\newcommand{\blind}{0}
\newcommand{\bd}{\boldsymbol}
\newcommand{\mb}{\mathbf}
\newcommand{\be}{\begin{equation}}
\newcommand{\ee}{\end{equation}}
\DeclareMathOperator*{\argmin}{arg\,min}
\DeclareMathOperator*{\argsort}{arg\,sort}
\DeclareMathOperator*{\HBIC}{HBIC}
\DeclareMathOperator*{\BIC}{BIC}
\DeclareMathOperator{\diag}{diag}
\DeclareMathOperator{\cov}{cov}
\DeclareMathOperator{\corr}{corr}
\DeclareMathOperator{\var}{var}
\DeclareMathOperator{\lspan}{span}
\DeclareMathOperator{\sd}{sd}
\DeclareMathOperator{\supp}{supp}
\DeclareMathOperator{\sign}{sgn}
\DeclareMathOperator{\AvgLength}{AvgLength}
\DeclareMathOperator{\TrueLength}{TrueLength}
\DeclareMathOperator{\CovRate}{CovRate}
\DeclareMathOperator{\AbsAvgZ}{AbsAvgZ}
\DeclareMathOperator{\SDZ}{SDZ}
\DeclareMathOperator{\SDAR}{SDAR}
\newtheorem{theorem}{Theorem}
\newtheorem{lem}{Lemma}
\newtheorem{cor}{Corollary}
\newtheorem{remark}{Remark}
\renewcommand{\fnum@algorithm}{\fname@algorithm~\thealgorithm.}
\algnewcommand\INPUT{\item[\textbf{Input:}]}%
\algnewcommand\OUTPUT{\item[\textbf{Output:}]}%
\begin{document}

\def\spacingset#1{\renewcommand{\baselinestretch}%
{#1}\small\normalsize} \spacingset{1}


\if0\blind
{
  \title{\bf Nodewise Loreg:     Nodewise $L_0$-penalized Regression for  High-dimensional Sparse Precision Matrix Estimation}
	\author{Hai Shu$^{1,\dag}$, Ziqi Chen$^{2,\dag,*}$, Yingjie Zhang$^{2,\dag}$, and Hongtu Zhu$^{3,*}$
\hspace{.2cm}\\
\\
		$^1$Department of Biostatistics, School of Global Public Health, New York University\\
		$^2$School of Statistics, KLATASDS-MOE, East China Normal University\\ 
		$^3$Departments of Biostatistics, Statistics, Computer Science, and Genetics,\\
The University of North Carolina at Chapel Hill\\
$^\dag$Equal contributions\\
		$^*$Correspondence: zqchen@fem.ecnu.edu.cn, htzhu@email.unc.edu
		}
  \maketitle
} \fi

\if1\blind
{
  \bigskip
  \bigskip
  \bigskip
  \begin{center}
    {\LARGE\bf Title}
\end{center}
  \medskip
} \fi

\bigskip
\begin{abstract}
We propose Nodewise Loreg, a nodewise $L_0$-penalized regression method for estimating  high-dimensional sparse precision matrices. We establish its asymptotic properties, including convergence rates, support recovery, and asymptotic normality under high-dimensional sub-Gaussian settings. Notably, the Nodewise Loreg estimator is asymptotically unbiased and normally distributed, eliminating the need for debiasing required by Nodewise Lasso.
  We also develop a desparsified version of Nodewise Loreg, similar  to the desparsified Nodewise Lasso estimator. The asymptotic variances of the undesparsified Nodewise Loreg estimator are upper bounded by those of both desparsified Nodewise Loreg and Lasso estimators for Gaussian data, potentially offering more powerful statistical inference. Extensive simulations show that the undesparsified Nodewise Loreg estimator generally outperforms the two desparsified estimators in asymptotic normal behavior. Moreover, Nodewise Loreg surpasses Nodewise Lasso, CLIME, and GLasso in most simulations in terms of matrix norm losses, support recovery, and timing performance. 
Application to a breast cancer gene expression dataset further demonstrates Nodewise Loreg's superiority over the three $L_1$-norm based methods.

\end{abstract}

\noindent%
{\it Keywords:}  Asymptotic normality,  Large precision matrix, $L_0$ penalty, Nodewise regression, No debiasing.
\vfill

\newpage
\spacingset{1.9} 

\section{Introduction}
\label{s:intro}

The precision matrix, or inverse covariance matrix, is fundamental in multivariate analysis and applications such as linear and quadratic discriminant analysis \citep{MR2722294}, generalized least squares \citep{MR2814522}, Markowitz portfolio optimization \citep{MR3837524}, and minimum variance beamforming \citep{Vorobyov2013}. It is particularly crucial in undirected graphical models, which represent the conditional dependence structure among variables \citep{MR1419991}.  
Let $\bd{x}=(x_1,\dots, x_p)^\top\in \mathbb{R}^p$ be a vector  of $p$ random variables
with a precision matrix $\mb{\Omega}\in \mathbb{R}^{p\times p}$.
In an undirected graphical model,  
$\bd{x}$ is represented as $p$ nodes in a graph, where an undirected edge between 
$x_i$ and $x_j$ is drawn if they are conditionally dependent given all other variables.
This conditional dependence corresponds to a
nonzero entry $(i,j)$ in $\mb{\Omega}$, when $\bd{x}$ is a Gaussian, or more broadly, a nonparanormal random vector \citep{Liu09}. 
Undirected graphical models have been widely used in fields such as brain functional connectivity analysis \citep{ryali2012estimation}, genetic network analysis \citep{Schafer05}, and social network analysis \citep{farasat2015probabilistic}, among others \citep{giudici2016graphical, epskamp2018gaussian}. 

Consider an $n\times p$ data matrix $\mb{X}$, whose rows  are $n$ i.i.d. copies of the
$p$-dimensional real-valued random vector
$\bd{x}$
with a covariance matrix $\mb{\Sigma}=\mb{\Omega}^{-1}$.
 It is well known that the sample covariance matrix derived from $\mb{X}$
is not a consistent estimator of $\mb{\Sigma}$
in high-dimensional settings
where
$p$ grows with $n$ \citep{Bai93,Bai10},
and moreover, it fails to be invertible when $p>n$. 
Different structural assumptions have been imposed to ensure consistent estimation 
of high-dimensional precision matrix $\mb{\Omega}$ \citep{fan2016overview}.
In this paper,
we focus on the sparsity assumption regarding  the number of nonzero entries in $\mb{\Omega}$.

There is a rich literature on the estimation of high-dimensional sparse precision matrix \(\mathbf{\Omega}\). The \(L_1\)-penalized maximum likelihood estimation (MLE) method, known as graphical Lasso (GLasso) \citep{Frie08, Yuan07, Bane08, Roth08, Ravi11, Hsie14}, and its variants with different penalties \citep{Lam09, marjanovic2015, van2016ridge, kuismin2017precision, kovacs2021graphical} have been extensively developed and investigated for their theoretical properties. 
Methods for estimating \(\mathbf{\Omega}\) column-by-column, which can be implemented with parallel computing, include various nodewise regression approaches \citep{Mein06, Yuan10, sun2013sparse, VGeer14, Zhao15, pmlr-v51-wang16a, TigerLiu17, Jank17, klaassen2023uniform}, and constrained \(L_1\)-minimization for inverse matrix estimation (CLIME) \citep{CaiClime} and its variants \citep{liu2015fast, CLZ16}. 
Notably, Nodewise Lasso \citep{Mein06, VGeer14, Jank17}, a nodewise regression method with \(L_1\) penalty, converts the large precision matrix estimation into a high-dimensional sparse linear regression problem, where each node \(x_j\) is regressed on the remaining nodes \(\{x_i\}_{i \ne j}\) using the Lasso method \citep{Lasso}.

High-dimensional sparse linear regression has been extensively studied. The ideal penalty for promoting sparsity penalizes the \(L_0\) norm of the regression coefficient vector, which counts the number of nonzero coefficients. Unfortunately, the \(L_0\) penalty is discontinuous and nonconvex, making the optimization problem NP-hard \citep{MR1320206,chen2014complexity}. 
The popular Lasso method \citep{Lasso} uses the \(L_1\) norm instead, resulting in a convex minimization problem. However, the \(L_1\) penalty tends to overshrink large coefficients, producing biased estimates \citep{fan2001variable}. To reduce this bias, alternatives like the adaptive Lasso \citep{adaptiveLasso}, SCAD \citep{fan2001variable}, and MCP \citep{MCP2010} 
penalties have been proposed as relaxed surrogates of the \(L_0\) penalty. 
Several algorithms approximate \(L_0\)-penalized linear regression, including GraDes \citep{garg2009gradient}, FoBa \citep{zhang2011adaptive}, OMP \citep{zhang2011sparse}, MIO \citep{MR3476618}, ABESS \citep{zhu2020polynomial}, and SDAR \citep{Huang18}. SDAR stands out for its computational efficiency, with complexity \(O(np)\) per iteration, and its robust statistical properties such as consistency and sparsistency. Extensive simulations have demonstrated the superior performance of the \(L_0\) penalty over non-\(L_0\) penalties \citep{johnson2015risk, Huang18, dai2023variable}.

Another stream of work focuses on ``de-biasing" (or ``de-sparsifying") procedures for Lasso-type estimators in high-dimensional linear regression. These procedures yield estimators of regression coefficients that are asymptotically unbiased and normally distributed, enabling statistical inference for high-dimensional linear models \citep{zhang2014confidence, VGeer14, javanmard2014confidence, javanmard2014hypothesis, 10.1214/17-AOS1630, 10.1111/ectj.12097}.
Building on these advancements, researchers have developed debiased (desparsified) Nodewise Lasso-type estimators for large precision matrices \citep{Jank17, chang2018confidence, klaassen2023uniform}, which are asymptotically unbiased and enjoy asymptotic normality for inference on matrix parameters. 
Instead of estimating the regression coefficients, \citet{Zhao15} estimate the noise level using scaled Lasso \citep{sun2012scaled}, achieving asymptotic normality without debiasing but requiring post-thresholding to provide a sparse estimator of the precision matrix. 
Inspired by the debiased Lasso \citep{VGeer14}, \citet{Jank15} derive a desparsified GLasso estimator by inverting the Karush-Kuhn-Tucker (KKT) conditions of the GLasso optimization problem, proving the estimator's asymptotic normality.

We propose Nodewise Loreg, a novel {\bf nodewise} $\mathbf{L_0}$-penalized {\bf reg}ression method for estimating large sparse precision matrices, in contrast to  Nodewise Lasso that uses the \(L_1\) penalty \citep{Mein06, VGeer14, Jank17}. Utilizing the SDAR algorithm \citep{Huang18} for \(L_0\)-penalized regression, we establish comprehensive asymptotic results for Nodewise Loreg, including convergence rates, support recovery, and asymptotic normality in high-dimensional sub-Gaussian settings. Notably, the Nodewise Loreg estimator is asymptotically unbiased and normally distributed, without the need for debiasing.
Moreover, its asymptotic normality has  entrywise variances dominated by those of the desparsified Nodewise Lasso
estimator \citep{Jank17} under Gaussian conditions,
potentially providing more powerful statistical inference. 
Additionally, we develop a desparsified Nodewise Loreg estimator with the same  asymptotic normality as the desparsified Nodewise Lasso estimator. Extensive simulations demonstrate that the undesparsified Nodewise Loreg estimator generally outperforms the two desparsified estimators in asymptotic normal behavior. Furthermore, in most simulation settings, Nodewise Loreg surpasses Nodewise Lasso, CLIME, and GLasso in terms of matrix norm losses, support recovery, and timing performance. While related to linear regression, our method requires a more involved treatment due to its nonlinear nature and the adaptation of SDAR theory from fixed to random design settings.


To the best of our knowledge, our work is the first to theoretically establish the asymptotic properties of the \(L_0\)-penalized nodewise regression estimator. Previous works by \citet{marjanovic2015} and \citet{kim2021scalable} also use the \(L_0\) penalty for large precision matrix estimation, but in different ways.
\citet{marjanovic2015} propose an \(L_0\)-penalized MLE method, replacing the \(L_1\) penalty in GLasso with the \(L_0\) penalty, and use a cyclic descent algorithm for optimization. However, their analysis is limited to algorithmic convergence and does not address the asymptotic properties of the estimator.
\citet{kim2021scalable} adopt the innovated scalable efficient estimation framework of \citet{MR3546445} to estimate \(\mathbf{\Omega}\), which is also the covariance matrix of \(\widetilde{\bd{x}} := \mathbf{\Omega}\bd{x}\). They estimate the data matrix \(\mathbf{\Omega}\mathbf{X}^\top \in \mathbb{R}^{p \times n}\) of $\widetilde{\bd{x}}$  using \(L_0\)-penalized linear regression via the SDAR algorithm, then use a hard-thresholded sample covariance matrix of \(\widetilde{\bd{x}}\) as the estimator of \(\mathbf{\Omega}\), but do not provide any asymptotic theory.


The rest of this paper is organized as follows. Section~\ref{s:method} proposes the Nodewise Loreg estimator for high-dimensional sparse precision matrices, following an introduction of the \(L_0\)-penalized linear regression and the SDAR algorithm. Section~\ref{s: theoretical results} presents the asymptotic properties of Nodewise Loreg  and discusses a desparsified version of the estimator. Section~\ref{s: simulations} evaluates the performance of Nodewise Loreg against Nodewise Lasso, CLIME, and GLasso through extensive simulations, and Section~\ref{sec: real data} analyzes the MDA133 breast cancer gene expression dataset \citep{hess2006pharmacogenomic}. Section~\ref{s: conclusion} concludes the paper and suggests possible extensions. Supplementary Materials provide all theoretical proofs, additional simulation results, and the computer code for simulations and real data analysis.

\section{Method}\label{s:method}

\subsection{Notation}

We now introduce some useful notation. 
Define $[n]=\{1,2,\dots, n\}$ for any positive integer $n$.
For any matrix $\mb{M}=(M_{ij})_{n\times p}\in \mathbb{R}^{n\times p}$, we denote the spectral norm $\| \mb{M} \|_2=\lambda_{\max}^{1/2}(\mb{M}^\top \mb{M})$,
where $\lambda_{\max}(\cdot)$ and $\lambda_{\min}(\cdot)$ are, respectively, the largest and smallest eigenvalues of the input square matrix, 
the Frobenius norm 
$\| \mb{M}\|_F=(\sum_{i=1}^n \sum_{j=1}^p M_{ij}^2)^{1/2}$,
the max norm $\|\mb{M} \|_{\max}=\max_{i\in [n], j\in [p]}|M_{ij}|$,
the $L_1$ norm
$\| \mb{M}\|_1=\max_{j\in [p]} \sum_{i=1}^n | M_{ij}|$,
and
the $L_\infty$ norm 
$\| \mb{M}\|_\infty=\max_{i\in [n]} \sum_{j=1}^p | M_{ij}|$.
The support set of $\mb{M}$ is defined
by $\supp(\mb{M})=\{(i,j)\in [n]\times [p]: M_{ij}\ne 0\}$.
Matrix subsetting is defined as
$\mb{M}_{AB}=(M_{ij})_{i\in A, j\in B}$, where
$A\subseteq [n]$ and $B\subseteq [p]$ are the subsets of  row indices and column indices of $\mb{M}$, respectively.
 Special subsetting symbols 
 $*$, $\setminus k$, and $k$ 
 imply complete, all-but-one, and single index selections for rows or columns.
For clarity, we sometimes use a comma to separate $A$ and $B$ in the subscript
of $\mb{M}$ when subsetting the matrix.
For any vector $\bd{v}=(v_1,\dots, v_p)^\top \in \mathbb{R}^p$,
we denote the $L_0$ norm $\|\bd{v}\|_0=\sum_{i=1}^pI(v_i\ne 0)$ 
with  $I(\cdot)$ being the indicator function,
define $\bd{v}_{\setminus j}$ as the vector obtained by excluding $v_j$ from $\bd{v}$,
and let
$\bd{v}_S$ denote
the
vector consisting of the entries of $\bd{v}$ indexed by
a given set $S$.
Let $\Phi(\cdot)$ be the cumulative distribution function of the standard Gaussian distribution.
Throughout the rest of the paper, by default we assume both $n,p\to \infty$   in the asymptotic arguments.

\subsection{$L_0$-penalized linear regression}
We consider the high-dimensional linear regression model:
\[
\bd{y}=\mb{Z}\bd{\beta}^*+\bd{\eta}
\]
where $\bd{y}\in \mathbb{R}^n$ is the response vector,
$\mb{Z}\in \mathbb{R}^{n\times p}$ is the design matrix with
$\sqrt{n}$-normalized columns (i.e., $\|\mb{Z}_{*j}\|_2=\sqrt{n}$, $j\in[p]$),
$\bd{\beta}^*\in\mathbb{R}^p$
is the vector of regression coefficients,
and $\bd{\eta}\in \mathbb{R}^n$ is the vector of random noises.
Assume that $\bd{\beta}^*$ is sparse, that is, the number of its nonzero entries is small  relative to $n,p$.
We can then estimate $\bd{\beta}^*$ by the solution of the $L_0$ minimization problem:
 \be\label{eqn: L0 reg}
 \min_{\bd{\beta}\in\mathbb{R}^p} \| \bd{y} -\mb{Z}\bd{\beta}\|_2^2/n\quad\text{subject to}\quad \|\bd{\beta} \|_0\le s,
 \ee
 where $s>0$ controls the sparsity level. 
Unfortunately, this optimization is known to be an NP-hard problem~\citep{MR1320206,chen2014complexity},
making it computationally intractable for large $p$.
To address this challenge, many
works have turned to relaxed surrogates for the $L_0$ penalty,
such as the widely-used Lasso ($L_1$-norm) penalty~\citep{Lasso}, the adaptive Lasso penalty~\citep{adaptiveLasso},
the SCAD penalty~\citep{fan2001variable}, and the MCP penalty~\citep{MCP2010}.
Alternatively, 
several
algorithms have been proposed to approximately solve the $L_0$-penalized linear regression, 
including  GraDes \citep{garg2009gradient}, FoBa \citep{zhang2011adaptive},  OMP \citep{zhang2011sparse},  MIO \citep{MR3476618}, ABESS \citep{zhu2020polynomial}, and SDAR \citep{Huang18}.

We consider the SDAR algorithm \citep{Huang18}, 
which is notable for its computational efficiency, with a complexity of $O(np)$ per iteration, and its
 comprehensive statistical properties,
 such as consistency in different norms and sparsistency of parameter estimates,
when compared to other competing algorithms.  
The SDAR algorithm is motivated by the necessary KKT conditions for 
the Lagrangian form of \eqref{eqn: L0 reg}.
Specifically, the Lagrangian form is 
 \be\label{eqn: L0 reg Lagrangian}
 \min_{\bd{\beta}\in\mathbb{R}^p} \| \bd{y} -\mb{Z}\bd{\beta}\|_2^2/n+2\lambda \|\bd{\beta} \|_0.
 \ee
 The KKT conditions for the solution of \eqref{eqn: L0 reg Lagrangian} are
\be\label{KKT cond}
\begin{cases}
\bd{\beta}=(\bd{\beta}+\bd{d})I(|\bd{\beta}+\bd{d}|\ge \sqrt{2\lambda}),\\
 \bd{d}=\mb{Z}^\top(\bd{y}-\mb{Z}\bd{\beta})/n.
\end{cases}
\ee
It follows from the KKT conditions that
\be\label{KKT cond2}
\begin{cases}
A=\{i\in[p]: |\bd{\beta}_{\{i\}}+\bd{d}_{\{i\}}|\ge \sqrt{2\lambda}\},\\
\bd{\beta}_A=(\mb{Z}_A^\top\mb{Z}_A)^{-1}\mb{Z}_A^\top\bd{y},
\qquad
\bd{\beta}_{A^c}=\bd{0},\\
\bd{d}_{A^c}=\mb{Z}_{A^c}^\top(\bd{y}-\mb{Z}_A\bd{\beta}_A)/n,
\qquad
\bd{d}_{A}=\bd{0},
\end{cases}
\ee
where $A^c$ is the complement of set $A$.
The SDAR algorithm finds an approximate sequence of solutions to the  KKT conditions in~\eqref{KKT cond} iteratively based on equation  \eqref{KKT cond2} until convergence. In particular,
instead of tuning the parameter $\lambda$, 
SDAR tunes the size of the active set $A$, denoted by $T$. Thus,
$A$ comprises  the indices of the first $T$ largest entries 
in the vector $|\bd{\beta}+\bd{d}|$.
Algorithm~\ref{alg: SDAR} summarizes the steps of SDAR, where $\argsort(\cdot,\downarrow)[1:T]$ yields the set consisting of the indices of the
first $T$ largest entries  in a given vector.

\begin{algorithm}
\caption{$\SDAR(\bd{y},\mb{Z}, T)$ for \eqref{eqn: L0 reg Lagrangian}} \label{alg: SDAR}
\begin{algorithmic}[1]
\INPUT $\bd{y}$, $\mb{Z}$, and $T$.
\State Initialize $\bd{\beta}^0=\bd{0}$, 
\State $\bd{d}^0=\mb{Z}^\top(\bd{y}-\mb{Z}\bd{\beta}^0)/n$, 
\State $A^0=\argsort(|\bd{\beta}^0+\bd{d}^0|, \downarrow)[1:T]$;
\For{$k=0,1,2,\dots$}
    \State $\bd{\beta}_{A^k}^{k+1}=(\mb{Z}_{A^k}^\top \mb{Z}_{A^k})^{-1}\mb{Z}_{A^k}^\top\bd{y}$, 
    \State $\bd{\beta}_{(A^k)^c}^{k+1}=\bd{0}$,
    \State $\bd{d}_{(A^k)^c}^{k+1}=\mb{Z}_{(A^k)^c}^\top(\bd{y}-\mb{Z}_{A^k}\bd{\beta}_{A^k}^{k+1})/n$, 
    \State $\bd{d}_{A^k}^{k+1}=\bd{0}$,
        \State $A^{k+1}=\argsort(|\bd{\beta}^{k+1}+\bd{d}^{k+1}|, \downarrow)[1:T]$;
\State \textbf{if} $A^{k+1} = A^k$ \textbf{then} stop;
\EndFor
\OUTPUT $\widehat{\bd{\beta}}=\bd{\beta}^{k+1}$, $\widehat{A}=A^k$, and $k$.
\end{algorithmic}
\end{algorithm}

\subsection{Nodewise Loreg: nodewise $L_0$-penalized regression}

Our Nodewise Loreg method modifies the Nodewise Lasso  \citep{Mein06, VGeer14,Jank17} by replacing the $L_1$ penalty with the $L_0$ penalty.
We first introduce the nodewise regression. 
Recall that the rows of data matrix $\mb{X}\in \mathbb{R}^{n\times p}$  are $n$ i.i.d. copies of the
random vector
$\bd{x}\in \mathbb{R}^p$ with a precision matrix $\mb{\Omega}=\mb{\Sigma}^{-1}$.
Following the literature \citep{VGeer14, Jank17}, 
we assume $E(\bd{x})=\bd{0}$ to concentrate on 
the precision matrix estimation, 
avoiding the 
impact of the mean estimation by centering the data.
The projection theorem 
and the inverse formula for partitioned matrices~\citep{Bane14}
provide
the steps of
the population-level nodewise regression:
for each $j\in [p]$, we have 
\begin{align} 
\bd{\alpha}^*_j&=\argmin_{\bd{\alpha}_j\in \mathbb{R}^{p-1}} E(x_j-\bd{x}_{\setminus j }^\top \bd{\alpha}_j)^2=\mb{\Sigma}^{-1}_{\setminus j,\setminus j}\mb{\Sigma}_{\setminus j, j},
\label{lm, projection}\\
\mb{\Omega}_{jj}&=(\mb{\Sigma}_{jj}-\mb{\Sigma}_{j,\setminus j} \mb{\Sigma}_{\setminus j,\setminus j}^{-1} \mb{\Sigma}_{\setminus j,j} )^{-1}
\label{Theta_jj formula}\\
&=(\mb{\Sigma}_{jj}-2 \bd{\alpha}_j^{*\top} \mb{\Sigma}_{\setminus j,j}+\bd{\alpha}_j^{*\top} \mb{\Sigma}_{\setminus j,\setminus j} \bd{\alpha}_j^*)^{-1}
=[E(x_j-\bd{x}_{\setminus j }^\top \bd{\alpha}_j)^2]^{-1},
\label{Omega_jj=sigma_j^-2}\\ 
\mb{\Omega}_{\setminus j,j}&=-\mb{\Omega}_{jj}\bd{\alpha}_j^*.
\label{Theta_-jj formula}
\end{align}

Nodewise Lasso estimates  $\mb{\Omega}$ 
column-by-column
using Lasso for the $L_1$-penalized least squares based on data matrix $\mb{X}$:
 for $j\in[p]$,
\begin{align*}
\widehat{\bd{\alpha}}_j&=\argmin_{\bd{\alpha}_j\in \mathbb{R}^{p-1}}\| \mb{X}_{*j}-\mb{X}_{*\setminus j} \bd{\alpha}_j   \|_2^2/n+2\lambda_j \| \bd{\alpha}_j\|_1,\\
\widehat{\sigma}_j^2&=\| \mb{X}_{*j}-\mb{X}_{*\setminus j} \widehat{\bd{\alpha}}_j  \|_2^2/n+\lambda_j \| \widehat{\bd{\alpha}}_j\|_1,\\
\widehat{\mb{\Omega}}_{jj}&=\widehat{\sigma}_j^{-2},
\quad\text{and}\quad
\widehat{\mb{\Omega}}_{\setminus j,j }=-\widehat{\mb{\Omega}}_{jj}\widehat{\bd{\alpha}}_j.
\end{align*}

In contrast to Nodewise Lasso,
our proposed Nodewise Loreg uses the standardized design matrix 
$\mb{Z}_{*\setminus j}$ 
and the $L_0$ penalty instead of $\mb{X}_{*\setminus j}$
and the $L_1$ penalty in the penalized least squares, which we solve using the SDAR algorithm.
Here, $\mb{Z}=\mb{X}\widehat{\mb{\Gamma}}^{-1/2}$ with $\widehat{\mb{\Gamma}}=\diag\{(\widehat{\mb{\Sigma}}_{ii})_{i\in[p]}\}$ and
$(\widehat{\mb{\Sigma}}_{ii})_{i\in [p]}$ being the diagonal of
$\widehat{\mb{\Sigma}}=\mb{X}^\top \mb{X}/n$.
Algorithm~\ref{alg: Nodewise Loreg} outlines  the  estimating procedure for Nodewise Loreg.

\begin{algorithm}
\caption{Nodewise Loreg}\label{alg: Nodewise Loreg}
\begin{algorithmic}[1]
\INPUT $\mb{X}$ and $\{T_j\}_{j=1}^p$.
    \State $\mb{Z}=\mb{X}\widehat{\mb{\Gamma}}^{-1/2}$ with $\widehat{\mb{\Gamma}}=\diag\{(\widehat{\mb{\Sigma}}_{ii})_{i\in[p]}\}$ and
$\widehat{\mb{\Sigma}}=\mb{X}^\top \mb{X}/n$,

\For{$j=1,\dots,p$}
    \State $(\widehat{\bd{\beta}}_j,\widehat{A}_j,k_j)=\SDAR(\mb{X}_{*j},\mb{Z}_{*\setminus j},T_j)$ from Algorithm~\ref{alg: SDAR}
    
    for $\min_{\bd{\beta}_j\in\mathbb{R}^{p-1}}\| \mb{X}_{*j}-\mb{Z}_{*\setminus j} \bd{\beta}_j   \|_2^2/n+2\lambda_j \| \bd{\beta}_j\|_0$,

\State $\widehat{\bd{\alpha}}_j=\widehat{\mb{\Gamma}}_{\setminus j,\setminus j}^{-1/2} \widehat{\bd{\beta}}_j$,
\State $\widehat{\sigma}_j^2=\|\mb{X}_{*j}-\mb{X}_{*\setminus j} \widehat{\bd{\alpha}}_j  \|_2^2/n$,
\State $\widehat{\mb{\Omega}}_{jj}=\widehat{\sigma}_j^{-2}$,
\State $\widehat{\mb{\Omega}}_{\setminus j,j }=-\widehat{\mb{\Omega}}_{jj}\widehat{\bd{\alpha}}_j$,
\EndFor
\OUTPUT $\widehat{\mb{\Omega}}$.
\end{algorithmic}
\end{algorithm}

For Algorithm~\ref{alg: Nodewise Loreg},
we sometimes write $k_j=k_j(T_j)$ and $\widehat{A}_j=\widehat{A}_j(k_j(T_j))=\widehat{A}_j(T_j)$
to explicitly  indicate their dependence on the tuning parameter $T_j$.
To select the optimal value of $T_j$,
we adopt 
the high-dimensional Bayesian information criterion
(HBIC) \citep{Wang13HBIC,Huang18,zhu2020polynomial}.
For any active set $A_j$ of $\bd{\beta}_j$,
the HBIC is defined as
\be\label{HBIC general}
\HBIC(A_j)=n \log \mathcal{L}_{A_j}+|A_j|\log (p-1)\log(\log n),
\ee
where $\mathcal{L}_{A_j}=\min_{(\bd{\beta}_j)_{A_j^c}=\bd{0}}\mathcal{L}_{n,j}(\bd{\beta}_j)$
and $\mathcal{L}_{n,j}(\bd{\beta}_j)=\| \mb{X}_{*j}-\mb{Z}_{*\setminus j} \bd{\beta}_j   \|_2^2/n$.
Define the optimal value of $T_j$ as
\be\label{HBIC: Tj*}
T_j^*=\min_{0\le T_j\le T_{\max,j}} \HBIC(\widehat{A}_j(T_j)),
\ee
where $T_{\max,j}$ is the largest candidate value of $T_j$.

   Throughout the paper, due to their column-by-column estimating nature, 
   we apply the minimum symmetrization  \citep{CaiClime} to both Nodewise Loreg and Nodewise Lasso:
  \[  
    \widehat{\mb{\Omega}}_{ij}^{\text{S}}=\widehat{\mb{\Omega}}_{ij}^{\text{US}}I(|\widehat{\mb{\Omega}}_{ij}^{\text{US}}|\le |\widehat{\mb{\Omega}}_{ji}^{\text{US}}|)+\widehat{\mb{\Omega}}_{ji}^{\text{US}}I(|\widehat{\mb{\Omega}}_{ij}^{\text{US}}|> |\widehat{\mb{\Omega}}_{ji}^{\text{US}}|),
    \] 
    where $\widehat{\mb{\Omega}}^{\text{S}}$ and $\widehat{\mb{\Omega}}^{\text{US}}$ are the symmetrized and unsymmetrized estimators of $\mb{\Omega}$,  respectively.

\section{Theoretical results}\label{s: theoretical results}

In this section, we study the asymptotic properties of the proposed Nodewise Loreg estimator, including 
convergence rates, support recovery, and asymptotic normality.
We also develop and discuss a desparsified version of the Nodewise Loreg estimator, analogous  to the desparsified Nodewise Lasso estimator in
\citet{Jank17}.

For the theoretical analysis, we
define $T=\max_{j\in [p]} T_j$
as the maximum tuning parameter, 
 $\mb{R}=\corr(\bd{x})$ as the correlation matrix of $\bd{x}$, 
$A_j^*=\supp(\bd{\alpha}_j^*)$ as
the support set of $\bd{\alpha}_j^*$ given in~\eqref{lm, projection},
$m_j=\min_{i\in A_j^*} |\mb{\Omega}_{ij}|$
as the minimum absolute value of the off-diagonal nonzero
entries of $\mb{\Omega}$ in its $j$-th column,
and 
$s_j=
\|\mb{\Omega}_{*j} \|_0$ as the sparsity of the $j$-th column of $\mb{\Omega}$.
We define the following class of precision matrices:
\[
\mathcal{G}(s)=\left\{\mb{\Omega}\in\mathbb{R}^{p\times p}: \max_{j\in [p]}\| \mb{\Omega}_{*j}\|_0\le s, \text{~\ref{Theta bound} given below is satisfied}\right\}.
\]

We introduce some assumptions frequently used in our theoretical analysis.

\begin{enumerate}[label={(A\arabic*)}]
\item\label{Theta bound} (Bounded spectrum) $\kappa_1^{-1}\le \lambda_{\min}(\mb{\Omega})\le\lambda_{\max}(\mb{\Omega}) \le \kappa_2$ with positive constants $\kappa_1,\kappa_2$.

\item\label{subGauss} (Sub-Gaussianity) There exists a constant $K>0$ such that
\[
\sup_{\bd{v}\in \mathbb{R}^p:\|\bd{v}\|_2\le 1}E\left[\exp(|\bd{v}^\top\bd{x}|^2/K^2)\right]\le 2. 
\]

\item\label{assmp: bounded Tj}  
$(s_j-1)\vee 1\le T_j\le p-1$ for all $j\in [p]$.

\item\label{(C1)} $\gamma_T^{}<1-c_1$ with a constant $c_1>0$, 
$k_j\ge  \log_{1-\frac{c_1}{2}} \sqrt{T_j(\log p)/n}$
for all $j\in[p]$, and $T(\log p)/n=o(1)$,
where $
\gamma_T^{}
:=[2\theta_{T,T}+(1+\sqrt{2})\theta_{T,T}^2]c_\kappa^2+
(1+\sqrt{2})\theta_{T,T}c_\kappa
$ with
$\theta_{T,T}:=\max_{\substack{A,B\in S_{T}: A\cap B=\emptyset}} 
\| \mb{R}_{AB}\|_2 $,
 $S_{T}:=\{A\subseteq [p]:|A|\le T\}$, and $c_\kappa:=\kappa_1\kappa_2$.

\item\label{(C2)} $T\mu<1/4-c_2$ with a constant $c_2>0$, 
$\min_{j\in [p]}k_j\ge \log_{\frac{3-6c_2}{3+2c_2}}\sqrt{(\log p)/n}$,
and $T\sqrt{(\log p)/n}=o(1)$, where $\mu := \max_{i\ne j}|\mb{R}_{ij}|$.
\end{enumerate}

Assumptions~\ref{Theta bound} and \ref{subGauss} are commonly used in the literature on large precision matrix estimation, such as in \citet{Jank17} for Nodewise Lasso. Assumption~\ref{assmp: bounded Tj} requires that the tuning parameter \(T_j\) is not smaller than the true support size \(|A_j^*| = s_j - 1\), which is satisfied with high probability if \(T_j\) is selected optimally using the HBIC~\eqref{HBIC: Tj*} (see Lemma~\ref{Lemma2: A_hat=A}). Assumptions~\ref{(C1)} and \ref{(C2)} translate the corresponding assumptions in Theorems 4(i) and 12(i) of \citet{Huang18}, used to prove the consistency of their fixed design \(L_0\)-penalized linear regression estimation, to the random design setting in our context.

It is important to highlight that although our precision matrix estimation is related to linear regression, the theoretical analysis requires a more involved treatment due to its nonlinear nature and the fact that the SDAR theory developed by \citet{Huang18} for fixed design linear regression is not directly applicable to the random design used in nodewise regression. Unless specified otherwise, \(\widehat{\mathbf{\Omega}}\) in this section refers to the Nodewise Loreg estimator obtained from Algorithm~\ref{alg: Nodewise Loreg}.

\subsection{Convergence rates and support recovery}\label{subsec:  Convergence rates and support recovery}

We present the following theorem for the convergence rates of Nodewise Loreg estimator $\widehat{\mb{\Omega}}$
in various matrix norms.
\begin{theorem}[Convergence rates]\label{thm: consistency}
Suppose that \ref{Theta bound}-\ref{assmp: bounded Tj} hold.
For any  constant $C>0$, there exists a constant $M>0$ such that,
if \ref{(C1)} or \ref{(C2)} holds, then
\[
\inf_{\mb{\Omega}\in\mathcal{G}(T+1)} P\left(\| \widehat{\mb{\Omega}}-\mb{\Omega}\|_1\le M T\sqrt{(\log p)/n}\right)=1-O(p^{-C})
\]
and
$$
\inf_{\mb{\Omega}\in\mathcal{G}(T+1)} P\left( \max_{j\in[p]}\| \widehat{\mb{\Omega}}_{*j}-\mb{\Omega}_{*j}\|_2\le M \sqrt{T(\log p)/n}\right)	=1-O(p^{-C}),
$$
and
if \ref{(C2)} holds, then
$$
\inf_{\mb{\Omega}\in\mathcal{G}(T+1)} P\left(\| \widehat{\mb{\Omega}}-\mb{\Omega}\|_{\max}\le M\sqrt{(\log p)/n}\right)=1-O(p^{-C}).
$$
\end{theorem}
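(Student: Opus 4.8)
The plan is to reduce all three matrix-norm bounds to uniform per-column control of the nodewise regression output and then to propagate the resulting errors through $\widehat{\mb{\Omega}}_{jj}=\widehat{\sigma}_j^{-2}$ and $\widehat{\mb{\Omega}}_{\setminus j,j}=-\widehat{\mb{\Omega}}_{jj}\widehat{\bd{\alpha}}_j$. A first observation is that the minimum symmetrization is harmless for upper bounds: since each $\widehat{\mb{\Omega}}^{\text{S}}_{ij}$ equals one of $\widehat{\mb{\Omega}}^{\text{US}}_{ij},\widehat{\mb{\Omega}}^{\text{US}}_{ji}$ and $\mb{\Omega}_{ij}=\mb{\Omega}_{ji}$, one has $|\widehat{\mb{\Omega}}^{\text{S}}_{ij}-\mb{\Omega}_{ij}|\le\max\bigl(|\widehat{\mb{\Omega}}^{\text{US}}_{ij}-\mb{\Omega}_{ij}|,\,|\widehat{\mb{\Omega}}^{\text{US}}_{ji}-\mb{\Omega}_{ji}|\bigr)$, so it suffices to bound the unsymmetrized estimator. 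It then suffices to show that, with probability $1-O(p^{-C-1})$ and uniformly over $\mb{\Omega}\in\mathcal{G}(T+1)$, each column obeys $\|\widehat{\bd{\alpha}}_j-\bd{\alpha}_j^*\|_2\le M_1\sqrt{s_j(\log p)/n}$ and $|\widehat{\sigma}_j^2-\sigma_j^2|\le M_2\sqrt{T(\log p)/n}$ with $\sigma_j^2=\mb{\Omega}_{jj}^{-1}$, and, under \ref{(C2)}, additionally $\|\widehat{\bd{\alpha}}_j-\bd{\alpha}_j^*\|_\infty\le M_3\sqrt{(\log p)/n}$; a union bound over the $p$ columns then upgrades the probability to $1-O(p^{-C})$.

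The core of the argument is to obtain these per-column rates from the SDAR theory. The model for column $j$ is $\mb{X}_{*j}=\mb{Z}_{*\setminus j}\bd{\beta}_j^\circ+\bd{\eta}_j$ with $\bd{\beta}_j^\circ=\widehat{\mb{\Gamma}}_{\setminus j,\setminus j}^{1/2}\bd{\alpha}_j^*$ and residuals $\bd{\eta}_j=\mb{X}_{*j}-\mb{X}_{*\setminus j}\bd{\alpha}_j^*$ (whose rows are i.i.d.\ copies of the population residual), so that $\supp(\bd{\beta}_j^\circ)=A_j^*$. Since the bounds of \citet{Huang18} are for fixed design, I would first translate the population conditions \ref{(C1)}/\ref{(C2)}---stated through $\gamma_T$ and $\mu=\max_{i\ne j}|\mb{R}_{ij}|$ on the population correlation matrix $\mb{R}$---into their empirical counterparts for the standardized design $\mb{Z}_{*\setminus j}$, whose Gram matrix $\mb{Z}_{*\setminus j}^\top\mb{Z}_{*\setminus j}/n$ is essentially the sample correlation matrix $\widehat{\mb{R}}$. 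Using \ref{subGauss} and \ref{Theta bound} I would show $\|\widehat{\mb{R}}-\mb{R}\|_{\max}\le M\sqrt{(\log p)/n}$ and $\|\widehat{\mb{\Gamma}}-\mb{\Gamma}\|_{\max}\le M\sqrt{(\log p)/n}$ (with $\mb{\Gamma}=\diag(\mb{\Sigma})$ bounded above and below) with high probability, which transfers the restricted-eigenvalue/coherence quantities from population to empirical up to an $o(1)$ perturbation and controls the empirical score $\|\mb{Z}_{*\setminus j}^\top\bd{\eta}_j/n\|_\infty\le M\sqrt{(\log p)/n}$. Feeding these into the now-applicable $L_0$ consistency bounds gives $\|\widehat{\bd{\beta}}_j-\bd{\beta}_j^\circ\|_2\le M\sqrt{s_j(\log p)/n}$, and under \ref{(C2)} the coherence bound $T\mu<1/4-c_2$ yields the dimension-free $\|\widehat{\bd{\beta}}_j-\bd{\beta}_j^\circ\|_\infty\le M\sqrt{(\log p)/n}$; the diagonal factor $\widehat{\mb{\Gamma}}_{\setminus j,\setminus j}^{-1/2}$ relating $\widehat{\bd{\alpha}}_j$ to $\widehat{\bd{\beta}}_j$ is bounded, so the same rates pass to $\widehat{\bd{\alpha}}_j-\bd{\alpha}_j^*$. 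The variance estimate is handled by expanding $\widehat{\sigma}_j^2=\|\mb{X}_{*j}-\mb{X}_{*\setminus j}\widehat{\bd{\alpha}}_j\|_2^2/n$ around $\bd{\alpha}_j^*$: concentration of the empirical residual sum of squares at the truth contributes $O(\sqrt{(\log p)/n})$ and the quadratic terms in $\widehat{\bd{\alpha}}_j-\bd{\alpha}_j^*$ contribute $O(\sqrt{T(\log p)/n})$, and since $\sigma_j^2\in[\kappa_2^{-1},\kappa_1]$ by \ref{Theta bound}, $|\widehat{\mb{\Omega}}_{jj}-\mb{\Omega}_{jj}|=|\widehat{\sigma}_j^{-2}-\sigma_j^{-2}|$ inherits the rate.

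Finally I would assemble the matrix norms from the decomposition $\widehat{\mb{\Omega}}_{\setminus j,j}-\mb{\Omega}_{\setminus j,j}=-\widehat{\mb{\Omega}}_{jj}(\widehat{\bd{\alpha}}_j-\bd{\alpha}_j^*)-(\widehat{\mb{\Omega}}_{jj}-\mb{\Omega}_{jj})\bd{\alpha}_j^*$, in which $\widehat{\mb{\Omega}}_{jj}$, $\mb{\Omega}_{jj}$ and $\|\bd{\alpha}_j^*\|_2\le c_\kappa$ are all uniformly bounded by \ref{Theta bound}. Recalling $s_j\le T+1$, this gives $\|\widehat{\mb{\Omega}}_{*j}-\mb{\Omega}_{*j}\|_2\le M\sqrt{T(\log p)/n}$ (the diagonal entry being of lower order), and taking the maximum over $j$ yields the second bound. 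For the $L_1$ norm, since $\widehat{\bd{\alpha}}_j-\bd{\alpha}_j^*$ is supported on a set of size at most $T+|A_j^*|\le 2T$, Cauchy--Schwarz upgrades the $\ell_2$ bound to $\|\widehat{\bd{\alpha}}_j-\bd{\alpha}_j^*\|_1\le M\,T\sqrt{(\log p)/n}$, giving the first bound because the $L_1$ matrix norm is the maximum absolute column sum. Under \ref{(C2)}, the dimension-free $\ell_\infty$ coefficient bound together with the diagonal bound gives the entrywise rate $\|\widehat{\mb{\Omega}}-\mb{\Omega}\|_{\max}\le M\sqrt{(\log p)/n}$. The main obstacle throughout is the random-design adaptation of the SDAR theory in the second paragraph---transferring the population coherence/restricted-eigenvalue conditions \ref{(C1)}/\ref{(C2)} and the score bound to the empirical standardized design uniformly over $j$ and over $\mathcal{G}(T+1)$, while simultaneously accounting for the randomness of the target $\bd{\beta}_j^\circ$ through $\widehat{\mb{\Gamma}}$.
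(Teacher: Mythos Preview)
Your overall architecture is exactly the paper's: invoke the SDAR consistency results of \citet{Huang18} per column, transfer the population regularity conditions \ref{(C1)}/\ref{(C2)} to the sample standardized design, control $\widehat{\sigma}_j^2$ by expanding around $\bd{\alpha}_j^*$, and assemble matrix norms from the decomposition $\widehat{\mb{\Omega}}_{\setminus j,j}-\mb{\Omega}_{\setminus j,j}=-\widehat{\mb{\Omega}}_{jj}(\widehat{\bd{\alpha}}_j-\bd{\alpha}_j^*)-(\widehat{\mb{\Omega}}_{jj}-\mb{\Omega}_{jj})\bd{\alpha}_j^*$ together with the $\ell_2$-to-$\ell_1$ conversion via support size. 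Your remark on minimum symmetrization is a clean extra step.

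There is, however, one genuine technical gap in the \ref{(C1)} branch. You propose to transfer the restricted-eigenvalue/orthogonality quantities via the entrywise bound $\|\widehat{\mb{R}}-\mb{R}\|_{\max}\le M\sqrt{(\log p)/n}$. This suffices under \ref{(C2)} (since the mutual coherence $\mu$ is itself an $\ell_\infty$ quantity, and there $T\sqrt{(\log p)/n}=o(1)$), but it does \emph{not} suffice under \ref{(C1)}. The quantity $\theta_{T,T}=\max_{|A|,|B|\le T,\,A\cap B=\emptyset}\|\mb{R}_{AB}\|_2$ is a spectral constant, and the crude route $\|\widehat{\mb{R}}_{AB}-\mb{R}_{AB}\|_2\le T\|\widehat{\mb{R}}-\mb{R}\|_{\max}=O\!\bigl(T\sqrt{(\log p)/n}\bigr)$ need not be $o(1)$ when only $T(\log p)/n=o(1)$ is assumed (e.g.\ $T=n^{1/3}$, $\log p=n^{1/3}$). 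The same issue affects the restricted eigenvalue needed for Huang's (A2). The paper closes this gap by a different concentration result: a covering/union-bound argument (Remark~5.40 in \citet{Vershynin2012}) giving the uniform restricted spectral bound $\max_{A\in S_{2T}}\|\widehat{\mb{\Sigma}}_{AA}-\mb{\Sigma}_{AA}\|_2\le M\sqrt{T(\log p)/n}$, from which $\tilde{\theta}_{T,T}\le\theta_{T,T}+o(1)$ and the eigenvalue bounds on $\mb{Z}_{*A}^\top\mb{Z}_{*A}/n$ follow under \ref{(C1)}. Once you replace the max-norm step by this sharper tool, your argument goes through as in the paper.

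A minor remark: the SDAR $\ell_2$ bound you will obtain is $\|\widehat{\bd{\beta}}_j-\bd{\beta}_j^\circ\|_2\le M\sqrt{T_j(\log p)/n}$ (in terms of the active-set size $T_j$, since $h_2(T_j)$ scales with $T_j$), not $\sqrt{s_j(\log p)/n}$; this does not affect your final rates because you already take $s_j\le T+1$ downstream.
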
	

\begin{remark}\label{remark: rate}
From Theorem~\ref{thm: consistency},
our Nodewise Loreg estimator attains the same order of convergence rates
as Nodewise Lasso estimator in the $L_1$ norm and the max column-wise $L_2$ norm
\citep[][Theorem~2.4]{VGeer14}.
Convergence rates in the max norm and the scaled Frobenius norm
can be obtained by
$
\| \widehat{\mb{\Omega}}-\mb{\Omega}\|_{\max}\vee p^{-1/2}\| \widehat{\mb{\Omega}}-\mb{\Omega} \|_F \le\max_{j\in[p]}\|\widehat{\mb{\Omega}}_{*j}-\mb{\Omega}_{*j} \|_2
=O_P(\sqrt{T(\log p)/n}).
$
However, our Nodewise Loreg estimator can achieve a faster convergence rate in the max norm, $\| \widehat{\mb{\Omega}}-\mb{\Omega}\|_{\max}=O_P(\sqrt{\log p/n})$, under assumption~\ref{(C2)}.
In contrast, this order of the max-norm  rate is only shown to be attainable by the desparsified (debiased) Nodewise Lasso estimator, which is not a sparse matrix and needs a post-thresholding that additionally requires $T(\log p)/\sqrt{n}=o(1)$ \citep[][Sections 3.2 and 3.4]{Jank17}.
The rates in the $L_1$ norm
and the scaled Frobenius norm
and the faster  max-norm rate
of our Nodewise Loreg estimator
are faster than
the minimax lower bounds, 
which are of the order
$\|\mb{\Omega} \|_1 T\sqrt{(\log p)/n}$, 
$\|\mb{\Omega} \|_1 \sqrt{T(\log p)/n}$,
and 
$\sqrt{(\log p)/n}\vee T (\log p)/n$, respectively,
as shown in 
\citet[][Theorems 4.1 and 6.1]{CLZ16}
and
\citet[][Theorem~5]{Zhao15}.
Our faster  rates
are  due to the 
additional condition
\ref{(C1)} or \ref{(C2)}.
These two conditions
 are translated versions of those in Theorems~4(i) and 12(i) of \citet{Huang18},
which are used to establish the consistency of
their fixed design $L_0$-penalized linear regression estimation, 
to the random design setting in our context.
\end{remark}

Next, we study
the support recovery of $\mb{\Omega}$
by $\widehat{\mb{\Omega}}$.
Before that,
we introduce  two lemmas regarding the support recovery of $\{A_j^*\}_{j\in[p]}$.

\begin{lem}\label{Lemma: A_hat=A}
Suppose that \ref{Theta bound}-\ref{assmp: bounded Tj} hold.
For any constant $C > 0$,
assume that 
\ref{(C1)} holds with $m_j\ge M_1\sqrt{T_j(\log p)/n} $
for all $j\in[p]$, or \ref{(C2)} holds with
$\min_{j\in[p]}m_j\ge M_2\sqrt{(\log p)/n}$,
where $M_1$ and $M_2$ are sufficiently
large positive constants dependent on $C$.
Then, 
$$
\inf_{\mb{\Omega}\in\mathcal{G}(T+1)}P(A_j^*\subseteq\widehat{A}_j, \forall j\in [p])=1-O(p^{-C}).
$$
If $T_j=s_j-1$ for all $j\in [p]$,
then 
$$
\inf_{\mb{\Omega}\in\mathcal{G}(T+1)}P(\widehat{A}_j=A_j^*,\forall j\in [p])=1-O(p^{-C}).
$$
\end{lem}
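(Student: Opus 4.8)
The plan is to reduce the support-recovery claims for $\widehat{A}_j$ to an entrywise control of the SDAR selection statistic $\bd{\beta}_j^k+\bd{d}_j^k$ around the true coefficient vector, and then to match that error against the beta-min conditions on $m_j$. First I would translate everything to the standardized regression of $\mb{X}_{*j}$ on $\mb{Z}_{*\setminus j}$, whose population coefficient $\bd{\beta}_j^*=\widehat{\mb{\Gamma}}_{\setminus j,\setminus j}^{1/2}\bd{\alpha}_j^*$ satisfies $\supp(\bd{\beta}_j^*)=\supp(\bd{\alpha}_j^*)=A_j^*$ because $\widehat{\mb{\Gamma}}^{1/2}$ is diagonal and positive. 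By \eqref{Theta_-jj formula} we have $(\bd{\alpha}_j^*)_i=-\mb{\Omega}_{ij}/\mb{\Omega}_{jj}$, so $\min_{i\in A_j^*}|(\bd{\alpha}_j^*)_i|\ge m_j/\kappa_2$ under \ref{Theta bound}; combined with the concentration of the sample variances $\widehat{\mb{\Gamma}}_{ii}$ around bounded population variances under \ref{Theta bound}--\ref{subGauss}, this gives $\min_{i\in A_j^*}|(\bd{\beta}_j^*)_i|\gtrsim m_j$ with high probability, so the beta-min hypotheses transfer directly to lower bounds on the signal of $\bd{\beta}_j^*$. Note also that $|A_j^*|=s_j-1\le T_j$ by \ref{assmp: bounded Tj}.

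The core step is a uniform entrywise bound on the selection statistic. Writing $\bd{\eta}_j=\mb{X}_{*j}-\mb{X}_{*\setminus j}\bd{\alpha}_j^*$ and $\widehat{\mb{R}}_{\setminus j,\setminus j}=\mb{Z}_{*\setminus j}^\top\mb{Z}_{*\setminus j}/n$, the identity $\bd{d}=\mb{Z}^\top(\bd{y}-\mb{Z}\bd{\beta})/n$ from \eqref{KKT cond} yields
$$
\bd{\beta}_j^k+\bd{d}_j^k-\bd{\beta}_j^*=\bigl(\mb{I}-\widehat{\mb{R}}_{\setminus j,\setminus j}\bigr)\bigl(\bd{\beta}_j^k-\bd{\beta}_j^*\bigr)+\mb{Z}_{*\setminus j}^\top\bd{\eta}_j/n .
$$
The noise term $\mb{Z}_{*\setminus j}^\top\bd{\eta}_j/n$ is controlled at order $\sqrt{(\log p)/n}$ uniformly in $j$ by sub-Gaussian concentration under \ref{subGauss}, while the first term is governed by the SDAR estimation error $\bd{\beta}_j^k-\bd{\beta}_j^*$, supported on $A^k\cup A_j^*$ of size $O(T_j)$, paired with the coherence quantities $\theta_{T,T}$ or $\mu$ evaluated on the sample correlation. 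This is the machinery underlying Theorem~\ref{thm: consistency}: under \ref{(C1)} the contraction driven by $\gamma_T<1-c_1$ over $k_j$ iterations gives $\|\bd{\beta}_j^k+\bd{d}_j^k-\bd{\beta}_j^*\|_\infty=O(\sqrt{T_j(\log p)/n})$, and under the sharper condition $T\mu<1/4-c_2$ of \ref{(C2)} it gives the faster $O(\sqrt{(\log p)/n})$ bound.

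With this bound in hand I would prove $A_j^*\subseteq\widehat{A}_j$ by showing that every true coordinate dominates every null coordinate in $|\bd{\beta}_j^k+\bd{d}_j^k|$. For $i\in A_j^*$, the entrywise bound gives $|(\bd{\beta}_j^k+\bd{d}_j^k)_i|\ge|(\bd{\beta}_j^*)_i|-\|\bd{\beta}_j^k+\bd{d}_j^k-\bd{\beta}_j^*\|_\infty$, which stays above half the signal once $M_1$ under \ref{(C1)} or $M_2$ under \ref{(C2)} exceeds the implied constant; for $i\notin A_j^*$, where $(\bd{\beta}_j^*)_i=0$, we have $|(\bd{\beta}_j^k+\bd{d}_j^k)_i|\le\|\bd{\beta}_j^k+\bd{d}_j^k-\bd{\beta}_j^*\|_\infty$. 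Choosing $M_1,M_2$ so the signal floor exceeds the null ceiling forces all $|A_j^*|\le T_j$ true coordinates into the top-$T_j$ set $\widehat{A}_j$, and a union bound over $j\in[p]$, absorbed into the $O(p^{-C})$ budget, gives the uniform inclusion. For the exact-recovery claim, taking $T_j=s_j-1=|A_j^*|$ makes $|\widehat{A}_j|=T_j=|A_j^*|$, so the inclusion of equal-cardinality sets forces $\widehat{A}_j=A_j^*$; another union bound finishes the proof.

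The main obstacle I expect is the random-design transfer in the core step: the coherence quantities $\theta_{T,T}$ and $\mu$ and the contraction factor $\gamma_T$ are stated for the population correlation $\mb{R}=\corr(\bd{x})$, but the SDAR recursion operates through the random sample correlation $\widehat{\mb{R}}_{\setminus j,\setminus j}$, itself built from the data-dependent normalization $\widehat{\mb{\Gamma}}$. Establishing that these restricted-eigenvalue and coherence properties transfer from $\mb{R}$ to $\widehat{\mb{R}}$ on all sparse index sets, uniformly over the $p$ simultaneous regressions, and that the iteration count $k_j$ (whence the logarithmic lower bounds on $k_j$ in \ref{(C1)} and \ref{(C2)}) suffices for the contraction to reach the noise floor, is precisely where sub-Gaussian concentration of $\widehat{\mb{R}}-\mb{R}$ and a careful union bound must be combined, marking the departure from the fixed-design SDAR theory of \citet{Huang18}.
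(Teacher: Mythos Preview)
Your approach is correct but differs from the paper's in both halves. For the inclusion $A_j^*\subseteq\widehat{A}_j$, the paper does not analyze the selection statistic $\bd{\beta}_j^k+\bd{d}_j^k$ at all; it works at the level of $\widehat{\mb{\Omega}}$, reusing the column-wise error bounds $\|\widehat{\mb{\Omega}}_{\setminus j,j}-\mb{\Omega}_{\setminus j,j}\|_\infty$ already obtained in the proof of Theorem~\ref{thm: consistency} to get $\supp(\mb{\Omega}_{\setminus j,j})\subseteq\supp(\widehat{\mb{\Omega}}_{\setminus j,j})$ under the beta-min hypotheses, and then closes the chain $A_j^*=\supp(\mb{\Omega}_{\setminus j,j})\subseteq\supp(\widehat{\mb{\Omega}}_{\setminus j,j})=\supp(\widehat{\bd{\beta}}_j)\subseteq\widehat{A}_j$. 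For exact recovery the paper does not deduce equality from your inclusion-plus-cardinality argument either; it instead invokes the specific SDAR inequalities of \citet{Huang18} bounding the missed signal $\|(\bd{\beta}_j^*)_{A_j^*\setminus\widehat{A}_j}\|$ (equations \eqref{norm of beta outside} and \eqref{infty norm beta outside} in the proof), shows this falls strictly below the minimum signal so that $A_j^*\setminus\widehat{A}_j=\emptyset$, and only then uses $|\widehat{A}_j|=T_j=|A_j^*|$. Your route is arguably more direct for Lemma~\ref{Lemma: A_hat=A} in isolation, since controlling the top-$T_j$ selection via your gradient decomposition avoids the detour through $\widehat{\mb{\Omega}}$; the paper's route, on the other hand, proves Lemma~\ref{Lemma: A_hat=A} and Theorem~\ref{thm: Sparsistency} simultaneously, yields the sign-consistency statement of Theorem~\ref{thm: Sparsistency} as an immediate byproduct, and recycles the Theorem~\ref{thm: consistency} infrastructure rather than introducing a separate entrywise analysis of $\bd{\beta}^k+\bd{d}^k$. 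Your identification of the random-design transfer (from $\mb{R}$ to $\widehat{\mb{R}}$, uniformly over sparse index sets) as the main technical hurdle is accurate and matches exactly what the paper handles in the Theorem~\ref{thm: consistency} proof.
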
	

Lemma~\ref{Lemma2: A_hat=A} given below shows that
the event 
$\{T_j=s_j-1,\forall j\in[p]\}$ is achievable with high probability,
when $T_j=T_j^*$, which is selected from the HBIC~\eqref{HBIC: Tj*}.
Denote $T_{\max}=\max_{j\in[p]}T_{\max,j}$, and
let $\gamma_{T_{\max}}^{}$ be the $\gamma_T^{}$ defined in \ref{(C1)}
with $T$ replaced by $T_{\max}$.

\begin{lem}\label{Lemma2: A_hat=A}
Suppose that \ref{Theta bound} and \ref{subGauss} hold, 
$\gamma_{T_{\max}}^{}<1-c$ with a constant $c>0$, 
 $T_{\max}(\log p)/n=o(1)$, 
 $s(\log p)\log(\log n)/n=o(1)$,
 and
{\color{black}$k_j(T_j)\ge  \log_{1-\frac{c}{2}} \sqrt{T_j(\log p)/n}$}
and $0\le T_j\le T_{\max,j}\in [(s_j-1)\vee 1,p-1]$
 for all $j\in [p]$.
For any constant $C>0$, if $m_j\ge M\sqrt{(T_{\max,j}\vee\log\log n)(\log p)/n}$ for all $j\in[p]$
with a sufficiently large constant $M>0$ dependent $C$,
then we have 
\be\label{eq: T*=s-1}
\inf_{\mb{\Omega}\in\mathcal{G}(s)} P(T_j^*=s_j-1, \forall j\in [p])=1-O(p^{-C})
\ee
and
$
\inf_{\mb{\Omega}\in\mathcal{G}(s)}P\left(\widehat{A}_j(k_j(T_j=T_j^*))=A_j^*,\forall j\in [p]\right)=1-O(p^{-C}).
$
\end{lem}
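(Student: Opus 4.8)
The plan is to reduce both statements to the single event that, with probability $1-O(p^{-C})$, the HBIC~\eqref{HBIC general} simultaneously selects the correct size $T_j^*=s_j-1$ for every $j\in[p]$. Once $\{T_j^*=s_j-1,\ \forall j\}$ is established, the second conclusion is immediate: on this event $\widehat{A}_j(k_j(T_j^*))=\widehat{A}_j(k_j(s_j-1))$, and this equals $A_j^*$ with probability $1-O(p^{-C})$ by the exact-recovery part of Lemma~\ref{Lemma: A_hat=A} applied with the \emph{deterministic} choice $T_j=s_j-1$ (whose signal hypothesis $m_j\ge M_1\sqrt{T_j(\log p)/n}$ is implied by the present $m_j\ge M\sqrt{(T_{\max,j}\vee\log\log n)(\log p)/n}$, since $s_j-1\le T_{\max,j}$); intersecting the two high-probability events yields the claim. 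Because everything must hold uniformly over $j\in[p]$, over candidate sizes $0\le T_j\le T_{\max,j}\le p-1$, and over $\mb{\Omega}\in\mathcal{G}(s)$, I would make each per-$(j,T_j)$ deviation bound hold with probability $1-O(p^{-C-2})$ so the union bound over the at most $p^2$ pairs retains $1-O(p^{-C})$; the conditions on the iteration counts $k_j$ and $s(\log p)\log(\log n)/n=o(1)$ are what let me invoke the per-size consistency and screening bounds underlying Theorem~\ref{thm: consistency} and Lemma~\ref{Lemma: A_hat=A} at each $T_j$.

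To prove $T_j^*=s_j-1$ I would compare $\HBIC(\widehat{A}_j(T_j))$ with $\HBIC(\widehat{A}_j(s_j-1))$, writing the difference as $n[\log\mathcal{L}_{\widehat{A}_j(T_j)}-\log\mathcal{L}_{\widehat{A}_j(s_j-1)}]+(T_j-s_j+1)\log(p-1)\log(\log n)$, and rule out under- and over-selection separately. For \emph{under-selection} ($T_j<s_j-1$) the penalty term is negative with magnitude at most $(s_j-1-T_j)\log(p-1)\log(\log n)$, so the loss gap must dominate. Since $\widehat{A}_j(T_j)$ omits at least $s_j-1-T_j$ true variables, the population least-squares loss exceeds the oracle loss by at least a sparse-eigenvalue constant times the sum of the squared omitted standardized coefficients, each of order $m_j^2$ by Assumption~\ref{Theta bound} and the relation $\bd{\alpha}_j^*=-\mb{\Omega}_{\setminus j,j}/\mb{\Omega}_{jj}$; concentration of $\mathcal{L}_{n,j}$ transfers this to the sample loss, and $\log(1+x)\ge cx$ converts it to $n[\log\mathcal{L}_{\widehat{A}_j(T_j)}-\log\mathcal{L}_{\widehat{A}_j(s_j-1)}]\gtrsim (s_j-1-T_j)\,nm_j^2$. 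The beta-min hypothesis gives $nm_j^2\ge M^2(\log\log n)(\log p)$, so taking $M$ large makes the loss gap beat the penalty reduction uniformly.

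For \emph{over-selection} ($T_j>s_j-1$), the present signal strength exceeds the threshold required by Lemma~\ref{Lemma: A_hat=A} for every $T_j\le T_{\max,j}$, so the screening property $A_j^*\subseteq\widehat{A}_j(T_j)$ holds with high probability; hence $\mathcal{L}_{\widehat{A}_j(T_j)}$ differs from the oracle loss only through fitted noise on the $T_j-(s_j-1)$ spurious coordinates. A maximal inequality for the residual projections over subsets of size at most $T_{\max,j}$, controlled through the restricted quantities $\theta_{T,T}$, $\mu$ and $\gamma_{T_{\max}}<1-c$, bounds the induced reduction by $n[\log\mathcal{L}_{\widehat{A}_j(s_j-1)}-\log\mathcal{L}_{\widehat{A}_j(T_j)}]\le C'(T_j-s_j+1)\log p$ with high probability. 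Since $\log(p-1)\log(\log n)\gg\log p$ as $n\to\infty$, the extra penalty $(T_j-s_j+1)\log(p-1)\log(\log n)$ strictly dominates, ruling out over-selection; this is exactly the role of the inflating factor $\log(\log n)$ in the HBIC. The main obstacle I anticipate is precisely this over-selection step: obtaining the $O(t\log p)$ maximal bound on the log-loss reduction \emph{uniformly} over all $\binom{p-1}{t}$ spurious subsets and all $t\le T_{\max,j}$ under a random sub-Gaussian design, which requires careful chi-square-type concentration together with uniform lower bounds on the minimal sparse eigenvalues of the sample Gram matrix $\mb{Z}_{*\setminus j}^\top\mb{Z}_{*\setminus j}/n$ — the same ingredient that underlies Lemma~\ref{Lemma: A_hat=A}, but which must here be made uniform in $T_j$.
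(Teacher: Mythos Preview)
Your proposal is correct and follows essentially the same approach as the paper: establish that HBIC selects $T_j^*=s_j-1$ for all $j$ via an under-/over-selection comparison of $\HBIC$ values, then deduce exact recovery from Lemma~\ref{Lemma: A_hat=A}. The paper compresses the HBIC-consistency step by invoking and modifying the proof of Theorem~4 in \citet{zhu2020polynomial} (adapting their screening inequality via Lemma~\ref{Lemma: A_hat=A}, lower-bounding the oracle loss by $\kappa_2^{-1}/4$, and bounding the constants $K_{s,3},K_{s,4},K_{s,5}$ through $\gamma_{T_{\max}}<1-c$), arriving at the same beta-min requirement $m_j\gtrsim\sqrt{(T_{\max,j}\vee\log\log n)(\log p)/n}$ and the same rate condition $s(\log p)\log(\log n)/n=o(1)$ that you identify.
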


Now we present the result of Nodewide Loreg estimator $\widehat{\mb{\Omega}}$ on the support recovery of~$\mb{\Omega}$.

\begin{theorem}[Support  recovery]\label{thm: Sparsistency}
Suppose that \ref{Theta bound}-\ref{assmp: bounded Tj} hold.
For any constant $C>0$,
assume that \ref{(C1)} holds with $m_j\ge M_1\sqrt{T_j(\log p)/n} $ 
for all $j\in[p]$,
or \ref{(C2)} holds with
$\min_{j\in[p]}m_j\ge M_2\sqrt{(\log p)/n}$,
where $M_1$ and $M_2$ are sufficiently
large positive constants dependent on $C$.
Then, we have 
$$
\inf_{\mb{\Omega}\in\mathcal{G}(T+1)} P\left(\supp(\mb{\Omega})\subseteq \supp(\widehat{\mb{\Omega}})\right)=1-O(p^{-C})
$$
and
$
\inf_{\mb{\Omega}\in\mathcal{G}(T+1)}P\left(\sign(\widehat{\mb{\Omega}}_{ij})=\sign(\mb{\Omega}_{ij}),
\forall (i,j)\in \supp(\mb{\Omega})\right)=1-O(p^{-C}).
$
If $T_j=s_j-1$ for all $j\in [p]$, then we have 
\be\label{eq: supp_hat=supp}
\inf_{\mb{\Omega}\in\mathcal{G}(T+1)} P\left(\supp(\widehat{\mb{\Omega}})=\supp(\mb{\Omega})\right)=1-O(p^{-C}).
\ee
\end{theorem}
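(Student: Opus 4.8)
The plan is to reduce the matrix-level support and sign recovery to the regression-level recovery of each $\bd{\alpha}_j^*$, and then to leverage Lemma~\ref{Lemma: A_hat=A} together with the entrywise estimation error that underlies Theorem~\ref{thm: consistency}. The diagonal is automatic: both $\mb{\Omega}_{jj}$ and $\widehat{\mb{\Omega}}_{jj}=\widehat{\sigma}_j^{-2}$ are strictly positive (the former by \ref{Theta bound}, the latter on the high-probability event where $\widehat{\sigma}_j^2$ is bounded away from zero), so $(j,j)\in\supp(\mb{\Omega})\cap\supp(\widehat{\mb{\Omega}})$ and the diagonal signs match. For off-diagonal entries, \eqref{Theta_-jj formula} gives $\mb{\Omega}_{ij}=-\mb{\Omega}_{jj}(\bd{\alpha}_j^*)_i$, while $\widehat{\mb{\Omega}}_{\setminus j,j}=-\widehat{\mb{\Omega}}_{jj}\widehat{\bd{\alpha}}_j$ with $\widehat{\bd{\alpha}}_j=\widehat{\mb{\Gamma}}_{\setminus j,\setminus j}^{-1/2}\widehat{\bd{\beta}}_j$ and $\widehat{\mb{\Gamma}}^{-1/2}$ diagonal positive yields $\sign(\widehat{\mb{\Omega}}_{ij}^{\text{US}})=-\sign((\widehat{\bd{\beta}}_j)_i)$ and $\supp(\widehat{\mb{\Omega}}_{\setminus j,j}^{\text{US}})=\supp(\widehat{\bd{\beta}}_j)$. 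Hence, setting $\bd{\beta}_j^*:=\widehat{\mb{\Gamma}}_{\setminus j,\setminus j}^{1/2}\bd{\alpha}_j^*$ (so that $\supp(\bd{\beta}_j^*)=A_j^*$ and the signs of $\bd{\beta}_j^*$ and $\bd{\alpha}_j^*$ agree entrywise, since the rescaling is positive-diagonal), the whole theorem reduces to showing, uniformly over $j\in[p]$ and $\mb{\Omega}\in\mathcal{G}(T+1)$ and with probability $1-O(p^{-C})$, that $\sign((\widehat{\bd{\beta}}_j)_i)=\sign((\bd{\beta}_j^*)_i)$ for every $i\in A_j^*$, and for the last claim that $\supp(\widehat{\bd{\beta}}_j)=A_j^*$.

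The heart of the argument is this per-column sign recovery. Lemma~\ref{Lemma: A_hat=A} already supplies $A_j^*\subseteq\widehat{A}_j$ for all $j$ on an event of probability $1-O(p^{-C})$, so every true coordinate lies in the fitted active set on which $\widehat{\bd{\beta}}_j$ is the least-squares solution; it remains only to rule out that an estimated coordinate in $A_j^*$ is zero or wrongly signed. For this I would invoke the per-column $\ell_\infty$ estimation bound produced in the proofs of Theorem~\ref{thm: consistency} and Lemma~\ref{Lemma: A_hat=A}, namely that $\max_{i\in\widehat A_j}|(\widehat{\bd{\beta}}_j)_i-(\bd{\beta}_j^*)_i|$ is of order $\sqrt{T_j(\log p)/n}$ under \ref{(C1)} and of order $\sqrt{(\log p)/n}$ under \ref{(C2)}, with the same exceptional probability (note the per-column rate in $T_j$, not the aggregate $T$, is what matches the signal condition). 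Transferring the signal through \eqref{Theta_-jj formula} and \ref{Theta bound} gives $\min_{i\in A_j^*}|(\bd{\alpha}_j^*)_i|=m_j/\mb{\Omega}_{jj}\ge m_j/\kappa_2$, and since $\widehat{\mb{\Gamma}}_{ii}^{1/2}=\widehat{\mb{\Sigma}}_{ii}^{1/2}$ concentrates around a constant-order quantity by \ref{subGauss}, the transferred signal obeys $\min_{i\in A_j^*}|(\bd{\beta}_j^*)_i|\gtrsim m_j$. Choosing $M_1$ (resp.\ $M_2$) large enough that $m_j$ strictly dominates this $\ell_\infty$ error forces, for each $i\in A_j^*$, $(\widehat{\bd{\beta}}_j)_i$ to be nonzero with the sign of $(\bd{\beta}_j^*)_i$; a union bound over the $p$ columns (absorbed into the constants, exactly as in Lemma~\ref{Lemma: A_hat=A}) delivers the uniform statement. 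The hard part is precisely establishing this random-design $\ell_\infty$ bound: translating the fixed-design SDAR guarantees of \citet{Huang18} to the random design, controlling the sample rescaling $\widehat{\mb{\Gamma}}$, and keeping the exceptional probability at $O(p^{-C})$ uniformly in $j$ and $\mb{\Omega}$.

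With per-column sign recovery in hand, I would assemble the matrix-level claims through the minimum symmetrization. Because $\mb{\Omega}$ is symmetric, $(i,j)\in\supp(\mb{\Omega})$ forces both $i\in A_j^*$ and $j\in A_i^*$, so the per-column result makes both $\widehat{\mb{\Omega}}_{ij}^{\text{US}}$ and $\widehat{\mb{\Omega}}_{ji}^{\text{US}}$ nonzero and of the common correct sign $\sign(\mb{\Omega}_{ij})$; since $\widehat{\mb{\Omega}}_{ij}^{\text{S}}$ equals whichever of the two has smaller magnitude, it is nonzero with sign $\sign(\mb{\Omega}_{ij})$. This is the sign-recovery claim, and the inclusion $\supp(\mb{\Omega})\subseteq\supp(\widehat{\mb{\Omega}})$ follows at once, as a correctly signed entry is nonzero. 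For the final claim, when $T_j=s_j-1$ Lemma~\ref{Lemma: A_hat=A} strengthens the active-set recovery to $\widehat{A}_j=A_j^*$, whence $\supp(\widehat{\bd{\beta}}_j)\subseteq A_j^*$; combined with $A_j^*\subseteq\supp(\widehat{\bd{\beta}}_j)$ from the previous step this gives $\supp(\widehat{\bd{\beta}}_j)=A_j^*$ for every $j$, so $\supp(\widehat{\mb{\Omega}}^{\text{US}})=\supp(\mb{\Omega})$. As this set is already symmetric, the min-symmetrization leaves an off-diagonal entry nonzero exactly when both of its mirrored undesparsified entries are nonzero, i.e.\ exactly on $\supp(\mb{\Omega})$, yielding $\supp(\widehat{\mb{\Omega}}^{\text{S}})=\supp(\mb{\Omega})$ and completing the proof.
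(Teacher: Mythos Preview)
Your proposal is correct and follows essentially the same approach as the paper: both arguments hinge on the per-column entrywise error bounds $\|\widehat{\bd{\beta}}_j-\bd{\beta}_j^*\|_\infty$ (of order $\sqrt{T_j(\log p)/n}$ under \ref{(C1)} via the $\ell_2$ bound, and $\sqrt{(\log p)/n}$ under \ref{(C2)}) established in the proof of Theorem~\ref{thm: consistency}, compared against the transferred signal $\min_{i\in A_j^*}|(\bd{\beta}_j^*)_i|\gtrsim m_j$. The only organizational difference is that you invoke Lemma~\ref{Lemma: A_hat=A} as an input and argue at the $\bd{\beta}$ level before transferring to $\widehat{\mb{\Omega}}$, whereas the paper proves Theorem~\ref{thm: Sparsistency} and Lemma~\ref{Lemma: A_hat=A} jointly, working directly at the $\widehat{\mb{\Omega}}$ level via $\|\widehat{\mb{\Omega}}_{\setminus j,j}-\mb{\Omega}_{\setminus j,j}\|_{\max}$ and deriving the first part of Lemma~\ref{Lemma: A_hat=A} as a consequence of the support containment; your explicit treatment of the minimum symmetrization is a welcome addition the paper leaves implicit.
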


For the sparsistency of $\widehat{\mb{\Omega}}$
given in \eqref{eq: supp_hat=supp} in Theorem~\ref{thm: Sparsistency},
the condition 
$\{T_j=s_j-1,\forall j\in[p]\}$ is
achievable with 
high probability if $T_j$ is set to $T_j^*$
selected from the HBIC~\eqref{HBIC: Tj*},
due to \eqref{eq: T*=s-1} in Lemma~\ref{Lemma2: A_hat=A}.
Thus, we obtain
the following corollary. 

\begin{cor}[Sparsistency]\label{cor: Sparsistency}
Under the conditions of Lemma~\ref{Lemma2: A_hat=A},
when $T_j=T_j^*$ for all $j\in [p]$,
we have 
$\inf_{\mb{\Omega}\in\mathcal{G}(s)} P\big(\supp(\widehat{\mb{\Omega}})=\supp(\mb{\Omega})\big)=1-O(p^{-C})
$.
\end{cor}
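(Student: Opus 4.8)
The plan is to reduce the corollary to a combination of the oracle sparsistency guarantee \eqref{eq: supp_hat=supp} in Theorem~\ref{thm: Sparsistency} and the tuning-consistency guarantee \eqref{eq: T*=s-1} in Lemma~\ref{Lemma2: A_hat=A}. The conceptual point is that the data-dependent HBIC choice $T_j=T_j^*$ coincides, with high probability, with the oracle choice $T_j=s_j-1$, and on that event the Nodewise Loreg estimator is literally the same deterministic function of $\mb{X}$ under both choices, so the random selection does not degrade the probabilistic guarantee.

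First I would introduce the event $\mathcal{E}=\{T_j^*=s_j-1,\ \forall j\in[p]\}$ and let $\widehat{\mb{\Omega}}^{\text{orc}}$ denote the estimator from Algorithm~\ref{alg: Nodewise Loreg} run with the deterministic inputs $T_j=s_j-1$, while $\widehat{\mb{\Omega}}$ denotes the estimator with $T_j=T_j^*$. Both are measurable functions of the same data, and on $\mathcal{E}$ they receive identical tuning parameters, so $\widehat{\mb{\Omega}}=\widehat{\mb{\Omega}}^{\text{orc}}$ on $\mathcal{E}$. A union bound then gives, for every $\mb{\Omega}\in\mathcal{G}(s)$,
\[
P\big(\supp(\widehat{\mb{\Omega}})\ne\supp(\mb{\Omega})\big)\le P\big(\{\supp(\widehat{\mb{\Omega}}^{\text{orc}})\ne\supp(\mb{\Omega})\}\cap\mathcal{E}\big)+P(\mathcal{E}^c)\le P\big(\supp(\widehat{\mb{\Omega}}^{\text{orc}})\ne\supp(\mb{\Omega})\big)+P(\mathcal{E}^c).
\]
Lemma~\ref{Lemma2: A_hat=A} bounds the last term by $O(p^{-C})$ uniformly over $\mathcal{G}(s)$, so it remains to control the oracle term through \eqref{eq: supp_hat=supp}.

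The crux is checking that the hypotheses of Theorem~\ref{thm: Sparsistency} (in its \ref{(C1)} branch) are implied by the hypotheses of Lemma~\ref{Lemma2: A_hat=A} once $T_j=s_j-1$, so that the oracle term is genuinely $O(p^{-C})$. Under this substitution $T=\max_j(s_j-1)\le T_{\max}$, whence: monotonicity of $\gamma_T$ in $T$ (since $\theta_{T,T}$ is nondecreasing as $S_T$ grows) yields $\gamma_T\le\gamma_{T_{\max}}<1-c$, giving the spectral part of \ref{(C1)} with $c_1=c$; the assumption $T_{\max}(\log p)/n=o(1)$ forces $T(\log p)/n=o(1)$; the stated $k_j\ge\log_{1-c/2}\sqrt{T_j(\log p)/n}$ is exactly the iteration-count requirement of \ref{(C1)}; and the signal condition $m_j\ge M\sqrt{(T_{\max,j}\vee\log\log n)(\log p)/n}\ge M\sqrt{T_j(\log p)/n}$ supplies the bound $m_j\ge M_1\sqrt{T_j(\log p)/n}$ needed by Theorem~\ref{thm: Sparsistency} with $M_1=M$. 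Moreover $T+1=\max_j s_j\le s$, so the class $\mathcal{G}(T+1)$ to which \eqref{eq: supp_hat=supp} applies contains the $\mb{\Omega}$ at hand, and the resulting $O(p^{-C})$ rate is uniform over $\mathcal{G}(s)$. Combining the two $O(p^{-C})$ bounds in the display and taking the infimum over $\mathcal{G}(s)$ yields the claim.

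I expect the main obstacle to be careful bookkeeping at the interface of the two results rather than any deep estimate. One must confirm that the constants $M_1$ (Theorem~\ref{thm: Sparsistency}) and $M$ (Lemma~\ref{Lemma2: A_hat=A}) can be chosen compatibly for the same target exponent $C$, and handle degenerate columns with $s_j=1$, where $T_j=0$, $A_j^*=\emptyset$, and \ref{assmp: bounded Tj} is vacuous but support recovery is trivial since $\widehat{A}_j=\emptyset$ is already delivered by Lemma~\ref{Lemma2: A_hat=A}. The only place where the data-dependence of the tuning must be argued explicitly is the coupling step, namely that replacing the random $T_j^*$ by the deterministic $s_j-1$ leaves $\widehat{\mb{\Omega}}$ unchanged on $\mathcal{E}$; everything downstream then follows from results already established in the excerpt.
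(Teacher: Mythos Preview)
Your proposal is correct and follows essentially the same approach as the paper, which simply says to combine \eqref{eq: T*=s-1} in Lemma~\ref{Lemma2: A_hat=A} with \eqref{eq: supp_hat=supp} in Theorem~\ref{thm: Sparsistency}. Your write-up is considerably more explicit---spelling out the coupling on $\mathcal{E}=\{T_j^*=s_j-1,\forall j\}$, the union bound, the verification that Lemma~\ref{Lemma2: A_hat=A}'s hypotheses imply the \ref{(C1)} branch of Theorem~\ref{thm: Sparsistency} when $T_j=s_j-1$, and the edge case $s_j=1$---but these are exactly the details the paper leaves implicit in its one-line proof.
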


\subsection{Asymptotic normality}
We consider the asymptotic normality of Nodewise Loreg estimator $\widehat{\mb{\Omega}}$.
We write
\begin{align}
\sqrt{n}(\widehat{\mb{\Omega}}_{ij}-\mb{\Omega}_{ij})
&=-\sqrt{n}\big[\mb{\Omega}_{ii}\mb{\Omega}_{jj}(\bd{\epsilon}_{i\parallel \bd{x}_{\widehat{A}_j}}^\top+\bd{\epsilon}_{i\parallel\epsilon_j}^\top )\bd{\epsilon}_j/n
 - \mb{\Omega}_{ij}\big]+r_{ij}~~\text{for}~~i\in [p], \label{diff in Omega for AN}\\
& =-\sqrt{n}
[(\mb{\Sigma}_{\widehat{A}_j^+\widehat{A}_j^+}^{-1})_{\hat{i}*}
\widehat{\mb{\Sigma}}_{\widehat{A}_j^+\widehat{A}_j^+}
(\mb{\Sigma}_{\widehat{A}_j^+\widehat{A}_j^+}^{-1})_{*\hat{j}}-\mb{\Omega}_{ij}]+\tilde{r}_{ij}~~\text{for}~~i\in \widehat{A}_j^+,
\label{diff in Omega for AN, simplified}
\end{align}
where 
$\bd{\epsilon}_{i\parallel \bd{x}_{\widehat{A}_j}}
=[\mb{\Omega}_{ii}^{-1}(\bd{e}_i)_{\widehat{A}_j}^\top
\mb{\Sigma}_{\widehat{A}_j\widehat{A}_j}^{-1}\mb{X}_{*\widehat{A}_j}^\top]^\top\in \mathbb{R}^n$,
$\bd{e}_i\in \mathbb{R}^p$ with $1$ on the $i$-th entry and 0
elsewhere,
$\bd{\epsilon}_{i\parallel\epsilon_j}=\mb{\Omega}_{ii}^{-1}\mb{\Omega}_{ij}\bd{\epsilon}_j$,
$\bd{\epsilon}_j
=\mb{X}_{*j}-\mb{X}_{*\setminus j}\bd{\alpha}_j^*
$,
and $\hat{i}$ and $\hat{j}$ are the positions of $i$ and $j$
in $\widehat{A}_j^+=\widehat{A}_j\cup\{j\}$
when its elements are sorted in ascending order.
The remainder terms
$r_{ij}$ and $\tilde{r}_{ij}$ satisfy
$P(r_{ij}=\tilde{r}_{ij}, \forall i\in\widehat{A}_j^+,j\in [p])\to 1$
and
$
\max_{1\le i,j\le p} |r_{ij}|= O_P(T(\log p)/\sqrt{n})=o_P(1)
$
under certain conditions (see Lemma~\ref{thm: normality}), and thus are negligible.

Consider the main term in \eqref{diff in Omega for AN}.
Define 
$\epsilon_{i \parallel \bd{x}_{\widehat{A}_j}}=
\mb{\Omega}_{ii}^{-1}(\bd{e}_i)_{\widehat{A}_j}^\top
\mb{\Sigma}_{\widehat{A}_j\widehat{A}_j}^{-1}\bd{x}_{\widehat{A}_j}$,
$
\epsilon_{i\parallel\epsilon_j}
=\mb{\Omega}_{ii}^{-1}\mb{\Omega}_{ij}\epsilon_j
$,
and
$\epsilon_j=x_j-\bd{x}_{\setminus j }^\top \bd{\alpha}_j^*$.
We let $\bd{x}$ be independent of $\widehat{A}_j$,
despite that $\widehat{A}_j$ may change with
values of $\mb{X}\in\mathbb{R}^{n\times p}$,
whose rows, however, are i.i.d. copies of $\bd{x}$.
The scaled version of of the first component,
$\bd{\epsilon}_{i\parallel \bd{x}_{\widehat{A}_j}}^\top\bd{\epsilon}_j/n$, is the sample mean of $\epsilon_{i \parallel \bd{x}_{\widehat{A}_j}} \epsilon_j$ with
$E[\epsilon_{i \parallel \bd{x}_{\widehat{A}_j}} \epsilon_j|\widehat{A}_j]=0$.
Rewrite
$\epsilon_{i \parallel \bd{x}_{\widehat{A}_j}}
=\mb{\Omega}_{ii}^{-1}\mb{\Omega}_{*i}^{\top}\mb{\Sigma}_{*\widehat{A}_j}\mb{\Sigma}_{\widehat{A}_j\widehat{A}_j}^{-1}\bd{x}_{\widehat{A}_j}
=\mb{\Omega}_{ii}^{-1}E[\mb{\Omega}_{*i}^{\top}\bd{x} \bd{b}_{\widehat{A}_j}^\top|\widehat{A}_j]
\bd{b}_{\widehat{A}_j}
=E[\epsilon_i \bd{b}_{\widehat{A}_j}^\top|\widehat{A}_j]\bd{b}_{\widehat{A}_j}$
with $\bd{b}_{\widehat{A}_j}=\mb{\Sigma}_{\widehat{A}_j\widehat{A}_j}^{-1/2}\bd{x}_{\widehat{A}_j}$.
Given $\widehat{A}_j$,
it implies that
$\epsilon_{i \parallel \bd{x}_{\widehat{A}_j}}$
 is the orthogonal projection of $\epsilon_i$ onto $\lspan(\bd{x}_{\widehat{A}_j}^\top)$, which is the subspace spanned by 
entries of $\bd{x}_{\widehat{A}_j}$ in
$(\mathcal{L}^2(\mathbb{R}), E)$. Here, $(\mathcal{L}^2(\mathbb{R}), E)$ is
 the $\mathcal{L}^2$ space of real random variables with expectation as the inner product.
 Also note that
$\epsilon_j$ is the orthogonal rejection of
$x_j$ from $\lspan(\bd{x}_{\setminus j}^\top)$.
Thus, $\epsilon_{i \parallel \bd{x}_{\widehat{A}_j}}  \perp \epsilon_j$, i.e., 
$E(\epsilon_{i \parallel \bd{x}_{\widehat{A}_j}} \epsilon_{\widehat{A}_j}|\widehat{A}_j)=0$.
The scaled version of the second component,
$\bd{\epsilon}_{i\parallel\epsilon_j}^\top \bd{\epsilon}_j/n$,
is the sample mean of $\epsilon_{i\parallel\epsilon_j}\epsilon_j$
with $E[\epsilon_{i\parallel\epsilon_j}\epsilon_j]=
E[\epsilon_i\epsilon_j]=\mb{\Omega}_{ii}^{-1}\mb{\Omega}_{jj}^{-1}\mb{\Omega}_{ij}$, which equals the  scaled negative version of the third component,
since $\epsilon_{i\parallel\epsilon_j}=E[\epsilon_i\epsilon_j/\sd(\epsilon_j)]\epsilon_j/\sd(\epsilon_j)$ is the orthogonal projection of
$\epsilon_i$ onto $\epsilon_j$ in space $(\mathcal{L}^2(\mathbb{R}), E)$.
Hence,  
the asymptotic normality of $\widehat{\mb{\Omega}}_{ij}$
can be derived using 
the central limit theorem, assuming
$\widehat{A}_j$ is a fixed set given $n$ and $p$.
However, 
the i.i.d. assumption may not hold for
entries of $\bd{\epsilon}_{i\parallel \bd{x}_{\widehat{A}_j}}$
in the first scaled component 
$\bd{\epsilon}_{i\parallel \bd{x}_{\widehat{A}_j}}^\top\bd{\epsilon}_j/n$, as 
$\widehat{A}_j$ is a random set dependent on data $\mb{X}$.

We consider the asymptotic normality under the assumption
that  the event $\mathcal{E}_j=\{A_j^*\subseteq \widehat{A}_j =\widetilde{A}_j(n,p)\}$
holds with high probability,
where $\widetilde{A}_j(n,p)$ is a fixed set given $n$ and $p$.
Lemmas~\ref{Lemma: A_hat=A} and \ref{Lemma2: A_hat=A} show that
event $\mathcal{E}_j$ is achievable with high probability.
One might relax the condition $\widehat{A}_j\,{=}\,\widetilde{A}_j(n,p)$ by
 treating 
the entries of $\bd{\epsilon}_{i\parallel \bd{x}_{\widehat{A}_j}}$
as exchangeable variables,
and then apply the central limit theorem of
exchangeable variables~\citep{jiang2002empirical}.
This theoretical derivation is more involved and thus is not pursued in this paper. 
For simplicity, $\widetilde{A}_j(n,p)$  is abbreviated as $\widetilde{A}_j$ by omitting its  dependence on $n$ and $p$.
We define
$\sigma_{i,\widetilde{A}_j}=\sd(\xi_{i,\widetilde{A}_j}\epsilon_j)$
and 
$\sigma_{i,\widehat{A}_j}=\sd(\xi_{i,\widehat{A}_j}\epsilon_j|\widehat{A}_j)$,
where $
\xi_{i,A}
=-\mb{\Omega}_{ii}\mb{\Omega}_{jj}(\epsilon_{i\parallel \bd{x}_{A}}+\epsilon_{i\parallel \epsilon_j})I(i\in A)
-\mb{\Omega}_{ii}\mb{\Omega}_{jj}\epsilon_iI(i\notin A)
$ 
and
$\epsilon_{i \parallel \bd{x}_A}
=
\mb{\Omega}_{ii}^{-1}(\bd{e}_i)_A^\top
\mb{\Sigma}_{AA}^{-1}\bd{x}_A$
with $A\in \{\widetilde{A}_j,\widehat{A}_j\}$.

\begin{lem}\label{thm: normality}
Suppose that \ref{Theta bound}-\ref{assmp: bounded Tj} hold,
and either \ref{(C1)} or \ref{(C2)} is satisfied.
For $\mb{\Omega}\in\mathcal{G}(T+1)$,
if $P(A_j^*\subseteq \widehat{A}_j,\forall j\in [p])\to 1$,
then 
$P(r_{ij}=\tilde{r}_{ij}, \forall i\in\widehat{A}_j^+,j\in [p])\to 1$ and 
$\max_{1\le i,j\le p}|r_{ij}|=O_P(T(\log p)/\sqrt{n})$.
Further, if
$P(\cap_{j\in [p]}\mathcal{E}_j){\to 1}$,
$T(\log p)/\sqrt{n}=o(1)$, and
$\min_{i\in \widehat{A}_j^+,j\in [p]}\sigma_{i,\widetilde{A}_j}\ge \omega$
with a constant $\omega>0$, 
then
\be\label{thm eqn: AN Omega all}
\sup_{ i\in \widehat{A}_j^+,j\in[p],z\in\mathbb{R}}\left|P({\sqrt{n}(\widehat{\mb{\Omega}}_{ij}-\mb{\Omega}_{ij})}/\sigma_{i,\widehat{A}_j}\le z)-\Phi(z)\right|\to 0.
\ee
\end{lem}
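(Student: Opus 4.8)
The plan is to prove the two assertions separately, exploiting the exact finite-sample form of $\widehat{\mb{\Omega}}_{ij}$ produced by Algorithm~\ref{alg: Nodewise Loreg}. Write $B=\widehat{A}_j^+=\widehat{A}_j\cup\{j\}$. Since the SDAR output restricted to its active set is the ordinary least-squares fit, the partitioned-inverse formula gives the exact identity $\widehat{\mb{\Omega}}_{ij}=(\widehat{\mb{\Sigma}}_{BB}^{-1})_{\hat i\hat j}$ for every $i\in B$, where $\widehat{\mb{\Sigma}}_{BB}=\mb{X}_{*B}^\top\mb{X}_{*B}/n$ (indeed $(\widehat{\mb{\Sigma}}_{BB}^{-1})_{\hat j\hat j}=1/\widehat\sigma_j^2=\widehat{\mb{\Omega}}_{jj}$ and $(\widehat{\mb{\Sigma}}_{BB}^{-1})_{\hat i\hat j}=-\widehat{\mb{\Omega}}_{jj}(\widehat{\bd\alpha}_j)_i$). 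I would then center this around $\mb{\Sigma}_{BB}^{-1}$ using $\Delta:=\widehat{\mb{\Sigma}}_{BB}-\mb{\Sigma}_{BB}$ and the Neumann expansion $\widehat{\mb{\Sigma}}_{BB}^{-1}=\mb{\Sigma}_{BB}^{-1}-\mb{\Sigma}_{BB}^{-1}\Delta\mb{\Sigma}_{BB}^{-1}+\cdots$. The crucial point is that the surrogate main term $(\mb{\Sigma}_{BB}^{-1}\widehat{\mb{\Sigma}}_{BB}\mb{\Sigma}_{BB}^{-1})_{\hat i\hat j}$ appearing in~\eqref{diff in Omega for AN, simplified} equals $\mb{\Sigma}_{BB}^{-1}+\mb{\Sigma}_{BB}^{-1}\Delta\mb{\Sigma}_{BB}^{-1}$ at $(\hat i,\hat j)$, so its linear term exactly cancels that of $\widehat{\mb{\Sigma}}_{BB}^{-1}$; on the event $\{A_j^*\subseteq\widehat A_j\}$, where $\mb{\Omega}_{ij}=(\mb{\Sigma}_{BB}^{-1})_{\hat i\hat j}$ (justified below), this leaves $\tilde r_{ij}=\sqrt n\,\big(\widehat{\mb{\Sigma}}_{BB}^{-1}+\mb{\Sigma}_{BB}^{-1}\widehat{\mb{\Sigma}}_{BB}\mb{\Sigma}_{BB}^{-1}-2\,\mb{\Sigma}_{BB}^{-1}\big)_{\hat i\hat j}$, which is purely quadratic in $\Delta$.

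This quadratic structure is the whole point of the decomposition. Bounding $\|\mb{\Sigma}_{BB}^{-1}\|_2\le\kappa_2$ uniformly over $|B|\le T+1$ via \ref{Theta bound}, together with the uniform sub-Gaussian concentration $\sup_{|B|\le T+1}\|\widehat{\mb{\Sigma}}_{BB}-\mb{\Sigma}_{BB}\|_2=O_P(\sqrt{T(\log p)/n})$ (the same bound, from \ref{subGauss} and a union bound over $\binom{p}{T+1}$ sets, that underlies Theorem~\ref{thm: consistency}), I would obtain $\max_{i\in\widehat A_j^+,j}|\tilde r_{ij}|=\sqrt n\cdot O_P(\|\Delta\|_2^2)=O_P(T(\log p)/\sqrt n)$; for $i\notin\widehat A_j^+$ both $\widehat{\mb{\Omega}}_{ij}$ and the main term of \eqref{diff in Omega for AN} vanish while $\mb{\Omega}_{ij}=0$ on $\{A_j^*\subseteq\widehat A_j\}$, so $r_{ij}=0$ there, giving the stated $\max_{i,j}|r_{ij}|$ bound. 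To identify $r_{ij}$ with $\tilde r_{ij}$ I would show the two leading terms in \eqref{diff in Omega for AN} and \eqref{diff in Omega for AN, simplified} coincide on $\{A_j^*\subseteq\widehat A_j\}$. The key fact is that $\epsilon_j=x_j-\bd{x}_{\setminus j}^\top\bd{\alpha}_j^*$ is orthogonal to every $x_i$, $i\ne j$; hence whenever $A_j^*\subseteq\widehat A_j$ the projection of $x_j$ onto $\bd{x}_{\widehat A_j}$ is unchanged, which forces $(\mb{\Sigma}_{BB}^{-1})_{*\hat j}=\mb{\Omega}_{Bj}$ and in particular $\mb{\Omega}_{ij}=(\mb{\Sigma}_{BB}^{-1})_{\hat i\hat j}$. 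Substituting this turns the surrogate main summand into $\mb{\Omega}_{jj}\mb{\Omega}^B_{\hat i\hat i}\epsilon_i^B\epsilon_j$, which the local nodewise-regression identities for index $i\in B$ (successive orthogonalization against $\bd{x}_{\widehat A_j}$ and the extra coordinate $x_j$) match to $\mb{\Omega}_{ii}\mb{\Omega}_{jj}(\epsilon_{i\parallel\bd x_{\widehat A_j}}+\epsilon_{i\parallel\epsilon_j})\epsilon_j=-\xi_{i,\widehat A_j}\epsilon_j$, yielding $r_{ij}=\tilde r_{ij}$ on an event of probability tending to one by the hypothesis $P(A_j^*\subseteq\widehat A_j,\forall j)\to 1$.

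For the asymptotic normality I would condition on the high-probability event $\cap_j\mathcal E_j$, on which every $\widehat A_j$ equals the deterministic set $\widetilde A_j$; treating $\bd x$ as independent of $\widehat A_j$ as in the surrounding discussion, the surrogate main term becomes the average $n^{-1/2}\sum_{l=1}^n(\xi_{i,\widetilde A_j}^{(l)}\epsilon_j^{(l)}+\mb{\Omega}_{ij})$ of i.i.d.\ mean-zero summands with standard deviation $\sigma_{i,\widetilde A_j}$ (the centering is exactly the mean computation $E[\xi_{i,\widetilde A_j}\epsilon_j]=-\mb{\Omega}_{ij}$ carried out before the statement). Because $\xi_{i,\widetilde A_j}$ and $\epsilon_j$ are linear forms in $\bd x$ with coefficient vectors bounded through \ref{Theta bound}, assumption \ref{subGauss} makes each summand sub-exponential with a uniformly bounded norm, so its third absolute moment is $O(1)$ uniformly in $(i,j)$; combined with the variance floor $\sigma_{i,\widetilde A_j}\ge\omega$, the Berry--Esseen theorem gives $\sup_{i,j,z}|P(\text{main}/\sigma_{i,\widetilde A_j}\le z)-\Phi(z)|=O(1/\sqrt n)$. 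I would then add back the remainder: since $\sigma_{i,\widehat A_j}=\sigma_{i,\widetilde A_j}\ge\omega$ on $\mathcal E_j$ and $\max_{i,j}|r_{ij}|=O_P(T(\log p)/\sqrt n)=o_P(1)$ by the first part together with $T(\log p)/\sqrt n=o(1)$, a Slutsky/anti-concentration step using the Lipschitz continuity of $\Phi$ absorbs $r_{ij}/\sigma_{i,\widehat A_j}$ uniformly and yields \eqref{thm eqn: AN Omega all}.

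The main obstacle I anticipate is reconciling the data-dependence of the active set $\widehat A_j$ with the demand for uniformity over the $O(p^2)$ pairs $(i,j)$ and over $z$. Conditioning on $\cap_j\mathcal E_j$ reduces $\widehat A_j$ to a fixed set and restores the i.i.d.\ structure needed for Berry--Esseen, but the argument only closes because (i) the surrogate in \eqref{diff in Omega for AN, simplified} was engineered to kill the linear error term, so the remainder is quadratic and survives the $\sqrt n$ scaling uniformly, and (ii) the sub-Gaussian moment bounds and the variance floor $\omega$ are uniform in $(i,j)$. Verifying the exact identity $r_{ij}=\tilde r_{ij}$---that the population-projection parameterization of \eqref{diff in Omega for AN} and the sample-covariance parameterization of \eqref{diff in Omega for AN, simplified} describe the same random quantity on $\{A_j^*\subseteq\widehat A_j\}$---is the most delicate algebraic step, and it is precisely the orthogonality of $\epsilon_j$ to all remaining coordinates that makes the local and global nodewise regressions agree and renders the identity available.
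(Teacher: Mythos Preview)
Your proposal is correct and reaches the same conclusions as the paper, but the route to the remainder bound is genuinely more direct. The paper first establishes the desparsified decomposition \eqref{Theta-Theta2d-Theta} (proved earlier for Theorem~\ref{thm: normality 2}) and then works backward: it expands $\widehat{\mb{\Omega}}_{*i}^\top\widehat{\mb{\Omega}}_{jj}\widehat{\bd d}_j$ in several stages, introducing and bounding intermediate remainders $r_{2ij}$ and $r_{3ij}$ before arriving at \eqref{diff in Omega for AN} and the bound on $r_{ij}$. You instead go straight to the exact OLS identity $\widehat{\mb{\Omega}}_{ij}=(\widehat{\mb{\Sigma}}_{BB}^{-1})_{\hat i\hat j}$ and read off $\tilde r_{ij}$ as the second-order Neumann remainder $\sqrt n\,(\mb{\Sigma}_{BB}^{-1}\Delta\mb{\Sigma}_{BB}^{-1}\Delta\widehat{\mb{\Sigma}}_{BB}^{-1})_{\hat i\hat j}$, which makes the quadratic structure and the $O_P(T(\log p)/\sqrt n)$ rate transparent in one line. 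The identification $r_{ij}=\tilde r_{ij}$ and the Berry--Esseen step are then handled essentially as in the paper (the paper's projection argument that $\epsilon_{i\parallel\bd x_{\widehat A_j}}+\epsilon_{i\parallel\epsilon_j}=\epsilon_{i\parallel\bd x_{\widehat A_j^+}}$ and $(\mb{\Sigma}_{BB}^{-1})_{*\hat j}=\mb{\Omega}_{Bj}$ on $\{A_j^*\subseteq\widehat A_j\}$ is exactly what you invoke). The paper's detour has the side benefit that the decomposition \eqref{diff in Omega for AN} is derived for all $i\in[p]$, not only $i\in\widehat A_j^+$, but for the present lemma your observation that $r_{ij}=0$ for $i\notin\widehat A_j^+$ on the good event suffices and is cleaner.
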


The assumption
 $T(\log p)/\sqrt{n}=o(1)$ and
 a condition similar to
 $\min_{i\in \widehat{A}_j^+,j\in [p]}\sigma_{i,\widetilde{A}_j}\ge \omega$
are also required in
the
asymptotic normality theorem of 
the desparsified Nodewise Lasso estimator in \citet{Jank17} (see their conditions A2* and A4).
From Lemmas~\ref{Lemma: A_hat=A} and~\ref{Lemma2: A_hat=A},
the condition $P(\cap_{j\in [p]}\mathcal{E}_j){\to 1}$
in Lemma~\ref{thm: normality}
 is achievable
under certain conditions for 
minimum magnitudes of nonzero entries $\{m_j\}_{j\in[p]}$
and
tuning parameters $\{T_j\}_{j\in[p]}$. Thus, we have the following result.

\begin{theorem}[Asymptotic normality of $\widehat{\mb{\Omega}}$]\label{Corollary: AN}
Suppose that \ref{Theta bound} and \ref{subGauss} hold,
$\gamma_{T_{\max}}^{}<1-c$ with a constant $c>0$, 
 $T_{\max}(\log p)/n=o(1)$, and
 {\color{black}$s(\log p)/\sqrt{n}=o(1)$}.
 For all $j\in [p]$, assume that
$k_j(T_j)\ge\log_{1-\frac{c}{2}} \sqrt{T_j(\log p)/n}$, 
$0\le T_j\le T_{\max,j}\in [(s_j-1)\vee 1,p-1]$,
and $m_j\ge M\sqrt{(T_{\max,j}\vee\log\log n)(\log p)/n}$
with a sufficiently large constant $M>0$. 
Let {\color{black}$\widehat{A}_j=\widehat{A}_j(k_j(T_j=T_j^*))$ for all $j\in [p]$} with $T_j^*$ given in \eqref{HBIC: Tj*}.
Assume that
$\min_{i\in \widehat{A}_j^+,j\in [p]}\sigma_{i,A_j^*}\ge \omega$
with a constant $\omega>0$.
Then, we have 
\be\label{thm eqn: AN Omega all}
\sup_{\mb{\Omega}\in \mathcal{G}(s), i\in \widehat{A}_j^+,j\in[p],z\in\mathbb{R}}\left|P({\sqrt{n}(\widehat{\mb{\Omega}}_{ij}-\mb{\Omega}_{ij})}/\sigma_{i,\widehat{A}_j}\le z)-\Phi(z)\right|\to 0.
\ee
\end{theorem}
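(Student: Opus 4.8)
The plan is to obtain this theorem as a synthesis of the asymptotic normality Lemma~\ref{thm: normality} and the HBIC support-recovery Lemma~\ref{Lemma2: A_hat=A}: the latter pins down the random active sets produced by HBIC tuning to the deterministic targets required by the former. Concretely, I would first use Lemma~\ref{Lemma2: A_hat=A} to show that, uniformly over $\mathcal{G}(s)$, the selected tuning parameters satisfy $T_j^{*}=s_j-1$ and the returned active sets satisfy $\widehat{A}_j(k_j(T_j=T_j^{*}))=A_j^{*}$ with probability $1-O(p^{-C})$; then I would feed $\widetilde{A}_j:=A_j^{*}$ (a fixed set given $n,p$) into Lemma~\ref{thm: normality}, for which the event $\mathcal{E}_j=\{A_j^{*}\subseteq\widehat{A}_j=\widetilde{A}_j\}$ reduces exactly to $\{\widehat{A}_j=A_j^{*}\}$, so that $P(\cap_{j\in[p]}\mathcal{E}_j)\to 1$.

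The first task is to check that every hypothesis of Lemma~\ref{Lemma2: A_hat=A} is implied by the present assumptions. Conditions \ref{Theta bound}, \ref{subGauss}, $\gamma_{T_{\max}}<1-c$, $T_{\max}(\log p)/n=o(1)$, the lower bound on $k_j(T_j)$, the range $0\le T_j\le T_{\max,j}\in[(s_j-1)\vee 1,p-1]$, and the signal condition $m_j\ge M\sqrt{(T_{\max,j}\vee\log\log n)(\log p)/n}$ are assumed verbatim. The only hypothesis not stated identically is $s(\log p)\log(\log n)/n=o(1)$; this follows from $s(\log p)/\sqrt{n}=o(1)$ by writing $s(\log p)\log(\log n)/n=[s(\log p)/\sqrt{n}]\cdot[\log(\log n)/\sqrt{n}]$ and noting the second factor tends to $0$. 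Lemma~\ref{Lemma2: A_hat=A} then yields \eqref{eq: T*=s-1} and $\widehat{A}_j(k_j(T_j^{*}))=A_j^{*}$ for all $j$, both uniformly over $\mathcal{G}(s)$.

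The second task is to verify the hypotheses of Lemma~\ref{thm: normality} on the high-probability event $\{T_j^{*}=s_j-1,\forall j\}$, on which the otherwise random quantity $T=\max_{j}T_j^{*}=\max_j(s_j-1)\le s-1$ becomes deterministic. On this event $T+1=\max_j s_j$, so each $s_j\le T+1$ and hence $\mb{\Omega}\in\mathcal{G}(T+1)$, and $T_j=s_j-1$ secures \ref{assmp: bounded Tj} for columns with $s_j\ge 2$. I would then verify \ref{(C1)} with $c_1=c$: since $\theta_{T,T}$, and therefore $\gamma_T$, is nondecreasing in $T$ and $T\le T_{\max}$, we get $\gamma_T\le\gamma_{T_{\max}}<1-c$, while the $k_j$ bound and $T(\log p)/n\le T_{\max}(\log p)/n=o(1)$ follow directly. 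The remaining rate condition of Lemma~\ref{thm: normality} holds because $T(\log p)/\sqrt{n}\le s(\log p)/\sqrt{n}=o(1)$, and the variance floor holds because $\widetilde{A}_j=A_j^{*}$ gives $\min_{i,j}\sigma_{i,\widetilde{A}_j}=\min_{i,j}\sigma_{i,A_j^{*}}\ge\omega$ by assumption; on the same event $\sigma_{i,\widehat{A}_j}=\sigma_{i,A_j^{*}}$, so the normalizing constant in the conclusion is the intended one. Invoking Lemma~\ref{thm: normality} then delivers \eqref{thm eqn: AN Omega all}.

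The main obstacle is the circularity created by data-dependent tuning: Lemma~\ref{thm: normality} is stated with deterministic conditions on $T=\max_j T_j$ and on $\mb{\Omega}\in\mathcal{G}(T+1)$, yet here $T_j=T_j^{*}$ is chosen by HBIC and is random. The device that breaks it is to restrict all of the above verification to the event $\{T_j^{*}=s_j-1,\forall j\}$ of Lemma~\ref{Lemma2: A_hat=A}, on which $T$ collapses to the deterministic value $\max_j(s_j-1)$; the complementary event has probability $O(p^{-C})=o(1)$ and contributes negligibly to the supremum over $z$. Because the probabilities in Lemma~\ref{Lemma2: A_hat=A} are $1-O(p^{-C})$ uniformly over $\mathcal{G}(s)$ and the rate conditions entering Lemma~\ref{thm: normality} are uniform in $\mb{\Omega}$, the convergence in \eqref{thm eqn: AN Omega all} is uniform over $\mb{\Omega}\in\mathcal{G}(s)$; the only point needing a line of care is the degenerate column $s_j=1$, where $A_j^{*}=\emptyset$ and $\widehat{A}_j^{+}=\{j\}$, which is handled separately.
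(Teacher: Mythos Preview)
Your proposal is correct and follows the same strategy as the paper: use Lemma~\ref{Lemma2: A_hat=A} to get $T_j^{*}=s_j-1$ and $\widehat{A}_j=A_j^{*}$ with high probability, then feed $\widetilde{A}_j=A_j^{*}$ into Lemma~\ref{thm: normality}. The paper executes the second step by re-opening the proof of Lemma~\ref{thm: normality} and intersecting with the event $\mathcal{E}_T=\{T_j^{*}=s_j-1,\forall j\}$ at the remainder bound (so that $T\le s$ and $s(\log p)/\sqrt{n}=o(1)$ applies), which is exactly your restriction-to-the-good-event device phrased inside the proof rather than as an oracle coupling.
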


Lemma~\ref{thm: normality} and Theorem~\ref{Corollary: AN} demonstrate that the Nodewise Loreg estimator $\widehat{\mb{\Omega}}$
achieves asymptotic normality without the need for debiasing that is required by the Nodewise Lasso estimator \citep{Jank17}. 
Given the above asymptotic normality and the sparsistency  in Section~\ref{subsec:  Convergence rates and support recovery}, we can conclude that 
the Nodewise Loreg estimator $\widehat{\mb{\Omega}}$ is asymptotically unbiased.

We now consider the estimation of the asymptotic variance $\sigma_{i,\widehat{A}_j}^2$.
From \eqref{diff in Omega for AN, simplified} and Lemma~\ref{thm: normality},
it holds with high probability that
$
\sigma_{i,\widehat{A}_j}^2=\var((\mb{\Sigma}_{\widehat{A}_j^+\widehat{A}_j^+}^{-1})_{\hat{i}*}\bd{x}_{\widehat{A}_j^+}\bd{x}_{\widehat{A}_j^+}^\top(\mb{\Sigma}_{\widehat{A}_j^+\widehat{A}_j^+}^{-1})_{*\hat{j}}
|\widehat{A}_j)
$ for $i\in \widehat{A}_j^+$.
Thus, we can estimate $\sigma_{i,\widehat{A}_j}^2$ by
\be\label{sigma_i,A_j, general}
\widehat{\sigma}_{i,\widehat{A}_j}^2:=\frac{1}{n}\sum_{k=1}^n[(\widehat{\mb{\Sigma}}_{\widehat{A}_j^+\widehat{A}_j^+}^{-1})_{\hat{i}*}\mb{X}_{k,\widehat{A}_j^+}^\top\mb{X}_{k,\widehat{A}_j^+}(\widehat{\mb{\Sigma}}_{\widehat{A}_j^+\widehat{A}_j^+}^{-1})_{*\hat{j}}]^2
-\frac{1}{2}(\widehat{\mb{\Omega}}_{ij}^2+\widehat{\mb{\Omega}}_{ji}^2)
~~\text{for}~~ i\in \widehat{A}_j^+.
\ee
Further, if $\bd{x}$ follows a $p$-variate Gaussian distribution,
then with high probability
$\sigma_{i,\widehat{A}_j}^2$ has the closed form
$\sigma_{i,\widehat{A}_j}^2=(\mb{\Sigma}_{\widehat{A}_j^+\widehat{A}_j^+}^{-1})_{\hat{i}\hat{i}}(\mb{\Sigma}_{\widehat{A}_j^+\widehat{A}_j^+}^{-1})_{\hat{j}\hat{j}}+(\mb{\Sigma}_{\widehat{A}_j^+\widehat{A}_j^+}^{-1})_{\hat{i}\hat{j}}^2$ for $i\in \widehat{A}_j^+$.
Then, we estimate $\sigma_{i,\widehat{A}_j}^2$ by
\be\label{sigma_i,A_j, Gaussian}
\widehat{\sigma}_{i,\widehat{A}_j}^2=(\widehat{\mb{\Sigma}}_{\widehat{A}_j^+\widehat{A}_j^+}^{-1})_{\hat{i}\hat{i}}(\widehat{\mb{\Sigma}}_{\widehat{A}_j^+\widehat{A}_j^+}^{-1})_{\hat{j}\hat{j}}+(\widehat{\mb{\Sigma}}_{\widehat{A}_j^+\widehat{A}_j^+}^{-1})_{\hat{i}\hat{j}}^2
~~\text{for}~~ i\in \widehat{A}_j^+.
\ee
The following theorem shows that 
$\widehat{\sigma}_{i,\widehat{A}_j}^2$ is a consistent estimator of $\sigma_{i,\widehat{A}_j}^2$.

\begin{theorem}[Consistency of $\widehat{\sigma}_{i,\widehat{A}_j}^2$]
\label{thm: consistency of sigma_iAj}
Suppose that \ref{Theta bound}-\ref{assmp: bounded Tj} hold,
either \ref{(C1)} or \ref{(C2)} is satisfied,
and $P(\cap_{j\in [p]}\mathcal{E}_j)\to 1$.
We have the following results.
\begin{enumerate}[label=(\roman*),leftmargin=*, itemsep=0pt, topsep=0pt, partopsep=0pt, parsep=0pt]
\item\label{thm (i): consistency of sigma_iAj} (Sub-Gaussian cases)
Assume that
$T(\log p)/\sqrt{n}=o(1)$, and 
$[\log(p\vee n)]^4/n^{1-c}=o(1)$ with a constant $c>0$.
Let $\widehat{\sigma}_{i,\widehat{A}_j}^2$ be the estimator defined in  \eqref{sigma_i,A_j, general}.
Then, for all $\varepsilon>0$,
\[
P\Big(\max_{i\in \widehat{A}_j^+,j\in[p]} |\widehat{\sigma}_{i,\widehat{A}_j}^2-\sigma_{i,\widehat{A}_j}^2|\ge \varepsilon\Big)
\to 0.
\]
\item\label{thm (ii): consistency of sigma_iAj} (Gaussian cases)
Assume that $\bd{x}$ follows a $p$-variate Gaussian distribution.
Let $\widehat{\sigma}_{i,\widehat{A}_j}^2$ be the estimator defined in  \eqref{sigma_i,A_j, Gaussian}.
Then, we have 
\[
\max_{i\in \widehat{A}_j^+,j\in[p]} |\widehat{\sigma}_{i,\widehat{A}_j}^2-\sigma_{i,\widehat{A}_j}^2|
=O_P(\sqrt{T(\log p)/n}).
\]
\end{enumerate}
\end{theorem}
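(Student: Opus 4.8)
The plan is to work throughout on the high-probability event $\cap_{j\in[p]}\mathcal{E}_j$, on which each $\widehat{A}_j=\widetilde{A}_j$ is a nonrandom set (given $n,p$) of cardinality at most $T+1$ with $A_j^*\subseteq\widehat{A}_j$. Since $P(\cap_j\mathcal{E}_j)\to1$ is assumed, it suffices to prove both bounds on this event, which reduces each part to a statement that is \emph{uniform over all index sets $A=\widehat{A}_j^+$ with $|A|\le T+1$ and all position pairs $\hat i,\hat j\in A$}. The common ingredient is a uniform concentration of sample covariance submatrices,
\[
\max_{A\subseteq[p]:\,|A|\le T+1}\bigl\|\widehat{\mb{\Sigma}}_{AA}-\mb{\Sigma}_{AA}\bigr\|_2=O_P\bigl(\sqrt{T(\log p)/n}\bigr),
\]
obtained from a sub-Gaussian tail bound for a fixed submatrix together with a union bound over the $\binom{p}{T+1}\le p^{T+1}$ choices of $A$ (the factor $T\log p$ in the rate is exactly the price of this union bound). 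Combined with assumption \ref{Theta bound} (bounded eigenvalues, hence $\|\mb{\Sigma}_{AA}^{-1}\|_2\le\kappa_1$ uniformly) and the standard inverse-perturbation inequality, this upgrades to $\max_{|A|\le T+1}\|\widehat{\mb{\Sigma}}_{AA}^{-1}-\mb{\Sigma}_{AA}^{-1}\|_2=O_P(\sqrt{T(\log p)/n})$.

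For part~\ref{thm (ii): consistency of sigma_iAj} this is essentially the whole story. The target $\sigma_{i,\widehat A_j}^2$ and its estimator in \eqref{sigma_i,A_j, Gaussian} are the \emph{same} smooth map $g(\mb M)=(\mb M^{-1})_{\hat i\hat i}(\mb M^{-1})_{\hat j\hat j}+(\mb M^{-1})_{\hat i\hat j}^2$ evaluated at $\mb M=\mb{\Sigma}_{AA}$ and $\mb M=\widehat{\mb{\Sigma}}_{AA}$, respectively. Because each entry of $\mb M^{-1}$ is bounded in modulus by $\|\mb M^{-1}\|_2\le\kappa_1$, the map $g$ is Lipschitz in the entries of the inverse with a constant depending only on $\kappa_1$, so
\[
\bigl|\widehat\sigma_{i,\widehat A_j}^2-\sigma_{i,\widehat A_j}^2\bigr|\le C(\kappa_1)\,\bigl\|\widehat{\mb{\Sigma}}_{AA}^{-1}-\mb{\Sigma}_{AA}^{-1}\bigr\|_{\max};
\]
taking the maximum over $i\in\widehat A_j^+,j\in[p]$ and invoking the uniform inverse bound yields the claimed $O_P(\sqrt{T(\log p)/n})$ rate.

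For part~\ref{thm (i): consistency of sigma_iAj}, with $\widehat\sigma_{i,\widehat A_j}^2$ as in \eqref{sigma_i,A_j, general}, I would split $\widehat\sigma_{i,\widehat A_j}^2-\sigma_{i,\widehat A_j}^2$ into three pieces. Writing $W=\bd a^\top\bd x_A\bd x_A^\top\bd b$ with $\bd a=(\mb{\Sigma}_{AA}^{-1})_{*\hat i}$, $\bd b=(\mb{\Sigma}_{AA}^{-1})_{*\hat j}$, and $\widehat W_k$ for the analogous quadratic form using $\widehat{\mb{\Sigma}}_{AA}^{-1}$ and the $k$-th observation, the pieces are: (a) the empirical-versus-population error $\bigl|\tfrac1n\sum_k W_k^2-E[W^2\mid\widehat A_j]\bigr|$ at the true inverse; (b) the plug-in error $\bigl|\tfrac1n\sum_k(\widehat W_k^2-W_k^2)\bigr|$, again Lipschitz in $\widehat{\mb{\Sigma}}_{AA}^{-1}-\mb{\Sigma}_{AA}^{-1}$ with quadratic-form multipliers that are uniformly controlled with high probability, hence handled by the uniform inverse bound; and (c) $\bigl|\tfrac12(\widehat{\mb{\Omega}}_{ij}^2+\widehat{\mb{\Omega}}_{ji}^2)-(\mb{\Sigma}_{AA}^{-1})_{\hat i\hat j}^2\bigr|$, which vanishes since $\widehat{\mb{\Omega}}_{ij}\to\mb{\Omega}_{ij}$ in max norm by Theorem~\ref{thm: consistency} and $(\mb{\Sigma}_{AA}^{-1})_{\hat i\hat j}\to\mb{\Omega}_{ij}$ on the support-recovery event $A_j^*\subseteq A$.

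The main obstacle is piece~(a). Since $\bd x_k$ is only sub-Gaussian, $W_k^2$ is a degree-four polynomial in $\bd x_k$ and is therefore sub-Weibull of order $1/2$, so a direct Bernstein bound is unavailable and moment control alone cannot survive the uniformity over $\binom{p}{T+1}$ subsets. The plan is a truncation argument: truncate $W_k^2$ at a polylogarithmic threshold $\tau_n$, apply a Bernstein inequality to the centered truncated part (whose bounded range lets the union bound over subsets and over the $O(T^2)$ position pairs be absorbed into the exponent), and bound the discarded tail contributions $E[W^2\,I(W^2>\tau_n)]$ and $\tfrac1n\sum_kW_k^2\,I(W_k^2>\tau_n)$ via sub-Weibull moment and maximal inequalities. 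The hypothesis $[\log(p\vee n)]^4/n^{1-c}=o(1)$ is precisely calibrated so that, with $\tau_n$ of polylogarithmic order, both the truncated Bernstein deviation and the truncation bias are $o(1)$ uniformly; verifying this calibration while keeping the union bound sharp enough to absorb the heavy tails is the technical heart of the argument.
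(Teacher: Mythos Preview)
Your plan is correct and matches the paper's proof, which reduces to the deterministic sets $\widetilde A_j$ on $\cap_j\mathcal E_j$ and then invokes Lemmas~2 and~3 in the supplement of \citet{Jank17}; your Lipschitz argument for Part~(ii) and your decomposition-plus-truncation for Part~(i) are precisely how those lemmas are proved.

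One simplification you are missing in piece~(a): on $\cap_j\mathcal E_j$ the sets $\widetilde A_j^+$ are $p$ \emph{fixed} subsets (even if unknown), so the union bound for $\tfrac1n\sum_kW_k^2-E[W^2]$ runs over at most $p(T{+}1)$ pairs $(i,j)$, not over all $\binom{p}{T+1}$ subsets of $[p]$. The $p^{T+1}$ union bound is needed for the submatrix spectral estimate you quote (because that estimate is used elsewhere in the paper with genuinely data-dependent supports), but not here. With only polynomially many terms, truncation at $\tau_n\asymp[\log(p\vee n)]^2$ makes both the Bernstein deviation and the tail $P(\max_{k,i,j}W_k^2>\tau_n)\le npT\exp(-c\tau_n^{1/2})$ vanish comfortably under $[\log(p\vee n)]^4/n^{1-c}=o(1)$; the heavy-tail difficulty you anticipate does not arise. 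Also, in piece~(c), note that $(\mb\Sigma_{\widehat A_j^+\widehat A_j^+}^{-1})_{\hat i\hat j}=\mb\Omega_{ij}$ holds \emph{exactly} on $\{A_j^*\subseteq\widehat A_j\}$ by the partitioned-inverse identities used in the proof of Lemma~\ref{thm: normality}, so (c) reduces directly to the max-norm consistency of $\widehat{\mb\Omega}$ rather than requiring an asymptotic statement.
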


\subsection{Discussion on a desparsified Nodewise Loreg estimator}
We can  construct 
a desparsified version of the Nodewise Loreg
estimator 
in the same form of the desparsified (debiased) Nodewise Lasso estimator
given in \citet{Jank17}. Specifically,
we have 
\be
\widehat{\mb{\Omega}}_{ij}-\widehat{\mb{\Omega}}_{*i}^\top(\widehat{\mb{\Sigma}}\widehat{\mb{\Omega}}_{*j}-\bd{e}_j)  -\mb{\Omega}_{ij}
=-\mb{\Omega}_{*i}^\top(\widehat{\mb{\Sigma}}-\mb{\Sigma})
\mb{\Omega}_{*j}+\Delta_{ij}/\sqrt{n}
.
\label{Theta-Theta2d-Theta}
\ee
where $\Delta_{ij}=o_P(1)$ under certain conditions (see Theorem~\ref{thm: normality 2}).
Thus, the desparsified Nodewise Loreg estimator is  defined as
 \be
 \widehat{\mb{T}}=\widehat{\mb{\Omega}}-\widehat{\mb{\Omega}}^\top(\widehat{\mb{\Sigma}}\widehat{\mb{\Omega}}-\mb{I})
 =\widehat{\mb{\Omega}}+\widehat{\mb{\Omega}}^\top-
 \widehat{\mb{\Omega}}^\top  \widehat{\mb{\Sigma}}   \widehat{\mb{\Omega}},
 \ee
 which employs  the same formula as the desparsified Nodewise Lasso estimator, 
 except for plugging in their respective undesparsified estimators as $\widehat{\mb{\Omega}}$.
The following theorem
shows that
the desparsified Nodewise
Loreg estimator
$\widehat{\mb{T}}$
enjoys asymptotic normality
without requiring
the condition $P( \widehat{A}_j\,{=}\,\widetilde{A}_j(n,p), \forall j \in [p])\to 1$
assumed for 
its  undesparsified counterpart $\widehat{\mb{\Omega}}$.

\begin{theorem}[Asymptotic normality of $\widehat{\mb{T}}$]\label{thm: normality 2}
Suppose that \ref{Theta bound}-\ref{assmp: bounded Tj} hold. 
Assume that 
\ref{(C1)} holds with $m_j\ge M_1\sqrt{T_j(\log p)/n} $
for all $j\in[p]$, or \ref{(C2)} holds with
$\min_{j\in[p]}m_j\ge M_2\sqrt{(\log p)/n}$,
where $M_1$ and $M_2$ are sufficiently
large positive constants.
Then uniformly for all $\mb{\Omega}\in\mathcal{G}(T+1)$,
we have $\max_{1\le i,j\le p}|\Delta_{ij}|=O_P(T(\log p)/\sqrt{n})$.
Further, if $T(\log p)/\sqrt{n}=o(1)$  and $\sigma_{ij}:=\sd(\mb{\Omega}_{*i}^\top \bd{x}\bd{x}^\top \mb{\Omega}_{*j})
\ge w$ with a constant $w>0$,
then
\[
\sup_{\mb{\Omega}\in \mathcal{G}(T+1),
1\le i,j\le p,
 z\in\mathbb{R}}|P(\sqrt{n}(\widehat{\mb{T}}_{ij}-\mb{\Omega}_{ij})/\sigma_{ij}\le z)-\Phi(z)|\to 0.
\]

\end{theorem}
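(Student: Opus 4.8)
The plan is to start from the defining identity~\eqref{Theta-Theta2d-Theta}, treat the leading term $-\mb{\Omega}_{*i}^\top(\widehat{\mb{\Sigma}}-\mb{\Sigma})\mb{\Omega}_{*j}$ as the source of the Gaussian limit, and show the remainder $\Delta_{ij}$ is uniformly negligible after scaling. Writing $\mb{E}=\widehat{\mb{\Omega}}-\mb{\Omega}$ and $\mb{F}=\widehat{\mb{\Sigma}}-\mb{\Sigma}$, I would substitute $\widehat{\mb{\Omega}}=\mb{\Omega}+\mb{E}$ and $\widehat{\mb{\Sigma}}=\mb{\Sigma}+\mb{F}$ into $\widehat{\mb{T}}_{ij}=\widehat{\mb{\Omega}}_{ij}-\widehat{\mb{\Omega}}_{*i}^\top(\widehat{\mb{\Sigma}}\widehat{\mb{\Omega}}_{*j}-\bd{e}_j)$ and use the exact identities $\mb{\Sigma}\mb{\Omega}_{*j}=\bd{e}_j$ and $\mb{\Omega}_{*i}^\top\mb{\Sigma}=\bd{e}_i^\top$ to cancel the first-order contribution $\mb{\Omega}_{*i}^\top\mb{\Sigma}\mb{E}_{*j}=\mb{E}_{ij}$ against $\widehat{\mb{\Omega}}_{ij}-\mb{\Omega}_{ij}=\mb{E}_{ij}$. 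This isolates the leading term together with
\[
\Delta_{ij}/\sqrt{n}=-\big(\mb{\Omega}_{*i}^\top\mb{F}\mb{E}_{*j}+\mb{E}_{*i}^\top\mb{F}\mb{\Omega}_{*j}+\mb{E}_{*i}^\top\mb{\Sigma}\mb{E}_{*j}+\mb{E}_{*i}^\top\mb{F}\mb{E}_{*j}\big),
\]
so that $\Delta_{ij}$ is a sum of four bilinear remainders, each quadratic in the small quantities $\mb{E}$ and $\mb{F}$.

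The core of the argument is bounding these four terms uniformly by $O_P(T(\log p)/n)$. The crucial ingredient is a restricted (sparse) operator-norm bound: under \ref{Theta bound}--\ref{subGauss} and $T(\log p)/n=o(1)$, with probability $1-O(p^{-C})$,
\[
\max_{|A|\le 2(T+1),\,|B|\le 2(T+1)}\|\mb{F}_{AB}\|_2\le C_1\sqrt{T(\log p)/n},
\]
which I would prove by a Bernstein bound for the centered sub-exponential products $(\bd{u}^\top\bd{x})(\bd{v}^\top\bd{x})$ combined with an $\varepsilon$-net over $O(T)$-sparse unit directions, whose log-cardinality is $O(T\log p)$. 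Since $\mb{\Omega}_{*i}$ is $(T+1)$-sparse for $\mb{\Omega}\in\mathcal{G}(T+1)$ and $\widehat{\mb{\Omega}}_{*j}$ is $(T+1)$-sparse by construction (so $\mb{E}_{*j}$ is $2(T+1)$-sparse, with $\supp(\mb{\Omega})\subseteq\supp(\widehat{\mb{\Omega}})$ guaranteed by Theorem~\ref{thm: Sparsistency} under the stated $m_j$ conditions), each $\mb{F}$-term reduces to a bilinear form on an $O(T)\times O(T)$ submatrix, giving $|\mb{\Omega}_{*i}^\top\mb{F}\mb{E}_{*j}|\le\|\mb{F}_{AB}\|_2\,\|\mb{\Omega}_{*i}\|_2\,\|\mb{E}_{*j}\|_2$. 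Plugging in $\|\mb{\Omega}_{*i}\|_2\le\kappa_2$, $\|\mb{\Sigma}\|_2\le\kappa_1$, and the column rate $\max_j\|\mb{E}_{*j}\|_2=O_P(\sqrt{T(\log p)/n})$ from Theorem~\ref{thm: consistency} yields $O_P(T(\log p)/n)$ for the first three terms and a strictly higher-order bound for the fourth. All constants are uniform over $\mathcal{G}(T+1)$ by \ref{Theta bound}--\ref{subGauss}, so $\max_{i,j}|\Delta_{ij}|=O_P(T(\log p)/\sqrt{n})$.

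For the limit law I would note that $\mb{\Sigma}\mb{\Omega}_{*j}=\bd{e}_j$ forces $\mb{\Omega}_{*i}^\top\mb{\Sigma}\mb{\Omega}_{*j}=\mb{\Omega}_{ij}$, so the leading term scaled by $\sqrt{n}$ equals $-n^{-1/2}\sum_{k=1}^n(W_k-\mb{\Omega}_{ij})$ with i.i.d.\ summands $W_k=(\mb{\Omega}_{*i}^\top\bd{x}_k)(\bd{x}_k^\top\mb{\Omega}_{*j})$ of mean $\mb{\Omega}_{ij}$ and variance $\sigma_{ij}^2$, where $\bd{x}_k$ denotes the copies of $\bd{x}$. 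Under \ref{subGauss} each factor is sub-Gaussian with $\psi_2$-norm at most $\kappa_2K$, so $W_k$ is sub-exponential with third absolute moment bounded uniformly in $i,j,\mb{\Omega}$; combined with $\sigma_{ij}\ge w$, the Berry--Esseen theorem gives $\sup_z|P(\cdot\le z)-\Phi(z)|=O(w^{-3}n^{-1/2})\to0$ uniformly. Finally I would splice the two parts: because $\max_{i,j}|\Delta_{ij}/\sigma_{ij}|=O_P(T(\log p)/\sqrt{n})/w=o_P(1)$ under $T(\log p)/\sqrt{n}=o(1)$, a standard sandwich using the Lipschitz bound $\Phi(z\pm\varepsilon)\le\Phi(z)+\varepsilon/\sqrt{2\pi}$ together with the uniform Berry--Esseen rate transfers the Gaussian approximation to $\sqrt{n}(\widehat{\mb{T}}_{ij}-\mb{\Omega}_{ij})/\sigma_{ij}$, uniformly over $\mb{\Omega}\in\mathcal{G}(T+1)$, $i,j$, and $z$. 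The main obstacle is the sharp $O_P(T(\log p)/n)$ control of the cross terms: a naive bound $\|\mb{F}\|_{\max}\|\mb{\Omega}_{*i}\|_1\|\mb{E}_{*j}\|_1$ only gives $O_P(T^{3/2}(\log p)/n)$, so the joint sparsity of $\mb{\Omega}_{*i}$ and $\mb{E}_{*j}$ must be exploited through the restricted operator-norm inequality.
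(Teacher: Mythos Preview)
Your proposal is correct, and the Berry--Esseen plus Lipschitz--sandwich argument for the limit law is essentially what the paper does (it defers to the proof of Theorem~1 in \citet{Jank17}). The decomposition of $\Delta_{ij}$ is also algebraically identical to the paper's, which groups your four terms as $-(\widehat{\mb{\Omega}}_{*j}-\mb{\Omega}_{*j})^\top(\widehat{\mb{\Sigma}}\mb{\Omega}_{*i}-\bd{e}_i)-(\widehat{\mb{\Omega}}_{*i}-\mb{\Omega}_{*i})^\top(\widehat{\mb{\Sigma}}\widehat{\mb{\Omega}}_{*j}-\bd{e}_j)$.

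Where you genuinely diverge is in how the remainder is controlled. The paper bounds the first grouped term by $\|\mb{E}_{*j}\|_1\cdot\|\mb{F}\mb{\Omega}_{*i}\|_{\max}$ (using Theorem~\ref{thm: consistency} and a sub-Gaussian max bound), and for the second it exploits the KKT identity $\widehat{\mb{\Sigma}}\widehat{\mb{\Omega}}_{*j}=\bd{e}_j+\widehat{\mb{\Omega}}_{jj}\widehat{\bd{d}}_j$ together with Lemma~\ref{Lemma: A_hat=A} to get $A_j^*\subseteq\widehat{A}_j$, which is where the $m_j$ conditions enter. Your restricted-operator-norm route is more elementary and estimator-agnostic: it uses only the column $\ell_2$ rate from Theorem~\ref{thm: consistency}, the automatic $O(T)$-sparsity of $\mb{\Omega}_{*i}$ and $\mb{E}_{*j}$, and the bound $\max_{|A|\le cT}\|\mb{F}_{AA}\|_2=O_P(\sqrt{T(\log p)/n})$, which the paper already proves as \eqref{Sigma_AA's error in norm 2}. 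A small point: your invocation of Theorem~\ref{thm: Sparsistency} is unnecessary, since the $2(T+1)$-sparsity of $\mb{E}_{*j}$ follows directly from $|\widehat{A}_j|\le T_j$ and $\mb{\Omega}\in\mathcal{G}(T+1)$ without support containment. Consequently your argument for $\max_{i,j}|\Delta_{ij}|=O_P(T(\log p)/\sqrt{n})$ actually goes through under \ref{Theta bound}--\ref{assmp: bounded Tj} and \ref{(C1)} or \ref{(C2)} alone, which is slightly more than the theorem claims; the paper's route, by contrast, uses the $m_j$ conditions substantively via the KKT simplification.
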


For the asymptotic variance $\sigma_{ij}^2=\var(\mb{\Omega}_{*i}^\top \bd{x}\bd{x}^\top \mb{\Omega}_{*j})
$   in Theorem~\ref{thm: normality 2},
we  estimate it by 
\be\label{sigma_ij, general}
\widehat{\sigma}_{ij}^2:=\frac{1}{n}\sum_{k=1}^n(\widehat{\mb{\Omega}}_{*i}^\top\mb{X}_{k*}^\top\mb{X}_{k*}\widehat{\mb{\Omega}}_{*j})^2
-\frac{1}{2}(\widehat{\mb{\Omega}}_{ij}^2+\widehat{\mb{\Omega}}_{ji}^2).
\ee
If $\bd{x}$ follows a $p$-variate Gaussian distribution,
then we have
$\sigma_{ij}^2=\mb{\Omega}_{ii}\mb{\Omega}_{jj}+\mb{\Omega}_{ij}^2$ and estimate it by
\be\label{sigma_ij, Gaussian}
\widehat{\sigma}_{ij}^2=\widehat{\mb{\Omega}}_{ii}\widehat{\mb{\Omega}}_{jj}+\frac{1}{2}(\widehat{\mb{\Omega}}_{ij}^2+\widehat{\mb{\Omega}}_{ji}^2).
\ee
We establish the following consistency result for $\widehat{\sigma}_{ij}^2$ with respect  to $\sigma_{ij}^2$.

\begin{theorem}[Consistency of $\widehat{\sigma}_{ij}^2$]
\label{thm: consistency of sigma_ij}
Suppose that \ref{Theta bound}-\ref{assmp: bounded Tj} hold, and either \ref{(C1)} or \ref{(C2)}  is satisfied. We have the following results.
\begin{enumerate}[label=(\roman*),leftmargin=*, itemsep=0pt, topsep=0pt, partopsep=0pt, parsep=0pt]
\item\label{thm (i): consistency of sigma_ij} (Sub-Gaussian cases)
Assume that
$T(\log p)/\sqrt{n}=o(1)$, and 
$[\log(p\vee n)]^4/n^{1-c}=o(1)$ with a constant $c>0$.
Let $\widehat{\sigma}_{ij}^2$ be the estimator defined in  \eqref{sigma_ij, general}.
Then, for all $\varepsilon>0$,
\[
\sup_{\mb{\Omega}\in\mathcal{G}(T+1)} P\Big(\max_{1\le i,j\le p} |\widehat{\sigma}_{ij}^2-\sigma_{ij}^2|\ge \varepsilon\Big)
\to 0.
\]

\item\label{thm (ii): consistency of sigma_ij} (Gaussian cases)
Assume that $\bd{x}$ follows a $p$-variate Gaussian distribution.
Let $\widehat{\sigma}_{i,\widehat{A}_j}^2$ be the estimator defined in  \eqref{sigma_ij, Gaussian}.
Then, uniformly for all $\mb{\Omega}\in\mathcal{G}(T+1)$, we have 
\[
\max_{1\le i,j\le p} |\widehat{\sigma}_{ij}^2-\sigma_{ij}^2|=O_P(\sqrt{T(\log p)/n}).
\]
\end{enumerate}
\end{theorem}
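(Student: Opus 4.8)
The plan is to write $\widehat{\sigma}_{ij}^2-\sigma_{ij}^2$ as the sum of a sampling fluctuation, a plug-in error, and a bias-correction error, and to bound each uniformly in $1\le i,j\le p$ and uniformly over $\mathcal{G}(T+1)$. The two algebraic facts I would use throughout are the identity $\mb{\Omega}_{*i}^\top\bd{x}=\mb{\Omega}_{ii}\epsilon_i$ with $\epsilon_i=x_i-\bd{x}_{\setminus i}^\top\bd{\alpha}_i^*$ (immediate from \eqref{lm, projection}--\eqref{Theta_-jj formula}), and the moment identity $E[\mb{\Omega}_{*i}^\top\bd{x}\bd{x}^\top\mb{\Omega}_{*j}]=\mb{\Omega}_{*i}^\top\mb{\Sigma}\mb{\Omega}_{*j}=\mb{\Omega}_{ij}$, which gives $\sigma_{ij}^2=S_{ij}-\mb{\Omega}_{ij}^2$ for $S_{ij}:=E[(\mb{\Omega}_{*i}^\top\bd{x}\bd{x}^\top\mb{\Omega}_{*j})^2]$. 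Writing $\widehat S_{ij}=\tfrac1n\sum_k(\widehat{\mb{\Omega}}_{*i}^\top\mb{X}_{k*}^\top\mb{X}_{k*}\widehat{\mb{\Omega}}_{*j})^2$ for the empirical term in \eqref{sigma_ij, general} and $\bar S_{ij}$ for the same quantity with $\widehat{\mb{\Omega}}$ replaced by $\mb{\Omega}$, I would decompose
\[
\widehat{\sigma}_{ij}^2-\sigma_{ij}^2=(\widehat S_{ij}-\bar S_{ij})+(\bar S_{ij}-S_{ij})-\Big(\tfrac12(\widehat{\mb{\Omega}}_{ij}^2+\widehat{\mb{\Omega}}_{ji}^2)-\mb{\Omega}_{ij}^2\Big).
\]
Uniformity over $\mathcal{G}(T+1)$ follows because \ref{Theta bound} and \ref{subGauss} supply constants that do not depend on the particular $\mb{\Omega}$, so the sub-Gaussian norms and moments of the $\epsilon_i$, as well as all rate constants below, are uniform over the class.

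For the bias-correction term I would use $|\widehat{\mb{\Omega}}_{ij}^2-\mb{\Omega}_{ij}^2|\le|\widehat{\mb{\Omega}}_{ij}-\mb{\Omega}_{ij}|\,(|\widehat{\mb{\Omega}}_{ij}|+|\mb{\Omega}_{ij}|)$. By \ref{Theta bound}, $\|\mb{\Omega}\|_{\max}$ is bounded, and Theorem~\ref{thm: consistency} gives $\max_{ij}|\widehat{\mb{\Omega}}_{ij}-\mb{\Omega}_{ij}|\le\max_j\|\widehat{\mb{\Omega}}_{*j}-\mb{\Omega}_{*j}\|_2=O_P(\sqrt{T(\log p)/n})=o_P(1)$, so this term is $o_P(1)$ uniformly. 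For the sampling term I would exploit $\mb{\Omega}_{*i}^\top\mb{X}_{k*}^\top=\mb{\Omega}_{ii}\epsilon_{ik}$ to obtain $\bar S_{ij}-S_{ij}=\mb{\Omega}_{ii}^2\mb{\Omega}_{jj}^2\big(\tfrac1n\sum_k(\epsilon_{ik}\epsilon_{jk})^2-E[(\epsilon_i\epsilon_j)^2]\big)$, reducing everything (after using boundedness of $\mb{\Omega}_{ii},\mb{\Omega}_{jj}$) to the uniform concentration of $\tfrac1n\sum_k(\epsilon_{ik}\epsilon_{jk})^2$ around its mean. Under \ref{subGauss} each $\epsilon_i$ is sub-Gaussian, hence $(\epsilon_i\epsilon_j)^2$ is sub-Weibull of order $1/2$; a Bernstein-type inequality for sub-Weibull averages together with a union bound over the $O(p^2)$ pairs yields $\max_{ij}|\bar S_{ij}-S_{ij}|=o_P(1)$. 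The degree-$4$ (in the sub-Gaussian variables) nature of $(\epsilon_i\epsilon_j)^2$ is precisely what produces the $[\log(p\vee n)]^4$ factor through the required truncation step, explaining the condition $[\log(p\vee n)]^4/n^{1-c}=o(1)$. This is the step I expect to be the main obstacle, since a naive Bernstein bound is unavailable for such heavy-tailed summands and the truncation/maximal-inequality bookkeeping must be carried out uniformly over the class.

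For the plug-in term I would write $a_k^2-b_k^2=(a_k-b_k)(a_k+b_k)$ with $a_k=\widehat{\mb{\Omega}}_{*i}^\top\mb{X}_{k*}^\top\mb{X}_{k*}\widehat{\mb{\Omega}}_{*j}$ and $b_k=\mb{\Omega}_{*i}^\top\mb{X}_{k*}^\top\mb{X}_{k*}\mb{\Omega}_{*j}$, expand $a_k-b_k$ into two pieces each linear in a column error $\widehat{\mb{\Omega}}_{*i}-\mb{\Omega}_{*i}$ or $\widehat{\mb{\Omega}}_{*j}-\mb{\Omega}_{*j}$, and bound the resulting averaged quartic forms in $\mb{X}_{k*}$ using the column-$L_2$ rate $\max_j\|\widehat{\mb{\Omega}}_{*j}-\mb{\Omega}_{*j}\|_2=O_P(\sqrt{T(\log p)/n})$ from Theorem~\ref{thm: consistency}, together with empirical operator-norm control of $\widehat{\mb{\Sigma}}$ and the companion fourth-order empirical averages (themselves governed by the sub-Weibull concentration of the previous paragraph). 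After verifying that the cross-factor $\tfrac1n\sum_k|a_k+b_k|$ is $O_P(1)$, this term is $o_P(1)$ under $T(\log p)/\sqrt n=o(1)$, and combining the three bounds establishes part (i).

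For the Gaussian case (part (ii)), the estimator \eqref{sigma_ij, Gaussian} makes $\widehat{\sigma}_{ij}^2-\sigma_{ij}^2=(\widehat{\mb{\Omega}}_{ii}\widehat{\mb{\Omega}}_{jj}-\mb{\Omega}_{ii}\mb{\Omega}_{jj})+(\tfrac12(\widehat{\mb{\Omega}}_{ij}^2+\widehat{\mb{\Omega}}_{ji}^2)-\mb{\Omega}_{ij}^2)$ a smooth function of the entries of $\widehat{\mb{\Omega}}$. Each difference factors as an entrywise error times a factor that is bounded by \ref{Theta bound} (plus the $o_P(1)$ consistency of $\widehat{\mb{\Omega}}$), so the max-entry bound $\max_{ij}|\widehat{\mb{\Omega}}_{ij}-\mb{\Omega}_{ij}|=O_P(\sqrt{T(\log p)/n})$ from Theorem~\ref{thm: consistency} transfers directly, giving the claimed rate $O_P(\sqrt{T(\log p)/n})$ uniformly over $\mathcal{G}(T+1)$.
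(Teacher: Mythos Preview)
Your proposal is correct in outline and follows essentially the same route as the paper, which simply cites Lemmas~2 and~3 of \citet{Jank17} and plugs in the Nodewise Loreg rates from Theorem~\ref{thm: consistency}. Your three-term decomposition for part~(i) and the entrywise Lipschitz argument for part~(ii) are exactly what those lemmas do.

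One point warrants care: for the plug-in term you invoke ``empirical operator-norm control of $\widehat{\mb{\Sigma}}$'', but in the high-dimensional regime $\|\widehat{\mb{\Sigma}}\|_2$ is not $O_P(1)$, so this cannot be used literally. The paper's proof (following \citet{Jank17}) instead pairs the $L_1$ column-error rate $\|\widehat{\mb{\Omega}}-\mb{\Omega}\|_1=O_P(T\sqrt{(\log p)/n})$ from \eqref{norm-1 bound for Theta} with max-norm bounds on the fourth-order empirical averages $\max_l\bigl|\tfrac1n\sum_k X_{kl}\epsilon_{ik}\epsilon_{jk}^2\bigr|$; equivalently, one exploits the $O(T)$-sparsity of $\widehat{\mb{\Omega}}_{*i}-\mb{\Omega}_{*i}$ so that only restricted eigenvalue control of $\widehat{\mb{\Sigma}}$ on index sets of size $O(T)$ (established in \eqref{spectrum of Sigma_AA}) is required. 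With that adjustment your argument goes through.
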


We observe that the  asymptotic variance of desparsified Nodewise Loreg estimator $\widehat{\mb{T}}_{ij}$,
 $\sigma_{ij}^2:=\var(\mb{\Omega}_{*i}^\top \bd{x}\bd{x}^\top \mb{\Omega}_{*j})=\var(\mb{\Omega}_{ii}\mb{\Omega}_{jj}  \epsilon_i\epsilon_j)$,
is similar to  that  of undesparsified Nodewise Loreg estimator $\widehat{\mb{\Omega}}_{ij}$, 
  $\sigma_{i,\widehat{A}_j}^2:=\var(\xi_{i,\widehat{A}_j}\epsilon_j|\widehat{A}_j)=\var(\mb{\Omega}_{ii}\mb{\Omega}_{jj}  (\epsilon_{i\parallel \bd{x}_{\widehat{A}_j}}+\epsilon_{i\parallel \epsilon_j})\epsilon_j    |\widehat{A}_j)$.
Thus, we aim to compare these two asymptotic variances.
Note that $\sigma_{ij}^2$ is also the asymptotic variance of the desparsified Nodewise Lasso estimator \citep{Jank17}.

\begin{theorem}[Comparison of asymptotic variances]
\label{sigma_ij compare}
Assume that 
$\bd{x}$ follows a $p$-variate Gaussian distribution.
Given any $\widehat{A}_j$ such that $A_j^*\subseteq \widehat{A}_j$,
we have
$\sigma_{i,\widehat{A}_j}^2\le \sigma_{ij}^2$
for $i\in \widehat{A}_j^+$, 
where equality holds
if and only if $(A_i^*\cup\{i\})\setminus \widehat{A}_j^+=\emptyset$.
\end{theorem}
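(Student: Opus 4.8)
The plan is to reduce both quantities to their Gaussian closed forms and compare them entry by entry through a single Schur-complement identity. Write $B=\widehat{A}_j^+=\widehat{A}_j\cup\{j\}$ and $B^c=[p]\setminus B$. Under Gaussianity the two targets are, as recorded just above \eqref{sigma_i,A_j, Gaussian} and \eqref{sigma_ij, Gaussian},
\[
\sigma_{i,\widehat{A}_j}^2=(\mb{\Sigma}_{BB}^{-1})_{\hat{i}\hat{i}}(\mb{\Sigma}_{BB}^{-1})_{\hat{j}\hat{j}}+(\mb{\Sigma}_{BB}^{-1})_{\hat{i}\hat{j}}^2,
\qquad
\sigma_{ij}^2=\mb{\Omega}_{ii}\mb{\Omega}_{jj}+\mb{\Omega}_{ij}^2,
\]
so it suffices to control the three relevant entries of $\mb{\Sigma}_{BB}^{-1}$ in terms of the full precision matrix $\mb{\Omega}$.

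First I would invoke the block-inverse (Schur complement) identity applied to $\mb{\Sigma}=\mb{\Omega}^{-1}$, namely
\[
\mb{\Sigma}_{BB}^{-1}=\mb{\Omega}_{BB}-\mb{\Omega}_{BB^c}\mb{\Omega}_{B^cB^c}^{-1}\mb{\Omega}_{B^cB}=:\mb{\Omega}_{BB}-\mb{W}.
\]
The key step is to exploit the hypothesis $A_j^*\subseteq\widehat{A}_j$. By \eqref{Theta_-jj formula} we have $\mb{\Omega}_{\setminus j,j}=-\mb{\Omega}_{jj}\bd{\alpha}_j^*$, so $\supp(\mb{\Omega}_{\setminus j,j})=A_j^*\subseteq\widehat{A}_j\subseteq B$; hence $\mb{\Omega}_{B^c,j}=\bd{0}$, i.e. the $\hat{j}$-th column of $\mb{\Omega}_{B^cB}$ vanishes. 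Consequently $\mb{W}$ has a zero $\hat{j}$-th column, which forces $(\mb{\Sigma}_{BB}^{-1})_{\hat{i}\hat{j}}=\mb{\Omega}_{ij}$ and $(\mb{\Sigma}_{BB}^{-1})_{\hat{j}\hat{j}}=\mb{\Omega}_{jj}$, while only the diagonal entry retains a correction, $(\mb{\Sigma}_{BB}^{-1})_{\hat{i}\hat{i}}=\mb{\Omega}_{ii}-W_{\hat{i}\hat{i}}$ with $W_{\hat{i}\hat{i}}=\mb{\Omega}_{i,B^c}\mb{\Omega}_{B^cB^c}^{-1}\mb{\Omega}_{B^c,i}$.

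Plugging these three identities into the closed forms above I would obtain the clean cancellation
\[
\sigma_{ij}^2-\sigma_{i,\widehat{A}_j}^2=W_{\hat{i}\hat{i}}\,\mb{\Omega}_{jj}.
\]
Since $\mb{\Omega}_{B^cB^c}$ is a principal submatrix of the positive definite $\mb{\Omega}$ (assumption \ref{Theta bound}), $\mb{\Omega}_{B^cB^c}^{-1}$ is positive definite and $\mb{\Omega}_{jj}>0$, so $W_{\hat{i}\hat{i}}\ge 0$ and the inequality $\sigma_{i,\widehat{A}_j}^2\le\sigma_{ij}^2$ follows. Equality holds iff $W_{\hat{i}\hat{i}}=0$, i.e. iff $\mb{\Omega}_{B^c,i}=\bd{0}$; since $\supp(\mb{\Omega}_{*i})=A_i^*\cup\{i\}$, this is precisely $(A_i^*\cup\{i\})\cap B^c=\emptyset$, equivalently $(A_i^*\cup\{i\})\setminus\widehat{A}_j^+=\emptyset$, as claimed.

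The main obstacle, such as it is, lies not in any hard estimate but in the bookkeeping of the preceding paragraph: recognizing that $\supp(\mb{\Omega}_{\setminus j,j})=A_j^*$ (hence $\mb{\Omega}_{B^c,j}=\bd{0}$) is what makes the $\hat{j}$-column of the Schur correction $\mb{W}$ vanish, collapsing the cross-term and the $jj$-term to the exact full-matrix entries $\mb{\Omega}_{ij}$ and $\mb{\Omega}_{jj}$. Once that reduction is in place, the positivity of the quadratic form $W_{\hat{i}\hat{i}}$ and the equality characterization are entirely routine.
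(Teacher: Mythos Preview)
Your proof is correct but takes a genuinely different route from the paper. The paper argues at the level of random variables: it decomposes $\epsilon_i=\epsilon_{i\parallel\bd{x}_{\widehat{A}_j^+}}+\epsilon_{i\perp\bd{x}_{\widehat{A}_j^+}}$, notes that $\epsilon_j\in\lspan(\bd{x}_{\widehat{A}_j^+}^\top)$ when $A_j^*\subseteq\widehat{A}_j$, and then uses that under Gaussianity the orthogonal rejection $\epsilon_{i\perp\bd{x}_{\widehat{A}_j^+}}$ is independent of $(\epsilon_{i\parallel\bd{x}_{\widehat{A}_j^+}},\epsilon_j)$; expanding $E[\epsilon_i^2\epsilon_j^2]$ then yields the inequality via the nonnegative extra term $E[\epsilon_{i\perp\bd{x}_{\widehat{A}_j^+}}^2\epsilon_j^2]$, with equality iff $\epsilon_{i\perp\bd{x}_{\widehat{A}_j^+}}=0$ a.s. You instead bypass the random variables entirely and work purely in linear algebra, plugging in the Gaussian closed forms and invoking the single Schur-complement identity $\mb{\Sigma}_{BB}^{-1}=\mb{\Omega}_{BB}-\mb{\Omega}_{BB^c}\mb{\Omega}_{B^cB^c}^{-1}\mb{\Omega}_{B^cB}$. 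Your route is arguably cleaner: the hypothesis $A_j^*\subseteq\widehat{A}_j$ becomes the one vanishing condition $\mb{\Omega}_{B^c,j}=\bd{0}$, which collapses the $\hat{j}$-column of the correction and leaves the difference as the explicit quadratic form $\mb{\Omega}_{i,B^c}\mb{\Omega}_{B^cB^c}^{-1}\mb{\Omega}_{B^c,i}\cdot\mb{\Omega}_{jj}$. The paper's probabilistic argument, on the other hand, makes the specific role of Gaussianity (orthogonality $\Rightarrow$ independence) more visible, which dovetails with the remark and counterexample showing the inequality can fail outside the Gaussian case.
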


\begin{remark}
From Theorem~\ref{sigma_ij compare}, when $\bd{x}$ is Gaussian,
$\sigma_{i,\widehat{A}_j}^2$, the asymptotic variance of undesparsified Nodewise Loreg estimator $\widehat{\mb{\Omega}}_{ij}$,
is dominated by
 $\sigma_{ij}^2$, the asymptotic variance
shared by both the desparsified Nodewise Loreg and desparsified Nodewise Lasso estimators $\widehat{\mb{T}}_{ij}$.
 This suggests a potential for more powerful statistical inference, 
 such as discovering nonzero $\mb{\Omega}_{ij}$ 
through  hypothesis testing,
when $\widehat{\mb{\Omega}}_{ij}$ and $\widehat{\mb{T}}_{ij}$ are very close.
It should be noted that
Theorem~\ref{sigma_ij compare} may not hold when $\bd{x}$ is non-Gaussian.
A counterexample is provided  in Supplementary Materials. 
Nonetheless,  in many practical applications, such as genomic data analysis, non-Gaussian data are often transformed to approximate Gaussian distributions \citep{box1964analysis,durbin2002variance,feng2016note}, 
potentially retaining the advantage when
 $\sigma_{i,\widehat{A}_j}^2$ is smaller.

\end{remark}

\section{Simulation Studies}\label{s: simulations}

\subsection{Simulation settings}
We numerically compare our Nodewise Loreg with Nodewise Lasso, CLIME,
and GLasso.
The following four graph models are considered for the precision matrix $\mb{\Omega}$.

\begin{itemize}[leftmargin=*, itemsep=0pt]
\item \textbf{Band graph}: $\mb{\Omega}_{ij}=I(|i-j|=0)+0.5I(|i-j|=1)+0.3I(|i-j|=2)$.
 The resulting graph has $2p-3$ edges. 


\item \textbf{Random graph}: 
We add an edge between each pair of the $p$ nodes with probability $4/p$ independently.
The resulting graph has approximately $2(p-1)$ edges. 
Once the graph is obtained, 
we construct the adjacency matrix $\mb{A}$
and generate the precision matrix
\begin{align} \label{precision}
    \mb{\Omega} = \mb{A}+(|\lambda_{\min}(\mb{A})|+0.1)\mb{I}_{p\times p}.
\end{align}

\item  \textbf{Hub graph}: The $p$ nodes are evenly partitioned into $p/10$ disjoint groups. 
Within each group, one node is selected as the hub and we add an edge between the hub and every other node. The resulting graph has
$p - p/10$ edges. $\mb{\Omega}$ is generated using~\eqref{precision}.

\item  \textbf{Cluster graph}: 
The $p$ nodes are evenly partitioned into $p/10$ disjoint groups. 
Within each group, we add an edge between each pair of the 10 nodes with probability 0.6 independently.
The resulting graph has approximately
 $2.7p$ edges.  $\mb{\Omega}$ is generated by~\eqref{precision}.\end{itemize}

For each graph model, the precision matrix $\mb{\Omega}$ is fixed across all replications within the same simulation setting. Similar graph models have been considered in 
the literature \citep{TigerLiu17,pmlr-v51-wang16a,zhao2012huge}.
We generate the data matrix $\mb{X}\in\mathbb{R}^{n\times p}$,  consisting of $n$ rows that are i.i.d. copies of a $p$-dimensional random vector $\bd{x}$.
We consider the following Gaussian and sub-Gaussian cases for $\bd{x}$, also considered in \citet{Jank17}.

\begin{itemize}[leftmargin=*, itemsep=0pt]
\item {\bf Gaussian case}: $\bd{x}\sim \mathcal{N}(\bd{0},\mb{\Omega}^{-1})$.

\item {\bf Sub-Gaussian case}: Let $\bd{u}$ be a vector consisting of $p$ i.i.d. entries from a continuous uniform distribution on the interval $[-\sqrt{3},\sqrt{3}]$.
Generate $\bd{x}=\mb{\Omega}^{-1/2}\bd{u}$.
\end{itemize}

We mainly consider the four settings of sample size and variable dimension: $(n,p)\in \{(200,200),$ $(200,400),(400,200),(400,400)\}$. For each simulation setting, we conduct $100$ independent replications to calculate evaluation metrics. For figures assessing asymptotic normality, we increase the number of replications to $400$ and add cases of $n=800$ to ensure a clearer comparison of the histogram with the standard Gaussian curve.

We implement our Nodewise Loreg by invoking C++ subroutines through R for the SDAR algorithm. For the implementation of Nodewise Lasso, CLIME, and GLasso, we utilize the R packages \textit{glmnet}~\citep{glmnet2022}, \textit{flare}~\citep{flare2022}, and \textit{glasso}~\citep{GLasso2019}, which are coded in C++, C, and Fortran, respectively.
For our Nodewise Loreg, we select tuning parameters $\{T_j\}_{j=1}^p$ by minimizing  the HBIC~\eqref{HBIC general} as described in~\eqref{HBIC: Tj*}  with the maximum values $\{T_{\max,j}\}_{j=1}^p$ all set to  $20$.
For Nodewise Lasso,  tuning parameters $\{\lambda_j\}_{j=1}^p$ are also selected
by minimizing  the HBIC~\eqref{HBIC general} in which plugging their corresponding estimated active sets.
The candidate values for  $\{\lambda_j\}_{j=1}^p$ range from 0.02 to 2 with 
20 equal logarithmic spaced values.
For both CLIME and GLasso,
we select the optimal value of their respective tuning parameter $\lambda$ by minimizing the BIC criterion \citep{Yuan07}:
  \[
  \BIC(\lambda)=-\log |\widehat{\mb{\Omega}}(\lambda)|+\text{tr}(\widehat{\mb{\Omega}}(\lambda)\widehat{\mb{\Sigma}})+\frac{\log n}{n}\sum_{i\leq j}I([\widehat{\mb{\Omega}}(\lambda)]_{ij}),
  \]
 where $\widehat{\mb{\Omega}}(\lambda)$ is the estimated precision matrix using tuning parameter $\lambda$, and the candidate values of $\lambda$ is the same as those for Nodewise Lasso.


To utilize the asymptotic normality of  Nodewise Loreg and Nodewise Lasso,
we also consider to further threshold their estimates by multiple testing. 
Denote the null and alternative hypotheses at entry $(i,j)$ by $H_{ij,0}$: $\mb{\Omega}_{ij}=0$
and $H_{ij,1}$: $\mb{\Omega}_{ij}\ne 0$, respectively.
Let $S_L(\mb{M})$ denote the support set of  the off-diagonal lower triangular part of a square matrix~$\mb{M}$.
Define
$\mathcal{T}(\mb{M}_1|Z_0(\mb{M}_2),S_L(\mb{M}_3))$
as the thresholded version of a symmetric matrix $\mb{M}_1$ 
via multiple testing for $H_{ij,0}$ against $H_{ij,1}$ at all entries $(i,j)\in S_L(\mb{M}_3)$
based on the $Z$-score of $(\mb{M}_2)_{ij}$ under $H_{ij,0}$ as per the asymptotic normality
of Nodewise Loreg or Nodewise Lasso, where
$(\mb{M}_1)_{ij}=(\mb{M}_1)_{ji}$ remain unchanged  
if $(i,j)\in S_L(\mb{M}_3)$ with rejected $H_{ij,0}$ or if $i=j$,
and all other entries $(i,j)\in [p]\times [p]$ are set to zero.
We conduct multiple testing using the false discovery rate (FDR) control method,
AdaptZ~\citep{sun2007oracle},
which under certain conditions is optimal in minimizing the false nondiscovery rate while controlling FDR.
The nominal FDR level is set to $0.05$. 
We consider the four thresholded estimators
$\mathcal{T}(\widehat{\mb{\Omega}}^{\text{S}}|Z_0(\widehat{\mb{\Omega}}^\text{US}),S_L(\widehat{\mb{\Omega}}^\text{S}))$, 
$\mathcal{T}(\widehat{\mb{\Omega}}^{\text{S}}|Z_0(\widehat{\mb{T}}),S_L(\widehat{\mb{\Omega}}^{\text{S}}))$,  
$\mathcal{T}(\widehat{\mb{T}}|Z_0(\widehat{\mb{T}}),S_L(\widehat{\mb{\Omega}}^{\text{S}}))$,  
and 
$\mathcal{T}(\widehat{\mb{T}}|Z_0(\widehat{\mb{T}}),S_L(\widehat{\mb{T}}))$. 
The asymptotic variances 
are estimated
 for 
the undesparsified Nodewise Loreg estimator
using 
\eqref{sigma_i,A_j, general} 
for sub-Gaussian cases
and \eqref{sigma_i,A_j, Gaussian}
for Gaussian cases,
and
for desparsified Nodewise Loreg
and desparsified Nodewise Lasso estimators 
using \eqref{sigma_ij, general} for sub-Gaussian cases and \eqref{sigma_ij, Gaussian}
for Gaussian cases.

\subsection{Results on matrix norm losses}

We evaluate matrix losses using \(\|\widehat{\mathbf{\Omega}} - \mathbf{\Omega}\|\) in the \(L_1\), spectral, Frobenius, and max norms for each method. Results based on 100 replications are summarized in Table~\ref{tab: norm loss, band Gaussian} and Tables~\ref{tab: norm loss, other Gaussian} and \ref{tab: norm loss, subGaussian} in the Supplementary Materials. 

The undesparsified Nodewise Loreg estimator \(\widehat{\mathbf{\Omega}}^{\text{S}}\) substantially outperforms the undesparsified Nodewise Lasso's \(\widehat{\mathbf{\Omega}}^{\text{S}}\) across all settings and norms, except for the max norm in Gaussian band graph settings with \(n=200\) and  in the sub-Gaussian band graph setting with \((n, p) = (200, 400)\). In these exceptions, undesparsified Nodewise Loreg surpasses
when \(n\) increases to 400.

The desparsified Nodewise Loreg estimator \(\widehat{\mathbf{T}}\) significantly underperforms its undesparsified counterpart \(\widehat{\mathbf{\Omega}}^{\text{S}}\) in the \(L_1\), spectral, and Frobenius norms, but shows comparable results in the max norm. Similar trends are observed for Nodewise Lasso, except for improvements in the spectral norm under cluster graph settings and in the max norm. The desparsified estimator \(\widehat{\mathbf{T}}\) is not sparse, leading to error accumulation over many zero entries, making it less practical.

Thresholding \(\widehat{\mathbf{T}}\) via multiple testing for Nodewise Loreg is also not recommended, since 
$\mathcal{T}(\widehat{\mb{T}}|Z_0(\widehat{\mb{T}}),S_L(\widehat{\mb{\Omega}}^{\text{S}}))$ and 
$\mathcal{T}(\widehat{\mb{T}}|Z_0(\widehat{\mb{T}}),S_L(\widehat{\mb{T}}))$ 
exhibit significantly inferior performance 
compared to  
$\widehat{\mb{\Omega}}^{\text{S}}$.
Thresholding \(\widehat{\mathbf{\Omega}}^{\text{S}}\) based on the asymptotic normality of   \(\widehat{\mathbf{\Omega}}^{\text{US}}\) or \(\widehat{\mathbf{T}}\) 
also shows no significant improvement in matrix norm losses,
as 
$\widehat{\mb{\Omega}}^{\text{S}}$,
$\mathcal{T}(\widehat{\mb{\Omega}}^{\text{S}}|Z_0(\widehat{\mb{\Omega}}^\text{US}),S_L(\widehat{\mb{\Omega}}^\text{S}))$, and 
$\mathcal{T}(\widehat{\mb{\Omega}}^{\text{S}}|Z_0(\widehat{\mb{T}}),S_L(\widehat{\mb{\Omega}}^{\text{S}}))$ have similarly good results.
This is due to their similar performance in support recovery, as shown in the next subsection.

Compared
with Nodewise Lasso's best-performing estimator 
$\mathcal{T}(\widehat{\mb{T}}|Z_0(\widehat{\mb{T}}),S_L(\widehat{\mb{T}}))$,
Nodewise Loreg's 
$\mathcal{T}(\widehat{\mb{\Omega}}^{\text{S}}|Z_0(\widehat{\mb{\Omega}}^\text{US}),S_L(\widehat{\mb{\Omega}}^\text{S}))$
shows superior performance 
in the $L_1$, spectral, and Frobenius norms
under all band and cluster graph settings,
as well as nearly all random and hub graph settings with $n=400$, 
achieves better and comparable max norm losses under all graph settings when $n=400$,
and generally has small differences under the other settings. 
Both estimators generally have significant smaller matrix norm losses than CLIME and GLasso estimators.

 \begin{table}[th!]
 \renewcommand\arraystretch{1.3}
 \begin{center}
\resizebox{\textwidth}{!}{
\begin{tabular}{cclcccc}
\toprule
Gaussian &   &  & $L_1$ norm& Spectral norm &Frobenius norm& Max norm  \\
Graph&   $p$& Method & $n=200/n=400$& $n=200/n=400$ &$n=200/n=400$& $n=200/n=400$  \\
\midrule

\multirow{26.5}{*}{Band}&\multirow{13}{*}{200} &
$L_0{:}~ \widehat{\mb{\Omega}}^{\text{S}}$& 1.168(0.192)/{\bf 0.637(0.096)}  & 0.834(0.154)/{\bf 0.453(0.075)} & 3.299(0.180)/1.832(0.093) & 0.511(0.089)/0.269(0.045)  \\
\multirow{13}{*}{}& \multirow{13}{*}{}& $L_0{:}~ \mathcal{T}(\widehat{\mb{\Omega}}^{\text{S}}|Z_0(\widehat{\mb{\Omega}}^\text{US}),S_L(\widehat{\mb{\Omega}}^\text{S}))$& 1.163(0.193)/0.637(0.097) & {\bf 0.832(0.154)/0.453(0.075)} &{\bf  3.295(0.179)/1.830(0.092)} & 0.511(0.089)/0.272(0.045)   \\
\multirow{13}{*}{}& \multirow{13}{*}{}& $L_0{:}~  \mathcal{T}(\widehat{\mb{\Omega}}^{\text{S}}|Z_0(\widehat{\mb{T}}),S_L(\widehat{\mb{\Omega}}^{\text{S}}))$&{\bf 1.162(0.189)}/0.638(0.098) & 0.833(0.154)/0.455(0.074) & 3.303(0.181)/1.831(0.093) & 0.511(0.089)/0.274(0.045)\\
\multirow{13}{*}{}& \multirow{13}{*}{}& $L_0{:}~  \mathcal{T}(\widehat{\mb{T}}|Z_0(\widehat{\mb{T}}),S_L(\widehat{\mb{\Omega}}^{\text{S}}))$& 1.633(0.337)/0.804(0.142) & 1.100(0.205)/0.559(0.092) & 3.979(0.249)/2.097(0.123) & 0.514(0.089)/0.277(0.043) \\
\multirow{13}{*}{}& \multirow{13}{*}{}& $L_0{:}~  \mathcal{T}(\widehat{\mb{T}}|Z_0(\widehat{\mb{T}}),S_L(\widehat{\mb{T}}))$& 1.945(0.476)/0.948(0.149) & 1.179(0.221)/0.596(0.088) & 4.585(0.260)/2.442(0.150) & 0.514(0.089)/0.288(0.039) \\
\multirow{13}{*}{}& \multirow{13}{*}{}& $L_0{:}~  \widehat{\mb{T}}$& 16.566(0.992)/10.373(0.407) & 3.375(0.139)/2.211(0.088) & 15.645(0.250)/10.386(0.129) & 0.514(0.089)/0.269(0.042) \\
\multirow{13}{*}{}& \multirow{13}{*}{}& $L_1{:}~ \widehat{\mb{\Omega}}^{\text{S}}$& 2.010(0.097)/1.332(0.058) & 1.522(0.065)/1.052(0.034) & 8.248(0.164)/5.772(0.111) & 0.497(0.018)/0.338(0.017) \\ 
\multirow{13}{*}{}& \multirow{13}{*}{}& $L_1{:}~ \mathcal{T}(\widehat{\mb{\Omega}}^{\text{S}}|Z_0(\widehat{\mb{T}}),S_L(\widehat{\mb{\Omega}}^{\text{S}}))$& 2.010(0.097)/1.332(0.058) & 1.522(0.065)/1.053(0.034) & 8.248(0.164)/5.772(0.111) & 0.497(0.018)/0.338(0.017) \\ 
\multirow{13}{*}{}& \multirow{13}{*}{}& $L_1{:}~  \mathcal{T}(\widehat{\mb{T}}|Z_0(\widehat{\mb{T}}),S_L(\widehat{\mb{\Omega}}^{\text{S}}))$& 1.940(0.230)/0.835(0.084) & 1.223(0.153)/0.546(0.046) & 4.791(0.238)/2.559(0.089) & 0.487(0.041)/0.257(0.027) \\
\multirow{13}{*}{}& \multirow{13}{*}{}& $L_1{:}~  \mathcal{T}(\widehat{\mb{T}}|Z_0(\widehat{\mb{T}}),S_L(\widehat{\mb{T}}))$& 1.328(0.145)/0.673(0.084) & 0.915(0.093)/0.493(0.051) & 3.976(0.184)/2.158(0.092) &{\bf 0.445(0.044)}/0.267(0.031) \\
\multirow{13}{*}{}& \multirow{13}{*}{}& $L_1{:}~  \widehat{\mb{T}}$& 10.455(0.330)/7.770(0.234) & 2.207(0.060)/1.598(0.042) & 11.123(0.073)/8.324(0.051) &{\bf  0.445(0.044)/0.250(0.023)} \\
\multirow{13}{*}{}& \multirow{13}{*}{}&  CLIME& 1.988(0.065)/1.454(0.074) & 1.666(0.053)/1.156(0.069) & 9.986(0.354)/6.609(0.454) & 0.534(0.022)/0.382(0.026) \\
\multirow{13}{*}{}& \multirow{13}{*}{}&  GLasso& 2.157(0.022)/1.928(0.020) & 2.021(0.006)/1.768(0.007) & 12.801(0.016)/10.958(0.023) & 0.614(0.008)/0.524(0.009) \\
\cmidrule{2-7}

\multirow{13}{*}{}&\multirow{13}{*}{400}& $L_0{:}~ \widehat{\mb{\Omega}}^{\text{S}}$& 1.384(0.187)/0.707(0.104)  &{\bf 0.972(0.136)/0.508(0.079)} & 5.052(0.208)/2.652(0.088) & 0.661(0.122)/0.301(0.039)  \\
\multirow{13}{*}{}& \multirow{13}{*}{}& $L_0{:}~ \mathcal{T}(\widehat{\mb{\Omega}}^{\text{S}}|Z_0(\widehat{\mb{\Omega}}^\text{US}),S_L(\widehat{\mb{\Omega}}^\text{S}))$& 1.380(0.183)/0.706(0.104) & {\bf 0.972(0.136)/0.508(0.079)} &{\bf  5.048(0.208)/2.647(0.089) }& 0.661(0.122)/0.301(0.039)  \\
\multirow{13}{*}{}& \multirow{13}{*}{}& $L_0{:}~  \mathcal{T}(\widehat{\mb{\Omega}}^{\text{S}}|Z_0(\widehat{\mb{T}}),S_L(\widehat{\mb{\Omega}}^{\text{S}}))$&{\bf  1.379(0.185)/0.705(0.106)} & {\bf 0.972(0.136)/0.508(0.079)} & 5.053(0.206)/2.650(0.090) & 0.661(0.122)/0.302(0.039) \\
\multirow{13}{*}{}& \multirow{13}{*}{}& $L_0{:}~  \mathcal{T}(\widehat{\mb{T}}|Z_0(\widehat{\mb{T}}),S_L(\widehat{\mb{\Omega}}^{\text{S}}))$& 2.101(0.352)/0.919(0.134) & 1.358(0.203)/0.639(0.103) & 6.258(0.278)/3.081(0.118) & 0.663(0.122)/0.304(0.037) \\
\multirow{13}{*}{}& \multirow{13}{*}{}& $L_0{:}~  \mathcal{T}(\widehat{\mb{T}}|Z_0(\widehat{\mb{T}}),S_L(\widehat{\mb{T}}))$& 2.572(0.559)/1.123(0.163) & 1.439(0.211)/0.679(0.097) & 7.227(0.289)/3.611(0.138) & 0.663(0.122)/0.312(0.029) \\
\multirow{13}{*}{}& \multirow{13}{*}{}& $L_0{:}~  \widehat{\mb{T}}$& 33.504(1.755)/20.077(0.516) & 5.585(0.186)/3.516(0.111) & 31.403(0.313)/20.751(0.153) & 0.663(0.122)/0.298(0.039) \\
\multirow{13}{*}{}& \multirow{13}{*}{}& $L_1{:}~ \widehat{\mb{\Omega}}^{\text{S}}$& 2.131(0.024)/1.423(0.061) & 1.718(0.047)/1.158(0.031) & 13.247(0.191)/9.034(0.106) & 0.532(0.022)/0.363(0.018) \\ 
\multirow{13}{*}{}& \multirow{13}{*}{}& $L_1{:}~ \mathcal{T}(\widehat{\mb{\Omega}}^{\text{S}}|Z_0(\widehat{\mb{T}}),S_L(\widehat{\mb{\Omega}}^{\text{S}}))$& 2.131(0.024)/1.423(0.061) & 1.718(0.047)/1.158(0.031) & 13.247(0.191)/9.034(0.106) & 0.532(0.022)/0.363(0.018) \\ 
\multirow{13}{*}{}& \multirow{13}{*}{}& $L_1{:}~  \mathcal{T}(\widehat{\mb{T}}|Z_0(\widehat{\mb{T}}),S_L(\widehat{\mb{\Omega}}^{\text{S}}))$& 2.150(0.064)/0.944(0.088) & 1.549(0.097)/0.628(0.047) & 8.771(0.356)/3.948(0.100) & 0.532(0.022)/0.290(0.020) \\
\multirow{13}{*}{}& \multirow{13}{*}{}& $L_1{:}~  \mathcal{T}(\widehat{\mb{T}}|Z_0(\widehat{\mb{T}}),S_L(\widehat{\mb{T}}))$& 1.676(0.145)/0.773(0.078) & 1.160(0.086)/0.576(0.051) & 6.984(0.232)/3.416(0.104) &{\bf 0.531(0.024)}/0.296(0.017) \\
\multirow{13}{*}{}& \multirow{13}{*}{}& $L_1{:}~  \widehat{\mb{T}}$& 19.481(0.582)/14.731(0.302) & 3.471(0.080)/2.417(0.042) & 20.929(0.101)/16.087(0.061) &{\bf  0.531(0.024)/0.280(0.021)} \\
\multirow{13}{*}{}& \multirow{13}{*}{}&  CLIME& 1.999(0.055)/1.645(0.060) & 1.674(0.021)/1.358(0.021) & 14.186(0.091)/11.221(0.080) & 0.546(0.015)/0.443(0.015) \\
\multirow{13}{*}{}& \multirow{13}{*}{}&  GLasso& 2.201(0.021)/2.009(0.024) & 2.025(0.005)/1.848(0.067) & 18.130(0.018)/16.286(0.691) & 0.620(0.008)/0.550(0.022) \\
\bottomrule
\end{tabular}}
\caption{Average (standard deviation) of each matrix norm loss over 100 simulation replications under Gaussian band graph settings.
The results for other (sub-)Gaussian graph settings are given in Tables~\ref{tab: norm loss, other Gaussian} and \ref{tab: norm loss, subGaussian} in Supplementary Materials.}
\label{tab: norm loss, band Gaussian}
\end{center}
 \end{table}

\subsection{Results on support recovery}
   To compare the support recovery performance of estimators on off-diagonal entries of $\mb{\Omega}$, we adopt the four evaluation metrics,  precision, sensitivity, specificity, and Matthews correlation coefficient (MCC). They  are defined as follows:
\begin{align*}
&\text{Precision}=\frac{\text{TP}}{\text{TP}+\text{FP}},
\qquad\text{Sensitivity}=\frac{\text{TP}}{\text{TP}+\text{FN}},
\qquad\text{Specificity}=\frac{\text{TN}}{\text{FP}+\text{TN}}, \\
&\text{MCC}=\frac{\text{TP}\times \text{TN}-\text{FP}\times \text{FN}}{\sqrt{(\text{TP}+\text{FP})(\text{TP}+\text{FN})(\text{TN}+\text{FP})(\text{TN}+\text{FN})}},
\end{align*}
 where TP is the number of true positives, TN is the number of true negatives, FP  is  the number of false positives, and FN  is  the number of false negatives.
Table~\ref{tab: supp recover, Gaussian band} and Tables~\ref{tab: supp recover, other Gaussian} and \ref{tab: supp recover, subGaussian} in the Supplementary Materials report the support recovery results based on 100 replications. 
The desparsified estimator \(\widehat{\mathbf{T}}\) for both Nodewise Loreg and Nodewise Lasso is excluded due to its non-sparsity. 

The four Nodewise Loreg estimators, \(\widehat{\mathbf{\Omega}}^{\text{S}}\), \(\mathcal{T}(\widehat{\mathbf{\Omega}}^{\text{S}}|Z_0(\widehat{\mathbf{\Omega}}^\text{US}),S_L(\widehat{\mathbf{\Omega}}^\text{S}))\), \(\mathcal{T}(\widehat{\mathbf{\Omega}}^{\text{S}}|Z_0(\widehat{\mathbf{T}}),S_L(\widehat{\mathbf{\Omega}}^{\text{S}}))\), and \(\mathcal{T}(\widehat{\mathbf{T}}|Z_0(\widehat{\mathbf{T}}),S_L(\widehat{\mathbf{\Omega}}^{\text{S}}))\), show similar performance with slight differences. This similarity is due to \(\widehat{\mathbf{\Omega}}^{\text{S}}\) already having high support recovery, leaving little room for improvement, 
or because
the 
strong assumption \(\max_{j \in [p]} s_j (\log p) / \sqrt{n} = o(1)\), 
required by the asymptotic normalities of
\(\widehat{\mathbf{\Omega}}^{\text{US}}\) and \(\widehat{\mathbf{T}}\) (Theorems~\ref{Corollary: AN} and \ref{thm: normality 2}),
is not ideally met for small \(n\) relative to \(p\).
The other Nodewise Loreg estimator,  \(\mathcal{T}(\widehat{\mathbf{T}}|Z_0(\widehat{\mathbf{T}}),S_L(\widehat{\mathbf{T}}))\), 
which is purely based on the desparisified estimator 
$\widehat{\mb{T}}$,
underperforms the four aforementioned estimators in all settings, except for the cluster graph settings with \(n=200\), where it shows higher MCC and sensitivity as well as lower precision, but is surpassed by the four when \(n\) increases to 400. These results suggest that the desparsified Nodewise Loreg estimator \(\widehat{\mathbf{T}}\) does not outperform its undesparsified counterpart \(\widehat{\mathbf{\Omega}}^{\text{S}}\), despite requiring weaker assumptions for asymptotic normality.

The three Nodewise Lasso estimators,  \(\widehat{\mathbf{\Omega}}^{\text{S}}\), \(\mathcal{T}(\widehat{\mathbf{\Omega}}^{\text{S}}|Z_0(\widehat{\mathbf{T}}),S_L(\widehat{\mathbf{\Omega}}^{\text{S}}))\), and  \(\mathcal{T}(\widehat{\mathbf{T}}|Z_0(\widehat{\mathbf{T}}),S_L(\widehat{\mathbf{\Omega}}^{\text{S}}))\), are inferior to their Nodewise Loreg counterparts, with significantly lower MCC. The other Nodewise Lasso estimator \(\mathcal{T}(\widehat{\mathbf{T}}|Z_0(\widehat{\mathbf{T}}),S_L(\widehat{\mathbf{T}}))\) underperforms the first four Nodewise Loreg estimators by up to 0.152 lower average MCC, except for random graph settings with \(n=200\), where it is still surpassed by or comparable to the  four when \(n\) increases to 400.
The CLIME and GLasso estimators perform worse than the first four Nodewise Loreg estimators, with significantly lower MCC in all settings. Overall, the first four Nodewise Loreg estimators exhibit the best performance in support recovery.

 \begin{table}[th!]
 \renewcommand\arraystretch{1.3}
 \begin{center}
\resizebox{\textwidth}{!}{
\begin{tabular}{cclcccc}
\toprule
Gaussian &   &  & Precision& Sensitivity &Specificity& MCC  \\
Graph&   $p$& Method & $n=200/n=400$& $n=200/n=400$ &$n=200/n=400$& $n=200/n=400$  \\
\midrule

\multirow{22}{*}{Band}&\multirow{10.5}{*}{200} &
$L_0{:}~ \widehat{\mb{\Omega}}^{\text{S}}$& 0.983(0.007)/0.992(0.005)  &{\bf 0.979(0.008)/1.000(0.001)} &{\bf 1.000(0.000)/1.000(0.000)} & 0.980(0.006)/0.996(0.003) \\
\multirow{10.5}{*}{}& \multirow{10.5}{*}{}& $L_0{:}~ \mathcal{T}(\widehat{\mb{\Omega}}^{\text{S}}|Z_0(\widehat{\mb{\Omega}}^\text{US}),S_L(\widehat{\mb{\Omega}}^\text{S}))$&{\bf 0.985(0.007)/0.994(0.005)}  &{\bf 0.979(0.008)/1.000(0.001)} &{\bf 1.000(0.000)/1.000(0.000)} &{\bf 0.981(0.006)/0.997(0.002)}  \\
\multirow{10.5}{*}{}& \multirow{10.5}{*}{}& $L_0{:}~  \mathcal{T}(\widehat{\mb{\Omega}}^{\text{S}}|Z_0(\widehat{\mb{T}}),S_L(\widehat{\mb{\Omega}}^{\text{S}}))$& 0.984(0.006)/{\bf 0.994(0.005)}  & 0.978(0.008)/0.999(0.001) &{\bf 1.000(0.000)/1.000(0.000)} &{\bf 0.981(0.006)/0.997(0.002)} \\
\multirow{10.5}{*}{}& \multirow{10.5}{*}{}& $L_0{:}~  \mathcal{T}(\widehat{\mb{T}}|Z_0(\widehat{\mb{T}}),S_L(\widehat{\mb{\Omega}}^{\text{S}}))$& 0.984(0.006)/{\bf 0.994(0.005)}  & 0.978(0.008)/0.999(0.001) &{\bf 1.000(0.000)/1.000(0.000)} &{\bf 0.981(0.006)/0.997(0.002)} \\
\multirow{10.5}{*}{}& \multirow{10.5}{*}{}& $L_0{:}~  \mathcal{T}(\widehat{\mb{T}}|Z_0(\widehat{\mb{T}}),S_L(\widehat{\mb{T}}))$& 0.931(0.015)/0.931(0.010) & 0.949(0.012)/0.998(0.002) & 0.999(0.000)/0.998(0.000) & 0.938(0.009)/0.963(0.006) \\
\multirow{10.5}{*}{}& \multirow{10.5}{*}{}& $L_1{:}~ \widehat{\mb{\Omega}}^{\text{S}}$& 0.780(0.016)/0.810(0.014) & 0.928(0.016)/{\bf 1.000(0.001)} & 0.995(0.000)/0.995(0.000) & 0.847(0.012)/0.897(0.008) \\ 
\multirow{10.5}{*}{}& \multirow{10.5}{*}{}& $L_1{:}~ \mathcal{T}(\widehat{\mb{\Omega}}^{\text{S}}|Z_0(\widehat{\mb{T}}),S_L(\widehat{\mb{\Omega}}^{\text{S}}))$& 0.782(0.016)/0.811(0.014) & 0.928(0.016)/{\bf 1.000(0.001)} & 0.995(0.000)/0.995(0.000) & 0.848(0.012)/0.898(0.008) \\ 
\multirow{10.5}{*}{}& \multirow{10.5}{*}{}& $L_1{:}~  \mathcal{T}(\widehat{\mb{T}}|Z_0(\widehat{\mb{T}}),S_L(\widehat{\mb{\Omega}}^{\text{S}}))$& 0.782(0.016)/0.811(0.014) & 0.928(0.016)/{\bf 1.000(0.001)} & 0.995(0.000)/0.995(0.000) & 0.848(0.012)/0.898(0.008) \\
\multirow{10.5}{*}{}& \multirow{10.5}{*}{}& $L_1{:}~  \mathcal{T}(\widehat{\mb{T}}|Z_0(\widehat{\mb{T}}),S_L(\widehat{\mb{T}}))$& 0.963(0.009)/0.948(0.002) & 0.966(0.009)/0.999(0.001) & 0.999(0.000)/0.999(0.000) & 0.964(0.007)/0.973(0.001) \\
\multirow{10.5}{*}{}& \multirow{10.5}{*}{}&  CLIME& 0.567(0.043)/0.373(0.077) &	0.963(0.011)/{\bf 1.000(0.001)} &	0.985(0.005)/0.964(0.007) &	0.732(0.032)/0.598(0.058) \\
\multirow{10.5}{*}{}& \multirow{10.5}{*}{}&  GLasso& 0.497(0.015)/0.377(0.011) & 0.932(0.014)/0.999(0.001) & 0.981(0.001)/0.966(0.002) & 0.672(0.011)/0.603(0.009) \\
\cmidrule{2-7}

\multirow{10.5}{*}{}&\multirow{10.5}{*}{400}& $L_0{:}~ \widehat{\mb{\Omega}}^{\text{S}}$& 0.984(0.005)/0.993(0.003)  & 0.969(0.007)/{\bf 1.000(0.001)} &{\bf 1.000(0.000)/1.000(0.000)} & 0.976(0.005)/0.996(0.001)  \\
\multirow{10.5}{*}{}& \multirow{10.5}{*}{}& $L_0{:}~ \mathcal{T}(\widehat{\mb{\Omega}}^{\text{S}}|Z_0(\widehat{\mb{\Omega}}^\text{US}),S_L(\widehat{\mb{\Omega}}^\text{S}))$&{\bf 0.985(0.005)/0.994(0.003)}  & 0.969(0.007)/{\bf 1.000(0.001)} &{\bf 1.000(0.000)/1.000(0.000)} &{\bf 0.977(0.005)/0.997(0.001)} \\
\multirow{10.5}{*}{}& \multirow{10.5}{*}{}& $L_0{:}~  \mathcal{T}(\widehat{\mb{\Omega}}^{\text{S}}|Z_0(\widehat{\mb{T}}),S_L(\widehat{\mb{\Omega}}^{\text{S}}))$&{\bf 0.985(0.005)/0.994(0.003)}  & 0.969(0.007)/{\bf 1.000(0.001)} &{\bf 1.000(0.000)/1.000(0.000)} &{\bf 0.977(0.005)/0.997(0.001)} \\
\multirow{10.5}{*}{}& \multirow{10.5}{*}{}& $L_0{:}~  \mathcal{T}(\widehat{\mb{T}}|Z_0(\widehat{\mb{T}}),S_L(\widehat{\mb{\Omega}}^{\text{S}}))$&{\bf 0.985(0.005)/0.994(0.003)}  & 0.969(0.007)/{\bf 1.000(0.001)} &{\bf 1.000(0.000)/1.000(0.000)} &{\bf 0.977(0.005)/0.997(0.001)} \\
\multirow{10.5}{*}{}& \multirow{10.5}{*}{}& $L_0{:}~  \mathcal{T}(\widehat{\mb{T}}|Z_0(\widehat{\mb{T}}),S_L(\widehat{\mb{T}}))$& 0.930(0.011)/0.936(0.007) & 0.930(0.009)/0.997(0.002) & 0.999(0.000)/0.999(0.000) & 0.929(0.007)/0.966(0.003) \\
\multirow{10.5}{*}{}& \multirow{10.5}{*}{}& $L_1{:}~ \widehat{\mb{\Omega}}^{\text{S}}$& 0.797(0.011)/0.822(0.011) & 0.827(0.022)/0.999(0.001) & 0.998(0.000)/0.998(0.000) & 0.810(0.012)/0.905(0.006) \\ 
\multirow{10.5}{*}{}& \multirow{10.5}{*}{}& $L_1{:}~ \mathcal{T}(\widehat{\mb{\Omega}}^{\text{S}}|Z_0(\widehat{\mb{T}}),S_L(\widehat{\mb{\Omega}}^{\text{S}}))$& 0.798(0.011)/0.823(0.011) & 0.827(0.022)/0.999(0.001) & 0.998(0.000)/0.998(0.000) & 0.810(0.012)/0.906(0.006) \\ 
\multirow{10.5}{*}{}& \multirow{10.5}{*}{}& $L_1{:}~  \mathcal{T}(\widehat{\mb{T}}|Z_0(\widehat{\mb{T}}),S_L(\widehat{\mb{\Omega}}^{\text{S}}))$& 0.798(0.011)/0.823(0.011) & 0.827(0.022)/0.999(0.001) & 0.998(0.000)/0.998(0.000) & 0.810(0.012)/0.906(0.006) \\
\multirow{10.5}{*}{}& \multirow{10.5}{*}{}& $L_1{:}~  \mathcal{T}(\widehat{\mb{T}}|Z_0(\widehat{\mb{T}}),S_L(\widehat{\mb{T}}))$& 0.957(0.008)/0.948(0.001) & 0.945(0.010)/0.998(0.002) &{\bf 1.000(0.000)}/0.999(0.000) & 0.951(0.008)/0.973(0.001) \\
\multirow{10.5}{*}{}& \multirow{10.5}{*}{}&  CLIME& 0.482(0.007)/0.495(0.009) &{\bf 0.970(0.006)/1.000(0.001)} & 0.989(0.000)/0.990(0.000) & 0.680(0.006)/0.700(0.006) \\
\multirow{10.5}{*}{}& \multirow{10.5}{*}{}&  GLasso& 0.391(0.010)/0.402(0.119) & 0.932(0.010)/0.998(0.002) & 0.985(0.001)/0.982(0.009) & 0.599(0.008)/0.621(0.098) \\
\bottomrule
\end{tabular}}
\caption{Average (standard deviation) of each support-recovery metric over 100 simulation replications under Gaussian band graph settings.
The results for other (sub-)Gaussian graph settings are given in Tables~\ref{tab: supp recover, other Gaussian} and
\ref{tab: supp recover, subGaussian} in Supplementary Materials.}
\label{tab: supp recover, Gaussian band}
\end{center}
 \end{table}

\subsection{Results on asymptotic normality}
    
We evaluate the asymptotic normality of Nodewise Loreg in comparison to Nodewise Lasso.
We consider the unsymmetrized Nodewise Loreg estimator $\widehat{\mb{\Omega}}^{\text{US}}$
and the desparsified estimator $\widehat{\mb{T}}$ respectively from Nodewise Loreg and Nodewise Lasso.
Denote $S_{\mb{\Omega}}=\supp(\mb{\Omega})$, $S_{\mb{\Omega}}^c=[p]\times [p] \setminus S_{\mb{\Omega}}$,
and
$\widehat{S}_{\mb{\Omega}}=\bigcap_{m=1}^{M}\supp(\widehat{\mb{\Omega}}^{\text{US},(m)}_{L_0})$, where $\widehat{\mb{\Omega}}^{\text{US},(m)}_{L_0}$ is the unsymmetrized estimate of $\mb{\Omega}$ from Nodewise Loreg 
based on the data from the $m$-th replication.
We consider four evaluation metrics computed for each single entry in a given set.
The average and standard deviation of each evaluation metric  are calculated over all entries in $\widehat{S}_{\mb{\Omega}}$ for all the three estimators, 
and in $S_{\mb{\Omega}}$ and in $S_{\mb{\Omega}}^c$ only for the two desparsified estimators which are applicable there.
For entry $(i,j)$, the four evaluation metrics are defined as follows,
where $\widehat{\sigma}_{\mb{\Omega}_{ij}}^{(m)}$ and $\widehat{\bm \Omega}^{(m)}$ denote the estimates of the asymptotic variance $\sigma_{\mb{\Omega}_{ij}}$ (either $\sigma_{i,\widehat{A}_j}$ or $\sigma_{ij}$ as per method) and the precision matrix $\mb{\Omega}$, respectively, obtained from 
the $m$-th replication.

\begin{itemize}[leftmargin=*, itemsep=0pt]
\item Average length of estimated $100(1-\alpha)\%$ confidence intervals:
\[
\AvgLength_{(i,j)}
=\frac{1}{M} \sum_{m=1}^{M} 2 \Phi^{-1}(1-\alpha / 2) \widehat{\sigma}_{\mb{\Omega}_{ij}}^{(m)} /\sqrt{n},
\]
which estimates the length of the corresponding true confidence interval
\[
\TrueLength_{(i,j)}
= 2 \Phi^{-1}(1-\alpha / 2)\sigma_{\mb{\Omega}_{ij}} /\sqrt{n}.
\]

\item
Coverage rate of estimated $100(1-\alpha)\%$ confidence intervals: 
\begin{align*}
\CovRate_{(i,j)}=\frac{1}{M}\sum_{m=1}^{M}I\Big(\mb{\Omega}_{ij}\in \Big[&\widehat{\mb{\Omega}}_{ij}^{(m)}-\Phi^{-1}(1-\alpha / 2) \widehat{\sigma}_{\mb{\Omega}_{ij}}^{(m)} /\sqrt{n},\\
&\widehat{\mb{\Omega}}_{ij}^{(m)}+\Phi^{-1}(1-\alpha / 2) \widehat{\sigma}_{\mb{\Omega}_{ij}}^{(m)}/\sqrt{n} \Big]\Big).
\end{align*}

\item Absolute average $Z$-score: 
\[
\AbsAvgZ_{(i,j)}=\Big|\frac{1}{M}\sum_{m=1}^{M}\sqrt{n}(\widehat{\mb{\Omega}}_{ij}^{(m)}-\mb{\Omega}_{ij})/\widehat{\sigma}_{\mb{\Omega}_{ij}}^{(m)}\Big|.
\]

\item Standard deviation of $Z$-scores: 
\[
\SDZ_{(i,j)}=\sd\Big(\big(\sqrt{n}(\widehat{\mb{\Omega}}_{ij}^{(m)}-\mb{\Omega}_{ij})/\widehat{\sigma}_{\mb{\Omega}_{ij}}^{(m)}\big)_{m=1}^{M}\Big).
\]
\end{itemize}

 \begin{table}[b!]
 \renewcommand\arraystretch{1.3}
 \begin{center}
\resizebox{\textwidth}{!}{
\begin{tabular}{cclccccc}
\toprule
Gaussian &   &  & $\TrueLength_{(i,j)\in\widehat{S}_{\mb{\Omega}}}$& $\AvgLength_{(i,j)\in\widehat{S}_{\mb{\Omega}}}$ &$\CovRate_{(i,j)\in\widehat{S}_{\mb{\Omega}}}$& $\AbsAvgZ_{(i,j)\in\widehat{S}_{\mb{\Omega}}}$ & $\SDZ_{(i,j)\in\widehat{S}_{\mb{\Omega}}}$  \\
Graph&   $p$& Method & $n=200/n=400$& $n=200/n=400$ &$n=200/n=400$& $n=200/n=400$ & $n=200/n=400$ \\
\midrule

\multirow{6.5}{*}{Band}&\multirow{3}{*}{200} &
$L_0{:}~ \widehat{\mb{\Omega}}^{\text{US}}$& {\bf 0.322(0.048)/0.209(0.038)}  & {\bf 0.347(0.054)/0.214(0.039)} &{\bf 0.904(0.032)/0.942(0.023)} & {\bf 0.445(0.158)/0.204(0.116)} &	1.088(0.094)/{\bf 1.010(0.071)} \\
\multirow{6.5}{*}{}& \multirow{3}{*}{}& $L_0{:}~ \widehat{\mb{T}}$& 0.336(0.039)/0.225(0.027)  & 0.362(0.043)/0.232(0.028) &	0.883(0.033)/0.938(0.024) &	0.631(0.103)/0.283(0.111) &	{\bf 1.075(0.085)}/1.012(0.073)  \\
\multirow{6.5}{*}{}& \multirow{3}{*}{}& $L_1{:}~ \widehat{\mb{T}}$& 0.336(0.039)/0.225(0.027) & 0.257(0.039)/0.190(0.026) &	0.627(0.097)/0.846(0.088) &	1.536(0.378)/0.673(0.407) &	1.497(0.160)/1.144(0.117) \\ 
\cmidrule{2-8}

\multirow{6.5}{*}{}&\multirow{3}{*}{400}& $L_0{:}~ \widehat{\mb{\Omega}}^{\text{US}}$& {\bf 0.322(0.047)/0.209(0.038)}  & {\bf 0.351(0.054)/0.215(0.040)} &{\bf	0.884(0.036)/0.938(0.024)} &	{\bf 0.505(0.180)/0.230(0.123)} &	1.139(0.098)/{\bf 1.024(0.073)}  \\
\multirow{6.5}{*}{}& \multirow{3}{*}{}& $L_0{:}~ \widehat{\mb{T}}$& 0.337(0.039)/0.225(0.027)  & 0.367(0.043)/0.233(0.028)	& 0.851(0.038)/0.930(0.025) &	0.754(0.127)/0.325(0.115) &	{\bf 1.130(0.089)}/1.031(0.074) \\
\multirow{6.5}{*}{}& \multirow{3}{*}{}& $L_1{:}~ \widehat{\mb{T}}$& 0.337(0.039)/0.225(0.027) & 0.241(0.038)/0.184(0.026) &	0.443(0.107)/0.787(0.129) &	2.344(0.459)/0.916(0.506) &	1.777(0.226)/1.196(0.132) \\ 
\cmidrule{1-8}

\multirow{6.5}{*}{Random}&\multirow{3}{*}{200} &
$L_0{:}~ \widehat{\mb{\Omega}}^{\text{US}}$&{\bf 0.583(0.258)/0.375(0.174)}  & 0.652(0.308)/ {\bf 0.387(0.183)} &{\bf	0.898(0.033)/0.938(0.025)} &{\bf	0.372(0.133)/0.216(0.126)} &	{\bf 1.139(0.094)}/1.026(0.078) \\
\multirow{6.5}{*}{}& \multirow{3}{*}{}& $L_0{:}~ \widehat{\mb{T}}$& 0.624(0.296)/0.433(0.197)  & {\bf 0.659(0.322)}/0.446(0.205) &{\bf	0.898(0.033)}/0.935(0.025) &	0.375(0.138)/0.259(0.114) &	1.140(0.094)/{\bf 1.026(0.074)}  \\
\multirow{6.5}{*}{}& \multirow{3}{*}{}& $L_1{:}~ \widehat{\mb{T}}$& 0.624(0.296)/0.433(0.197) & 0.516(0.208)/0.377(0.153)	& 0.616(0.169)/0.890(0.089) &	1.628(0.686)/0.528(0.391) &	1.531(0.350)/1.059(0.151) \\ 
\cmidrule{2-8}

\multirow{6.5}{*}{}&\multirow{3}{*}{400}& $L_0{:}~ \widehat{\mb{\Omega}}^{\text{US}}$& {\bf 0.585(0.200)/0.371(0.138)}  & 0.667(0.251)/ {\bf 0.384(0.146)} &{\bf 0.874(0.038)/0.936(0.026)} &	{\bf 0.402(0.137)/0.231(0.129)} &	1.212(0.110)/{\bf 1.032(0.076)}  \\
\multirow{6.5}{*}{}& \multirow{3}{*}{}& $L_0{:}~ \widehat{\mb{T}}$& 0.639(0.240)/0.441(0.145)  &  {\bf 0.678(0.261)}/0.455(0.152) &{\bf	0.874(0.038)}/0.932(0.025) &	0.406(0.140)/0.279(0.117) &	{\bf 1.211(0.111)}/1.038(0.074) \\
\multirow{6.5}{*}{}& \multirow{3}{*}{}& $L_1{:}~ \widehat{\mb{T}}$& 0.639(0.240)/0.441(0.145) & 0.509(0.153)/0.375(0.107) &	0.534(0.197)/0.865(0.123) &	2.039(0.875)/0.668(0.484) &	1.609(0.415)/1.053(0.176) \\ 
\cmidrule{1-8}

\multirow{6.5}{*}{Hub}&\multirow{3}{*}{200} &
$L_0{:}~ \widehat{\mb{\Omega}}^{\text{US}}$&{\bf 1.328(1.092)/1.030(0.707)}  & {\bf 1.406(1.212)/1.056(0.736)} &{\bf	0.932(0.027)/0.945(0.022)} &	{\bf 0.270(0.137)/0.160(0.108)} &	{\bf 1.040(0.078)/1.008(0.075)} \\
\multirow{6.5}{*}{}& \multirow{3}{*}{}& $L_0{:}~ \widehat{\mb{T}}$& 1.726(1.063)/1.245(0.662)  & 1.829(1.161)/1.276(0.688) &	0.919(0.028)/0.941(0.023) &	0.356(0.131)/0.206(0.113) &	1.060(0.080)/1.012(0.076)  \\
\multirow{6.5}{*}{}& \multirow{3}{*}{}& $L_1{:}~ \widehat{\mb{T}}$& 1.726(1.063)/1.245(0.662) & 1.333(0.692)/1.020(0.476) &	0.891(0.086)/0.920(0.063) &	0.398(0.395)/0.267(0.297) &	1.124(0.156)/1.050(0.115) \\ 
\cmidrule{2-8}

\multirow{6.5}{*}{}&\multirow{3}{*}{400}& $L_0{:}~ \widehat{\mb{\Omega}}^{\text{US}}$&{\bf 1.297(1.107)/1.028(0.708)}  &{\bf  1.379(1.246)/1.056(0.739) }&{\bf	0.924(0.031)/0.942(0.024)} &	{\bf 0.309(0.144)/0.175(0.106)} &	{\bf 1.053(0.079)/1.019(0.077)}  \\
\multirow{6.5}{*}{}& \multirow{3}{*}{}& $L_0{:}~ \widehat{\mb{T}}$& 1.717(1.090)/1.244(0.664)  & 1.831(1.202)/1.277(0.691) &	0.907(0.033)/0.936(0.025) &	0.408(0.144)/0.226(0.112) &	1.082(0.084)/1.031(0.077) \\
\multirow{6.5}{*}{}& \multirow{3}{*}{}& $L_1{:}~ \widehat{\mb{T}}$& 1.717(1.090)/1.244(0.664) & 1.289(0.675)/1.000(0.461) &	0.863(0.114)/0.909(0.079) &	0.510(0.517)/0.315(0.371) &	1.147(0.168)/1.066(0.127) \\ 
\cmidrule{1-8}

\multirow{6.5}{*}{Cluster}&\multirow{3}{*}{200} &
$L_0{:}~ \widehat{\mb{\Omega}}^{\text{US}}$&{\bf 0.739(0.355)/0.401(0.170)}  & 0.831(0.396)/{\bf 0.413(0.176)} &{\bf	0.849(0.055)/0.941(0.024)} &{\bf	0.473(0.180)/0.191(0.132) }&	{\bf 1.261(0.207)/1.016(0.070) }\\
\multirow{6.5}{*}{}& \multirow{3}{*}{}& $L_0{:}~ \widehat{\mb{T}}$& 0.789(0.373)/0.426(0.175)  &{\bf  0.847(0.413)}/0.441(0.183) &	0.847(0.053)/0.936(0.025) &	0.498(0.174)/0.265(0.123) &{\bf	1.261(0.207)}/1.020(0.070)  \\
\multirow{6.5}{*}{}& \multirow{3}{*}{}& $L_1{:}~ \widehat{\mb{T}}$& 0.789(0.373)/0.426(0.175) & 0.586(0.281)/0.338(0.138) &	0.352(0.279)/0.537(0.280) &	2.908(1.411)/1.967(1.251) &	1.631(0.312)/1.306(0.223) \\ 
\cmidrule{2-8}

\multirow{6.5}{*}{}&\multirow{3}{*}{400}& $L_0{:}~ \widehat{\mb{\Omega}}^{\text{US}}$&{\bf 0.600(0.246)/0.359(0.147)}  & 0.695(0.283)/{\bf 0.371(0.154)} &{\bf	0.809(0.072)/0.938(0.026)} &	{\bf 0.391(0.248)/0.217(0.146)}  &	{\bf 1.431(0.233)/1.021(0.076)} \\
\multirow{6.5}{*}{}& \multirow{3}{*}{}& $L_0{:}~ \widehat{\mb{T}}$& 0.662(0.257)/0.381(0.151)  &{\bf  0.697(0.289)}/0.395(0.158) &{\bf	0.809(0.072)}/0.932(0.027) &	0.392(0.249)/0.294(0.126) &	1.431(0.234)/1.029(0.077) \\
\multirow{6.5}{*}{}& \multirow{3}{*}{}& $L_1{:}~ \widehat{\mb{T}}$& 0.662(0.257)/0.381(0.151) & 0.485(0.183)/0.301(0.118) &	0.228(0.200)/0.527(0.297) &	3.513(1.144)/1.997(1.308) &	1.654(0.288)/1.280(0.232) \\ 
\bottomrule
\end{tabular}}
\caption{Average (standard deviation) of each asymptotic-normality metric over all entries in $\widehat{S}_{\mb{\Omega}}$ under Gaussian settings based on 100 simulation replications.
The results for other \mbox{(sub-)Gaussian} settings on $\widehat{S}_{\mb{\Omega}}$,  $S_{\mb{\Omega}}$ and $S_{\mb{\Omega}}^c$ are given in Tables~\ref{tab: AN S_omega, Gaussian}--\ref{tab: AN S_omega complement, subGaussian} in Supplementary \mbox{Materials.}}
\label{tab: AN S_omega hat, Gaussian}
\end{center}
 \end{table}

Table~\ref{tab: AN S_omega hat, Gaussian} and Tables~\ref{tab: AN S_omega, Gaussian}--\ref{tab: AN S_omega complement, subGaussian} in the Supplementary Materials present the results on asymptotic normality using the four evaluation metrics, based on \(M = 100\) replications and \(\alpha = 0.05\) for constructing confidence intervals.

Specifically, Table~\ref{tab: AN S_omega hat, Gaussian} and Table~\ref{tab: AN S_omega hat, subGaussian} show the results on \(\widehat{S}_{\mathbf{\Omega}}\) under Gaussian and sub-Gaussian settings, respectively.
We observe that the true length of the 95\% confidence interval, \(\TrueLength_{(i,j)}\), for the unsymmetrized Nodewise Loreg estimator \(\widehat{\mathbf{\Omega}}^{\text{US}}\) is, on average, smaller on \(\widehat{S}_{\mathbf{\Omega}}\) than for the desparsified estimator \(\widehat{\mathbf{T}}\) of both Nodewise Loreg and Nodewise Lasso. This result is consistent with Theorem~\ref{sigma_ij compare} for multivariate Gaussian distributions, though the theorem does not universally apply to sub-Gaussian scenarios.

The average length of estimated confidence intervals, \(\AvgLength_{(i,j)}\), from both Nodewise Loreg estimators \(\widehat{\mathbf{\Omega}}^{\text{US}}\) and \(\widehat{\mathbf{T}}\) more accurately approximates the true length, \(\TrueLength_{(i,j)}\), across \(\widehat{S}_{\mathbf{\Omega}}\) compared to the Nodewise Lasso estimator \(\widehat{\mathbf{T}}\). This indicates that our Nodewise Loreg estimators for asymptotic variances \(\sigma_{i,\widehat{A}_j}\) and \(\sigma_{ij}\) are generally closer to their true values on \(\widehat{S}_{\mathbf{\Omega}}\) than the Nodewise Lasso estimator for \(\sigma_{ij}\).

The unsymmetrized Nodewise Loreg estimator \(\widehat{\mathbf{\Omega}}^{\text{US}}\) outperforms the other two estimators in the coverage rate of estimated 95\% confidence intervals, \(\CovRate_{(i,j)}\), on \(\widehat{S}_{\mathbf{\Omega}}\), aligning more closely with the nominal value of 0.95. The desparsified Nodewise Loreg estimator \(\widehat{\mathbf{T}}\) ranks second, while the desparsified Nodewise Lasso estimator \(\widehat{\mathbf{T}}\) significantly underperforms, particularly in cluster graph settings, trailing by over 0.4 on average compared to \(\widehat{\mathbf{\Omega}}^{\text{US}}\).

Regarding the \(Z\)-score, the unsymmetrized Nodewise Loreg estimator \(\widehat{\mathbf{\Omega}}^{\text{US}}\) performs the best in all settings in terms of the absolute average \(Z\)-score \(\AbsAvgZ_{(i,j)}\) being closer to zero on \(\widehat{S}_{\mathbf{\Omega}}\), leading by a large margin in most settings compared to Nodewise Loreg's desparsified estimator \(\widehat{\mathbf{T}}\) and even more significantly against Nodewise Lasso's \(\widehat{\mathbf{T}}\) in all settings. This result suggests that debiasing Nodewise Loreg's \(\widehat{\mathbf{\Omega}}^{\text{US}}\) may be unnecessary. Additionally, with the standard deviation of \(Z\)-scores, \(\SDZ_{(i,j)}\), being closest to one or nearly the best, the \(Z\)-score of Nodewise Loreg's \(\widehat{\mathbf{\Omega}}^{\text{US}}\) more closely follows the standard Gaussian distribution.

Comparative analysis of Nodewise Loreg's desparsified estimator \(\widehat{\mathbf{T}}\) and Nodewise Lasso's \(\widehat{\mathbf{T}}\) based on the results on \(S_{\mathbf{\Omega}}\) and \(S_{\mathbf{\Omega}}^c\) given in Tables~\ref{tab: AN S_omega, Gaussian}, \ref{tab: AN S_omega complement, Gaussian}, \ref{tab: AN S_omega, subGaussian}, and \ref{tab: AN S_omega complement, subGaussian} shows the former's superior performance in most settings.

  \begin{figure}[bh!]
  \caption*{\scriptsize $n=200, p=200$}
  \vspace{-0.43cm}
 \begin{minipage}{0.48\linewidth}
    \begin{minipage}{0.32\linewidth}
        \centering
        \includegraphics[width=\textwidth]{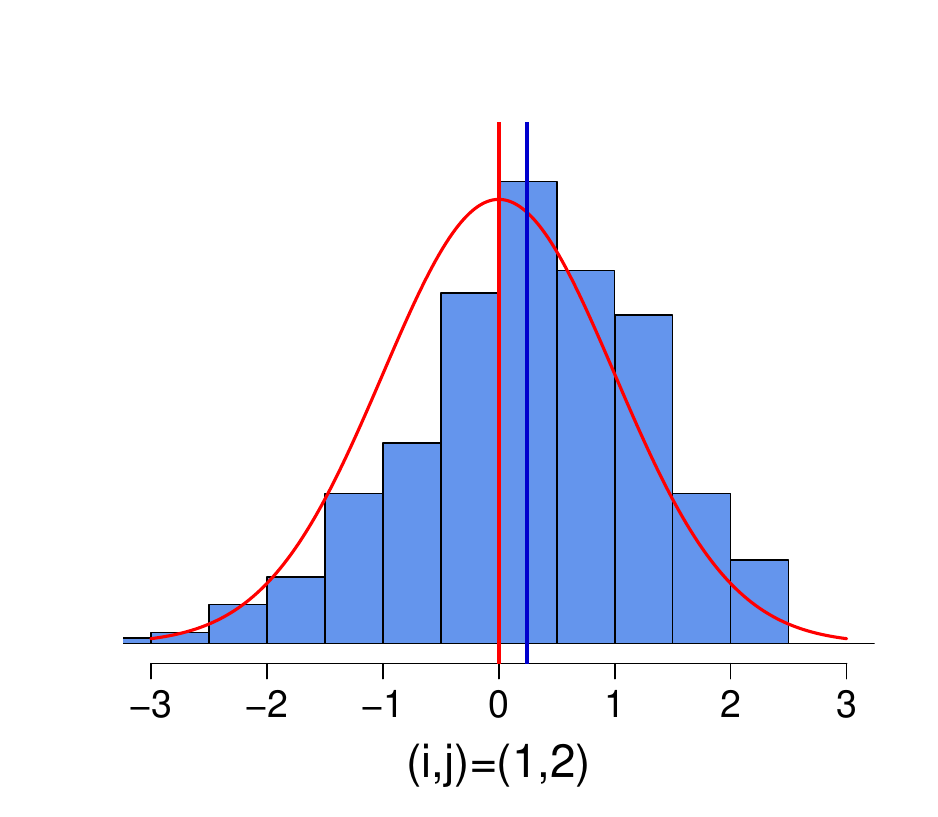}
    \end{minipage}
    \begin{minipage}{0.32\linewidth}
        \centering
        \includegraphics[width=\textwidth]{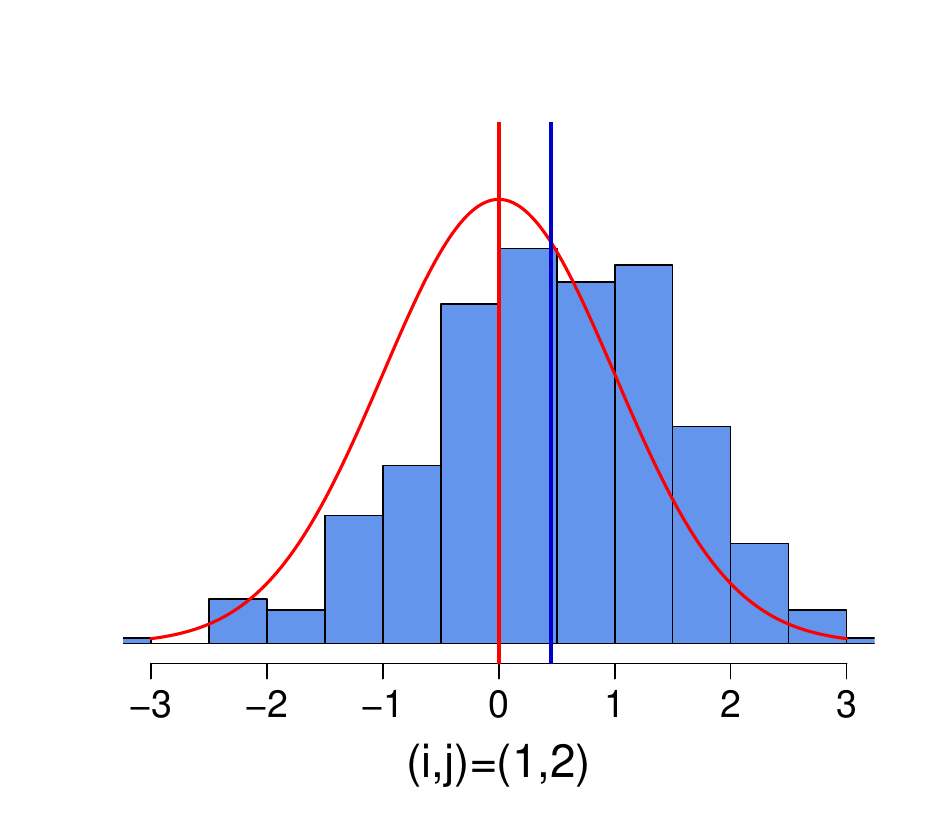}
    \end{minipage}
    \begin{minipage}{0.32\linewidth}
        \centering
        \includegraphics[width=\textwidth]{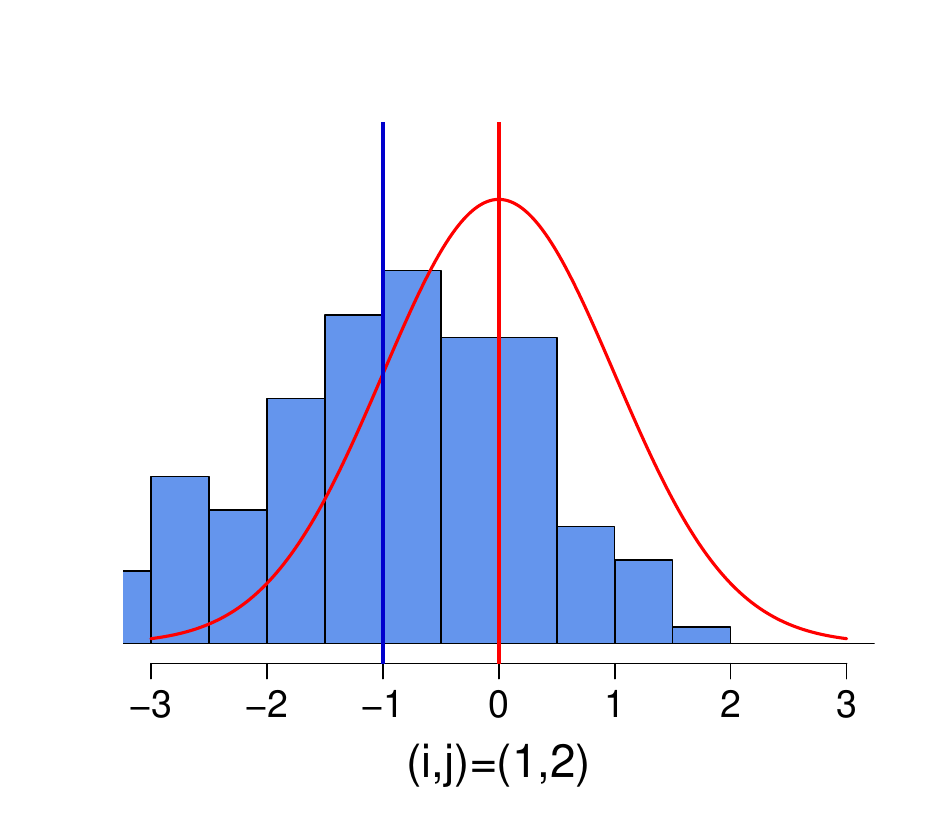}
    \end{minipage}
 \end{minipage}
 \hfill
 \begin{minipage}{0.48\linewidth}
    \begin{minipage}{0.32\linewidth}
        \centering
        \includegraphics[width=\textwidth]{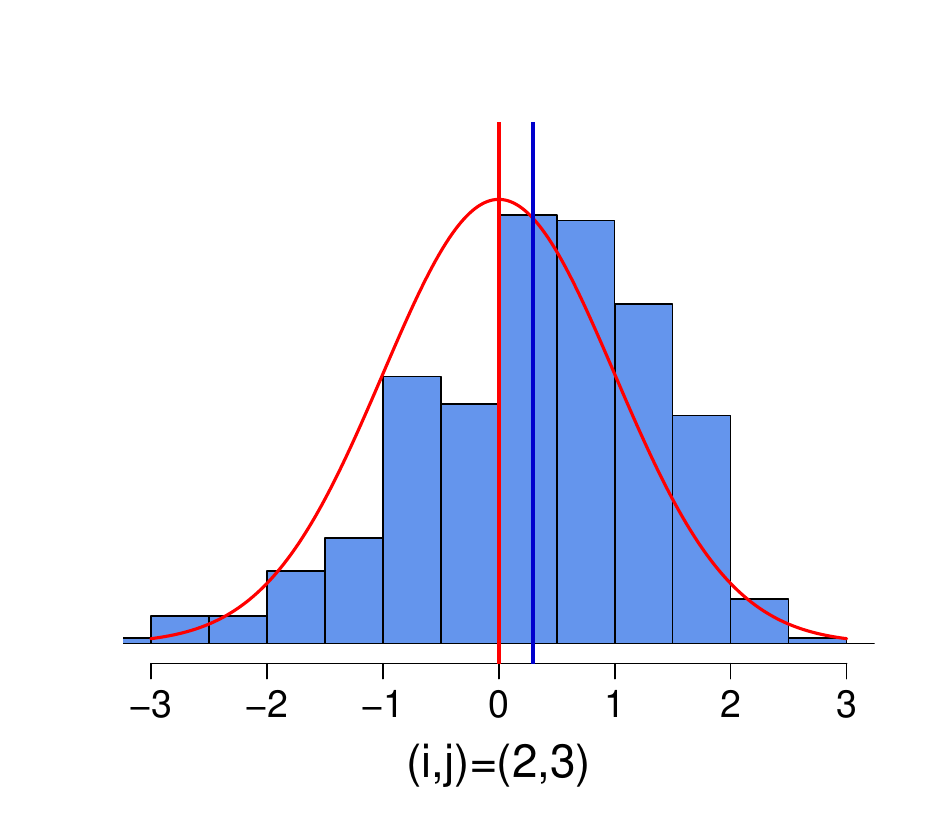}
    \end{minipage}
    \begin{minipage}{0.32\linewidth}
        \centering
        \includegraphics[width=\textwidth]{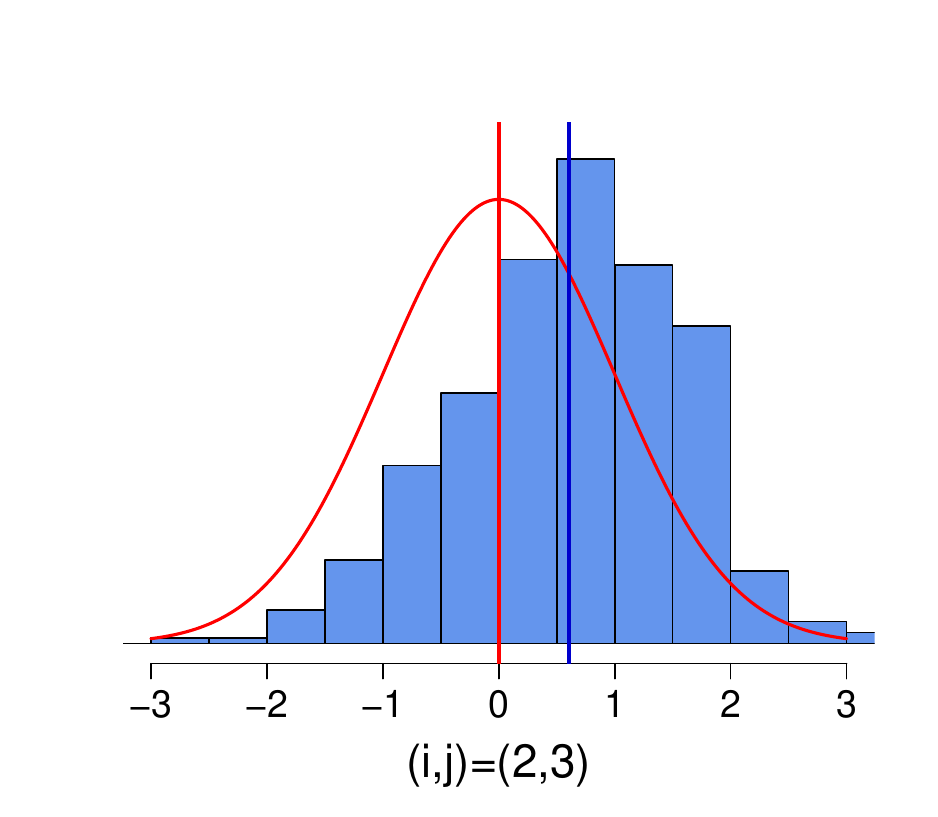}
    \end{minipage}
    \begin{minipage}{0.32\linewidth}
        \centering
        \includegraphics[width=\textwidth]{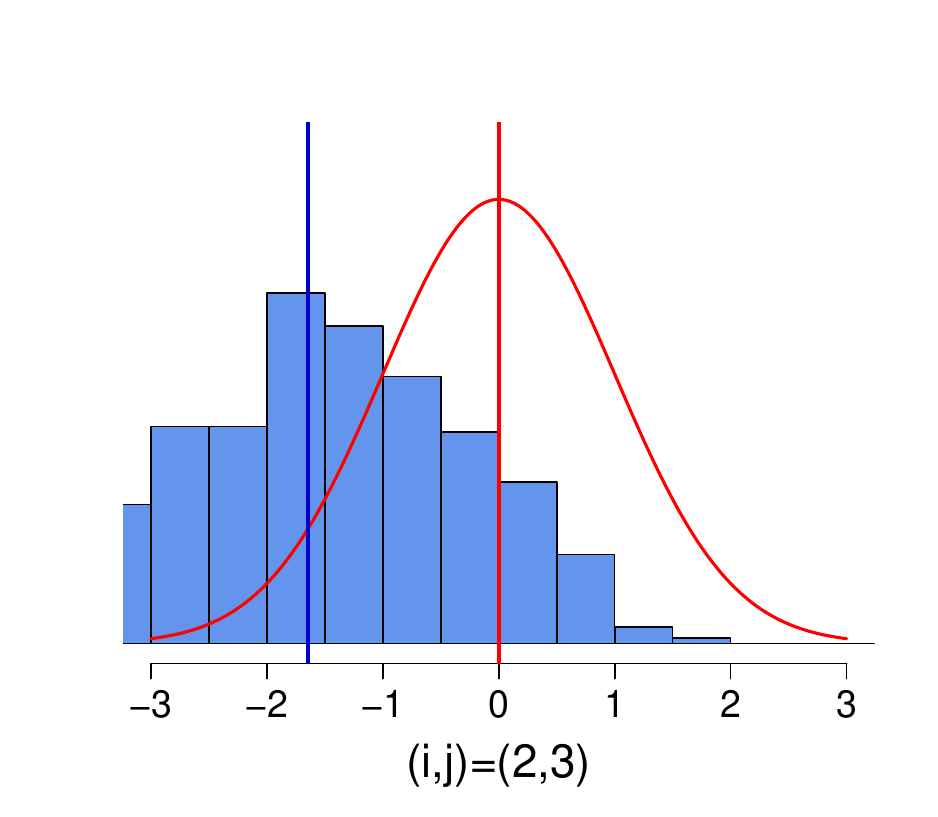}
    \end{minipage}   
 \end{minipage}

  \caption*{\scriptsize $n=400, p=200$}
    \vspace{-0.43cm}
 \begin{minipage}{0.48\linewidth}
    \begin{minipage}{0.32\linewidth}
        \centering
        \includegraphics[width=\textwidth]{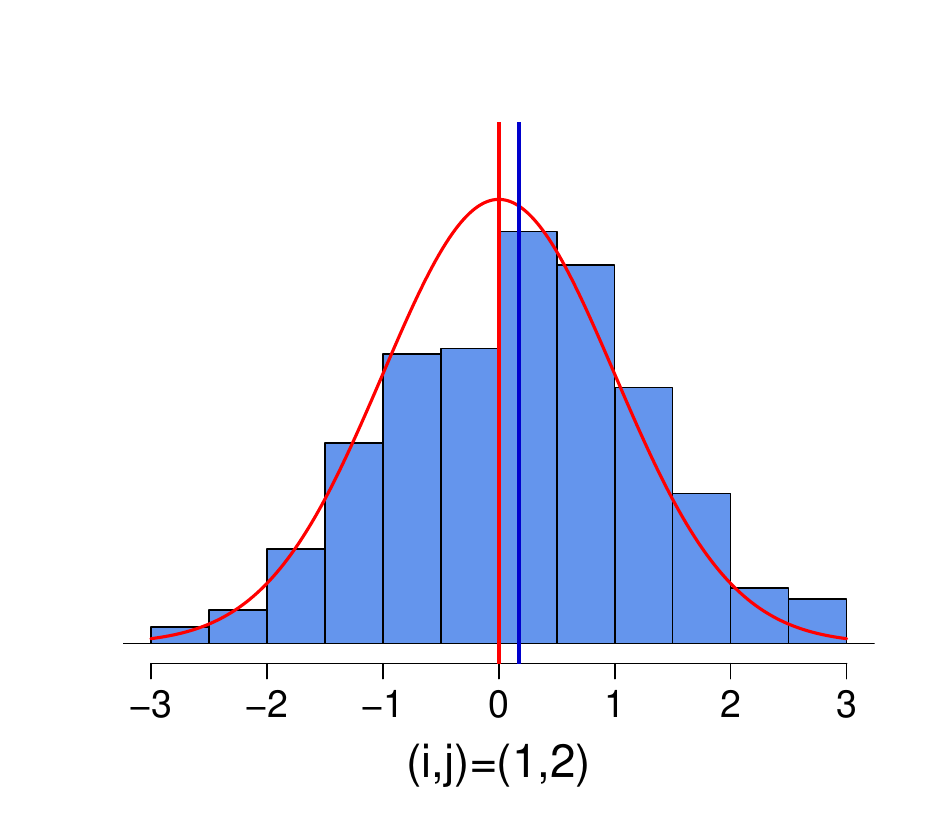}
    \end{minipage}
    \begin{minipage}{0.32\linewidth}
        \centering
        \includegraphics[width=\textwidth]{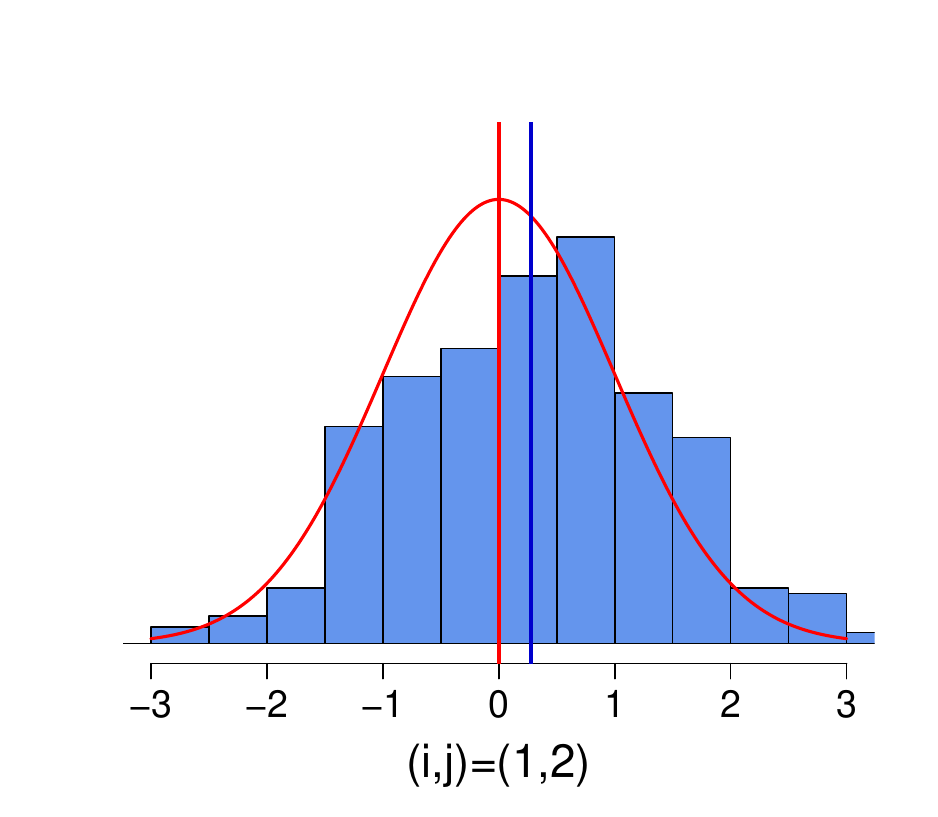}
    \end{minipage}
    \begin{minipage}{0.32\linewidth}
        \centering
        \includegraphics[width=\textwidth]{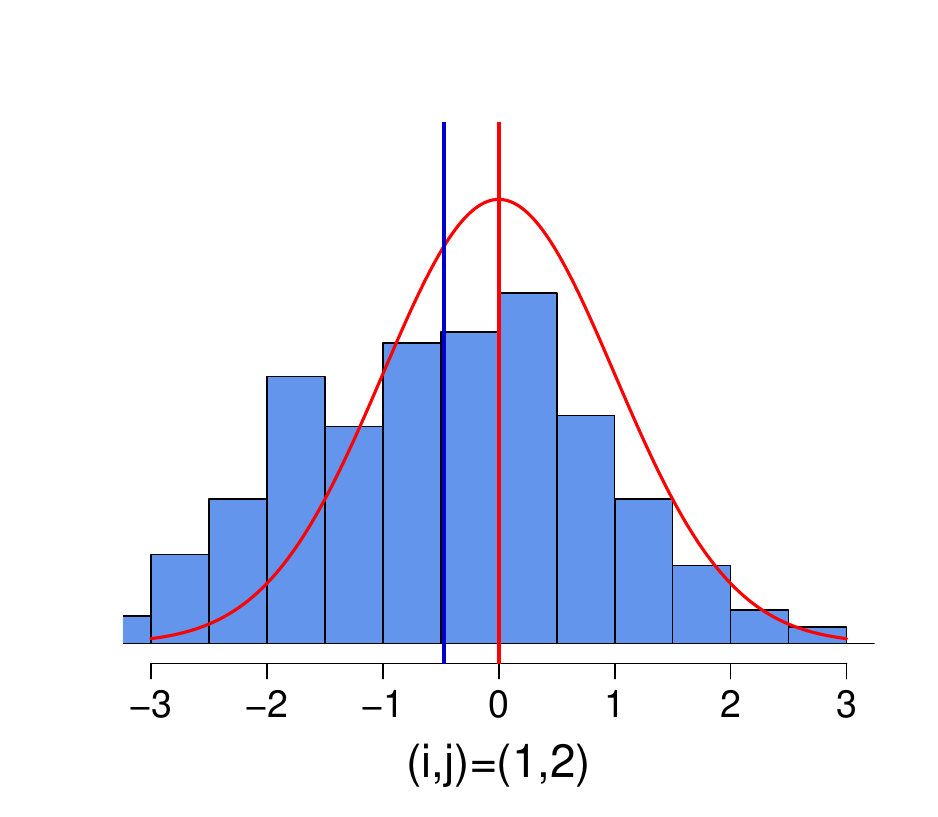}
    \end{minipage}
 \end{minipage}
 \hfill
 \begin{minipage}{0.48\linewidth}
    \begin{minipage}{0.32\linewidth}
        \centering
        \includegraphics[width=\textwidth]{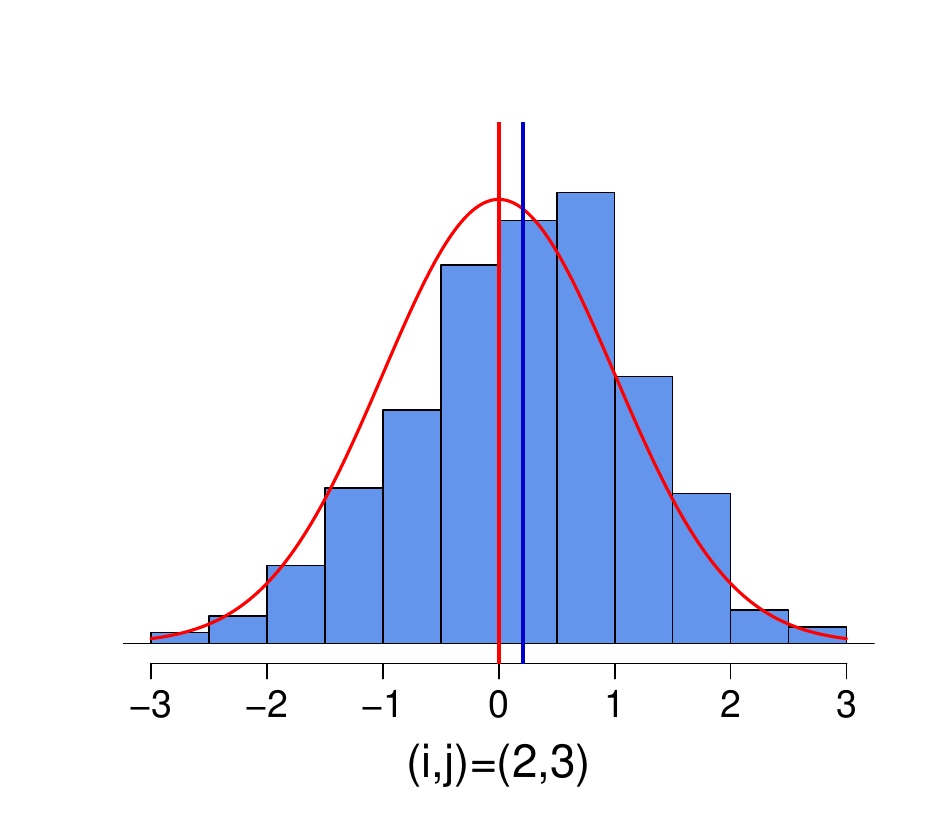}
    \end{minipage}
    \begin{minipage}{0.32\linewidth}
        \centering
        \includegraphics[width=\textwidth]{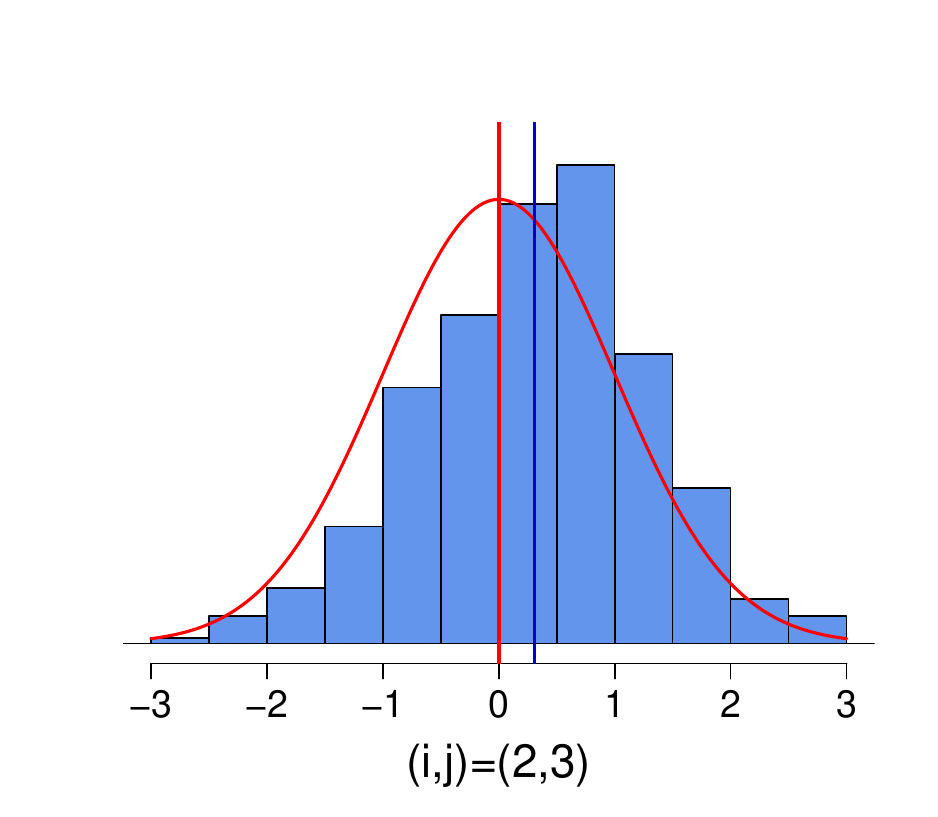}
    \end{minipage}
    \begin{minipage}{0.32\linewidth}
        \centering
        \includegraphics[width=\textwidth]{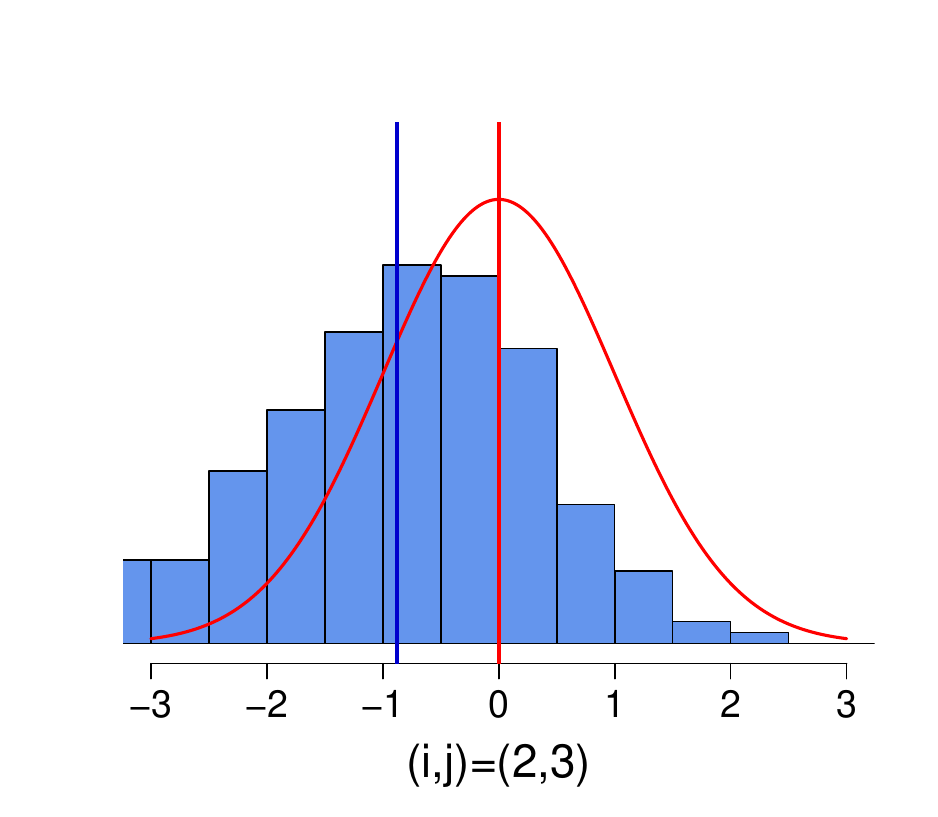}
    \end{minipage}   
 \end{minipage}

  \caption*{\scriptsize $n=800, p=200$}
    \vspace{-0.43cm}
 \begin{minipage}{0.48\linewidth}
    \begin{minipage}{0.32\linewidth}
        \centering
        \includegraphics[width=\textwidth]{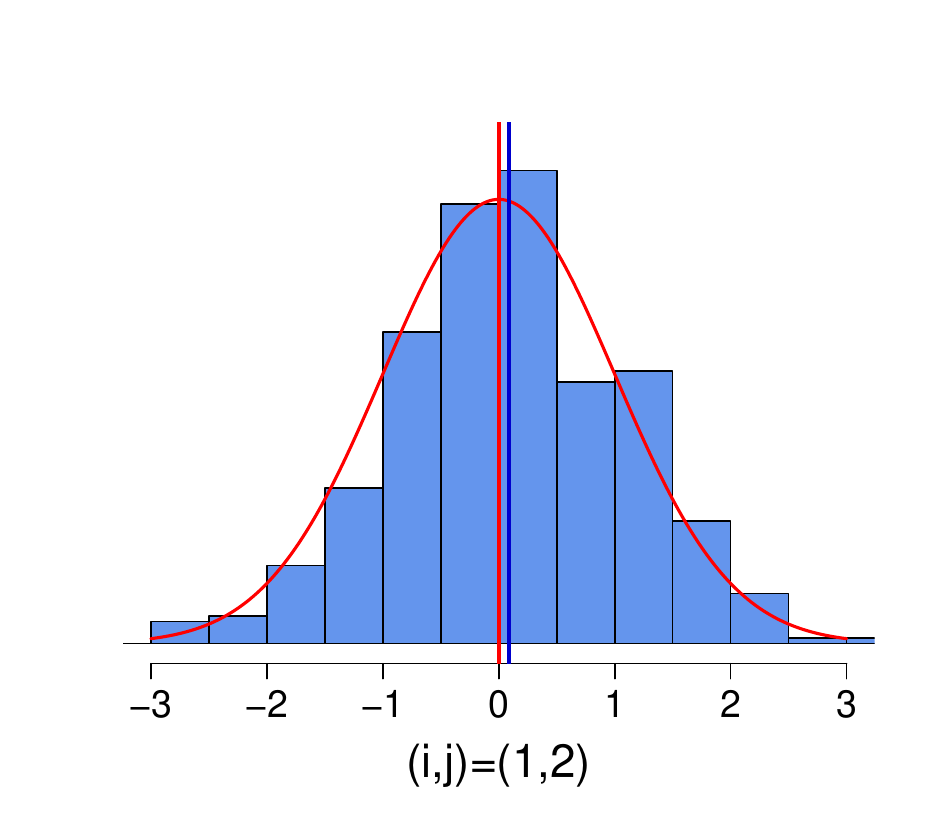}
    \end{minipage}
    \begin{minipage}{0.32\linewidth}
        \centering
        \includegraphics[width=\textwidth]{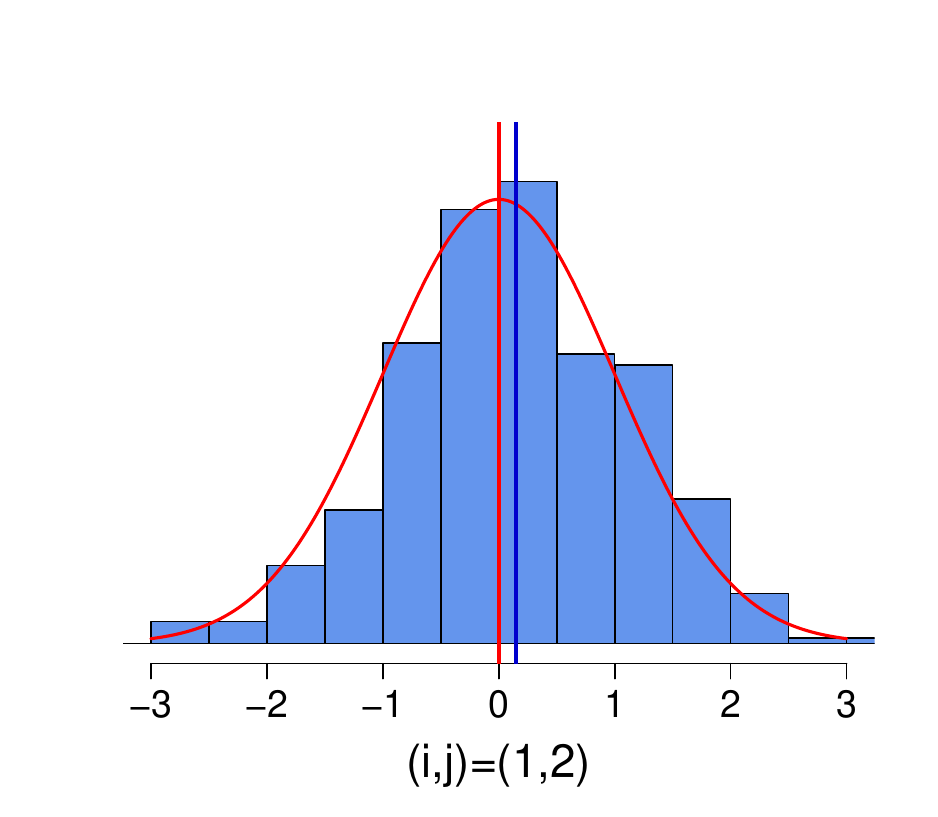}
    \end{minipage}
    \begin{minipage}{0.32\linewidth}
        \centering
        \includegraphics[width=\textwidth]{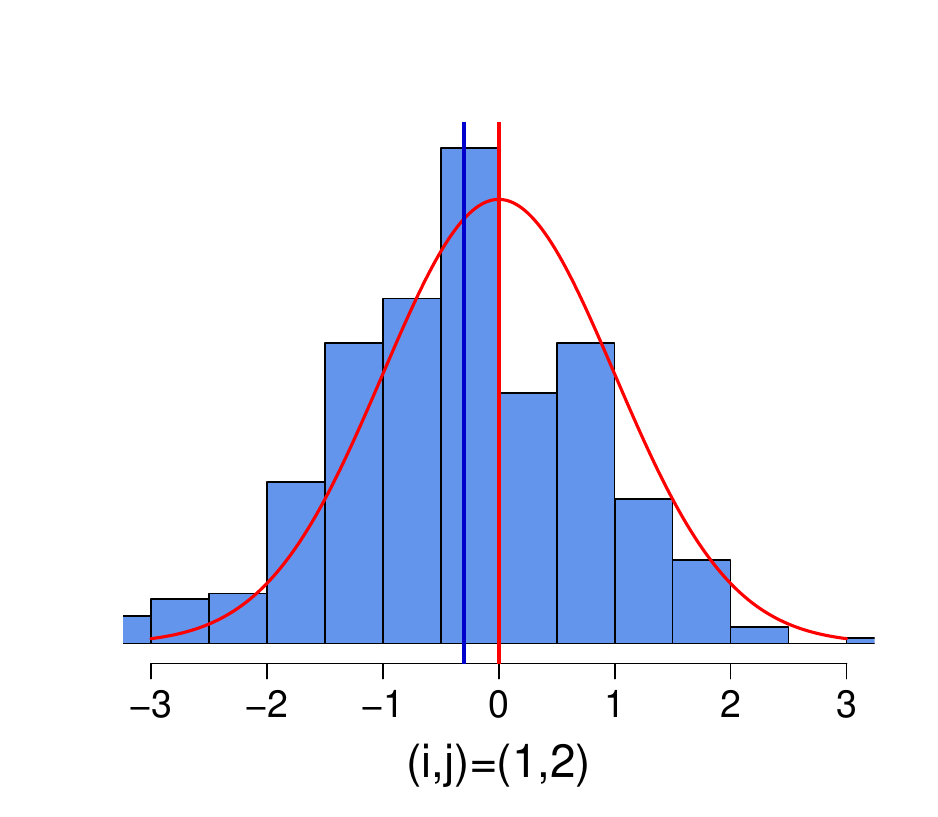}
    \end{minipage}
 \end{minipage}
 \hfill
 \begin{minipage}{0.48\linewidth}
    \begin{minipage}{0.32\linewidth}
        \centering
        \includegraphics[width=\textwidth]{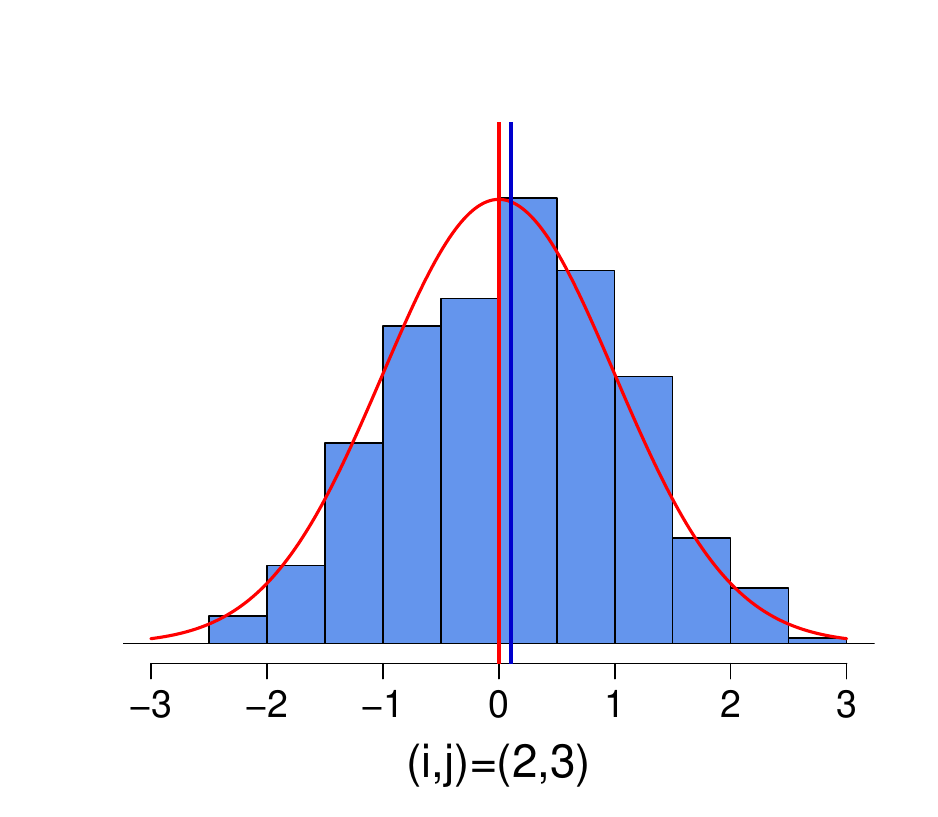}
    \end{minipage}
    \begin{minipage}{0.32\linewidth}
        \centering
        \includegraphics[width=\textwidth]{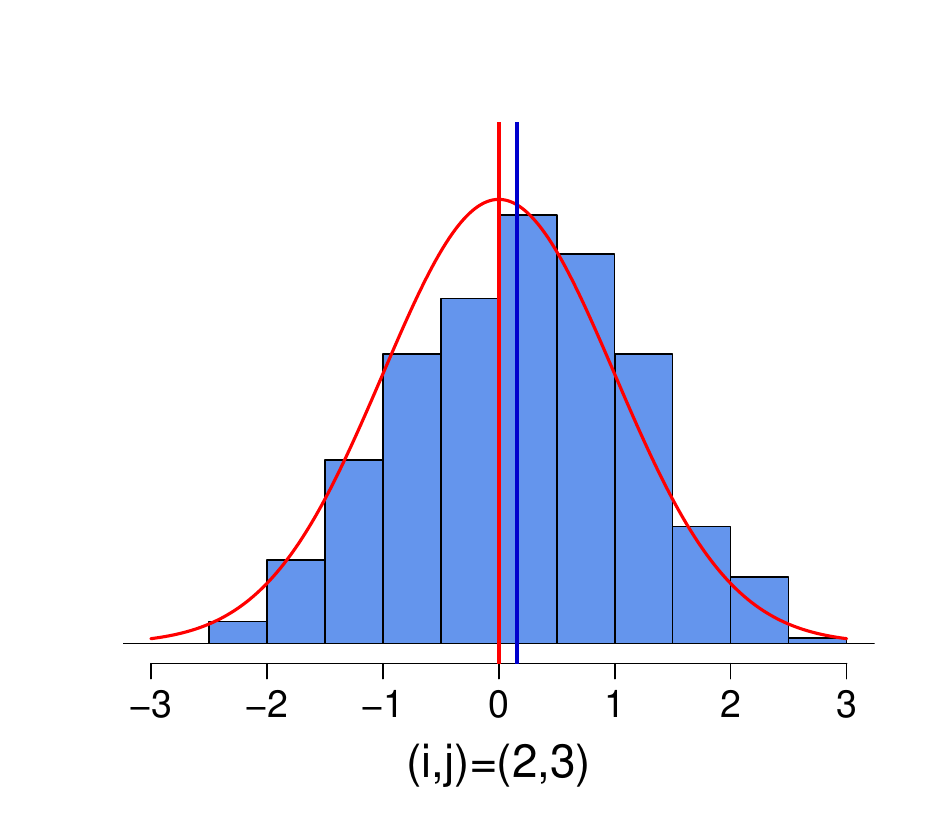}
    \end{minipage}
    \begin{minipage}{0.32\linewidth}
        \centering
        \includegraphics[width=\textwidth]{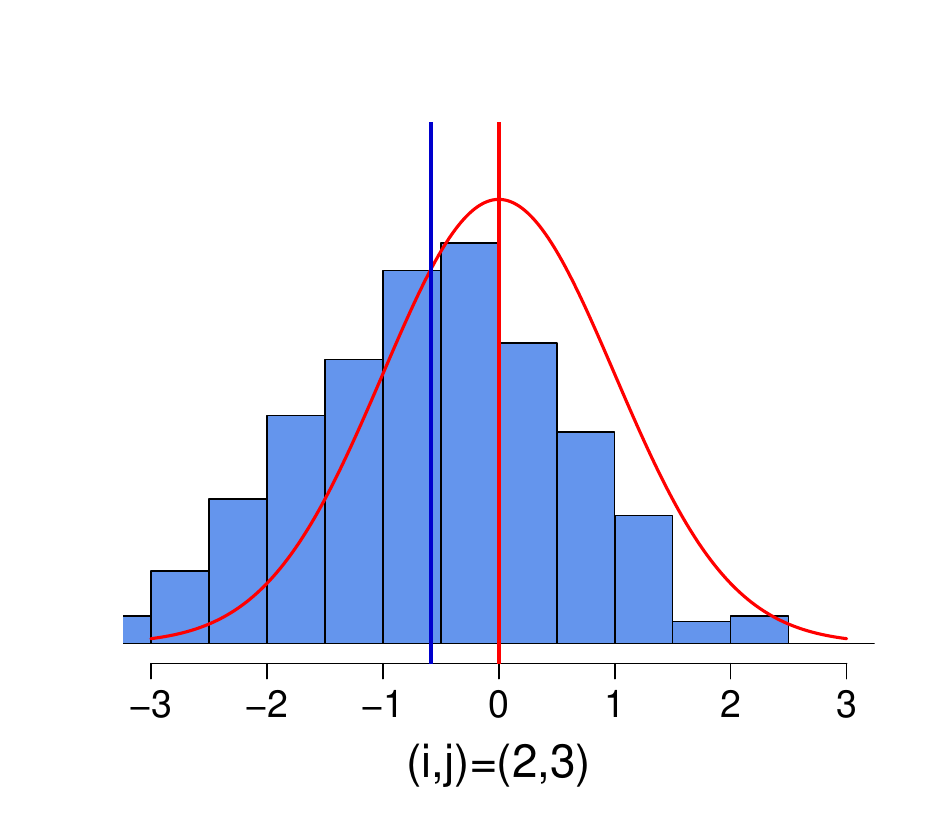}
    \end{minipage}   
 \end{minipage}
 
   \caption*{\scriptsize $n=200, p=400$}
     \vspace{-0.43cm}
 \begin{minipage}{0.48\linewidth}
    \begin{minipage}{0.32\linewidth}
        \centering
        \includegraphics[width=\textwidth]{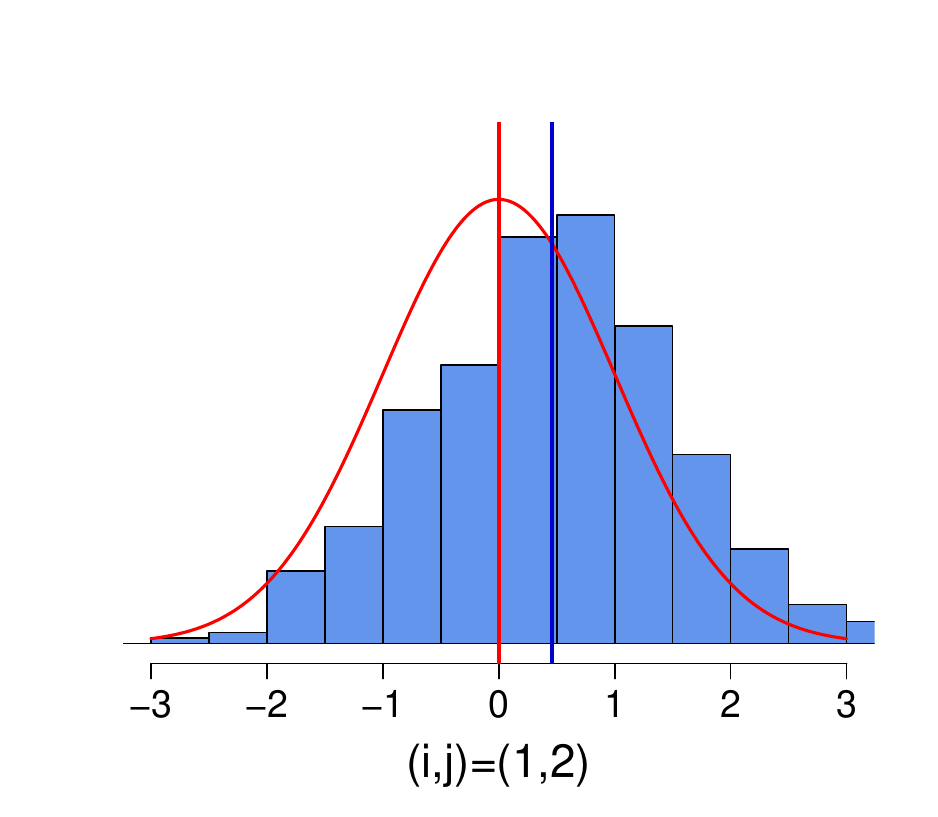}
    \end{minipage}
    \begin{minipage}{0.32\linewidth}
        \centering
        \includegraphics[width=\textwidth]{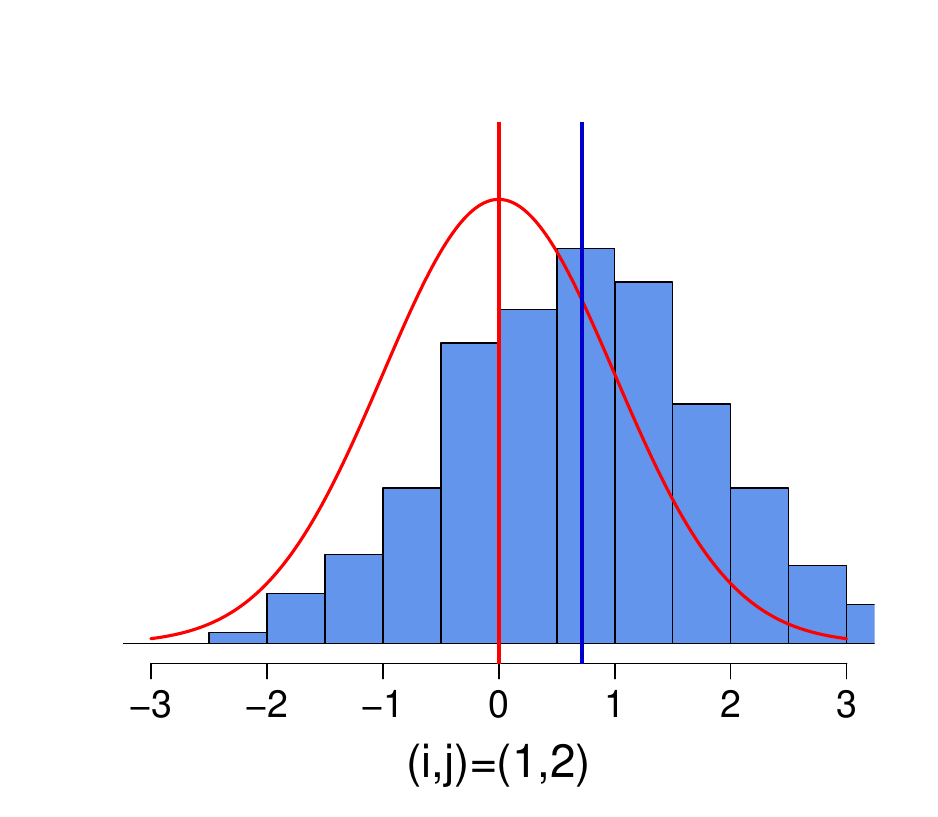}
    \end{minipage}
    \begin{minipage}{0.32\linewidth}
        \centering
        \includegraphics[width=\textwidth]{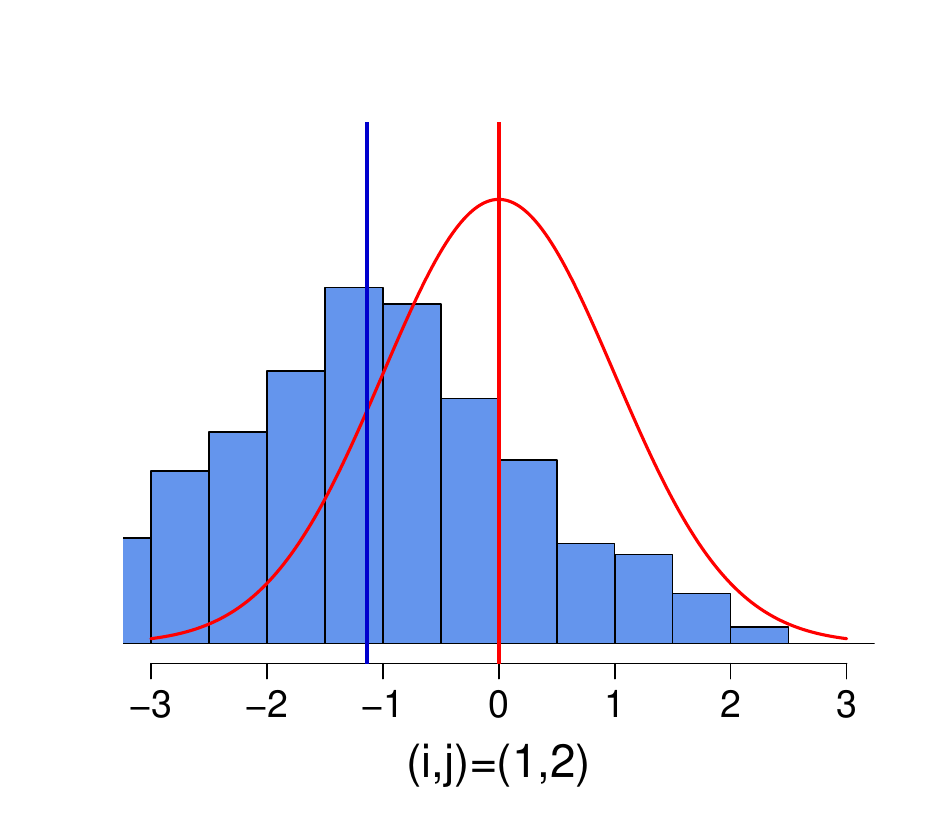}
    \end{minipage}
 \end{minipage}
 \hfill
 \begin{minipage}{0.48\linewidth}
    \begin{minipage}{0.32\linewidth}
        \centering
        \includegraphics[width=\textwidth]{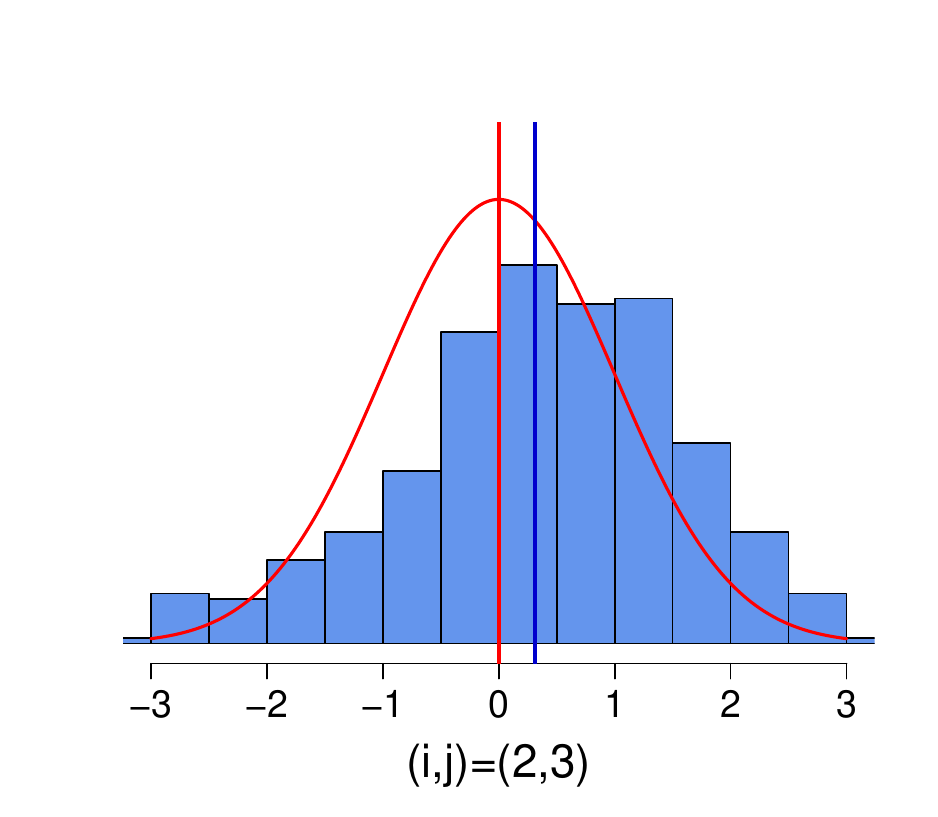}
    \end{minipage}
    \begin{minipage}{0.32\linewidth}
        \centering
        \includegraphics[width=\textwidth]{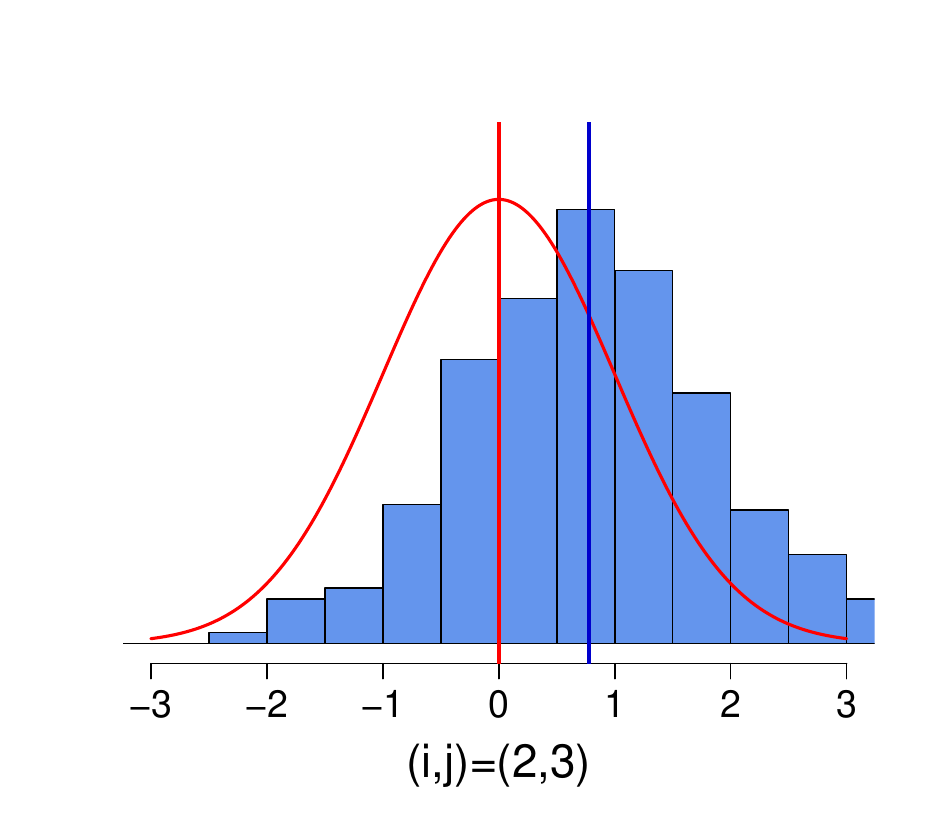}
    \end{minipage}
    \begin{minipage}{0.32\linewidth}
        \centering
        \includegraphics[width=\textwidth]{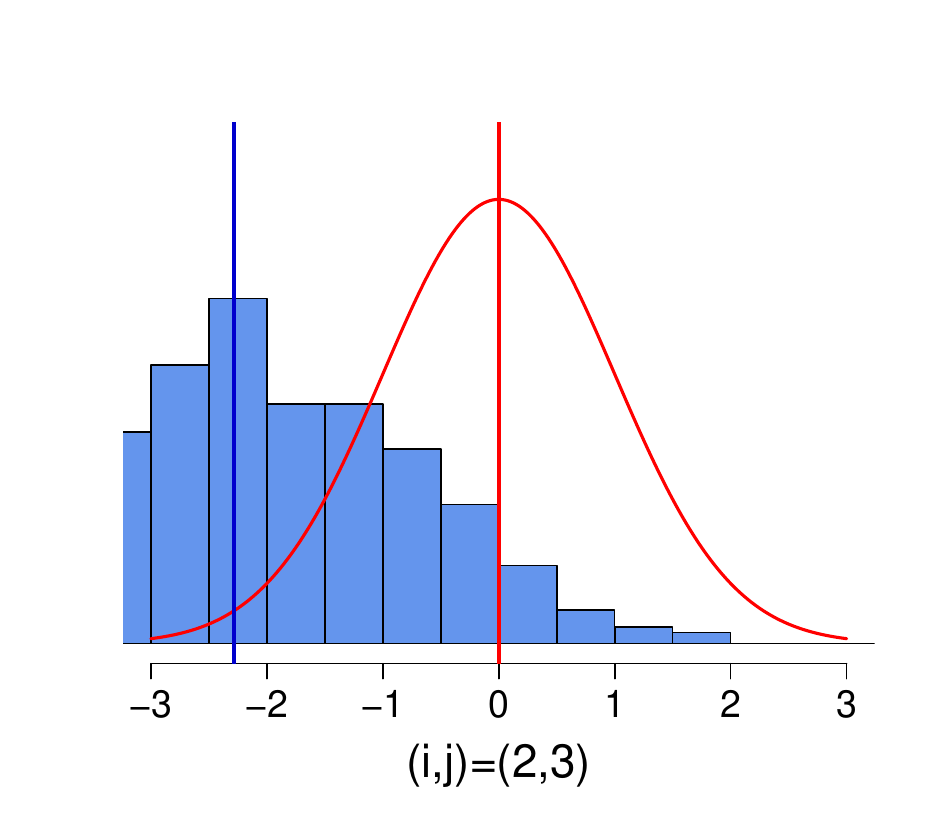}
    \end{minipage}   
 \end{minipage}

  \caption*{\scriptsize $n=400, p=400$}
    \vspace{-0.43cm}
 \begin{minipage}{0.48\linewidth}
    \begin{minipage}{0.32\linewidth}
        \centering
        \includegraphics[width=\textwidth]{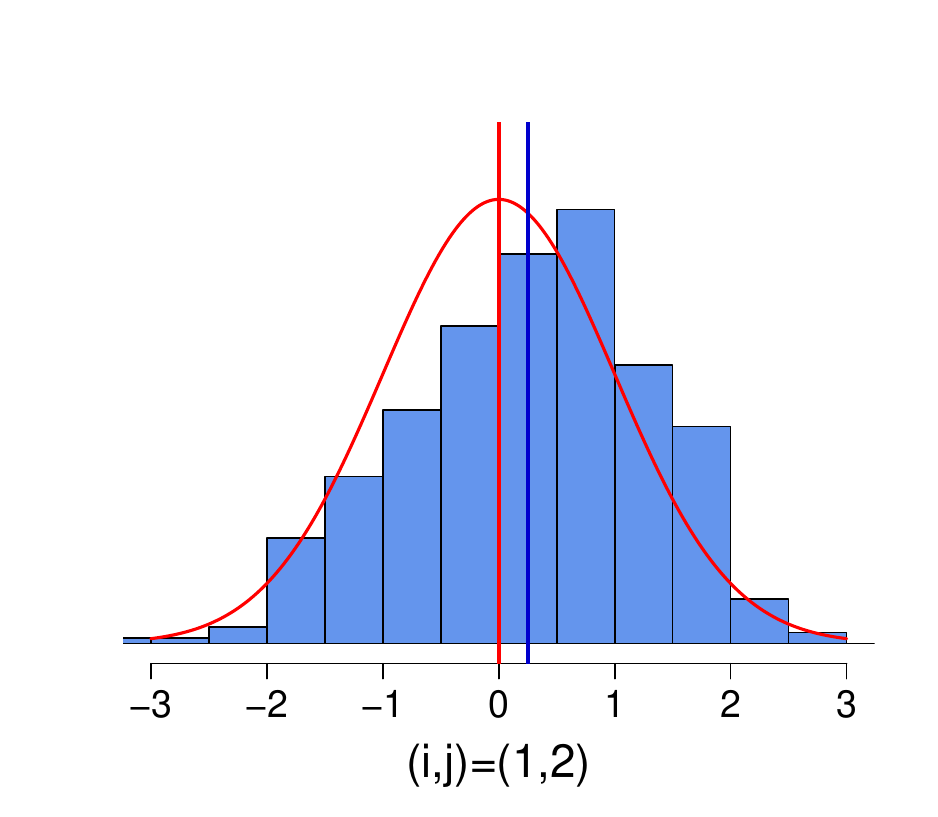}
    \end{minipage}
    \begin{minipage}{0.32\linewidth}
        \centering
        \includegraphics[width=\textwidth]{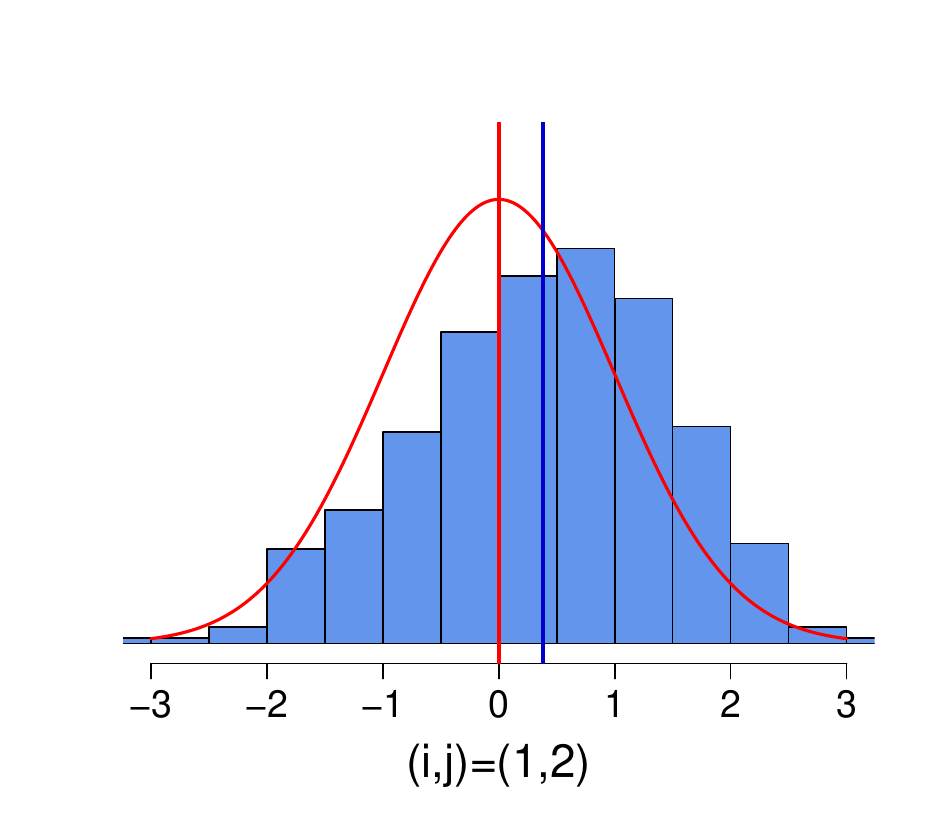}
    \end{minipage}
    \begin{minipage}{0.32\linewidth}
        \centering
        \includegraphics[width=\textwidth]{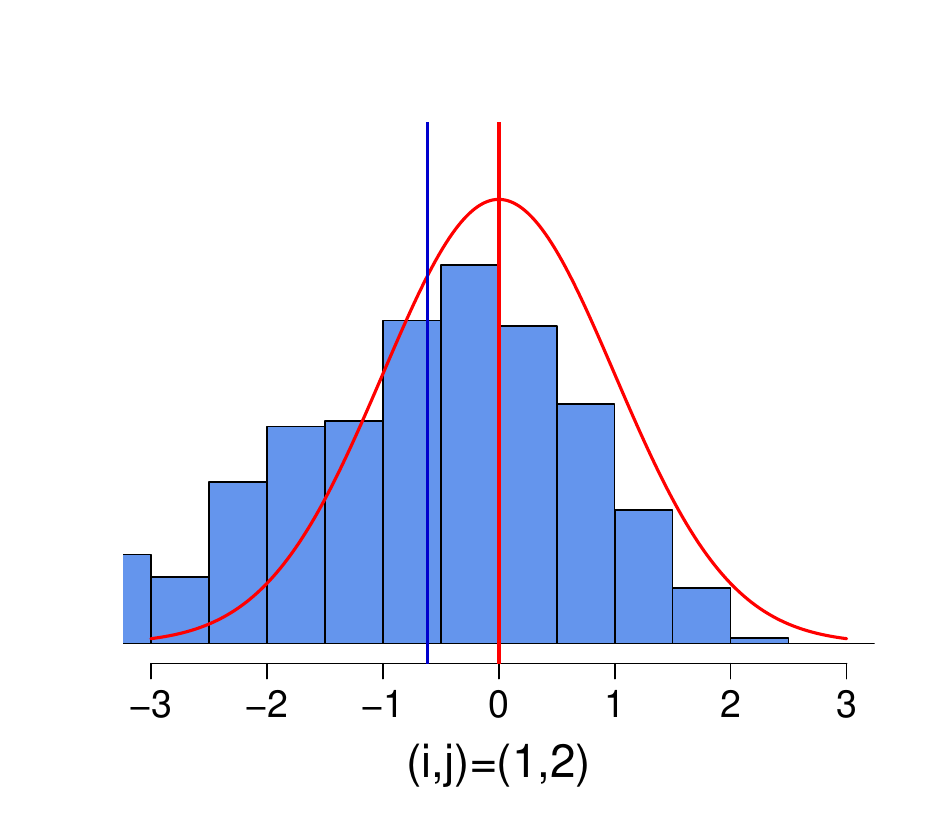}
    \end{minipage}
 \end{minipage}
 \hfill
 \begin{minipage}{0.48\linewidth}
    \begin{minipage}{0.32\linewidth}
        \centering
        \includegraphics[width=\textwidth]{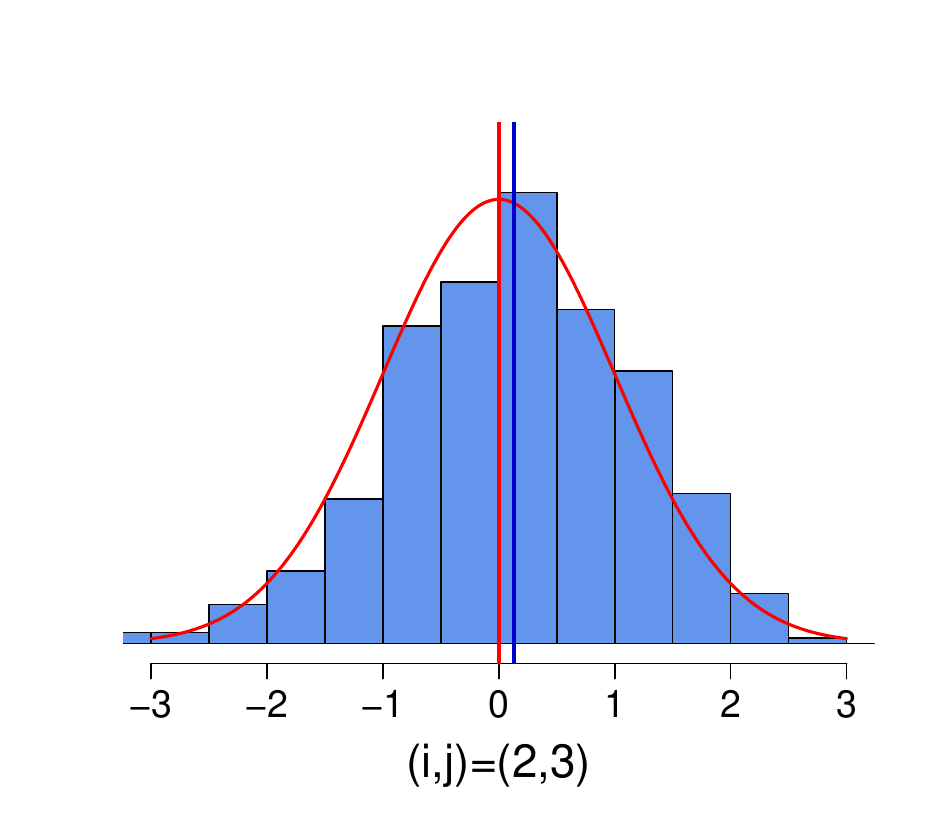}
    \end{minipage}
    \begin{minipage}{0.32\linewidth}
        \centering
        \includegraphics[width=\textwidth]{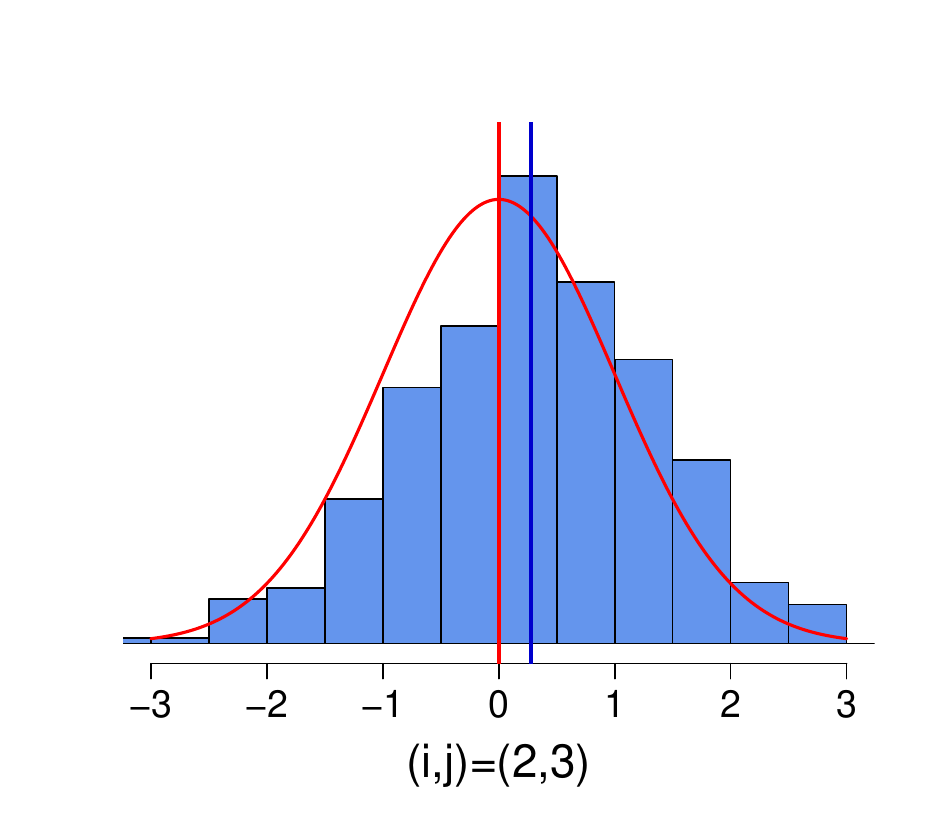}
    \end{minipage}
    \begin{minipage}{0.32\linewidth}
        \centering
        \includegraphics[width=\textwidth]{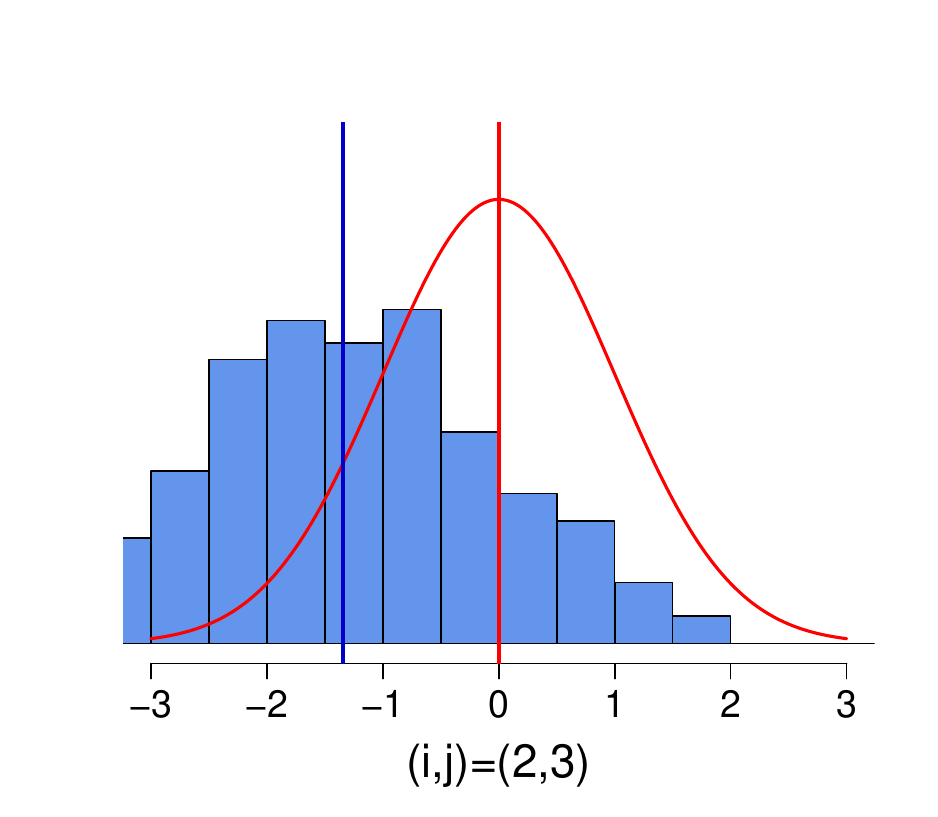}
    \end{minipage}   
 \end{minipage}

  \caption*{\scriptsize $n=800, p=400$}
    \vspace{-0.43cm}
 \begin{minipage}{0.48\linewidth}
    \begin{minipage}{0.32\linewidth}
        \centering
        \includegraphics[width=\textwidth]{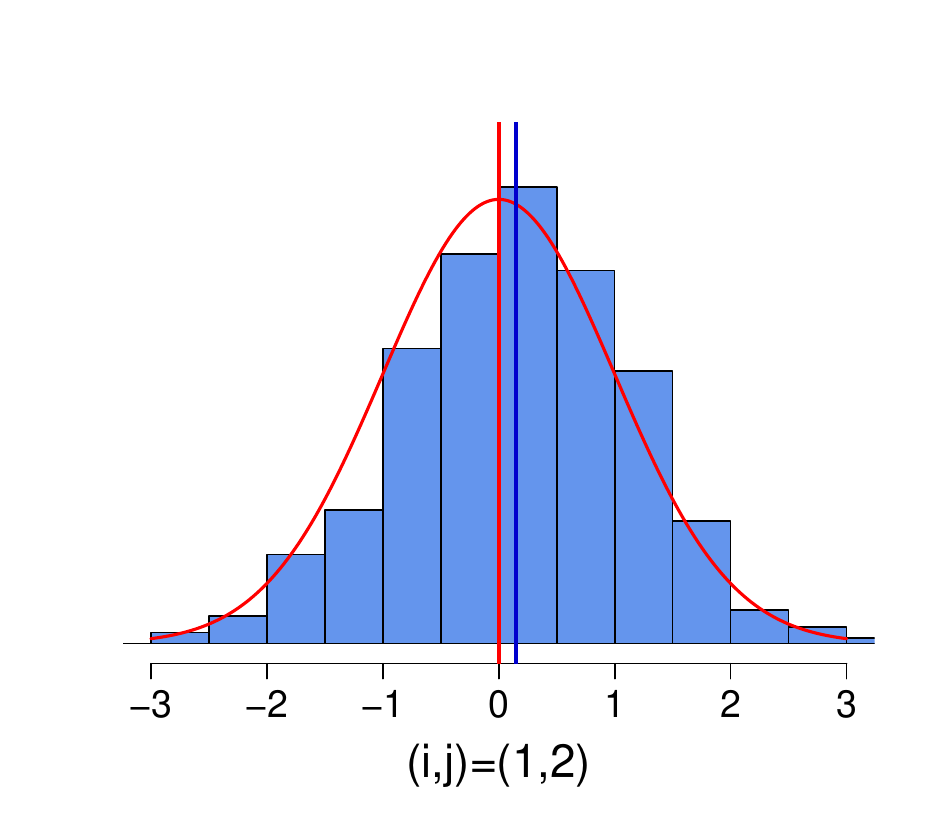}
                           \caption*{\scriptsize $L_0{:}~ \widehat{\mb{\Omega}}^{\text{US}}$ }
    \end{minipage}
    \begin{minipage}{0.32\linewidth}
        \centering
        \includegraphics[width=\textwidth]{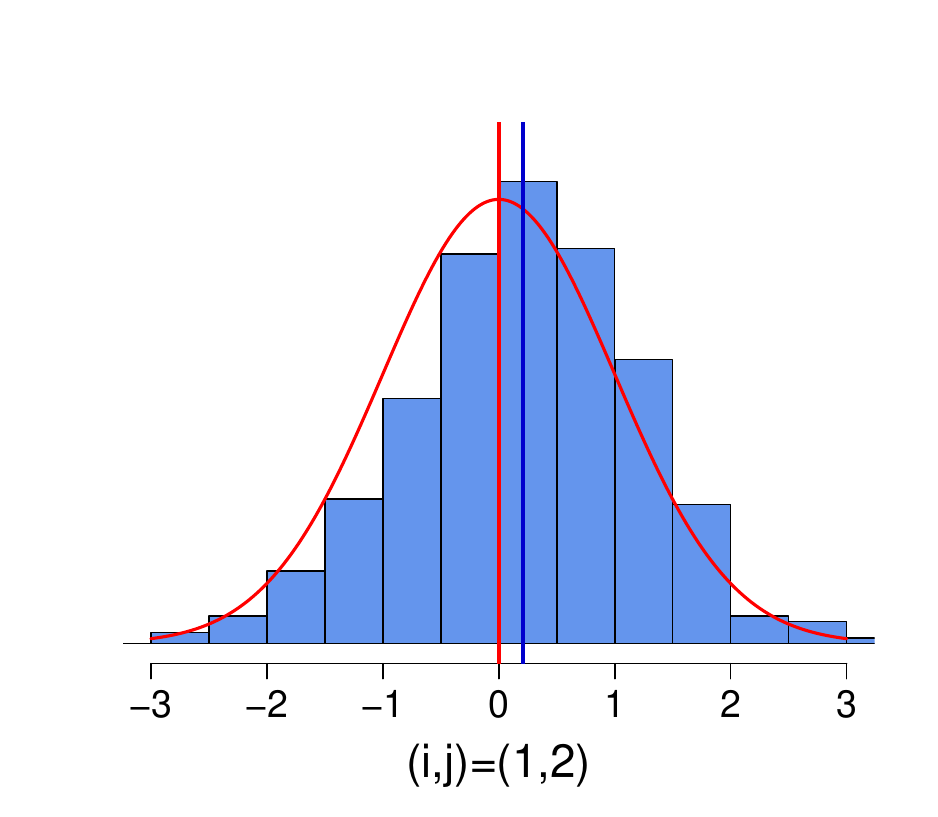}
           \caption*{\scriptsize $L_0{:}~ \widehat{\mb{T}}$}
    \end{minipage}
    \begin{minipage}{0.32\linewidth}
        \centering
        \includegraphics[width=\textwidth]{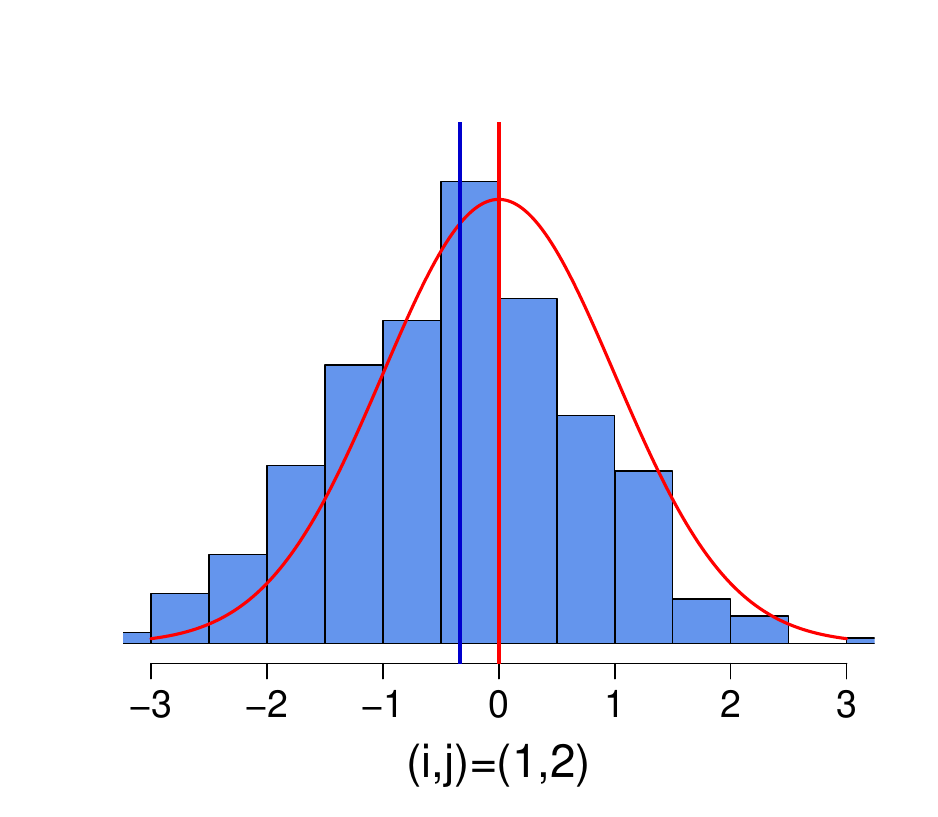}
           \caption*{\scriptsize $L_1{:}~ \widehat{\mb{T}}$}
    \end{minipage}
     \caption*{(a) $(i,j)=(1,2)$}
 \end{minipage}
 \hfill
 \begin{minipage}{0.48\linewidth}
    \begin{minipage}{0.32\linewidth}
        \centering
        \includegraphics[width=\textwidth]{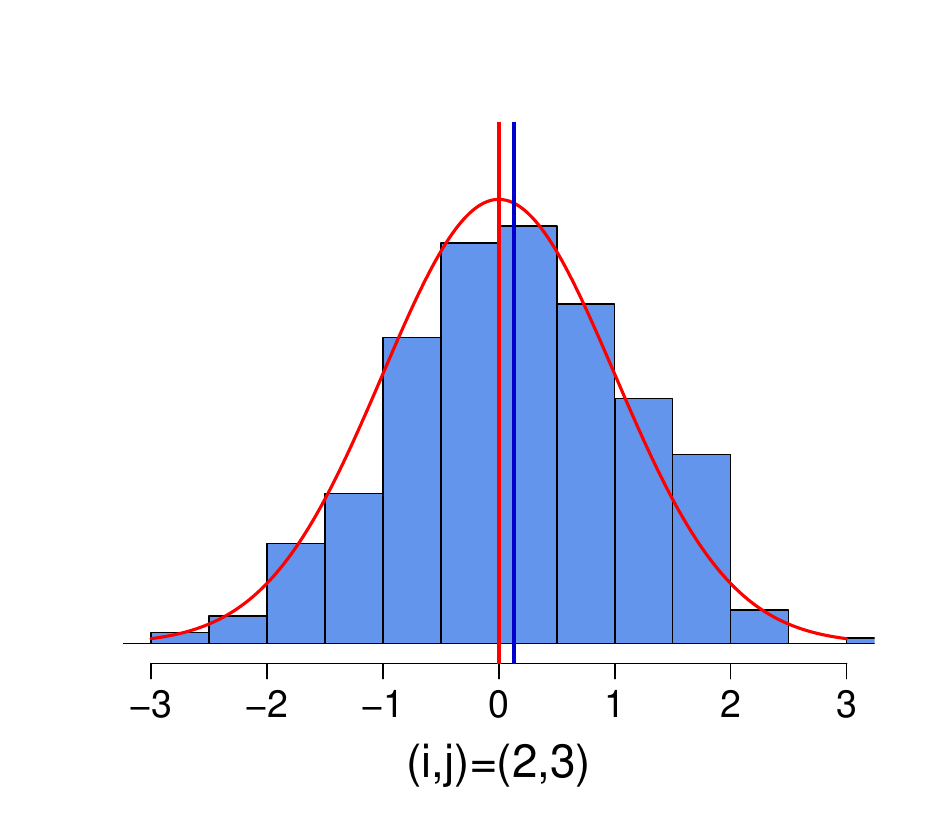}
                            \caption*{\scriptsize $L_0{:}~ \widehat{\mb{\Omega}}^{\text{US}}$}
    \end{minipage}
    \begin{minipage}{0.32\linewidth}
        \centering
        \includegraphics[width=\textwidth]{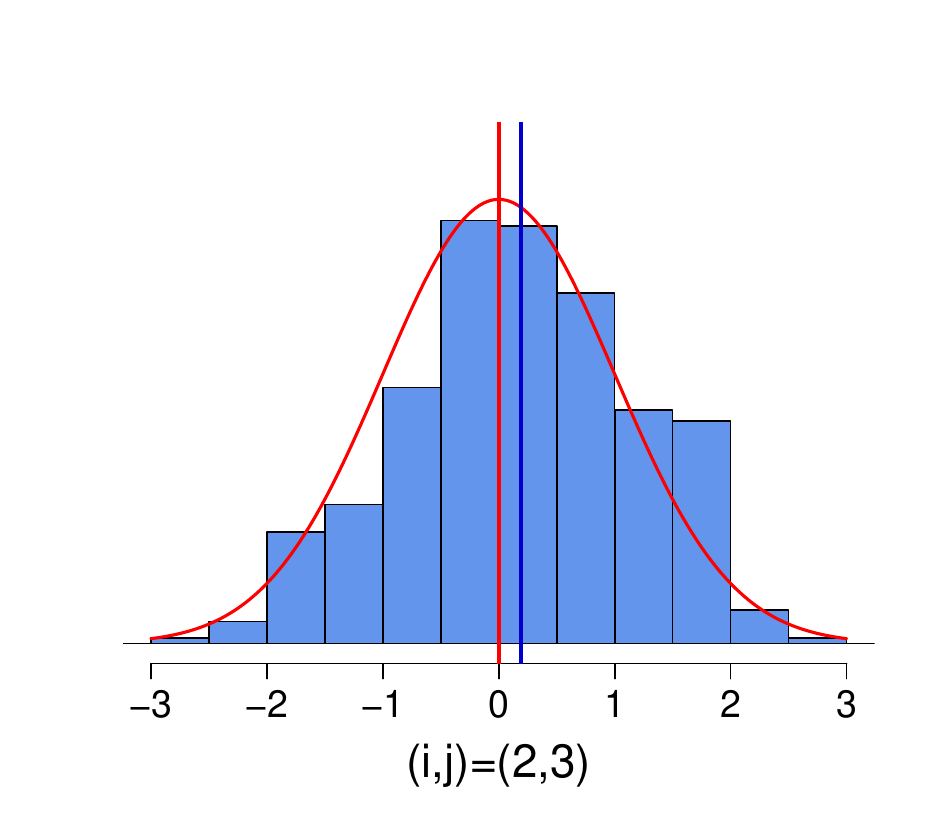}
           \caption*{\scriptsize $L_0{:}~ \widehat{\mb{T}}$}
    \end{minipage}
    \begin{minipage}{0.32\linewidth}
        \centering
        \includegraphics[width=\textwidth]{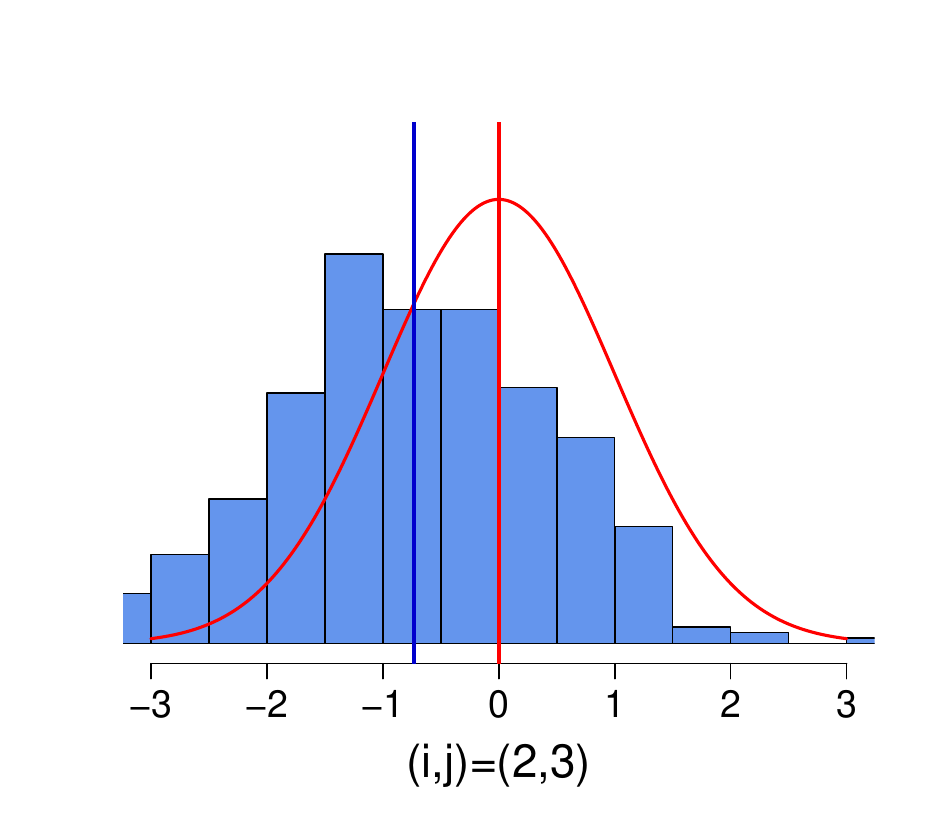}
           \caption*{\scriptsize $L_1{:}~ \widehat{\mb{T}}$}
    \end{minipage}   
         \caption*{(b) $(i,j)=(2,3)$}
 \end{minipage}

     \caption{Histograms of $\big(\sqrt{n}(\widehat{\mb{\Omega}}_{ij}^{(m)}-\mb{\Omega}_{ij})/\widehat{\sigma}_{\mb{\Omega}_{ij}}^{(m)}\big)_{m=1}^{400}$ under Gaussian band graph settings.
     The results for more matrix entries and other (sub-)Gaussian graph settings are given in 
     Figures \ref{fig: normalplot Gaussian band}--\ref{fig: normalplot sub-Gaussian cluster}
in Supplementary Materials.}
\label{fig: normalplot Gaussian band selected}
\end{figure}

 To visualize the performance of the three estimators on asymptotic normality, we present histograms of their \(Z\)-score values \(\big(\sqrt{n}(\widehat{\mathbf{\Omega}}_{ij}^{(m)} - \mathbf{\Omega}_{ij}) / \widehat{\sigma}_{\mathbf{\Omega}_{ij}}^{(m)}\big)_{m=1}^{M}\) from \(M = 400\) replications in Figure~\ref{fig: normalplot Gaussian band selected} and Figures~\ref{fig: normalplot Gaussian band}--\ref{fig: normalplot sub-Gaussian cluster} in the Supplementary Materials, superimposed with the density curve of the standard Gaussian distribution.

All three methods show \(Z\)-score histograms approaching the standard Gaussian curve as \(n\) increases.  The Nodewise Loreg estimators \(\widehat{\mathbf{\Omega}}^{\text{US}}\) and \(\widehat{\mathbf{T}}\) exhibit similar performance, again suggesting that debiasing \(\widehat{\mathbf{\Omega}}^{\text{US}}\) may not be necessary. Moreover, they outperform the Nodewise Lasso estimator \(\widehat{\mathbf{T}}\) in demonstrating asymptotic normality.
Particularly, the two Nodewise Loreg estimators have \(Z\)-score histograms well approximated by the standard Gaussian curve in most examples when \(n \le p\), though the theorems for their asymptotic normality (Theorems~\ref{Corollary: AN} and \ref{thm: normality 2}) require the strong assumption \(\max_{j\in[p]}s_j(\log p) / \sqrt{n} = o(1)\)
as in Nodewise Lasso’s \citep[][Theorem 1]{Jank17}. In contrast, the Nodewise Lasso estimator \(\widehat{\mathbf{T}}\)  often fails to fit the curve well, even when \(n = 800\), especially under the cluster graph settings.

\subsection{Timing performance} 

We execute the Nodewise Loreg, Nodewise Lasso, CLIME, and GLasso using our institution's High Performance Computing cluster. 
Each simulation replication is allocated with 2GB memory on an Intel Xeon Platinum 8268 CPU core operating at 2.90GHz. 
We only compute the runtime of the respective estimator $\widehat{\mb{\Omega}}^{\text{S}}$ for Nodewise Loreg and Nodewise Lasso, 
without considering the optional post-thresholding using multiple testing,
which varies in time depending on the selected FDR control method.
Table~\ref{tab: runtime} summarizes the average and standard deviation of the runtime  over the 100 replications under each simulation setting for each method.
Notably,
our Nodewise Loreg outperforms the other methods in 13 of the 32 settings,
achieving significantly shorter runtime than Nodewise Lasso
with a reduction of up to 45\% in average runtime for all settings, 
except the two hub graph settings with $(p,n)=(400,400)$.
CLIME exhibits the longest runtime in every setting,
with its average runtime more than 100 times that of Nodewise Loreg in 22 settings.
While GLasso records the shortest runtime in 19 settings, 16 of which have $n=400$,
it requires up to 54\% longer  average runtime in the corresponding 16 settings with $n=200$,
where  GLasso is slower than Nodewise Loreg in 13 settings.
This counterintuitive phenomenon of  increased runtime for smaller $n$ is also observed 
for CLIME when $p=200$.
This occurs because GLasso and CLIME only require the sample covariance matrix as input data, without the need for the sample size $n$. 
Such significant variability in runtime across settings with the same $p$ but different $n$
highlights the unstable timing performance of GLasso.
As expected, the runtime increases as either $n$ or $p$ increases for Nodewise Loreg and Nodewise Lasso.
Moreover, the column-by-column estimating nature of Nodewise Loreg, Nodewise Lasso, and CLIME  allows for acceleration using
parallel computing on multiple CPU cores, an advantage that GLasso lacks.

 \begin{table}[h!]
 \renewcommand\arraystretch{1.3}
 \begin{center}
\resizebox{\textwidth}{!}{
\begin{tabular}{cccccccc}
\toprule
Graph&     & Gaussian: $p=200$& Gaussian: $p=400$& Sub-Gaussian: $p=200$&Sub-Gaussian: $p=400$  \\
Setting&    Method & $n=200/n=400$& $n=200/n=400$ &$n=200/n=400$& $n=200/n=400$  \\
\midrule

\multirow{4}{*}{Band}&$L_0{:}~ \widehat{\mb{\Omega}}^{\text{S}}$
&{\bf 5.335(0.219)}/9.111(0.246) &{\bf 22.434(0.822)}/41.196(2.315) &{\bf 5.297(0.197)}/9.209(0.270) &{\bf 22.472(0.785)}/40.648(2.203) \\
\multirow{4}{*}{}&$L_1{:}~ \widehat{\mb{\Omega}}^{\text{S}}$
&9.513(0.630)/12.27(0.323) &38.927(2.095)/55.500(2.060) &9.719(0.763)/12.36(0.457) &39.496(2.198)/54.575(2.175) \\
\multirow{4}{*}{}& CLIME
&578.0(3.380)/182.9(1.213) &2687.4(53.87)/4997.6(184.8) &577.7(3.325)/182.6(1.423) &2641.8(75.47)/4975.7(174.9) \\
\multirow{4}{*}{}& GLasso
&5.791(0.226)/{\bf 4.519(0.046)} &56.328(2.206)/{\bf 36.860(1.184)} &5.753(0.220)/{\bf 4.529(0.062)} &57.388(2.136)/{\bf 37.239(1.407)} \\

\midrule
\multirow{4}{*}{Random}&$L_0{:}~ \widehat{\mb{\Omega}}^{\text{S}}$
&{\bf 5.241(0.230)}/8.975(0.328) &{\bf 22.351(0.865)}/40.906(1.739) &{\bf 5.113(0.197)}/8.775(0.289) &{\bf 22.107(0.839)}/40.901(1.854) \\
\multirow{4}{*}{}&$L_1{:}~ \widehat{\mb{\Omega}}^{\text{S}}$
&9.007(0.437)/11.72(0.614) &33.733(0.597)/49.489(2.340) &9.208(0.475)/11.85(0.622) &33.752(0.718)/49.643(2.800) \\
\multirow{4}{*}{}& CLIME
&571.9(19.42)/180.5(5.070) &2618.6(69.71)/5080.0(53.11) &572.5(17.44)/180.6(4.681) &2617.3(81.80)/5040.2(85.89) \\
\multirow{4}{*}{}& GLasso
&5.543(0.366)/{\bf 3.812(0.215)} &48.725(3.261)/{\bf 34.274(1.565)} &5.663(0.352)/{\bf 3.816(0.192)} &48.929(3.222)/{\bf 34.207(1.512)} \\

\midrule
\multirow{4}{*}{Hub}&$L_0{:}~ \widehat{\mb{\Omega}}^{\text{S}}$
& 5.932(0.178)/10.73(0.261) &{\bf 24.319(0.616)}/44.034(2.488) &{\bf 5.826(0.169)}/10.53(0.282) &{\bf 23.947(0.652)}/43.753(2.029) \\
\multirow{4}{*}{}&$L_1{:}~ \widehat{\mb{\Omega}}^{\text{S}}$
&8.374(0.392)/11.12(0.503) &28.248(2.020)/40.368(3.410) &8.580(0.421)/11.10(0.528) &28.416(2.025)/40.291(3.620) \\
\multirow{4}{*}{}& CLIME
&560.2(14.25)/196.3(7.235) &2665.2(67.20)/4806.5(37.98) &559.4(14.29)/196.0(6.804) &2660.3(71.48)/4793.6(39.04) \\
\multirow{4}{*}{}& GLasso
&{\bf 5.854(0.299)}/{\bf 4.610(0.195)} &49.782(2.863)/{\bf 36.676(1.709)} &6.008(0.294)/{\bf 4.574(0.204)} &49.762(2.706)/{\bf 36.840(1.863)} \\

\midrule
\multirow{4}{*}{Cluster}&$L_0{:}~ \widehat{\mb{\Omega}}^{\text{S}}$
&5.148(0.169)/8.992(0.210) &{\bf 21.933(0.958)}/38.489(2.411) &5.061(0.164)/8.849(0.207) &{\bf 21.639(0.936)}/38.647(2.232) \\
\multirow{4}{*}{}&$L_1{:}~ \widehat{\mb{\Omega}}^{\text{S}}$
&8.910(0.212)/11.73(0.679) &33.537(2.272)/51.223(9.438) &9.030(0.247)/11.86(0.729) &33.515(2.394)/51.020(9.905) \\
\multirow{4}{*}{}& CLIME
&566.1(5.136)/181.3(5.235) &2453.5(23.43)/4719.1(211.9) &556.5(4.938)/181.0(5.203) &2451.9(22.73)/4543.9(262.2) \\
\multirow{4}{*}{}& GLasso
&{\bf 4.538(0.249)}/{\bf 3.423(0.129)} &34.968(1.797)/{\bf 24.114(1.060)} &{\bf 4.654(0.244)}/{\bf 3.421(0.108)} &34.765(1.856)/{\bf 24.347(1.259)} \\
\bottomrule
\end{tabular}}
\caption{Average (standard deviation) of runtime in seconds over 100 simulation replications.}
\label{tab: runtime}
\end{center}
 \end{table}

\section{Analysis of MDA133 breast cancer gene expression}\label{sec: real data}

	We compare the performance of 
Nodewise Loreg, Nodewise Lasso, CLIME, and GLasso	
on precision matrix estimation 
using the MDA133 breast cancer gene expression dataset \citep{hess2006pharmacogenomic}
obtained from MD Anderson Cancer Center (\url{https://bioinformatics.mdanderson.org/public-datasets}). This dataset contains
the dChip normalized model based expression index (MBEI) values at 22,283 Affymetrix gene probesets
for 133 breast cancer patients treated with neoadjuvant chemotherapy,
including 34 patients with pathologic complete
response (pCR) and 99 patients with non-pCR. 
pCR is a strong surrogate marker for improved disease-free survival and overall survival \citep{spring2020pathologic},
which is defined as no residual invasive cancer in the
breast or lymph nodes, or residual in situ carcinoma without invasive component.
Based on the estimated precision matrix from the gene expression data,
we apply linear discriminant analysis \citep[LDA;][]{MR2722294} to predict patients' pCR status,
and also explore the direct connectivity between gene probesets.

 We transform the MBEI data using \(\log_2(\text{MBEI} + 1)\), making the \(\log_2\)-transformed MBEI approximately Gaussian for each gene probeset. We focus on the 300 most significantly differentially expressed gene probesets selected by the limma moderated \(t\)-test \citep{ritchie2015limma}. We apply a stratified sampling approach to randomly divide the dataset into training and testing sets, with sizes \(n_{\text{train}} = 112\) and \(n_{\text{test}} = 21\), respectively, where the testing set includes 5 pCR subjects and 16 non-pCR subjects (approximately 1/6 of the subjects in each class). This data splitting strategy is randomly repeated 100 times to train and test the LDA.

The LDA assumes that the \(\log_2\)-transformed MBEI data of the \(p = 300\) selected gene probesets follows a Gaussian distribution \(\mathcal{N}(\bd{\mu}_k, \mathbf{\Omega}^{-1})\) with the same precision matrix \(\mathbf{\Omega}\) but different means \(\bd{\mu}_k\), where \(k = 1\) for the pCR class and \(k = 0\) for the non-pCR class. The LDA scores are defined as:
\[
\delta_k(\bd{x}) = \bd{x}^\top \widehat{\mathbf{\Omega}} \widehat{\bd{\mu}}_k - \frac{1}{2} \widehat{\bd{\mu}}_k^\top \widehat{\mathbf{\Omega}} \widehat{\bd{\mu}}_k + \log \widehat{\pi}_k, \quad k = 0, 1,
\]
where \(\bd{x}\) is the data of a given testing subject, \(\widehat{\mathbf{\Omega}}\) is the precision matrix estimated from the sample mean-centered training data, \(\widehat{\bd{\mu}}_k\) is the within-class average vector in the training set, and \(\widehat{\pi}_k\) is the proportion of class-\(k\) subjects in the training set. A testing subject is classified as pCR if \(\delta_1(\bd{x}) > \delta_0(\bd{x})\), and otherwise is classified as non-pCR. 
We select the tuning parameters for each method as in our simulations, except for Nodewise Loreg, for which \(T_{\max,j} = \lfloor n / (\log(p) \log(\log n)) \rfloor\), \(j \in [p]\), as suggested in \citet{zhu2020polynomial}. The nominal FDR level is set to 0.05 for all thresholding estimators of Nodewise Loreg and Nodewise Lasso.

Table~\ref{tab: real data, pCR classify} reports the 
 the classification performance of each precision matrix estimator
 in evaluation metrics including  
 precision, sensitivity, specificity, MCC, and sparsity,  which denotes the
 number of edges in the resulting graph.
 We observe that  the Nodewise Loreg estimators
 $\widehat{\bm{\Omega}}^{\text{S}}$,
 $\mathcal{T}(\widehat{\mb{\Omega}}^{\text{S}}|Z_0(\widehat{\mb{\Omega}}^\text{US}),S_L(\widehat{\mb{\Omega}}^\text{S}))$,
 and
 $\mathcal{T}(\widehat{\mb{\Omega}}^{\text{S}}|Z_0(\widehat{\mb{T}}),S_L(\widehat{\mb{\Omega}}^{\text{S}}))$
 perform similarly and outperform 
 the other estimators, with 
 the highest values in precision, sensitivity, and MCC,
 nearly the highest specificity, and the smallest values in sparsity.

\begin{table}[b!]
 \renewcommand\arraystretch{1.3}
 \begin{center}
\resizebox{\textwidth}{!}{
\begin{tabular}{lccccc}
\toprule
Method & Precision  & Sensitivity & Specificity & MCC & Sparsity: \# edges  \\
\midrule

$L_0{:}~ \widehat{\bm{\Omega}}^{\text{S}}$&{\bf 0.637(0.158)} &{\bf  0.838(0.155)} & 0.829(0.106) &{\bf  0.622(0.164)} & 285.440(10.372)   \\
$L_0{:}~ \mathcal{T}(\widehat{\mb{\Omega}}^{\text{S}}|Z_0(\widehat{\mb{\Omega}}^\text{US}),S_L(\widehat{\mb{\Omega}}^\text{S}))$&{\bf  0.637(0.158)} &{\bf  0.838(0.155)} & 0.829(0.106) & {\bf 0.622(0.164)} &{\bf  284.440(10.372)}  \\
$L_0{:}~  \mathcal{T}(\widehat{\mb{\Omega}}^{\text{S}}|Z_0(\widehat{\mb{T}}),S_L(\widehat{\mb{\Omega}}^{\text{S}}))$&{\bf  0.637(0.158)} &{\bf  0.838(0.155)} & 0.829(0.106) &{\bf  0.622(0.164)} &{\bf  284.440(10.372)} \\
$L_0{:}~  \mathcal{T}(\widehat{\mb{T}}|Z_0(\widehat{\mb{T}}),S_L(\widehat{\mb{\Omega}}^{\text{S}}))$& 0.130(0.085) & 0.310(0.212) & 0.357(0.143) & $-$0.294(0.225) &{\bf  284.440(10.372)} \\
$L_0{:}~  \mathcal{T}(\widehat{\mb{T}}|Z_0(\widehat{\mb{T}}),S_L(\widehat{\mb{T}}))$& 0.210(0.158) & 0.394(0.250) & 0.486(0.197) & $-$0.106(0.329) & 428.410(25.700) \\
$L_0{:}~  \widehat{\mb{T}}$& 0.069(0.053) & 0.206(0.167) & 0.181(0.116) & $-$0.576(0.165) & 44850.00(0.000) \\
$L_1{:}~ \widehat{\mb{\Omega}}^{\text{S}}$& 0.611(0.156) & 0.826(0.172) & 0.813(0.108) &0.592(0.180) & 288.220(15.005) \\ 
$L_1{:}~ \mathcal{T}(\widehat{\mb{\Omega}}^{\text{S}}|Z_0(\widehat{\mb{T}}),S_L(\widehat{\mb{\Omega}}^{\text{S}}))$& 0.612(0.156) & 0.826(0.172) & 0.814(0.108) & 0.593(0.180) & 293.200(15.460) \\ 
$L_1{:}~  \mathcal{T}(\widehat{\mb{T}}|Z_0(\widehat{\mb{T}}),S_L(\widehat{\mb{\Omega}}^{\text{S}}))$& 0.114(0.063) & 0.330(0.198) & 0.233(0.119) & $-$0.409(0.203) & 293.200(15.460) \\
$L_1{:}~  \mathcal{T}(\widehat{\mb{T}}|Z_0(\widehat{\mb{T}}),S_L(\widehat{\mb{T}}))$& 0.051(0.049) & 0.148(0.144) & 0.186(0.110) & $-$0.614(0.158) & 4981.65(565.00) \\
$L_1{:}~  \widehat{\mb{T}}$& 0.051(0.049) & 0.148(0.144) & 0.187(0.106) & $-$0.613(0.153) & 44850.00(0.000) \\
CLIME& 0.608(0.141) & 0.828(0.153) & 0.811(0.107) & 0.591(0.152) & 616.590(41.211) \\
GLasso& 0.627(0.167) & 0.686(0.187) &{\bf  0.855(0.091)} & 0.531(0.176) & 3862.01(353.98) \\
\bottomrule
\end{tabular}}
\caption{Average (standard deviation) of  each pCR classification metric over 100 replications.}
\label{tab: real data, pCR classify}
\end{center}
 \end{table}

 \begin{figure}[b!]
 \begin{subfigure}{0.5\linewidth}
\centering
        \includegraphics[width=0.8\textwidth]{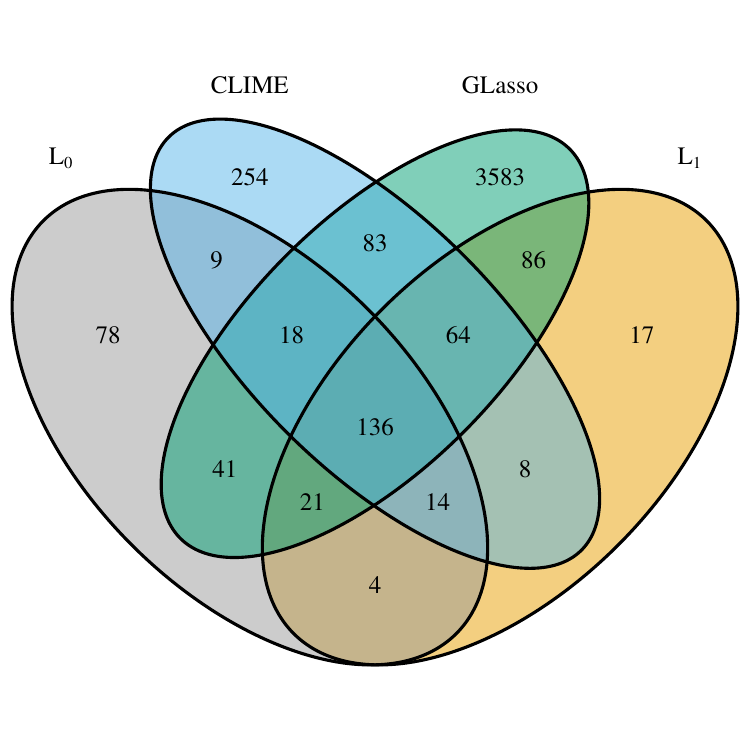}
        \vspace{-0.5cm}
   \caption*{\scriptsize Based on estimated absolute partial correlations} 
     \end{subfigure} 
    \begin{subfigure}{0.5\linewidth}
\centering
        \includegraphics[width=0.8\textwidth]{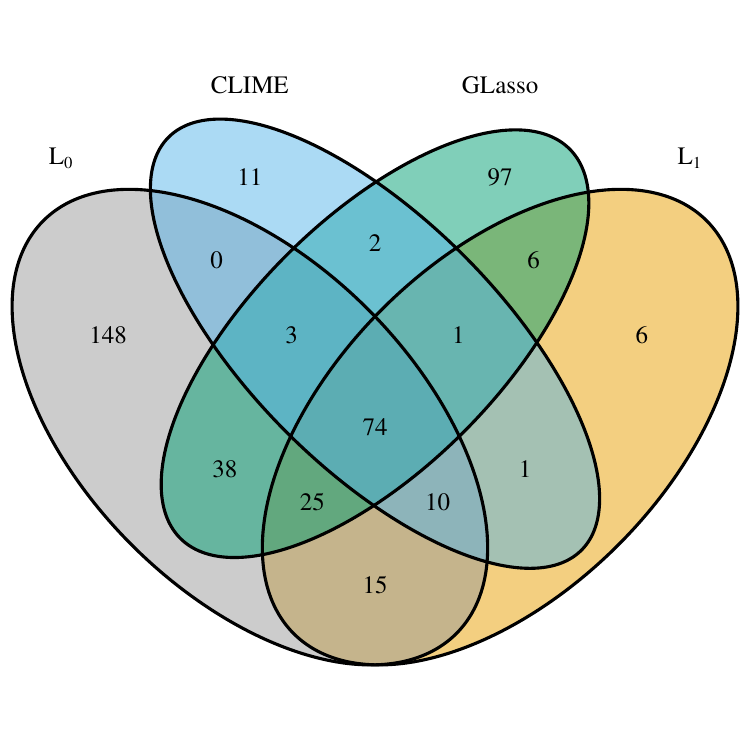}
                \vspace{-0.5cm}
   \caption*{\scriptsize  Based on estimated absolute partial correlations thresholded by 0.1}
    \end{subfigure}  
    \caption{Venn diagrams for direct connections of gene probesets identified by 
    Nodewise Loreg ($L_0{:}\,\mathcal{T}(\widehat{\mb{\Omega}}^{\text{S}}|Z_0(\widehat{\mb{\Omega}}^\text{US}),S_L(\widehat{\mb{\Omega}}^\text{S}))$),
     Nodewise Lasso ($L_1{:}\,\mathcal{T}(\widehat{\mb{\Omega}}^{\text{S}}|Z_0(\widehat{\mb{T}}),S_L(\widehat{\mb{\Omega}}^{\text{S}}))$),
     CLIME, and GLasso.}
        \label{fig: Venn diagram}
\end{figure}

 \begin{figure}[b!]
 \begin{minipage}{1\linewidth}
   \caption*{\small Based on estimated absolute partial correlations}
    \vspace{-0.3cm}
 \begin{subfigure}{0.245\linewidth}
        \centering
        \includegraphics[width=\textwidth]{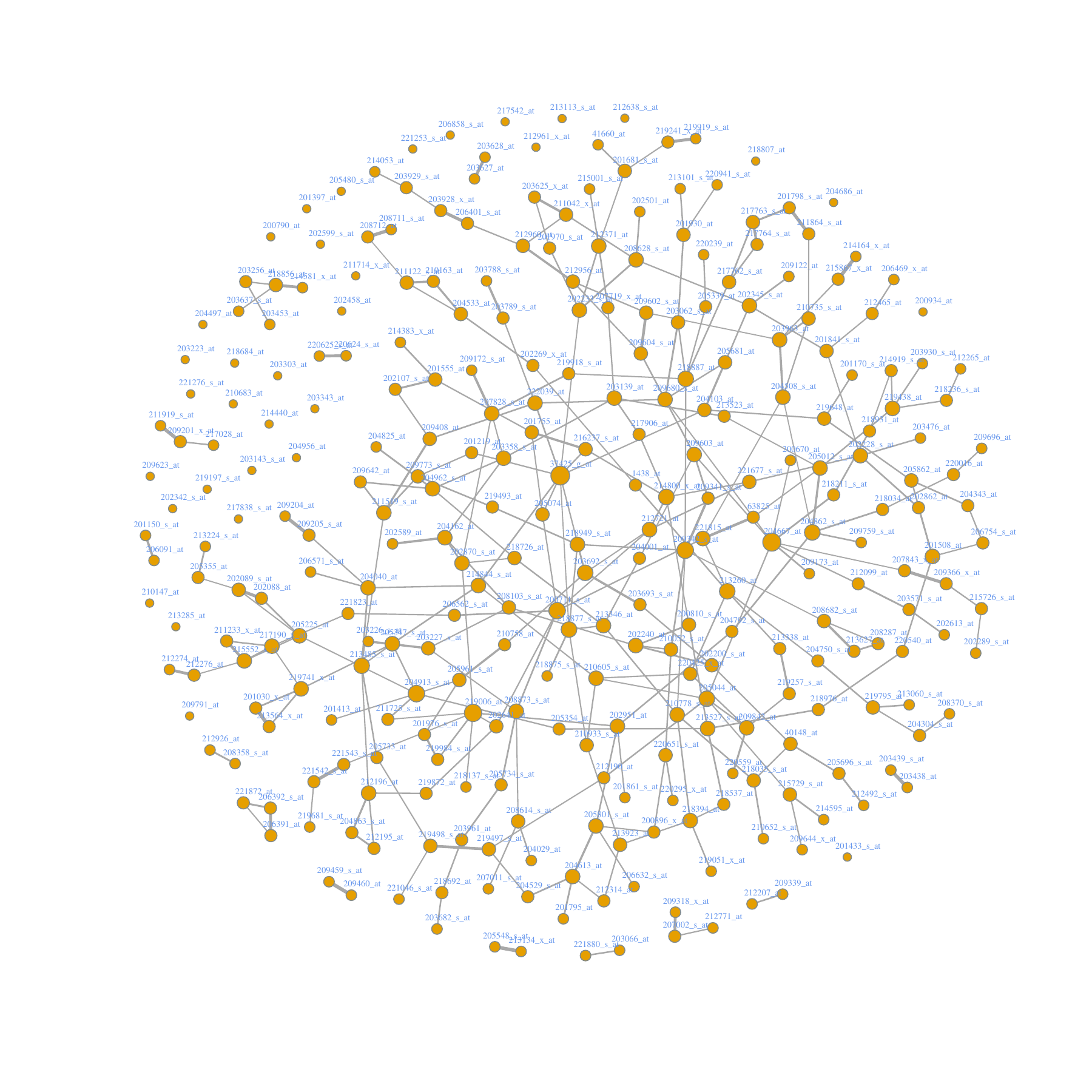}
        \caption*{\scriptsize $L_0{:}~\mathcal{T}(\widehat{\mb{\Omega}}^{\text{S}}|Z_0(\widehat{\mb{\Omega}}^\text{US}),S_L(\widehat{\mb{\Omega}}^\text{S}))$\\
       \centering (321 edges)}
\end{subfigure}   
\begin{subfigure}{0.245\linewidth}
        \centering
        \includegraphics[width=\textwidth]{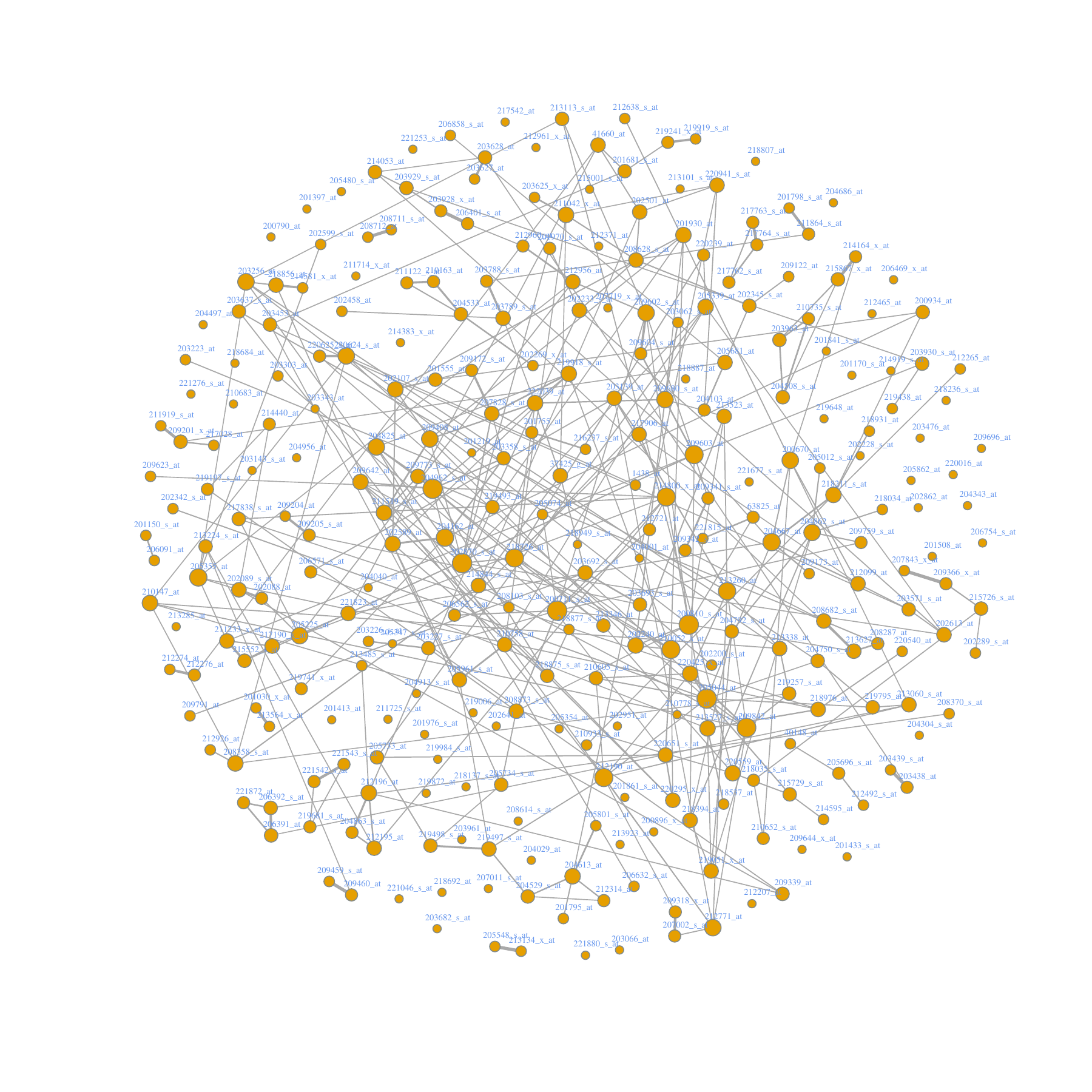}
        \caption*{\scriptsize $L_1{:}~  \mathcal{T}(\widehat{\mb{\Omega}}^{\text{S}}|Z_0(\widehat{\mb{T}}),S_L(\widehat{\mb{\Omega}}^{\text{S}}))$\\
    \centering      (350 edges)}
\end{subfigure}
\begin{subfigure}{0.245\linewidth}
        \centering
        \includegraphics[width=\textwidth]{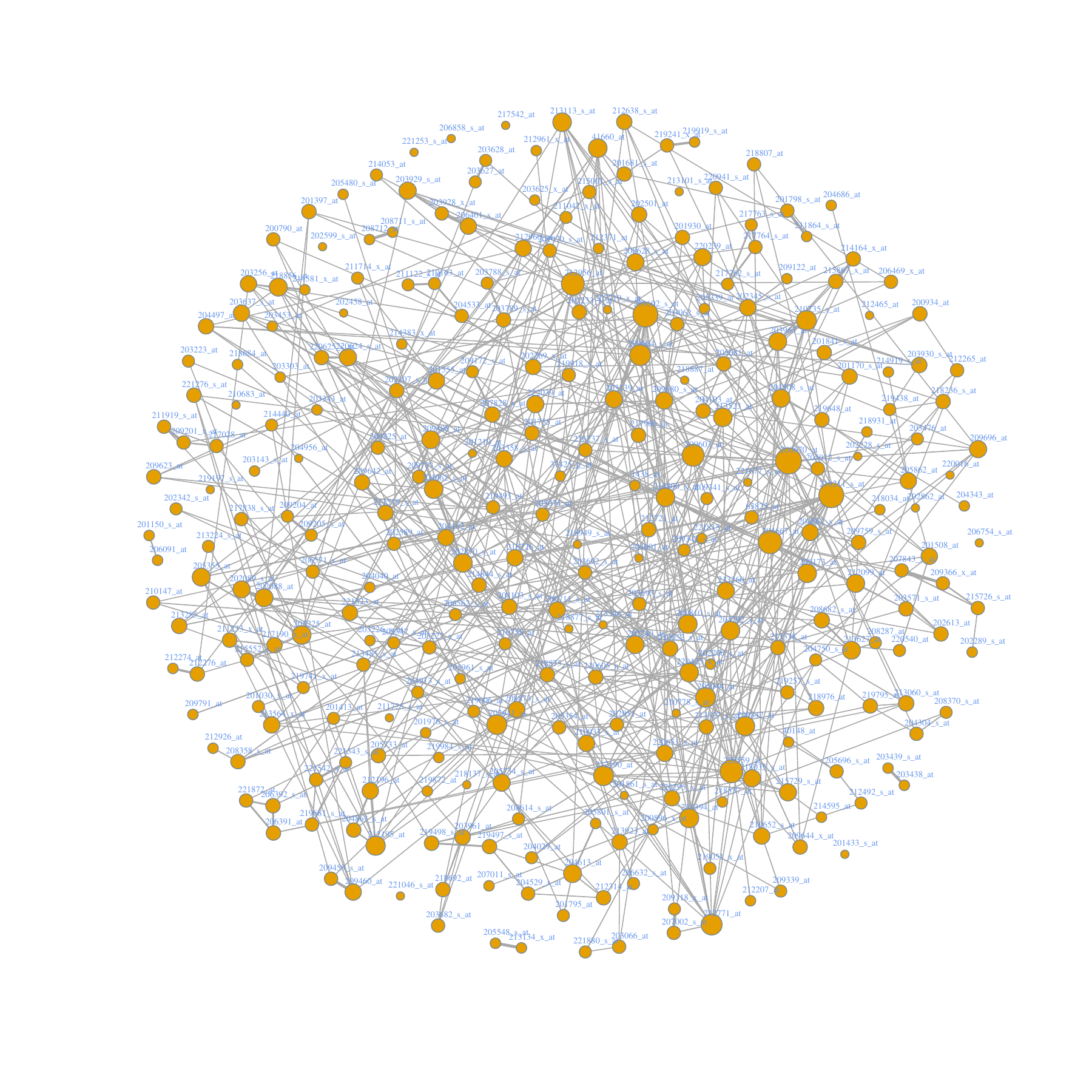}
            \caption*{\scriptsize CLIME\\
            \centering (586 edges)}
\end{subfigure}
\begin{subfigure}{0.245\linewidth}
        \centering
        \includegraphics[width=\textwidth]{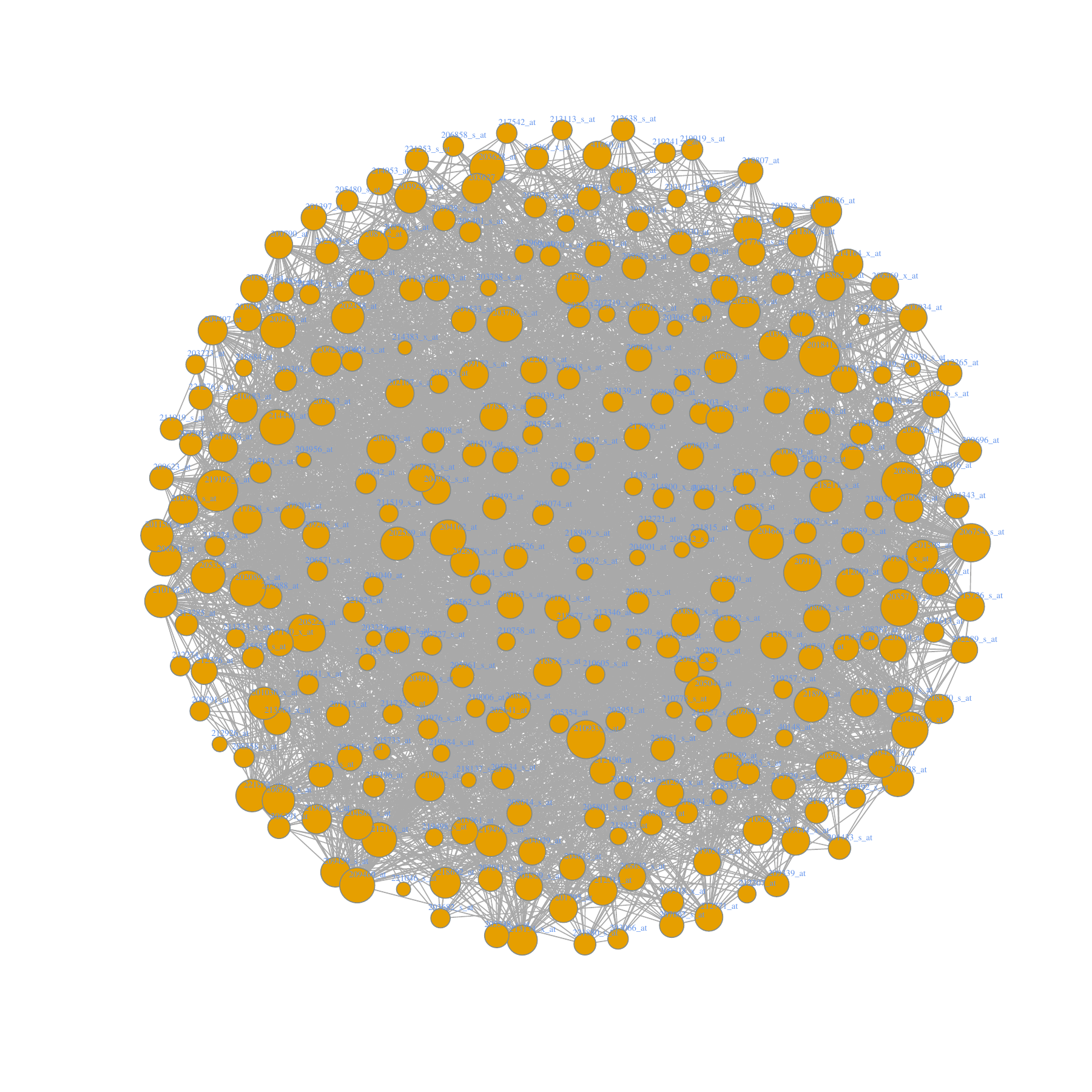}
         \caption*{\scriptsize GLasso\\
         \centering (4032 edges)}
\end{subfigure}
    \end{minipage}
    
     \begin{minipage}{1\linewidth}
   \caption*{\small Based on estimated absolute partial correlations thresholded by 0.1}
\begin{subfigure}{0.245\linewidth}
        \centering
        \includegraphics[width=\textwidth]{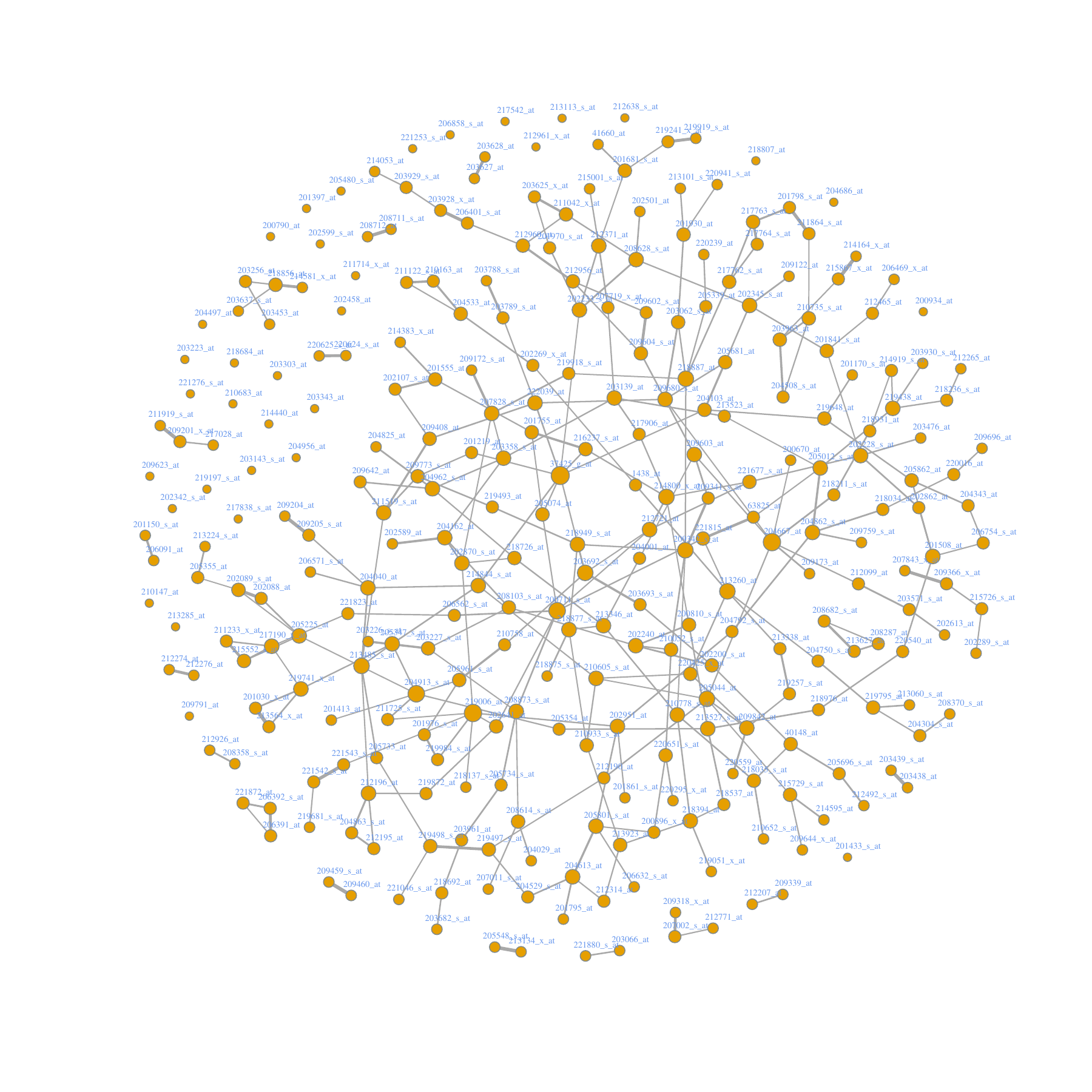}
        \caption*{\scriptsize $L_0{:}~\mathcal{T}(\widehat{\mb{\Omega}}^{\text{S}}|Z_0(\widehat{\mb{\Omega}}^\text{US}),S_L(\widehat{\mb{\Omega}}^\text{S}))$\\
        \centering (313 edges)}
\end{subfigure} 
\begin{subfigure}{0.245\linewidth}
        \centering
        \includegraphics[width=\textwidth]{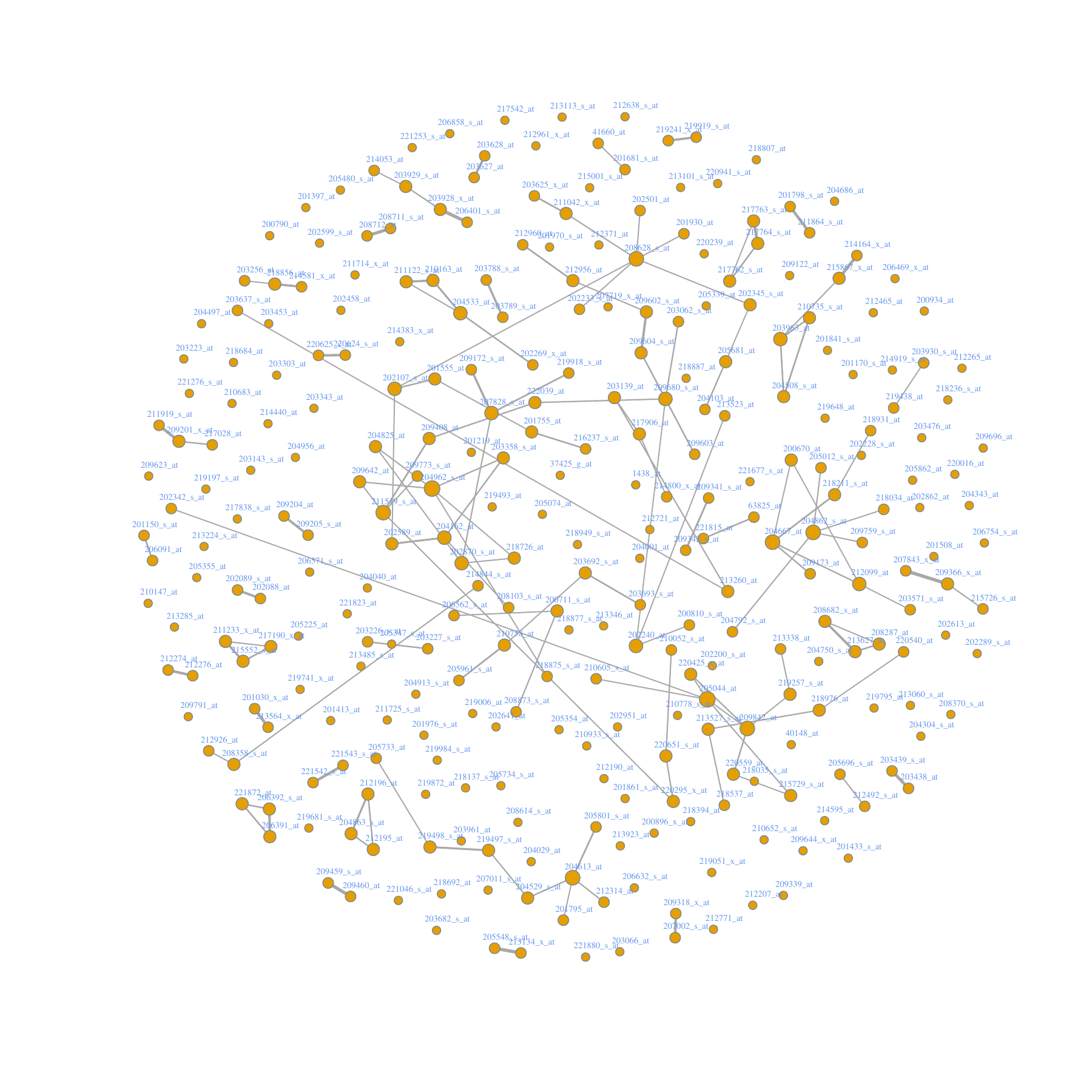}
        \caption*{\scriptsize $L_1{:}~  \mathcal{T}(\widehat{\mb{\Omega}}^{\text{S}}|Z_0(\widehat{\mb{T}}),S_L(\widehat{\mb{\Omega}}^{\text{S}}))$\\
        \centering (138 edges)}
\end{subfigure}    
\begin{subfigure}{0.245\linewidth}
        \centering
        \includegraphics[width=\textwidth]{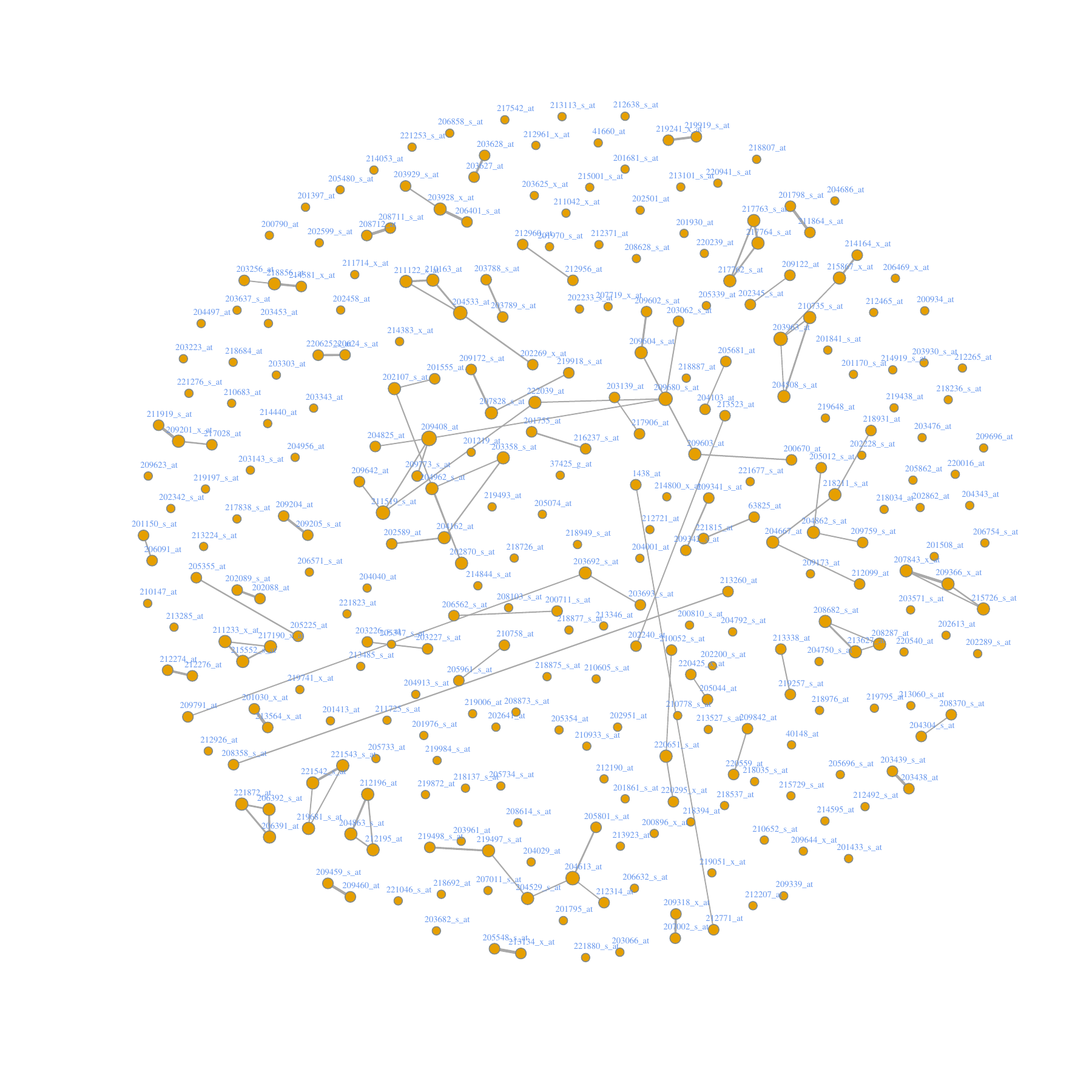}
                \caption*{\scriptsize CLIME\\
                \centering (102 edges)}
\end{subfigure}    
\begin{subfigure}{0.245\linewidth}
        \centering
        \includegraphics[width=\textwidth]{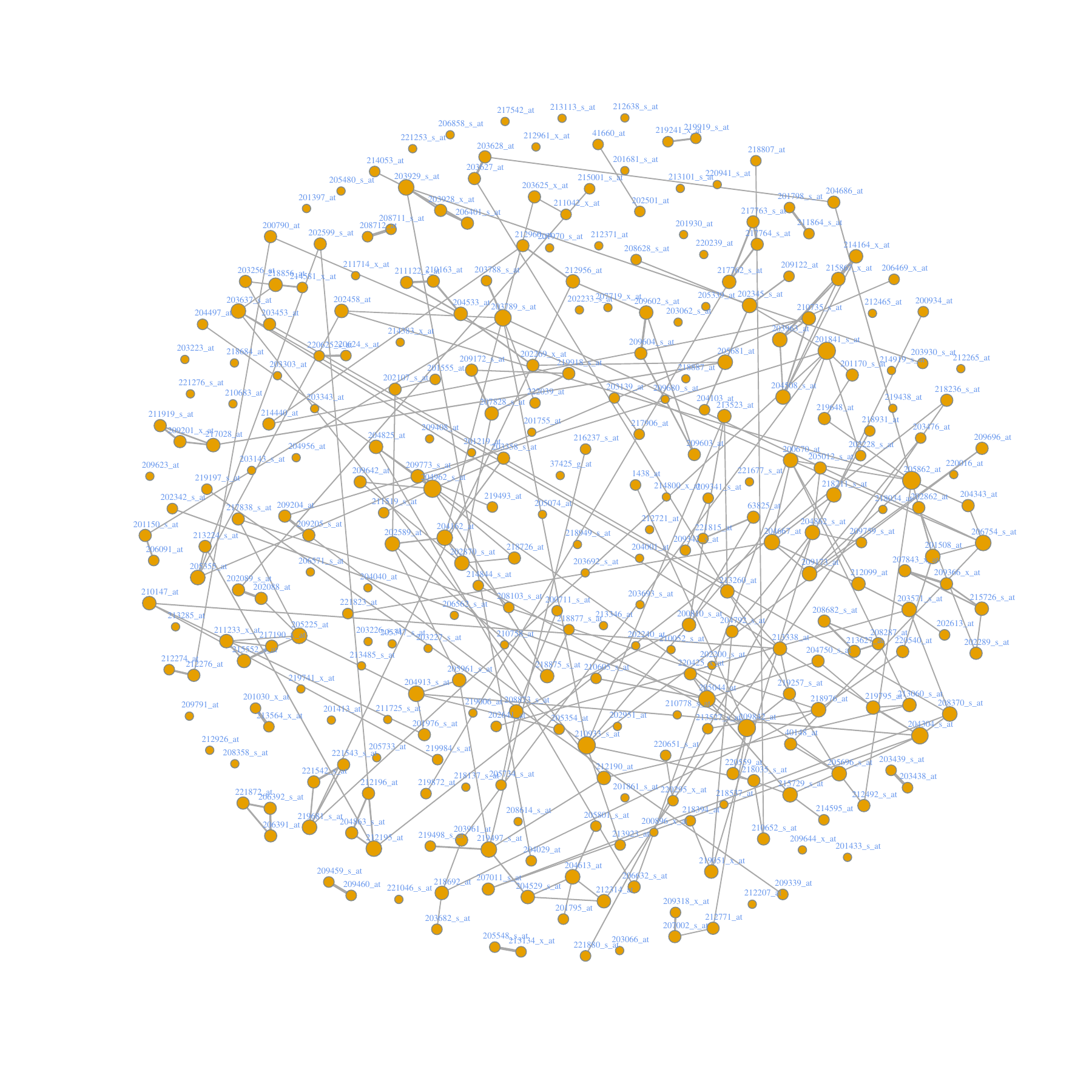}
     \caption*{\scriptsize GLasso\\
                \centering (246 edges)}
\end{subfigure}   
    \end{minipage}
    \caption{Network graphs of gene probesets identified by 
    Nodewise Loreg ($L_0{:}\,\mathcal{T}(\widehat{\mb{\Omega}}^{\text{S}}|Z_0(\widehat{\mb{\Omega}}^\text{US}),S_L(\widehat{\mb{\Omega}}^\text{S}))$),
     Nodewise Lasso ($L_1{:}\,\mathcal{T}(\widehat{\mb{\Omega}}^{\text{S}}|Z_0(\widehat{\mb{T}}),S_L(\widehat{\mb{\Omega}}^{\text{S}}))$),
     CLIME, and GLasso.}
    \label{fig: network of probesets}
\end{figure}

We analyze the direct connectivity among the 300 differentially expressed gene probesets using the estimated partial correlation matrices derived from the four precision matrix estimation methods based on the entire dataset. The partial correlation between \(x_i\) and \(x_j\) given \(\{x_k\}_{k \notin \{i,j\}}\) is equal to \(-\mathbf{\Omega}_{ij}/\sqrt{\mathbf{\Omega}_{ii}\mathbf{\Omega}_{jj}}\) \citep{Cram46}. A zero partial correlation indicates conditional independence if \(\bd{x}\) is Gaussian or nonparanormal \citep{Liu09}.

We focus on \(\mathcal{T}(\widehat{\mathbf{\Omega}}^{\text{S}}|Z_0(\widehat{\mathbf{\Omega}}^{\text{US}}),S_L(\widehat{\mathbf{\Omega}}^{\text{S}}))\) for Nodewise Loreg and \(\mathcal{T}(\widehat{\mathbf{\Omega}}^{\text{S}}|Z_0(\widehat{\mathbf{T}}),S_L(\widehat{\mathbf{\Omega}}^{\text{S}}))\) for Nodewise Lasso due to their superior performance in pCR classification. We detect 321, 350, 586, and 4032 direct connections of gene probesets using Nodewise Loreg, Nodewise Lasso, CLIME, and GLasso, respectively (Figure~\ref{fig: Venn diagram}).

Nodewise Loreg uniquely identifies 78 direct connections not found by the other methods, while 4095 direct connections are identified by at least one other method but not by Nodewise Loreg, with 3583 unique to GLasso. Of these 4095 connections, only 14 have an absolute partial correlation estimate larger than 0.1 by at least two other methods, and only one of these is detected by
another Nodewise Loreg  estimator
$\mathcal{T}(\widehat{\mb{T}}|Z_0(\widehat{\mb{T}}),S_L(\widehat{\mb{T}}))$. Among the 78 unique connections identified by Nodewise Loreg, 49 are also found by another Nodewise Lasso estimator
$\mathcal{T}(\widehat{\mb{T}}|Z_0(\widehat{\mb{T}}),S_L(\widehat{\mb{T}}))$, and only 5, which are in the remaining 29, have an absolute partial correlation estimate no larger than 0.1.

Figure~\ref{fig: network of probesets} shows network graphs of the 300 gene probesets based on estimated absolute partial correlations from each method. We observe that 60.6\%, 82.6\%, and 93.9\% of the direct connections (i.e., edges) detected by Nodewise Lasso, CLIME, and GLasso, respectively, are weak (estimated absolute partial correlations \(\leq 0.1\)), while only 8 direct connections found by Nodewise Loreg are weak. For networks thresholded at 0.1, the majority of Nodewise Lasso and CLIME’s structures (89.9\% and 85.3\% of edges) are subsumed within the network identified by Nodewise Loreg, which significantly differs from GLasso’s network, covering only 56.9\% of its edges. 
Given GLasso's poor performance in simulations, we focus on Nodewise Loreg's network. The unthresholded network from Nodewise Loreg contains 33 isolates, 9 dyads, 3 triads, and 1 pentad, with the remaining 235 gene probesets forming a connected subnetwork with several small clusters and no hubs, as the maximum node degree is 9.


\section{Conclusion}\label{s: conclusion}

In this paper, we propose Nodewise Loreg,  a nodewise $L_0$-penalized regression method for estimating high-dimensional sparse precision matrix. 
We establish its asymptotic results, including convergence rates in various matrix norms, support recovery, and asymptotic normality, under high-dimensional sub-Gaussian settings.
In particular, 
under certain conditions, we
prove that the Nodewise Loreg estimator
is asymptotically unbiased and normally distributed without the need for debiasing.
A desparsified version of the Nodewise Loreg estimator is also developed in parallel to 
the desparsified Nodewise Lasso estimator.
The asymptotic
variances of undesparsified  Nodewise Loreg estimator
are entrywise dominated by those of the two desparsified estimators
for Gaussian data,
potentially providing more powerful statistical inference when the data is Gaussian or can be transformed to be nearly Gaussian.
Our extensive simulations demonstrate that the undesparsified Nodewise Loreg estimator generally surpasses the
two desparsified estimators in exhibiting asymptotic normal behavior.
Moreover, the proposed Nodewise Loreg method 
outperforms Nodewise Lasso, CLIME, and GLasso
in most simulation settings in terms of matrix norm losses, support recovery, and timing performance.
The analysis of MDA133 breast cancer gene expression dataset
shows that our Nodewise Loreg performs better in the pCR classification
and detects a higher quality partial correlation network of gene probesets than the three $L_1$-norm based methods.

There are many possible future extensions of our research. 
The first is to extend 
 the entrywise asymptotic normality of 
 the Nodewise Loreg estimator to 
the asymptotic normality of a subset of 
its matrix entries.
The subset-based asymptotic normality
has been recently studied by 
\citet{chang2018confidence} and \citet{klaassen2023uniform}
for Nodewise Lasso-type estimators,
which enables 
simultaneous inference 
via joint hypothesis testing
without multiple testing correction.
Another direction is to extend our theory 
from i.i.d. data to time series data. 
In many applications,  
such as brain functional connectivity analysis using functional MRI \citep{ryali2012estimation},
data are not a i.i.d. random sample but are temporally dependent observations.
\citet{Chen13,chen2016regularized}, \citet{Shu19}, and \citet{zhang2021minimax} 
show that
large precision matrix estimators like GLasso and CLIME
are still applicable to high-dimensional time series data, even with strong temporal dependence,
and provide
 convergence rates 
depending on the strength of temporal dependence.
These interesting and challenging studies are worth further investigation.





\section*{Supplementary Materials}

\section{Theoretical proofs}\label{sec: proofs}

\begin{lem}\label{lemma: bound for var(e)}
	Define $\epsilon_j=x_j-\bd{x}_{\setminus j}^\top\bd{\alpha}_j^*$,  $\bd{\epsilon}_j=\mb{X}_{*j}-\mb{X}_{*\setminus j}\bd{\alpha}_j^*$, and $\sigma_j^2=\var(\epsilon_j)$. If Assumptions~\ref{Theta bound} and \ref{subGauss} hold
	and $(\log p)/n=o(1)$, then
	$$
P\left(	\max_{j\in[p]} \left|\|\bd{\epsilon}_j \|_2^2/n-\sigma_j^2\right|> M \sqrt{(\log p)/n}  \right)=O(p^{-C})
	$$
for any given constant $C>0$
	and some constant $M>0$ dependent on $C$.	
\end{lem}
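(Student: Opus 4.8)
The plan is to recognize that $\|\bd{\epsilon}_j\|_2^2/n$ is the empirical average of $n$ i.i.d.\ copies of $\epsilon_j^2$ and to apply a uniform (over $j\in[p]$) sub-exponential concentration bound followed by a union bound. Since the rows of $\mb{X}$ are i.i.d.\ copies of $\bd{x}$, the $k$-th entry of $\bd{\epsilon}_j$ equals $X_{kj}-\mb{X}_{k,\setminus j}\bd{\alpha}_j^*$, an i.i.d.\ copy of $\epsilon_j$; hence $\|\bd{\epsilon}_j\|_2^2/n = n^{-1}\sum_{k=1}^n \epsilon_{j,k}^2$ has mean $\sigma_j^2=E(\epsilon_j^2)$, and the problem reduces to bounding the deviation of this average from its mean, uniformly in $j$.

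First I would control the tail behavior of $\epsilon_j$ uniformly in $j$. Writing $\epsilon_j=\bd{a}_j^\top\bd{x}$, where $\bd{a}_j\in\mathbb{R}^p$ has $1$ in position $j$ and the negated entries of $\bd{\alpha}_j^*$ elsewhere, I note that $E(\bd{x})=\bd{0}$ gives $\sigma_j^2=\bd{a}_j^\top\mb{\Sigma}\bd{a}_j\ge \lambda_{\min}(\mb{\Sigma})\|\bd{a}_j\|_2^2=\lambda_{\max}(\mb{\Omega})^{-1}\|\bd{a}_j\|_2^2$. Combined with $\sigma_j^2=\mb{\Omega}_{jj}^{-1}\le \lambda_{\min}(\mb{\Omega})^{-1}$ from \eqref{Omega_jj=sigma_j^-2}, Assumption~\ref{Theta bound} yields $\|\bd{a}_j\|_2^2\le \kappa_1\kappa_2=c_\kappa$ uniformly in $j$. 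Applying Assumption~\ref{subGauss} with the unit vector $\bd{a}_j/\|\bd{a}_j\|_2$ and using monotonicity of $\exp$ then gives $E[\exp(\epsilon_j^2/(K^2 c_\kappa))]\le E[\exp(\epsilon_j^2/(K^2\|\bd{a}_j\|_2^2))]\le 2$, so $\epsilon_j$ is sub-Gaussian with Orlicz $\psi_2$-norm bounded by $K\sqrt{c_\kappa}$. Consequently $\epsilon_j^2-\sigma_j^2$ is sub-exponential with $\psi_1$-norm bounded by a constant $K_1$ depending only on $K,\kappa_1,\kappa_2$, uniformly in $j$.

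Next I would invoke Bernstein's inequality for sums of i.i.d.\ centered sub-exponential variables to obtain, for each $j$,
\[
P\!\left(\left|\|\bd{\epsilon}_j\|_2^2/n-\sigma_j^2\right|>t\right)\le 2\exp\!\left(-c\,n\min\{t^2/K_1^2,\,t/K_1\}\right)
\]
for an absolute constant $c>0$. Taking $t=M\sqrt{(\log p)/n}$ and using $(\log p)/n=o(1)$ guarantees $t\le K_1$ for $n$ large, so the quadratic branch of the minimum governs, yielding the per-$j$ bound $2\exp(-cM^2(\log p)/K_1^2)=2p^{-cM^2/K_1^2}$. A union bound over $j\in[p]$ produces the overall bound $2p^{\,1-cM^2/K_1^2}$, and choosing $M\ge K_1\sqrt{(C+1)/c}$ makes this $O(p^{-C})$, as required.

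The main obstacle is the uniform sub-Gaussianity step: establishing that $\|\bd{a}_j\|_2$ is bounded by a constant independent of $j$ and $p$ through the eigenvalue bounds of Assumption~\ref{Theta bound}, so that the single constant $K$ from Assumption~\ref{subGauss} transfers to a uniform sub-exponential norm for all of the $\epsilon_j^2$. Everything downstream is a routine Bernstein-plus-union-bound argument, with the only subtlety being verification that the small-deviation (quadratic) regime of Bernstein's inequality applies, which is precisely what the hypothesis $(\log p)/n=o(1)$ secures.
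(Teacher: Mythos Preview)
Your proposal is correct and follows essentially the same route as the paper: show that each $\epsilon_j=\bd{a}_j^\top\bd{x}$ is sub-Gaussian with a parameter independent of $j$ by uniformly bounding $\|\bd{a}_j\|_2$ via Assumption~\ref{Theta bound}, then apply a sub-exponential/Hanson--Wright-type concentration inequality to $n^{-1}\sum_k\epsilon_{j,k}^2$ and finish with a union bound over $j\in[p]$. The only cosmetic differences are that the paper bounds $\|\bd{\alpha}_j^*\|_2$ directly through block-matrix identities (obtaining $\|\bd{a}_j\|_2^2\le\kappa^2$) whereas you use the neat inequality $\sigma_j^2\ge\lambda_{\min}(\mb{\Sigma})\|\bd{a}_j\|_2^2$ together with $\sigma_j^2=\mb{\Omega}_{jj}^{-1}\le\kappa_1$, and the paper cites the Hanson--Wright inequality of Rudelson--Vershynin (2013) rather than Bernstein for sub-exponentials---which amount to the same bound in this i.i.d.\ diagonal-form case.
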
	
\begin{proof}[Proof of Lemma~\ref{lemma: bound for var(e)}]
In the rest of the paper, for simplicity, we choose a constant $\kappa>1$ such that
$\kappa^{-1}\le \kappa_1^{-1}\le \kappa_2\le\kappa$.
We first prove $\epsilon_j$	is a sub-Gaussian variable.
Define $\bd{v}_j=(1, -\bd{\alpha}_j^{*\top})^\top$.
Recall that $\bd{\alpha}^*_j=\mb{\Sigma}^{-1}_{\setminus j,\setminus j}\mb{\Sigma}_{\setminus j, j}$.
By the second inequality in Lemma~1 of \citet{Lam09},
\[
\|\bd{\alpha}_j^*\|_2=\|\mb{\Sigma}^{-1}_{\setminus j,\setminus j}\mb{\Sigma}_{\setminus j, j}\|_2\le
\|\mb{\Sigma}^{-1/2}_{\setminus j,\setminus j}\|_2\|\mb{\Sigma}^{-1/2}_{\setminus j,\setminus j}\mb{\Sigma}_{\setminus j, j}\|_2= \lambda_{\min}^{-1/2}(\mb{\Sigma}_{\setminus j,\setminus j} ) (\mb{\Sigma}_{j,\setminus j}\mb{\Sigma}^{-1}_{\setminus j,\setminus j}\mb{\Sigma}_{\setminus j,j})^{1/2}.
\]
By Cauchy’s interlace theorem \citep{hwang2004cauchy} and Assumption~\ref{Theta bound},
\[
\lambda_{\min}^{-1/2}(\mb{\Sigma}_{\setminus j,\setminus j} )
\le \lambda_{\min}^{-1/2}(\mb{\Sigma})=\lambda_{\max}^{1/2}(\mb{\Omega})\le \kappa^{1/2}.
\]
From \eqref{Theta_jj formula}, we have
$$\mb{\Sigma}_{j,\setminus j} \mb{\Sigma}_{\setminus j,\setminus j}^{-1} \mb{\Sigma}_{\setminus j,j}=\mb{\Sigma}_{jj}-\mb{\Omega}_{jj}^{-1}\le \max_{j\in[p]}\mb{\Sigma}_{jj}-1/\max_{j\in[p]}\mb{\Omega}_{jj}\le  \lambda_{\max}(\mb{\Sigma})-1/\lambda_{\max}(\mb{\Omega})\le \kappa-\kappa^{-1}.$$
From the above,
we obtain
\be\label{norm-2 of alpha_j}
\|\bd{\alpha}_j^*\|_2\le 
\lambda_{\min}^{-1/2}(\mb{\Sigma}_{\setminus j,\setminus j} ) (\mb{\Sigma}_{j,\setminus j}\mb{\Sigma}^{-1}_{\setminus j,\setminus j}\mb{\Sigma}_{\setminus j,j})^{1/2}
\le \kappa^{1/2}(\kappa-\kappa^{-1})^{1/2}
=(\kappa^2-1)^{1/2}
\ee
Hence, $\| \bd{v}_j \|_2^2=\| (1,-\bd{\alpha}_j^{*\top})^\top\|_2^2
=1+\|\bd{\alpha}_j^*\|_2^2\le 1+(\kappa^2-1)=\kappa^2$.
Then from Assumption~\ref{subGauss},
we have
\begin{align}\label{subGauss epsilon}
E\left[\exp(\epsilon_j^2/(\kappa K)^2)\right]
&=E\left[\exp(|\bd{v}_j^\top(x_j,\bd{x}_{\setminus j}^\top)^\top|^2/(\kappa K)^2)\right]
=E\left[\exp(|\kappa^{-1}\bd{v}_j^\top(x_j,\bd{x}_{\setminus j}^\top)^\top|^2/K^2)\right]\nonumber\\
&\le \sup_{\bd{v}\in \mathbb{R}^p:\|\bd{v}\|_2\le 1}E\left[\exp(|\bd{v}^\top\bd{x}|^2/K^2)\right]\le 2. 
\end{align}
Therefore, $\epsilon_j$ is a sub-Gaussian random variable \citep[][Definition 2.5.6]{vershynin_2018} with a parameter $\kappa K$ for all $j\in[p]$.

From Theorem~1.1 
in \cite{Rude13},
for any given constant $C>0$, there exists a constant $M>0$ such 
that 
\be\label{ineq: max e cencentration}
P\left(\max_{j\in[p]}\left|\frac{\|\bd{\epsilon}_j\|_2^2}{n}-\sigma_j^2 \right|>M\sqrt{\frac{\log p}{n}}\right)\le \sum_{j=1}^pP\left(\left|\frac{\|\bd{\epsilon}_j\|_2^2}{n}-\sigma_j^2 \right|>M\sqrt{\frac{\log p}{n}}\right)
=O(p^{-C}).
\ee

The proof is complete.
\end{proof}

\begin{proof}[Proof of Theorem~\ref{thm: consistency}]
We first consider the proof under Assumption~\ref{(C1)}.

We will use Theorem 4 (i) in \citet{Huang18} 
to obtain the bound of $\max_{j\in[p]}\| \widehat{\bd{\beta}}_j-\bd{\beta}_j^*\|_2$, where $\bd{\beta}_j^*:=
\widehat{\mb{\Gamma}}_{\setminus j, \setminus j}^{1/2}\bd{\alpha}_j^*$.
Before using the theorem, 
we need to verify whether
their Assumptions (A1) and (A2) hold in our case.  
Due to our Assumption~\ref{assmp: bounded Tj},
their (A1) holds for all $\{T_i\}_{i=1}^p$.
Next, we check their (A2).
We have that 
\begin{align}
\max_{1\le \ell\le |A|, A\in S_{2T}}|\lambda_\ell(\widehat{\mb{\Sigma}}_{AA}) 
-\lambda_\ell( \mb{\Sigma}_{AA})|
&\le \max_{A\in S_{2T}}\|  \widehat{\mb{\Sigma}}_{AA} - \mb{\Sigma}_{AA} \|_2
\le  \max_{A\subseteq [p], |A|=2T} \|  \widehat{\mb{\Sigma}}_{AA} - \mb{\Sigma}_{AA} \|_2
\nonumber\\
&\le 
 \check{M}_0 \sqrt{T (\log p)/n}       
\label{Sigma_AA's error in norm 2}
\end{align}
with probability $1-O(p^{-C})$ for 
any given constant $C>0$ and some constant 
$\check{M}_0>0$ dependent on $C$,
where
the first inequality follows from
Weyl's inequality, the second inequality follows from Cauchy’s interlace theorem,
and the last inequality 
is obtained by using Assumption~\ref{subGauss}, $T(\log p)/n=o(1)$,
Remark 5.40 in \citet{Vershynin2012} (in which let $t=\sqrt{(\log p^{2T+C})/c}$), and the same proof technique used in \eqref{ineq: max e cencentration} (at most $p^{2T}$ terms in the summation).
From Cauchy’s interlace theorem 
and Assumption~\ref{Theta bound},
for any $A\subseteq [p]$,
\be \label{bounded Sigma_AA}
\kappa^{-1}\le \lambda_{\min}(\mb{\Sigma}) \le \lambda_{\min}(\mb{\Sigma}_{AA})\le \lambda_{\max}( \mb{\Sigma}_{AA})\le \lambda_{\max}(\mb{\Sigma})
\le \kappa.
\ee
By \eqref{Sigma_AA's error in norm 2}, \eqref{bounded Sigma_AA},
and $T(\log p)/n=o(1)$,
we have
\be\label{spectrum of Sigma_AA}
P\left((2\kappa)^{-1}\le 
\lambda_{\min}(\widehat{\mb{\Sigma}}_{AA}) \le \lambda_{\max}(\widehat{\mb{\Sigma}}_{AA}) 
\le 2\kappa, \forall A\in S_{2T} \right)=1-O(p^{-C}).
\ee
Then, it holds with probability $1-O(p^{-C})$ that 
 \begin{align}\label{lb min Sigma_ii}
  (2\kappa)^{-1}\le\min_{A\in S_{2T}} \lambda_{\min}(\widehat{\mb{\Sigma}}_{AA})
 & \le\min_{i\in A,A\in S_{2T}} \widehat{\mb{\Sigma}}_{ii}\nonumber\\
 &=\min_{i\in [p]}\widehat{\mb{\Sigma}}_{ii}
\le \max_{i\in [p]}\widehat{\mb{\Sigma}}_{ii}\nonumber\\
&\qquad\qquad~~=
\max_{i\in A,A\in S_{2T}} \widehat{\mb{\Sigma}}_{ii}
\le \max_{A\in S_{2T}} \lambda_{\max}(\widehat{\mb{\Sigma}}_{AA})\le 2\kappa.
 \end{align}
 From \eqref{lb min Sigma_ii},
$P(\min_{i\in[p]}\widehat{\mb{\Sigma}}_{ii}\ge (2\kappa)^{-1}> 0)=1-O(p^{-C})$,
which implies 
\be\label{Gamma exists}
P(\widehat{\mb{\Sigma}}_{ii}^{-1}~\text{exists}, \forall i\in[p]) =1-O(p^{-C}).
\ee
From Assumption~\ref{Theta bound}, we have
 \be\label{lb for sigma_ii}
 \kappa_2^{-1}\le\lambda_{\min}(\mb{\Sigma})
 \le \min_{i\in[p]} \mb{\Sigma}_{ii}
 \le \max_{i\in[p]} \mb{\Sigma}_{ii}
 \le \lambda_{\max}(\mb{\Sigma}) 
\le \kappa_1 .
\ee 
From the sub-Gaussian concentration inequality in (29) of \citet{CaiClime},
Assumption~\ref{subGauss}, and
$(\log p)/n=o(1)$,
\be\label{max norm err of Sigma}
\| \widehat{\mb{\Sigma}}-\mb{\Sigma}\|_{\max}\le \tilde{M}_0\sqrt{(\log p)/n}
\ee
with probability $1-O(p^{-C})$ for some constant $\tilde{M}_0>0$ dependent on $C$.
By the mean value theorem, \eqref{lb min Sigma_ii}, \eqref{lb for sigma_ii},
and \eqref{max norm err of Sigma},
we have 
\begin{align}\label{max err in Sigma^-1/2}
\max_{i\in[p]}|\widehat{\mb{\Sigma}}_{ii}^{-1/2}-\mb{\Sigma}_{ii}^{-1/2}|
\le \frac{1}{2}\max_{i\in[p]}\{  \widehat{\mb{\Sigma}}_{ii}^{-3/2},  \mb{\Sigma}_{ii}^{-3/2} \}\max_{i\in[p]}|\widehat{\mb{\Sigma}}_{ii}-\mb{\Sigma}_{ii}|
\le \check{M}_1 \sqrt{(\log p)/n}
\end{align}
with probability $1-O(p^{-C})$ for some constant $\check{M}_1>0$ dependent on $C$.
By Weyl’s inequality, \eqref{Sigma_AA's error in norm 2}, \eqref{bounded Sigma_AA}, 
\eqref{lb min Sigma_ii}, \eqref{lb for sigma_ii}, and \eqref{max err in Sigma^-1/2},
we have
\begin{align}\label{diff in eigen sample corr}
&\max_{1\le \ell\le |A|, A\in S_{2T}}|\lambda_\ell(\mb{Z}_{*A}^\top\mb{Z}_{*A}/n) 
-\lambda_\ell( \mb{R}_{AA})|\nonumber\\
&\le
\max_{A\in S_{2T}}\| \mb{Z}_{*A}^\top\mb{Z}_{*A}/n
-\mb{R}_{AA}\|_2\nonumber\\
&=\max_{A\in S_{2T}}\|\widehat{\mb{\Gamma}}_{AA}^{-1/2}\widehat{\mb{\Sigma}}_{AA}\widehat{\mb{\Gamma}}_{AA}^{-1/2}-   \mb{\Gamma}_{AA}^{-1/2}\mb{\Sigma}_{AA}\mb{\Gamma}_{AA}^{-1/2}\|_2
\nonumber\\
&\le \max_{A\in S_{2T}}\Big\{
\|\widehat{\mb{\Gamma}}_{AA}^{-1/2}(\widehat{\mb{\Sigma}}_{AA}
-\mb{\Sigma}_{AA})\widehat{\mb{\Gamma}}_{AA}^{-1/2}\|_2
+\|\widehat{\mb{\Gamma}}_{AA}^{-1/2}\mb{\Sigma}_{AA}(\widehat{\mb{\Gamma}}_{AA}^{-1/2}-\mb{\Gamma}_{AA}^{-1/2})\|_2
\nonumber\\
&\qquad\qquad\qquad+\|(\widehat{\mb{\Gamma}}_{AA}^{-1/2}-\mb{\Gamma}_{AA}^{-1/2})
\mb{\Sigma}_{AA}\mb{\Gamma}_{AA}^{-1/2}
 \|_2\Big\}\nonumber\\
 &\le \tilde{M}_1 \sqrt{T(\log p)/n}
\end{align}
with probability $1-O(p^{-C})$ for some constant $\tilde{M}_1>0$ dependent on $C$,
where $\mb{R}:=\corr(\bd{x})$.
Since
$\lambda_{\max}(\mb{\Sigma})\mb{\Gamma}^{-1} \succeq
\mb{R} =\mb{\Gamma}^{-1/2} \mb{\Sigma}\mb{\Gamma}^{-1/2}
\succeq 
\lambda_{\min}(\mb{\Sigma})\mb{\Gamma}^{-1} $,
where $\mb{M}_1\succeq \mb{M}_2$ denotes that $\mb{M}_1-\mb{M}_2$ is a positive semi-definite matrix, then
together with \eqref{lb for sigma_ii}
we obtain
\be\label{eigen bounds for R}
 (\kappa_1\kappa_2)^{-1}
\le \lambda_{\min}(\mb{\Sigma})(\max_{i\in[p]} \mb{\Sigma}_{ii})^{-1}
\le
\lambda_{\min}(\mb{R})\le 
\lambda_{\max}(\mb{R})
\le 
\lambda_{\max}(\mb{\Sigma})(\min_{i\in [p]} \mb{\Sigma}_{ii})^{-1}
\le \kappa_1\kappa_2.
\ee
By Cauchy’s interlace theorem,
$
 (\kappa_1\kappa_2)^{-1}\le
 \lambda_{\min}(\mb{R})\le  \lambda_{\min}(\mb{R}_{AA})
 \le \lambda_{\max}(\mb{R}_{AA})\le
\lambda_{\max}(\mb{R})
 \le \kappa_1\kappa_2
$
for any $A\subseteq [p]$.
Combining it with \eqref{diff in eigen sample corr} and $T(\log p)/n=o(1)$ yields
\begin{align}\label{P(Event1)}
P(\mathcal{E}_{\widehat{\mb{R}}})&=1-O(p^{-C})\quad \text{where}\\
&\mathcal{E}_{\widehat{\mb{R}}}:=\{\tilde{c}_\kappa^{-1}\le
\lambda_{\min}(\mb{Z}_{*A}^\top\mb{Z}_{*A}/n)\le 
\lambda_{\max}(\mb{Z}_{*A}^\top\mb{Z}_{*A}/n)\le \tilde{c}_\kappa,
\forall A\in S_{2T}
\}\nonumber
\end{align}
with $\tilde{c}_\kappa=c_\kappa+o(1)>c_\kappa:=\kappa_1\kappa_2$.
Hence, Assumption (A2) in \citet{Huang18} holds for 
all $\{T_i\}_{i=1}^p$
with probability $1-O(p^{-C})$.
Recall from Algorithms~\ref{alg: SDAR} and \ref{alg: Nodewise Loreg} that 
$\widehat{\bd{\beta}}_j= \bd{\beta}_j^{k_j+1}$
and
$\widehat{A}_j=A_j^{k_j}$.
Then by Theorem 4 (i) in \citet{Huang18},
on the event $\mathcal{E}_{\widehat{\mb{R}}}$,
for
any
$
\tilde{\theta}_{T,T}\ge  \max_{\substack{A,B\in S_{T}: A\cap B=\emptyset}}\| \mb{Z}_{*A}^\top\mb{Z}_{*B}/n\|_2
$,
we have that, for all $j\in[p]$, 
\begin{align}
\|  \bd{\beta}_j^{k_j+1}-
\bd{\beta}_j^* \|_2
&\le \tilde{b}_1 \tilde{\gamma}_T^{k_j} \| \bd{\beta}_j^*  \|_2
+\tilde{b}_2 h_2(T_j),\label{error in beta_k+1}\\
\| (\bd{\beta}_j^*)_{A_j^*\setminus A_j^{k_j}}\|_2 &
\le \tilde{\gamma}_T^{k_j} \| \bd{\beta}_j^*  \|_2
+\frac{\tilde{\gamma}_T^{}}{(1-\tilde{\gamma}_T^{})\tilde{\theta}_{T,T}}h_2(T_j),\label{norm of beta outside}
\end{align}
when
$\tilde{\gamma}_T^{}:=[2\tilde{\theta}_{T,T}+(1+\sqrt{2})\tilde{\theta}_{T,T}^2]\tilde{c}_\kappa^2+
(1+\sqrt{2})\tilde{\theta}_{T,T}\tilde{c}_\kappa<1$,
where 
$\tilde{b}_1:=1+\tilde{\theta}_{T,T}\tilde{c}_{\kappa}$, $\tilde{b}_2:=\frac{\tilde{\gamma} \tilde{b}_1}{(1-\tilde{\gamma})\tilde{\theta}_{T,T}}+\tilde{c}_\kappa$,
and $h_2(T_j):=\max_{A\subseteq [p]\setminus \{j\}, |A|\le T_j} \| \mb{Z}_{*A}^\top \bd{\epsilon}_j\|_2/n.$

We now consider the bounds for $\max_{j\in[p]}\| \bd{\beta}_j^*\|_2$
and $h_2(T)$.
By  \eqref{lb min Sigma_ii} and \eqref{norm-2 of alpha_j},
we have
\begin{align}
\max_{j\in[p]} \|\bd{\beta}_j^*\|_2&=
\max_{j\in[p]}\|\widehat{\mb{\Gamma}}_{\setminus j, \setminus j}^{1/2}\bd{\alpha}_j^*\|_2
\le \max_{j\in[p]}
\| \widehat{\mb{\Gamma}}_{\setminus j, \setminus j}^{1/2} \|_2\| \bd{\alpha}_j^*\|_2\nonumber\\\
&\le \max_{j\in[p]} \widehat{\mb{\Sigma}}_{jj}^{1/2} (\kappa^2-1)^{1/2}
\le (2\kappa)^{1/2}(\kappa^2-1)^{1/2}
=(2\kappa^3-2\kappa)^{1/2} \label{bounds for |betaj*|_2}
\end{align}
with probability $1-O(p^{-C})$.
By the sub-Gaussianity of $\{x_j,\epsilon_j\}_{j=1}^p$
given in Assumption~\ref{subGauss} and inequality \eqref{subGauss epsilon}, 
$\cov(x_i,\epsilon_j)=0$ for $i\ne j$,
the sub-Gaussian concentration inequality in (29) of \citet{CaiClime}, 
and $(\log p)/n=o(1)$, we have 
\be\label{bound for xe}
P\left(\max_{A\subseteq [p]\setminus \{j\}, |A|\le T_j}\| \mb{X}_{* A}^\top \bd{\epsilon}_j \|_2/n\le
\tilde{M}_2\sqrt{T_j(\log p)/n},\forall j\in[p] \right)=1-O(p^{-C})
\ee
for some constant $\tilde{M}_2>0$ dependent on $C$.
By
$\| \mb{X}_{* A}^\top \bd{\epsilon}_j \|_2
=\| \widehat{\mb{\Gamma}}_{AA}^{1/2}\mb{Z}_{* A}^\top \bd{\epsilon}_j \|_2\ge \min\limits_{i\in [p]} \widehat{\mb{\Sigma}}_{ii}^{1/2}\| \mb{Z}_{* A}^\top \bd{\epsilon}_j \|_2$,
\eqref{lb min Sigma_ii}
and \eqref{bound for xe}, we have
\be\label{bound for ze}
P\left(h_2(T_j)
:=\max_{A\subseteq [p]\setminus \{j\}, |A|\le T_j} \| \mb{Z}_{*A}^\top \bd{\epsilon}_j\|_2/n
\le
 \tilde{M}_2\sqrt{2\kappa T_j(\log p)/n}, 
\forall j\in[p]
 \right)=1-O(p^{-C}).
\ee

Due to \eqref{P(Event1)}, \eqref{error in beta_k+1}, \eqref{bounds for |betaj*|_2}
and \eqref{bound for ze},
we have
\begin{align}
P\Big(\|  \bd{\beta}_j^{k_j+1}-
\bd{\beta}_j^* \|_2
&\le \tilde{b}_1 \tilde{\gamma}_T^{k_j} (2\kappa^3-2\kappa)^{1/2}
+\tilde{b}_2  \tilde{M}_2\sqrt{2\kappa T_j (\log p)/n}
\nonumber\\
&\le [\tilde{b}_1(2\kappa^3-2\kappa)^{1/2}+ \tilde{b}_2  \tilde{M}_2\sqrt{2\kappa}]
\sqrt{T_j(\log p)/n}, ~~\forall j\in[p]\Big)=1-O(p^{-C})
\label{initial bound for beta}
\end{align}
when $0<\tilde{\gamma}_T^{}<1$ and $k_j \ge \log_{\tilde{\gamma}_T^{}} \sqrt{T_j(\log p)/n}$ for all $j\in[p]$.
Note that $\tilde{\gamma}_T^{}>0$ when $\tilde{\theta}_{T,T}>0$.

From \eqref{diff in eigen sample corr},
with probability $1-O(p^{-C})$,
\begin{align}
& \max_{\substack{A,B\in S_{T}: A\cap B=\emptyset}} 
\left| \| \mb{Z}_{*A}^\top\mb{Z}_{*B}/n\|_2  -\| \mb{R}_{AB}\|_2\right|
\le 
 \max_{\substack{A,B\in S_{T}: A\cap B=\emptyset}} 
 \| \mb{Z}_{*A}^\top\mb{Z}_{*B}/n-\mb{R}_{AB}\|_2\nonumber\\
 &\qquad\qquad\qquad \le \max_{A\in S_{2T}}\| \mb{Z}_{*A}^\top\mb{Z}_{*A}/n
-\mb{R}_{AA}\|_2
 \le  \tilde{M}_1 \sqrt{T(\log p)/n}.
 \label{R_AB diff}
\end{align}
Thus, 
$P(\tilde{\theta}_{T,T}\le \theta_{T,T}+\tilde{M}_1 \sqrt{T(\log p)/n})=1-O(p^{-C})$
for some $\tilde{\theta}_{T,T}$
satisfying
$0<\tilde{\theta}_{T,T}\ge  \max_{\substack{A,B\in S_{T}: A\cap B=\emptyset}}\| \mb{Z}_{*A}^\top\mb{Z}_{*B}/n\|_2$.
Since $T(\log p)/n=o(1)$, 
we have
$P(\tilde{\theta}_{T,T}\le \theta_{T,T}+o(1))=1-O(p^{-C})$.
Combining it with
$\theta_{T,T}\le \| \mb{R}\|_2\le  \kappa_1\kappa_2$ (from 
\eqref{eigen bounds for R})
and $\tilde{c}_\kappa=c_\kappa+o(1)=\kappa_1\kappa_2+o(1)$
yields
$P(\tilde{\gamma}_T^{}\le \gamma_T^{}+o(1))=1-O(p^{-C})$.
Then by $\gamma_T^{}<1-c_1$, 
with probability $1-O(p^{-C})$ we have
the following results:
$\tilde{\gamma}_T^{}\le \gamma_T^{}+c_1/2<1-c_1/2$,  
both $\tilde{b}_1$ and $\tilde{b}_2$ are upper bounded by a constant,
and
 $\log_{1-c_1/2} \sqrt{T_j(\log p)/n}\ge \log_{\tilde{\gamma}_T^{}} \sqrt{T_j(\log p)/n}$ for all $j\in[p]$.
Thus, 
if $k_j\ge \log_{1-c_1/2} \sqrt{T_j(\log p)/n}$ for all $j\in[p]$, 
then by \eqref{initial bound for beta} we have
\be\label{norm-2 error of beta_hat}
P\left(\|  \widehat{\bd{\beta}}_j-
\bd{\beta}_j^* \|_2\le M_3 \sqrt{T_j(\log p)/n},\forall j\in[p]\right)=1-O(p^{-C})
\ee
for some constant $M_3>0$ dependent on $C$.

Note that
\be\label{decomp in L1}
\| \widehat{\mb{\Omega}}-\mb{\Omega}\|_1
\le \max_{j\in[p]} |\widehat{\mb{\Omega}}_{jj}-\mb{\Omega}_{jj}|
+\max_{j\in[p]}\|\widehat{\mb{\Omega}}_{\setminus j,j}-\mb{\Omega}_{\setminus j,j} \|_1.
\ee

We first consider the first term on the right-hand side (RHS) of  \eqref{decomp in L1}.
For all $j\in [p]$,
\begin{align}
\widehat{\sigma}^2_j-\sigma_j^2
&=\|\mb{X}_{*j}-\mb{Z}_{*\setminus j}\widehat{\bd{\beta}}_j\|_2^2/n -\sigma_j^2
= \|\mb{Z}_{*\setminus j}(\bd{\beta}_j^*-\widehat{\bd{\beta}}_j)
+\bd{\epsilon}_j\|_2^2/n -\sigma_j^2\nonumber\\
&=( \| \bd{\epsilon}_j\|^2_2/n-\sigma_j^2  )
+\|\mb{Z}_{*\setminus j}(\bd{\beta}_j^*-\widehat{\bd{\beta}}_j)\|_2^2/n
+2
(\bd{\beta}_j^*-\widehat{\bd{\beta}}_j)^\top\mb{Z}_{*\setminus j}^\top \bd{\epsilon}_j/n.
\label{Theta^-1 decomp}
\end{align}
The bound for the first term on the RHS of \eqref{Theta^-1 decomp} is given in Lemma~\ref{lemma: bound for var(e)}.
For the second term, 
by \eqref{P(Event1)} and \eqref{norm-2 error of beta_hat},
with probability $1-O(p^{-C})$
we have that
\begin{align}
&\max_{j\in[p]}\|\mb{Z}_{*\setminus j}(\widehat{\bd{\beta}}_j-\bd{\beta}_j^*)\|_2^2/n
=\max_{j\in[p]}\|\mb{Z}_{*S_j}[(\widehat{\bd{\beta}}_j)_{S_j}-(\bd{\beta}_j^*)_{S_j}]\|_2^2/n\nonumber\\
&\quad\le\max_{j\in[p]}  n^{-1}\|\mb{Z}_{*S_j}\|_2^2 \|(\widehat{\bd{\beta}}_j)_{S_j}-(\bd{\beta}_j^*)_{S_j}\|_2^2
=\max_{j\in[p]} \|  n^{-1}  \mb{Z}_{*S_j}^\top \mb{Z}_{*S_j}\|_2\|\widehat{\bd{\beta}}_j-\bd{\beta}_j^*\|_2^2\nonumber\\
&\quad\le 2c_\kappa M_3^2 T(\log p)/n
\label{max z diff beta}
\end{align}
where $S_j=A_j^*\cup \widehat{A}_j$.
Now consider the third term on the RHS  of  \eqref{Theta^-1 decomp}.
Similar to \eqref{bound for xe} and \eqref{bound for ze}, we can obtain 
\be\label{max norm for Xe}
P\left(\max_{j\in [p]} \|\mb{X}_{*\setminus j}^\top \bd{\epsilon}_j \|_\infty/n
\le M_4\sqrt{(\log p)/n}\right)=1-O(p^{-C})
\ee
and
\be\label{max norm for Ze}
P\left(\max_{j\in [p]} \|\mb{Z}_{*\setminus j}^\top \bd{\epsilon}_j \|_\infty/n
\le M_4\sqrt{(\log p)/n}\right)=1-O(p^{-C})
\ee
for some constant $M_4>0$ dependent on $C$.
From 
\eqref{norm-2 error of beta_hat} and \eqref{max norm for Ze},
 we have
\begin{align}
&\max_{j\in[p]}2
|(\widehat{\bd{\beta}}_j-\bd{\beta}_j^*)^\top\mb{Z}_{*\setminus j}^\top \bd{\epsilon}_j|/n
\le 2 
(\max_{j\in[p]}\|\widehat{\bd{\beta}}_j-\bd{\beta}_j^* \|_1)
\max_{j\in[p]}\|\mb{Z}_{*\setminus j}^\top \bd{\epsilon}_j \|_\infty/n
\nonumber\\
&\quad \le 2 
(\max_{j\in[p]}\|\widehat{\bd{\beta}}_j-\bd{\beta}_j^* \|_2\sqrt{T_j+s_j-1})
\max_{j\in [p]} \|\mb{Z}_{*\setminus j}^\top \bd{\epsilon}_j \|_\infty/n
\nonumber\\
&\quad \le M_5  T(\log p)/n 
\label{3rd term in Theta^-1 bound}
\end{align}
with probability $1-O(p^{-C})$
for some constant $M_5>0$ dependent on $C$.
Combining \eqref{Theta^-1 decomp}, Lemma~\ref{lemma: bound for var(e)}, \eqref{max z diff beta}, and \eqref{3rd term in Theta^-1 bound} 
yields
\begin{align*}
\max_{j\in[p]} |\widehat{\sigma}^2_j-\sigma_j^2|
&\le M \sqrt{(\log p)/n} + 2c_\kappa M_3^2  T(\log p)/n + M_5  T(\log p)/n\nonumber\\
&\le M_6 [\sqrt{(\log p)/n}+T(\log p)/n]
\end{align*}
with probability $1-O(p^{-C})$ for some constant $M_6>0$ 
dependent on $C$.
From the above inequality, $\sigma_j^2=\mb{\Omega}_{jj}^{-1}$, Assumption~\ref{Theta bound} and $T(\log p)/n=o(1)$,
we have
\[
P((2\kappa)^{-1}\le \min_{j\in[p]}\widehat{\sigma}_j\le \max_{j\in[p]}\widehat{\sigma}_j\le 2\kappa)=1-O(p^{-C}),
\]
which implies
\be\label{Omega exists}
P(\widehat{\mb{\Omega}}_{jj}:=\widehat{\sigma}_j^{-2}~\text{exists}, \forall j\in[p]) =1-O(p^{-C}).
\ee
and
\be\label{bound for Theta_jj}
P((2\kappa)^{-1}\le\min_{j\in[p]}\widehat{\mb{\Omega}}_{jj}\le\max_{j\in[p]}\widehat{\mb{\Omega}}_{jj}\le 2\kappa)=
1-O(p^{-C}).
\ee
Hence, we have
\be\label{diff in Theta[j,j]}
\max_{j\in[p]} |\widehat{\mb{\Omega}}_{jj}-\mb{\Omega}_{jj}|
\le (\max_{j\in[p]} \widehat{\mb{\Omega}}_{jj})(\max_{j\in[p]}\mb{\Omega}_{jj})
	\max_{j\in[p]} |\widehat{\mb{\Omega}}_{jj}^{-1}-\mb{\Omega}_{jj}^{-1}|  
\le 2\kappa^2M_6  [\sqrt{(\log p)/n}+T(\log p)/n]
\ee
with probability $1-O(p^{-C})$.

Next, we consider the bound for the second term on the RHS of \eqref{decomp in L1}.
For any $j\in [p]$,
\begin{align}
\lefteqn{\|\widehat{\mb{\Omega}}_{\setminus j,j}-\mb{\Omega}_{\setminus j,j} \|_1}\nonumber\\
&=\| -\widehat{\mb{\Omega}}_{jj}
\widehat{\mb{\Gamma}}_{\setminus j, \setminus j}^{-1/2} \widehat{\bd{\beta}}_j+\mb{\Omega}_{jj}\widehat{\mb{\Gamma}}_{\setminus j, \setminus j}^{-1/2} \bd{\beta}_j^* \|_1
=\| \widehat{\mb{\Gamma}}_{\setminus j, \setminus j}^{-1/2} (\widehat{\mb{\Omega}}_{jj}\widehat{\bd{\beta}}_j -\mb{\Omega}_{jj}\bd{\beta}_j^*)  \|_1
\nonumber\\
&\le \| \widehat{\mb{\Gamma}}_{\setminus j, \setminus j}^{-1/2} \|_1
\widehat{\mb{\Omega}}_{jj}\| \widehat{\bd{\beta}}_j-\bd{\beta}_j^*\|_1
+|\widehat{\mb{\Omega}}_{jj}-\mb{\Omega}_{jj}|\|    \widehat{\mb{\Gamma}}_{\setminus j,\setminus, j}^{-1/2}\bd{\beta}_j^*\|_1
\nonumber\\
&\le (\max_{k\in[p]} \widehat{\mb{\Sigma}}_{kk}^{-1/2})
\widehat{\mb{\Omega}}_{jj}\| \widehat{\bd{\beta}}_j-\bd{\beta}_j^*\|_1
+|\widehat{\mb{\Omega}}_{jj}-\mb{\Omega}_{jj}|
\|\mb{\Omega}_{\setminus j, j}\mb{\Omega}_{jj}^{-1} \|_1
\label{diff in Theta column}\\
&\le (\min_{k\in[p]}\widehat{\mb{\Sigma}}_{kk})^{-1/2}
\widehat{\mb{\Omega}}_{jj}\| \widehat{\bd{\beta}}_j-\bd{\beta}_j^*\|_2\sqrt{T_j+s_j-1}
+|\widehat{\mb{\Omega}}_{jj}-\mb{\Omega}_{jj}|
\|\mb{\Omega}_{\setminus j,j}\|_1(\min_{k\in[p]}\mb{\Omega}_{kk})^{-1} 
\nonumber\\
&\le (\min_{k\in[p]}\widehat{\mb{\Sigma}}_{kk})^{-1/2}
\widehat{\mb{\Omega}}_{jj}\| \widehat{\bd{\beta}}_j-\bd{\beta}_j^*\|_2\sqrt{2T}
+|\widehat{\mb{\Omega}}_{jj}-\mb{\Omega}_{jj}|
(\|\mb{\Omega}_{\setminus j,j}\|_2\sqrt{T})(\min_{k\in[p]}\mb{\Omega}_{kk})^{-1}.
\nonumber
\end{align}
Together with \eqref{lb min Sigma_ii}, \eqref{bound for Theta_jj}, \eqref{norm-2 error of beta_hat}, \eqref{diff in Theta[j,j]}, $\|\mb{\Omega}_{\setminus j,j}\|_2\le \|\mb{\Omega}\|_2\le \kappa$, 
$\min_k \mb{\Omega}_{kk}\ge \lambda_{\min}(\mb{\Omega})\ge 1/\kappa$,
and $T(\log p)/n=o(1)$,
we have 
\be\label{diff in Theta[-j,j]}
P(\max_{j\in [p]}\|\widehat{\mb{\Omega}}_{\setminus j,j}-\mb{\Omega}_{\setminus j,j} \|_1
\le M_7 T \sqrt{(\log p)/n})=1-O(p^{-C})
\ee
for some constant $M_7>0$ dependent on $C$.

Combining \eqref{decomp in L1}, \eqref{diff in Theta[j,j]},  \eqref{diff in Theta[-j,j]} and $T(\log p)/n=o(1)$ yields 
\be\label{norm-1 bound for Theta}
P\left(\| \widehat{\mb{\Omega}}-\mb{\Omega}\|_1\le M_8 T  \sqrt{(\log p)/n}\right)
	=1-O(p^{-C})
\ee
	with some constant $M_8>0$ dependent on $C$.

Similar to \eqref{diff in Theta column}, from
\eqref{bound for Theta_jj}, \eqref{lb min Sigma_ii}, \eqref{norm-2 error of beta_hat}, \eqref{diff in Theta[j,j]},  \eqref{bounds for |betaj*|_2}, and $T(\log p)/n=o(1)$, we have
\begin{align}\label{column 2-norm bound}
P\Big(\|\widehat{\mb{\Omega}}_{\setminus j,j}-\mb{\Omega}_{\setminus j,j} \|_2
&\le \widehat{\mb{\Omega}}_{jj}\| \widehat{\mb{\Gamma}}_{\setminus j,\setminus j}^{-1/2} \|_2\|\widehat{\bd{\beta}}_j-\bd{\beta}_j^* \|_2
+|\widehat{\mb{\Omega}}_{jj}-\mb{\Omega}_{jj}|\|    \widehat{\mb{\Gamma}}_{\setminus j,\setminus, j}^{-1/2}\|_2\|\bd{\beta}_j^*\|_2\nonumber\\
&\le \tilde{M}_8\sqrt{T_j(\log p )/n},~~\forall j\in[p]\Big)=
1-O(p^{-C})
\end{align}
for some  constant $\tilde{M}_8>0$ dependent on $C$.
Due to \eqref{column 2-norm bound}, \eqref{diff in Theta[j,j]}, 
$\|\widehat{\mb{\Omega}}_{*j}-\mb{\Omega}_{*j} \|_2^2= \|\widehat{\mb{\Omega}}_{\setminus j,j}-\mb{\Omega}_{\setminus j,j} \|_2^2+|\widehat{\mb{\Omega}}_{jj}-\mb{\Omega}_{jj}|^2$,
and $T(\log p)/n=o(1)$, it holds with probability $1-O(p^{-C})$ that
\be\label{Theta column norm 2}
\max\{\| \widehat{\mb{\Omega}}-\mb{\Omega}\|_{\max}, p^{-1/2}\| \widehat{\mb{\Omega}}-\mb{\Omega} \|_F\} \le\max_{j\in[p]}\|\widehat{\mb{\Omega}}_{*j}-\mb{\Omega}_{*j} \|_2\le 
\sqrt{(\tilde{M}_8^2+ 4\kappa^4M_6^2)T(\log p)/n}.
\ee

Next, we consider  the error bounds of $\widehat{\mb{\Omega}}$ under Assumption~\ref{(C2)}.

By Theorem~12\,(i) in \citet{Huang18}, 
for
any $\tilde{\mu}\ge \max_{i\ne j}|\widehat{\mb{R}}_{ij}|$
with $\widehat{\mb{R}}:=\mb{Z}^\top\mb{Z}/n$,
when $T\tilde{\mu}\le 1/4$, 
we have that, for all $j\in[p]$,
\begin{align}
\| \bd{\beta}_j^{k_j+1}-\bd{\beta}_j^* \|_{\infty}
&\le \frac{4}{3}\tilde{\gamma}_{T,\mu}^{k_j} \|\bd{\beta}_j^* \|_\infty
+\frac{4}{3}(\frac{4}{1-\tilde{\gamma}_{T,\mu}^{}}+1)h_\infty(T),
\label{err in beta_k+1 in infty norm}\\
\| \bd{\beta}_j^*|_{A_j^*\setminus A_j^{k_j}}\|_\infty 
&\le \tilde{\gamma}_{T,\mu}^{k_j} \| \bd{\beta}_j^*  \|_\infty
+\frac{4}{1-\tilde{\gamma}_{T,\mu}^{}}h_\infty(T),
\label{infty norm beta outside}
\end{align}
where 
$
\tilde{\gamma}_{T,\mu}^{}:=(1+2T\tilde{\mu})T\tilde{\mu}/[1-(T-1)\tilde{\mu}]+2T\tilde{\mu}<1
$ and
$h_\infty(T):=\max\limits_{{j\in[p]}, A\subseteq [p]\setminus \{j\}, |A|\le T}\| \mb{Z}_{*A}^\top\bd{\epsilon}_j \|_\infty/n$.
By \eqref{err in beta_k+1 in infty norm}, $\| \bd{\beta}_j^* \|_\infty\le \| \bd{\beta}_j^* \|_2$,
\eqref{bounds for |betaj*|_2} and \eqref{max norm for Ze},
we have that, 
when $T\tilde{\mu}\le 1/4$,
with probability $1-O(p^{-C})$, 
\begin{align}
\max_{j\in[p]}\| \bd{\beta}_j^{k_j+1}-\bd{\beta}_j^* \|_{\infty}
&\le
\frac{4}{3}(2\kappa^3-2\kappa)^{1/2}\max_{j\in[p]}\tilde{\gamma}_{T,\mu}^{k_j}
+\frac{4}{3}(\frac{4}{1-\tilde{\gamma}_{T,\mu}^{}}+1)
M_4\sqrt{(\log p)/n}\nonumber\\
&\le [\frac{4}{3}(2\kappa^3-2\kappa)^{1/2}
+\frac{4}{3}(\frac{4}{1-\tilde{\gamma}_{T,\mu}}^{}+1)
M_4]\sqrt{(\log p)/n}\label{initial infty bound for beta}
\end{align}
if $0<\tilde{\gamma}_{T,\mu}^{}<1$ and $\min_{j\in[p]}k_j\ge \log_{\tilde{\gamma}_{T,\mu}^{}}\sqrt{(\log p)/n}$.
Note that $\tilde{\gamma}_{T,\mu}^{}>0$ when $\tilde{\mu}>0$.
From  \eqref{lb for sigma_ii} and \eqref{max norm err of Sigma}, 
\[
 \| \mb{\Gamma}^{-1/2} (\widehat{\mb{\Sigma}} - \mb{\Sigma}) \mb{\Gamma}^{-1/2}  \|_{\max}
 \le (\min_{i\in[p]} \mb{\Sigma}_{ii})^{-1}\| \widehat{\mb{\Sigma}} - \mb{\Sigma}\|_{\max}
\le \kappa_2\tilde{M}_0 \sqrt{(\log p)/n}
\]
with probability $1-O(p^{-C})$.
Using the above inequality and
following the proof of (S.15) in the supplement of \cite{Shu19} yields
\be\label{max err of sample corr}
P\left(\|\mb{Z}^\top\mb{Z}/n - \mb{R} \|_{\max}\le 4  \kappa_2\tilde{M}_0 \sqrt{(\log p)/n}\right)=1-O(p^{-C}).
\ee
Thus, we have
$
P(\tilde{\mu}\le \mu+ 4\kappa_2\tilde{M}_0\sqrt{(\log p)/n})=1-O(p^{-C})$
for some $\tilde{\mu}$ satisfying $0<\tilde{\mu}\ge \max_{i\ne j}|\widehat{\mb{R}}_{ij}|$.
Then by $T\mu< 1/4-c_2$ and $T\sqrt{(\log p)/n}=o(1)$, 
with probability $1-O(p^{-C})$
we have the following results:
$T\tilde{\mu}\le T\mu+4\kappa_2\tilde{M}_0T\sqrt{(\log p)/n}
\le T\mu+c_2/2<1/4-c_2/2
$,
$\tilde{\gamma}_{T,\mu}^{}< [1+2(1/4-c_2/2)](1/4-c_2/2)/[1-(1/4-c_2/2)]+2(1/4-c_2/2)=(3-6c_2)/(3+2c_2)<1$,
and
$\log_{\frac{3-6c_2}{3+2c_2}}\sqrt{(\log p)/n}\ge \log_{\tilde{\gamma}_{T,\mu}^{}}\sqrt{(\log p)/n}$.
Thus, if $\min_{j\in[p]} k_j\ge \log_{\frac{3-6c_2}{3+2c_2}}\sqrt{(\log p)/n}$,
then by \eqref{initial infty bound for beta}
we obtain
\be\label{infty bound for beta}
\max_{j\in[p]}\| \widehat{\bd{\beta}}_j-\bd{\beta}_j^*\|_{\infty}\le M_9\sqrt{(\log p)/n}
\ee
and
\be\label{norm-2 error of beta_hat, 2}
\max_{j\in[p]}\|\widehat{\bd{\beta}}_j-\bd{\beta}_j^*\|_2\le \sqrt{2T}\max_{j\in[p]}\|\widehat{\bd{\beta}}_j-\bd{\beta}_j^*\|_\infty
\le M_9\sqrt{2T(\log p)/n}
\ee
with probability $1-O(p^{-C})$ for some constant $M_9>0$ dependent on $C$.

Using \eqref{norm-2 error of beta_hat, 2} instead of \eqref{norm-2 error of beta_hat},
we can still obtain
all the results given in the part from \eqref{decomp in L1}
to \eqref{Theta column norm 2},
including \eqref{norm-1 bound for Theta} and \eqref{Theta column norm 2}.
But we can refine the error bound in the max norm
using \eqref{infty bound for beta}.
Specifically, 
note that
\be\label{decomp max norm}
\| \widehat{\mb{\Omega}}-\mb{\Omega}\|_{\max}
\le 
\max_{j\in[p]}\|\widehat{\mb{\Omega}}_{\setminus j, j}-\mb{\Omega}_{\setminus j, j}\|_{\infty}+\max_{j\in[p]}| \widehat{\mb{\Omega}}_{jj}-\mb{\Omega}_{jj}|.
\ee
Similar to \eqref{diff in Theta column} and \eqref{diff in Theta[-j,j]}, 
by \eqref{infty bound for beta}, \eqref{diff in Theta[j,j]}
and $T\sqrt{(\log p)/n}=o(1)$, 
the first term on the RHS of~\eqref{decomp max norm}
\begin{align}
\lefteqn{\max_{j\in[p]}\|\widehat{\mb{\Omega}}_{\setminus j, j}-\mb{\Omega}_{\setminus j, j}\|_{\infty}}\nonumber\\
&\le
\max_{j\in[p]}\left\{
\max_{k\in[p]}(\widehat{\mb{\Sigma}}_{kk}^{-1/2})
\widehat{\mb{\Omega}}_{jj}\| \widehat{\bd{\beta}}_j-\bd{\beta}_j^*\|_{\infty}
+|\widehat{\mb{\Omega}}_{jj}-\mb{\Omega}_{jj}|
\|\mb{\Omega}_{\setminus j, j} \|_{\infty}\mb{\Omega}_{jj}^{-1}
\right\}\nonumber\\
&\le (2\kappa)^{3/2}
M_9\sqrt{(\log p)/n}
+2\kappa^4M_6  [\sqrt{(\log p)/n}+T(\log p)/n]\nonumber\\
&\le [(2\kappa)^{3/2}+4\kappa^4M_6 ]\sqrt{(\log p)/n}
\label{max infty norm for diff in Theta column}
\end{align}
with probability $1-O(p^{-C})$.
Combining \eqref{decomp max norm},  \eqref{max infty norm for diff in Theta column} and
\eqref{diff in Theta[j,j]} 
gives
\be\label{max norm bound for diff in Theta}
P\left(\| \widehat{\mb{\Omega}}-\mb{\Omega}\|_{\max}\le [(2\kappa)^{3/2}+8\kappa^4M_6] \sqrt{(\log p)/n}\right)=1-O(p^{-C}).
\ee

It is easily seen from the proof that the above results hold uniformly for  $\mb{\Omega}\in\mathcal{G}(T+1)$. 

The proof is complete.
\end{proof}	

\begin{proof}[Proof of Theorem~\ref{thm: Sparsistency} and Lemma~\ref{Lemma: A_hat=A}]

We first consider the diagonal entries of $\widehat{\mb{\Omega}}$.
By Assumption~\ref{Theta bound}, we have
$\min_{j\in[p]}\mb{\Omega}_{jj}\ge \lambda_{\min}(\mb{\Omega})\ge \kappa_1^{-1}>0$.
Recall from the proof of Theorem~\ref{thm: consistency}
that \eqref{bound for Theta_jj} holds
under \ref{(C1)} or \ref{(C2)}.
Thus,
 $P(\min_{j\in[p]}\widehat{\mb{\Omega}}_{jj}\ge (2\kappa)^{-1} > 0)=1-O(p^{-C})$.

Now consider the off-diagonal entries of $\widehat{\mb{\Omega}}$.
Under Assumption~\ref{(C1)}, from 
\eqref{column 2-norm bound} we have
\[
P\left(	
\| \widehat{\mb{\Omega}}_{\setminus j,j}-\mb{\Omega}_{\setminus j,j}\|_{\max}\le 
\tilde{M}_8\sqrt{T_j(\log p)/n},~~\forall j\in[p]
 \right)
=1-O(p^{-C}).
\]
Thus, when $m_j\ge M_1\sqrt{T_j(\log p)/n} $
for all $j\in [p]$
with a constant $M_1>\tilde{M}_8$,
it holds with probability $1-O(p^{-C})$
that
$|\widehat{\mb{\Omega}}_{ij}|>0$
and $\sign(\widehat{\mb{\Omega}}_{ij})=\sign(\mb{\Omega}_{ij})$
for $i\ne j$ and $(i,j)\in \supp(\mb{\Omega})$.
Similarly,
under Assumption~\ref{(C2)} with
$\min_{j\in[p]}m_j\ge M_2\sqrt{(\log p)/n}$
for some  constant $M_2>0$,
by using
\eqref{max norm bound for diff in Theta}
and letting
$M_2>(2\kappa)^{3/2}+8\kappa^4M_6$,
we obtain the same result
for support and sign recovery.

Note that the above results hold uniformly for  $\mb{\Omega}\in\mathcal{G}(T+1)$. Thus,
under Assumption~\ref{(C1)} with $m_j\ge M_1\sqrt{T_j(\log p)/n} $ 
for all $j\in[p]$ and $M_1>\tilde{M}_8$, or under Assumption~\ref{(C2)} with
$\min_{j\in[p]}m_j\ge M_2\sqrt{(\log p)/n}$ 
and $M_2>(2\kappa)^{3/2}+8\kappa^4M_6$, we have
\be\label{support contain}
\inf_{\mb{\Omega}\in\mathcal{G}(T+1)}P(\supp(\mb{\Omega})\subseteq \supp(\widehat{\mb{\Omega}}))
=1-O(p^{-C})
\ee
and
$$\inf_{\mb{\Omega}\in\mathcal{G}(T+1)}P(\sign(\widehat{\mb{\Omega}}_{ij})=\sign(\mb{\Omega}_{ij}),
\forall (i,j)\in \supp(\mb{\Omega}))=1-O(p^{-C}).$$

Recall that 
$\mb{\Omega}_{\setminus j,j}=-\mb{\Omega}_{jj}\bd{\alpha}_j^*
=-\mb{\Omega}_{jj}\widehat{\mb{\Gamma}}_{\setminus j, \setminus j}^{-1/2}\bd{\beta}_j^*$
and
$\widehat{\mb{\Omega}}_{\setminus j,j}=-\widehat{\mb{\Omega}}_{jj}\widehat{\bd{\alpha}}_j
=-\widehat{\mb{\Omega}}_{jj}\widehat{\mb{\Gamma}}_{\setminus j, \setminus j}^{-1/2}\widehat{\bd{\beta}}_j$.
From \eqref{Gamma exists} and 
$\min_{j\in[p]}\mb{\Omega}_{jj}\ge\kappa_1^{-1}>0$, we have
\be\label{supp(Omega)=A}
\inf_{\mb{\Omega}\in\mathcal{G}(T+1)} P(\supp(\mb{\Omega}_{\setminus j,j})=A_j^*:=\supp(\bd{\alpha}^*_j)=\supp(\bd{\beta}^*_j),\forall j\in [p])=1-O(p^{-C}).
\ee
It follows from \eqref{Gamma exists}, \eqref{Omega exists} and \eqref{bound for Theta_jj} that
\be\label{supp(Omega_hat)=A_hat}
\inf_{\mb{\Omega}\in\mathcal{G}(T+1)} P(\supp(\widehat{\mb{\Omega}}_{\setminus j,j})=\supp(\widehat{\bd{\beta}}_j)\subseteq \widehat{A}_j,\forall j\in[p])=1-O(p^{-C}).
\ee
Combining \eqref{support contain}, \eqref{supp(Omega)=A} and \eqref{supp(Omega_hat)=A_hat} yields
\[
\inf_{\mb{\Omega}\in\mathcal{G}(T+1)}P(A_j^*\subseteq\widehat{A}_j, \forall j\in [p])=1-O(p^{-C}).
\]

Next, we consider the second part of the theorem and that of the lemma.

We first give the proof under Assumption~\ref{(C1)}.
Combining \eqref{P(Event1)}, \eqref{norm of beta outside}, \eqref{bounds for |betaj*|_2}
and \eqref{bound for ze},
we obtain 
\begin{align*}
P\Big(&\| (\bd{\beta}_j^*)_{A_j^*\setminus A_j^{k_j}}\|_2 
\le \tilde{\gamma}_T^{k_j}(2\kappa^3-2\kappa)^{1/2}
+\frac{\tilde{\gamma}_T^{}}{(1-\tilde{\gamma}_T^{})\tilde{\theta}_{T,T}}\tilde{M}_2\sqrt{2\kappa T_j(\log p)/n}\\
&\qquad\le [(2\kappa^3-2\kappa)^{1/2}+\frac{\tilde{\gamma}_T^{}}{(1-\tilde{\gamma}_T^{})\tilde{\theta}_{T,T}}\tilde{M}_2\sqrt{2\kappa}]\sqrt{T_j(\log p)/n},~~\forall j\in[p]\Big)=1-O(p^{-C})
\end{align*}
when $0<\tilde{\gamma}_T^{}<1$ and $k_j \ge \log_{\tilde{\gamma}_T^{}} \sqrt{T(\log p)/n}$.
By the argument below \eqref{R_AB diff},
when $\gamma_T^{}<1-c_1$,
with probability $1-O(p^{-C})$
we have the following results:
$\tilde{\gamma}_T^{}<1-c_1/2$,
$
\frac{\tilde{\gamma}_T^{}}{(1-\tilde{\gamma}_T^{})\tilde{\theta}_{T,T}}\le M_0
$
for some constant $M_0>0$,
and  $\log_{1-c_1/2} \sqrt{T_j(\log p)/n}\ge \log_{\tilde{\gamma}_T^{}} \sqrt{T_j(\log p)/n}$ for all $j\in[p]$.
Thus, when $k_j\ge \log_{1-c_1/2} \sqrt{T_j(\log p)/n}$ for all $j\in[p]$,
with probability $1-O(p^{-C})$ we have
\[
P(\| (\bd{\beta}_j^*)_{A_j^*\setminus A_j^{k_j}}\|_2
\le [(2\kappa^3-2\kappa)^{1/2}+M_0\tilde{M}_2\sqrt{2\kappa }]\sqrt{T_j(\log p)/n},~\forall j\in[p])=1-O(p^{-C}).
\]
Combining it with 
\eqref{Gamma exists},
\eqref{lb min Sigma_ii} and $\|\mb{\Omega}\|_{\max}\le\lambda_{\max}(\mb{\Omega}) \le \kappa $ yields that
the event
\begin{align}
\Big\{\|(\mb{\Omega}_{\setminus j,j})_{A_j^*\setminus A_j^{k_j}}\|_2
&=
\|-\mb{\Omega}_{jj}(\bd{\alpha}_j^*)_{A_j^*\setminus A_j^{k_j}}\|_2
=
\|-\mb{\Omega}_{jj}(\widehat{\mb{\Gamma}}_{\setminus j, \setminus j}^{-1/2}\bd{\beta}_j^*)_{A_j^*\setminus A_j^{k_j}}\|_2\nonumber\\
&\le \mb{\Omega}_{jj} (\min_{j\in[p]} \widehat{\mb{\Sigma}}_{ii})^{-1/2}\| (\bd{\beta}_j^*)_{A_j^*\setminus A_j^{k_j}}\|_2\nonumber\\
&\le \kappa (2\kappa)^{1/2}[(2\kappa^3-2\kappa)^{1/2}+M_0\tilde{M}_2\sqrt{2\kappa}]\sqrt{T_j(\log p)/n}\nonumber\\
&<M_1 \sqrt{T_j(\log p)/n}\le m_j,
~~\forall j\in[p]\Big\}
\label{||Theta||_2<m_theta}
\end{align}
holds with probability $1-O(p^{-C})$,
where we let the constant $M_1>\kappa (2\kappa)^{1/2}[(2\kappa^3-2\kappa)^{1/2}+M_0\tilde{M}_2\sqrt{2\kappa}]$.
Thus, when $|A_j^{k_j}|=s_j-1$ for all $j\in[p]$,
\be\label{A=A_hat}
\inf_{\mb{\Omega}\in\mathcal{G}(T+1)} P(\widehat{A}_j:=A_j^{k_j}=A_j^*, \forall j\in [p])=1-O(p^{-C}).
\ee
From \eqref{support contain}, \eqref{supp(Omega)=A}, \eqref{supp(Omega_hat)=A_hat}, and \eqref{A=A_hat} (note that the last three results hold uniformly for $\mb{\Omega}\in\mathcal{G}(T+1)$),
when $m_j\ge M_1\sqrt{T_j(\log p)/n} $ 
for all $j\in[p]$ with $M_1>\max\{\tilde{M}_8,\kappa (2\kappa)^{1/2}[(2\kappa^3-2\kappa)^{1/2}+M_0\tilde{M}_2\sqrt{2\kappa}]\}$, we have
\[
\inf_{\mb{\Omega}\in\mathcal{G}(T+1)}P(\supp(\mb{\Omega})= \supp(\widehat{\mb{\Omega}}))
=1-O(p^{-C}).
\]

Now we give the proof under Assumption~\ref{(C2)}.
By \eqref{infty norm beta outside}, $\| \bd{\beta}_j^* \|_\infty\le \| \bd{\beta}_j^* \|_2$,
\eqref{bounds for |betaj*|_2} and \eqref{max norm for Ze},
when $T\tilde{\mu}\le 1/4$,
with probability $1-O(p^{-C})$
we have
\begin{align*}
\max_{j\in[p]}\| (\bd{\beta}_j^*)_{A_j^*\setminus A_j^{k_j}}\|_\infty 
&\le(2\kappa^3-2\kappa)^{1/2}\max_{j\in[p]} \tilde{\gamma}_{T,\mu}^{k_j} 
+\frac{4}{1-\tilde{\gamma}_{T,\mu}^{}}M_4\sqrt{(\log p)/n}\\
&\le [(2\kappa^3-2\kappa)^{1/2}+\frac{4}{1-\tilde{\gamma}_{T,\mu}^{}}M_4]\sqrt{(\log p)/n}
\end{align*}
if $0<\tilde{\gamma}_{T,\mu}^{}<1$ and $\min_{j\in[p]}k_j\ge \log_{\tilde{\gamma}_{T,\mu}^{}}\sqrt{(\log p)/n}$.
By the argument below \eqref{max err of sample corr},
when $T\mu< 1/4-c_2$, with probability $1-O(p^{-C})$
we have the following results:
$T\tilde{\mu}<1/4-c_2/2$, 
$\tilde{\gamma}_{T,\mu}^{}<(3-6c_2)/(3+2c_2)<1$,
and $\log_{\frac{3-6c_2}{3+2c_2}}\sqrt{(\log p)/n}\ge \log_{\tilde{\gamma}_{T,\mu}^{}}\sqrt{(\log p)/n}$.
Thus, when $\min_{j\in[p]}k_j\ge \log_{\frac{3-6c_2}{3+2c_2}}\sqrt{(\log p)/n}$,
similar to \eqref{||Theta||_2<m_theta},
with probability $1-O(p^{-C})$
we have
\begin{align*}
\max_{j\in[p]}\|(\mb{\Omega}_{\setminus j,j})_{A_j^*\setminus A_j^{k_j}}\|_\infty
&\le \max_{j\in[p]} |\mb{\Omega}_{jj}| (\min_{j\in[p]} \widehat{\mb{\Sigma}}_{ii})^{-1/2}\max_{j\in[p]}\| (\bd{\beta}_j^*)_{A_j^*\setminus A_j^{k_j}}\|_\infty\\
&\le \kappa(2\kappa)^{1/2}
[(2\kappa^3-2\kappa)^{1/2}+\frac{4}{1-\frac{3-6c_2}{3+2c_2}}M_4]\sqrt{(\log p)/n}\\
&<M_2\sqrt{(\log p)/n}\le \min_{j\in[p]}m_j
\end{align*}
where we let the constant $M_2> \kappa(2\kappa)^{1/2}
[(2\kappa^3-2\kappa)^{1/2}+\frac{4}{1-\frac{3-6c_2}{3+2c_2}}M_4]$.
Thus, 
when $|A_j^{k_j}|=s_j-1$ for all $j\in[p]$, we have
\be\label{A=A_hat2}
\inf_{\mb{\Omega}\in\mathcal{G}(T+1)} P(\widehat{A}_j:=A_j^{k_j}=A_j^*, \forall j\in [p])=1-O(p^{-C}).
\ee
From 
\eqref{support contain}, \eqref{supp(Omega)=A}, \eqref{supp(Omega_hat)=A_hat}
and \eqref{A=A_hat2} (note that the last three results hold uniformly for $\mb{\Omega}\in\mathcal{G}(T+1)$),
when 
$\min_{j\in[p]}m_j\ge 
M_2[\sqrt{(\log p)/n}+s(\log p)/n]$
with $M_2>\max\{(2\kappa)^{3/2}+8\kappa^4M_6,\kappa(2\kappa)^{1/2}
[(2\kappa^3-2\kappa)^{1/2}+\frac{4}{1-\frac{3-6c_2}{3+2c_2}}M_4]\}$,
we have 
\[
\inf_{\mb{\Omega}\in\mathcal{G}(T+1)}P(\supp(\mb{\Omega})= \supp(\widehat{\mb{\Omega}}))
=1-O(p^{-C}).
\]

The proof is complete.
\end{proof}

\begin{proof}[Proof of Lemma~\ref{Lemma2: A_hat=A}]

The proof follows the proof of Theorem~4 given in the appendix of \citet{zhu2020polynomial} with some modifications.
Note that the inequality $P(\hat{\mathcal{A}}^s\supseteq \mathcal{A}^*)\ge 1-O(p^{-\alpha})$ below their (13) can be obtained by our
Lemma~\ref{Lemma: A_hat=A}
when $T_j\ge \max(s_j-1,1)$,
$\gamma_{T_{\max}}^{}<1-c_1$ for some constant $c_1>0$, 
$k_j\ge  \log_{1-\frac{c_1}{2}} \sqrt{T_j(\log p)/n}$, $T_{\max}(\log p)/n=o(1)$ and
$m_j\ge M_1\sqrt{T_{\max,j}(\log p)/n}$.
We revise their (16) to be
$
\mathcal{L}_{\hat{\mathcal{A}}^s}
\ge \kappa_2^{-1}/4,
$
which 
is obtained by our Lemma~\ref{lemma: bound for var(e)}, \eqref{max z diff beta} and $\sigma_j^2=\mb{\Omega}_{jj}^{-1}\ge\kappa_2^{-1}$.
Note that $\sigma^2$ in their (17) and the inequality below their (19) should be corrected to  
$\bd{\epsilon}^\top \bd{\epsilon}/n$,
$\|\bd{\epsilon}^\top \mb{H}_{\mathcal{A}^*}\bd{\epsilon}  \|_2^2$
in the inequality below their (17) and also in their (18)
 should be corrected to $\|\bd{\epsilon}^\top \mb{H}_{\mathcal{A}^*}\bd{\epsilon}  \|_2$, and
 $(c_-(s),c_+(s),\theta_{s,s}^2)$ in the proof from their (17) to the inequality below their (19) should be corrected to  $(c_-(s^*),c_+(s^*),\theta_{s^*,s^*}^2)$.
 From the argument below \eqref{R_AB diff}, 
$P(\tilde{\gamma}_{T_{\max}}^{}<1,
\tilde{\theta}_{T_{\max},T_{\max}}< 2\kappa_1\kappa_2)=1-O(p^{-C})$.
 Let their $c_{-}(s^*)=1/c_{+}(s^*)$ and let it equal our $\tilde{c}_\kappa^{-1}$ which falls in $[(2\kappa_1\kappa_2)^{-1} ,(\kappa_1\kappa_2)^{-1}]$ with probability $1-O(p^{-C})$,
 and let their $\theta_{s^*,s^*}$ be our $\tilde{\theta}_{s^*,s^*}$.
Then, with probability $1-O(p^{-C})$ we have
 their $(1+\sqrt{2})\theta_{s^*,s^*}^2/c_{-}^2(s^*)\le$ our $\tilde{\gamma}_{T_{\max}}^{}<1$.
 Thus, it holds with probability $1-O(p^{-C})$ that
 $(\kappa_1\kappa_2)^{-2}\ge c_{-}^2(s^*)\ge c_{-}^2(s^*)-\theta_{s^*,s^*}^2\ge [1-1/(1+\sqrt{2})]c_{-}^2(s^*)\ge  [1-1/(1+\sqrt{2})](2\kappa_1\kappa_2)^{-2}$.
 Thus, 
with probability $1-O(p^{-C})$, we have that
 their $K_{s,3}, K_{s,4}$ and $K_{s,5}$ with $s=s^*$ are upper and lower bounded by constants. 
Thus, we need their $b^*=M_b\max\{s^* (\log p)/n,(\log p)\log(\log n)/n\}$ for a sufficiently large constant $M_b>0$,
and $s^*(\log p)\log(\log n)/n=o(1)$
to obtain the last upper bound in their (18) and the last inequality in the proof of their Theorem~4.
Translating these conditions into our context,
we need
$
\min_{i\in A_j^*}|(\bd{\beta}_j^*)_{\{i\}}|
= \min_{i\in A_j^*}|(\mb{\Omega}_{jj}^{-1}
\widehat{\mb{\Gamma}}_{\setminus j,\setminus j}^{1/2}\mb{\Omega}_{\setminus j,j})_{\{i\}}|
\ge [M_b \max\{s_j (\log p)/n, (\log p)\log(\log n)/n\}]^{1/2}
$
with probability $1-O(p^{-C})$,
which is satisfied when
$m_j:= \min_{i\in A_j^*}| (\mb{\Omega}_{\setminus j,j})_{\{i\}} |
\ge (2\kappa)^{1/2}\kappa [M_b \max\{s_j (\log p)/n, (\log p)\log(\log n)/n\}]^{1/2}$,
and we also need $s_j(\log p)\log (\log n)/n=o(1)$.
The two conditions are assumed in the lemma.
Consequently, we have $P(T_j^*=s_j-1)=1-O(p^{-C})$
for any given constant $C>0$,
and thus $P(T_j^*=s_j-1, \forall j\in [p])=1-O(p^{-C+1})$,
where we can further let $C>1$. 
Then from Lemma~\ref{Lemma: A_hat=A},
we complete the proof. 
\end{proof}

\begin{proof}[Proof of Corollary~\ref{cor: Sparsistency}]
Combining 
\eqref{eq: T*=s-1} in Lemma~\ref{Lemma2: A_hat=A}
and 
\eqref{eq: supp_hat=supp} in Theorem~\ref{thm: Sparsistency},
we obtain the corollary.
\end{proof}

We now provide the proof of Theorem~\ref{thm: normality 2}, followed by the proof of Lemma~\ref{thm: normality}.

\begin{proof}[Proof of Theorem~\ref{thm: normality 2}]
From \eqref{P(Event1)},
$P((\mb{Z}_{*\widehat{A}_j}^\top \mb{Z}_{*\widehat{A}_j})^{-1}~\text{exists},\forall j\in[p])=1-O(p^{-C})$. 
By (7) in \citet{Huang18}, we have
$	
	(\widehat{\bd{\beta}}_j)_{\widehat{A}_j}=(\mb{Z}_{*\widehat{A}_j}^\top \mb{Z}_{*\widehat{A}_j})^{-1} \mb{Z}_{*\widehat{A}_j}^\top \mb{X}_{*j}.
$
From the above two equations, \eqref{Gamma exists}, and \eqref{Omega exists}, 
it holds with probability $1-O(p^{-C})$ that
\begin{align}
\widehat{\mb{\Omega}}_{\widehat{A}_j, j}&=
-\widehat{\mb{\Omega}}_{jj}(\widehat{\bd{\alpha}}_j)_{\widehat{A}_j}\label{hat: ThetaAj=-Thetajj alphajAj}\\
&=- \widehat{\mb{\Omega}}_{jj}\widehat{\mb{\Gamma}}_{\widehat{A}_j\widehat{A}_j}^{-1/2}  (\widehat{\bd{\beta}}_j)_{\widehat{A}_j}
=- \widehat{\mb{\Omega}}_{jj}\widehat{\mb{\Gamma}}_{\widehat{A}_j\widehat{A}_j}^{-1/2} 
(\mb{Z}_{*\widehat{A}_j}^\top \mb{Z}_{* \widehat{A}_j})^{-1} \mb{Z}_{*\widehat{A}_j}^\top \mb{X}_{*j}\nonumber\\
&=-\widehat{\mb{\Omega}}_{jj}
\widehat{\mb{\Gamma}}_{\widehat{A}_j\widehat{A}_j}^{-1/2} 
[\widehat{\mb{\Gamma}}_{\widehat{A}_j\widehat{A}_j}^{-1/2}(\widehat{\mb{\Gamma}}_{\widehat{A}_j\widehat{A}_j}^{1/2}\mb{Z}_{*\widehat{A}_j}^\top \mb{Z}_{* \widehat{A}_j}\widehat{\mb{\Gamma}}_{\widehat{A}_j\widehat{A}_j}^{1/2})\widehat{\mb{\Gamma}}_{\widehat{A}_j\widehat{A}_j}^{-1/2}]^{-1} \mb{Z}_{*\widehat{A}_j}^\top 
\mb{X}_{*j}\nonumber\\
&=-\widehat{\mb{\Omega}}_{jj}(\mb{X}_{*\widehat{A}_j}^\top \mb{X}_{* \widehat{A}_j})^{-1} \mb{X}_{*\widehat{A}_j}^\top \mb{X}_{*j},~~~\forall j\in[p].
\label{hat: ThetaAj=-Thetajj XXXX}
\end{align}
From \eqref{hat: ThetaAj=-Thetajj alphajAj} and \eqref{hat: ThetaAj=-Thetajj XXXX},
\be\label{alpha=Sigma^-1 Sigma}
P\left((\widehat{\bd{\alpha}}_j)_{\widehat{A}_j}=\widehat{\mb{\Sigma}}_{\widehat{A}_j,\widehat{A}_j}^{-1}
\widehat{\mb{\Sigma}}_{\widehat{A}_j,j},\forall j\in[p]\right)=1-O(p^{-C}).
\ee
Then, 
with probability $1-O(p^{-C})$, we have that
\begin{align}
\widehat{\mb{\Omega}}_{jj}^{-1}&=\widehat{\sigma}_j^2=\frac{1}{n}\big\|\mb{X}_{*j}-\mb{X}_{*\setminus j} \widehat{\bd{\alpha}}_j \big\|_2^2
=\frac{1}{n}\big\|\mb{X}_{*j}-\mb{X}_{*\widehat{A}_j} (\widehat{\bd{\alpha}}_j)_{\widehat{A}_j}  \big\|_2^2\nonumber\\
&=\frac{1}{n}\mb{X}_{*j}^\top \mb{X}_{*j}
-\frac{2}{n}\mb{X}_{*j}^\top\mb{X}_{*\widehat{A}_j} (\widehat{\bd{\alpha}}_j)_{\widehat{A}_j}
+\frac{1}{n}    (\widehat{\bd{\alpha}}_j)_{\widehat{A}_j}^\top   \mb{X}_{*\widehat{A}_j}^\top   \mb{X}_{*\widehat{A}_j} (\widehat{\bd{\alpha}}_j)_{\widehat{A}_j}\nonumber\\
&=\widehat{\mb{\Sigma}}_{jj}-2\widehat{\mb{\Sigma}}_{j,\widehat{A}_j}\widehat{\mb{\Sigma}}_{\widehat{A}_j,\widehat{A}_j}^{-1}
\widehat{\mb{\Sigma}}_{\widehat{A}_j,j}
+ \widehat{\mb{\Sigma}}_{\widehat{A}_j,j}^\top\widehat{\mb{\Sigma}}_{\widehat{A}_j,\widehat{A}_j}^{-1} \widehat{\mb{\Sigma}}_{\widehat{A}_j,\widehat{A}_j}\widehat{\mb{\Sigma}}_{\widehat{A}_j,\widehat{A}_j}^{-1}
\widehat{\mb{\Sigma}}_{\widehat{A}_j,j}\nonumber\\
&=\widehat{\mb{\Sigma}}_{jj}-\widehat{\mb{\Sigma}}_{j,\widehat{A}_j}\widehat{\mb{\Sigma}}_{\widehat{A}_j,\widehat{A}_j}^{-1}
\widehat{\mb{\Sigma}}_{\widehat{A}_j,j},~~~\forall j\in[p].
\label{Theta_jj_hat}
\end{align}
Multiplying both sides of the above equation by $\widehat{\mb{\Omega}}_{jj}$ 
and then using \eqref{hat: ThetaAj=-Thetajj XXXX} yields
\begin{align}
1&=\widehat{\mb{\Sigma}}_{jj}\widehat{\mb{\Omega}}_{jj}-\widehat{\mb{\Sigma}}_{j,\widehat{A}_j}(\widehat{\mb{\Sigma}}_{\widehat{A}_j,\widehat{A}_j}^{-1}
\widehat{\mb{\Sigma}}_{\widehat{A}_j,j}\widehat{\mb{\Omega}}_{jj})\nonumber\\
&=\widehat{\mb{\Sigma}}_{jj}\widehat{\mb{\Omega}}_{jj}+\widehat{\mb{\Sigma}}_{j,\widehat{A}_j}\widehat{\mb{\Omega}}_{\widehat{A}_j,j}.\label{Sigma Theta=1}
\end{align}
On the other hand, 
left multiplying both sides of \eqref{hat: ThetaAj=-Thetajj XXXX}
by $\widehat{\mb{\Sigma}}_{(\widehat{A}_j^+)^c,\widehat{A}_j}$
gives
\[
\widehat{\mb{\Sigma}}_{(\widehat{A}_j^+)^c,\widehat{A}_j}
\widehat{\mb{\Omega}}_{\widehat{A}_j,j}
=-\widehat{\mb{\Omega}}_{jj}\widehat{\mb{\Sigma}}_{(\widehat{A}_j^+)^c,\widehat{A}_j} \widehat{\mb{\Sigma}}_{\widehat{A}_j,\widehat{A}_j} ^{-1}
\widehat{\mb{\Sigma}}_{\widehat{A}_j,j},
\]
where $\widehat{A}_j^+:=\widehat{A}_j\cup  \{j\}$ and $(\widehat{A}_j^+)^c:=[p]\setminus \widehat{A}_j^+$.
Then,
\begin{align}
\widehat{\mb{\Sigma}}_{(\widehat{A}_j^+)^c,\widehat{A}_j}
\widehat{\mb{\Omega}}_{\widehat{A}_j,j}
+\widehat{\mb{\Sigma}}_{(\widehat{A}_j^+)^c,j}\widehat{\mb{\Omega}}_{jj}
&=-\widehat{\mb{\Omega}}_{jj}\widehat{\mb{\Sigma}}_{(\widehat{A}_j^+)^c,\widehat{A}_j} \widehat{\mb{\Sigma}}_{\widehat{A}_j,\widehat{A}_j} ^{-1}
\widehat{\mb{\Sigma}}_{\widehat{A}_j,j}
+\widehat{\mb{\Sigma}}_{(\widehat{A}_j^+)^c,j}\widehat{\mb{\Omega}}_{jj}\nonumber\\
&=\widehat{\mb{\Omega}}_{jj}(\widehat{\mb{\Sigma}}_{(\widehat{A}_j^+)^c,j}
-\widehat{\mb{\Sigma}}_{(\widehat{A}_j^+)^c,\widehat{A}_j} \widehat{\mb{\Sigma}}_{\widehat{A}_j,\widehat{A}_j} ^{-1}
\widehat{\mb{\Sigma}}_{\widehat{A}_j,j})\nonumber\\
&=\widehat{\mb{\Omega}}_{jj}
n^{-1}\mb{X}_{*(\widehat{A}_j^+)^c}^\top [\mb{X}_{*j}-
\mb{X}_{*\widehat{A}_j}(\widehat{\bd{\alpha}}_j)_{\widehat{A}_j}].
\nonumber
\end{align}
Combining the above equation, \eqref{Sigma Theta=1} and \eqref{hat: ThetaAj=-Thetajj XXXX} obtains
\be\label{Sigma_hat Theta_hat = e +}
P(\widehat{\mb{\Sigma}}\widehat{\mb{\Omega}}_{*j}=\bd{e}_j+\widehat{\mb{\Omega}}_{jj}\widehat{\bd{d}}_j,~~\forall j\in[p])
=1-O(p^{-C}),
\ee
where $\bd{e}_j\in \mathbb{R}^p$ has $1$ on the $j$-th entry and 0
elsewhere, and
$\widehat{\bd{d}}_j=\mb{P}
(\bd{0}_{1\times \widehat{A}_j^+} , \{n^{-1}\mb{X}_{*(\widehat{A}_j^+)^c}^\top [\mb{X}_{*j}-
\mb{X}_{*\widehat{A}_j}(\widehat{\bd{\alpha}}_j)_{\widehat{A}_j}]\}^\top )^\top$
with a permutation matrix $\mb{P}$.

From \eqref{Sigma_hat Theta_hat = e +} and
$\mb{\Sigma}\mb{\Omega}_{*j}=\bd{e}_j$,
it holds with probability $1-O(p^{-C})$ that, for all $i,j\in[p]$,
\begin{align}
&\widehat{\mb{\Omega}}_{ij}-\widehat{\mb{\Omega}}_{*i}^\top
\widehat{\mb{\Omega}}_{jj}\widehat{\bd{d}}_j
-\mb{\Omega}_{ij}=\widehat{\mb{\Omega}}_{ij}-\widehat{\mb{\Omega}}_{*i}^\top(\widehat{\mb{\Sigma}}\widehat{\mb{\Omega}}_{*j}-\bd{e}_j)  -\mb{\Omega}_{ij}\nonumber\\
&\quad=-\mb{\Omega}_{*i}^\top(\widehat{\mb{\Sigma}}-\mb{\Sigma})
\mb{\Omega}_{*j}
-(\widehat{\mb{\Omega}}_{*j}-\mb{\Omega}_{*j})^\top
(\widehat{\mb{\Sigma}}\mb{\Omega}_{*i}-\bd{e}_i)
-(\widehat{\mb{\Omega}}_{*i}-\mb{\Omega}_{*i})^\top
(\widehat{\mb{\Sigma}}\widehat{\mb{\Omega}}_{*j}-\bd{e}_j).
\label{Theta-Theta2d-Theta}
\end{align}

First, consider the bound of the third term on the RHS of \eqref{Theta-Theta2d-Theta}. 
From \eqref{Sigma_hat Theta_hat = e +}, 
with probability $1-O(p^{-C})$ we have that
\begin{align}\label{3rd term in {Theta-Theta2d-Theta}}
|(\widehat{\mb{\Omega}}_{*i}-\mb{\Omega}_{*i})^\top
(\widehat{\mb{\Sigma}}\widehat{\mb{\Omega}}_{*j}-\bd{e}_j)|
&\le \| \widehat{\mb{\Omega}}_{*i}-\mb{\Omega}_{*i} \|_2
 \|(\widehat{\mb{\Sigma}}\widehat{\mb{\Omega}}_{*j}-\bd{e}_j)_{\widehat{A}_i^+\cup A_i^*}\|_2\nonumber\\
&=
\| \widehat{\mb{\Omega}}_{*i}-\mb{\Omega}_{*i} \|_2\widehat{\mb{\Omega}}_{jj}\|( \widehat{\bd{d}}_j)_{\widehat{A}_i^+\cup A_i^*}\|_2,~~~\forall i,j\in[p].
\end{align}
{\color{red}Due to the first part of Lemma~\ref{Lemma: A_hat=A}, i.e.,
\be\label{A_j in A_j_hat with high prob}
\inf_{\mb{\Omega}\in\mathcal{G}(T+1)}P(A_j^*\subseteq\widehat{A}_j, \forall j\in [p])=1-O(p^{-C}),
\ee
}
it holds with probability $1-O(p^{-C})$ that, for all $i,j\in[p]$,
\begin{align}\label{norm-2 of d_j_hat}
&\|( \widehat{\bd{d}}_j)_{\widehat{A}_i^+\cup A_i^*}\|_2=
\|n^{-1}(\mb{X}^\top)_{(\widehat{A}_j^+)^c\cap (\widehat{A}_i^+\cup A_i^*),*} [\mb{X}_{*j}-
\mb{X}_{*\widehat{A}_j}(\widehat{\bd{\alpha}}_j)_{\widehat{A}_j}]\|_2\nonumber\\
&=
\|n^{-1}(\mb{X}^\top)_{(\widehat{A}_j^+)^c\cap \widehat{A}_i^+,*} [\mb{X}_{*j}-
\mb{X}_{*\widehat{A}_j}(\widehat{\bd{\alpha}}_j)_{\widehat{A}_j}]\|_2
\nonumber\\
&\le 
\|
n^{-1}(\mb{X}^\top)_{(\widehat{A}_j^+)^c\cap\widehat{A}_i^+,*}  [\mb{X}_{*j}-
\mb{X}_{*\widehat{A}_j}(\bd{\alpha}_j^*)_{\widehat{A}_j}]
 \|_2\nonumber\\
 &\qquad+\|n^{-1} (\mb{X}^\top)_{(\widehat{A}_j^+)^c\cap\widehat{A}_i^+,*} \mb{X}_{*\widehat{A}_j}[(\bd{\alpha}_j^*)_{\widehat{A}_j}-(\widehat{\bd{\alpha}}_j)_{\widehat{A}_j}] \|_2\nonumber\\
& \le \| n^{-1}(\mb{X}^\top)_{(\widehat{A}_j^+)^c\cap\widehat{A}_i^+,*} \bd{\epsilon}_j\|_2
+\|n^{-1} (\mb{X}^\top)_{(\widehat{A}_j^+)^c\cap\widehat{A}_i^+,*} \mb{X}_{*\widehat{A}_j} \|_2
\|(\bd{\alpha}_j^*)_{\widehat{A}_j}-(\widehat{\bd{\alpha}}_j)_{\widehat{A}_j} \|_2.
\end{align}
Similar to \eqref{bound for xe},
\begin{align}\label{1st term in norm-2 of d_j_hat}
P\Big(\max_{1\le i,j\le p}\| n^{-1}(\mb{X}^\top)_{(\widehat{A}_j^+)^c\cap\widehat{A}_i^+,*} \bd{\epsilon}_j\|_2
&\le \max_{1\le j\le p}
\max_{A\subseteq [p]\setminus \{j\}, |A|\le T+1}
\| n^{-1}(\mb{X}^\top)_{A*} \bd{\epsilon}_j\|_2
\nonumber\\
&\le M_{22}\sqrt{T (\log p)/n}\Big)=1-O(p^{-C})
\end{align}
for some  constant $M_{22}>0$ dependent on $C$.
By  \eqref{spectrum of Sigma_AA} (which holds with $S_{2T}$ replaced by $S_{2T+1}$),
\begin{align}\label{2.1 term in norm-2 of d_j_hat}
P\Big(\max_{1\le i,j\le p}\|n^{-1} (\mb{X}^\top)_{(\widehat{A}_j^+)^c\cap\widehat{A}_i^+,*} \mb{X}_{*\widehat{A}_j} \|_2
&\le 
\max_{1\le i,j\le p}\|\widehat{\mb{\Sigma}}_{\widehat{A}_i^+\cup \widehat{A}_j, \widehat{A}_i^+\cup \widehat{A}_j} \|_2
\nonumber\\
&\le \max_{A\in S_{2T+1} }\|\widehat{\mb{\Sigma}}_{AA} \|_2
\le 
2\kappa
\Big)
=1-O(p^{-C}).
\end{align}
From \eqref{hat: ThetaAj=-Thetajj XXXX} and \eqref{A_j in A_j_hat with high prob}, it holds with probability $1-O(p^{-C})$ that, for all $j\in[p]$,
\begin{align}
&(\widehat{\bd{\alpha}}_j)_{\widehat{A}_j}-(\bd{\alpha}_j^*)_{\widehat{A}_j}
=-\widehat{\mb{\Omega}}_{jj}^{-1}\widehat{\mb{\Omega}}_{\widehat{A}_j, j}-
(\bd{\alpha}_j^*)_{\widehat{A}_j}
\nonumber\\
&=
(\mb{X}_{*\widehat{A}_j}^\top \mb{X}_{* \widehat{A}_j})^{-1} \mb{X}_{*\widehat{A}_j}^\top [\mb{X}_{*j}-  \mb{X}_{*\widehat{A}_j} (\bd{\alpha}_j^*)_{\widehat{A}_j} ]     
=(\mb{X}_{*\widehat{A}_j}^\top \mb{X}_{* \widehat{A}_j})^{-1} \mb{X}_{*\widehat{A}_j}^\top\bd{\epsilon}_j
\nonumber\\
&=
\mb{\Sigma}_{\widehat{A}_j\widehat{A}_j}^{-1}\mb{X}_{*\widehat{A}_j}^\top\bd{\epsilon}_j/n
+
 \underbrace{(\widehat{\mb{\Sigma}}_{\widehat{A}_j\widehat{A}_j}^{-1} -\mb{\Sigma}_{\widehat{A}_j\widehat{A}_j}^{-1})\mb{X}_{*\widehat{A}_j}^\top\bd{\epsilon}_j/n}_{\displaystyle \bd{r}_{1j}}.
\label{diff in alpha}
\end{align}
Note that
\begin{align}
\| \bd{r}_{1j}\|_2&=\left\|(\widehat{\mb{\Sigma}}_{\widehat{A}_j\widehat{A}_j}^{-1} -\mb{\Sigma}_{\widehat{A}_j\widehat{A}_j}^{-1})\mb{X}_{*\widehat{A}_j}^\top\bd{\epsilon}_j/n\right\|_2
=  \left\|\mb{\Sigma}_{\widehat{A}_j\widehat{A}_j}^{-1}(\mb{\Sigma}_{\widehat{A}_j\widehat{A}_j}-\widehat{\mb{\Sigma}}_{\widehat{A}_j\widehat{A}_j})\widehat{\mb{\Sigma}}_{\widehat{A}_j\widehat{A}_j}^{-1}\mb{X}_{*\widehat{A}_j}^\top\bd{\epsilon}_j/n\right\|_2\nonumber\\
&\le 
 \left\|\mb{\Sigma}_{\widehat{A}_j\widehat{A}_j}^{-1}(\mb{\Sigma}_{\widehat{A}_j\widehat{A}_j}-\widehat{\mb{\Sigma}}_{\widehat{A}_j\widehat{A}_j})\right\|_2\left\| \widehat{\mb{\Sigma}}_{\widehat{A}_j\widehat{A}_j}^{-1}\mb{X}_{*\widehat{A}_j}^\top\bd{\epsilon}_j/n\right\|_2\nonumber\\
 &\le \max_{j\in[p]} \max_{A\subseteq [p]\setminus \{j\}: |A|\le T}
 \left\|\mb{\Sigma}_{AA}^{-1}(\mb{\Sigma}_{AA}-\widehat{\mb{\Sigma}}_{AA})\right\|_2\left\| \widehat{\mb{\Sigma}}_{AA}^{-1}\mb{X}_{*A}^\top\bd{\epsilon}_j/n\right\|_2.
 \label{2nd term in diff in alpha, ineq1}
\end{align}
From \eqref{bounded Sigma_AA}
and \eqref{Sigma_AA's error in norm 2},
\begin{align}
 \max_{j\in[p]} \max_{A\subseteq [p]\setminus \{j\}: |A|\le T}\left\|\mb{\Sigma}_{AA}^{-1}(\mb{\Sigma}_{AA}-\widehat{\mb{\Sigma}}_{AA})\right\|_2
 &\le (\max_{A\in S_T}\| \mb{\Sigma}_{AA}^{-1}\|_2)
 (\max_{A\in S_T}\| \mb{\Sigma}_{AA}-\widehat{\mb{\Sigma}}_{AA}\|_2)\nonumber\\
 & \le \kappa \check{M}_0\sqrt{T(\log p)/n}
  \label{Sigma^-1(diff in Sigma)}
\end{align}
with probability $1-O(p^{-C})$.
For the second component on the RHS of \eqref{2nd term in diff in alpha, ineq1}, 
by \eqref{spectrum of Sigma_AA} and 
an inequality similar to \eqref{bound for xe},
\begin{align}
 \max_{j\in[p]} \max_{A\subseteq [p]\setminus \{j\}: |A|\le T}  \left\| \widehat{\mb{\Sigma}}_{AA}^{-1}\mb{X}_{*A}^\top\bd{\epsilon}_j/n\right\|_2
&\le \max_{j\in[p]} \max_{A\subseteq [p]\setminus \{j\}: |A|\le T}\| \widehat{\mb{\Sigma}}_{AA}^{-1} \|_2 \| \mb{X}_{*A}^\top\bd{\epsilon}_j/n \|_2\nonumber\\
&\le \tilde{M}_4  \sqrt{T(\log p)/n}
\label{2nd term in diff in alpha, ineq2}
\end{align}
with probability $1-O(p^{-C})$ for some constant  $\tilde{M}_4>0$ dependent on $C$.
Plugging \eqref{Sigma^-1(diff in Sigma)} and \eqref{2nd term in diff in alpha, ineq2} into \eqref{2nd term in diff in alpha, ineq1} yields
\be\label{2nd term in diff in alpha}
\max_{j\in[p]} \|\bd{r}_{1j}\|_\infty\le \max_{j\in[p]} \|\bd{r}_{1j}\|_2\le \kappa\check{M}_0  \tilde{M}_4 T(\log p)/n
\ee
with probability $1-O(p^{-C})$.
Similar to \eqref{2nd term in diff in alpha, ineq2},
the first term on the RHS of \eqref{diff in alpha}
\[
\max_{j\in[p]}\|\mb{\Sigma}_{\widehat{A}_j\widehat{A}_j}^{-1}\mb{X}_{*\widehat{A}_j}^\top\bd{\epsilon}_j/n\|_2
\le \max_{j\in[p]} \max_{A\subseteq [p]\setminus \{j\}: |A|\le T}
\|\mb{\Sigma}_{AA}^{-1}\mb{X}_{*A}^\top\bd{\epsilon}_j/n\|_2
\le  \tilde{M}_4\sqrt{T (\log p)/n}
\]
with probability $1-O(p^{-C})$.
By  the above inequality, \eqref{2nd term in diff in alpha}, \eqref{diff in alpha}, and $T(\log p)/n=o(1)$,
\be\label{2.2 term in norm-2 of d_j_hat}
P\left(\max_{j\in[p]}\|(\widehat{\bd{\alpha}}_j)_{\widehat{A}_j}-(\bd{\alpha}_j^*)_{\widehat{A}_j}\|_2
\le M_\alpha\sqrt{T (\log p)/n}\right)=1-O(p^{-C})
\ee
for some constant $M_\alpha>0$ dependent on $C$.
Combining \eqref{norm-2 of d_j_hat}, \eqref{1st term in norm-2 of d_j_hat},
\eqref{2.1 term in norm-2 of d_j_hat} and \eqref{2.2 term in norm-2 of d_j_hat} yields
\be\label{bound for norm-2 of d_j_hat}
P\left(
\max_{1\le i,j\le p}\|( \widehat{\bd{d}}_j)_{\widehat{A}_i^+\cup A_i^*}\|_2
\le (M_{22}+2\kappa M_\alpha) \sqrt{T(\log p)/n}
\right)=1-O(p^{-C}).
\ee
Plugging \eqref{Theta column norm 2}, \eqref{bound for Theta_jj} 
 and \eqref{bound for norm-2 of d_j_hat}
into \eqref{3rd term in {Theta-Theta2d-Theta}} yields
\be\label{bound for the 3rd term in {Theta-Theta2d-Theta}}
P\left(\max_{1\le i,j\le p}|(\widehat{\mb{\Omega}}_{*i}-\mb{\Omega}_{*i})^\top
(\widehat{\mb{\Sigma}}\widehat{\mb{\Omega}}_{*j}-\bd{e}_j)|
\le M_{31} T(\log p)/n
\right)=1-O(p^{-C})
\ee
for some constant $M_{31}>0$ dependent on $C$.

For the second term on the RHS of \eqref{Theta-Theta2d-Theta},
by \eqref{norm-1 bound for Theta}, Lemma~10 in the supplement of \citet{Jank17}, $\max_{i\in[p]}\|\mb{\Omega}_{*i}\|_2\le\|\mb{\Omega}\|_2\le \kappa$, and $(\log p)/n=o(1)$,
we have
\be\label{2nd term in Theta-Theta2d-Theta}
\max_{1\le i,j\le p}|(\widehat{\mb{\Omega}}_{*j}-\mb{\Omega}_{*j})^\top
(\widehat{\mb{\Sigma}}\mb{\Omega}_{*i}-\bd{e}_i)|
\le \max_{j\in[p]}\| \widehat{\mb{\Omega}}_{*j}-\mb{\Omega}_{*j}\|_1  \max_{i\in[p]}\| \widehat{\mb{\Sigma}}\mb{\Omega}_{*i}-\bd{e}_i\|_{\max}
\le \check{M}_2 T (\log p)/n
\ee
with probability $1-O(p^{-C})$
for some constant $\check{M}_2>0$ dependent on $C$.

It follows from \eqref{Theta-Theta2d-Theta}, \eqref{2nd term in Theta-Theta2d-Theta} and \eqref{bound for the 3rd term in {Theta-Theta2d-Theta}} that
$\max_{1\le i,j\le p}|\Delta_{ij}|=O_P(T(\log p)/\sqrt{n})$ holds uniformly for $\mb{\Omega}\in\mathcal{G}(T+1)$.

Using \eqref{Theta-Theta2d-Theta}, \eqref{2nd term in Theta-Theta2d-Theta} and \eqref{bound for the 3rd term in {Theta-Theta2d-Theta}},
and following the proof of Theorem~1 in \citet{Jank17}
yields the asymptotic normality.
\end{proof}

\begin{proof}[Proof of Lemma~\ref{thm: normality}]
Note that
\be\label{Theta_hat2d_hat}
\widehat{\mb{\Omega}}_{*i}^\top
\widehat{\mb{\Omega}}_{jj}\widehat{\bd{d}}_j
=\mb{\Omega}_{*i}^{\top}\mb{\Omega}_{jj}
\widehat{\bd{d}}_j
+
\underbrace{(\widehat{\mb{\Omega}}_{*i}^\top-\mb{\Omega}_{*i}^{\top})
\widehat{\mb{\Omega}}_{jj}\widehat{\bd{d}}_j
+\mb{\Omega}_{*i}^{\top}
(\widehat{\mb{\Omega}}_{jj}-\mb{\Omega}_{jj})\widehat{\bd{d}}_j}_{\displaystyle r_{2ij}}.
\ee

We first assume that $A_j^*\subseteq \widehat{A}_j$ for all $j\in [p]$.

Using \eqref{Sigma_hat Theta_hat = e +} and \eqref{bound for the 3rd term in {Theta-Theta2d-Theta}}
for the first term in $r_{2ij}$,
and using $\|\mb{\Omega}_{*i} \|_2\le \|\mb{\Omega} \|_2\le \kappa$,
\eqref{diff in Theta[j,j]} and \eqref{bound for norm-2 of d_j_hat}
for the second term in $r_{2ij}$,
we have
\be\label{bound r_2ij}
P\Big(\max_{1\le i,j\le p}|r_{2ij}|
\le M_{23}T(\log p)/n\Big)
=1-O(p^{-C})
\ee
for some  constant $M_{23}>0$ dependent on $C$.

Consider the first term on the RHS of \eqref{Theta_hat2d_hat}.
From \eqref{alpha=Sigma^-1 Sigma}, $P(\mb{X}_{*,\widehat{A}_j}^\top [\mb{X}_{*j}-
\mb{X}_{*\widehat{A}_j}(\widehat{\bd{\alpha}}_j)_{\widehat{A}_j}]=\bd{0},\forall j\in[p])=1-O(p^{-C})$.
From this equation,  \eqref{alpha=Sigma^-1 Sigma} 
and \eqref{Theta_jj_hat},
with probability $1-O(p^{-C})$
we have that
\be\label{Theta2d}
\mb{\Omega}_{*i}^{\top}\mb{\Omega}_{jj}
\widehat{\bd{d}}_j
=\mb{\Omega}_{*i}^{\top}\mb{\Omega}_{jj}  n^{-1}\mb{X}^\top [\mb{X}_{*j}-
\mb{X}_{*\widehat{A}_j}(\widehat{\bd{\alpha}}_j)_{\widehat{A}_j}]
-\mb{\Omega}_{ji}^{\top}\mb{\Omega}_{jj} \widehat{\mb{\Omega}}_{jj}^{-1},~~~\forall i,j\in[p].
\ee
The first term on the RHS of~\eqref{Theta2d}
\begin{align}
&\mb{\Omega}_{*i}^{\top}\mb{\Omega}_{jj}  n^{-1}\mb{X}^\top [\mb{X}_{*j}-
\mb{X}_{*\widehat{A}_j}(\widehat{\bd{\alpha}}_j)_{\widehat{A}_j}]\nonumber\\
&=\mb{\Omega}_{*i}^{\top}\mb{\Omega}_{jj}  n^{-1}\mb{X}^\top [\mb{X}_{*j}-
\mb{X}_{*\widehat{A}_j}(\bd{\alpha}_j^*)_{\widehat{A}_j}]
-\mb{\Omega}_{*i}^{\top}\mb{\Omega}_{jj}  n^{-1}\mb{X}^\top\mb{X}_{*\widehat{A}_j} [(\widehat{\bd{\alpha}}_j)_{\widehat{A}_j}-(\bd{\alpha}_j^*)_{\widehat{A}_j}]\nonumber\\
&=\mb{\Omega}_{*i}^{\top}\mb{\Omega}_{jj}  n^{-1}\mb{X}^\top \bd{\epsilon}_j
-\mb{\Omega}_{*i}^{\top}\mb{\Omega}_{jj}  \mb{\Sigma}_{*\widehat{A}_j} [(\widehat{\bd{\alpha}}_j)_{\widehat{A}_j}-(\bd{\alpha}_j^*)_{\widehat{A}_j}]\nonumber\\
&\qquad + 
\mb{\Omega}_{*i}^{\top}\mb{\Omega}_{jj}  
(\mb{\Sigma}_{*\widehat{A}_j} -n^{-1}\mb{X}^\top\mb{X}_{*\widehat{A}_j})
[(\widehat{\bd{\alpha}}_j)_{\widehat{A}_j}-(\bd{\alpha}_j^*)_{\widehat{A}_j}]\nonumber\\
&=\mb{\Omega}_{*i}^{\top}\mb{\Omega}_{jj}  n^{-1}\mb{X}^\top \bd{\epsilon}_j
-\mb{\Omega}_{jj} 
\mb{\Omega}_{*i}^{\top}  \mb{\Sigma}_{*\widehat{A}_j} 
\mb{\Sigma}_{\widehat{A}_j\widehat{A}_j}^{-1}\mb{X}_{*\widehat{A}_j}^\top\bd{\epsilon}_j/n\nonumber\\
&\qquad 
 \underbrace{ -\mb{\Omega}_{jj} (\bd{e}_i)_{\widehat{A}_j}^\top
\bd{r}_{1j}
 + 
\mb{\Omega}_{jj}  
[(\bd{e}_i)^\top_{\widehat{A}_j} -n^{-1}\mb{\Omega}_{ii}\bd{\epsilon}_i^\top\mb{X}_{*\widehat{A}_j}]
[(\widehat{\bd{\alpha}}_j)_{\widehat{A}_j}-(\bd{\alpha}_j^*)_{\widehat{A}_j}]}_{\displaystyle r_{3ij}}
\label{1st term in Theta2d_hat}
\end{align}
holds for all $i,j\in[p]$
with probability $1-O(p^{-C})$,
where the last equality follows from \eqref{diff in alpha} and \eqref{Theta_-jj formula}.
Using \eqref{2nd term in diff in alpha} for the first term in $r_{3ij}$,
and using 
a concentration inequality similar to \eqref{bound for xe}
together with \eqref{2.2 term in norm-2 of d_j_hat}
for the second term in $r_{3ij}$,
we have
\be\label{bound r_3ij}
P\Big(\max_{1\le i,j\le p}|r_{3ij}|
\le M_{34}T(\log p)/n\Big)
=1-O(p^{-C})
\ee
for some constant $M_{34}>0$ dependent on $C$.

Note that
\begin{align}
&-\mb{\Omega}_{*i}^{\top}(\widehat{\mb{\Sigma}}-\mb{\Sigma})
\mb{\Omega}_{*j}+
\mb{\Omega}_{*i}^{\top}\mb{\Omega}_{jj}  n^{-1}\mb{X}^\top \bd{\epsilon}_j
-\mb{\Omega}_{ji}^{\top}\mb{\Omega}_{jj} \widehat{\mb{\Omega}}_{jj}^{-1}
\nonumber\\
&=-\mb{\Omega}_{*i}^{\top}n^{-1}\mb{X}^\top\mb{X}\mb{\Omega}_{*j}
+
\mb{\Omega}_{*i}^{\top}\mb{\Omega}_{jj}  n^{-1}\mb{X}^\top \bd{\epsilon}_j
+\mb{\Omega}_{ji}^{\top}
-\mb{\Omega}_{ji}^{\top}\mb{\Omega}_{jj} \widehat{\mb{\Omega}}_{jj}^{-1}\nonumber\\
&=
-\mb{\Omega}_{jj}\mb{\Omega}_{*i}^{\top}n^{-1}\mb{X}^\top(\bd{\epsilon}_j-\bd{\epsilon}_j)
-\mb{\Omega}_{ij}\mb{\Omega}_{jj}(\widehat{\mb{\Omega}}_{jj}^{-1}-\mb{\Omega}_{jj}^{-1})\nonumber\\
&=-\mb{\Omega}_{ij}\mb{\Omega}_{jj}(\widehat{\mb{\Omega}}_{jj}^{-1}-\mb{\Omega}_{jj}^{-1})
\label{ee/n-sigma in AN}
\end{align}

By \eqref{Theta-Theta2d-Theta}, \eqref{2nd term in Theta-Theta2d-Theta}, \eqref{bound for the 3rd term in {Theta-Theta2d-Theta}},
\eqref{Theta_hat2d_hat}, \eqref{bound r_2ij}, \eqref{Theta2d}, 
\eqref{1st term in Theta2d_hat}, \eqref{bound r_3ij}, \eqref{ee/n-sigma in AN}, \eqref{Theta^-1 decomp}, \eqref{max z diff beta} and \eqref{3rd term in Theta^-1 bound},
we have 
\begin{align*}\label{supp: diff in Omega for AN}
\sqrt{n}(\widehat{\mb{\Omega}}_{ij}-\mb{\Omega}_{ij})
&=-\sqrt{n}\mb{\Omega}_{jj}\mb{\Omega}_{*i}^{\top}[\mb{\Sigma}_{*\widehat{A}_j}\mb{\Sigma}_{\widehat{A}_j\widehat{A}_j}^{-1}\mb{X}_{*\widehat{A}_j}^\top\bd{\epsilon}_j/n
]
-\sqrt{n}\mb{\Omega}_{jj}\mb{\Omega}_{ij}(\bd{\epsilon}_j^\top\bd{\epsilon}_j/n-\mb{\Omega}_{jj}^{-1})
+r_{ij}\nonumber\\
&=-\sqrt{n}[\mb{\Omega}_{jj}\mb{\Omega}_{ii}(\bd{\epsilon}_{i\parallel \bd{x}_{\widehat{A}_j}}^\top\bd{\epsilon}_j/n
+\bd{\epsilon}_{i\parallel\epsilon_j}^\top \bd{\epsilon}_j/n) - \mb{\Omega}_{ij}]+r_{ij},
\end{align*}
where
\be\label{bound r_ij}
P\Big(\max_{1\le i,j\le p}|r_{ij}|\le M_r T(\log p)/\sqrt{n}\Big)=1-O(p^{-C})
\ee
for some constant $M_r>0$ dependent on $C$,
$\bd{\epsilon}_{i\parallel \bd{x}_{\widehat{A}_j}}
:=[\mb{\Omega}_{ii}^{-1}(\bd{e}_i)_{\widehat{A}_j}^\top
\mb{\Sigma}_{\widehat{A}_j\widehat{A}_j}^{-1}\mb{X}_{*\widehat{A}_j}^\top]^\top$
consists of 
the $n$
observations of 
\begin{align*}
\epsilon_{i \parallel \bd{x}_{\widehat{A}_j}}
&:=E[\epsilon_i \bd{b}_{\widehat{A}_j}^\top|\widehat{A}_j]\bd{b}_{\widehat{A}_j}
=\mb{\Omega}_{ii}^{-1}E[\mb{\Omega}_{*i}^{\top}\bd{x} \bd{b}_{\widehat{A}_j}^\top|\widehat{A}_j]
\bd{b}_{\widehat{A}_j}
=\mb{\Omega}_{ii}^{-1}
\mb{\Omega}_{*i}^{\top}E[\bd{x}\bd{x}_{\widehat{A}_j}^\top|\widehat{A}_j]
\mb{\Sigma}_{\widehat{A}_j\widehat{A}_j}^{-1/2}\mb{\Sigma}_{\widehat{A}_j\widehat{A}_j}^{-1/2}\bd{x}_{\widehat{A}_j}\\
&=\mb{\Omega}_{ii}^{-1}\mb{\Omega}_{*i}^{\top}\mb{\Sigma}_{*\widehat{A}_j}\mb{\Sigma}_{\widehat{A}_j\widehat{A}_j}^{-1}\bd{x}_{\widehat{A}_j}
\qquad \text{with}\qquad \bd{b}_{\widehat{A}_j}:=\mb{\Sigma}_{\widehat{A}_j\widehat{A}_j}^{-1/2}\bd{x}_{\widehat{A}_j},
\end{align*}
and
$\bd{\epsilon}_{i\parallel\epsilon_j}:=\mb{\Omega}_{ii}^{-1}\mb{\Omega}_{ij}\bd{\epsilon}_j$ consists of the $n$ observations of
\begin{align*}
\epsilon_{i\parallel\epsilon_j}
&:=E[\epsilon_i\epsilon_j/\sd(\epsilon_j)]\epsilon_j/\sd(\epsilon_j)
=E[\epsilon_i\epsilon_j]\epsilon_j\mb{\Omega}_{jj}
=E[\mb{\Omega}_{ii}^{-1}\mb{\Omega}_{i*}\bd{x}
\bd{x}^\top \mb{\Omega}_{*j} \mb{\Omega}_{jj}^{-1}
] \epsilon_j\mb{\Omega}_{jj}\\
&=\mb{\Omega}_{ii}^{-1}\mb{\Omega}_{i*}\mb{\Sigma}
\mb{\Omega}_{*j} \mb{\Omega}_{jj}^{-1}\epsilon_j\mb{\Omega}_{jj}
=\mb{\Omega}_{ii}^{-1}\mb{\Omega}_{ij}\epsilon_j.
\end{align*}

We now connect \eqref{diff in Omega for AN}
and \eqref{diff in Omega for AN, simplified}
under the condition that 
$A_j^*\subseteq \widehat{A}_j$  for all $j\in [p]$.
We note that
$\bd{\epsilon}_{i\parallel \bd{x}_{\widehat{A}_j}}
+\bd{\epsilon}_{i\parallel\epsilon_j}
=\bd{\epsilon}_{i\parallel \bd{x}_{\widehat{A}_j^+}}$,
where 
$\bd{\epsilon}_{i\parallel \bd{x}_{\widehat{A}_j^+}}
=[\mb{\Omega}_{ii}^{-1}(\bd{e}_i)_{\widehat{A}_j^+}^\top
\mb{\Sigma}_{\widehat{A}_j^+\widehat{A}_j^+}^{-1}\mb{X}_{*\widehat{A}_j^+}^\top]^\top$.
To show this equation, first, 
like $\epsilon_{i\parallel \bd{x}_{\widehat{A}_j}}$,
we see that
$\epsilon_{i\parallel \bd{x}_{\widehat{A}_j^+}}
:=\mb{\Omega}_{ii}^{-1}(\bd{e}_i)_{\widehat{A}_j^+}^\top
\mb{\Sigma}_{\widehat{A}_j^+\widehat{A}_j^+}^{-1}\bd{x}_{\widehat{A}_j^+}$
is the orthogonal projection of $\epsilon_i$ onto $\lspan(\bd{x}_{\widehat{A}_j^+}^\top)$;
besides,
$\epsilon_{i \parallel \bd{x}_{\widehat{A}_j}}+\epsilon_{i\parallel\epsilon_j}$ is the orthogonal projection
of $\epsilon_i$ onto $\lspan((\bd{x}_{\widehat{A}_j}^\top, \epsilon_j))$ due to 
$\epsilon_j\perp \lspan(\bd{x}_{\widehat{A}_j}^\top)$;
then by $x_j=\bd{x}_{A_j^*}^\top (\bd{\alpha}_j^*)_{A_j^*}+\epsilon_j=\bd{x}_{\widehat{A}_j}^\top (\bd{\alpha}_j^*)_{\widehat{A}_j}+\epsilon_j$,
we have
$\lspan((\bd{x}_{\widehat{A}_j}^\top, \epsilon_j))=
\lspan(\bd{x}_{\widehat{A}_j^+}^\top)$,
and thus
$\epsilon_{i \parallel \bd{x}_{\widehat{A}_j}}+\epsilon_{i\parallel\epsilon_j}=\epsilon_{i\parallel \bd{x}_{\widehat{A}_j^+}}$.
On the other hand,
since the orthogonal projection of $x_j$
onto $\lspan(\bd{x}_{\setminus j}^\top)$
is
$\bd{x}_{\setminus j}^\top \bd{\alpha}_j^*=
\bd{x}_{A_j^*}^\top (\bd{\alpha}_j^*)_{A_j^*}
\in \lspan(\bd{x}_{A_j^*}^\top)\subseteq \lspan(\bd{x}_{\widehat{A}_j}^\top)$,
then
the orthogonal projection of $x_j$
onto $\lspan(\bd{x}_{\widehat{A}_j}^\top)$
is also $\bd{x}_{A_j^*}^\top (\bd{\alpha}_j^*)_{A_j^*}
=\bd{x}_{\widehat{A}_j}^\top (\bd{\alpha}_j^*)_{\widehat{A}_j}$.
Applying \eqref{lm, projection}-\eqref{Theta_-jj formula} to $\{x_\ell\}_{\ell\in \widehat{A}_j^+}$
yields 
$
(\mb{\Sigma}_{\widehat{A}_j^+\widehat{A}_j^+}^{-1})_{\widehat{A}_j,\hat{j}}=-(\mb{\Sigma}_{\widehat{A}_j^+\widehat{A}_j^+}^{-1})_{\hat{j}\hat{j}}(\bd{\alpha}_j^*)_{\widehat{A}_j}
$
and
$(\mb{\Sigma}_{\widehat{A}_j^+\widehat{A}_j^+}^{-1})_{\hat{j}\hat{j}}
=\mb{\Omega}_{jj}$.
It follows that
$\epsilon_j
=\mb{\Omega}_{jj}^{-1}(\bd{e}_j)_{\widehat{A}_j^+}^\top
\mb{\Sigma}_{\widehat{A}_j^+\widehat{A}_j^+}^{-1}\bd{x}_{\widehat{A}_j^+}$.
Then from \eqref{diff in Omega for AN}, we have
\[
 \sqrt{n}(\widehat{\mb{\Omega}}_{ij}-\mb{\Omega}_{ij})
=-\sqrt{n}
[(\mb{\Sigma}_{\widehat{A}_j^+\widehat{A}_j^+}^{-1})_{\hat{i}*}
\widehat{\mb{\Sigma}}_{\widehat{A}_j^+\widehat{A}_j^+}
(\mb{\Sigma}_{\widehat{A}_j^+\widehat{A}_j^+}^{-1})_{*\hat{j}}-\mb{\Omega}_{ij}]+r_{ij}~~\text{for}~~i\in \widehat{A}_j^+.
\]
Comparing the above equation with \eqref{diff in Omega for AN, simplified},
we obtain
\be\label{r=r tilde}
r_{ij}=\tilde{r}_{ij}~~\text{for all}~~i\in\widehat{A}_j^+, j\in [p].
\ee

Recall that 
 \eqref{r=r tilde} and \eqref{bound r_ij} are obtained by
assuming $A_j^*\subseteq \widehat{A}_j$ for all $j\in [p]$.
If $P(A_j^*\subseteq \widehat{A}_j,\forall j\in [p])\to 1$,
then 
$$P(r_{ij}=\tilde{r}_{ij}, \forall i\in\widehat{A}_j^+,j\in [p])\to 1$$
and 
\be\label{bound r_ij, eqn2}
P\Big(\max_{1\le i,j\le p}|r_{ij}|\le M_r T(\log p)/\sqrt{n}\Big)\to 1.
\ee

Note that 
\begin{align*}
\xi_{i,\widetilde{A}_j}
&:=-\mb{\Omega}_{ii}\mb{\Omega}_{jj}(\epsilon_{i\parallel \bd{x}_{\widetilde{A}_j}}+\epsilon_{i\parallel \epsilon_j})I(i\in \widetilde{A}_j)
-\mb{\Omega}_{jj}\mb{\Omega}_{ii}\epsilon_iI(i\notin \widetilde{A}_j)\\
&=[-\mb{\Omega}_{jj}(\bd{e}_i)_{\widetilde{A}_j}^\top \mb{\Sigma}_{\widetilde{A}_j\widetilde{A}_j}^{-1}\bd{x}_{\widetilde{A}_j}
-\mb{\Omega}_{jj}\mb{\Omega}_{ij}\epsilon_j]I(i\in \widetilde{A}_j)
-\mb{\Omega}_{jj}\mb{\Omega}_{ii}\epsilon_iI(i\notin \widetilde{A}_j),
\end{align*}
and
\begin{align*}
\bd{\xi}_{i,\widetilde{A}_j}^\top &:=-\mb{\Omega}_{ii}\mb{\Omega}_{jj}(\bd{\epsilon}_{i\parallel \bd{x}_{\widetilde{A}_j}}^\top+\bd{\epsilon}_{i\parallel \epsilon_j}^\top)I(i\in \widetilde{A}_j)
-\mb{\Omega}_{jj}\mb{\Omega}_{ii}\bd{\epsilon}_i^\top I(i\notin \widetilde{A}_j)\\
&= [-\mb{\Omega}_{jj}(\bd{e}_i)_{\widetilde{A}_j}^\top \mb{\Sigma}_{\widetilde{A}_j\widetilde{A}_j}^{-1}\mb{X}_{*\widetilde{A}_j}^\top
-\mb{\Omega}_{jj}\mb{\Omega}_{ij}\bd{\epsilon}_j^\top]I(i\in \widetilde{A}_j)
-\mb{\Omega}_{jj}\mb{\Omega}_{ii}\bd{\epsilon}_i^\top I(i\notin \widetilde{A}_j).
\end{align*}
Since $\widetilde{A}_j$ is a fixed set given $n$ and $p$, we have
$E[\xi_{i,\widetilde{A}_j}\epsilon_j]=-\mb{\Omega}_{ij}$
due to $\epsilon_j\perp \lspan(\bd{x}_{\setminus j}^\top) \ni\epsilon_{i\parallel \bd{x}_{\widetilde{A}_j}}$ and 
$E[\epsilon_{i\parallel\epsilon_j}\epsilon_j]=
E[\epsilon_i\epsilon_j]=\mb{\Omega}_{ii}^{-1}\mb{\Omega}_{jj}^{-1}\mb{\Omega}_{ij}$.
By the Lyapunov inequality, Minkowski inequality, 
Cauchy-Schwarz inequality,
and $\|\mb{\Omega}\|_{\max}\le \| \mb{\Omega}\|_2\le \kappa$, we have that, for $1\le i,j\le p$,
\begin{align}\label{eq: E|xie+Omega|3}
E[|\xi_{i,\widetilde{A}_j}\epsilon_j+\mb{\Omega}_{ij}|^3]
&\le \{ E[|\xi_{i,\widetilde{A}_j}\epsilon_j+\mb{\Omega}_{ij}|^4] \}^{\frac{3}{4}}
\le 
\big(
\{E[(\xi_{i,\widetilde{A}_j}\epsilon_j)^4]\}^{\frac{1}{4}}+
|\mb{\Omega}_{ij}|
\big)^3\nonumber\\
&\le 
\big(
\{E[\xi_{i,\widetilde{A}_j}^8] E[\epsilon_j^8]\}^{\frac{1}{8}}+
\kappa
\big)^3.
\end{align}

Define $\bd{y}_{\widetilde{A}_j}=\mb{\Sigma}_{\widetilde{A}_j\widetilde{A}_j}^{-1} \bd{x}_{\widetilde{A}_j}$, 
$\mb{Y}_{*\widetilde{A}_j}= \mb{X}_{*\widetilde{A}_j}\mb{\Sigma}_{\widetilde{A}_j\widetilde{A}_j}^{-1}$,
and $\bd{v}_{ij}= \mb{\Sigma}_{\widetilde{A}_j\widetilde{A}_j}^{-1}(\bd{e}_i)_{\widetilde{A}_j}$.
If $i\in \widetilde{A}_j$,
then let $i_{\widetilde{A}_j}$ denote the position of $i$
in the set $\widetilde{A}_j$
when its elements are sorted in ascending order.
From Cauchy’s interlace theorem and Assumption~\ref{Theta bound},
$\|\bd{v}_{ij} \|_2\le \| \mb{\Sigma}_{\widetilde{A}_j\widetilde{A}_j}^{-1} \|_2\| (\bd{e}_i)_{\widetilde{A}_j}\|_2\le \lambda^{-1}_{\min}(\mb{\Sigma}_{\widetilde{A}_j\widetilde{A}_j})\le\lambda^{-1}_{\min}(\mb{\Sigma})
=\lambda_{\max}(\mb{\Omega})\le \kappa$.
Then by Assumption~\ref{subGauss}, 
\begin{align}\label{subGauss y}
E\left[\exp\{(\bd{y}_{\widetilde{A}_j})_{\{i_{\widetilde{A}_j}\}}^2/(\kappa K)^2\}\right]
&=E\left[\exp(|\bd{v}_{ij}^\top\bd{x}_{\widetilde{A}_j}|^2/(\kappa K)^2)\right]
=E\left[\exp(|\kappa^{-1}\bd{v}_{ij}^\top\bd{x}_{\widetilde{A}_j}|^2/K^2)\right]\nonumber\\
&\le \sup_{\bd{v}\in \mathbb{R}^p:\|\bd{v}\|_2\le 1}E\left[\exp(|\bd{v}^\top\bd{x}|^2/K^2)\right]\le 2. 
\end{align}
Thus, $(\bd{y}_{\widetilde{A}_j})_{\{i_{\widetilde{A}_j}\}}$ is a sub-Gaussian random variable \citep[][Definition 2.5.6]{vershynin_2018} with a parameter $\kappa K$ for $i\in \widetilde{A}_j$ and $j\in [p]$.
From \eqref{subGauss epsilon},
$\epsilon_j$ is  a sub-Gaussian random variable with a parameter $\kappa K$ for $j\in[p]$.
By the bounds of sub-Gaussian moments \citep[][Proposition~2.5.2]{vershynin_2018}, 
we have
$$
\max\{E[\epsilon_j^8],E[(\bd{y}_{\widetilde{A}_j})_{\{i_{\widetilde{A}_j}\}}^8]\}\le C_{\epsilon y}^8
$$
with a constant $C_{\epsilon y}>0$ for $i\in \widetilde{A}_j$ and $j\in[p]$.
By
$\xi_{i,\widetilde{A}_j}=[-\mb{\Omega}_{jj}(\bd{y}_{\widetilde{A}_j})_{\{i_{\widetilde{A}_j}\}}
-\mb{\Omega}_{jj}\mb{\Omega}_{ij}\epsilon_j]I(i\in \widetilde{A}_j)
-\mb{\Omega}_{jj}\mb{\Omega}_{ii}\epsilon_iI(i\notin \widetilde{A}_j)$, 
the Minkowski inequality, 
and $\|\mb{\Omega}\|_{\max}\le \kappa$,
we have
\begin{align*}
E[\xi_{i,\widetilde{A}_j}^8]&\le \big(  \{E[(\mb{\Omega}_{jj}(\bd{y}_{\widetilde{A}_j})_{\{i_{\widetilde{A}_j}\}})^8]\}^{\frac{1}{8}} + \{E[(\mb{\Omega}_{jj}\mb{\Omega}_{ij}\epsilon_j)^8]\}^{\frac{1}{8}}  \big)^8
I(i\in \widetilde{A}_j)+
E[(\mb{\Omega}_{jj}\mb{\Omega}_{ii}\epsilon_i)^8]
I(i\notin \widetilde{A}_j)\\
&\le (  \kappa C_{\epsilon y} + \kappa^2  C_{\epsilon y} )^8
\end{align*}
for $1\le i,j\le p$.
Then by \eqref{eq: E|xie+Omega|3}, for $1\le i,j\le p$,
\[
E[|\xi_{i,\widetilde{A}_j}\epsilon_j+\mb{\Omega}_{ij}|^3]
\le 
\big(
\{E[\xi_{i,\widetilde{A}_j}^8] E[\epsilon_j^8]\}^{\frac{1}{8}}+
\kappa
\big)^3\le (  \kappa C_{\epsilon y}^2 + \kappa^2  C_{\epsilon y}^2+\kappa) ^3.
\]

By the above inequality,  $\min_{i\in \widehat{A}_j^+,j\in [p]}\sigma_{i,\widetilde{A}_j}\ge \omega$, and Berry-Esseen theorem (Theorem~3.4.17 in \citet{Durrett2019}), we have that,
for any $z\in\mathbb{R}$,
\begin{align}
&\max_{i\in\widehat{A}_j^+,j\in [p]}\left|P(\sqrt{n}(\bd{\xi}_{i,\widetilde{A}_j}^\top\bd{\epsilon}_j/n -E[\xi_{i,\widetilde{A}_j}\epsilon_j])/\sigma_{i,\widetilde{A}_j}\le z)-\Phi(z)\right|\nonumber\\
&\qquad \le 3 (  \kappa C_{\epsilon y}^2 + \kappa^2  C_{\epsilon y}^2+\kappa)^3/(\omega^3\sqrt{n})=:M_{13}/\sqrt{n}.
\nonumber
\end{align}
Let $Z$ be a standard Gaussian random variable, define the event $\mathcal{E}_{r}=\{\max_{1\le i,j\le p}|r_{ij}|\le M_r T(\log p)/\sqrt{n}\}$, and denote its complement by $\mathcal{E}_{r}^c$.
From $P(\cap_{j\in [p]}\mathcal{E}_j){\to 1}$,
the above inequality, \eqref{bound r_ij, eqn2}, and $T(\log p)/\sqrt{n}=o(1)$,
we have that, uniformly for all $z\in\mathbb{R}$,
\begin{align}\label{AN derive}
&\max_{i\in\widehat{A}_j^+,j\in[p]}\big|P(\sqrt{n}(\widehat{\mb{\Omega}}_{ij}-\mb{\Omega}_{ij})/\sigma_{i,\widehat{A}_j}\le z)
-\Phi(z)\big|\nonumber\\
&\le 
\max_{i\in\widehat{A}_j^+,j\in[p]}\big|P(\frac{\sqrt{n}(\widehat{\mb{\Omega}}_{ij}-\mb{\Omega}_{ij})-r_{ij}}{\sigma_{i,\widehat{A}_j}}\le z-\frac{r_{ij}}{\sigma_{i,\widehat{A}_j}})
-\Phi(z)\big|
\nonumber\\
&\le\max_{i\in\widehat{A}_j^+,j\in[p]}\Big(\big|
P(\{
\frac{\sqrt{n}(\widehat{\mb{\Omega}}_{ij}-\mb{\Omega}_{ij})-r_{ij}}{\sigma_{i,\widehat{A}_j}}\le z-\frac{r_{ij}}{\sigma_{i,\widehat{A}_j}}
\} \cap \mathcal{E}_j)-\Phi(z) \big|\nonumber\\
&\qquad\qquad\qquad+
P(\{\frac{\sqrt{n}(\widehat{\mb{\Omega}}_{ij}-\mb{\Omega}_{ij})-r_{ij}}{\sigma_{i,\widehat{A}_j}}\le z-\frac{r_{ij}}{\sigma_{i,\widehat{A}_j}}\} \cap \mathcal{E}_j^c)
\Big)
\nonumber\\
&\le \max_{i\in\widehat{A}_j^+,j\in[p]}\big|
P(\{
\frac{\sqrt{n}(\widehat{\mb{\Omega}}_{ij}-\mb{\Omega}_{ij})-r_{ij}}{\sigma_{i,\widehat{A}_j}}\le z-\frac{r_{ij}}{\sigma_{i,\widehat{A}_j}}
\} \cap \mathcal{E}_j)-\Phi(z) \big|
+o(1)
\nonumber\\
&\le
\max_{i\in\widehat{A}_j^+,j\in[p]}
\Big(\big|  P( \{\frac{\sqrt{n}(\bd{\xi}_{i,\widetilde{A}_j}^\top\bd{\epsilon}_j/n -E[\xi_{i,\widetilde{A}_j}\epsilon_j])}{\sigma_{i,\widetilde{A}_j}}\le z-\frac{r_{ij}}{\sigma_{i,\widetilde{A}_j}}\} \cap \mathcal{E}_j) -P(\{Z\le z-\frac{r_{ij}}{\sigma_{i,\widetilde{A}_j}}\})     \big|
\nonumber\\
&\qquad\qquad\qquad+
|P(\{Z\le z-\frac{r_{ij}}{\sigma_{i,\widetilde{A}_j}}\})- \Phi(z)| \Big)
+o(1)
\nonumber\\
&\le\max_{i\in\widehat{A}_j^+,j\in[p]}
\Big(\big|  P( \frac{\sqrt{n}(\bd{\xi}_{i,\widetilde{A}_j}^\top\bd{\epsilon}_j/n -E[\xi_{i,\widetilde{A}_j}\epsilon_j])}{\sigma_{i,\widetilde{A}_j}}\le z-\frac{r_{ij}}{\sigma_{i,\widetilde{A}_j}} ) -P(Z\le z-\frac{r_{ij}}{\sigma_{i,\widetilde{A}_j}})     \big|
\nonumber\\
&\qquad\qquad\qquad+
P( \{\frac{\sqrt{n}(\bd{\xi}_{i,\widetilde{A}_j}^\top\bd{\epsilon}_j/n -E[\xi_{i,\widetilde{A}_j}\epsilon_j])}{\sigma_{i,\widetilde{A}_j}}\le z-\frac{r_{ij}}{\sigma_{i,\widetilde{A}_j}}\} \cap \mathcal{E}_j^c)
\nonumber\\
&\qquad\qquad\qquad+
P(z-\frac{|r_{ij}|}{\sigma_{i,\widetilde{A}_j}}\le Z\le z+\frac{|r_{ij}|}{\sigma_{i,\widetilde{A}_j}}) \Big)
+o(1)
\nonumber\\
&\le M_{13}/\sqrt{n}+o(1)+\max_{i\in\widehat{A}_j^+,j\in[p]} \Big(P(\{z-\frac{|r_{ij}|}{\sigma_{i,\widetilde{A}_j}}\le Z\le z+\frac{|r_{ij}|}{\sigma_{i,\widetilde{A}_j}}\}\cap\mathcal{E}_r) 
\nonumber\\
&\qquad\qquad\qquad\qquad\qquad\qquad\qquad+
P(\{z-\frac{|r_{ij}|}{\sigma_{i,\widetilde{A}_j}}\le Z\le z+\frac{|r_{ij}|}{\sigma_{i,\widetilde{A}_j}}\}\cap\mathcal{E}_r^c) 
\Big)
\nonumber\\
&=M_{13}/\sqrt{n}+o(1)+
P(z-\frac{M_rT \log p}{\omega \sqrt{n}}\le Z\le z+\frac{M_rT \log p}{\omega \sqrt{n}})
\nonumber\\
&=o(1).
\end{align}

The proof is complete.
\end{proof}

\begin{proof}[Proof of Theorem~\ref{Corollary: AN}]
From Lemma~\ref{Lemma2: A_hat=A}, we have
$\inf_{\mb{\Omega}\in\mathcal{G}(s)} P(T_j^*=s_j-1, \forall j\in [p])\to 1$
and $\inf_{\mb{\Omega}\in\mathcal{G}(s)}P(\widehat{A}_j=A_j^*,\forall j\in [p])\to 1$.
Define event $\mathcal{E}_T=\{T_j^*=s_j-1, \forall j\in [p]\}$.
Then following the proof of Lemma~\ref{thm: normality}
and plugging
\begin{align*}
&P(z-\frac{M_rT \log p}{\omega \sqrt{n}}\le Z\le z+\frac{M_rT \log p}{\omega \sqrt{n}})\nonumber\\
&= 
P(\{z-\frac{M_rT \log p}{\omega \sqrt{n}}\le Z\le z+\frac{M_rT \log p}{\omega \sqrt{n}}\}\cap\mathcal{E}_T)\\
&\qquad+P(\{z-\frac{M_rT \log p}{\omega \sqrt{n}}\le Z\le z+\frac{M_rT \log p}{\omega \sqrt{n}}\}\cap\mathcal{E}_T^c)\\
&\le P(z-\frac{M_rs \log p}{\omega \sqrt{n}}\le Z\le z+\frac{M_rs \log p}{\omega \sqrt{n}})+P(\mathcal{E}_T^c)\\
&=o(1)~~~(\text{due to $s (\log p)/\sqrt{n}=o(1)$ and $P(\mathcal{E}_T^c)=o(1)$})
\end{align*}
 into the second to last line of \eqref{AN derive},
 we complete the proof. 
\end{proof}

\begin{proof}[Proof of Theorem~\ref{thm: consistency of sigma_iAj}]

First, we prove Part~\ref{thm (i): consistency of sigma_iAj} of the theorem.

When $A_j^*\subseteq \widetilde{A}_j$,
we have
\begin{align*}
\sigma_{i,\widetilde{A}_j}^2&:=\var(\xi_{i,\widetilde{A}_j}\epsilon_j)\\
&=\var\left[(\mb{\Sigma}_{\widetilde{A}_j^+\widetilde{A}_j^+}^{-1})_{\tilde{i}*}\bd{x}_{\widetilde{A}_j^+}\bd{x}_{\widetilde{A}_j^+}^\top(\mb{\Sigma}_{\widetilde{A}_j^+\widetilde{A}_j^+}^{-1})_{*\tilde{j}}I(i\in \widetilde{A}_j^+)
+
\mb{\Omega}_{*i}^\top\bd{x}_{\widetilde{A}_j^+}\bd{x}_{\widetilde{A}_j^+}^\top\mb{\Omega}_{*j}I(i\notin \widetilde{A}_j^+)
\right],
\end{align*}
where $\widetilde{A}_j^+:=\widetilde{A}_j\cup\{j\}$, and $\tilde{i}$ and $\tilde{j}$ are the positions of $i$ and $j$
in $\widetilde{A}_j^+$ 
when its elements are sorted in ascending order.
Define
\begin{align*}
\widehat{\sigma}_{i,\widetilde{A}_j}^2=&\frac{1}{n}\sum_{k=1}^n\Big[(\widehat{\mb{\Sigma}}_{\widetilde{A}_j^+\widetilde{A}_j^+}^{-1})_{\tilde{i}*}\mb{X}_{k,\widetilde{A}_j^+}^\top\mb{X}_{k,\widetilde{A}_j^+}(\widehat{\mb{\Sigma}}_{\widetilde{A}_j^+\widetilde{A}_j^+}^{-1})_{*\tilde{j}}I(i\in \widetilde{A}_j^+)
+
\widehat{\mb{\Omega}}_{*i}^\top\mb{X}_{k,\widetilde{A}_j^+}^\top\mb{X}_{k,\widetilde{A}_j^+}\widehat{\mb{\Omega}}_{*j}I(i\notin \widetilde{A}_j^+)
\Big]^2\\
&\quad -\frac{1}{2}(\widehat{\mb{\Omega}}_{ij}^2+\widehat{\mb{\Omega}}_{ji}^2) .
\end{align*}

From \eqref{Sigma_AA's error in norm 2},
\eqref{bounded Sigma_AA} and \eqref{spectrum of Sigma_AA},
with probability $1-O(p^C)$ we have
\begin{align}
\max_{j\in[p]}\| \widehat{\mb{\Sigma}}_{\widetilde{A}_j^+\widetilde{A}_j^+}^{-1}-\mb{\Sigma}_{\widetilde{A}_j^+\widetilde{A}_j^+}^{-1}  \|_2
&\le\max_{j\in[p]}\|\widehat{\mb{\Sigma}}_{\widetilde{A}_j^+\widetilde{A}_j^+}^{-1}\|_2\| \widehat{\mb{\Sigma}}_{\widetilde{A}_j^+\widetilde{A}_j^+}-\mb{\Sigma}_{\widetilde{A}_j^+\widetilde{A}_j^+}  \|_2\|\mb{\Sigma}_{\widetilde{A}_j^+\widetilde{A}_j^+}^{-1}\|_2\nonumber\\
&\le 2\kappa^2\check{M}_0\sqrt{T(\log p)/n}
\label{inv Simga_AA in norm-2}
\end{align}
and
\be\label{inv Simga_AA in norm-1}
\max_{j\in[p]}\| \widehat{\mb{\Sigma}}_{\widetilde{A}_j^+\widetilde{A}_j^+}^{-1}-\mb{\Sigma}_{\widetilde{A}_j^+\widetilde{A}_j^+}^{-1}  \|_1
\le \max_{j\in[p]}\| \widehat{\mb{\Sigma}}_{\widetilde{A}_j^+\widetilde{A}_j^+}^{-1}-\mb{\Sigma}_{\widetilde{A}_j^+\widetilde{A}_j^+}^{-1}  \|_2 \sqrt{T+1}
\le 2^{3/2}\kappa^2\check{M}_0T\sqrt{(\log p)/n}.
\ee

Following the proof of Lemma~3 in Supplementary Material of \citet{Jank17},
but using \eqref{inv Simga_AA in norm-2},
\eqref{inv Simga_AA in norm-1}, \eqref{norm-1 bound for Theta},  \eqref{Theta column norm 2}, \eqref{bounded Sigma_AA}, and 
\eqref{max norm err of Sigma} instead of their counterparts,
we obtain that, for all $\varepsilon>0$,
\[
P(\max_{i,j\in[p]} |\widehat{\sigma}_{i,\widetilde{A}_j}^2-\sigma_{i,\widetilde{A}_j}^2|\ge \varepsilon)=o(1)
~~\text{if}~~A_j^*\subseteq \widetilde{A}_j.
\]
Define $\mathcal{E}_A=\cap_{j\in [p]}\mathcal{E}_j$
and denote its complement by $\mathcal{E}_A^c$.
Then, we have
\begin{align}
&P(\max_{i\in \widehat{A}_j^+,j\in[p]} |\widehat{\sigma}_{i,\widehat{A}_j}^2-\sigma_{i,\widehat{A}_j}^2|\ge \varepsilon)\nonumber\\
&\le P\Big(\big\{\max_{i\in \widehat{A}_j^+,j\in[p]} |\widehat{\sigma}_{i,\widehat{A}_j}^2-\sigma_{i,\widehat{A}_j}^2|\ge \varepsilon\big\} \cap  \mathcal{E}_A\Big)
+P\Big(\big\{\max_{i\in \widehat{A}_j^+,j\in[p]} |\widehat{\sigma}_{i,\widehat{A}_j}^2-\sigma_{i,\widehat{A}_j}^2|\ge \varepsilon\big\} \cap  \mathcal{E}_A^c\Big)\nonumber\\
&= P\Big(\big\{\max_{i\in \widehat{A}_j^+,j\in[p]} |\widehat{\sigma}_{i,\widetilde{A}_j}^2-\sigma_{i,\widetilde{A}_j}^2|\ge \varepsilon\big\} \cap  \mathcal{E}_A\Big)  +o(1)\nonumber\\
&\le
P(\max_{i\in \widehat{A}_j^+,j\in[p]} |\widehat{\sigma}_{i,\widetilde{A}_j}^2-\sigma_{i,\widetilde{A}_j}^2|\ge \varepsilon)
+o(1)\nonumber\\
&=o(1).
\label{diff in sigma>e}
\end{align}

Now, we prove Part~\ref{thm (ii): consistency of sigma_iAj} of the theorem. 
Re-define 
\[
\widehat{\sigma}_{i,\widetilde{A}_j}^2
=[(\widehat{\mb{\Sigma}}_{\widetilde{A}_j^+\widetilde{A}_j^+}^{-1})_{\tilde{i}\tilde{i}}(\widehat{\mb{\Sigma}}_{\widetilde{A}_j^+\widetilde{A}_j^+}^{-1})_{\tilde{j}\tilde{j}}+(\widehat{\mb{\Sigma}}_{\widetilde{A}_j^+\widetilde{A}_j^+}^{-1})_{\tilde{i}\tilde{j}}^2]I(i\in \widetilde{A}_j^+)
+
(\widehat{\mb{\Omega}}_{ii}\widehat{\mb{\Omega}}_{jj}+\widehat{\mb{\Omega}}_{ij}^2)I(i\notin \widetilde{A}_j^+).
\]
Following the proof of Lemma~2 in Supplementary Material of \citet{Jank17},
but using \eqref{inv Simga_AA in norm-2}, \eqref{Theta column norm 2}, and \eqref{bounded Sigma_AA} instead of their counterparts,
we obtain that
\[
P(\max_{i\in \widehat{A}_j^+,j\in[p]} |\widehat{\sigma}_{i,\widetilde{A}_j}^2-\sigma_{i,\widetilde{A}_j}^2|\ge M \sqrt{T(\log p)/n})=O(p^{-C})
~~\text{if}~~A_j^*\subseteq \widetilde{A}_j.
\]
for some constant $M>0$ dependent on $C$.
Then using the same proof technique for \eqref{diff in sigma>e}
yields
\[
\max_{i\in \widehat{A}_j^+,j\in[p]} |\widehat{\sigma}_{i,\widehat{A}_j}^2-\sigma_{i,\widehat{A}_j}^2|
=O_P(\sqrt{T(\log p)/n}).
\]

The proof is complete.
\end{proof}

\begin{proof}[Proof of Theorem~\ref{thm: consistency of sigma_ij}]

Following the proof of Lemma~3 in Supplementary Material of \citet{Jank17},
but using  \eqref{norm-1 bound for Theta},  \eqref{Theta column norm 2}, \ref{Theta bound} and 
\eqref{max norm err of Sigma} instead of their counterparts,
we can obtain the result in Part~\ref{thm (i): consistency of sigma_ij} of the theorem.
On the other hand,
following the proof of Lemma~2 in Supplementary Material of \citet{Jank17},
but using \eqref{Theta column norm 2} and \ref{Theta bound} instead of their counterparts,
we can obtain the result in Part~\ref{thm (ii): consistency of sigma_ij}  of the theorem.
\end{proof}

\begin{proof}[Proof of Theorem~\ref{sigma_ij compare}]
For $i\in\widehat{A}_j^+$, we have
\[
\sigma_{i,\widehat{A}_j}^2=\var(\mb{\Omega}_{ii}\mb{\Omega}_{jj}  (\epsilon_{i\parallel \bd{x}_{\widehat{A}_j}}+\epsilon_{i\parallel \epsilon_j})\epsilon_j    |\widehat{A}_j)
= \var(\mb{\Omega}_{ii}\mb{\Omega}_{jj}  \epsilon_{i\parallel \bd{x}_{\widehat{A}_j^+}}\epsilon_j    |\widehat{A}_j)
=\mb{\Omega}_{ii}^2\mb{\Omega}_{jj}^2 E[\epsilon_{i\parallel \bd{x}_{\widehat{A}_j^+}}^2\epsilon_j^2  |\widehat{A}_j ]
-\mb{\Omega}_{ij}^2
\]
and
$
\sigma_{ij}^2=\var(\mb{\Omega}_{ii}\mb{\Omega}_{jj}  \epsilon_i\epsilon_j)
=\mb{\Omega}_{ii}^2\mb{\Omega}_{jj}^2 E[\epsilon_i^2\epsilon_j^2 ]
-\mb{\Omega}_{ij}^2.
$

If $i=j$,
then $\epsilon_{i\parallel \bd{x}_{\widehat{A}_j^+}}=\epsilon_i$,
and thus $\sigma_{i,\widehat{A}_j}^2=\sigma_{ij}^2$.

If $i\in \widehat{A}_j$,
we only need to compare 
$E[\epsilon_{i\parallel \bd{x}_{\widehat{A}_j^+}}^2\epsilon_j^2  |\widehat{A}_j ]$ 
and
$E[\epsilon_i^2\epsilon_j^2 ]$.
Define $\epsilon_{i\perp\bd{x}_{\widehat{A}_j^+}}=
\epsilon_i-\epsilon_{i\parallel \bd{x}_{\widehat{A}_j^+}}$,
which is the orthogonal rejection of $\epsilon_i$
from $\lspan( \bd{x}_{\widehat{A}_j^+}^\top)$
when $\widehat{A}_j$ is given.
Thus, $\epsilon_{i\perp\bd{x}_{\widehat{A}_j^+}}\perp\epsilon_{i\parallel\bd{x}_{\widehat{A}_j^+}}$.
Since $\epsilon_j\in \lspan( \bd{x}_{\widehat{A}_j^+}^\top)$
due to $A_j^*\subseteq \widehat{A}_j$,
we have $\epsilon_{i\perp\bd{x}_{\widehat{A}_j^+}}\perp \epsilon_j$.
Thus,
given $\widehat{A}_j$,
$\epsilon_{i\perp\bd{x}_{\widehat{A}_j^+}}$
is independent of $(\epsilon_{i\parallel\bd{x}_{\widehat{A}_j^+}},\epsilon_j)$ when $\bd{x}$ follows a $p$-variate Gaussian distribution.
Then, when $\widehat{A}_j$ is given, we have
\begin{align*}
E[\epsilon_i^2\epsilon_j^2 ]
&=E[\epsilon_i^2\epsilon_j^2 |\widehat{A}_j ]
=E[(\epsilon_{i\parallel \bd{x}_{\widehat{A}_j^+}}+\epsilon_{i\perp\bd{x}_{\widehat{A}_j^+}})^2\epsilon_j^2 |\widehat{A}_j]\\
&=E[\epsilon_{i\parallel \bd{x}_{\widehat{A}_j^+}}^2\epsilon_j^2|\widehat{A}_j ]
+E[\epsilon_{i\perp \bd{x}_{\widehat{A}_j^+}}^2\epsilon_j^2|\widehat{A}_j ]
+2E[\epsilon_{i\parallel \bd{x}_{\widehat{A}_j^+}}\epsilon_{i\perp \bd{x}_{\widehat{A}_j^+}}\epsilon_j^2|\widehat{A}_j ]\\
&=E[\epsilon_{i\parallel \bd{x}_{\widehat{A}_j^+}}^2\epsilon_j^2|\widehat{A}_j ]
+E[\epsilon_{i\perp \bd{x}_{\widehat{A}_j^+}}^2\epsilon_j^2|\widehat{A}_j ]
+2E[\epsilon_{i\parallel \bd{x}_{\widehat{A}_j^+}}\epsilon_j^2 |\widehat{A}_j ]E[ \epsilon_{i\perp \bd{x}_{\widehat{A}_j^+}}|\widehat{A}_j ]\\
&=E[\epsilon_{i\parallel \bd{x}_{\widehat{A}_j^+}}^2\epsilon_j^2|\widehat{A}_j ]
+E[\epsilon_{i\perp \bd{x}_{\widehat{A}_j^+}}^2\epsilon_j^2|\widehat{A}_j ]\\
&\ge E[\epsilon_{i\parallel \bd{x}_{\widehat{A}_j^+}}^2\epsilon_j^2|\widehat{A}_j ],
\end{align*}
where equality holds if and only if
$P(\epsilon_{i\perp \bd{x}_{\widehat{A}_j^+}} 
=\epsilon_i-\epsilon_{i\parallel \bd{x}_{\widehat{A}_j^+}}=0 |\widehat{A}_j )=1$, i.e.,
$(A_i^*\cup\{i\})\setminus \widehat{A}_j^+=\emptyset$.

The proof is complete.
\end{proof}

\begin{proof}[A counterexample to Theorem~\ref{sigma_ij compare} when $\bd{x}$ is non-Gaussian]
Given $\widehat{A}_j$, 
denote $X=\epsilon_{i\parallel \bd{x}_{\widehat{A}_j^+}}$, $Y=\epsilon_{i\perp\bd{x}_{\widehat{A}_j^+}}$,
and $Z=\epsilon_j$. 
Consider $i\in A_j^*$.
We have $E[(X+Y)Z]=E[\epsilon_i\epsilon_j]=\mb{\Omega}_{ii}^{-1}\mb{\Omega}_{jj}^{-1}\mb{\Omega}_{ij}\ne 0$.
Then by  $E[YZ]=E[\epsilon_{i\perp\bd{x}_{\widehat{A}_j^+}} \epsilon_j|\widehat{A}_j]=0$ (from the proof of Theorem~\ref{sigma_ij compare}),
we obtain $E[XZ]\ne 0$.
On the other hand, $E[(X+Y)^2]=\var(\epsilon_i)\ge E[X^2]=\var(\epsilon_{i\parallel \bd{x}_{\widehat{A}_j^+}}|\widehat{A}_j)$.
From the proof of Theorem~\ref{sigma_ij compare},
$\sigma_{ij}<\sigma_{i,\widehat{A}_j}$ is equivalent to 
$E[\epsilon_i^2\epsilon_j^2 ]<E[\epsilon_{i\parallel \bd{x}_{\widehat{A}_j^+}}^2\epsilon_j^2  |\widehat{A}_j ]$ when $i\in \widehat{A}_j\supseteq A_j^*$.
Thus, we only need to find a counterexample such that 
$E[(X+Y)^2Z^2]<E[X^2Z^2]$, $E[(X+Y)^2]\ge E[X^2]$, $E[XZ]\ne 0$, $E[XY]= 0$, $E[YZ]=0$, and $E[X]=E[Y]=E[Z]=0$.
Table~\ref{tab:counterexample} gives the joint probability distribution of $(X,Y,Z)$ for the counterexample.
\begin{table}[h!]
\centering
}
\caption{Average (standard deviation) of each asymptotic-normality metric over all entries in $S_{\mb{\Omega}}^c$ under sub-Gaussian settings  based on 100 simulation replications.}
\label{tab: AN S_omega complement, subGaussian}
\end{center}
 \end{table}

  \begin{sidewaysfigure}[th!]
  \caption*{$n=200, p=200$}
  \vspace{-0.43cm}
 \begin{minipage}{0.3\linewidth}
    \begin{minipage}{0.24\linewidth}
        \centering
        \includegraphics[width=\textwidth]{images/unbias_L0_Gaussian_Band_n200_p200_12.pdf}
    \end{minipage}
    \begin{minipage}{0.24\linewidth}
        \centering
        \includegraphics[width=\textwidth]{images/unbias_L0_Gaussian_Band_n200_p200_23.pdf}
    \end{minipage}
    \begin{minipage}{0.24\linewidth}
        \centering
        \includegraphics[width=\textwidth]{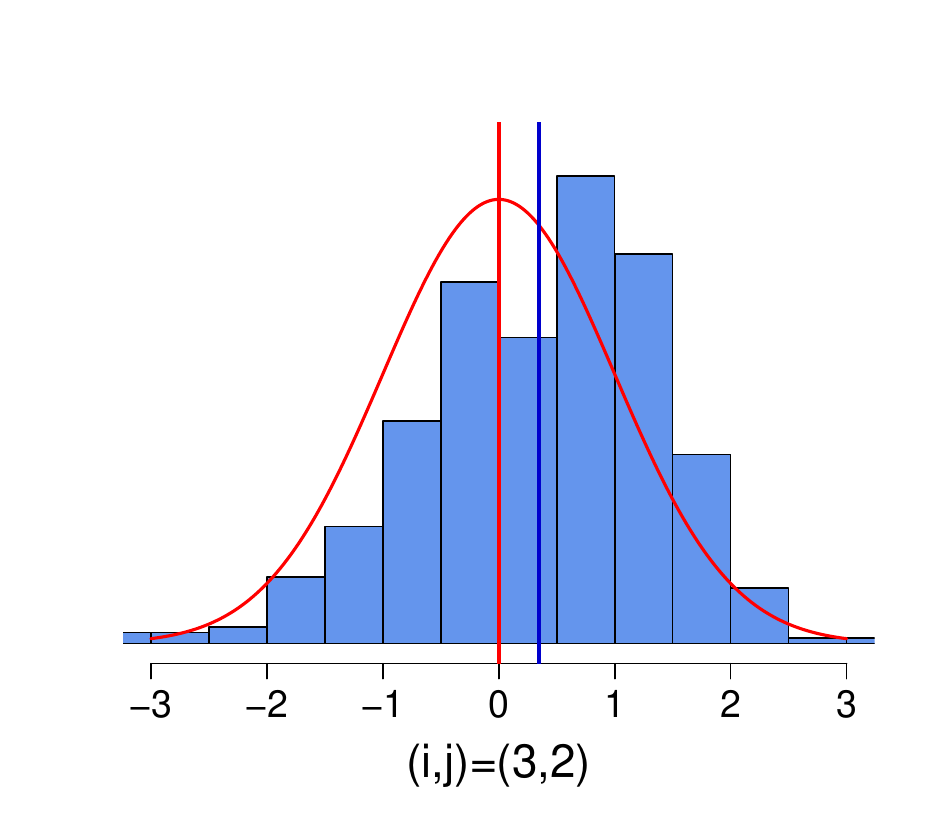}
    \end{minipage}
    \begin{minipage}{0.24\linewidth}
        \centering
        \includegraphics[width=\textwidth]{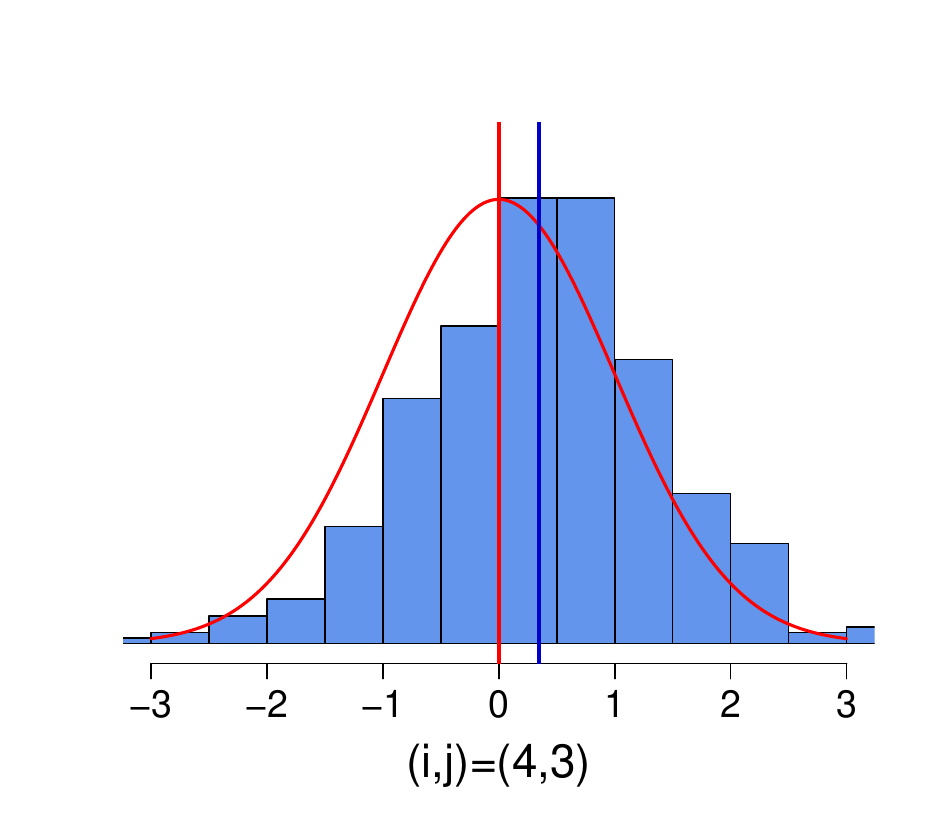}
    \end{minipage}
 \end{minipage}
 \hspace{1cm}
 \begin{minipage}{0.3\linewidth}
    \begin{minipage}{0.24\linewidth}
        \centering
        \includegraphics[width=\textwidth]{images/debias_L0_Gaussian_Band_n200_p200_12.pdf}
    \end{minipage}
    \begin{minipage}{0.24\linewidth}
        \centering
        \includegraphics[width=\textwidth]{images/debias_L0_Gaussian_Band_n200_p200_23.pdf}
    \end{minipage}
    \begin{minipage}{0.24\linewidth}
        \centering
        \includegraphics[width=\textwidth]{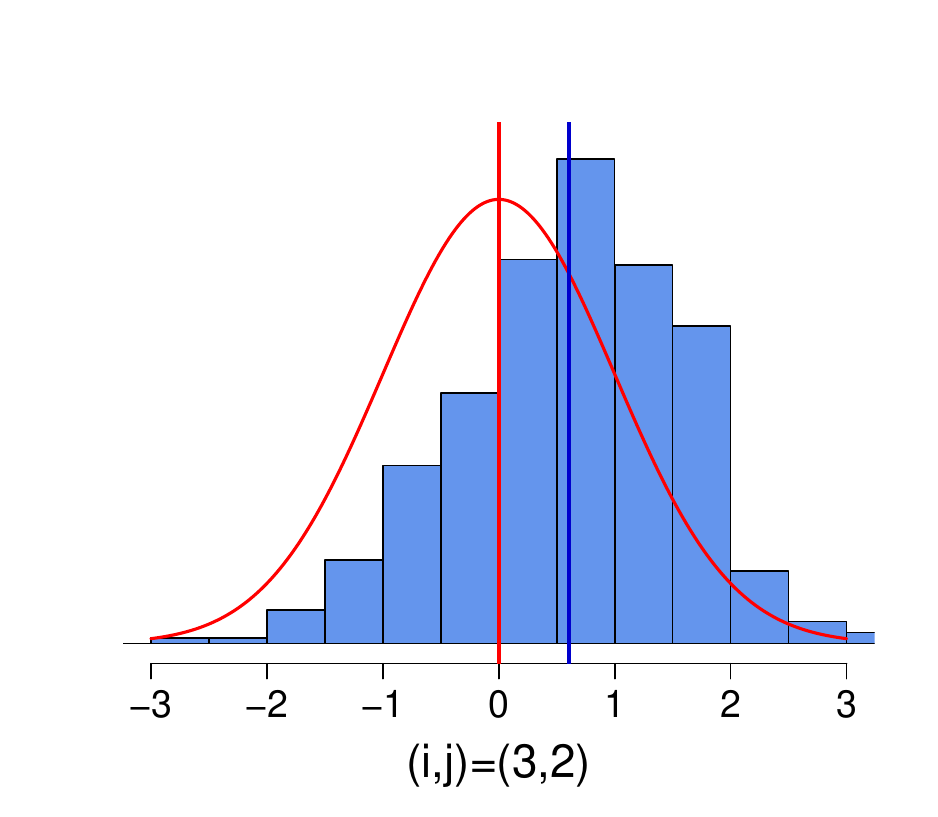}
    \end{minipage}
    \begin{minipage}{0.24\linewidth}
        \centering
        \includegraphics[width=\textwidth]{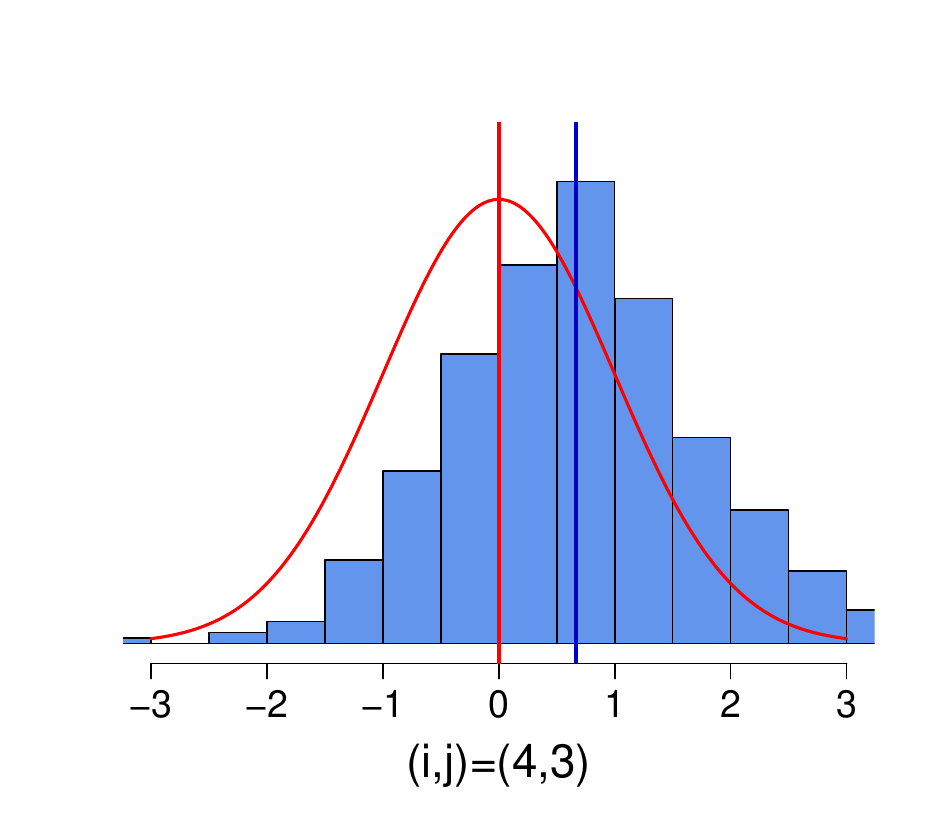}
    \end{minipage}    
 \end{minipage}
  \hspace{1cm}
 \begin{minipage}{0.3\linewidth}
     \begin{minipage}{0.24\linewidth}
        \centering
        \includegraphics[width=\textwidth]{images/L1_Gaussian_Band_n200_p200_12.pdf}
    \end{minipage}
    \begin{minipage}{0.24\linewidth}
        \centering
        \includegraphics[width=\textwidth]{images/L1_Gaussian_Band_n200_p200_23.pdf}
    \end{minipage}
    \begin{minipage}{0.24\linewidth}
        \centering
        \includegraphics[width=\textwidth]{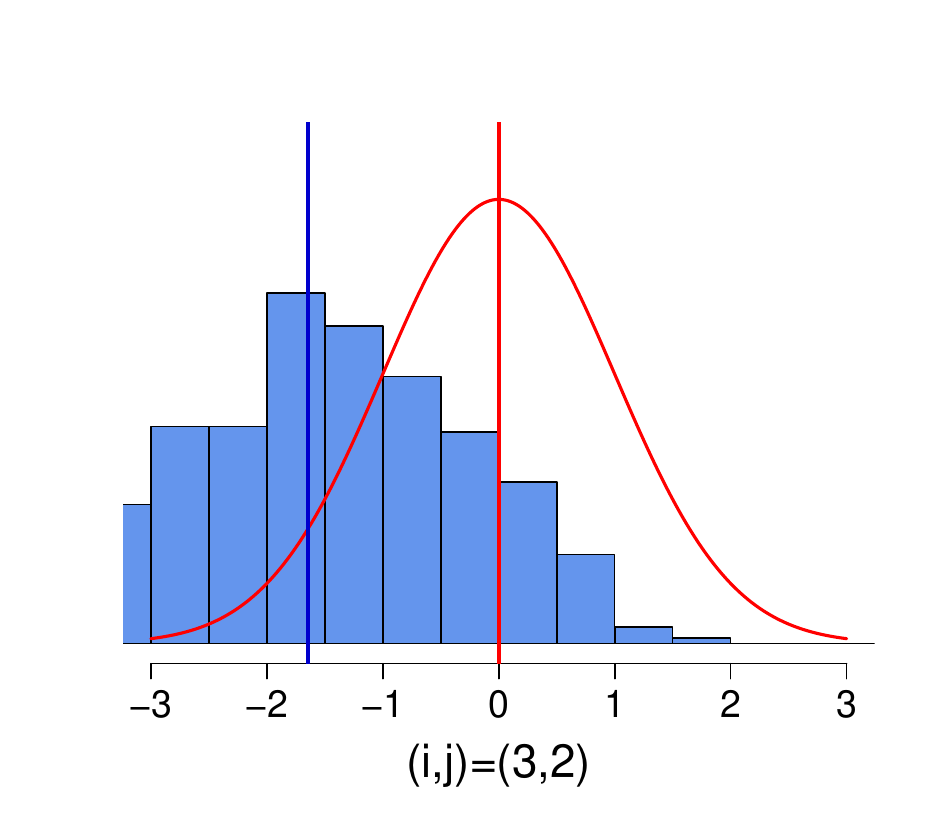}
    \end{minipage}
    \begin{minipage}{0.24\linewidth}
        \centering
        \includegraphics[width=\textwidth]{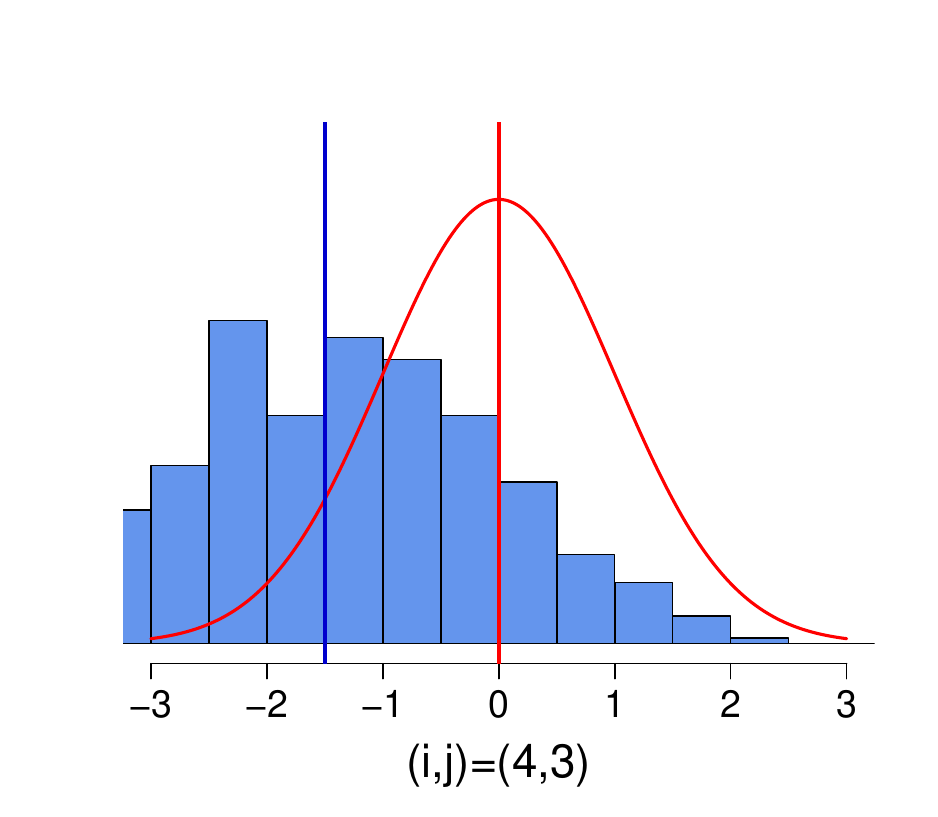}
    \end{minipage}
 \end{minipage}

  \caption*{$n=400, p=200$}
    \vspace{-0.43cm}
 \begin{minipage}{0.3\linewidth}
    \begin{minipage}{0.24\linewidth}
        \centering
        \includegraphics[width=\textwidth]{images/unbias_L0_Gaussian_Band_n400_p200_12.pdf}
    \end{minipage}
    \begin{minipage}{0.24\linewidth}
        \centering
        \includegraphics[width=\textwidth]{images/unbias_L0_Gaussian_Band_n400_p200_23.pdf}
    \end{minipage}
    \begin{minipage}{0.24\linewidth}
        \centering
        \includegraphics[width=\textwidth]{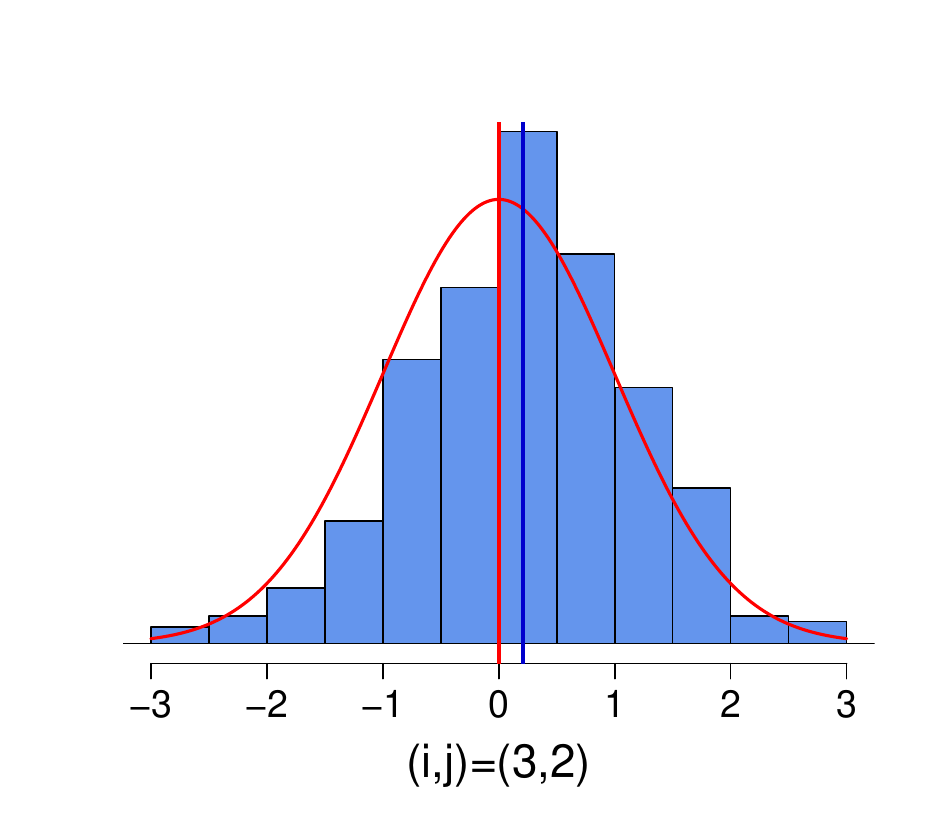}
    \end{minipage}
    \begin{minipage}{0.24\linewidth}
        \centering
        \includegraphics[width=\textwidth]{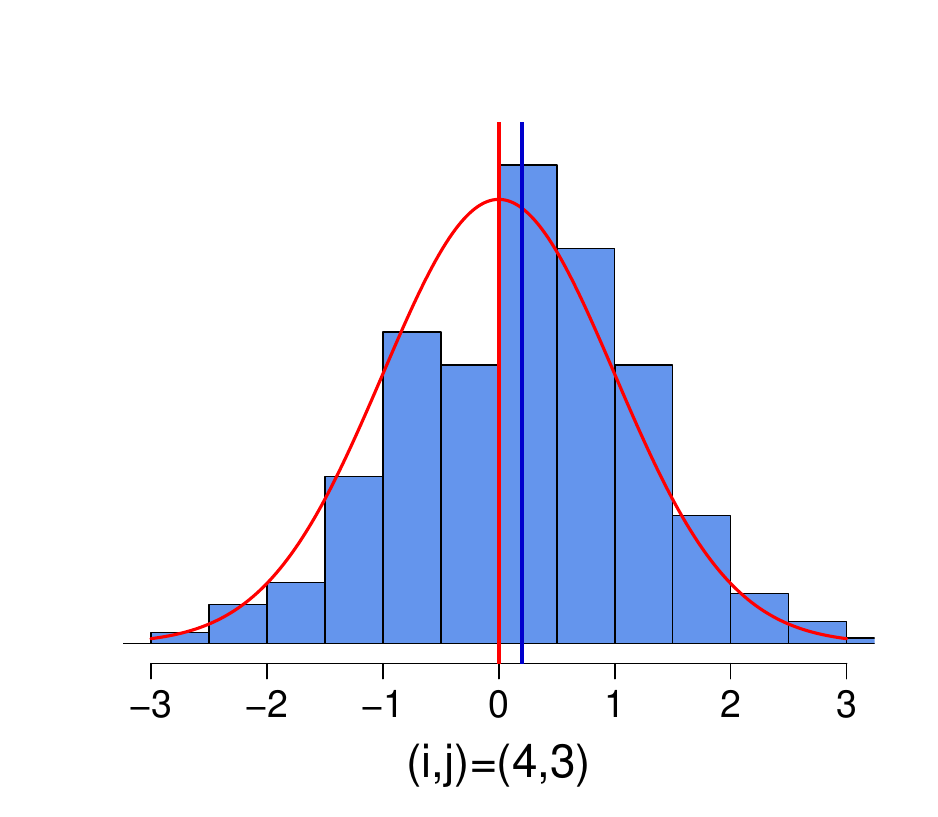}
    \end{minipage}
 \end{minipage}  
     \hspace{1cm}
 \begin{minipage}{0.3\linewidth}
    \begin{minipage}{0.24\linewidth}
        \centering
        \includegraphics[width=\textwidth]{images/debias_L0_Gaussian_Band_n400_p200_12.pdf}
    \end{minipage}
    \begin{minipage}{0.24\linewidth}
        \centering
        \includegraphics[width=\textwidth]{images/debias_L0_Gaussian_Band_n400_p200_23.pdf}
    \end{minipage}
    \begin{minipage}{0.24\linewidth}
        \centering
        \includegraphics[width=\textwidth]{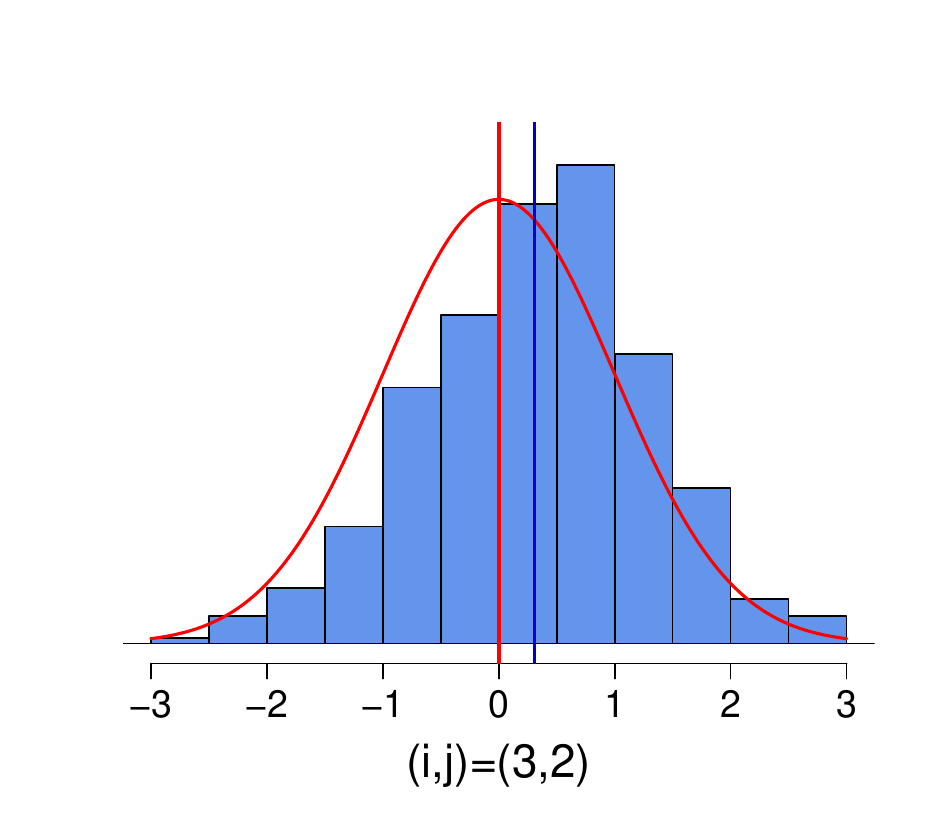}
    \end{minipage}
    \begin{minipage}{0.24\linewidth}
        \centering
        \includegraphics[width=\textwidth]{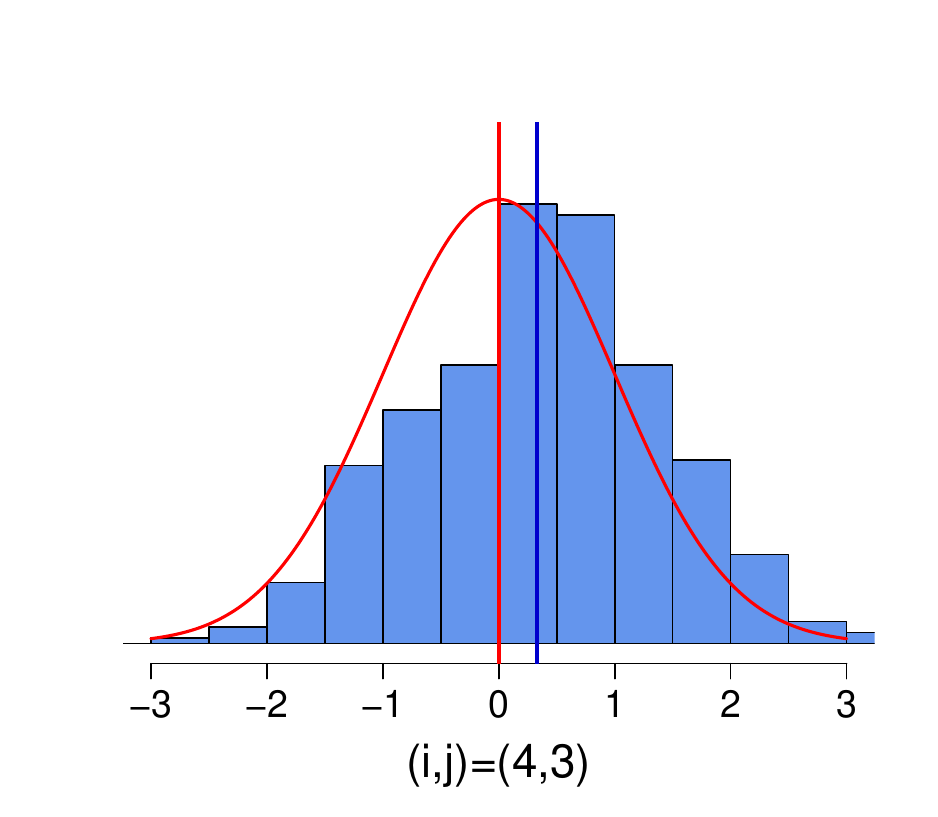}
    \end{minipage}
  \end{minipage}  
    \hspace{1cm}
 \begin{minipage}{0.3\linewidth}
    \begin{minipage}{0.24\linewidth}
        \centering
        \includegraphics[width=\textwidth]{images/L1_Gaussian_Band_n400_p200_12.pdf}
    \end{minipage}
    \begin{minipage}{0.24\linewidth}
        \centering
        \includegraphics[width=\textwidth]{images/L1_Gaussian_Band_n400_p200_23.pdf}
    \end{minipage}
    \begin{minipage}{0.24\linewidth}
        \centering
        \includegraphics[width=\textwidth]{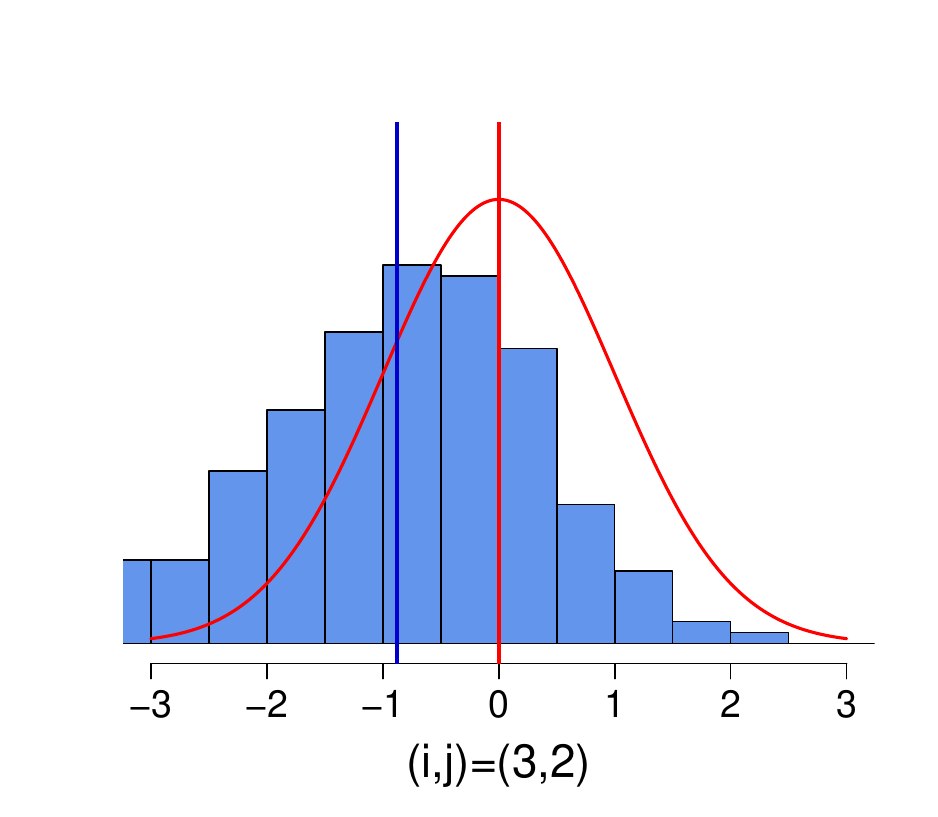}
    \end{minipage}
    \begin{minipage}{0.24\linewidth}
        \centering
        \includegraphics[width=\textwidth]{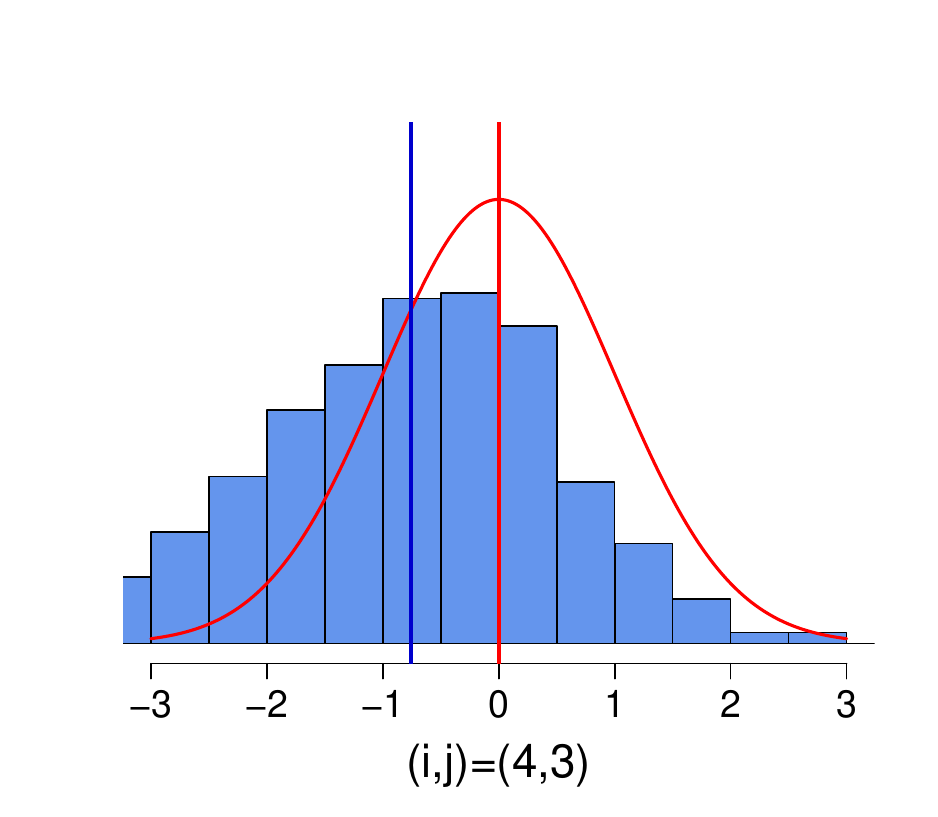}
    \end{minipage}
 \end{minipage}

  \caption*{$n=800, p=200$}
    \vspace{-0.43cm}
 \begin{minipage}{0.3\linewidth}
    \begin{minipage}{0.24\linewidth}
        \centering
        \includegraphics[width=\textwidth]{images/unbias_L0_Gaussian_Band_n800_p200_12.pdf}
    \end{minipage}
    \begin{minipage}{0.24\linewidth}
        \centering
        \includegraphics[width=\textwidth]{images/unbias_L0_Gaussian_Band_n800_p200_23.pdf}
    \end{minipage}
    \begin{minipage}{0.24\linewidth}
        \centering
        \includegraphics[width=\textwidth]{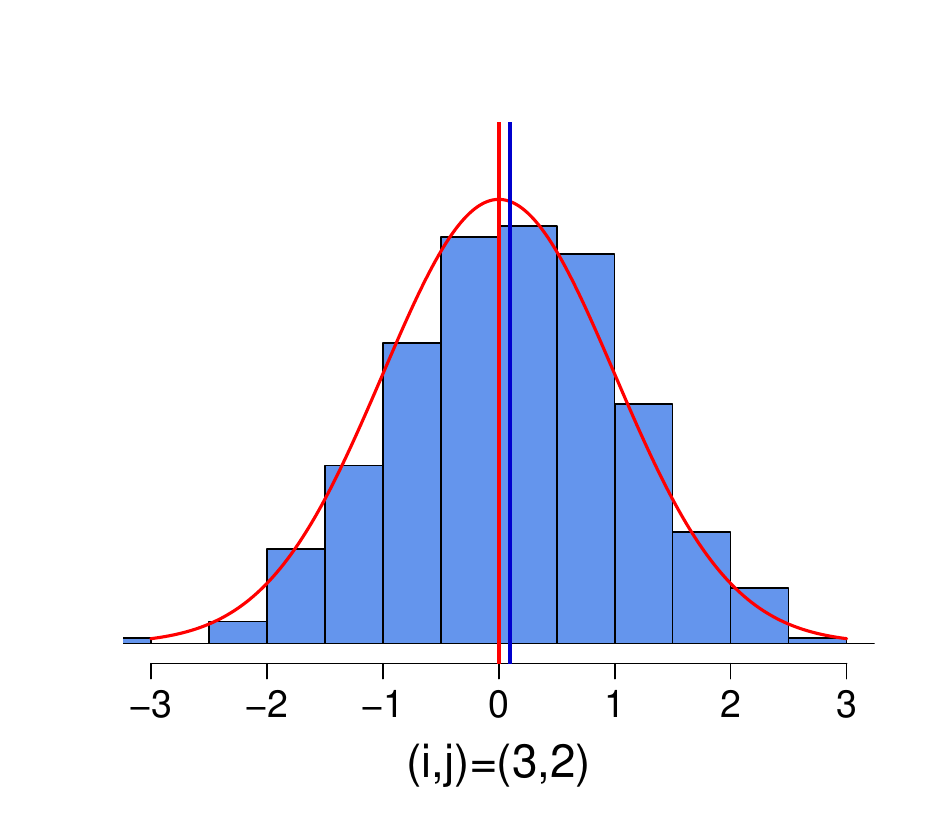}
    \end{minipage}
    \begin{minipage}{0.24\linewidth}
        \centering
        \includegraphics[width=\textwidth]{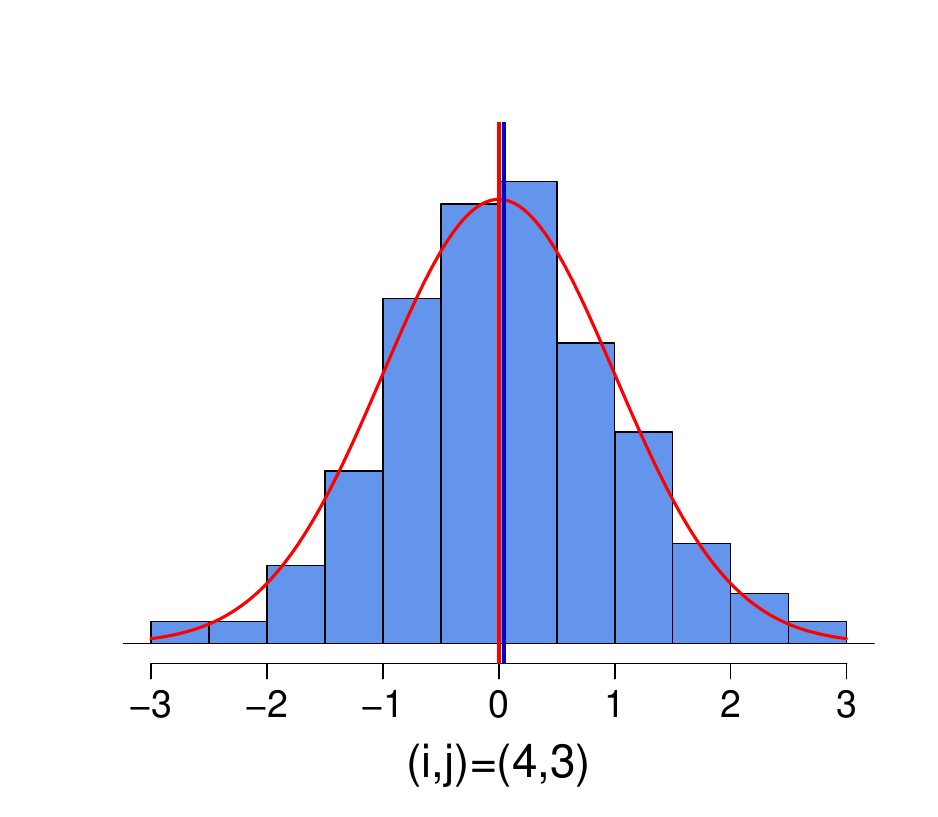}
    \end{minipage}
 \end{minipage} 
     \hspace{1cm}
 \begin{minipage}{0.3\linewidth}
    \begin{minipage}{0.24\linewidth}
        \centering
        \includegraphics[width=\textwidth]{images/debias_L0_Gaussian_Band_n800_p200_12.pdf}
    \end{minipage}
    \begin{minipage}{0.24\linewidth}
        \centering
        \includegraphics[width=\textwidth]{images/debias_L0_Gaussian_Band_n800_p200_23.pdf}
    \end{minipage}
    \begin{minipage}{0.24\linewidth}
        \centering
        \includegraphics[width=\textwidth]{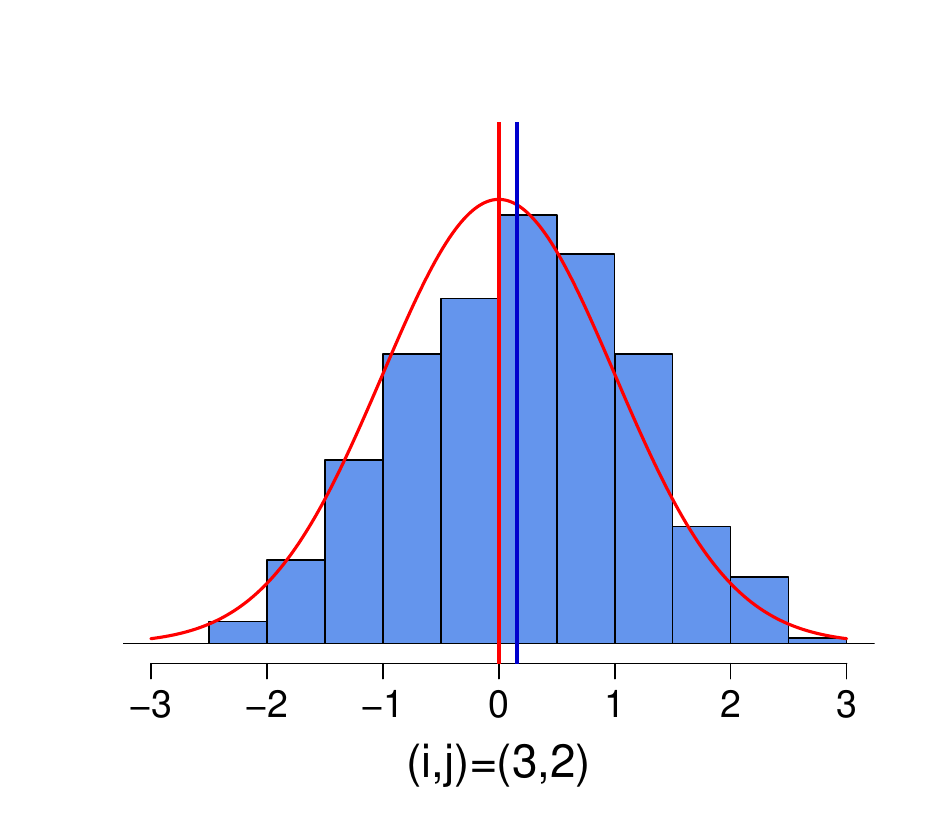}
    \end{minipage}
    \begin{minipage}{0.24\linewidth}
        \centering
        \includegraphics[width=\textwidth]{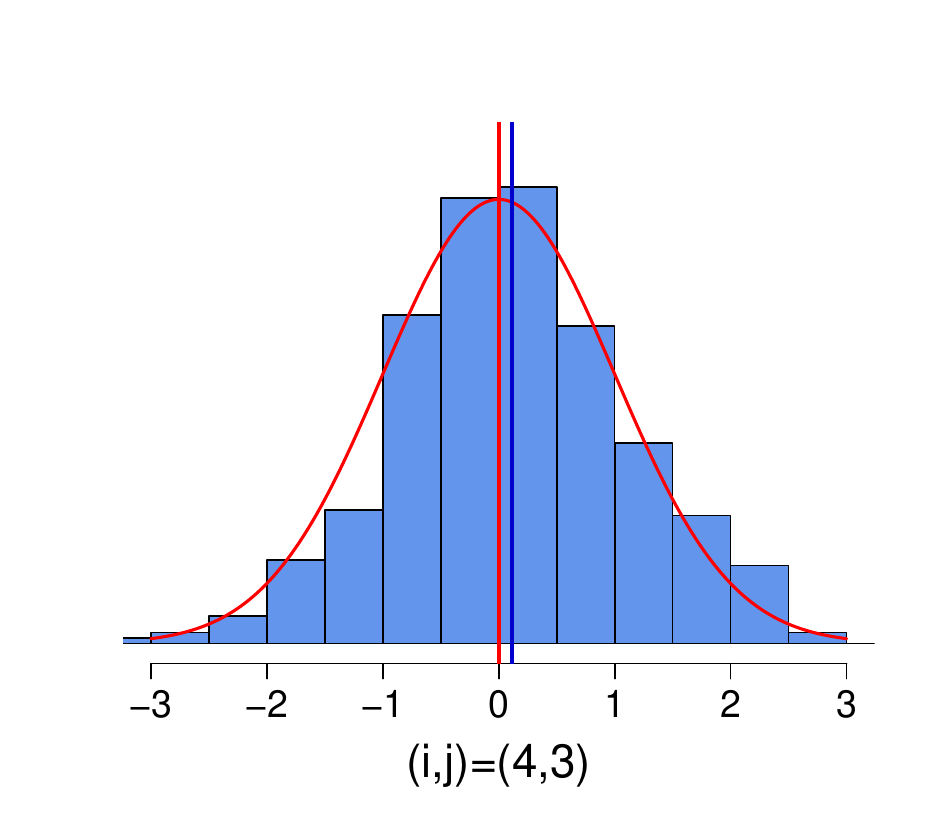}
    \end{minipage}
 \end{minipage}   
      \hspace{1cm}
 \begin{minipage}{0.3\linewidth}
    \begin{minipage}{0.24\linewidth}
        \centering
        \includegraphics[width=\textwidth]{images/L1_Gaussian_Band_n800_p200_12.pdf}
    \end{minipage}
    \begin{minipage}{0.24\linewidth}
        \centering
        \includegraphics[width=\textwidth]{images/L1_Gaussian_Band_n800_p200_23.pdf}
    \end{minipage}
    \begin{minipage}{0.24\linewidth}
        \centering
        \includegraphics[width=\textwidth]{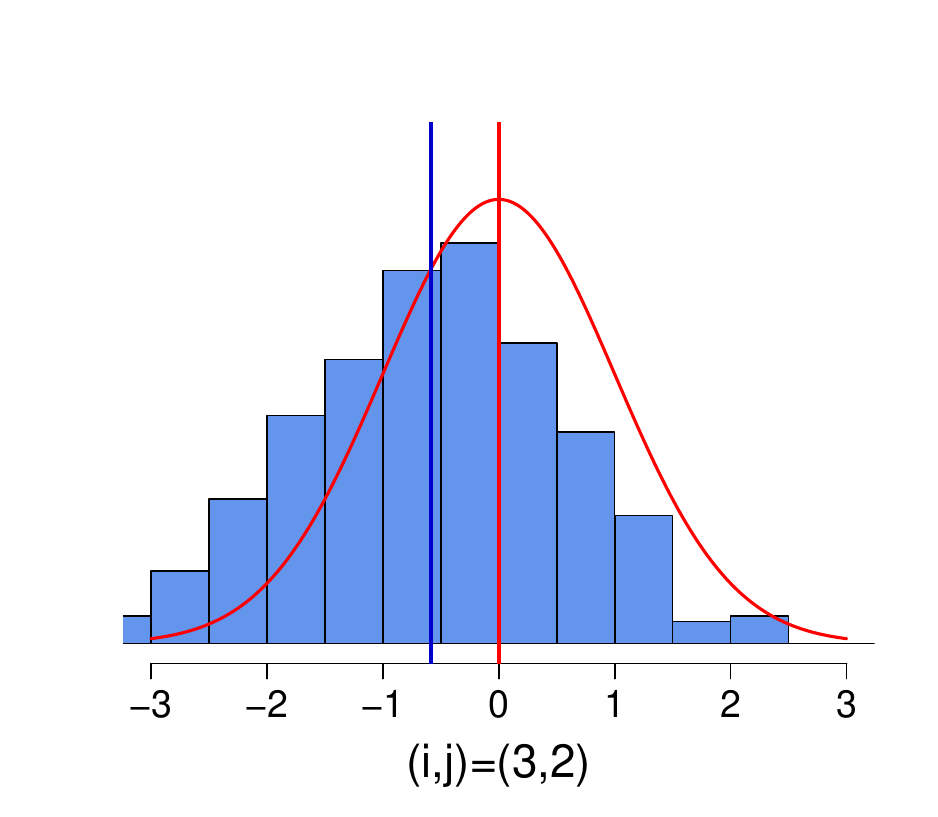}
    \end{minipage}
    \begin{minipage}{0.24\linewidth}
        \centering
        \includegraphics[width=\textwidth]{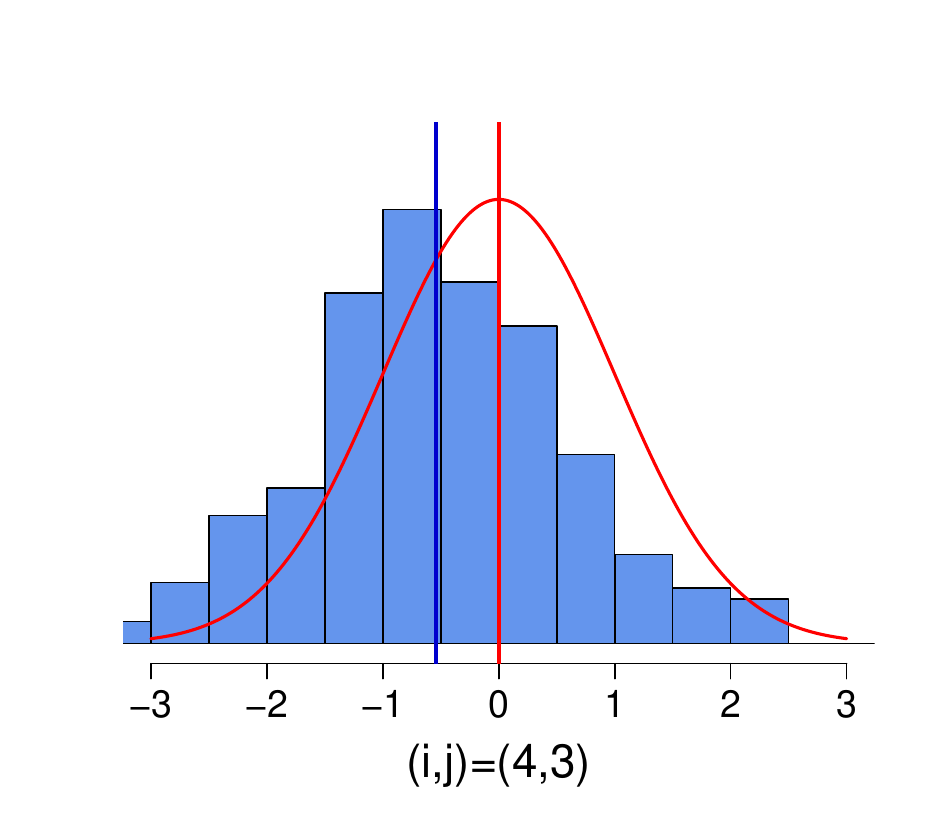}
    \end{minipage}
     \end{minipage}   
     
 \caption*{$n=200, p=400$}
   \vspace{-0.43cm}
 \begin{minipage}{0.3\linewidth}
    \begin{minipage}{0.24\linewidth}
        \centering
        \includegraphics[width=\textwidth]{images/unbias_L0_Gaussian_Band_n200_p400_12.pdf}
    \end{minipage}
    \begin{minipage}{0.24\linewidth}
        \centering
        \includegraphics[width=\textwidth]{images/unbias_L0_Gaussian_Band_n200_p400_23.pdf}
    \end{minipage}
    \begin{minipage}{0.24\linewidth}
        \centering
        \includegraphics[width=\textwidth]{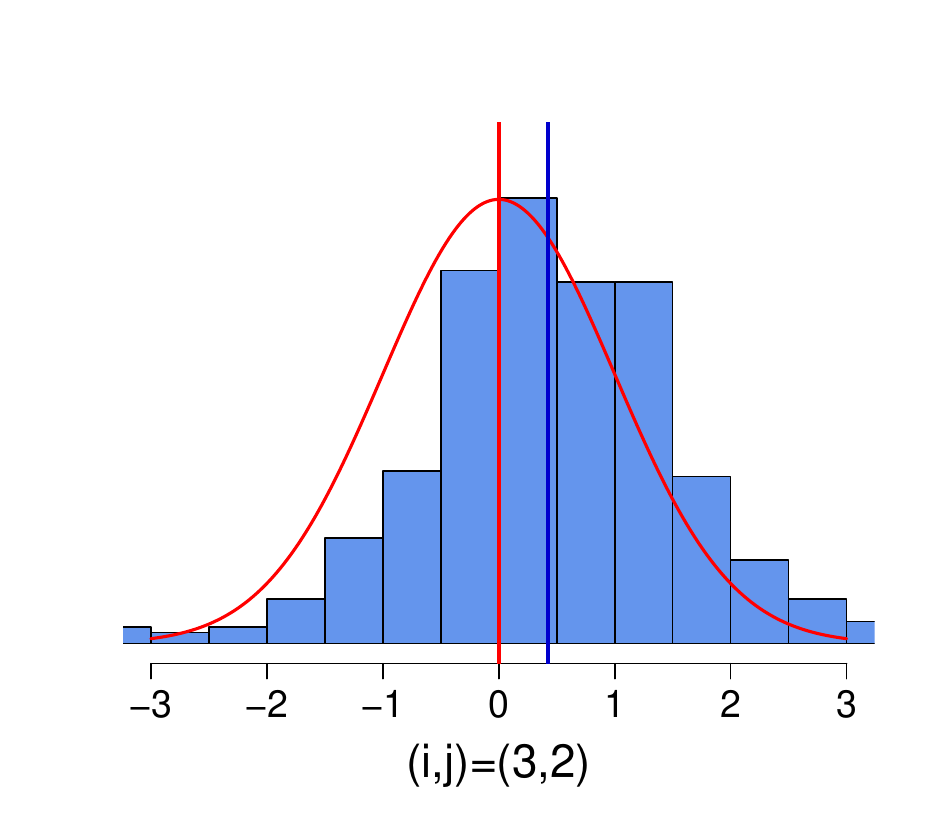}
    \end{minipage}
    \begin{minipage}{0.24\linewidth}
        \centering
        \includegraphics[width=\textwidth]{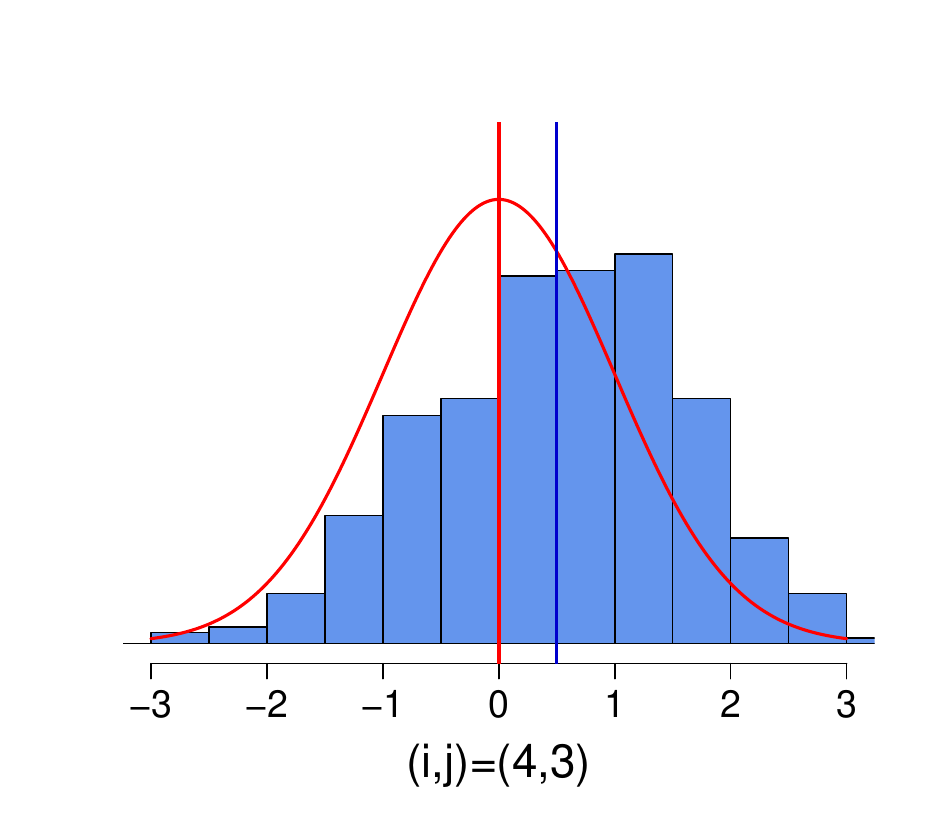}
    \end{minipage}
 \end{minipage}
 \hspace{1cm}
 \begin{minipage}{0.3\linewidth}
    \begin{minipage}{0.24\linewidth}
        \centering
        \includegraphics[width=\textwidth]{images/debias_L0_Gaussian_Band_n200_p400_12.pdf}
    \end{minipage}
    \begin{minipage}{0.24\linewidth}
        \centering
        \includegraphics[width=\textwidth]{images/debias_L0_Gaussian_Band_n200_p400_23.pdf}
    \end{minipage}
    \begin{minipage}{0.24\linewidth}
        \centering
        \includegraphics[width=\textwidth]{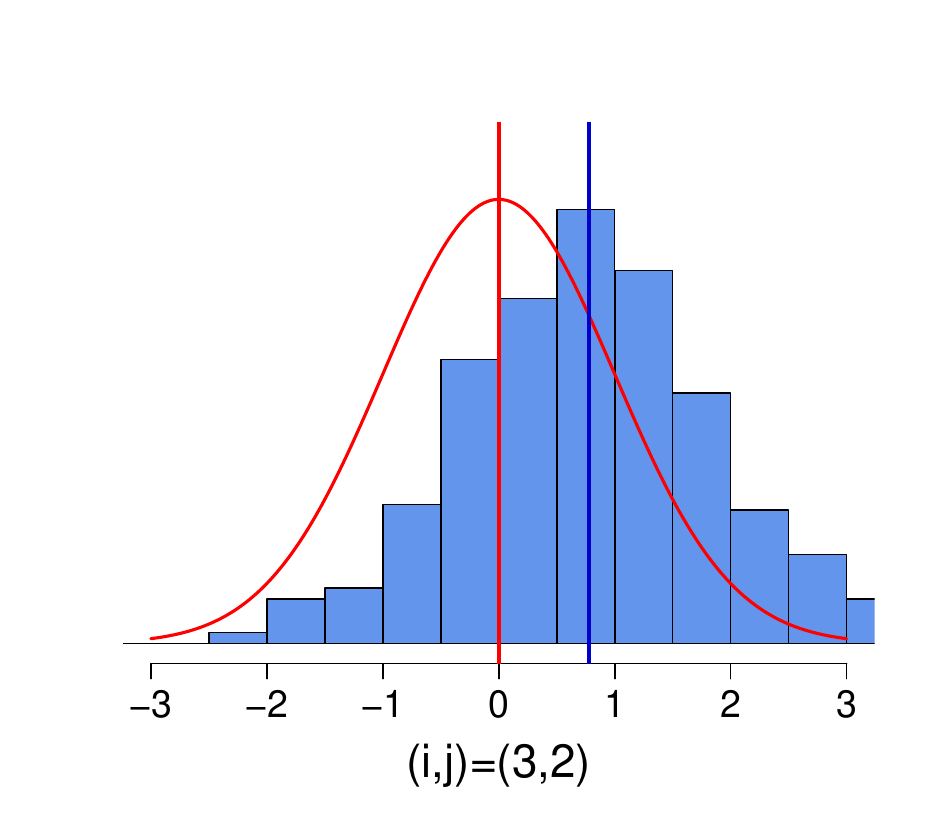}
    \end{minipage}
    \begin{minipage}{0.24\linewidth}
        \centering
        \includegraphics[width=\textwidth]{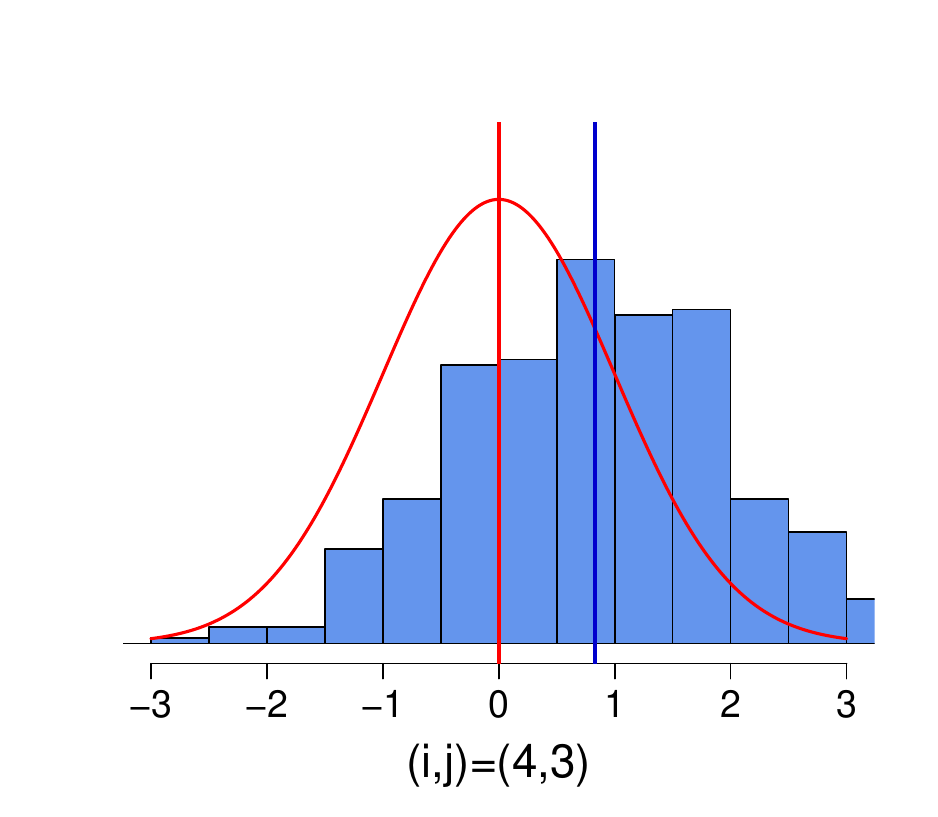}
    \end{minipage}    
 \end{minipage}
  \hspace{1cm}
 \begin{minipage}{0.3\linewidth}
     \begin{minipage}{0.24\linewidth}
        \centering
        \includegraphics[width=\textwidth]{images/L1_Gaussian_Band_n200_p400_12.pdf}
    \end{minipage}
    \begin{minipage}{0.24\linewidth}
        \centering
        \includegraphics[width=\textwidth]{images/L1_Gaussian_Band_n200_p400_23.pdf}
    \end{minipage}
    \begin{minipage}{0.24\linewidth}
        \centering
        \includegraphics[width=\textwidth]{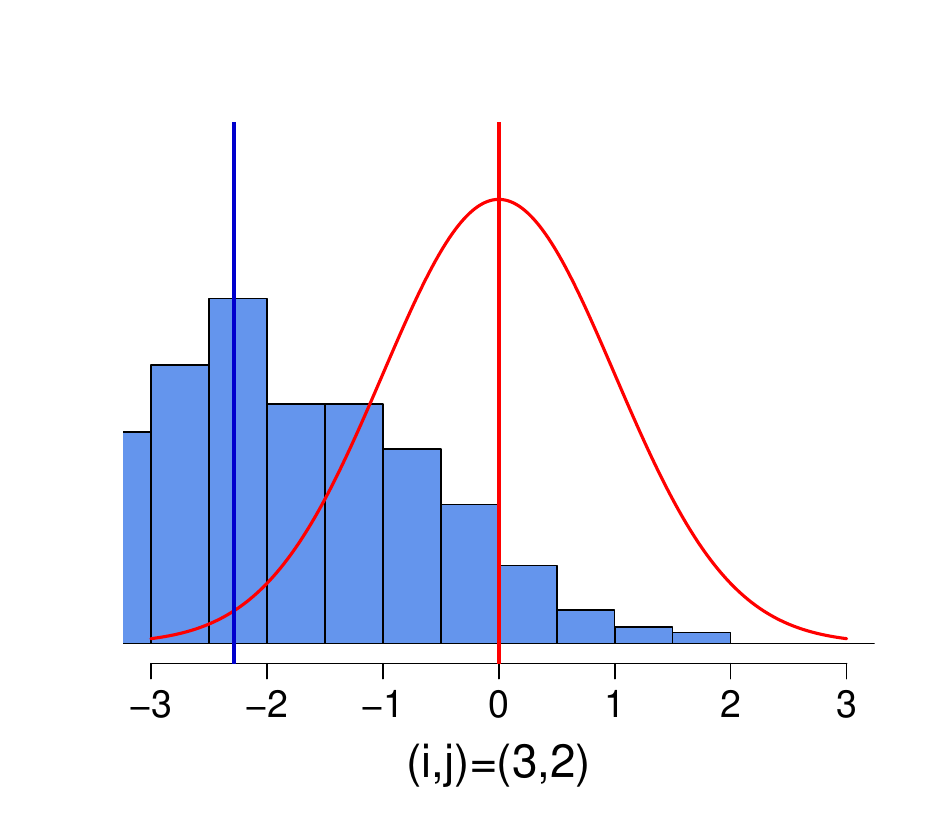}
    \end{minipage}
    \begin{minipage}{0.24\linewidth}
        \centering
        \includegraphics[width=\textwidth]{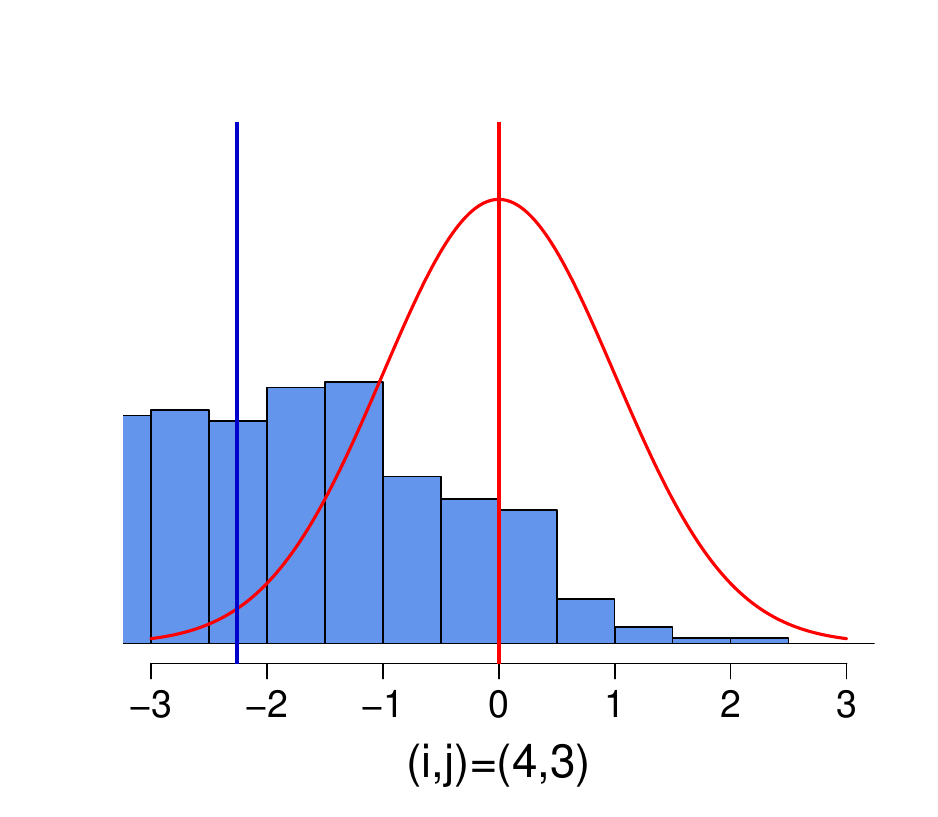}
    \end{minipage}
 \end{minipage}

  \caption*{$n=400, p=400$}
    \vspace{-0.43cm}
 \begin{minipage}{0.3\linewidth}
    \begin{minipage}{0.24\linewidth}
        \centering
        \includegraphics[width=\textwidth]{images/unbias_L0_Gaussian_Band_n400_p400_12.pdf}
    \end{minipage}
    \begin{minipage}{0.24\linewidth}
        \centering
        \includegraphics[width=\textwidth]{images/unbias_L0_Gaussian_Band_n400_p400_23.pdf}
    \end{minipage}
    \begin{minipage}{0.24\linewidth}
        \centering
        \includegraphics[width=\textwidth]{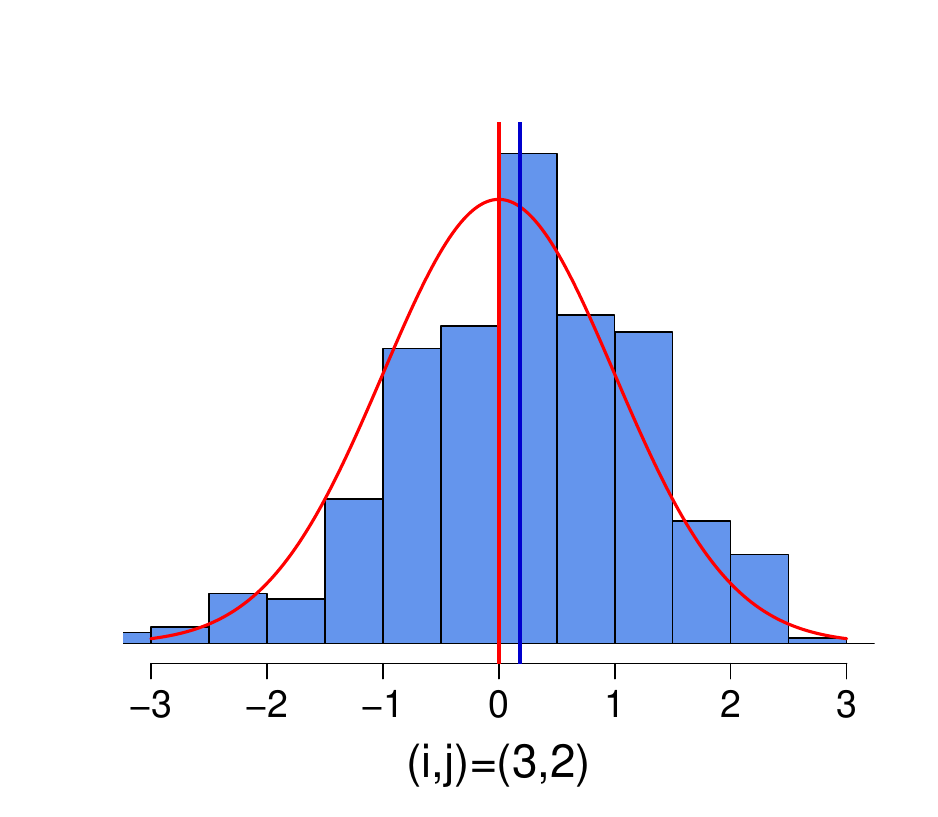}
    \end{minipage}
    \begin{minipage}{0.24\linewidth}
        \centering
        \includegraphics[width=\textwidth]{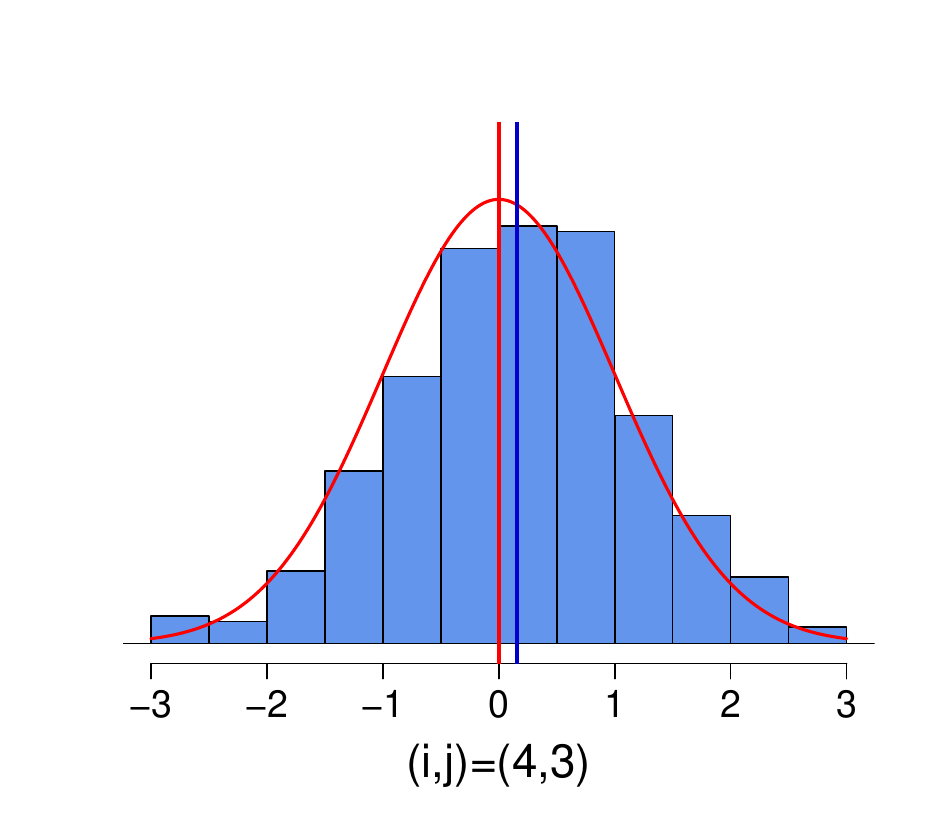}
    \end{minipage}
 \end{minipage}  
     \hspace{1cm}
 \begin{minipage}{0.3\linewidth}
    \begin{minipage}{0.24\linewidth}
        \centering
        \includegraphics[width=\textwidth]{images/debias_L0_Gaussian_Band_n400_p400_12.pdf}
    \end{minipage}
    \begin{minipage}{0.24\linewidth}
        \centering
        \includegraphics[width=\textwidth]{images/debias_L0_Gaussian_Band_n400_p400_23.pdf}
    \end{minipage}
    \begin{minipage}{0.24\linewidth}
        \centering
        \includegraphics[width=\textwidth]{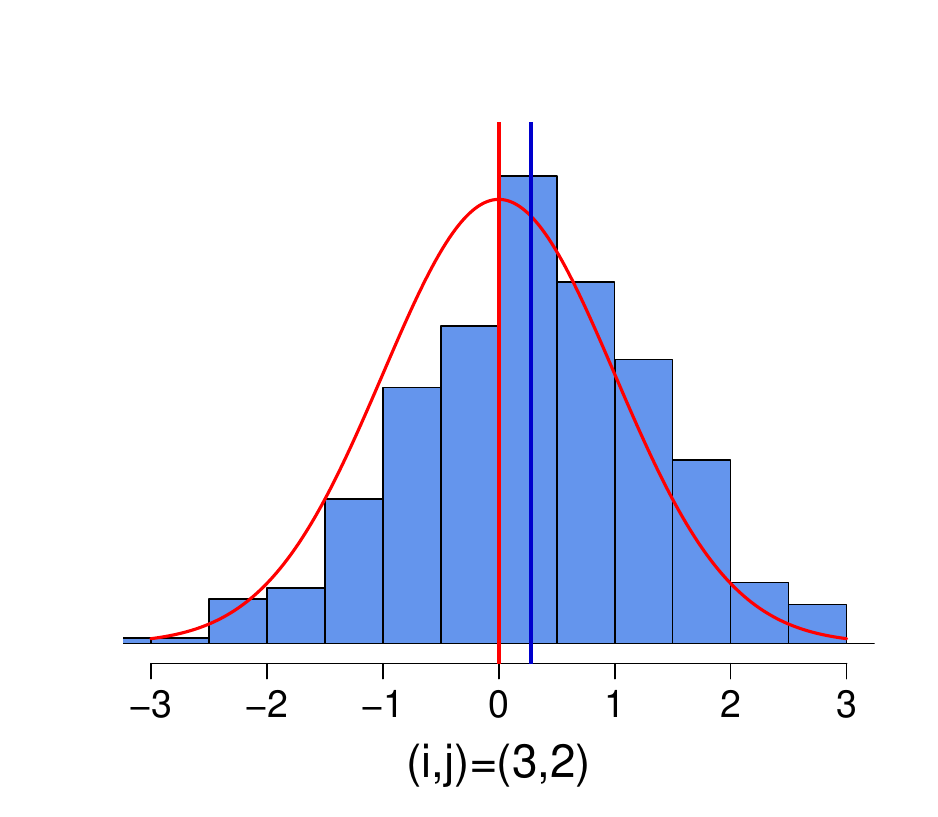}
    \end{minipage}
    \begin{minipage}{0.24\linewidth}
        \centering
        \includegraphics[width=\textwidth]{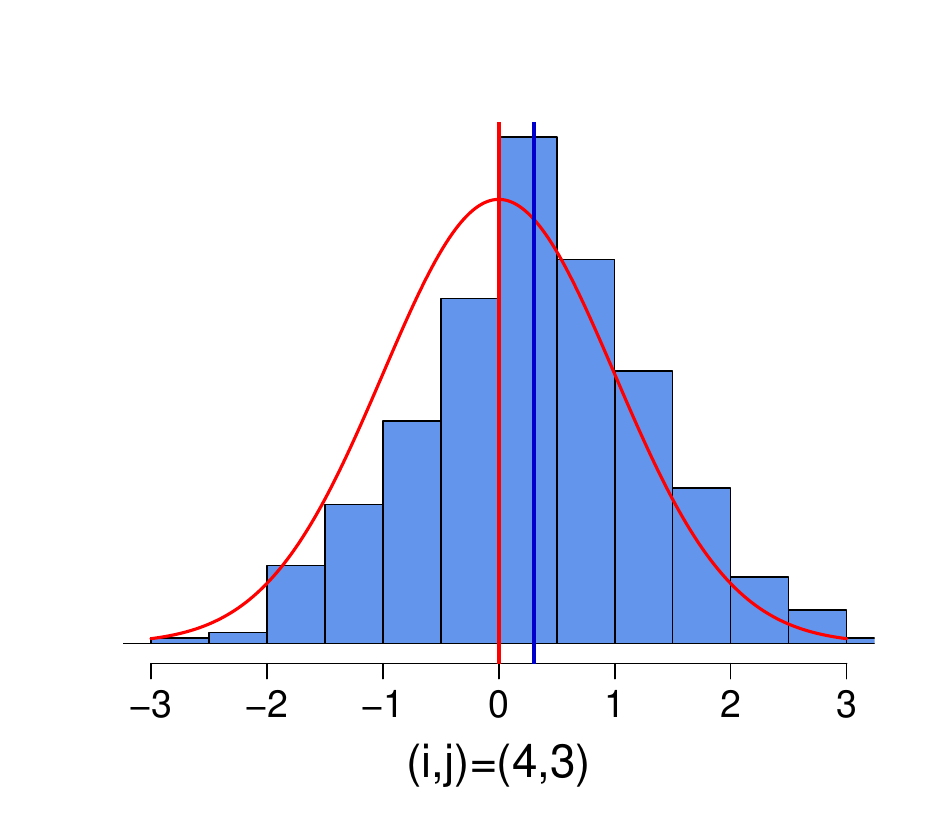}
    \end{minipage}
  \end{minipage}  
    \hspace{1cm}
 \begin{minipage}{0.3\linewidth}
    \begin{minipage}{0.24\linewidth}
        \centering
        \includegraphics[width=\textwidth]{images/L1_Gaussian_Band_n400_p400_12.pdf}
    \end{minipage}
    \begin{minipage}{0.24\linewidth}
        \centering
        \includegraphics[width=\textwidth]{images/L1_Gaussian_Band_n400_p400_23.pdf}
    \end{minipage}
    \begin{minipage}{0.24\linewidth}
        \centering
        \includegraphics[width=\textwidth]{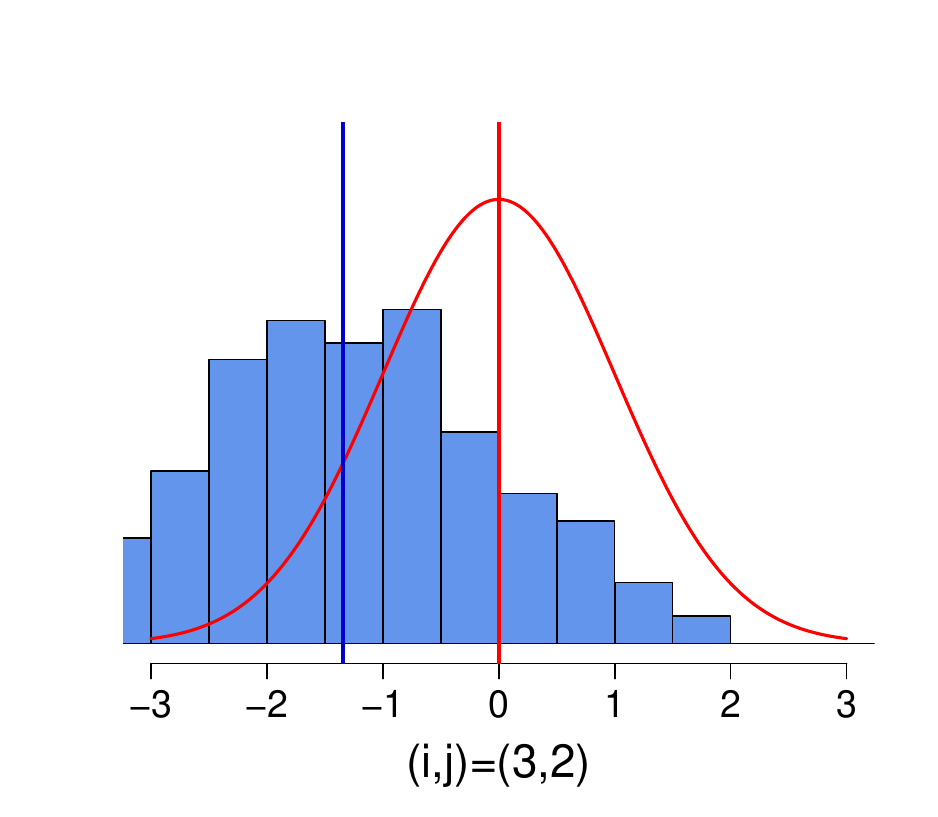}
    \end{minipage}
    \begin{minipage}{0.24\linewidth}
        \centering
        \includegraphics[width=\textwidth]{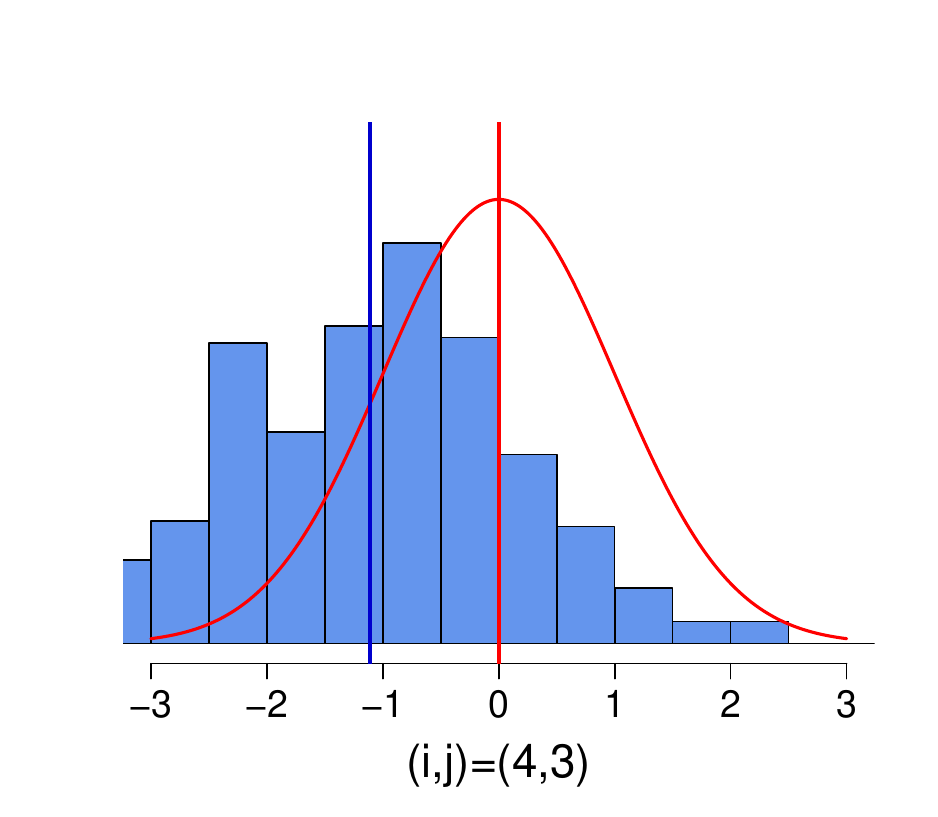}
    \end{minipage}
 \end{minipage}

  \caption*{$n=800, p=400$}
    \vspace{-0.43cm}
 \begin{minipage}{0.3\linewidth}
    \begin{minipage}{0.24\linewidth}
        \centering
        \includegraphics[width=\textwidth]{images/unbias_L0_Gaussian_Band_n800_p400_12.pdf}
    \end{minipage}
    \begin{minipage}{0.24\linewidth}
        \centering
        \includegraphics[width=\textwidth]{images/unbias_L0_Gaussian_Band_n800_p400_23.pdf}
    \end{minipage}
    \begin{minipage}{0.24\linewidth}
        \centering
        \includegraphics[width=\textwidth]{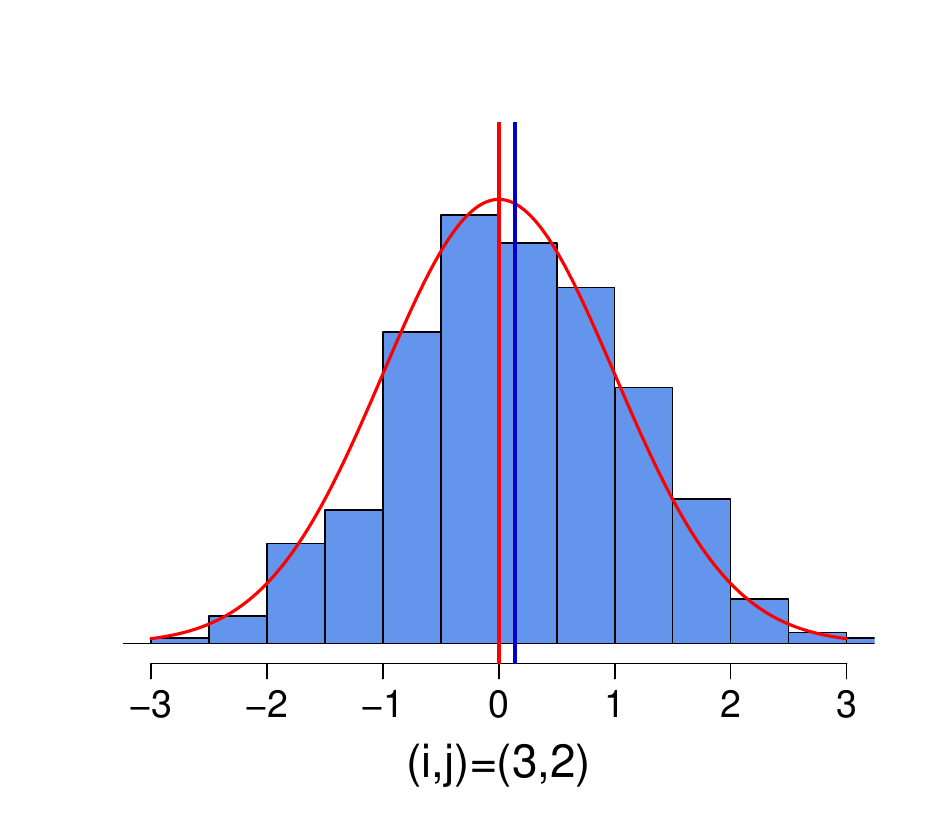}
    \end{minipage}
    \begin{minipage}{0.24\linewidth}
        \centering
        \includegraphics[width=\textwidth]{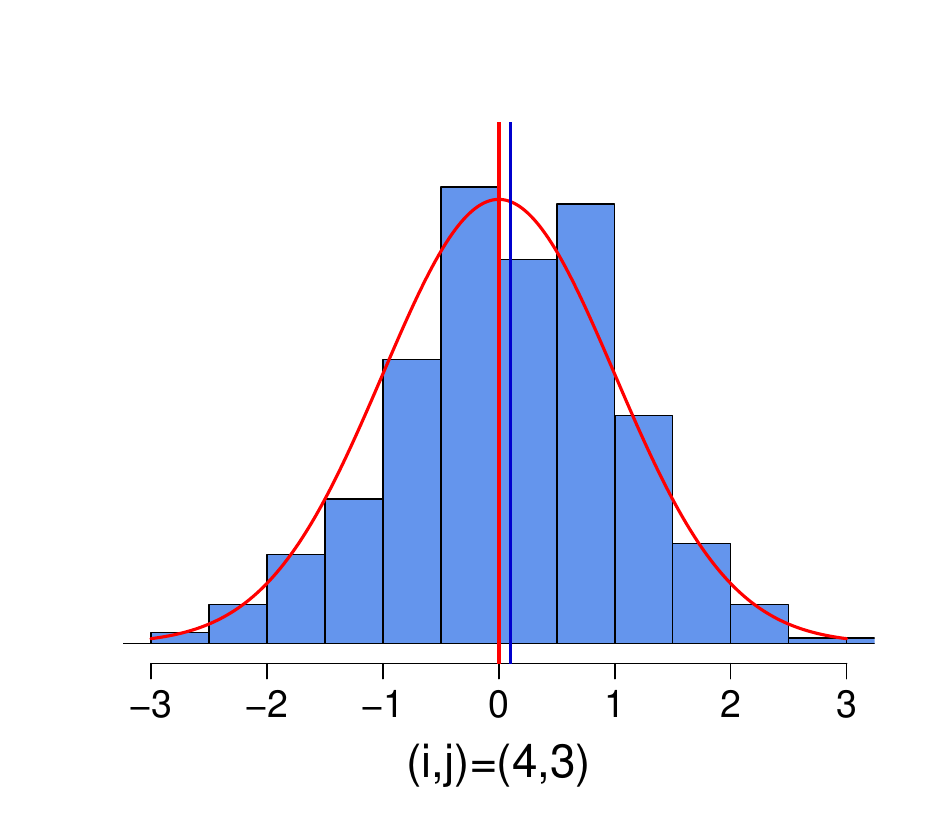}
    \end{minipage}
   \caption*{(a)~~$L_0{:}~ \widehat{\mb{\Omega}}^{\text{US}}$}
 \end{minipage} 
     \hspace{1cm}
 \begin{minipage}{0.3\linewidth}
    \begin{minipage}{0.24\linewidth}
        \centering
        \includegraphics[width=\textwidth]{images/debias_L0_Gaussian_Band_n800_p400_12.pdf}
    \end{minipage}
    \begin{minipage}{0.24\linewidth}
        \centering
        \includegraphics[width=\textwidth]{images/debias_L0_Gaussian_Band_n800_p400_23.pdf}
    \end{minipage}
    \begin{minipage}{0.24\linewidth}
        \centering
        \includegraphics[width=\textwidth]{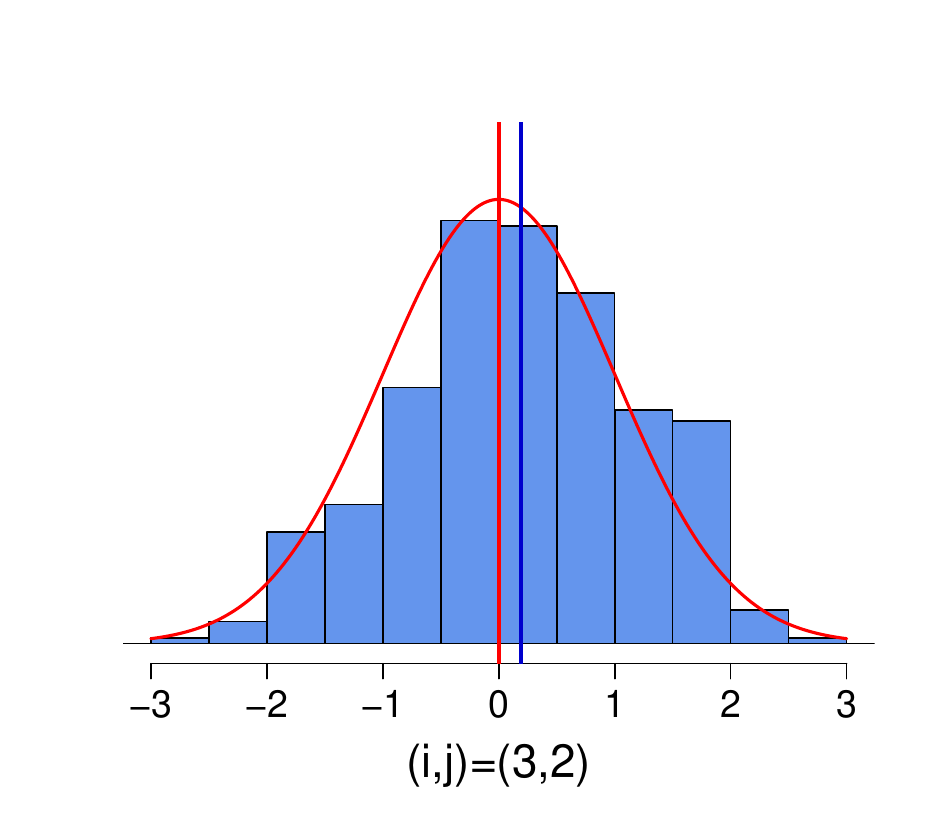}
    \end{minipage}
    \begin{minipage}{0.24\linewidth}
        \centering
        \includegraphics[width=\textwidth]{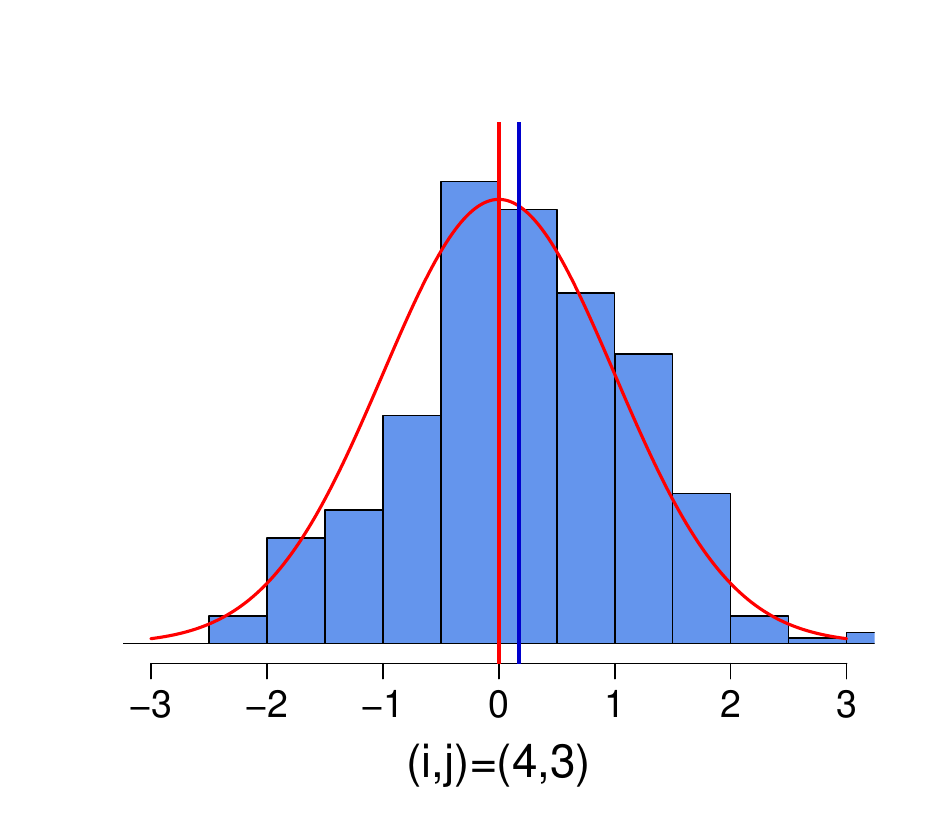}
    \end{minipage}
        \caption*{(b)~~$L_0{:}~ \widehat{\mb{T}}$}
 \end{minipage}   
      \hspace{1cm}
 \begin{minipage}{0.3\linewidth}
    \begin{minipage}{0.24\linewidth}
        \centering
        \includegraphics[width=\textwidth]{images/L1_Gaussian_Band_n800_p400_12.pdf}
    \end{minipage}
    \begin{minipage}{0.24\linewidth}
        \centering
        \includegraphics[width=\textwidth]{images/L1_Gaussian_Band_n800_p400_23.pdf}
    \end{minipage}
    \begin{minipage}{0.24\linewidth}
        \centering
        \includegraphics[width=\textwidth]{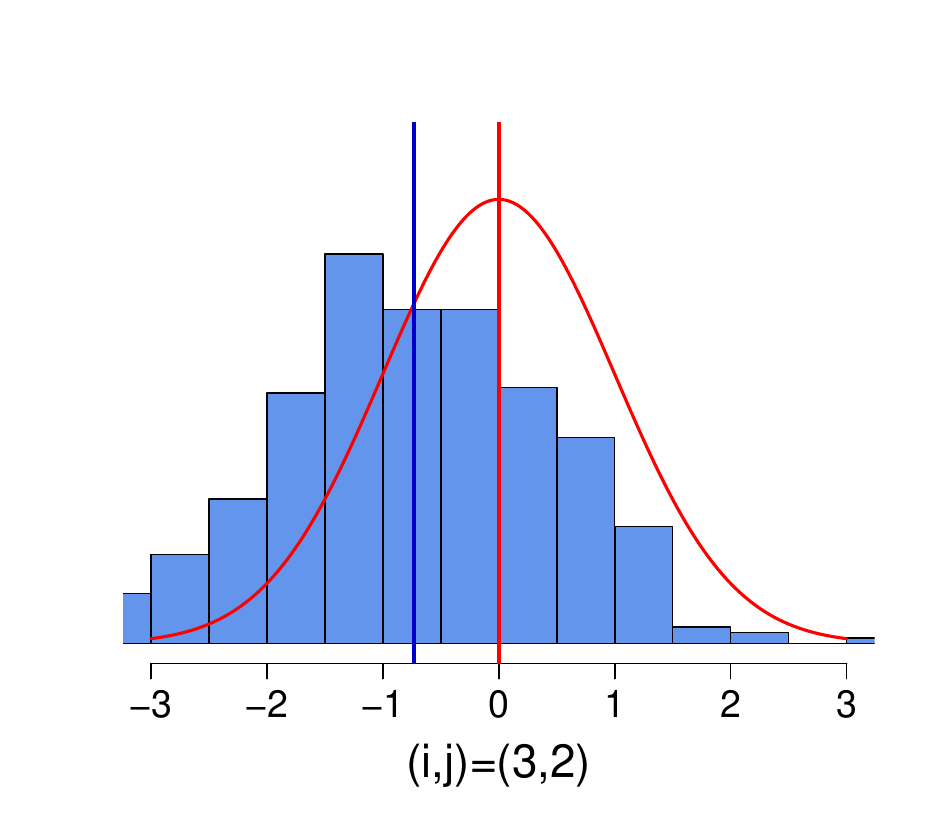}
    \end{minipage}
    \begin{minipage}{0.24\linewidth}
        \centering
        \includegraphics[width=\textwidth]{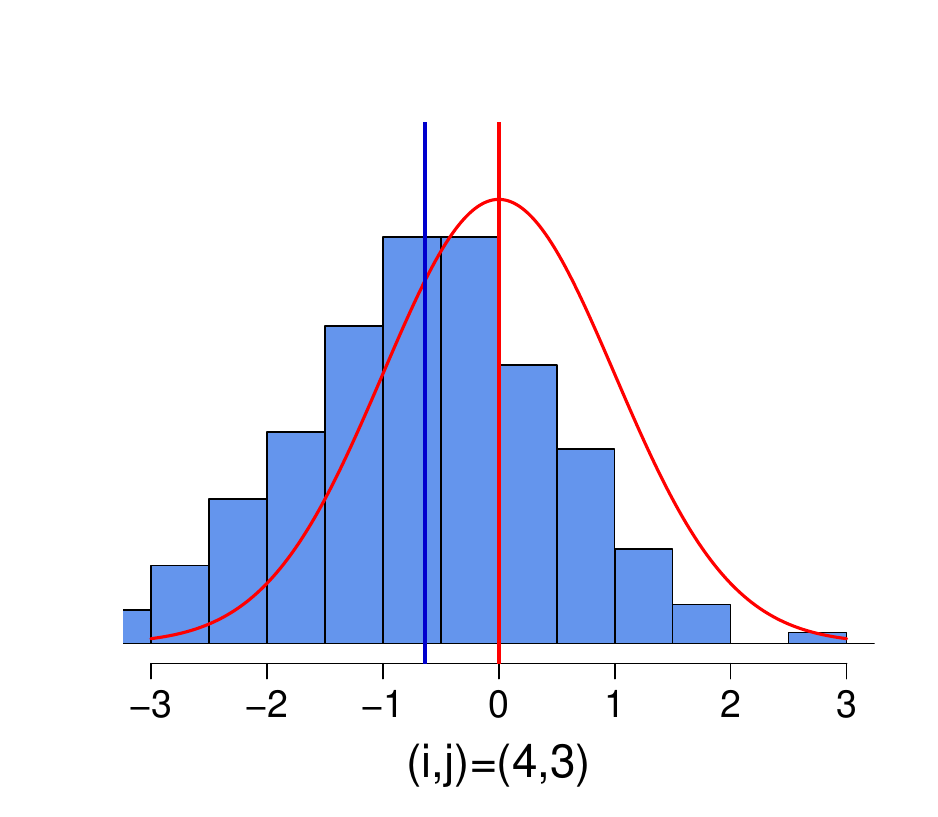}
    \end{minipage}
    \caption*{(c)~~$L_1{:}~ \widehat{\mb{T}}$}
     \end{minipage}   
     \caption{Histograms of $\big(\sqrt{n}(\widehat{\mb{\Omega}}_{ij}^{(m)}-\mb{\Omega}_{ij})/\widehat{\sigma}_{\mb{\Omega}_{ij}}^{(m)}\big)_{m=1}^{400}$ under Gaussian band graph settings.}
\label{fig: normalplot Gaussian band}
\end{sidewaysfigure}

 \begin{sidewaysfigure}[th!]
  \caption*{$n=200, p=200$}
      \vspace{-0.43cm}
 \begin{minipage}{0.3\linewidth}
    \begin{minipage}{0.24\linewidth}
        \centering
        \includegraphics[width=\textwidth]{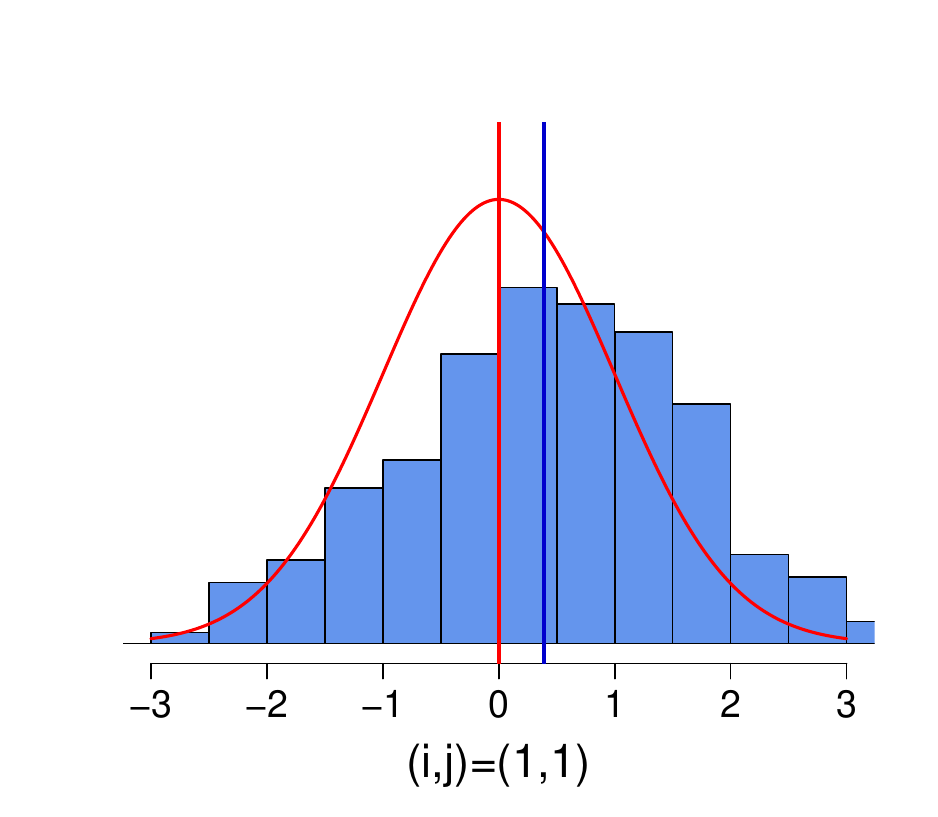}
    \end{minipage}
    \begin{minipage}{0.24\linewidth}
        \centering
        \includegraphics[width=\textwidth]{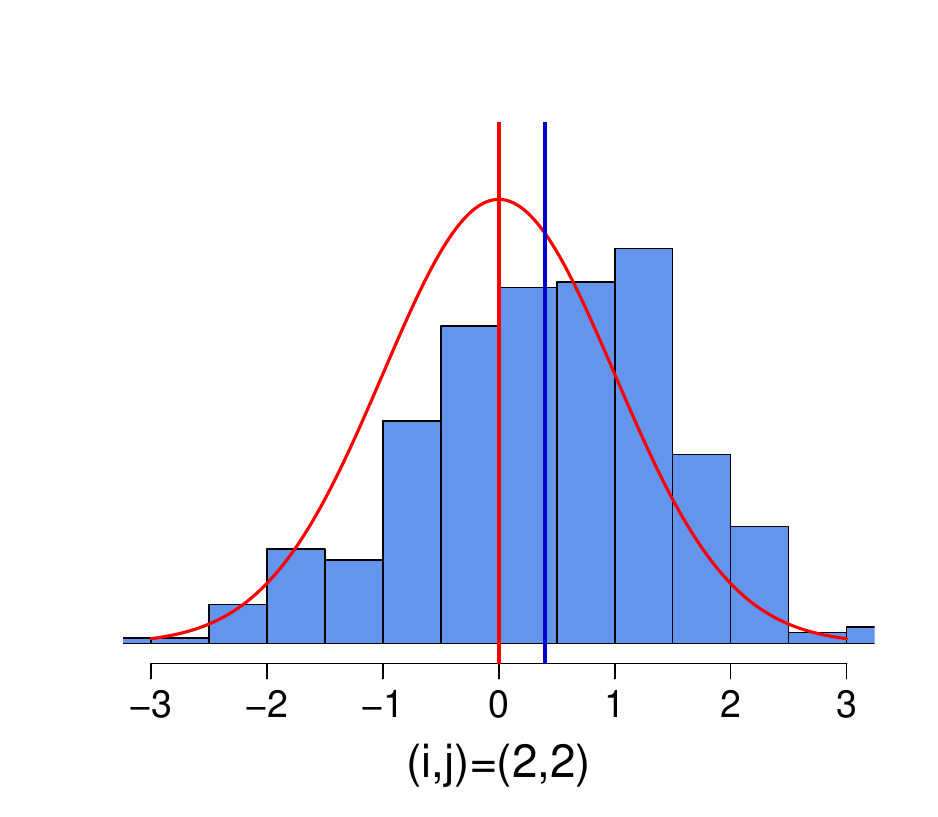}
    \end{minipage}
    \begin{minipage}{0.24\linewidth}
        \centering
        \includegraphics[width=\textwidth]{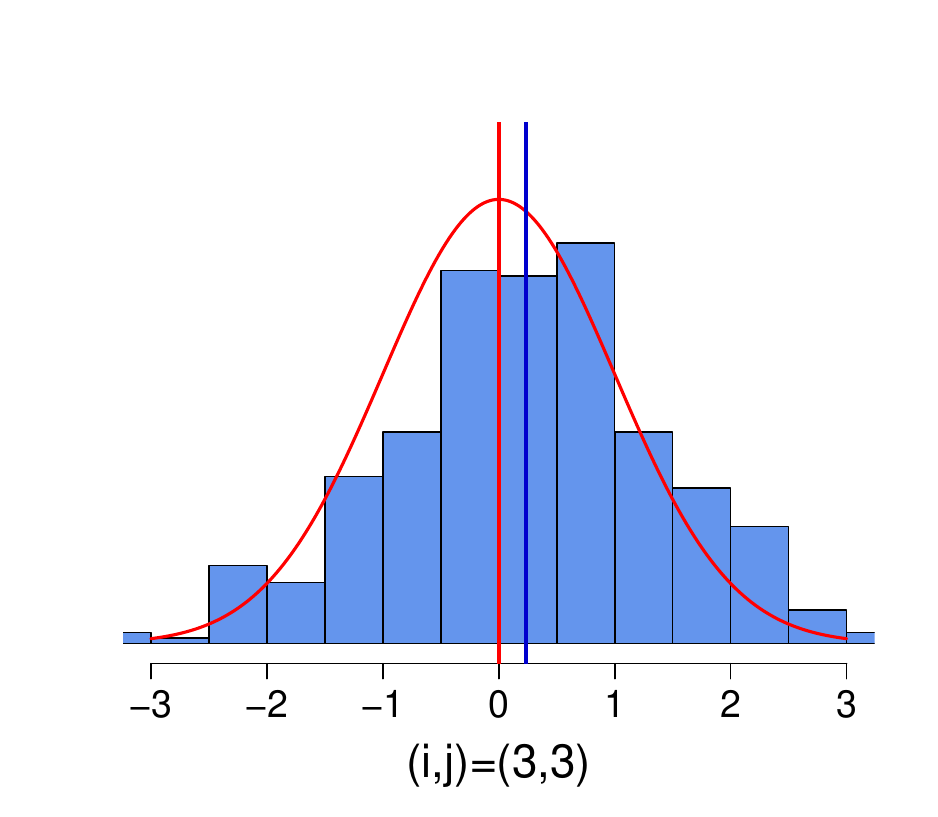}
    \end{minipage}
    \begin{minipage}{0.24\linewidth}
        \centering
        \includegraphics[width=\textwidth]{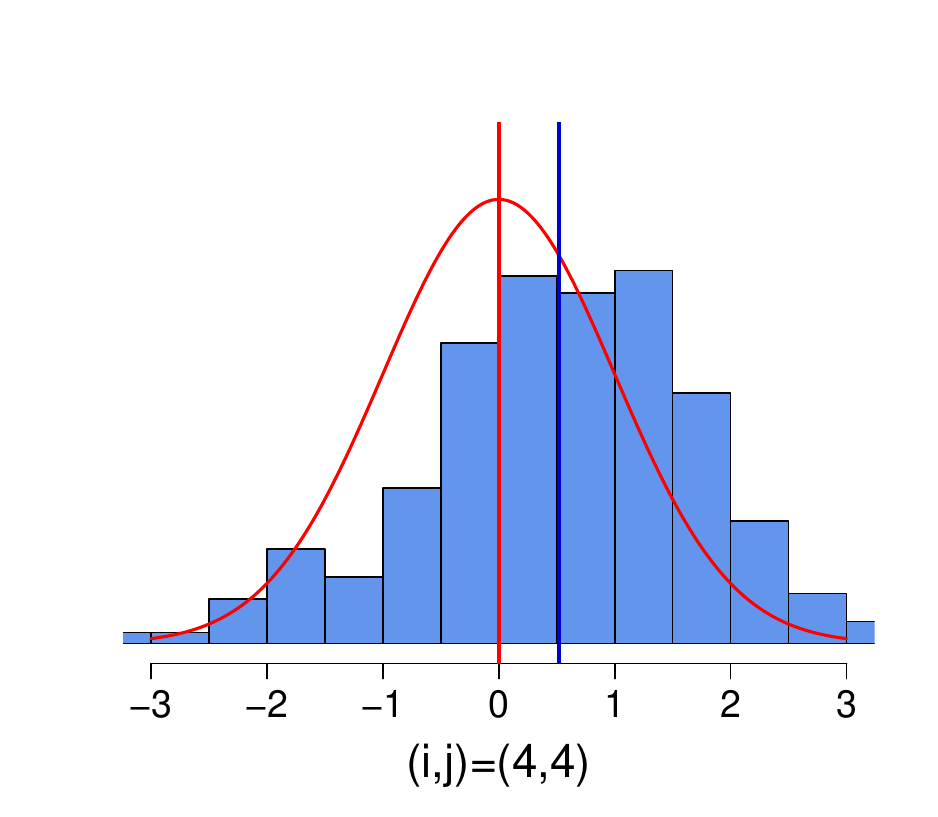}
    \end{minipage}
 \end{minipage}
 \hspace{1cm}
 \begin{minipage}{0.3\linewidth}
    \begin{minipage}{0.24\linewidth}
        \centering
        \includegraphics[width=\textwidth]{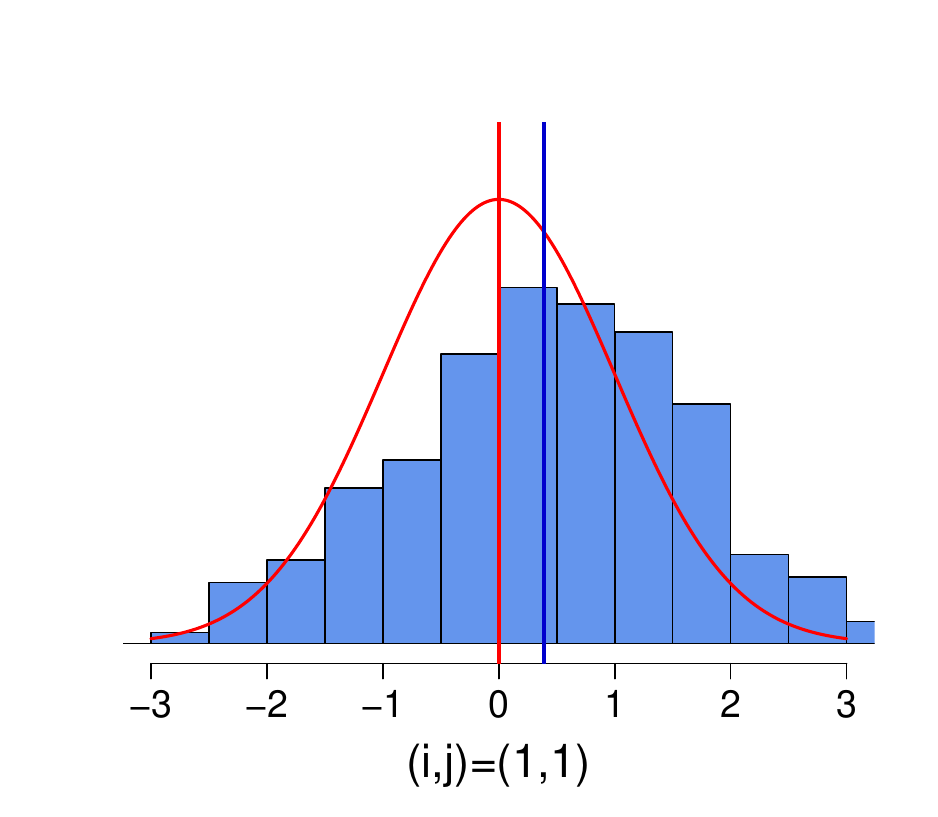}
    \end{minipage}
    \begin{minipage}{0.24\linewidth}
        \centering
        \includegraphics[width=\textwidth]{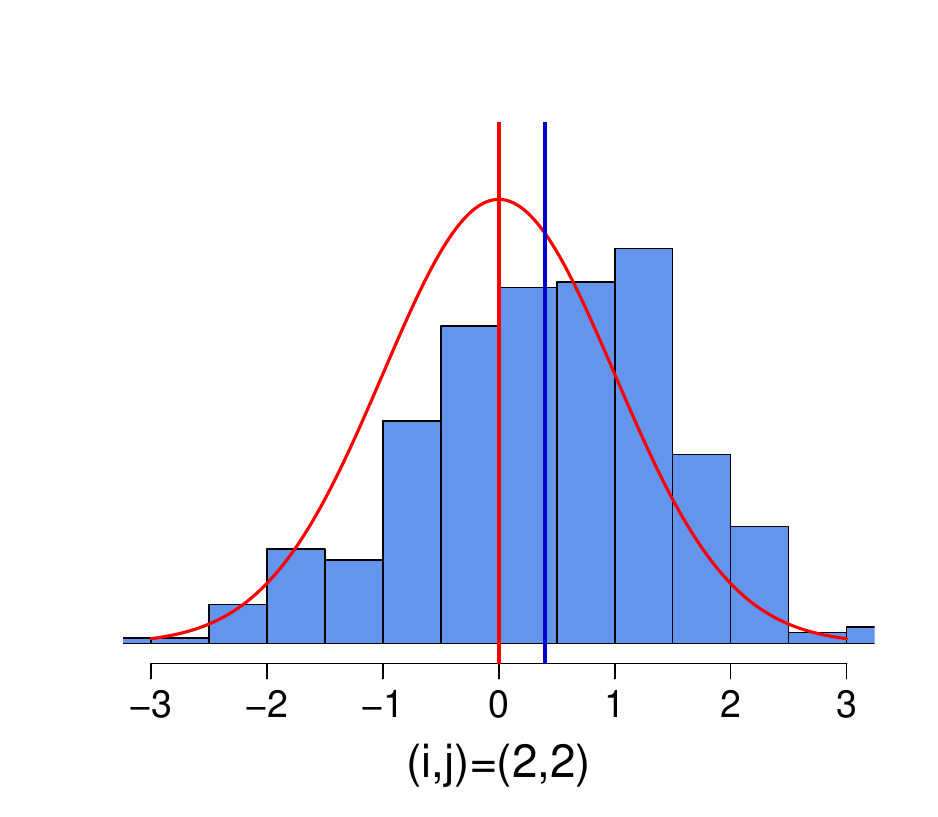}
    \end{minipage}
    \begin{minipage}{0.24\linewidth}
        \centering
        \includegraphics[width=\textwidth]{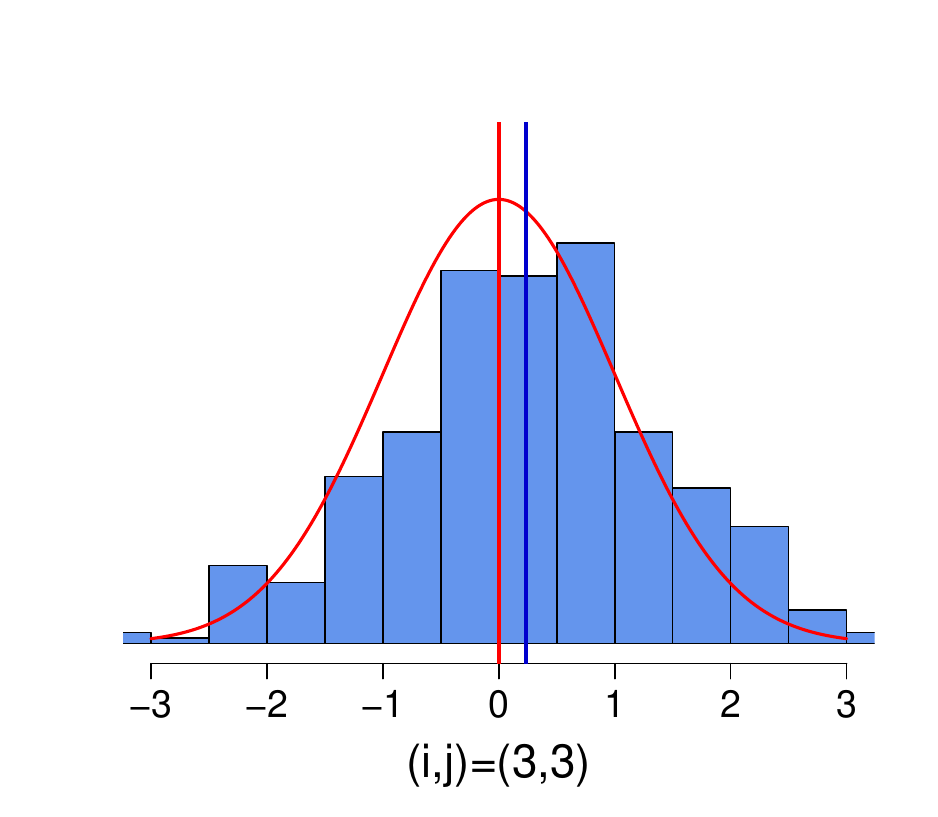}
    \end{minipage}
    \begin{minipage}{0.24\linewidth}
        \centering
        \includegraphics[width=\textwidth]{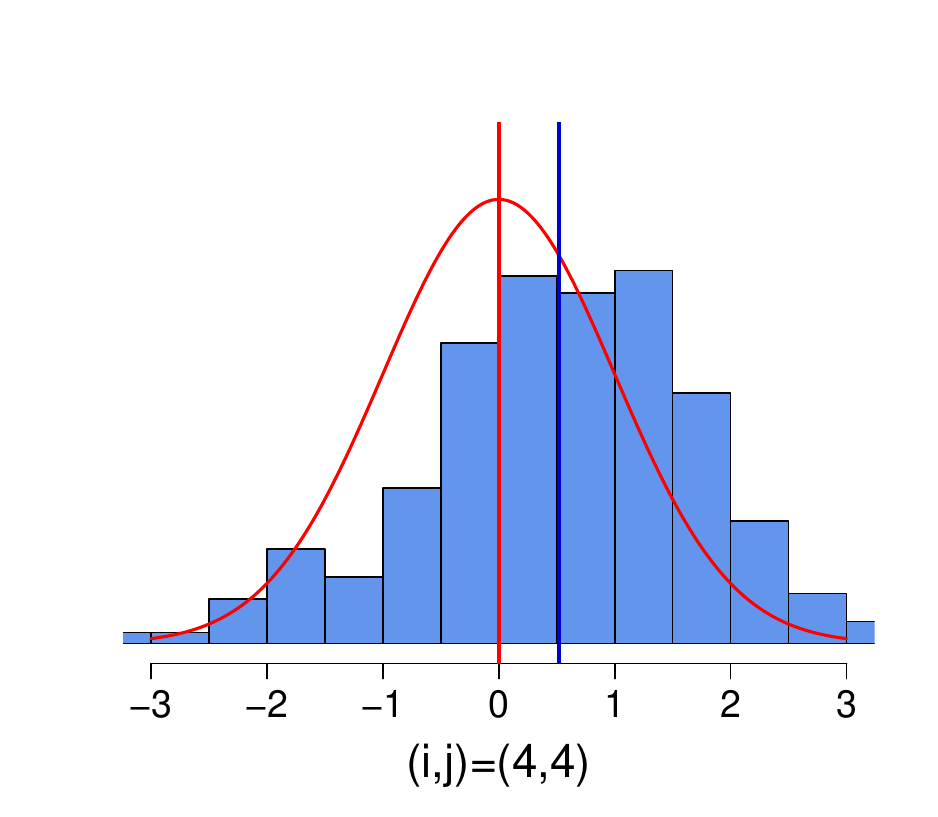}
    \end{minipage}    
 \end{minipage}
  \hspace{1cm}
 \begin{minipage}{0.3\linewidth}
     \begin{minipage}{0.24\linewidth}
        \centering
        \includegraphics[width=\textwidth]{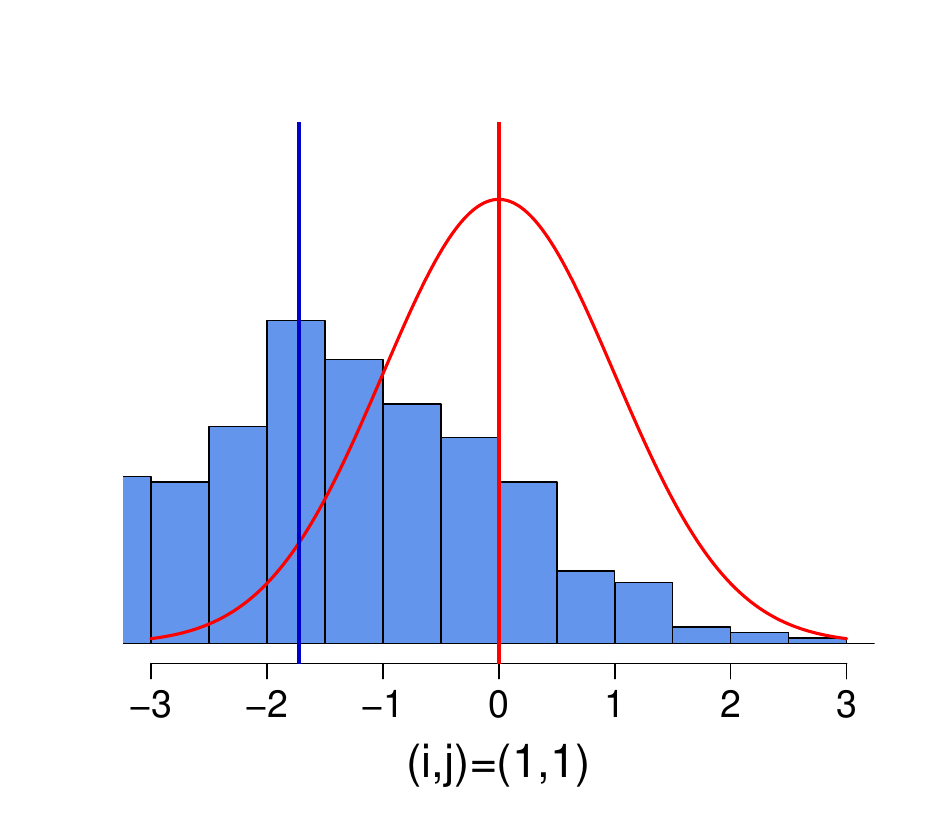}
    \end{minipage}
    \begin{minipage}{0.24\linewidth}
        \centering
        \includegraphics[width=\textwidth]{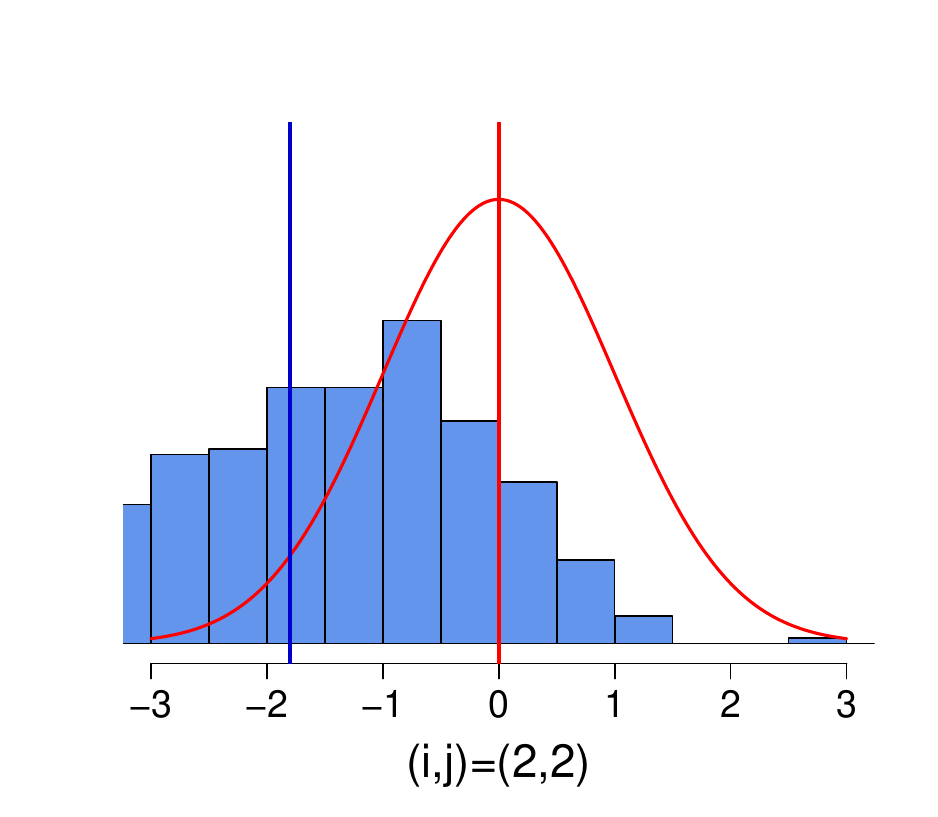}
    \end{minipage}
    \begin{minipage}{0.24\linewidth}
        \centering
        \includegraphics[width=\textwidth]{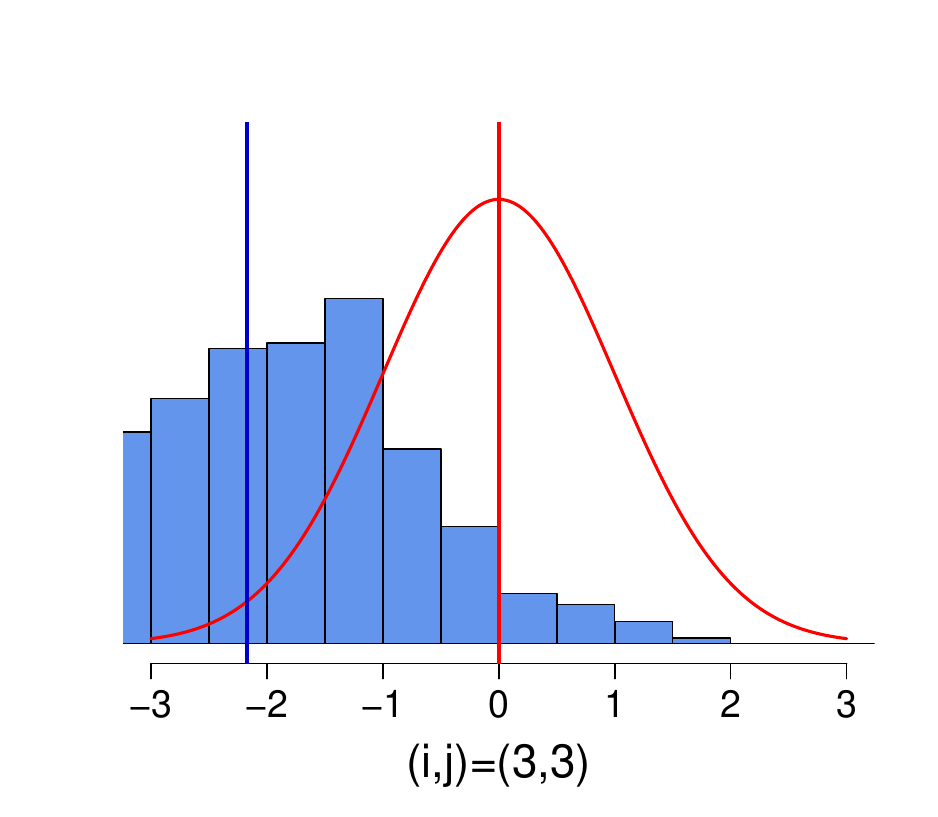}
    \end{minipage}
    \begin{minipage}{0.24\linewidth}
        \centering
        \includegraphics[width=\textwidth]{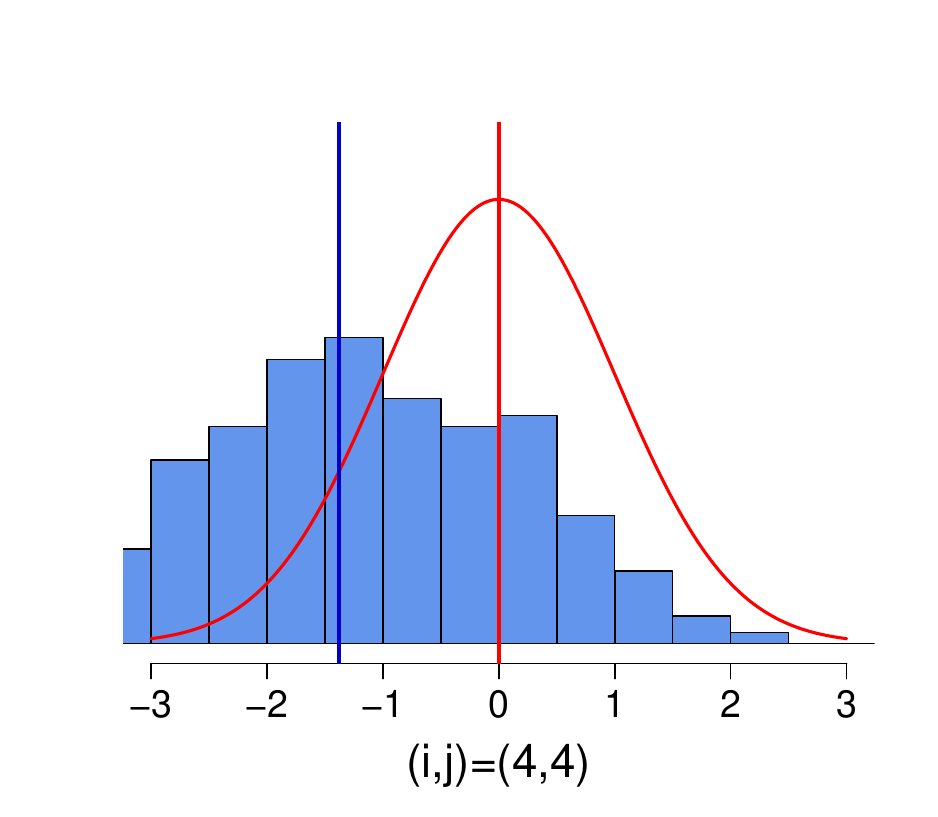}
    \end{minipage}
 \end{minipage}

  \caption*{$n=400, p=200$}
      \vspace{-0.43cm}
 \begin{minipage}{0.3\linewidth}
    \begin{minipage}{0.24\linewidth}
        \centering
        \includegraphics[width=\textwidth]{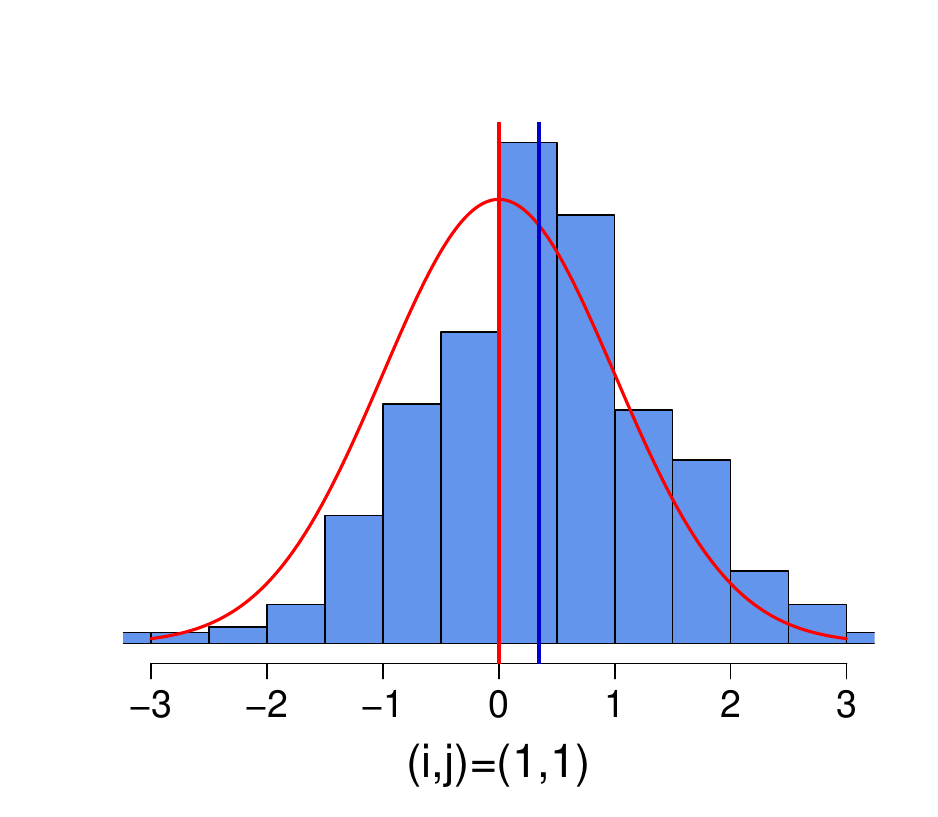}
    \end{minipage}
    \begin{minipage}{0.24\linewidth}
        \centering
        \includegraphics[width=\textwidth]{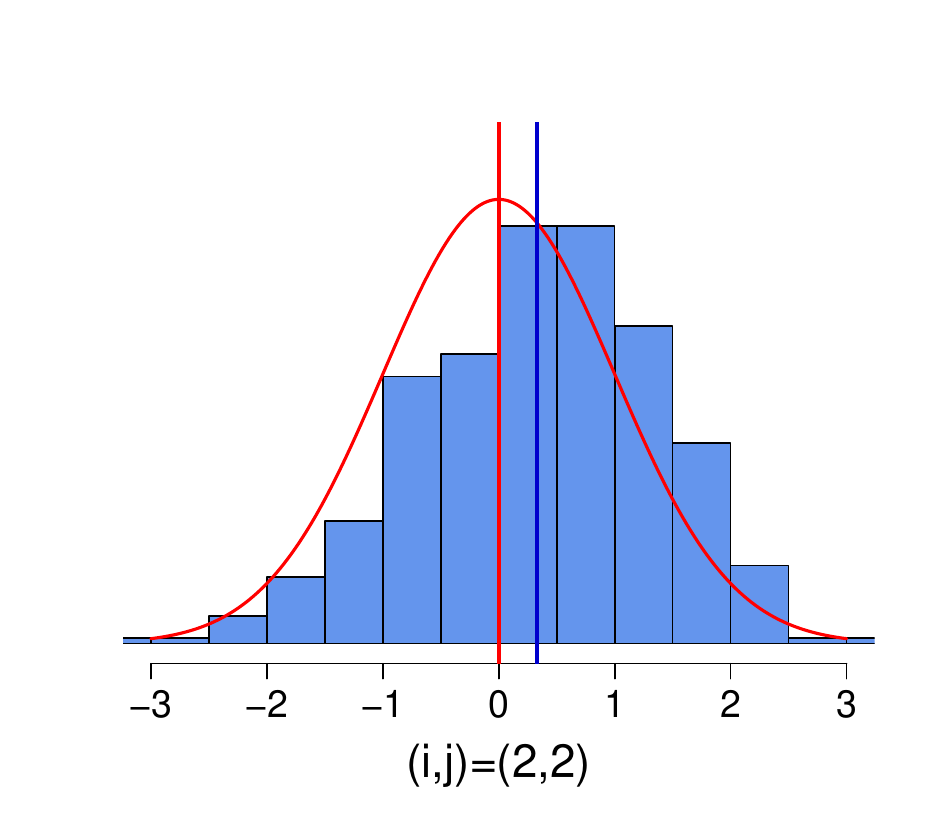}
    \end{minipage}
    \begin{minipage}{0.24\linewidth}
        \centering
        \includegraphics[width=\textwidth]{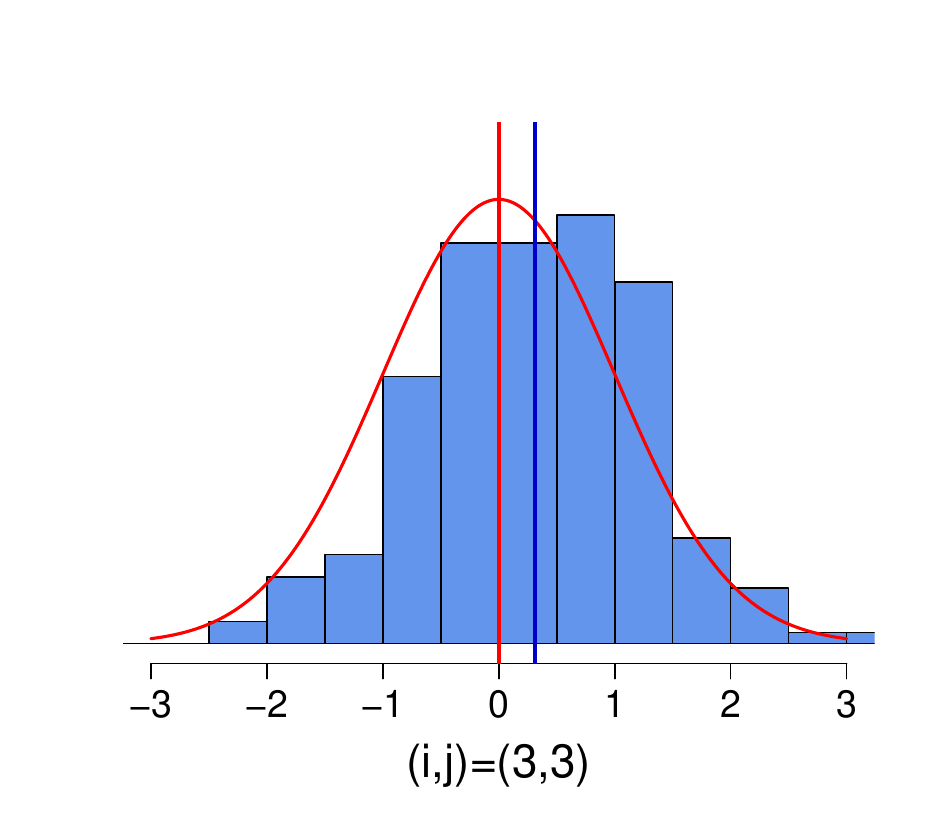}
    \end{minipage}
    \begin{minipage}{0.24\linewidth}
        \centering
        \includegraphics[width=\textwidth]{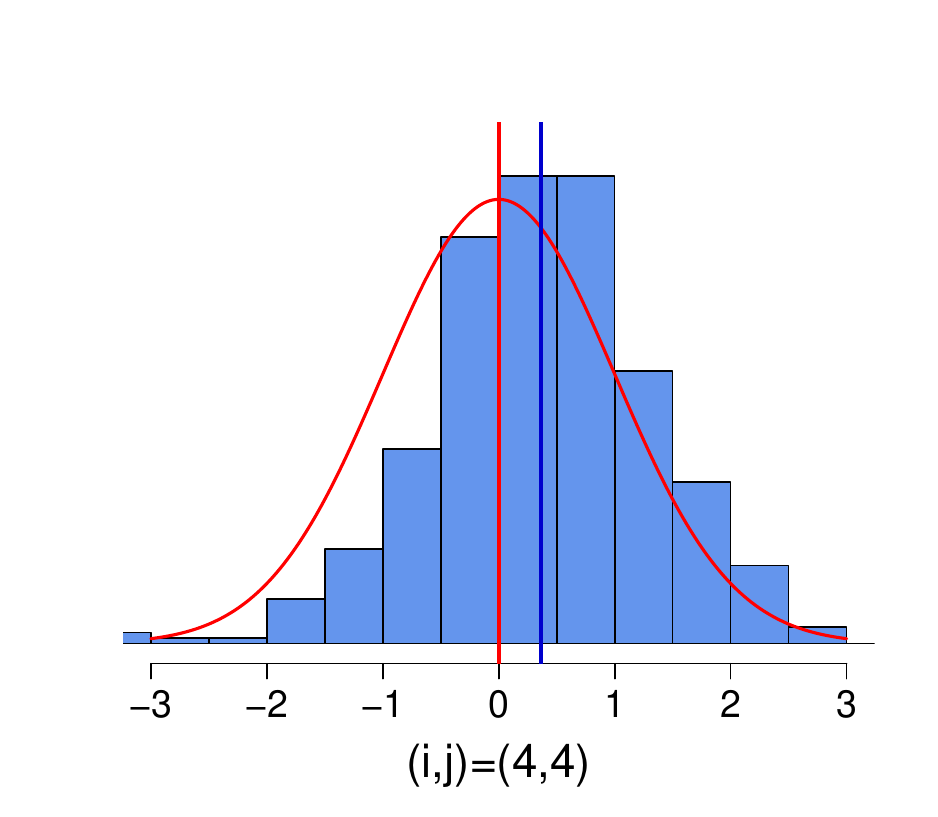}
    \end{minipage}
 \end{minipage}  
     \hspace{1cm}
 \begin{minipage}{0.3\linewidth}
    \begin{minipage}{0.24\linewidth}
        \centering
        \includegraphics[width=\textwidth]{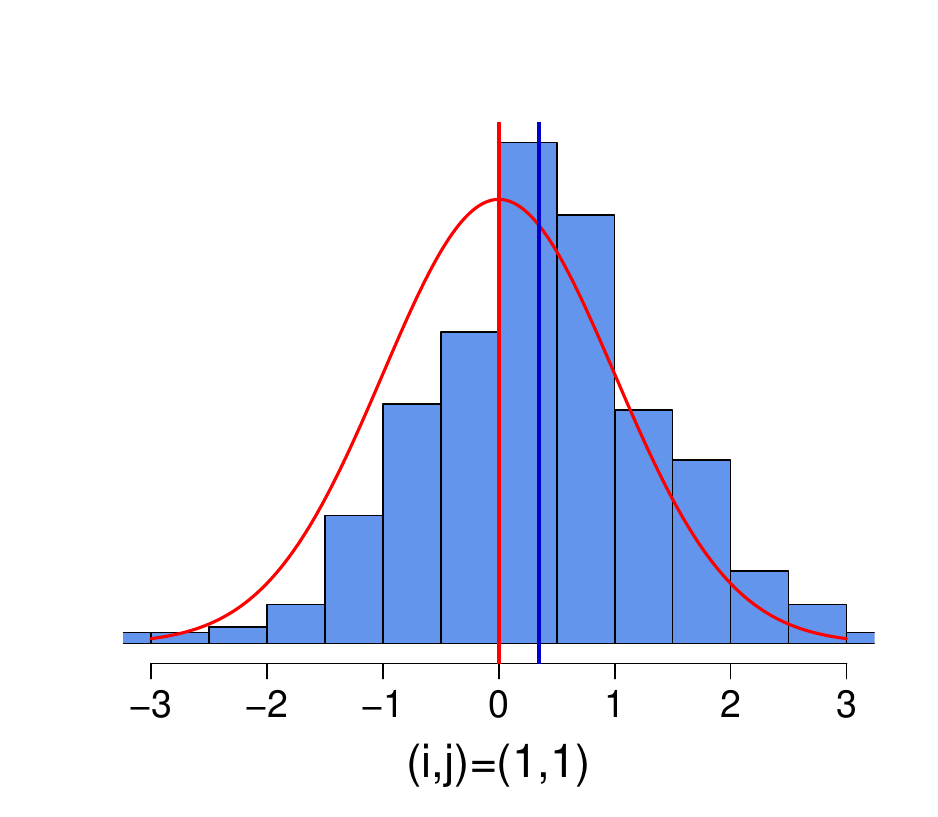}
    \end{minipage}
    \begin{minipage}{0.24\linewidth}
        \centering
        \includegraphics[width=\textwidth]{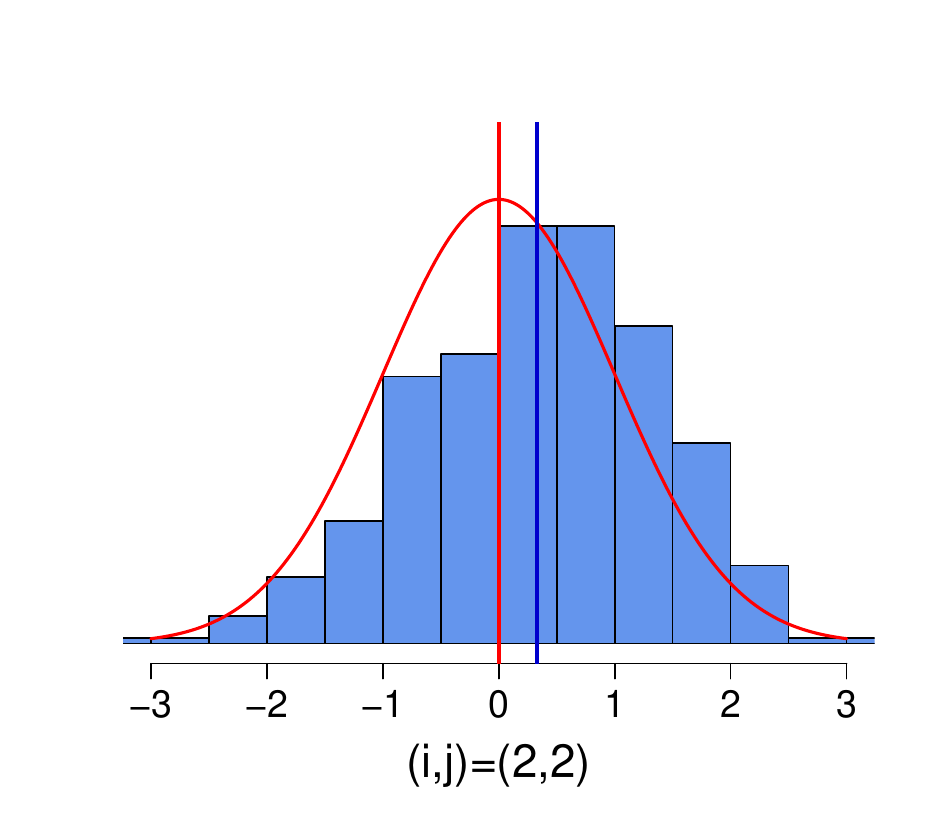}
    \end{minipage}
    \begin{minipage}{0.24\linewidth}
        \centering
        \includegraphics[width=\textwidth]{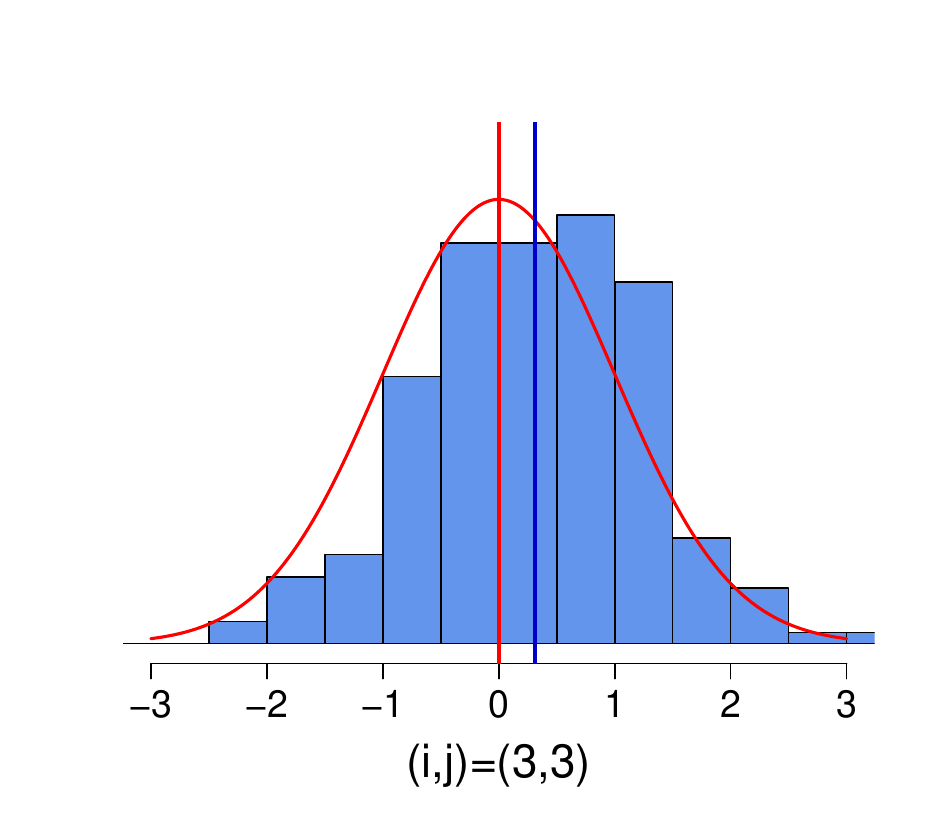}
    \end{minipage}
    \begin{minipage}{0.24\linewidth}
        \centering
        \includegraphics[width=\textwidth]{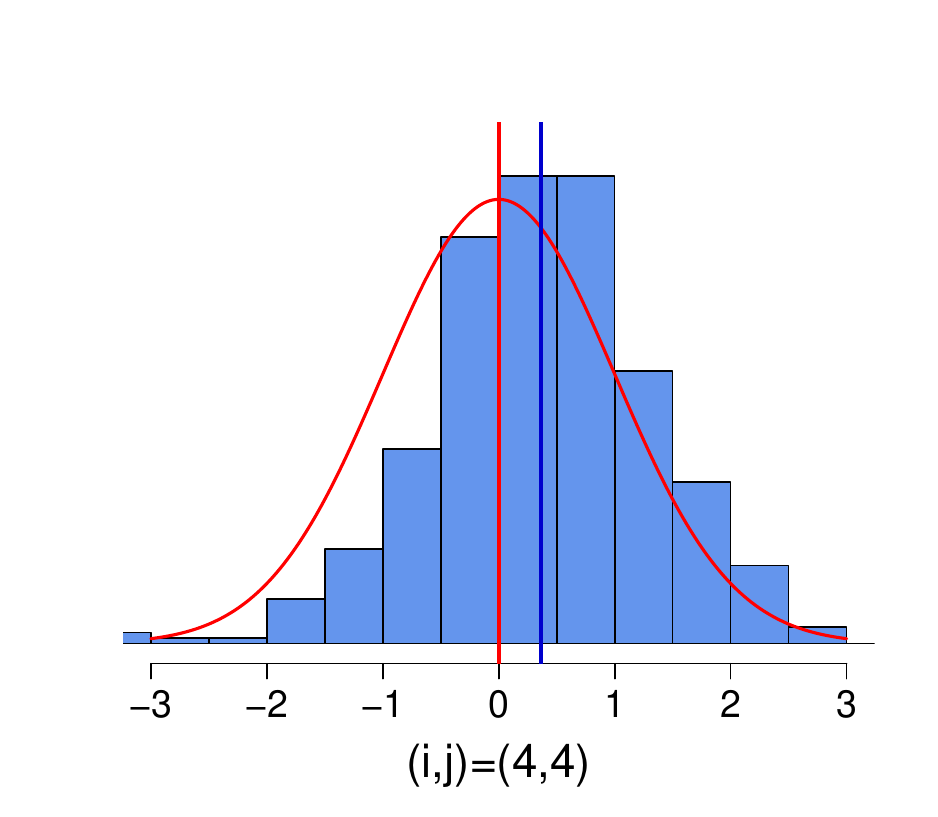}
    \end{minipage}
  \end{minipage}  
    \hspace{1cm}
 \begin{minipage}{0.3\linewidth}
    \begin{minipage}{0.24\linewidth}
        \centering
        \includegraphics[width=\textwidth]{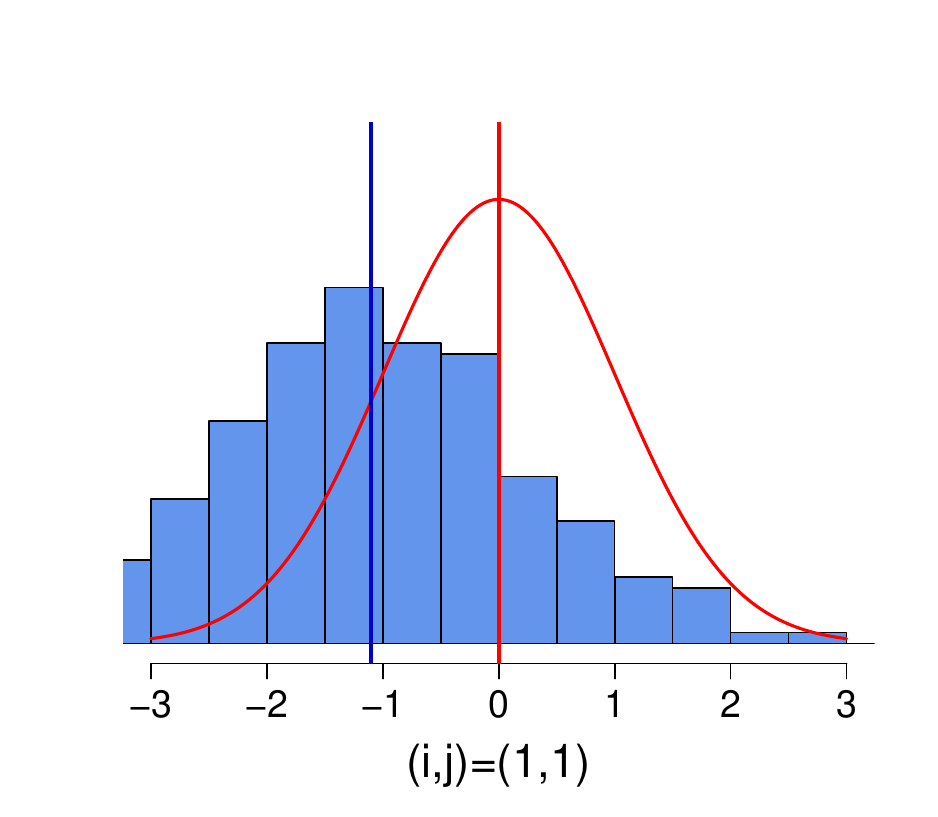}
    \end{minipage}
    \begin{minipage}{0.24\linewidth}
        \centering
        \includegraphics[width=\textwidth]{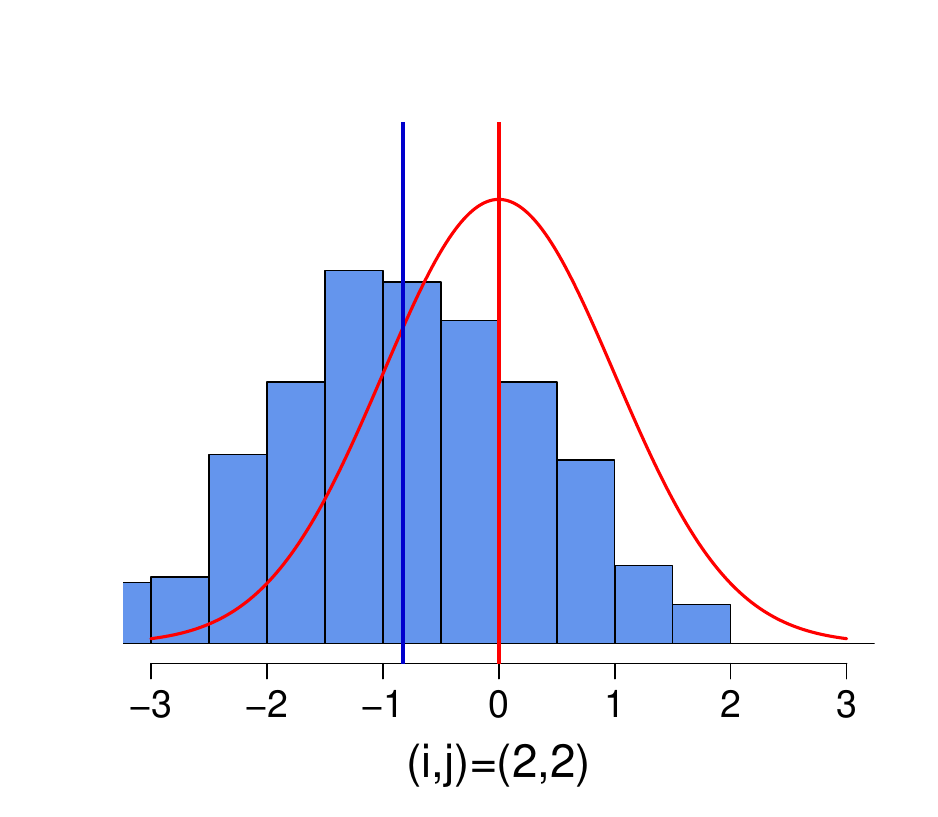}
    \end{minipage}
    \begin{minipage}{0.24\linewidth}
        \centering
        \includegraphics[width=\textwidth]{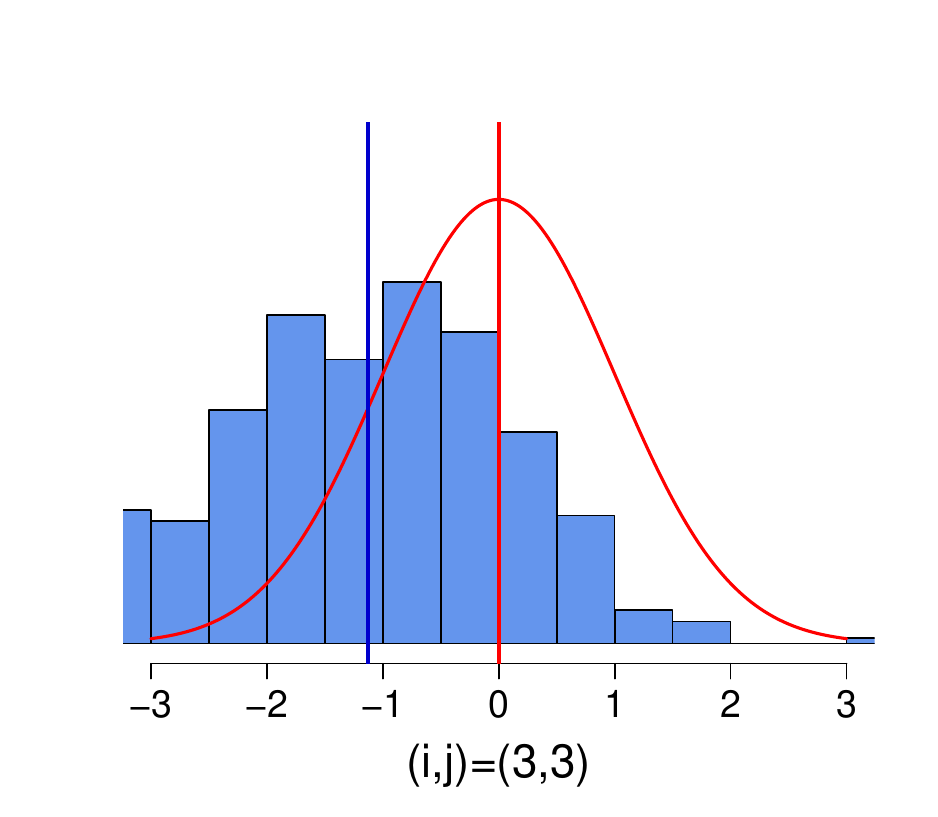}
    \end{minipage}
    \begin{minipage}{0.24\linewidth}
        \centering
        \includegraphics[width=\textwidth]{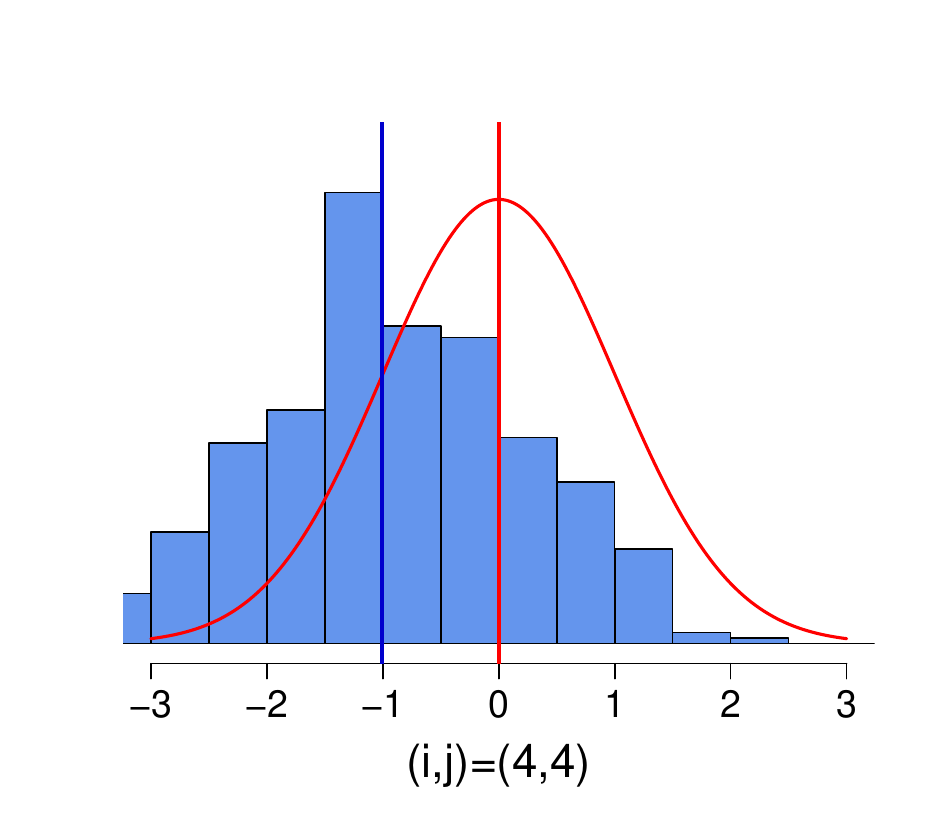}
    \end{minipage}
 \end{minipage}

  \caption*{$n=800, p=200$}
      \vspace{-0.43cm}
 \begin{minipage}{0.3\linewidth}
    \begin{minipage}{0.24\linewidth}
        \centering
        \includegraphics[width=\textwidth]{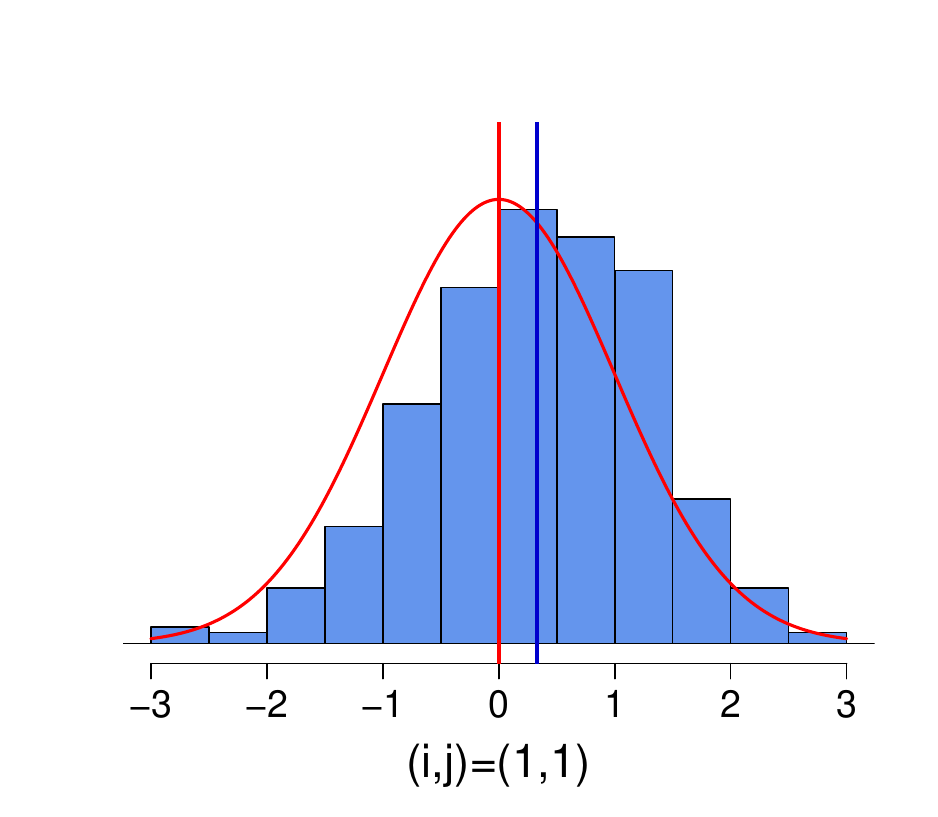}
    \end{minipage}
    \begin{minipage}{0.24\linewidth}
        \centering
        \includegraphics[width=\textwidth]{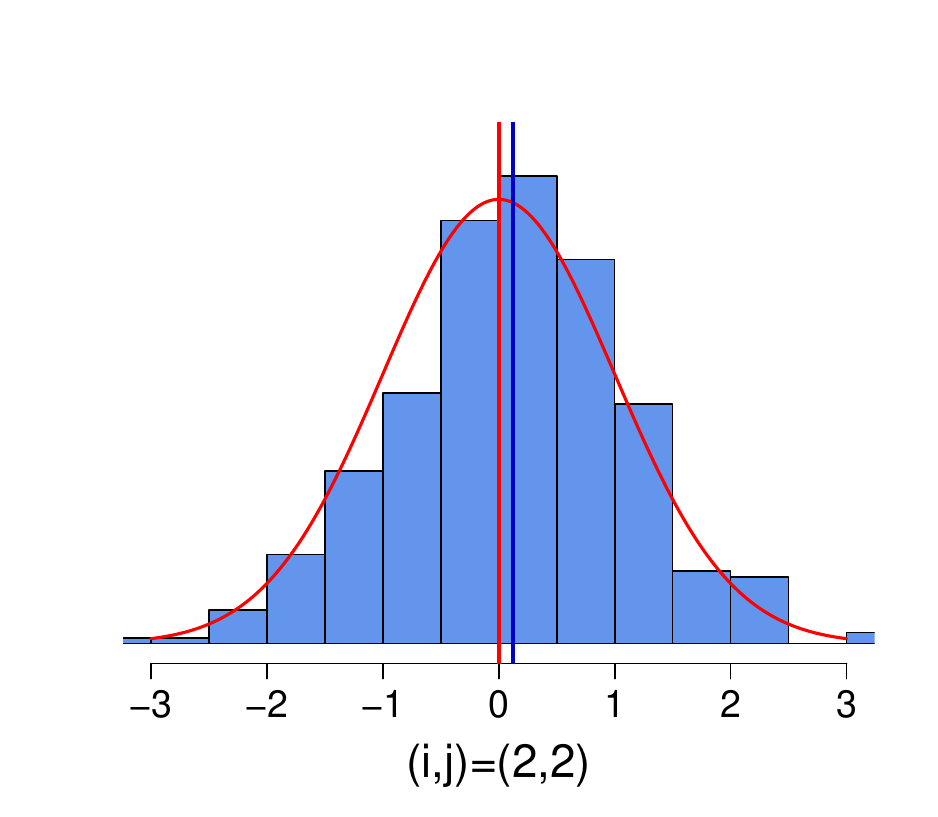}
    \end{minipage}
    \begin{minipage}{0.24\linewidth}
        \centering
        \includegraphics[width=\textwidth]{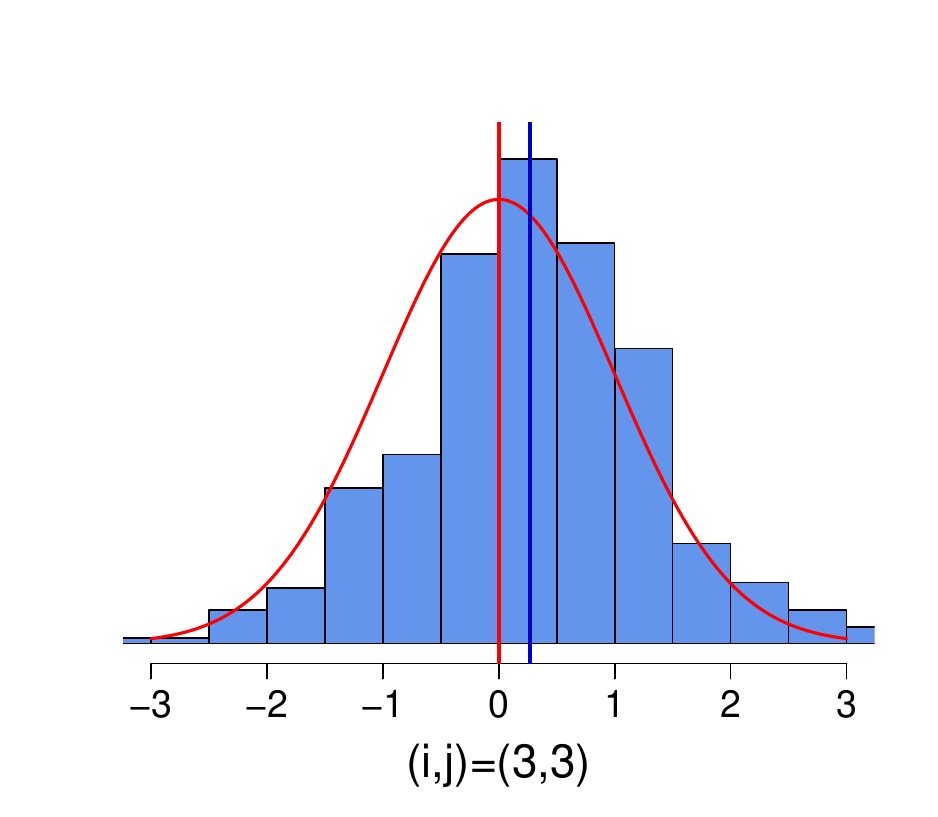}
    \end{minipage}
    \begin{minipage}{0.24\linewidth}
        \centering
        \includegraphics[width=\textwidth]{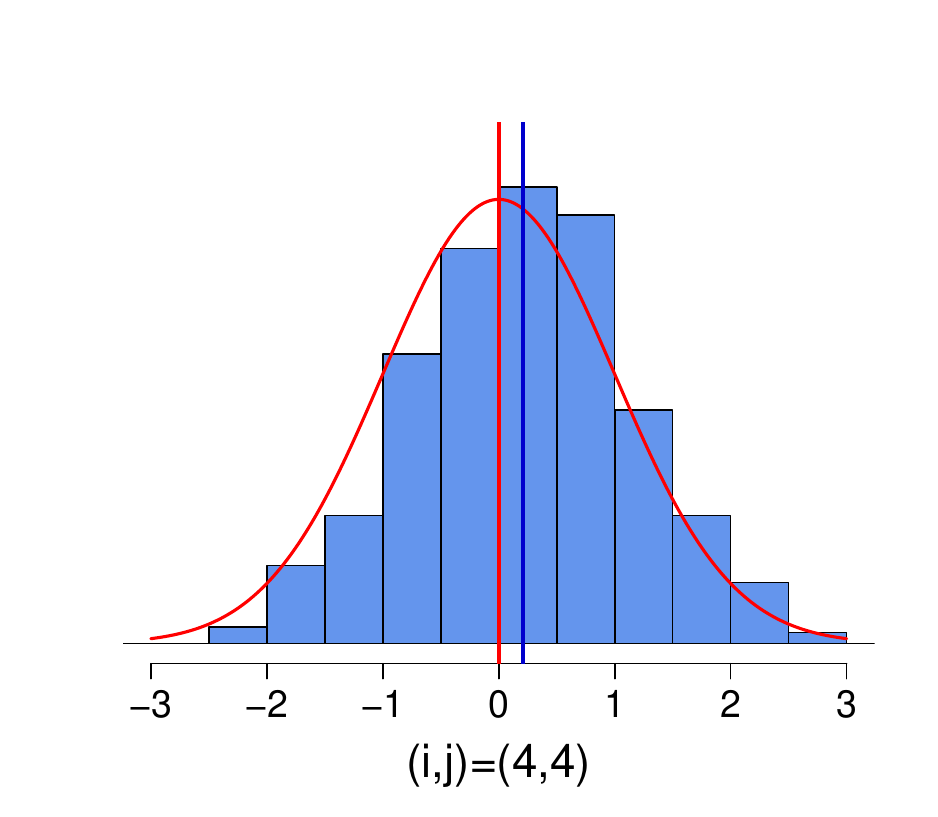}
    \end{minipage}
 \end{minipage} 
     \hspace{1cm}
 \begin{minipage}{0.3\linewidth}
    \begin{minipage}{0.24\linewidth}
        \centering
        \includegraphics[width=\textwidth]{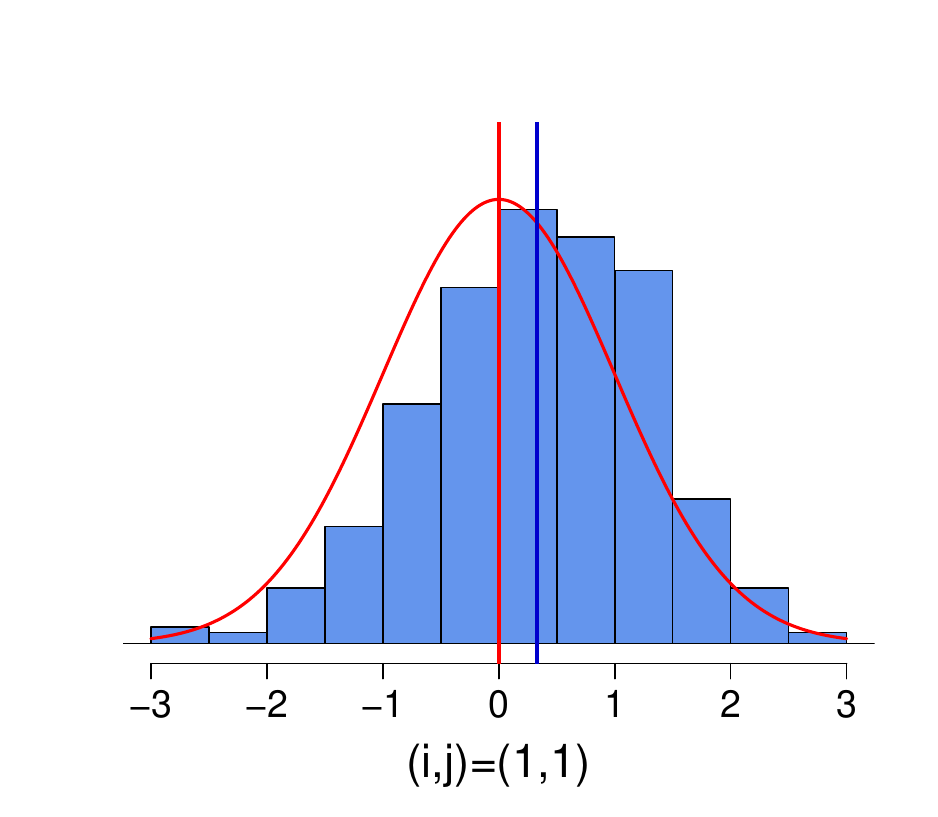}
    \end{minipage}
    \begin{minipage}{0.24\linewidth}
        \centering
        \includegraphics[width=\textwidth]{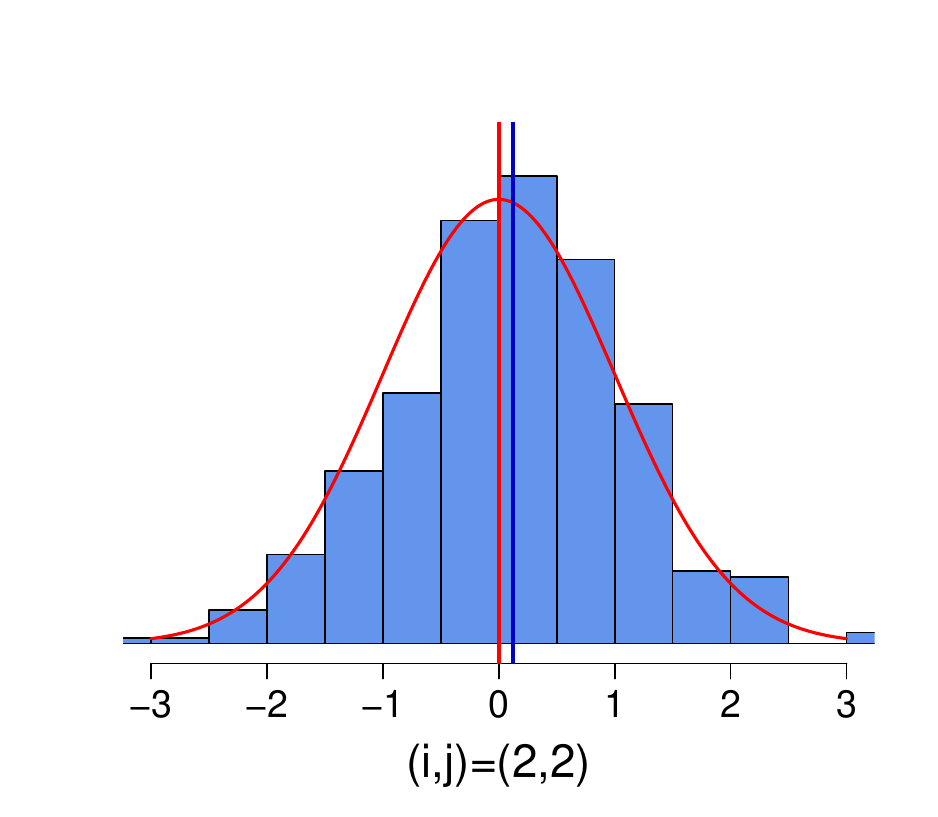}
    \end{minipage}
    \begin{minipage}{0.24\linewidth}
        \centering
        \includegraphics[width=\textwidth]{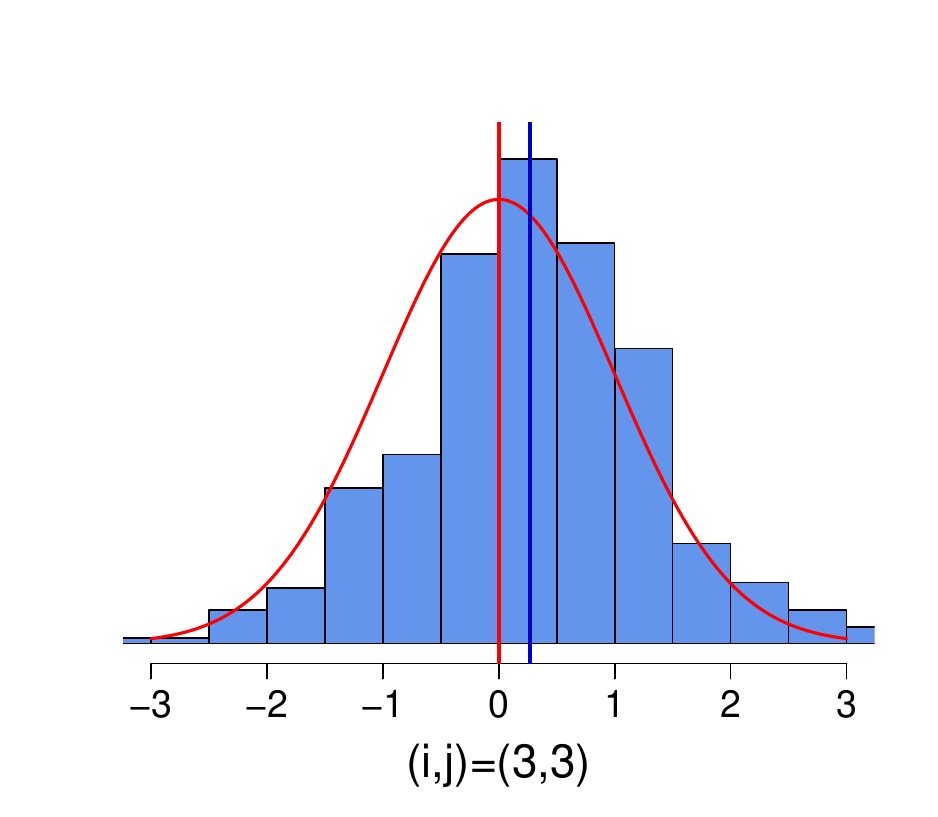}
    \end{minipage}
    \begin{minipage}{0.24\linewidth}
        \centering
        \includegraphics[width=\textwidth]{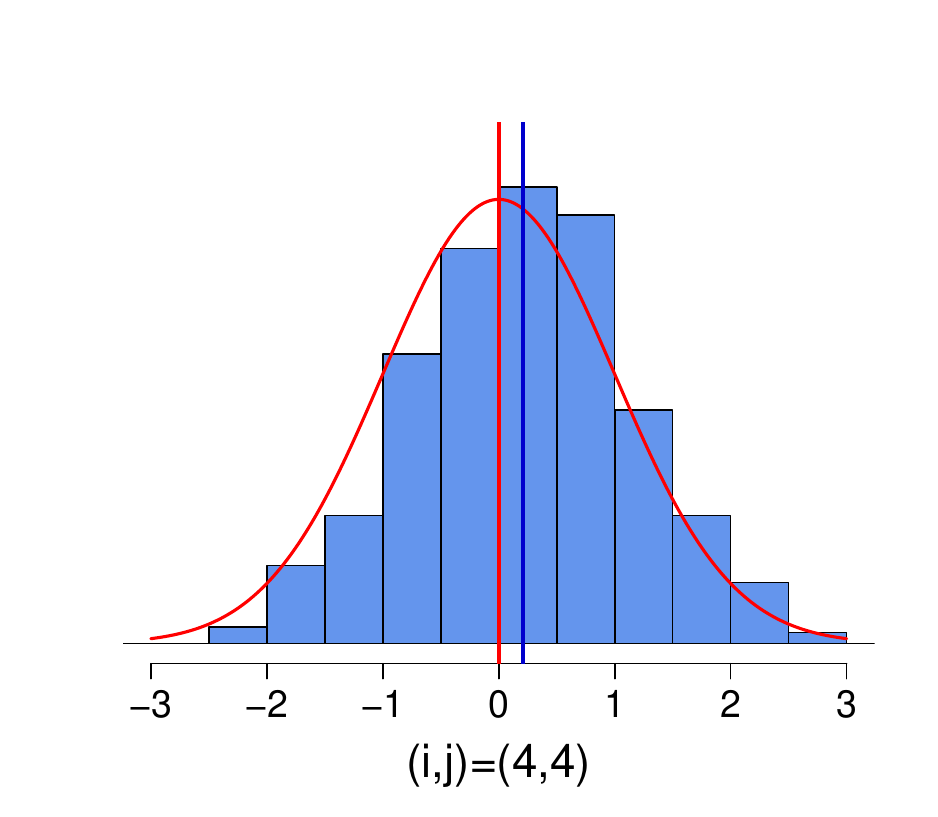}
    \end{minipage}
 \end{minipage}   
      \hspace{1cm}
 \begin{minipage}{0.3\linewidth}
    \begin{minipage}{0.24\linewidth}
        \centering
        \includegraphics[width=\textwidth]{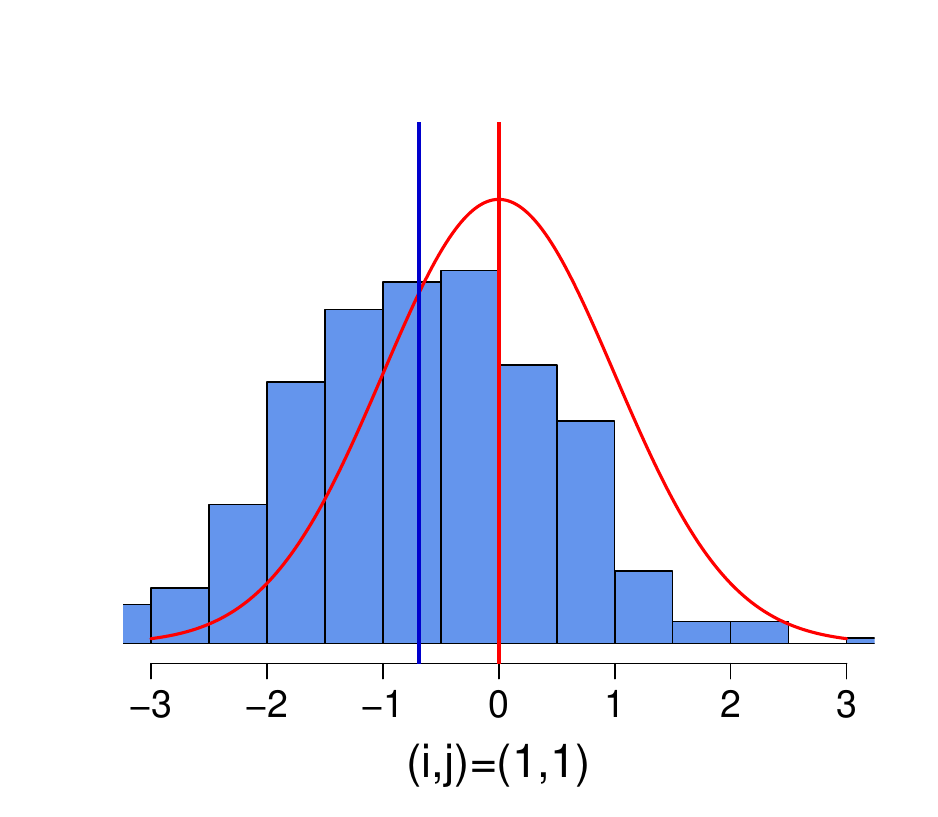}
    \end{minipage}
    \begin{minipage}{0.24\linewidth}
        \centering
        \includegraphics[width=\textwidth]{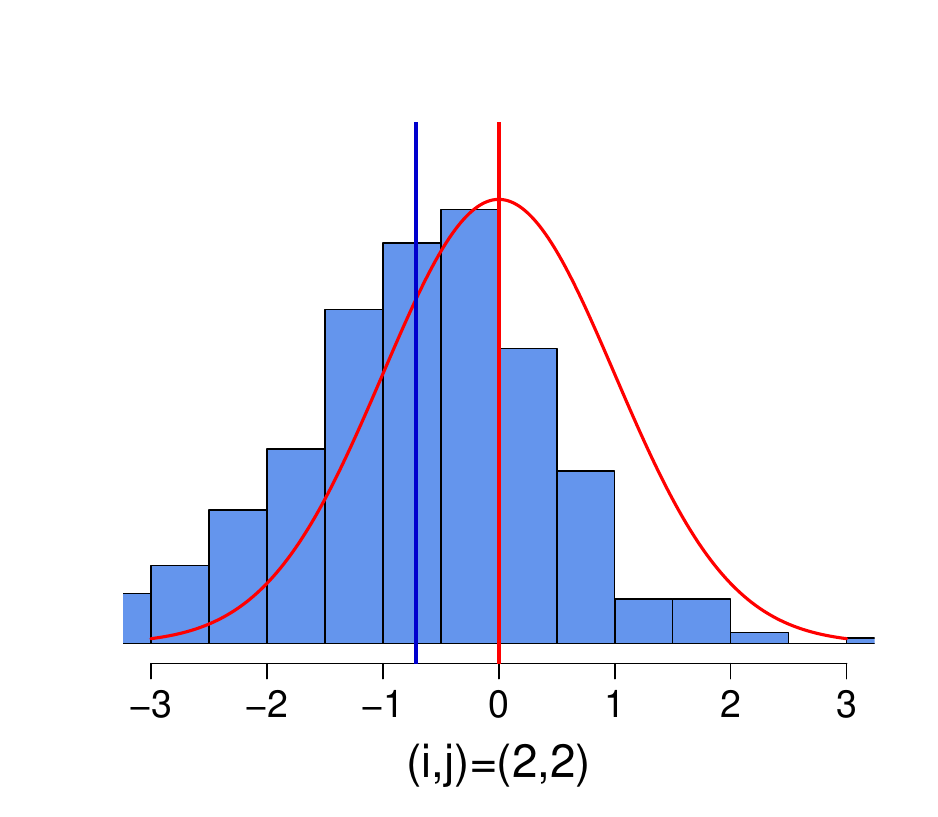}
    \end{minipage}
    \begin{minipage}{0.24\linewidth}
        \centering
        \includegraphics[width=\textwidth]{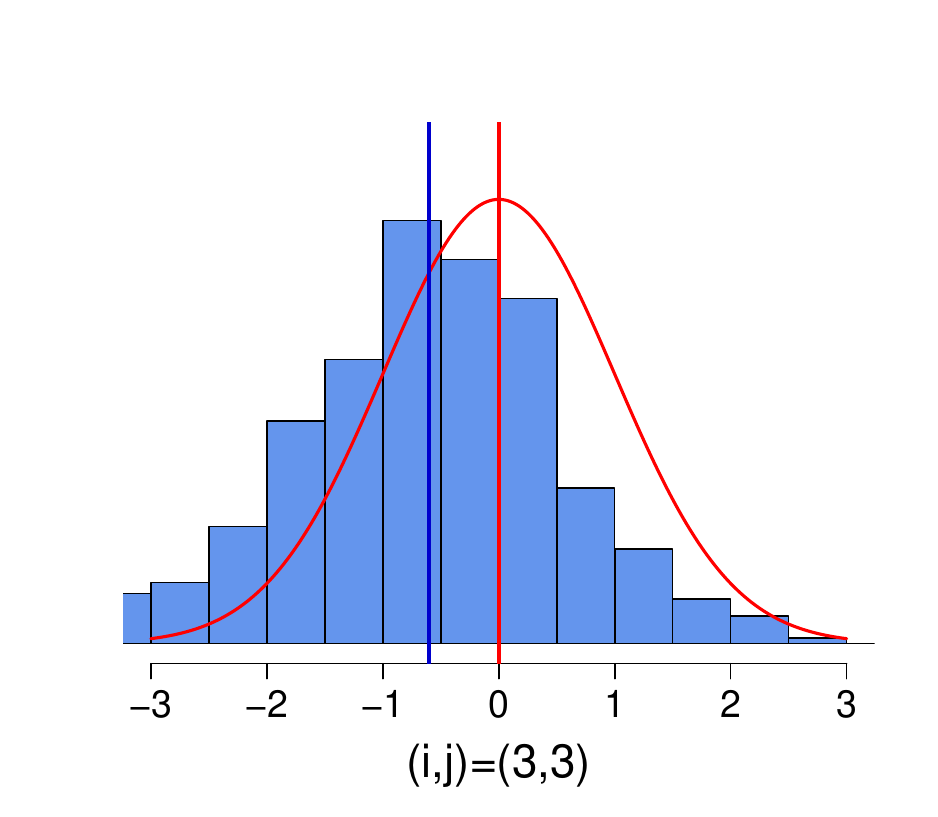}
    \end{minipage}
    \begin{minipage}{0.24\linewidth}
        \centering
        \includegraphics[width=\textwidth]{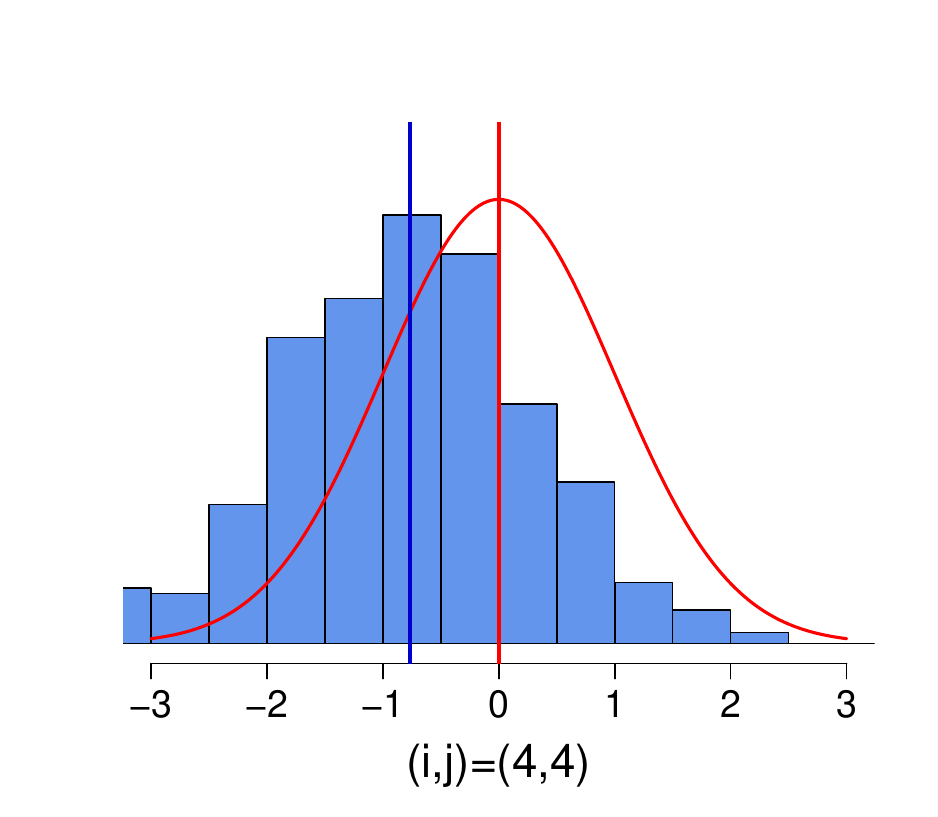}
    \end{minipage}
     \end{minipage}   
     
 \caption*{$n=200, p=400$}
     \vspace{-0.43cm}
 \begin{minipage}{0.3\linewidth}
    \begin{minipage}{0.24\linewidth}
        \centering
        \includegraphics[width=\textwidth]{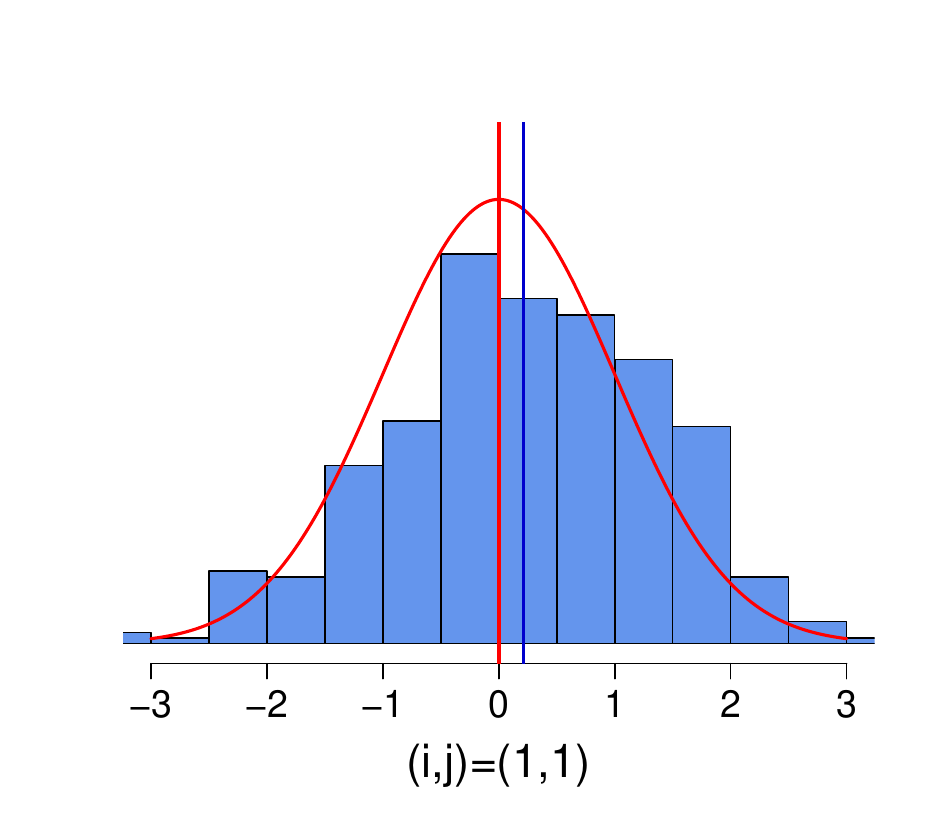}
    \end{minipage}
    \begin{minipage}{0.24\linewidth}
        \centering
        \includegraphics[width=\textwidth]{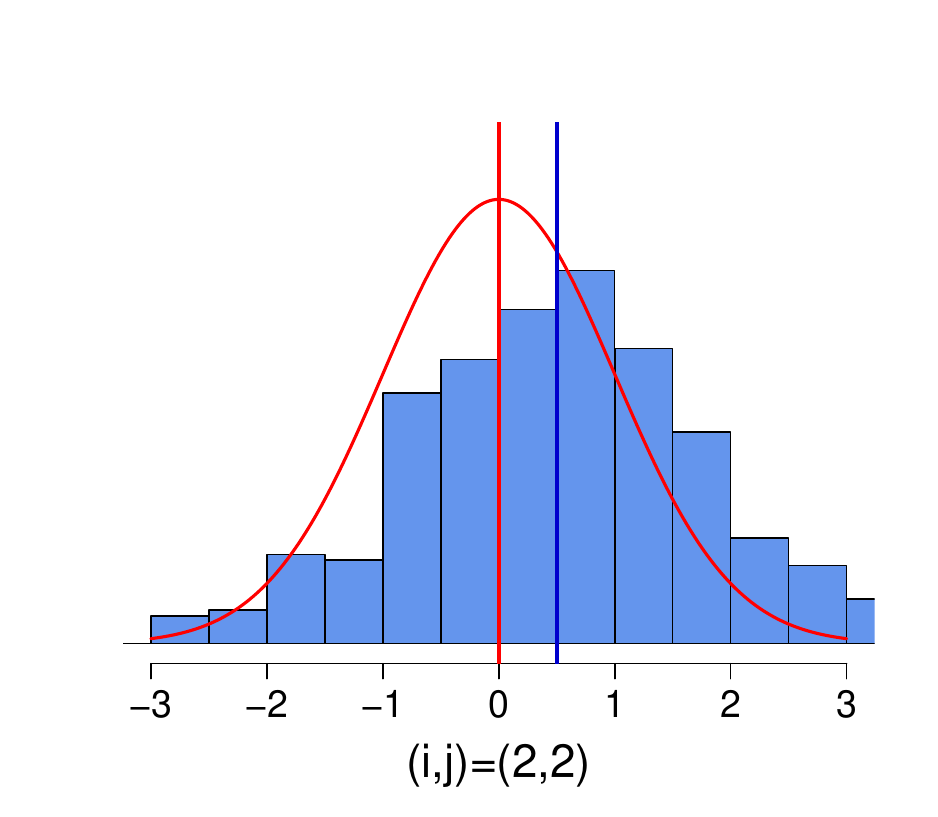}
    \end{minipage}
    \begin{minipage}{0.24\linewidth}
        \centering
        \includegraphics[width=\textwidth]{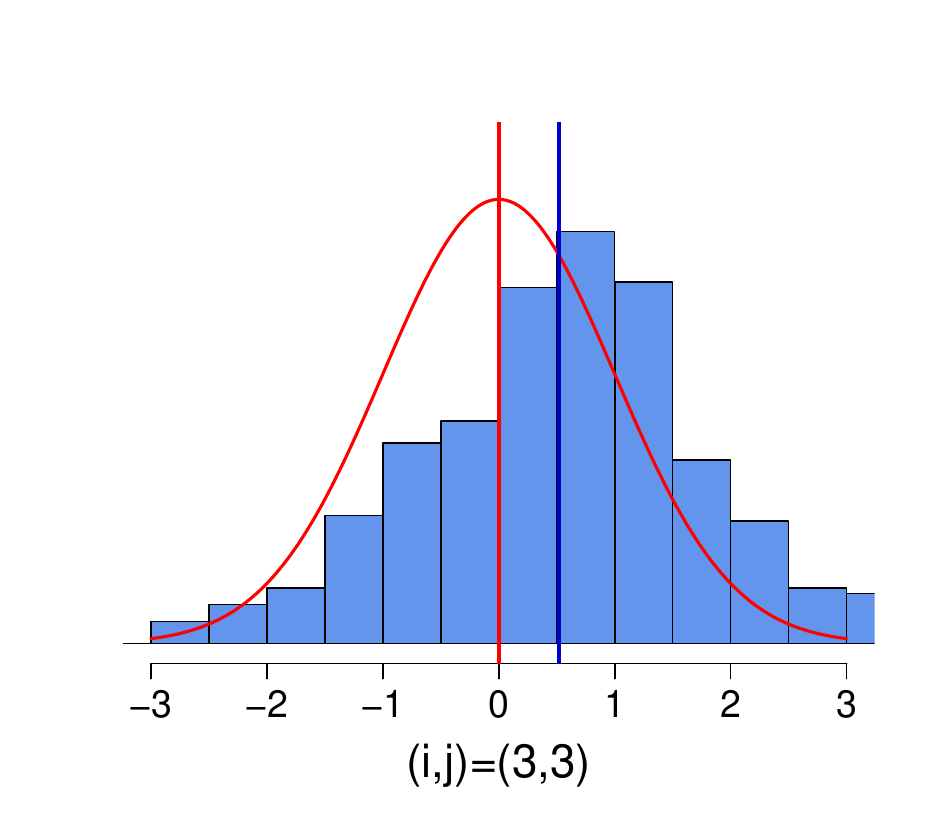}
    \end{minipage}
    \begin{minipage}{0.24\linewidth}
        \centering
        \includegraphics[width=\textwidth]{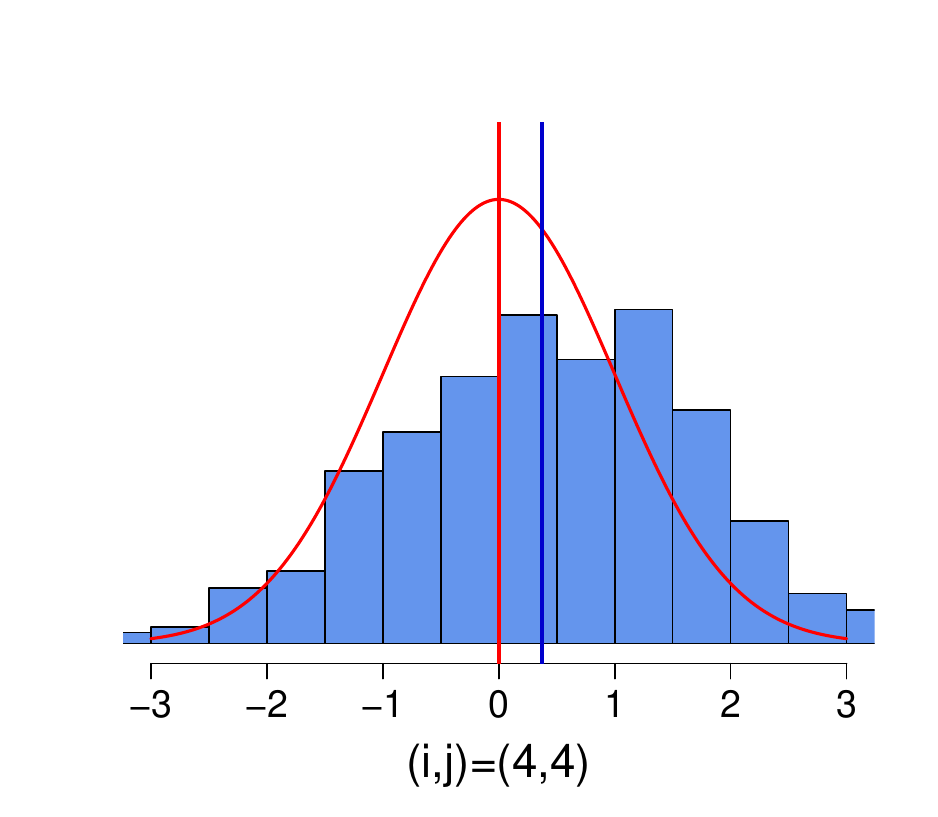}
    \end{minipage}
 \end{minipage}
 \hspace{1cm}
 \begin{minipage}{0.3\linewidth}
    \begin{minipage}{0.24\linewidth}
        \centering
        \includegraphics[width=\textwidth]{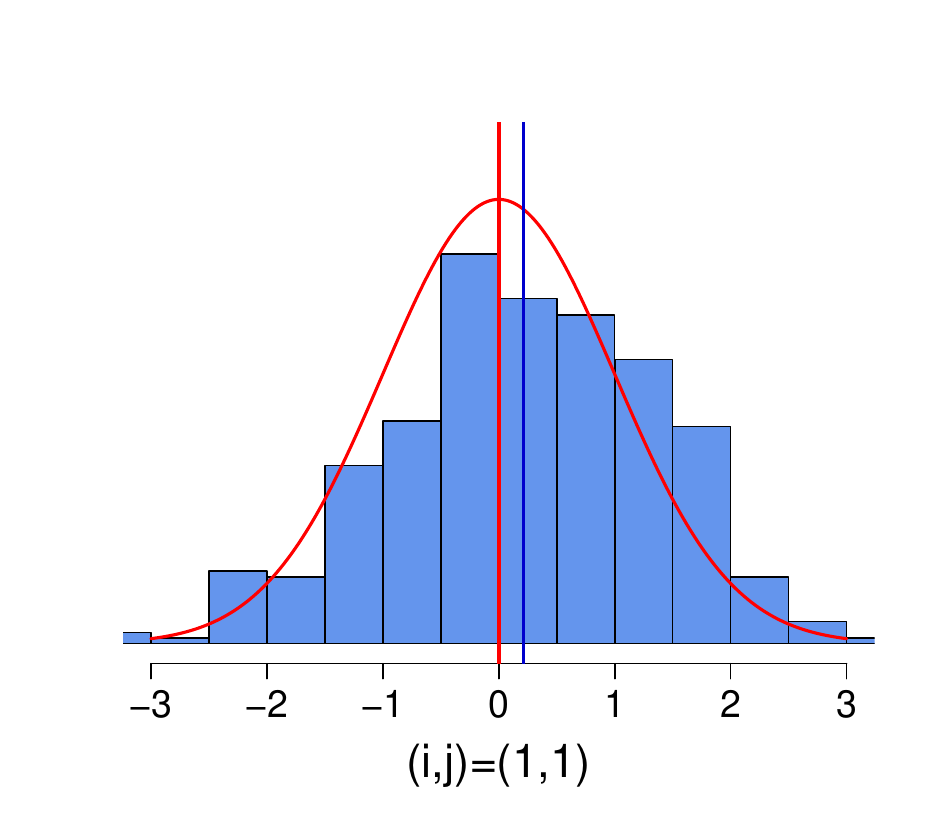}
    \end{minipage}
    \begin{minipage}{0.24\linewidth}
        \centering
        \includegraphics[width=\textwidth]{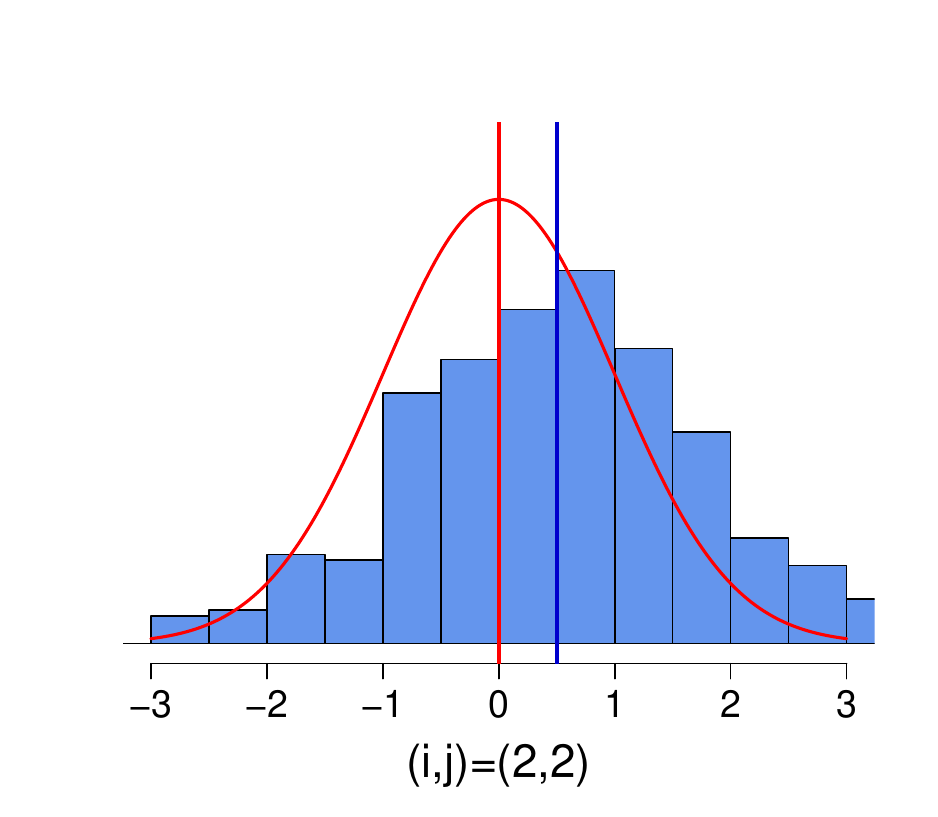}
    \end{minipage}
    \begin{minipage}{0.24\linewidth}
        \centering
        \includegraphics[width=\textwidth]{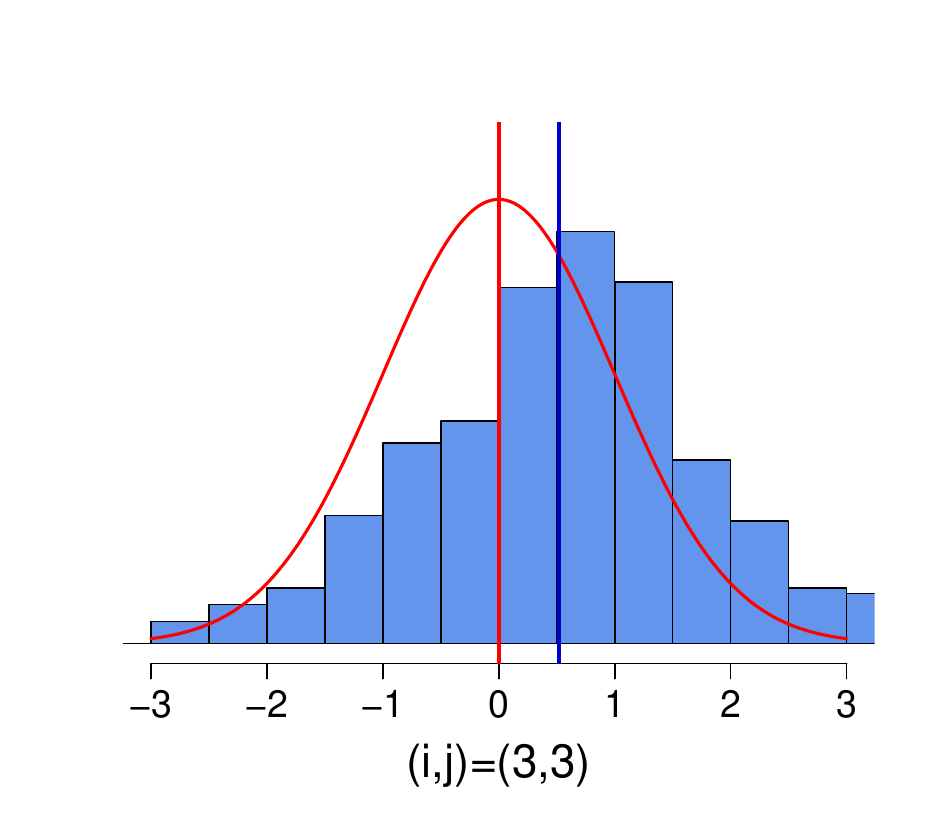}
    \end{minipage}
    \begin{minipage}{0.24\linewidth}
        \centering
        \includegraphics[width=\textwidth]{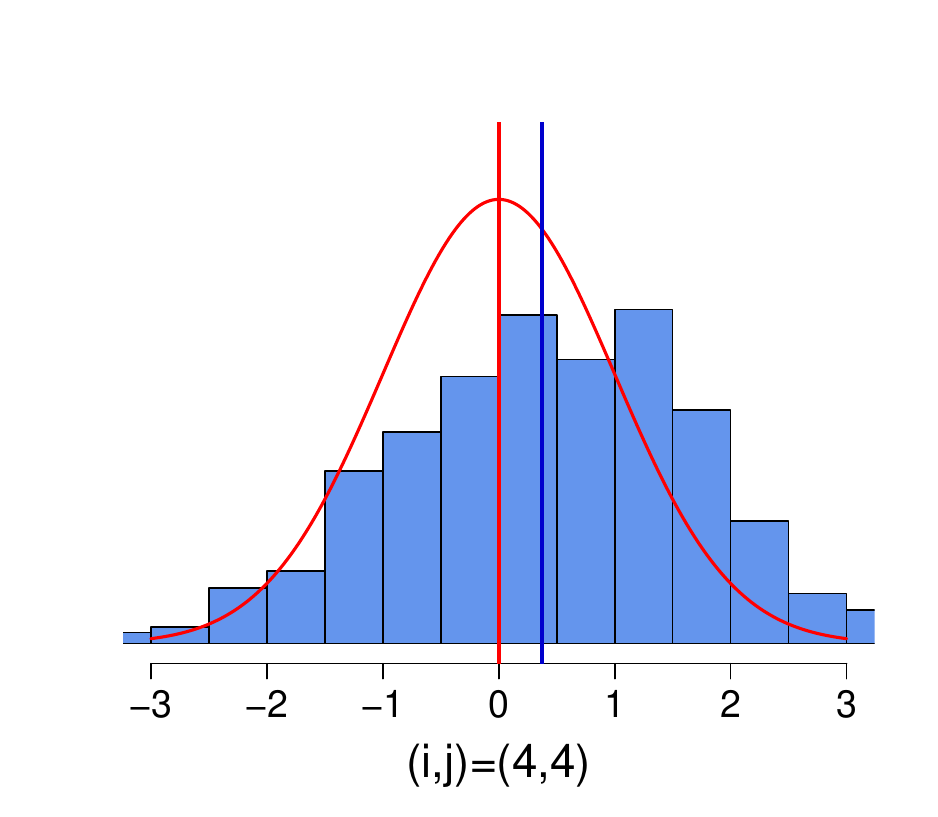}
    \end{minipage}    
 \end{minipage}
  \hspace{1cm}
 \begin{minipage}{0.3\linewidth}
     \begin{minipage}{0.24\linewidth}
        \centering
        \includegraphics[width=\textwidth]{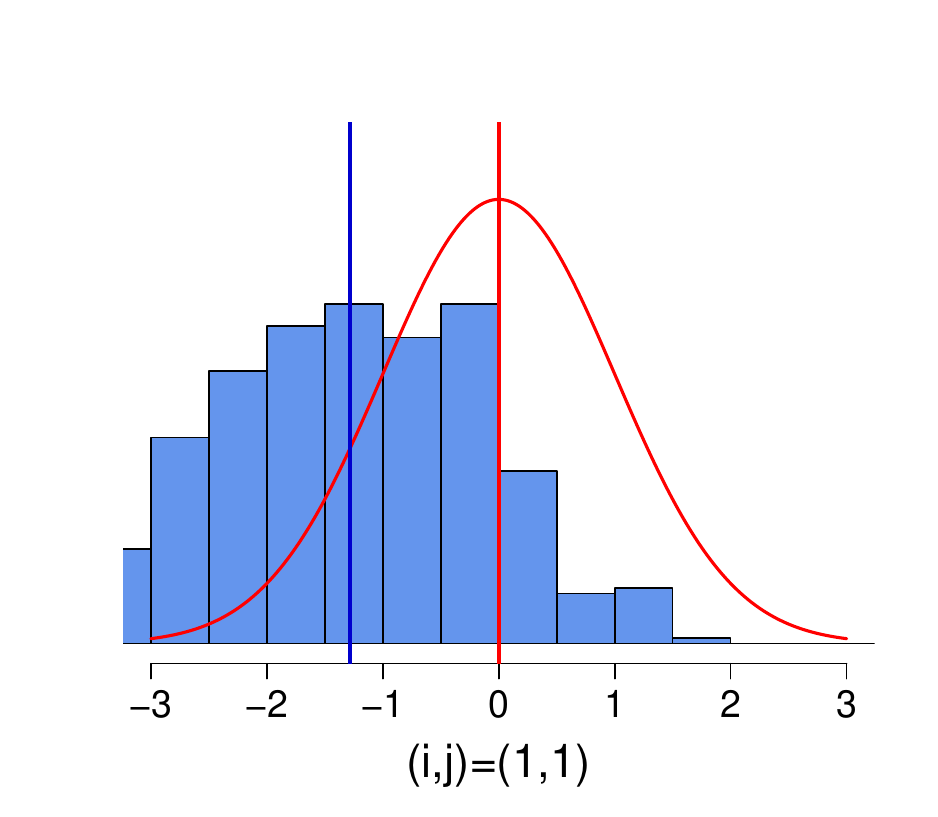}
    \end{minipage}
    \begin{minipage}{0.24\linewidth}
        \centering
        \includegraphics[width=\textwidth]{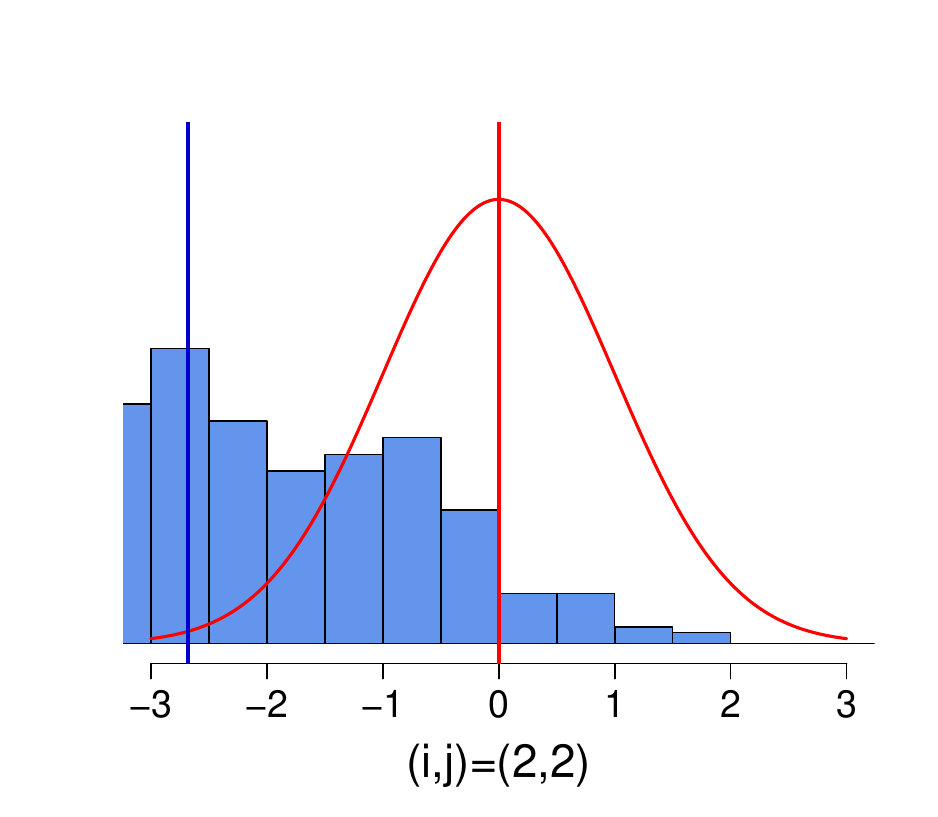}
    \end{minipage}
    \begin{minipage}{0.24\linewidth}
        \centering
        \includegraphics[width=\textwidth]{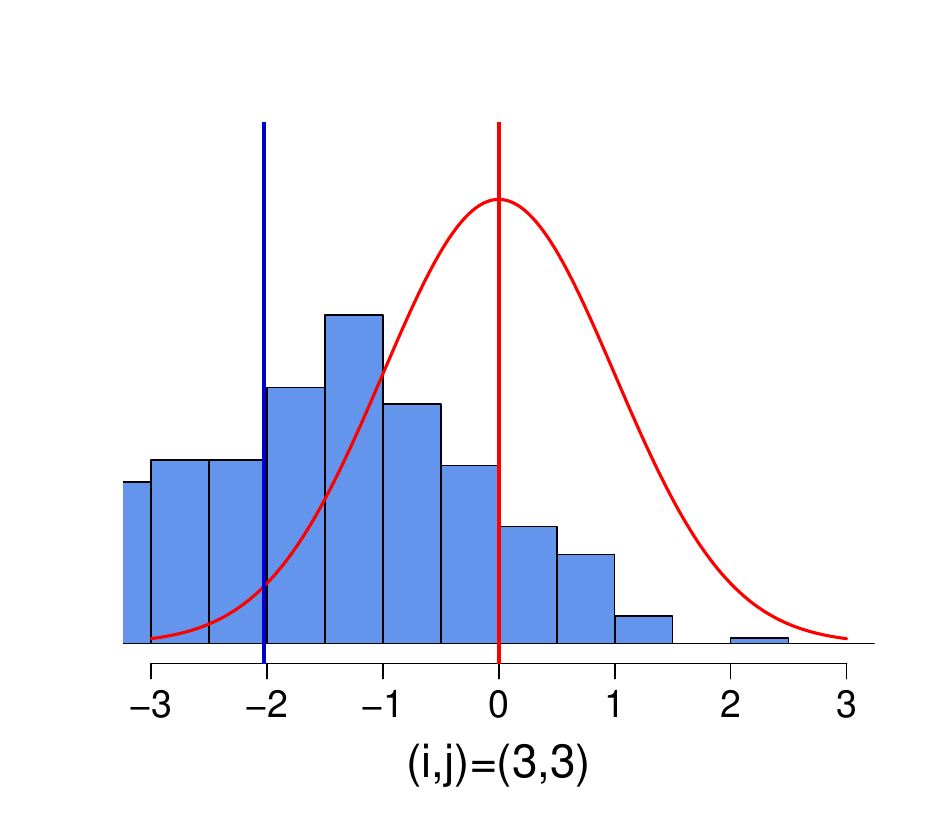}
    \end{minipage}
    \begin{minipage}{0.24\linewidth}
        \centering
        \includegraphics[width=\textwidth]{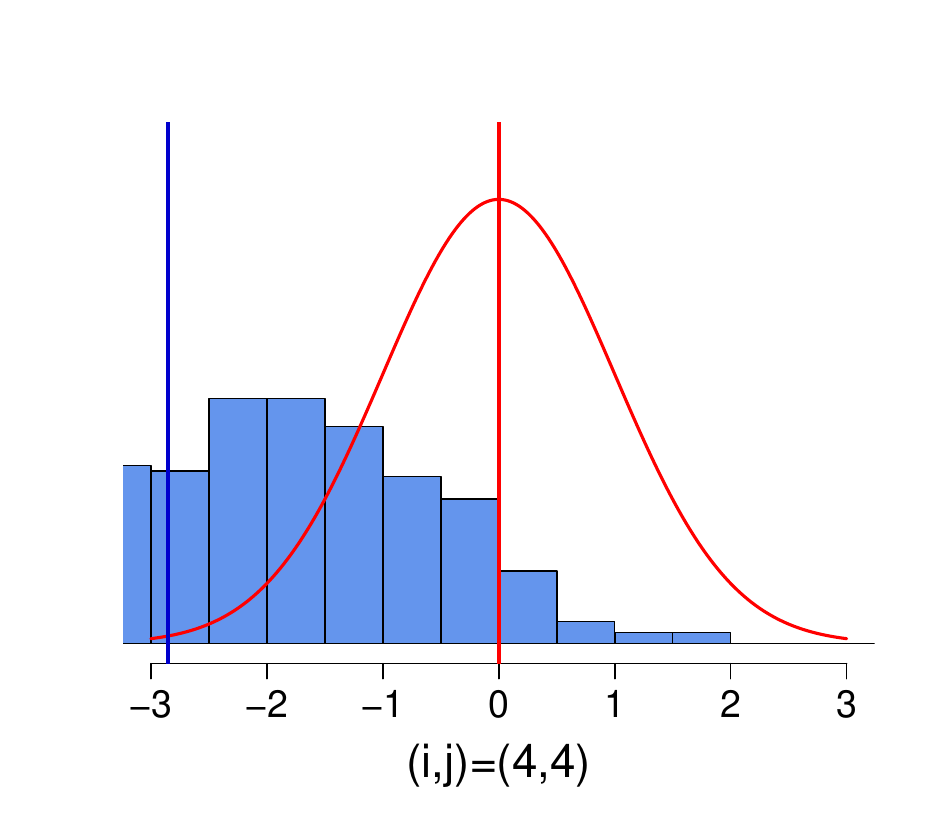}
    \end{minipage}
 \end{minipage}

  \caption*{$n=400, p=400$}
      \vspace{-0.43cm}
 \begin{minipage}{0.3\linewidth}
    \begin{minipage}{0.24\linewidth}
        \centering
        \includegraphics[width=\textwidth]{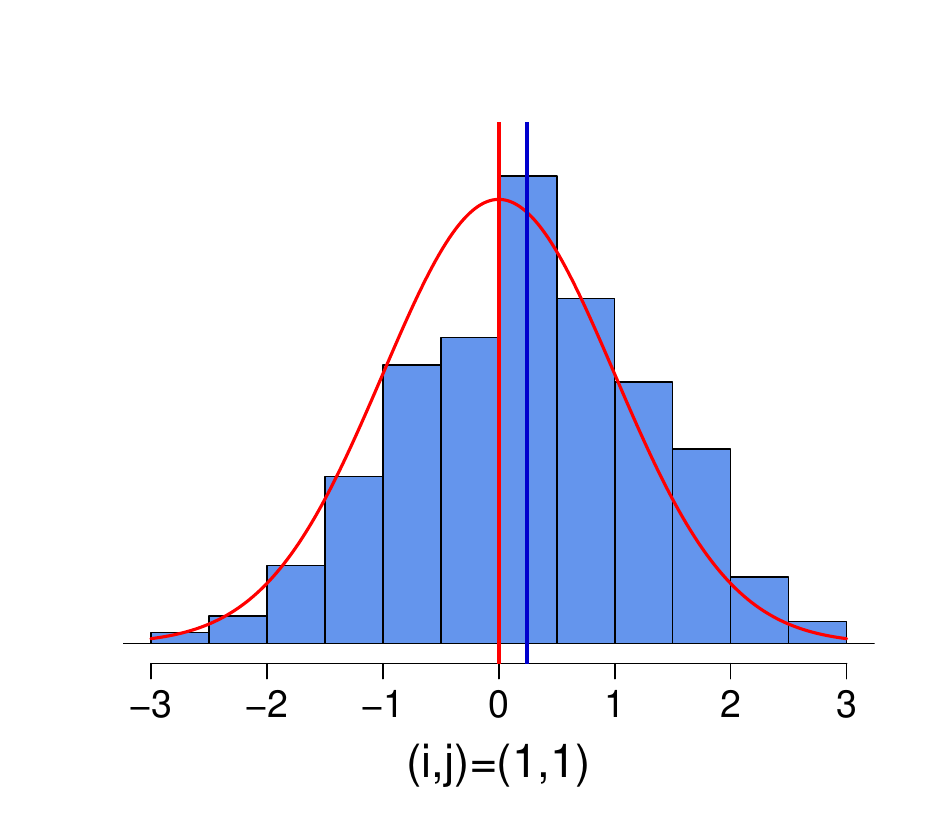}
    \end{minipage}
    \begin{minipage}{0.24\linewidth}
        \centering
        \includegraphics[width=\textwidth]{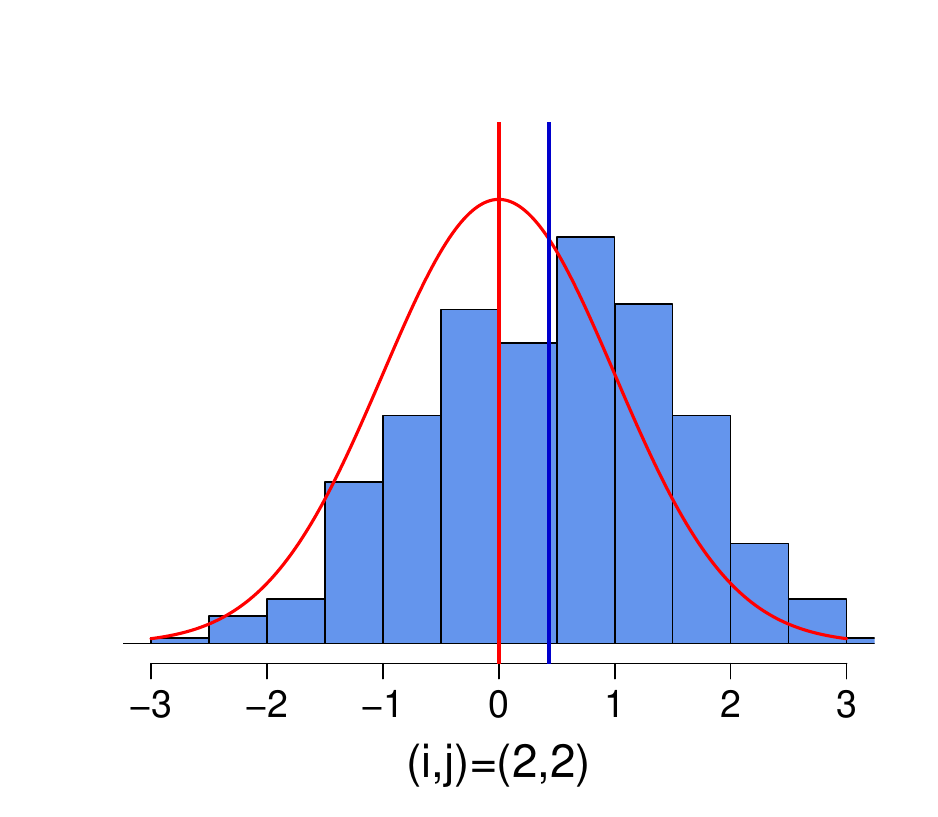}
    \end{minipage}
    \begin{minipage}{0.24\linewidth}
        \centering
        \includegraphics[width=\textwidth]{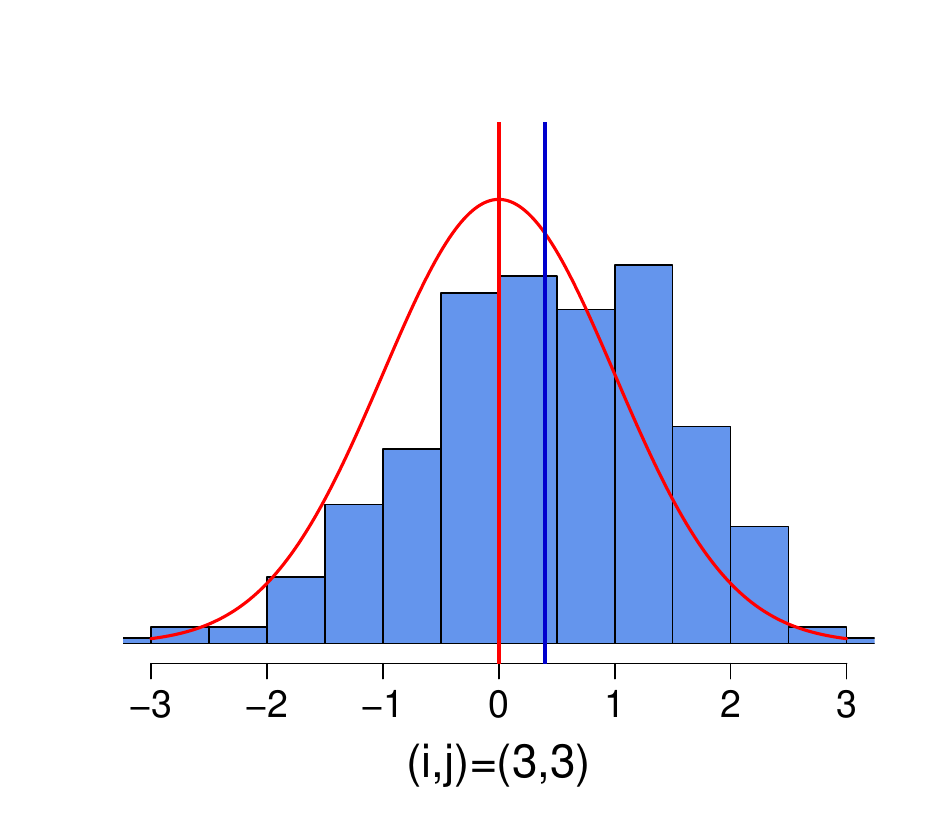}
    \end{minipage}
    \begin{minipage}{0.24\linewidth}
        \centering
        \includegraphics[width=\textwidth]{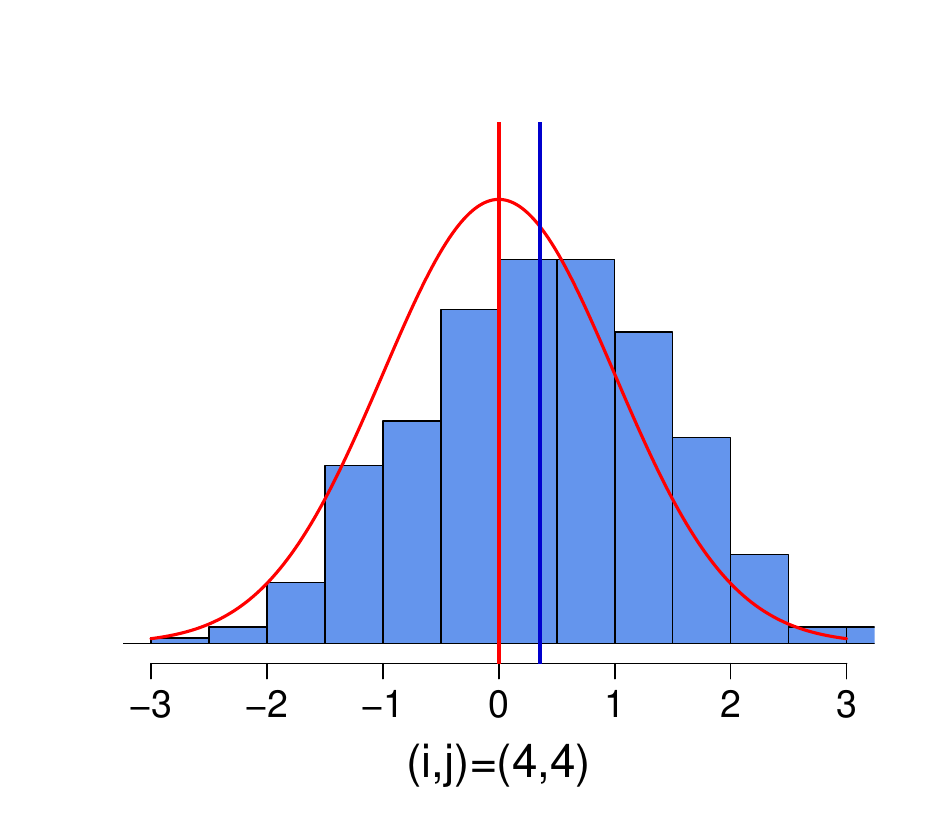}
    \end{minipage}
 \end{minipage}  
     \hspace{1cm}
 \begin{minipage}{0.3\linewidth}
    \begin{minipage}{0.24\linewidth}
        \centering
        \includegraphics[width=\textwidth]{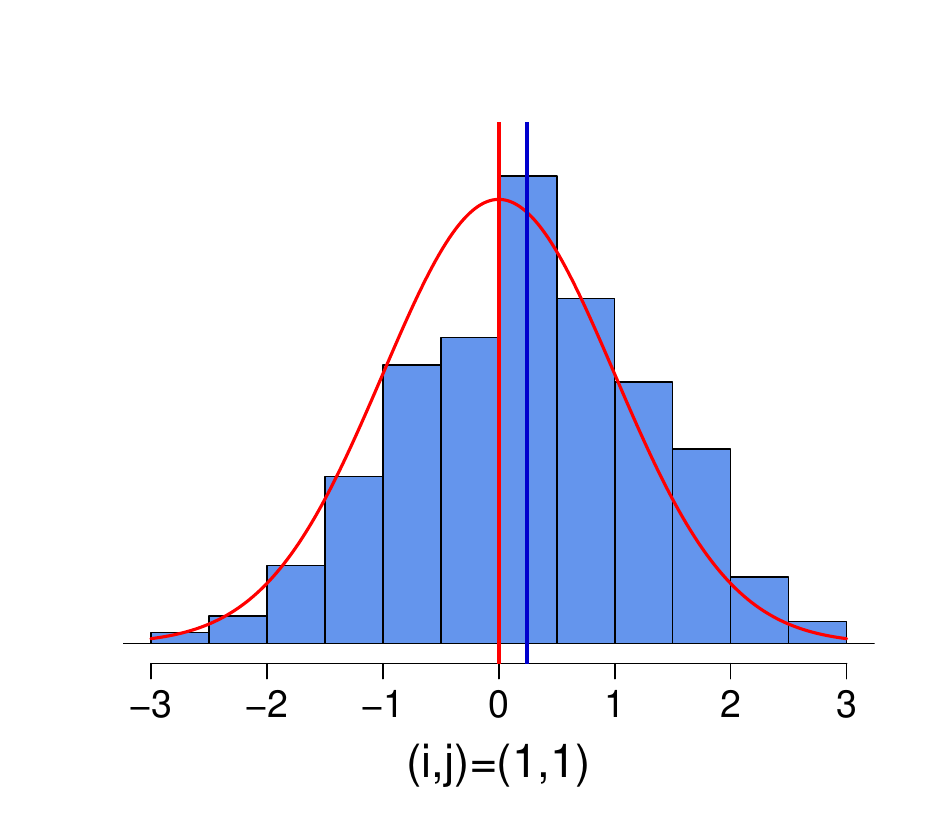}
    \end{minipage}
    \begin{minipage}{0.24\linewidth}
        \centering
        \includegraphics[width=\textwidth]{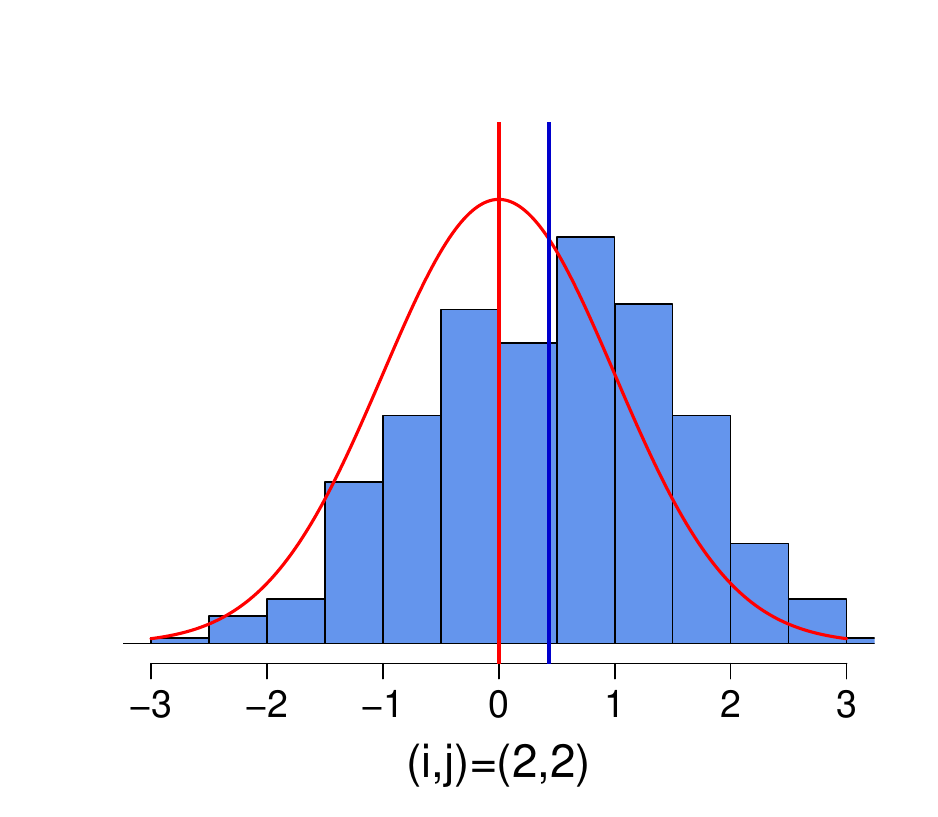}
    \end{minipage}
    \begin{minipage}{0.24\linewidth}
        \centering
        \includegraphics[width=\textwidth]{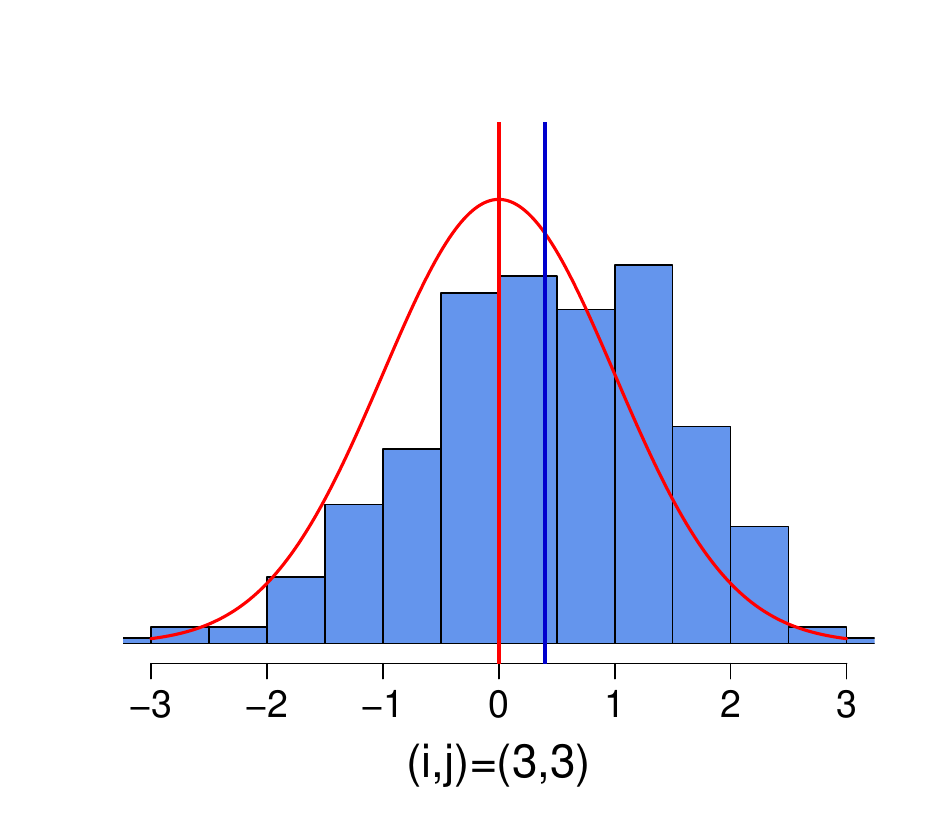}
    \end{minipage}
    \begin{minipage}{0.24\linewidth}
        \centering
        \includegraphics[width=\textwidth]{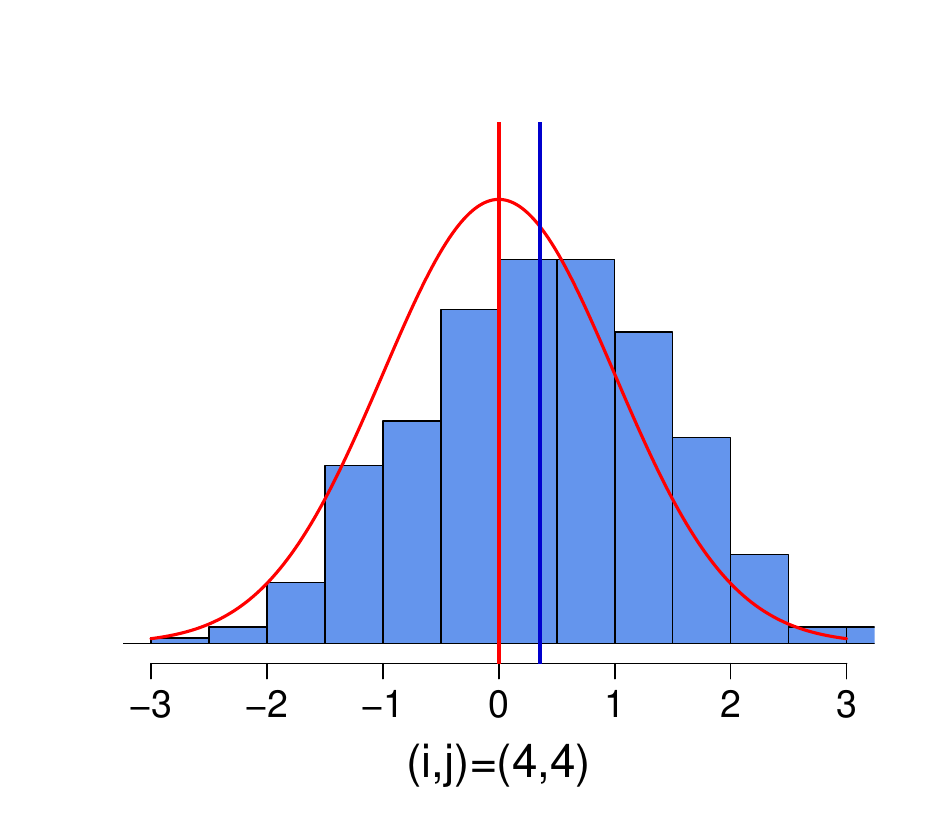}
    \end{minipage}
  \end{minipage}  
    \hspace{1cm}
 \begin{minipage}{0.3\linewidth}
    \begin{minipage}{0.24\linewidth}
        \centering
        \includegraphics[width=\textwidth]{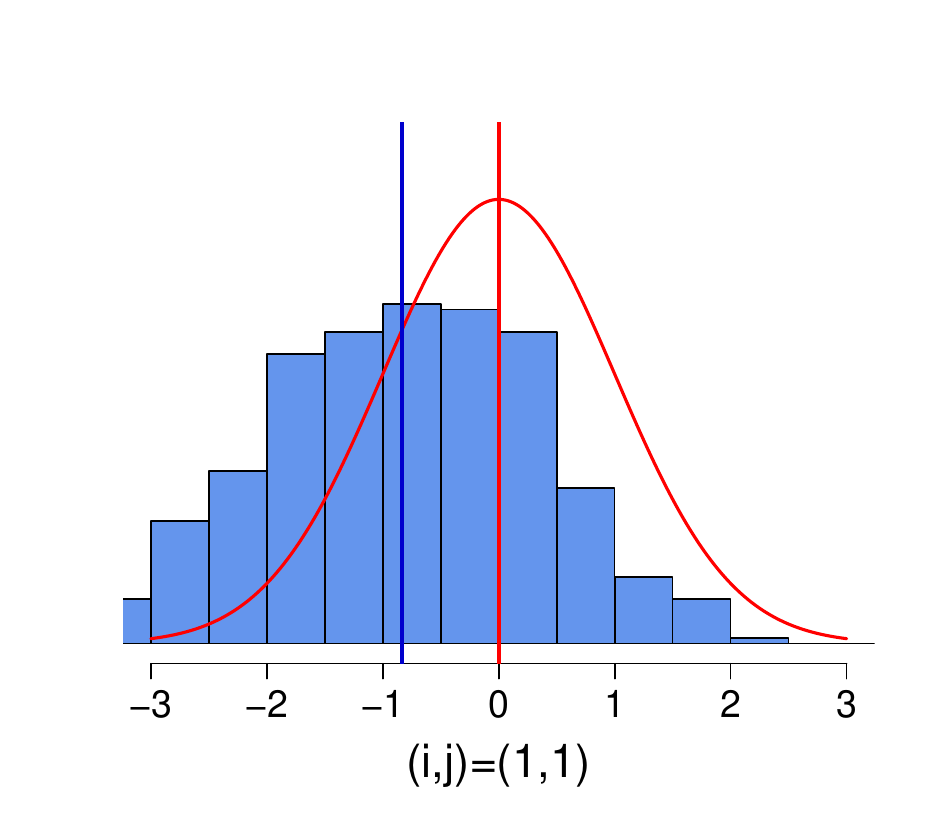}
    \end{minipage}
    \begin{minipage}{0.24\linewidth}
        \centering
        \includegraphics[width=\textwidth]{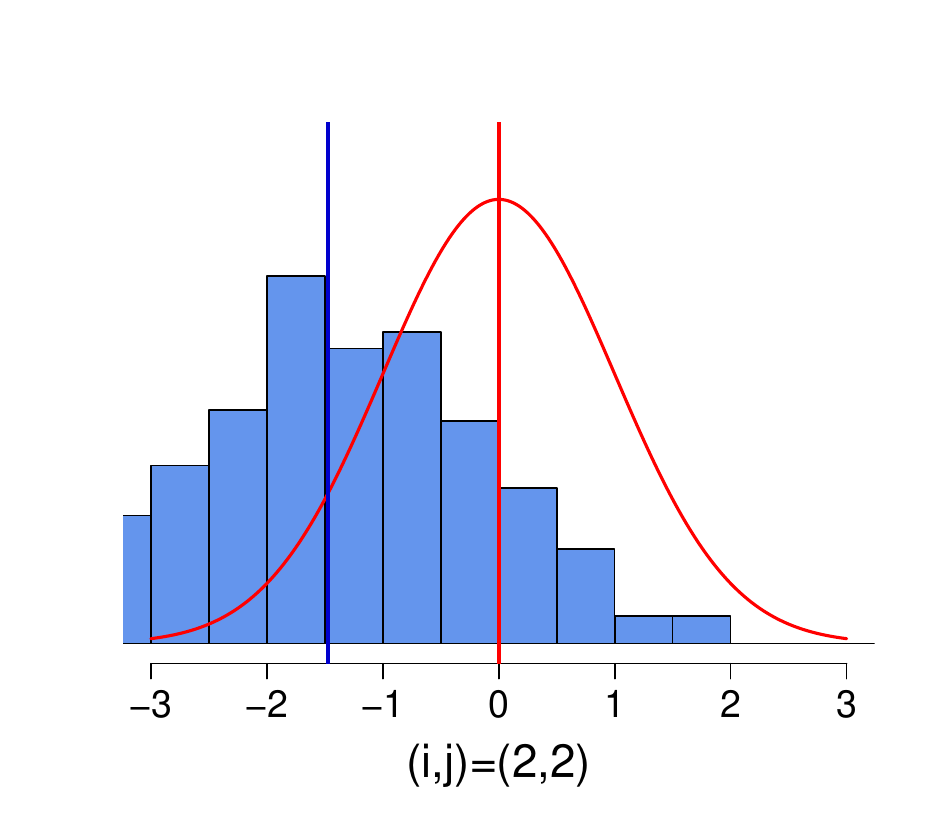}
    \end{minipage}
    \begin{minipage}{0.24\linewidth}
        \centering
        \includegraphics[width=\textwidth]{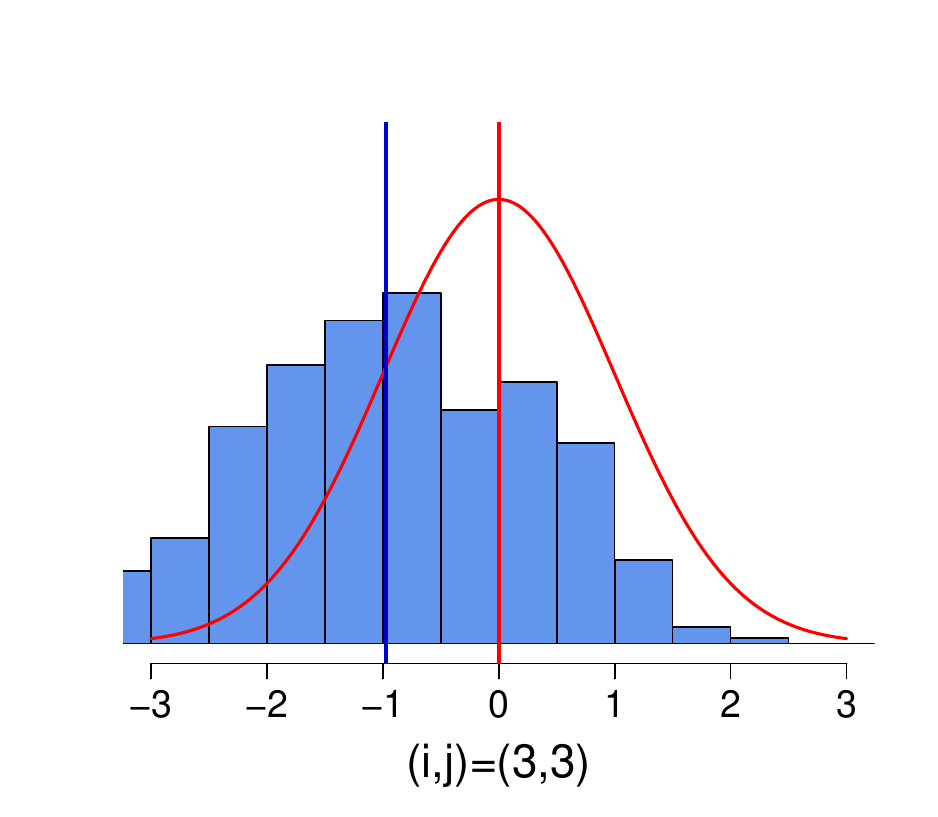}
    \end{minipage}
    \begin{minipage}{0.24\linewidth}
        \centering
        \includegraphics[width=\textwidth]{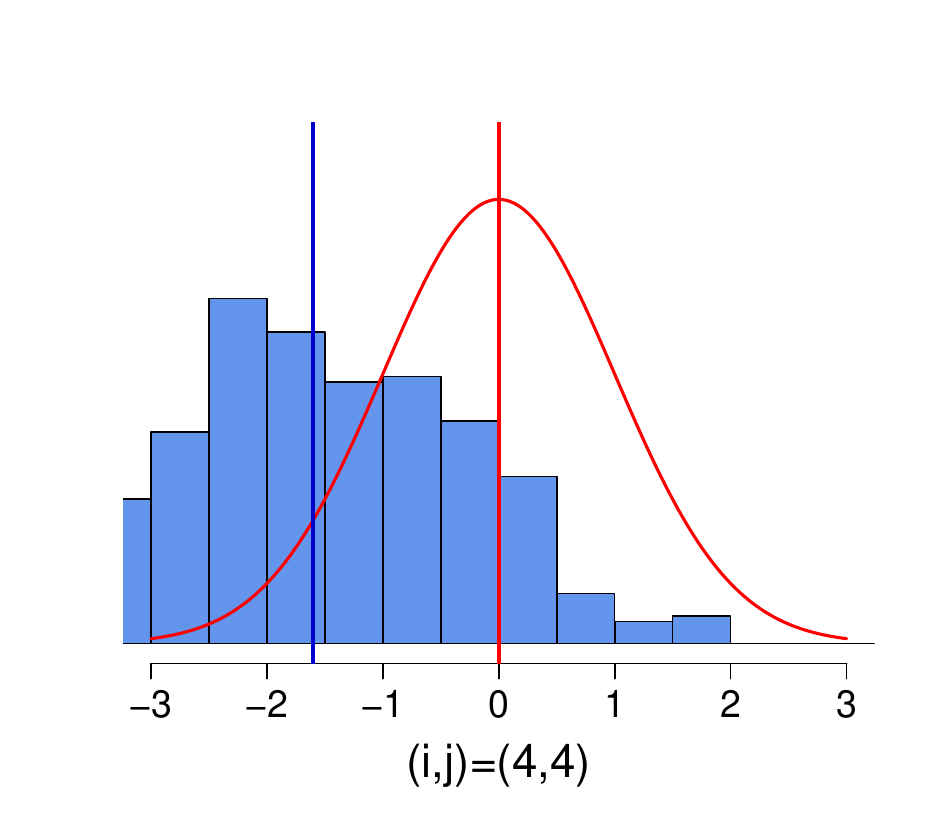}
    \end{minipage}
 \end{minipage}

  \caption*{$n=800, p=400$}
      \vspace{-0.43cm}
 \begin{minipage}{0.3\linewidth}
    \begin{minipage}{0.24\linewidth}
        \centering
        \includegraphics[width=\textwidth]{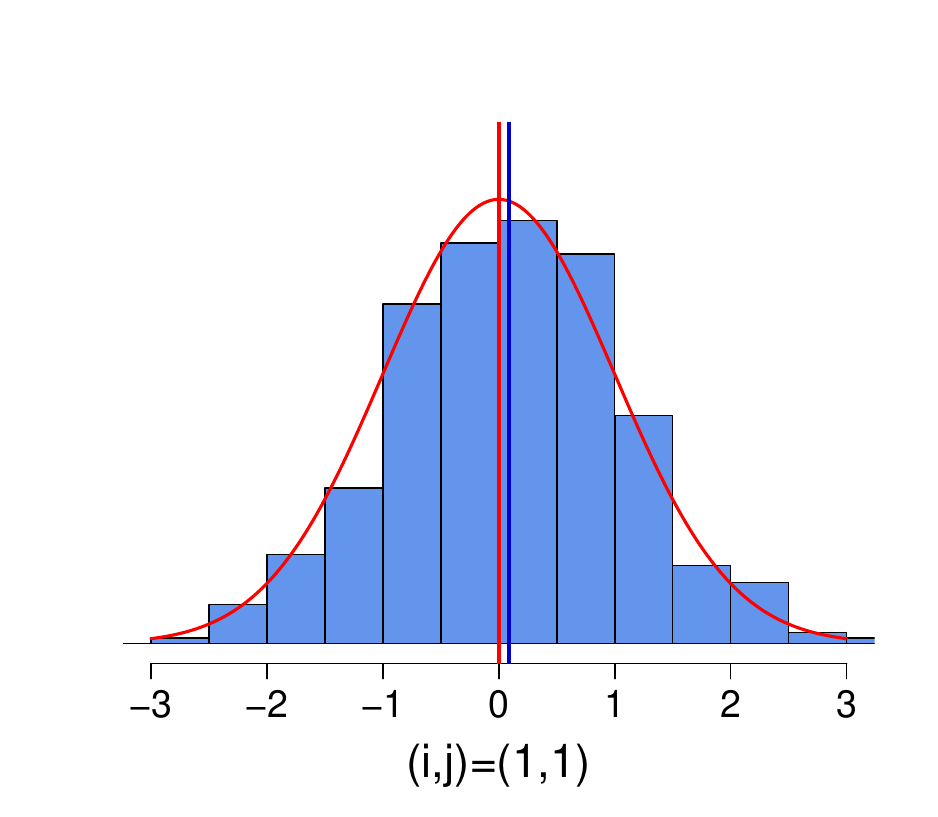}
    \end{minipage}
    \begin{minipage}{0.24\linewidth}
        \centering
        \includegraphics[width=\textwidth]{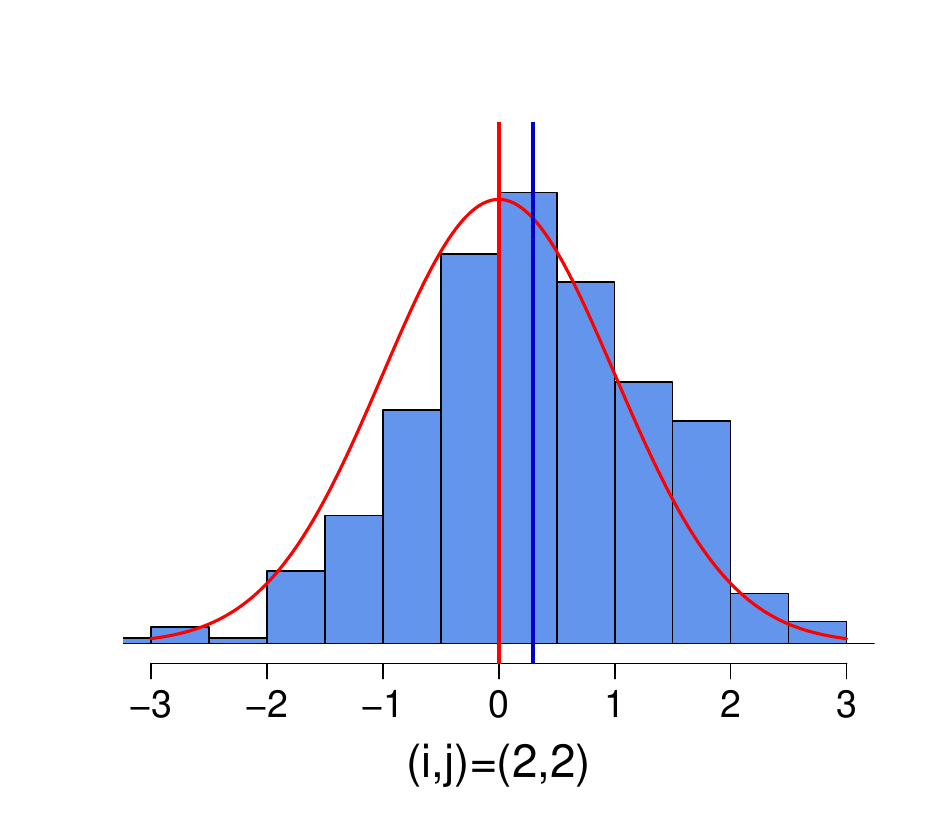}
    \end{minipage}
    \begin{minipage}{0.24\linewidth}
        \centering
        \includegraphics[width=\textwidth]{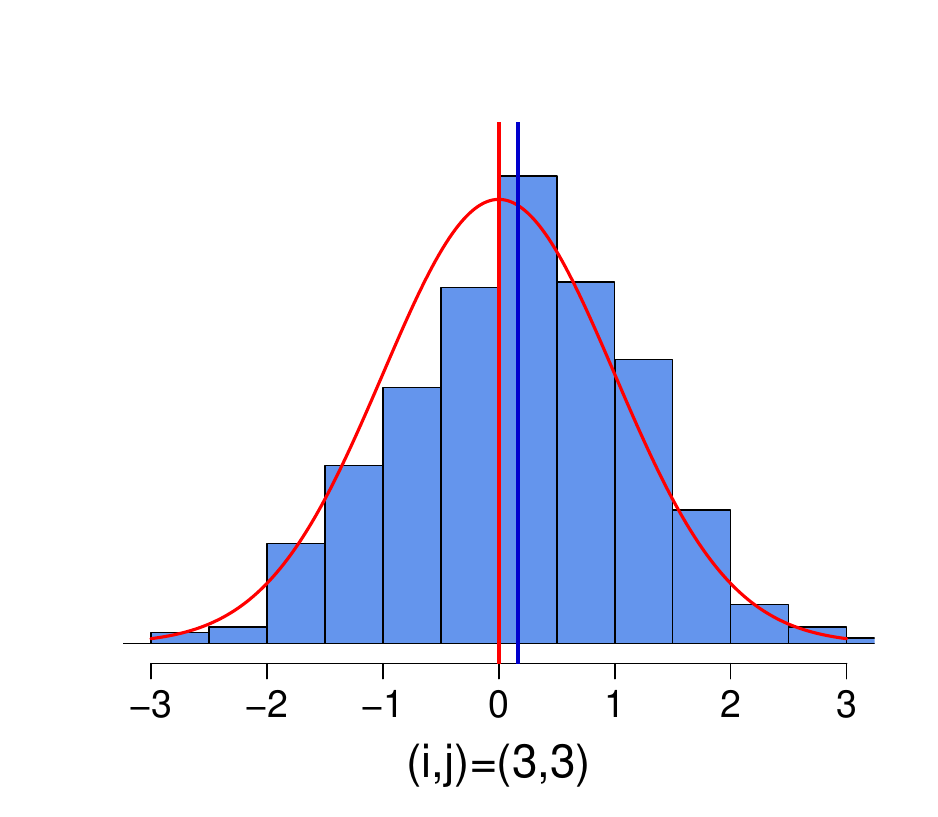}
    \end{minipage}
    \begin{minipage}{0.24\linewidth}
        \centering
        \includegraphics[width=\textwidth]{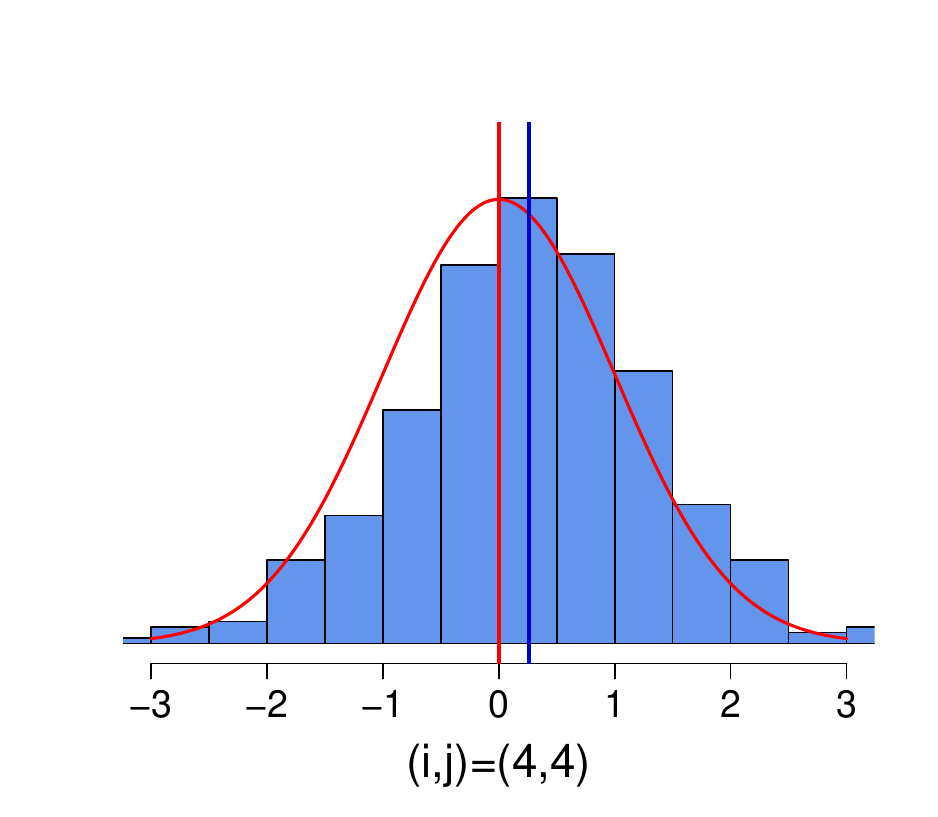}
    \end{minipage}
   \caption*{(a)~~$L_0{:}~ \widehat{\mb{\Omega}}^{\text{US}}$}
 \end{minipage} 
     \hspace{1cm}
 \begin{minipage}{0.3\linewidth}
    \begin{minipage}{0.24\linewidth}
        \centering
        \includegraphics[width=\textwidth]{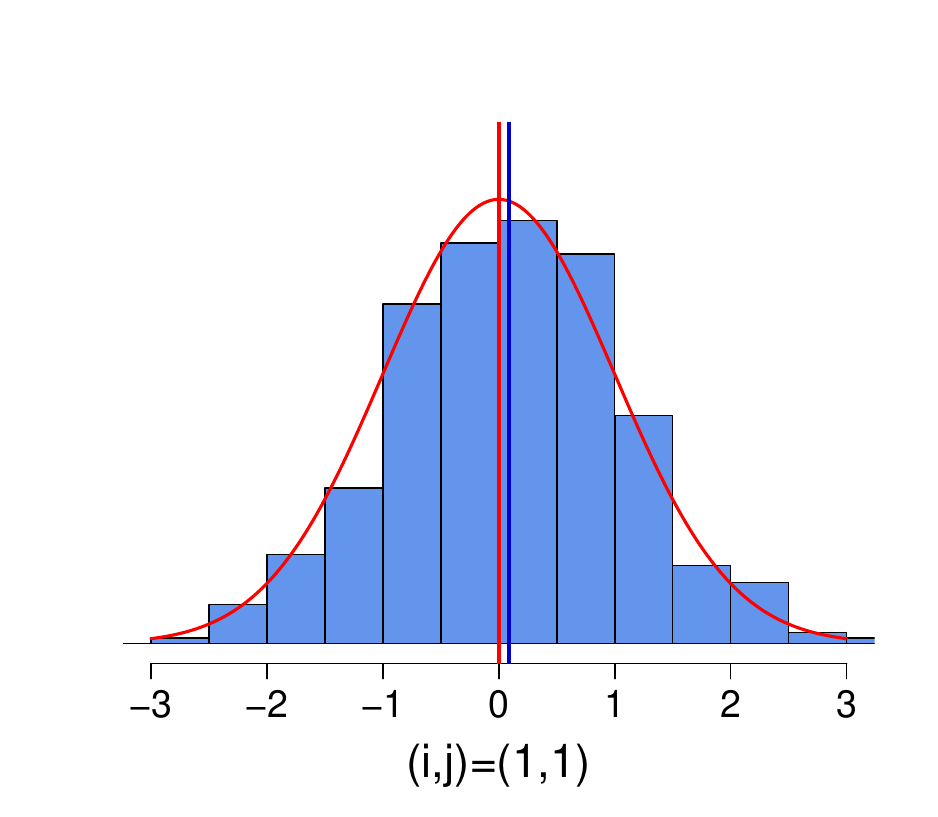}
    \end{minipage}
    \begin{minipage}{0.24\linewidth}
        \centering
        \includegraphics[width=\textwidth]{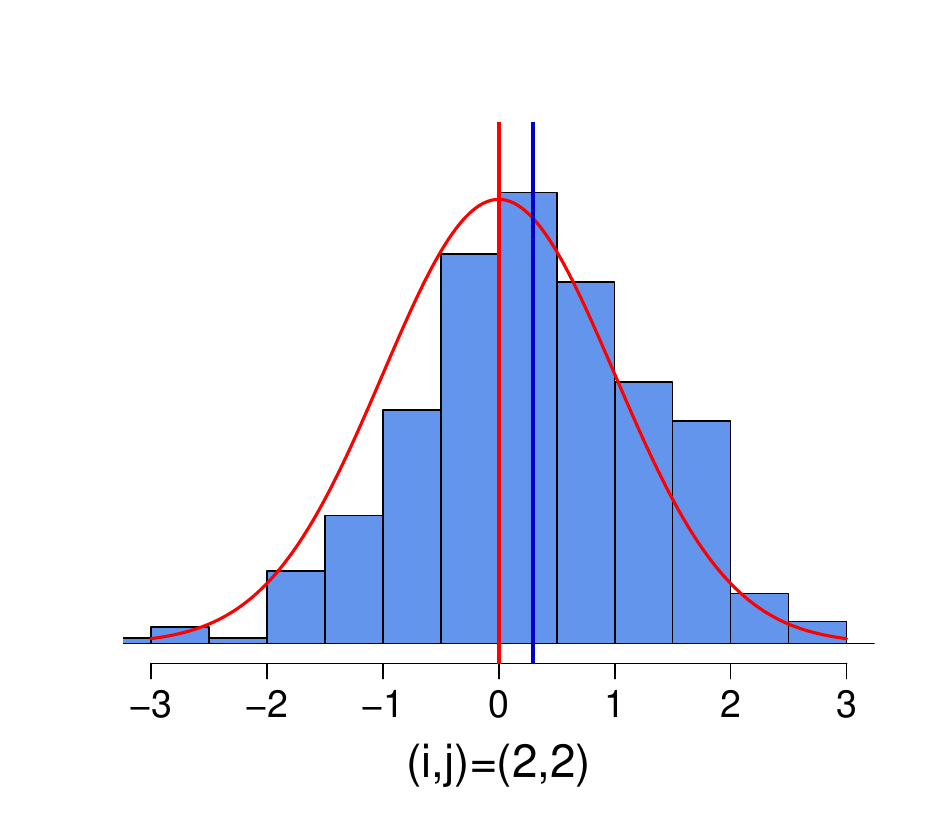}
    \end{minipage}
    \begin{minipage}{0.24\linewidth}
        \centering
        \includegraphics[width=\textwidth]{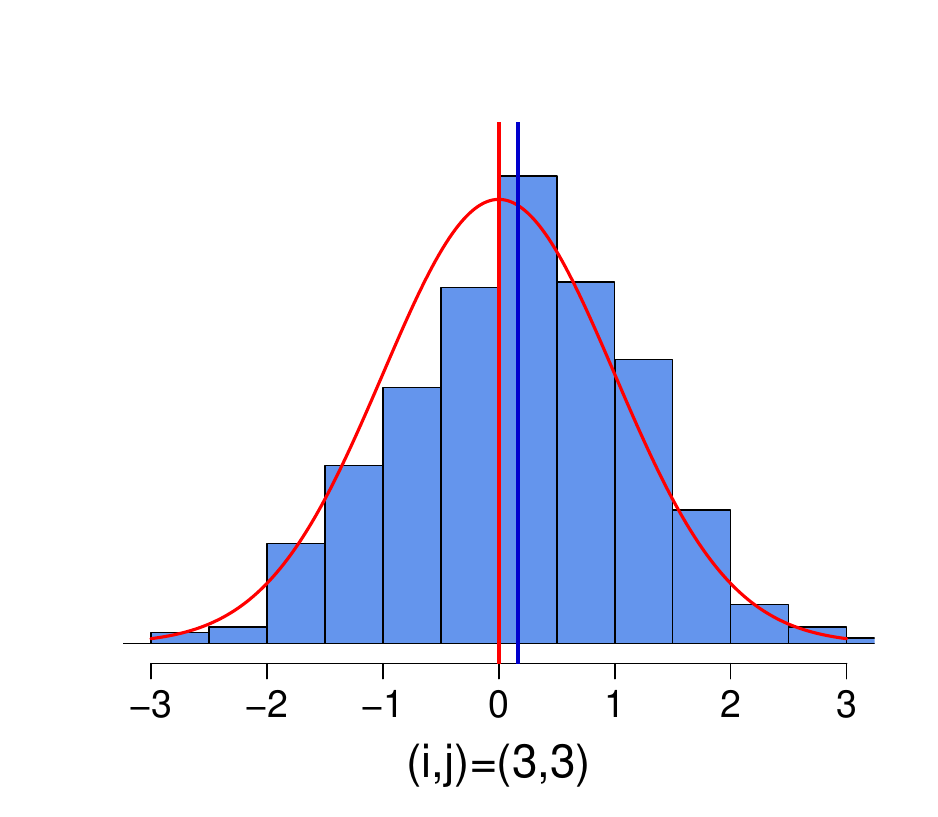}
    \end{minipage}
    \begin{minipage}{0.24\linewidth}
        \centering
        \includegraphics[width=\textwidth]{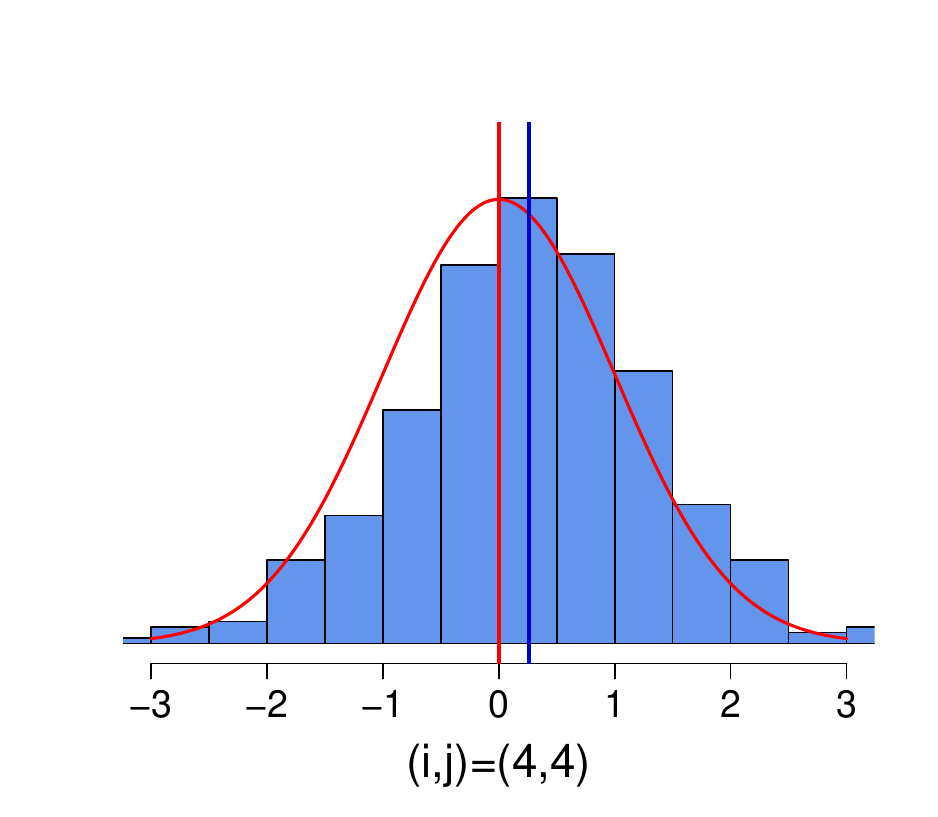}
    \end{minipage}
        \caption*{(b)~~$L_0{:}~ \widehat{\mb{T}}$}
 \end{minipage}   
      \hspace{1cm}
 \begin{minipage}{0.3\linewidth}
    \begin{minipage}{0.24\linewidth}
        \centering
        \includegraphics[width=\textwidth]{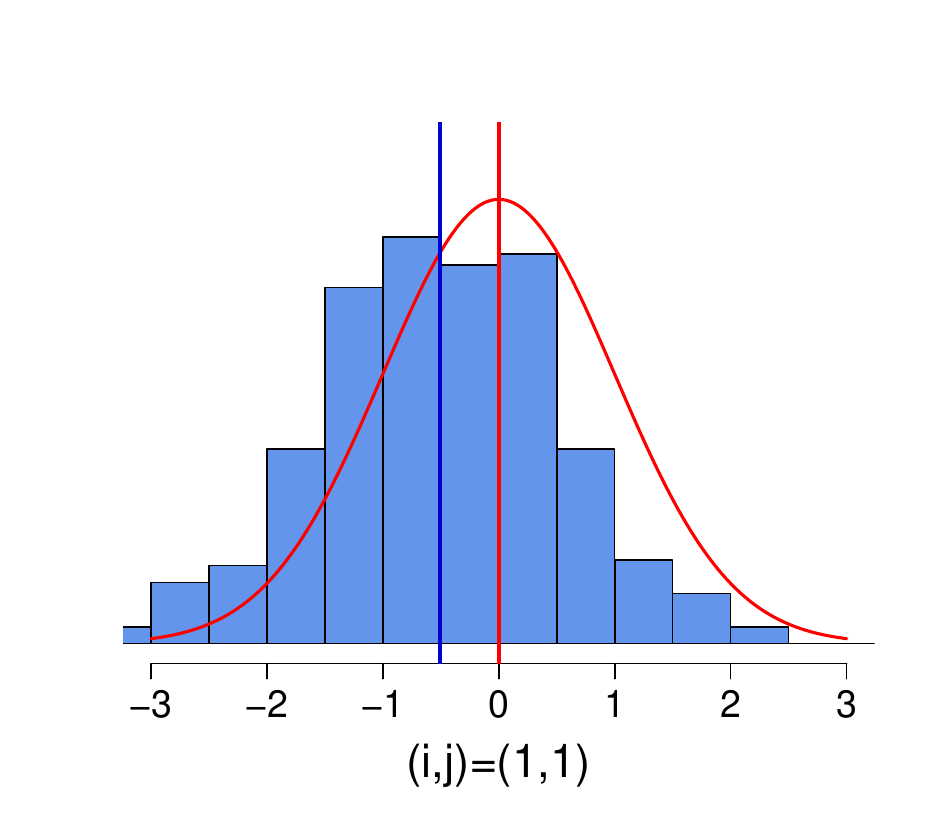}
    \end{minipage}
    \begin{minipage}{0.24\linewidth}
        \centering
        \includegraphics[width=\textwidth]{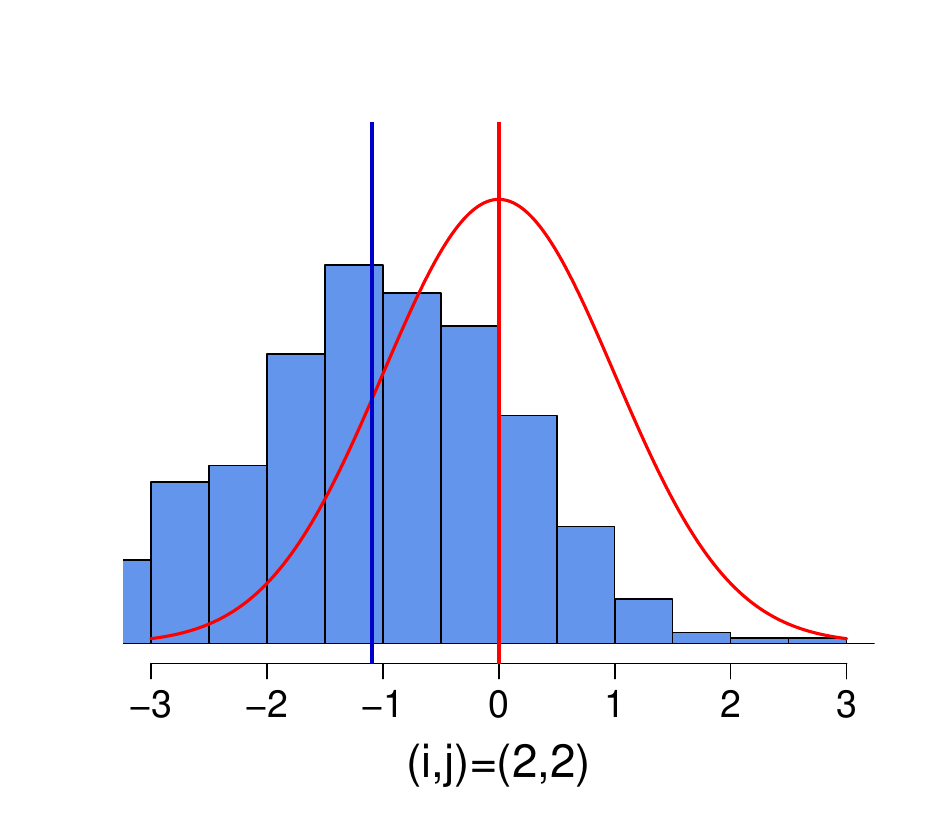}
    \end{minipage}
    \begin{minipage}{0.24\linewidth}
        \centering
        \includegraphics[width=\textwidth]{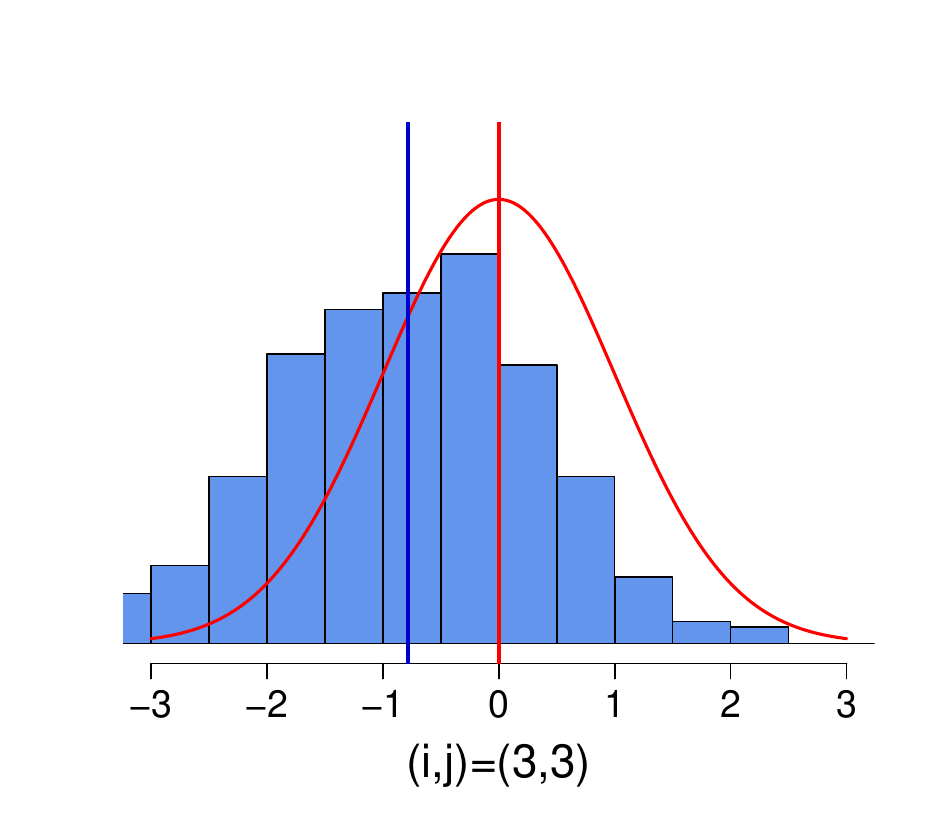}
    \end{minipage}
    \begin{minipage}{0.24\linewidth}
        \centering
        \includegraphics[width=\textwidth]{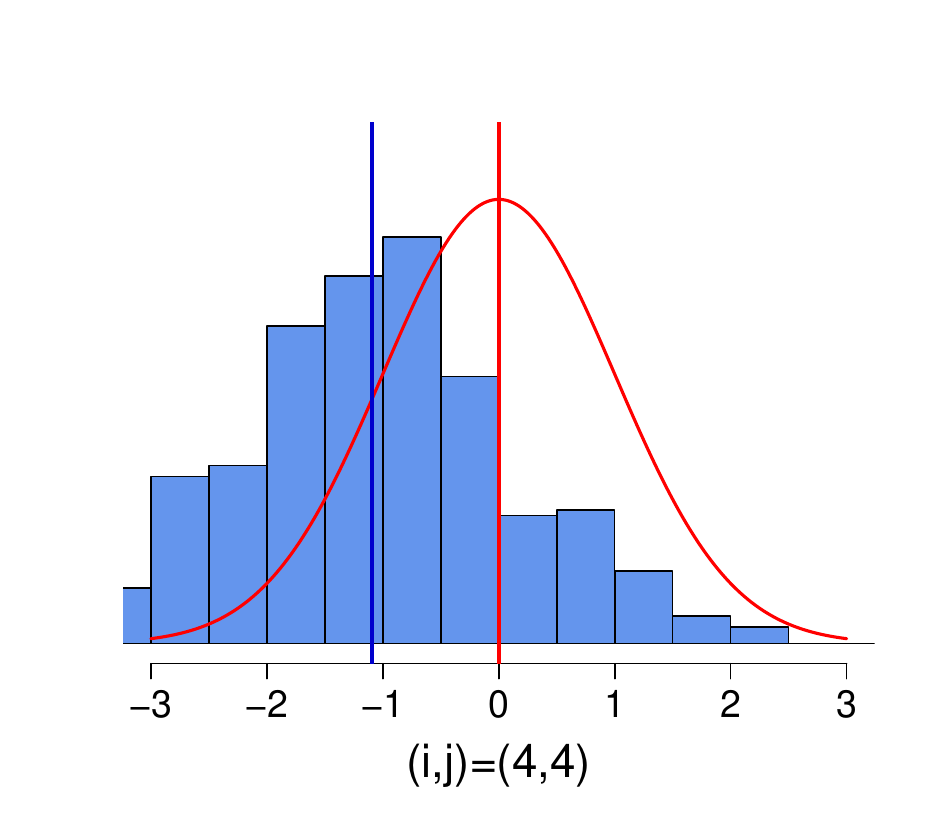}
    \end{minipage}
    \caption*{(c)~~$L_1{:}~ \widehat{\mb{T}}$}
     \end{minipage}   
     \caption{Histograms of $\big(\sqrt{n}(\widehat{\mb{\Omega}}_{ij}^{(m)}-\mb{\Omega}_{ij})/\widehat{\sigma}_{\mb{\Omega}_{ij}}^{(m)}\big)_{m=1}^{400}$ under Gaussian random graph settings.}
\end{sidewaysfigure}

 \begin{sidewaysfigure}[th!]
  \caption*{$n=200, p=200$}
      \vspace{-0.43cm}
 \begin{minipage}{0.3\linewidth}
    \begin{minipage}{0.24\linewidth}
        \centering
        \includegraphics[width=\textwidth]{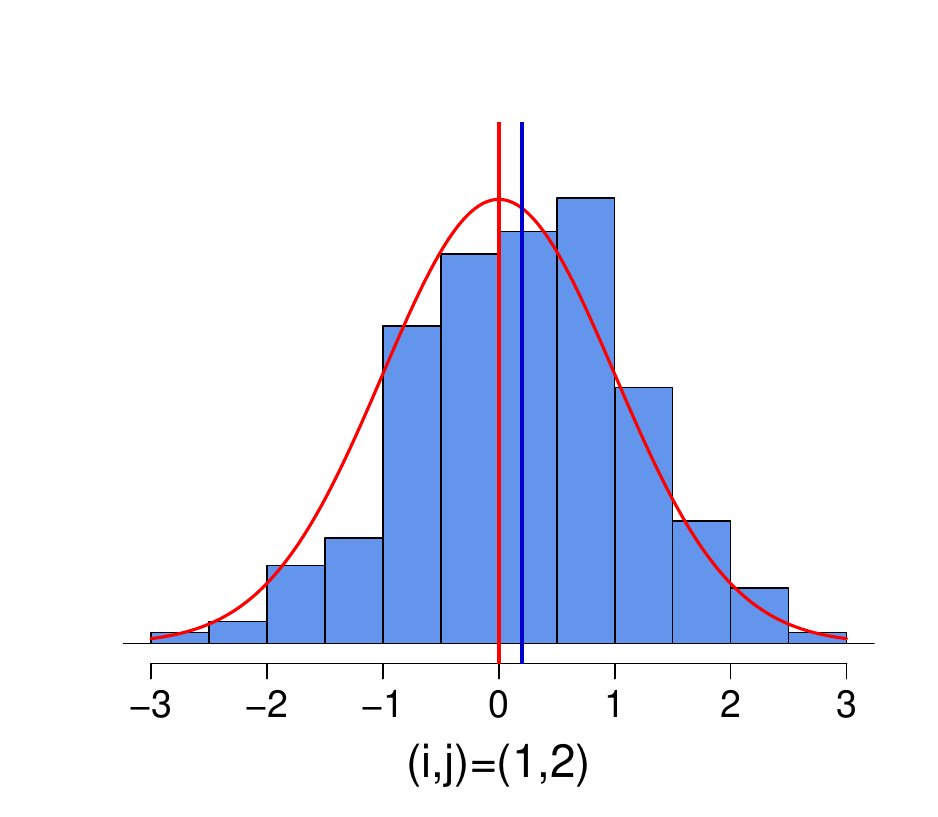}
    \end{minipage}
    \begin{minipage}{0.24\linewidth}
        \centering
        \includegraphics[width=\textwidth]{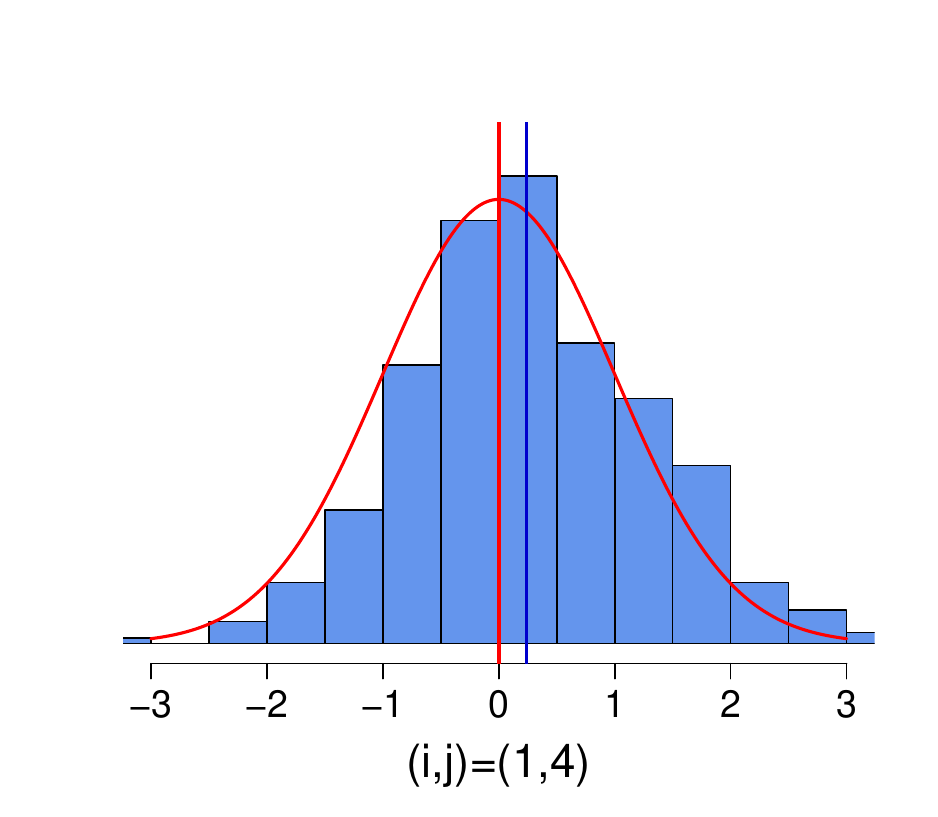}
    \end{minipage}
    \begin{minipage}{0.24\linewidth}
        \centering
        \includegraphics[width=\textwidth]{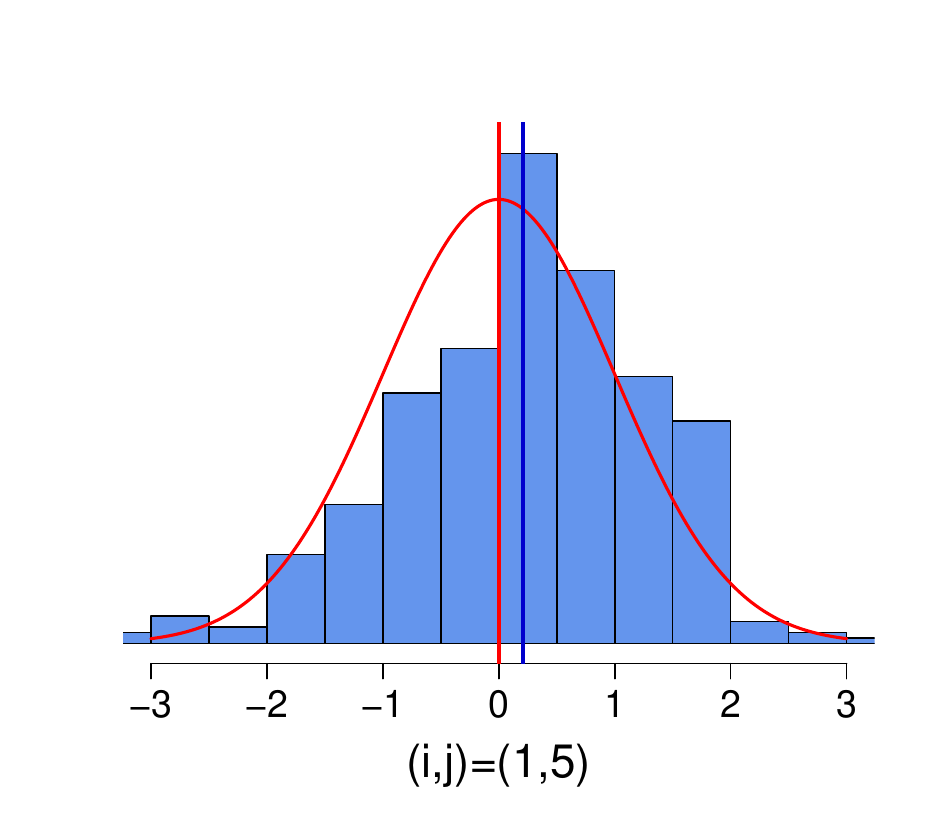}
    \end{minipage}
    \begin{minipage}{0.24\linewidth}
        \centering
        \includegraphics[width=\textwidth]{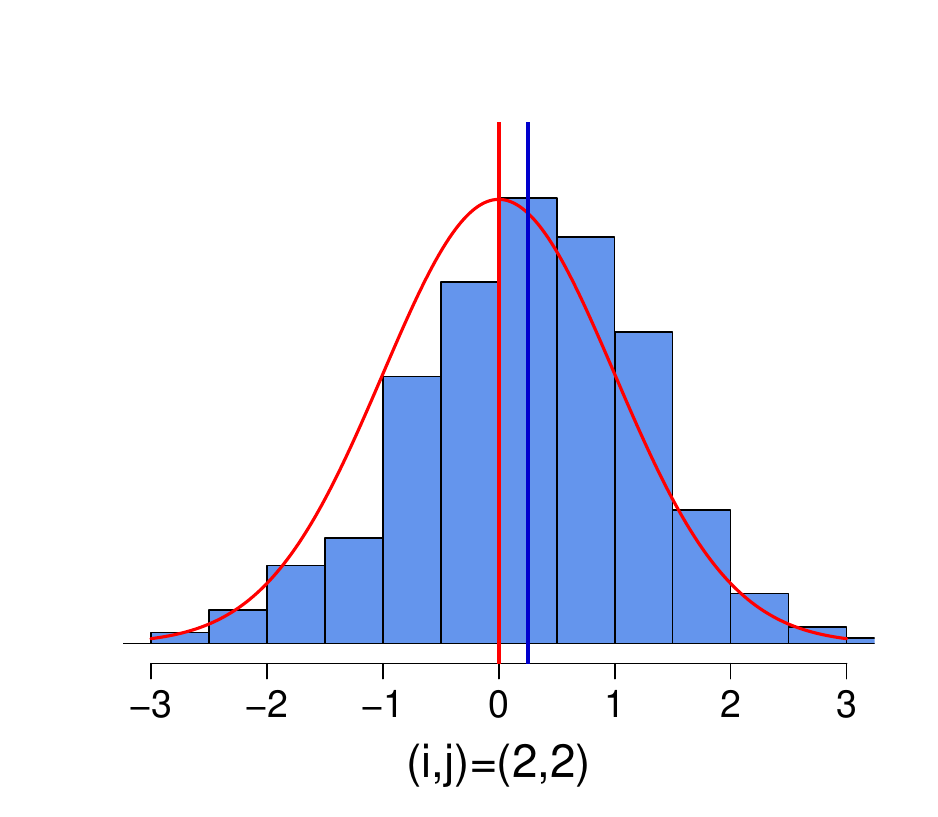}
    \end{minipage}
 \end{minipage}
 \hspace{1cm}
 \begin{minipage}{0.3\linewidth}
    \begin{minipage}{0.24\linewidth}
        \centering
        \includegraphics[width=\textwidth]{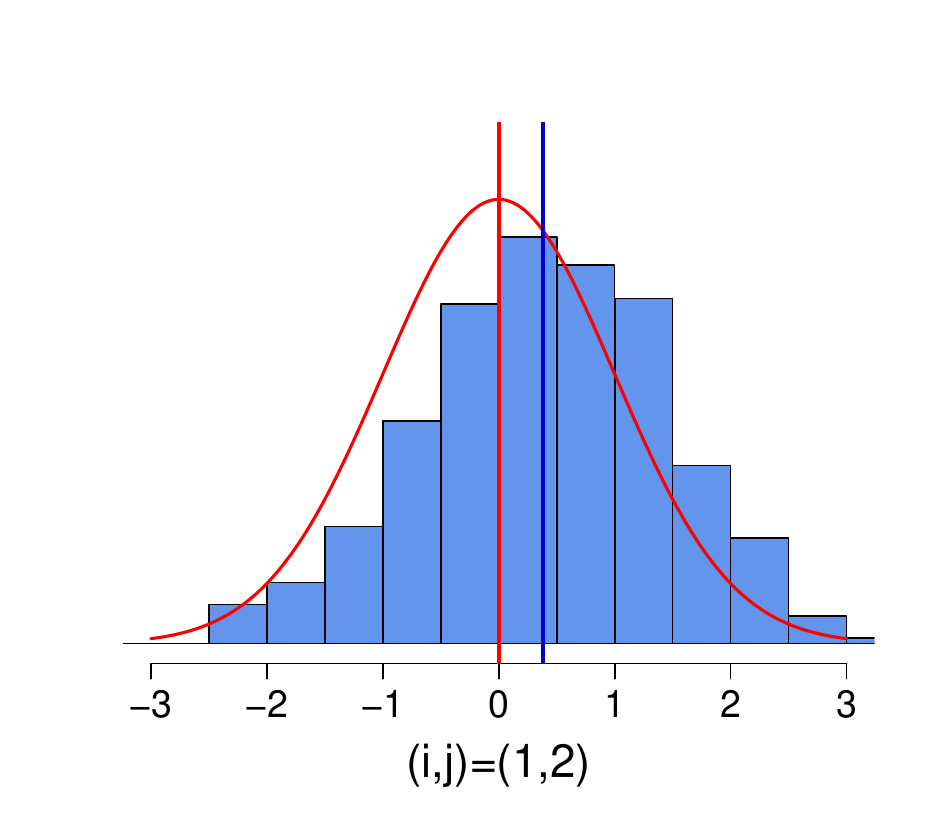}
    \end{minipage}
    \begin{minipage}{0.24\linewidth}
        \centering
        \includegraphics[width=\textwidth]{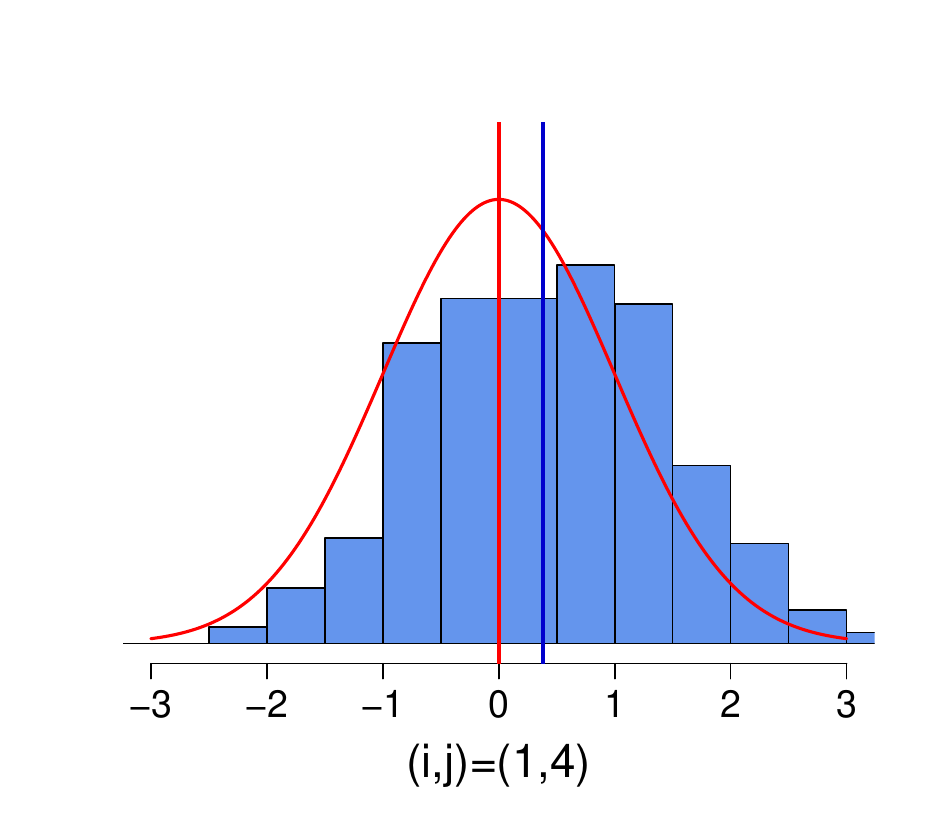}
    \end{minipage}
    \begin{minipage}{0.24\linewidth}
        \centering
        \includegraphics[width=\textwidth]{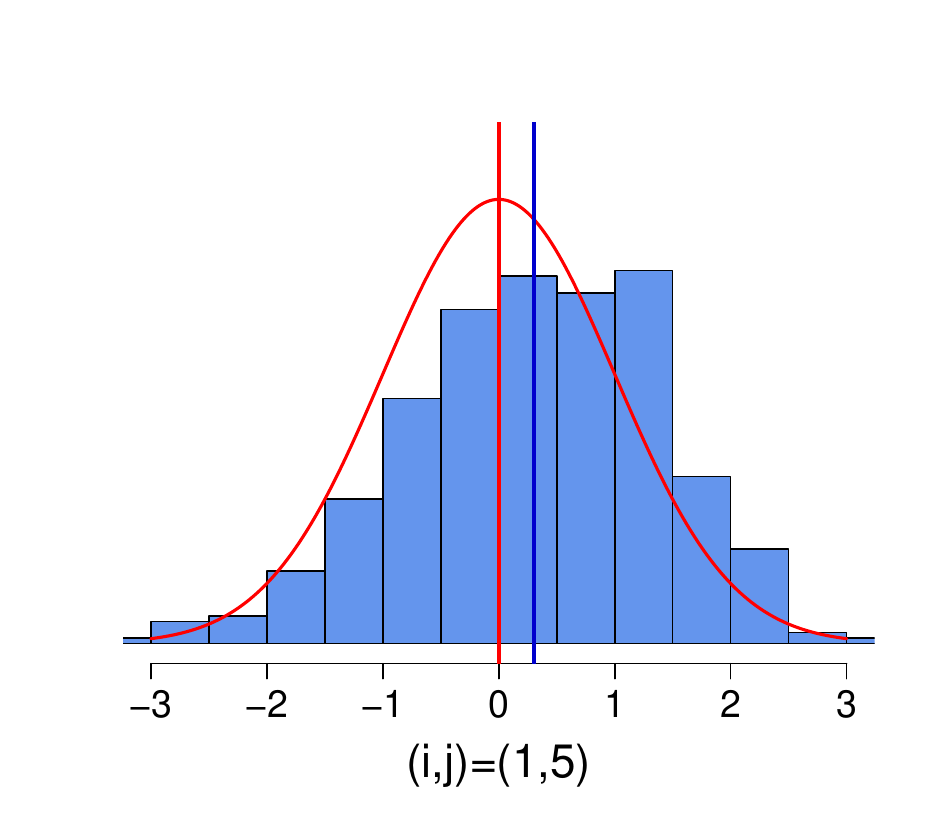}
    \end{minipage}
    \begin{minipage}{0.24\linewidth}
        \centering
        \includegraphics[width=\textwidth]{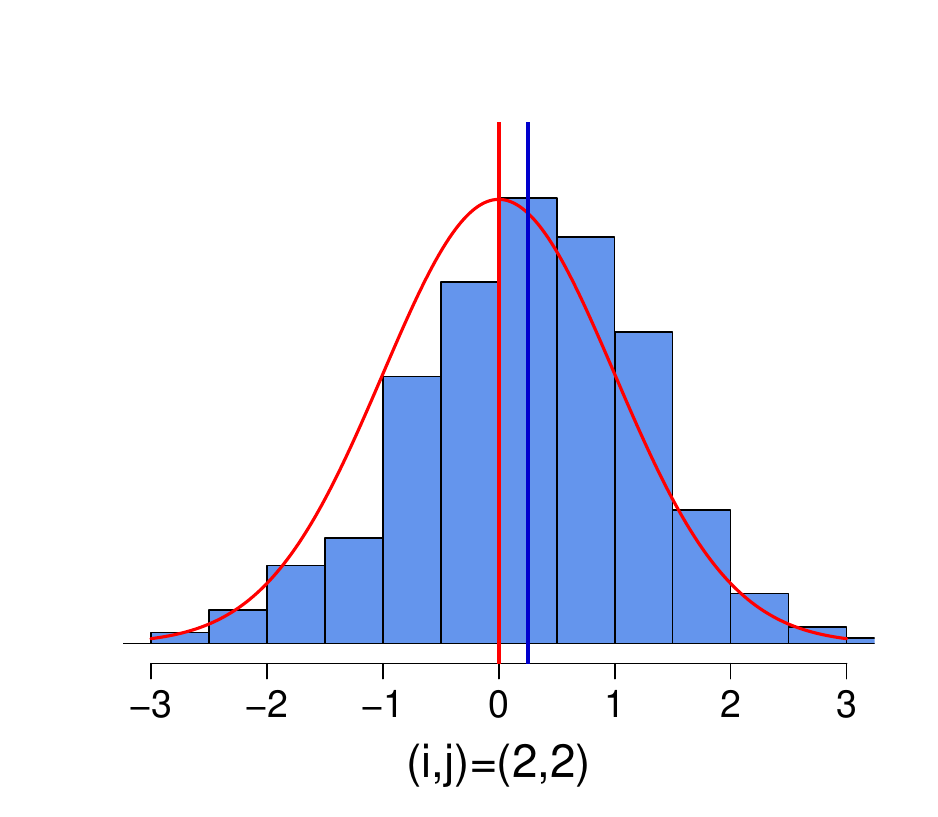}
    \end{minipage}    
 \end{minipage}
  \hspace{1cm}
 \begin{minipage}{0.3\linewidth}
     \begin{minipage}{0.24\linewidth}
        \centering
        \includegraphics[width=\textwidth]{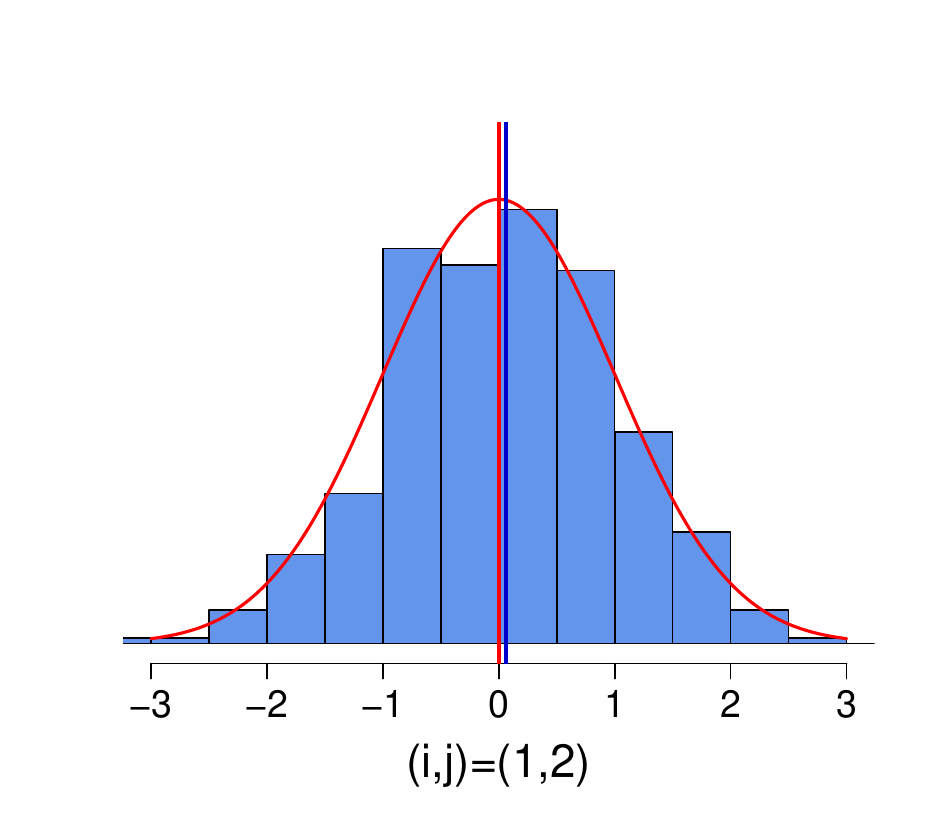}
    \end{minipage}
    \begin{minipage}{0.24\linewidth}
        \centering
        \includegraphics[width=\textwidth]{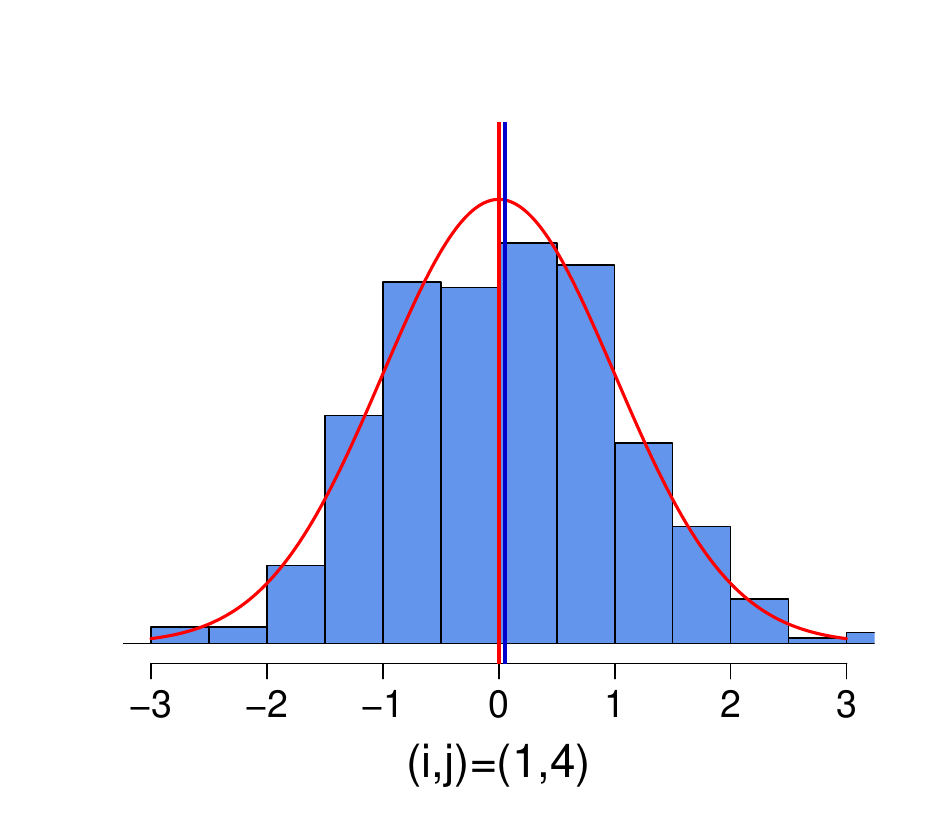}
    \end{minipage}
    \begin{minipage}{0.24\linewidth}
        \centering
        \includegraphics[width=\textwidth]{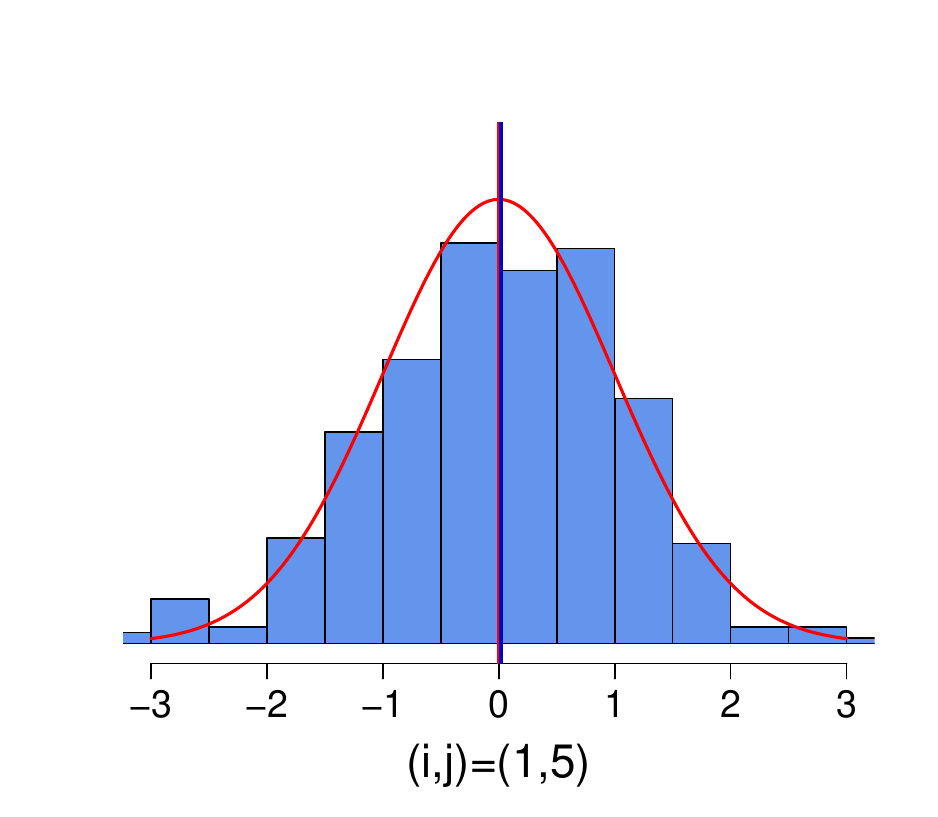}
    \end{minipage}
    \begin{minipage}{0.24\linewidth}
        \centering
        \includegraphics[width=\textwidth]{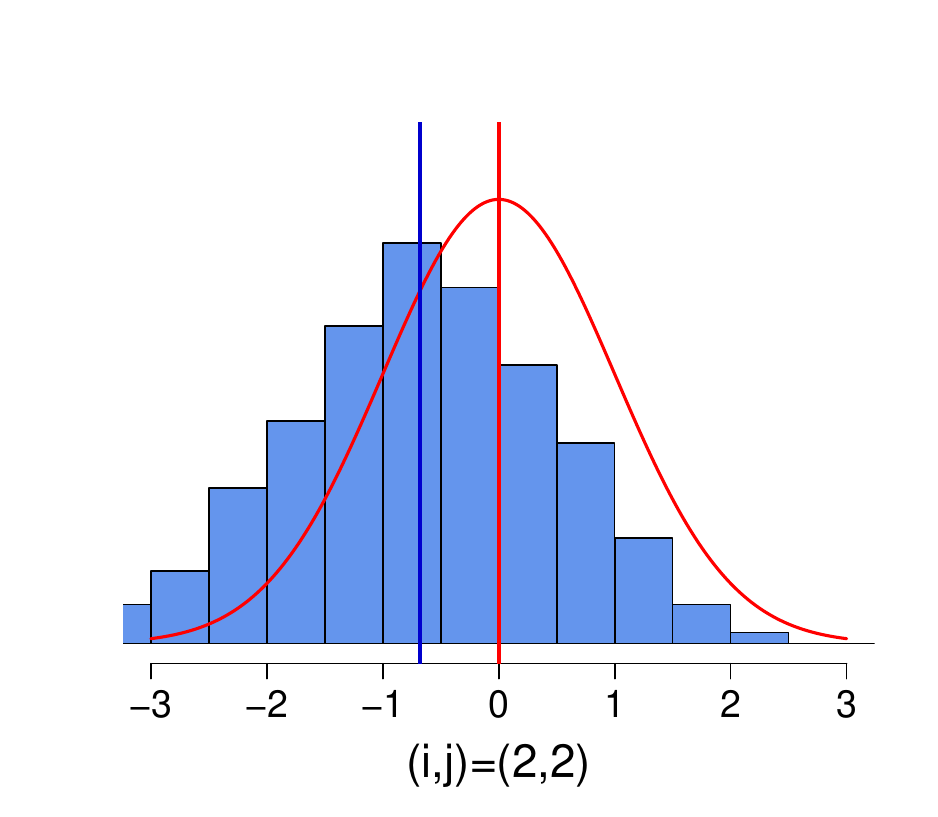}
    \end{minipage}
 \end{minipage}

  \caption*{$n=400, p=200$}
      \vspace{-0.43cm}
 \begin{minipage}{0.3\linewidth}
    \begin{minipage}{0.24\linewidth}
        \centering
        \includegraphics[width=\textwidth]{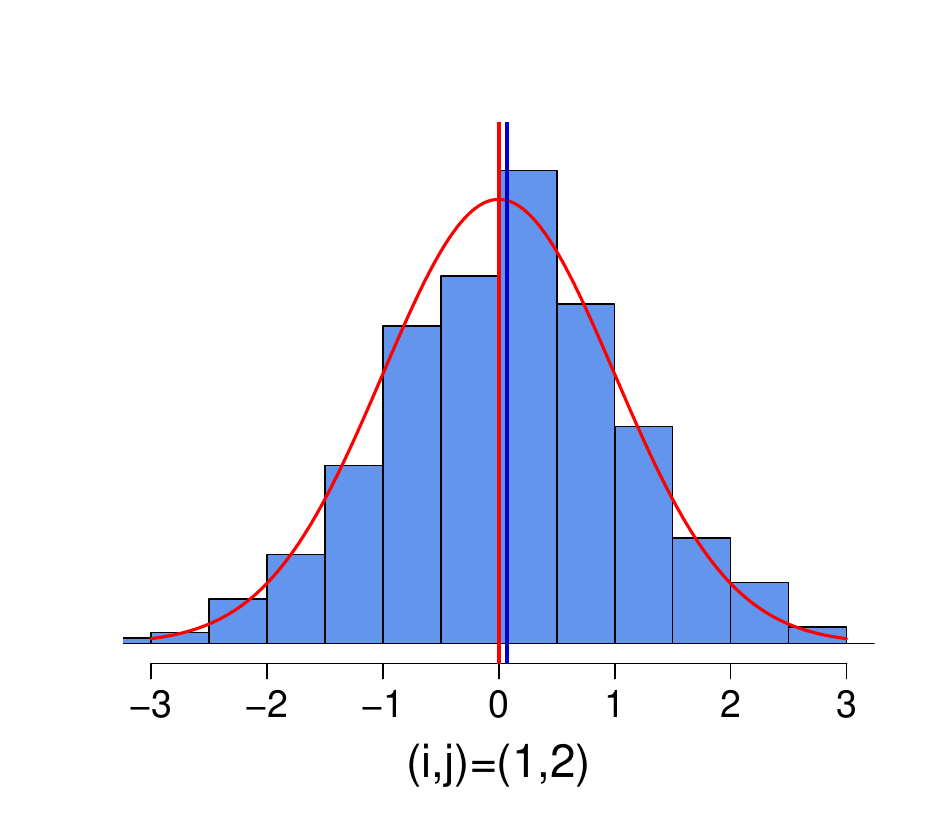}
    \end{minipage}
    \begin{minipage}{0.24\linewidth}
        \centering
        \includegraphics[width=\textwidth]{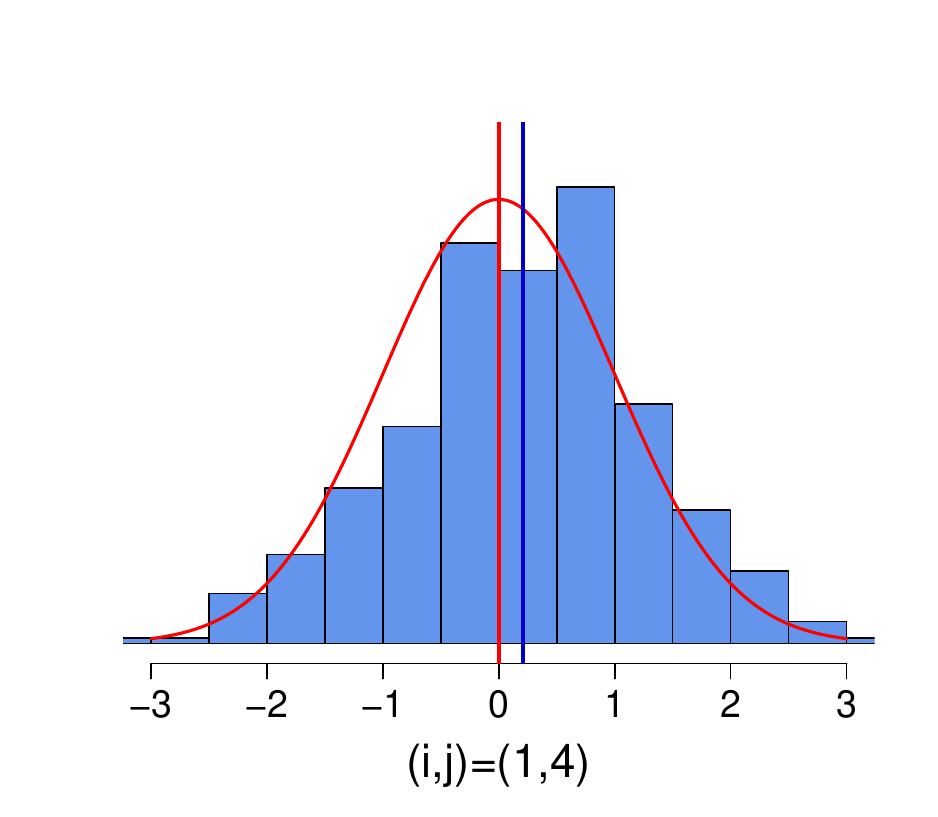}
    \end{minipage}
    \begin{minipage}{0.24\linewidth}
        \centering
        \includegraphics[width=\textwidth]{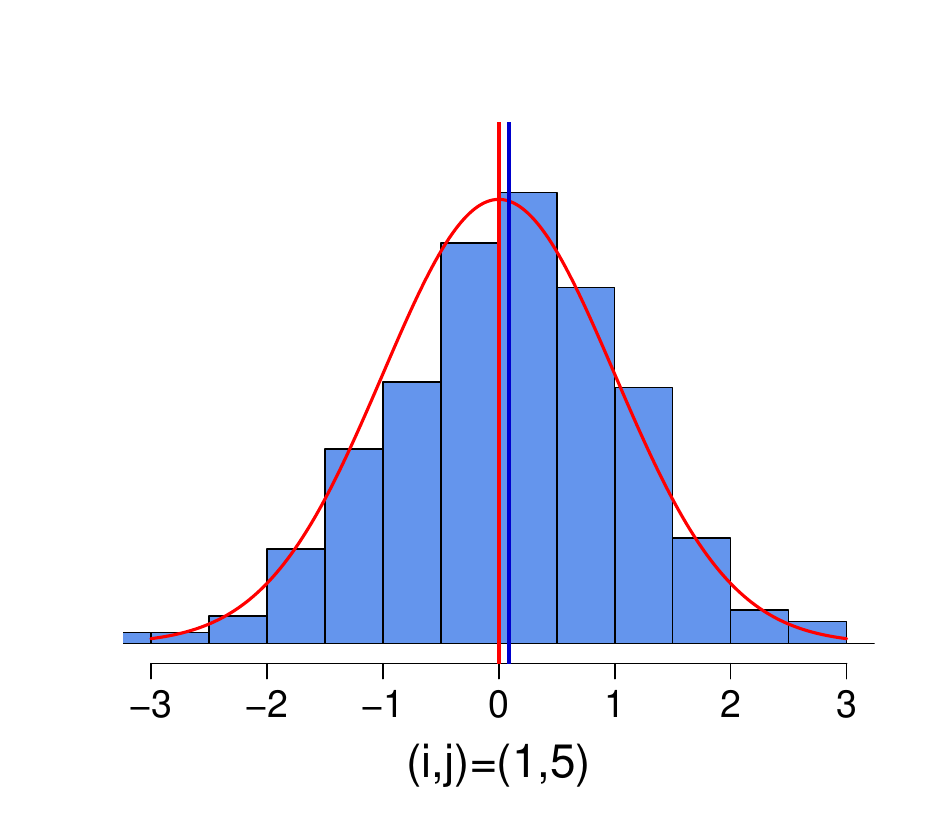}
    \end{minipage}
    \begin{minipage}{0.24\linewidth}
        \centering
        \includegraphics[width=\textwidth]{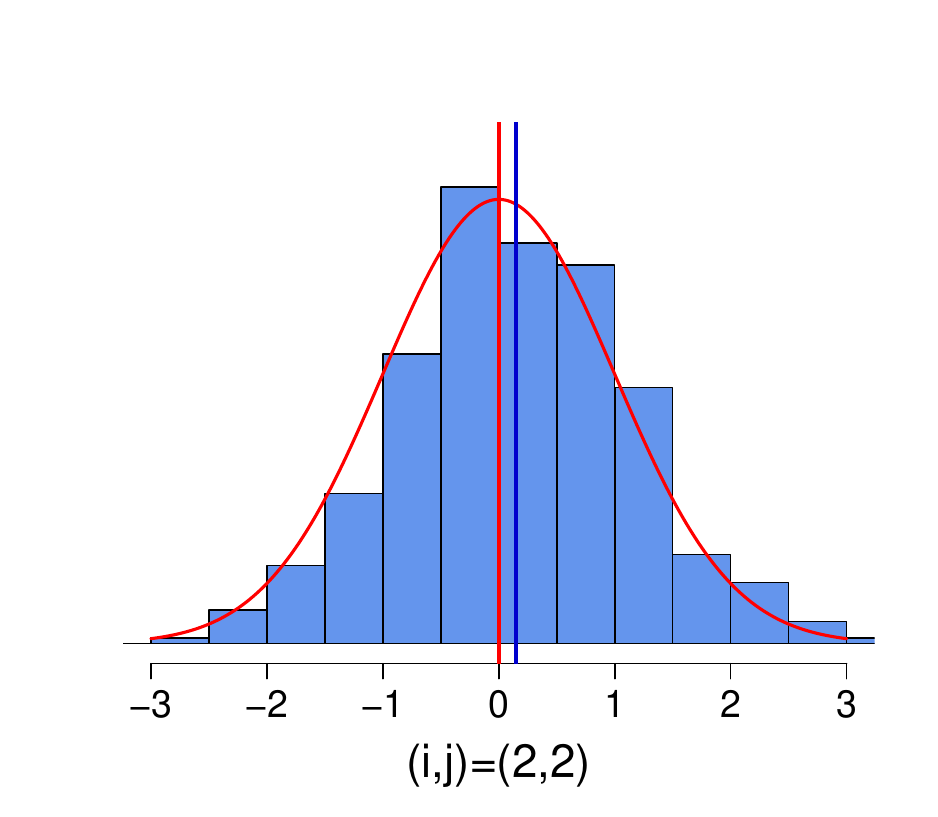}
    \end{minipage}
 \end{minipage}  
     \hspace{1cm}
 \begin{minipage}{0.3\linewidth}
    \begin{minipage}{0.24\linewidth}
        \centering
        \includegraphics[width=\textwidth]{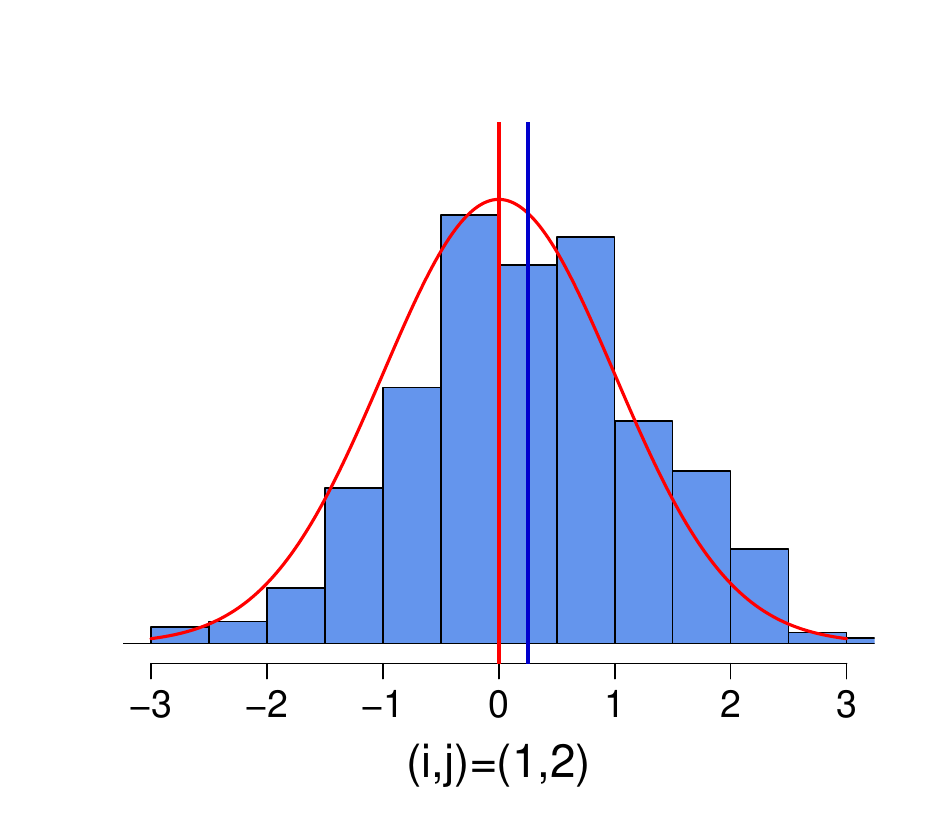}
    \end{minipage}
    \begin{minipage}{0.24\linewidth}
        \centering
        \includegraphics[width=\textwidth]{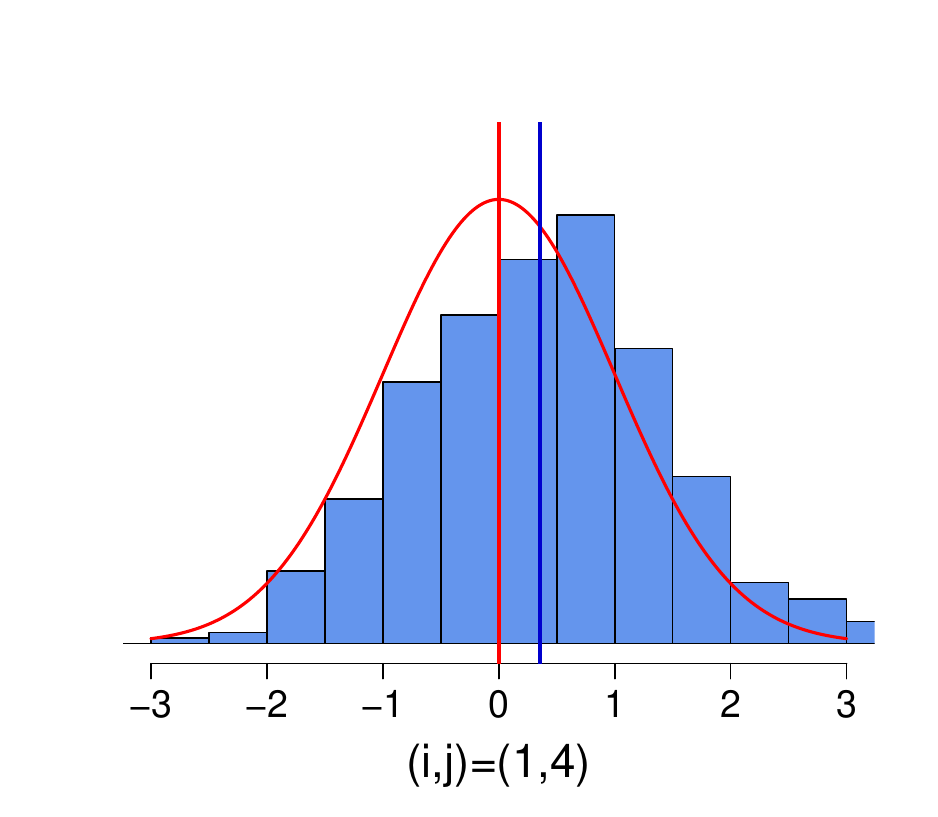}
    \end{minipage}
    \begin{minipage}{0.24\linewidth}
        \centering
        \includegraphics[width=\textwidth]{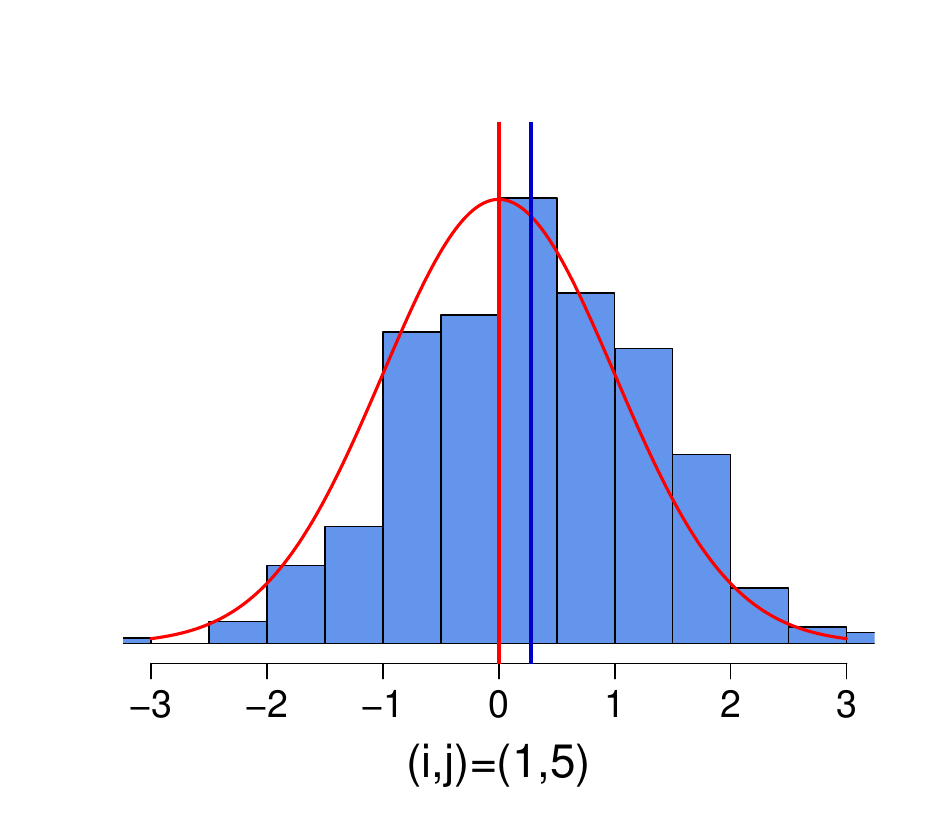}
    \end{minipage}
    \begin{minipage}{0.24\linewidth}
        \centering
        \includegraphics[width=\textwidth]{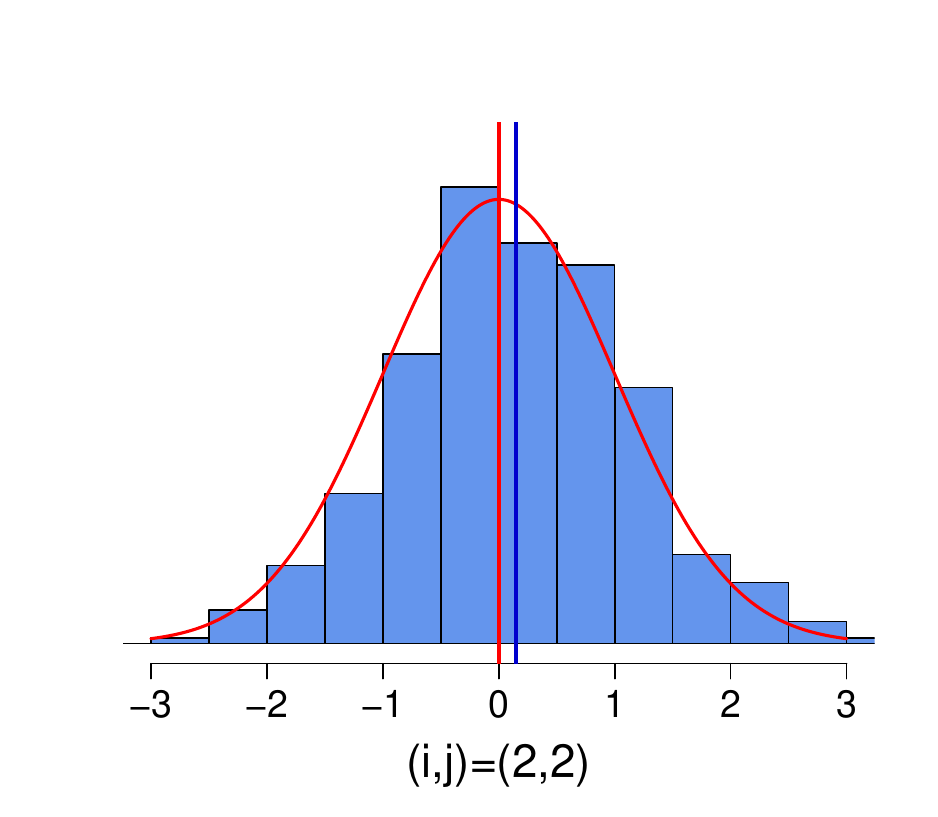}
    \end{minipage}
  \end{minipage}  
    \hspace{1cm}
 \begin{minipage}{0.3\linewidth}
    \begin{minipage}{0.24\linewidth}
        \centering
        \includegraphics[width=\textwidth]{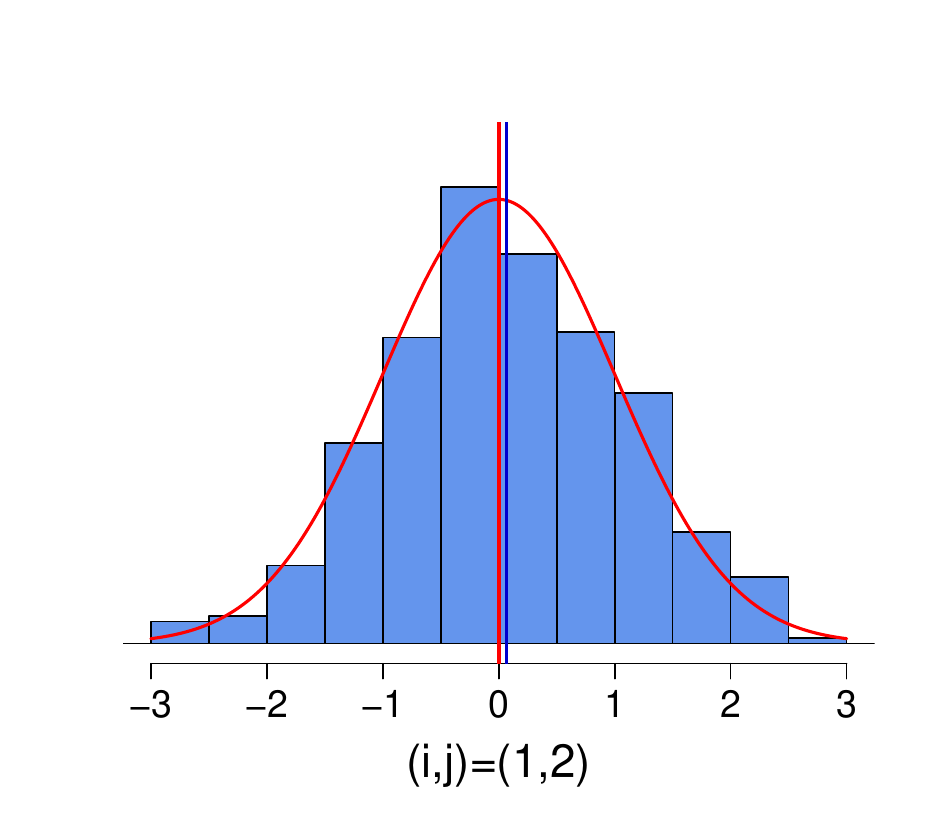}
    \end{minipage}
    \begin{minipage}{0.24\linewidth}
        \centering
        \includegraphics[width=\textwidth]{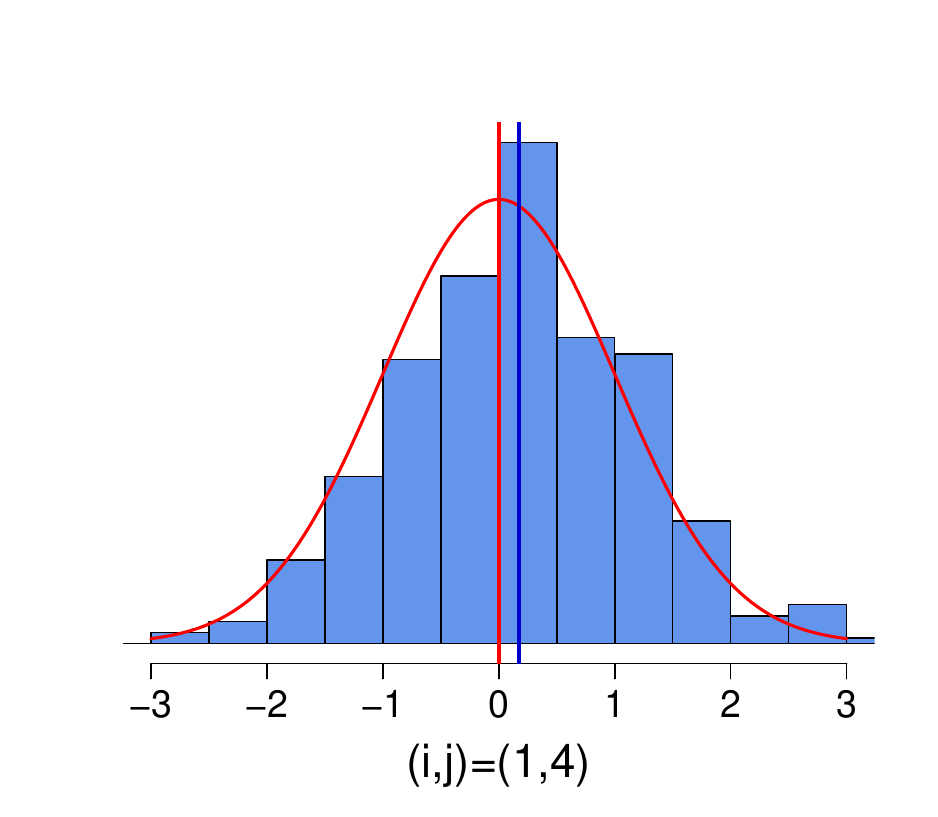}
    \end{minipage}
    \begin{minipage}{0.24\linewidth}
        \centering
        \includegraphics[width=\textwidth]{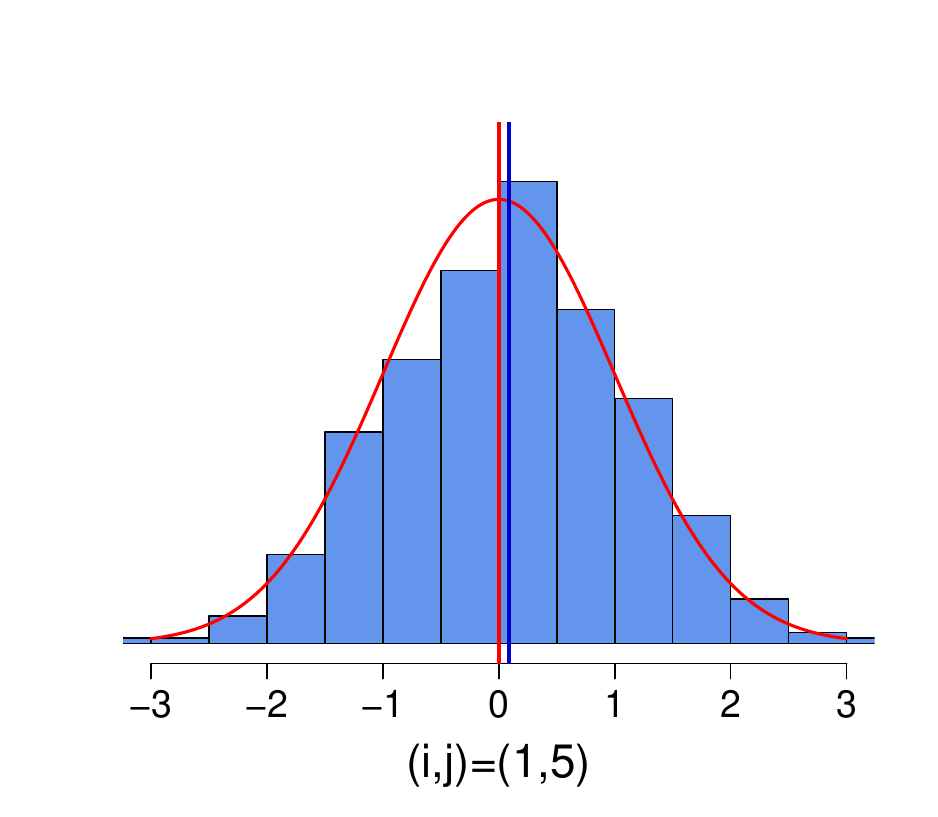}
    \end{minipage}
    \begin{minipage}{0.24\linewidth}
        \centering
        \includegraphics[width=\textwidth]{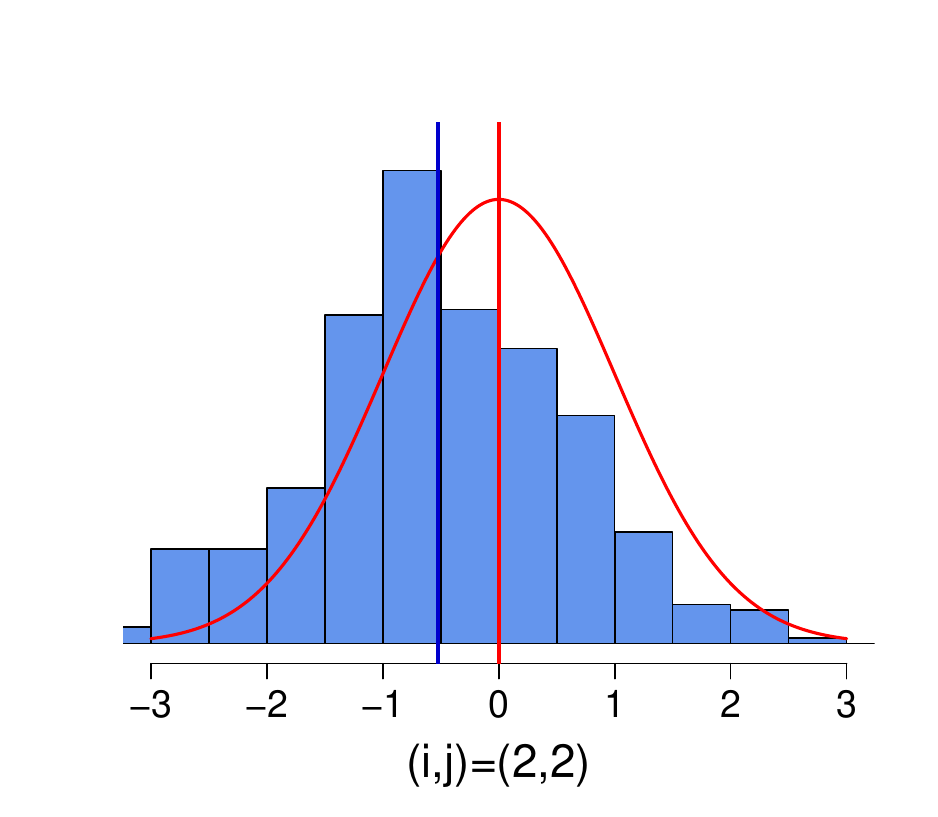}
    \end{minipage}
 \end{minipage}

  \caption*{$n=800, p=200$}
      \vspace{-0.43cm}
 \begin{minipage}{0.3\linewidth}
    \begin{minipage}{0.24\linewidth}
        \centering
        \includegraphics[width=\textwidth]{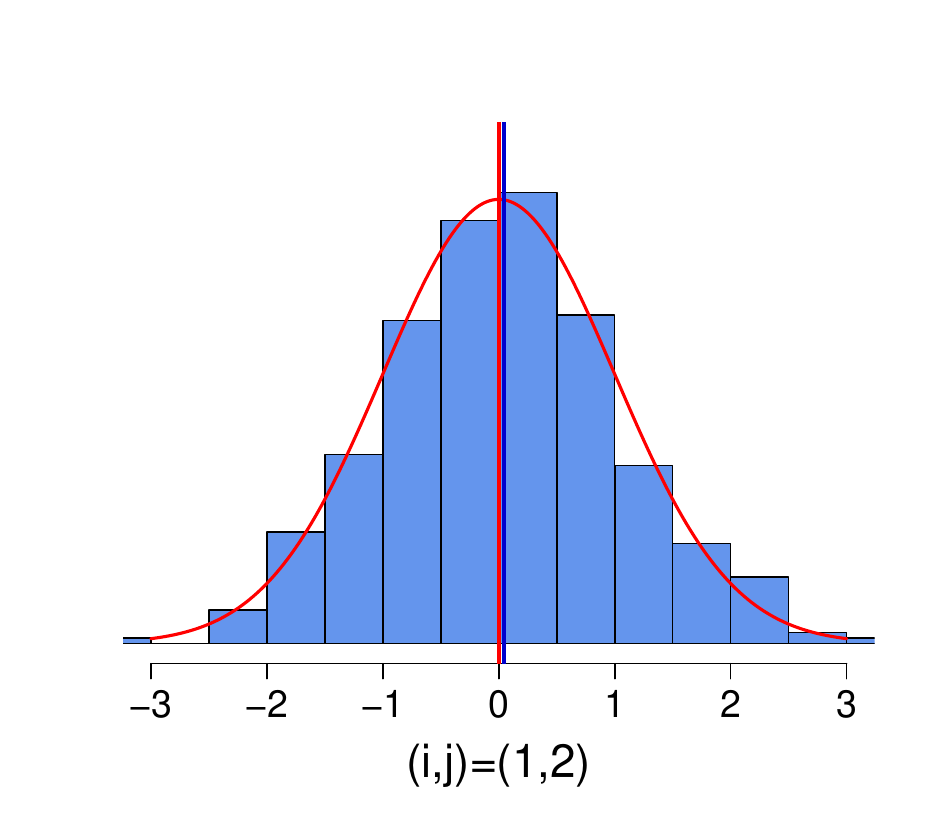}
    \end{minipage}
    \begin{minipage}{0.24\linewidth}
        \centering
        \includegraphics[width=\textwidth]{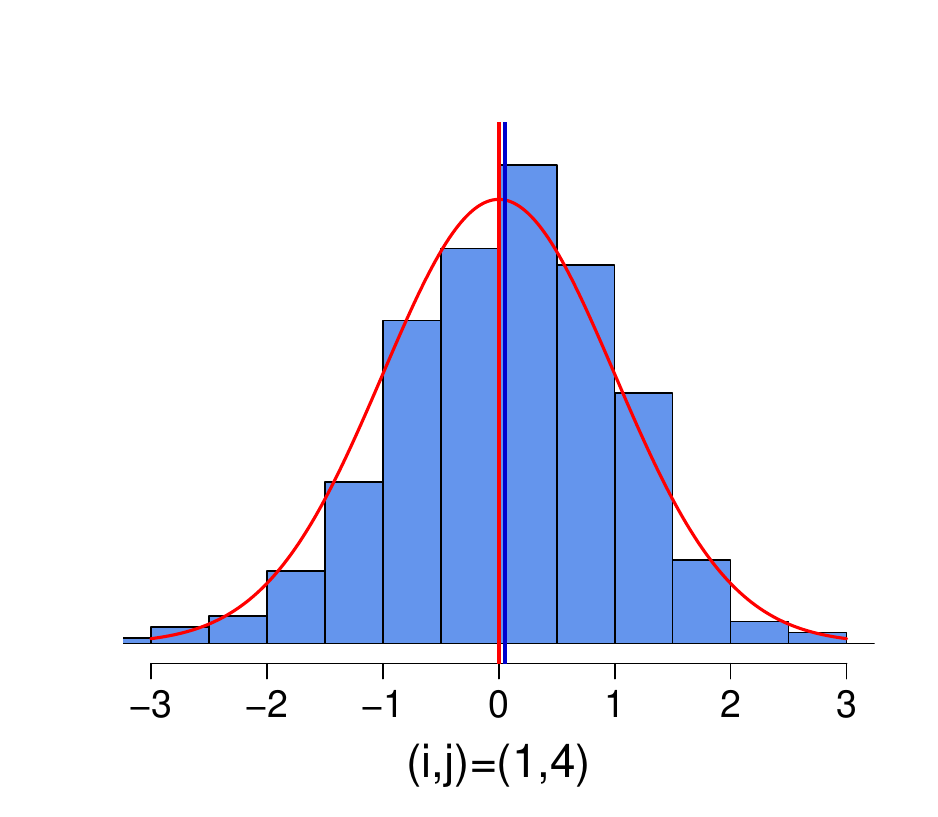}
    \end{minipage}
    \begin{minipage}{0.24\linewidth}
        \centering
        \includegraphics[width=\textwidth]{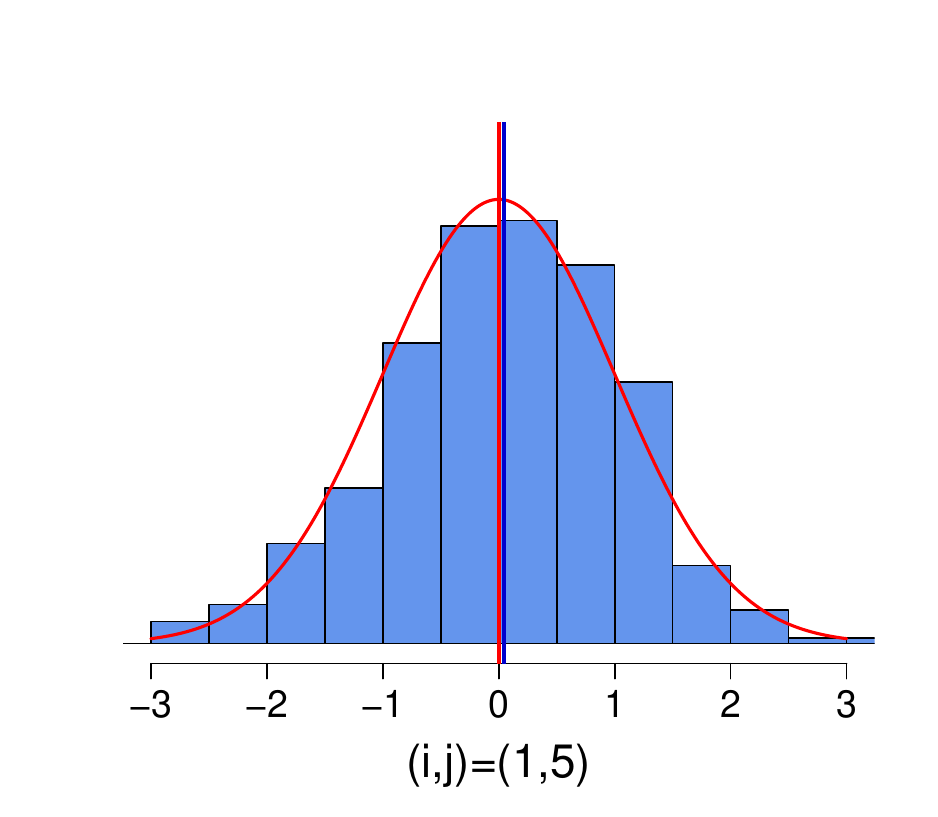}
    \end{minipage}
    \begin{minipage}{0.24\linewidth}
        \centering
        \includegraphics[width=\textwidth]{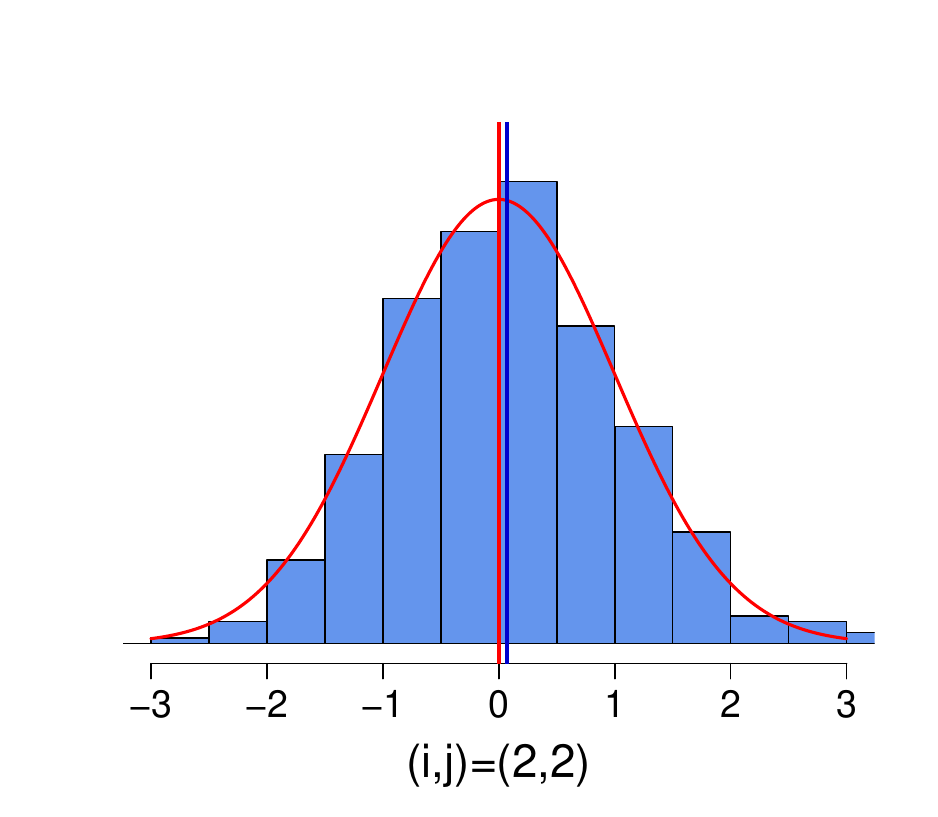}
    \end{minipage}
 \end{minipage} 
     \hspace{1cm}
 \begin{minipage}{0.3\linewidth}
    \begin{minipage}{0.24\linewidth}
        \centering
        \includegraphics[width=\textwidth]{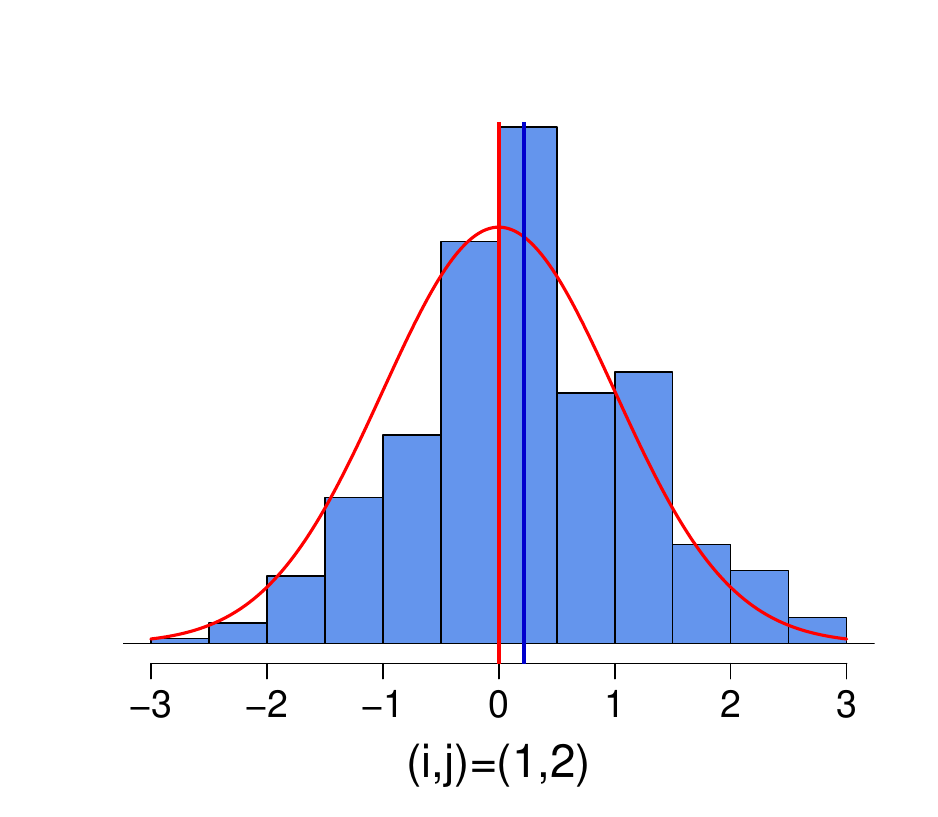}
    \end{minipage}
    \begin{minipage}{0.24\linewidth}
        \centering
        \includegraphics[width=\textwidth]{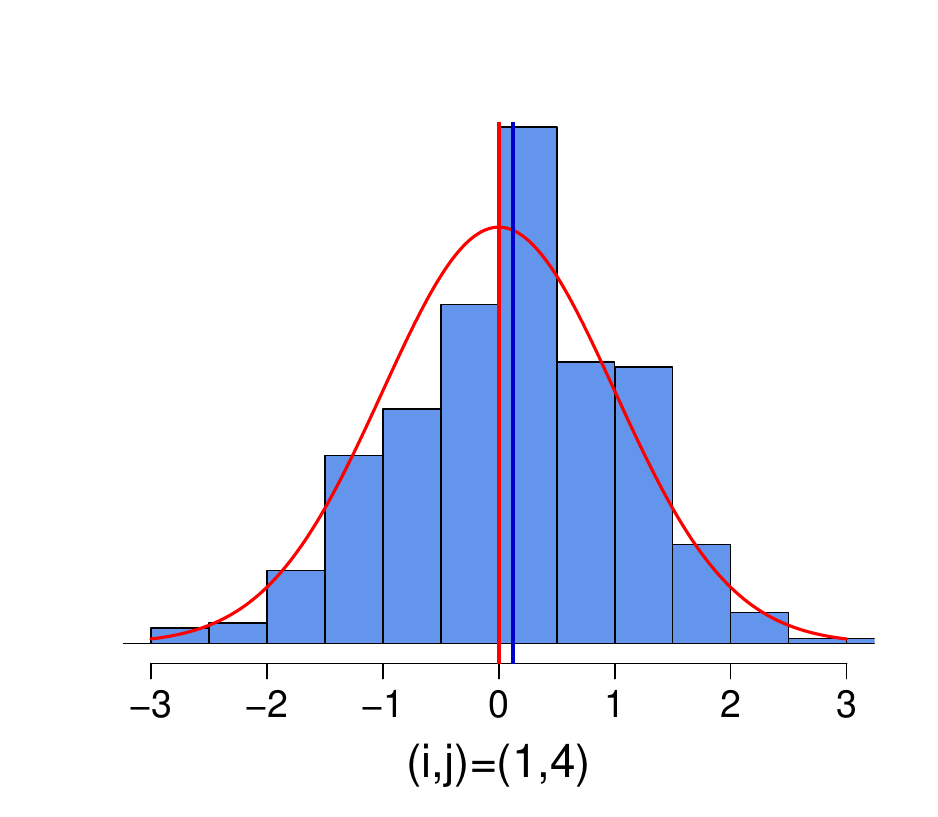}
    \end{minipage}
    \begin{minipage}{0.24\linewidth}
        \centering
        \includegraphics[width=\textwidth]{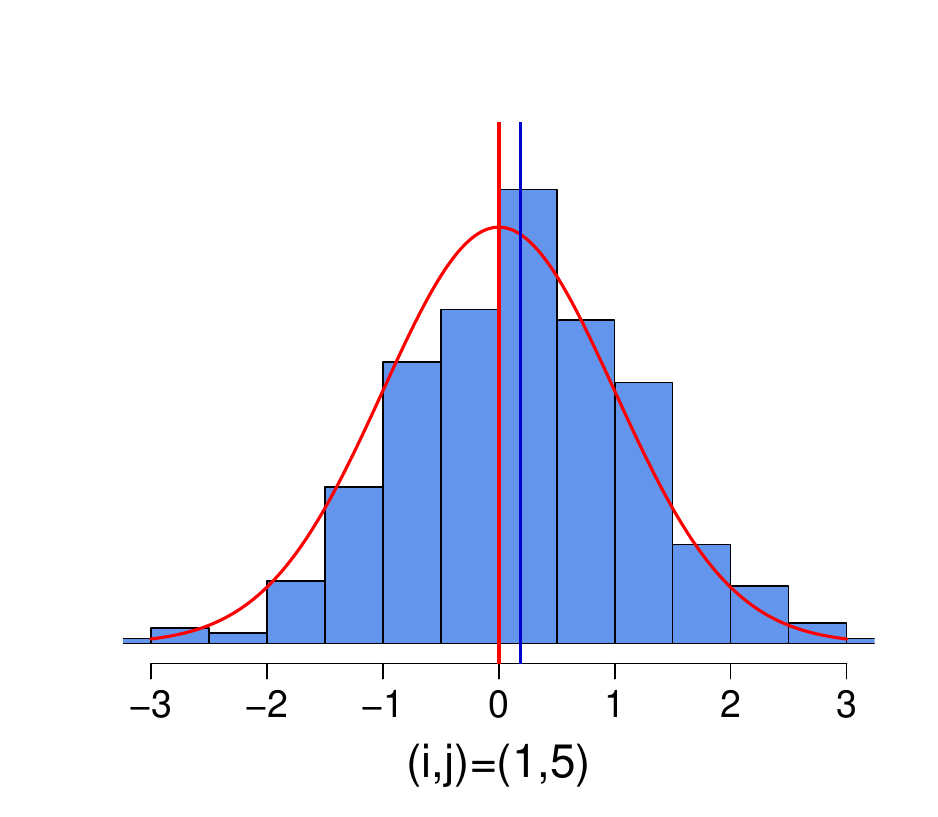}
    \end{minipage}
    \begin{minipage}{0.24\linewidth}
        \centering
        \includegraphics[width=\textwidth]{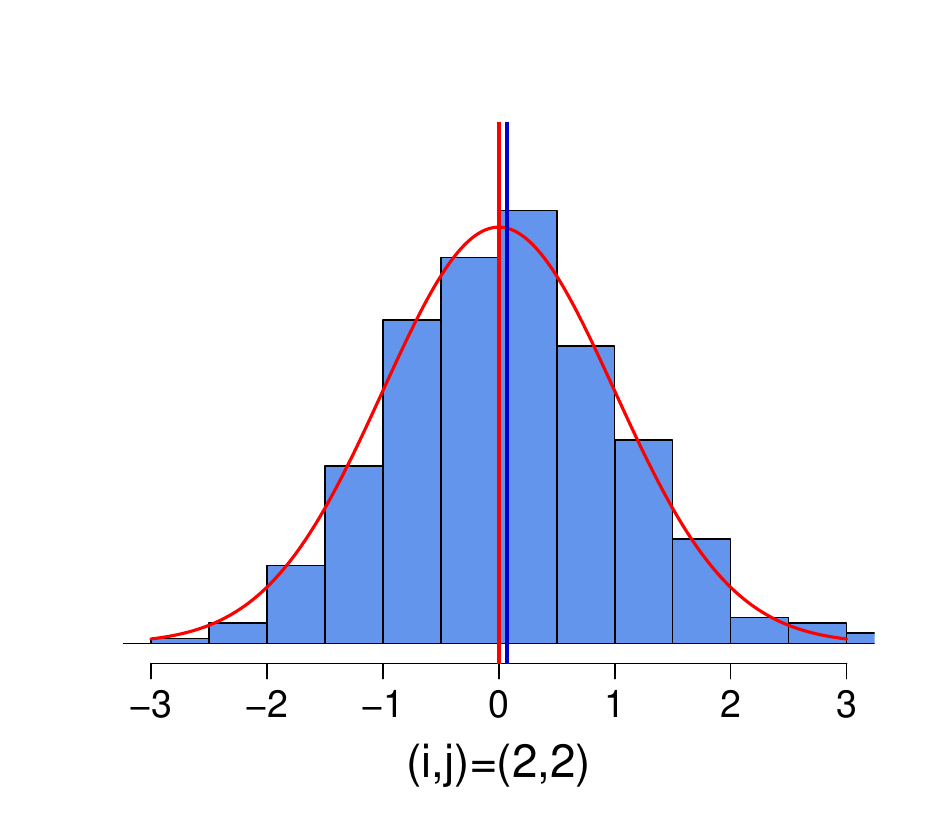}
    \end{minipage}
 \end{minipage}   
      \hspace{1cm}
 \begin{minipage}{0.3\linewidth}
    \begin{minipage}{0.24\linewidth}
        \centering
        \includegraphics[width=\textwidth]{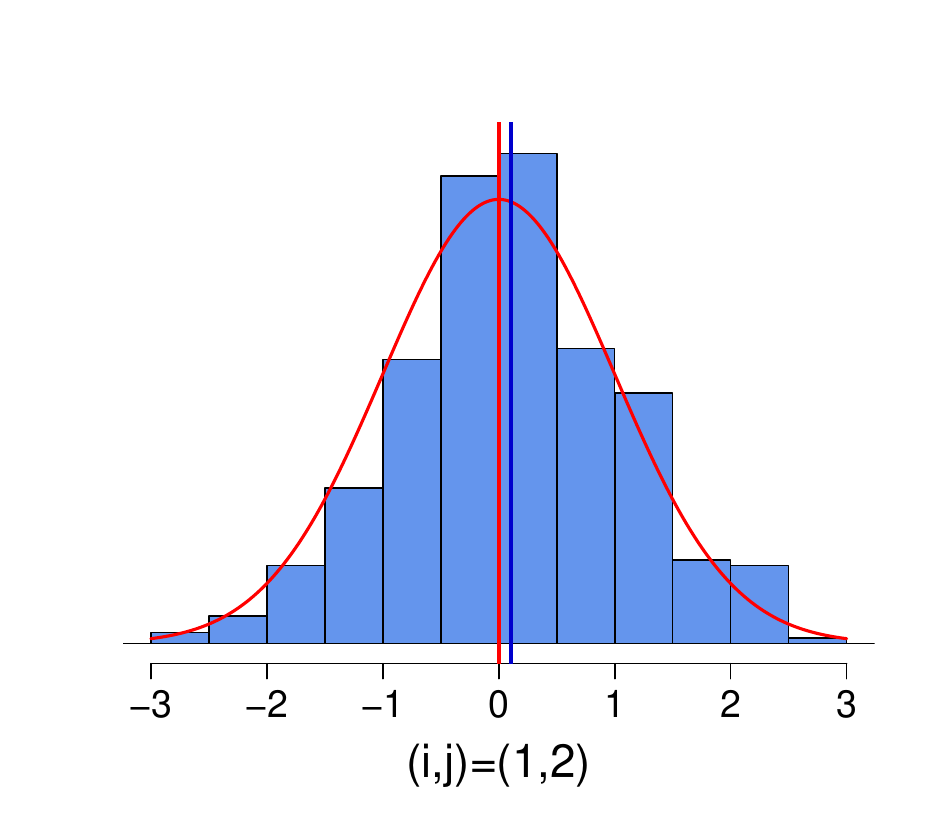}
    \end{minipage}
    \begin{minipage}{0.24\linewidth}
        \centering
        \includegraphics[width=\textwidth]{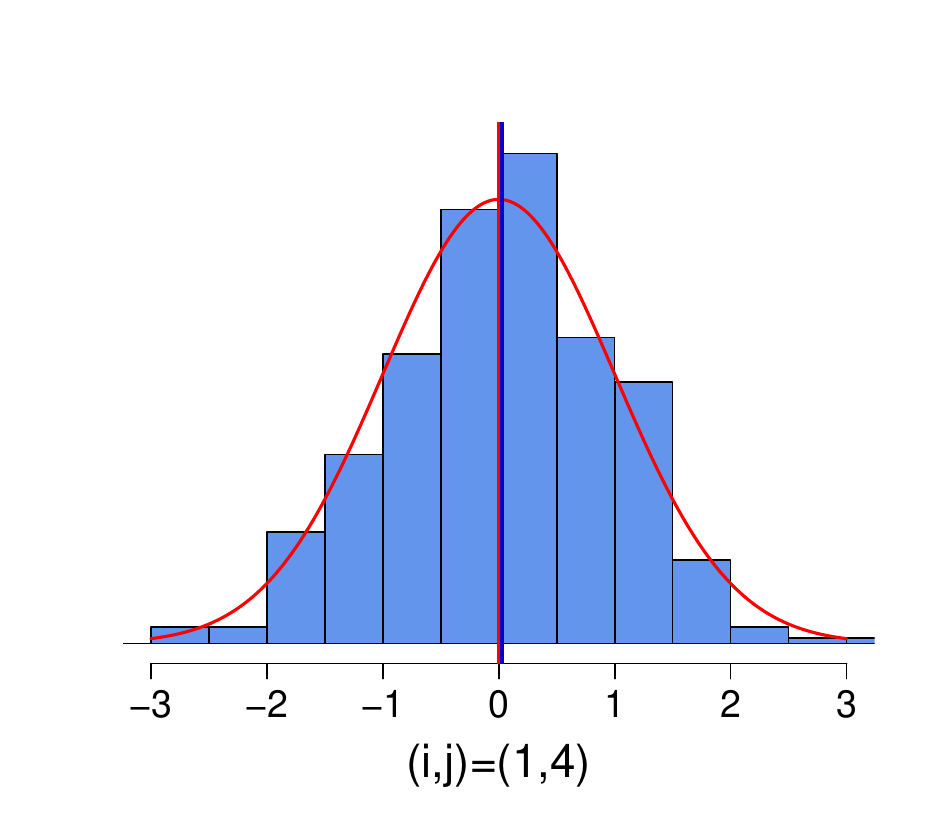}
    \end{minipage}
    \begin{minipage}{0.24\linewidth}
        \centering
        \includegraphics[width=\textwidth]{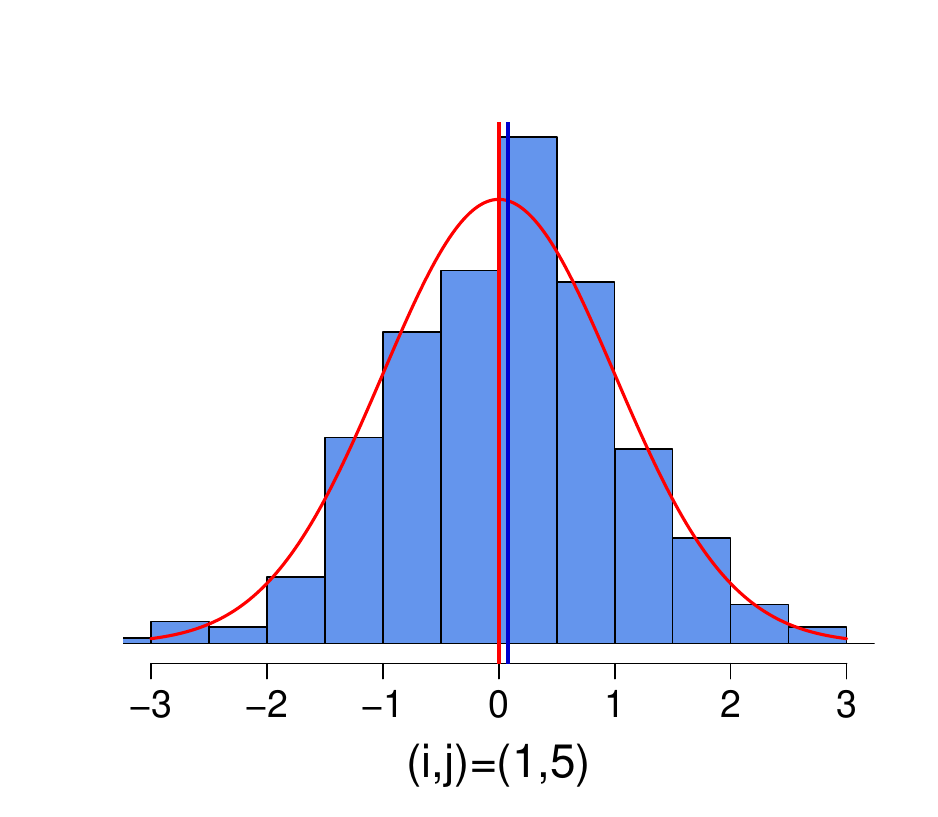}
    \end{minipage}
    \begin{minipage}{0.24\linewidth}
        \centering
        \includegraphics[width=\textwidth]{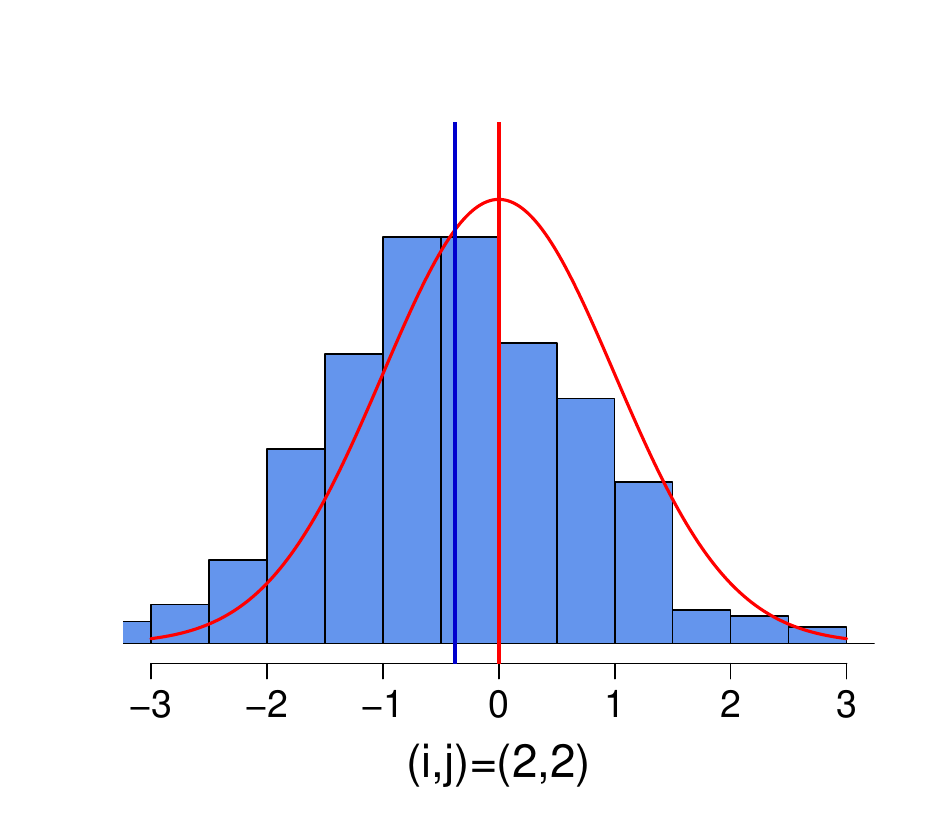}
    \end{minipage}
     \end{minipage}   
     
 \caption*{$n=200, p=400$}
     \vspace{-0.43cm}
 \begin{minipage}{0.3\linewidth}
    \begin{minipage}{0.24\linewidth}
        \centering
        \includegraphics[width=\textwidth]{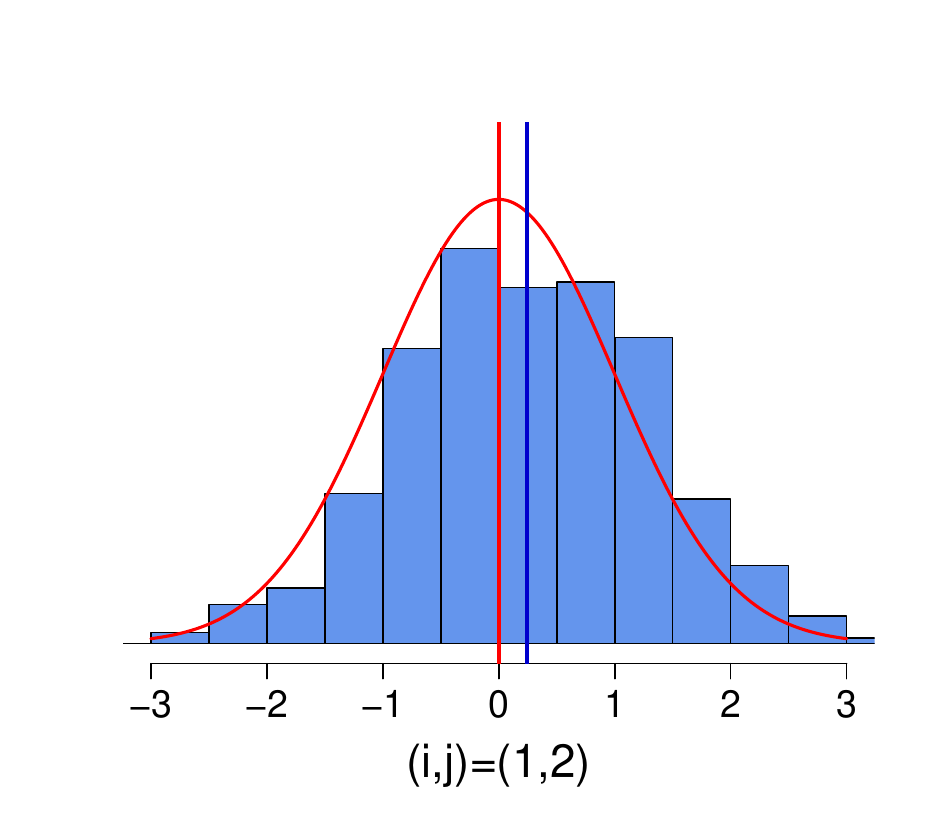}
    \end{minipage}
    \begin{minipage}{0.24\linewidth}
        \centering
        \includegraphics[width=\textwidth]{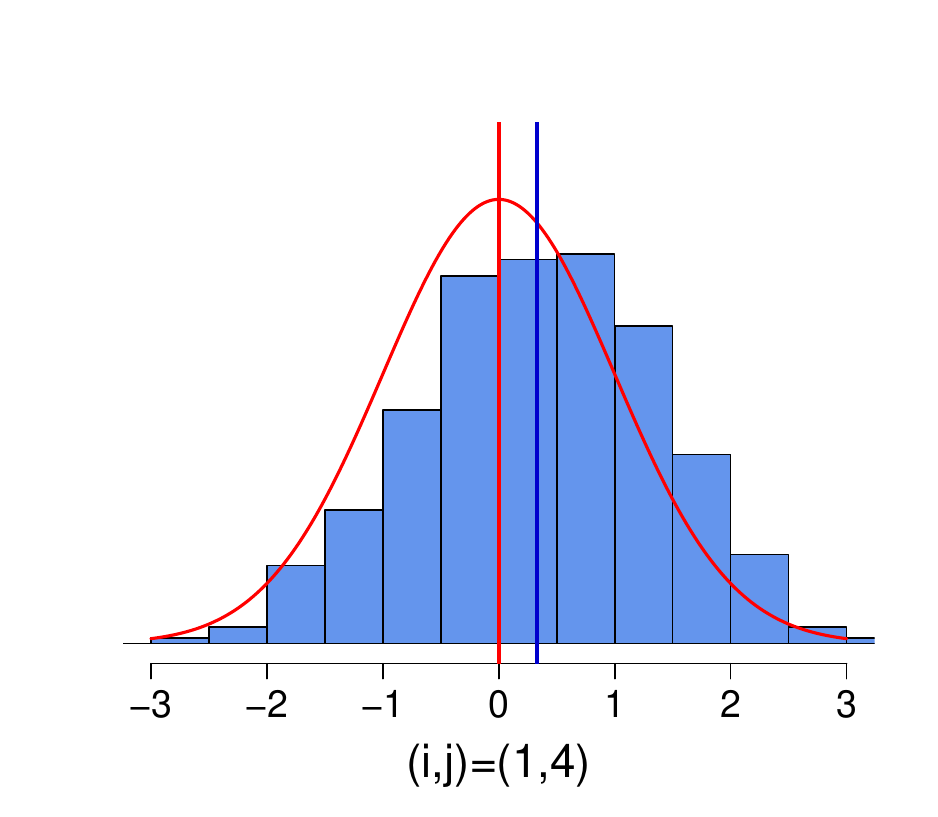}
    \end{minipage}
    \begin{minipage}{0.24\linewidth}
        \centering
        \includegraphics[width=\textwidth]{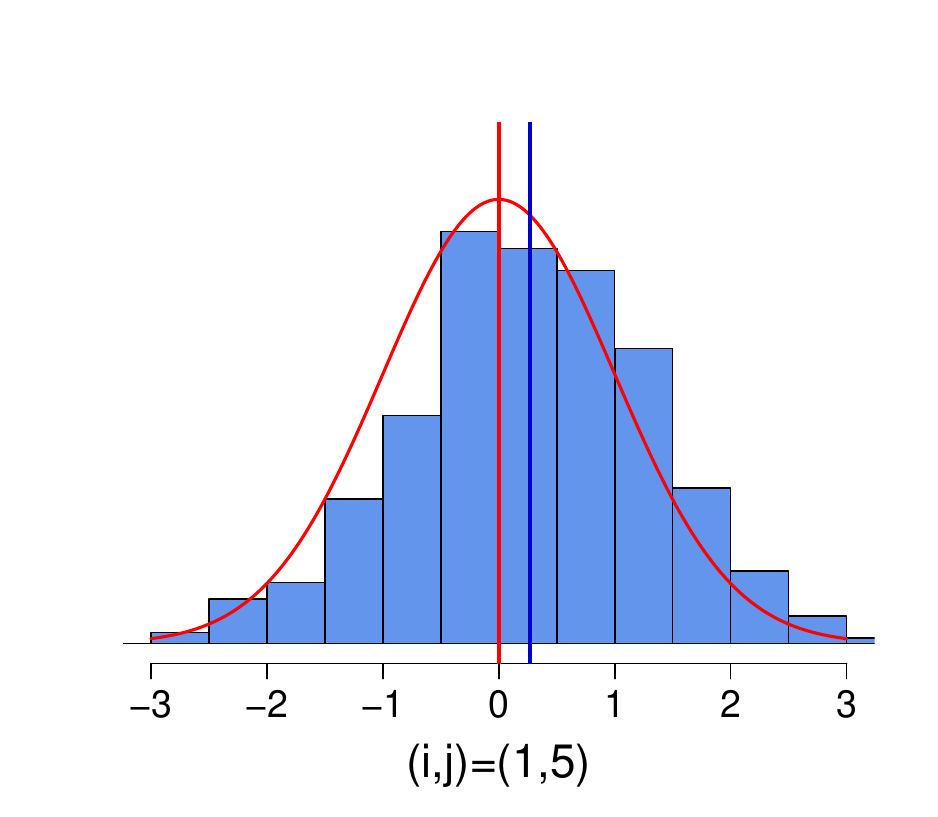}
    \end{minipage}
    \begin{minipage}{0.24\linewidth}
        \centering
        \includegraphics[width=\textwidth]{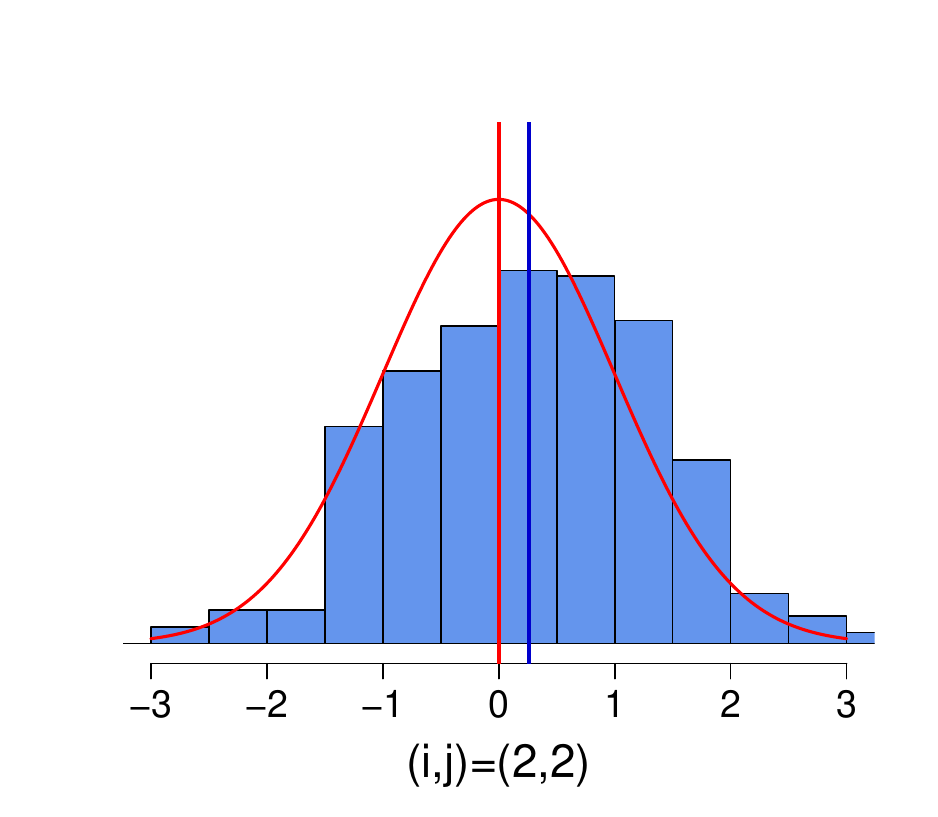}
    \end{minipage}
 \end{minipage}
 \hspace{1cm}
 \begin{minipage}{0.3\linewidth}
    \begin{minipage}{0.24\linewidth}
        \centering
        \includegraphics[width=\textwidth]{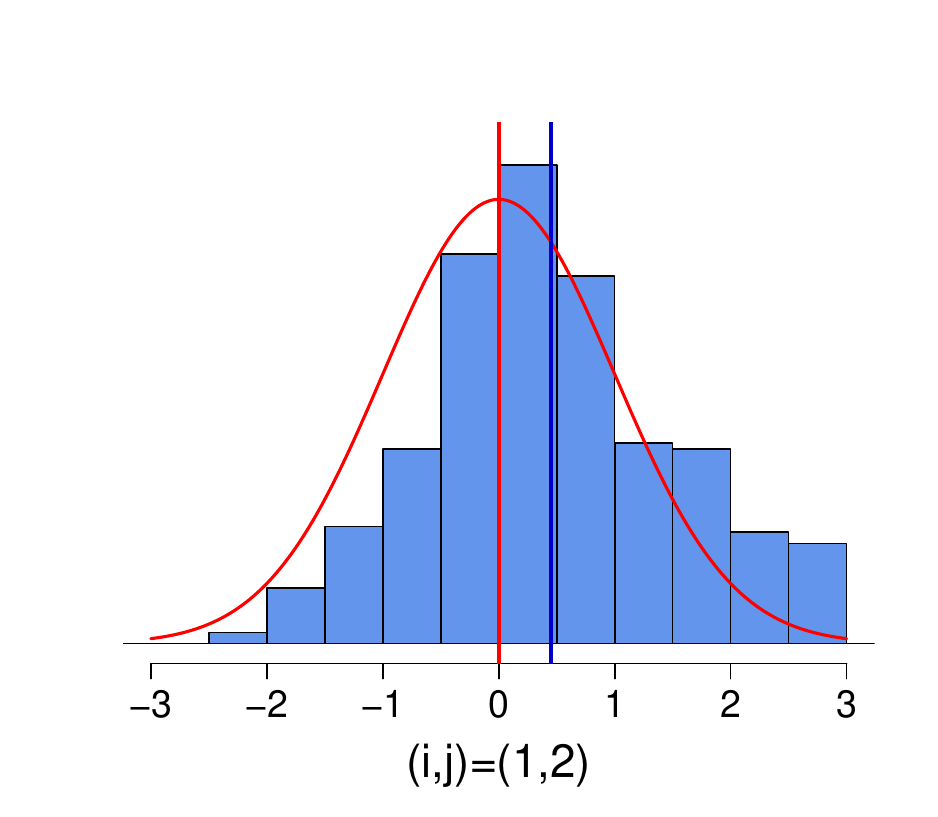}
    \end{minipage}
    \begin{minipage}{0.24\linewidth}
        \centering
        \includegraphics[width=\textwidth]{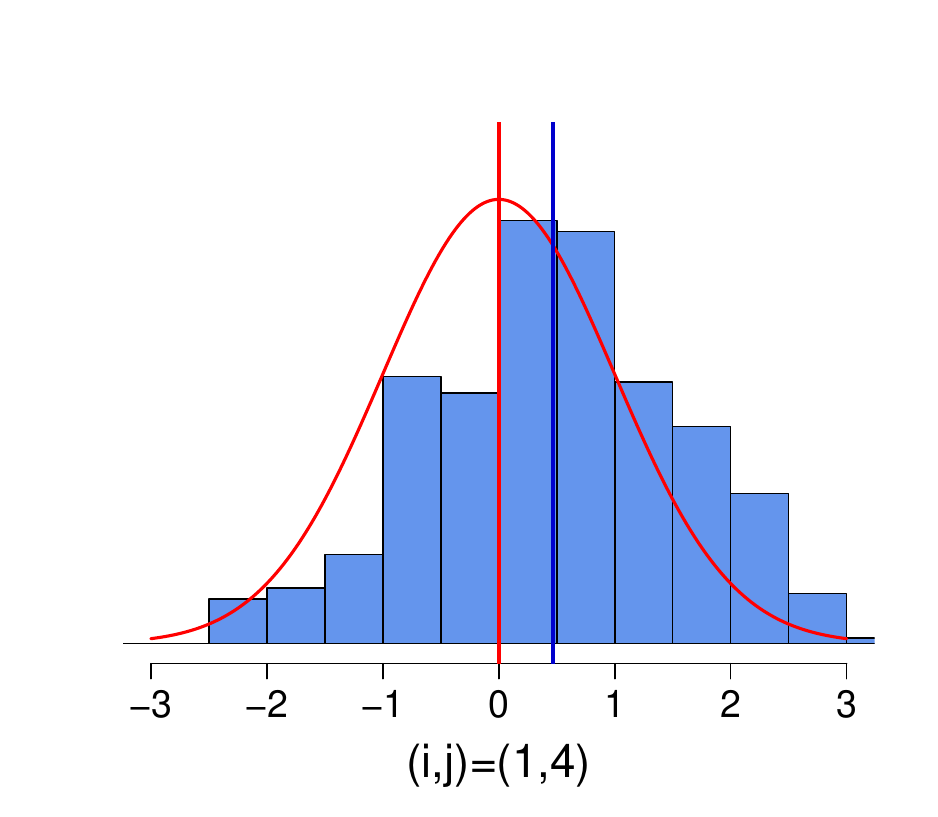}
    \end{minipage}
    \begin{minipage}{0.24\linewidth}
        \centering
        \includegraphics[width=\textwidth]{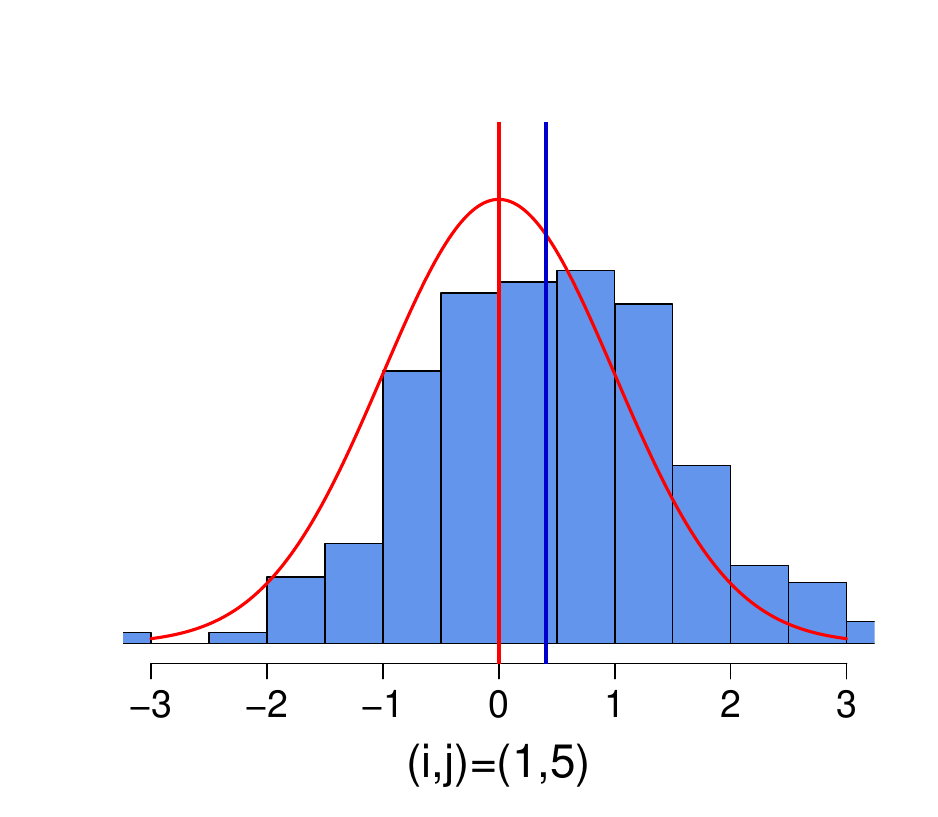}
    \end{minipage}
    \begin{minipage}{0.24\linewidth}
        \centering
        \includegraphics[width=\textwidth]{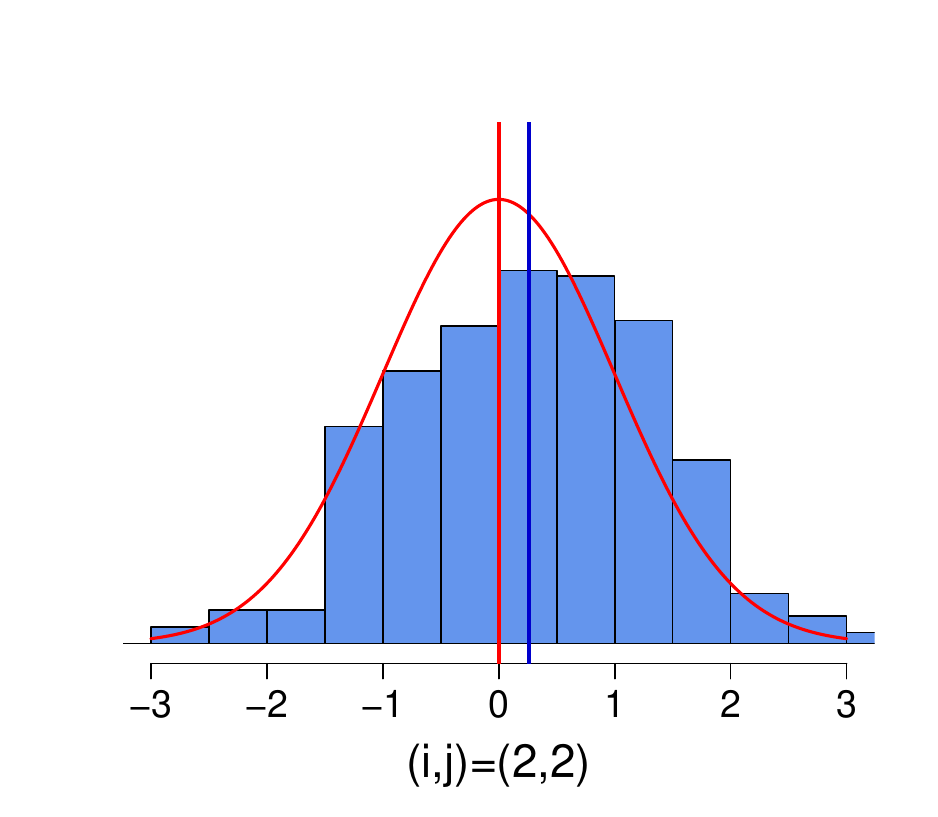}
    \end{minipage}    
 \end{minipage}
  \hspace{1cm}
 \begin{minipage}{0.3\linewidth}
     \begin{minipage}{0.24\linewidth}
        \centering
        \includegraphics[width=\textwidth]{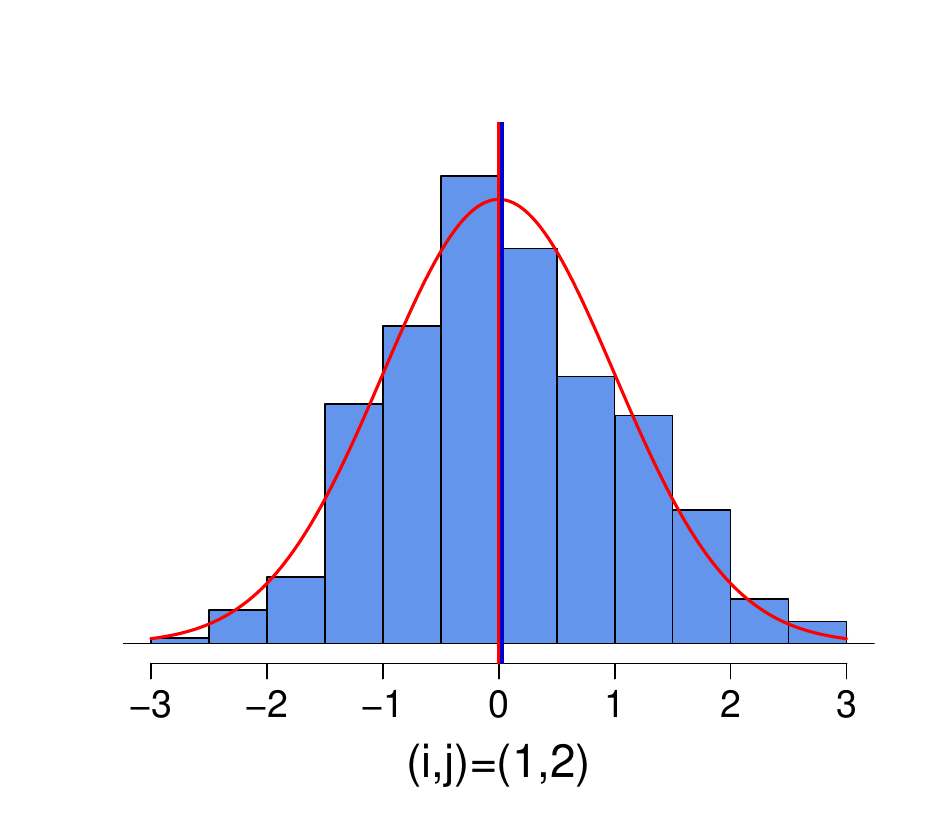}
    \end{minipage}
    \begin{minipage}{0.24\linewidth}
        \centering
        \includegraphics[width=\textwidth]{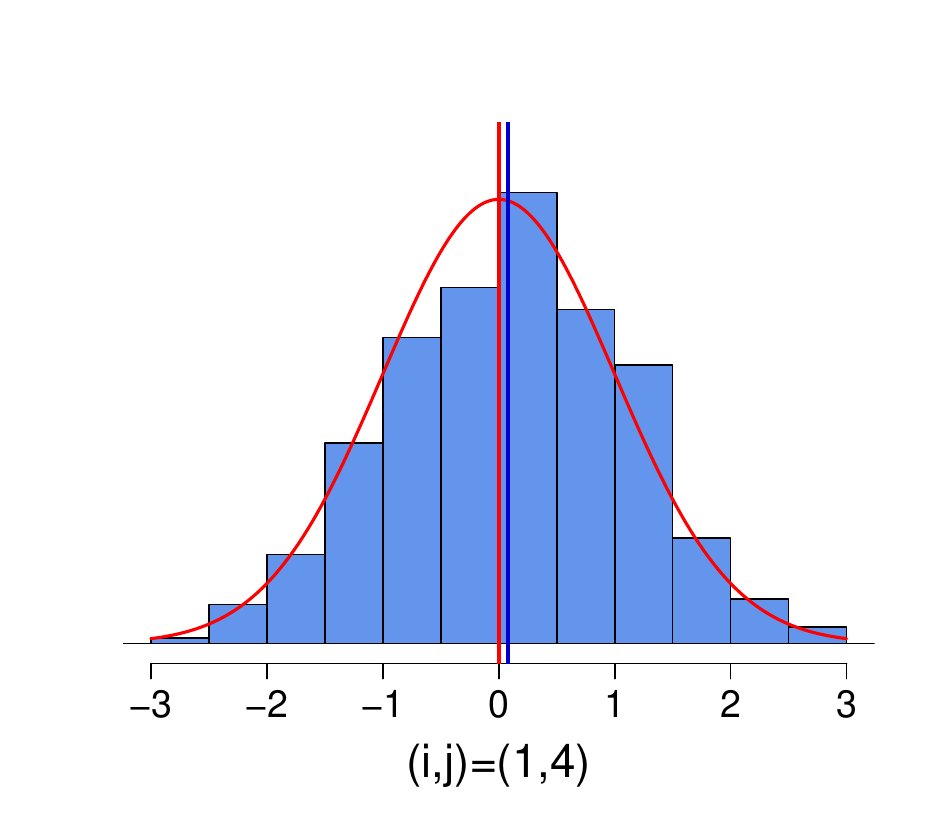}
    \end{minipage}
    \begin{minipage}{0.24\linewidth}
        \centering
        \includegraphics[width=\textwidth]{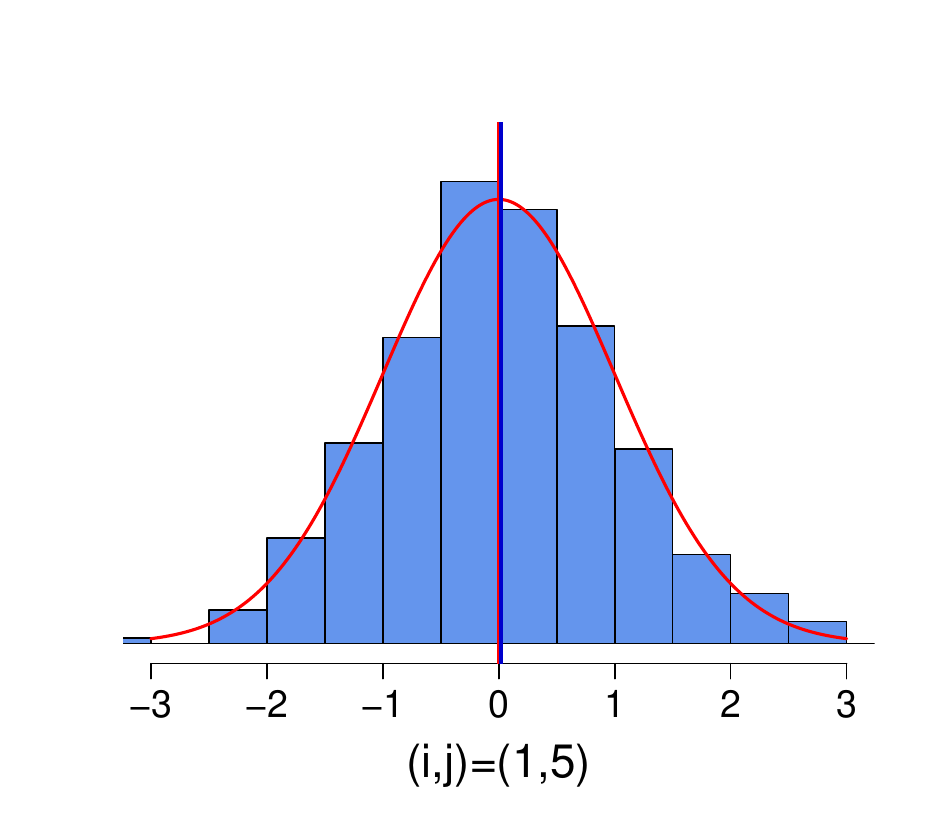}
    \end{minipage}
    \begin{minipage}{0.24\linewidth}
        \centering
        \includegraphics[width=\textwidth]{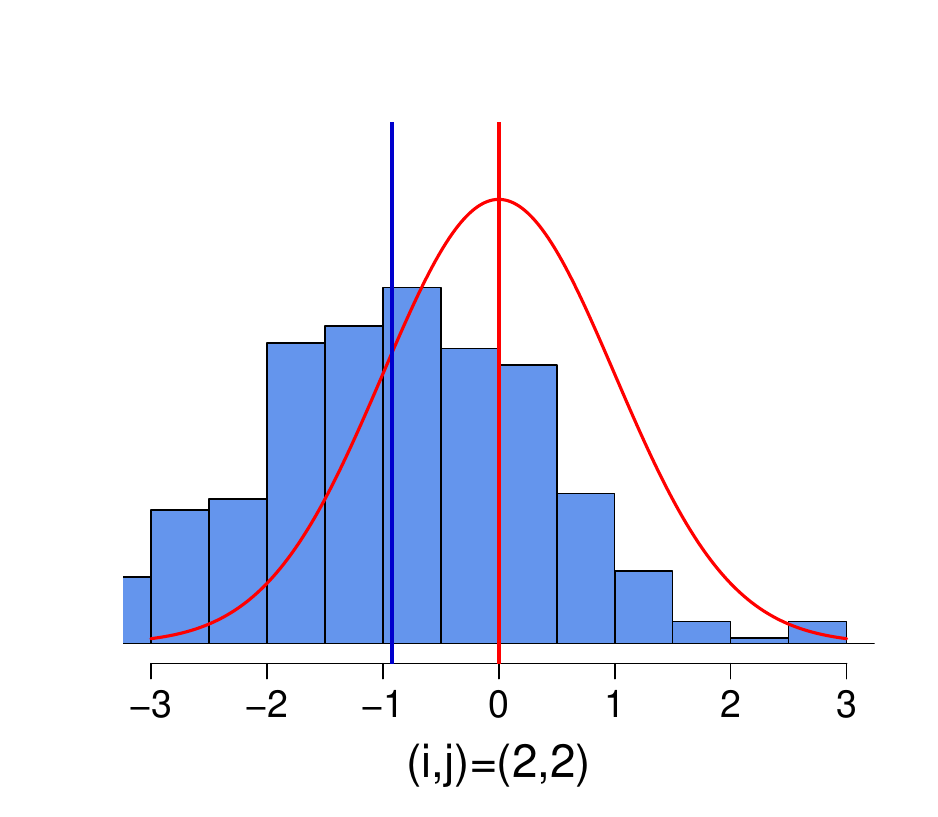}
    \end{minipage}
 \end{minipage}

  \caption*{$n=400, p=400$}
      \vspace{-0.43cm}
 \begin{minipage}{0.3\linewidth}
    \begin{minipage}{0.24\linewidth}
        \centering
        \includegraphics[width=\textwidth]{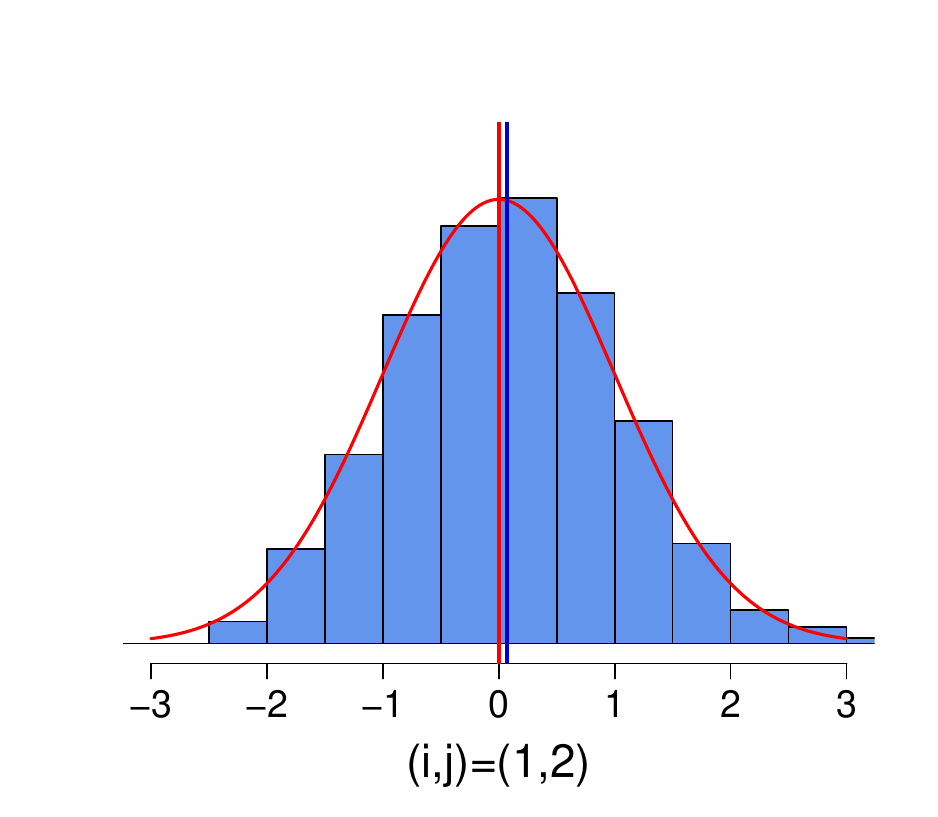}
    \end{minipage}
    \begin{minipage}{0.24\linewidth}
        \centering
        \includegraphics[width=\textwidth]{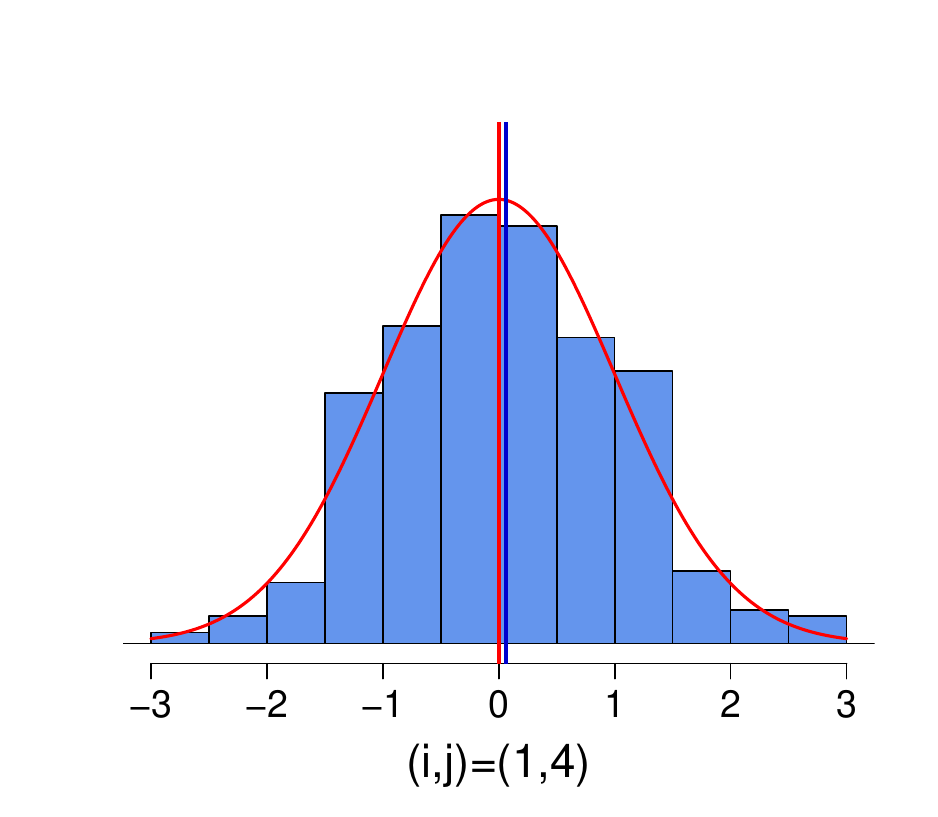}
    \end{minipage}
    \begin{minipage}{0.24\linewidth}
        \centering
        \includegraphics[width=\textwidth]{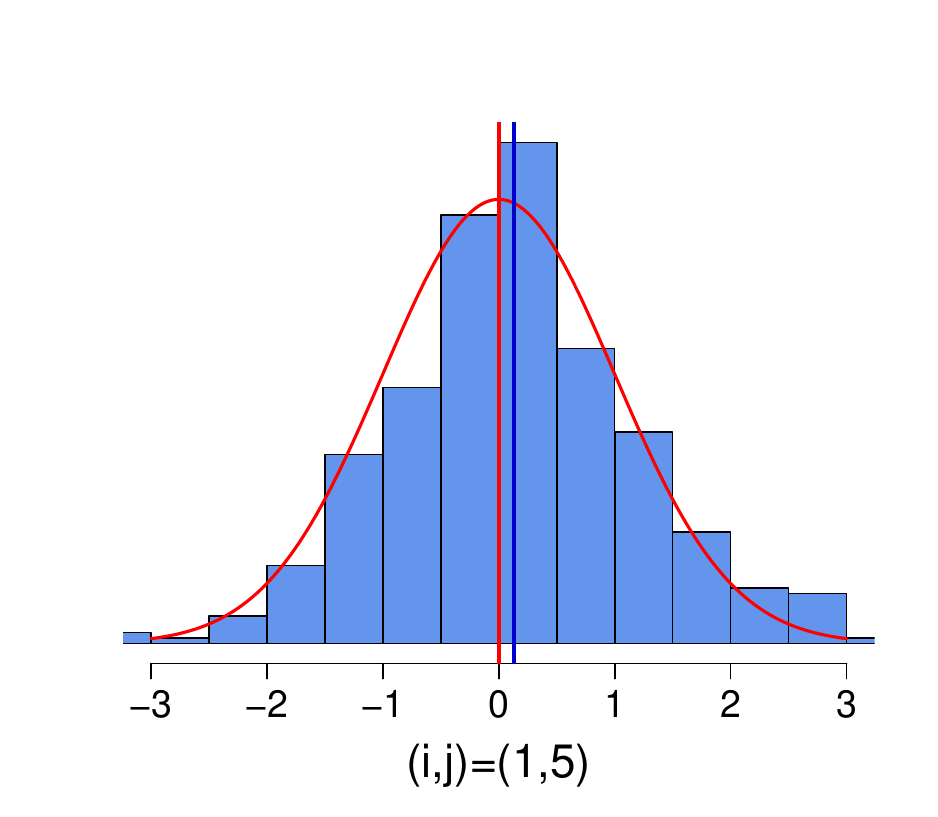}
    \end{minipage}
    \begin{minipage}{0.24\linewidth}
        \centering
        \includegraphics[width=\textwidth]{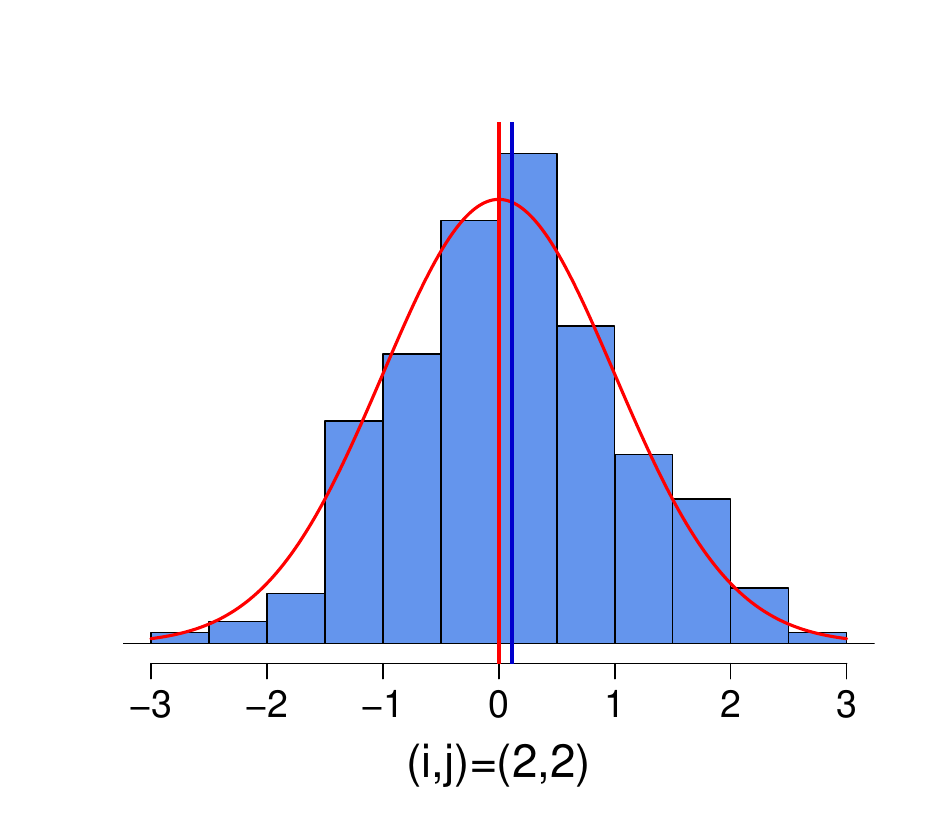}
    \end{minipage}
 \end{minipage}  
     \hspace{1cm}
 \begin{minipage}{0.3\linewidth}
    \begin{minipage}{0.24\linewidth}
        \centering
        \includegraphics[width=\textwidth]{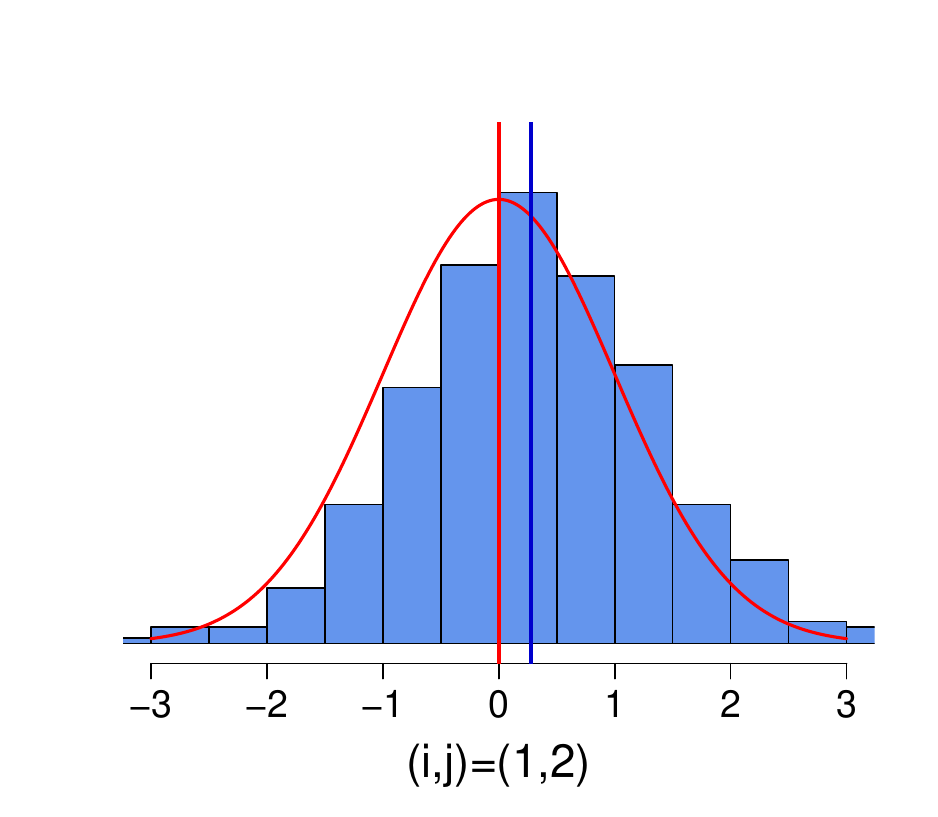}
    \end{minipage}
    \begin{minipage}{0.24\linewidth}
        \centering
        \includegraphics[width=\textwidth]{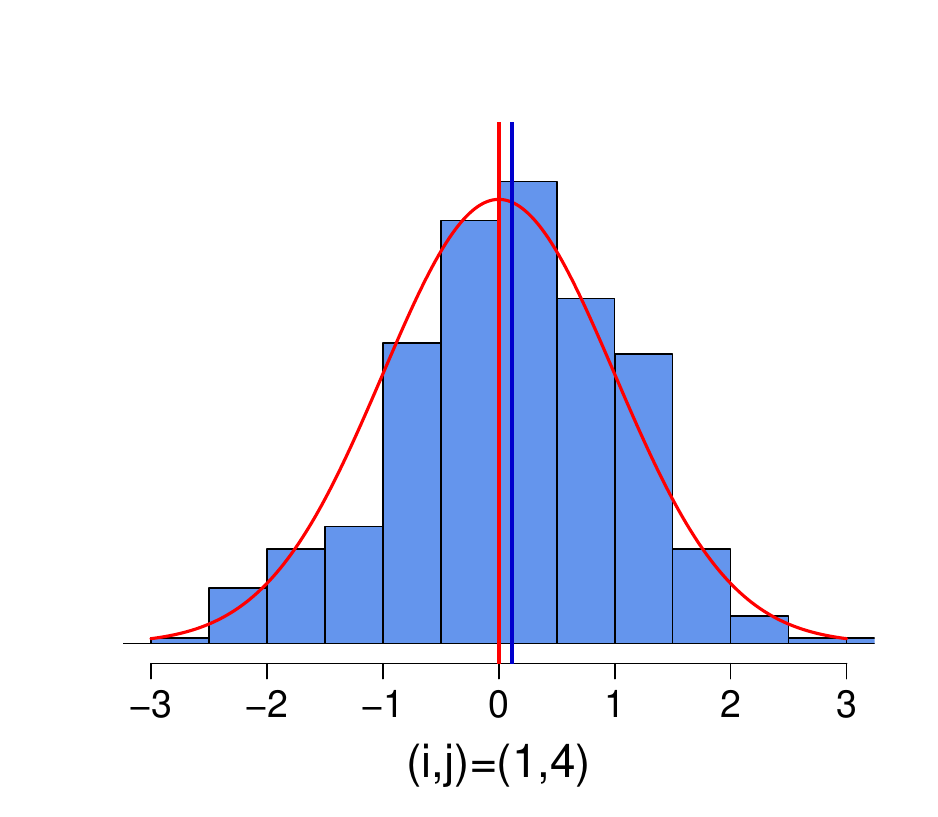}
    \end{minipage}
    \begin{minipage}{0.24\linewidth}
        \centering
        \includegraphics[width=\textwidth]{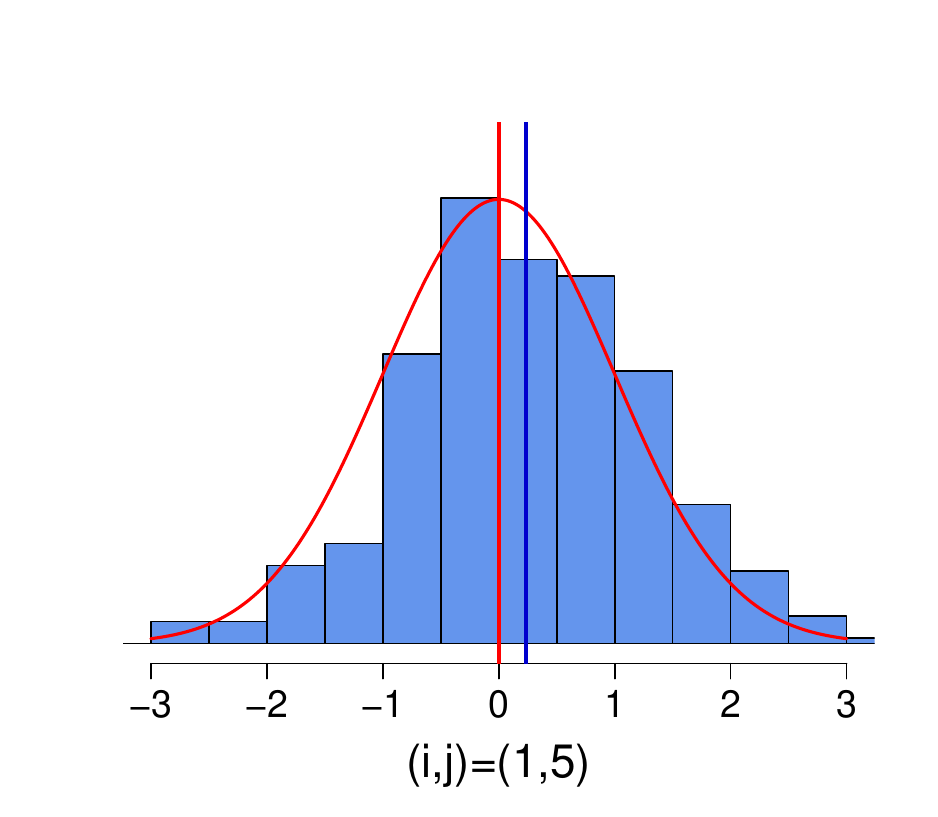}
    \end{minipage}
    \begin{minipage}{0.24\linewidth}
        \centering
        \includegraphics[width=\textwidth]{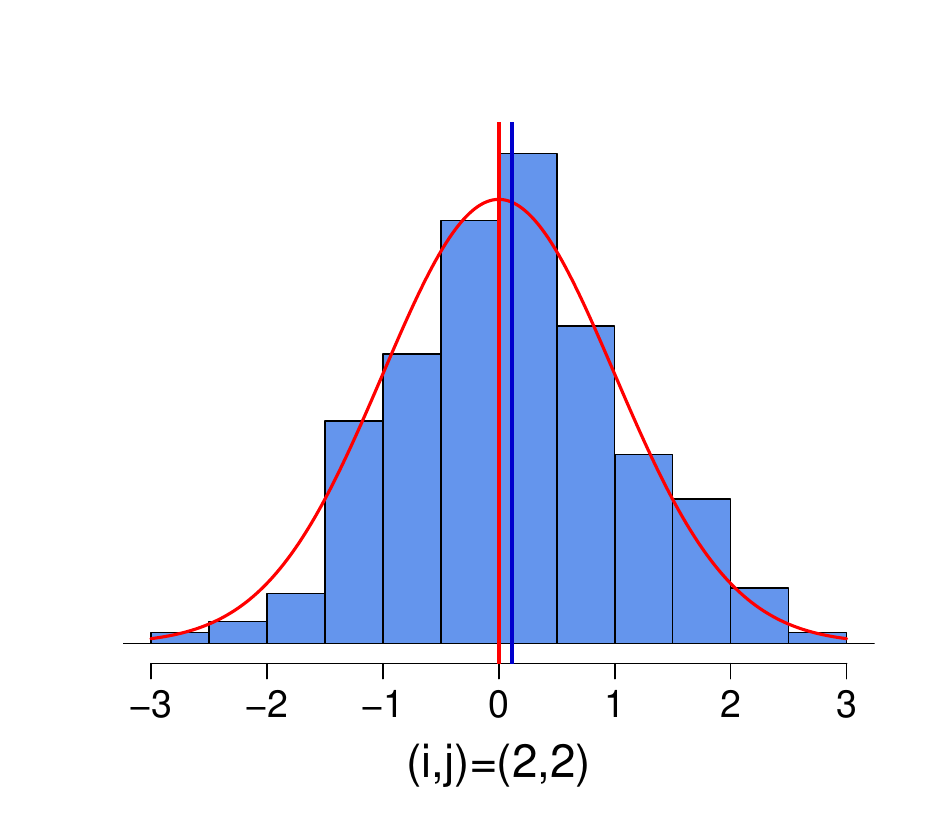}
    \end{minipage}
  \end{minipage}  
    \hspace{1cm}
 \begin{minipage}{0.3\linewidth}
    \begin{minipage}{0.24\linewidth}
        \centering
        \includegraphics[width=\textwidth]{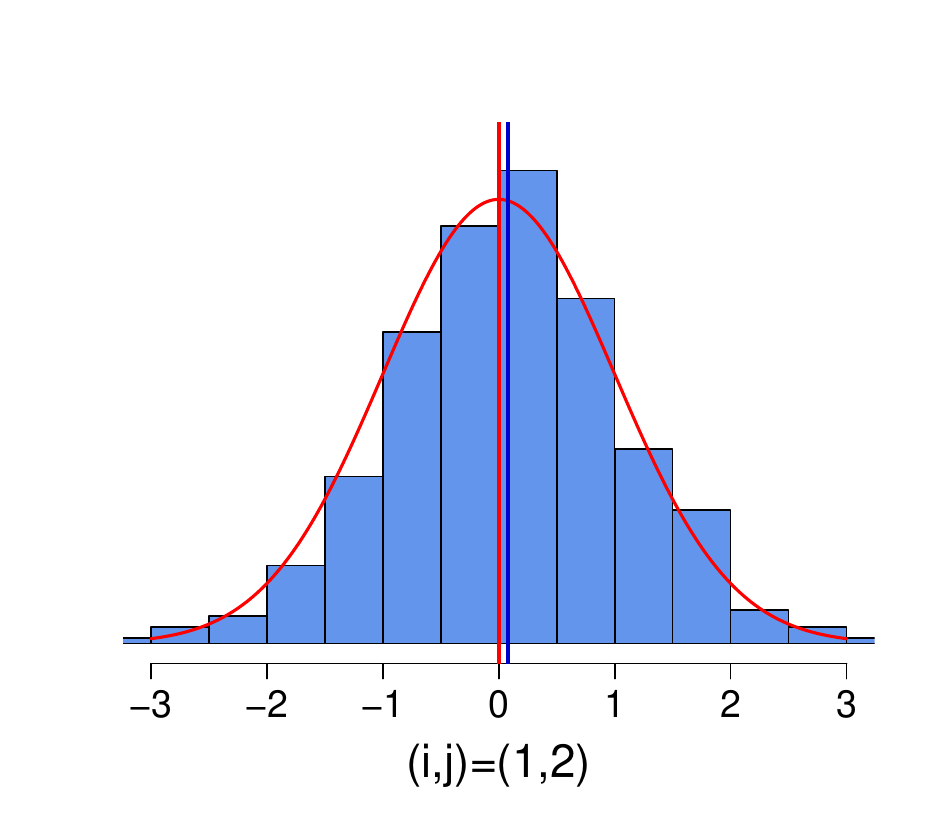}
    \end{minipage}
    \begin{minipage}{0.24\linewidth}
        \centering
        \includegraphics[width=\textwidth]{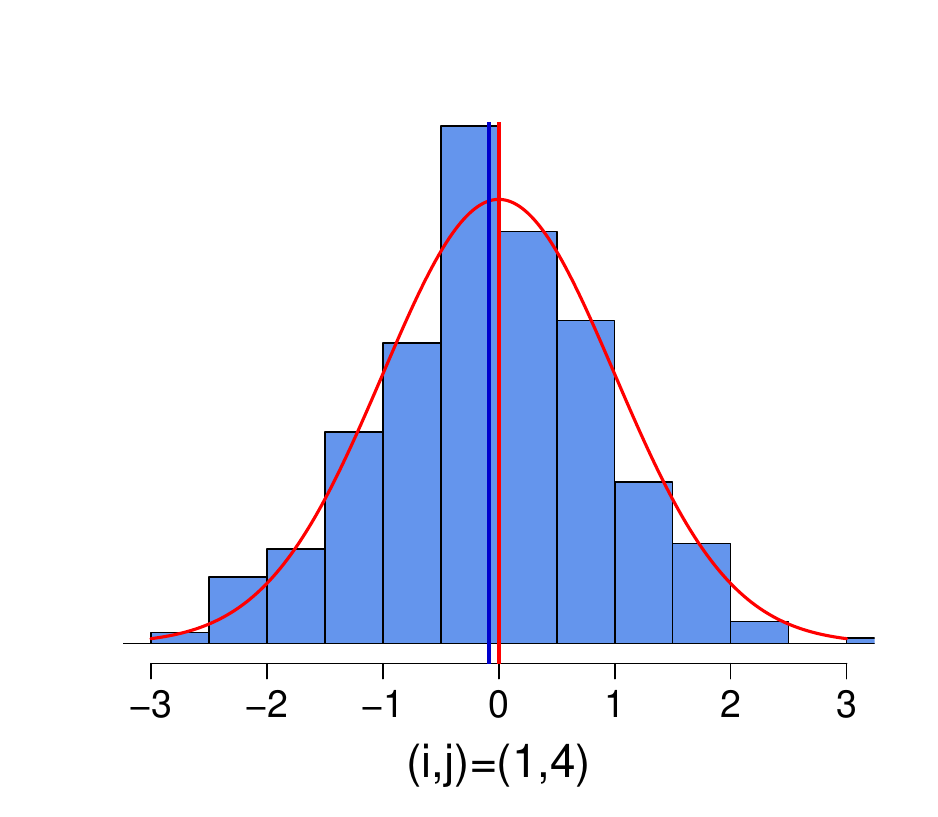}
    \end{minipage}
    \begin{minipage}{0.24\linewidth}
        \centering
        \includegraphics[width=\textwidth]{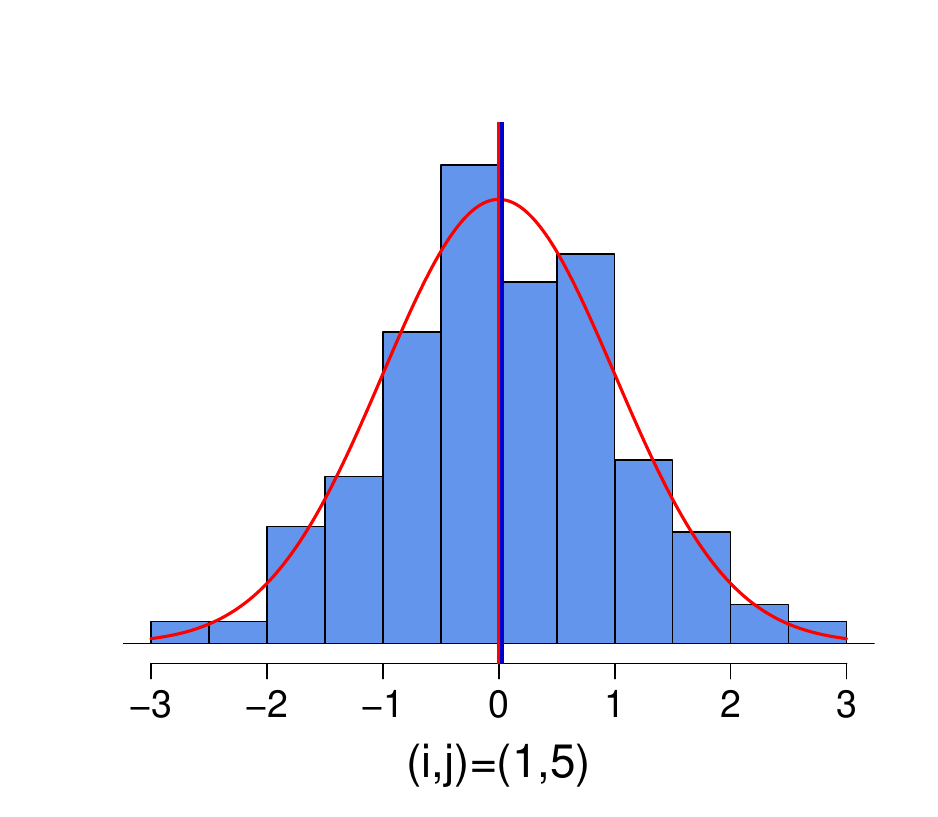}
    \end{minipage}
    \begin{minipage}{0.24\linewidth}
        \centering
        \includegraphics[width=\textwidth]{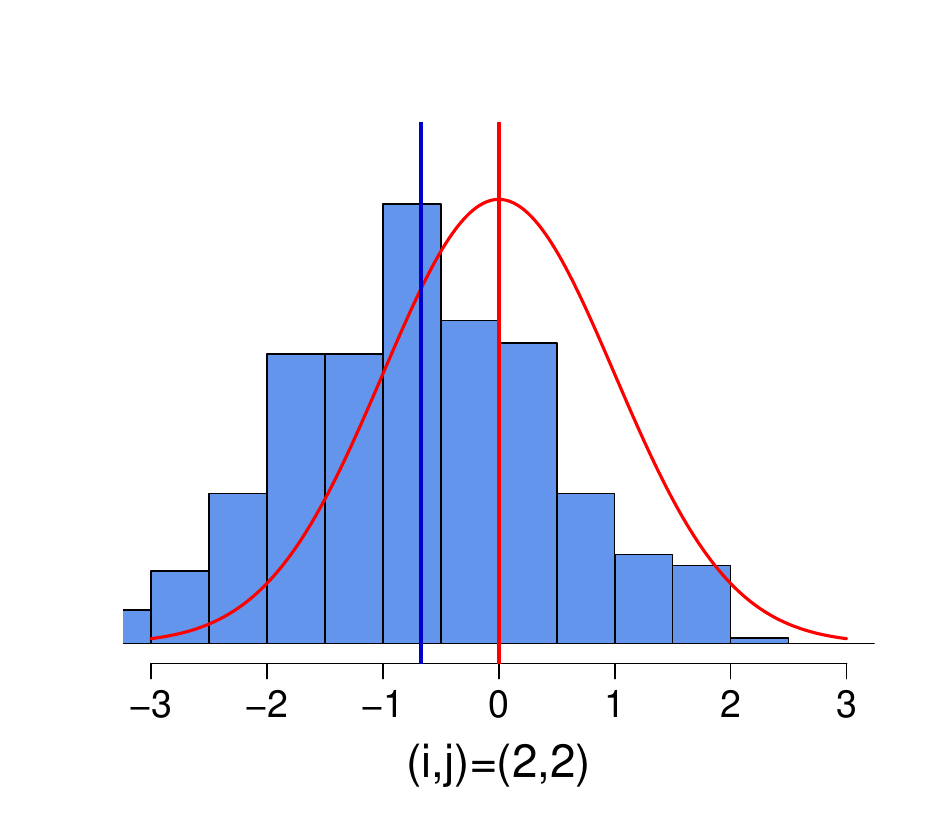}
    \end{minipage}
 \end{minipage}

  \caption*{$n=800, p=400$}
      \vspace{-0.43cm}
 \begin{minipage}{0.3\linewidth}
    \begin{minipage}{0.24\linewidth}
        \centering
        \includegraphics[width=\textwidth]{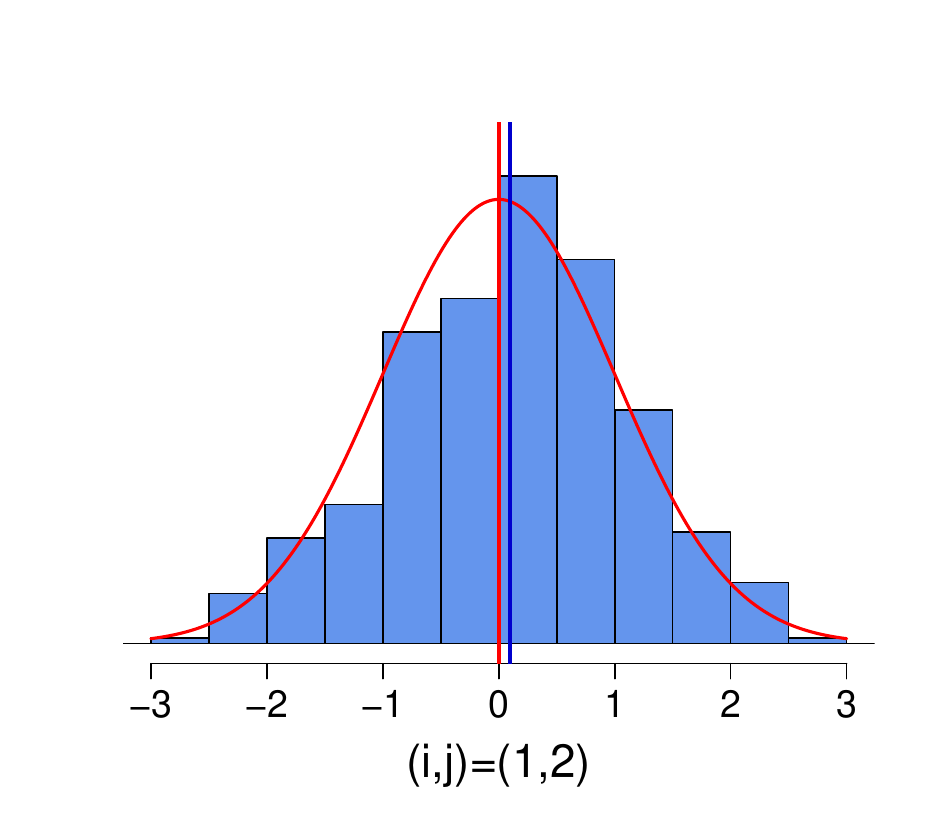}
    \end{minipage}
    \begin{minipage}{0.24\linewidth}
        \centering
        \includegraphics[width=\textwidth]{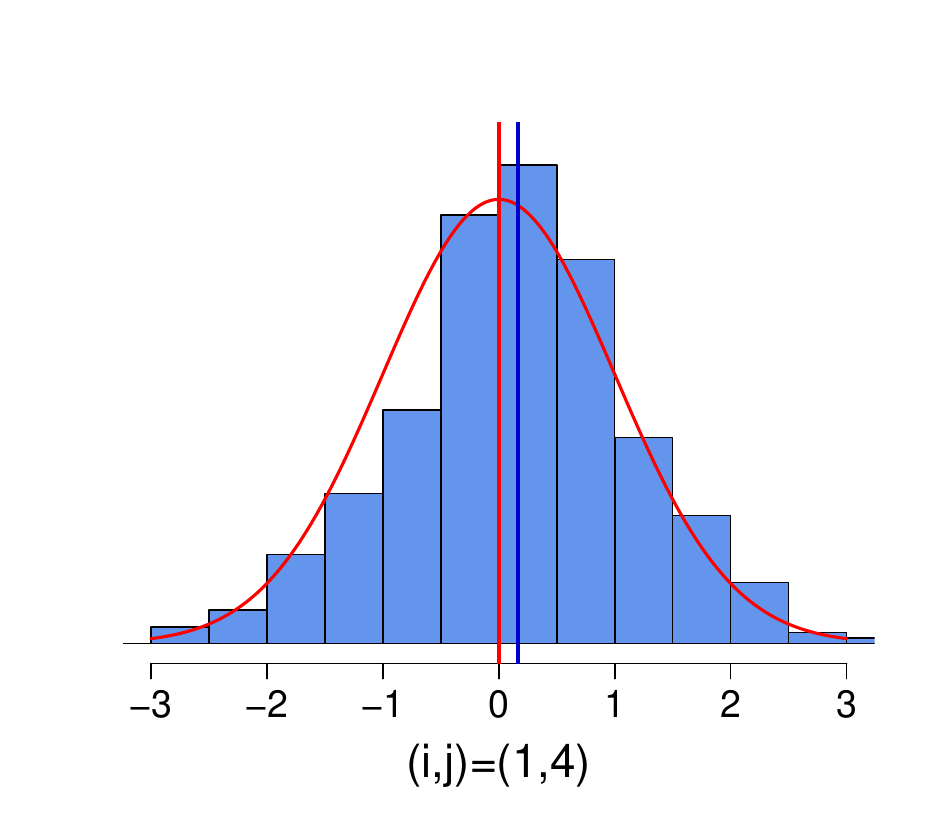}
    \end{minipage}
    \begin{minipage}{0.24\linewidth}
        \centering
        \includegraphics[width=\textwidth]{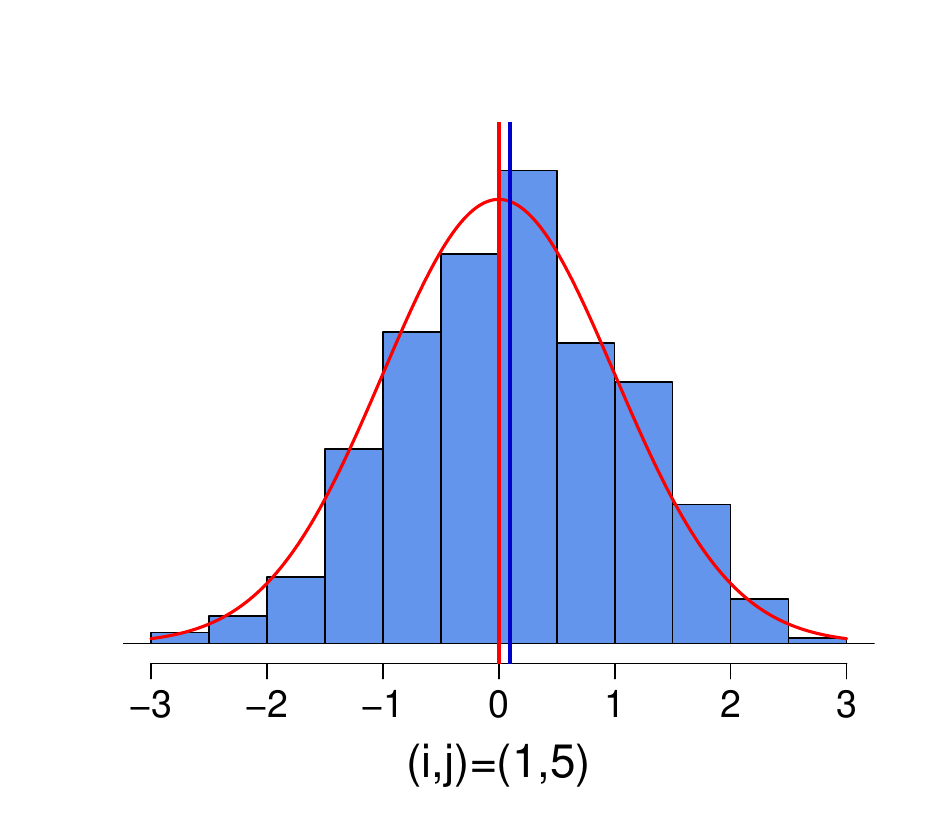}
    \end{minipage}
    \begin{minipage}{0.24\linewidth}
        \centering
        \includegraphics[width=\textwidth]{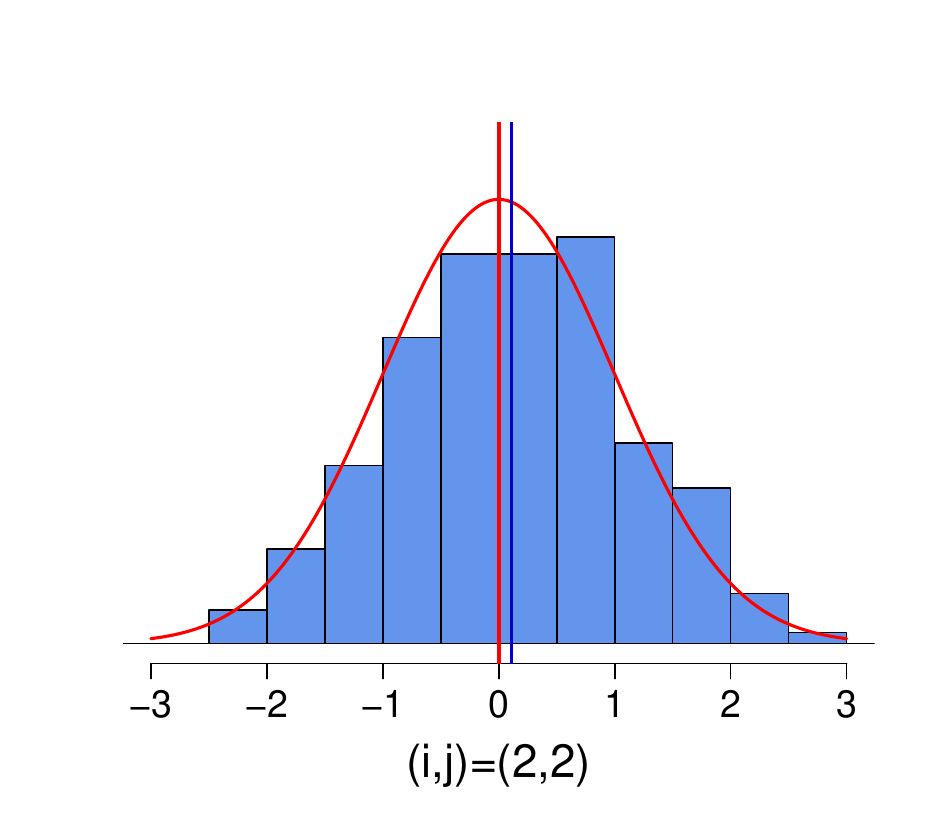}
    \end{minipage}
   \caption*{(a)~~$L_0{:}~ \widehat{\mb{\Omega}}^{\text{US}}$}
 \end{minipage} 
     \hspace{1cm}
 \begin{minipage}{0.3\linewidth}
    \begin{minipage}{0.24\linewidth}
        \centering
        \includegraphics[width=\textwidth]{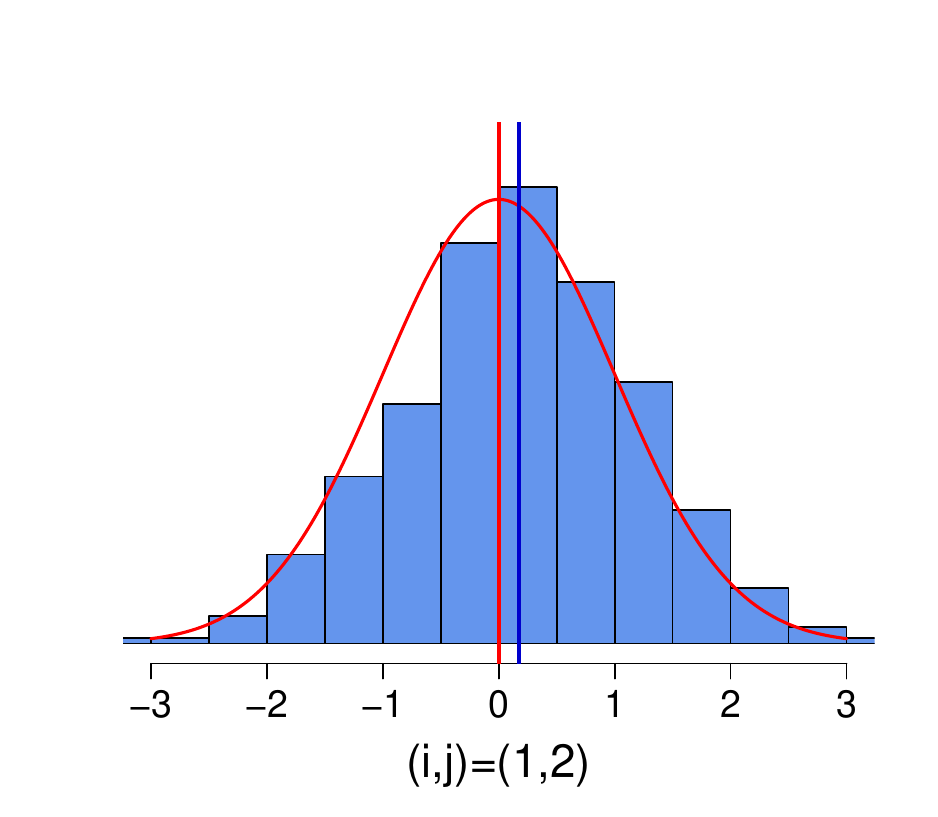}
    \end{minipage}
    \begin{minipage}{0.24\linewidth}
        \centering
        \includegraphics[width=\textwidth]{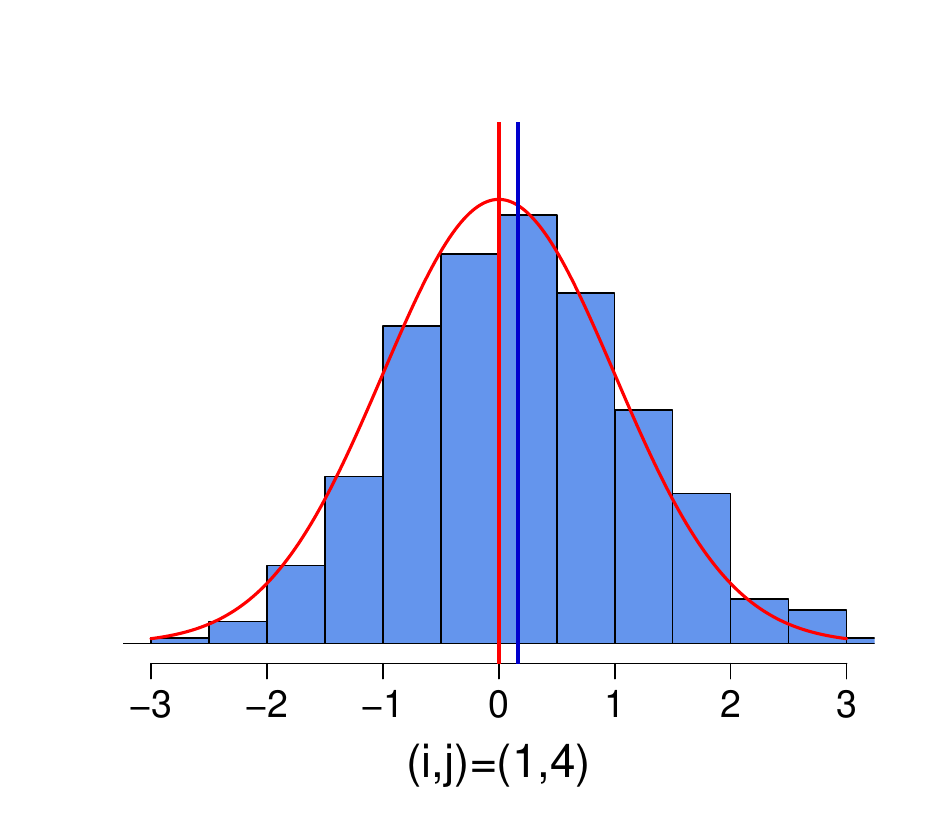}
    \end{minipage}
    \begin{minipage}{0.24\linewidth}
        \centering
        \includegraphics[width=\textwidth]{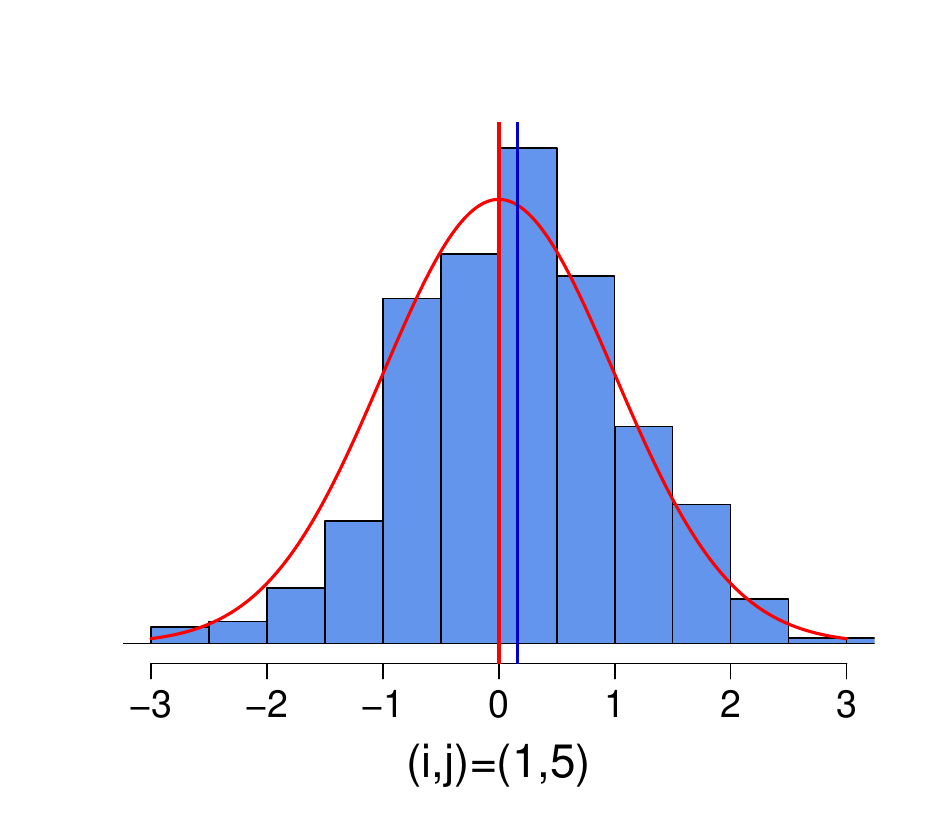}
    \end{minipage}
    \begin{minipage}{0.24\linewidth}
        \centering
        \includegraphics[width=\textwidth]{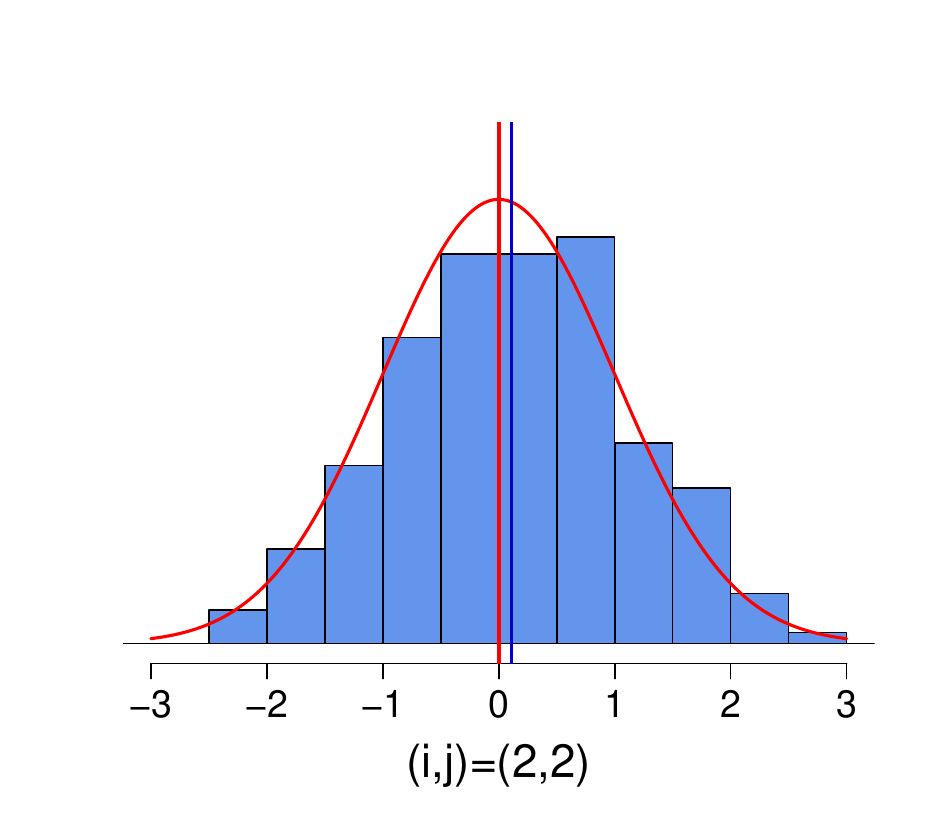}
    \end{minipage}
        \caption*{(b)~~$L_0{:}~ \widehat{\mb{T}}$}
 \end{minipage}   
      \hspace{1cm}
 \begin{minipage}{0.3\linewidth}
    \begin{minipage}{0.24\linewidth}
        \centering
        \includegraphics[width=\textwidth]{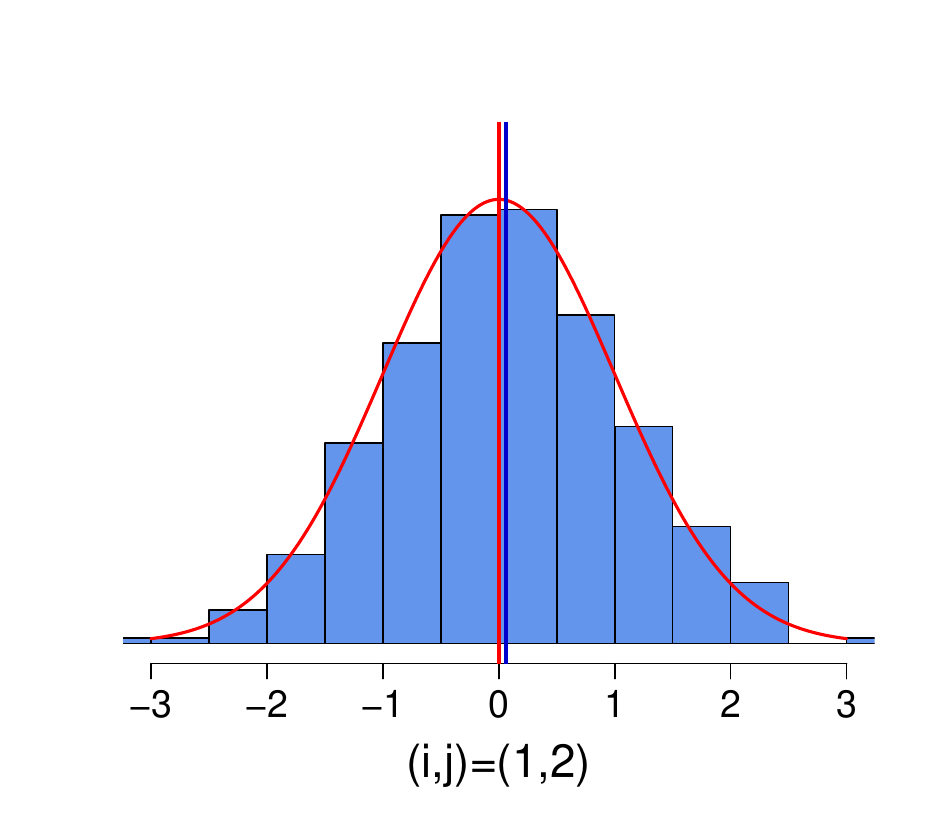}
    \end{minipage}
    \begin{minipage}{0.24\linewidth}
        \centering
        \includegraphics[width=\textwidth]{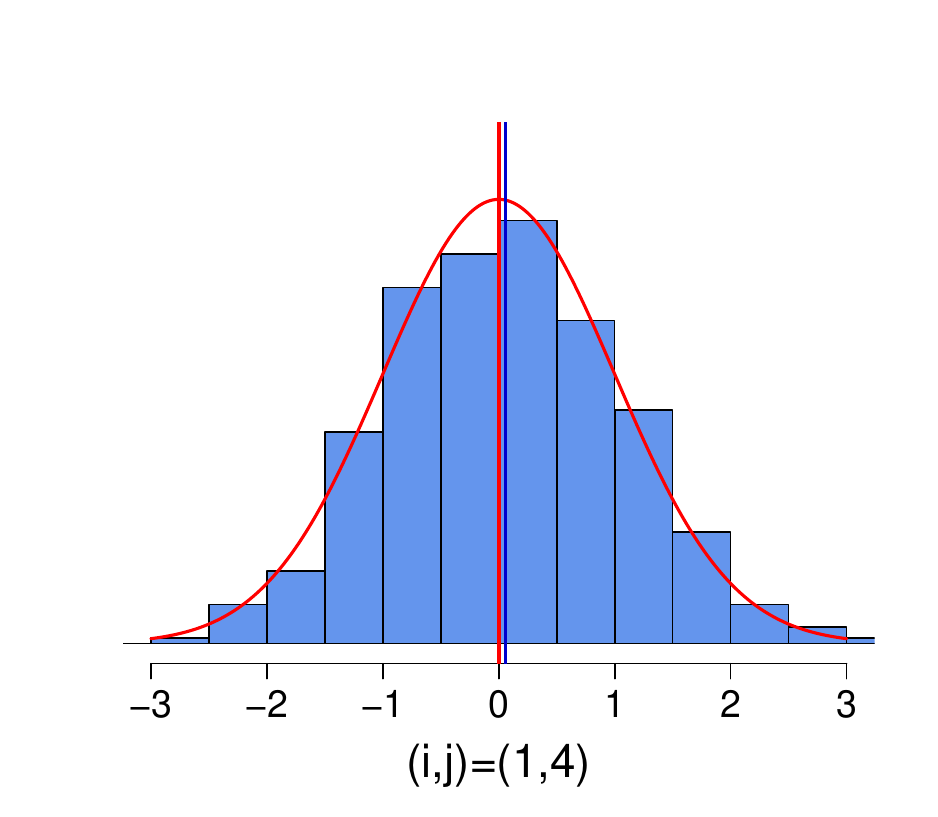}
    \end{minipage}
    \begin{minipage}{0.24\linewidth}
        \centering
        \includegraphics[width=\textwidth]{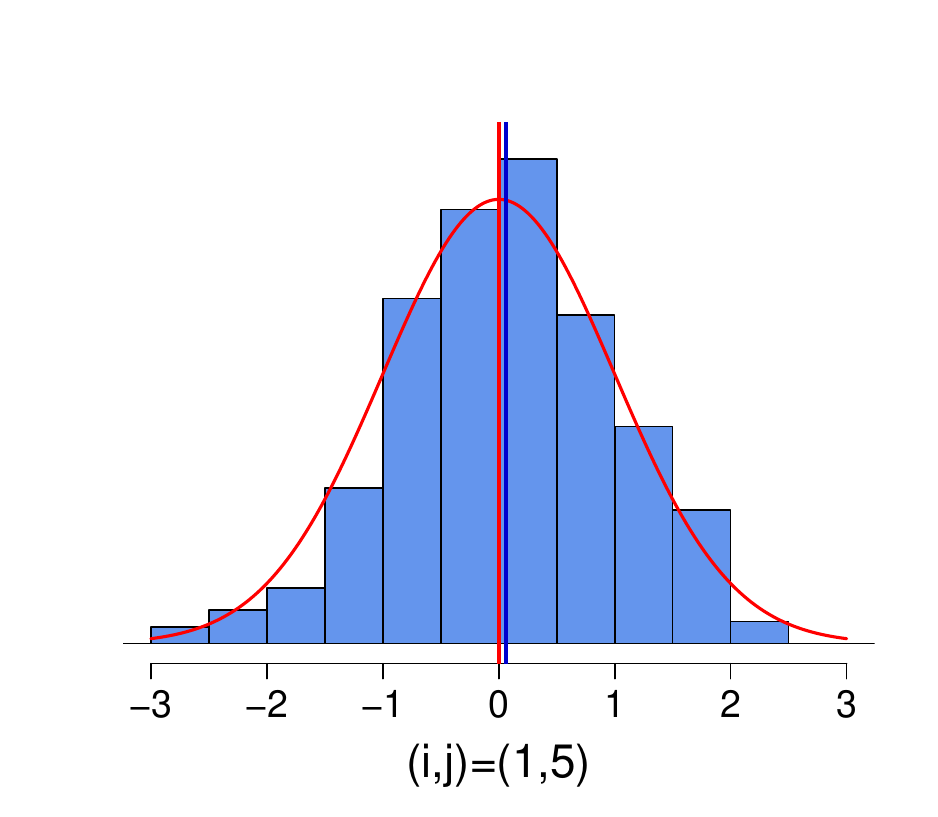}
    \end{minipage}
    \begin{minipage}{0.24\linewidth}
        \centering
        \includegraphics[width=\textwidth]{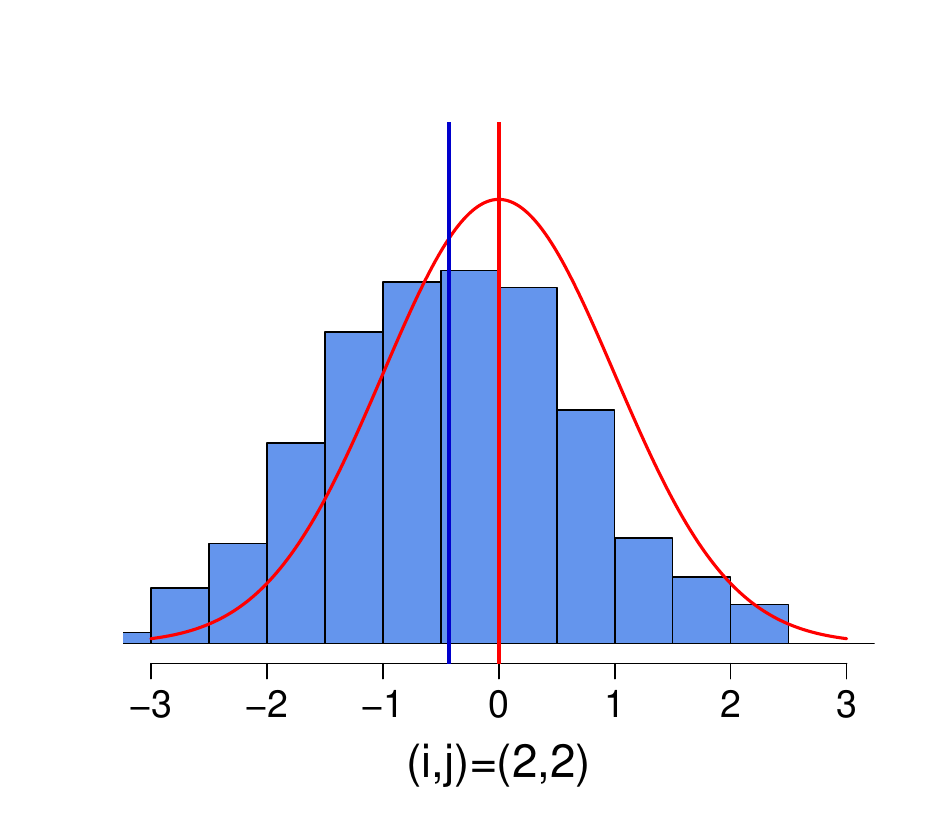}
    \end{minipage}
    \caption*{(c)~~$L_1{:}~ \widehat{\mb{T}}$}
     \end{minipage}   
     \caption{Histograms of $\big(\sqrt{n}(\widehat{\mb{\Omega}}_{ij}^{(m)}-\mb{\Omega}_{ij})/\widehat{\sigma}_{\mb{\Omega}_{ij}}^{(m)}\big)_{m=1}^{400}$ under Gaussian hub graph settings.}
\end{sidewaysfigure}

 \begin{sidewaysfigure}[th!]
  \caption*{$n=200, p=200$}
      \vspace{-0.43cm}
 \begin{minipage}{0.3\linewidth}
    \begin{minipage}{0.24\linewidth}
        \centering
        \includegraphics[width=\textwidth]{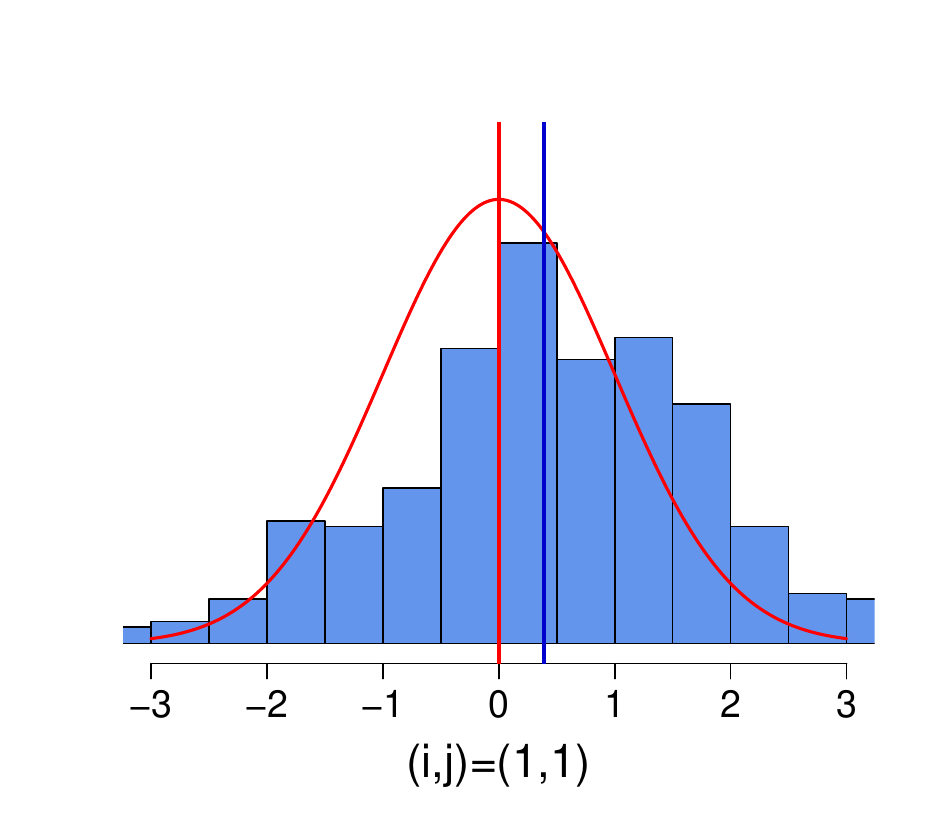}
    \end{minipage}
    \begin{minipage}{0.24\linewidth}
        \centering
        \includegraphics[width=\textwidth]{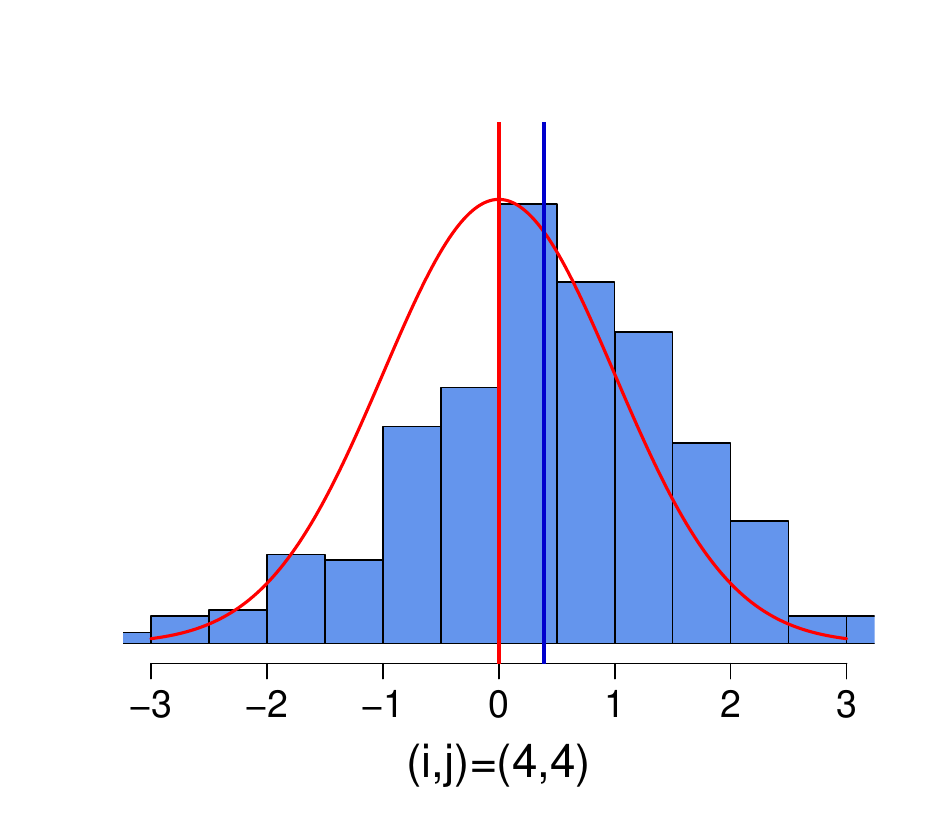}
    \end{minipage}
    \begin{minipage}{0.24\linewidth}
        \centering
        \includegraphics[width=\textwidth]{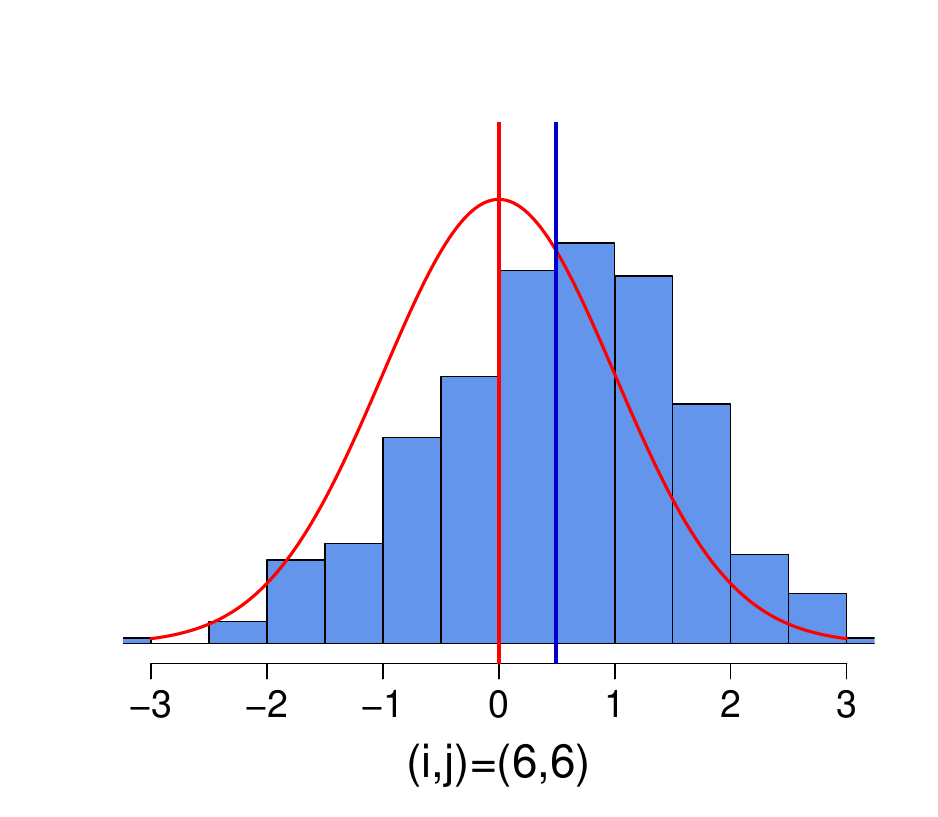}
    \end{minipage}
    \begin{minipage}{0.24\linewidth}
        \centering
        \includegraphics[width=\textwidth]{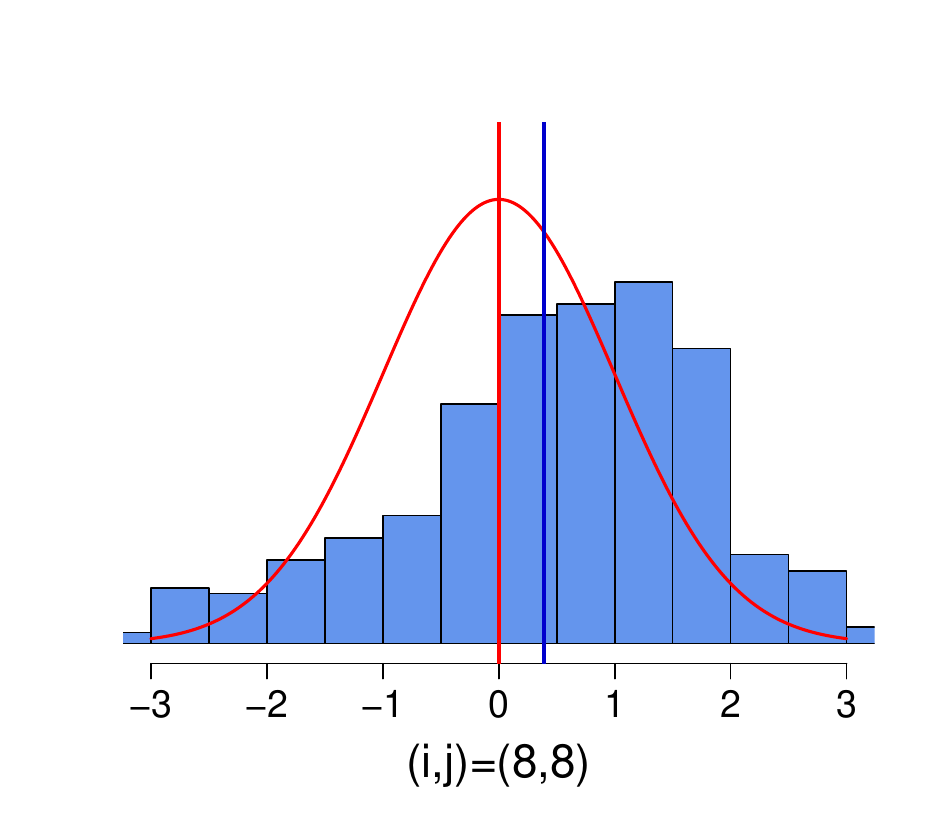}
    \end{minipage}
 \end{minipage}
 \hspace{1cm}
 \begin{minipage}{0.3\linewidth}
    \begin{minipage}{0.24\linewidth}
        \centering
        \includegraphics[width=\textwidth]{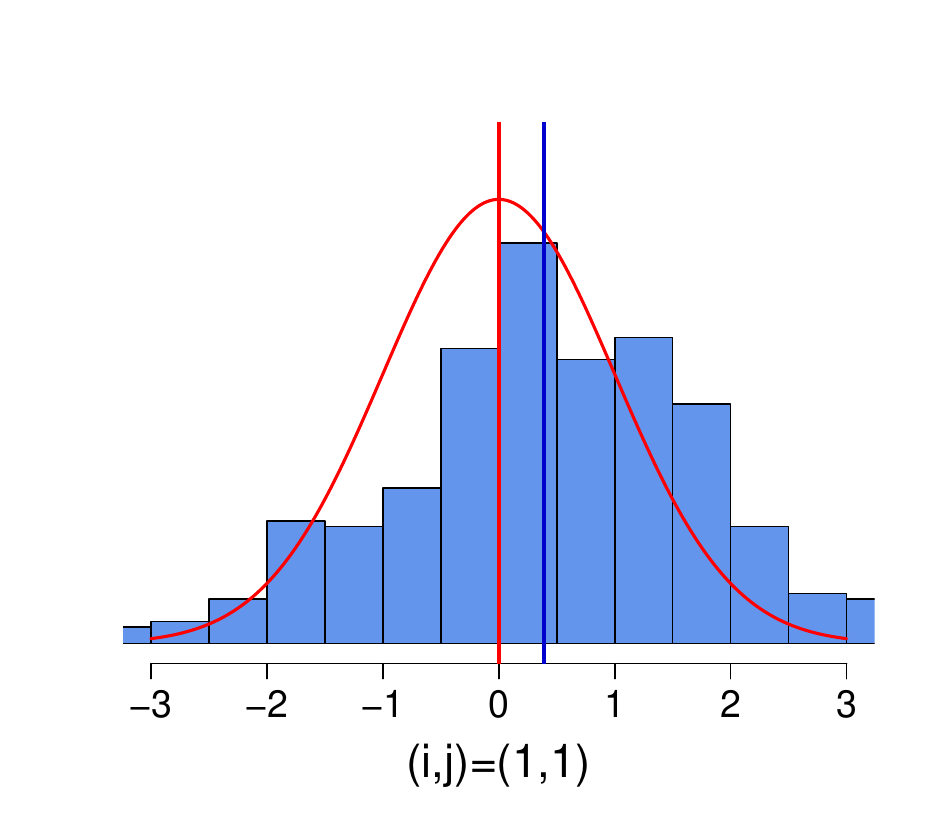}
    \end{minipage}
    \begin{minipage}{0.24\linewidth}
        \centering
        \includegraphics[width=\textwidth]{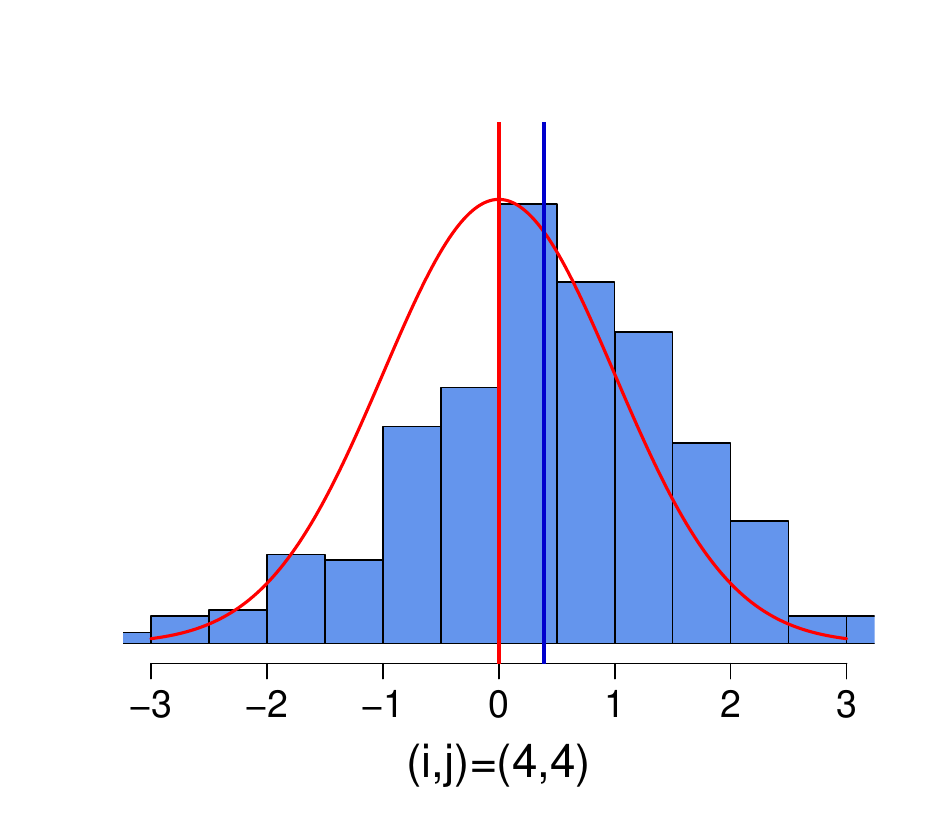}
    \end{minipage}
    \begin{minipage}{0.24\linewidth}
        \centering
        \includegraphics[width=\textwidth]{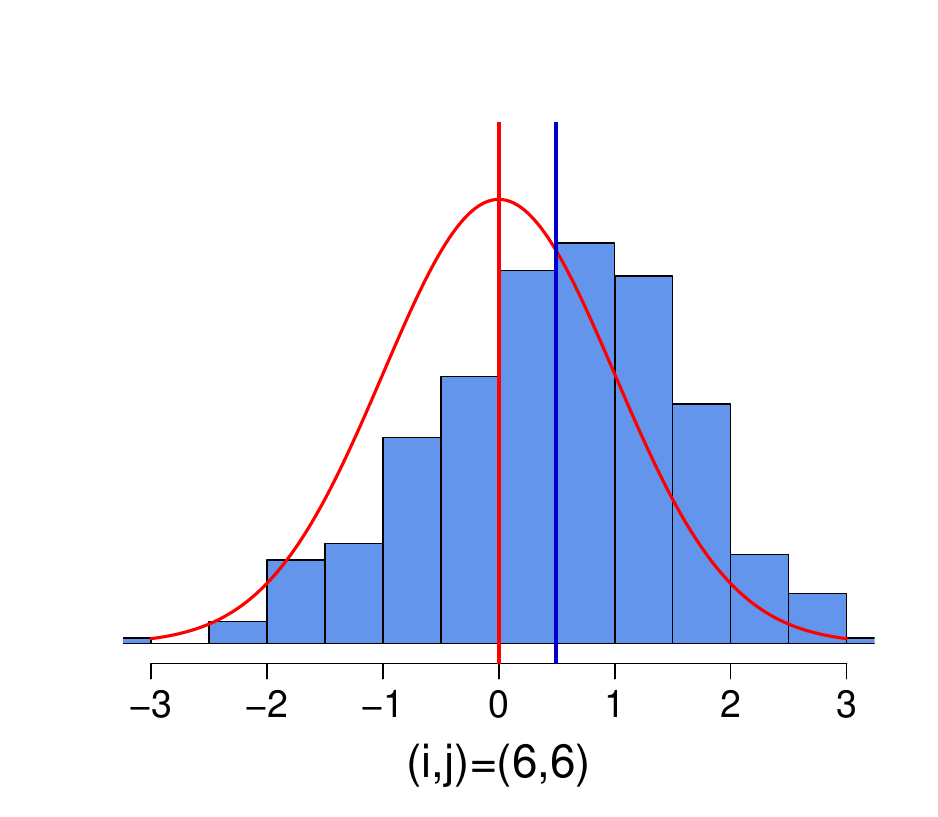}
    \end{minipage}
    \begin{minipage}{0.24\linewidth}
        \centering
        \includegraphics[width=\textwidth]{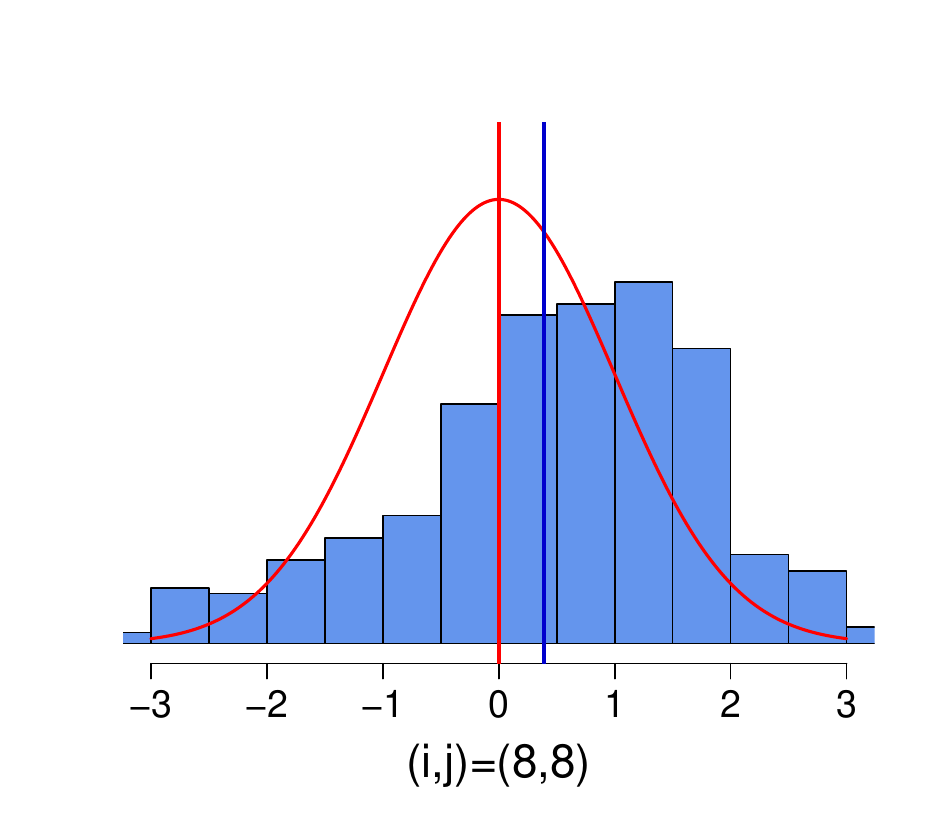}
    \end{minipage}    
 \end{minipage}
  \hspace{1cm}
 \begin{minipage}{0.3\linewidth}
     \begin{minipage}{0.24\linewidth}
        \centering
        \includegraphics[width=\textwidth]{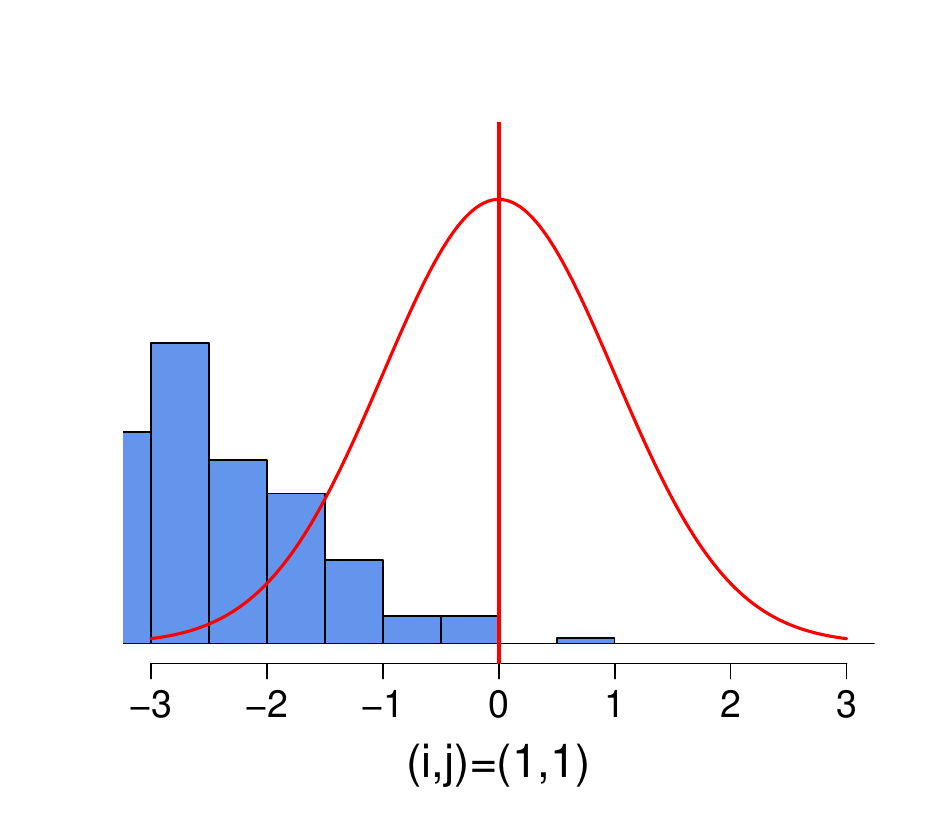}
    \end{minipage}
    \begin{minipage}{0.24\linewidth}
        \centering
        \includegraphics[width=\textwidth]{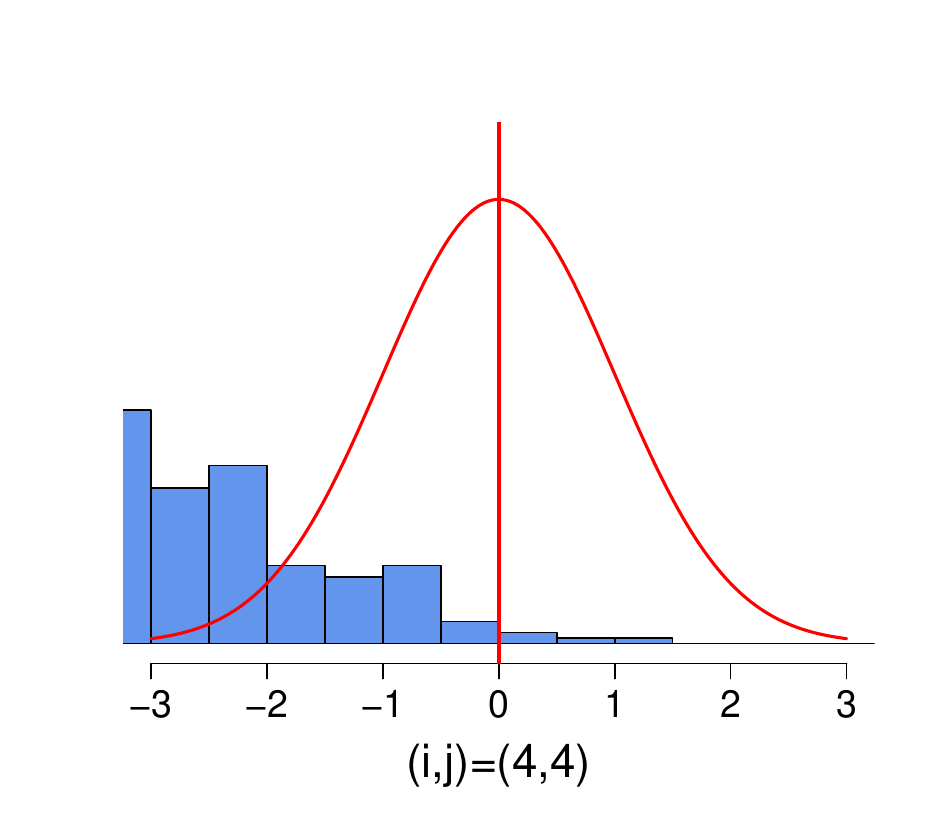}
    \end{minipage}
    \begin{minipage}{0.24\linewidth}
        \centering
        \includegraphics[width=\textwidth]{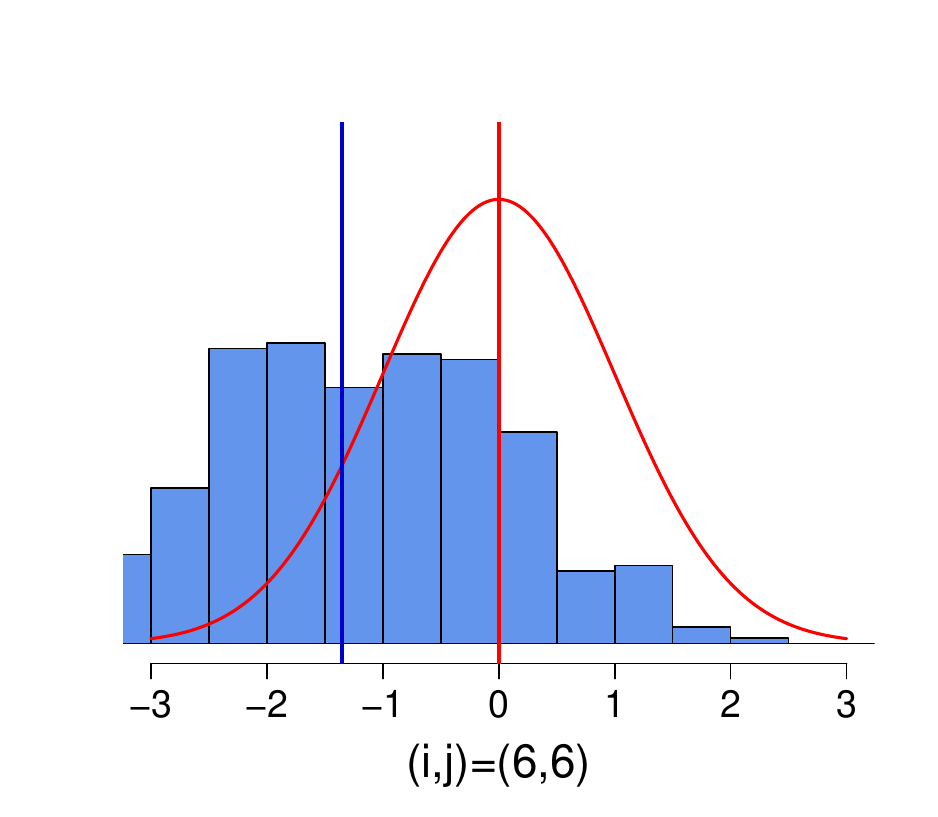}
    \end{minipage}
    \begin{minipage}{0.24\linewidth}
        \centering
        \includegraphics[width=\textwidth]{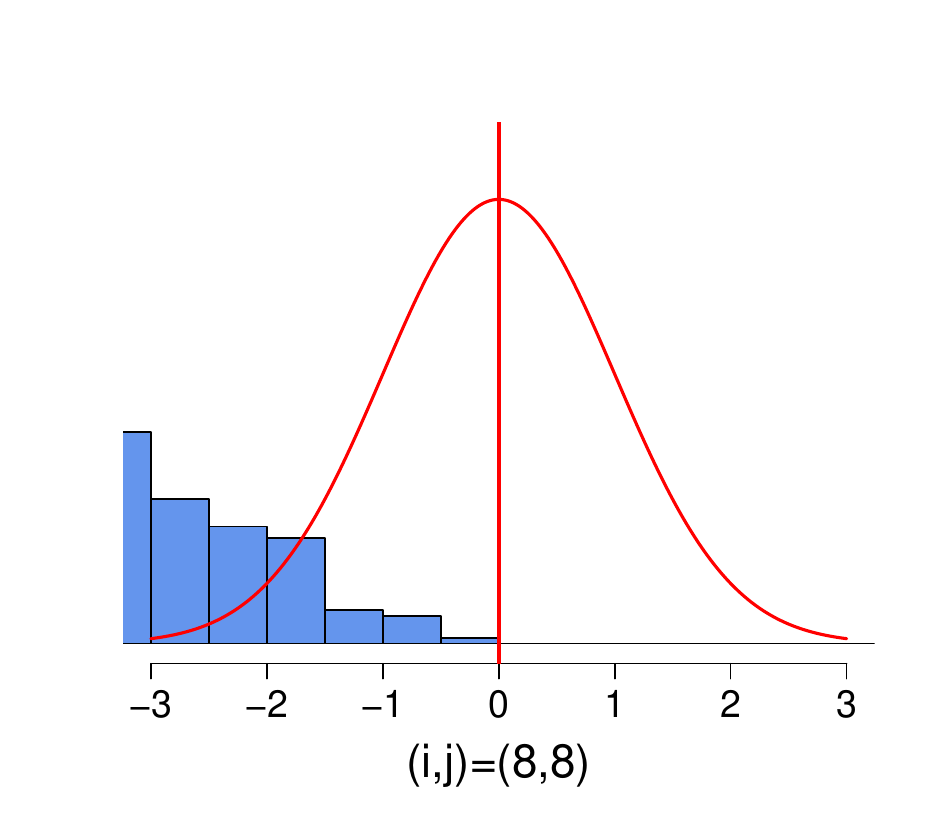}
    \end{minipage}
 \end{minipage}

  \caption*{$n=400, p=200$}
      \vspace{-0.43cm}
 \begin{minipage}{0.3\linewidth}
    \begin{minipage}{0.24\linewidth}
        \centering
        \includegraphics[width=\textwidth]{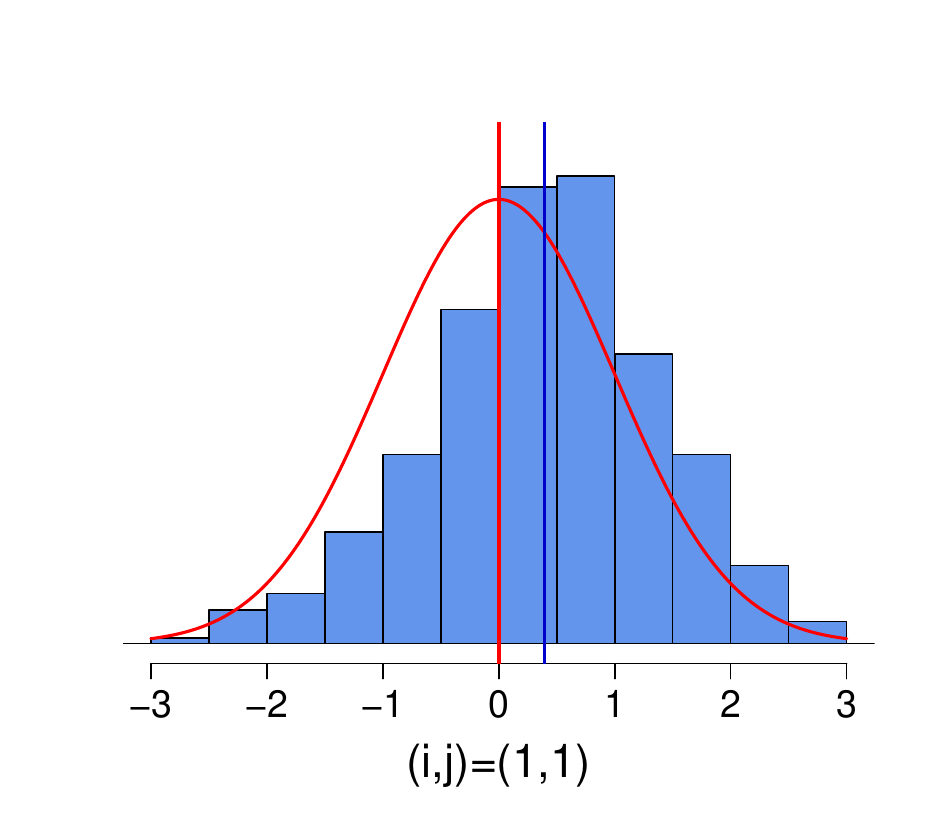}
    \end{minipage}
    \begin{minipage}{0.24\linewidth}
        \centering
        \includegraphics[width=\textwidth]{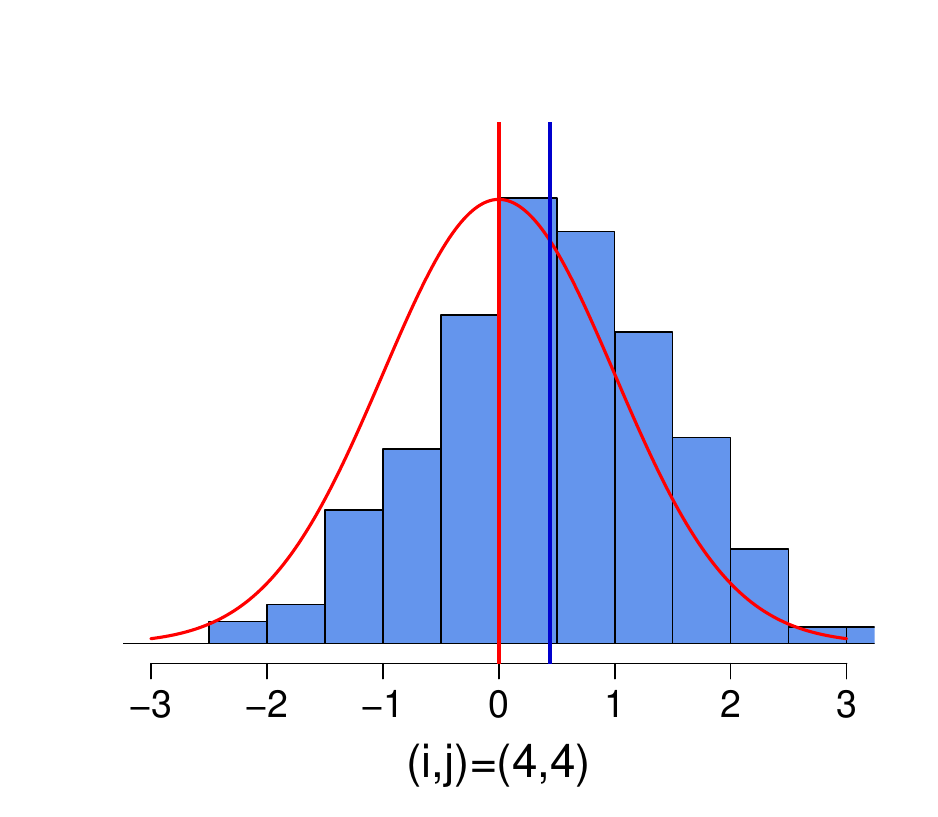}
    \end{minipage}
    \begin{minipage}{0.24\linewidth}
        \centering
        \includegraphics[width=\textwidth]{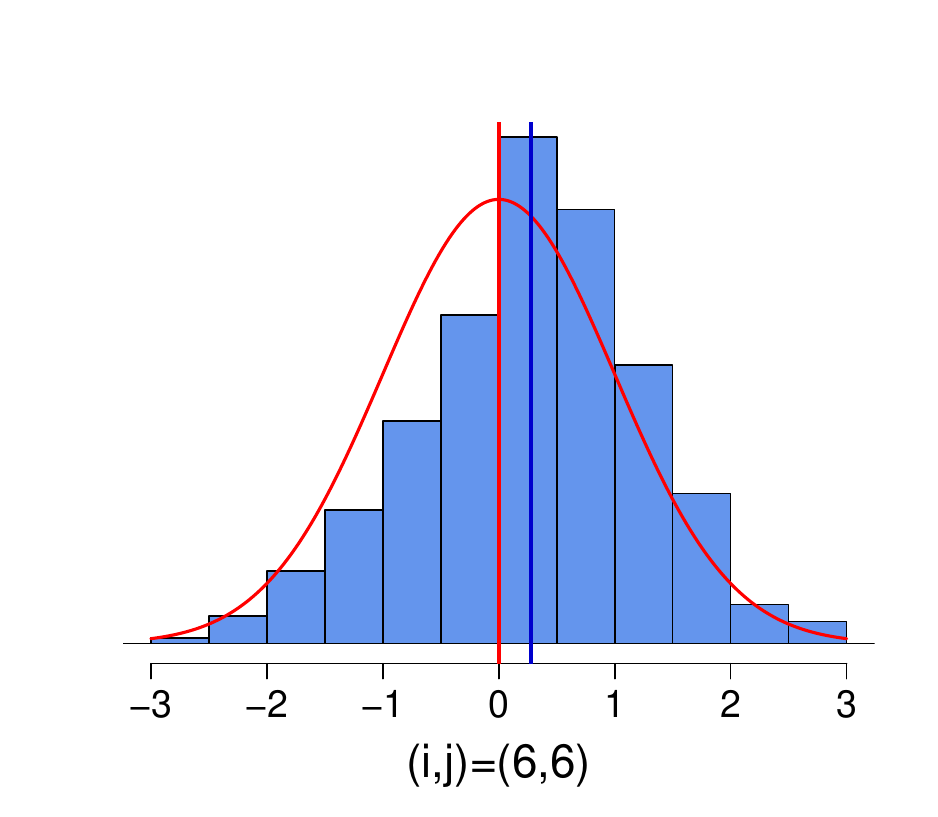}
    \end{minipage}
    \begin{minipage}{0.24\linewidth}
        \centering
        \includegraphics[width=\textwidth]{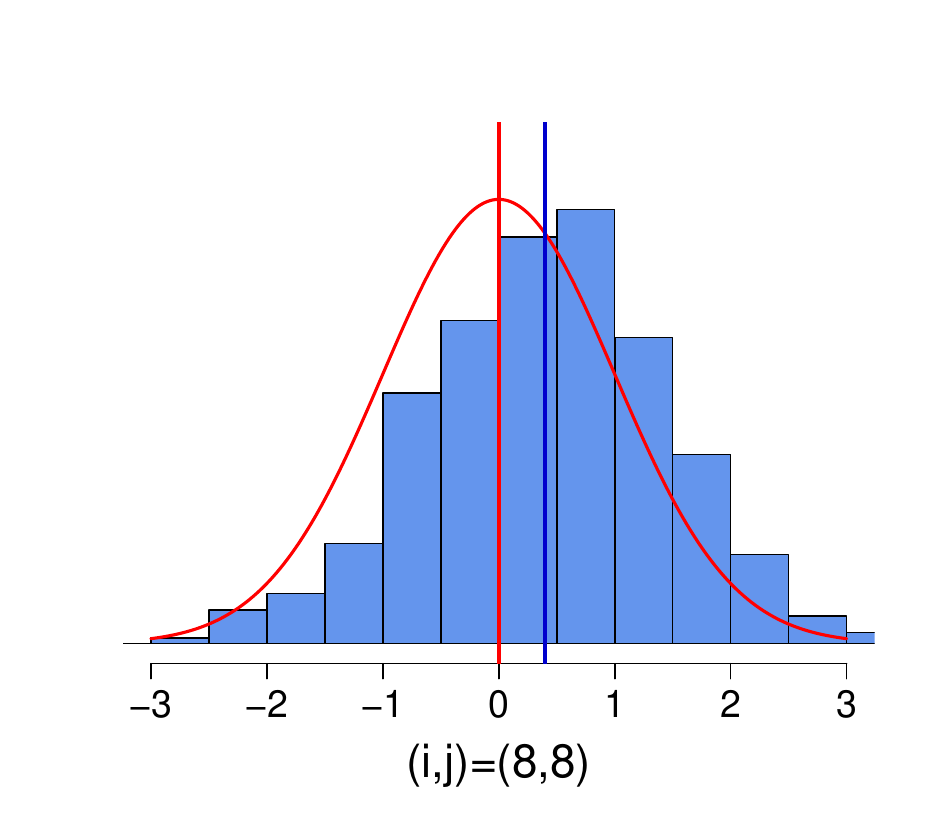}
    \end{minipage}
 \end{minipage}  
     \hspace{1cm}
 \begin{minipage}{0.3\linewidth}
    \begin{minipage}{0.24\linewidth}
        \centering
        \includegraphics[width=\textwidth]{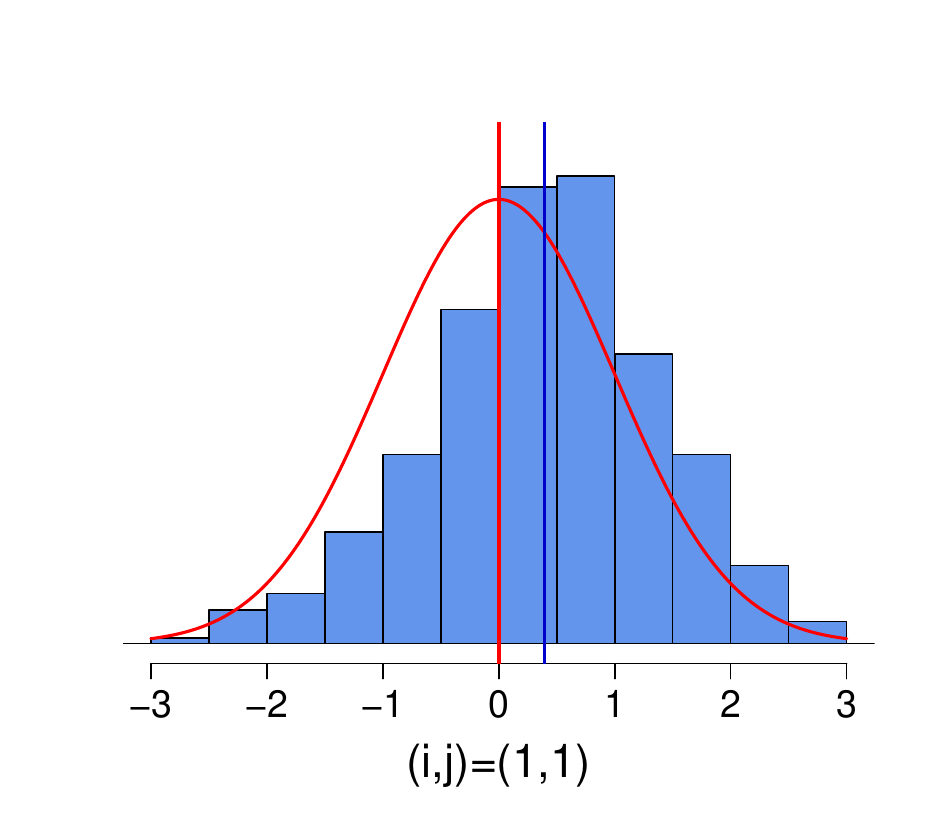}
    \end{minipage}
    \begin{minipage}{0.24\linewidth}
        \centering
        \includegraphics[width=\textwidth]{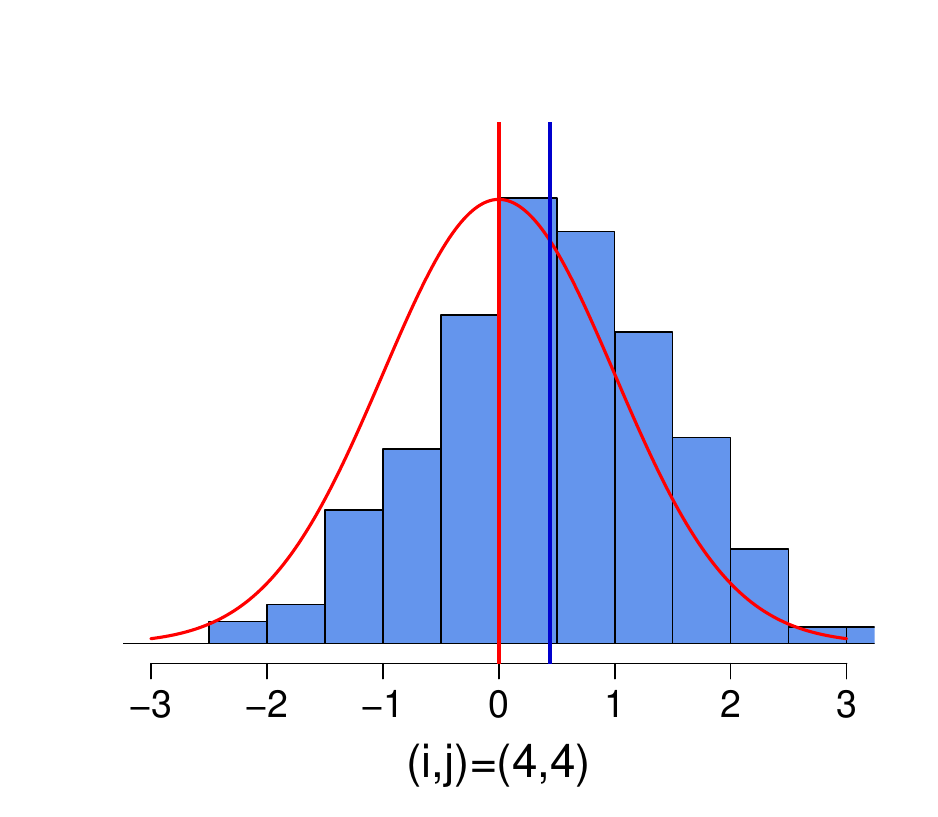}
    \end{minipage}
    \begin{minipage}{0.24\linewidth}
        \centering
        \includegraphics[width=\textwidth]{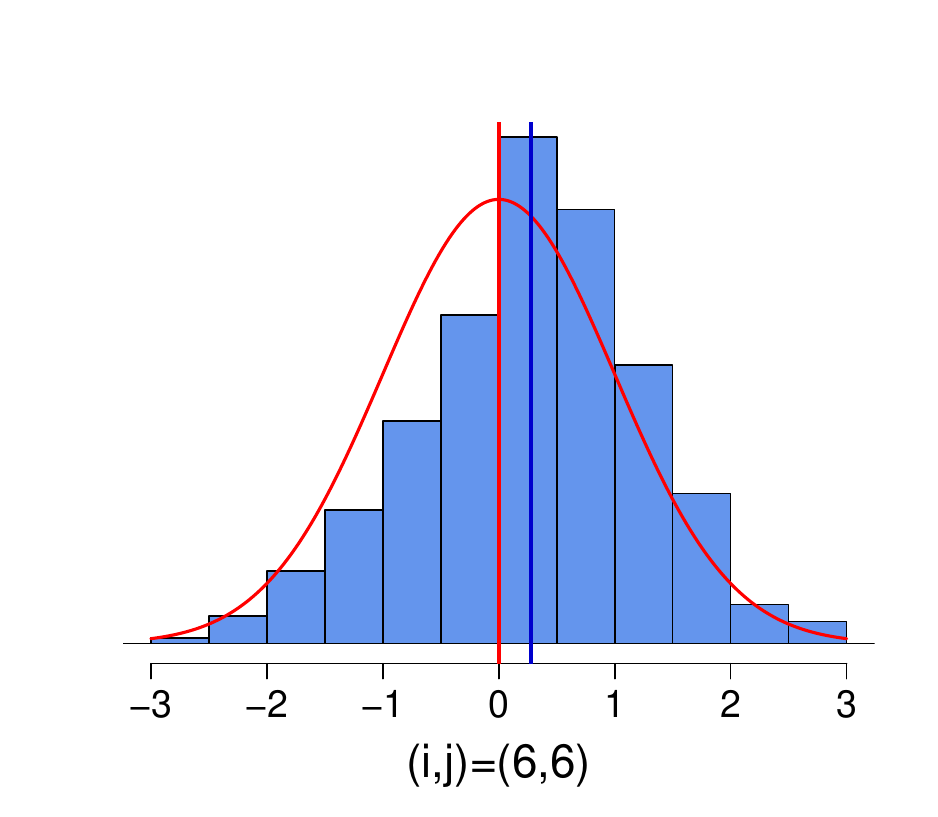}
    \end{minipage}
    \begin{minipage}{0.24\linewidth}
        \centering
        \includegraphics[width=\textwidth]{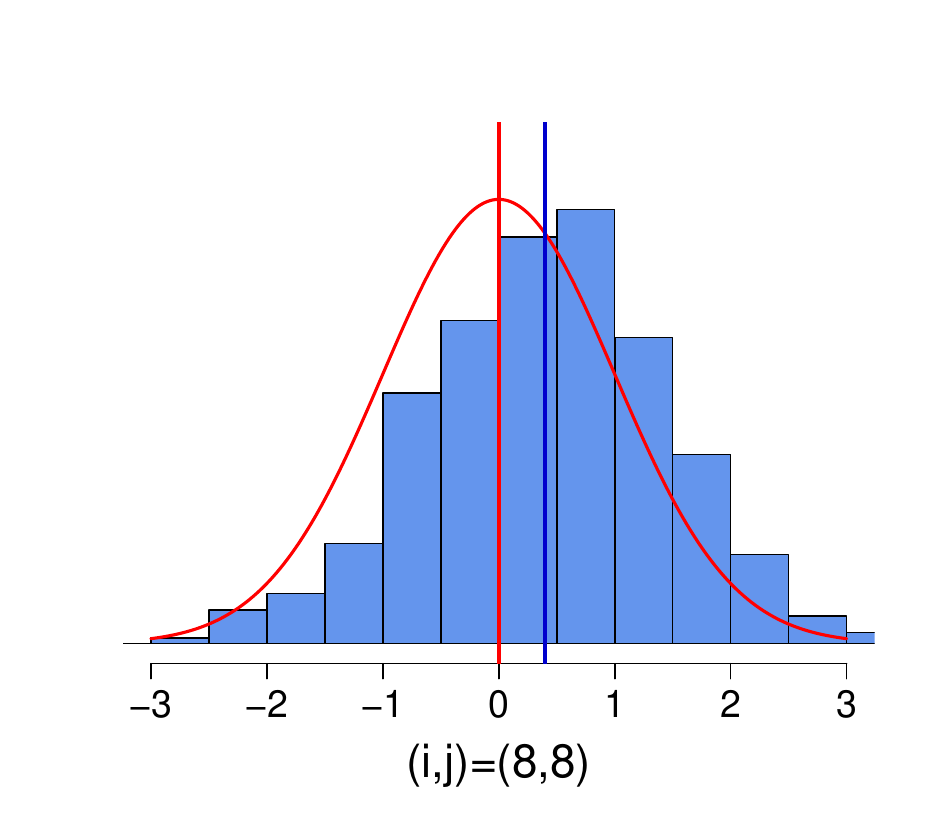}
    \end{minipage}
  \end{minipage}  
    \hspace{1cm}
 \begin{minipage}{0.3\linewidth}
    \begin{minipage}{0.24\linewidth}
        \centering
        \includegraphics[width=\textwidth]{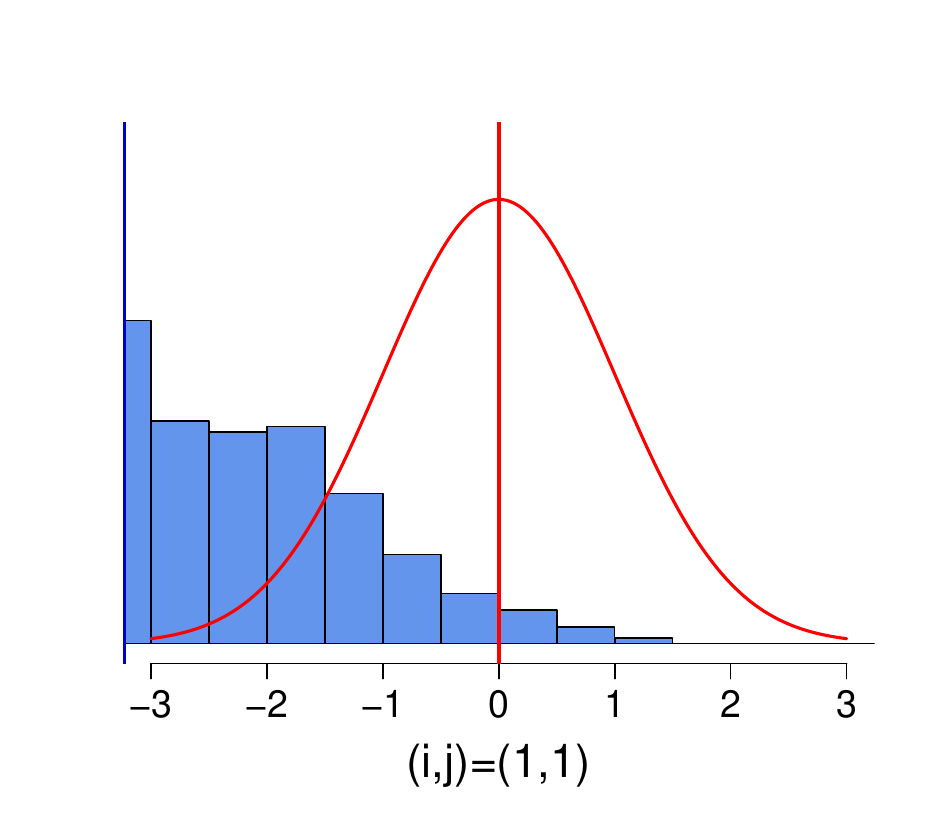}
    \end{minipage}
    \begin{minipage}{0.24\linewidth}
        \centering
        \includegraphics[width=\textwidth]{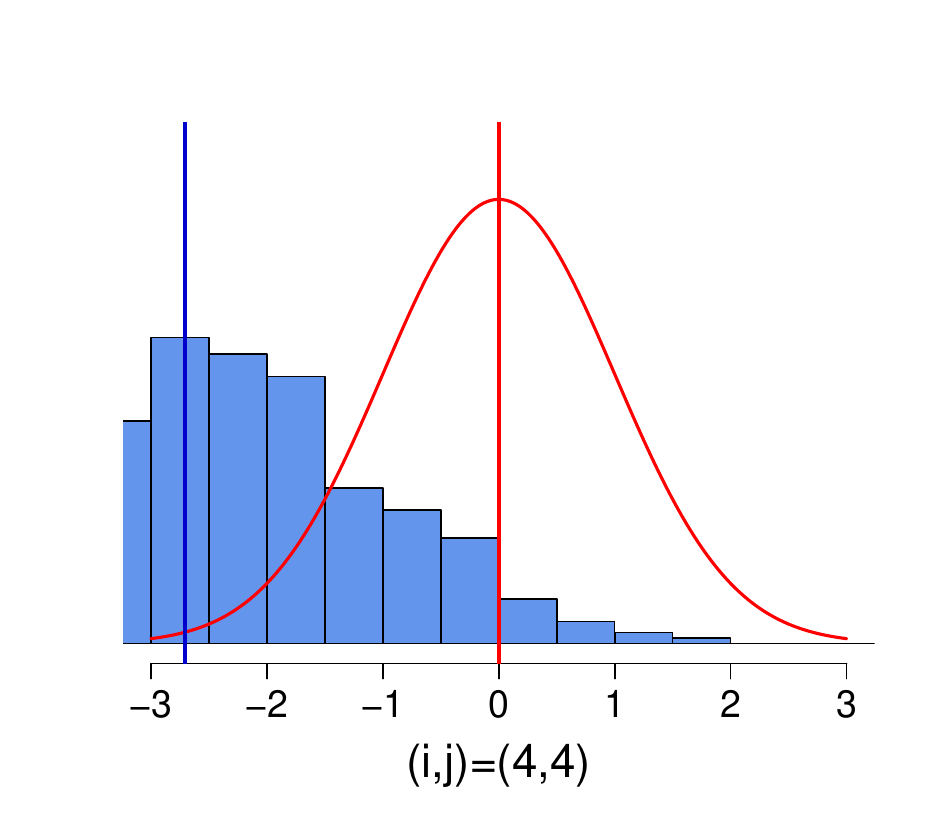}
    \end{minipage}
    \begin{minipage}{0.24\linewidth}
        \centering
        \includegraphics[width=\textwidth]{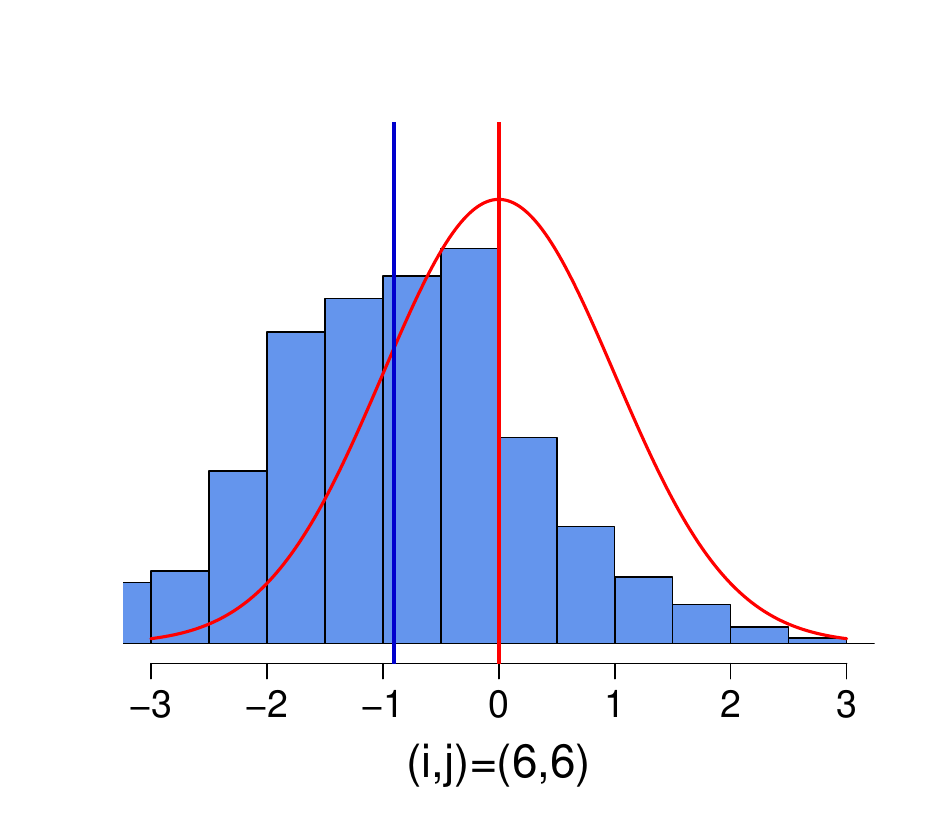}
    \end{minipage}
    \begin{minipage}{0.24\linewidth}
        \centering
        \includegraphics[width=\textwidth]{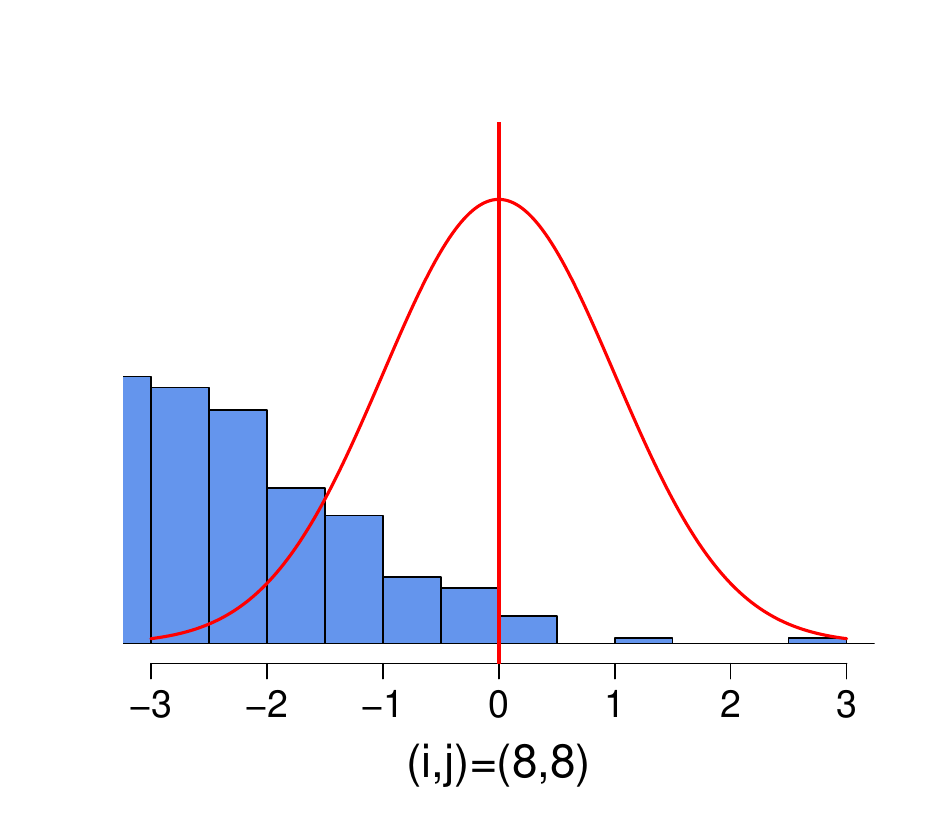}
    \end{minipage}
 \end{minipage}

  \caption*{$n=800, p=200$}
      \vspace{-0.43cm}
 \begin{minipage}{0.3\linewidth}
    \begin{minipage}{0.24\linewidth}
        \centering
        \includegraphics[width=\textwidth]{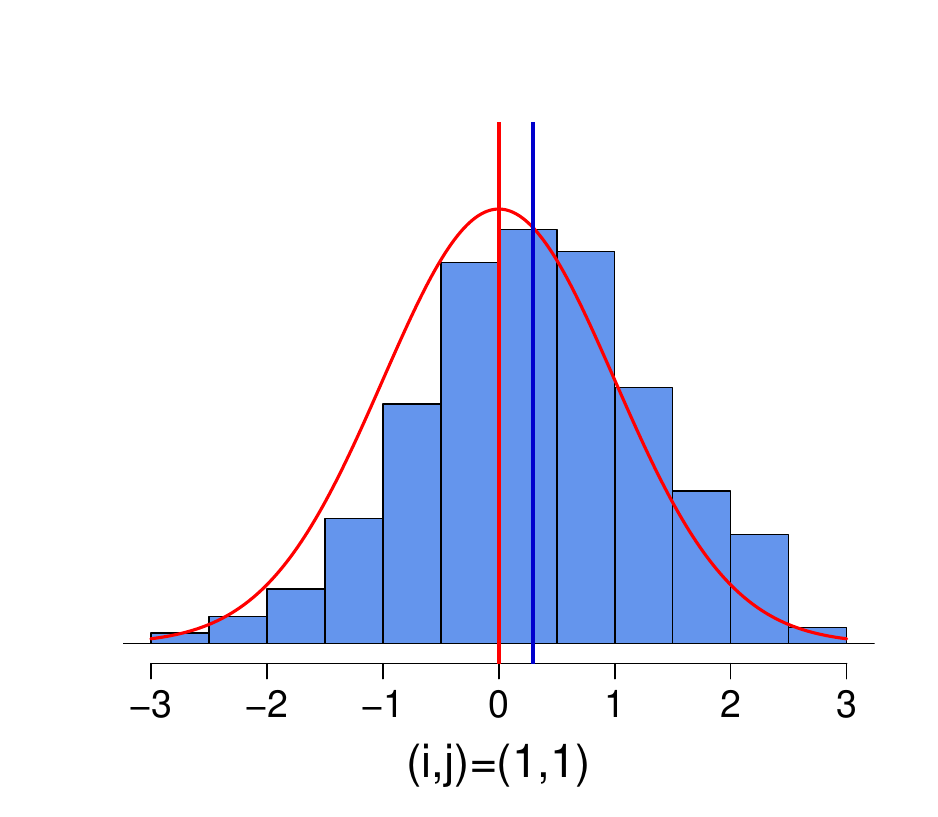}
    \end{minipage}
    \begin{minipage}{0.24\linewidth}
        \centering
        \includegraphics[width=\textwidth]{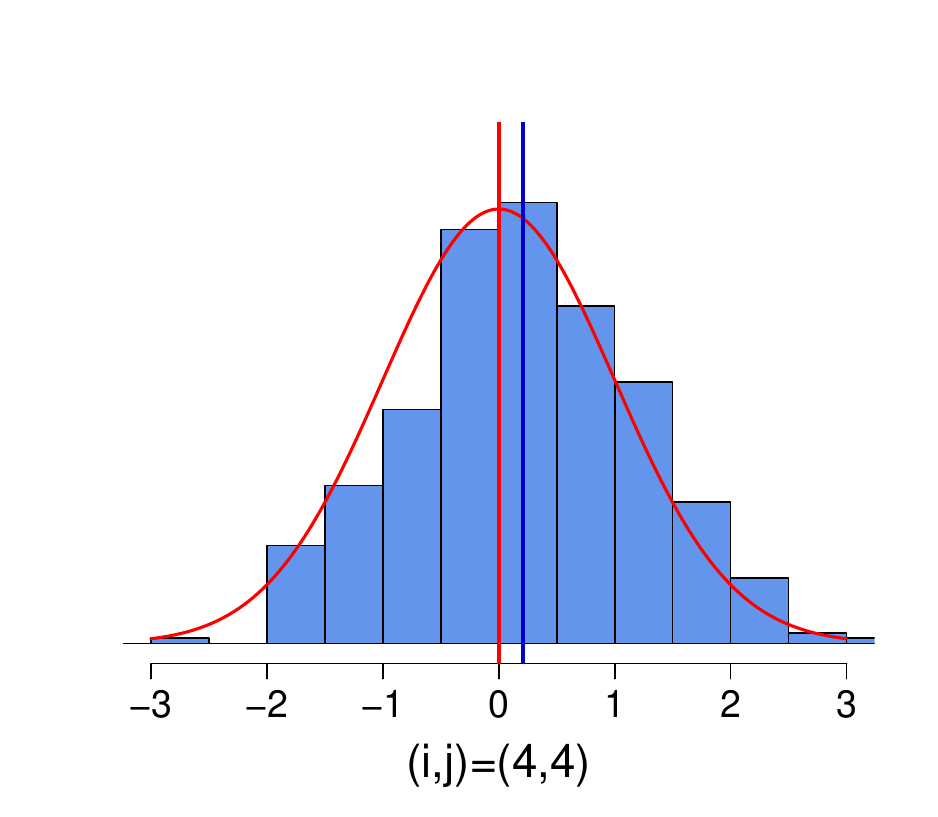}
    \end{minipage}
    \begin{minipage}{0.24\linewidth}
        \centering
        \includegraphics[width=\textwidth]{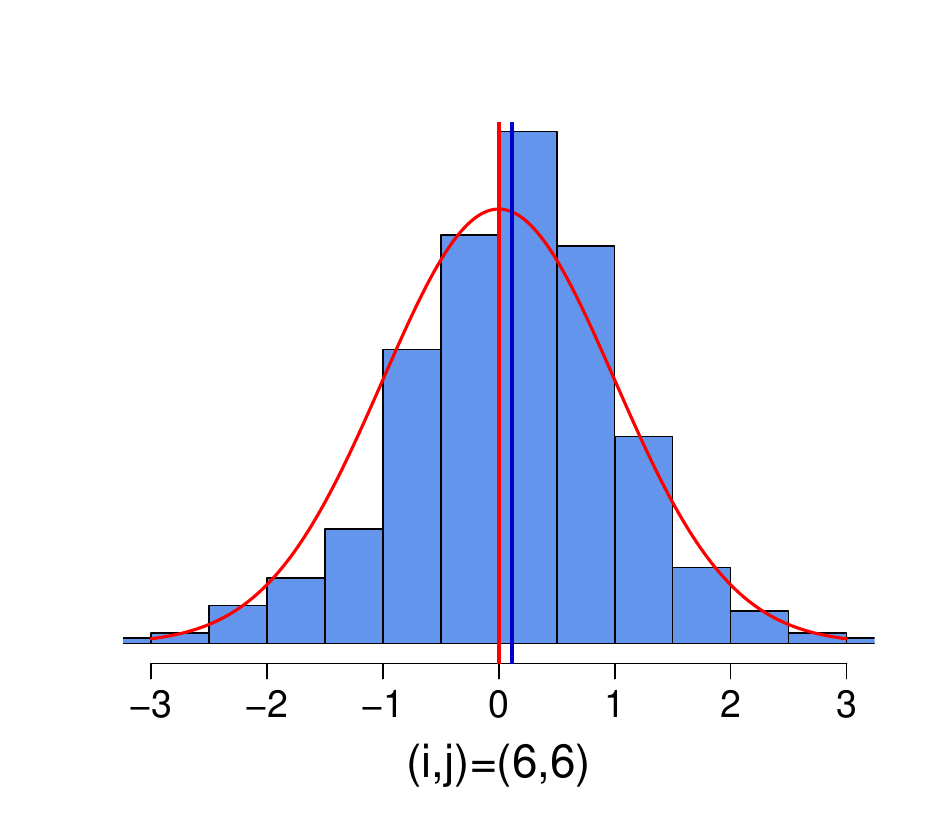}
    \end{minipage}
    \begin{minipage}{0.24\linewidth}
        \centering
        \includegraphics[width=\textwidth]{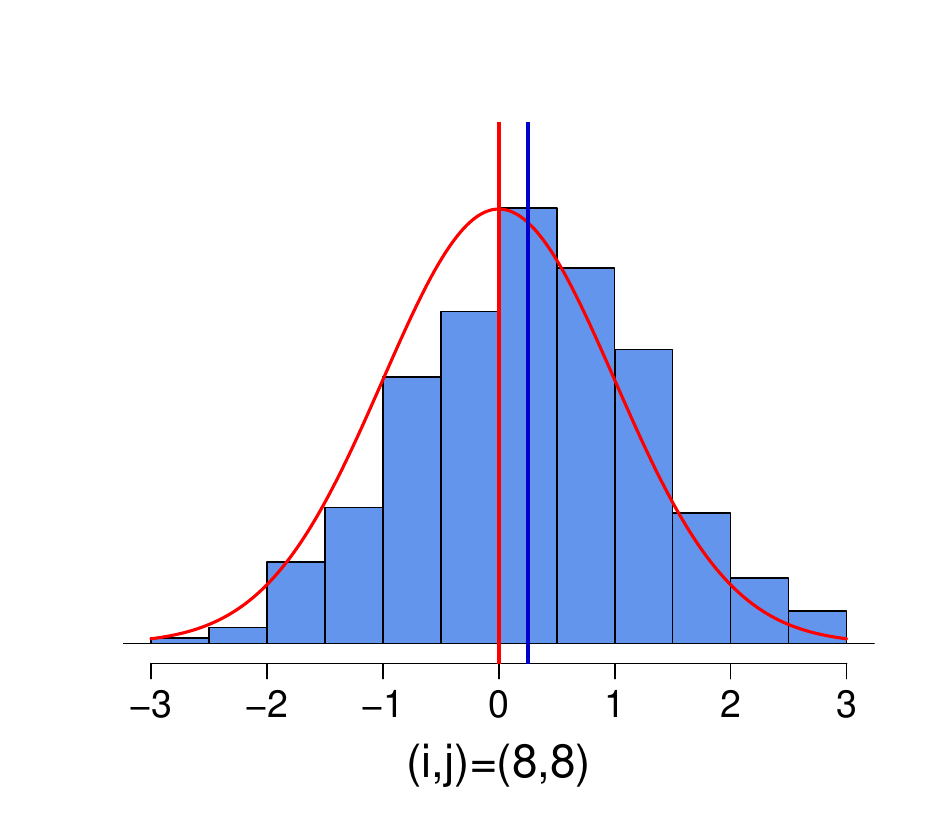}
    \end{minipage}
 \end{minipage} 
     \hspace{1cm}
 \begin{minipage}{0.3\linewidth}
    \begin{minipage}{0.24\linewidth}
        \centering
        \includegraphics[width=\textwidth]{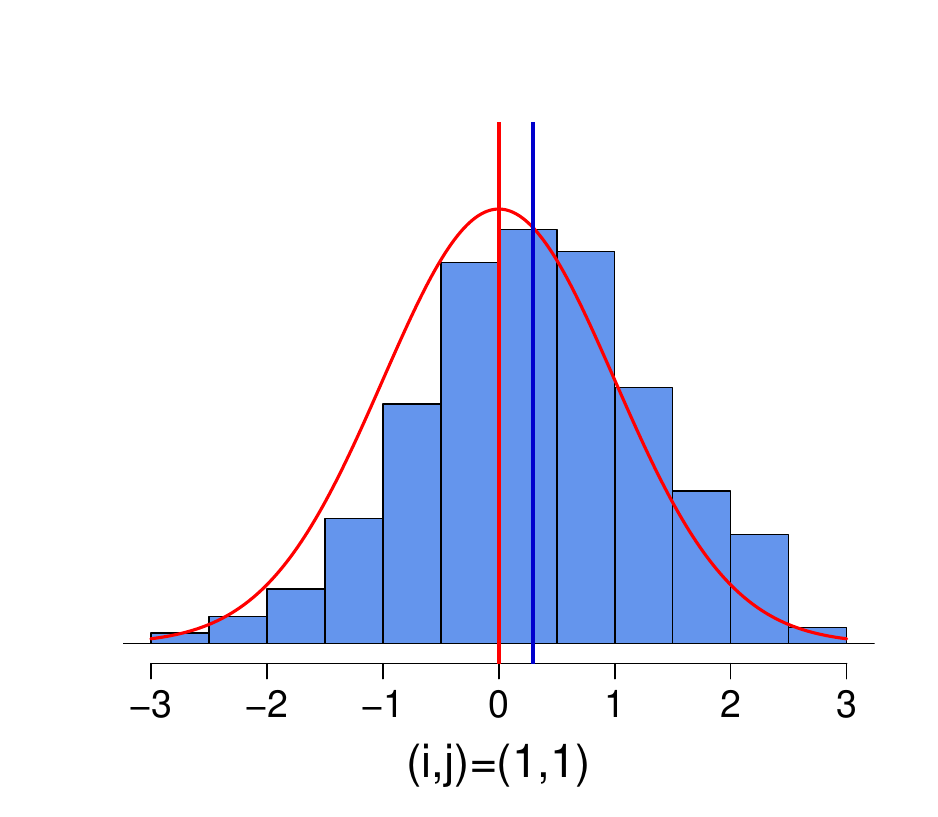}
    \end{minipage}
    \begin{minipage}{0.24\linewidth}
        \centering
        \includegraphics[width=\textwidth]{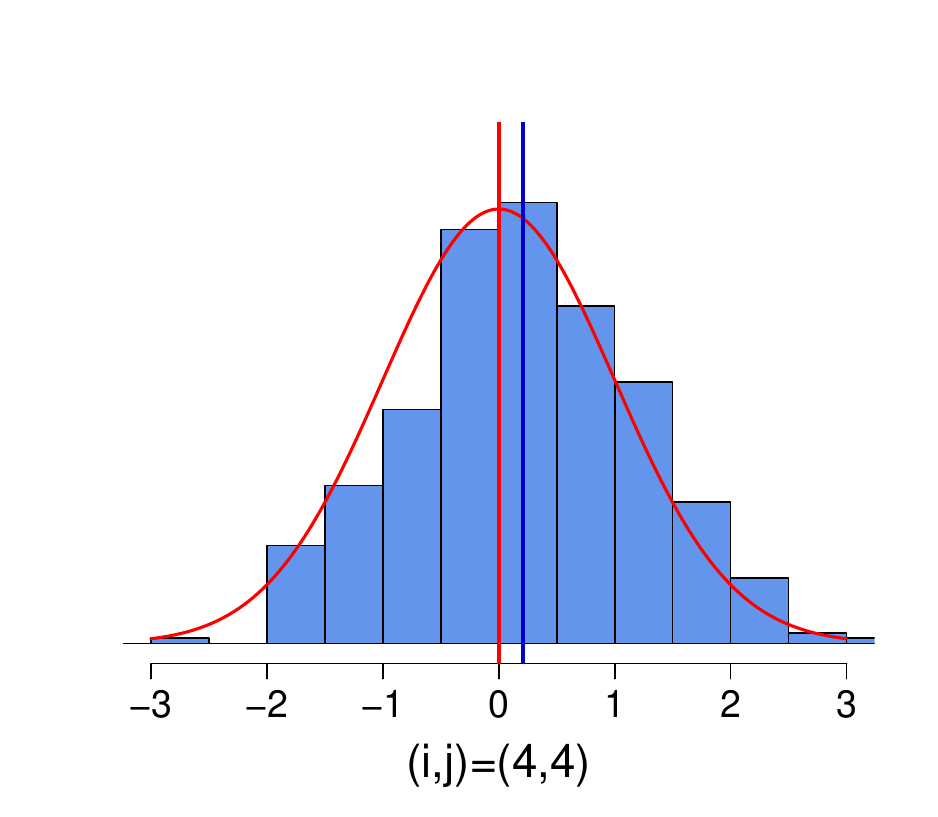}
    \end{minipage}
    \begin{minipage}{0.24\linewidth}
        \centering
        \includegraphics[width=\textwidth]{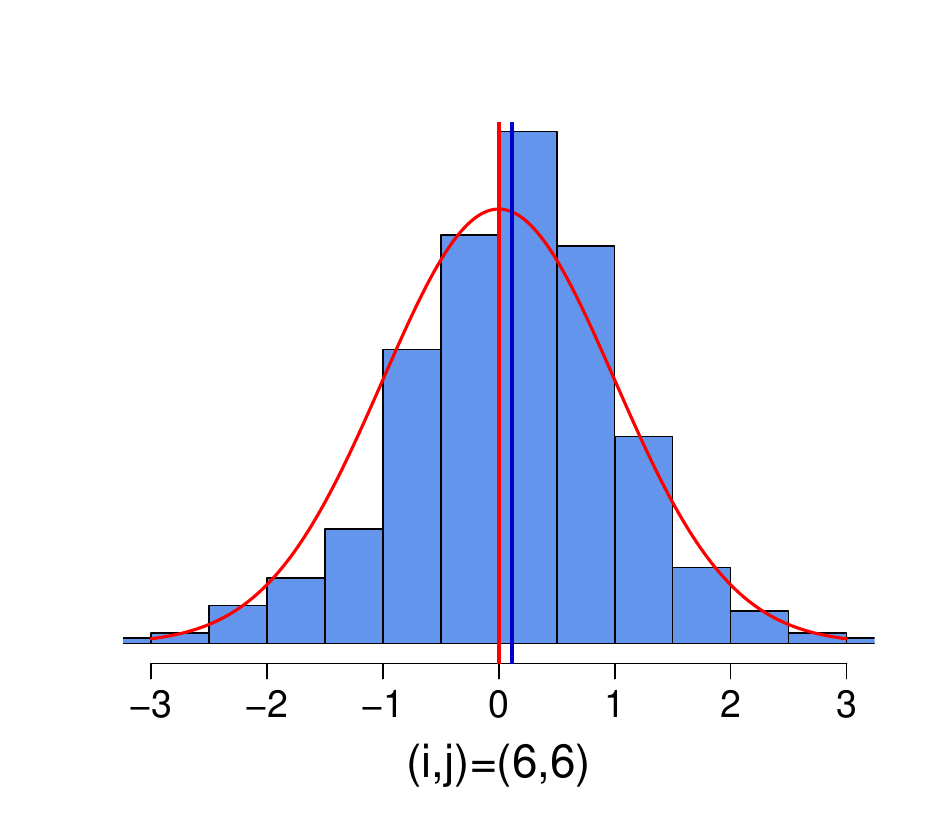}
    \end{minipage}
    \begin{minipage}{0.24\linewidth}
        \centering
        \includegraphics[width=\textwidth]{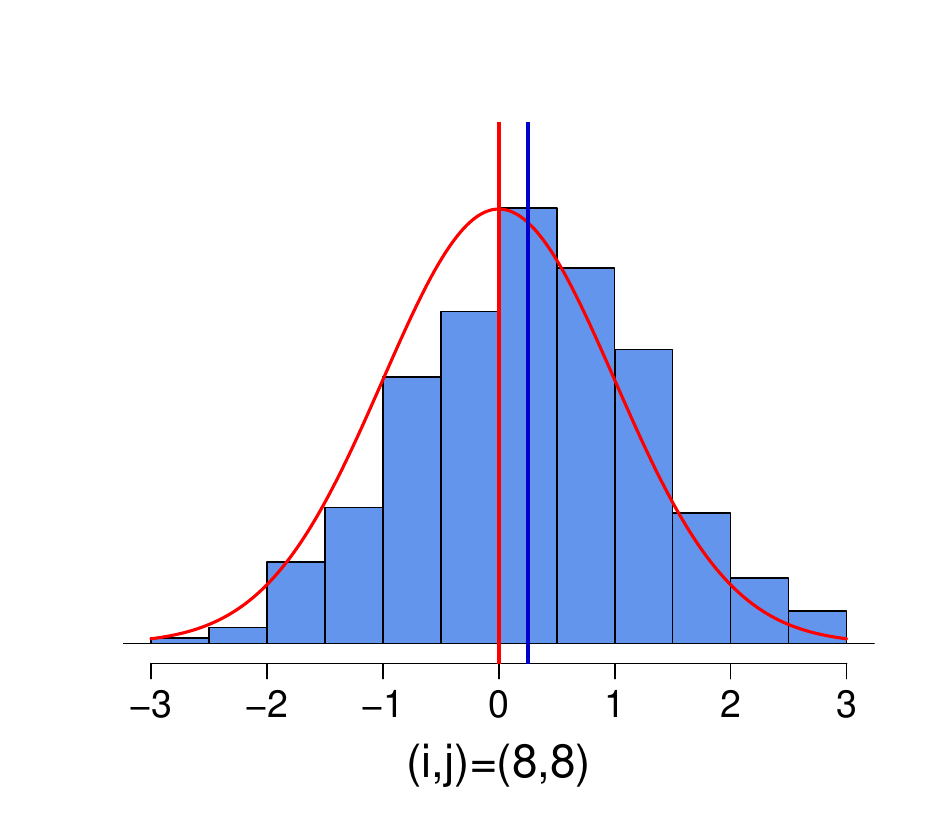}
    \end{minipage}
 \end{minipage}   
      \hspace{1cm}
 \begin{minipage}{0.3\linewidth}
    \begin{minipage}{0.24\linewidth}
        \centering
        \includegraphics[width=\textwidth]{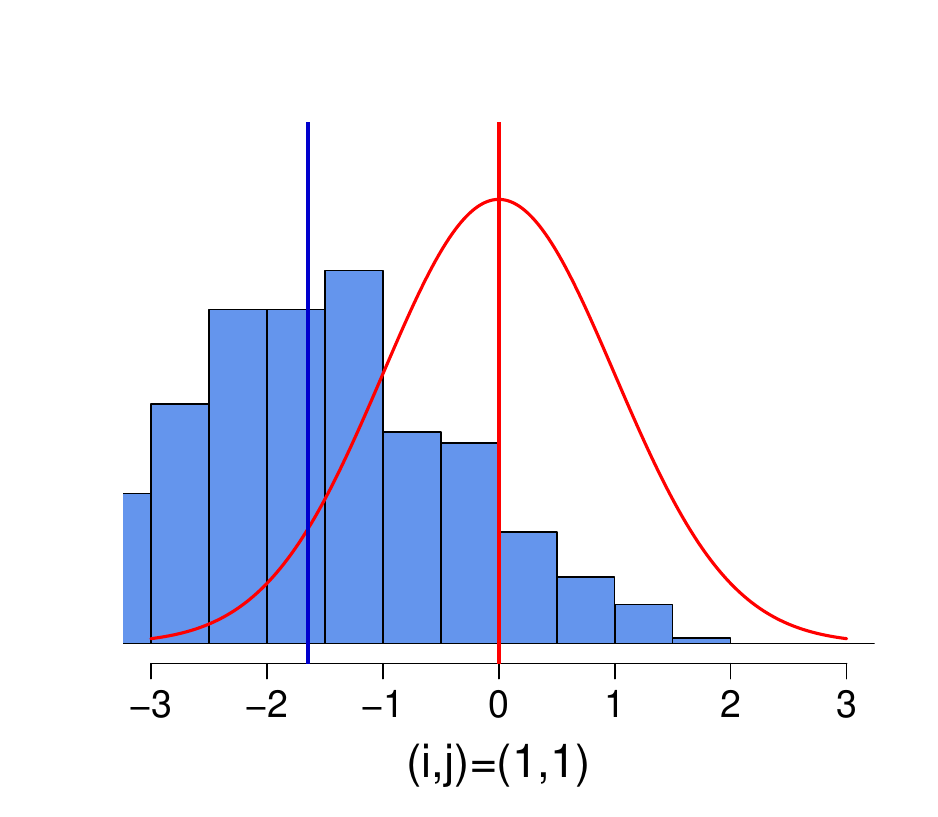}
    \end{minipage}
    \begin{minipage}{0.24\linewidth}
        \centering
        \includegraphics[width=\textwidth]{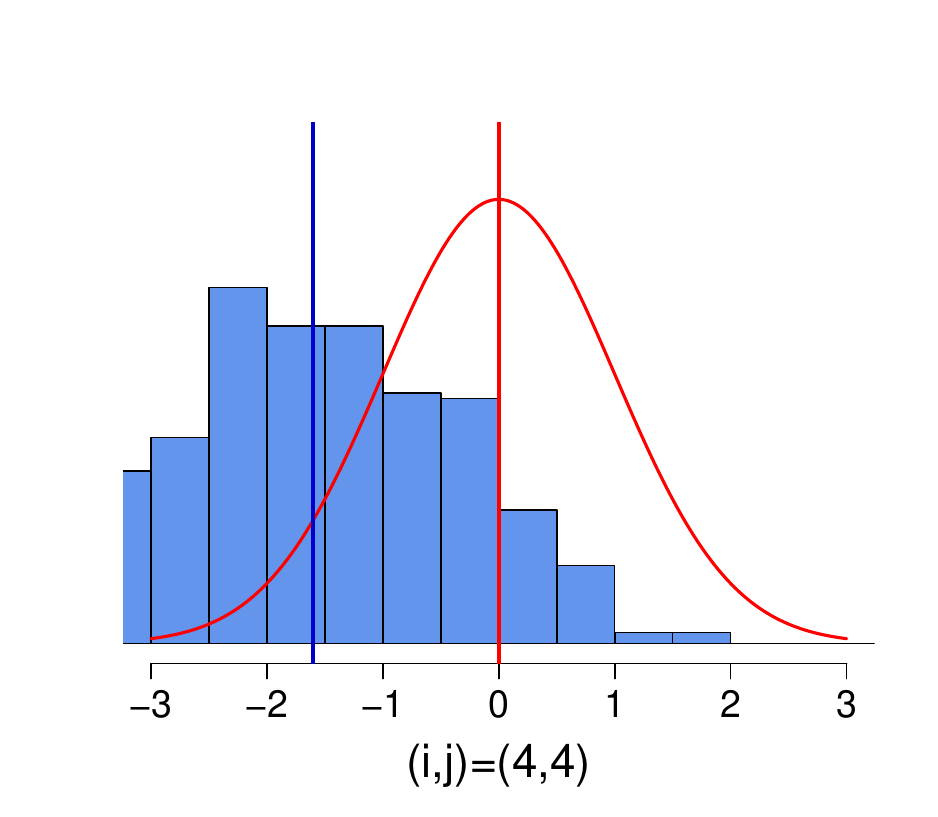}
    \end{minipage}
    \begin{minipage}{0.24\linewidth}
        \centering
        \includegraphics[width=\textwidth]{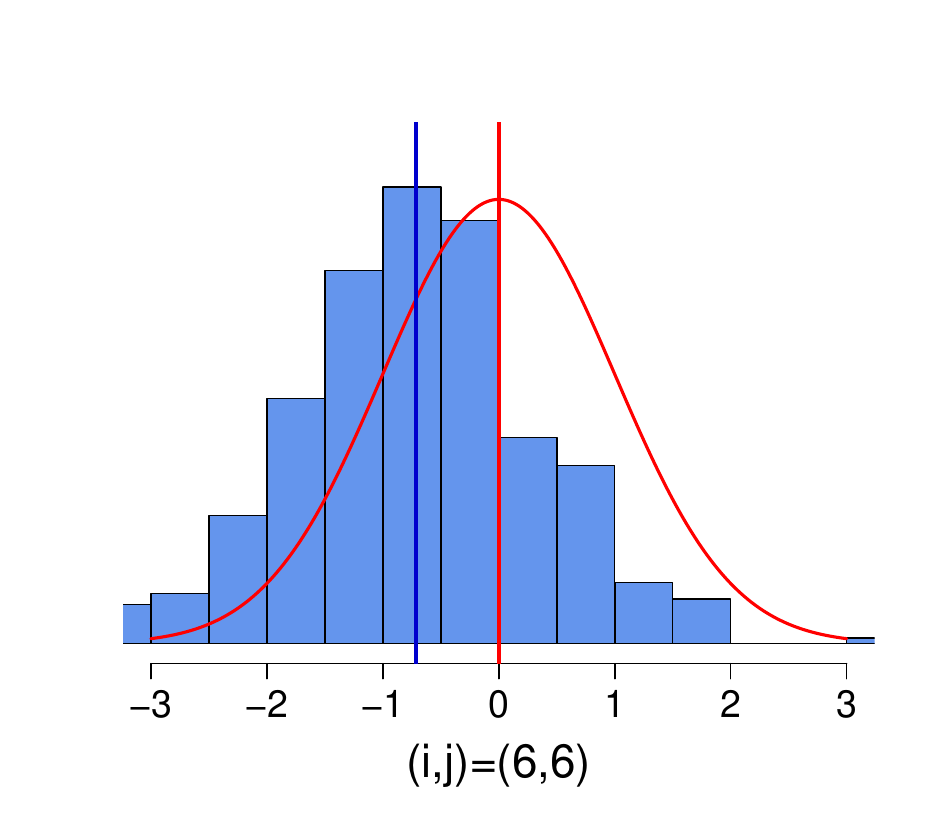}
    \end{minipage}
    \begin{minipage}{0.24\linewidth}
        \centering
        \includegraphics[width=\textwidth]{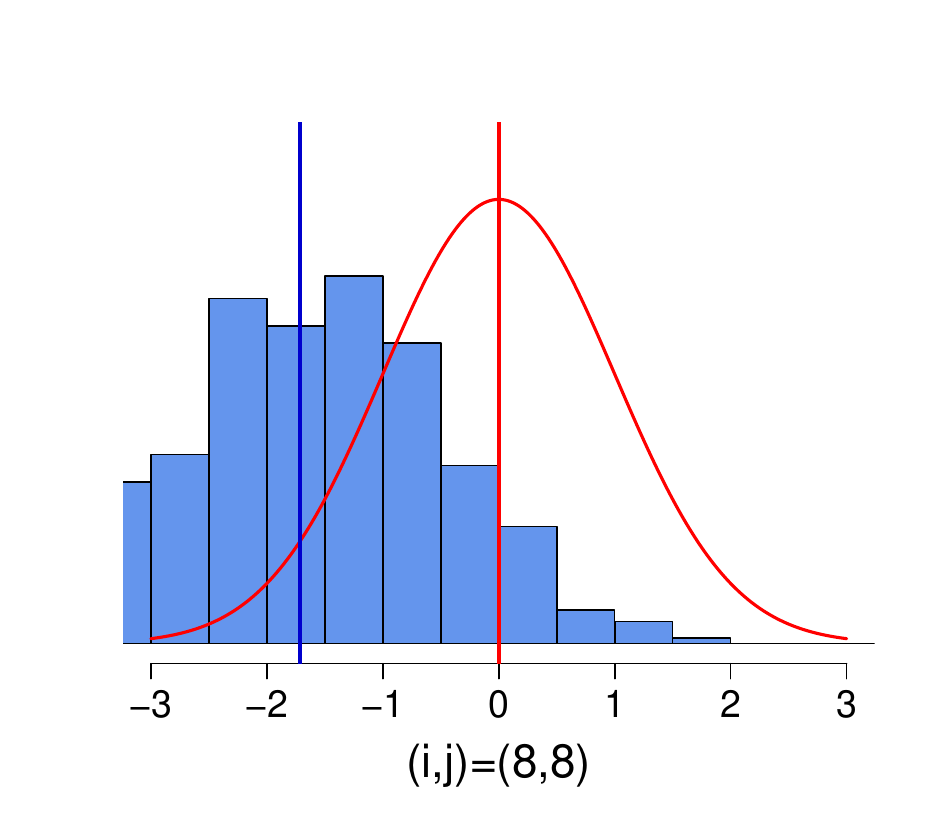}
    \end{minipage}
     \end{minipage}   
     
 \caption*{$n=200, p=400$}
     \vspace{-0.43cm}
 \begin{minipage}{0.3\linewidth}
    \begin{minipage}{0.24\linewidth}
        \centering
        \includegraphics[width=\textwidth]{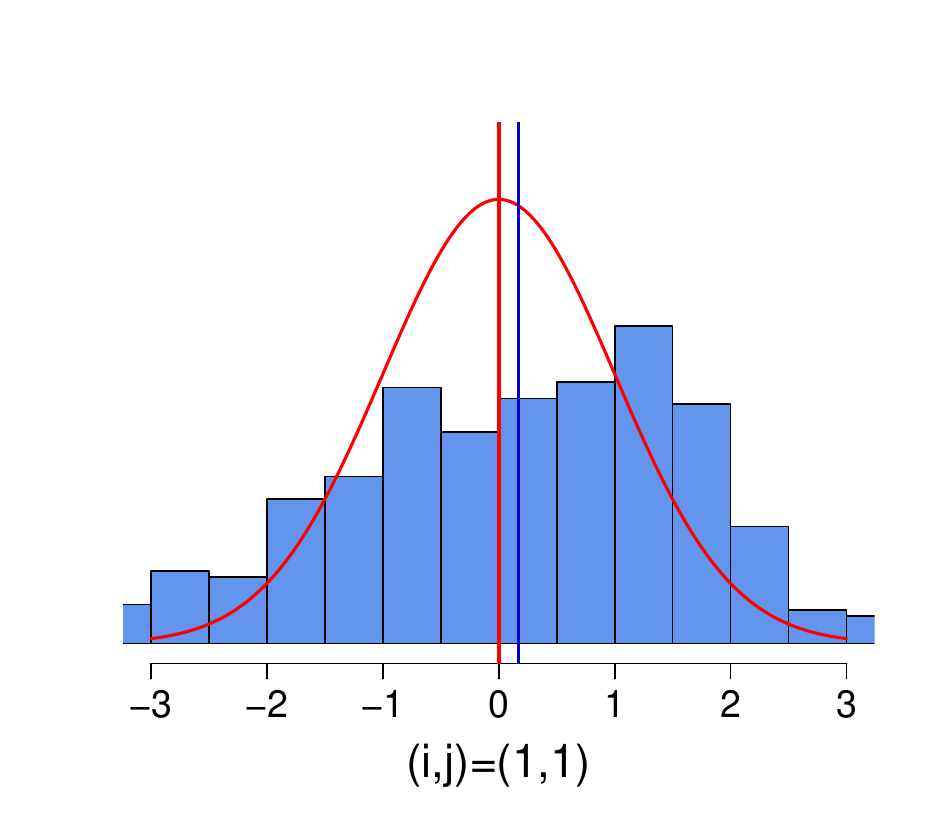}
    \end{minipage}
    \begin{minipage}{0.24\linewidth}
        \centering
        \includegraphics[width=\textwidth]{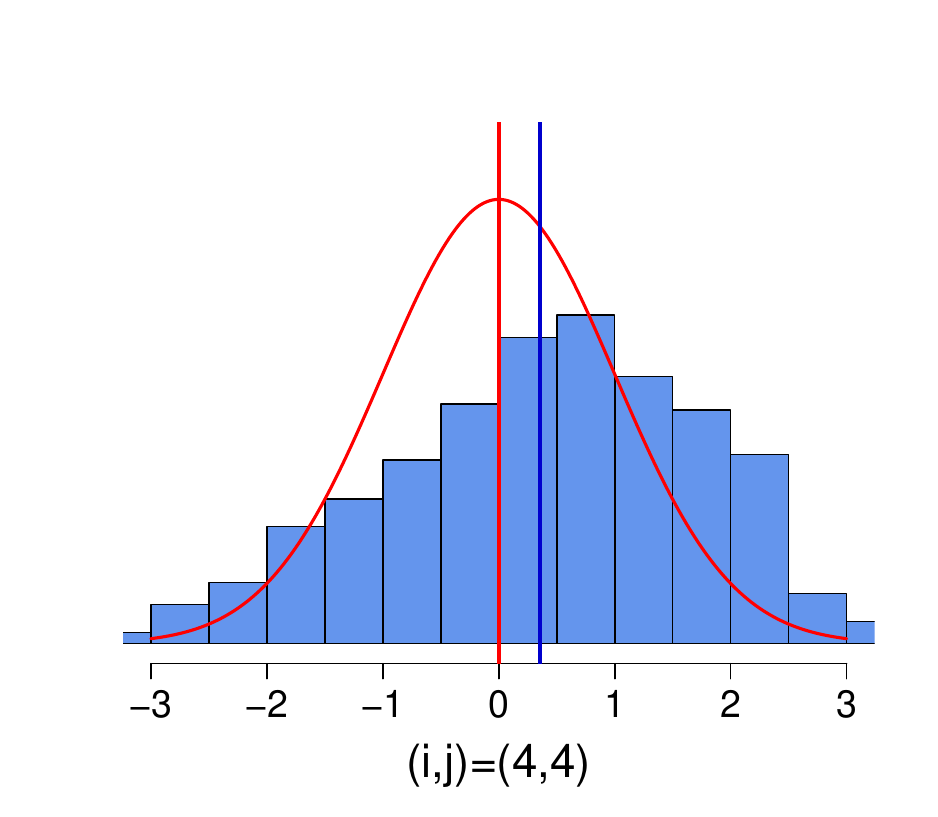}
    \end{minipage}
    \begin{minipage}{0.24\linewidth}
        \centering
        \includegraphics[width=\textwidth]{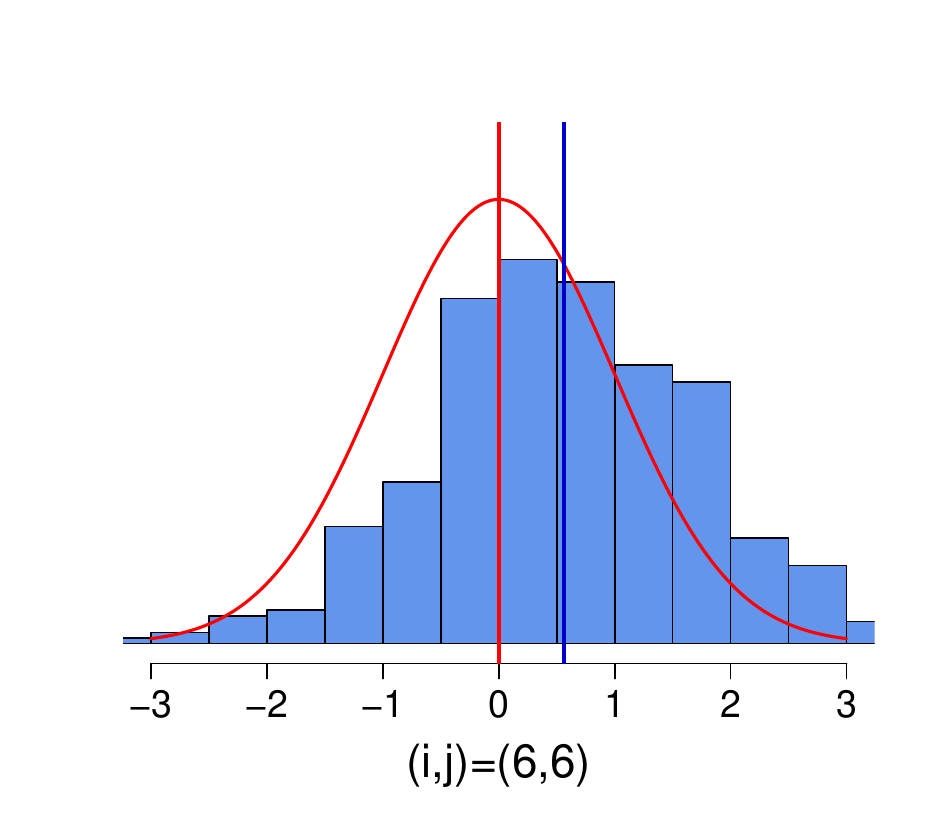}
    \end{minipage}
    \begin{minipage}{0.24\linewidth}
        \centering
        \includegraphics[width=\textwidth]{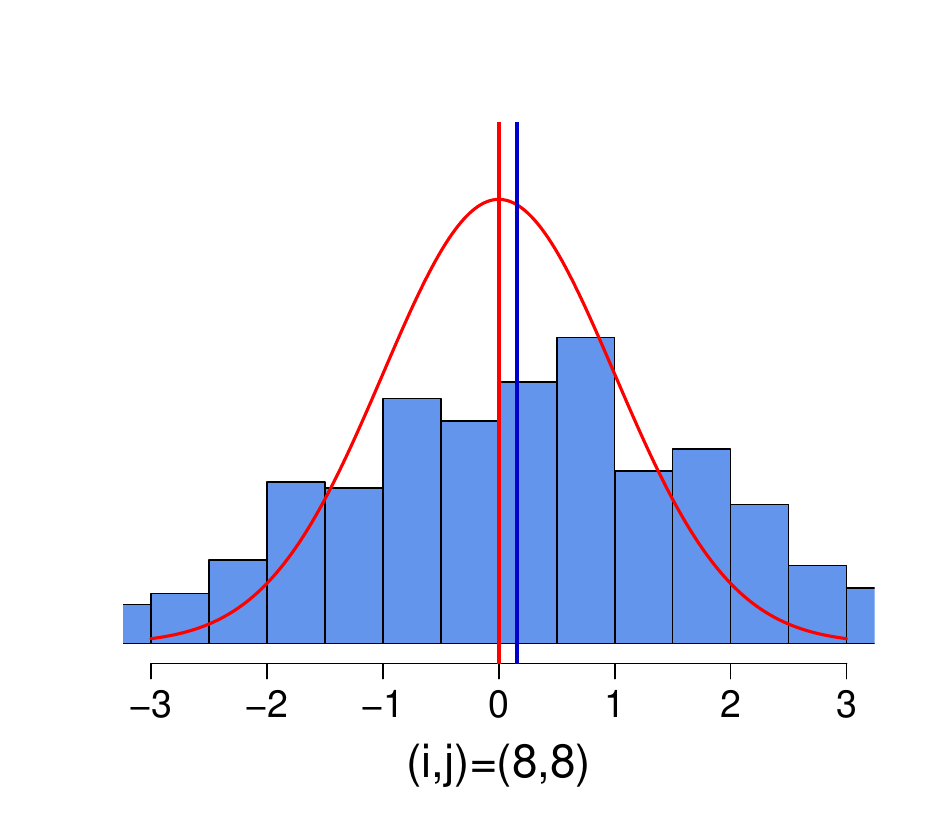}
    \end{minipage}
 \end{minipage}
 \hspace{1cm}
 \begin{minipage}{0.3\linewidth}
    \begin{minipage}{0.24\linewidth}
        \centering
        \includegraphics[width=\textwidth]{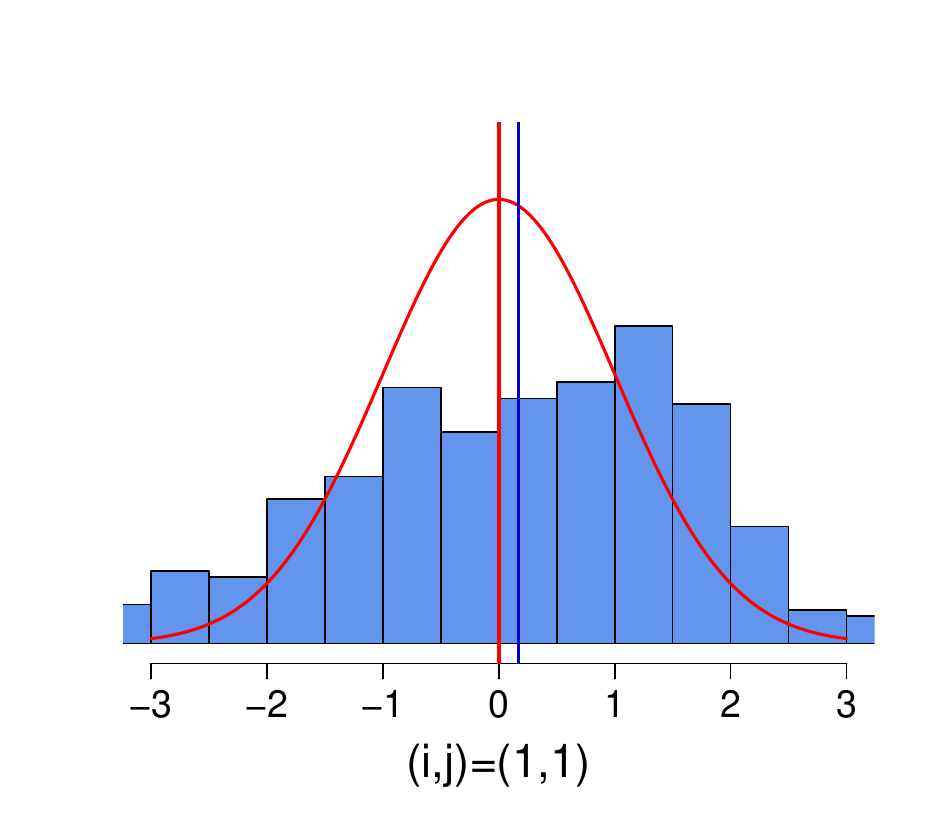}
    \end{minipage}
    \begin{minipage}{0.24\linewidth}
        \centering
        \includegraphics[width=\textwidth]{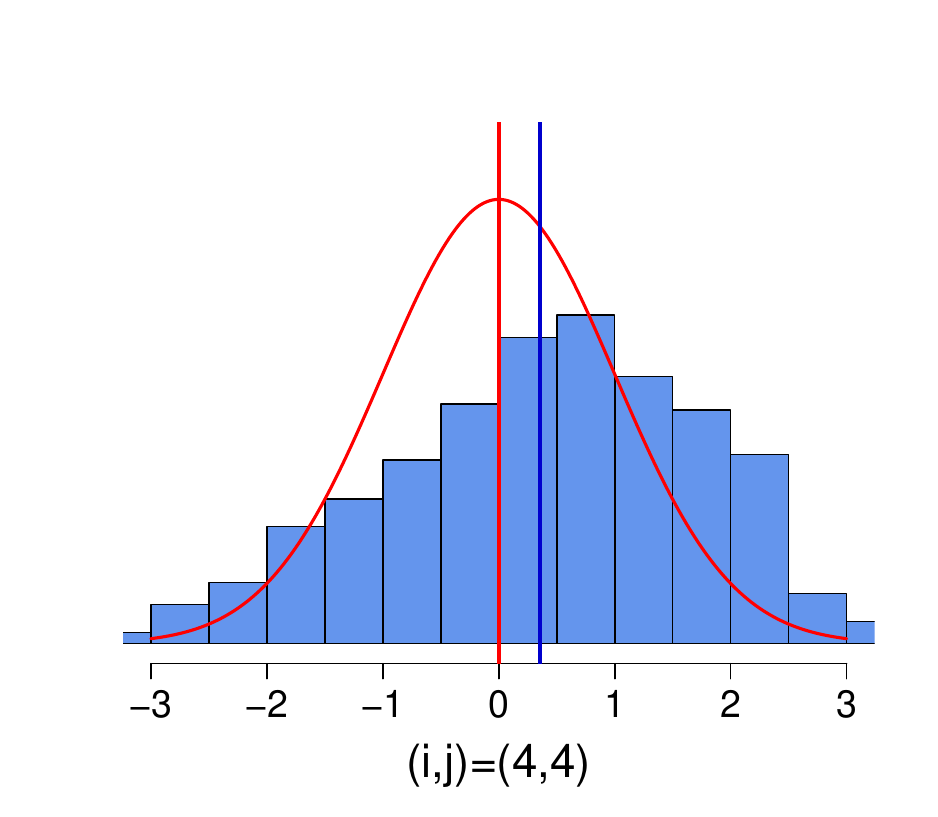}
    \end{minipage}
    \begin{minipage}{0.24\linewidth}
        \centering
        \includegraphics[width=\textwidth]{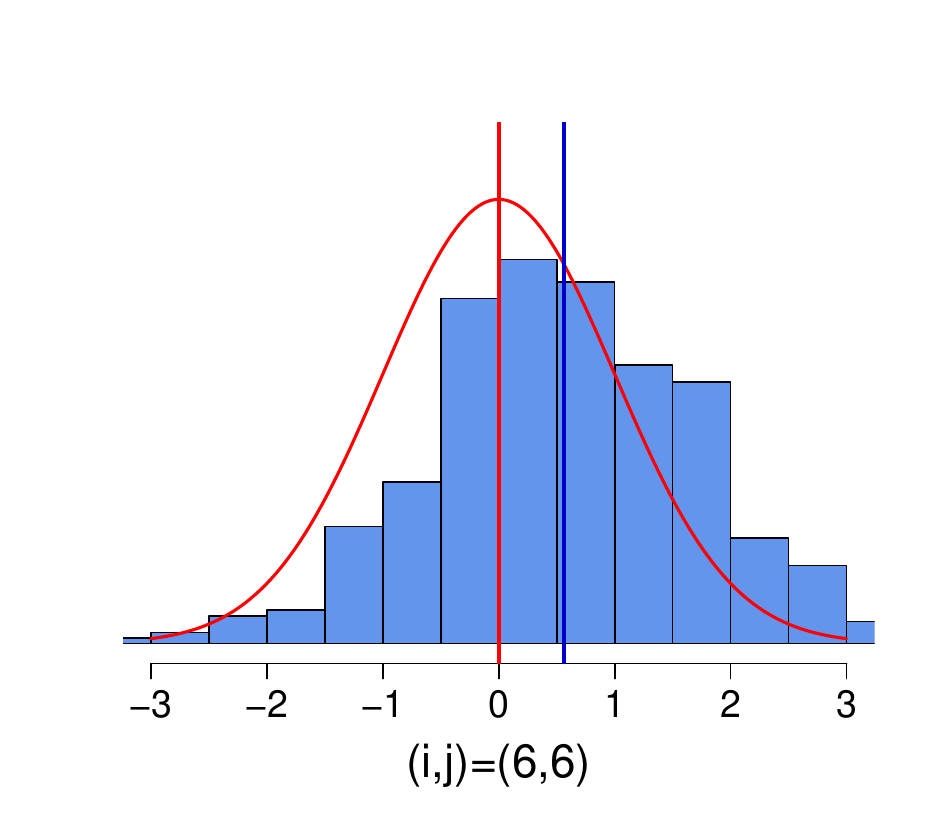}
    \end{minipage}
    \begin{minipage}{0.24\linewidth}
        \centering
        \includegraphics[width=\textwidth]{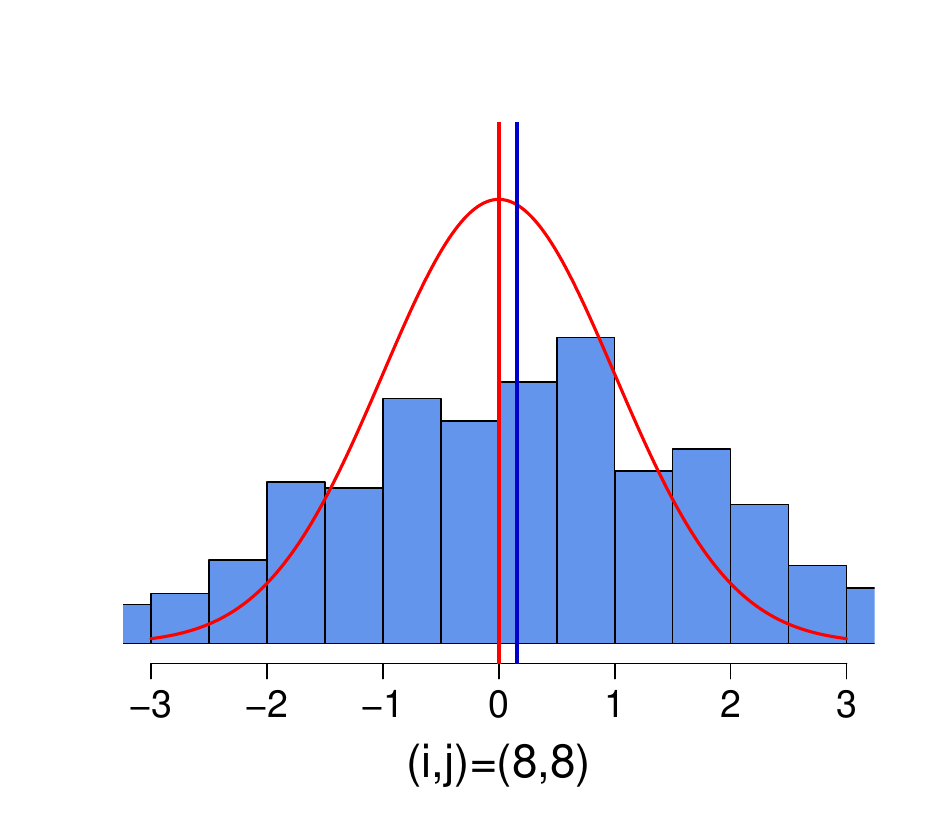}
    \end{minipage}    
 \end{minipage}
  \hspace{1cm}
 \begin{minipage}{0.3\linewidth}
     \begin{minipage}{0.24\linewidth}
        \centering
        \includegraphics[width=\textwidth]{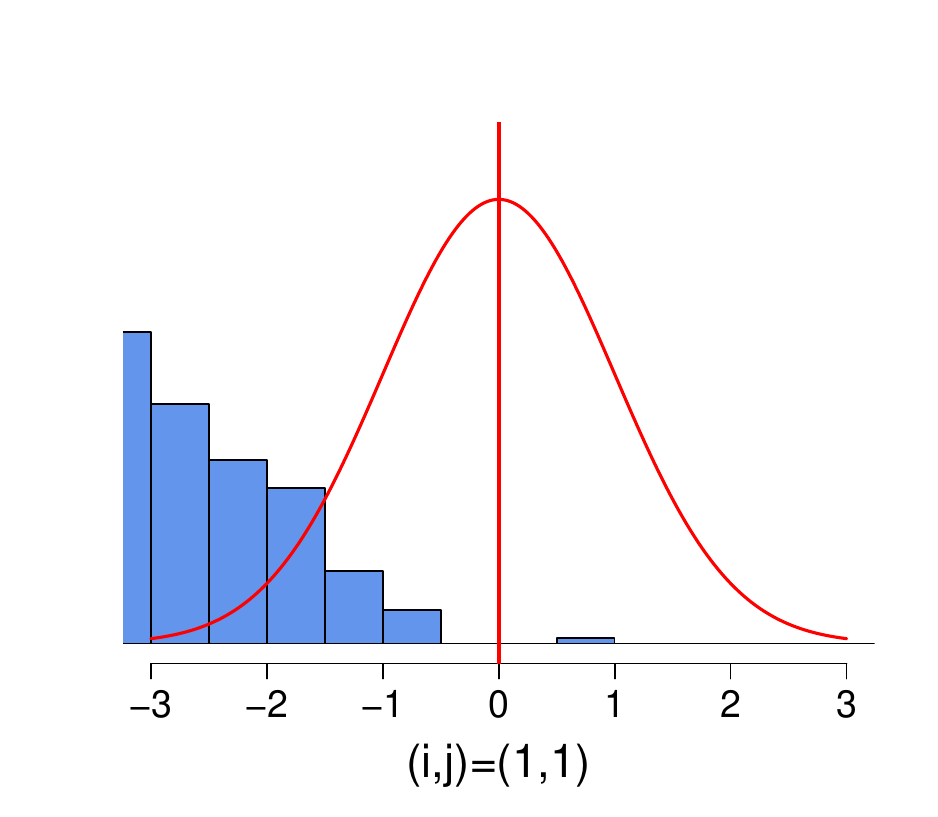}
    \end{minipage}
    \begin{minipage}{0.24\linewidth}
        \centering
        \includegraphics[width=\textwidth]{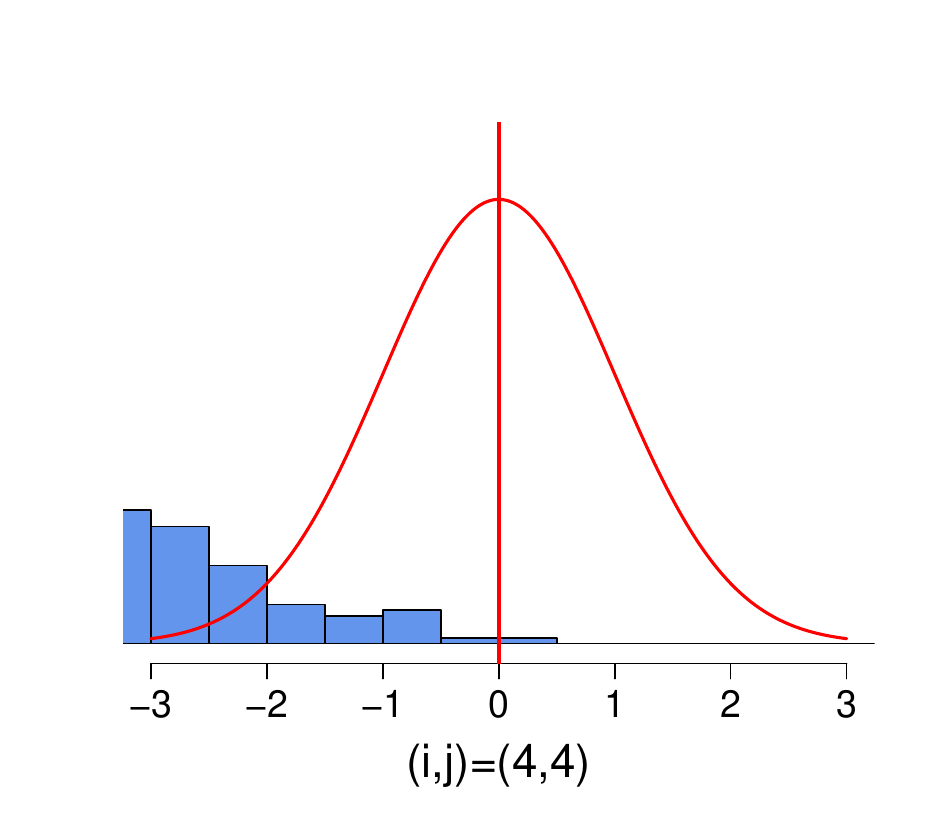}
    \end{minipage}
    \begin{minipage}{0.24\linewidth}
        \centering
        \includegraphics[width=\textwidth]{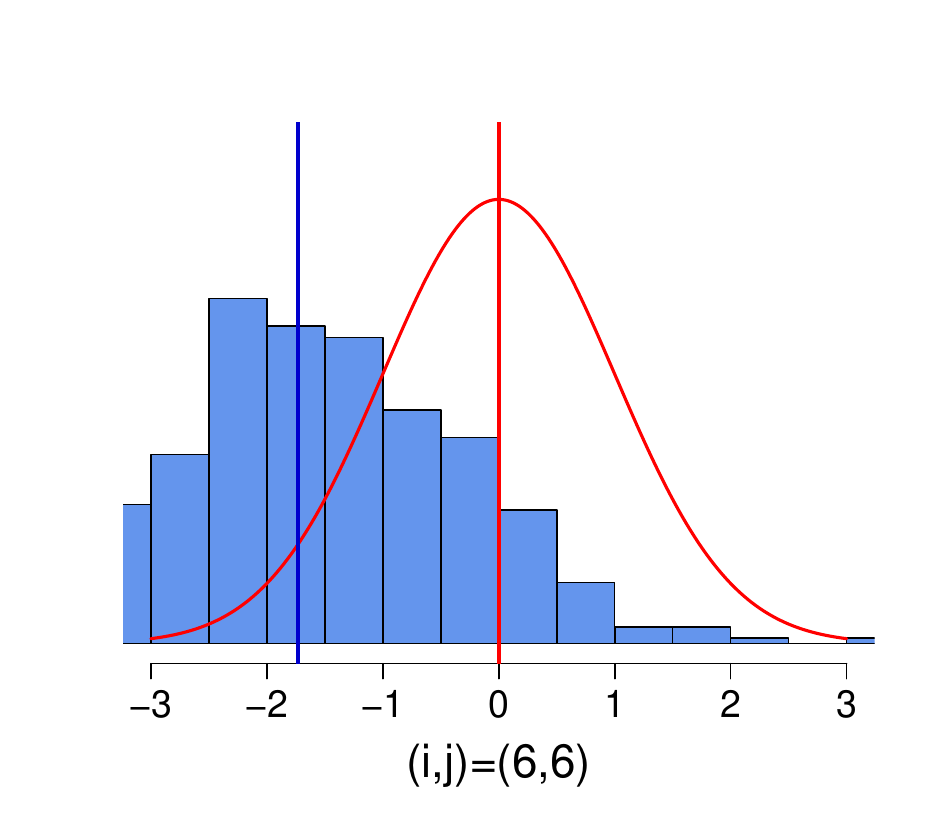}
    \end{minipage}
    \begin{minipage}{0.24\linewidth}
        \centering
        \includegraphics[width=\textwidth]{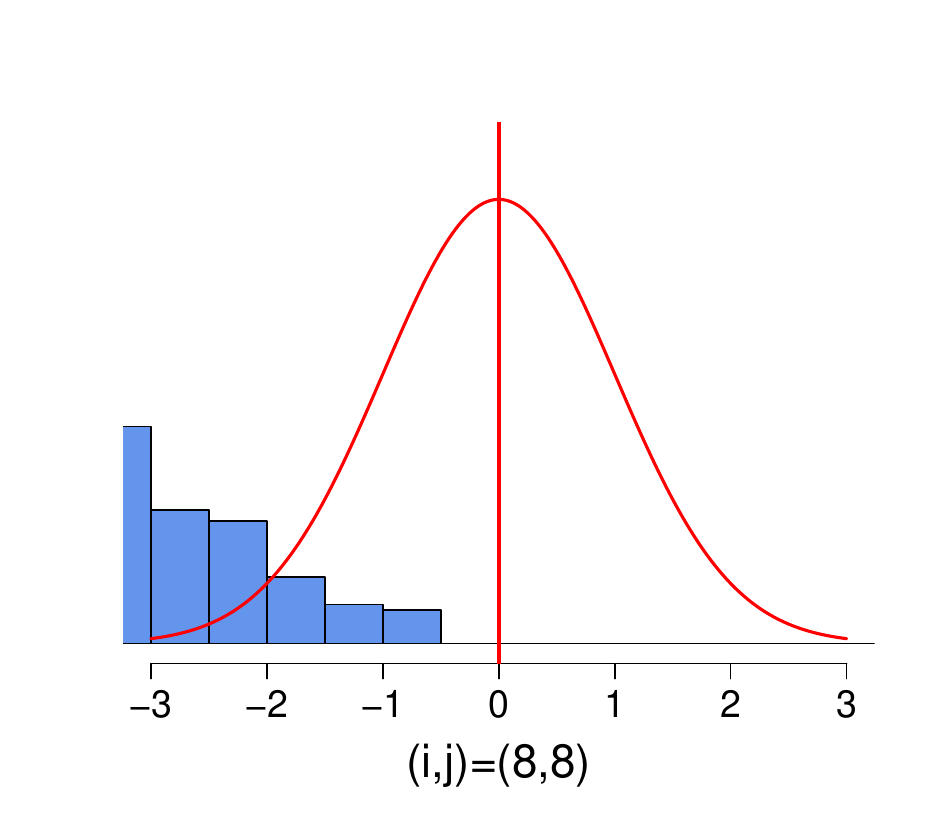}
    \end{minipage}
 \end{minipage}

  \caption*{$n=400, p=400$}
      \vspace{-0.43cm}
 \begin{minipage}{0.3\linewidth}
    \begin{minipage}{0.24\linewidth}
        \centering
        \includegraphics[width=\textwidth]{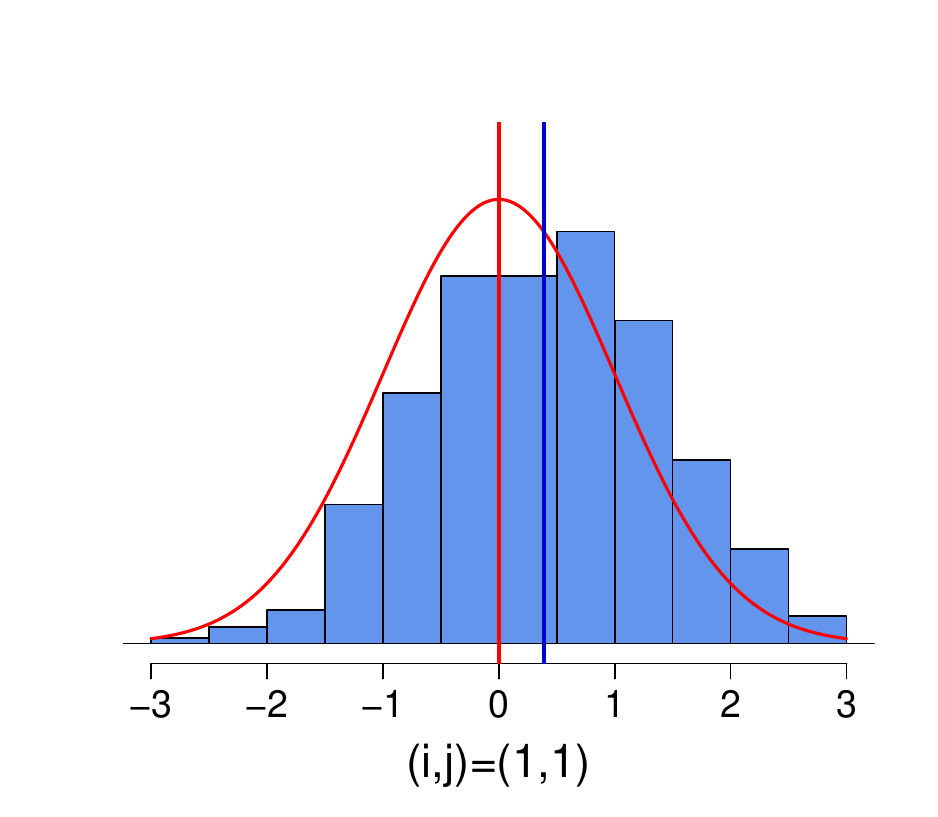}
    \end{minipage}
    \begin{minipage}{0.24\linewidth}
        \centering
        \includegraphics[width=\textwidth]{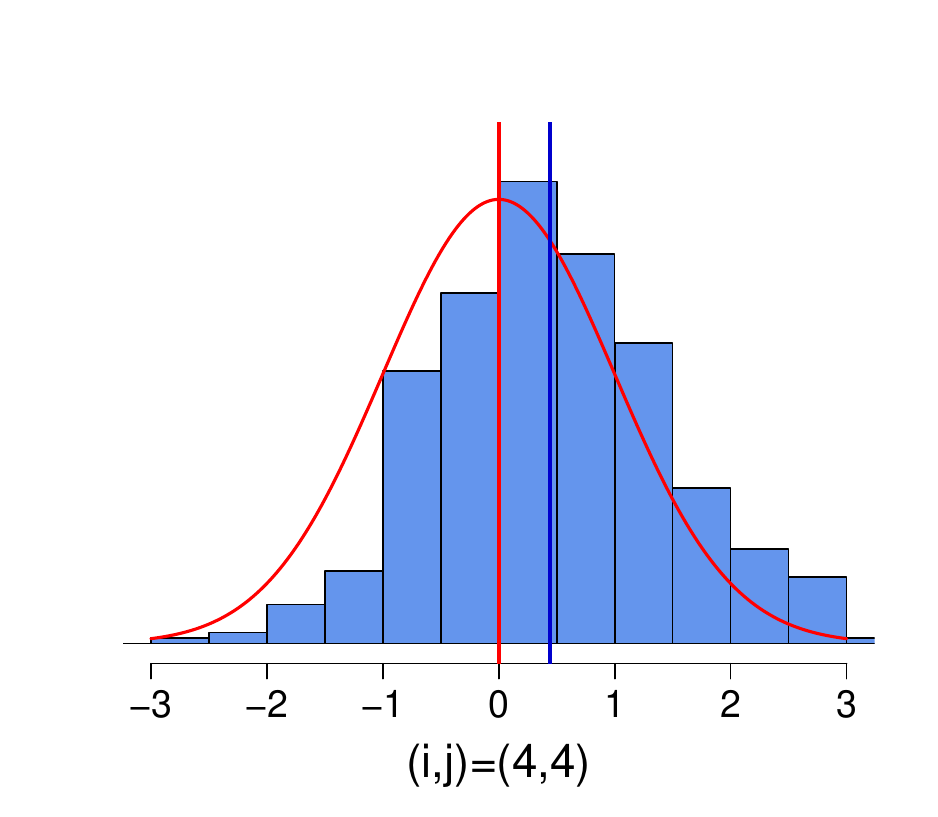}
    \end{minipage}
    \begin{minipage}{0.24\linewidth}
        \centering
        \includegraphics[width=\textwidth]{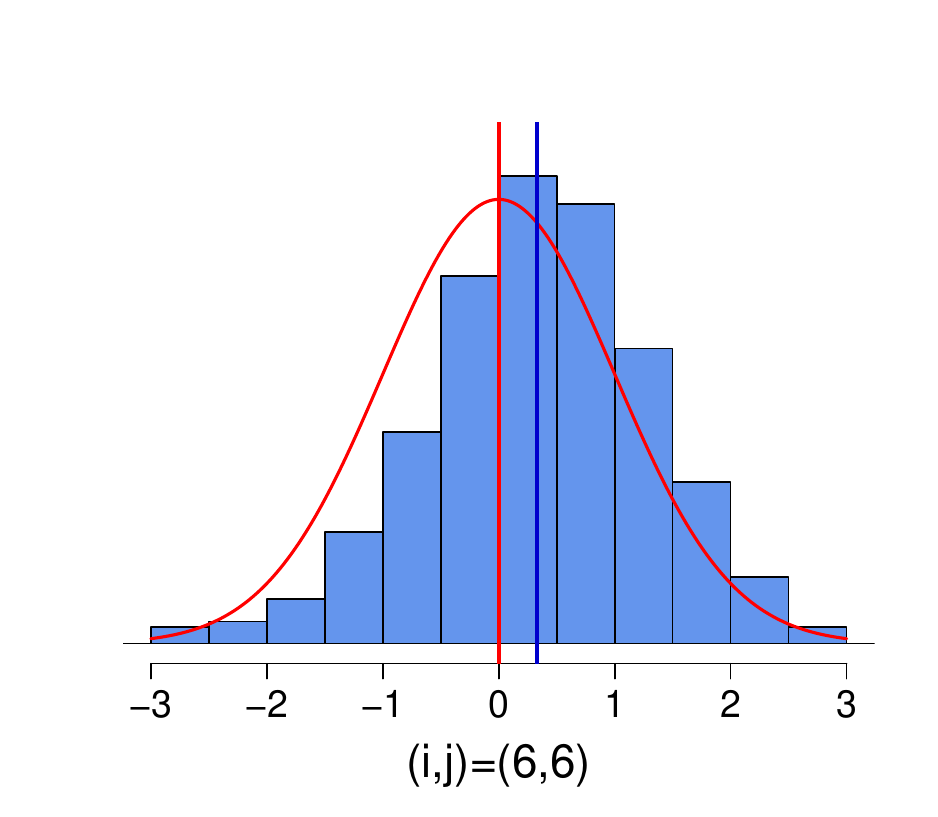}
    \end{minipage}
    \begin{minipage}{0.24\linewidth}
        \centering
        \includegraphics[width=\textwidth]{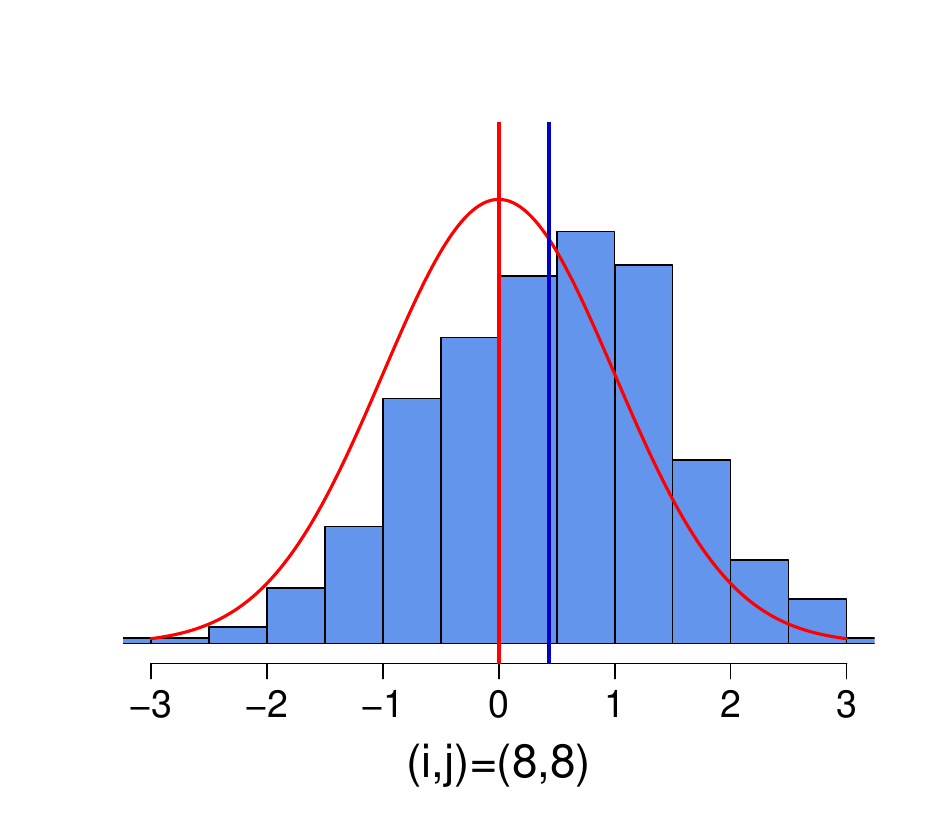}
    \end{minipage}
 \end{minipage}  
     \hspace{1cm}
 \begin{minipage}{0.3\linewidth}
    \begin{minipage}{0.24\linewidth}
        \centering
        \includegraphics[width=\textwidth]{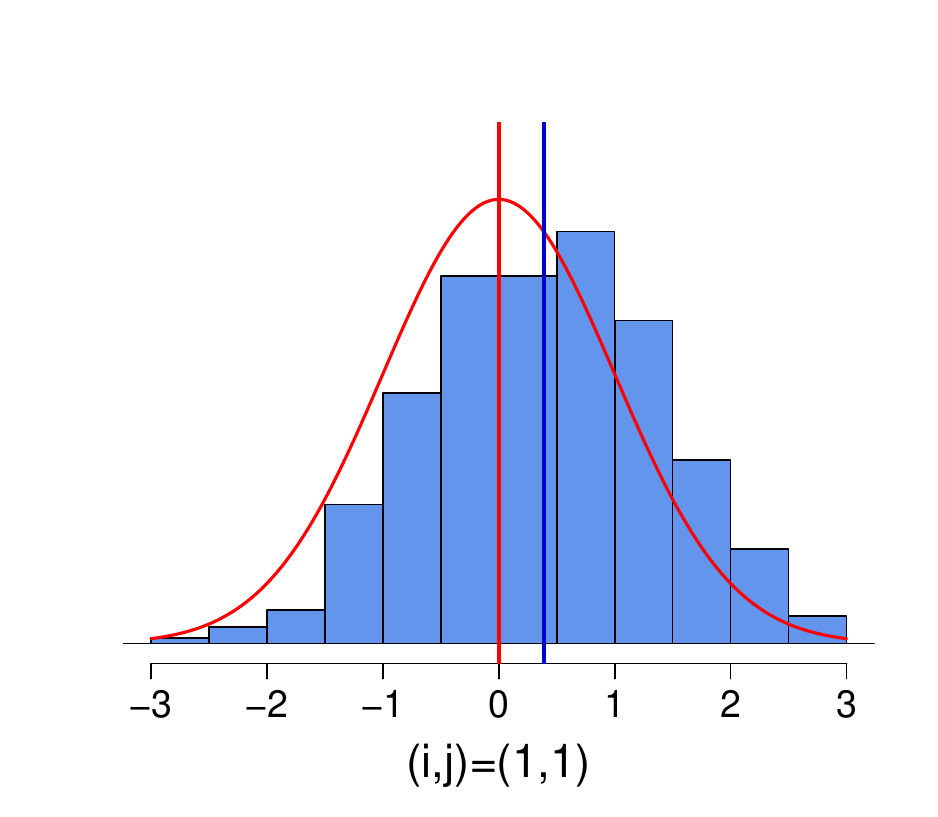}
    \end{minipage}
    \begin{minipage}{0.24\linewidth}
        \centering
        \includegraphics[width=\textwidth]{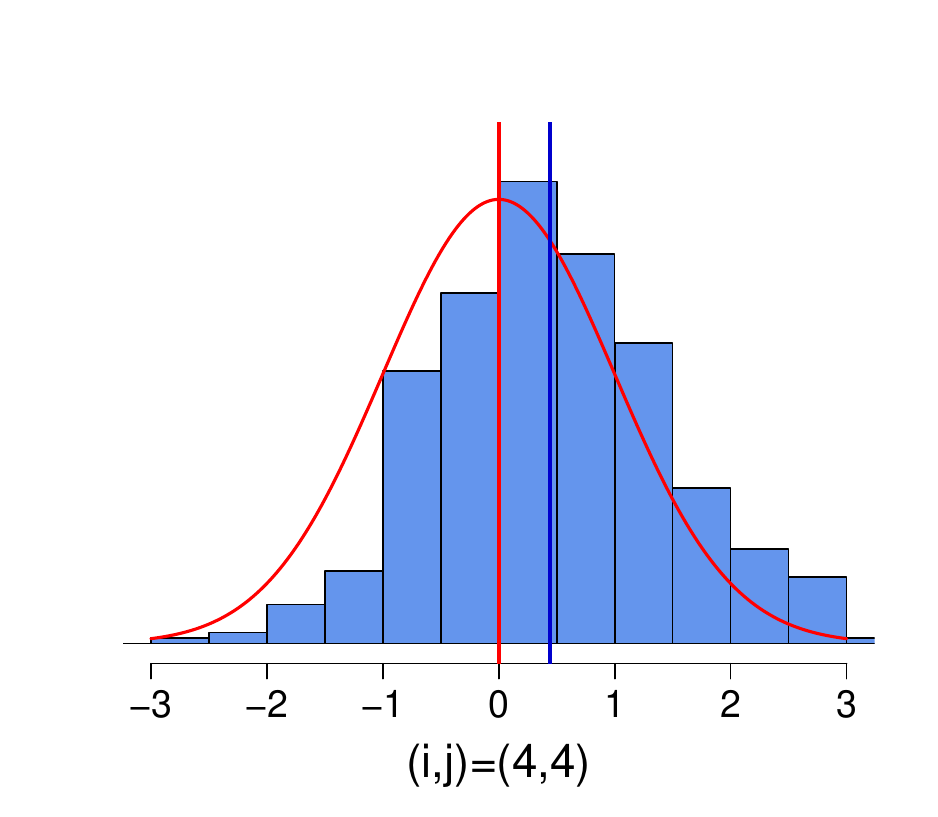}
    \end{minipage}
    \begin{minipage}{0.24\linewidth}
        \centering
        \includegraphics[width=\textwidth]{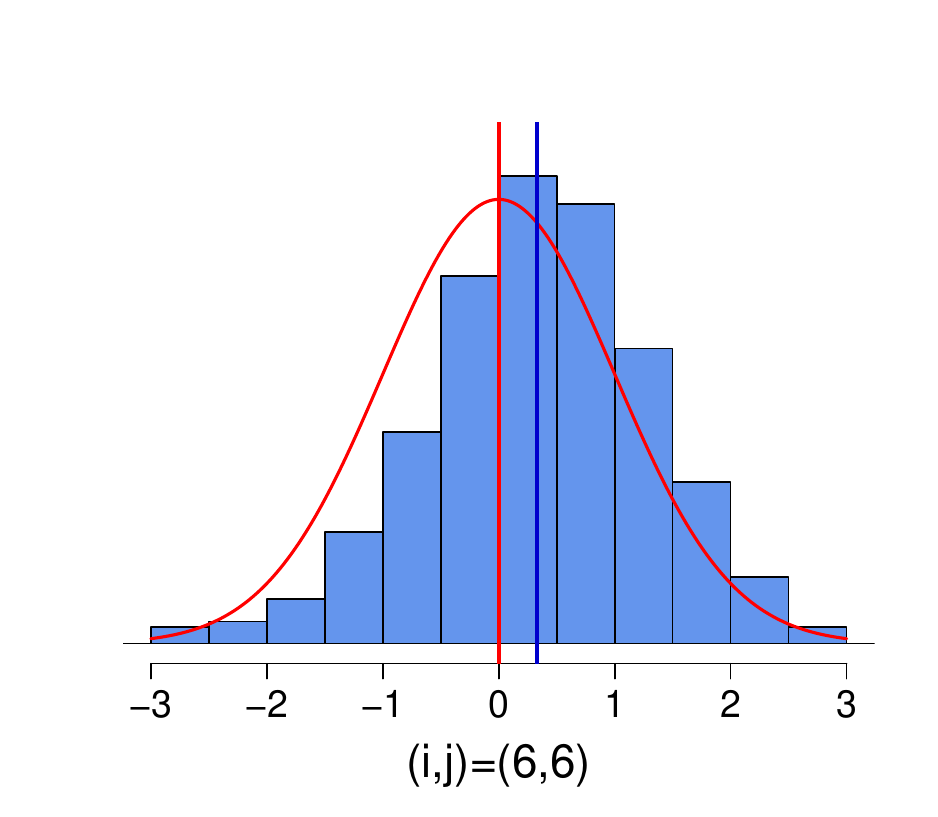}
    \end{minipage}
    \begin{minipage}{0.24\linewidth}
        \centering
        \includegraphics[width=\textwidth]{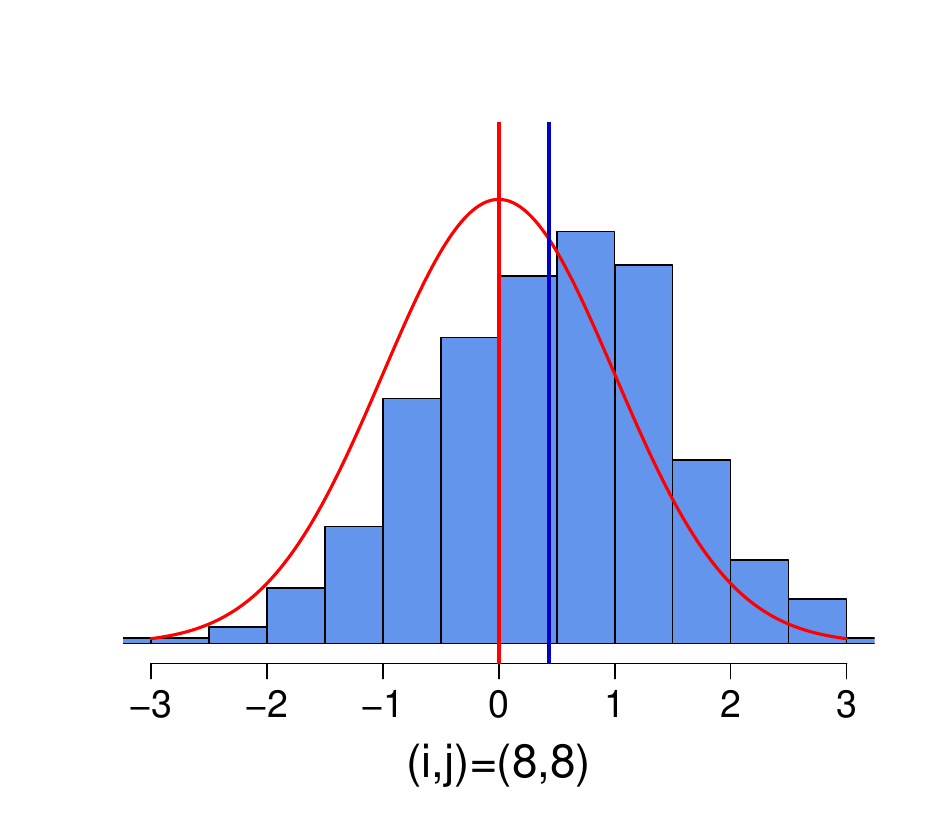}
    \end{minipage}
  \end{minipage}  
    \hspace{1cm}
 \begin{minipage}{0.3\linewidth}
    \begin{minipage}{0.24\linewidth}
        \centering
        \includegraphics[width=\textwidth]{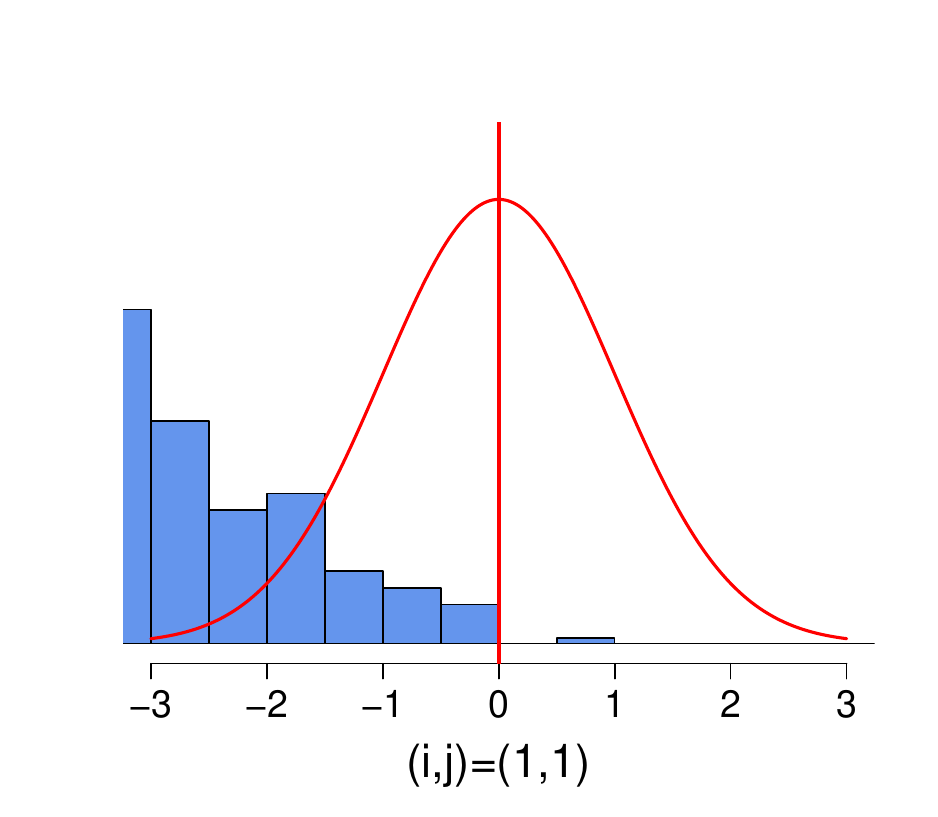}
    \end{minipage}
    \begin{minipage}{0.24\linewidth}
        \centering
        \includegraphics[width=\textwidth]{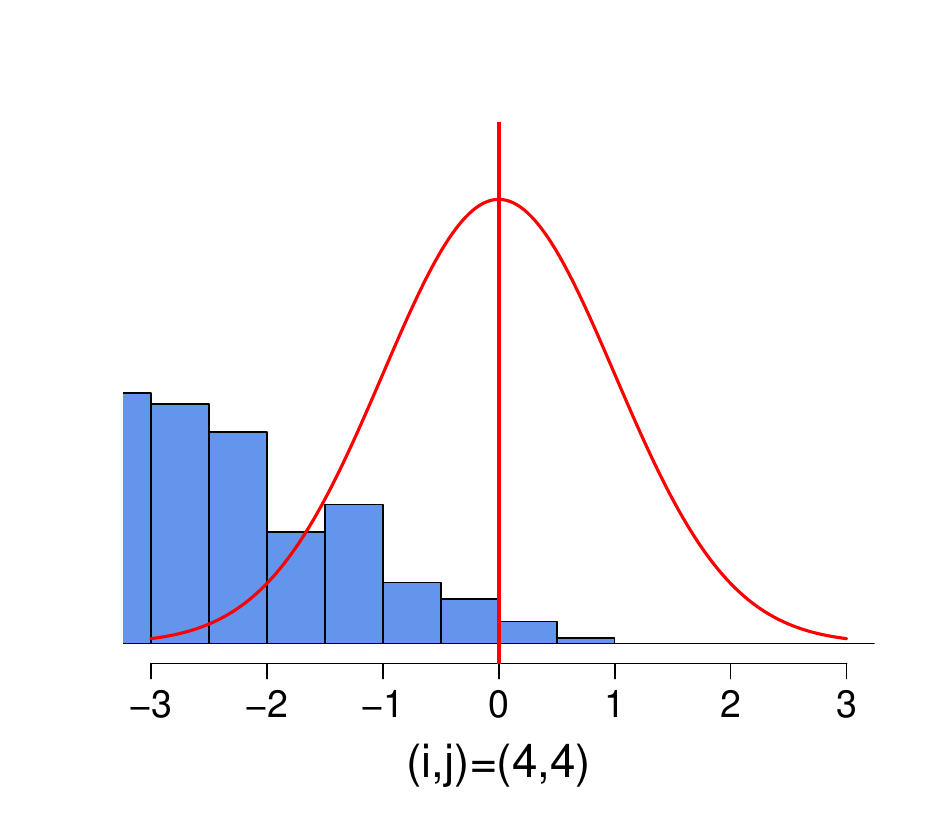}
    \end{minipage}
    \begin{minipage}{0.24\linewidth}
        \centering
        \includegraphics[width=\textwidth]{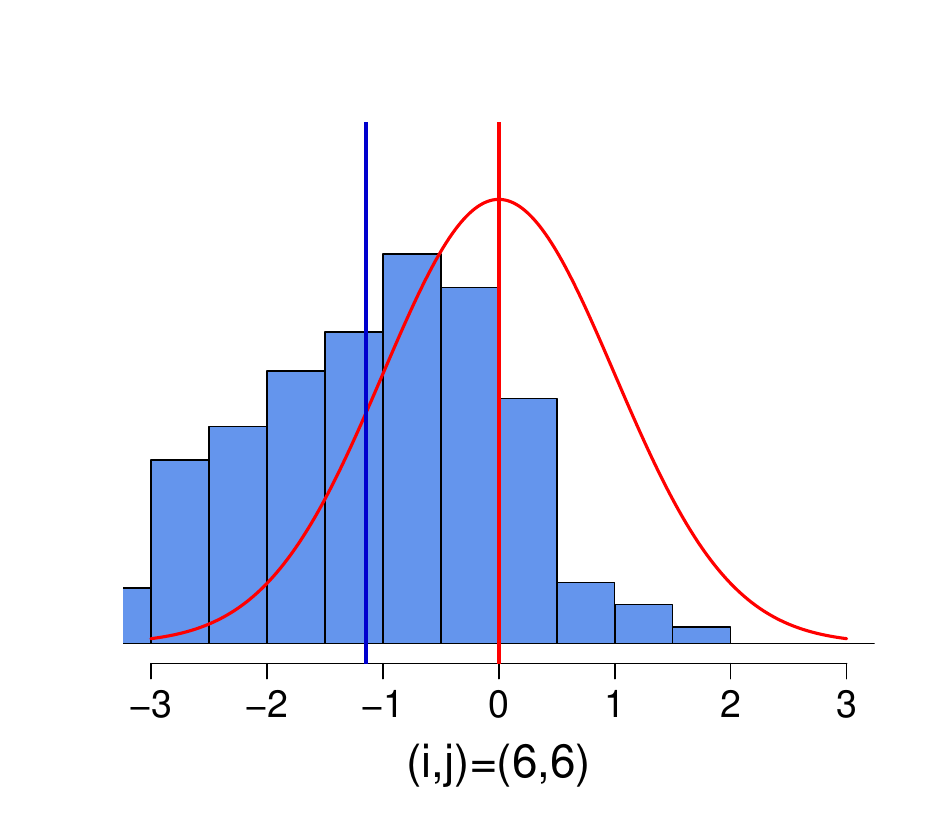}
    \end{minipage}
    \begin{minipage}{0.24\linewidth}
        \centering
        \includegraphics[width=\textwidth]{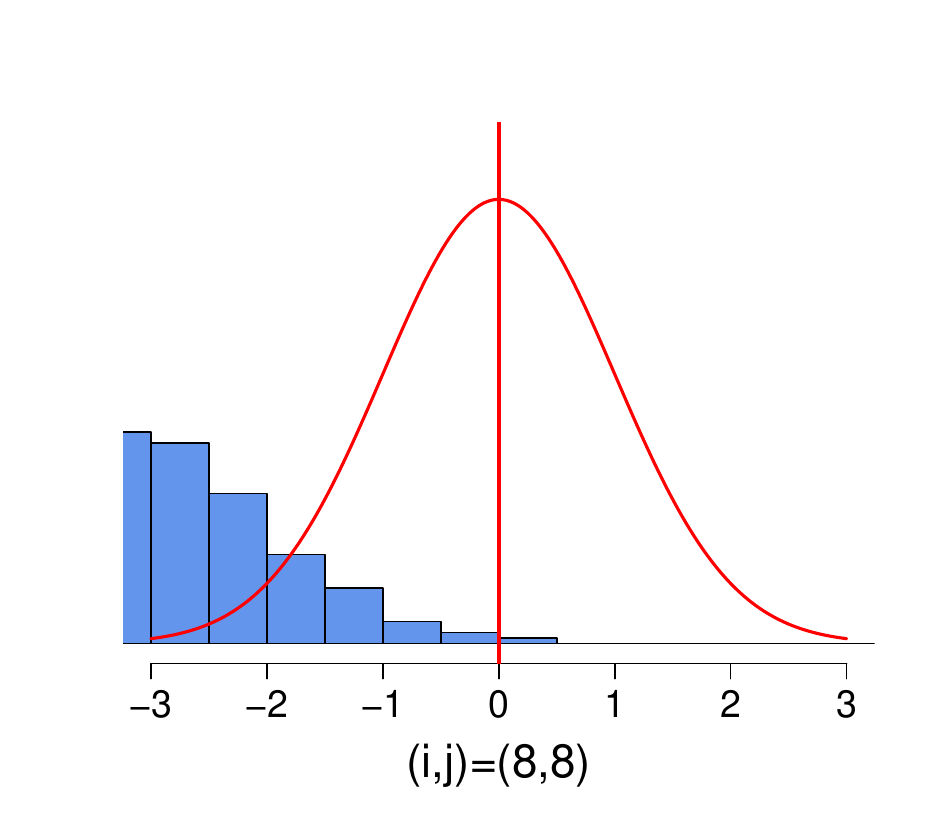}
    \end{minipage}
 \end{minipage}

  \caption*{$n=800, p=400$}
      \vspace{-0.43cm}
 \begin{minipage}{0.3\linewidth}
    \begin{minipage}{0.24\linewidth}
        \centering
        \includegraphics[width=\textwidth]{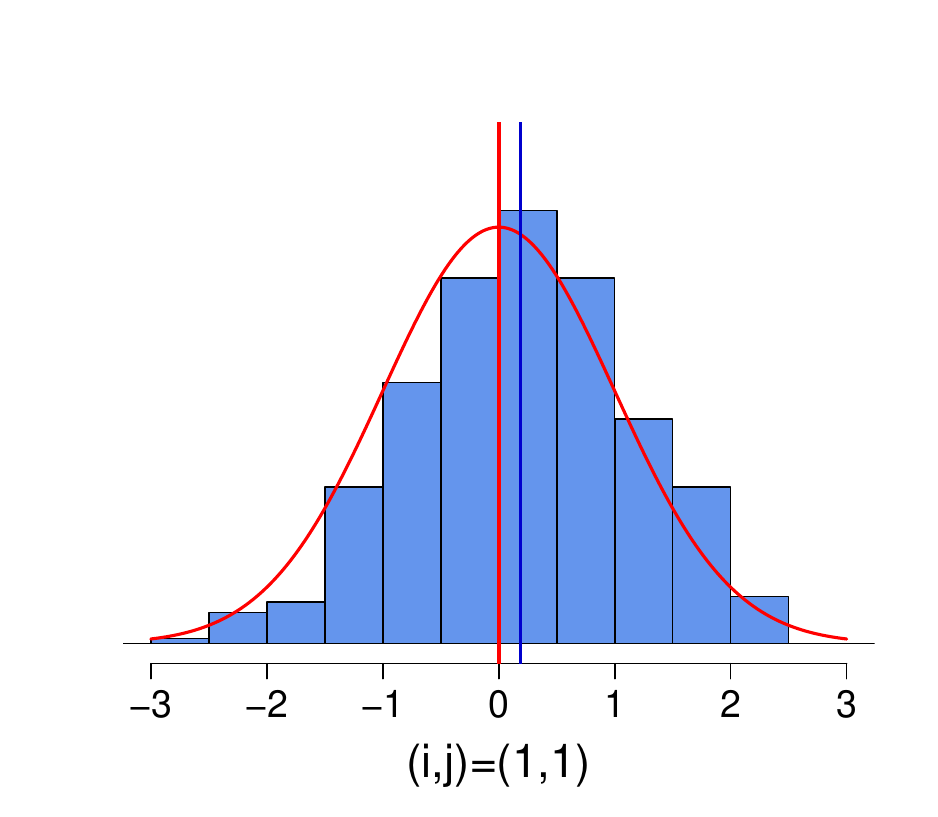}
    \end{minipage}
    \begin{minipage}{0.24\linewidth}
        \centering
        \includegraphics[width=\textwidth]{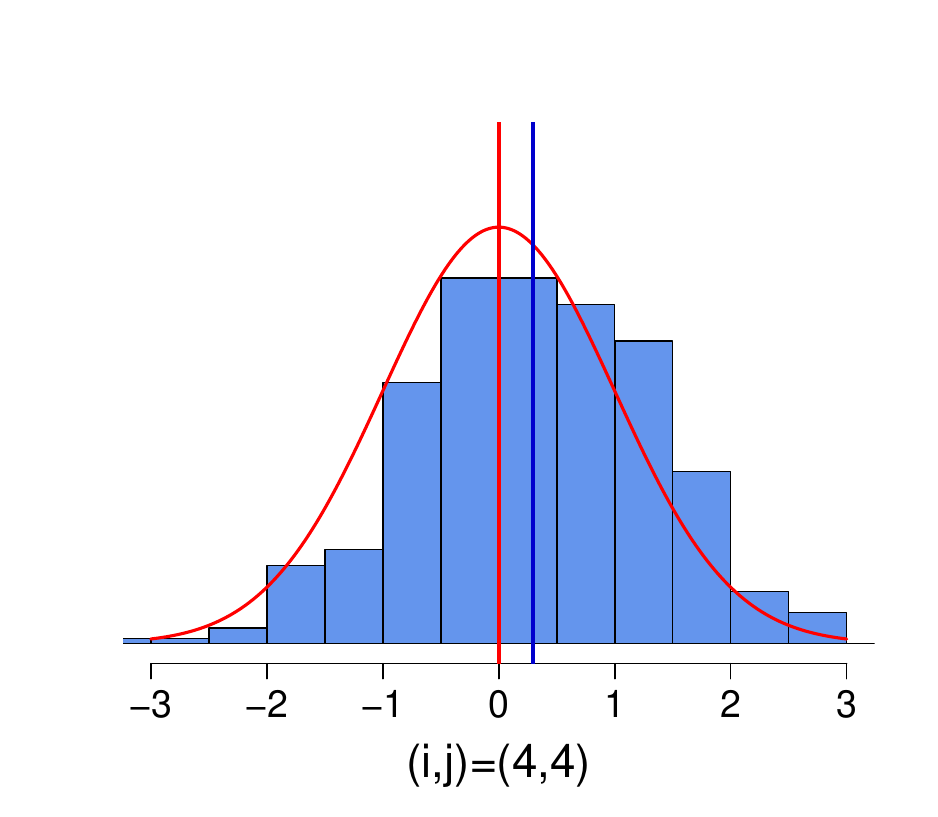}
    \end{minipage}
    \begin{minipage}{0.24\linewidth}
        \centering
        \includegraphics[width=\textwidth]{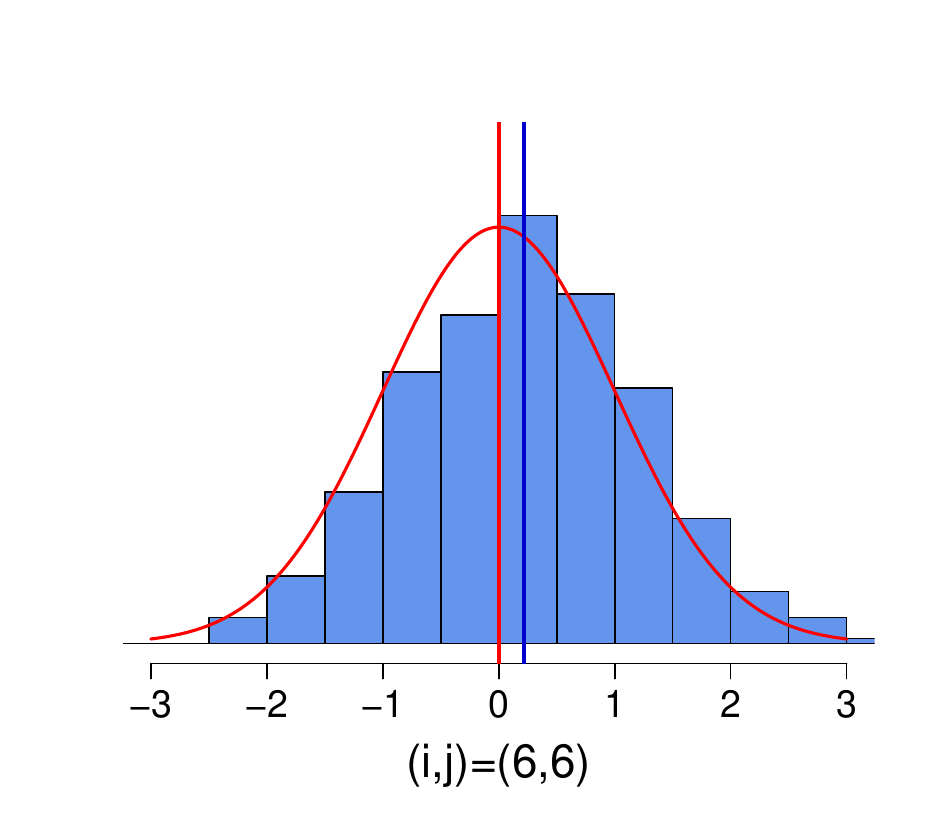}
    \end{minipage}
    \begin{minipage}{0.24\linewidth}
        \centering
        \includegraphics[width=\textwidth]{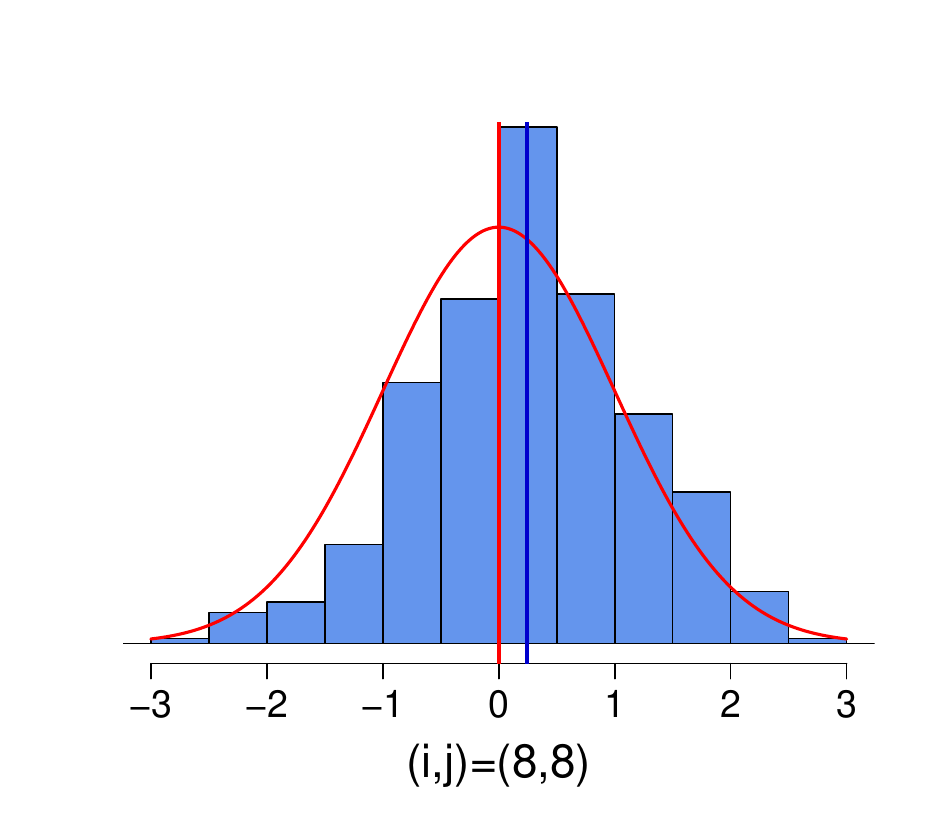}
    \end{minipage}
   \caption*{(a)~~$L_0{:}~ \widehat{\mb{\Omega}}^{\text{US}}$}
 \end{minipage} 
     \hspace{1cm}
 \begin{minipage}{0.3\linewidth}
    \begin{minipage}{0.24\linewidth}
        \centering
        \includegraphics[width=\textwidth]{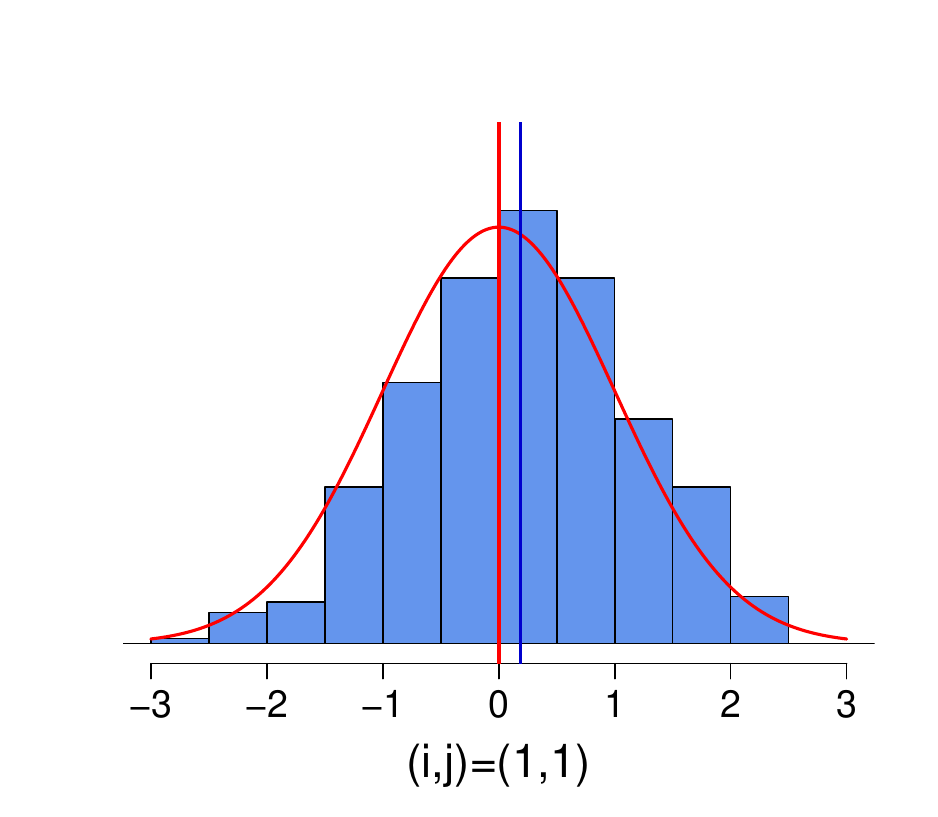}
    \end{minipage}
    \begin{minipage}{0.24\linewidth}
        \centering
        \includegraphics[width=\textwidth]{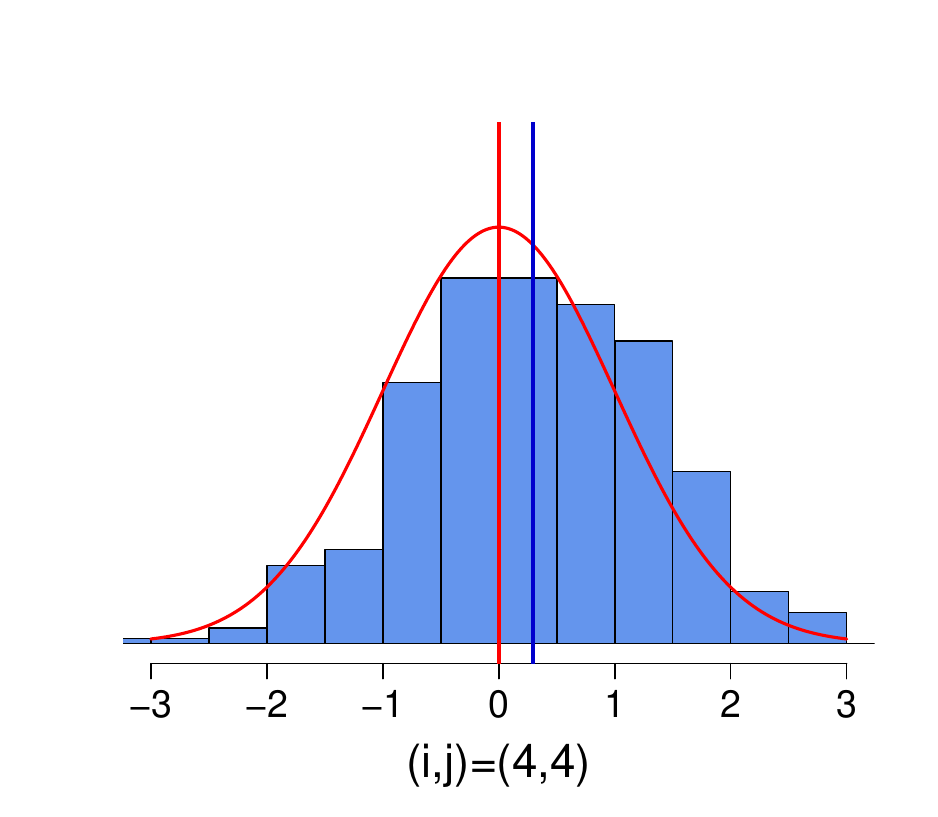}
    \end{minipage}
    \begin{minipage}{0.24\linewidth}
        \centering
        \includegraphics[width=\textwidth]{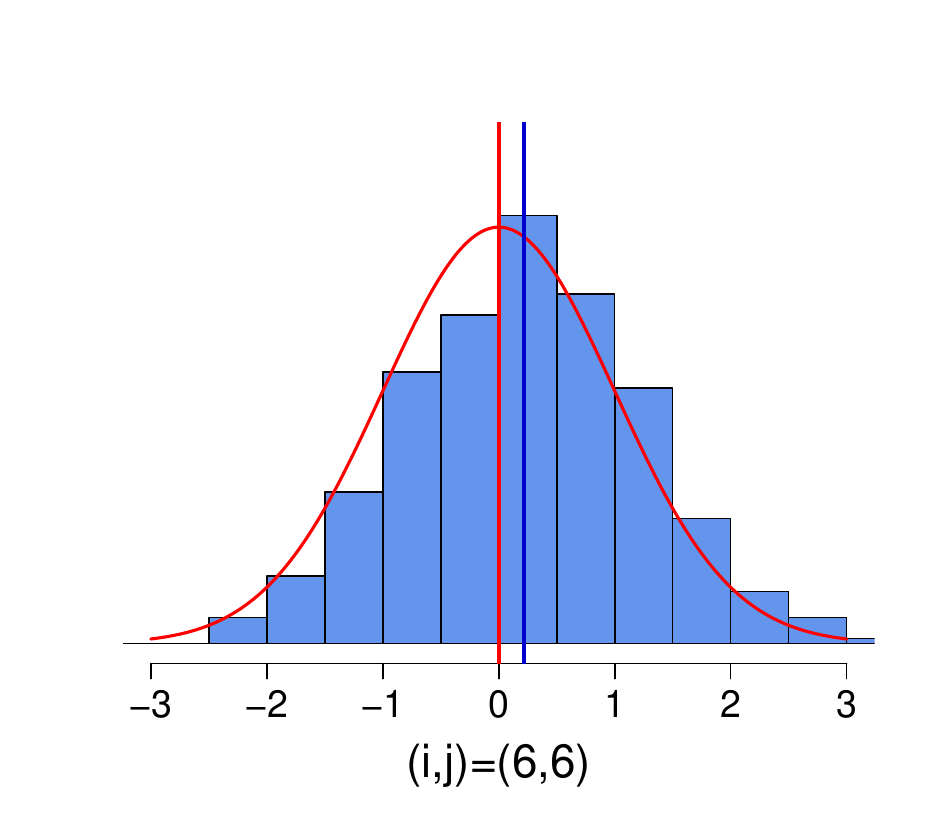}
    \end{minipage}
    \begin{minipage}{0.24\linewidth}
        \centering
        \includegraphics[width=\textwidth]{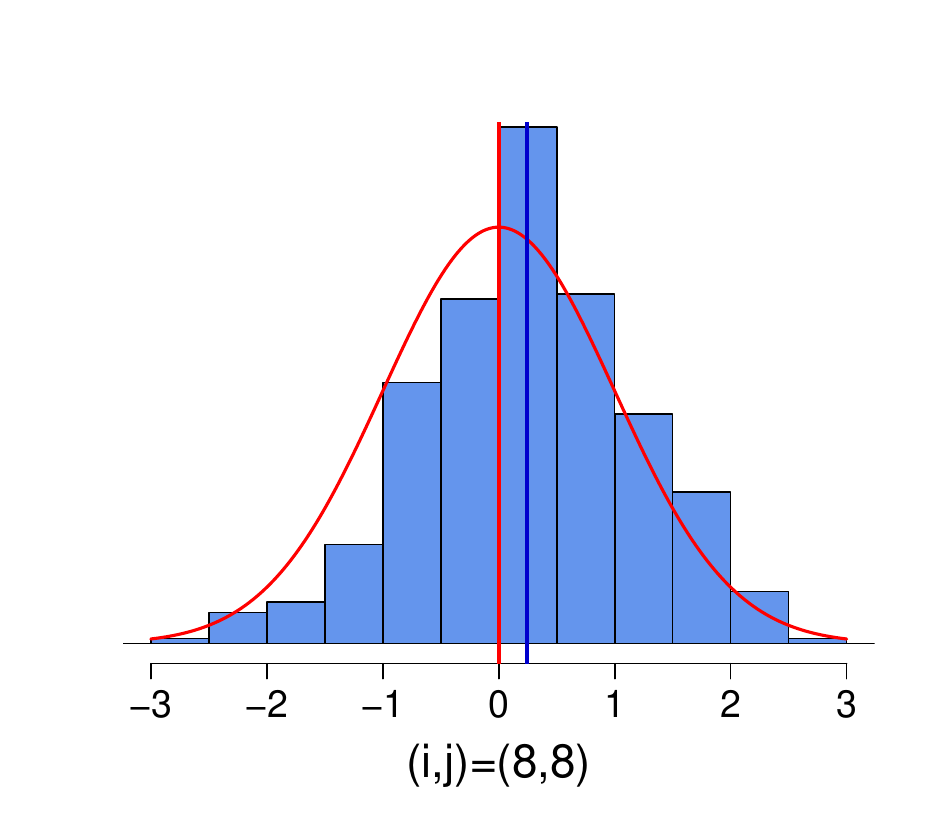}
    \end{minipage}
        \caption*{(b)~~$L_0{:}~ \widehat{\mb{T}}$}
 \end{minipage}   
      \hspace{1cm}
 \begin{minipage}{0.3\linewidth}
    \begin{minipage}{0.24\linewidth}
        \centering
        \includegraphics[width=\textwidth]{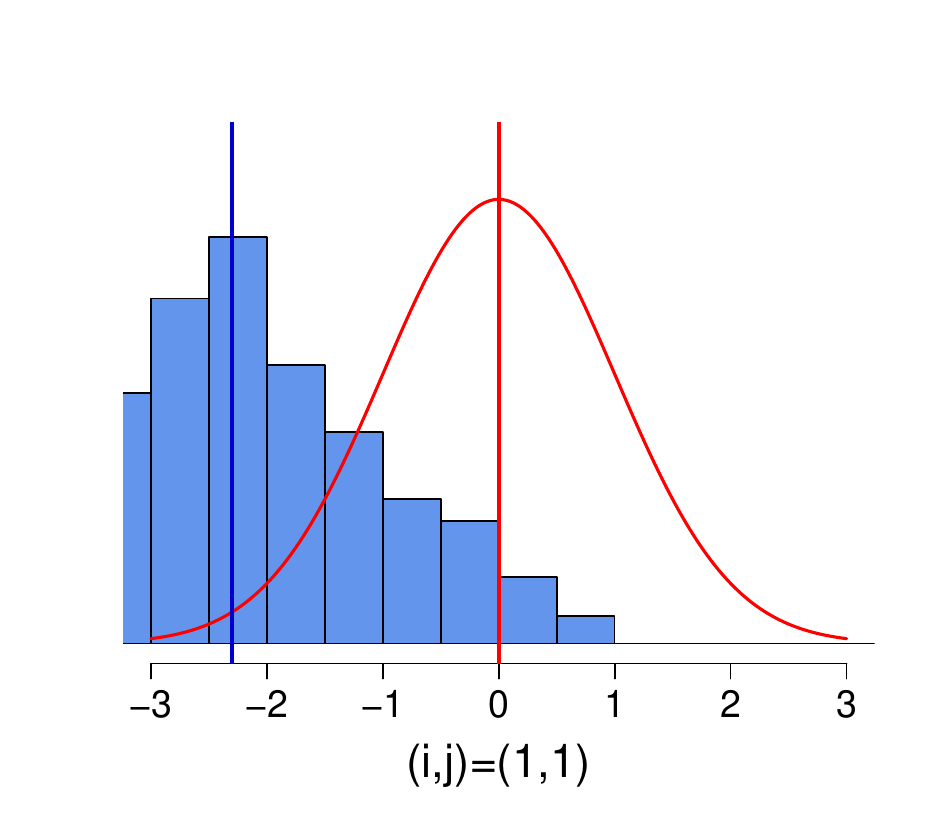}
    \end{minipage}
    \begin{minipage}{0.24\linewidth}
        \centering
        \includegraphics[width=\textwidth]{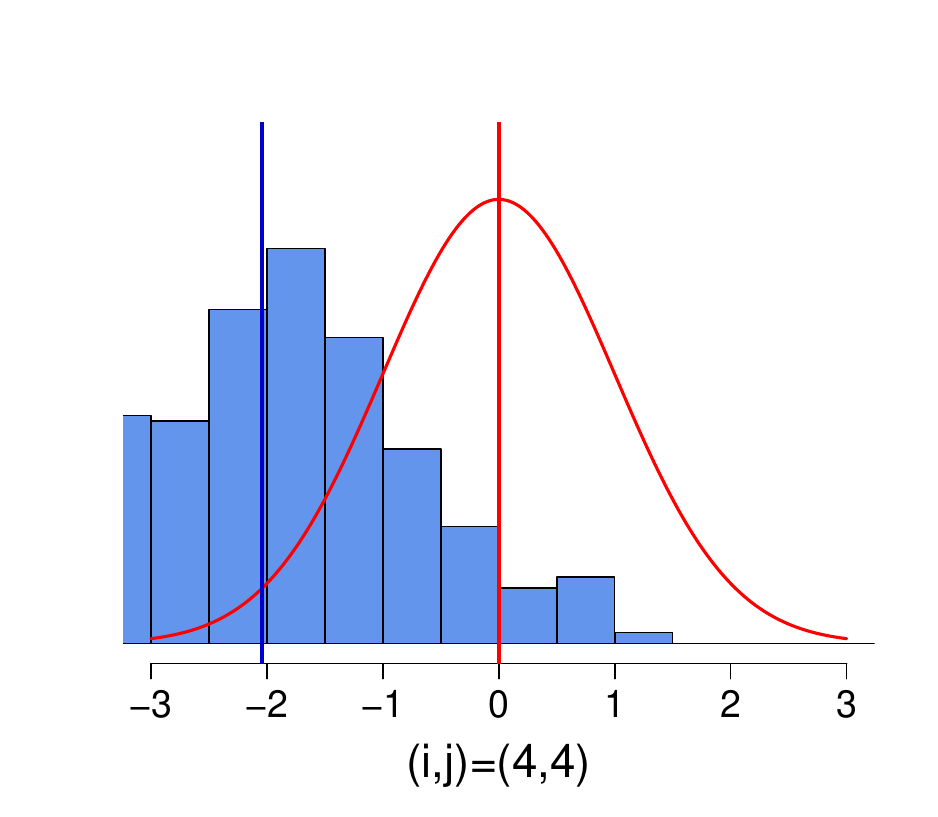}
    \end{minipage}
    \begin{minipage}{0.24\linewidth}
        \centering
        \includegraphics[width=\textwidth]{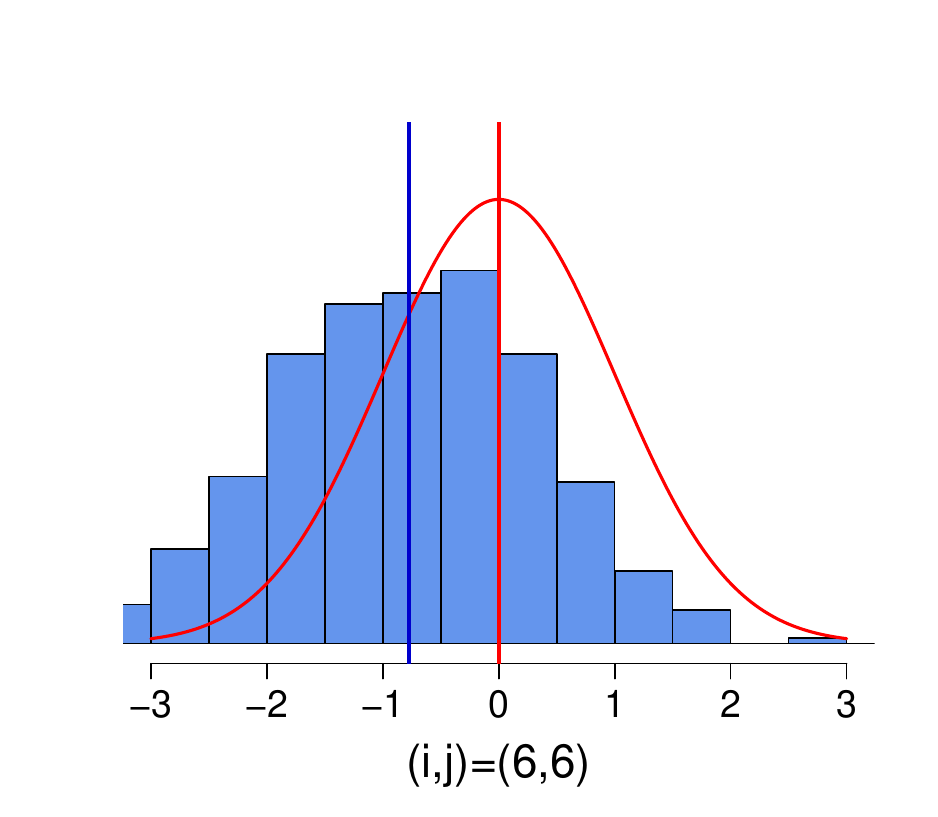}
    \end{minipage}
    \begin{minipage}{0.24\linewidth}
        \centering
        \includegraphics[width=\textwidth]{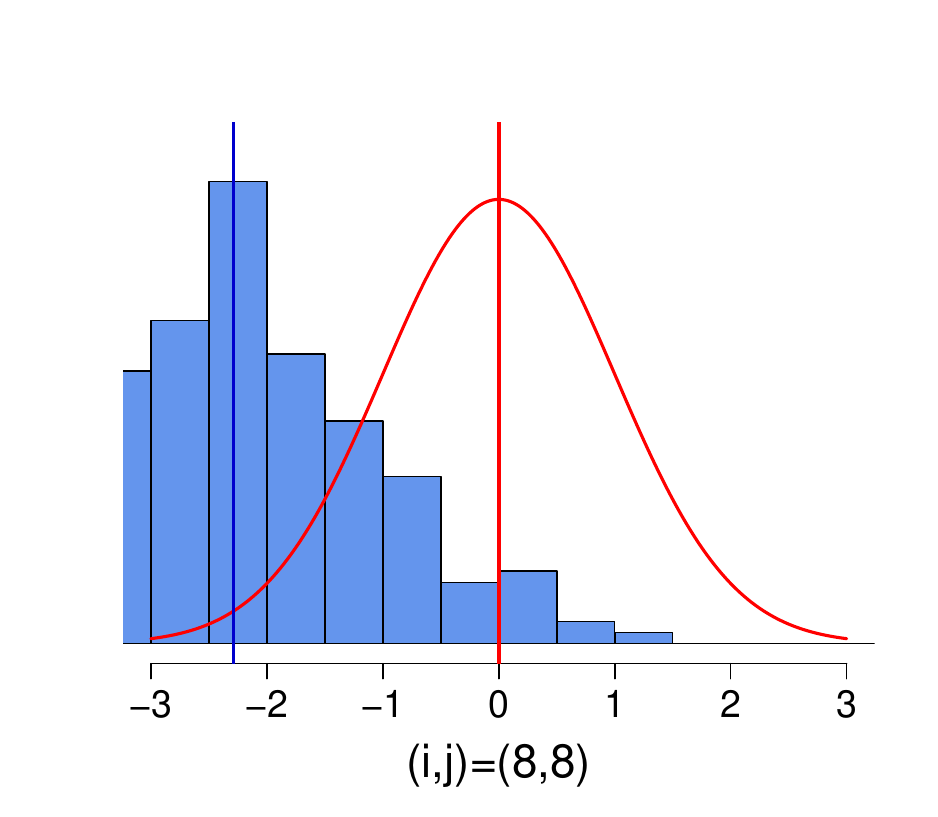}
    \end{minipage}
    \caption*{(c)~~$L_1{:}~ \widehat{\mb{T}}$}
     \end{minipage}   
     \caption{Histograms of $\big(\sqrt{n}(\widehat{\mb{\Omega}}_{ij}^{(m)}-\mb{\Omega}_{ij})/\widehat{\sigma}_{\mb{\Omega}_{ij}}^{(m)}\big)_{m=1}^{400}$ under Gaussian cluster graph settings.}
\end{sidewaysfigure}

  \begin{sidewaysfigure}[th!]
  \caption*{$n=200, p=200$}
      \vspace{-0.43cm}
 \begin{minipage}{0.3\linewidth}
    \begin{minipage}{0.24\linewidth}
        \centering
        \includegraphics[width=\textwidth]{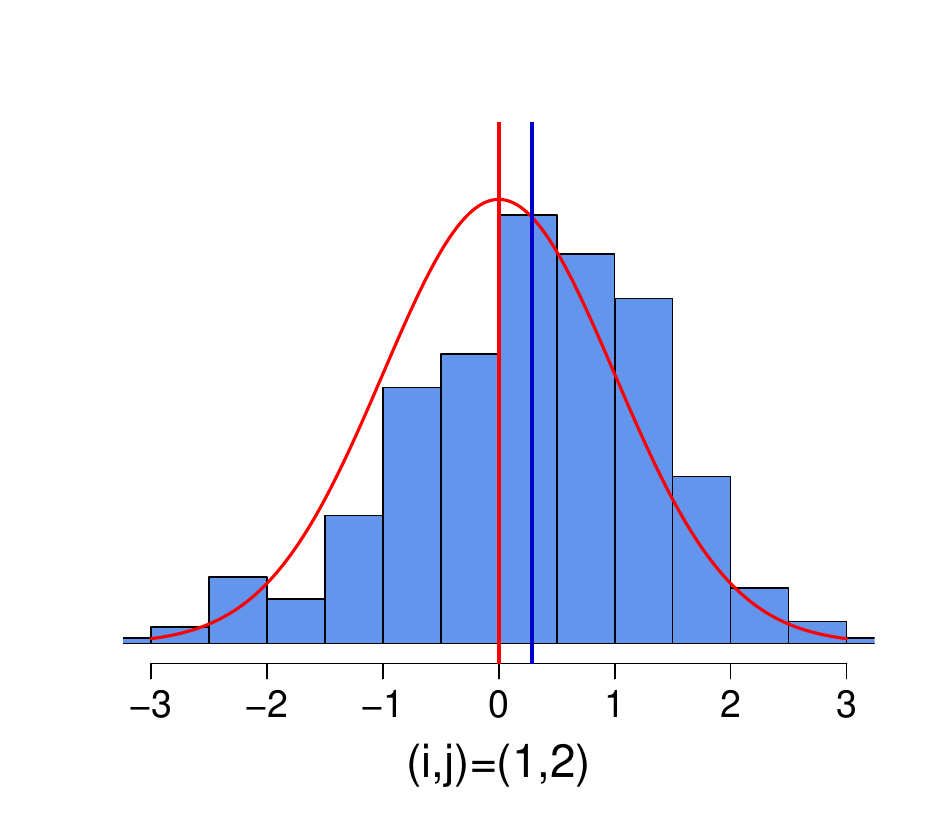}
    \end{minipage}
    \begin{minipage}{0.24\linewidth}
        \centering
        \includegraphics[width=\textwidth]{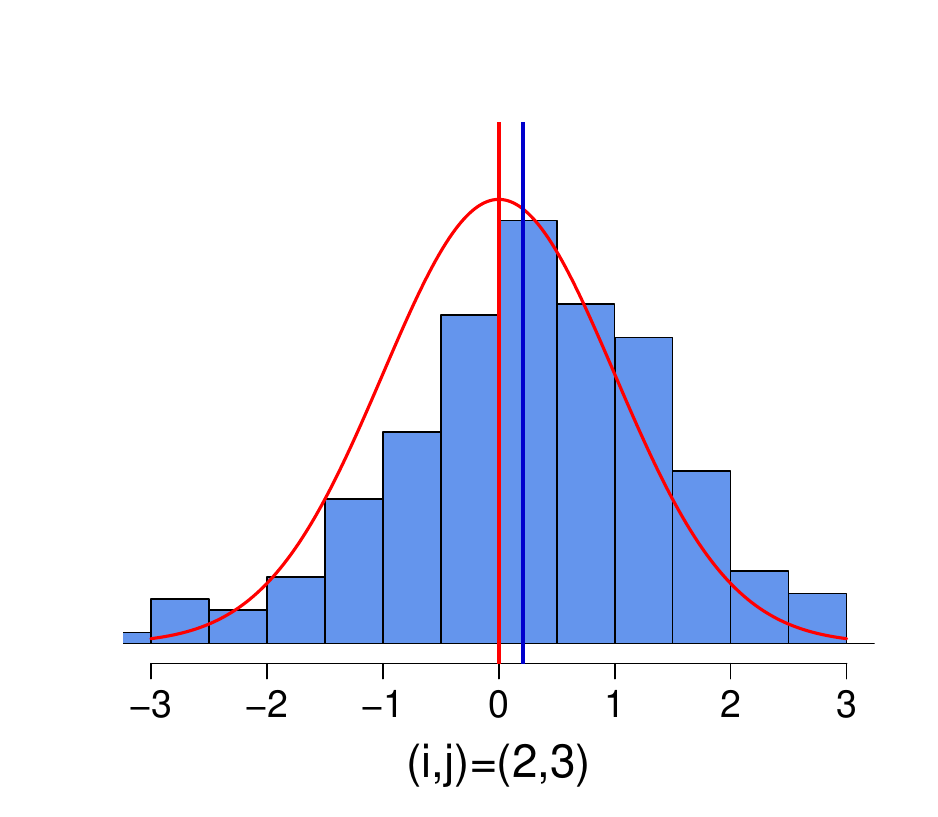}
    \end{minipage}
    \begin{minipage}{0.24\linewidth}
        \centering
        \includegraphics[width=\textwidth]{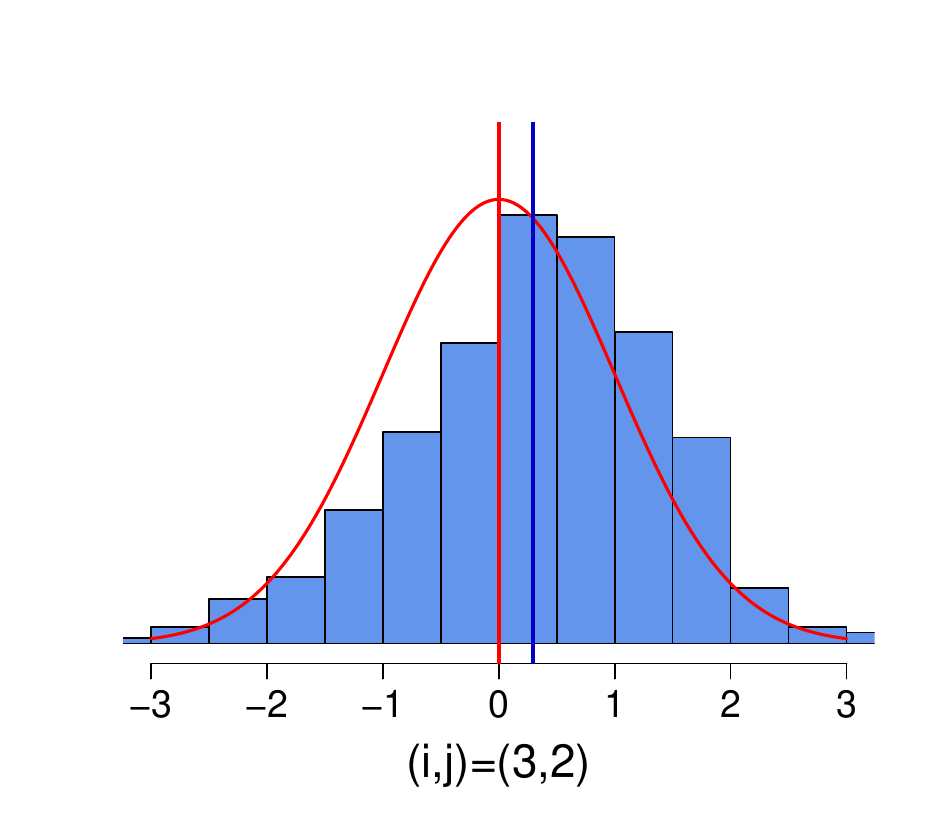}
    \end{minipage}
    \begin{minipage}{0.24\linewidth}
        \centering
        \includegraphics[width=\textwidth]{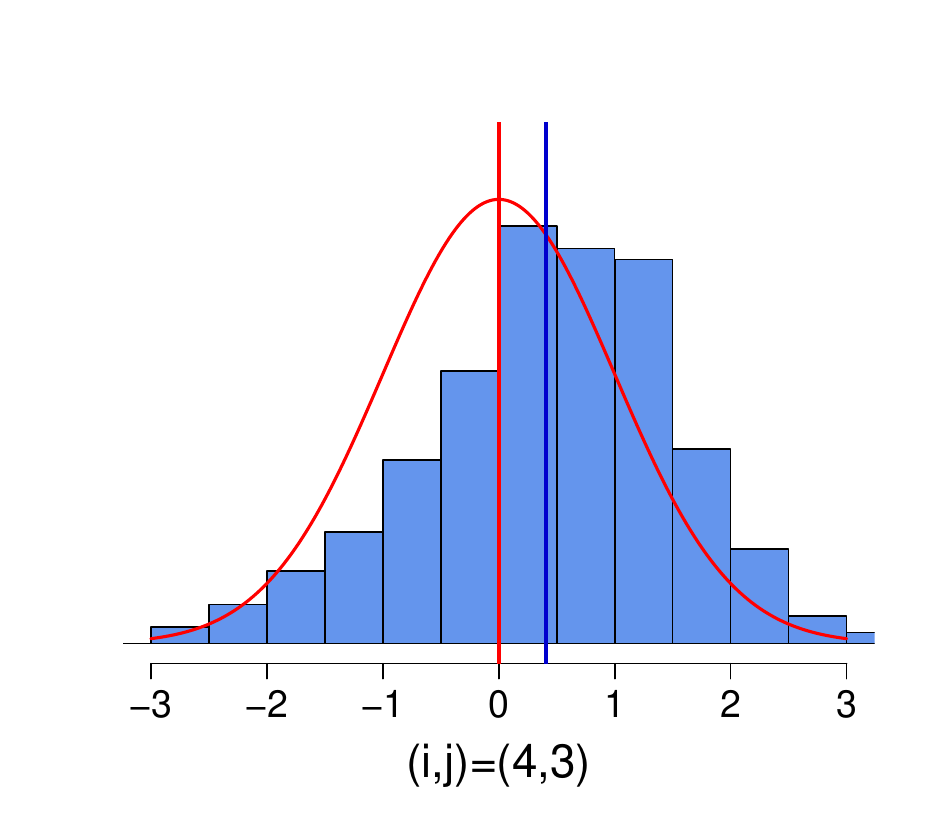}
    \end{minipage}
 \end{minipage}
 \hspace{1cm}
 \begin{minipage}{0.3\linewidth}
    \begin{minipage}{0.24\linewidth}
        \centering
        \includegraphics[width=\textwidth]{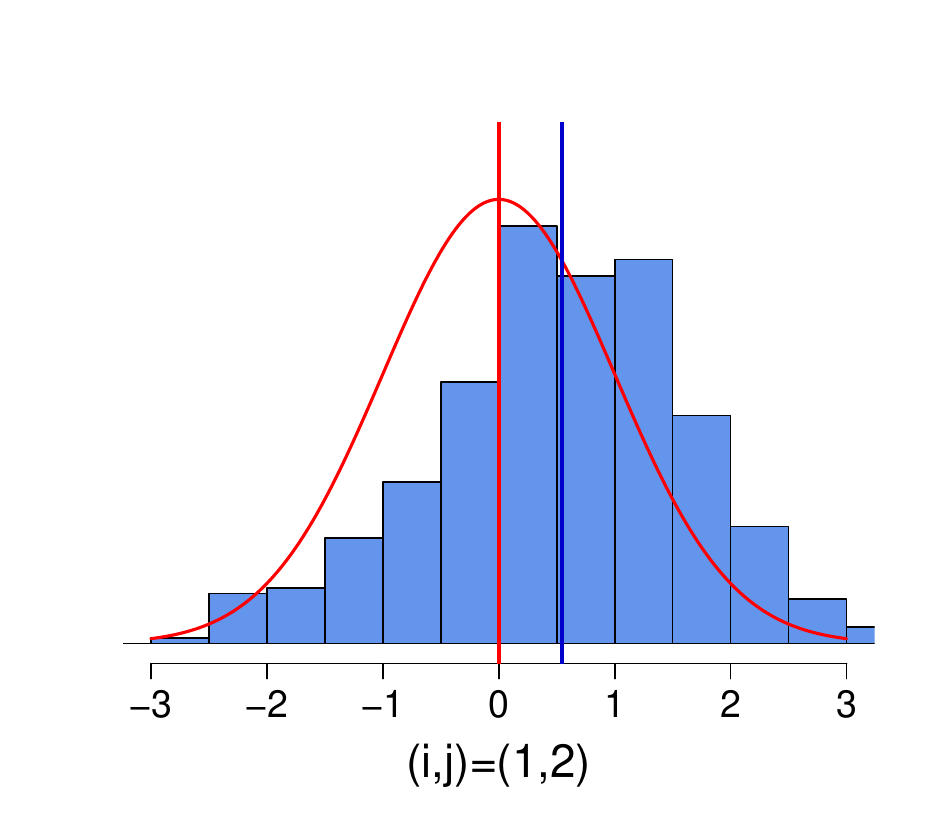}
    \end{minipage}
    \begin{minipage}{0.24\linewidth}
        \centering
        \includegraphics[width=\textwidth]{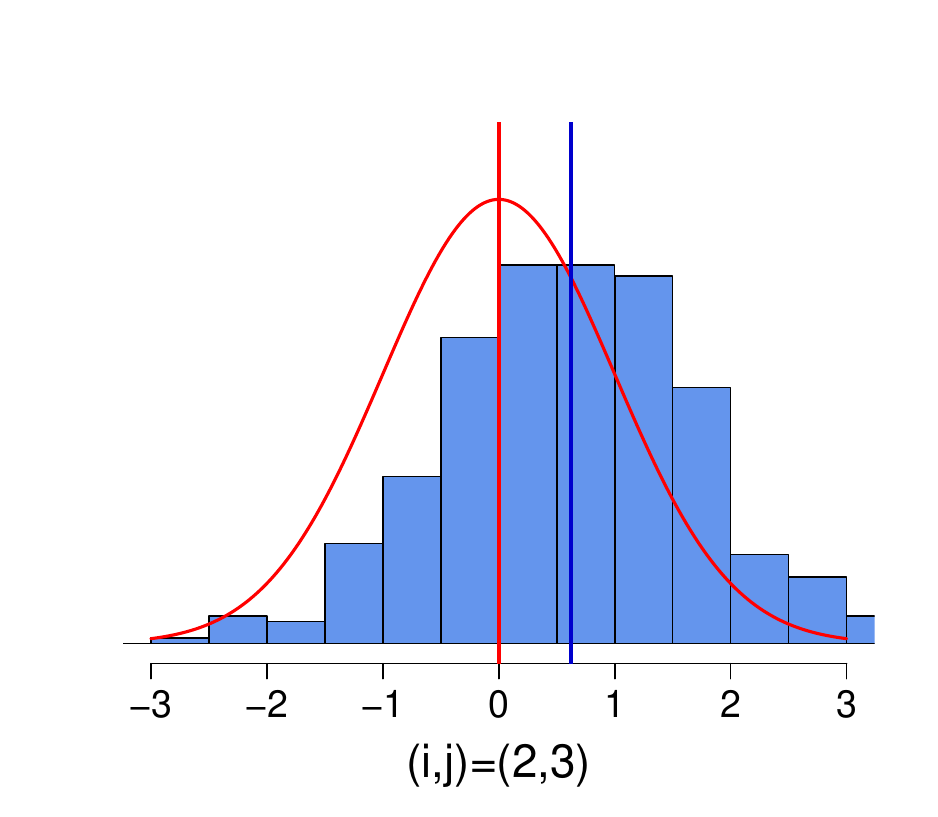}
    \end{minipage}
    \begin{minipage}{0.24\linewidth}
        \centering
        \includegraphics[width=\textwidth]{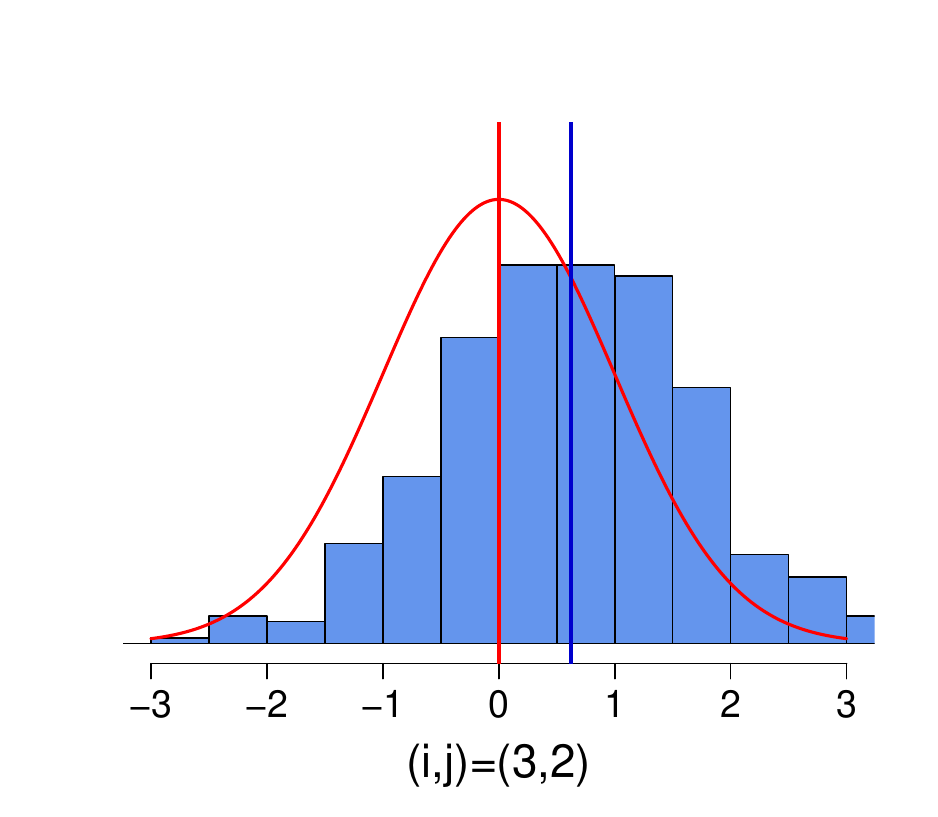}
    \end{minipage}
    \begin{minipage}{0.24\linewidth}
        \centering
        \includegraphics[width=\textwidth]{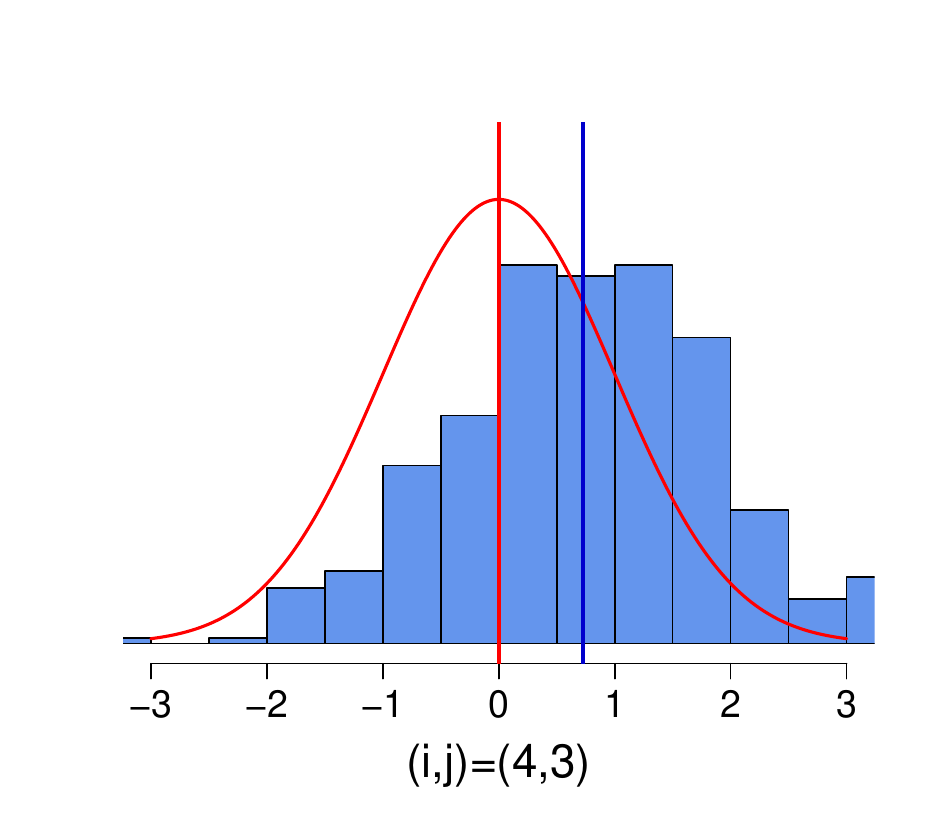}
    \end{minipage}    
 \end{minipage}
  \hspace{1cm}
 \begin{minipage}{0.3\linewidth}
     \begin{minipage}{0.24\linewidth}
        \centering
        \includegraphics[width=\textwidth]{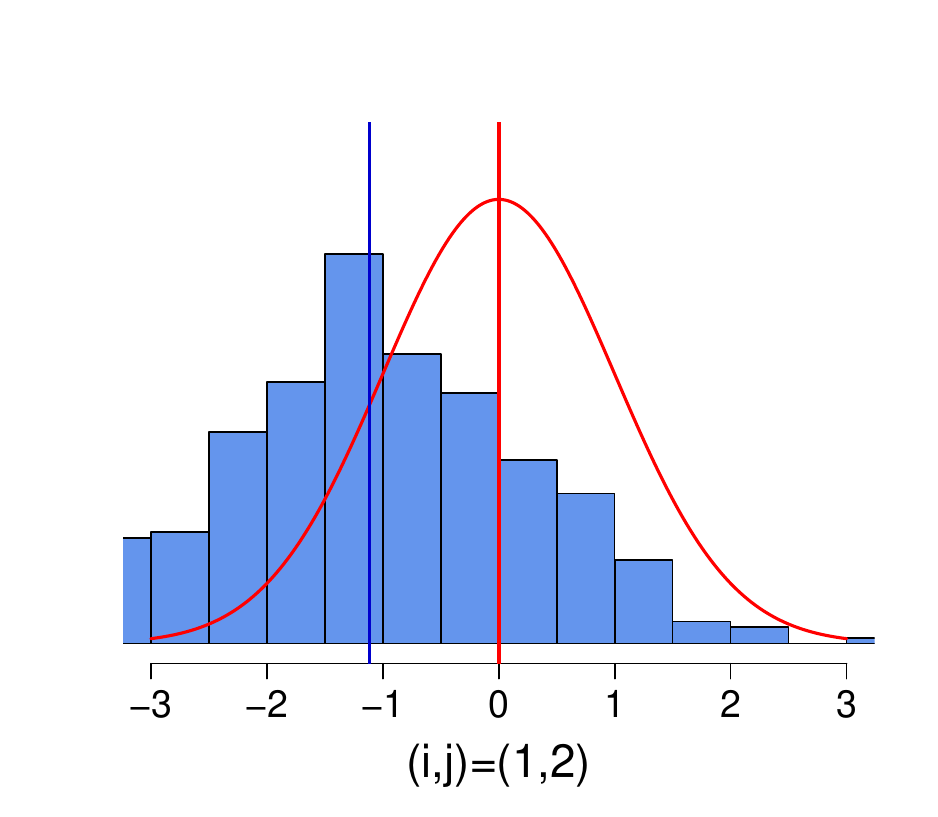}
    \end{minipage}
    \begin{minipage}{0.24\linewidth}
        \centering
        \includegraphics[width=\textwidth]{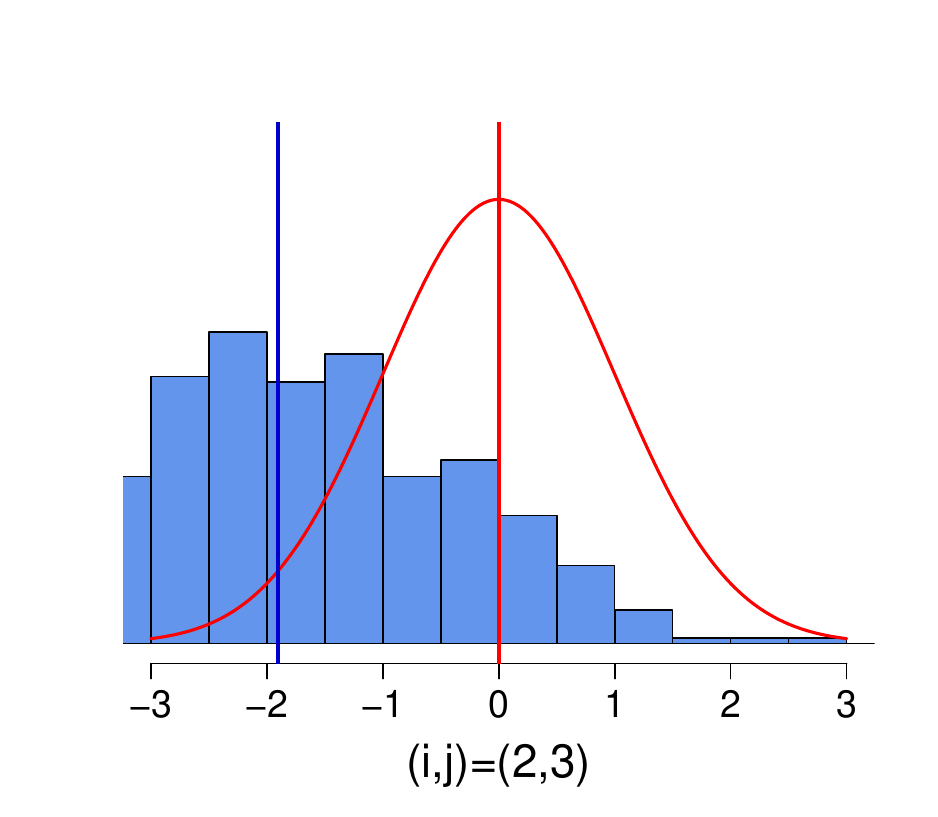}
    \end{minipage}
    \begin{minipage}{0.24\linewidth}
        \centering
        \includegraphics[width=\textwidth]{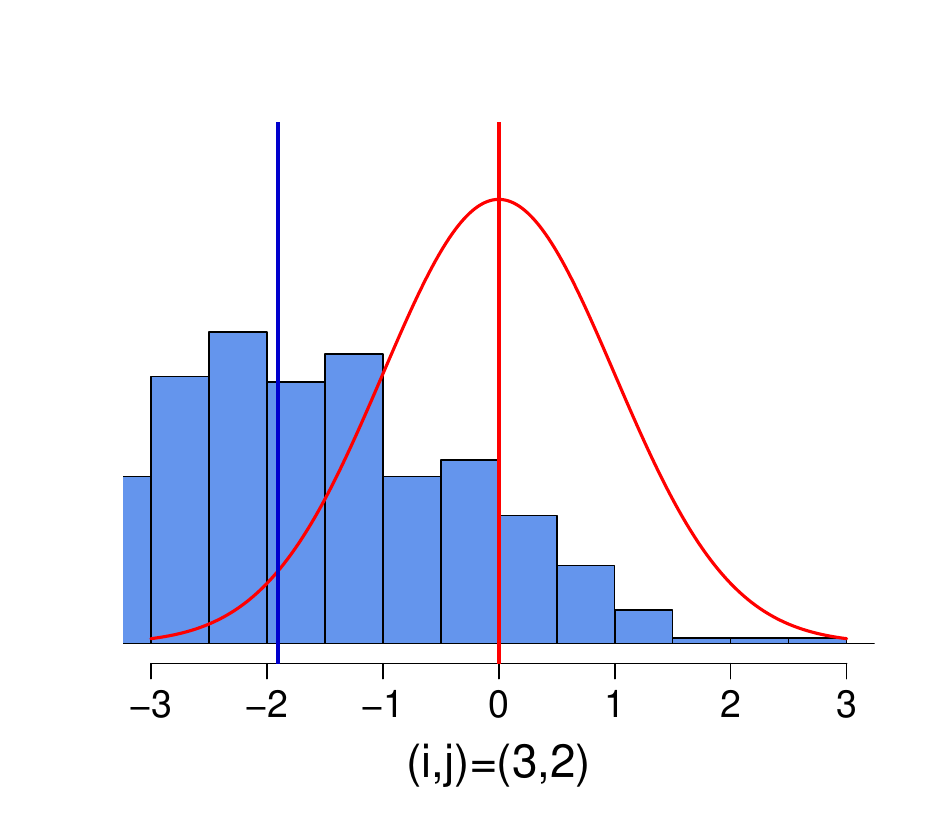}
    \end{minipage}
    \begin{minipage}{0.24\linewidth}
        \centering
        \includegraphics[width=\textwidth]{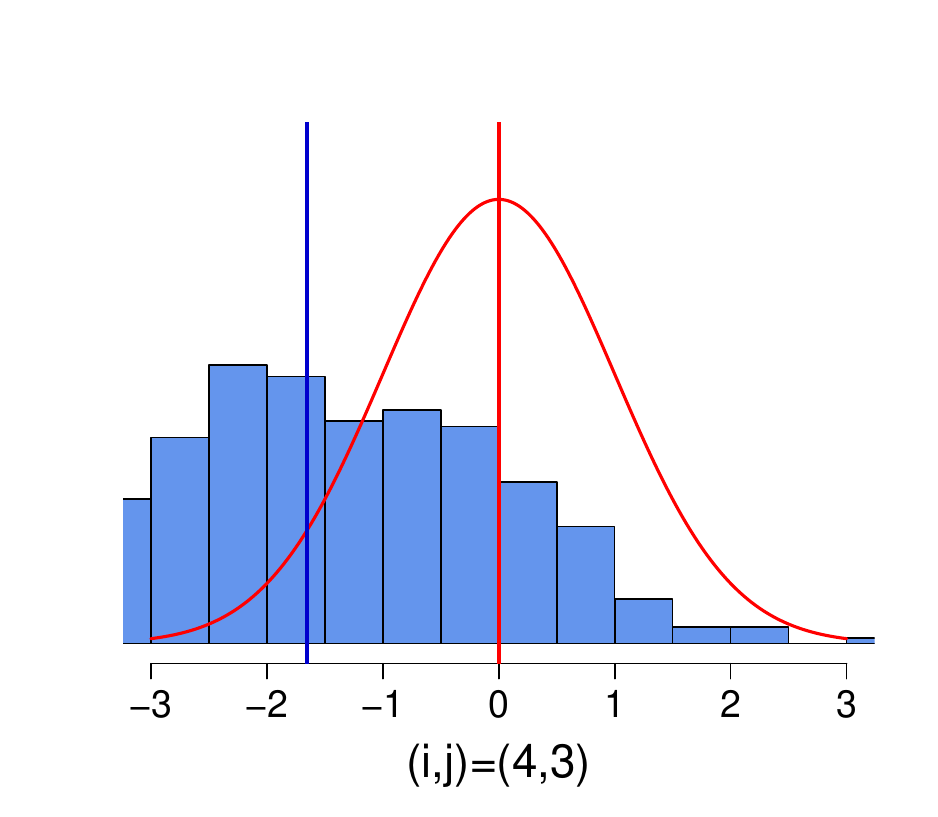}
    \end{minipage}
 \end{minipage}

  \caption*{$n=400, p=200$}
      \vspace{-0.43cm}
 \begin{minipage}{0.3\linewidth}
    \begin{minipage}{0.24\linewidth}
        \centering
        \includegraphics[width=\textwidth]{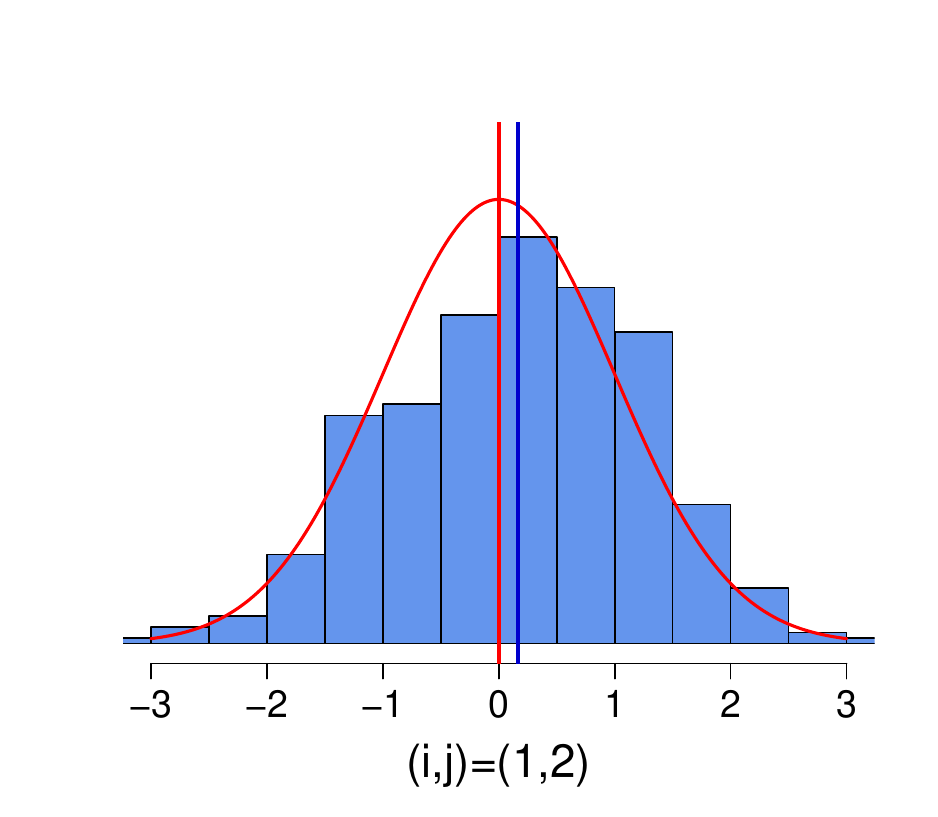}
    \end{minipage}
    \begin{minipage}{0.24\linewidth}
        \centering
        \includegraphics[width=\textwidth]{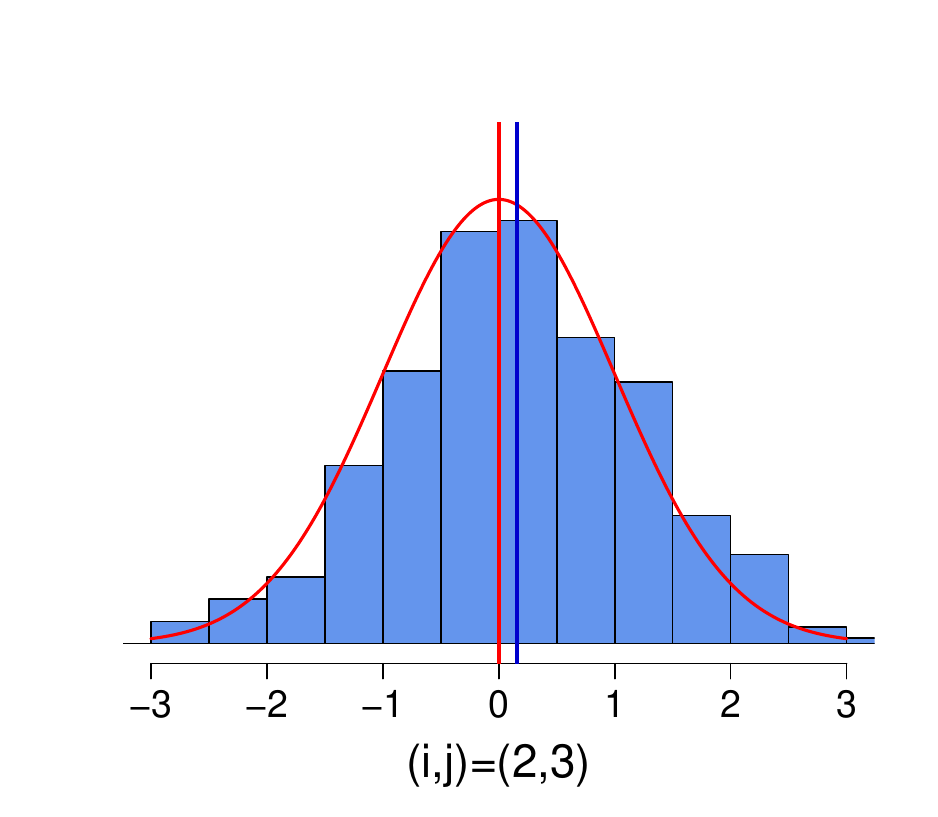}
    \end{minipage}
    \begin{minipage}{0.24\linewidth}
        \centering
        \includegraphics[width=\textwidth]{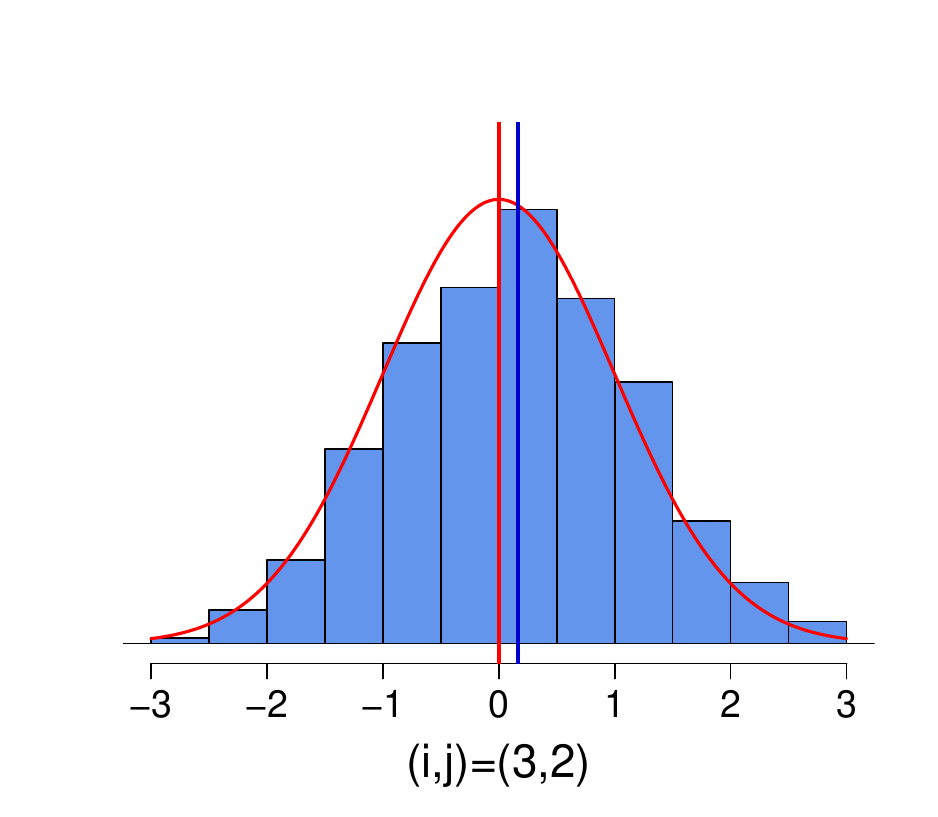}
    \end{minipage}
    \begin{minipage}{0.24\linewidth}
        \centering
        \includegraphics[width=\textwidth]{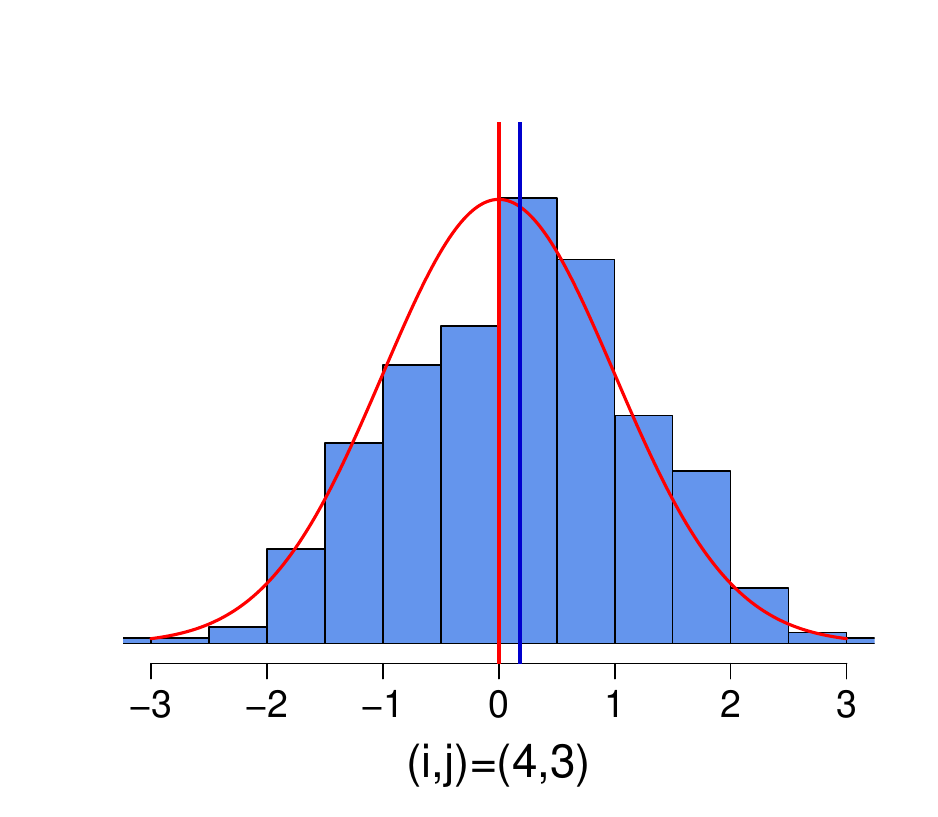}
    \end{minipage}
 \end{minipage}  
     \hspace{1cm}
 \begin{minipage}{0.3\linewidth}
    \begin{minipage}{0.24\linewidth}
        \centering
        \includegraphics[width=\textwidth]{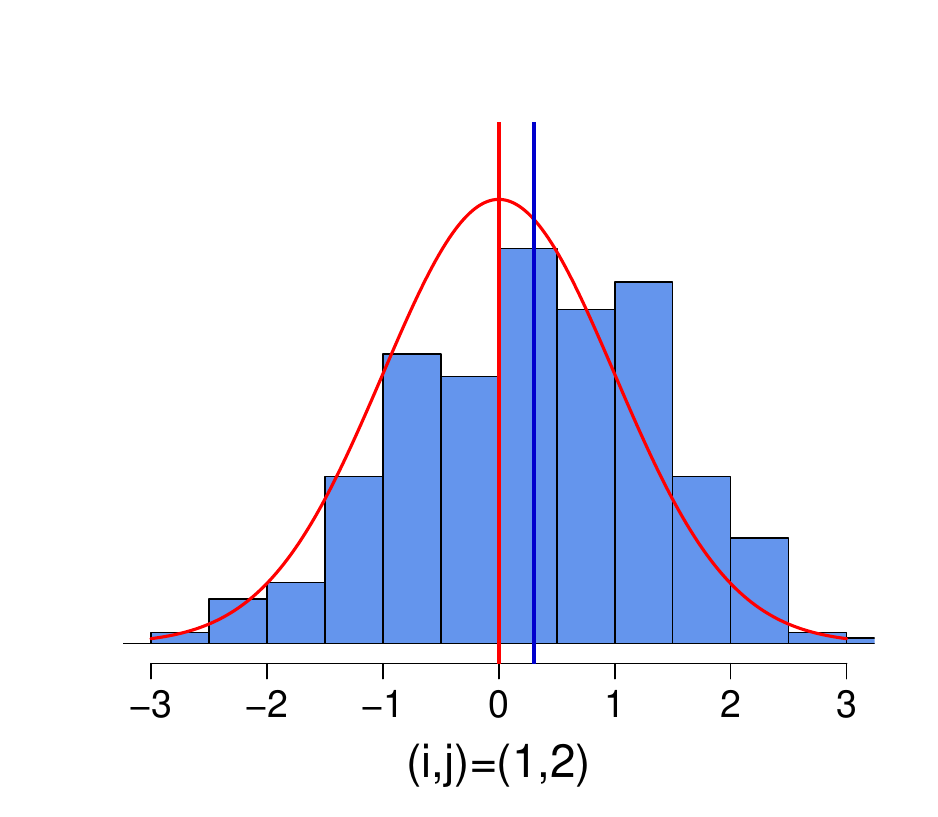}
    \end{minipage}
    \begin{minipage}{0.24\linewidth}
        \centering
        \includegraphics[width=\textwidth]{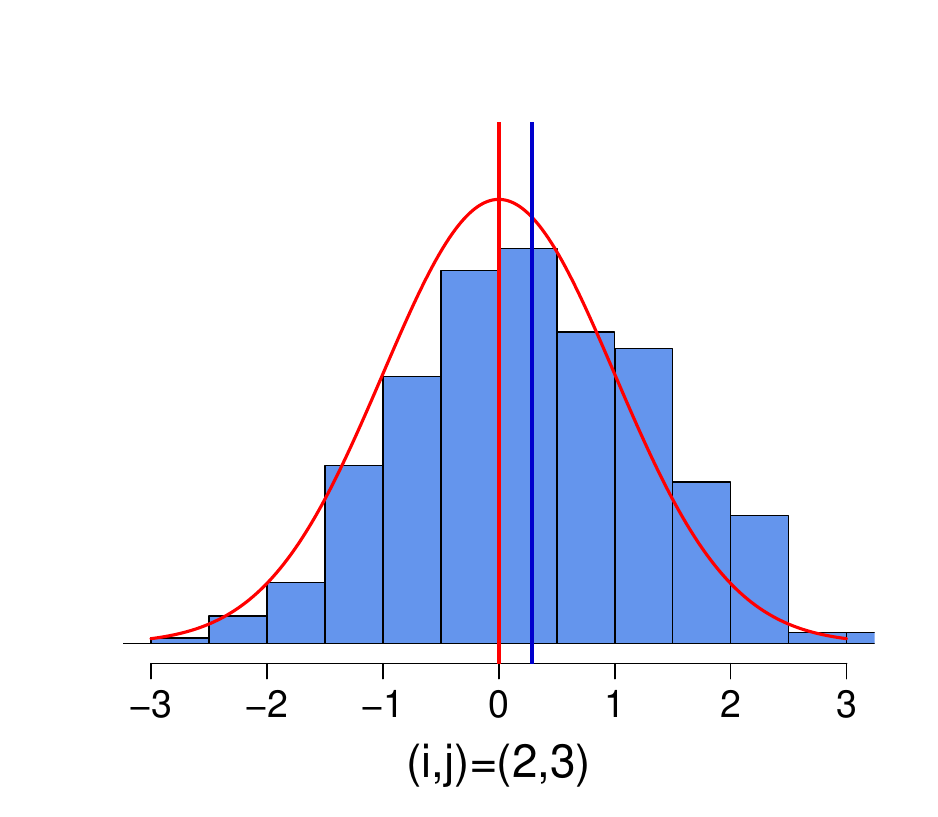}
    \end{minipage}
    \begin{minipage}{0.24\linewidth}
        \centering
        \includegraphics[width=\textwidth]{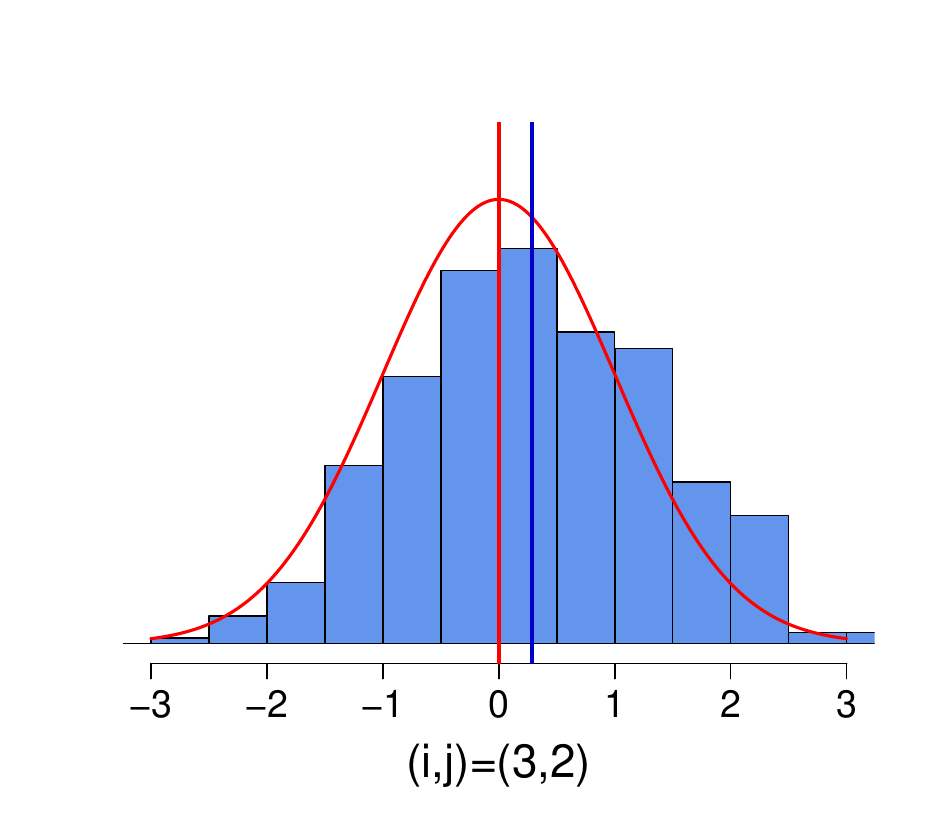}
    \end{minipage}
    \begin{minipage}{0.24\linewidth}
        \centering
        \includegraphics[width=\textwidth]{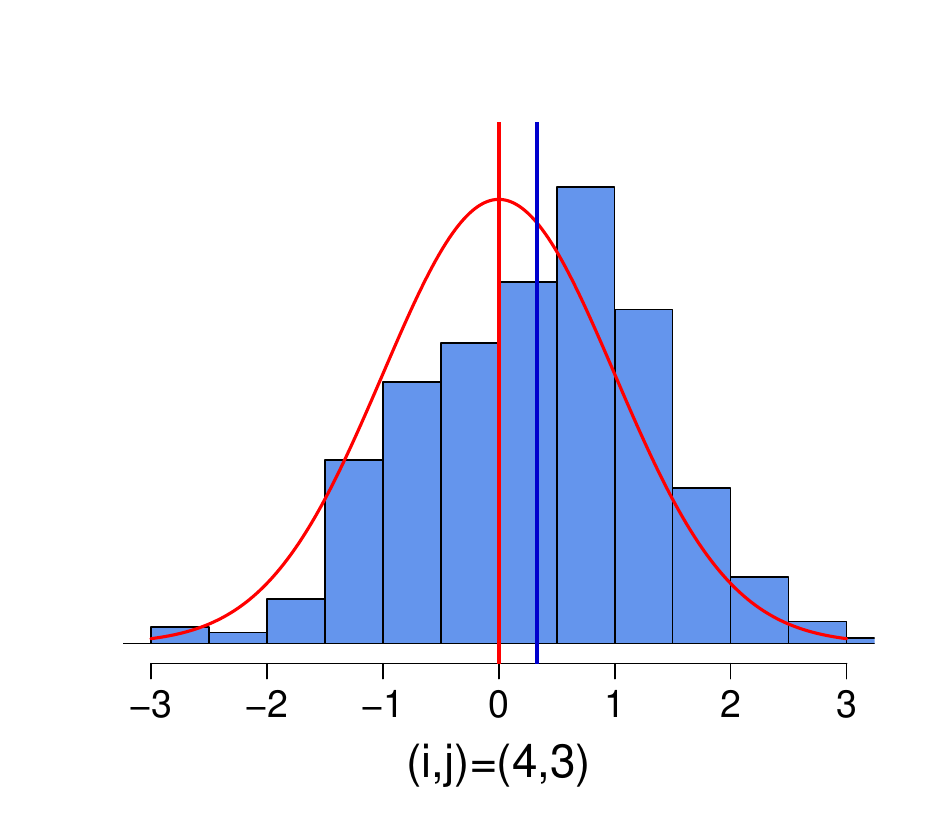}
    \end{minipage}
  \end{minipage}  
    \hspace{1cm}
 \begin{minipage}{0.3\linewidth}
    \begin{minipage}{0.24\linewidth}
        \centering
        \includegraphics[width=\textwidth]{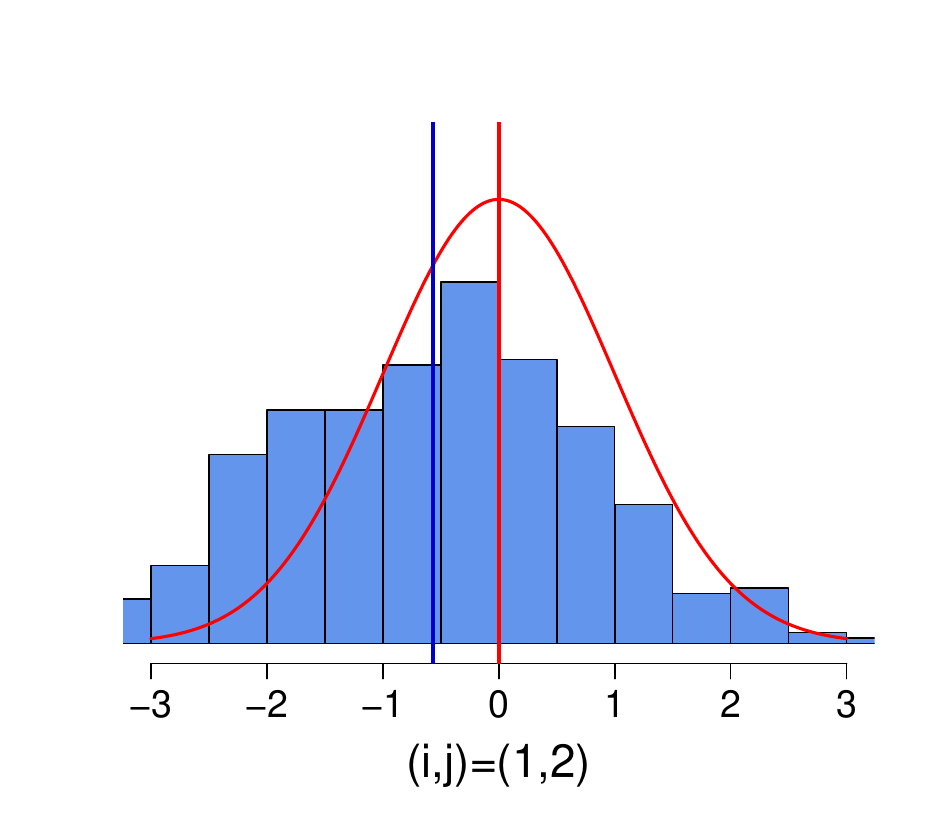}
    \end{minipage}
    \begin{minipage}{0.24\linewidth}
        \centering
        \includegraphics[width=\textwidth]{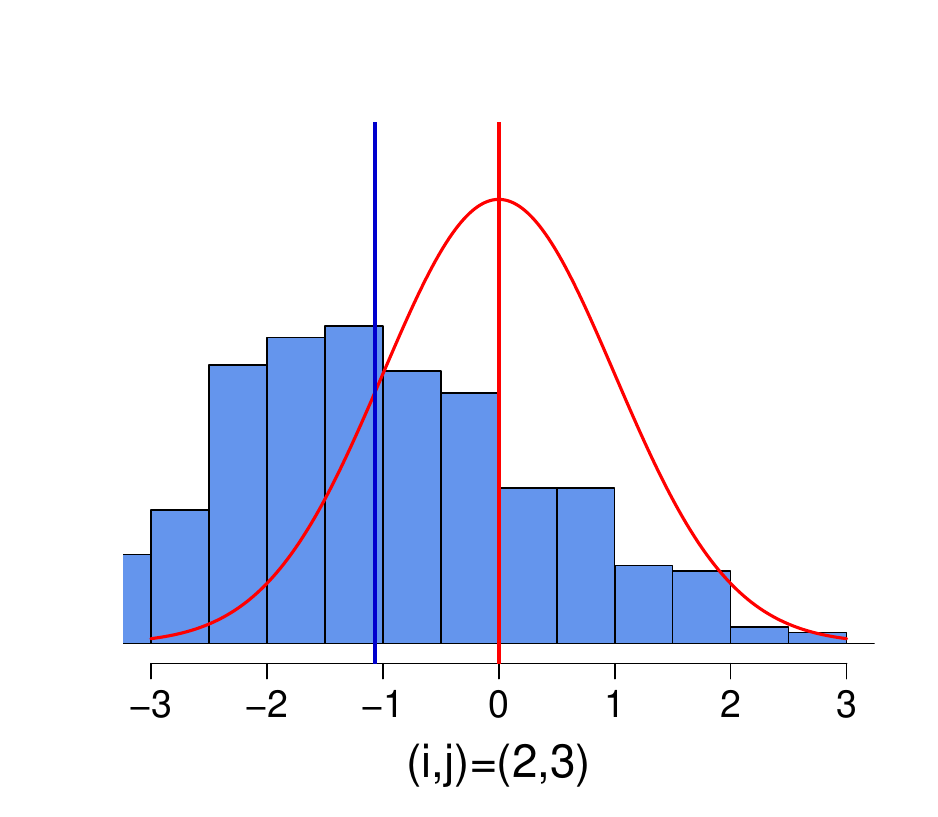}
    \end{minipage}
    \begin{minipage}{0.24\linewidth}
        \centering
        \includegraphics[width=\textwidth]{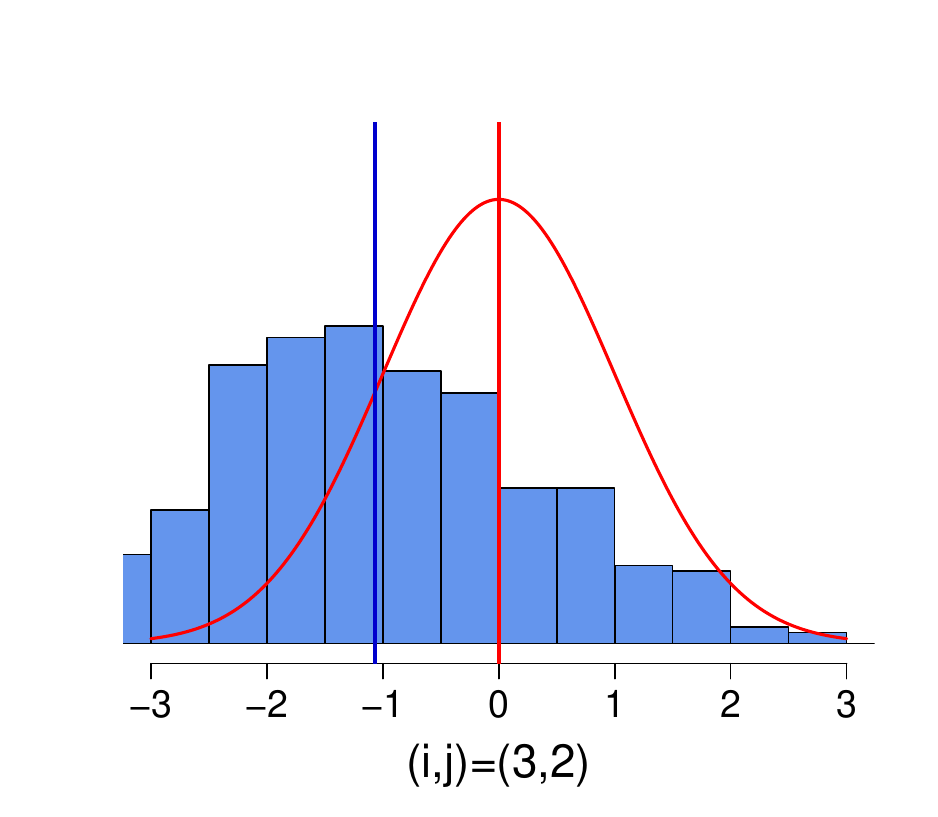}
    \end{minipage}
    \begin{minipage}{0.24\linewidth}
        \centering
        \includegraphics[width=\textwidth]{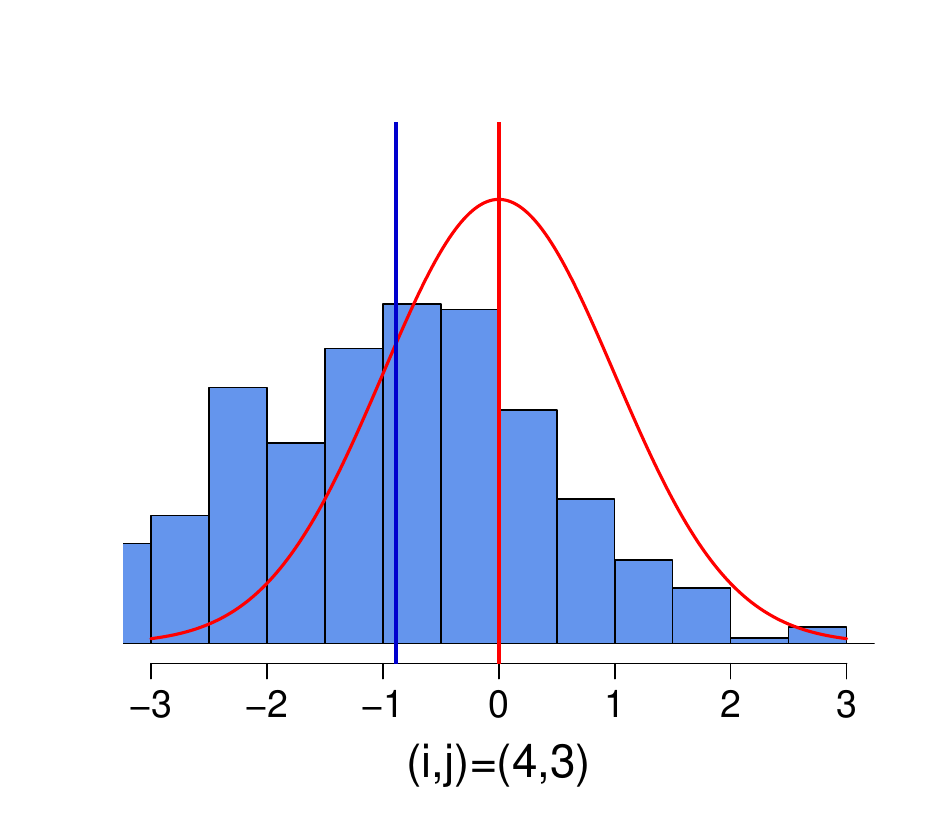}
    \end{minipage}
 \end{minipage}

  \caption*{$n=800, p=200$}
      \vspace{-0.43cm}
 \begin{minipage}{0.3\linewidth}
    \begin{minipage}{0.24\linewidth}
        \centering
        \includegraphics[width=\textwidth]{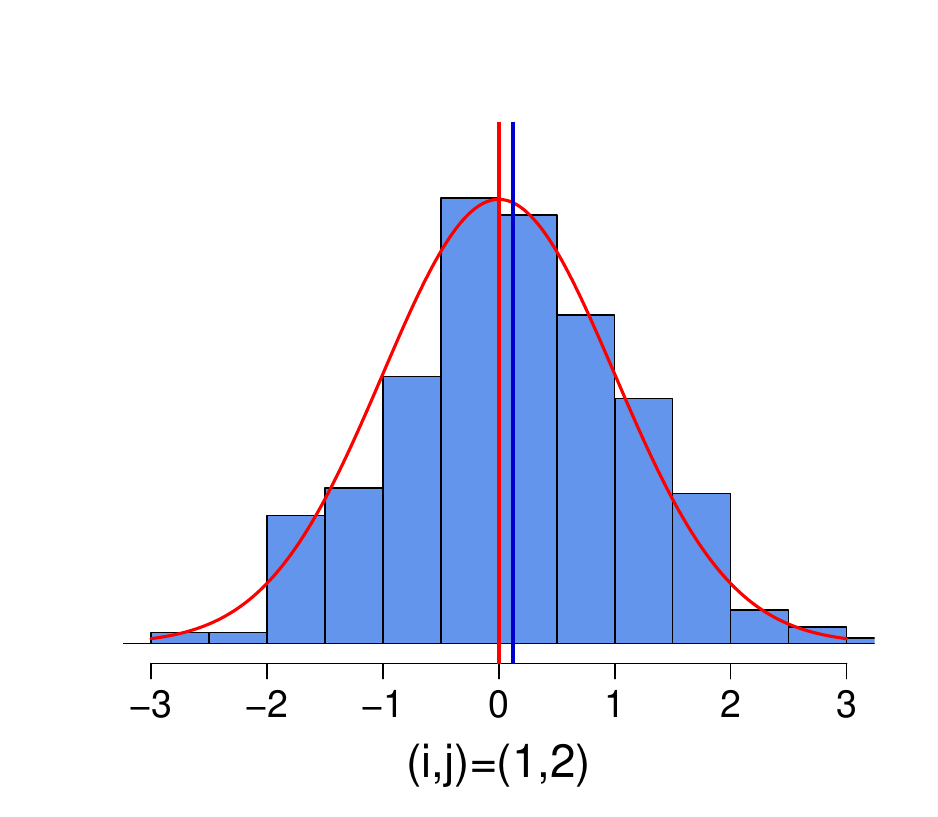}
    \end{minipage}
    \begin{minipage}{0.24\linewidth}
        \centering
        \includegraphics[width=\textwidth]{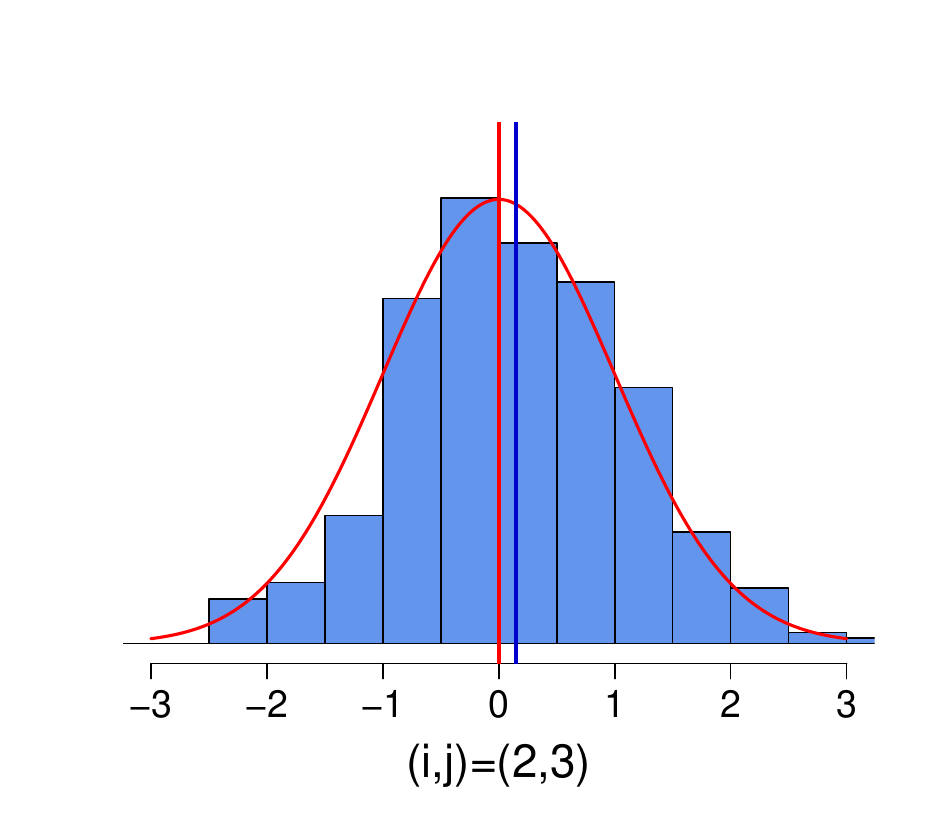}
    \end{minipage}
    \begin{minipage}{0.24\linewidth}
        \centering
        \includegraphics[width=\textwidth]{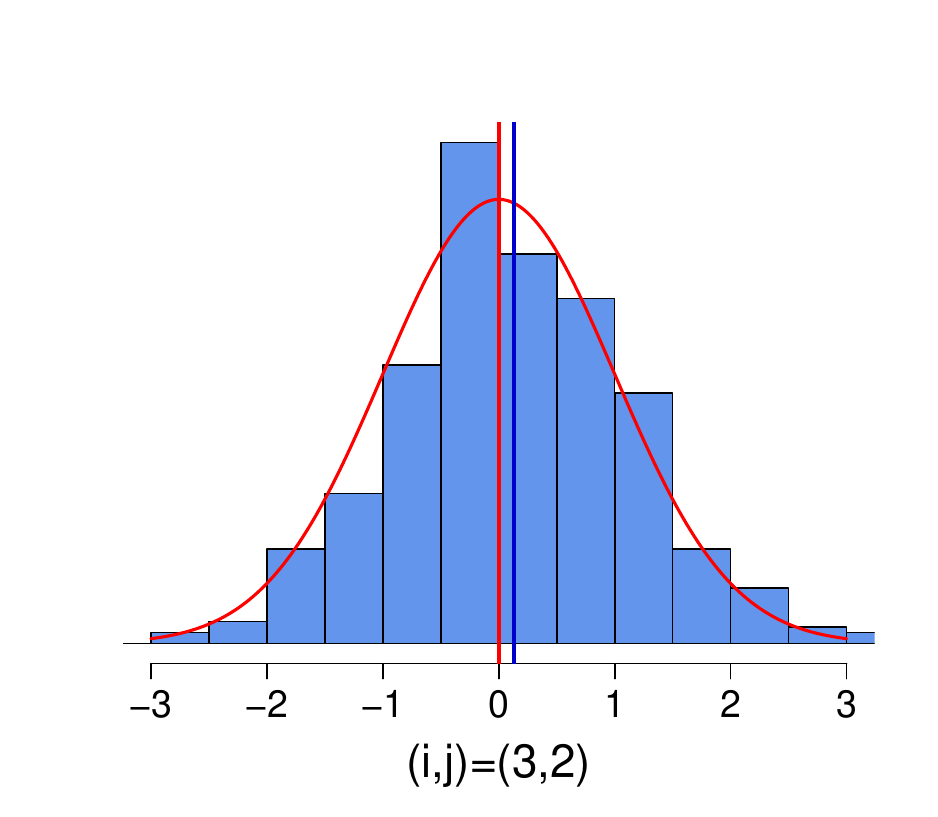}
    \end{minipage}
    \begin{minipage}{0.24\linewidth}
        \centering
        \includegraphics[width=\textwidth]{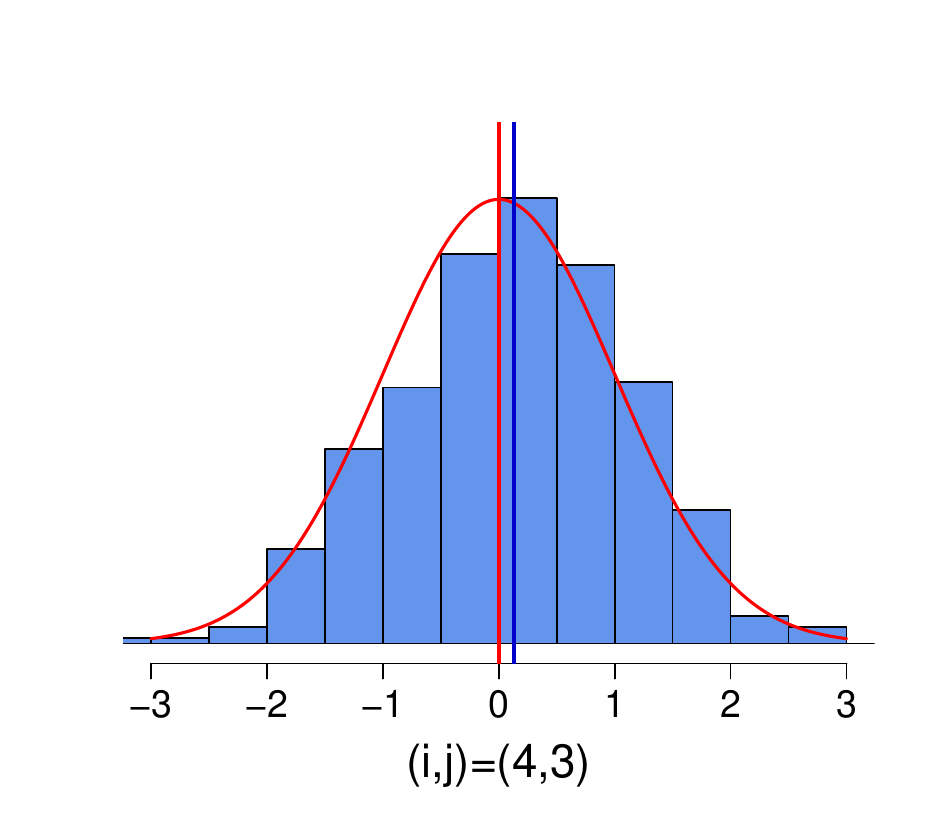}
    \end{minipage}
 \end{minipage} 
     \hspace{1cm}
 \begin{minipage}{0.3\linewidth}
    \begin{minipage}{0.24\linewidth}
        \centering
        \includegraphics[width=\textwidth]{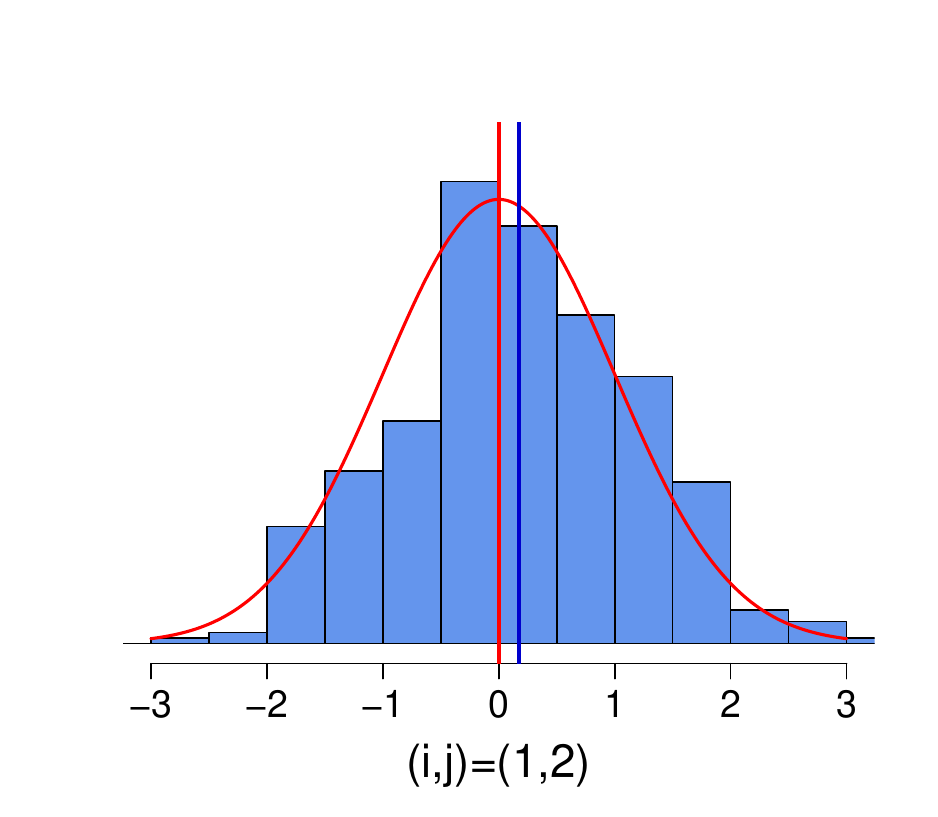}
    \end{minipage}
    \begin{minipage}{0.24\linewidth}
        \centering
        \includegraphics[width=\textwidth]{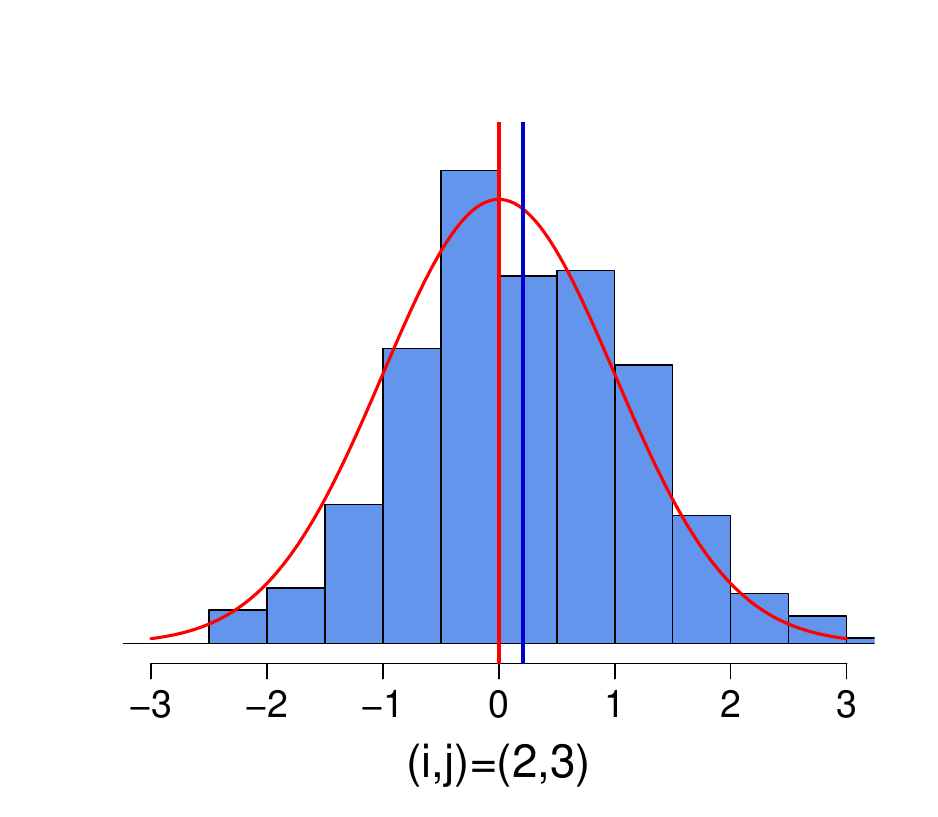}
    \end{minipage}
    \begin{minipage}{0.24\linewidth}
        \centering
        \includegraphics[width=\textwidth]{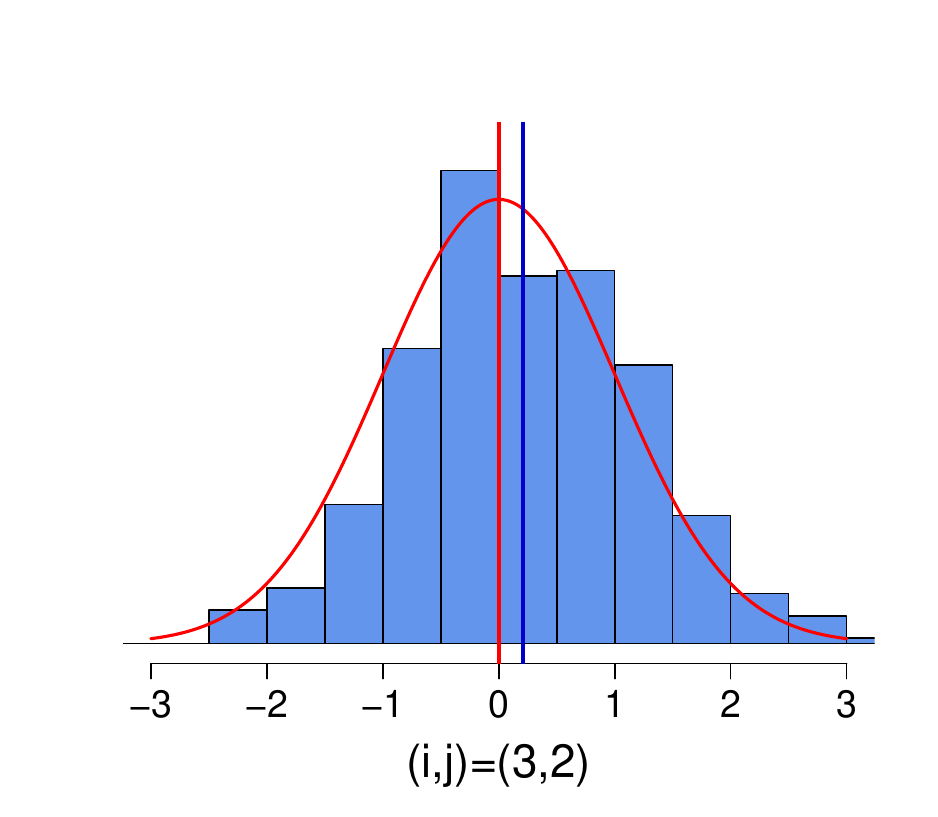}
    \end{minipage}
    \begin{minipage}{0.24\linewidth}
        \centering
        \includegraphics[width=\textwidth]{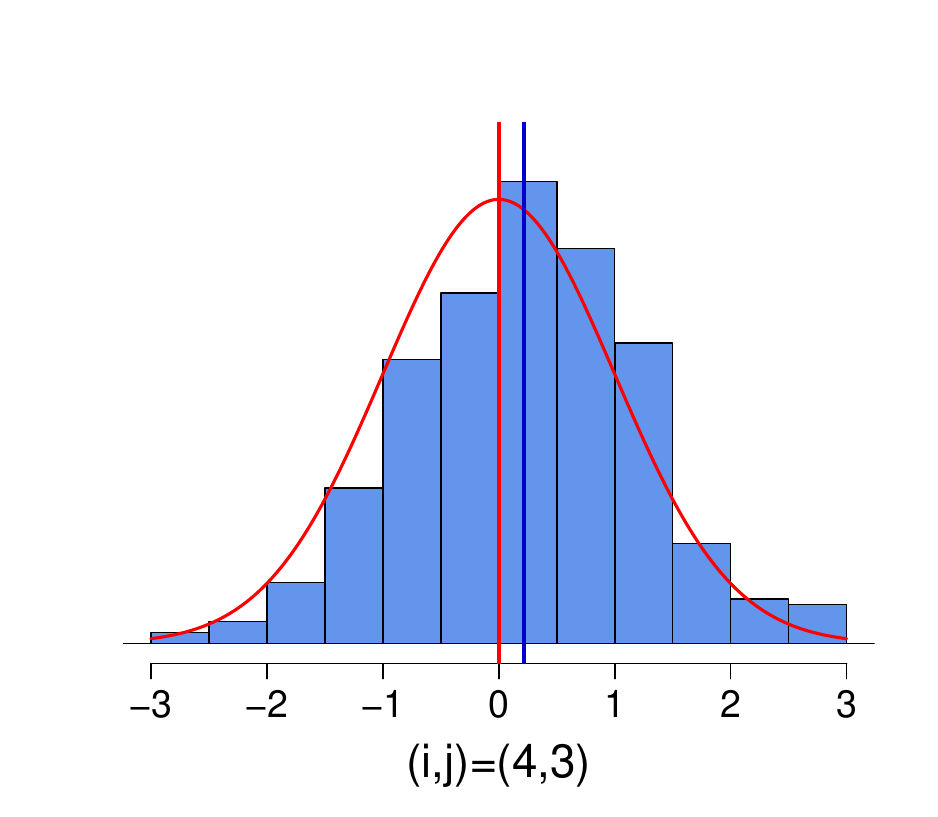}
    \end{minipage}
 \end{minipage}   
      \hspace{1cm}
 \begin{minipage}{0.3\linewidth}
    \begin{minipage}{0.24\linewidth}
        \centering
        \includegraphics[width=\textwidth]{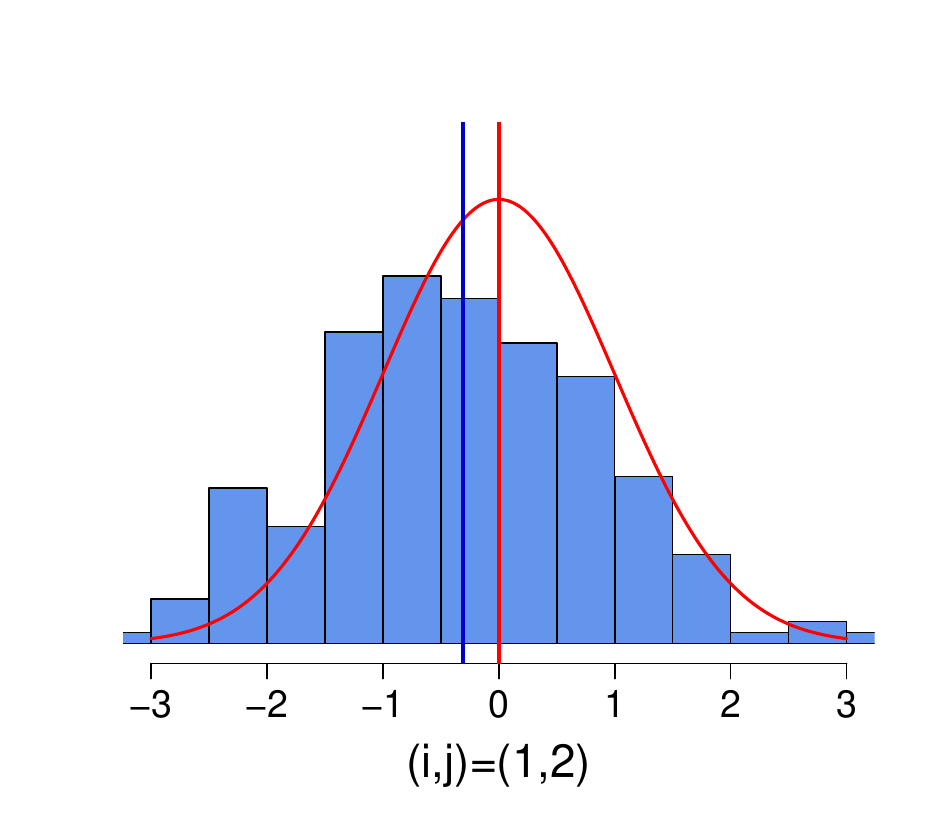}
    \end{minipage}
    \begin{minipage}{0.24\linewidth}
        \centering
        \includegraphics[width=\textwidth]{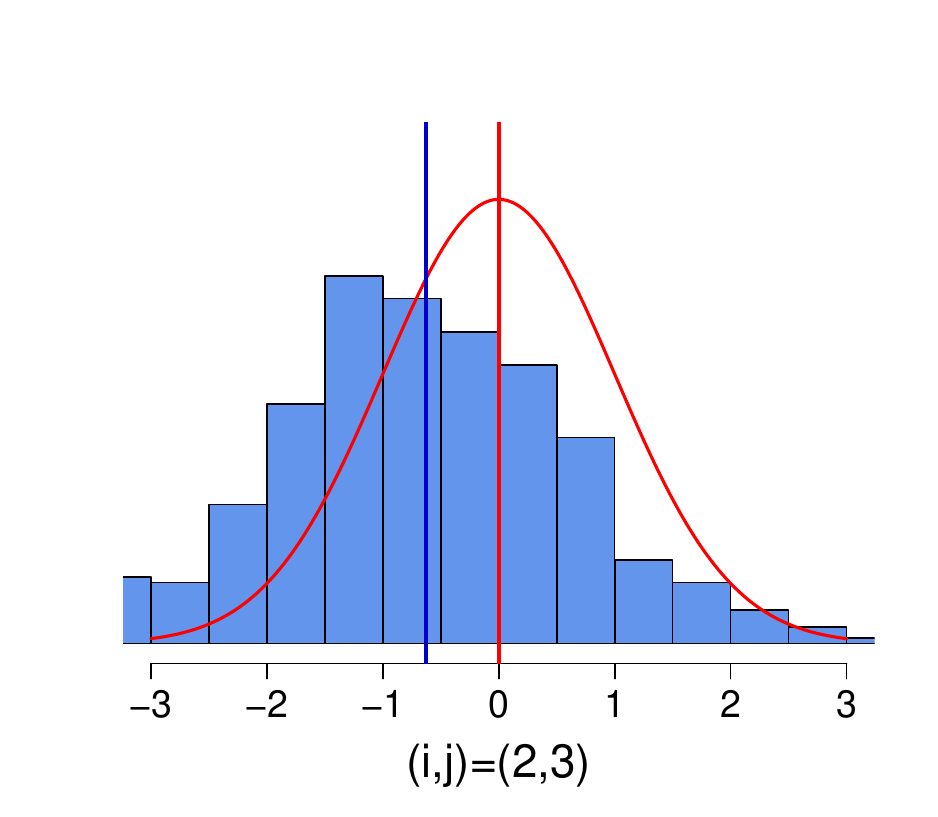}
    \end{minipage}
    \begin{minipage}{0.24\linewidth}
        \centering
        \includegraphics[width=\textwidth]{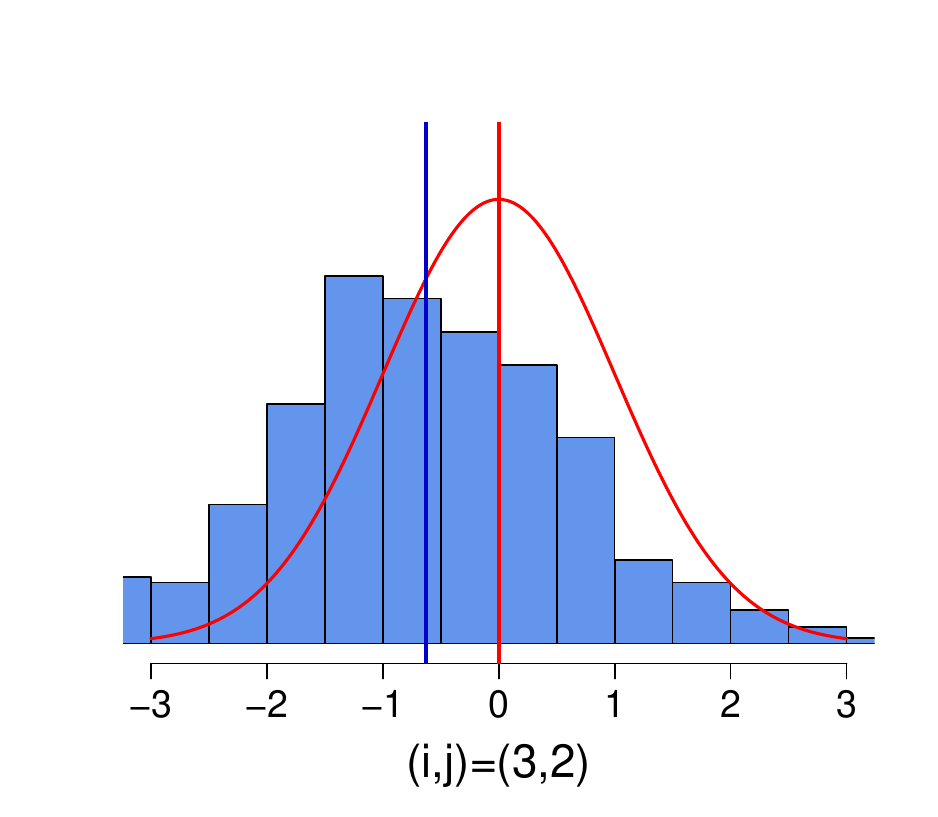}
    \end{minipage}
    \begin{minipage}{0.24\linewidth}
        \centering
        \includegraphics[width=\textwidth]{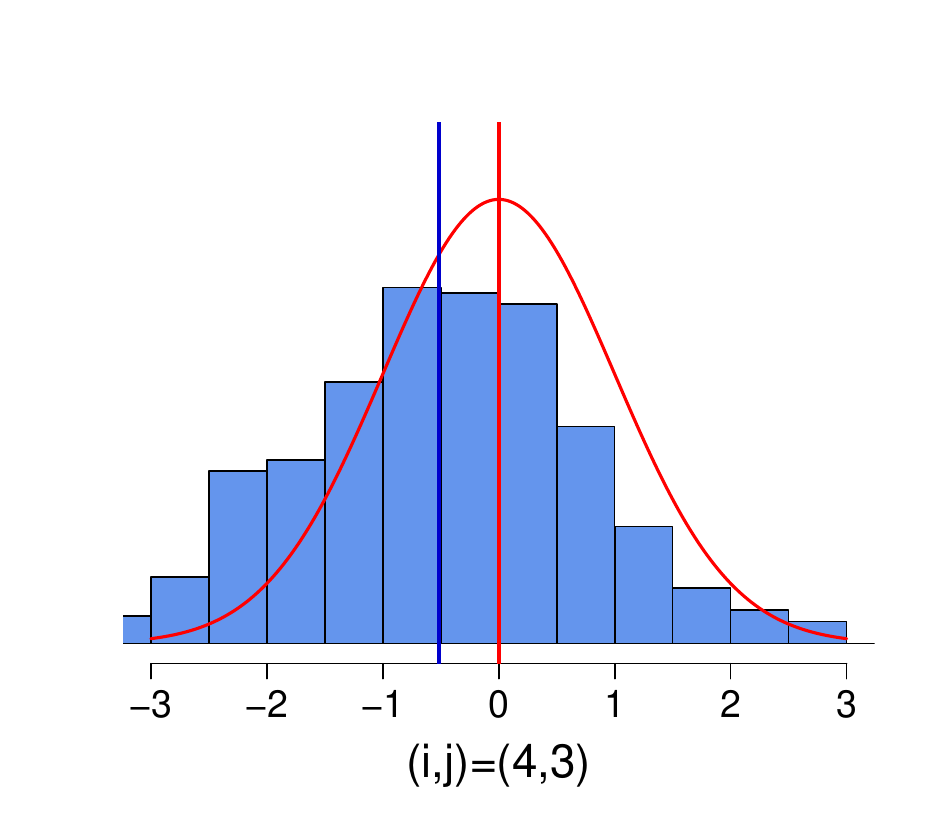}
    \end{minipage}
     \end{minipage}   
     
 \caption*{$n=200, p=400$}
     \vspace{-0.43cm}
 \begin{minipage}{0.3\linewidth}
    \begin{minipage}{0.24\linewidth}
        \centering
        \includegraphics[width=\textwidth]{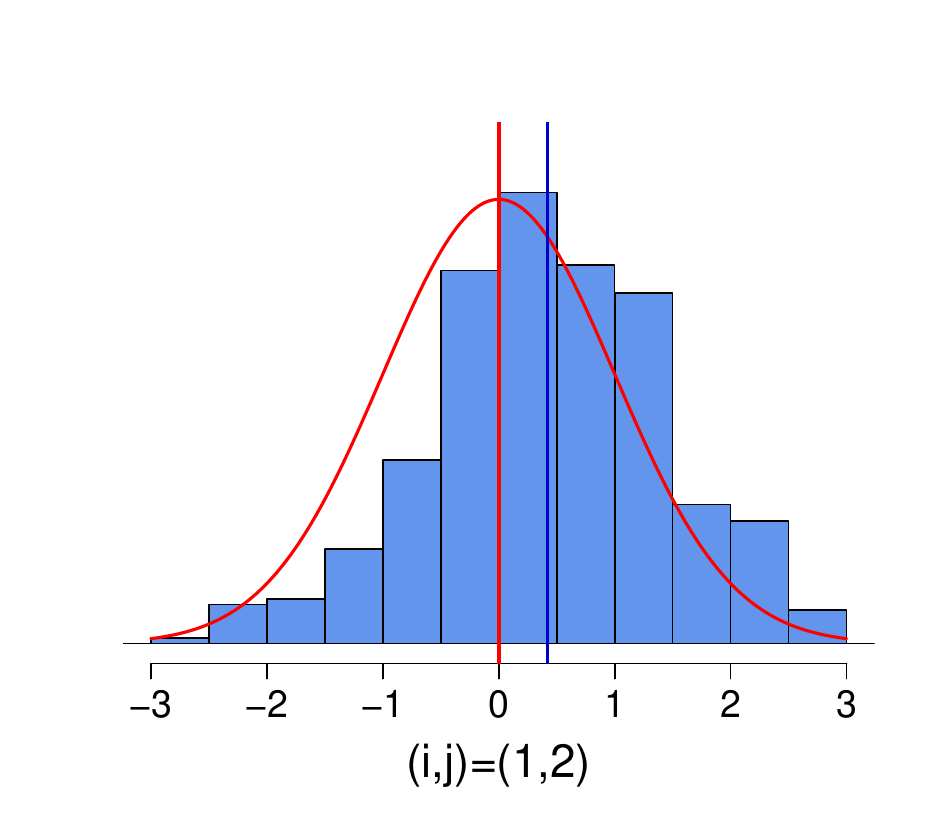}
    \end{minipage}
    \begin{minipage}{0.24\linewidth}
        \centering
        \includegraphics[width=\textwidth]{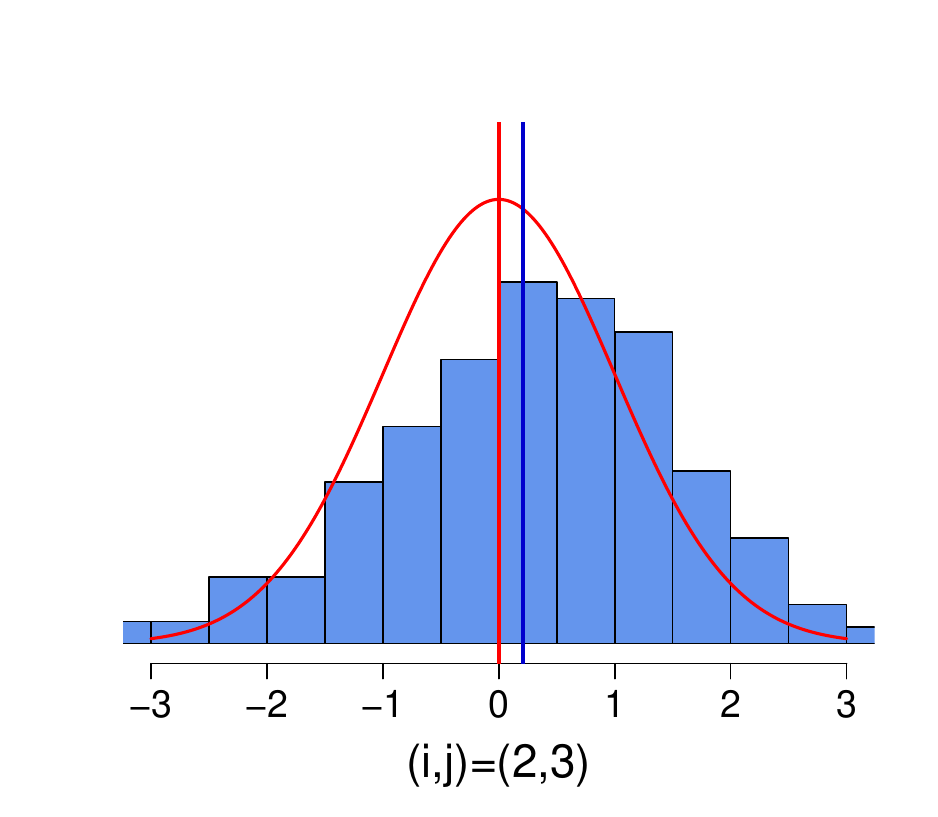}
    \end{minipage}
    \begin{minipage}{0.24\linewidth}
        \centering
        \includegraphics[width=\textwidth]{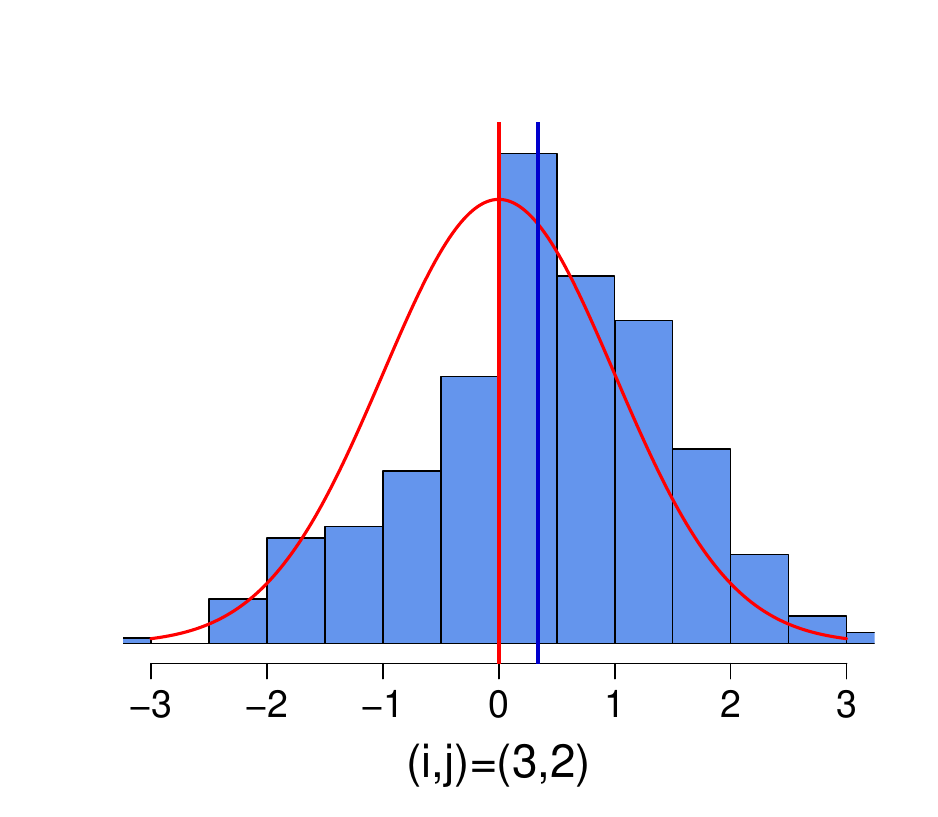}
    \end{minipage}
    \begin{minipage}{0.24\linewidth}
        \centering
        \includegraphics[width=\textwidth]{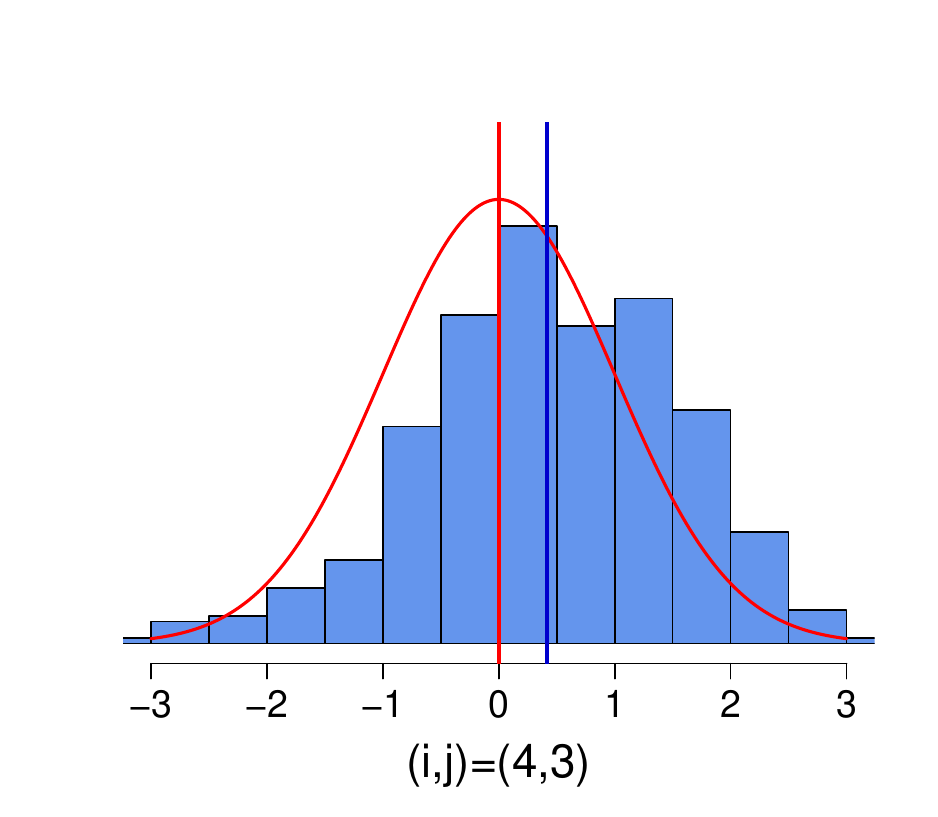}
    \end{minipage}
 \end{minipage}
 \hspace{1cm}
 \begin{minipage}{0.3\linewidth}
    \begin{minipage}{0.24\linewidth}
        \centering
        \includegraphics[width=\textwidth]{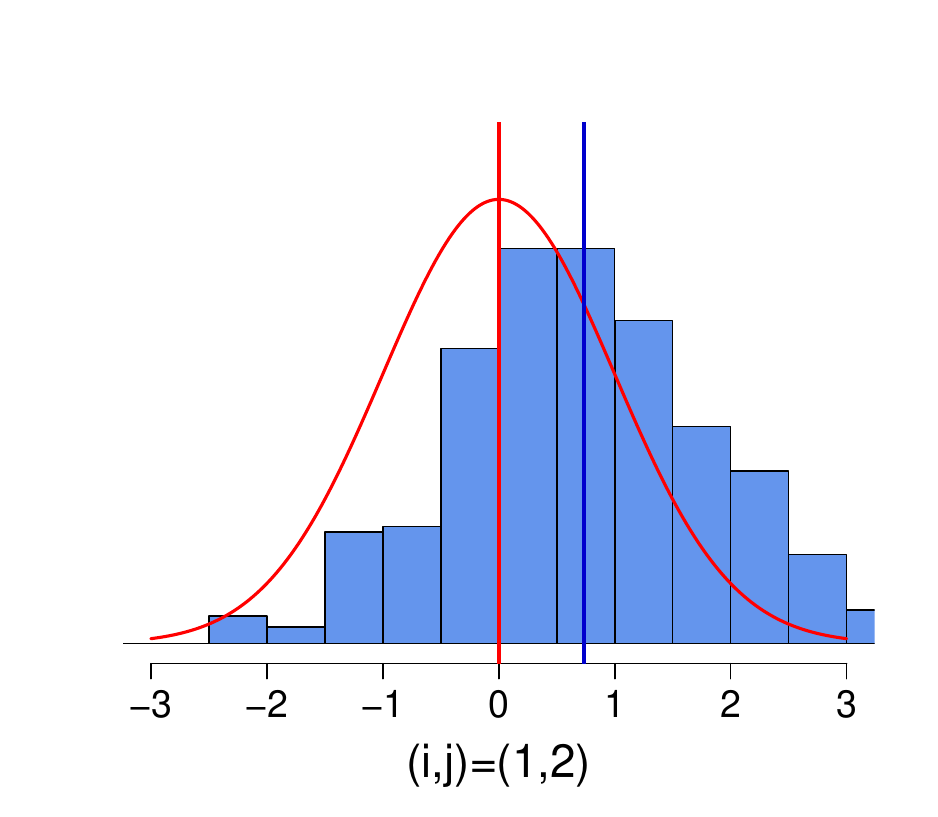}
    \end{minipage}
    \begin{minipage}{0.24\linewidth}
        \centering
        \includegraphics[width=\textwidth]{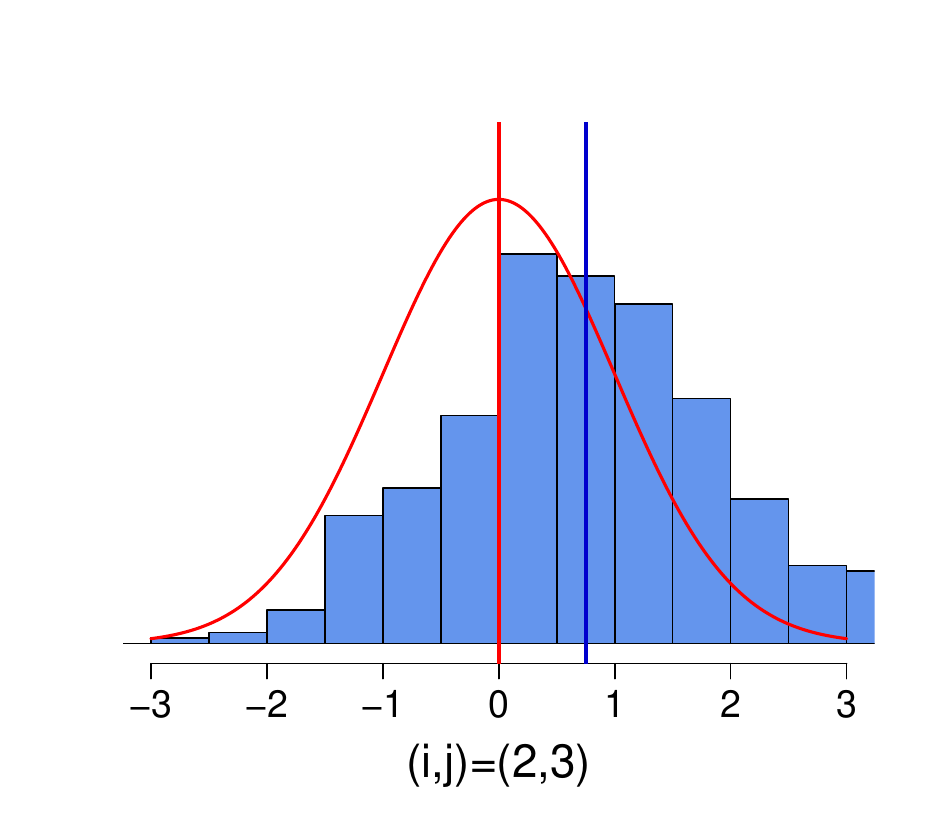}
    \end{minipage}
    \begin{minipage}{0.24\linewidth}
        \centering
        \includegraphics[width=\textwidth]{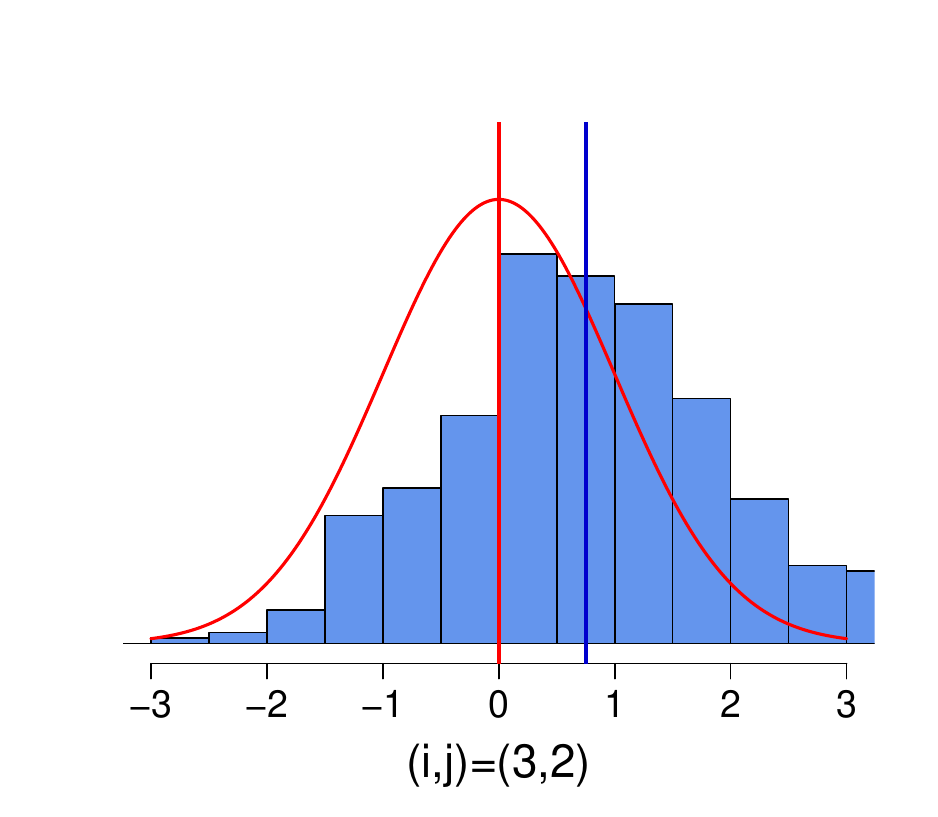}
    \end{minipage}
    \begin{minipage}{0.24\linewidth}
        \centering
        \includegraphics[width=\textwidth]{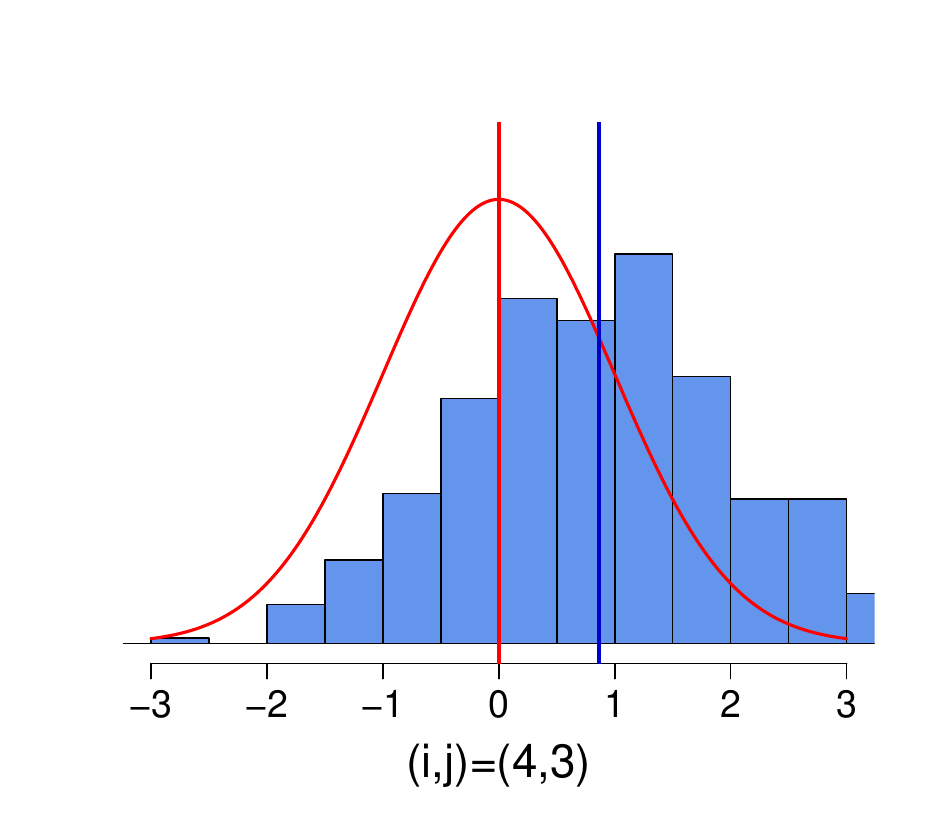}
    \end{minipage}    
 \end{minipage}
  \hspace{1cm}
 \begin{minipage}{0.3\linewidth}
     \begin{minipage}{0.24\linewidth}
        \centering
        \includegraphics[width=\textwidth]{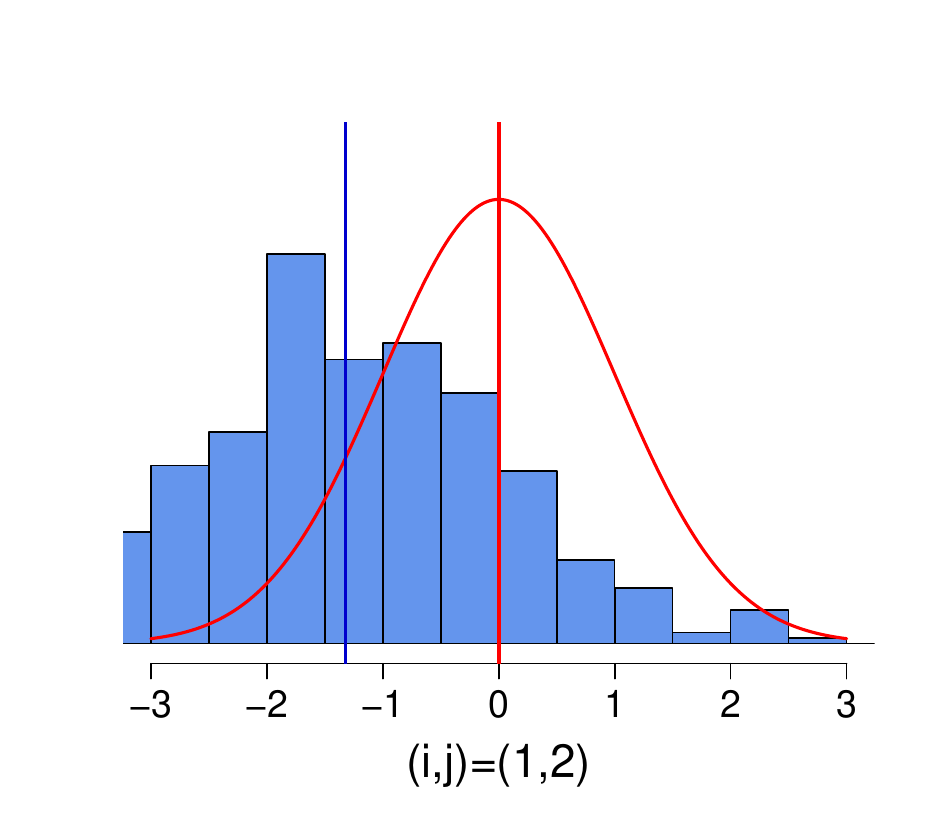}
    \end{minipage}
    \begin{minipage}{0.24\linewidth}
        \centering
        \includegraphics[width=\textwidth]{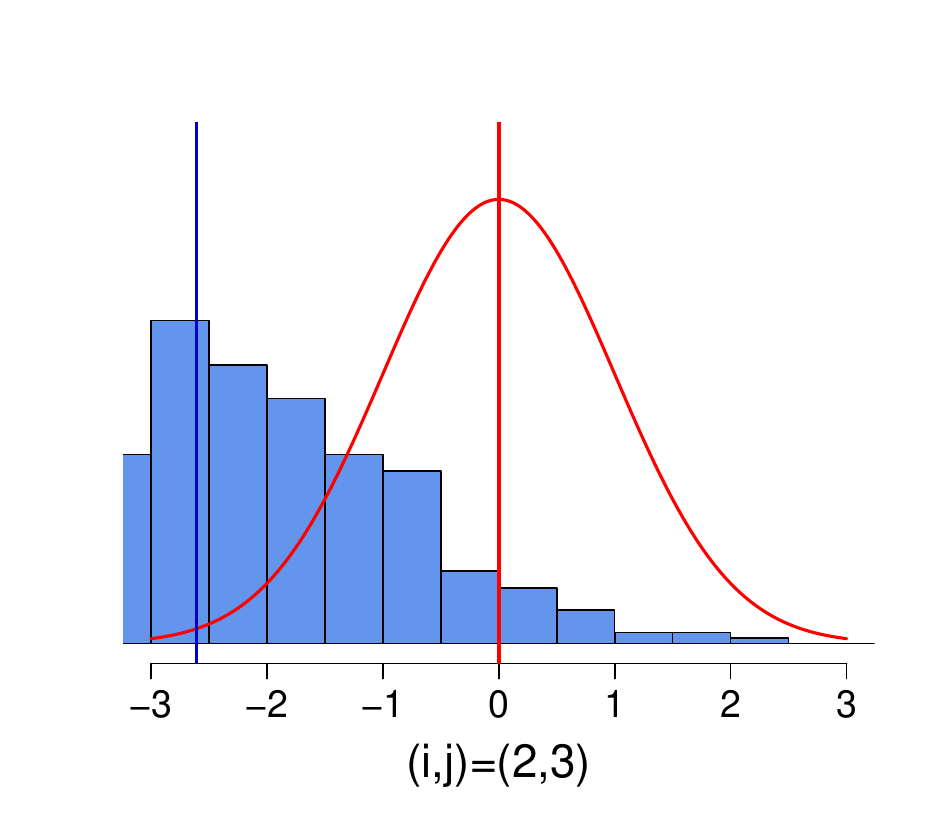}
    \end{minipage}
    \begin{minipage}{0.24\linewidth}
        \centering
        \includegraphics[width=\textwidth]{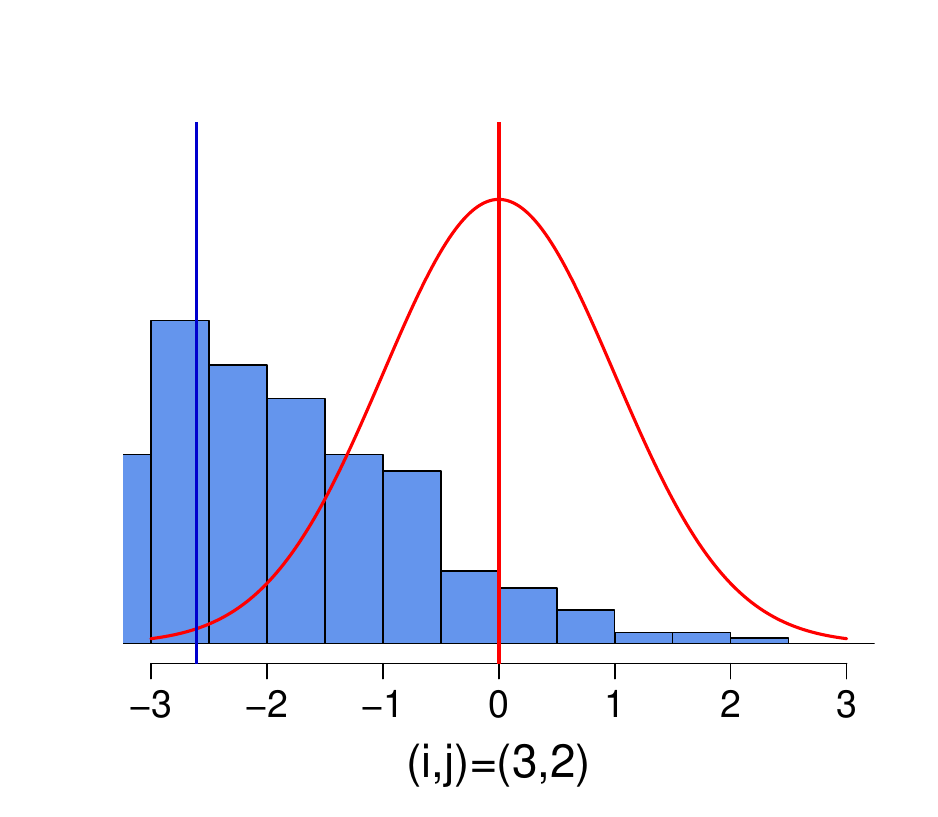}
    \end{minipage}
    \begin{minipage}{0.24\linewidth}
        \centering
        \includegraphics[width=\textwidth]{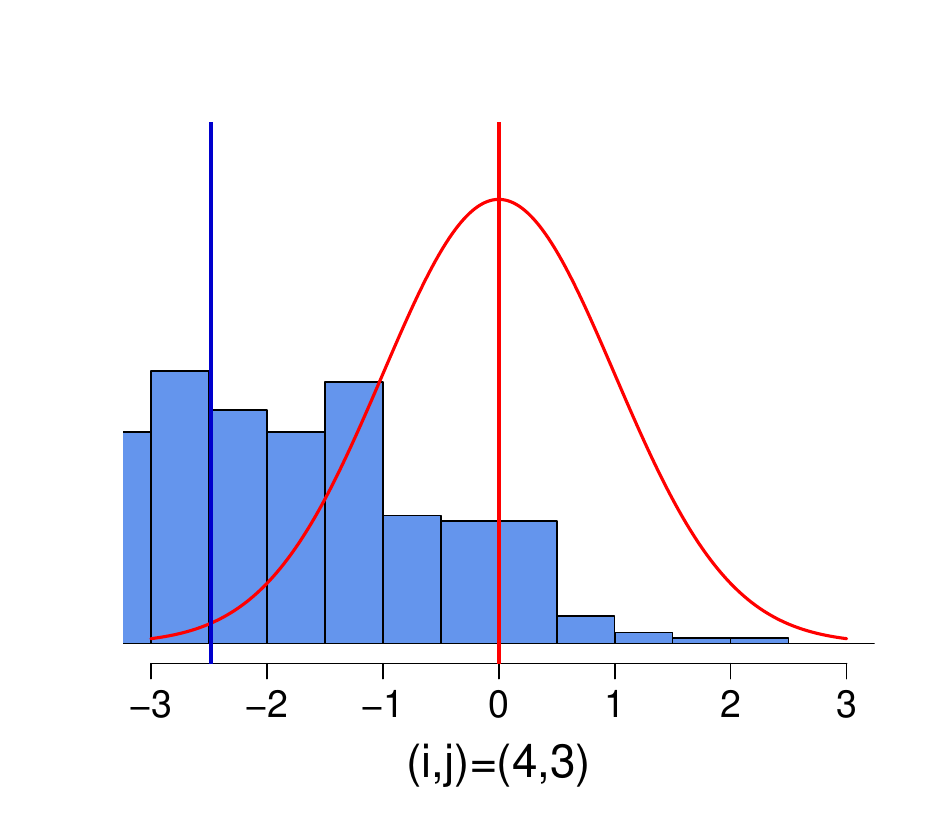}
    \end{minipage}
 \end{minipage}

  \caption*{$n=400, p=400$}
      \vspace{-0.43cm}
 \begin{minipage}{0.3\linewidth}
    \begin{minipage}{0.24\linewidth}
        \centering
        \includegraphics[width=\textwidth]{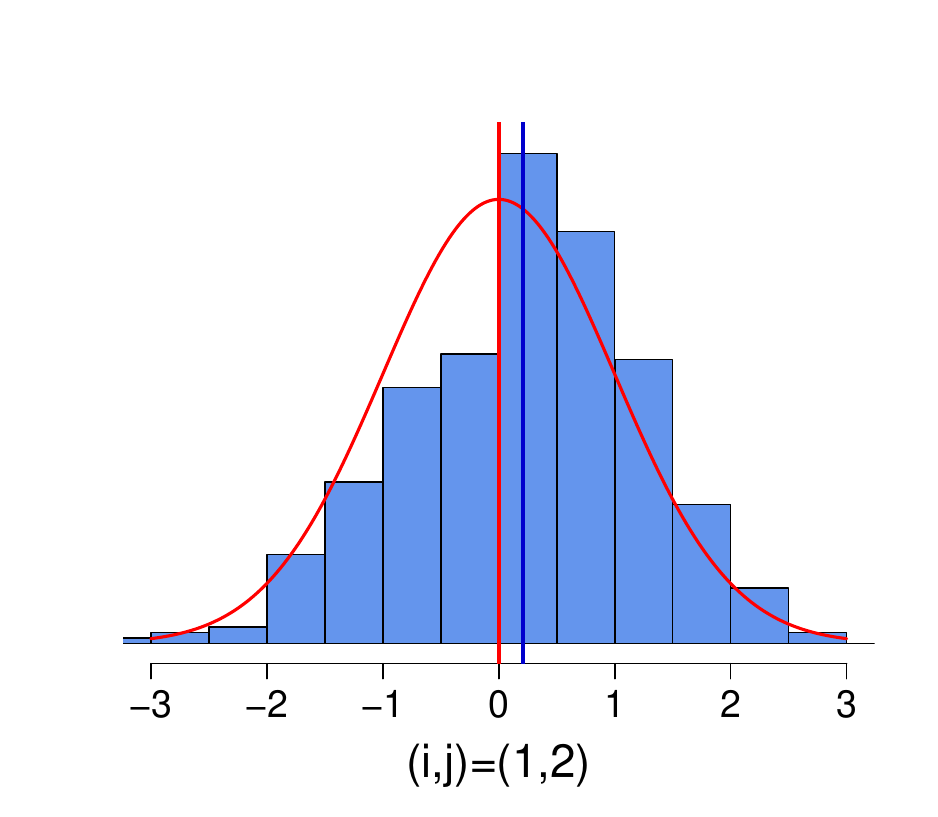}
    \end{minipage}
    \begin{minipage}{0.24\linewidth}
        \centering
        \includegraphics[width=\textwidth]{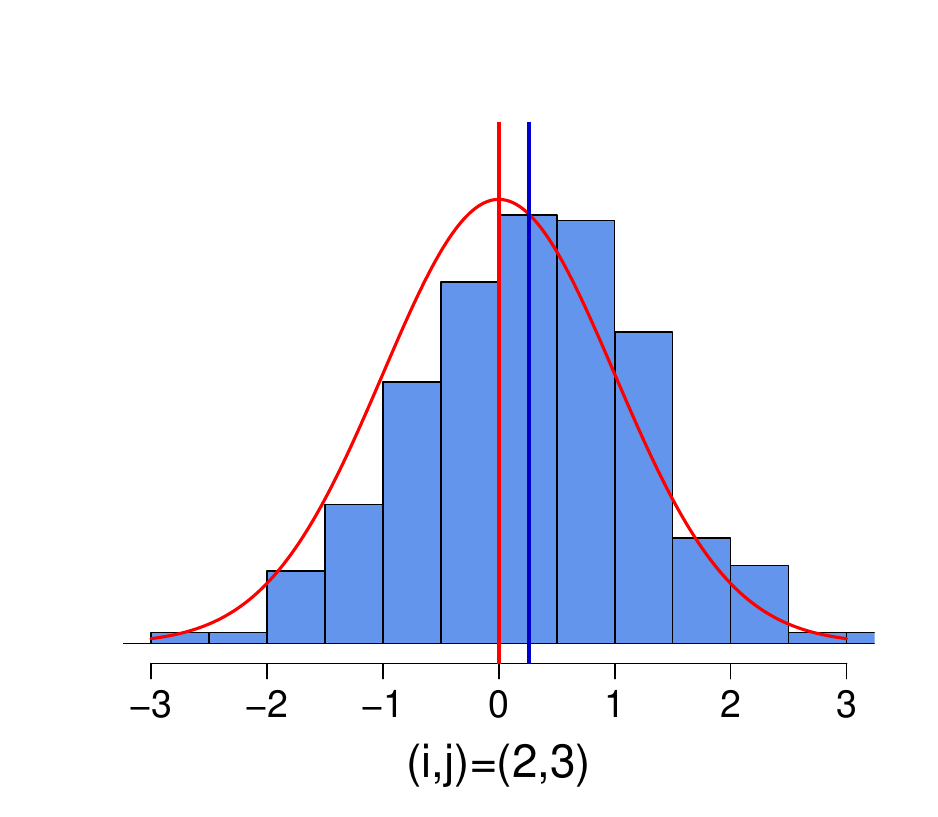}
    \end{minipage}
    \begin{minipage}{0.24\linewidth}
        \centering
        \includegraphics[width=\textwidth]{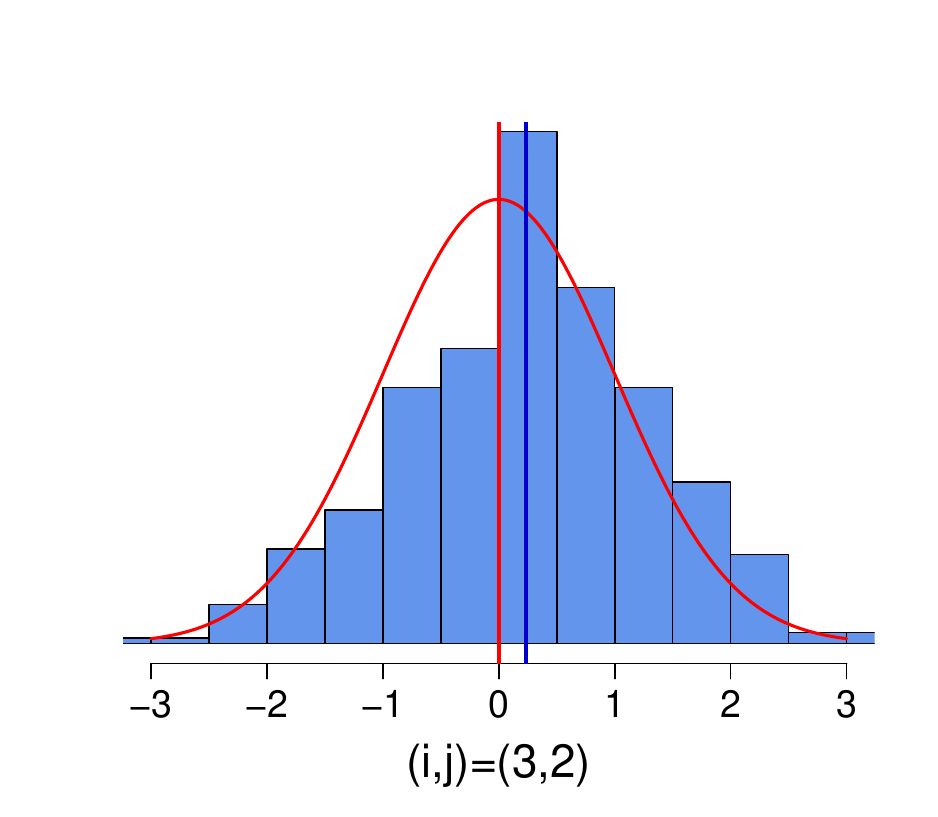}
    \end{minipage}
    \begin{minipage}{0.24\linewidth}
        \centering
        \includegraphics[width=\textwidth]{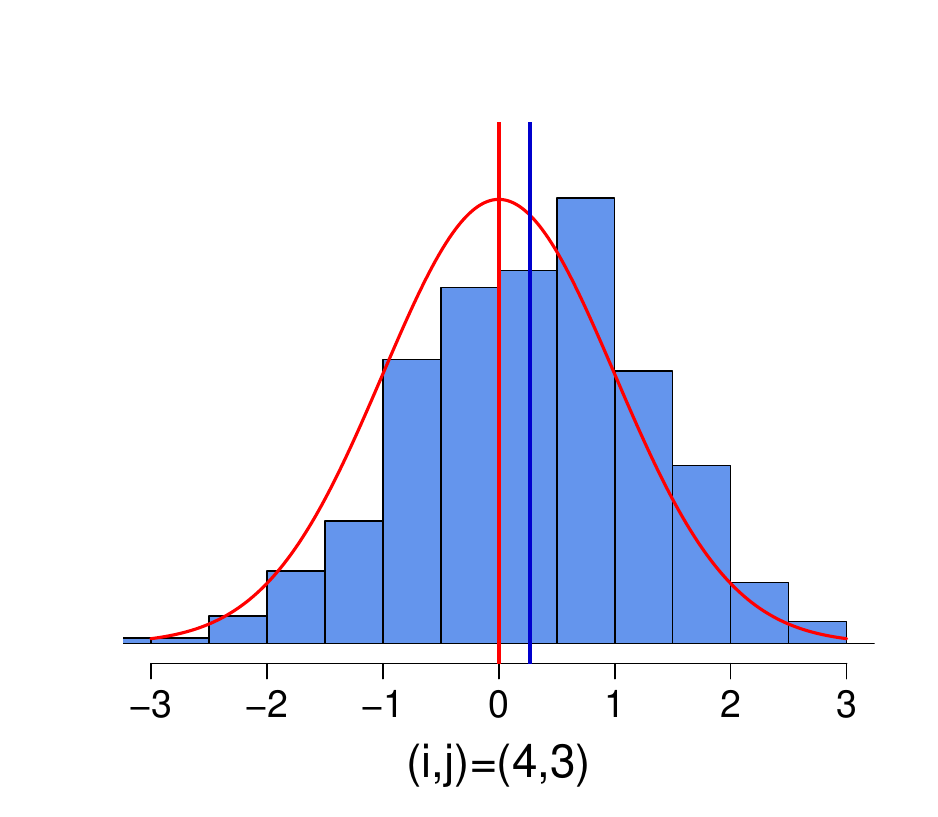}
    \end{minipage}
 \end{minipage}  
     \hspace{1cm}
 \begin{minipage}{0.3\linewidth}
    \begin{minipage}{0.24\linewidth}
        \centering
        \includegraphics[width=\textwidth]{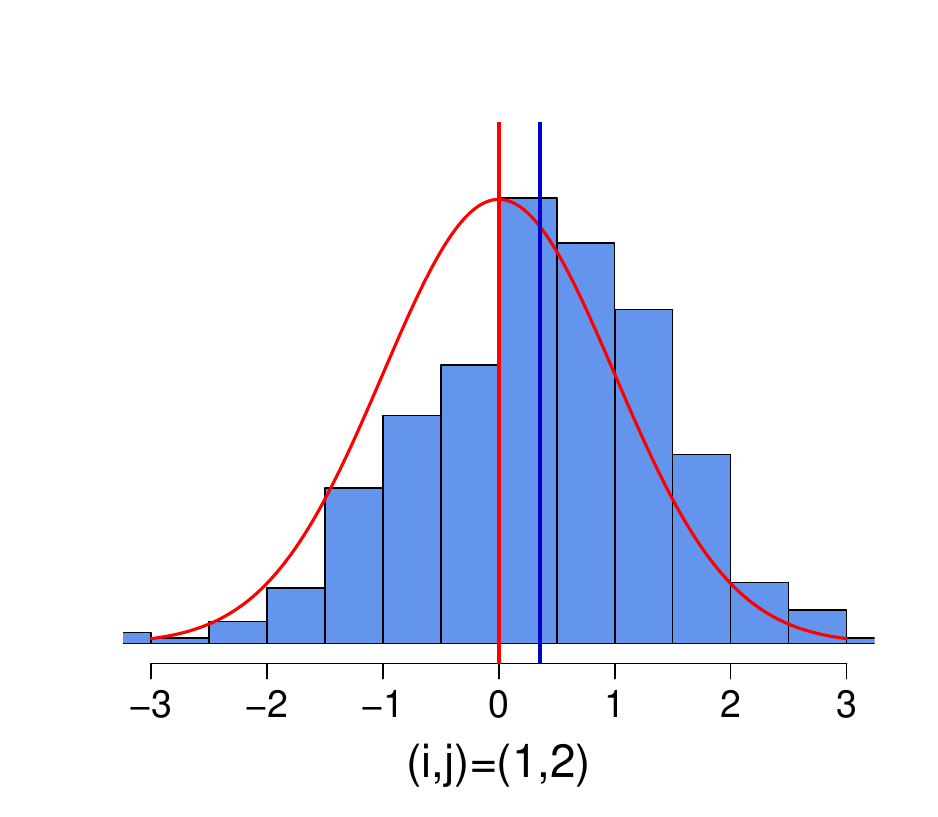}
    \end{minipage}
    \begin{minipage}{0.24\linewidth}
        \centering
        \includegraphics[width=\textwidth]{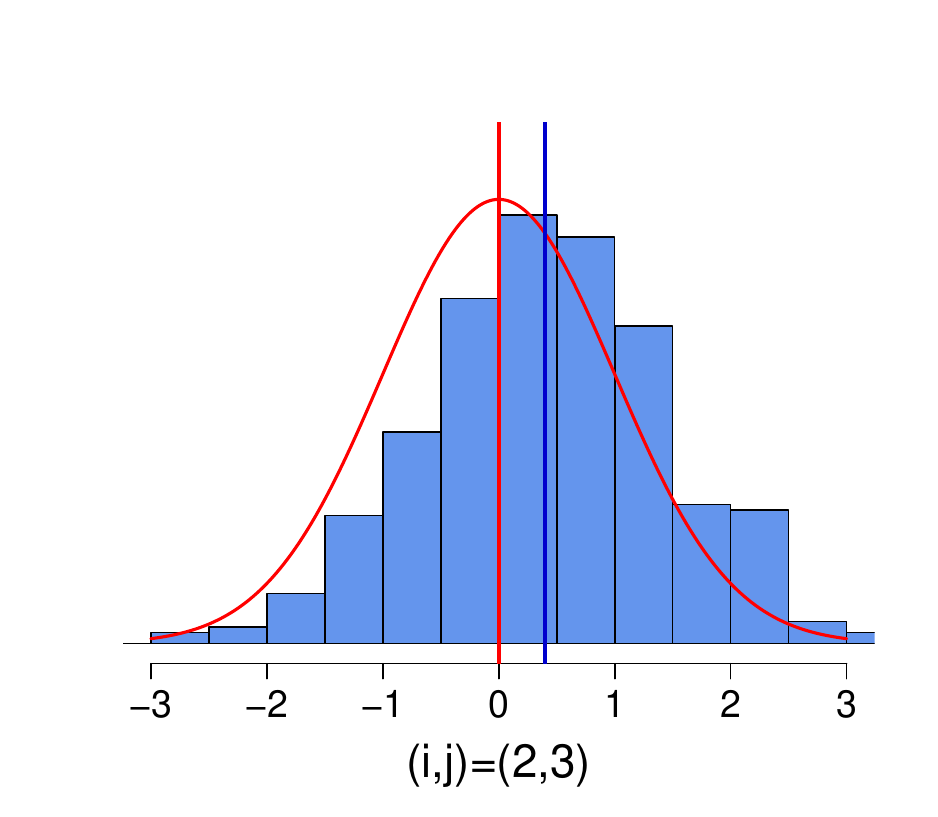}
    \end{minipage}
    \begin{minipage}{0.24\linewidth}
        \centering
        \includegraphics[width=\textwidth]{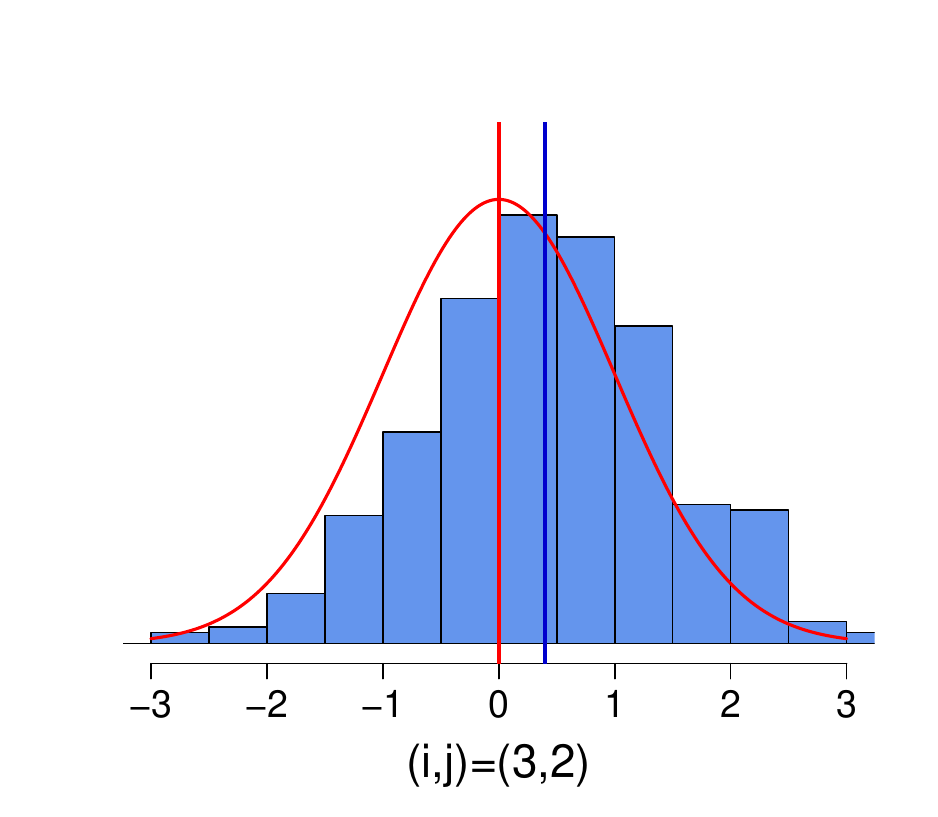}
    \end{minipage}
    \begin{minipage}{0.24\linewidth}
        \centering
        \includegraphics[width=\textwidth]{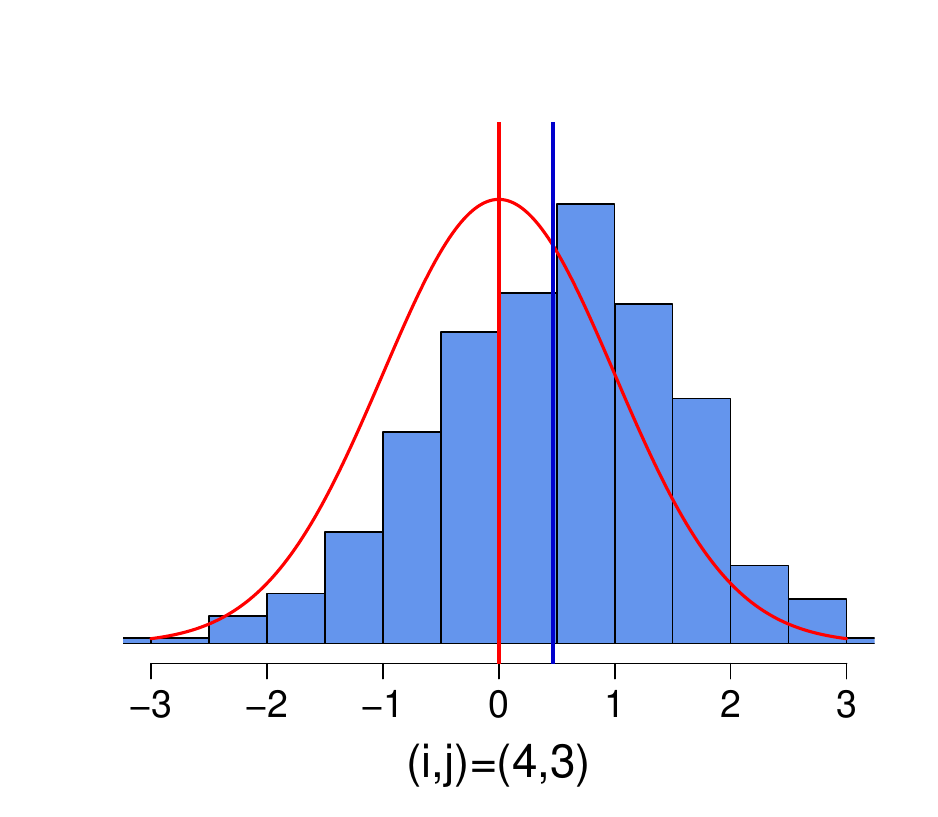}
    \end{minipage}
  \end{minipage}  
    \hspace{1cm}
 \begin{minipage}{0.3\linewidth}
    \begin{minipage}{0.24\linewidth}
        \centering
        \includegraphics[width=\textwidth]{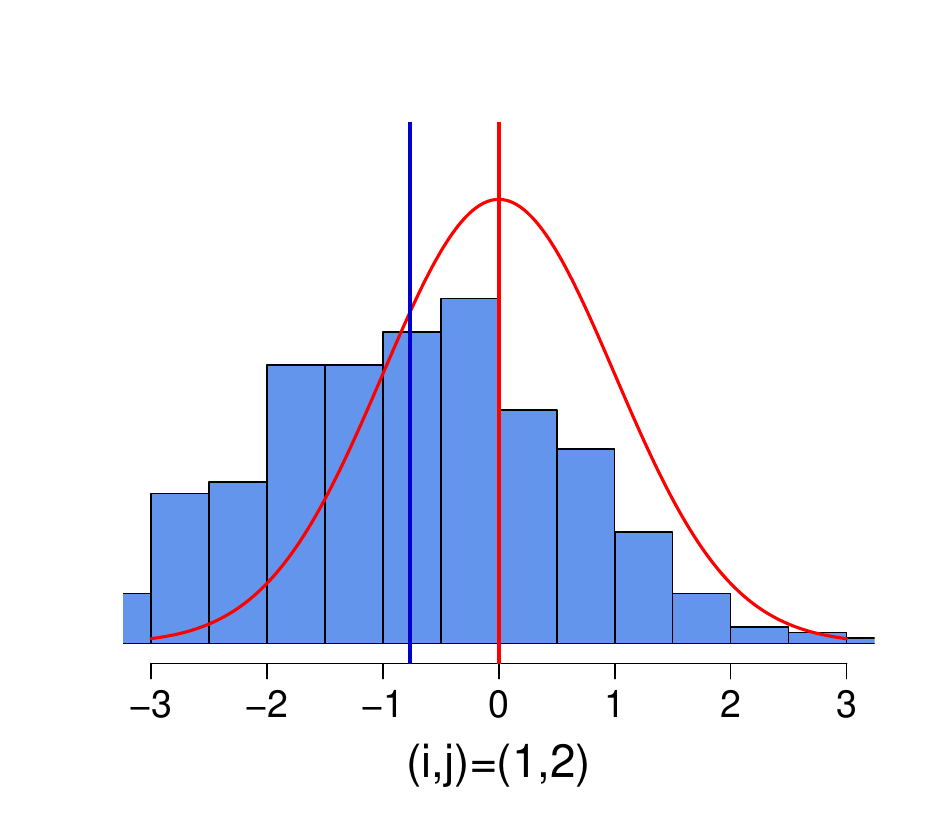}
    \end{minipage}
    \begin{minipage}{0.24\linewidth}
        \centering
        \includegraphics[width=\textwidth]{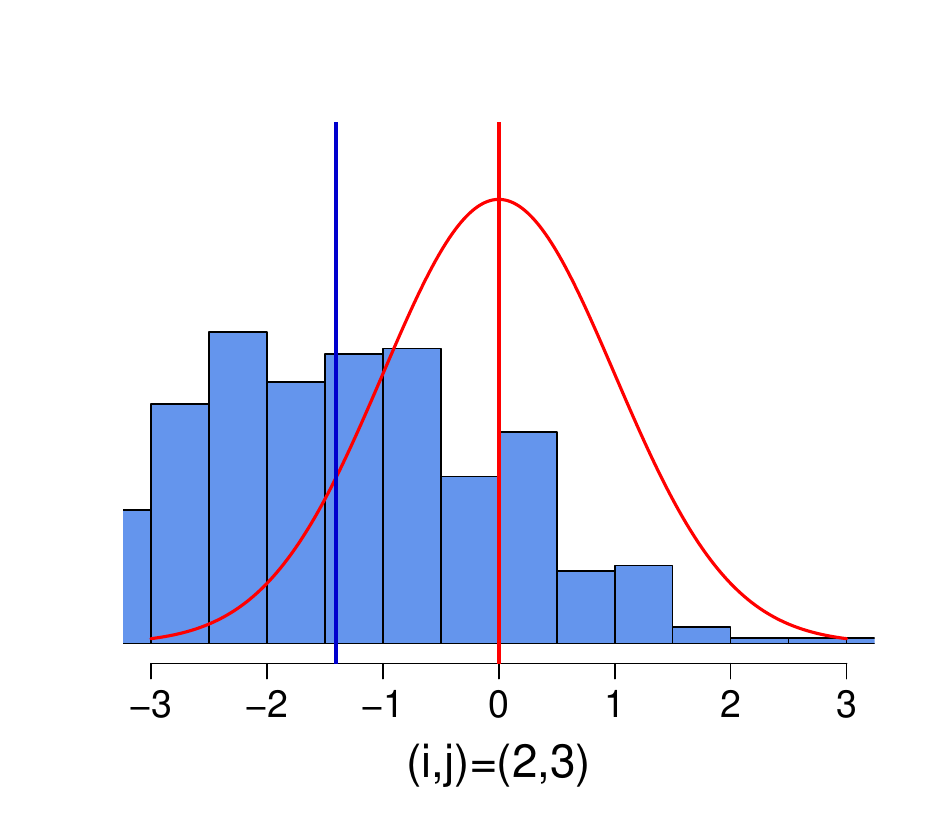}
    \end{minipage}
    \begin{minipage}{0.24\linewidth}
        \centering
        \includegraphics[width=\textwidth]{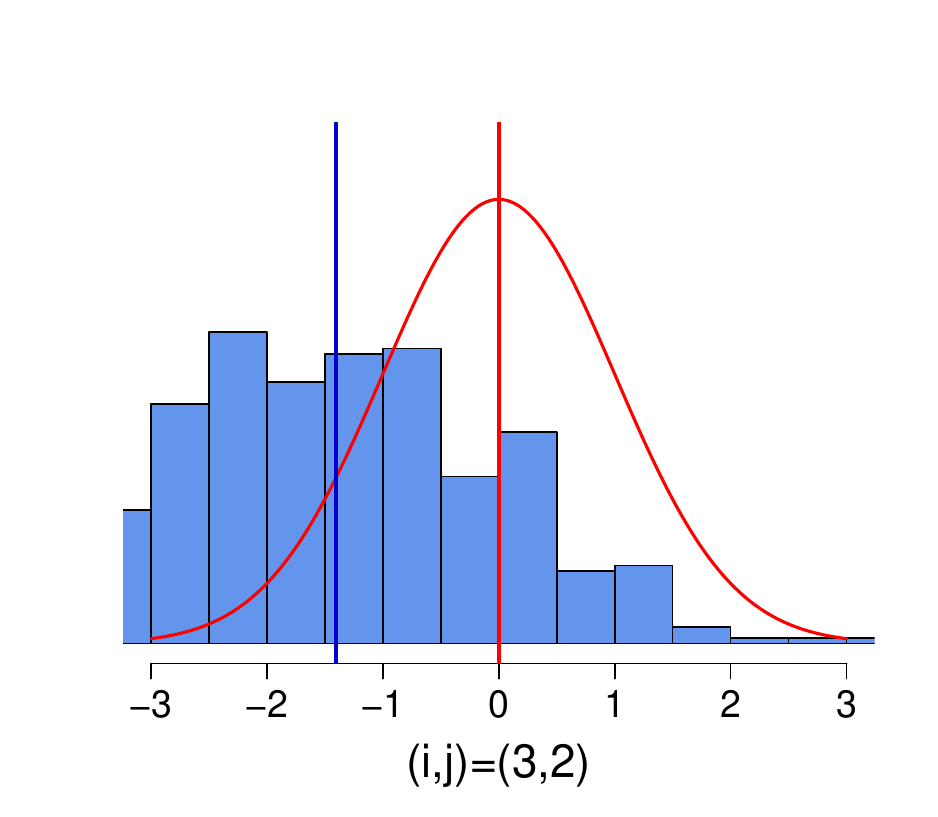}
    \end{minipage}
    \begin{minipage}{0.24\linewidth}
        \centering
        \includegraphics[width=\textwidth]{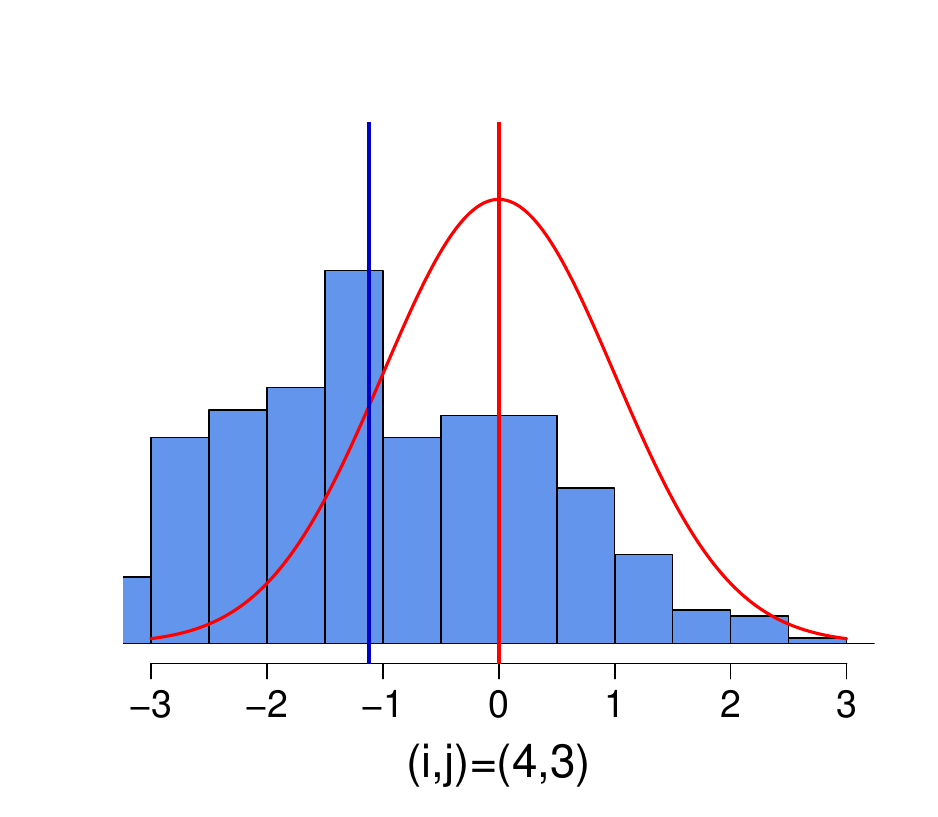}
    \end{minipage}
 \end{minipage}

  \caption*{$n=800, p=400$}
      \vspace{-0.43cm}
 \begin{minipage}{0.3\linewidth}
    \begin{minipage}{0.24\linewidth}
        \centering
        \includegraphics[width=\textwidth]{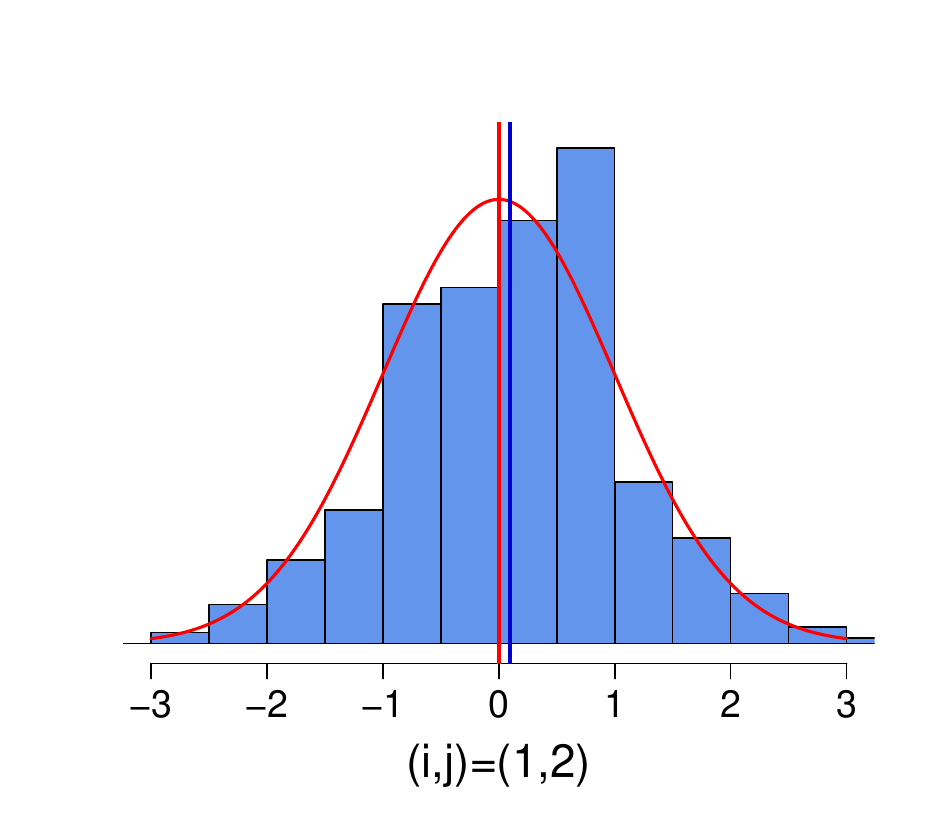}
    \end{minipage}
    \begin{minipage}{0.24\linewidth}
        \centering
        \includegraphics[width=\textwidth]{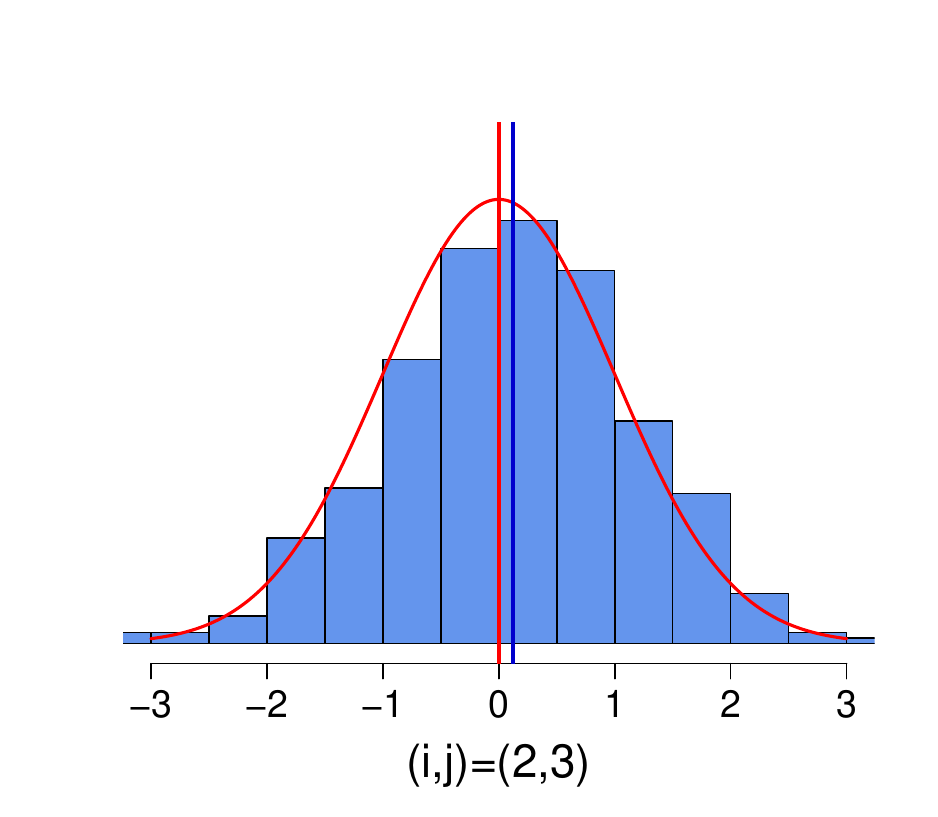}
    \end{minipage}
    \begin{minipage}{0.24\linewidth}
        \centering
        \includegraphics[width=\textwidth]{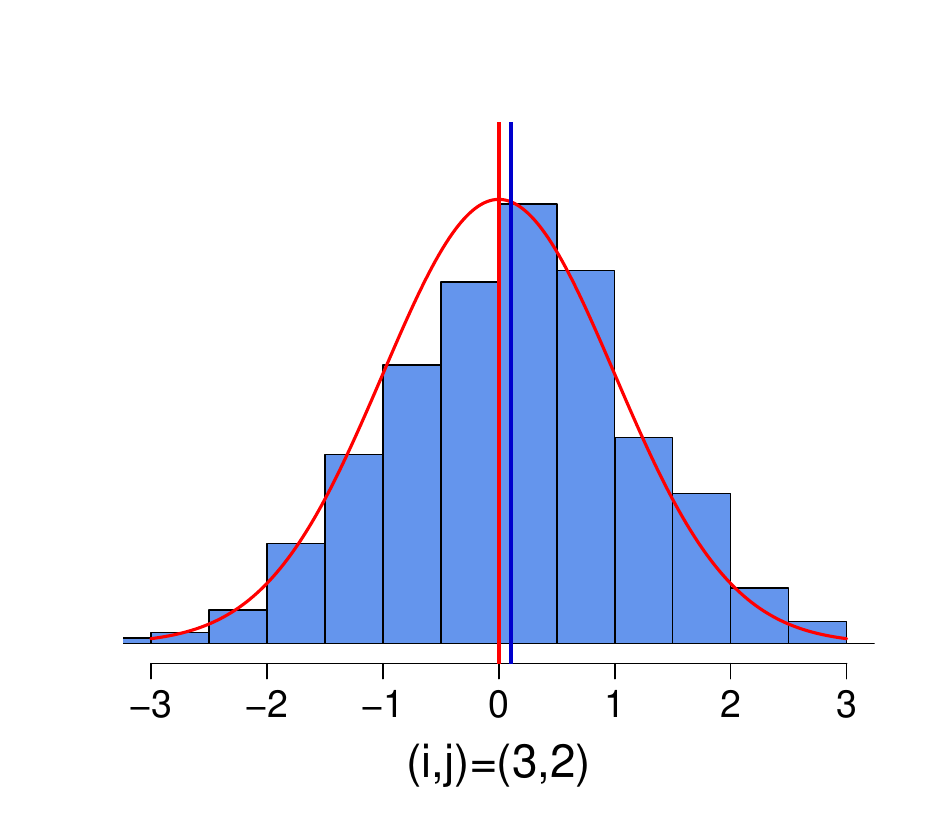}
    \end{minipage}
    \begin{minipage}{0.24\linewidth}
        \centering
        \includegraphics[width=\textwidth]{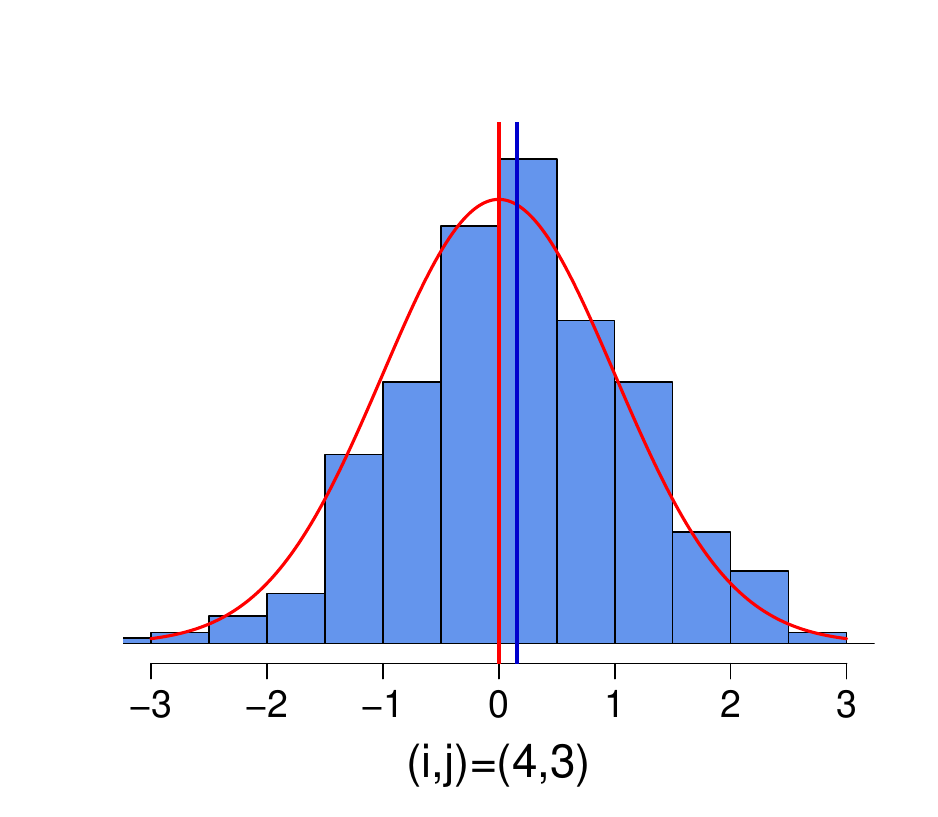}
    \end{minipage}
   \caption*{(a)~~$L_0{:}~ \widehat{\mb{\Omega}}^{\text{US}}$}
 \end{minipage} 
     \hspace{1cm}
 \begin{minipage}{0.3\linewidth}
    \begin{minipage}{0.24\linewidth}
        \centering
        \includegraphics[width=\textwidth]{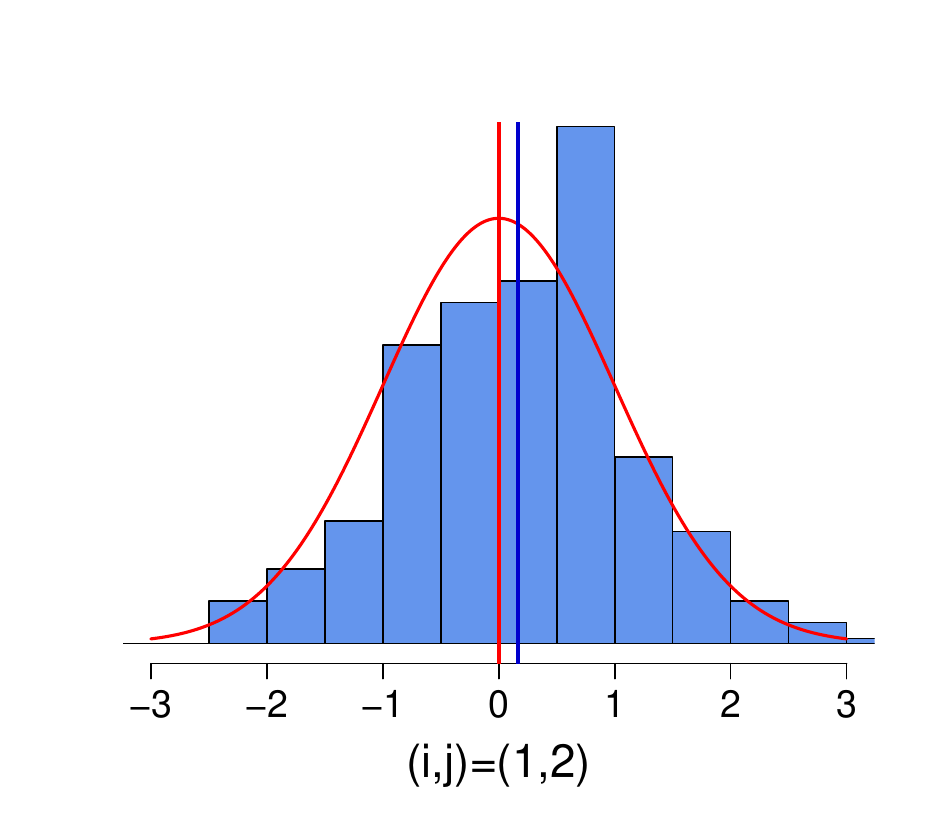}
    \end{minipage}
    \begin{minipage}{0.24\linewidth}
        \centering
        \includegraphics[width=\textwidth]{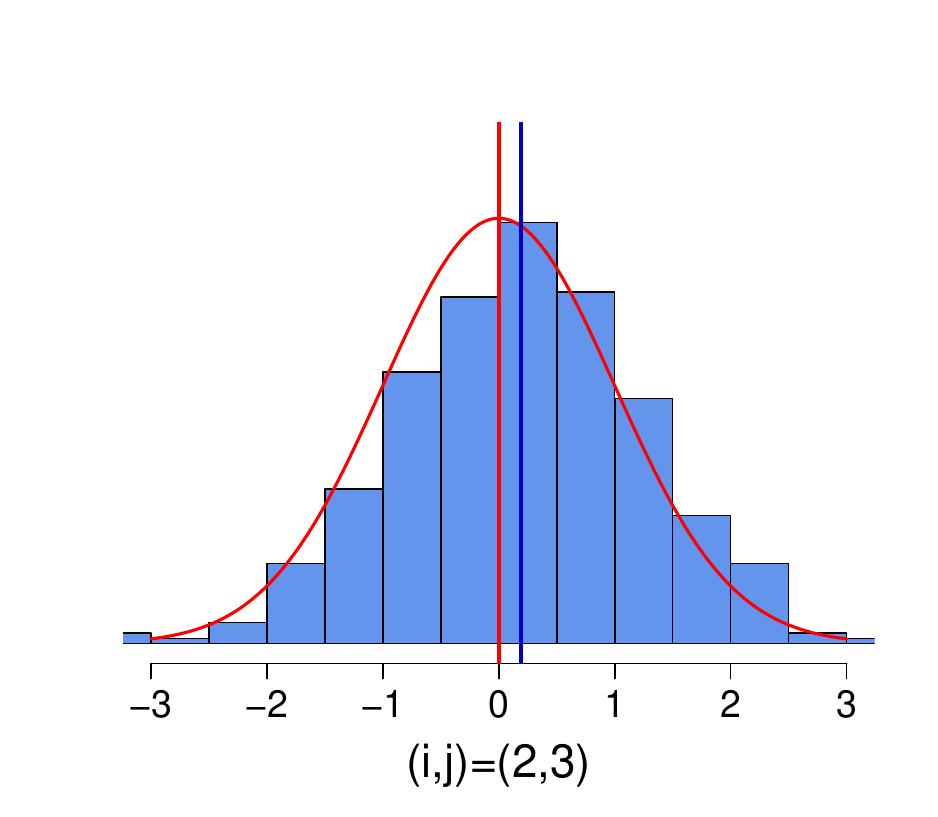}
    \end{minipage}
    \begin{minipage}{0.24\linewidth}
        \centering
        \includegraphics[width=\textwidth]{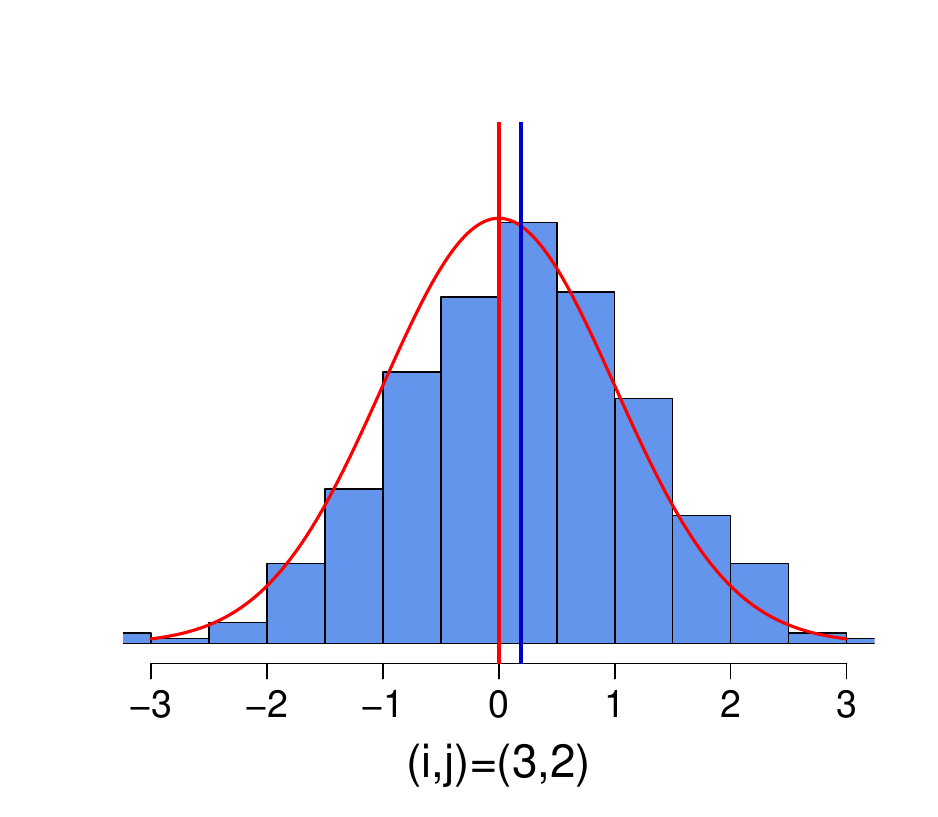}
    \end{minipage}
    \begin{minipage}{0.24\linewidth}
        \centering
        \includegraphics[width=\textwidth]{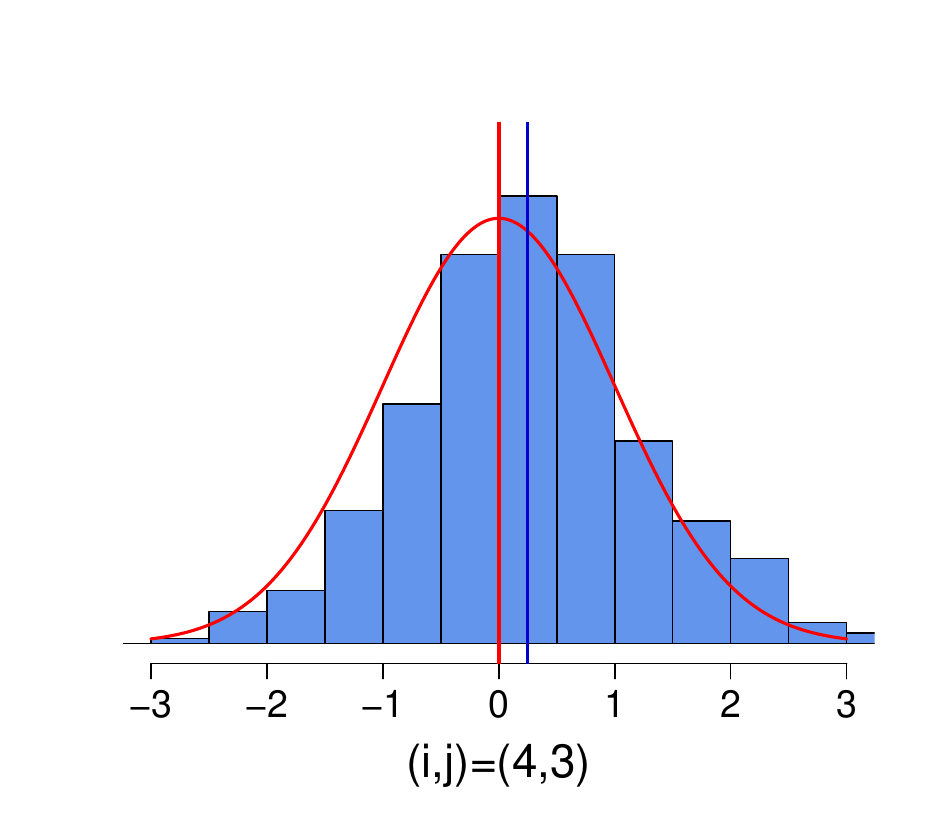}
    \end{minipage}
        \caption*{(b)~~$L_0{:}~ \widehat{\mb{T}}$}
 \end{minipage}   
      \hspace{1cm}
 \begin{minipage}{0.3\linewidth}
    \begin{minipage}{0.24\linewidth}
        \centering
        \includegraphics[width=\textwidth]{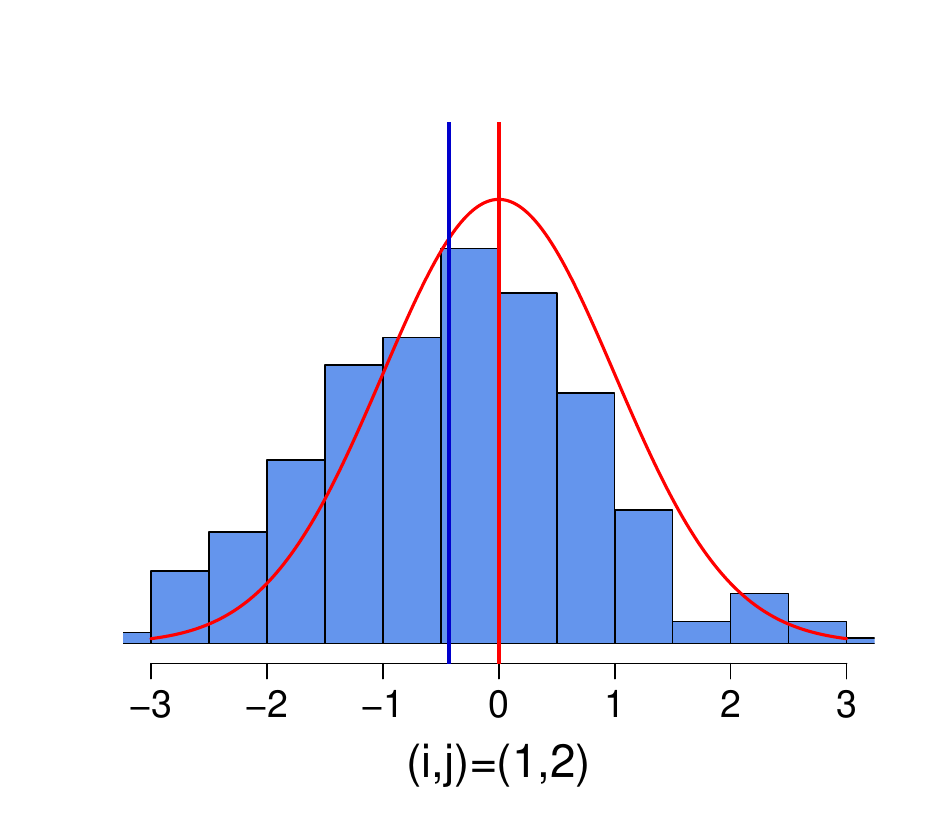}
    \end{minipage}
    \begin{minipage}{0.24\linewidth}
        \centering
        \includegraphics[width=\textwidth]{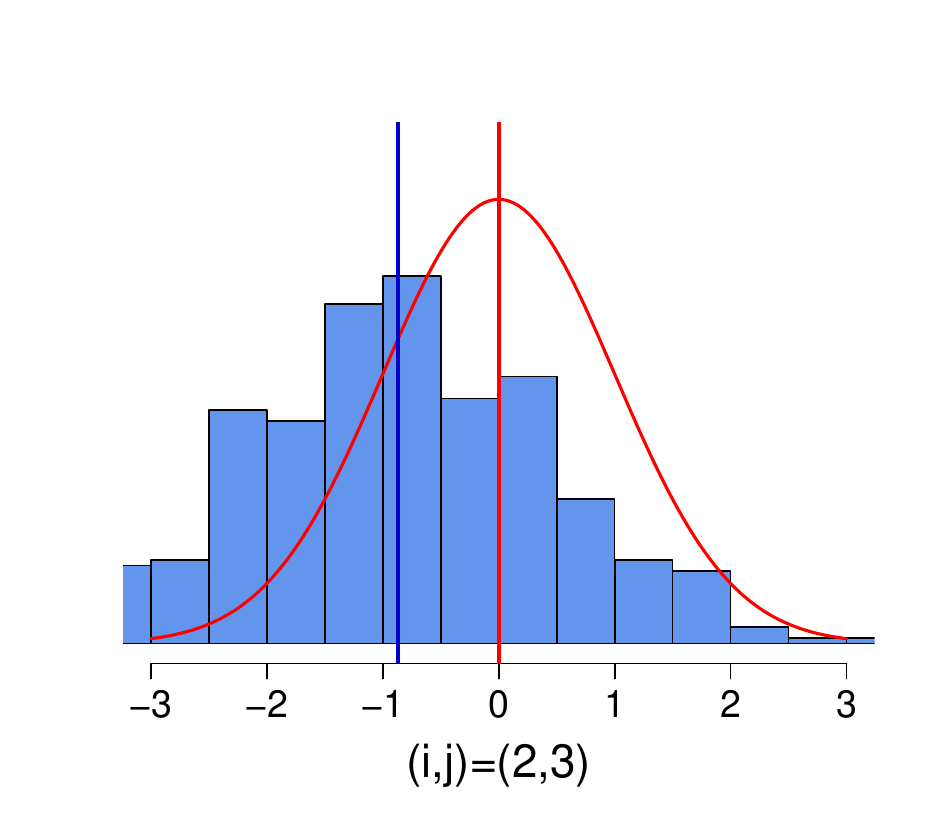}
    \end{minipage}
    \begin{minipage}{0.24\linewidth}
        \centering
        \includegraphics[width=\textwidth]{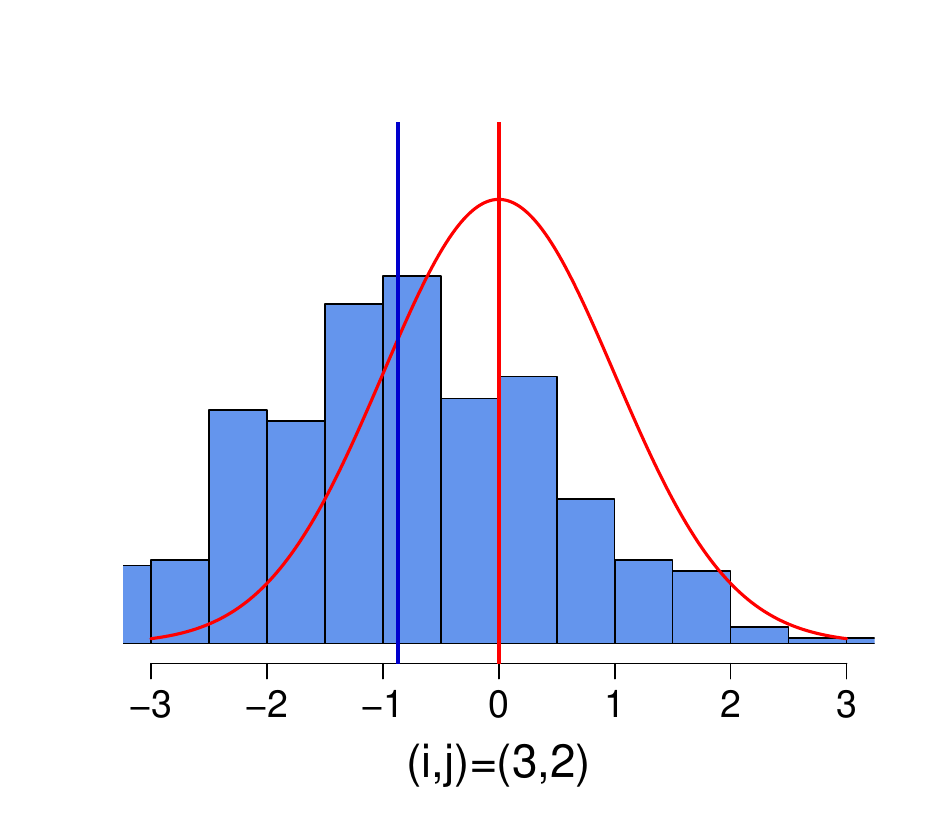}
    \end{minipage}
    \begin{minipage}{0.24\linewidth}
        \centering
        \includegraphics[width=\textwidth]{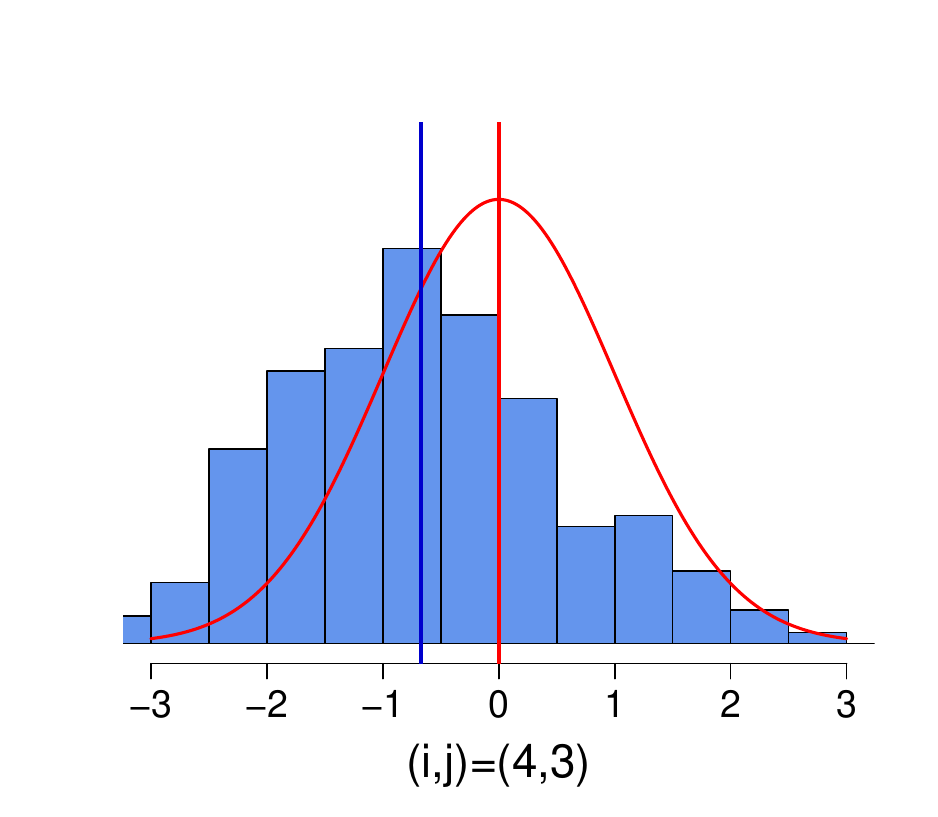}
    \end{minipage}
    \caption*{(c)~~$L_1{:}~ \widehat{\mb{T}}$}
     \end{minipage}   
     \caption{Histograms of $\big(\sqrt{n}(\widehat{\mb{\Omega}}_{ij}^{(m)}-\mb{\Omega}_{ij})/\widehat{\sigma}_{\mb{\Omega}_{ij}}^{(m)}\big)_{m=1}^{400}$ under sub-Gaussian band graph settings.}
\label{fig: normalplot sub-Gaussian band}
\end{sidewaysfigure}

 \begin{sidewaysfigure}[th!]
  \caption*{$n=200, p=200$}
      \vspace{-0.43cm}
 \begin{minipage}{0.3\linewidth}
    \begin{minipage}{0.24\linewidth}
        \centering
        \includegraphics[width=\textwidth]{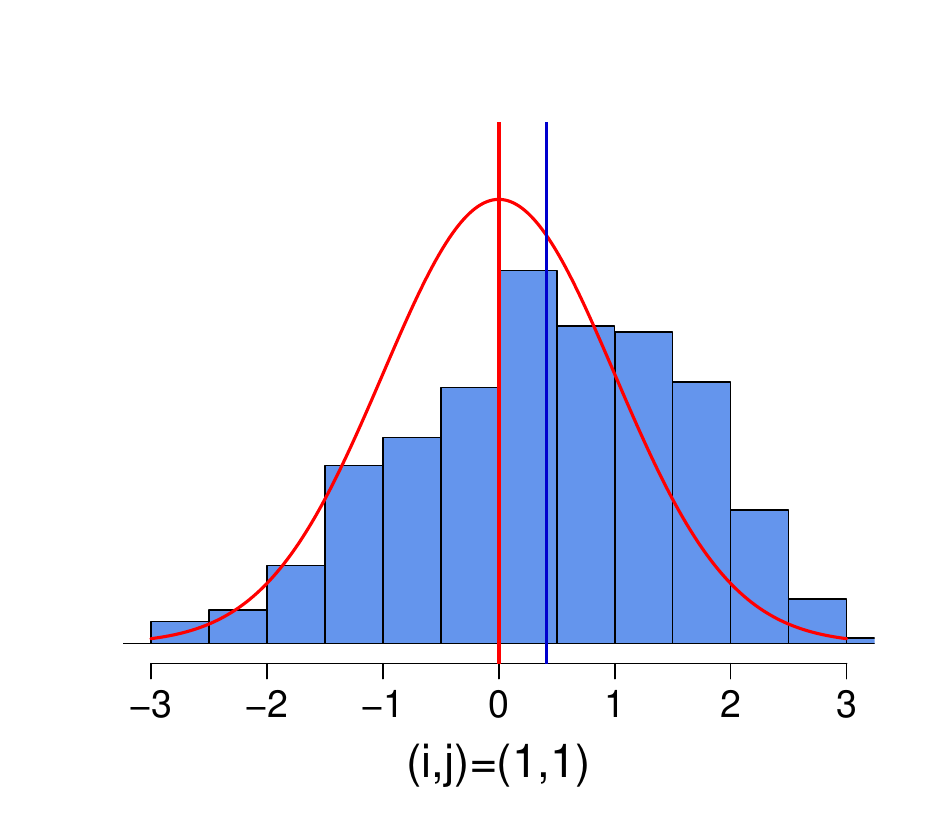}
    \end{minipage}
    \begin{minipage}{0.24\linewidth}
        \centering
        \includegraphics[width=\textwidth]{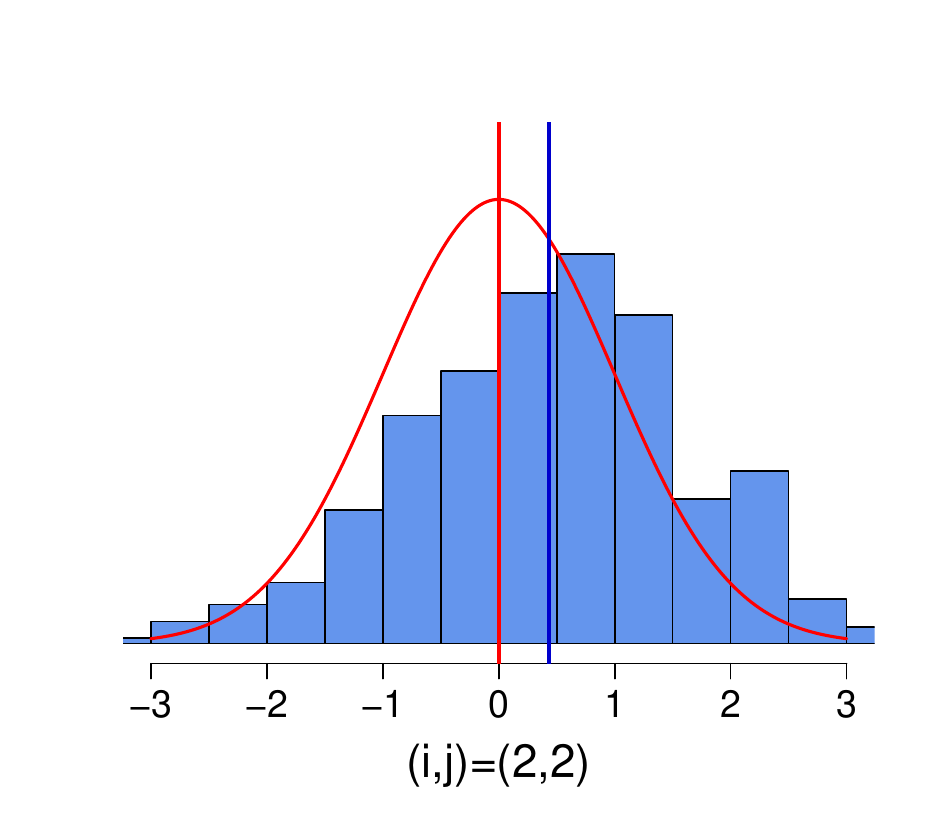}
    \end{minipage}
    \begin{minipage}{0.24\linewidth}
        \centering
        \includegraphics[width=\textwidth]{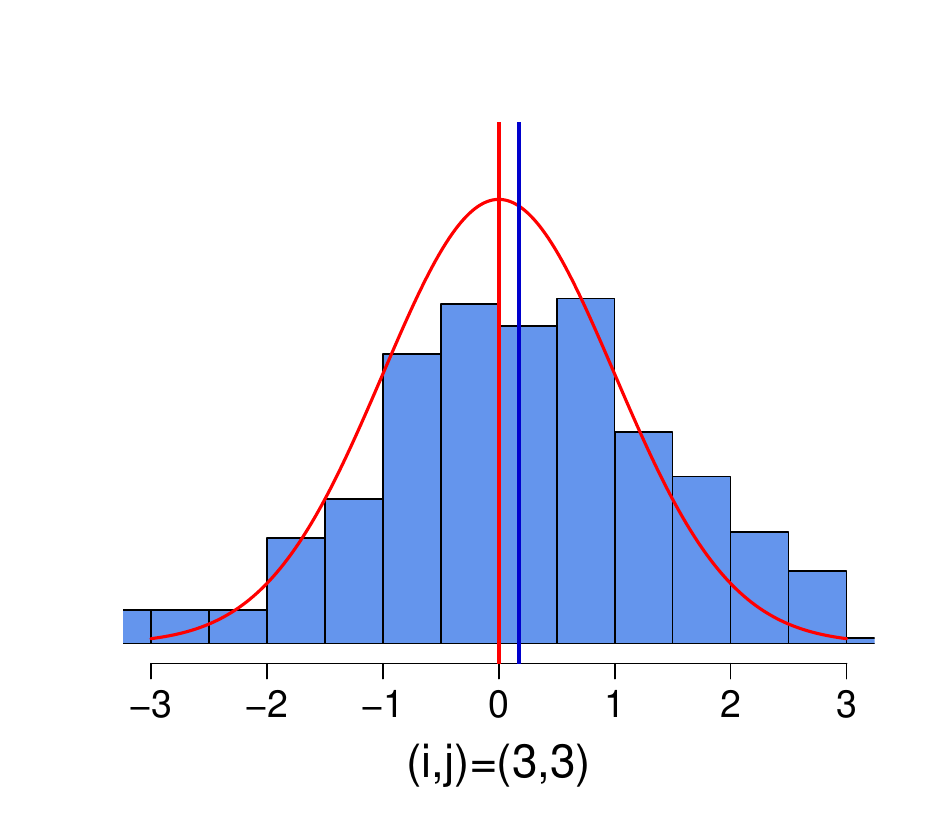}
    \end{minipage}
    \begin{minipage}{0.24\linewidth}
        \centering
        \includegraphics[width=\textwidth]{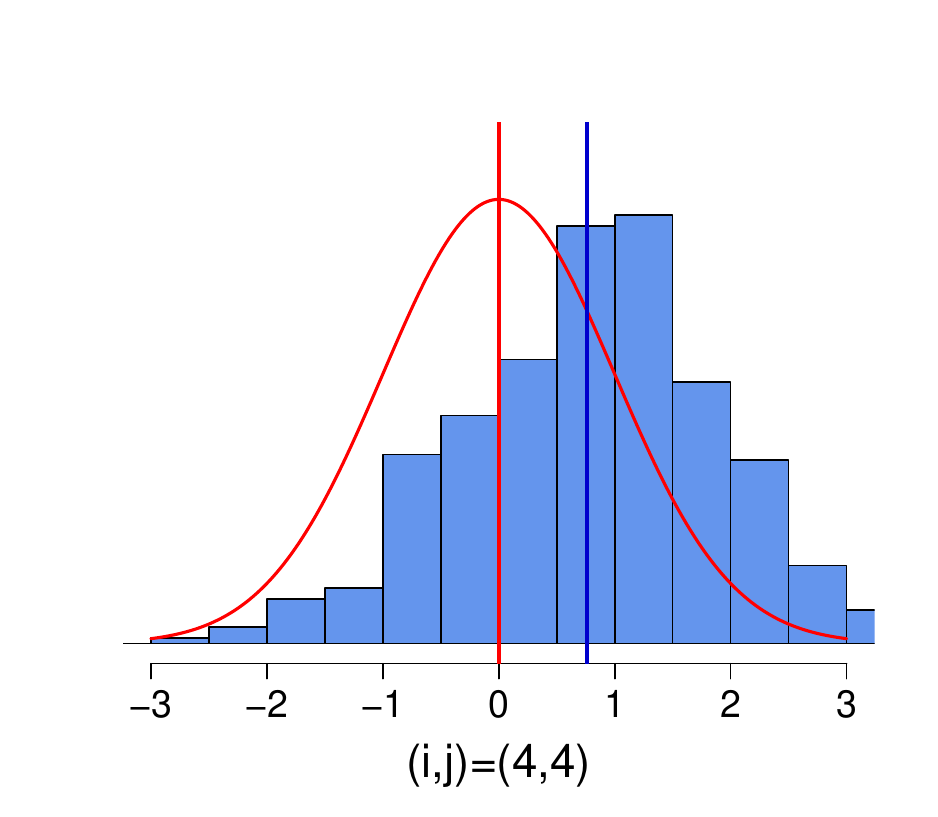}
    \end{minipage}
 \end{minipage}
 \hspace{1cm}
 \begin{minipage}{0.3\linewidth}
    \begin{minipage}{0.24\linewidth}
        \centering
        \includegraphics[width=\textwidth]{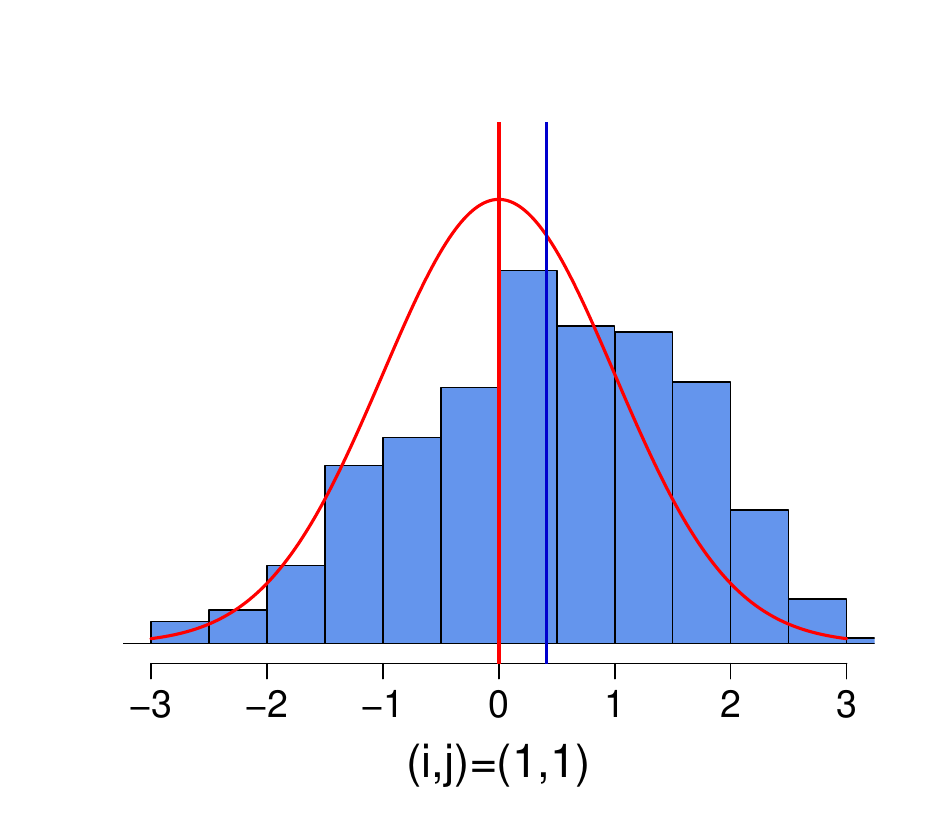}
    \end{minipage}
    \begin{minipage}{0.24\linewidth}
        \centering
        \includegraphics[width=\textwidth]{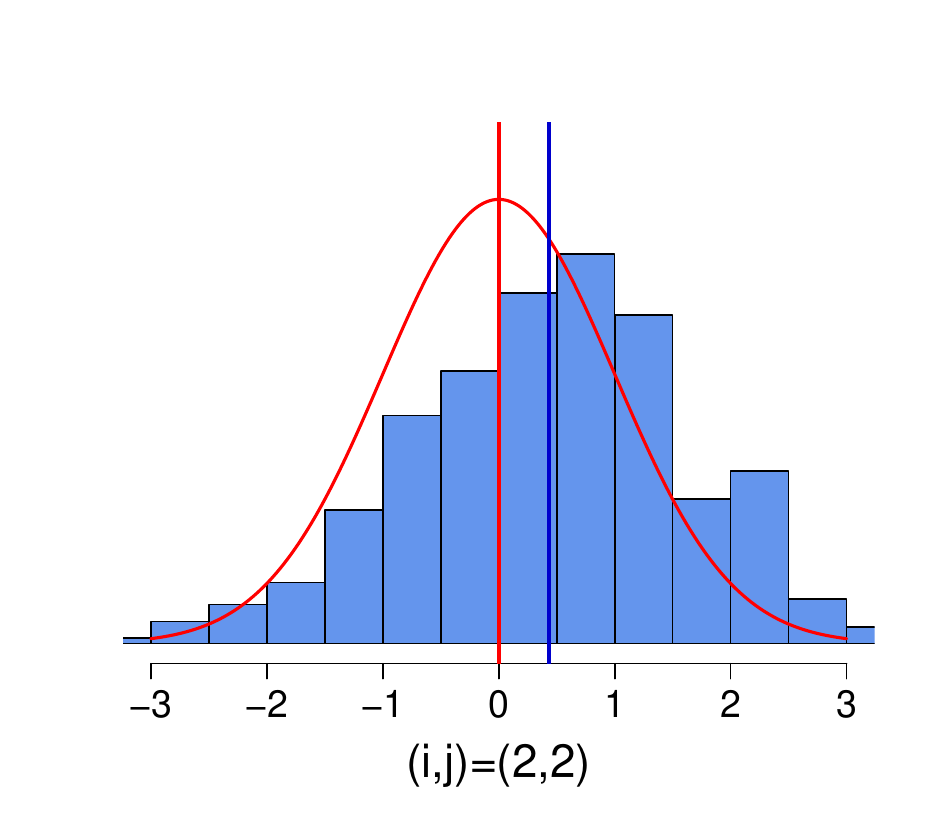}
    \end{minipage}
    \begin{minipage}{0.24\linewidth}
        \centering
        \includegraphics[width=\textwidth]{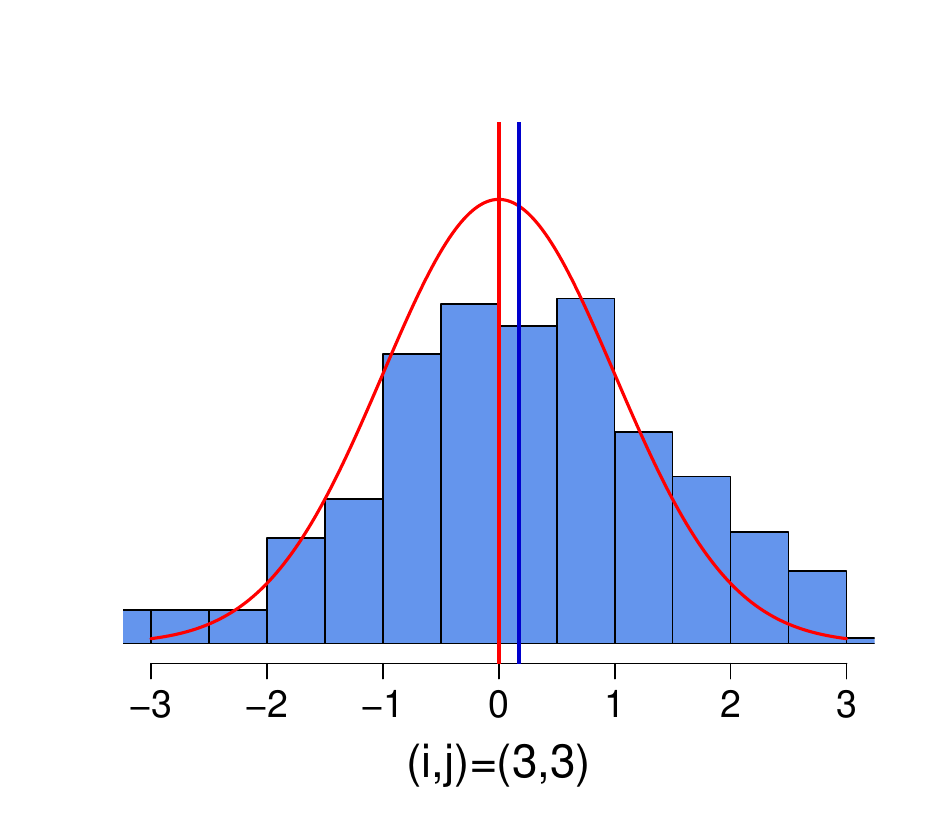}
    \end{minipage}
    \begin{minipage}{0.24\linewidth}
        \centering
        \includegraphics[width=\textwidth]{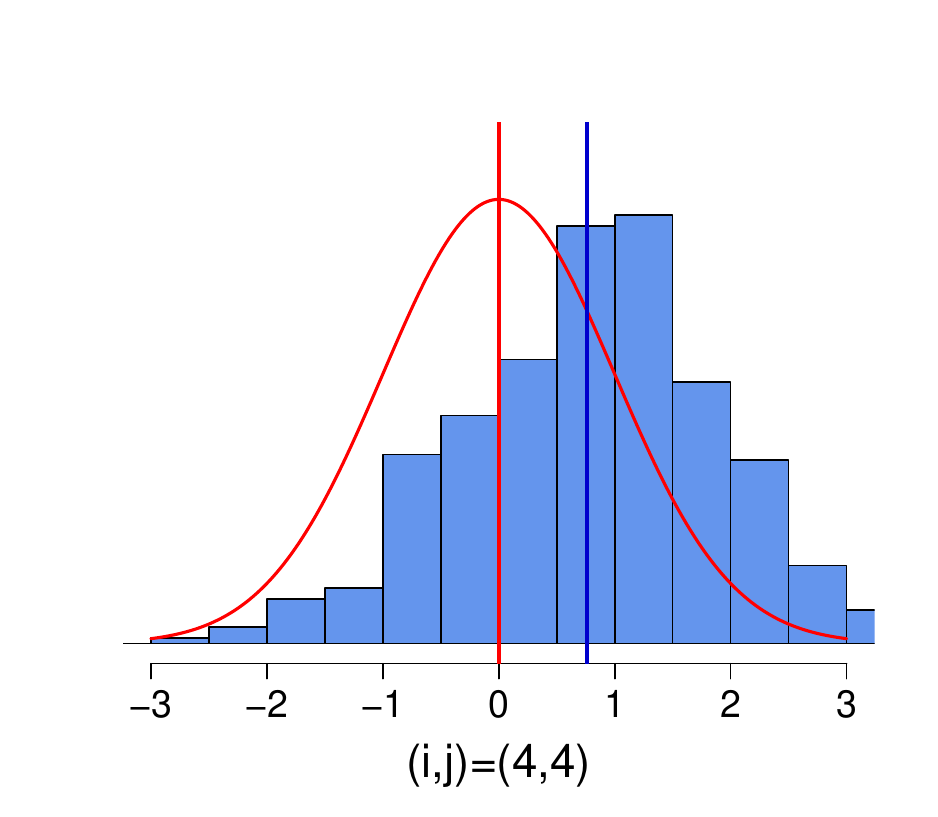}
    \end{minipage}    
 \end{minipage}
  \hspace{1cm}
 \begin{minipage}{0.3\linewidth}
     \begin{minipage}{0.24\linewidth}
        \centering
        \includegraphics[width=\textwidth]{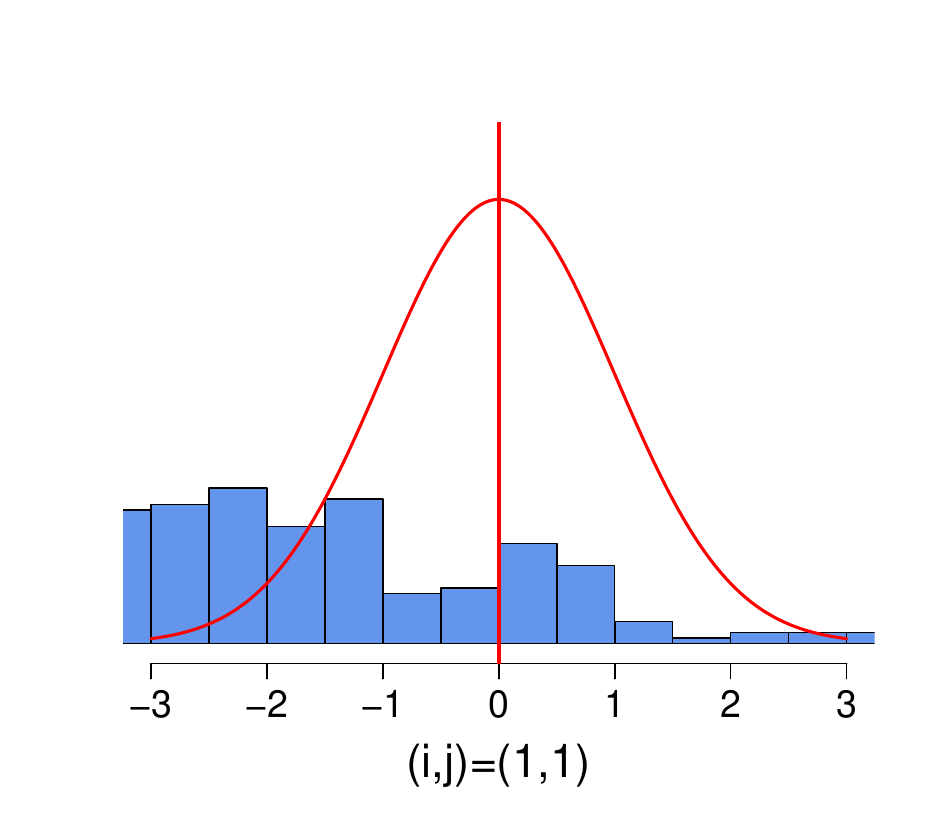}
    \end{minipage}
    \begin{minipage}{0.24\linewidth}
        \centering
        \includegraphics[width=\textwidth]{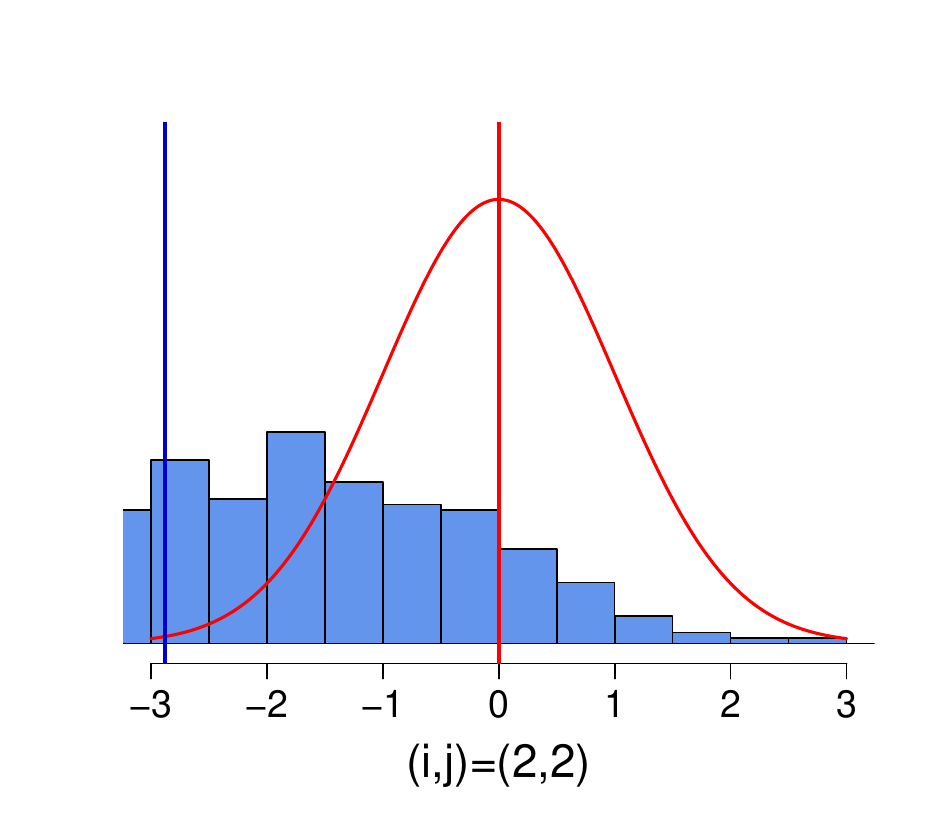}
    \end{minipage}
    \begin{minipage}{0.24\linewidth}
        \centering
        \includegraphics[width=\textwidth]{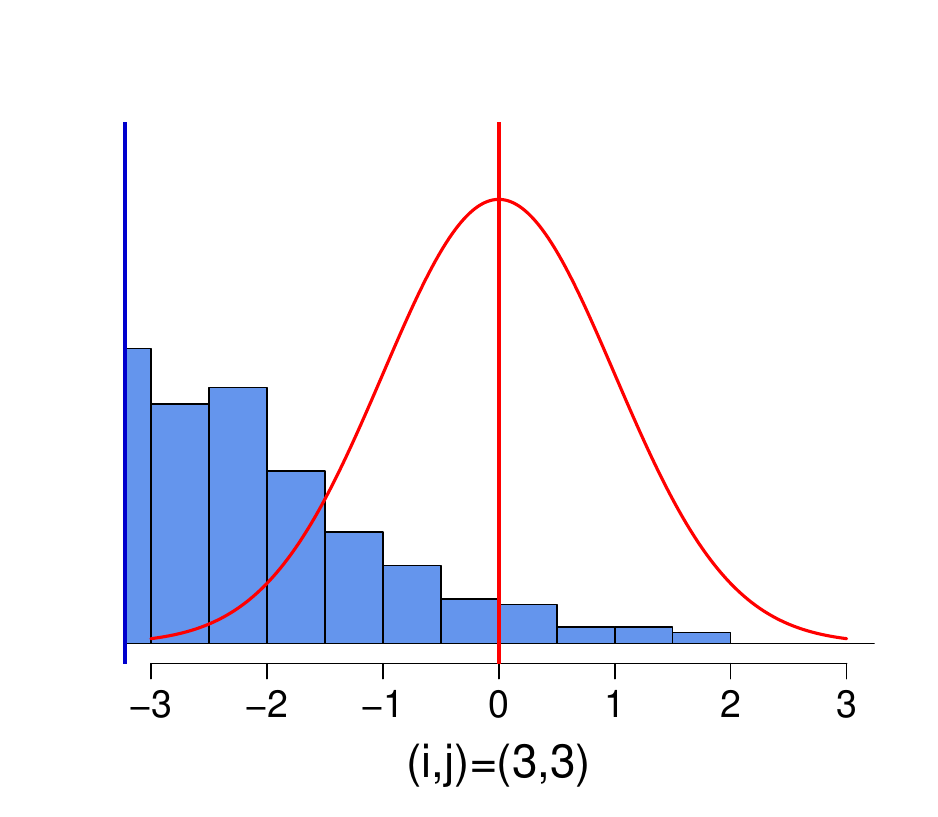}
    \end{minipage}
    \begin{minipage}{0.24\linewidth}
        \centering
        \includegraphics[width=\textwidth]{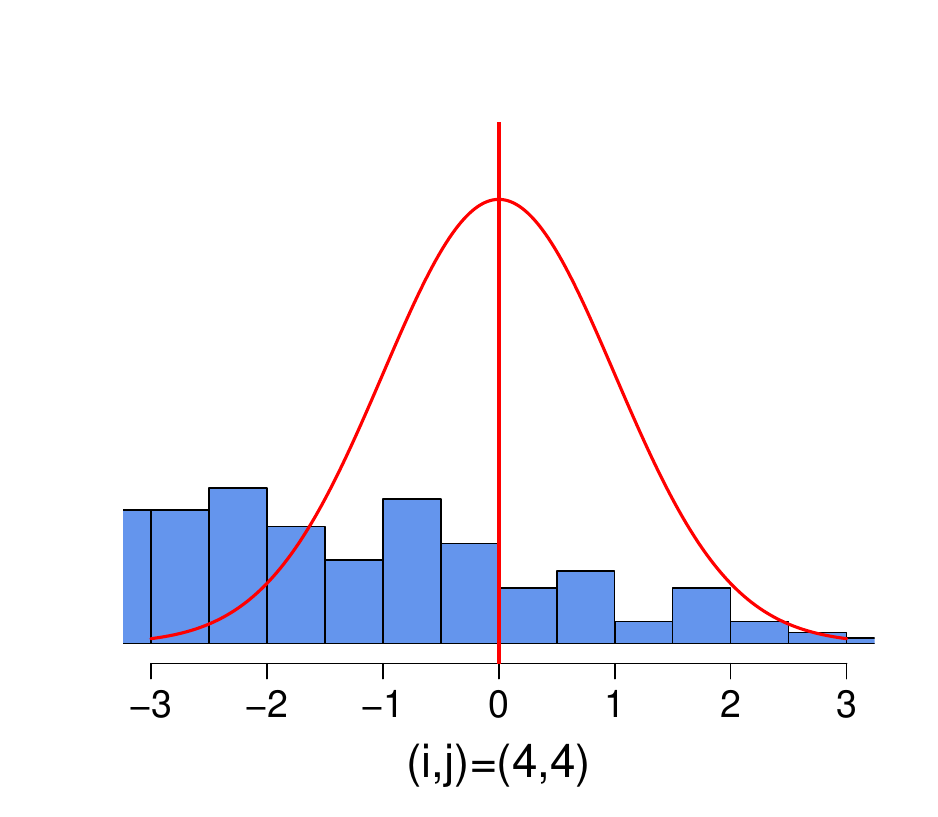}
    \end{minipage}
 \end{minipage}

  \caption*{$n=400, p=200$}
      \vspace{-0.43cm}
 \begin{minipage}{0.3\linewidth}
    \begin{minipage}{0.24\linewidth}
        \centering
        \includegraphics[width=\textwidth]{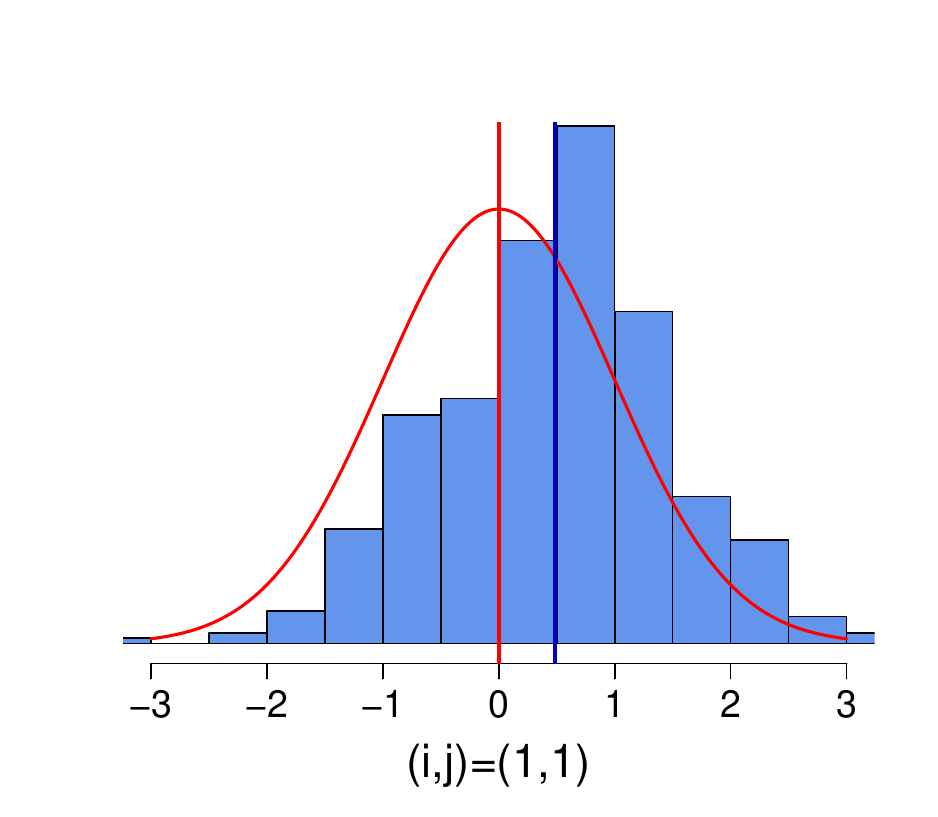}
    \end{minipage}
    \begin{minipage}{0.24\linewidth}
        \centering
        \includegraphics[width=\textwidth]{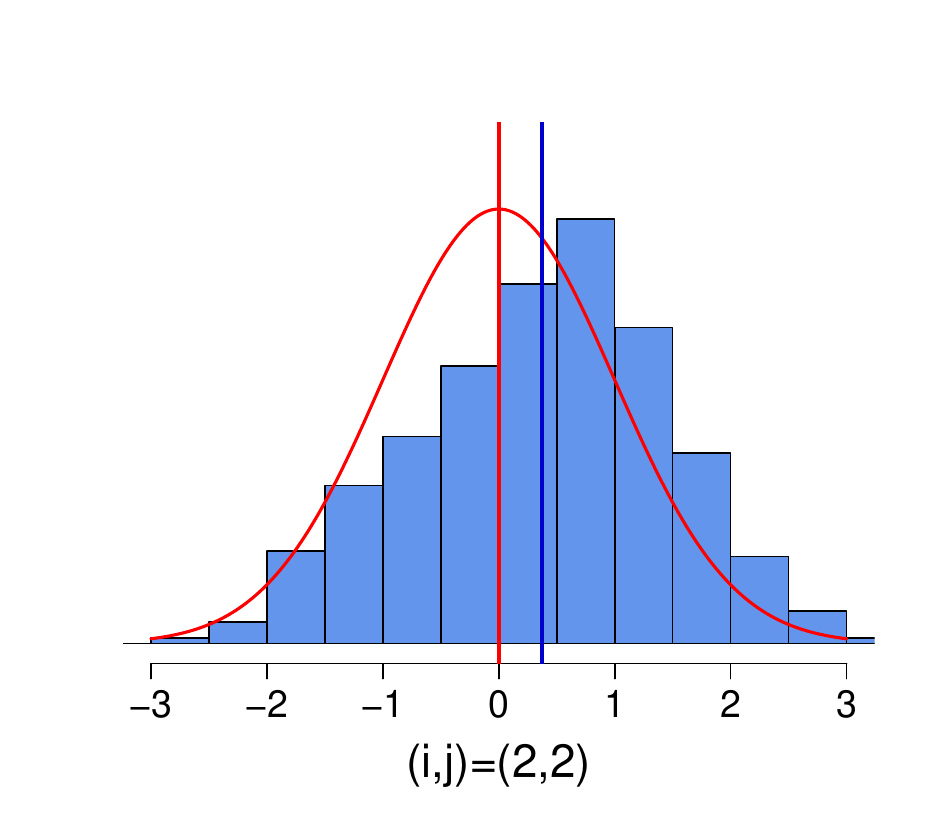}
    \end{minipage}
    \begin{minipage}{0.24\linewidth}
        \centering
        \includegraphics[width=\textwidth]{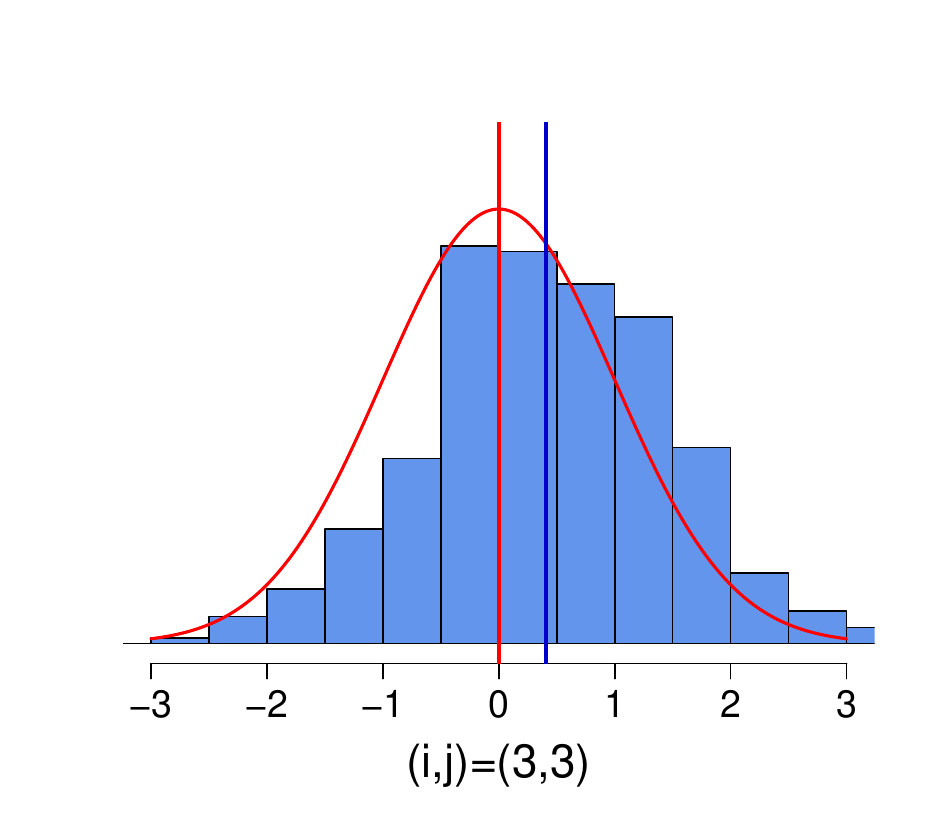}
    \end{minipage}
    \begin{minipage}{0.24\linewidth}
        \centering
        \includegraphics[width=\textwidth]{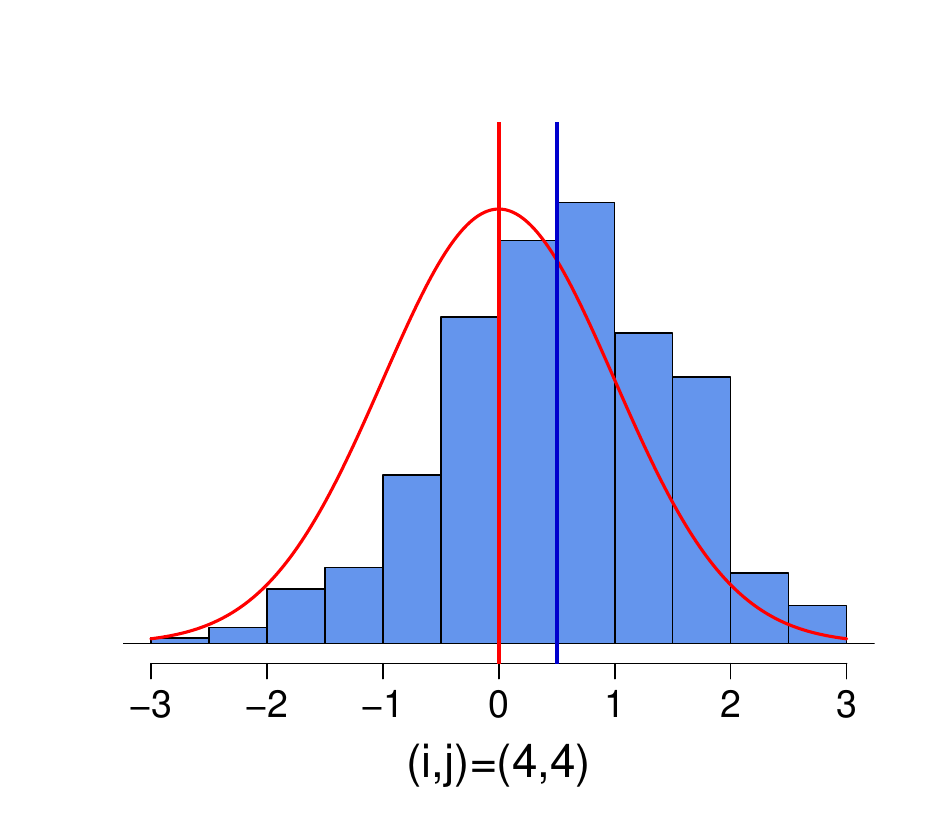}
    \end{minipage}
 \end{minipage}  
     \hspace{1cm}
 \begin{minipage}{0.3\linewidth}
    \begin{minipage}{0.24\linewidth}
        \centering
        \includegraphics[width=\textwidth]{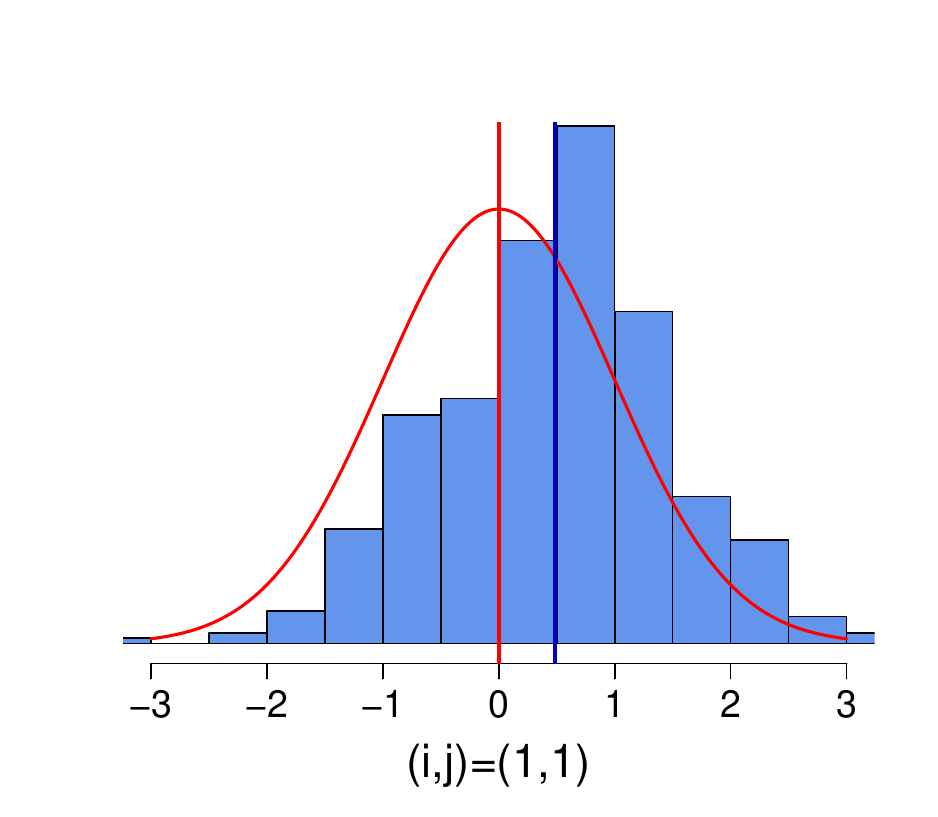}
    \end{minipage}
    \begin{minipage}{0.24\linewidth}
        \centering
        \includegraphics[width=\textwidth]{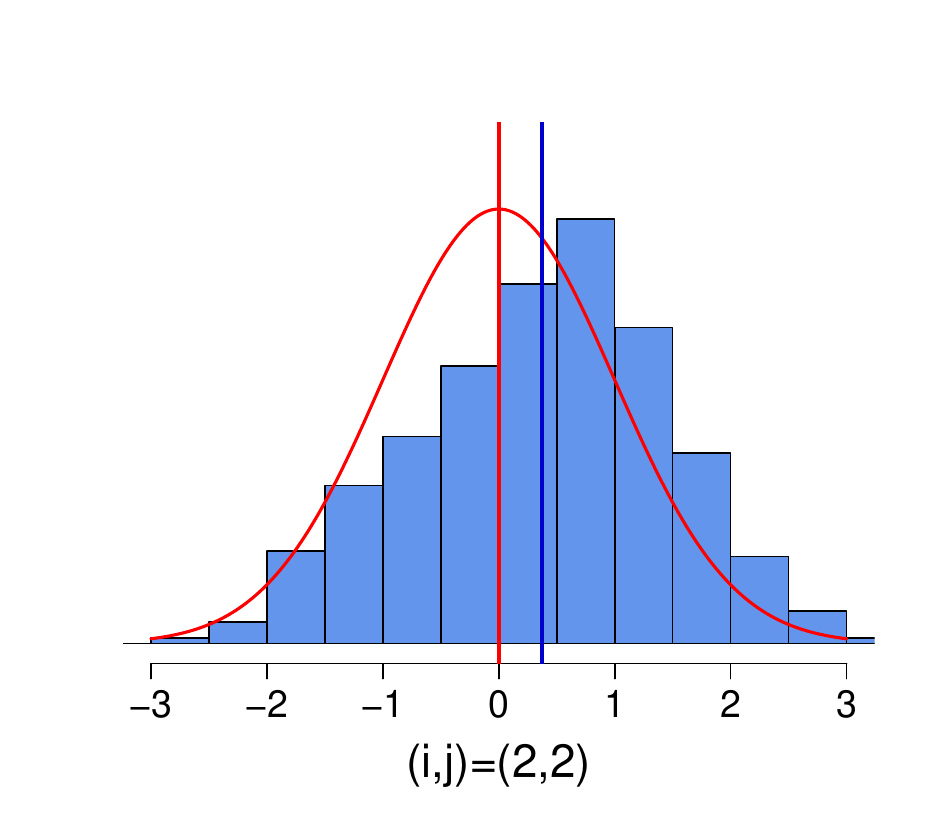}
    \end{minipage}
    \begin{minipage}{0.24\linewidth}
        \centering
        \includegraphics[width=\textwidth]{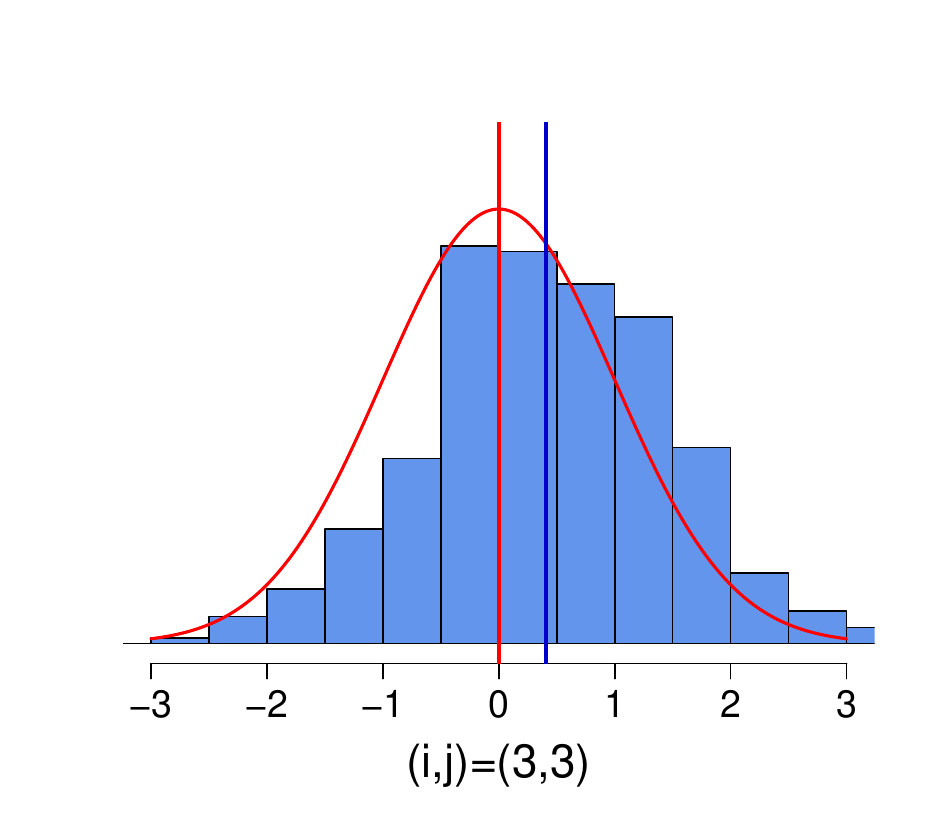}
    \end{minipage}
    \begin{minipage}{0.24\linewidth}
        \centering
        \includegraphics[width=\textwidth]{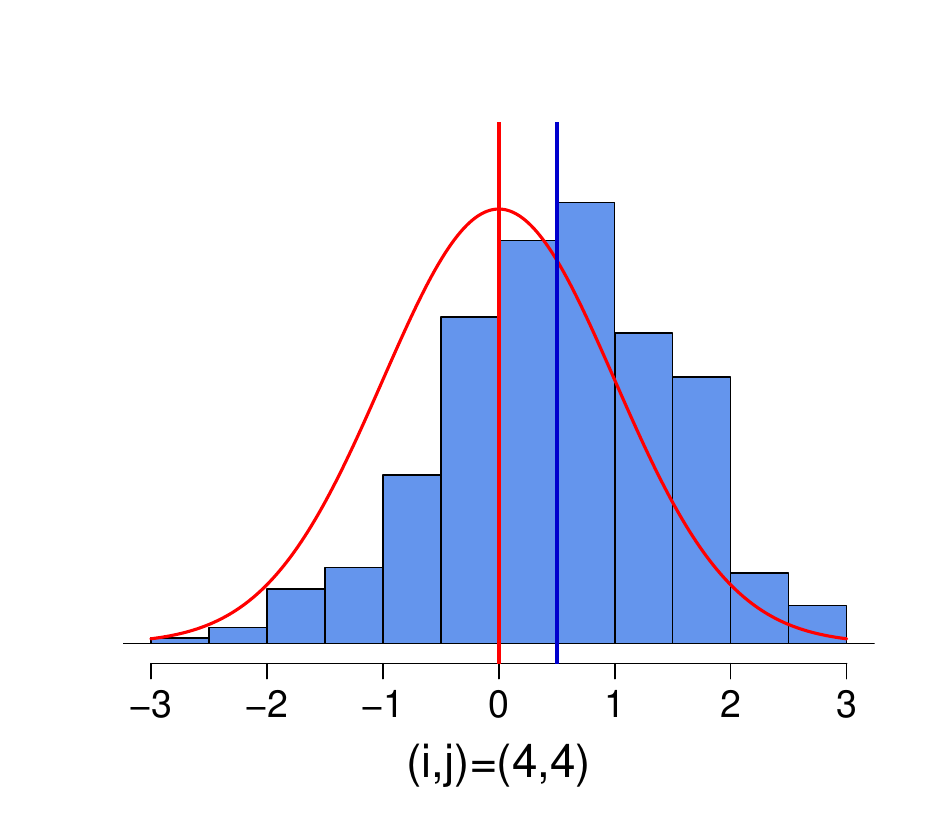}
    \end{minipage}
  \end{minipage}  
    \hspace{1cm}
 \begin{minipage}{0.3\linewidth}
    \begin{minipage}{0.24\linewidth}
        \centering
        \includegraphics[width=\textwidth]{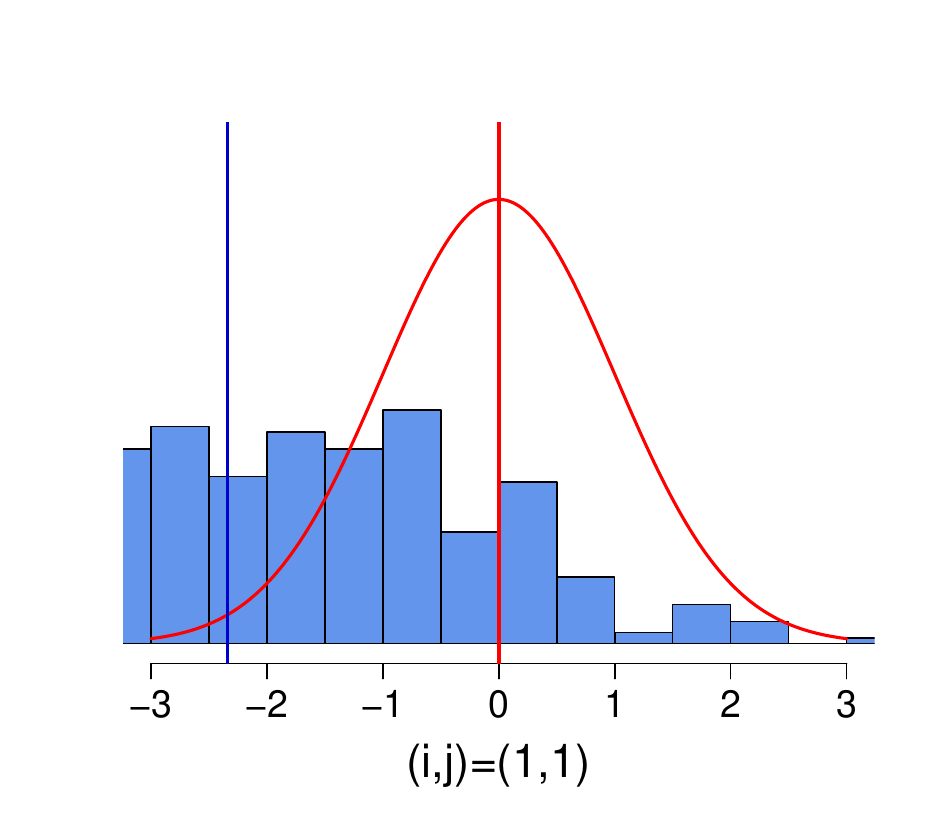}
    \end{minipage}
    \begin{minipage}{0.24\linewidth}
        \centering
        \includegraphics[width=\textwidth]{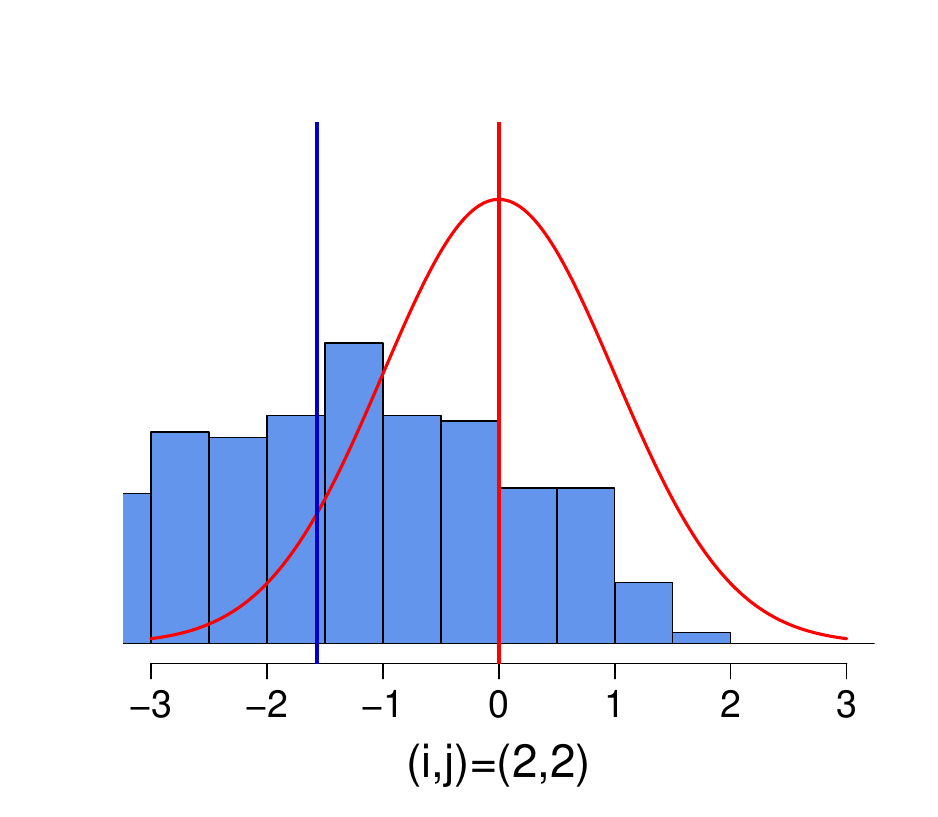}
    \end{minipage}
    \begin{minipage}{0.24\linewidth}
        \centering
        \includegraphics[width=\textwidth]{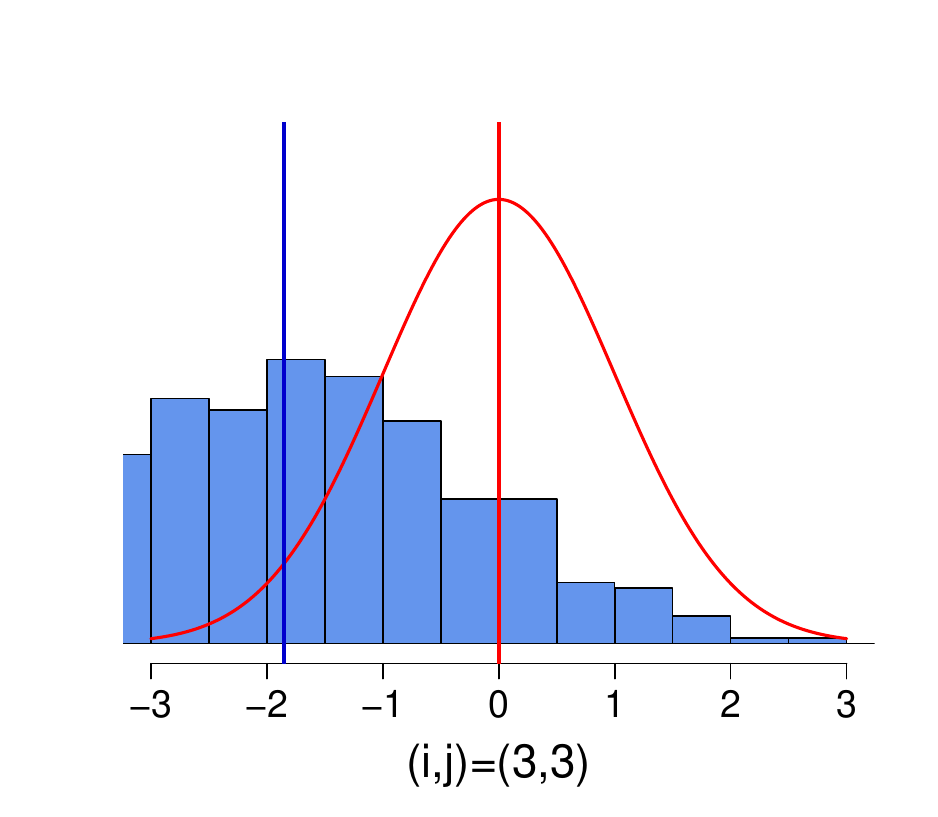}
    \end{minipage}
    \begin{minipage}{0.24\linewidth}
        \centering
        \includegraphics[width=\textwidth]{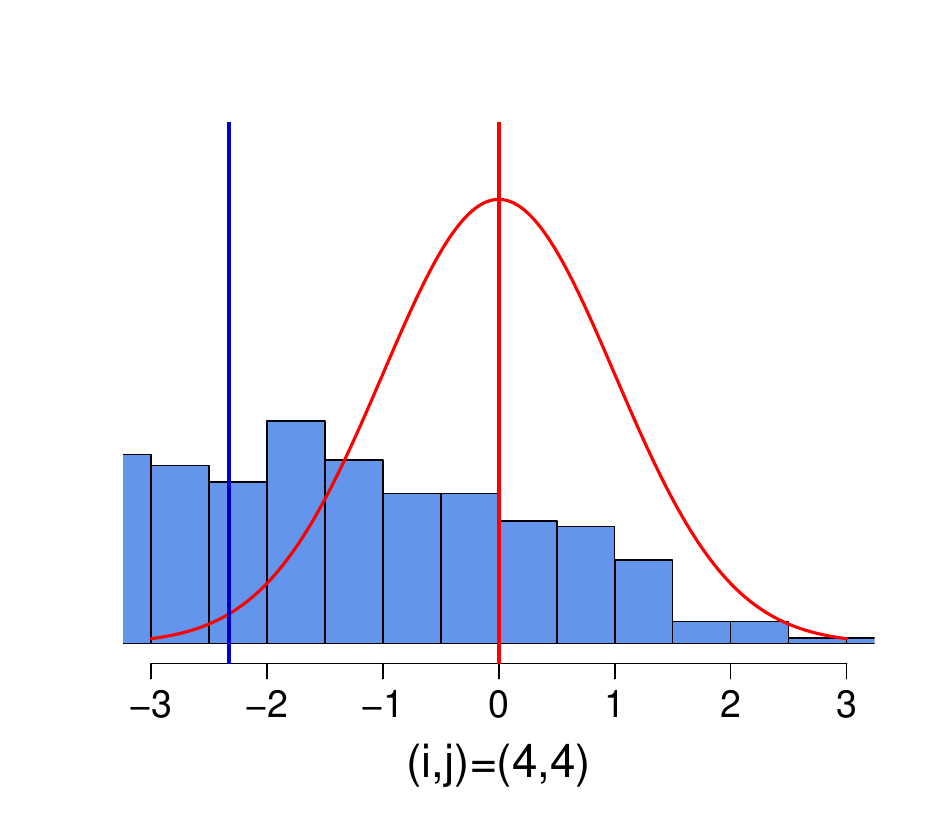}
    \end{minipage}
 \end{minipage}

  \caption*{$n=800, p=200$}
      \vspace{-0.43cm}
 \begin{minipage}{0.3\linewidth}
    \begin{minipage}{0.24\linewidth}
        \centering
        \includegraphics[width=\textwidth]{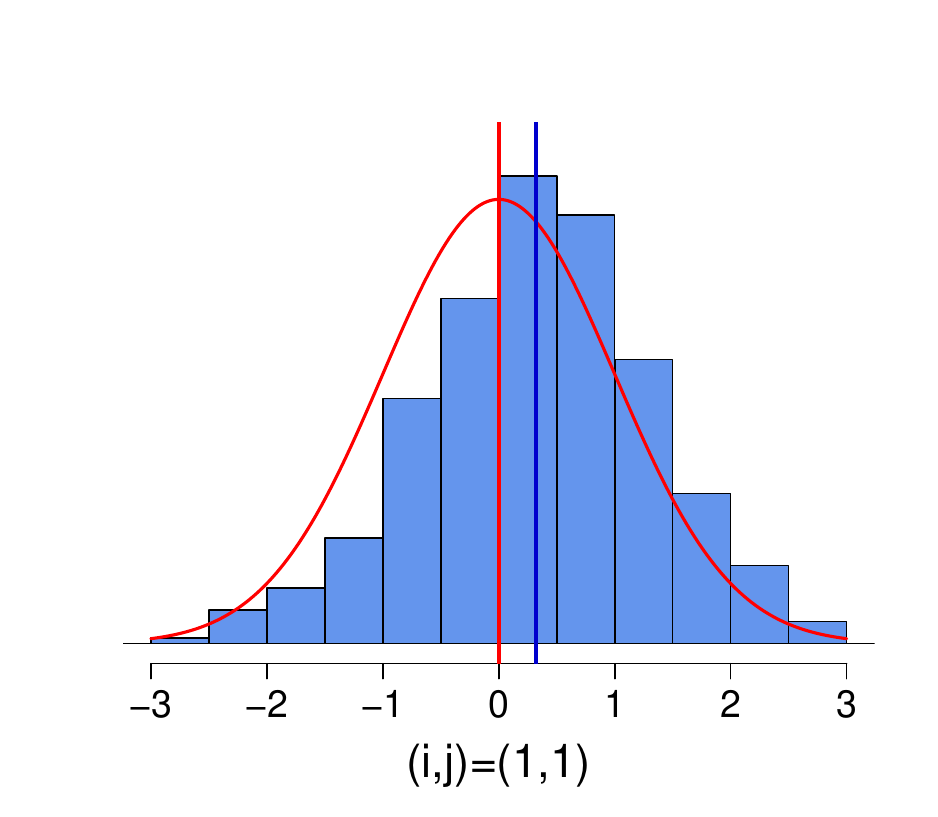}
    \end{minipage}
    \begin{minipage}{0.24\linewidth}
        \centering
        \includegraphics[width=\textwidth]{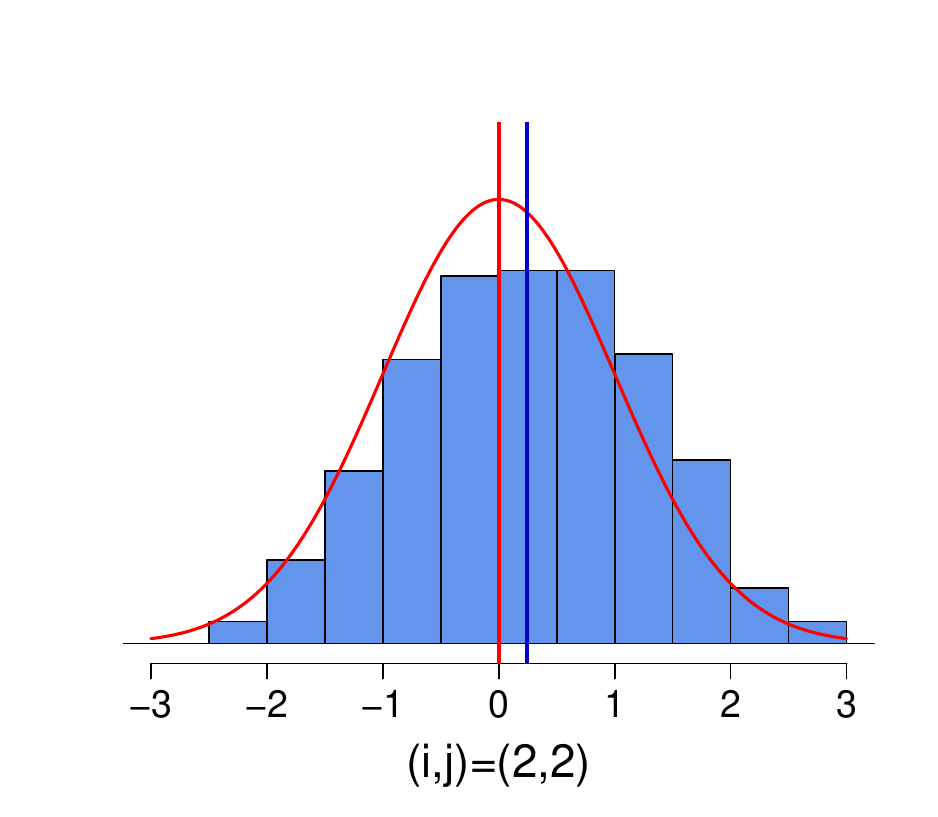}
    \end{minipage}
    \begin{minipage}{0.24\linewidth}
        \centering
        \includegraphics[width=\textwidth]{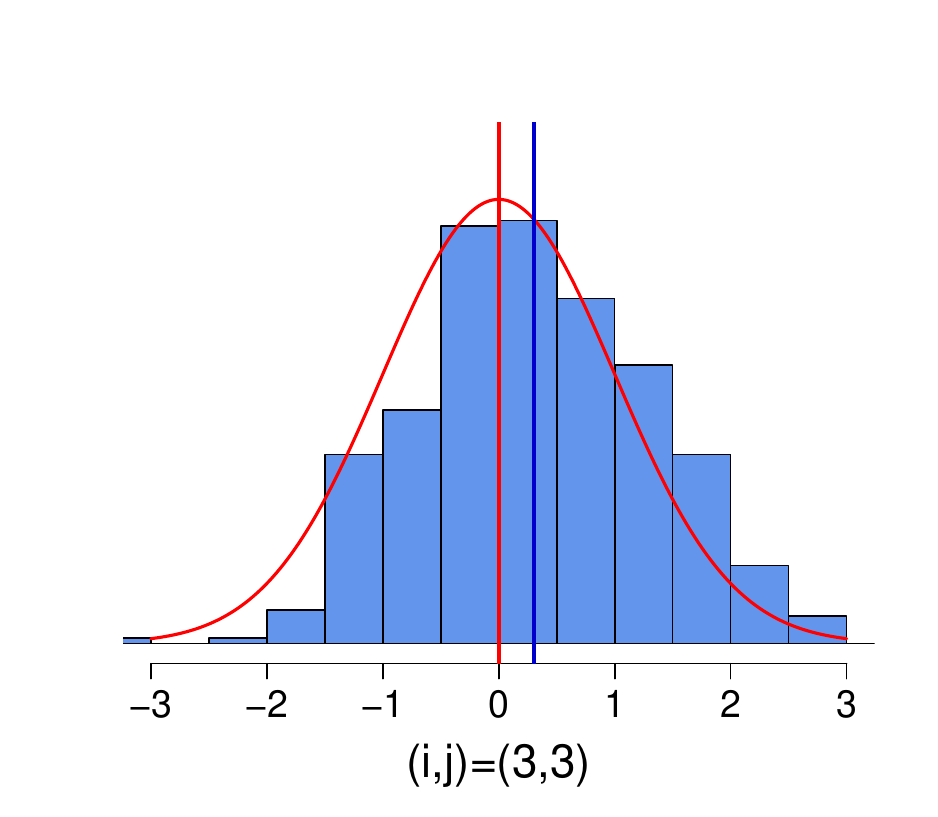}
    \end{minipage}
    \begin{minipage}{0.24\linewidth}
        \centering
        \includegraphics[width=\textwidth]{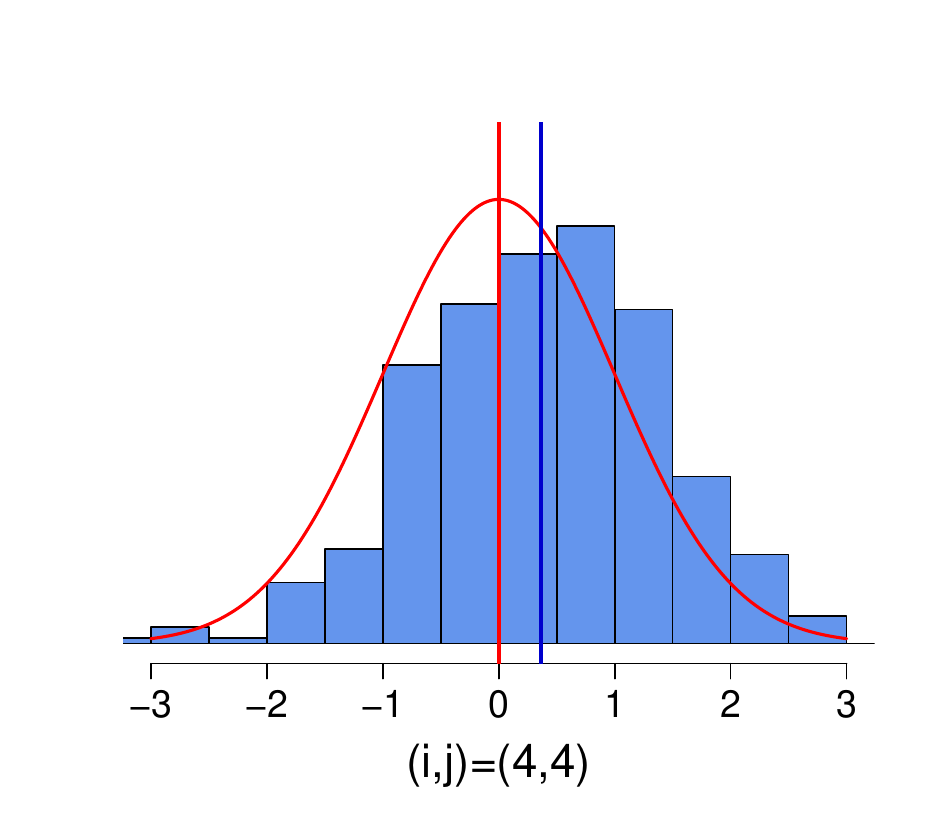}
    \end{minipage}
 \end{minipage} 
     \hspace{1cm}
 \begin{minipage}{0.3\linewidth}
    \begin{minipage}{0.24\linewidth}
        \centering
        \includegraphics[width=\textwidth]{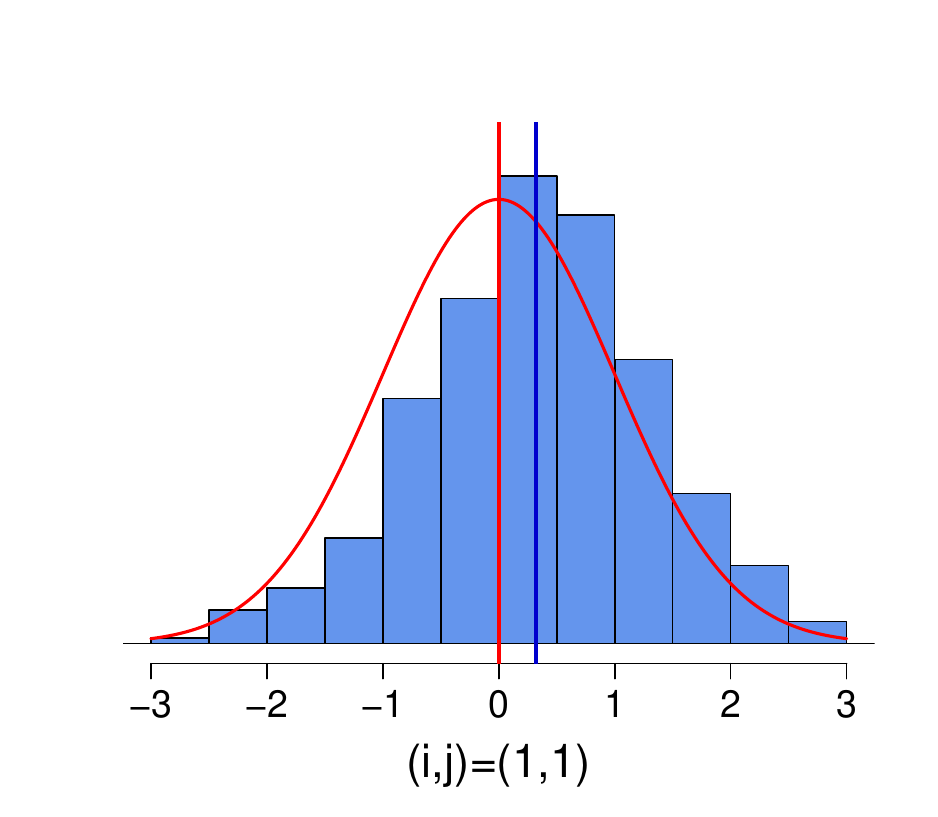}
    \end{minipage}
    \begin{minipage}{0.24\linewidth}
        \centering
        \includegraphics[width=\textwidth]{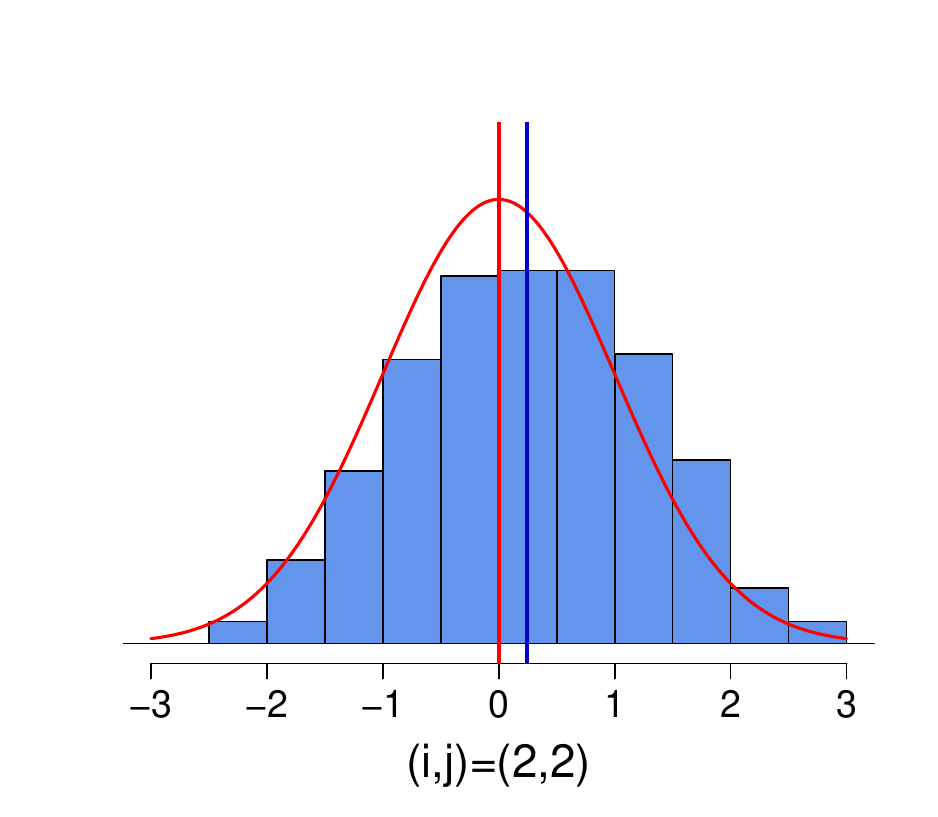}
    \end{minipage}
    \begin{minipage}{0.24\linewidth}
        \centering
        \includegraphics[width=\textwidth]{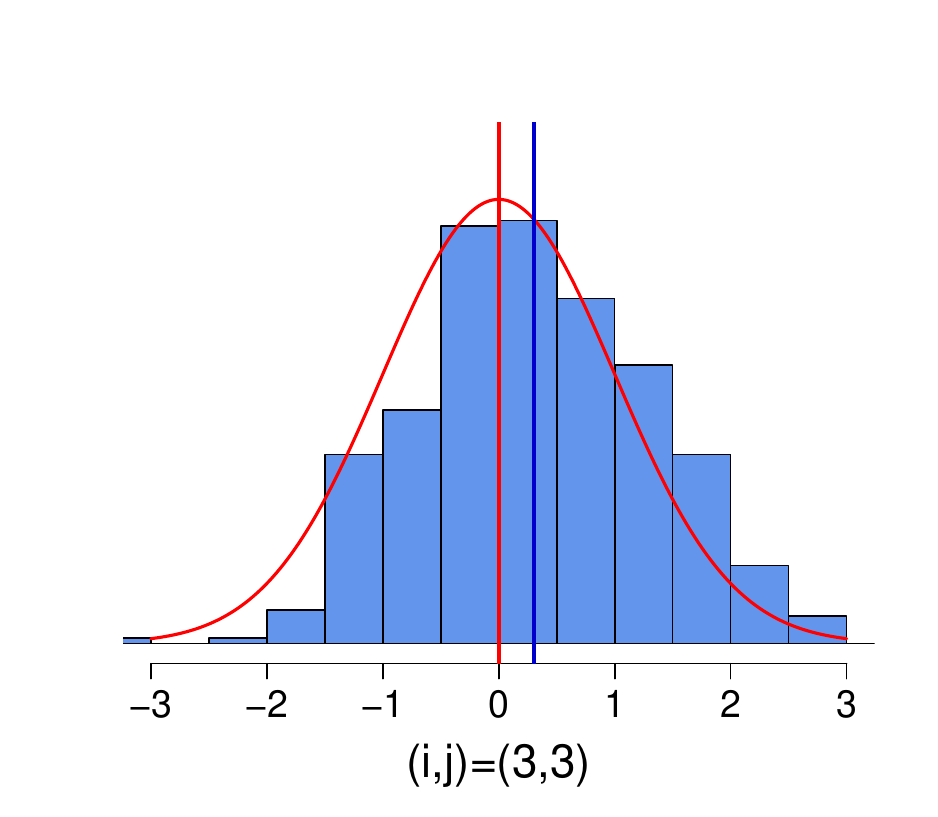}
    \end{minipage}
    \begin{minipage}{0.24\linewidth}
        \centering
        \includegraphics[width=\textwidth]{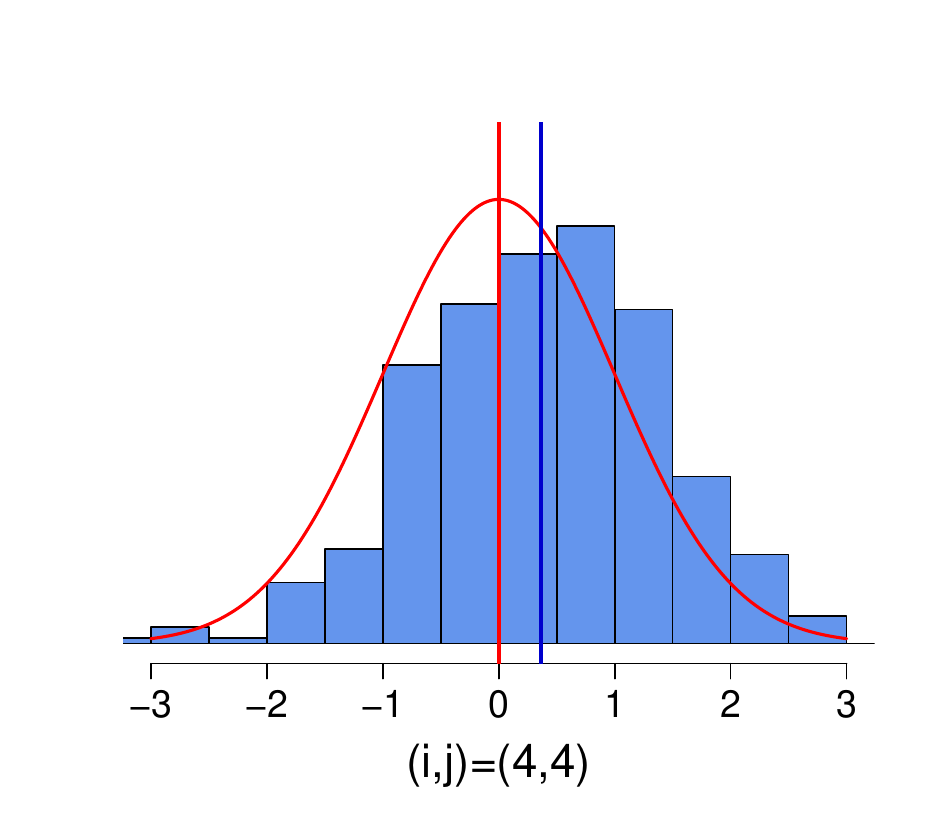}
    \end{minipage}
 \end{minipage}   
      \hspace{1cm}
 \begin{minipage}{0.3\linewidth}
    \begin{minipage}{0.24\linewidth}
        \centering
        \includegraphics[width=\textwidth]{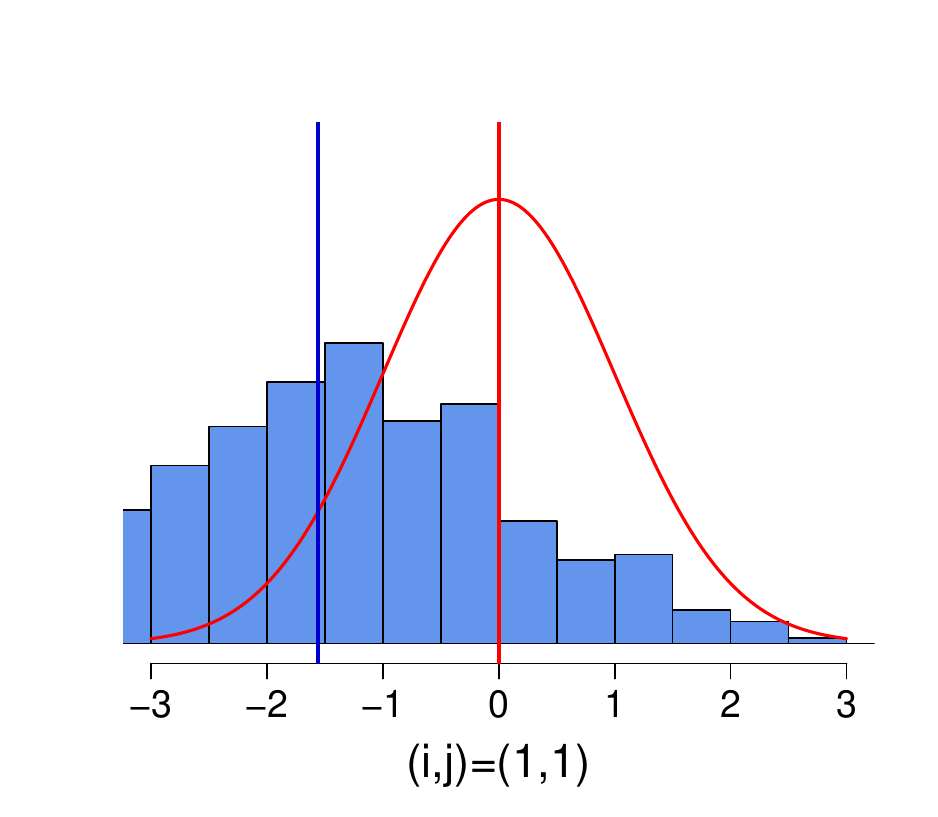}
    \end{minipage}
    \begin{minipage}{0.24\linewidth}
        \centering
        \includegraphics[width=\textwidth]{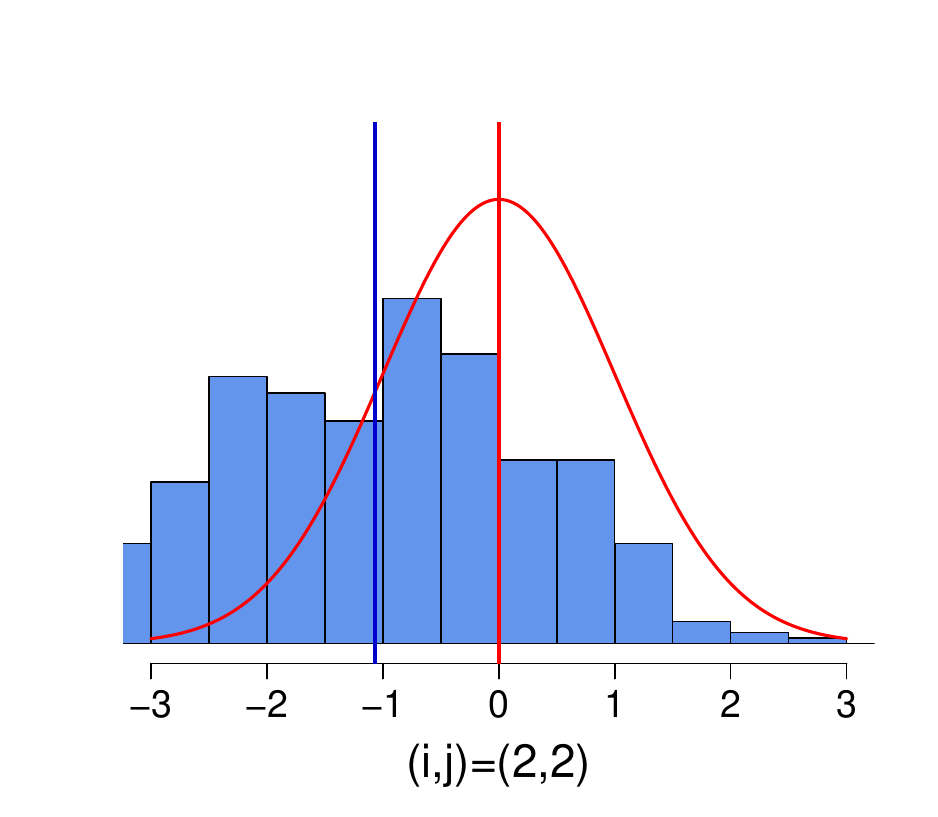}
    \end{minipage}
    \begin{minipage}{0.24\linewidth}
        \centering
        \includegraphics[width=\textwidth]{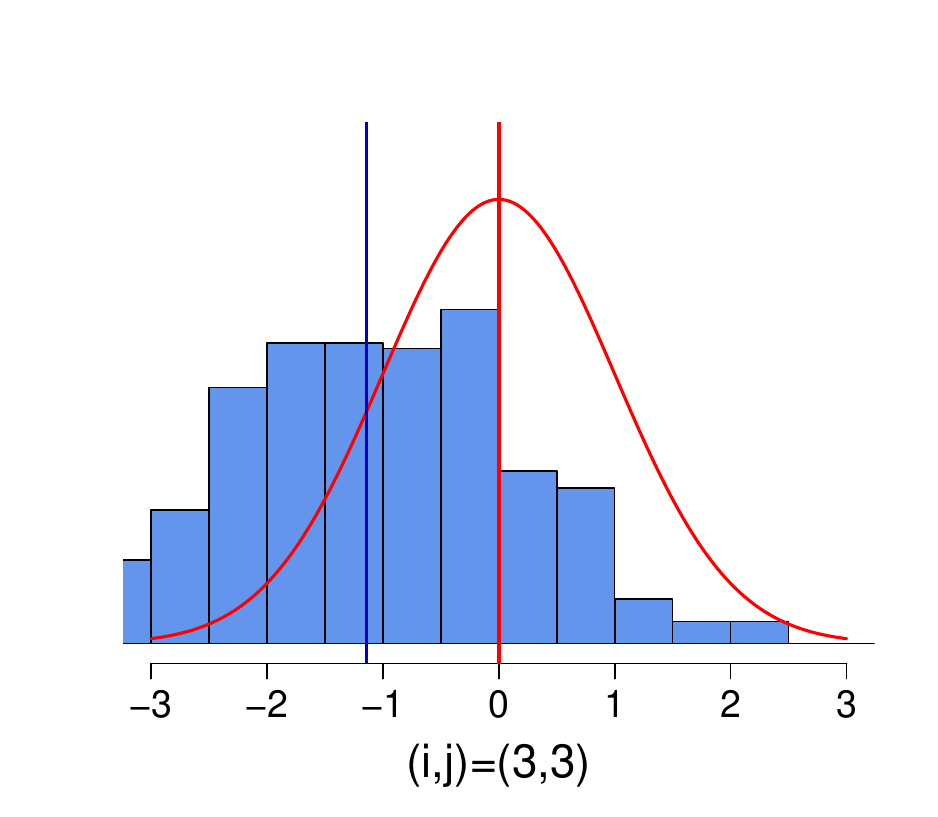}
    \end{minipage}
    \begin{minipage}{0.24\linewidth}
        \centering
        \includegraphics[width=\textwidth]{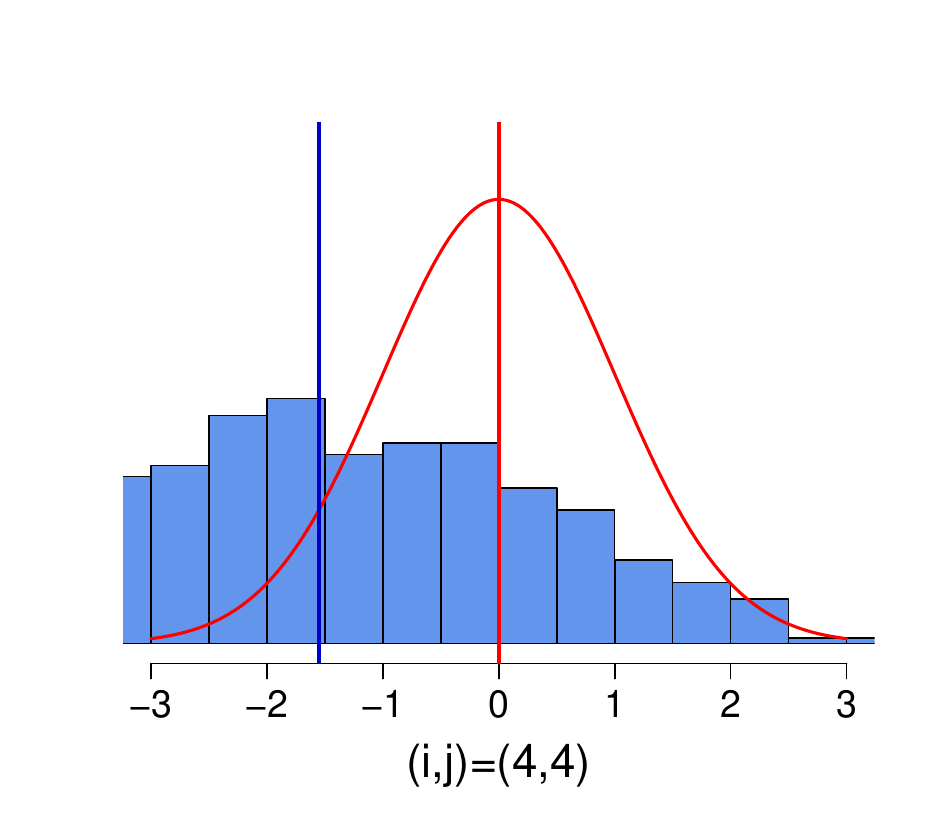}
    \end{minipage}
     \end{minipage}   
     
 \caption*{$n=200, p=400$}
     \vspace{-0.43cm}
 \begin{minipage}{0.3\linewidth}
    \begin{minipage}{0.24\linewidth}
        \centering
        \includegraphics[width=\textwidth]{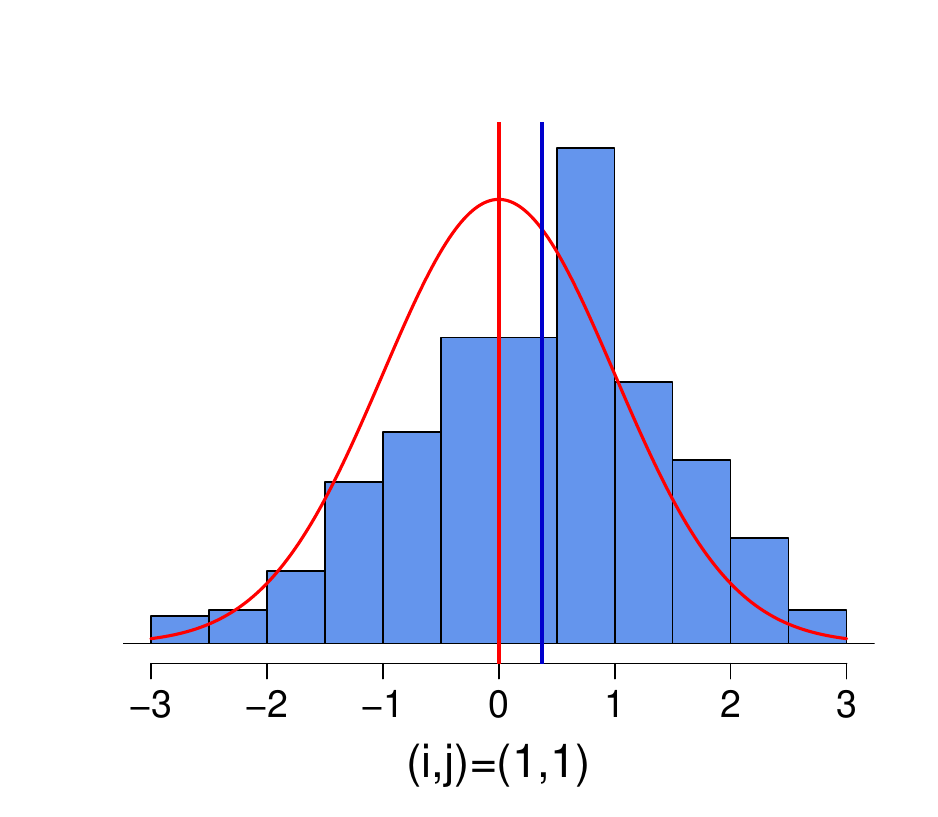}
    \end{minipage}
    \begin{minipage}{0.24\linewidth}
        \centering
        \includegraphics[width=\textwidth]{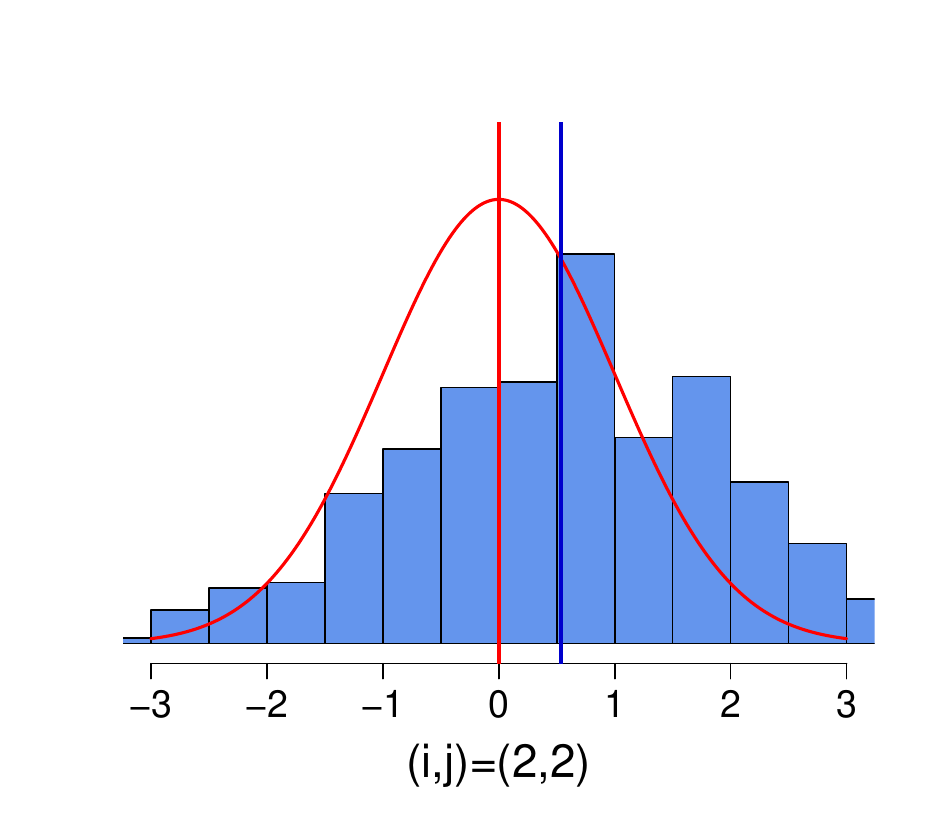}
    \end{minipage}
    \begin{minipage}{0.24\linewidth}
        \centering
        \includegraphics[width=\textwidth]{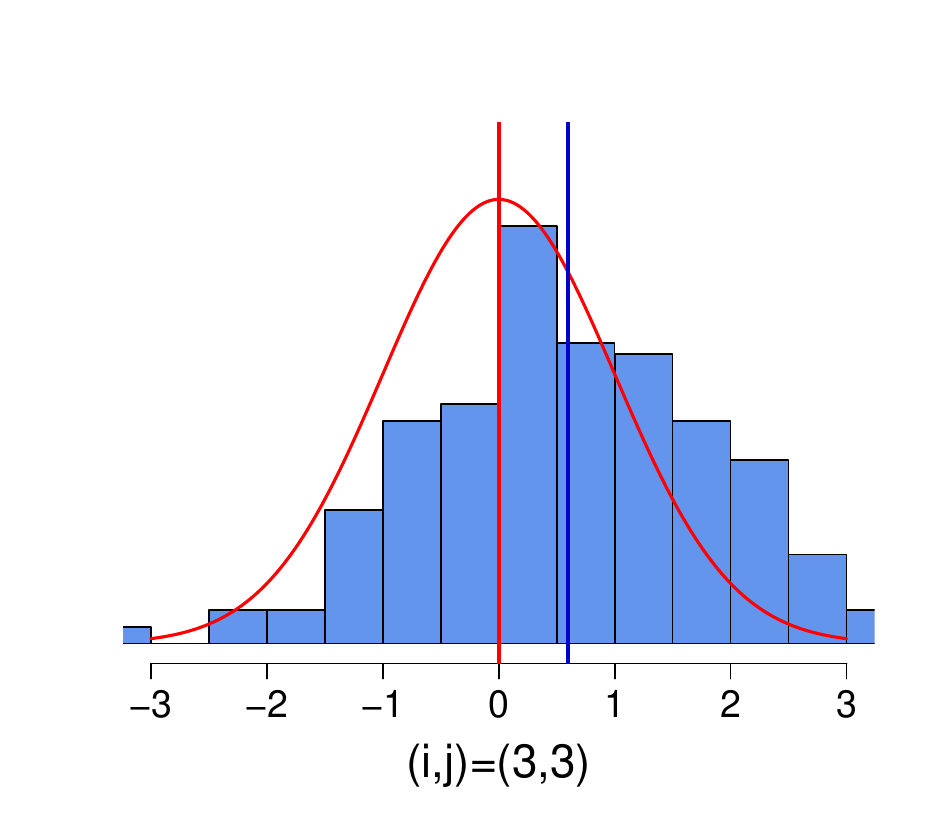}
    \end{minipage}
    \begin{minipage}{0.24\linewidth}
        \centering
        \includegraphics[width=\textwidth]{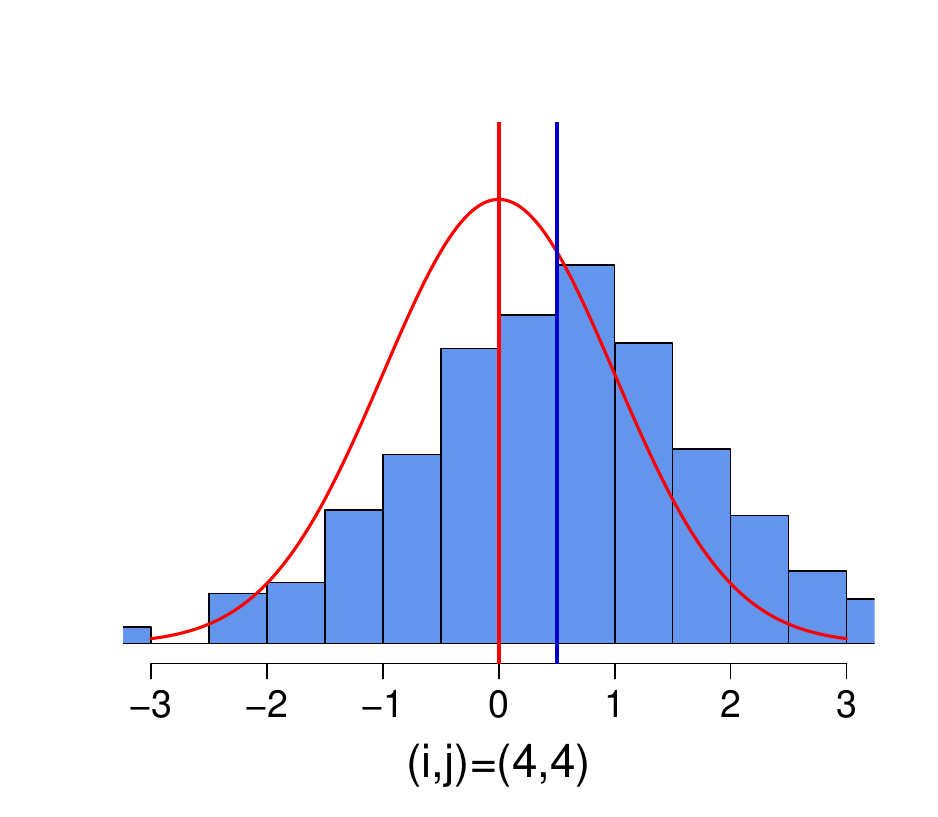}
    \end{minipage}
 \end{minipage}
 \hspace{1cm}
 \begin{minipage}{0.3\linewidth}
    \begin{minipage}{0.24\linewidth}
        \centering
        \includegraphics[width=\textwidth]{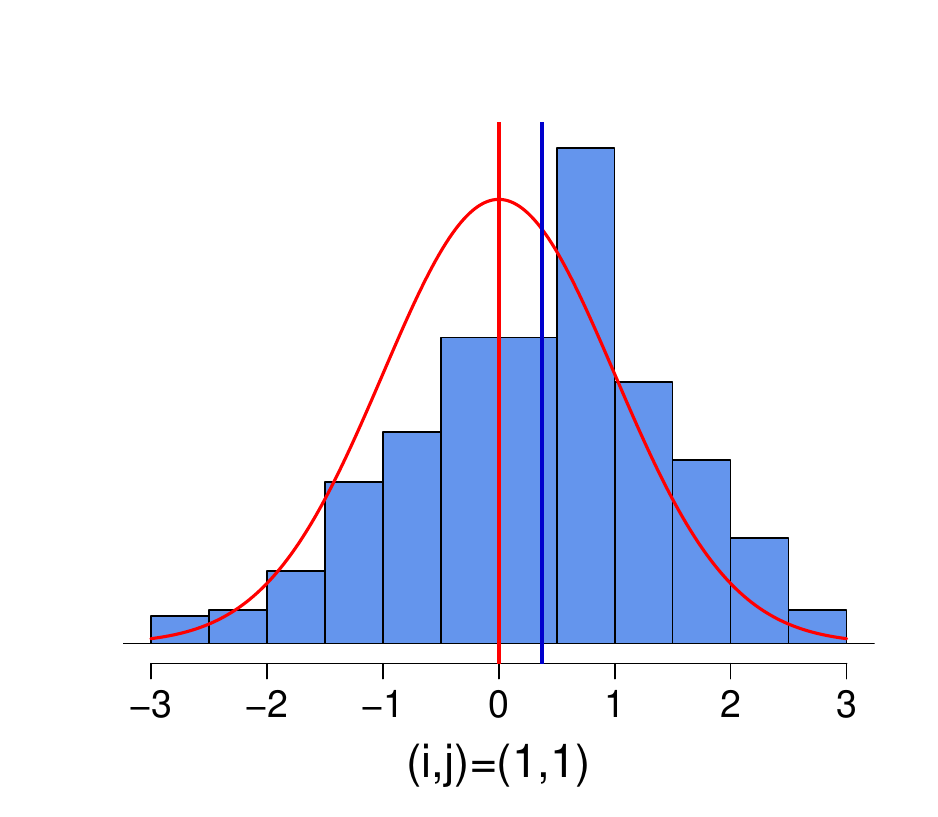}
    \end{minipage}
    \begin{minipage}{0.24\linewidth}
        \centering
        \includegraphics[width=\textwidth]{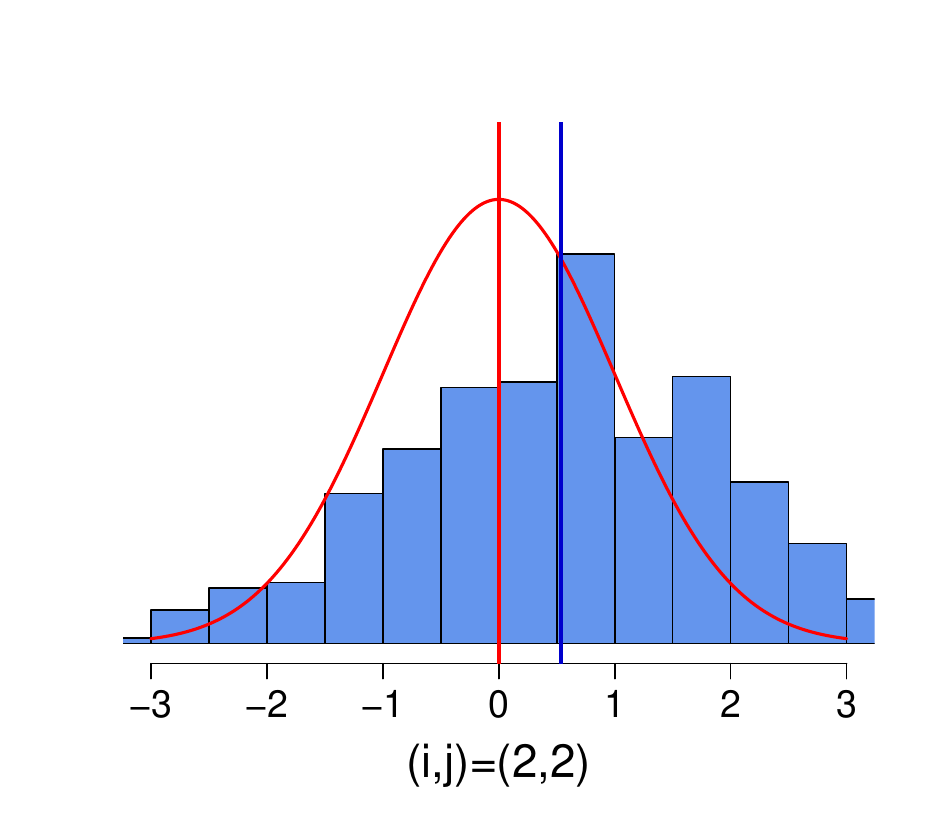}
    \end{minipage}
    \begin{minipage}{0.24\linewidth}
        \centering
        \includegraphics[width=\textwidth]{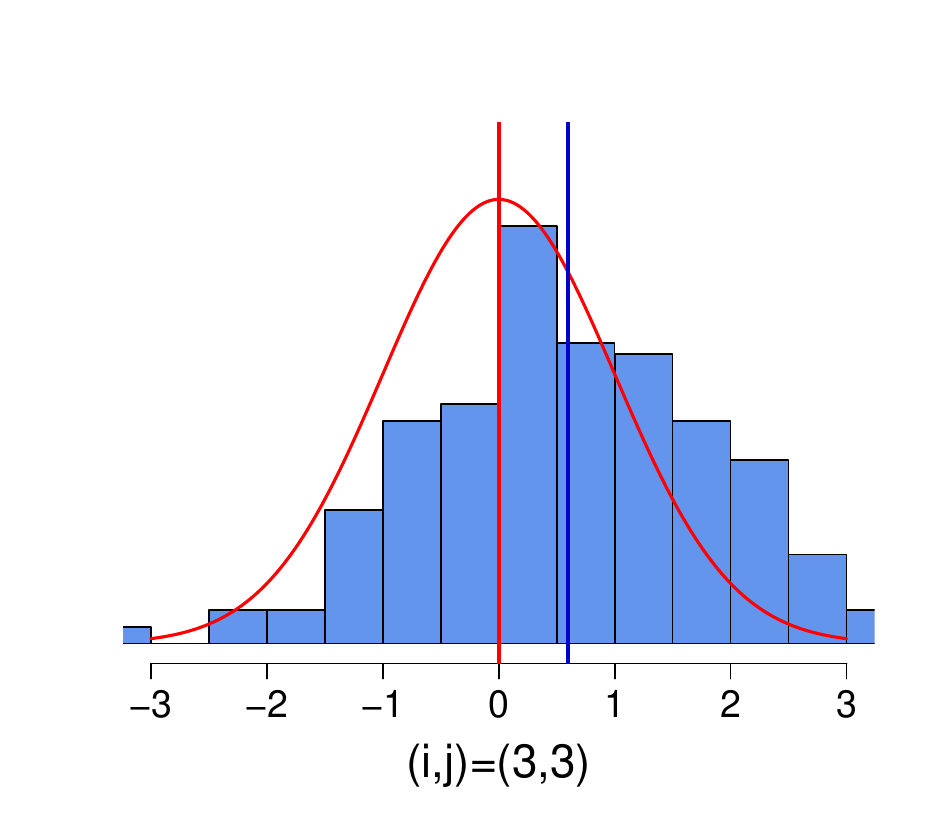}
    \end{minipage}
    \begin{minipage}{0.24\linewidth}
        \centering
        \includegraphics[width=\textwidth]{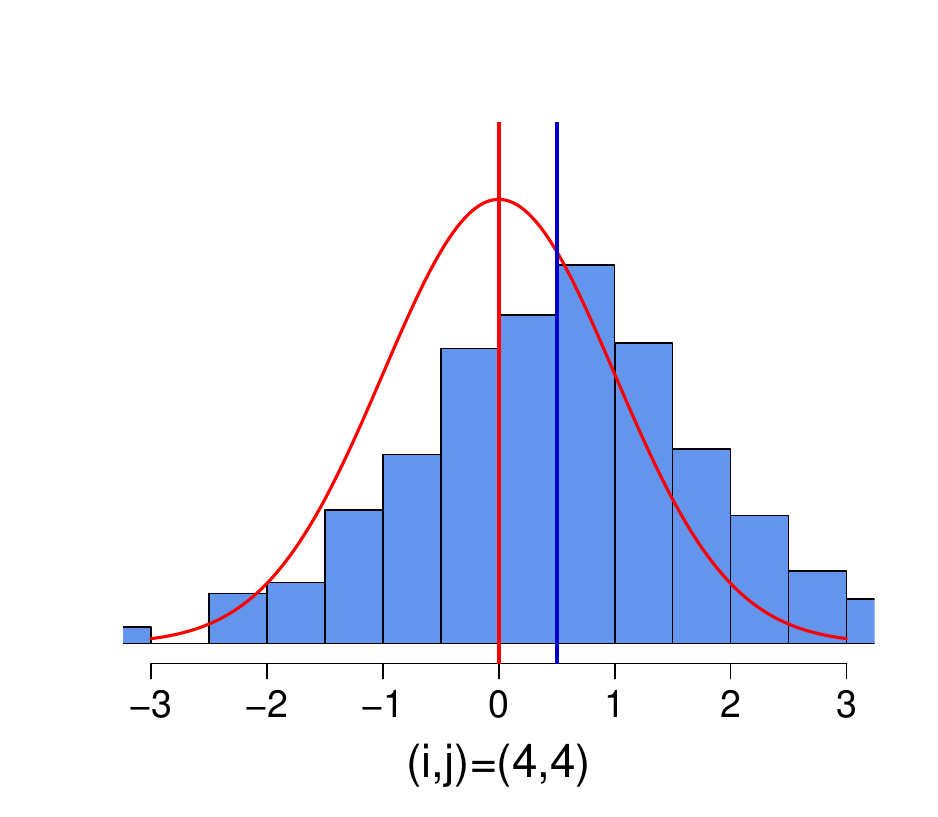}
    \end{minipage}    
 \end{minipage}
  \hspace{1cm}
 \begin{minipage}{0.3\linewidth}
     \begin{minipage}{0.24\linewidth}
        \centering
        \includegraphics[width=\textwidth]{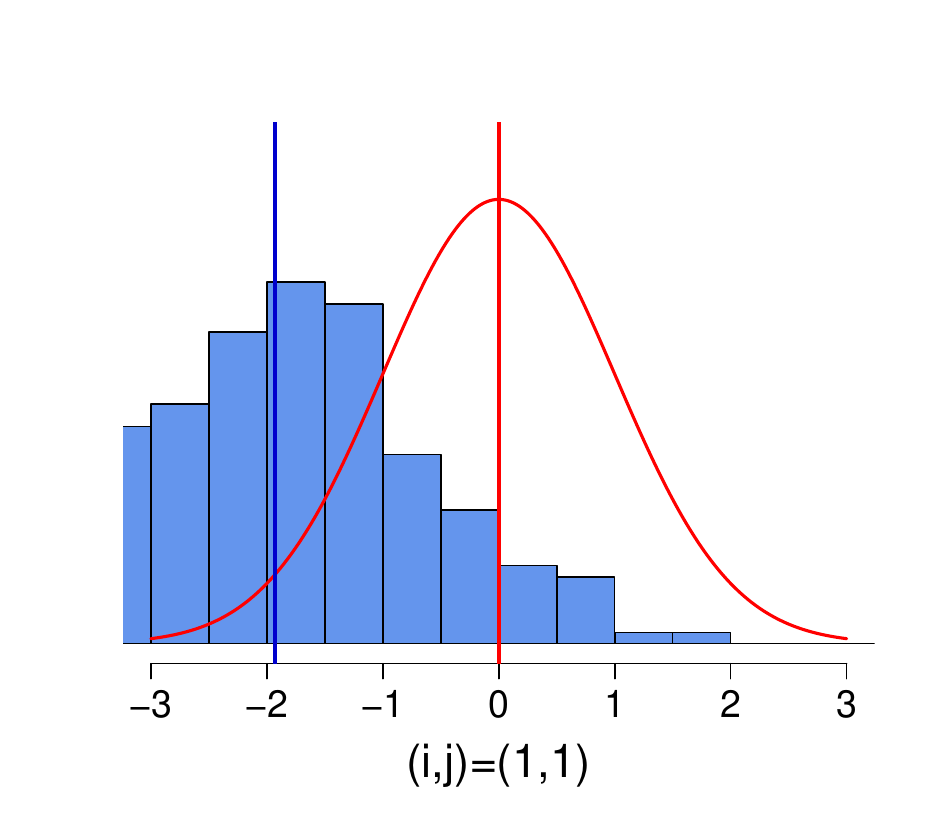}
    \end{minipage}
    \begin{minipage}{0.24\linewidth}
        \centering
        \includegraphics[width=\textwidth]{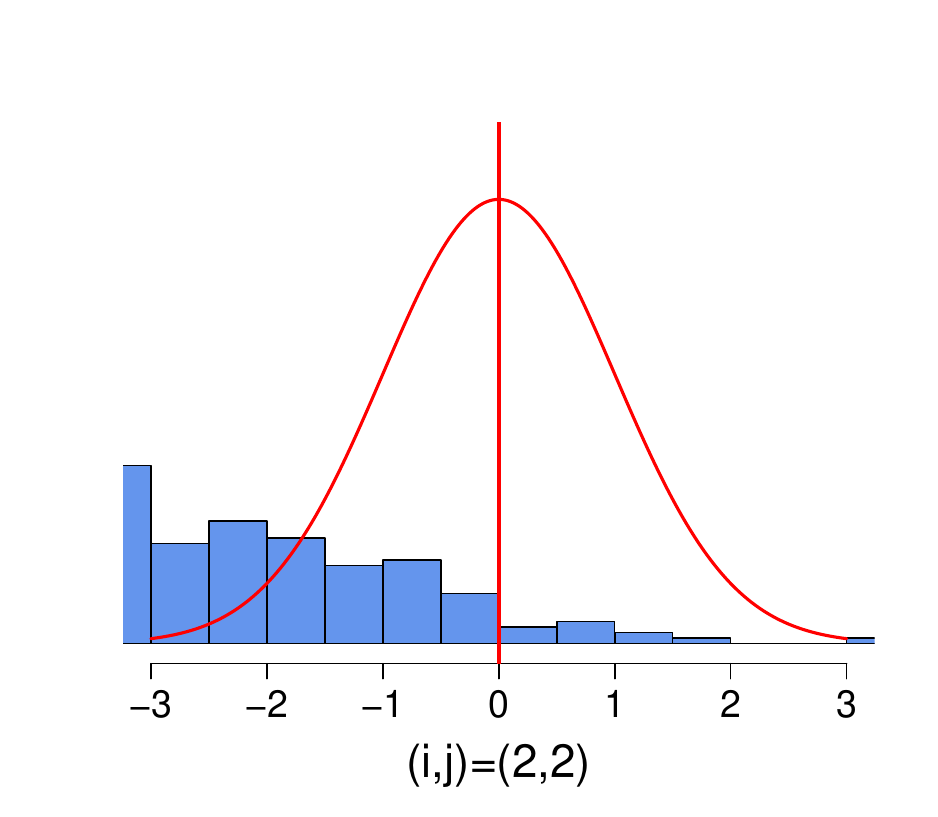}
    \end{minipage}
    \begin{minipage}{0.24\linewidth}
        \centering
        \includegraphics[width=\textwidth]{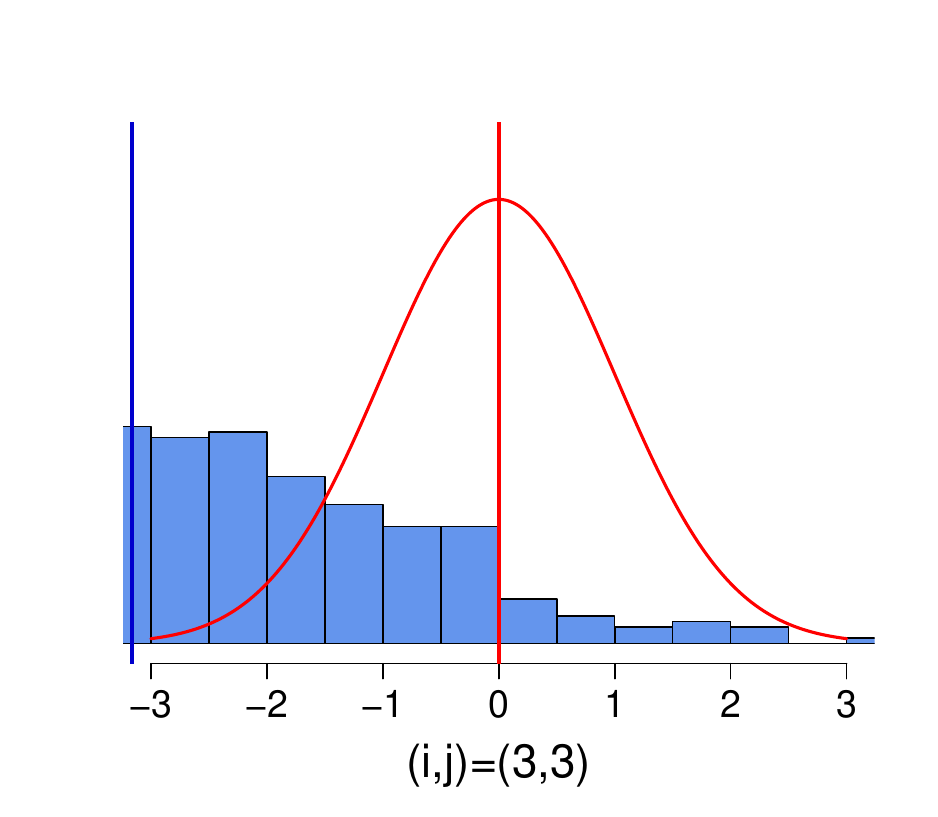}
    \end{minipage}
    \begin{minipage}{0.24\linewidth}
        \centering
        \includegraphics[width=\textwidth]{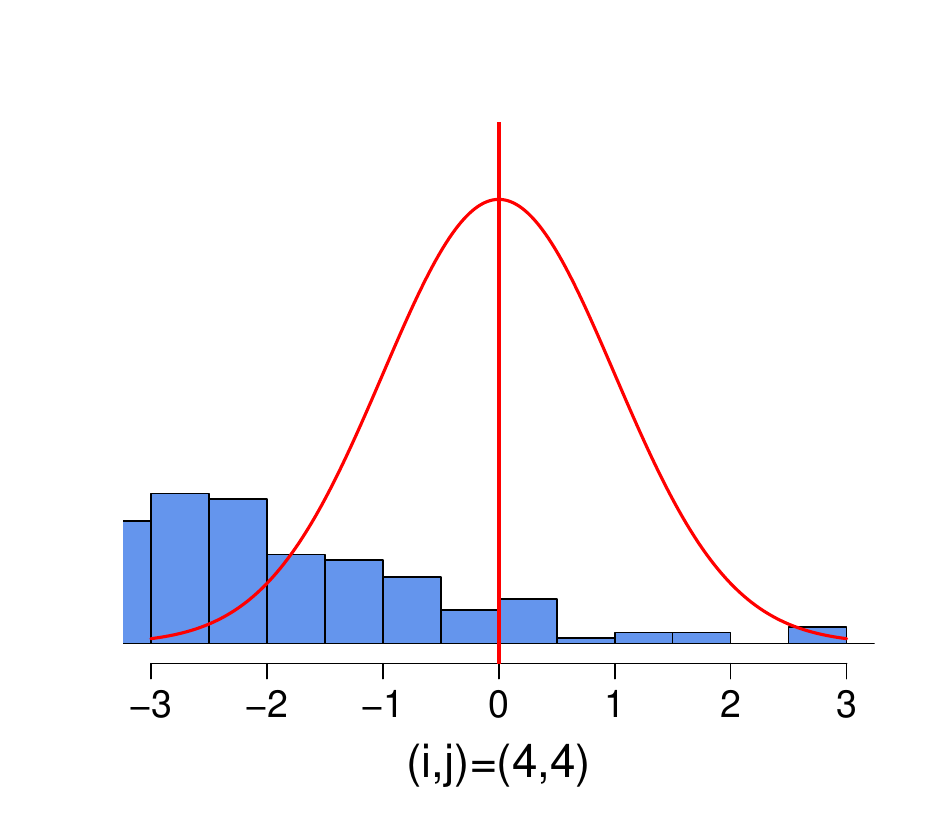}
    \end{minipage}
 \end{minipage}

  \caption*{$n=400, p=400$}
      \vspace{-0.43cm}
 \begin{minipage}{0.3\linewidth}
    \begin{minipage}{0.24\linewidth}
        \centering
        \includegraphics[width=\textwidth]{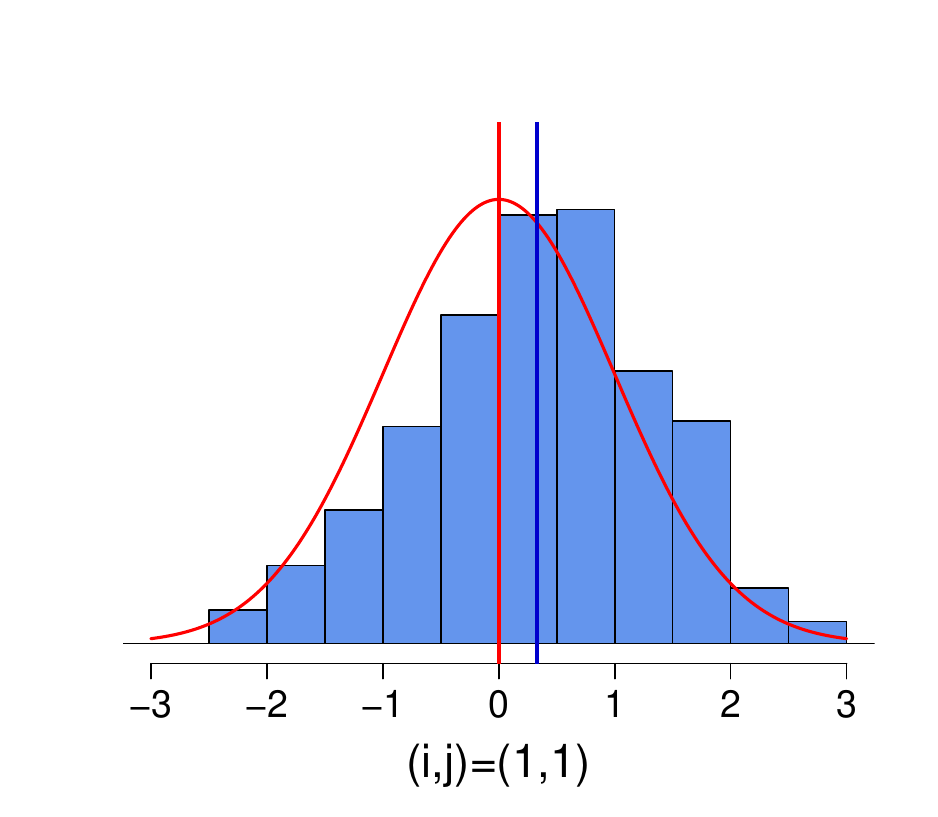}
    \end{minipage}
    \begin{minipage}{0.24\linewidth}
        \centering
        \includegraphics[width=\textwidth]{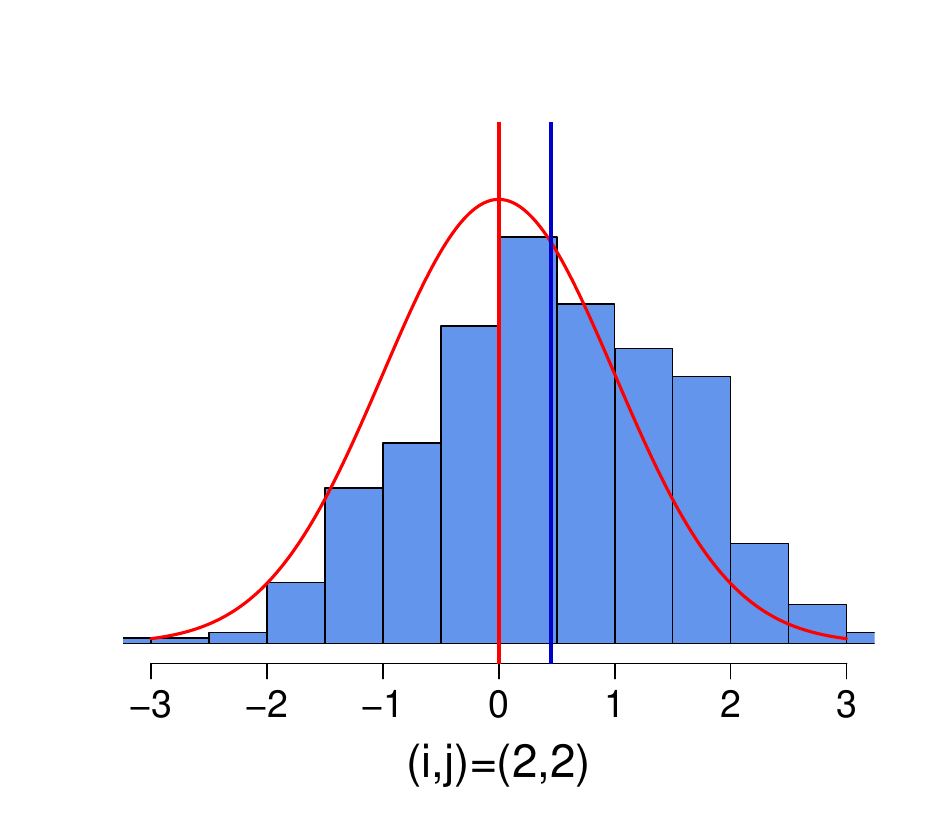}
    \end{minipage}
    \begin{minipage}{0.24\linewidth}
        \centering
        \includegraphics[width=\textwidth]{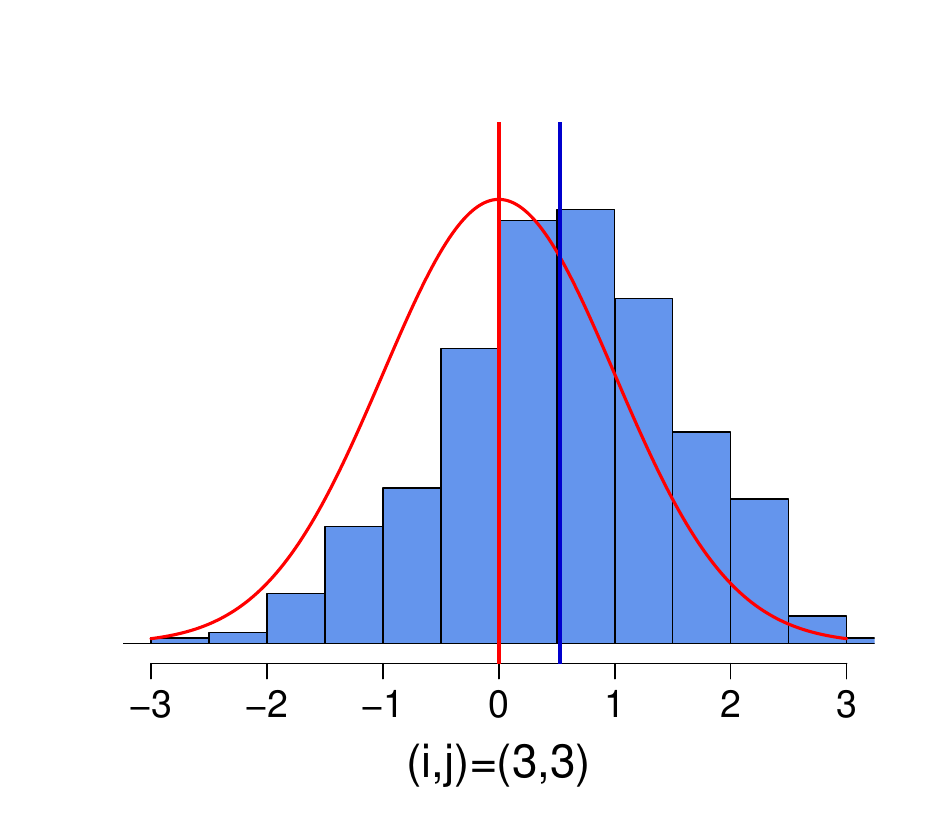}
    \end{minipage}
    \begin{minipage}{0.24\linewidth}
        \centering
        \includegraphics[width=\textwidth]{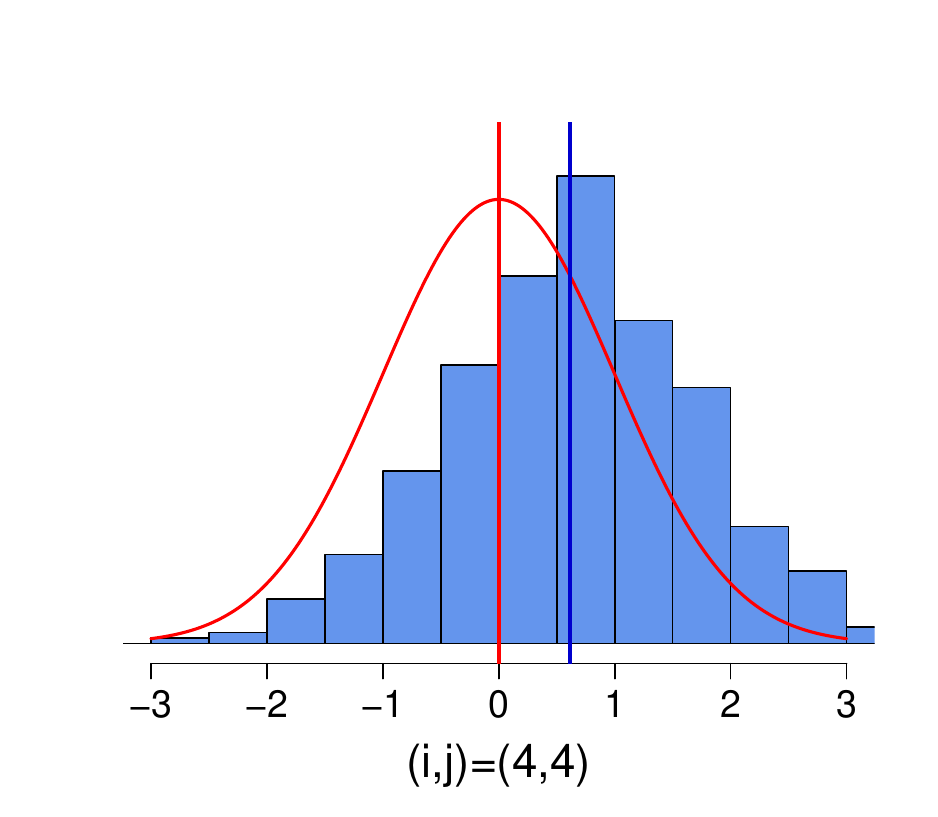}
    \end{minipage}
 \end{minipage}  
     \hspace{1cm}
 \begin{minipage}{0.3\linewidth}
    \begin{minipage}{0.24\linewidth}
        \centering
        \includegraphics[width=\textwidth]{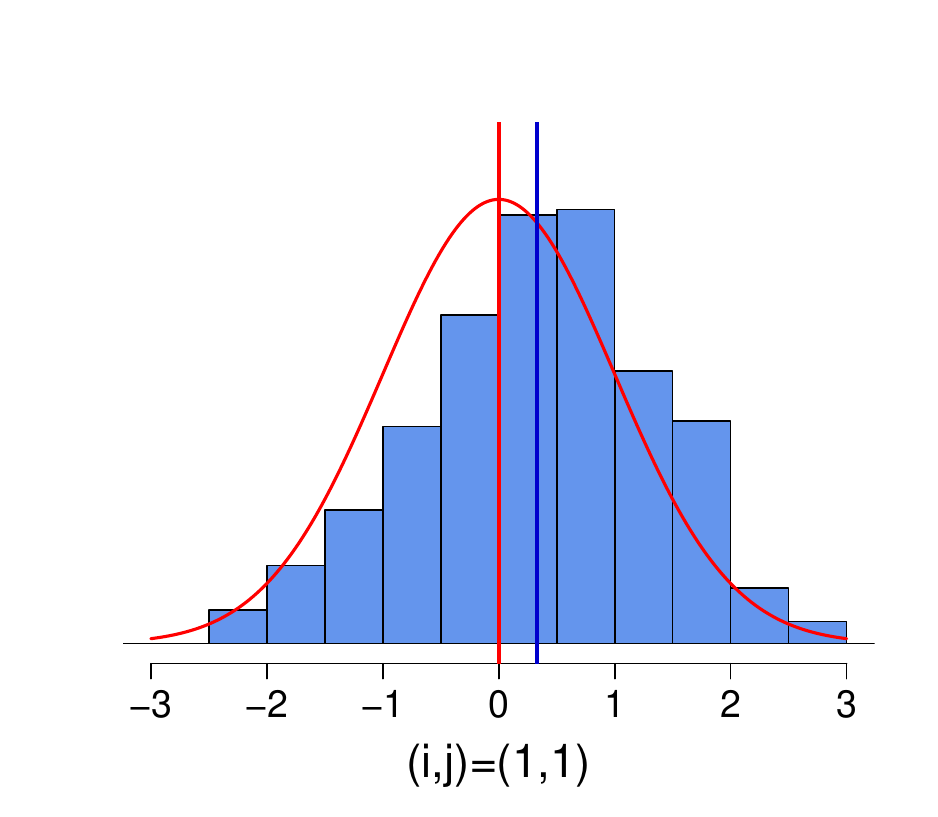}
    \end{minipage}
    \begin{minipage}{0.24\linewidth}
        \centering
        \includegraphics[width=\textwidth]{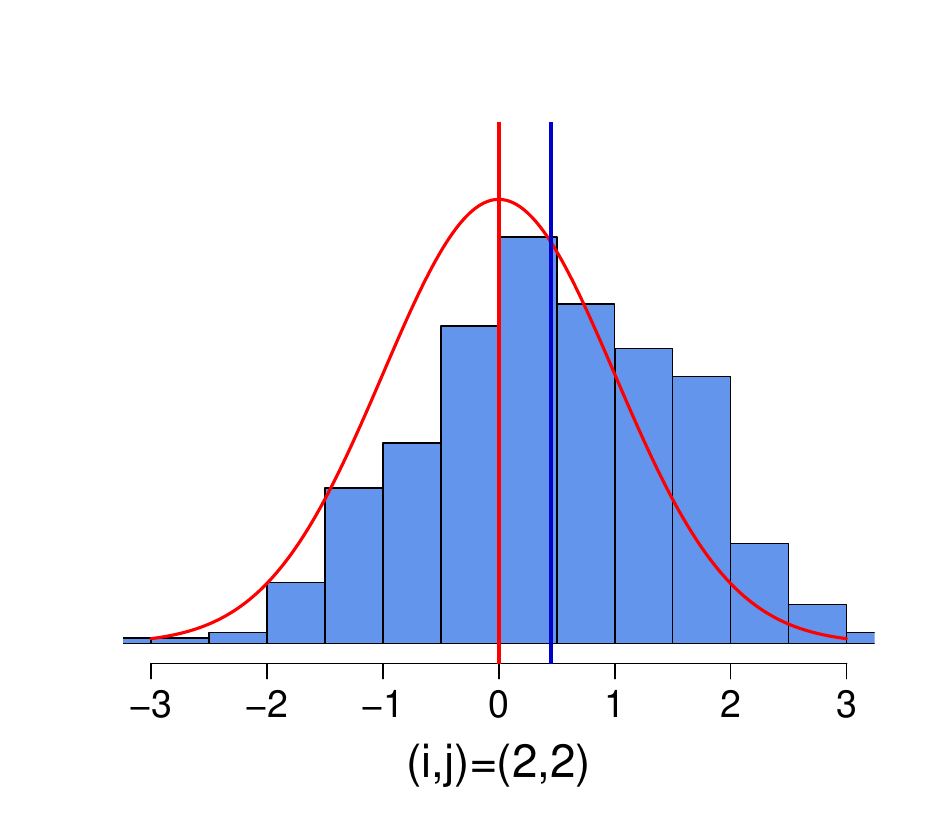}
    \end{minipage}
    \begin{minipage}{0.24\linewidth}
        \centering
        \includegraphics[width=\textwidth]{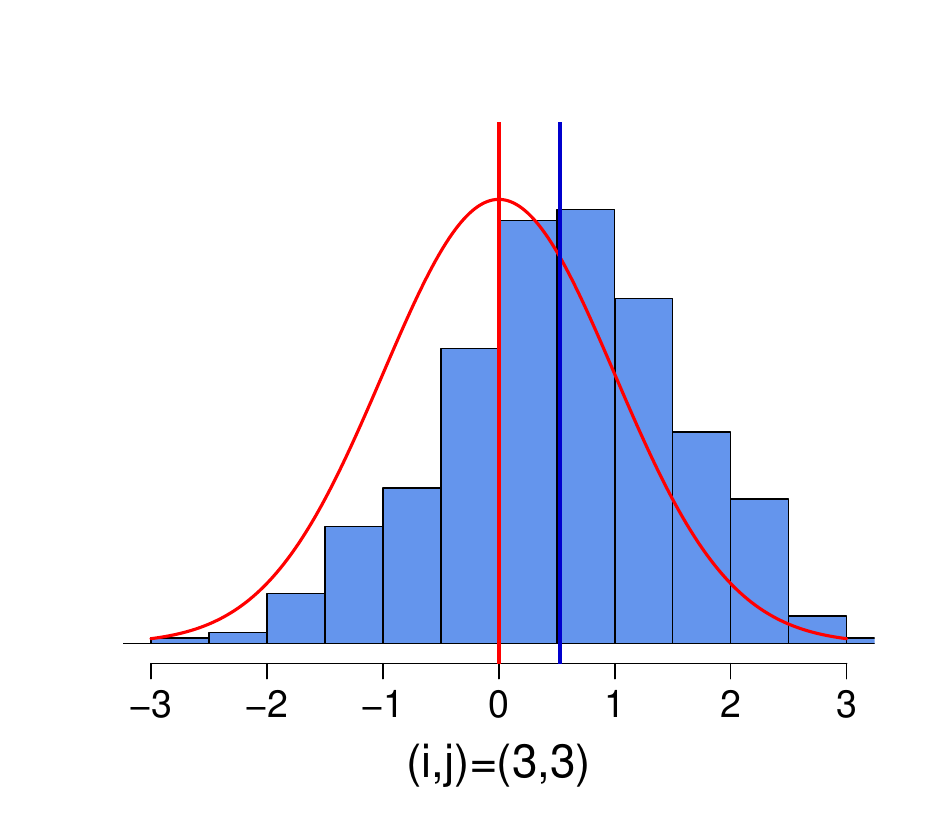}
    \end{minipage}
    \begin{minipage}{0.24\linewidth}
        \centering
        \includegraphics[width=\textwidth]{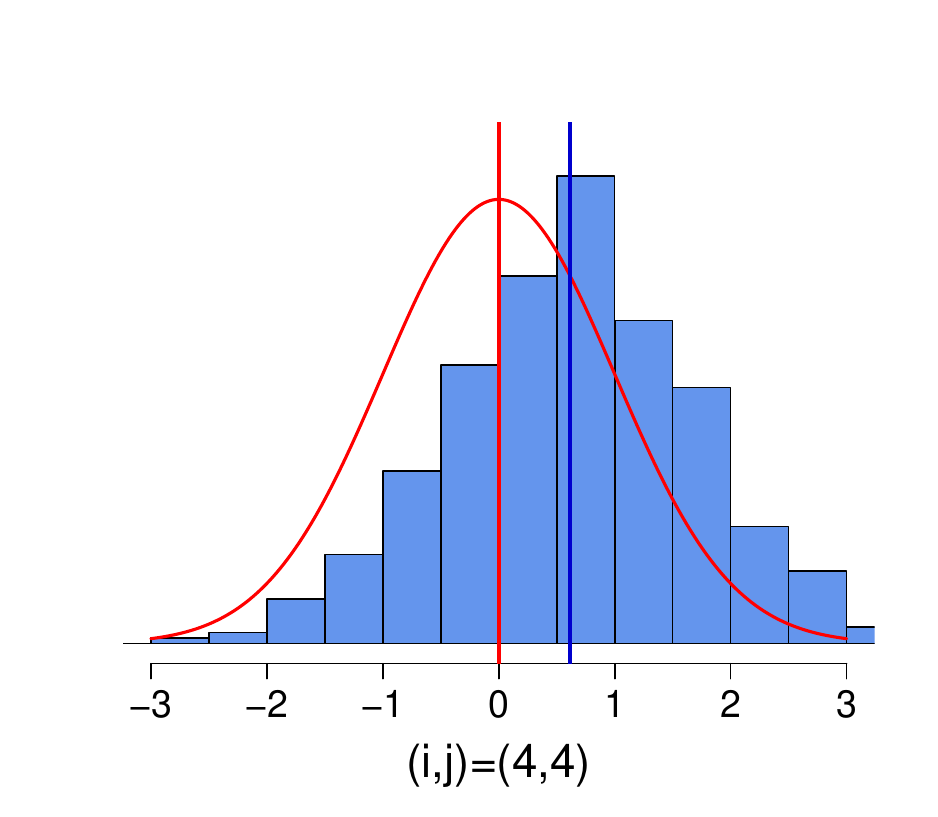}
    \end{minipage}
  \end{minipage}  
    \hspace{1cm}
 \begin{minipage}{0.3\linewidth}
    \begin{minipage}{0.24\linewidth}
        \centering
        \includegraphics[width=\textwidth]{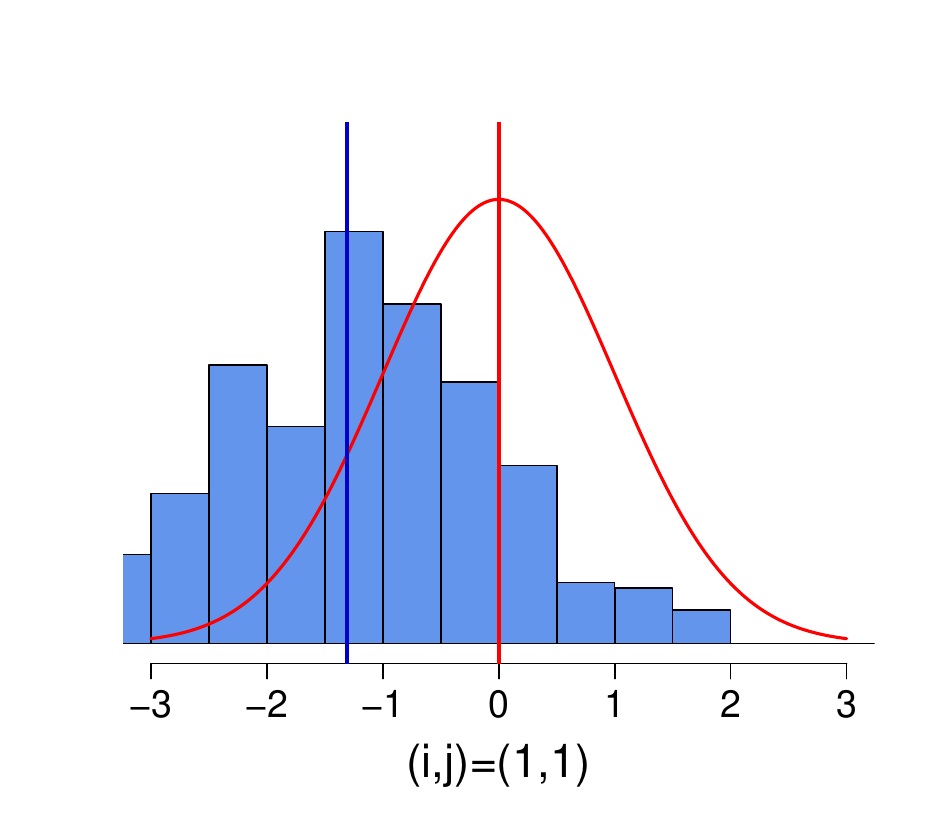}
    \end{minipage}
    \begin{minipage}{0.24\linewidth}
        \centering
        \includegraphics[width=\textwidth]{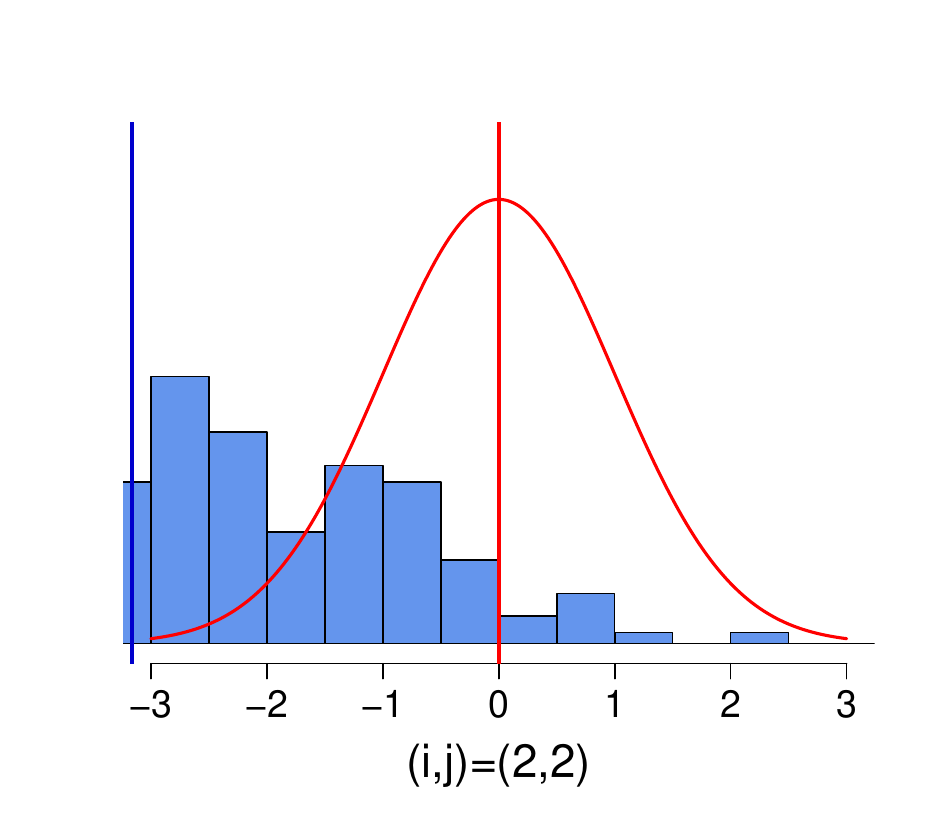}
    \end{minipage}
    \begin{minipage}{0.24\linewidth}
        \centering
        \includegraphics[width=\textwidth]{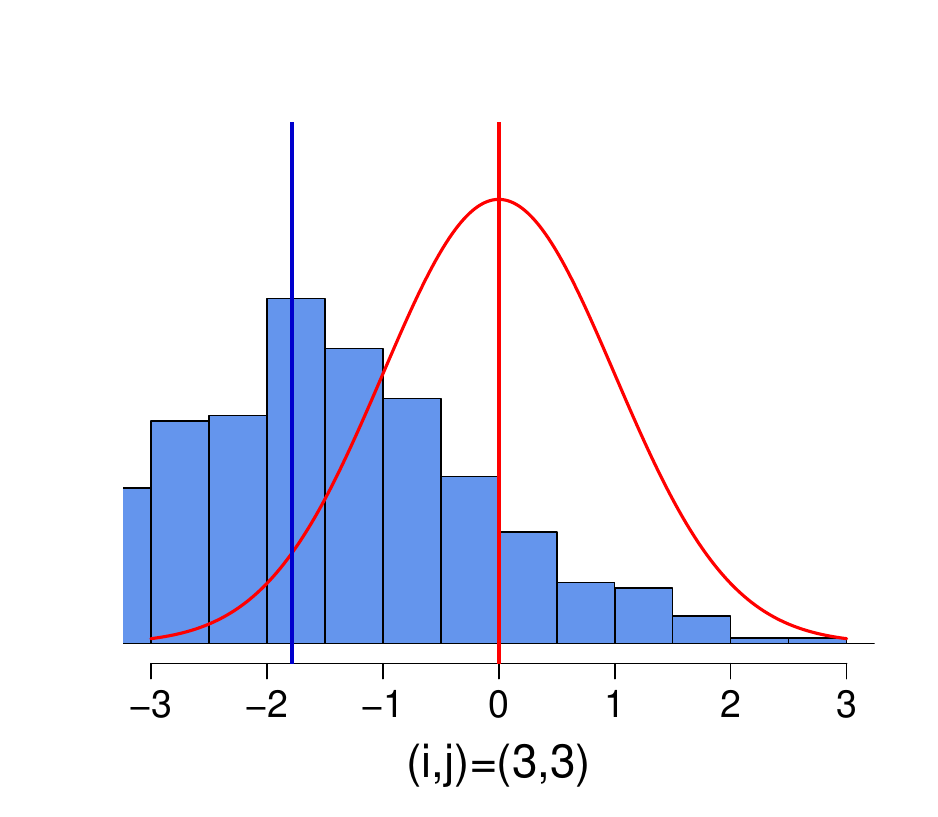}
    \end{minipage}
    \begin{minipage}{0.24\linewidth}
        \centering
        \includegraphics[width=\textwidth]{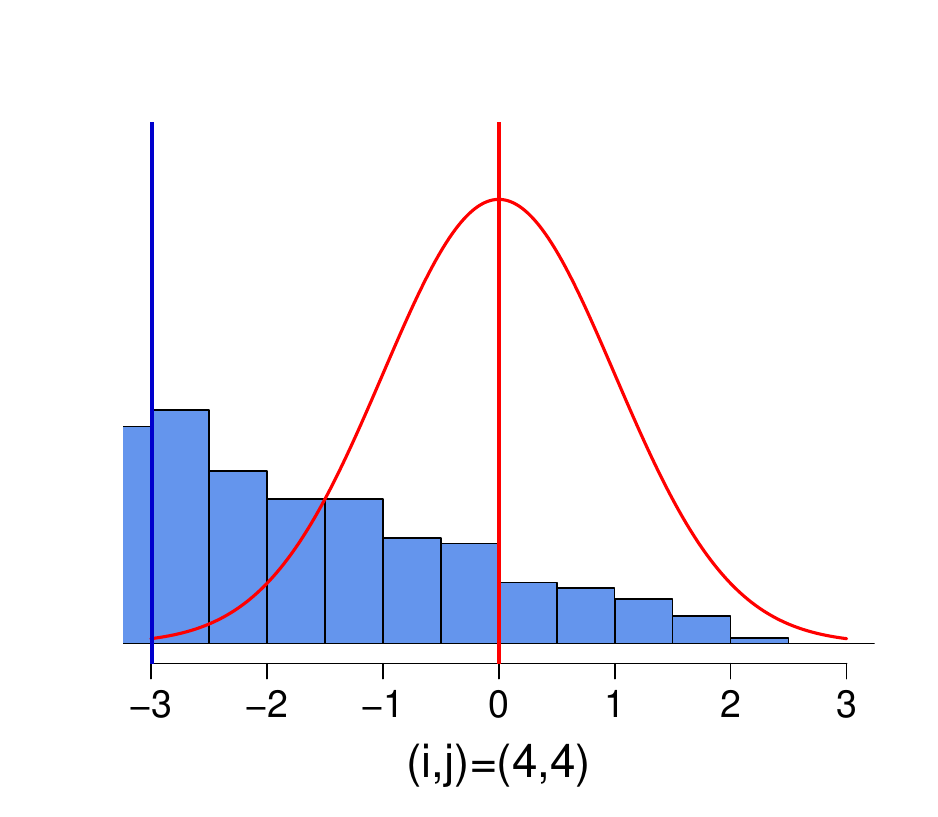}
    \end{minipage}
 \end{minipage}

  \caption*{$n=800, p=400$}
      \vspace{-0.43cm}
 \begin{minipage}{0.3\linewidth}
    \begin{minipage}{0.24\linewidth}
        \centering
        \includegraphics[width=\textwidth]{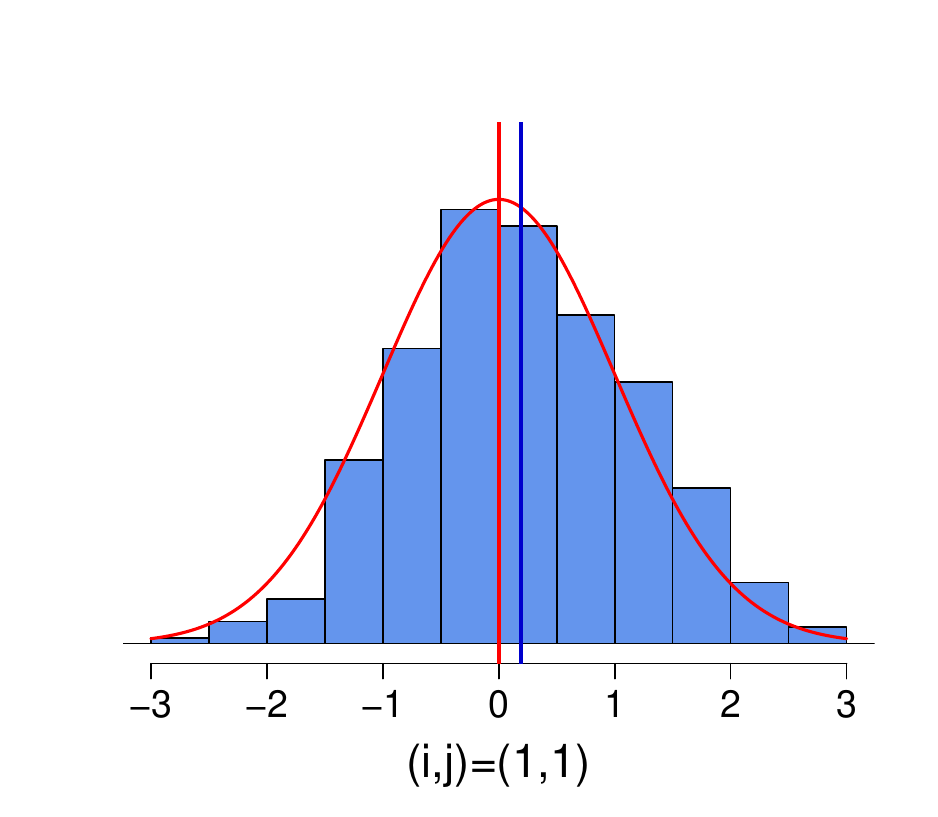}
    \end{minipage}
    \begin{minipage}{0.24\linewidth}
        \centering
        \includegraphics[width=\textwidth]{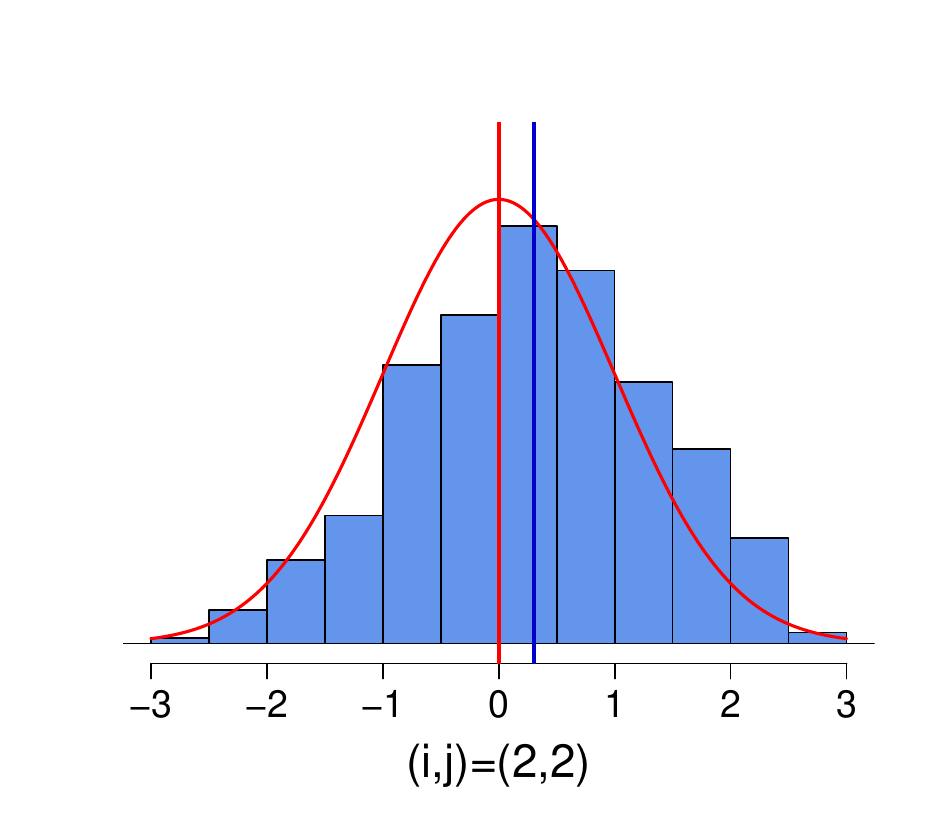}
    \end{minipage}
    \begin{minipage}{0.24\linewidth}
        \centering
        \includegraphics[width=\textwidth]{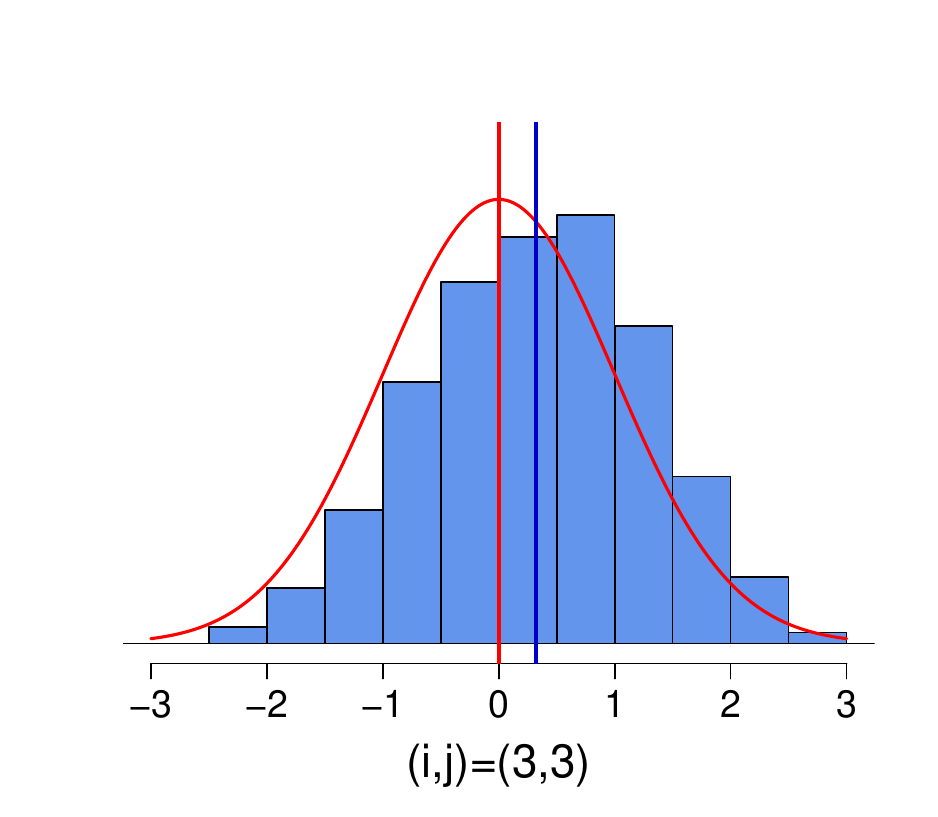}
    \end{minipage}
    \begin{minipage}{0.24\linewidth}
        \centering
        \includegraphics[width=\textwidth]{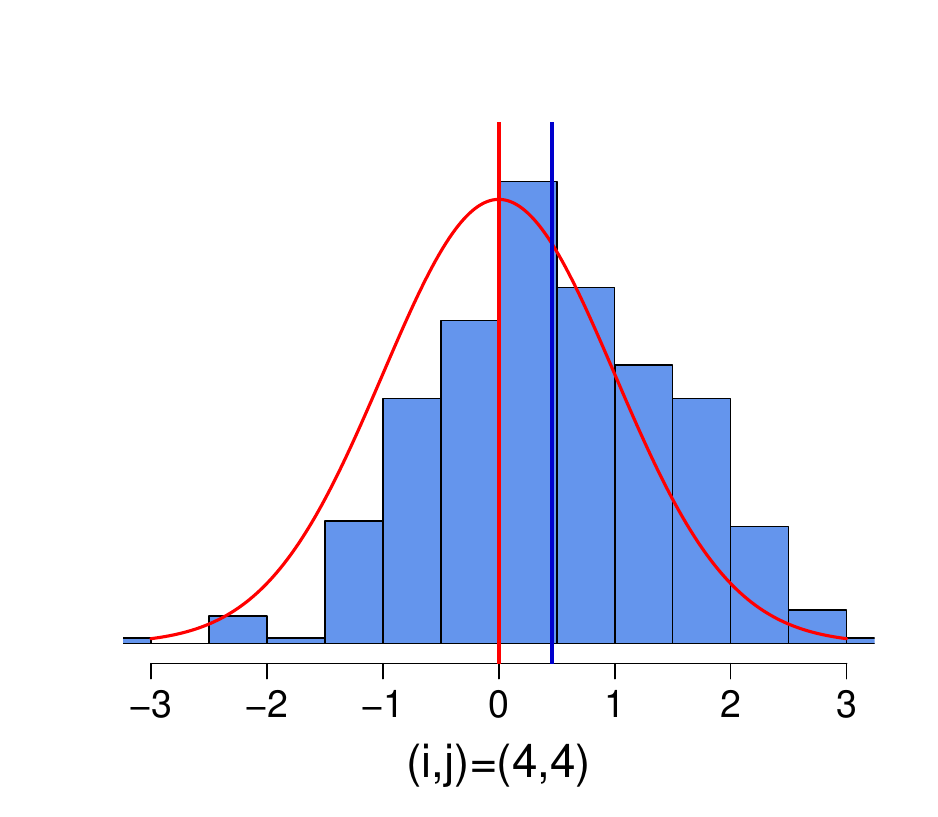}
    \end{minipage}
   \caption*{(a)~~$L_0{:}~ \widehat{\mb{\Omega}}^{\text{US}}$}
 \end{minipage} 
     \hspace{1cm}
 \begin{minipage}{0.3\linewidth}
    \begin{minipage}{0.24\linewidth}
        \centering
        \includegraphics[width=\textwidth]{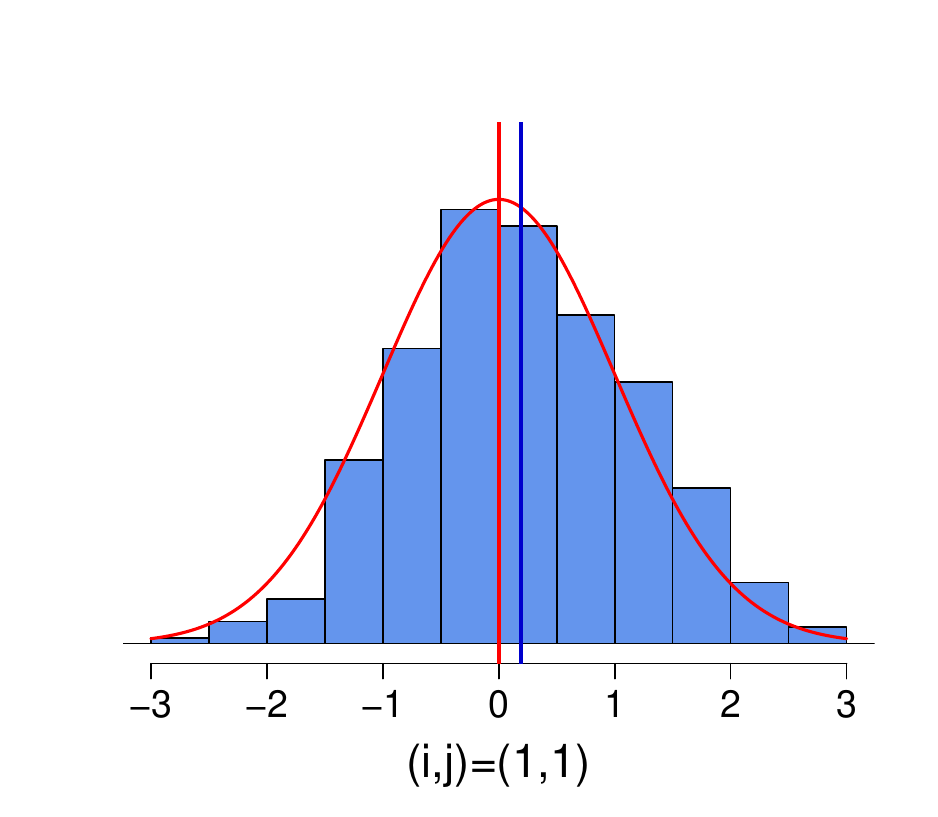}
    \end{minipage}
    \begin{minipage}{0.24\linewidth}
        \centering
        \includegraphics[width=\textwidth]{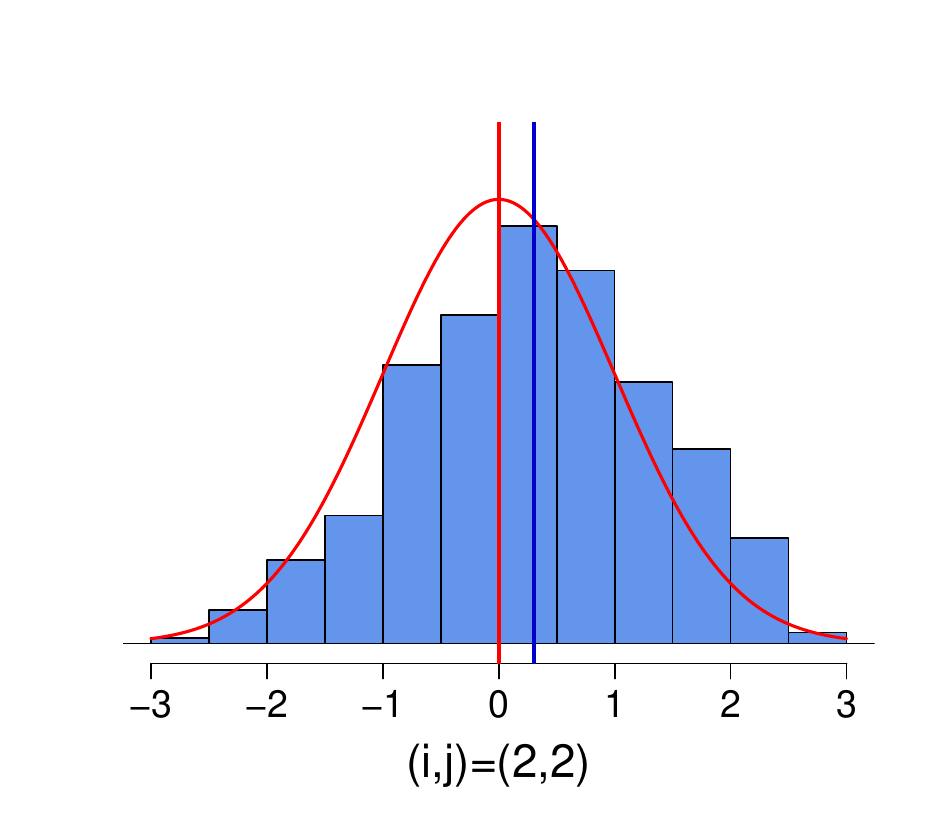}
    \end{minipage}
    \begin{minipage}{0.24\linewidth}
        \centering
        \includegraphics[width=\textwidth]{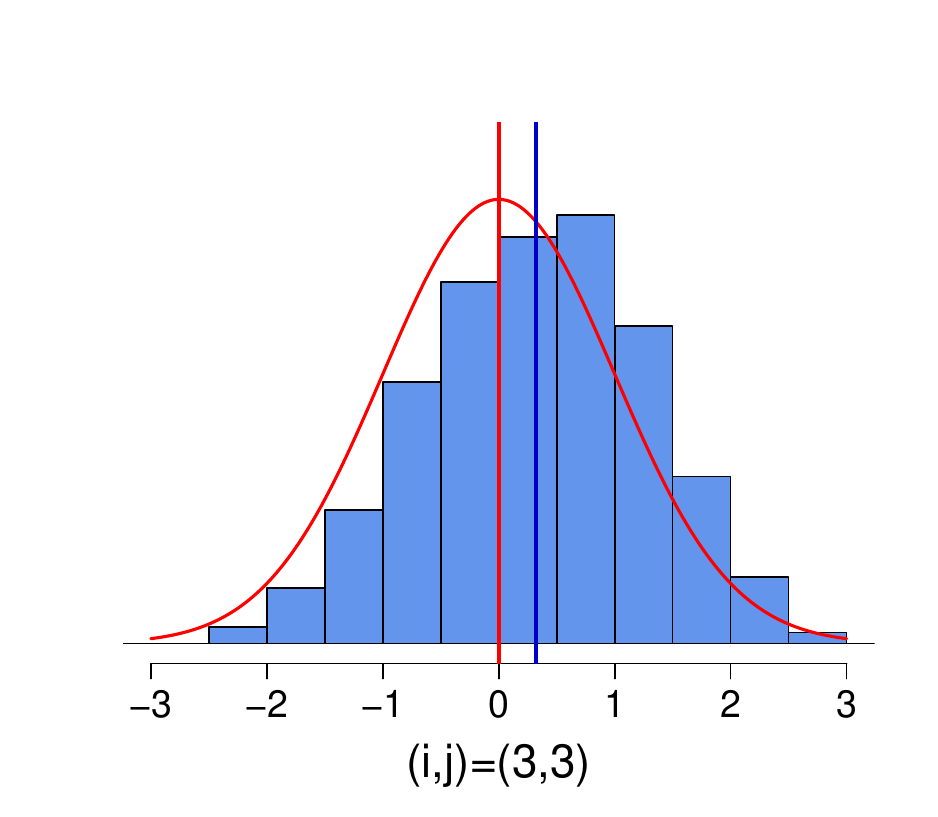}
    \end{minipage}
    \begin{minipage}{0.24\linewidth}
        \centering
        \includegraphics[width=\textwidth]{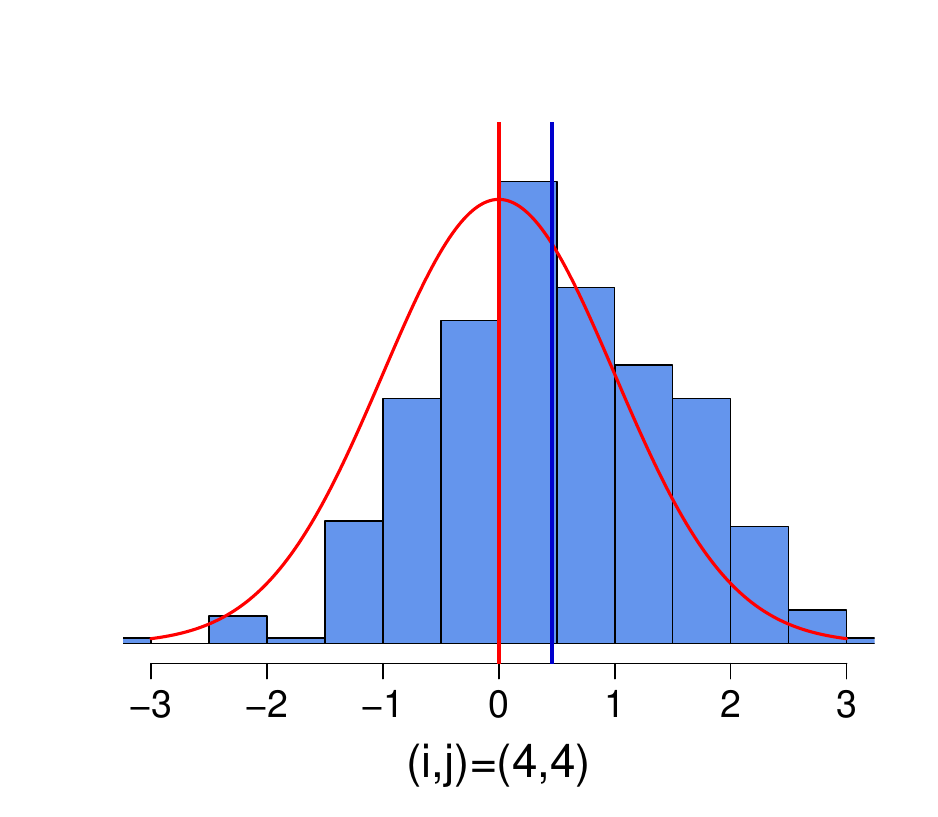}
    \end{minipage}
        \caption*{(b)~~$L_0{:}~ \widehat{\mb{T}}$}
 \end{minipage}   
      \hspace{1cm}
 \begin{minipage}{0.3\linewidth}
    \begin{minipage}{0.24\linewidth}
        \centering
        \includegraphics[width=\textwidth]{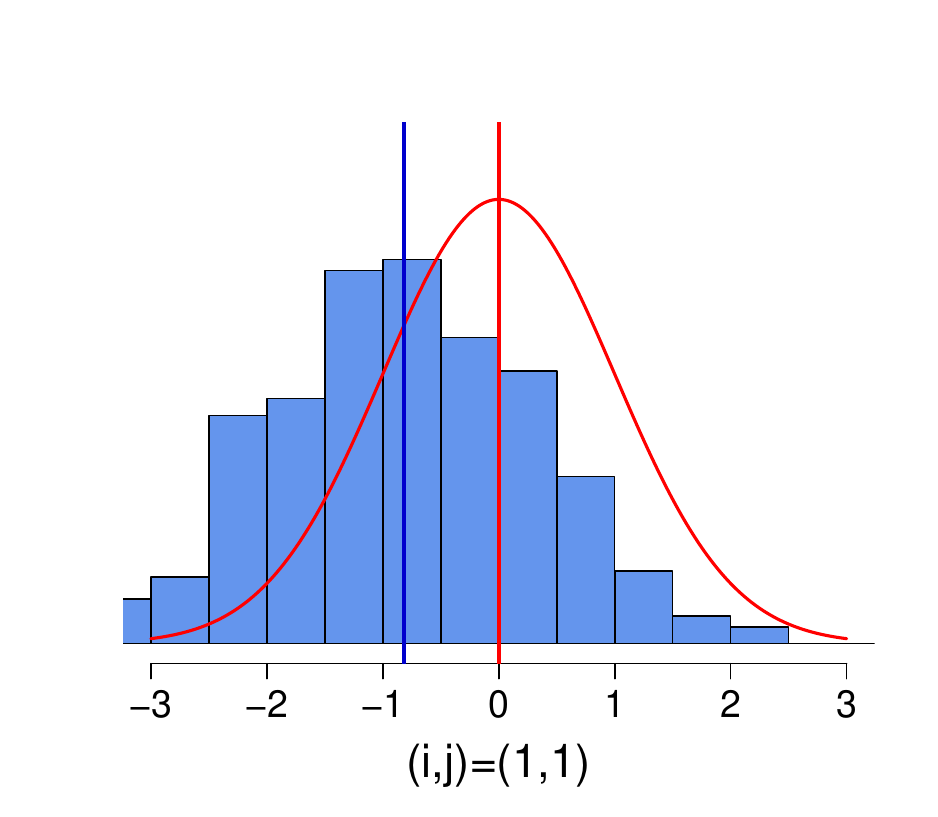}
    \end{minipage}
    \begin{minipage}{0.24\linewidth}
        \centering
        \includegraphics[width=\textwidth]{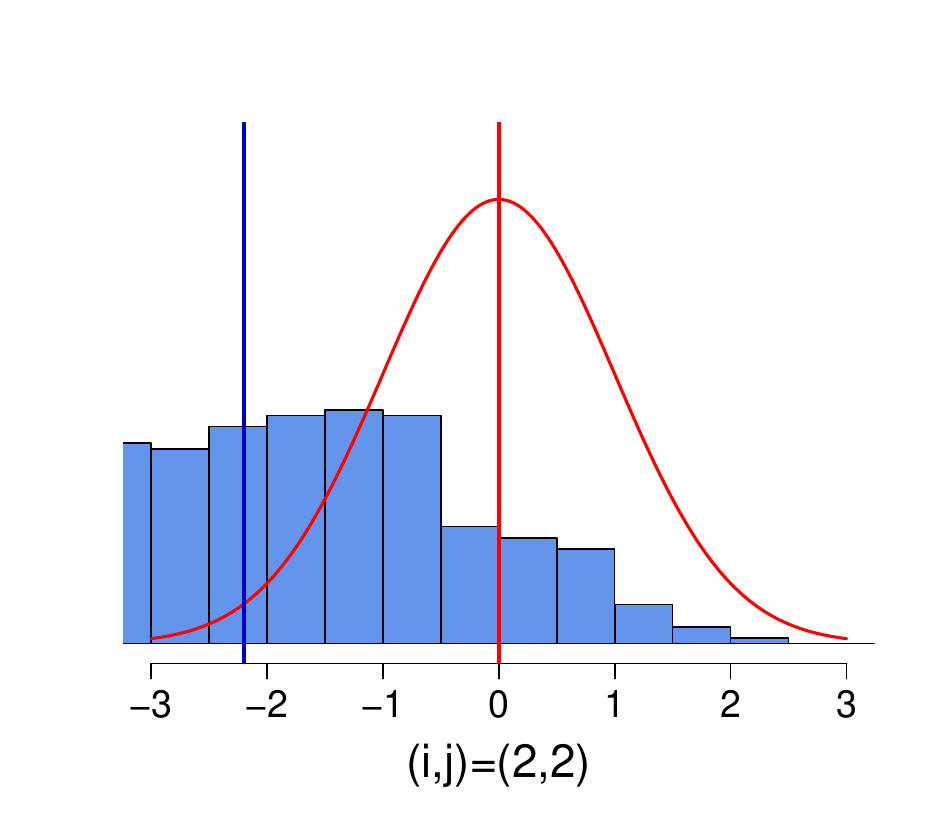}
    \end{minipage}
    \begin{minipage}{0.24\linewidth}
        \centering
        \includegraphics[width=\textwidth]{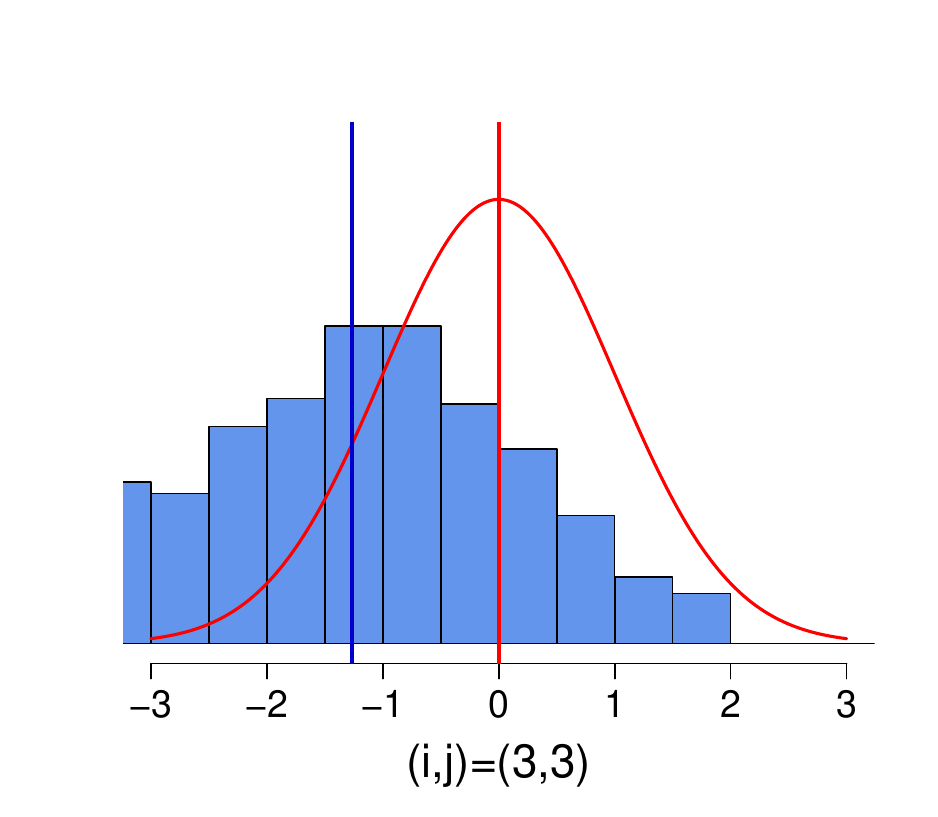}
    \end{minipage}
    \begin{minipage}{0.24\linewidth}
        \centering
        \includegraphics[width=\textwidth]{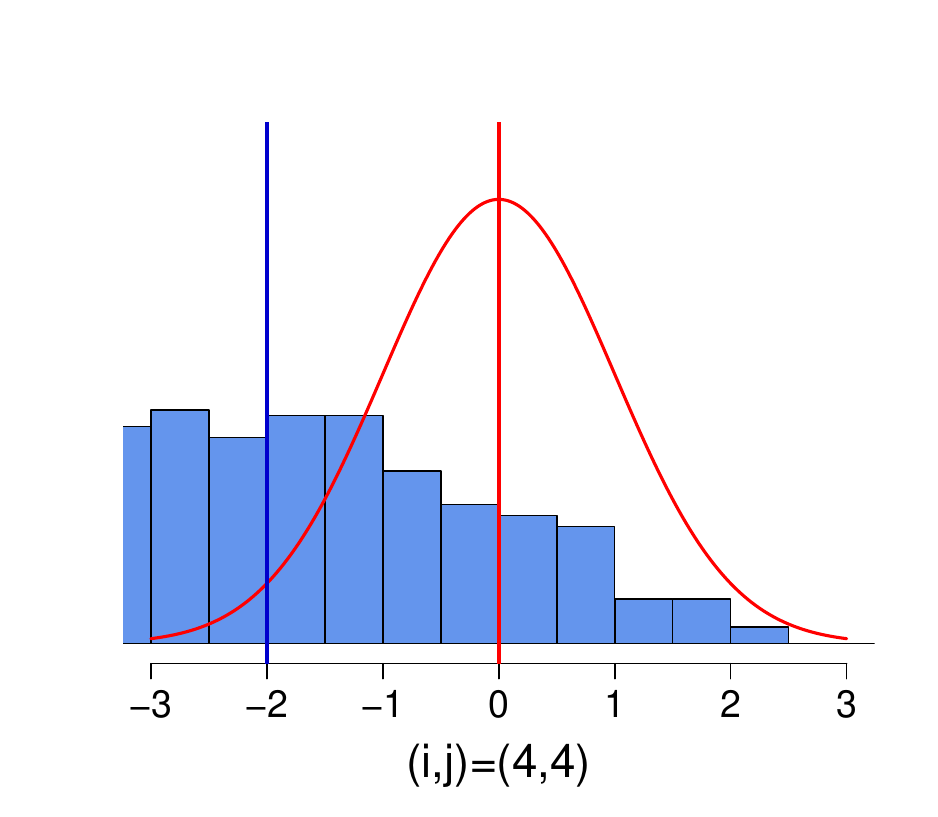}
    \end{minipage}
    \caption*{(c)~~$L_1{:}~ \widehat{\mb{T}}$}
     \end{minipage}   
     \caption{Histograms of $\big(\sqrt{n}(\widehat{\mb{\Omega}}_{ij}^{(m)}-\mb{\Omega}_{ij})/\widehat{\sigma}_{\mb{\Omega}_{ij}}^{(m)}\big)_{m=1}^{400}$ under sub-Gaussian random graph settings.}
\end{sidewaysfigure}

 \begin{sidewaysfigure}[th!]
  \caption*{$n=200, p=200$}
      \vspace{-0.43cm}
 \begin{minipage}{0.3\linewidth}
    \begin{minipage}{0.24\linewidth}
        \centering
        \includegraphics[width=\textwidth]{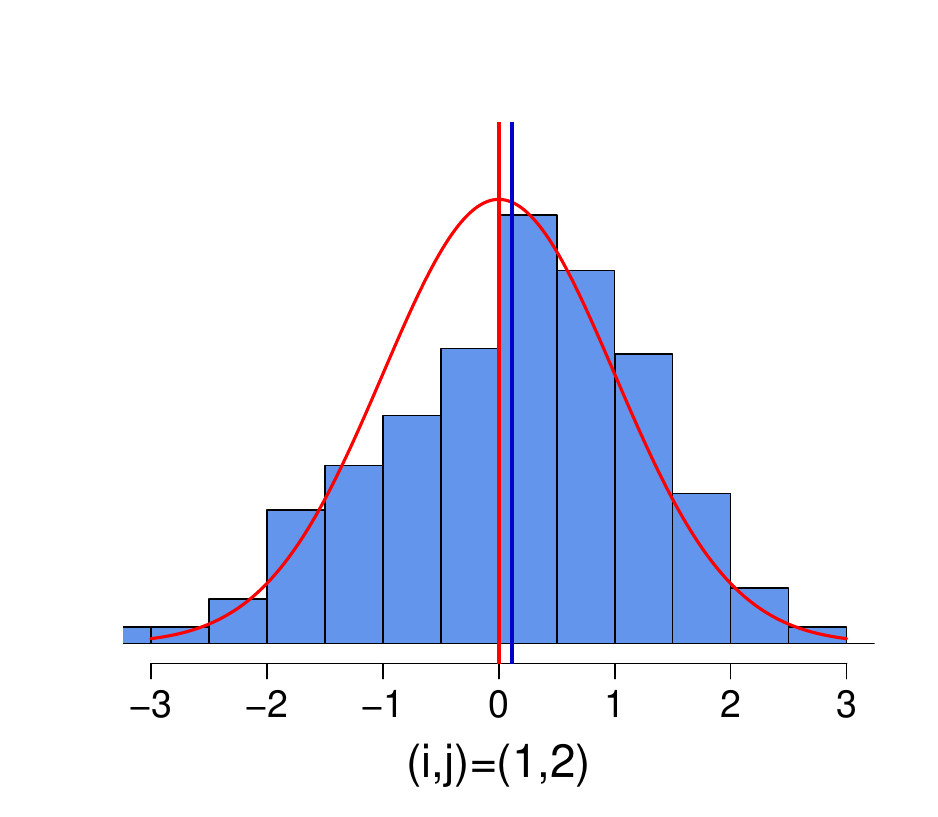}
    \end{minipage}
    \begin{minipage}{0.24\linewidth}
        \centering
        \includegraphics[width=\textwidth]{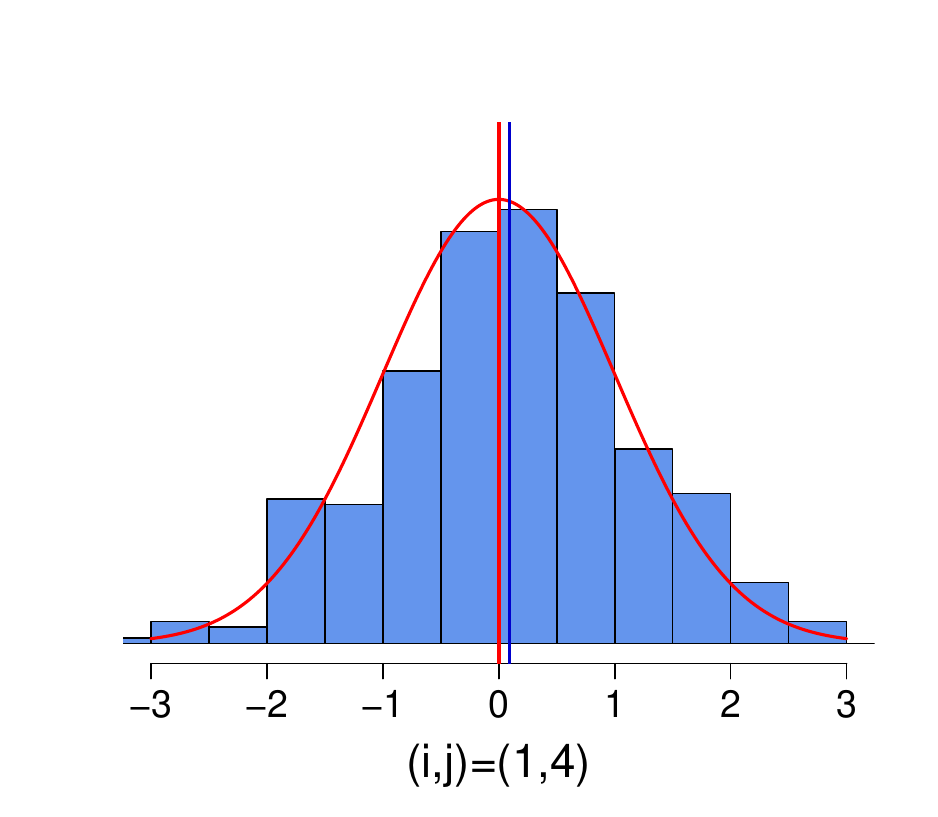}
    \end{minipage}
    \begin{minipage}{0.24\linewidth}
        \centering
        \includegraphics[width=\textwidth]{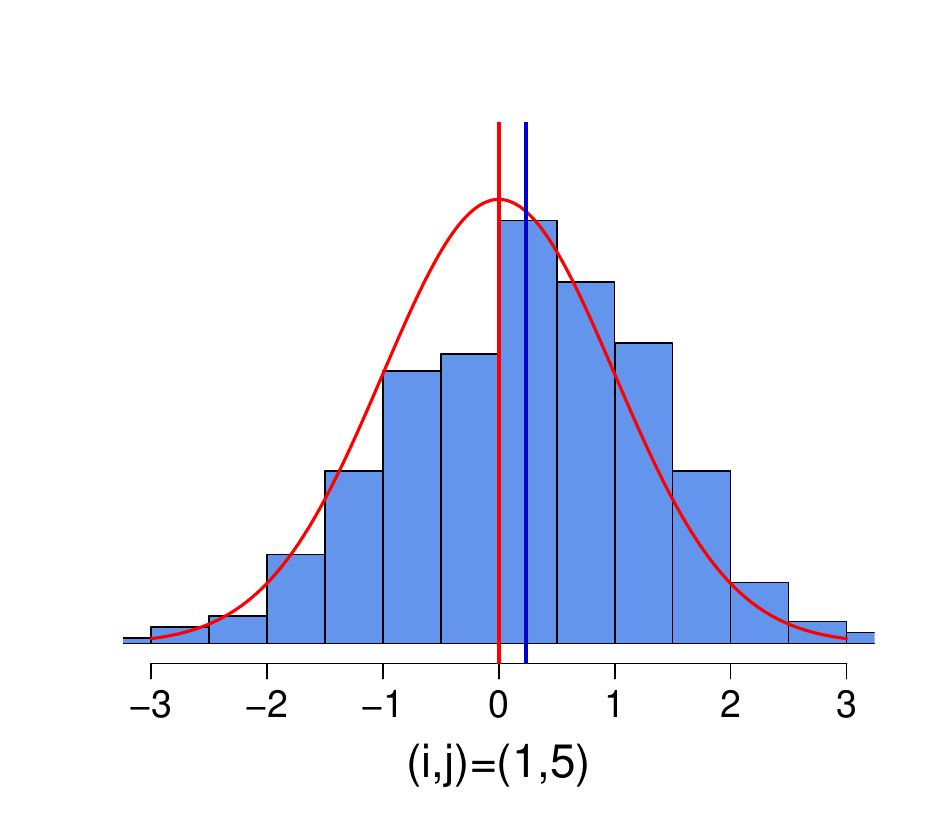}
    \end{minipage}
    \begin{minipage}{0.24\linewidth}
        \centering
        \includegraphics[width=\textwidth]{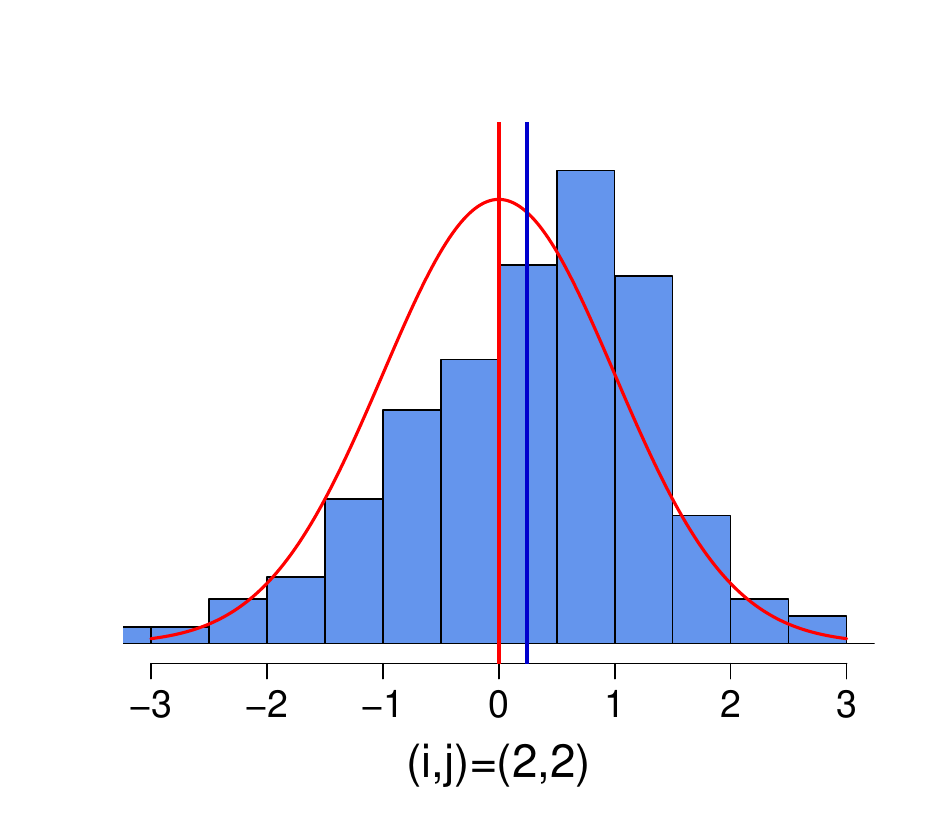}
    \end{minipage}
 \end{minipage}
 \hspace{1cm}
 \begin{minipage}{0.3\linewidth}
    \begin{minipage}{0.24\linewidth}
        \centering
        \includegraphics[width=\textwidth]{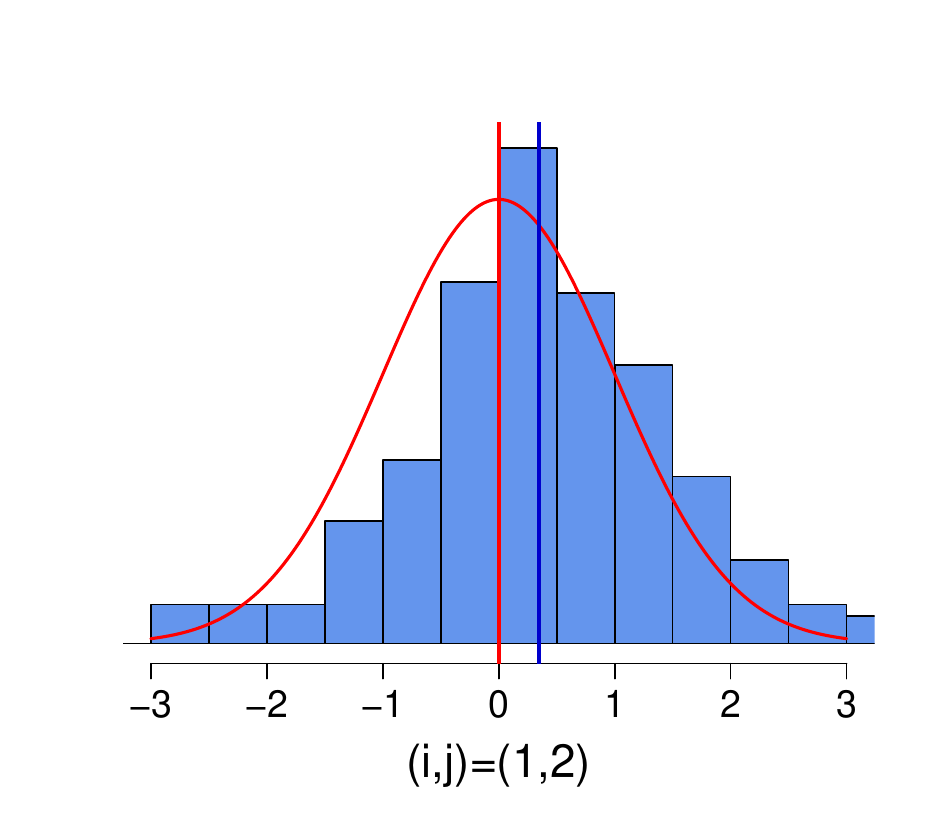}
    \end{minipage}
    \begin{minipage}{0.24\linewidth}
        \centering
        \includegraphics[width=\textwidth]{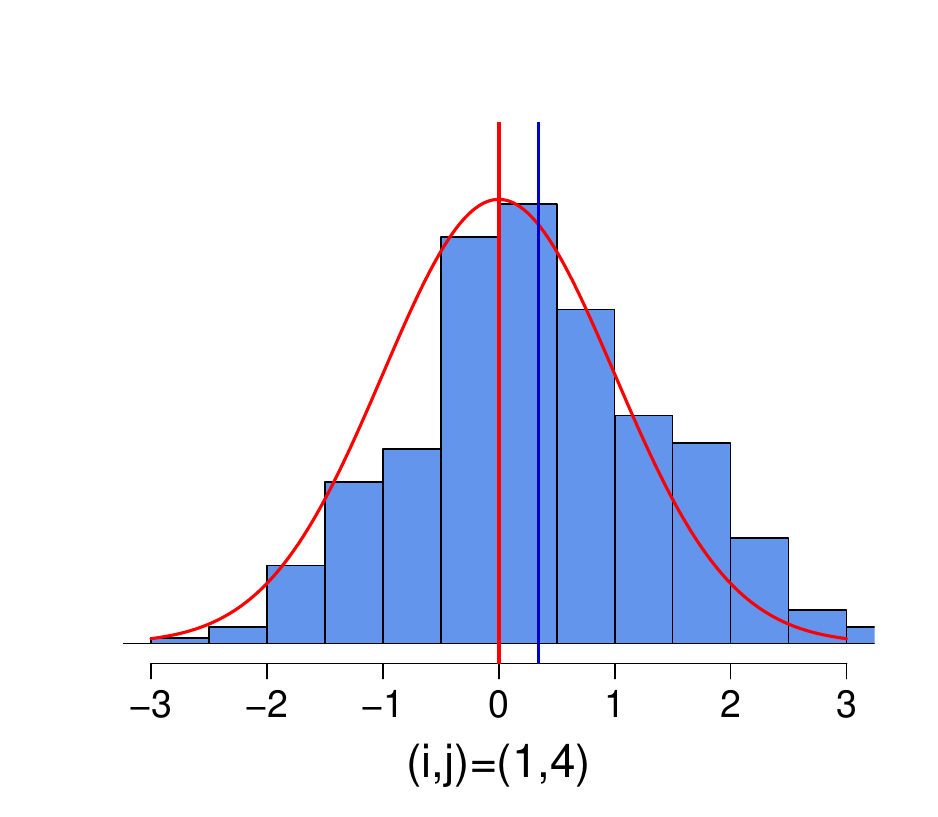}
    \end{minipage}
    \begin{minipage}{0.24\linewidth}
        \centering
        \includegraphics[width=\textwidth]{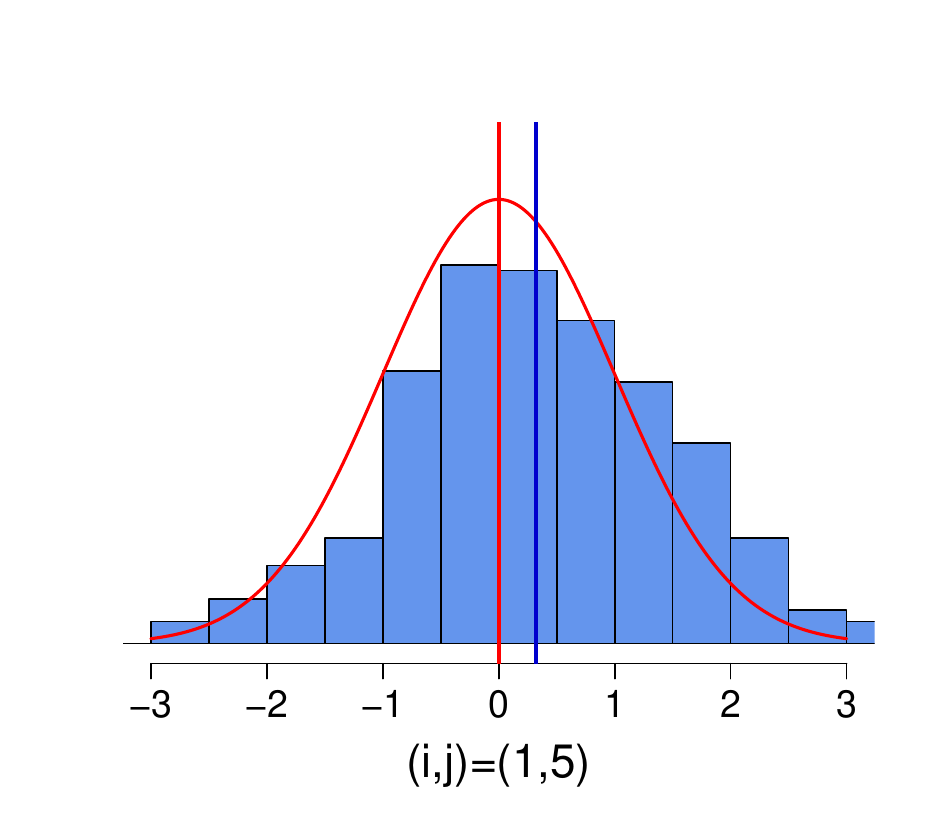}
    \end{minipage}
    \begin{minipage}{0.24\linewidth}
        \centering
        \includegraphics[width=\textwidth]{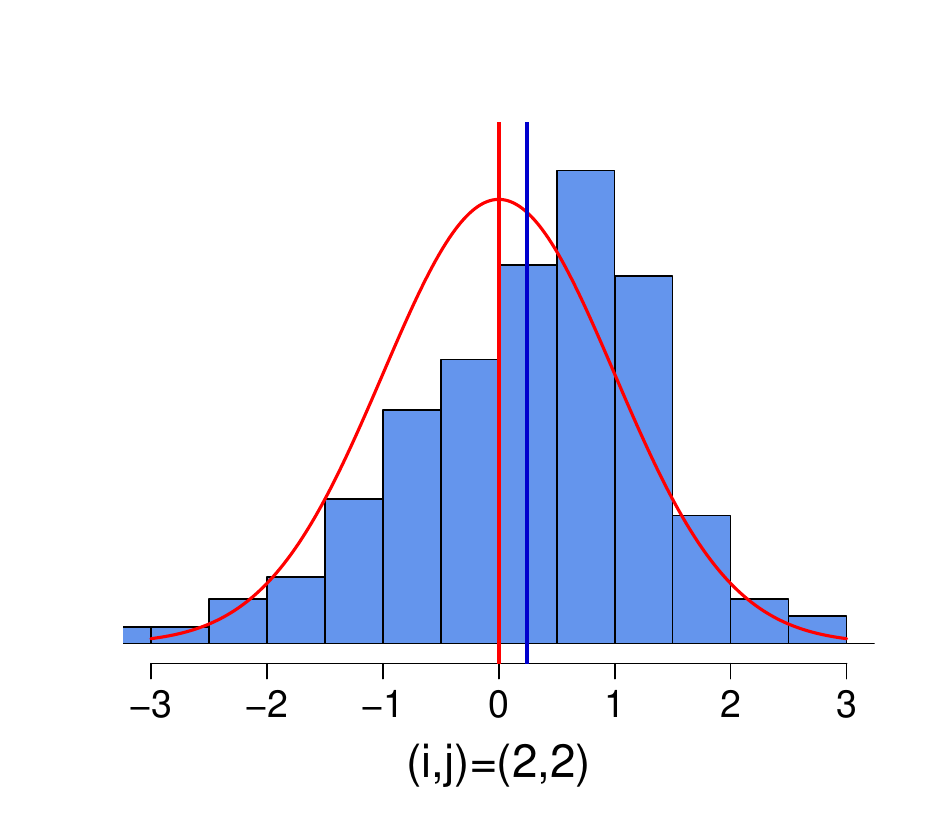}
    \end{minipage}    
 \end{minipage}
  \hspace{1cm}
 \begin{minipage}{0.3\linewidth}
     \begin{minipage}{0.24\linewidth}
        \centering
        \includegraphics[width=\textwidth]{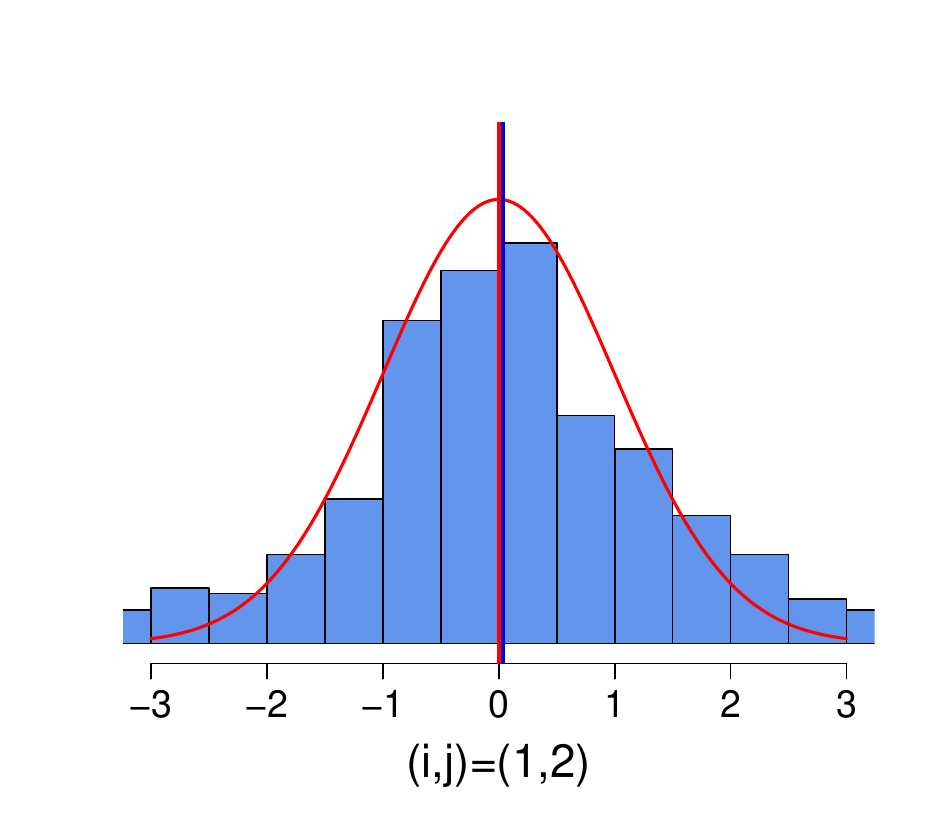}
    \end{minipage}
    \begin{minipage}{0.24\linewidth}
        \centering
        \includegraphics[width=\textwidth]{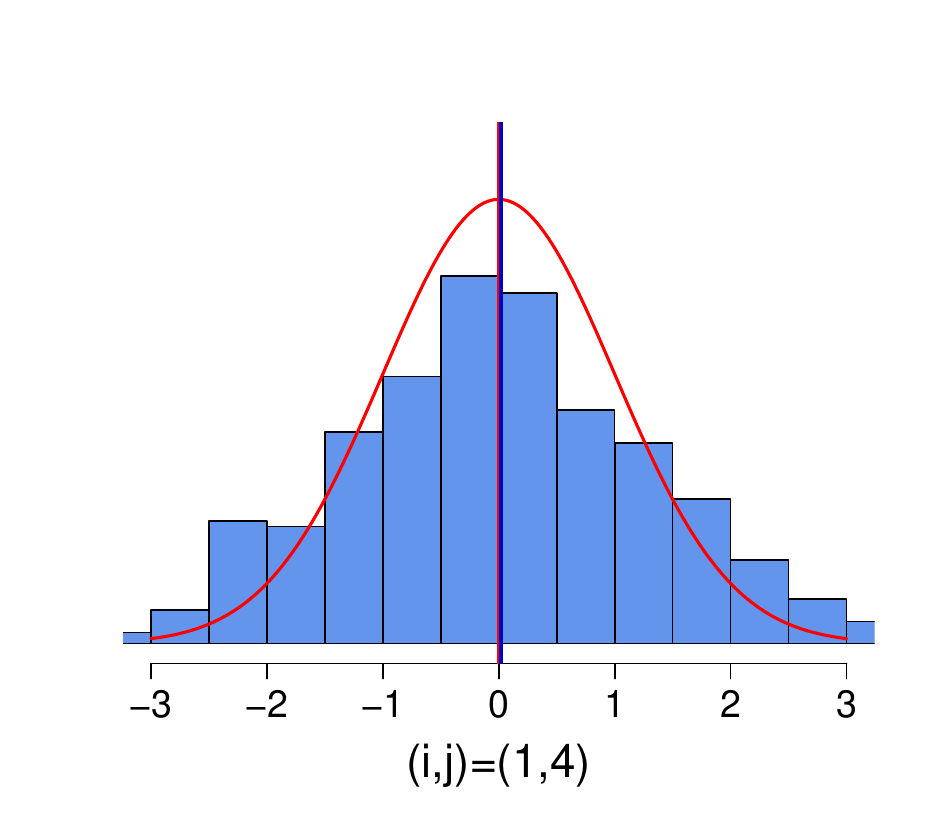}
    \end{minipage}
    \begin{minipage}{0.24\linewidth}
        \centering
        \includegraphics[width=\textwidth]{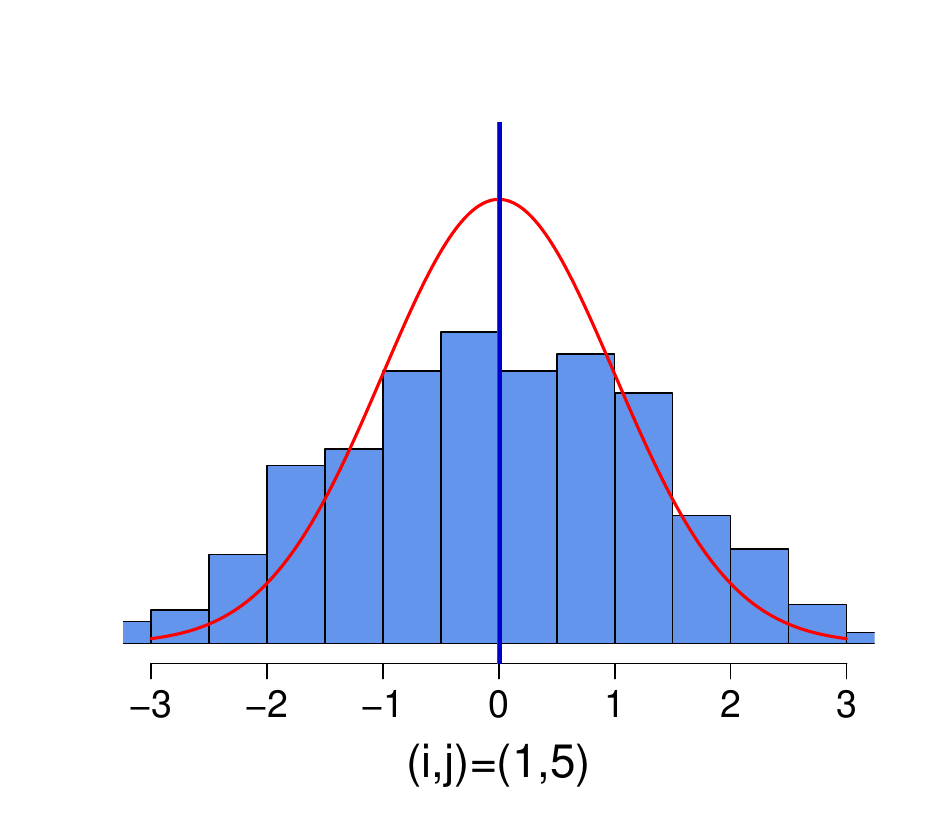}
    \end{minipage}
    \begin{minipage}{0.24\linewidth}
        \centering
        \includegraphics[width=\textwidth]{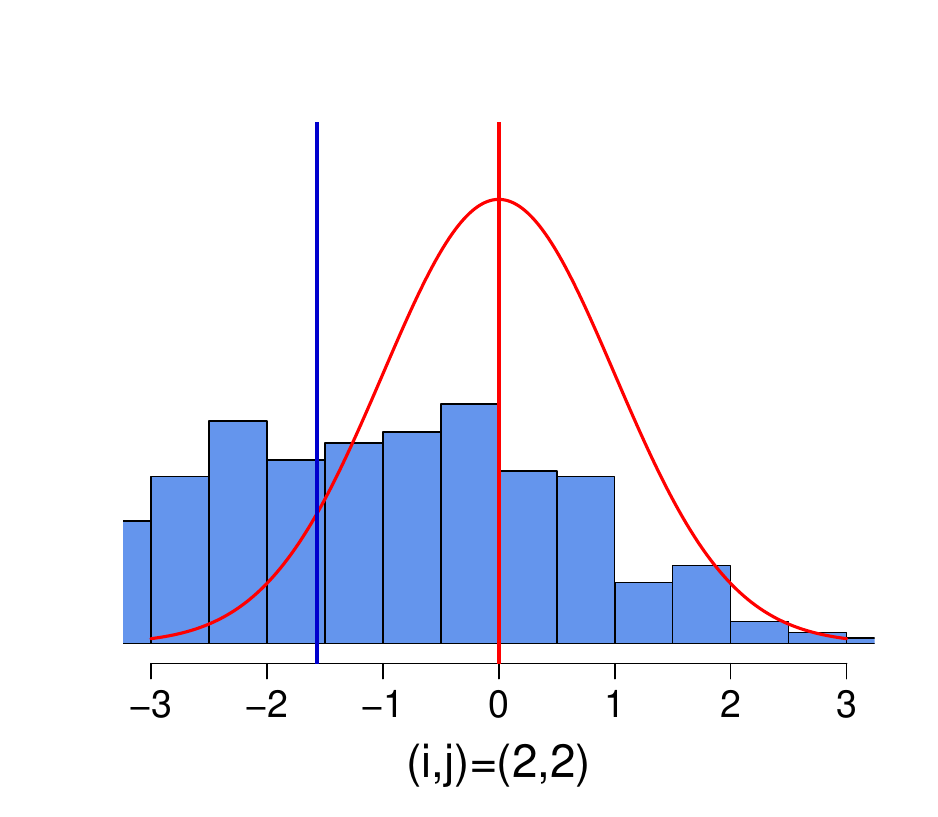}
    \end{minipage}
 \end{minipage}

  \caption*{$n=400, p=200$}
      \vspace{-0.43cm}
 \begin{minipage}{0.3\linewidth}
    \begin{minipage}{0.24\linewidth}
        \centering
        \includegraphics[width=\textwidth]{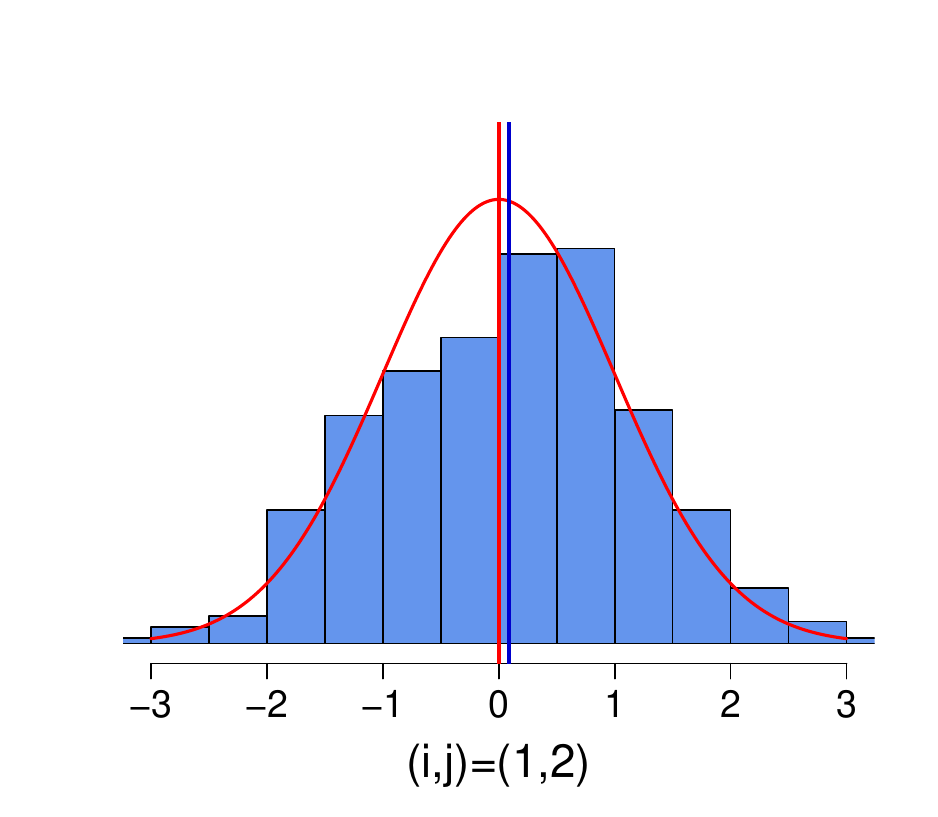}
    \end{minipage}
    \begin{minipage}{0.24\linewidth}
        \centering
        \includegraphics[width=\textwidth]{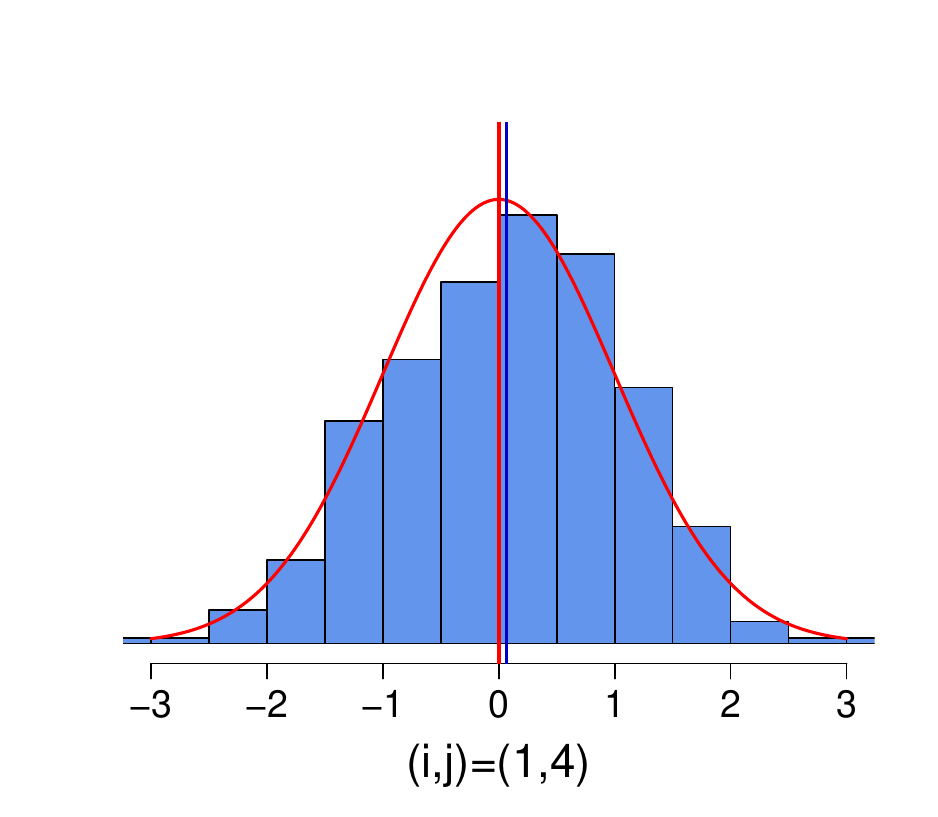}
    \end{minipage}
    \begin{minipage}{0.24\linewidth}
        \centering
        \includegraphics[width=\textwidth]{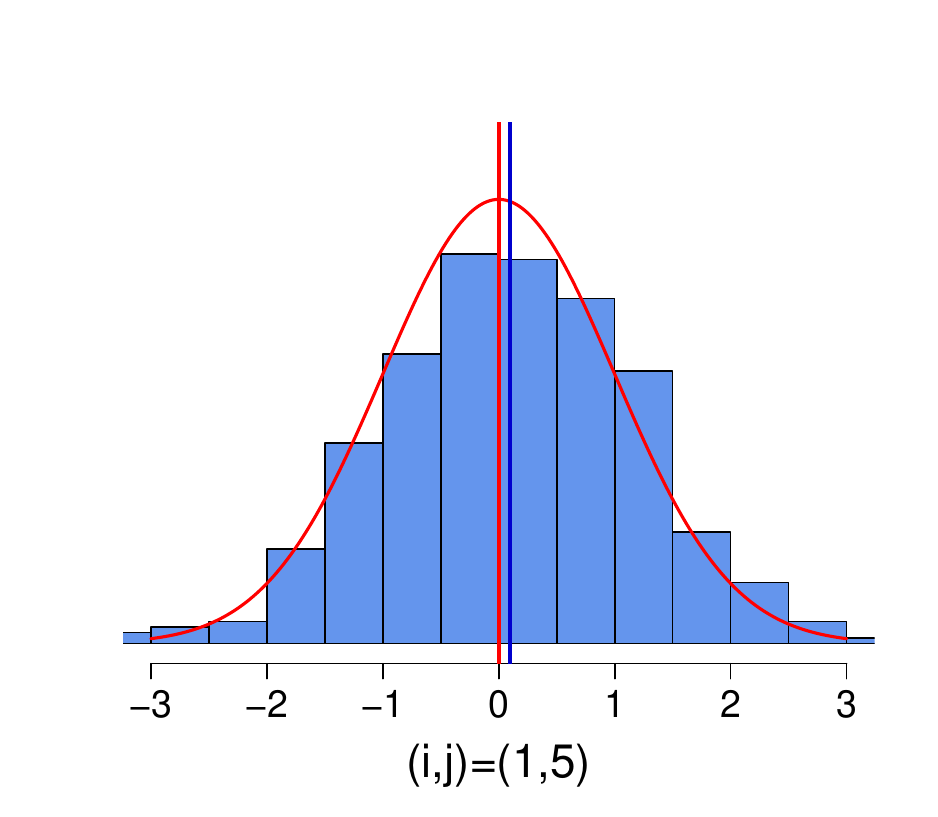}
    \end{minipage}
    \begin{minipage}{0.24\linewidth}
        \centering
        \includegraphics[width=\textwidth]{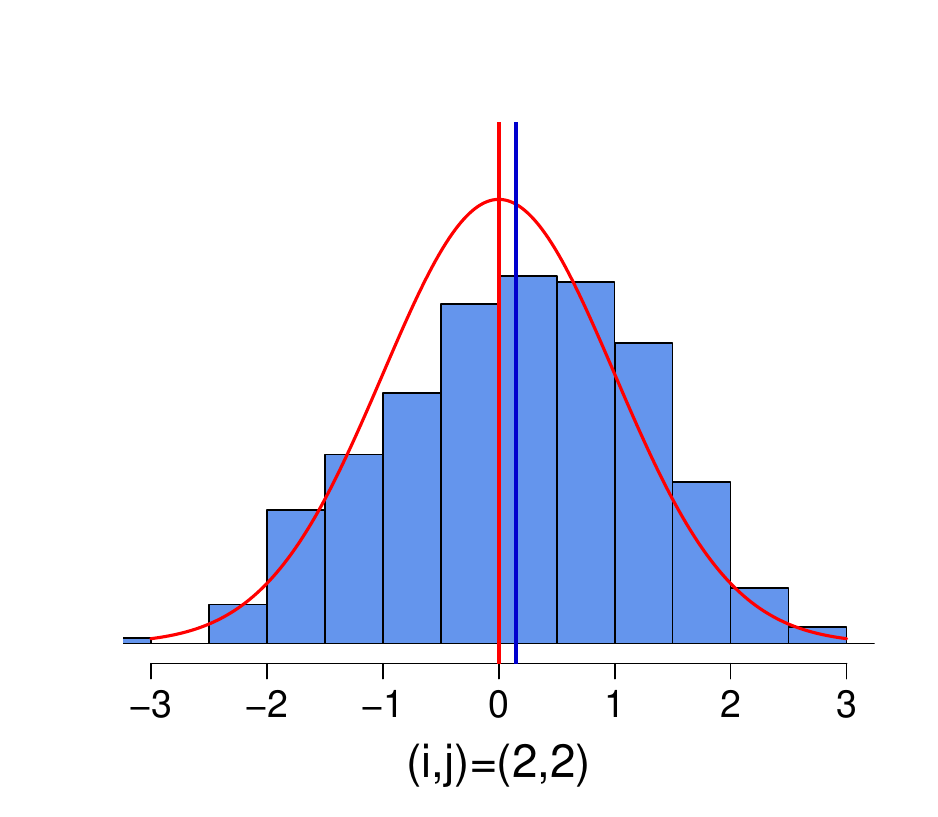}
    \end{minipage}
 \end{minipage}  
     \hspace{1cm}
 \begin{minipage}{0.3\linewidth}
    \begin{minipage}{0.24\linewidth}
        \centering
        \includegraphics[width=\textwidth]{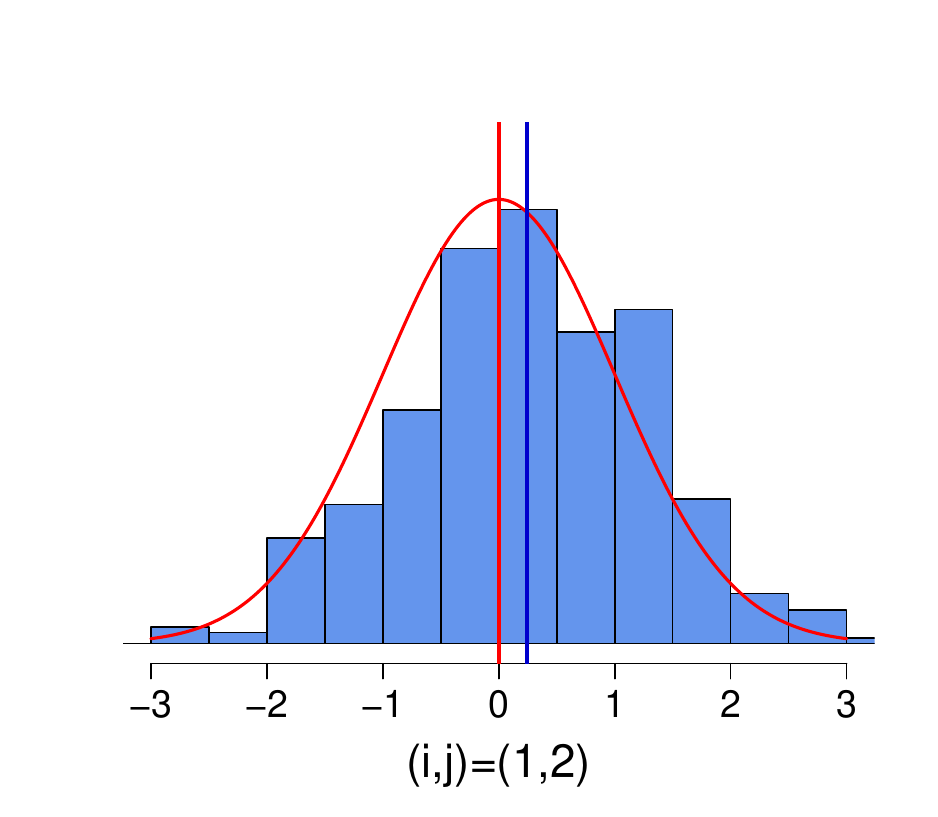}
    \end{minipage}
    \begin{minipage}{0.24\linewidth}
        \centering
        \includegraphics[width=\textwidth]{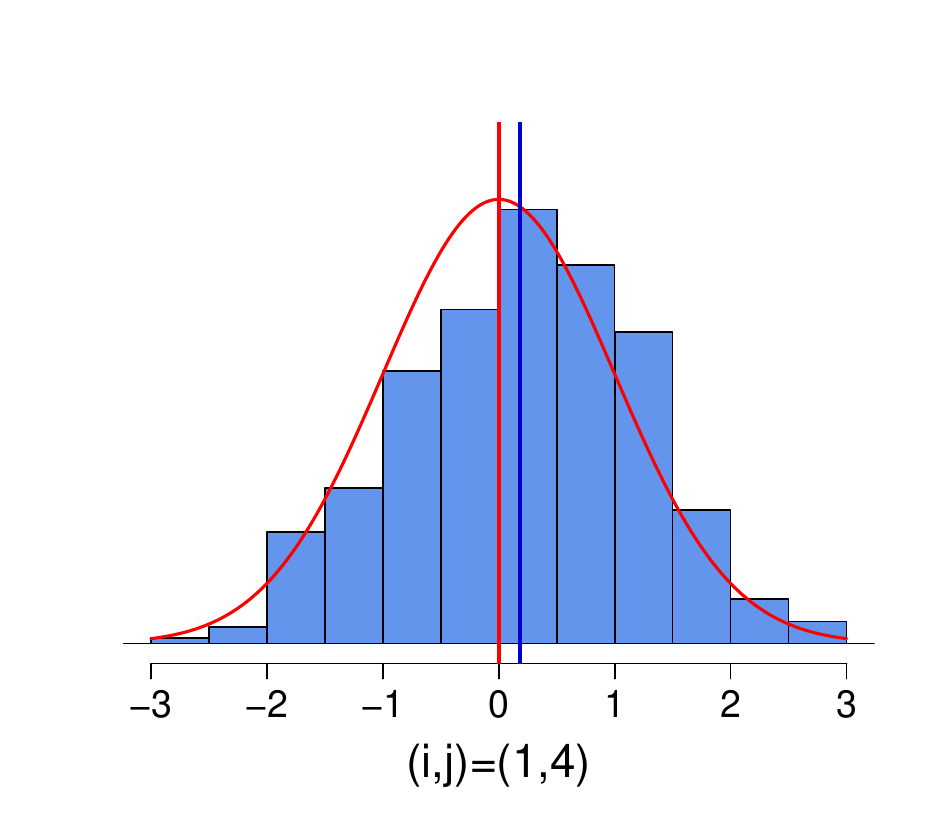}
    \end{minipage}
    \begin{minipage}{0.24\linewidth}
        \centering
        \includegraphics[width=\textwidth]{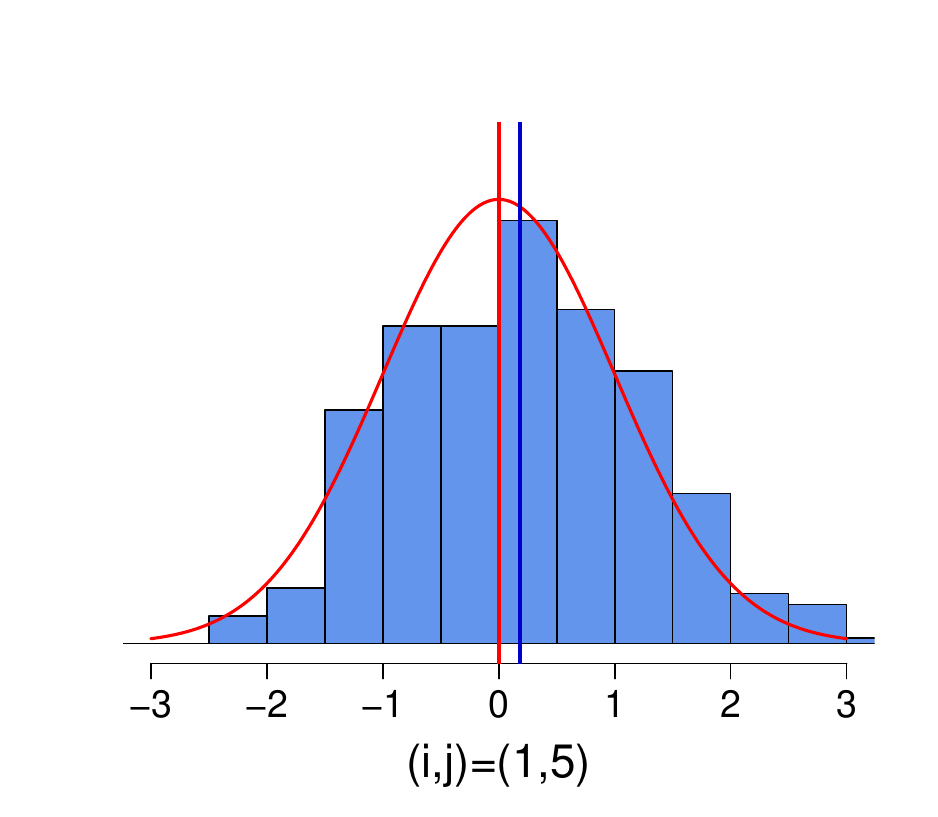}
    \end{minipage}
    \begin{minipage}{0.24\linewidth}
        \centering
        \includegraphics[width=\textwidth]{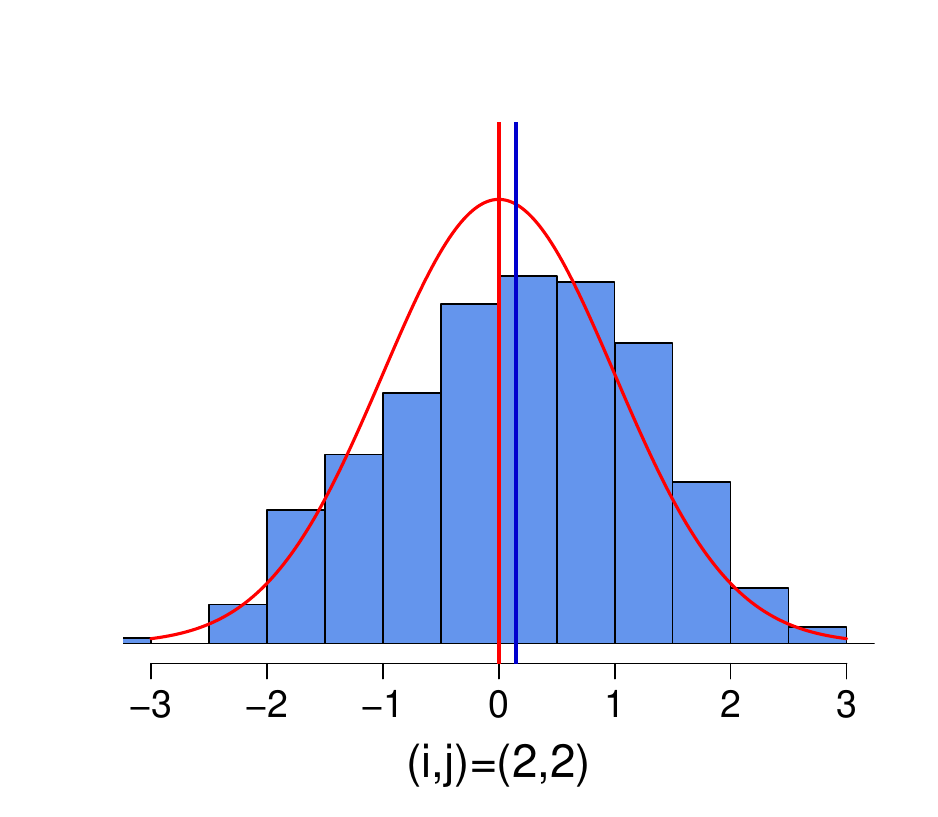}
    \end{minipage}
  \end{minipage}  
    \hspace{1cm}
 \begin{minipage}{0.3\linewidth}
    \begin{minipage}{0.24\linewidth}
        \centering
        \includegraphics[width=\textwidth]{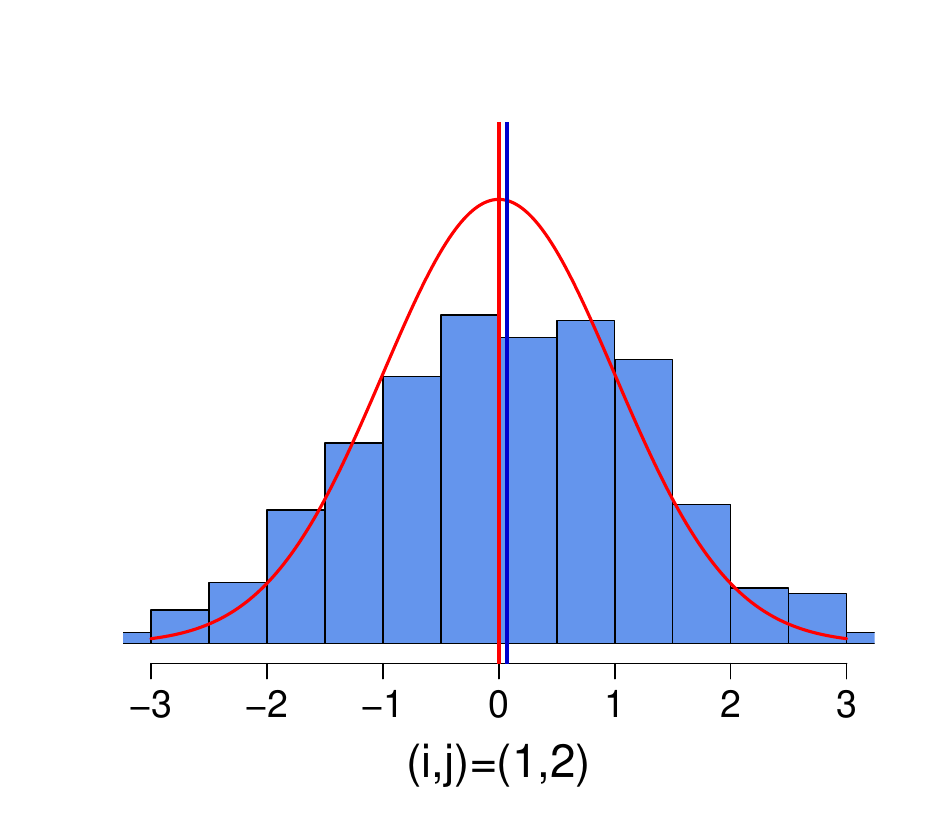}
    \end{minipage}
    \begin{minipage}{0.24\linewidth}
        \centering
        \includegraphics[width=\textwidth]{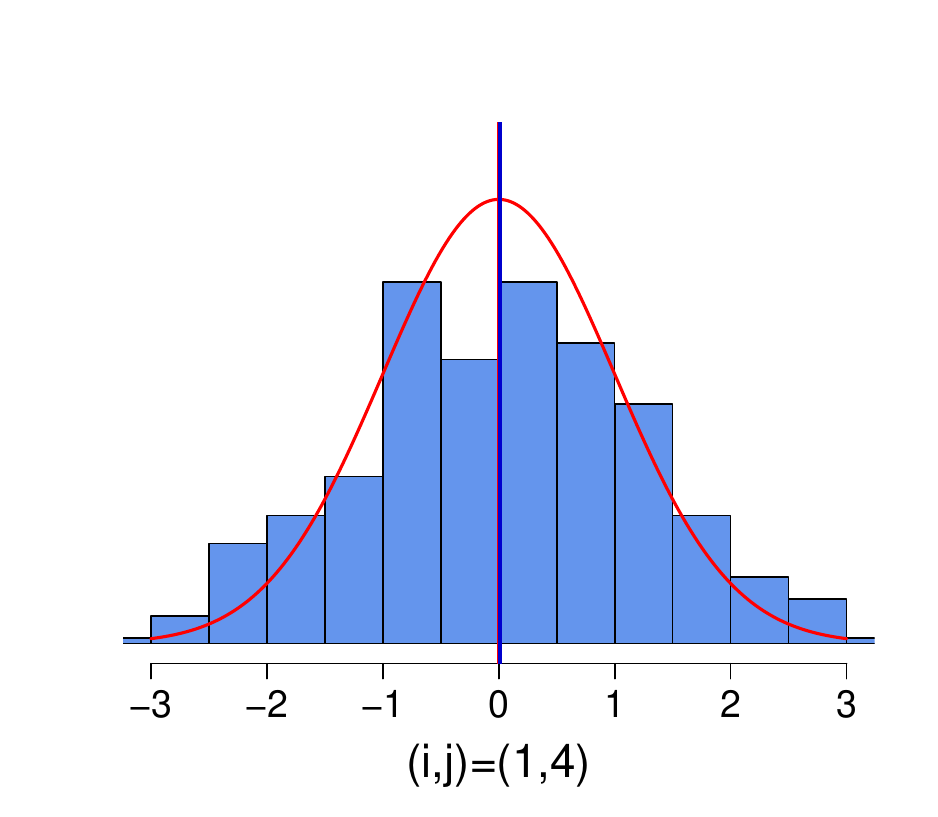}
    \end{minipage}
    \begin{minipage}{0.24\linewidth}
        \centering
        \includegraphics[width=\textwidth]{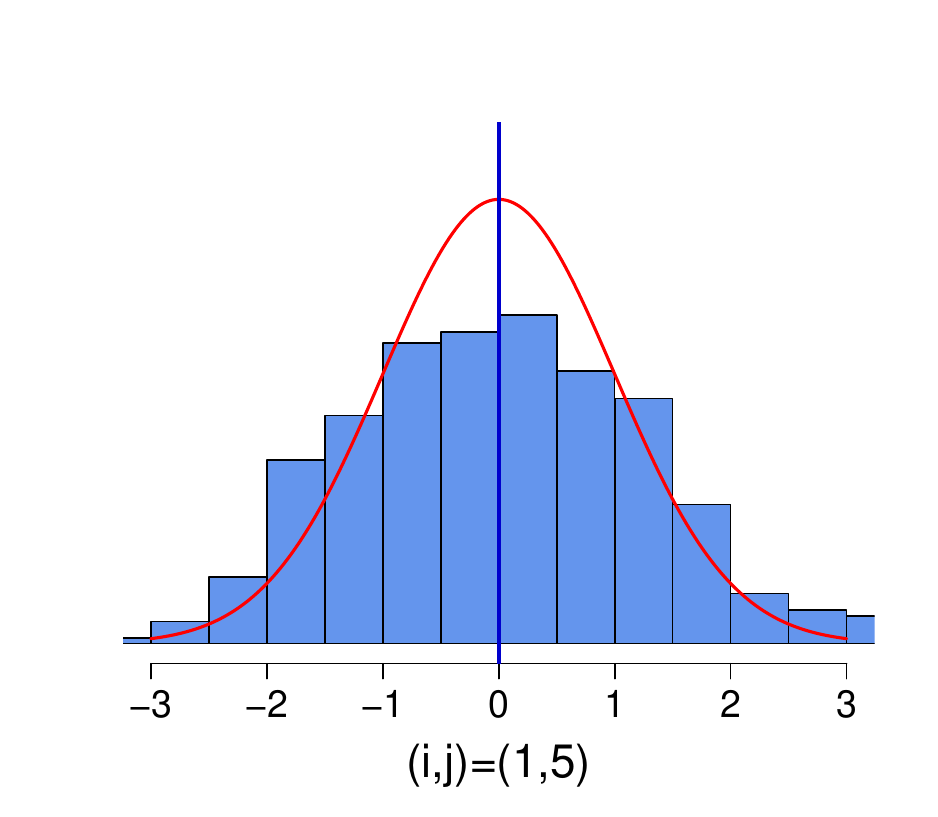}
    \end{minipage}
    \begin{minipage}{0.24\linewidth}
        \centering
        \includegraphics[width=\textwidth]{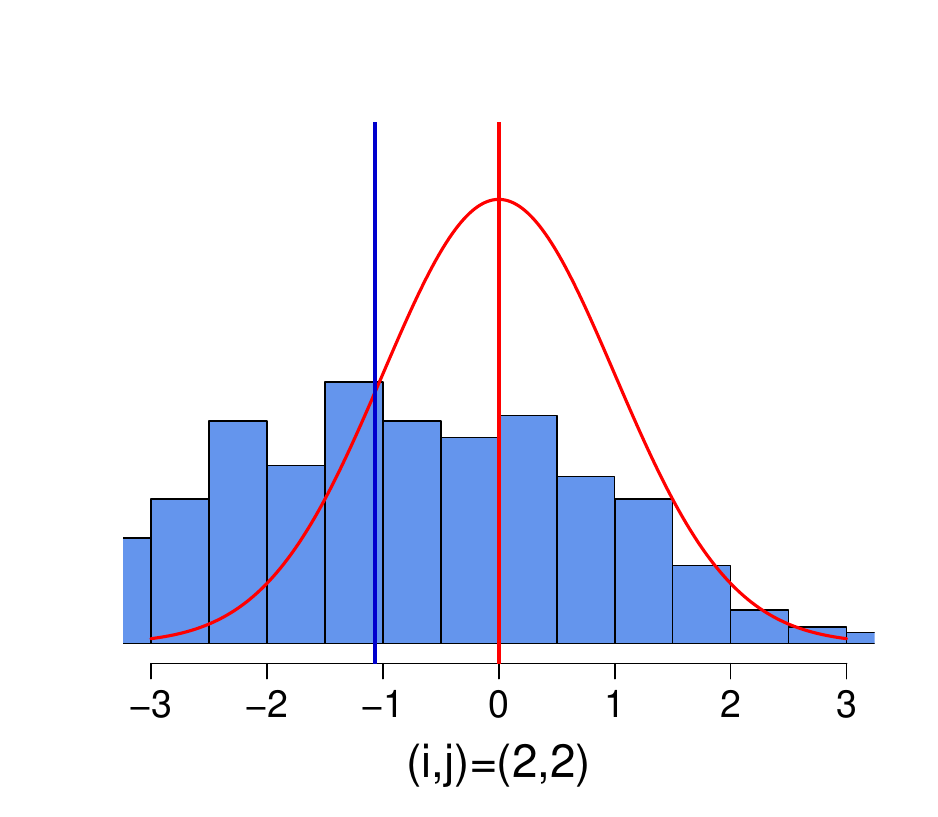}
    \end{minipage}
 \end{minipage}

  \caption*{$n=800, p=200$}
      \vspace{-0.43cm}
 \begin{minipage}{0.3\linewidth}
    \begin{minipage}{0.24\linewidth}
        \centering
        \includegraphics[width=\textwidth]{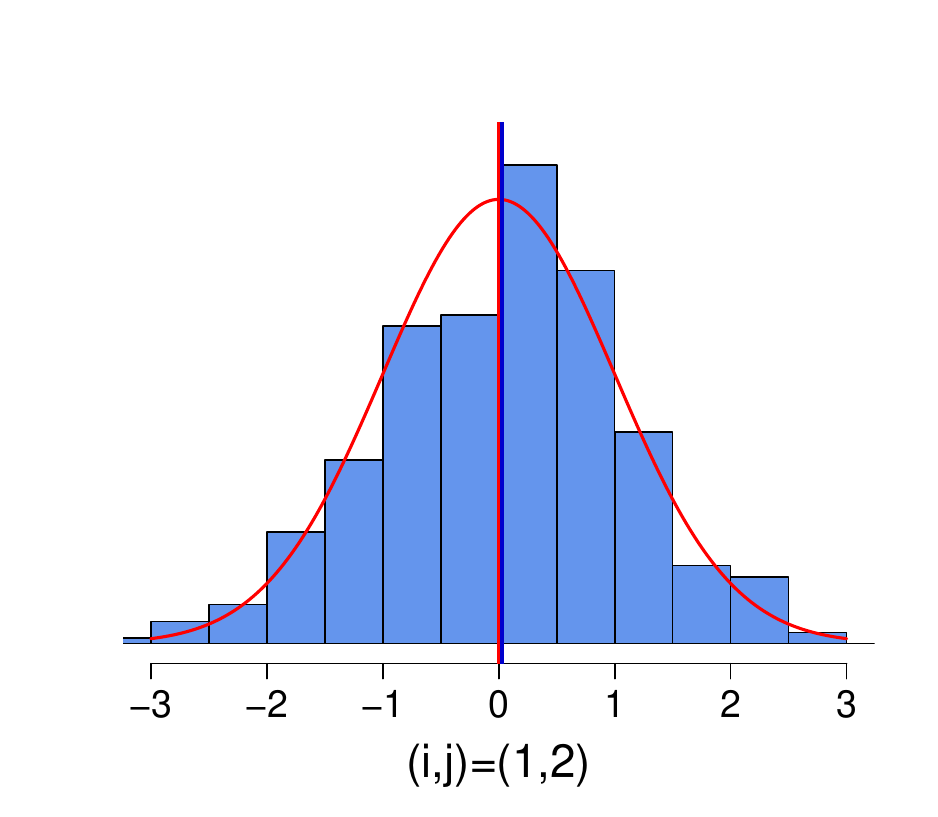}
    \end{minipage}
    \begin{minipage}{0.24\linewidth}
        \centering
        \includegraphics[width=\textwidth]{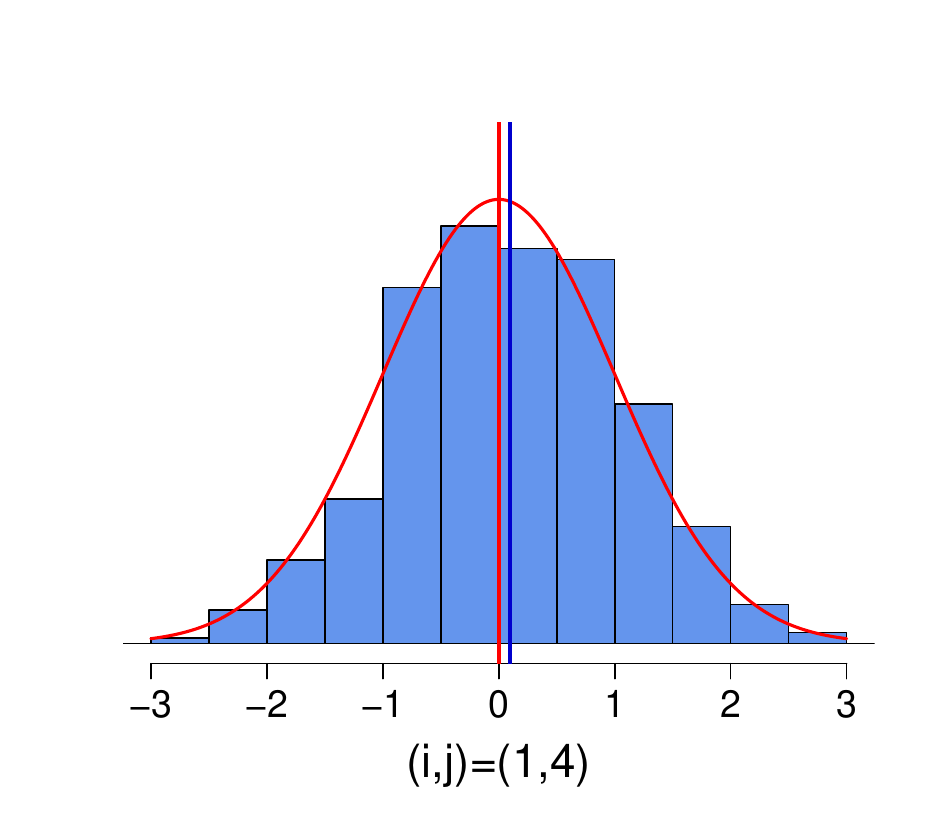}
    \end{minipage}
    \begin{minipage}{0.24\linewidth}
        \centering
        \includegraphics[width=\textwidth]{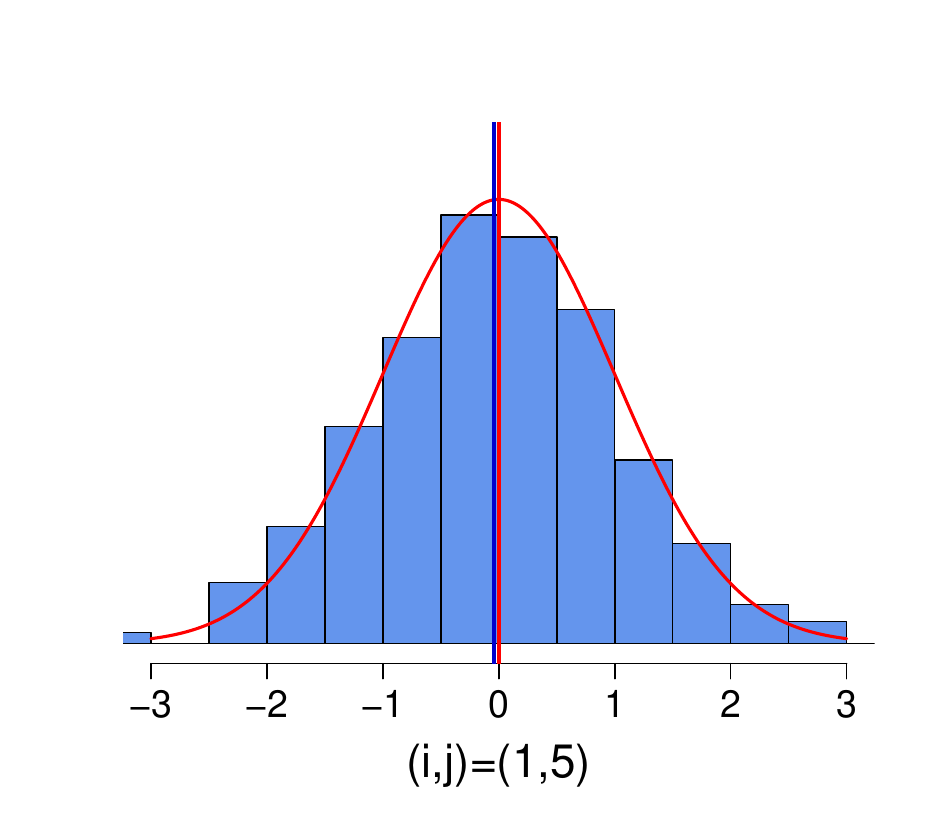}
    \end{minipage}
    \begin{minipage}{0.24\linewidth}
        \centering
        \includegraphics[width=\textwidth]{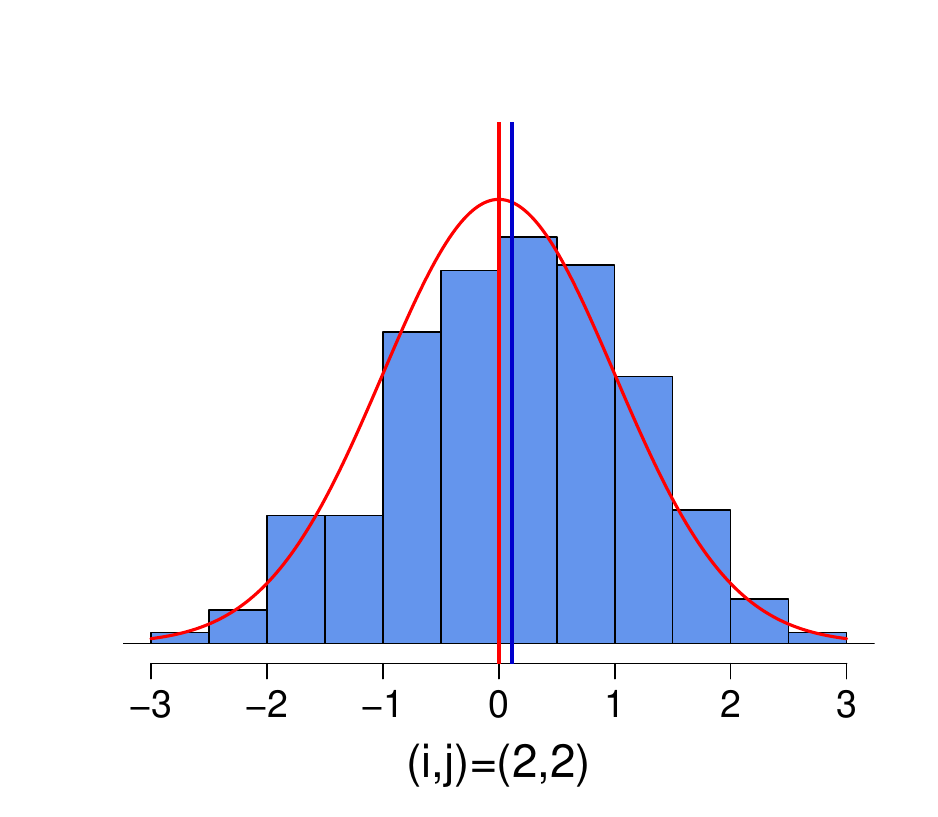}
    \end{minipage}
 \end{minipage} 
     \hspace{1cm}
 \begin{minipage}{0.3\linewidth}
    \begin{minipage}{0.24\linewidth}
        \centering
        \includegraphics[width=\textwidth]{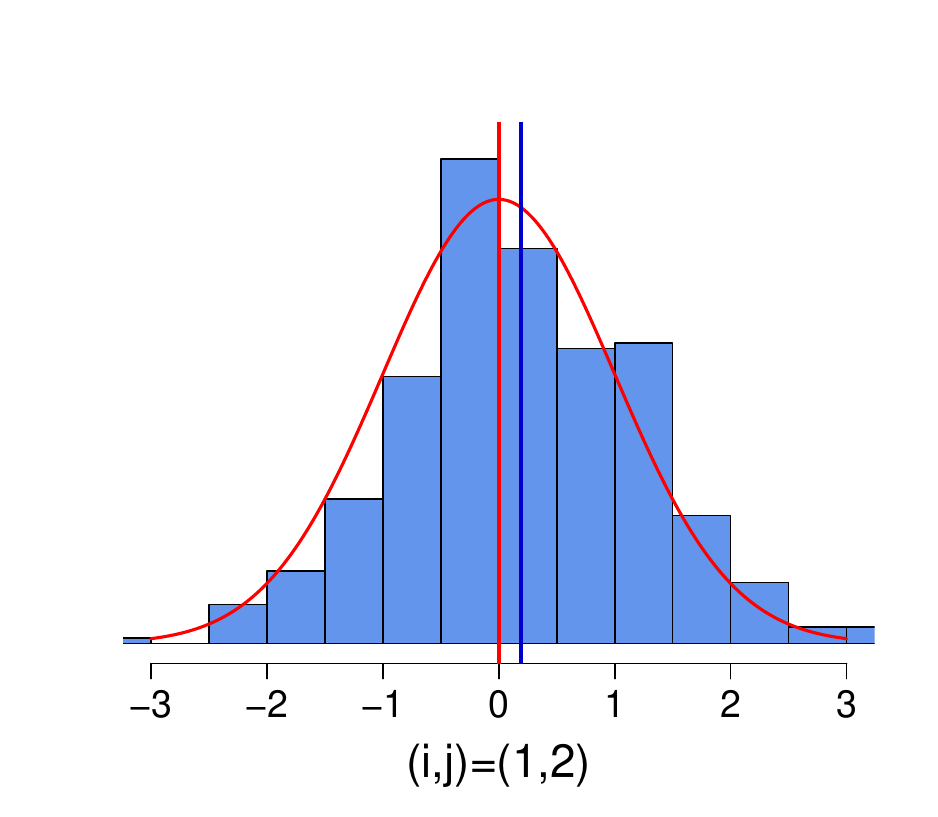}
    \end{minipage}
    \begin{minipage}{0.24\linewidth}
        \centering
        \includegraphics[width=\textwidth]{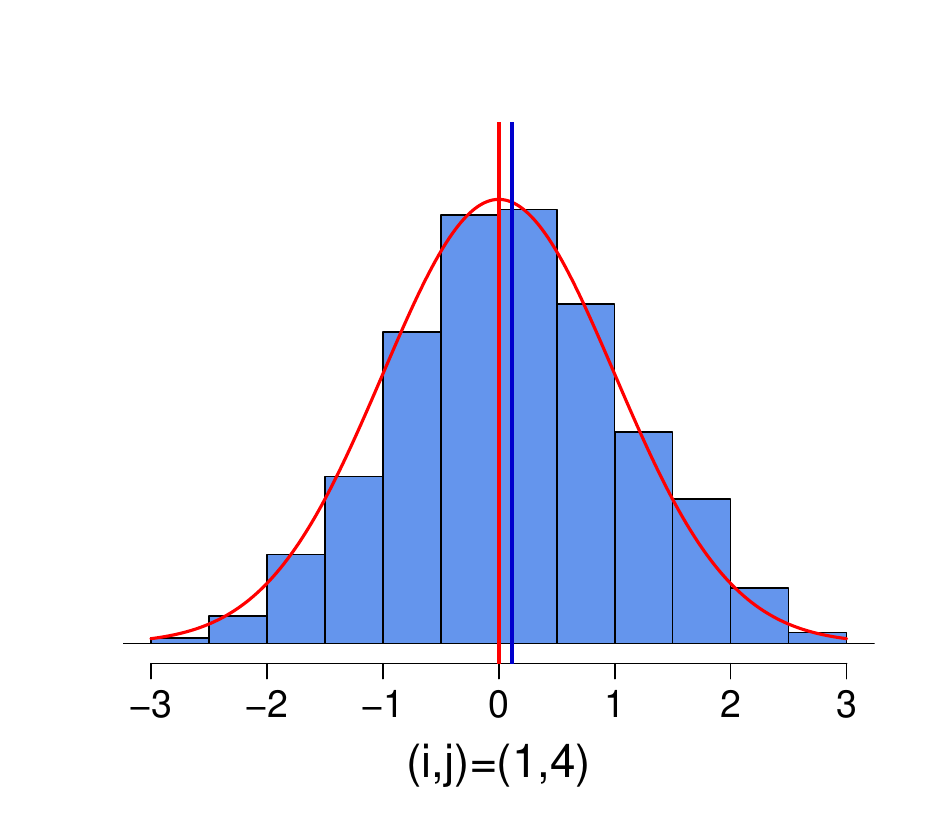}
    \end{minipage}
    \begin{minipage}{0.24\linewidth}
        \centering
        \includegraphics[width=\textwidth]{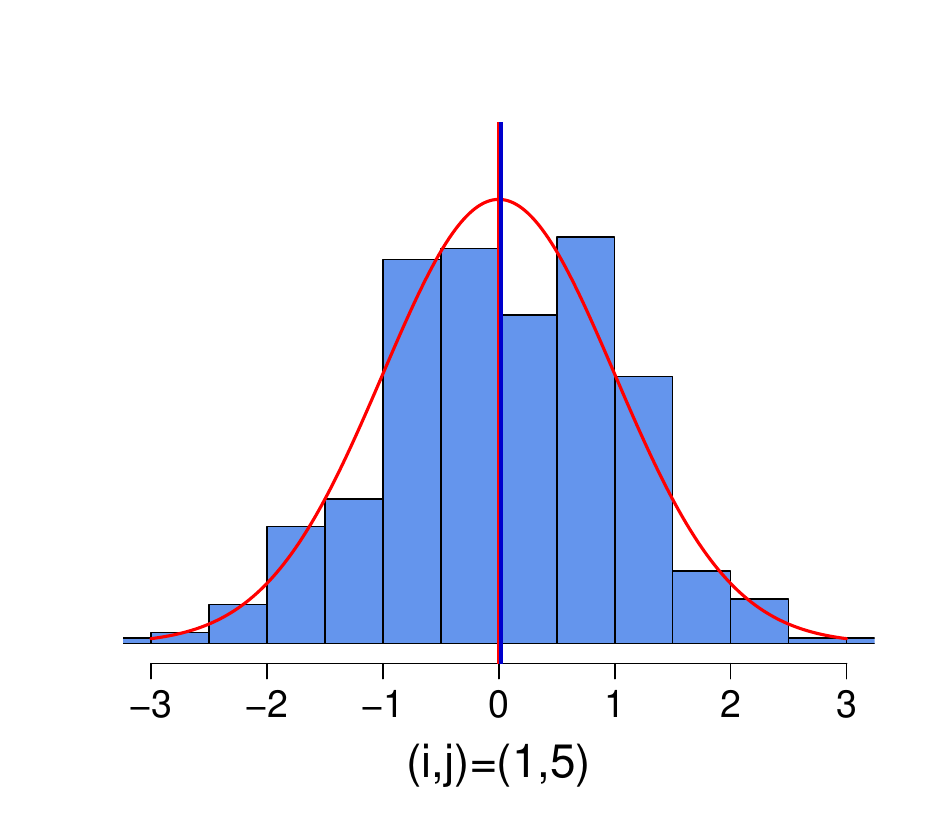}
    \end{minipage}
    \begin{minipage}{0.24\linewidth}
        \centering
        \includegraphics[width=\textwidth]{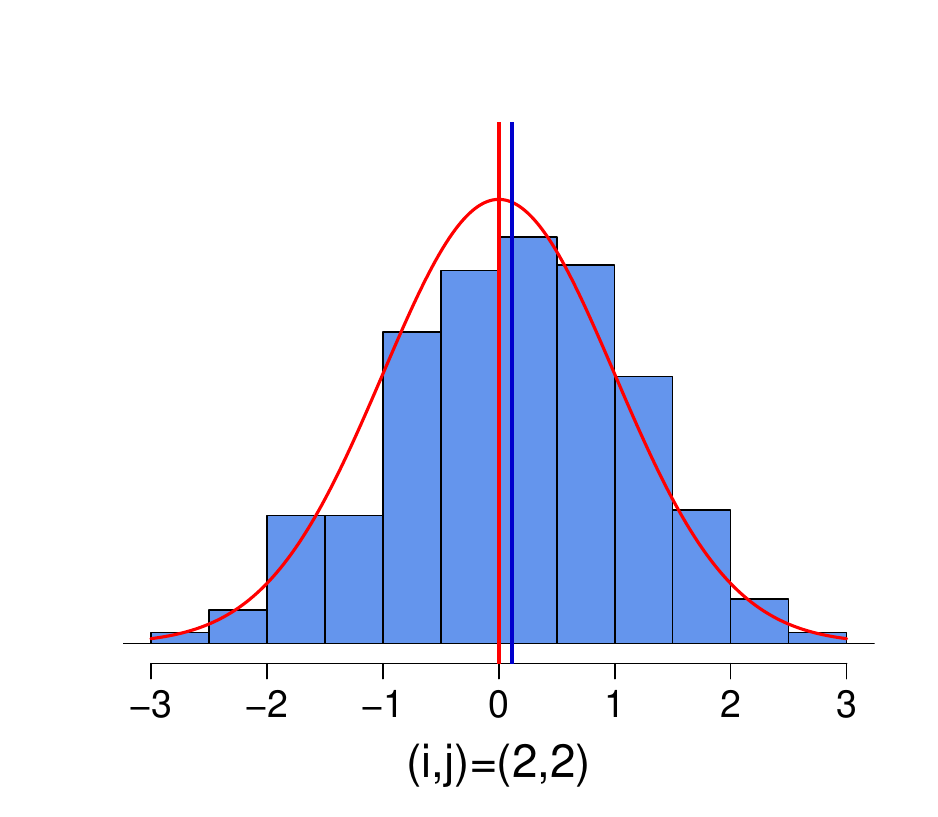}
    \end{minipage}
 \end{minipage}   
      \hspace{1cm}
 \begin{minipage}{0.3\linewidth}
    \begin{minipage}{0.24\linewidth}
        \centering
        \includegraphics[width=\textwidth]{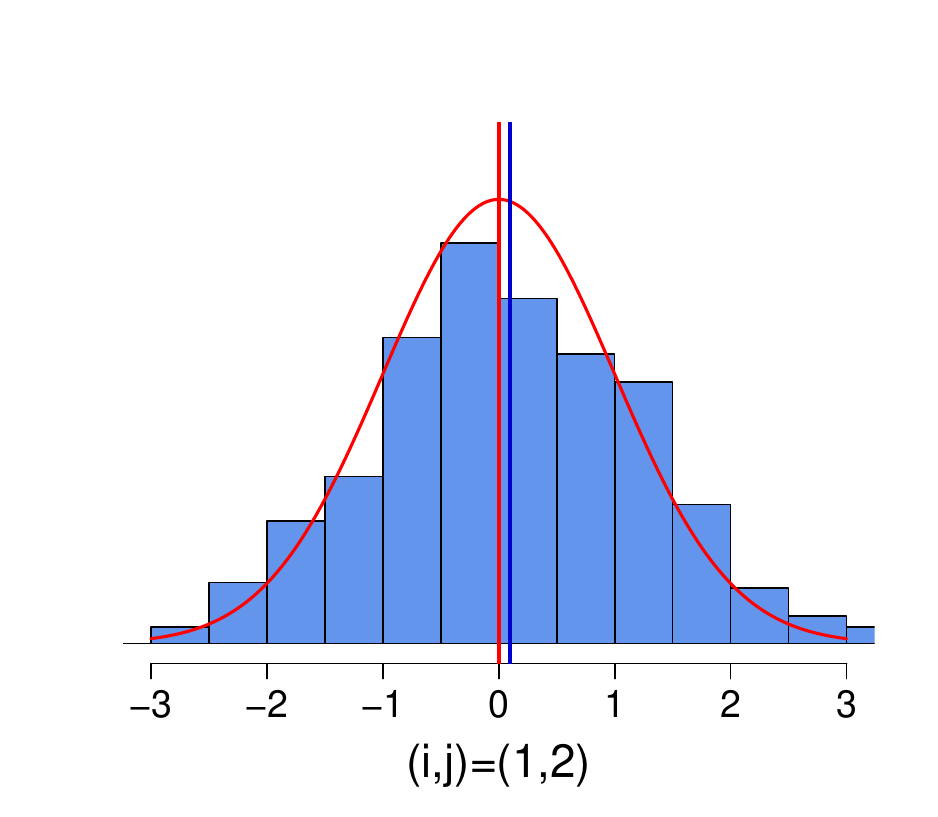}
    \end{minipage}
    \begin{minipage}{0.24\linewidth}
        \centering
        \includegraphics[width=\textwidth]{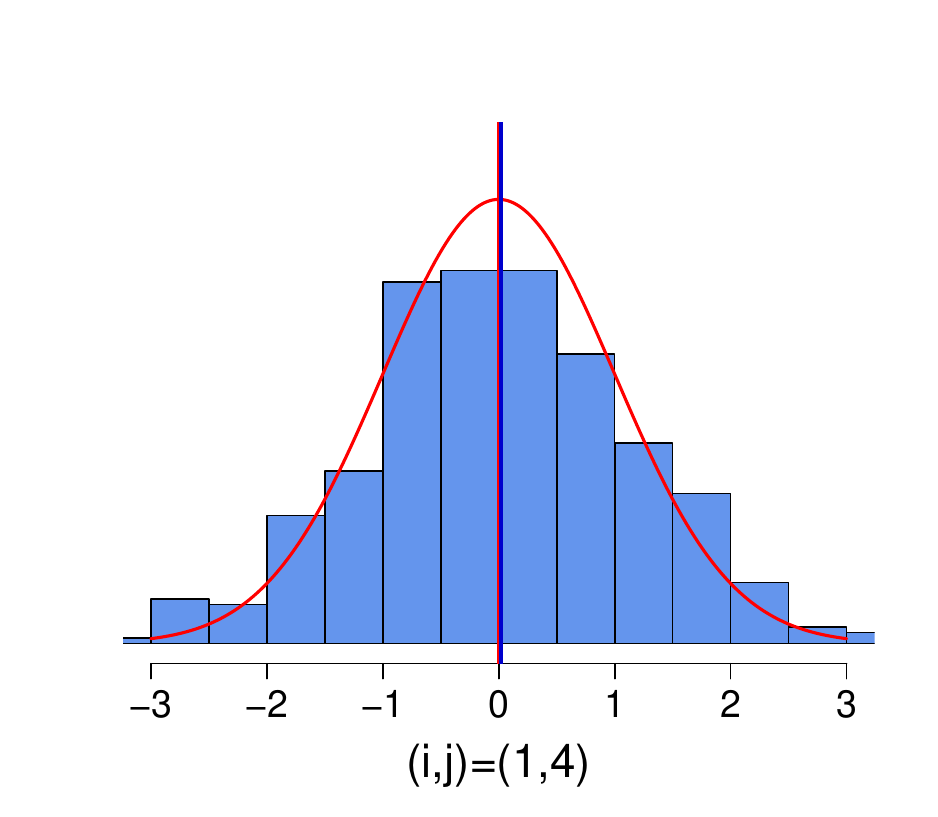}
    \end{minipage}
    \begin{minipage}{0.24\linewidth}
        \centering
        \includegraphics[width=\textwidth]{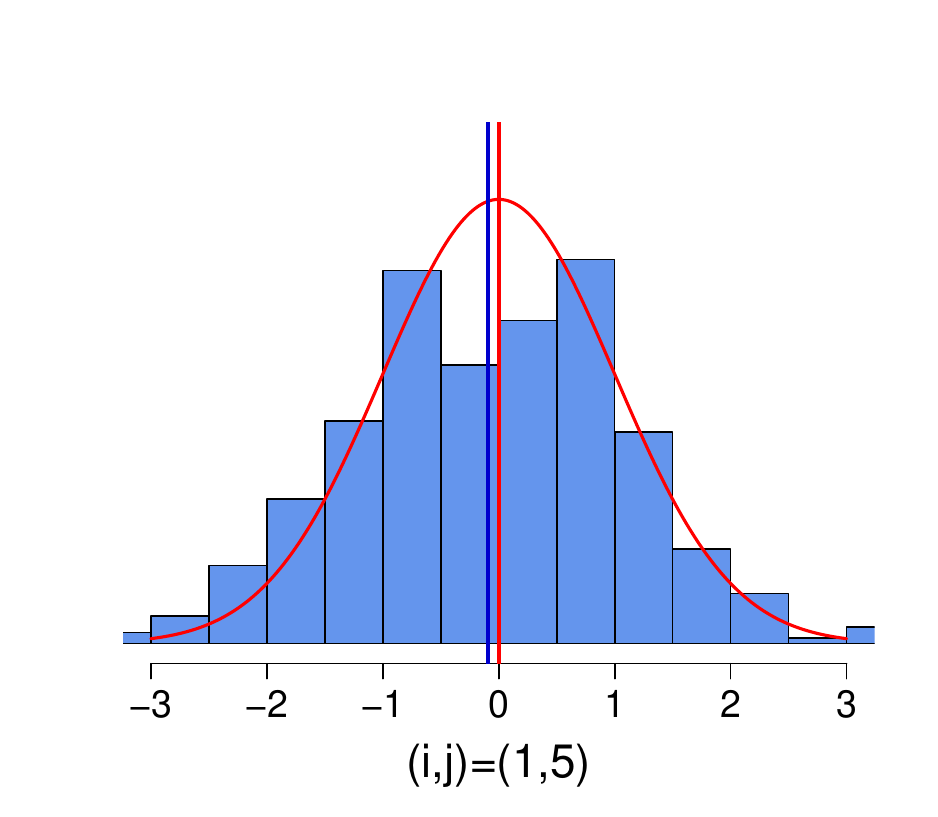}
    \end{minipage}
    \begin{minipage}{0.24\linewidth}
        \centering
        \includegraphics[width=\textwidth]{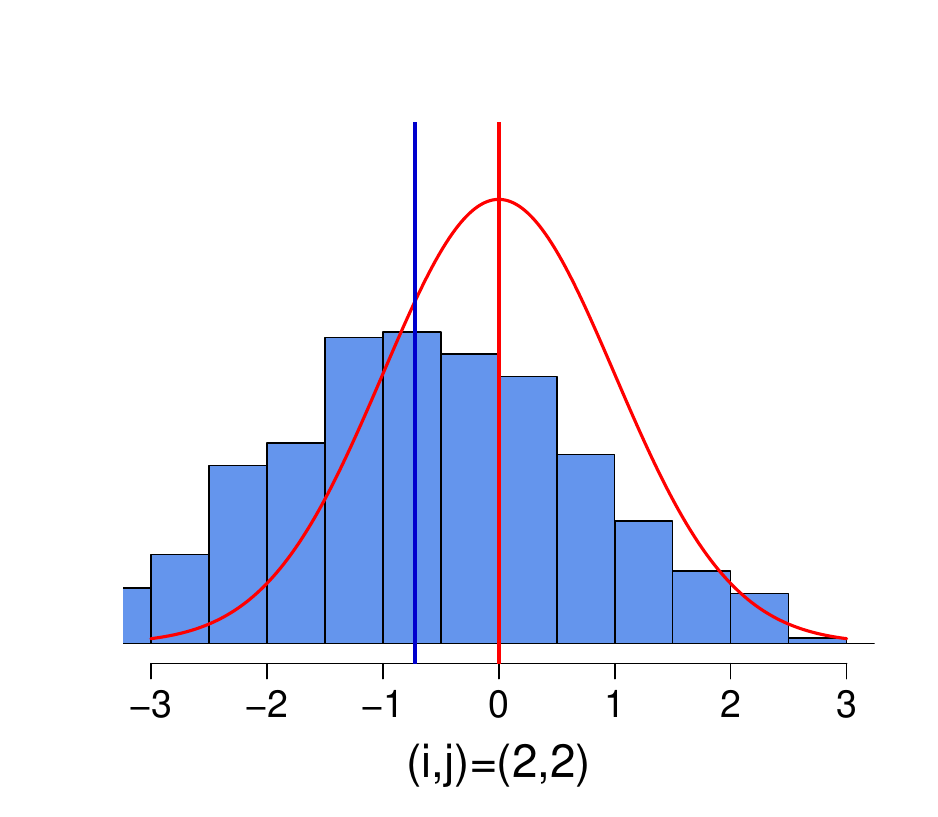}
    \end{minipage}
     \end{minipage}   
     
 \caption*{$n=200, p=400$}
     \vspace{-0.43cm}
 \begin{minipage}{0.3\linewidth}
    \begin{minipage}{0.24\linewidth}
        \centering
        \includegraphics[width=\textwidth]{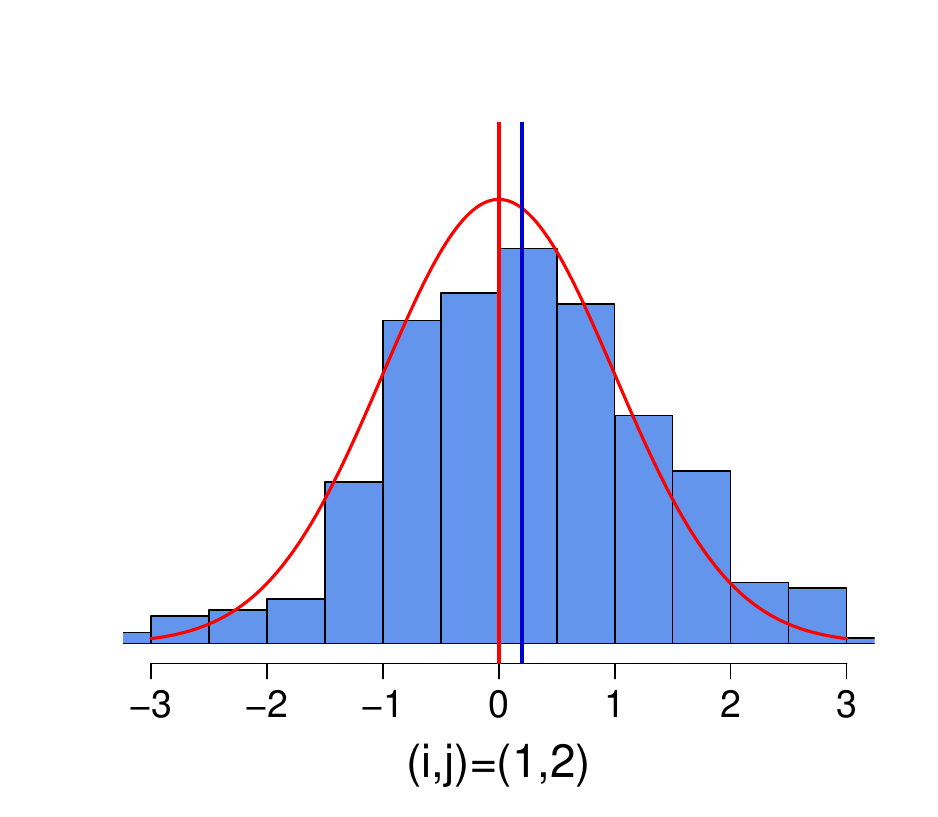}
    \end{minipage}
    \begin{minipage}{0.24\linewidth}
        \centering
        \includegraphics[width=\textwidth]{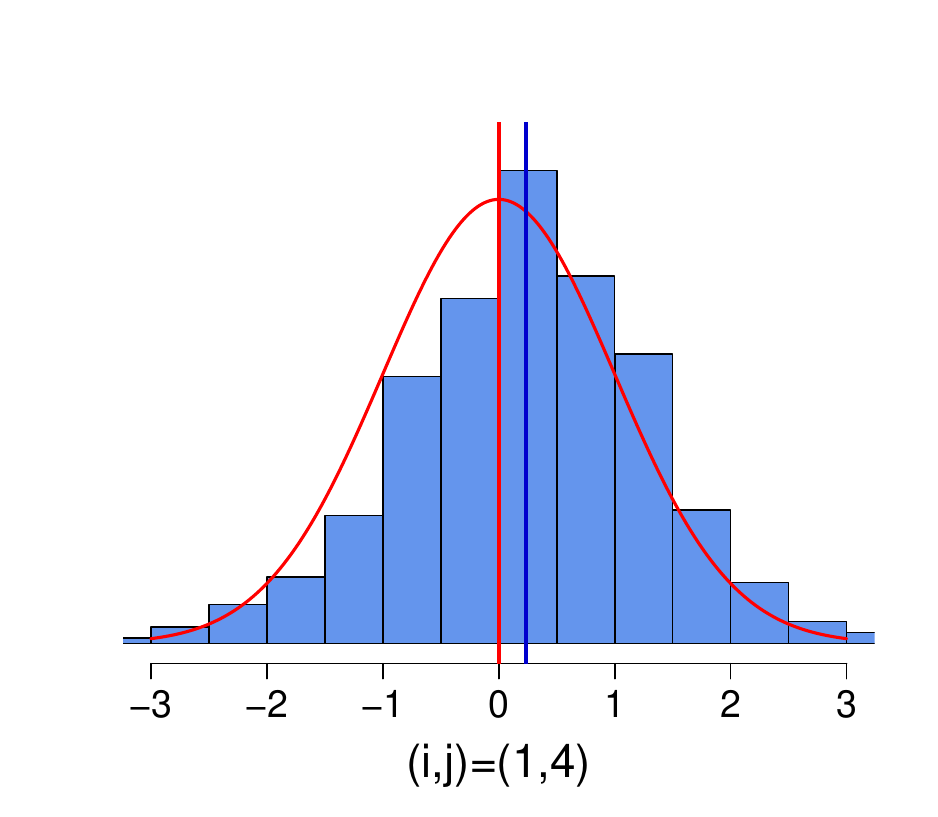}
    \end{minipage}
    \begin{minipage}{0.24\linewidth}
        \centering
        \includegraphics[width=\textwidth]{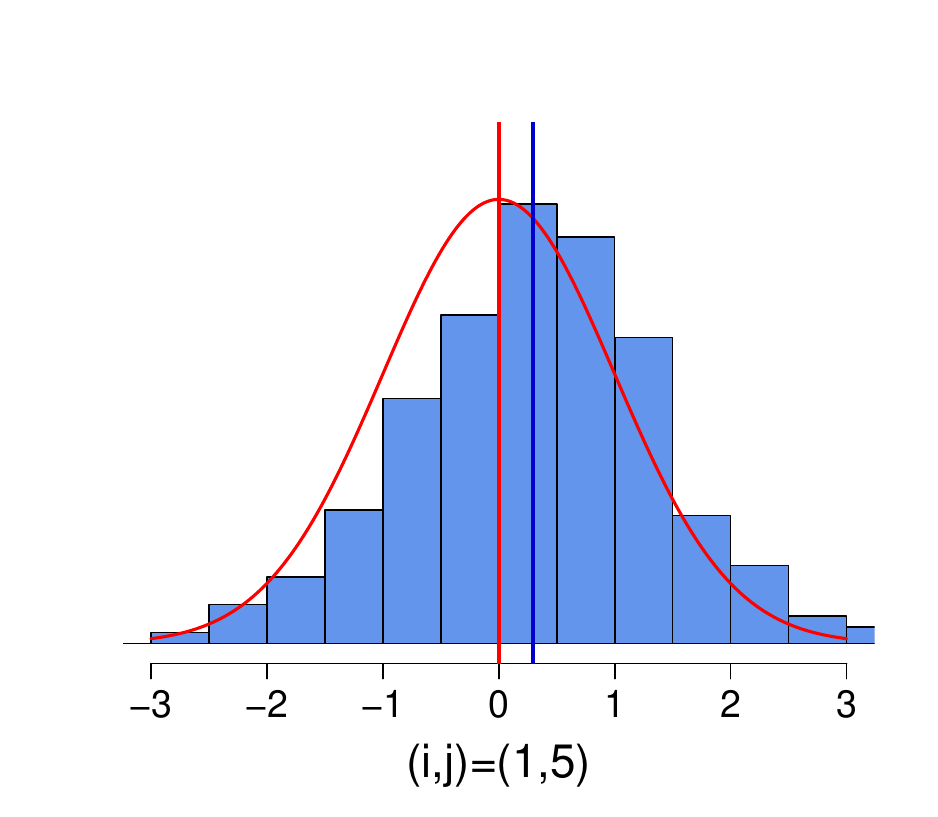}
    \end{minipage}
    \begin{minipage}{0.24\linewidth}
        \centering
        \includegraphics[width=\textwidth]{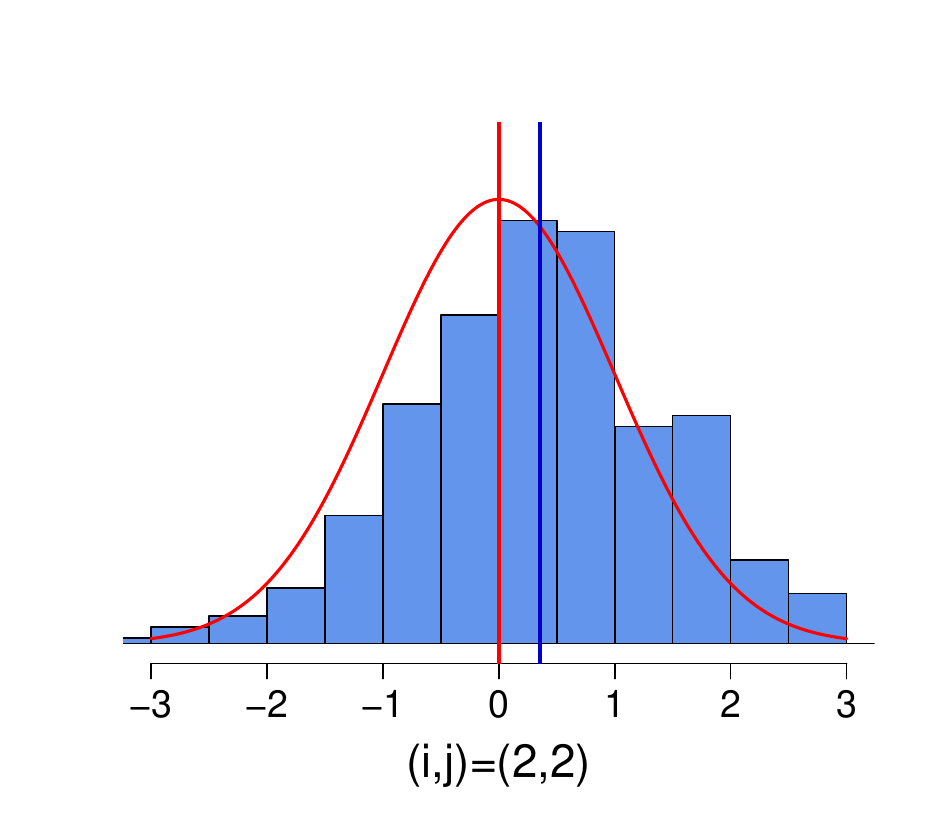}
    \end{minipage}
 \end{minipage}
 \hspace{1cm}
 \begin{minipage}{0.3\linewidth}
    \begin{minipage}{0.24\linewidth}
        \centering
        \includegraphics[width=\textwidth]{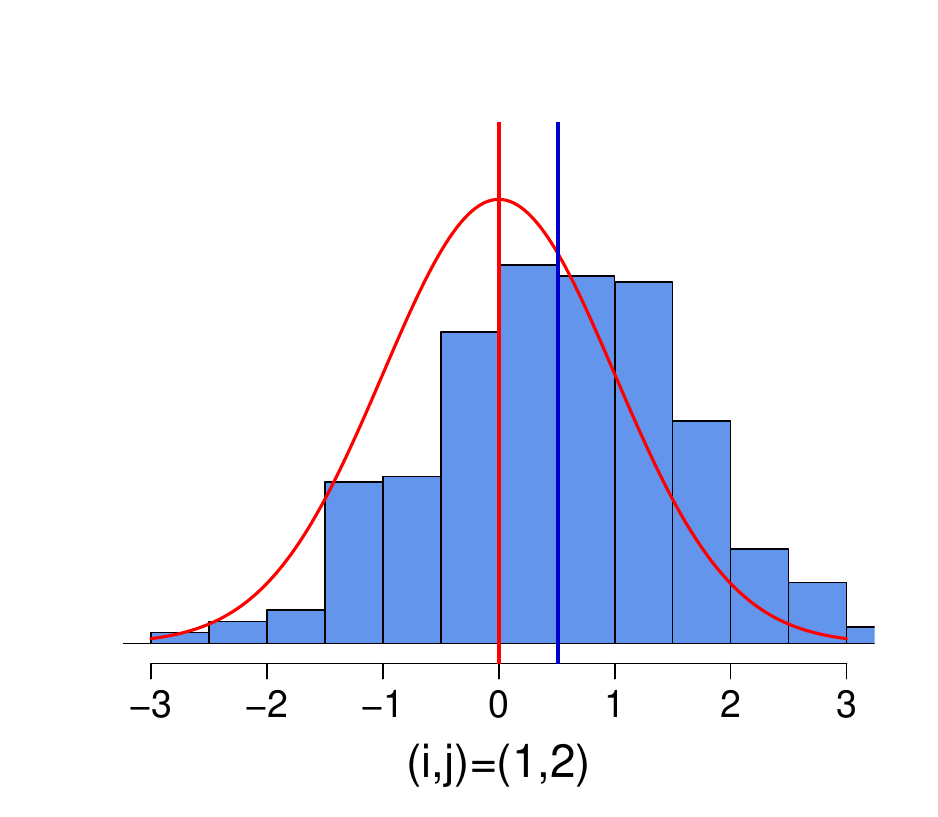}
    \end{minipage}
    \begin{minipage}{0.24\linewidth}
        \centering
        \includegraphics[width=\textwidth]{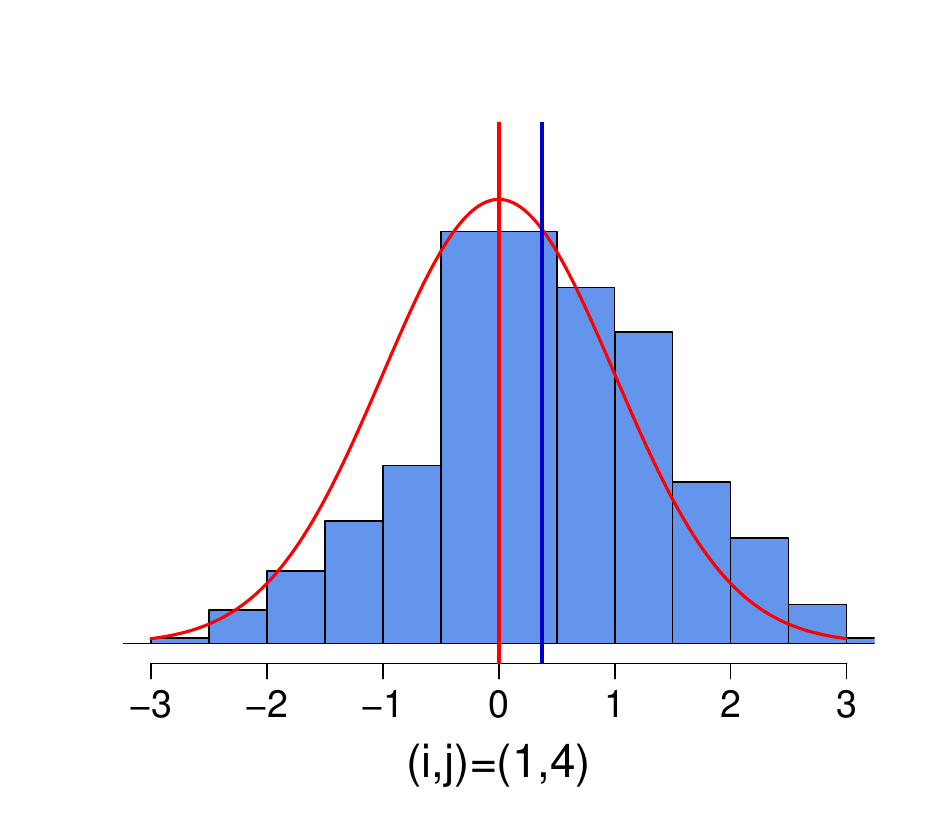}
    \end{minipage}
    \begin{minipage}{0.24\linewidth}
        \centering
        \includegraphics[width=\textwidth]{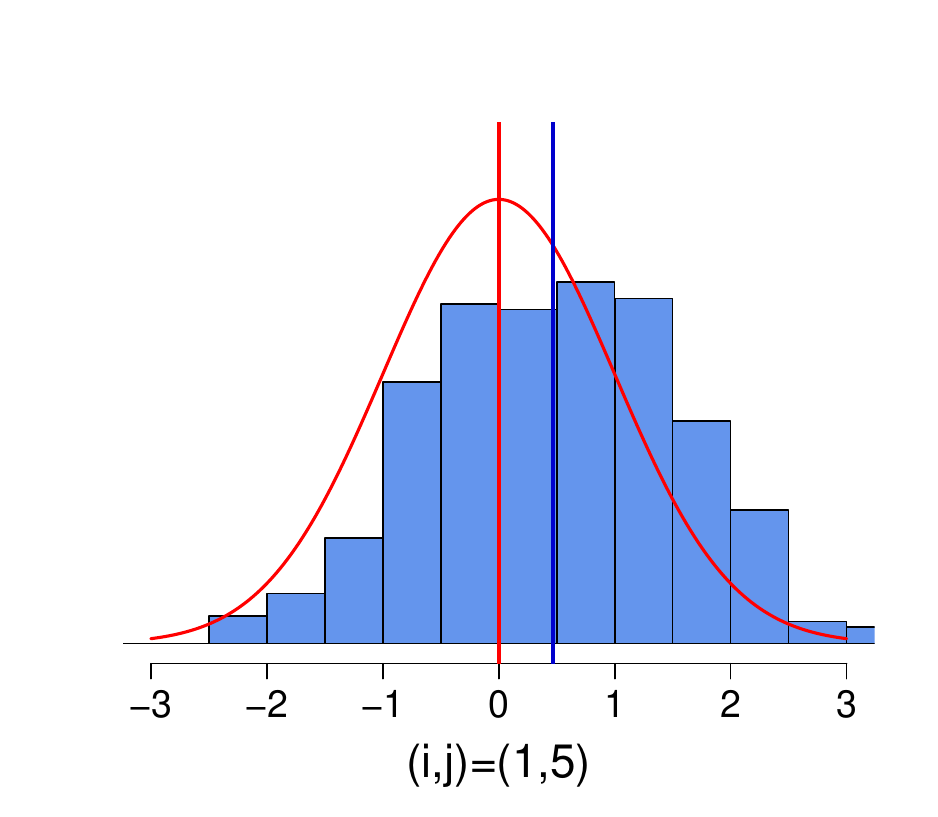}
    \end{minipage}
    \begin{minipage}{0.24\linewidth}
        \centering
        \includegraphics[width=\textwidth]{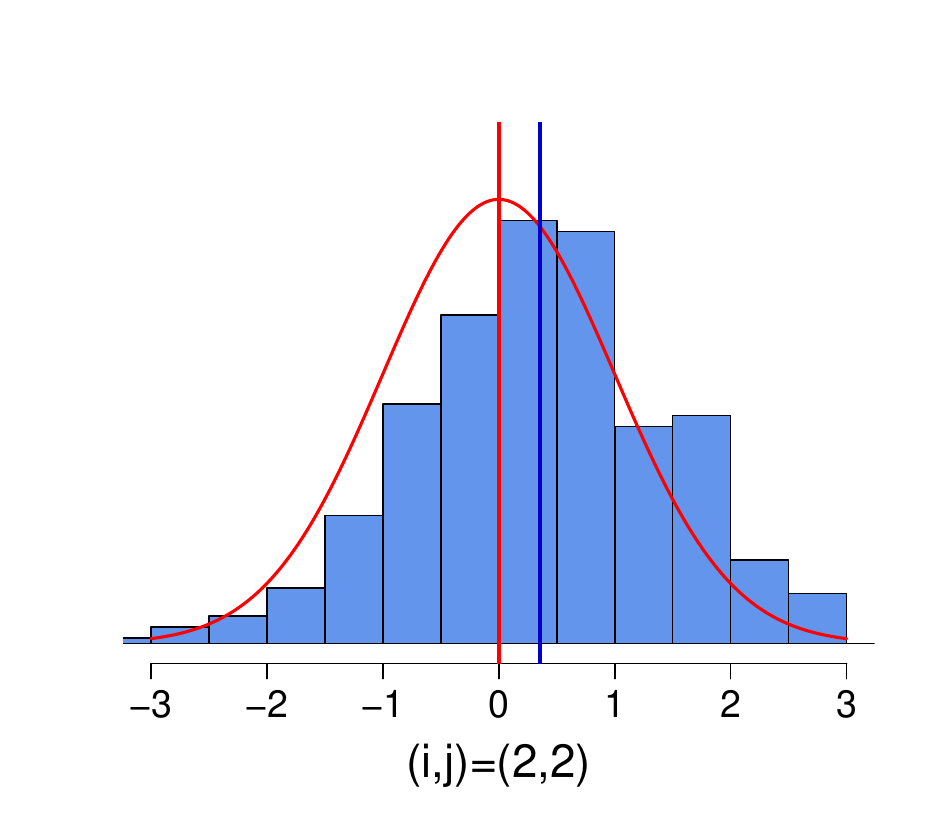}
    \end{minipage}    
 \end{minipage}
  \hspace{1cm}
 \begin{minipage}{0.3\linewidth}
     \begin{minipage}{0.24\linewidth}
        \centering
        \includegraphics[width=\textwidth]{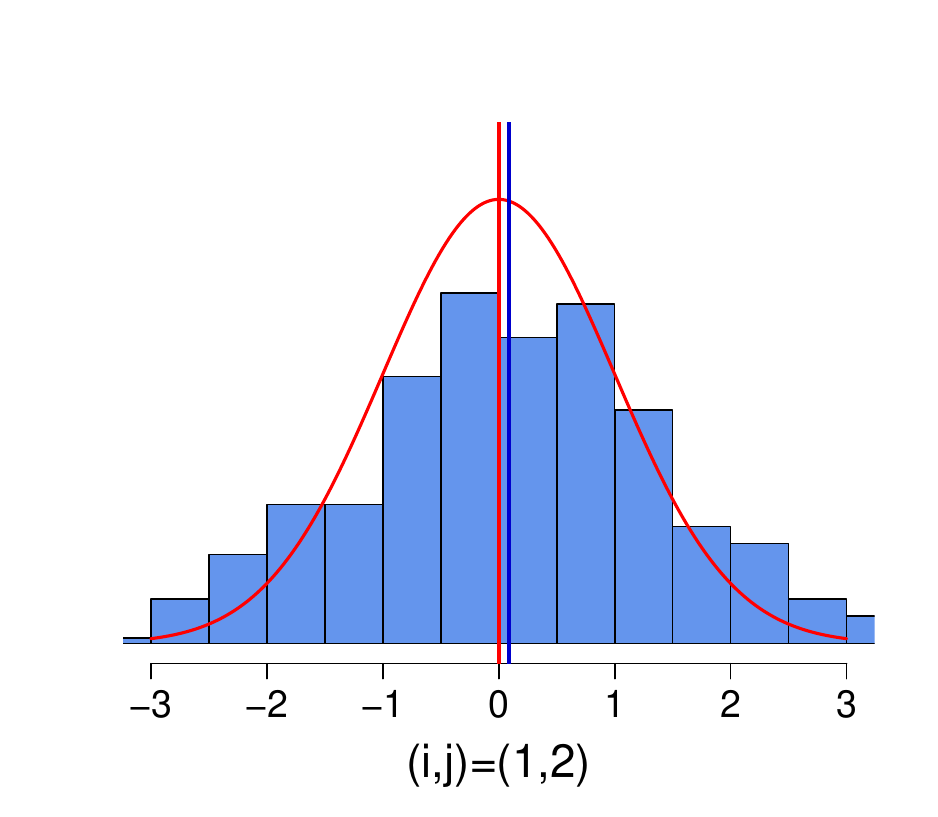}
    \end{minipage}
    \begin{minipage}{0.24\linewidth}
        \centering
        \includegraphics[width=\textwidth]{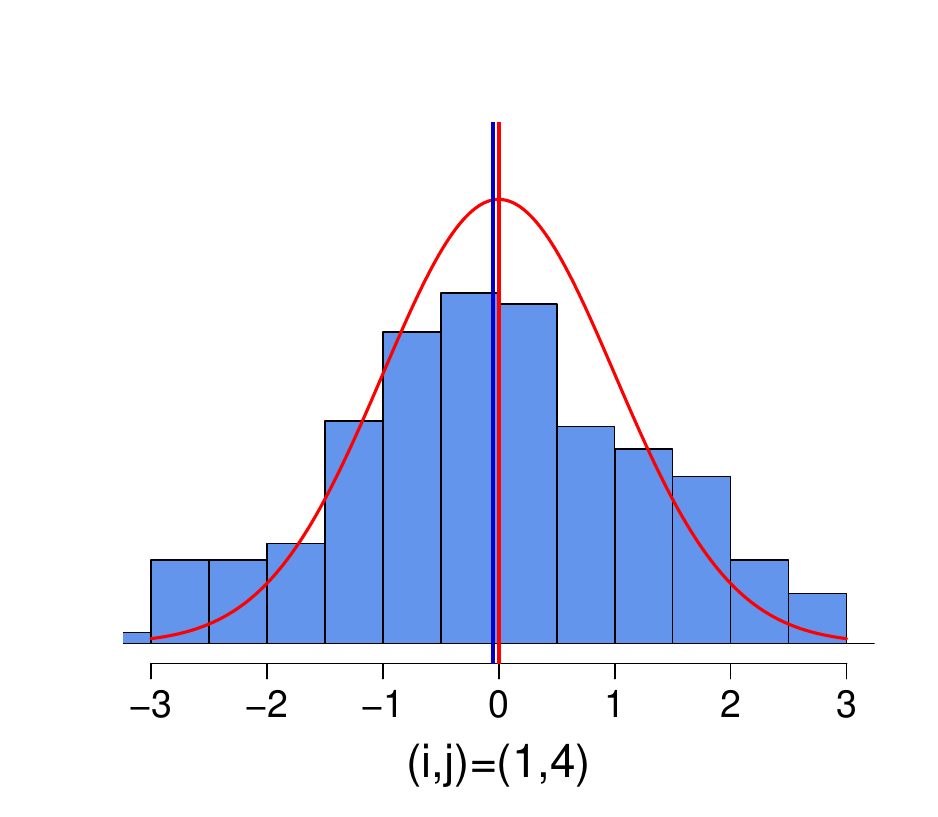}
    \end{minipage}
    \begin{minipage}{0.24\linewidth}
        \centering
        \includegraphics[width=\textwidth]{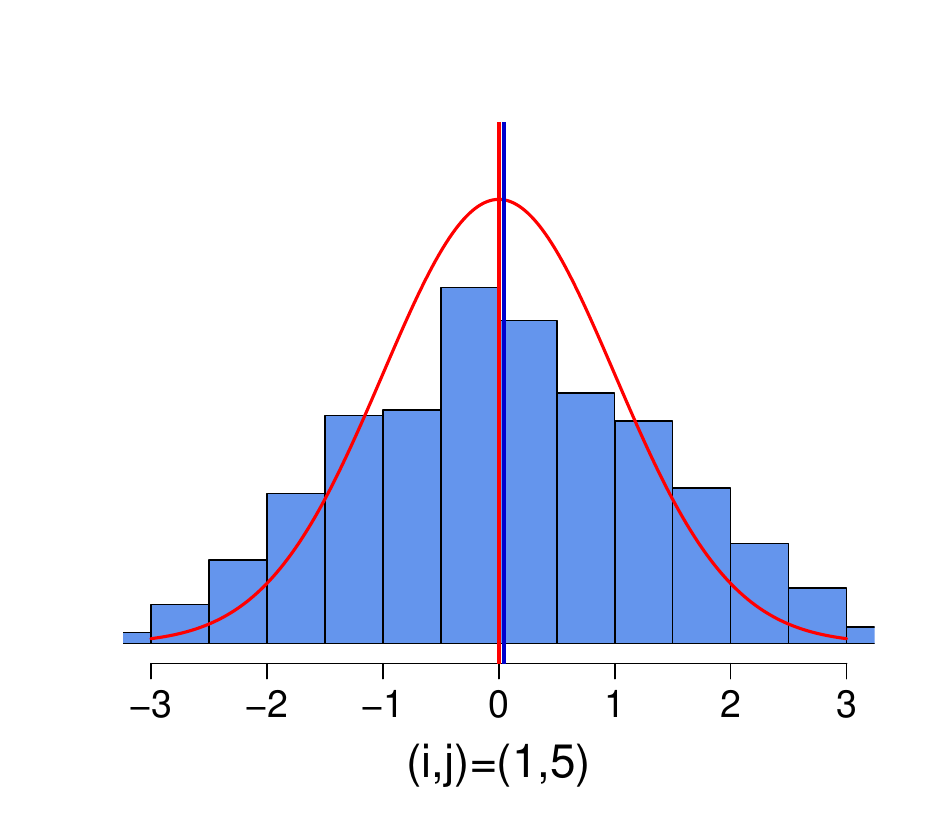}
    \end{minipage}
    \begin{minipage}{0.24\linewidth}
        \centering
        \includegraphics[width=\textwidth]{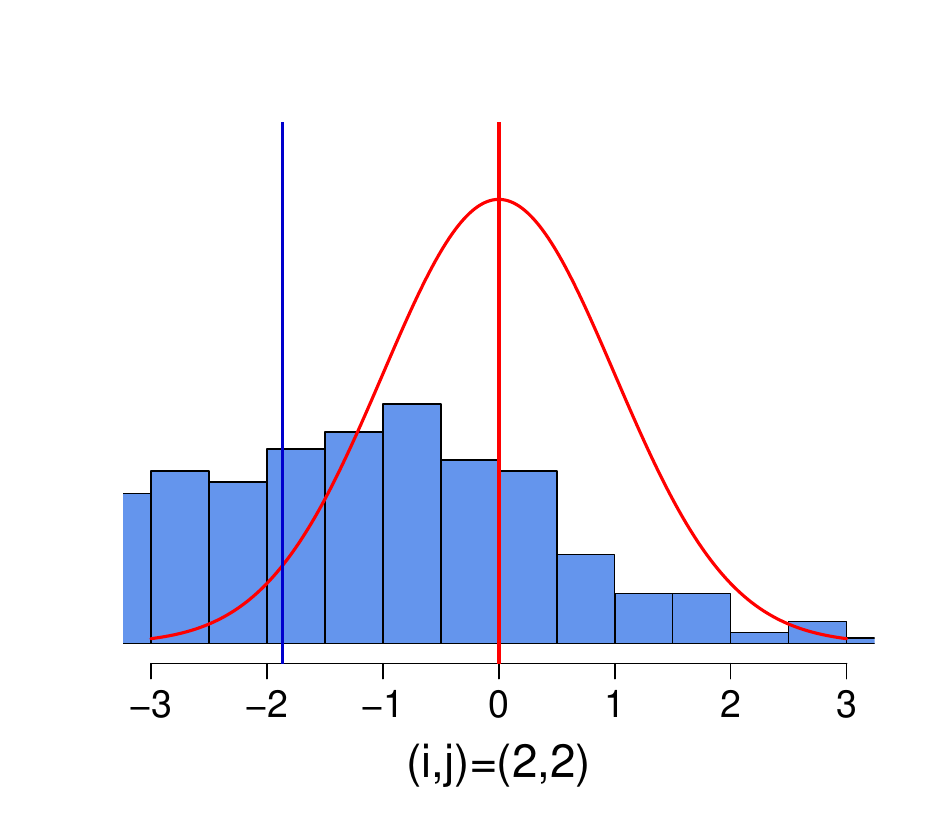}
    \end{minipage}
 \end{minipage}

  \caption*{$n=400, p=400$}
      \vspace{-0.43cm}
 \begin{minipage}{0.3\linewidth}
    \begin{minipage}{0.24\linewidth}
        \centering
        \includegraphics[width=\textwidth]{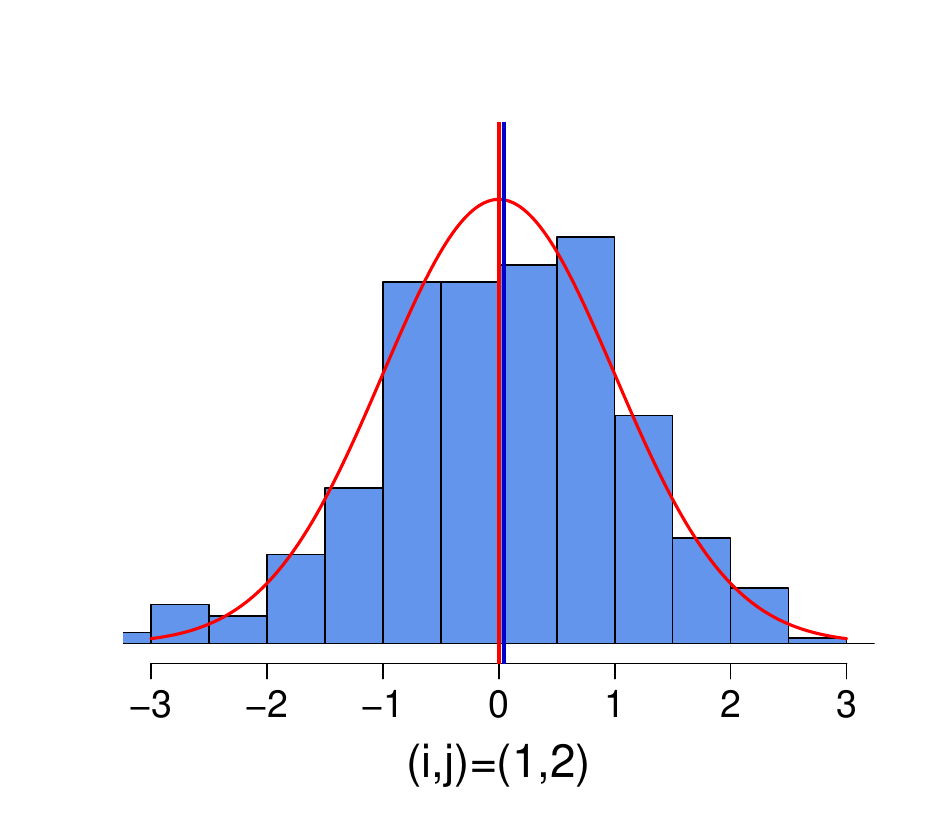}
    \end{minipage}
    \begin{minipage}{0.24\linewidth}
        \centering
        \includegraphics[width=\textwidth]{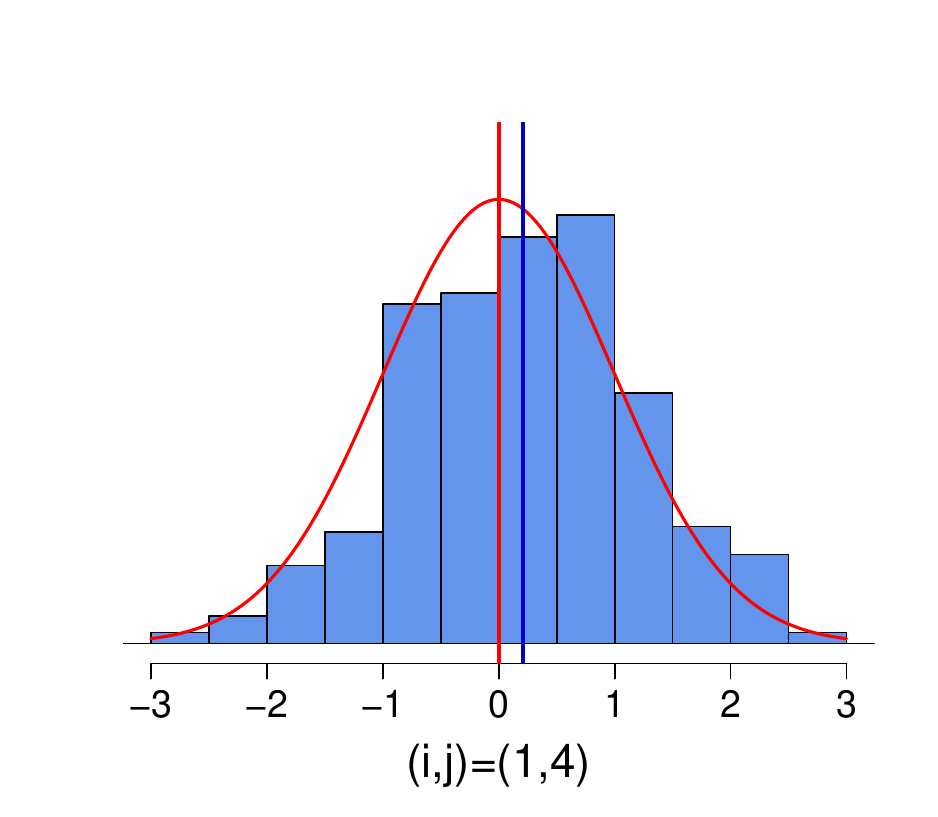}
    \end{minipage}
    \begin{minipage}{0.24\linewidth}
        \centering
        \includegraphics[width=\textwidth]{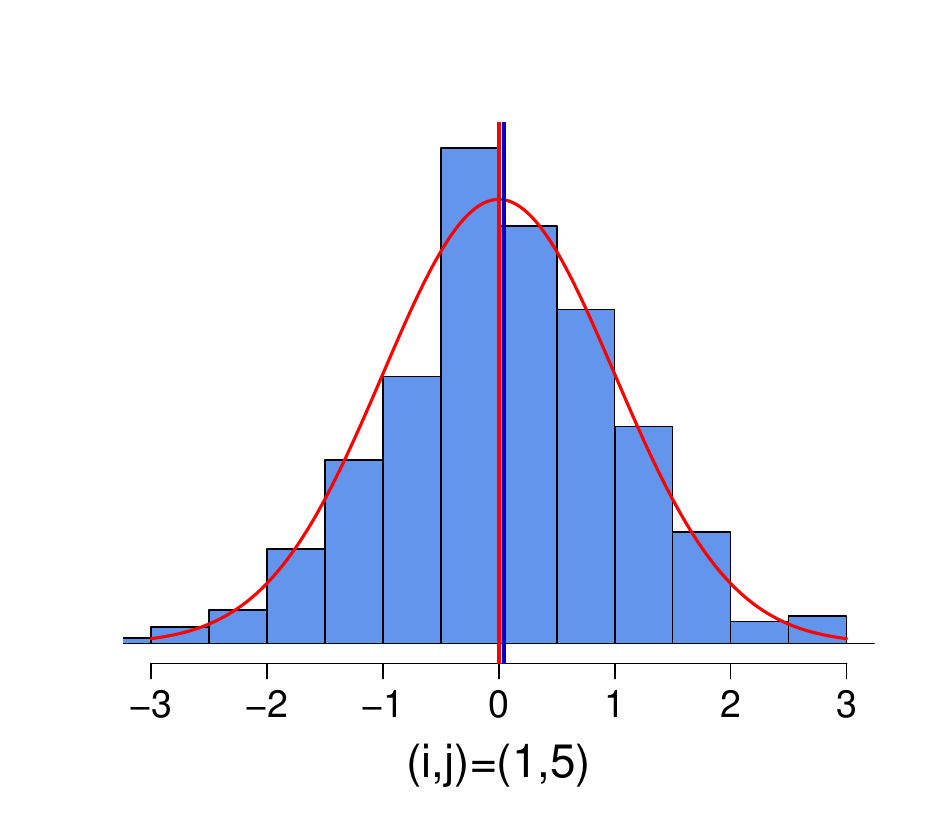}
    \end{minipage}
    \begin{minipage}{0.24\linewidth}
        \centering
        \includegraphics[width=\textwidth]{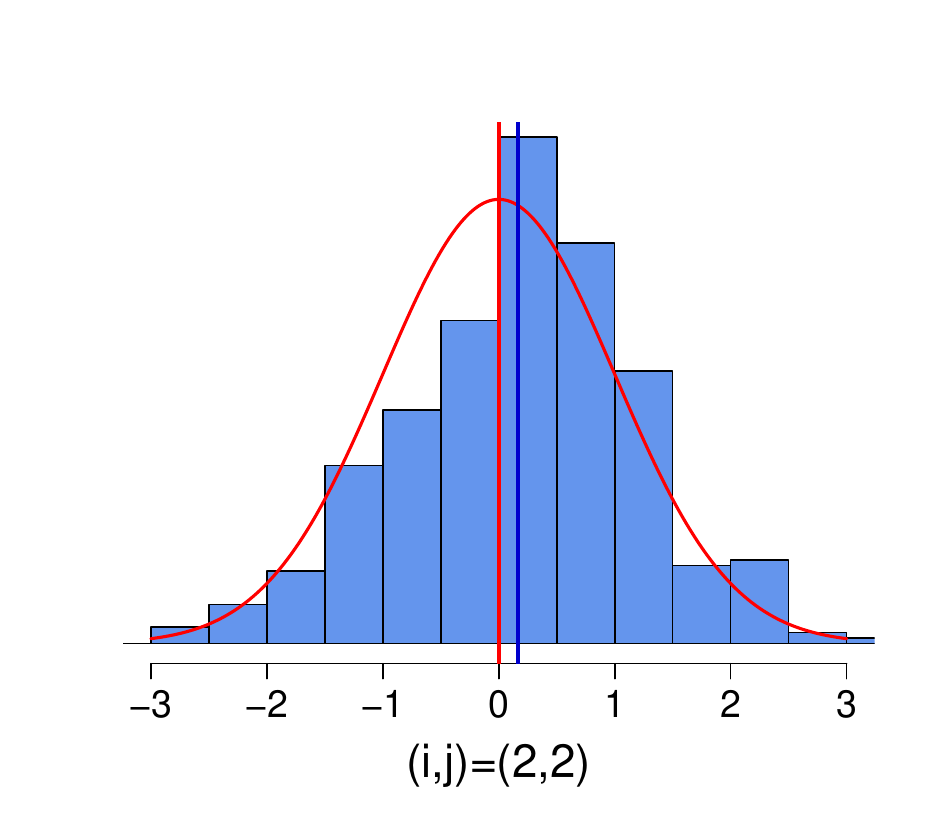}
    \end{minipage}
 \end{minipage}  
     \hspace{1cm}
 \begin{minipage}{0.3\linewidth}
    \begin{minipage}{0.24\linewidth}
        \centering
        \includegraphics[width=\textwidth]{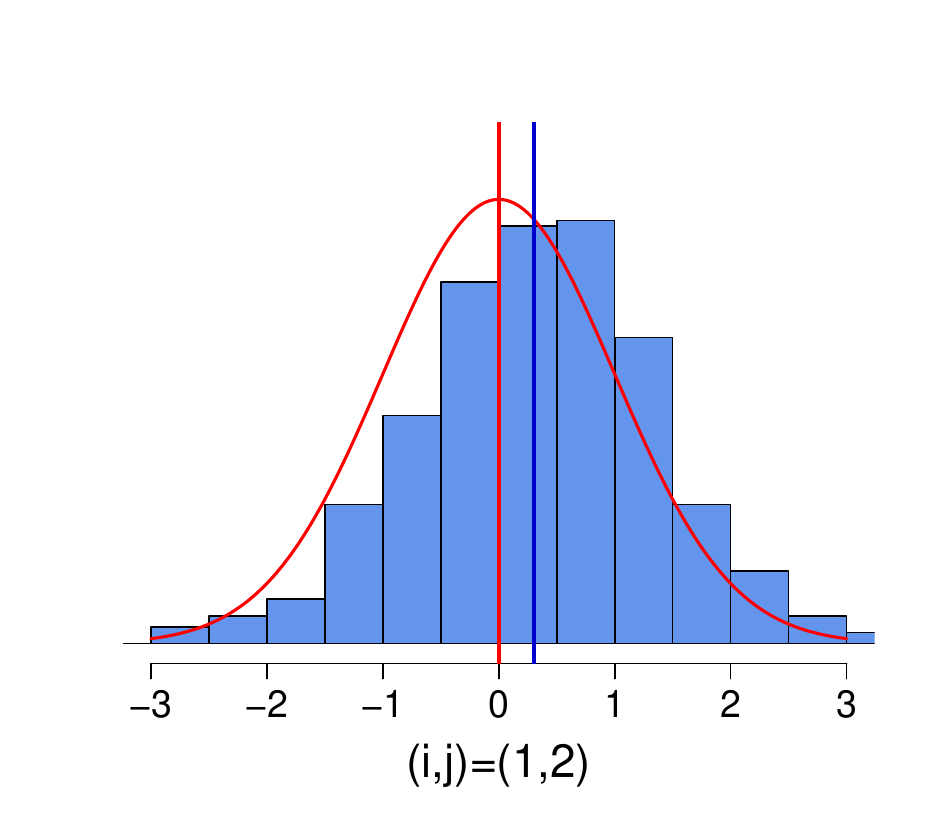}
    \end{minipage}
    \begin{minipage}{0.24\linewidth}
        \centering
        \includegraphics[width=\textwidth]{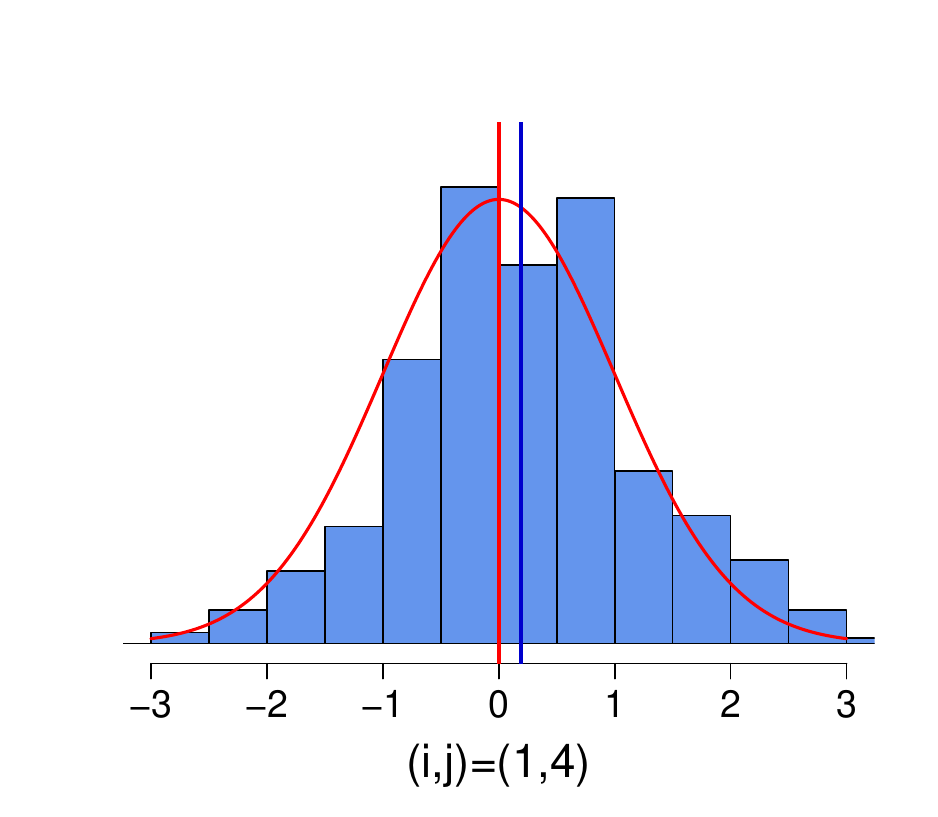}
    \end{minipage}
    \begin{minipage}{0.24\linewidth}
        \centering
        \includegraphics[width=\textwidth]{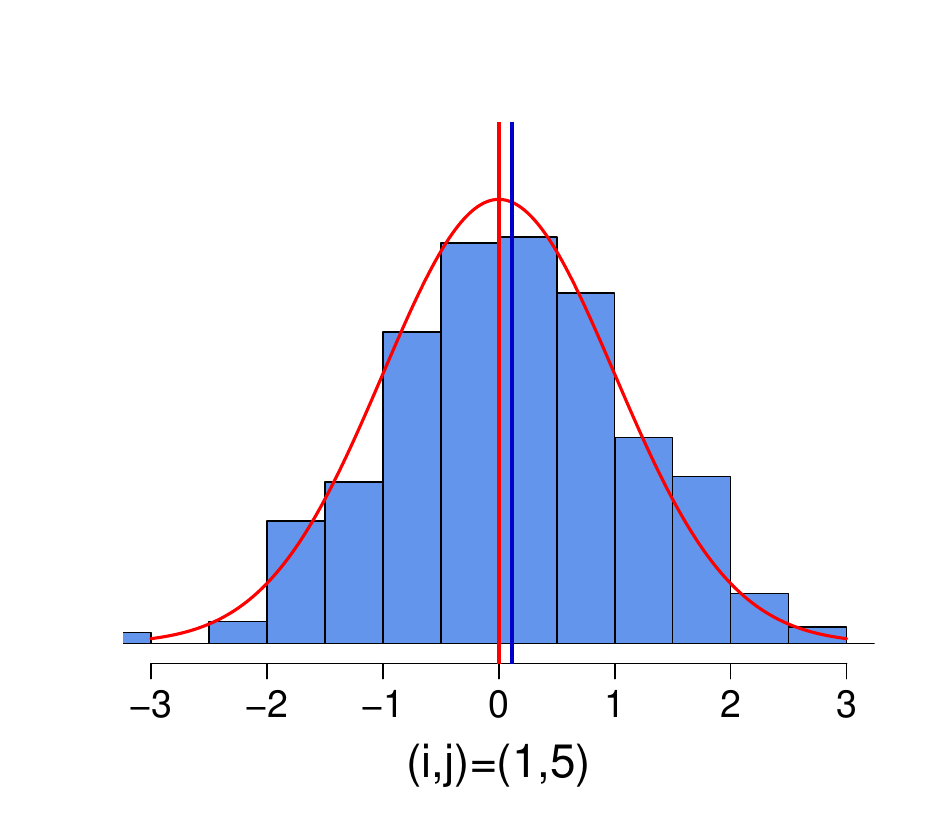}
    \end{minipage}
    \begin{minipage}{0.24\linewidth}
        \centering
        \includegraphics[width=\textwidth]{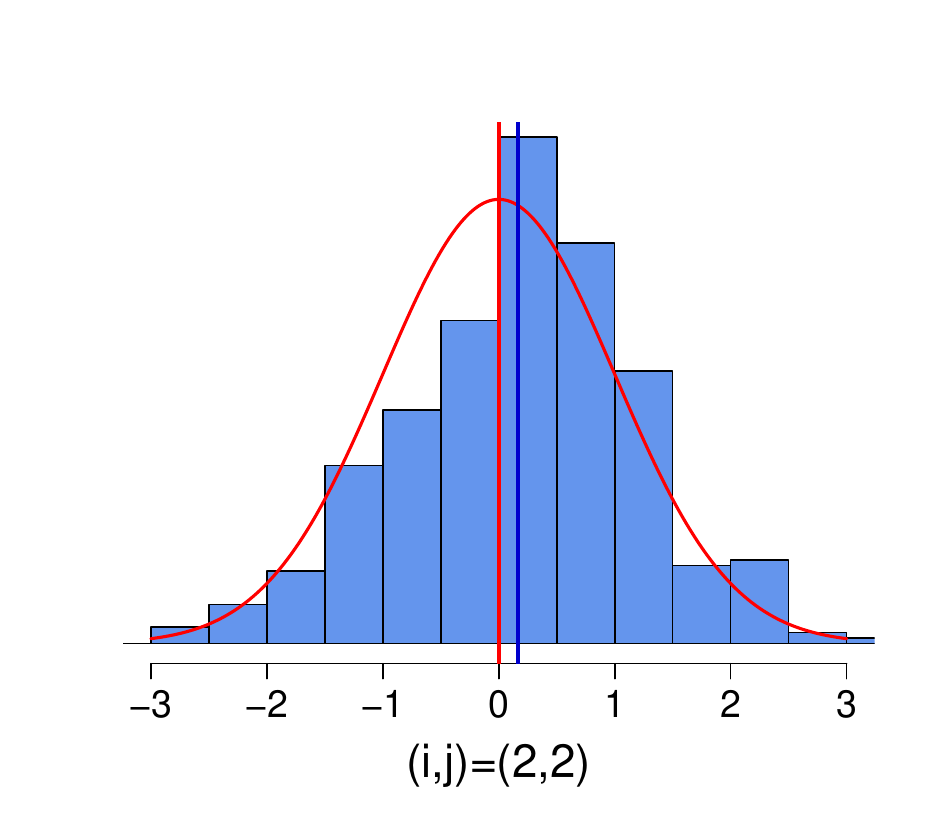}
    \end{minipage}
  \end{minipage}  
    \hspace{1cm}
 \begin{minipage}{0.3\linewidth}
    \begin{minipage}{0.24\linewidth}
        \centering
        \includegraphics[width=\textwidth]{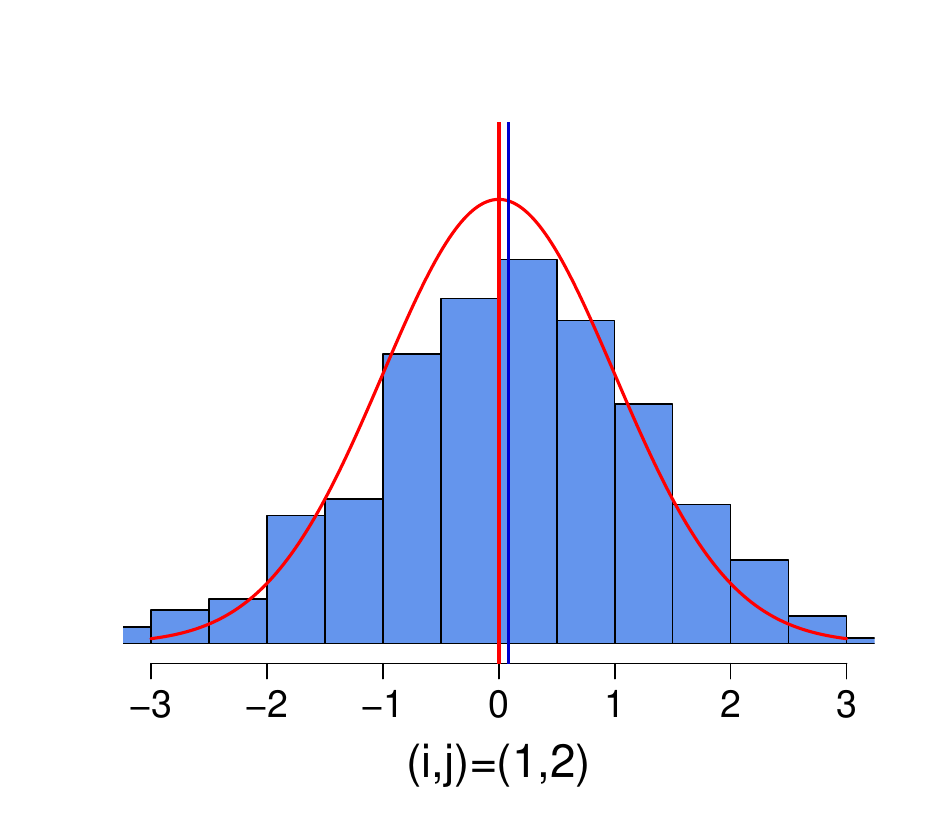}
    \end{minipage}
    \begin{minipage}{0.24\linewidth}
        \centering
        \includegraphics[width=\textwidth]{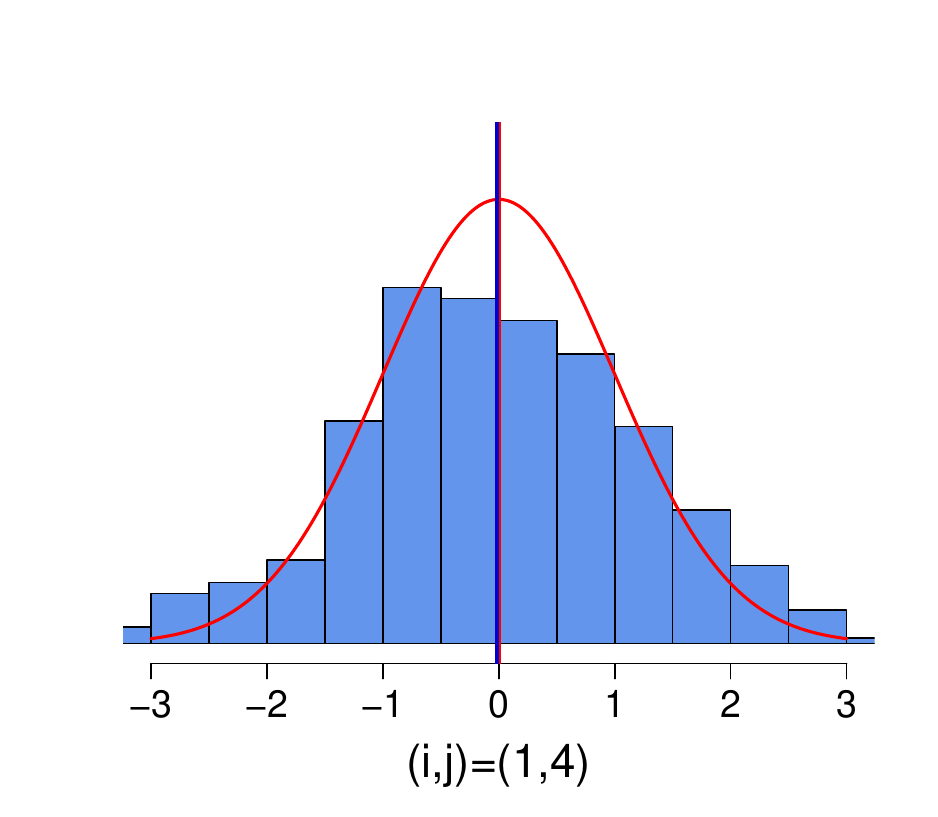}
    \end{minipage}
    \begin{minipage}{0.24\linewidth}
        \centering
        \includegraphics[width=\textwidth]{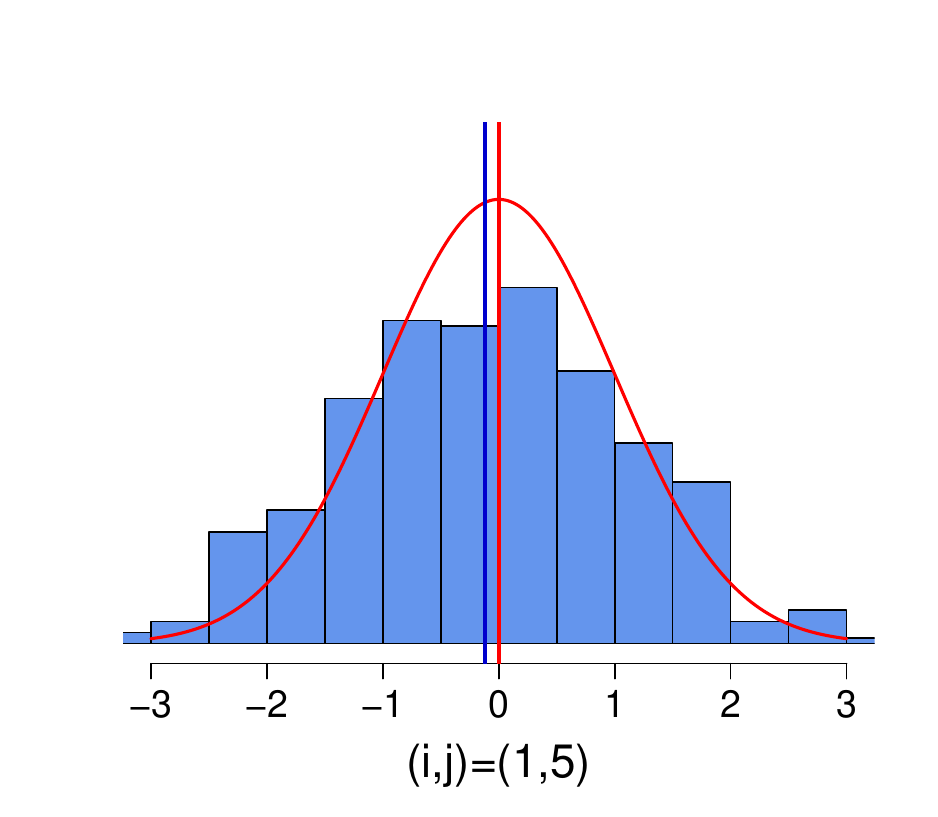}
    \end{minipage}
    \begin{minipage}{0.24\linewidth}
        \centering
        \includegraphics[width=\textwidth]{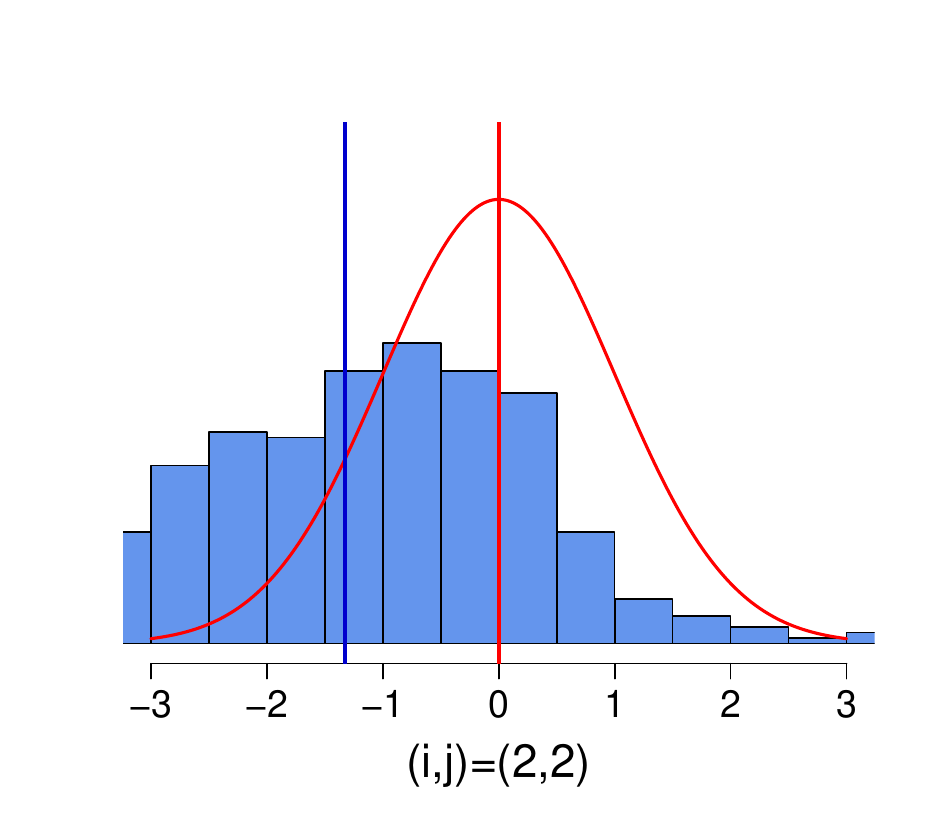}
    \end{minipage}
 \end{minipage}

  \caption*{$n=800, p=400$}
      \vspace{-0.43cm}
 \begin{minipage}{0.3\linewidth}
    \begin{minipage}{0.24\linewidth}
        \centering
        \includegraphics[width=\textwidth]{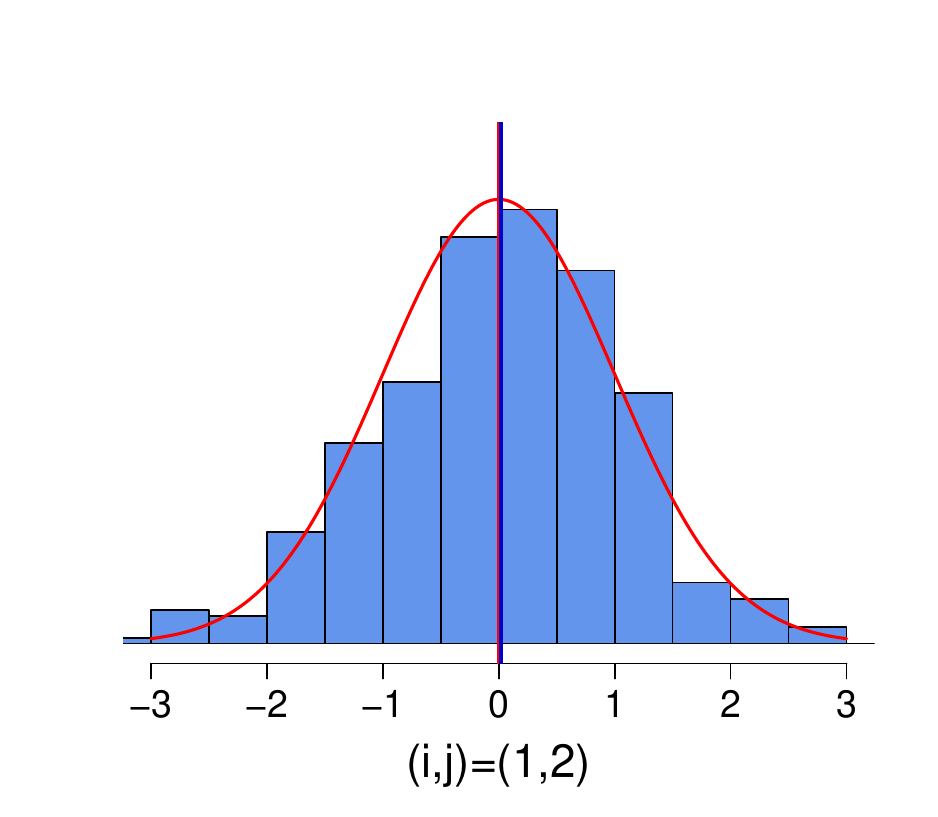}
    \end{minipage}
    \begin{minipage}{0.24\linewidth}
        \centering
        \includegraphics[width=\textwidth]{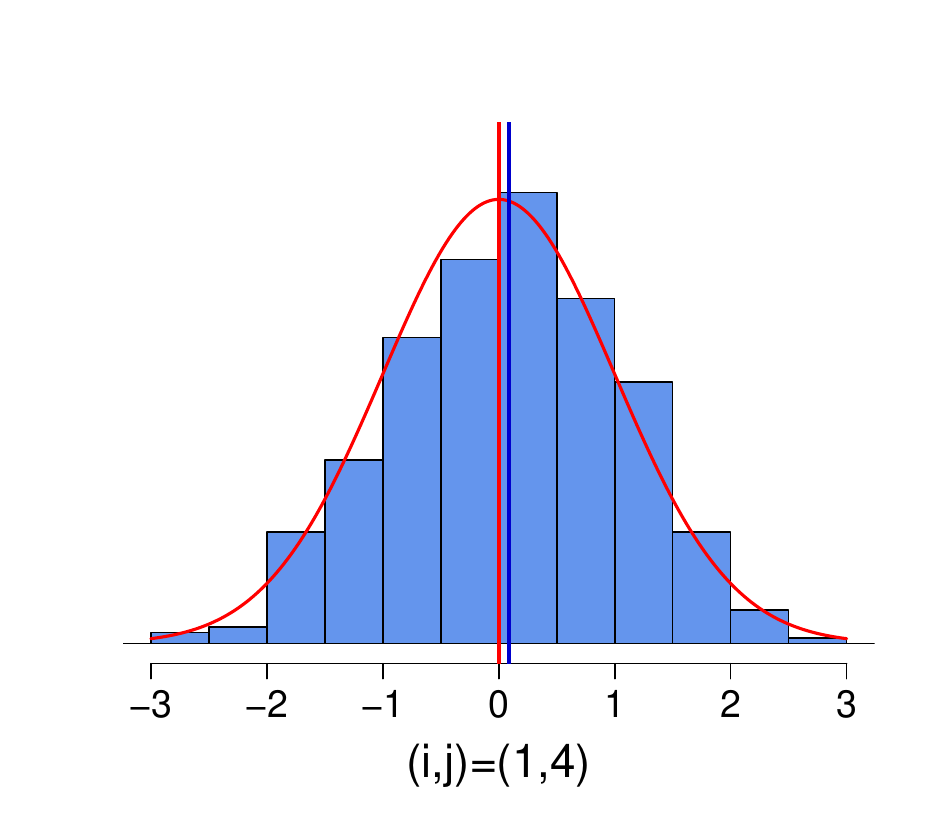}
    \end{minipage}
    \begin{minipage}{0.24\linewidth}
        \centering
        \includegraphics[width=\textwidth]{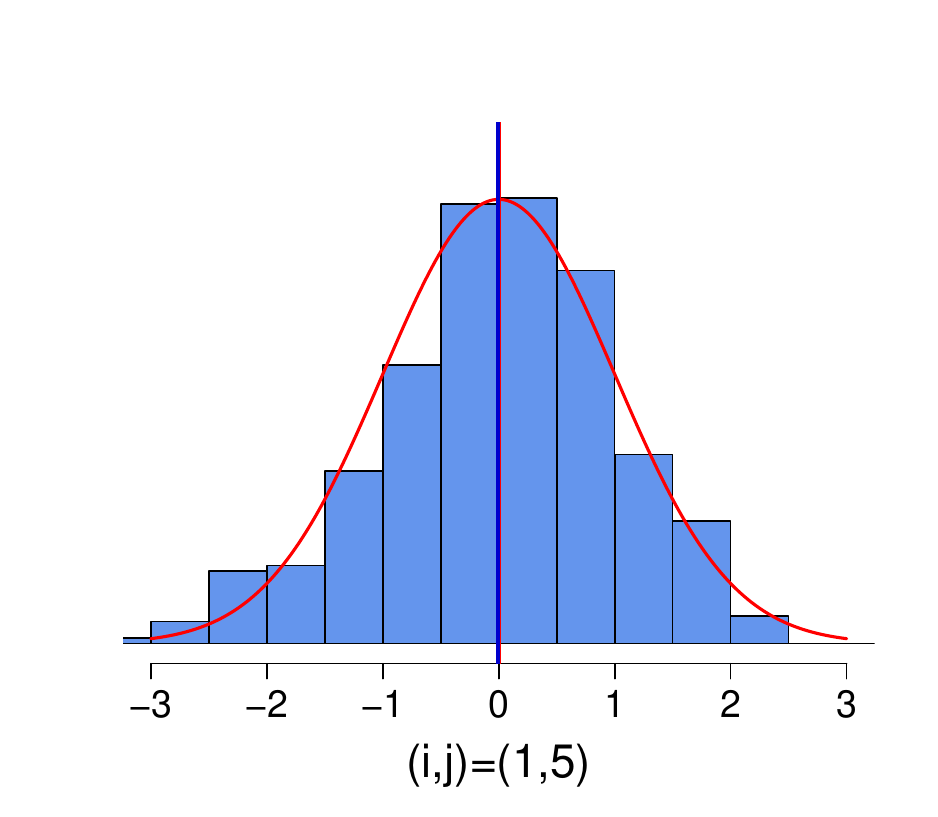}
    \end{minipage}
    \begin{minipage}{0.24\linewidth}
        \centering
        \includegraphics[width=\textwidth]{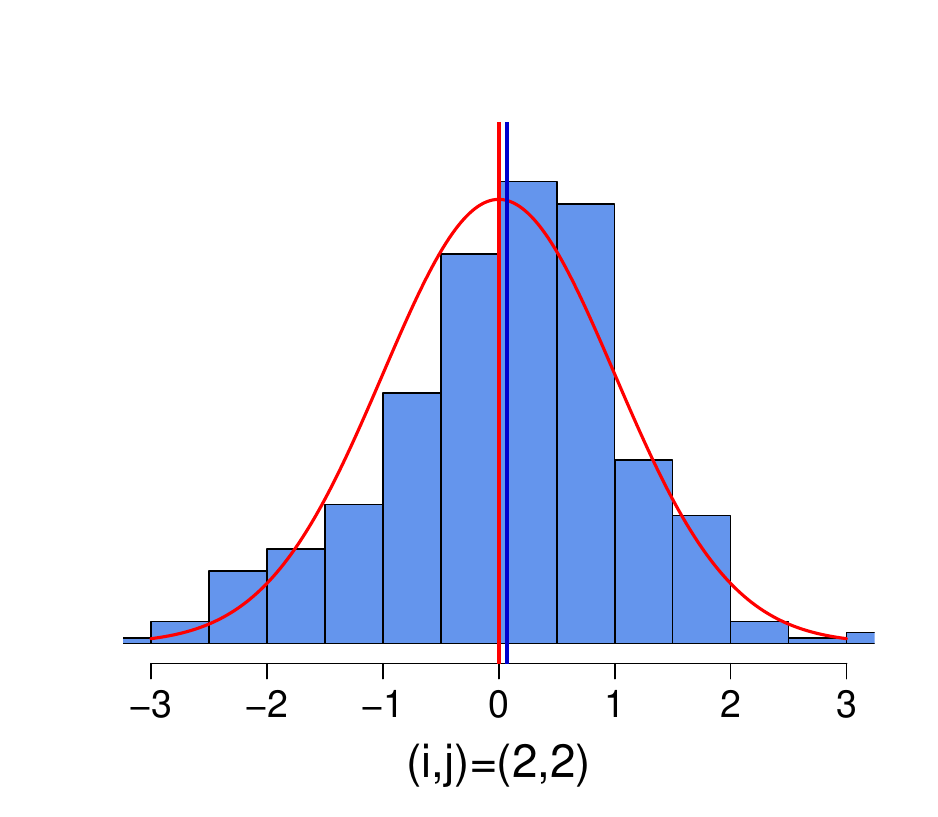}
    \end{minipage}
   \caption*{(a)~~$L_0{:}~ \widehat{\mb{\Omega}}^{\text{US}}$}
 \end{minipage} 
     \hspace{1cm}
 \begin{minipage}{0.3\linewidth}
    \begin{minipage}{0.24\linewidth}
        \centering
        \includegraphics[width=\textwidth]{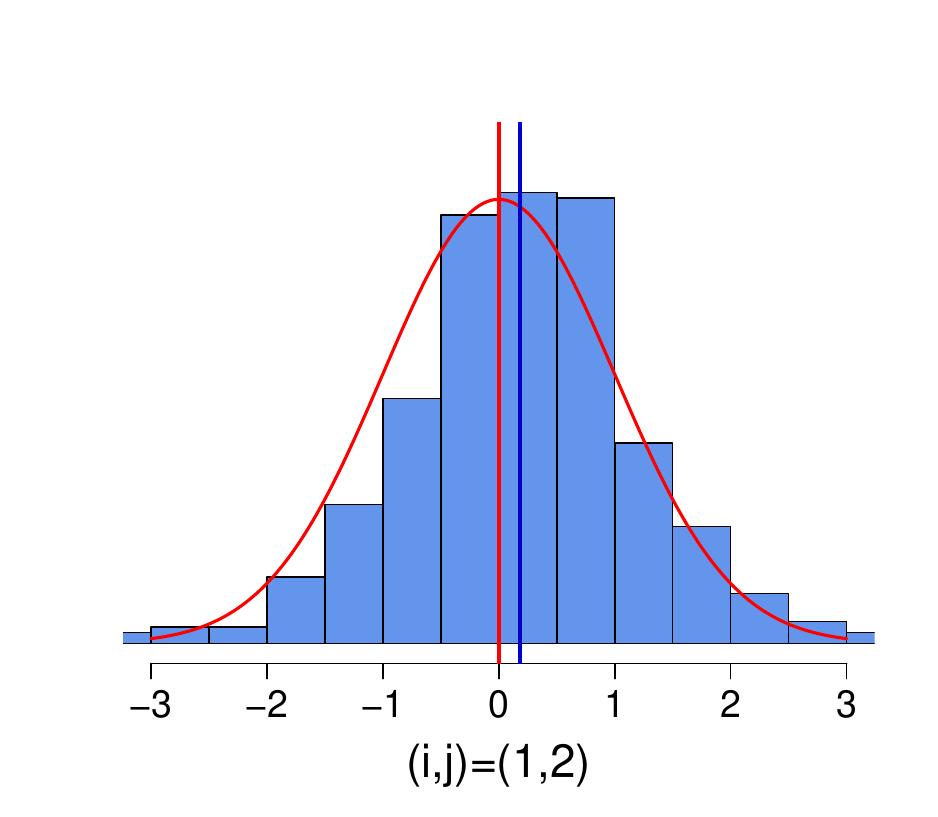}
    \end{minipage}
    \begin{minipage}{0.24\linewidth}
        \centering
        \includegraphics[width=\textwidth]{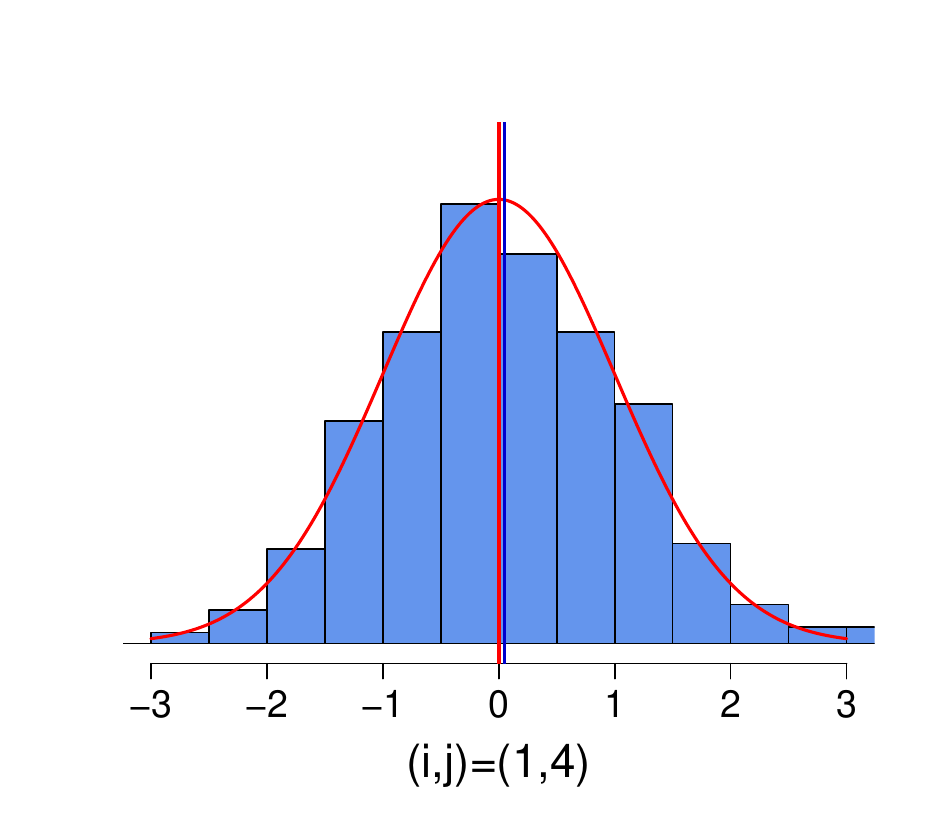}
    \end{minipage}
    \begin{minipage}{0.24\linewidth}
        \centering
        \includegraphics[width=\textwidth]{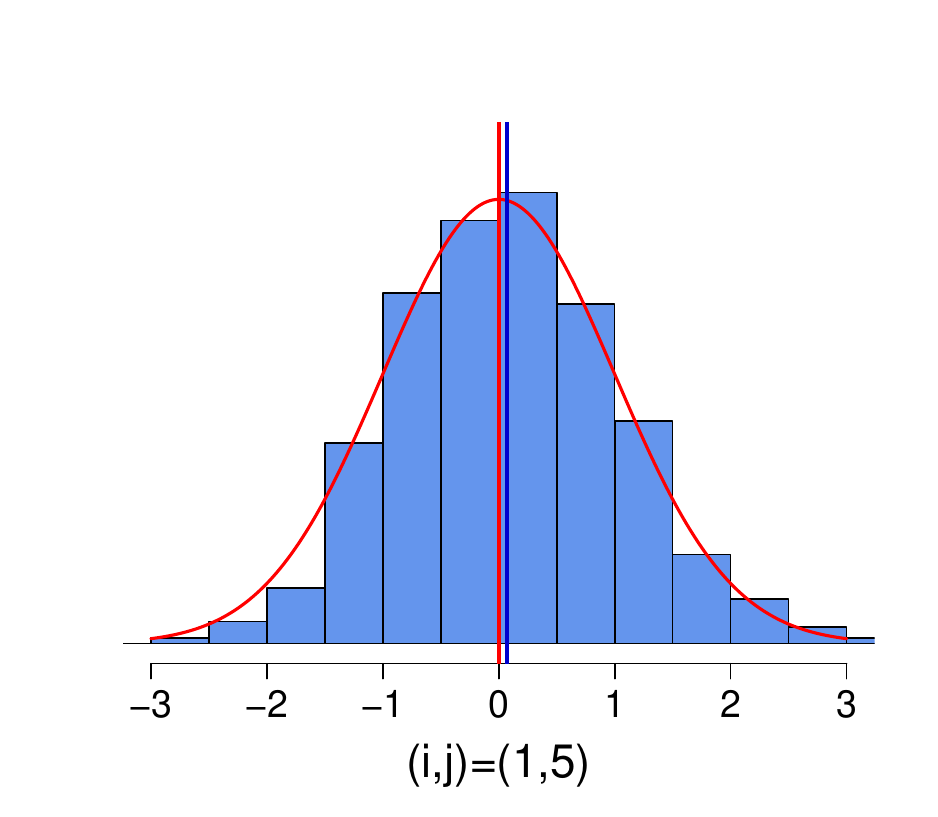}
    \end{minipage}
    \begin{minipage}{0.24\linewidth}
        \centering
        \includegraphics[width=\textwidth]{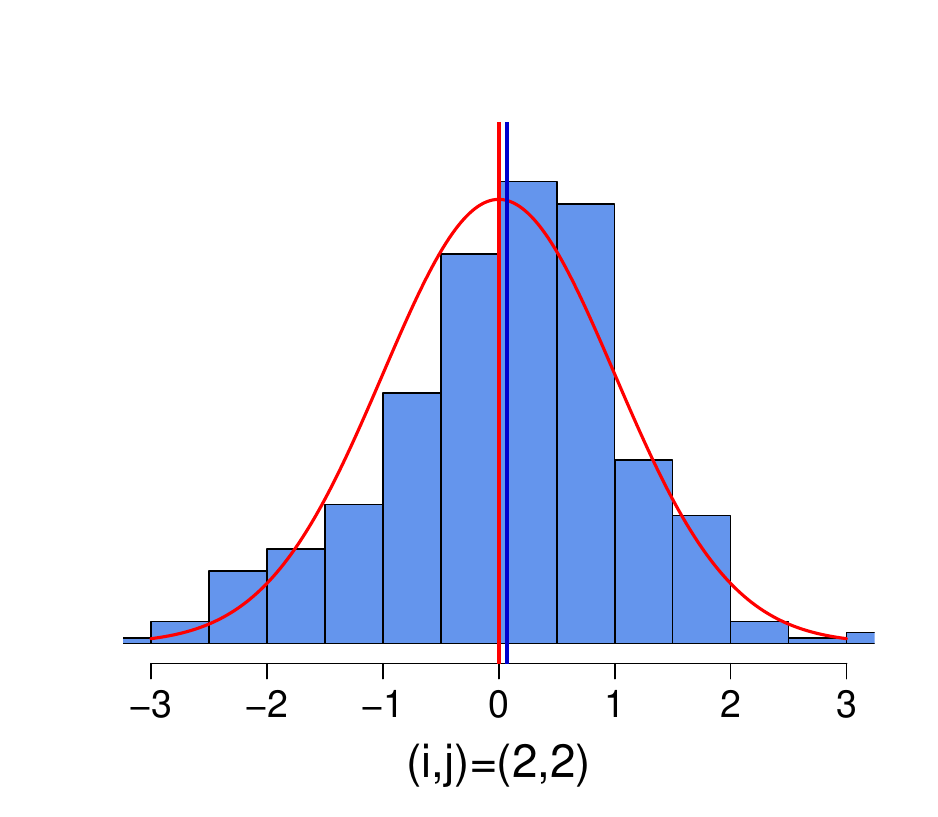}
    \end{minipage}
        \caption*{(b)~~$L_0{:}~ \widehat{\mb{T}}$}
 \end{minipage}   
      \hspace{1cm}
 \begin{minipage}{0.3\linewidth}
    \begin{minipage}{0.24\linewidth}
        \centering
        \includegraphics[width=\textwidth]{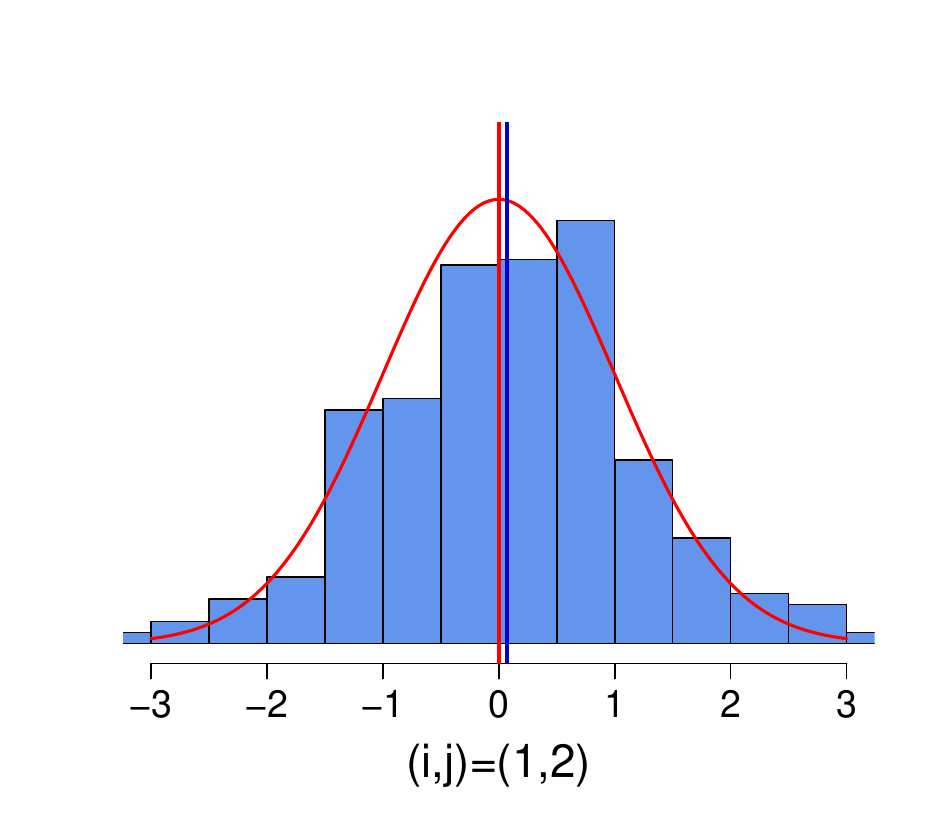}
    \end{minipage}
    \begin{minipage}{0.24\linewidth}
        \centering
        \includegraphics[width=\textwidth]{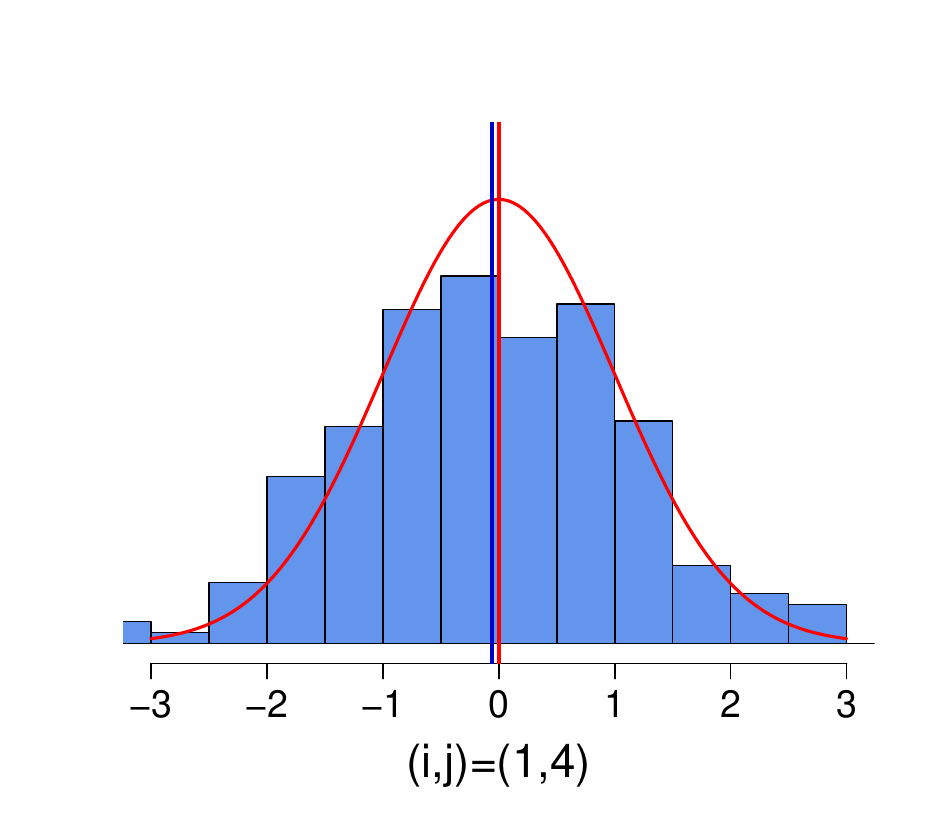}
    \end{minipage}
    \begin{minipage}{0.24\linewidth}
        \centering
        \includegraphics[width=\textwidth]{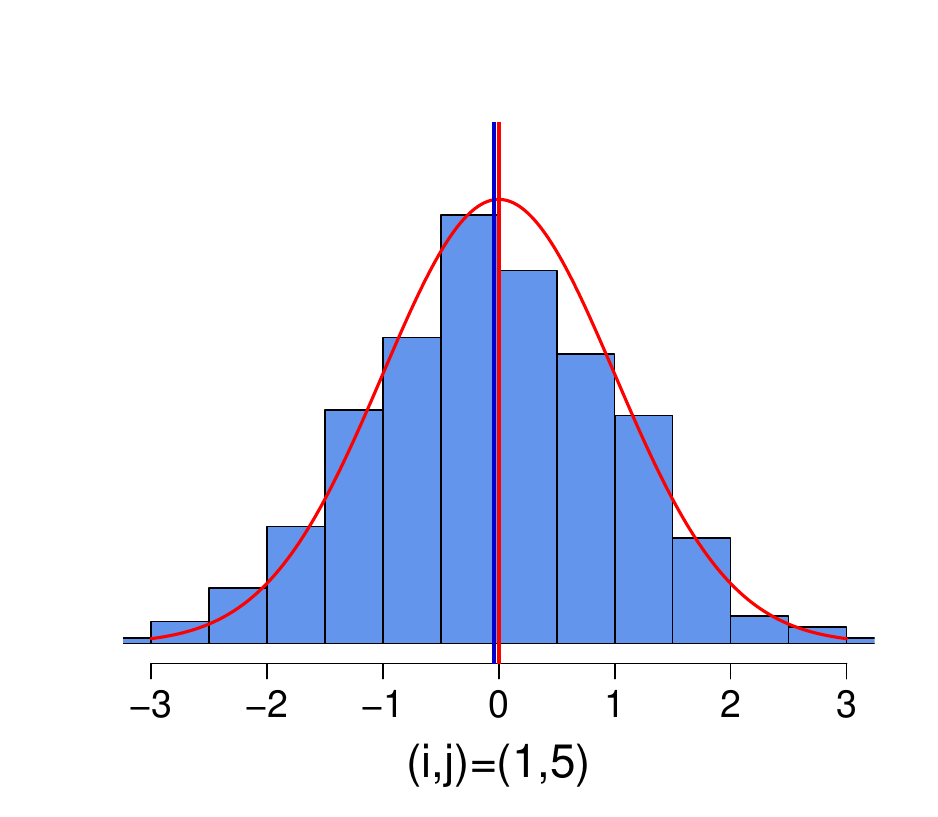}
    \end{minipage}
    \begin{minipage}{0.24\linewidth}
        \centering
        \includegraphics[width=\textwidth]{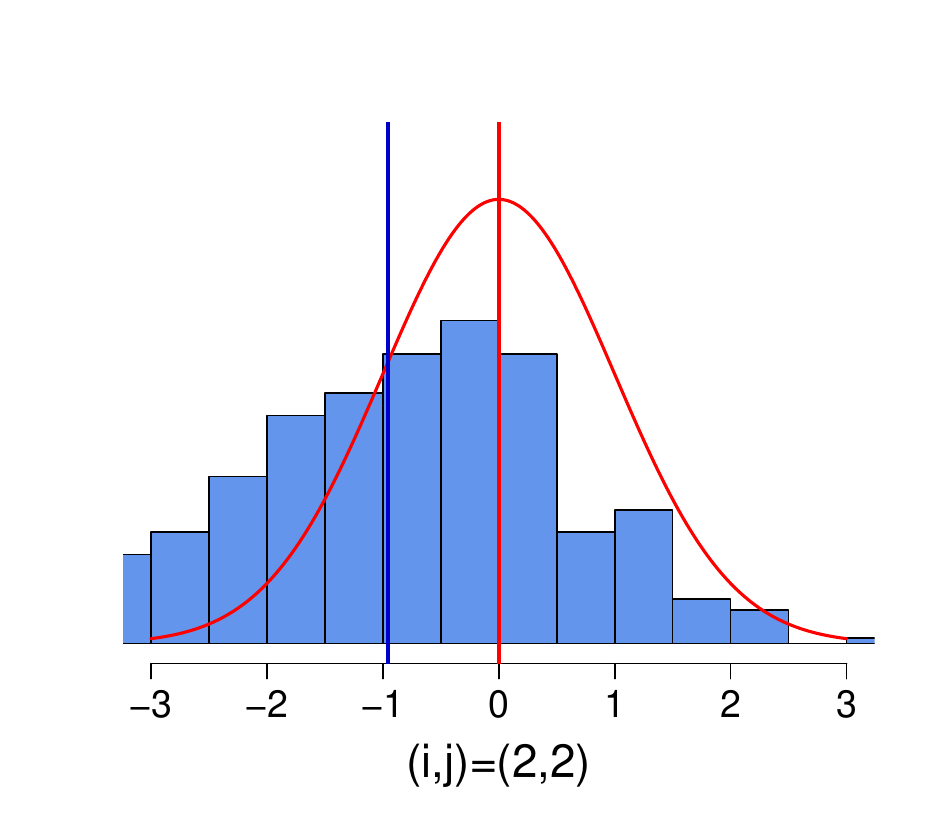}
    \end{minipage}
    \caption*{(c)~~$L_1{:}~ \widehat{\mb{T}}$}
     \end{minipage}   
     \caption{Histograms of $\big(\sqrt{n}(\widehat{\mb{\Omega}}_{ij}^{(m)}-\mb{\Omega}_{ij})/\widehat{\sigma}_{\mb{\Omega}_{ij}}^{(m)}\big)_{m=1}^{400}$ under sub-Gaussian hub graph settings.}
\end{sidewaysfigure}

 \begin{sidewaysfigure}[th!]
  \caption*{$n=200, p=200$}
      \vspace{-0.43cm}
 \begin{minipage}{0.3\linewidth}
    \begin{minipage}{0.24\linewidth}
        \centering
        \includegraphics[width=\textwidth]{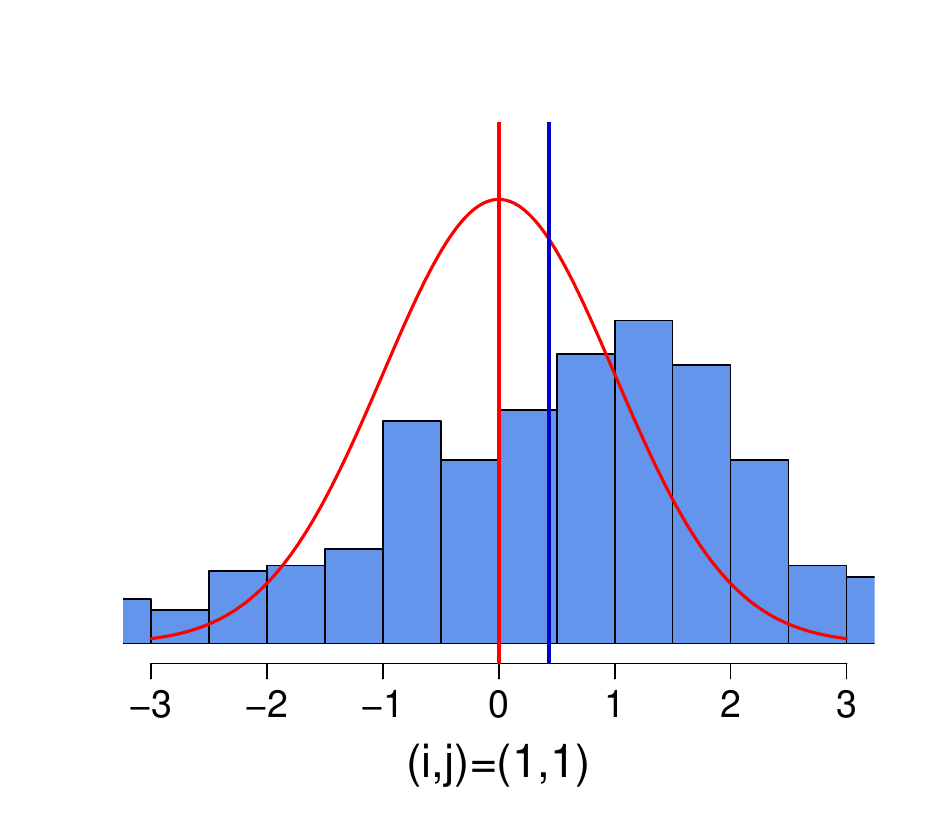}
    \end{minipage}
    \begin{minipage}{0.24\linewidth}
        \centering
        \includegraphics[width=\textwidth]{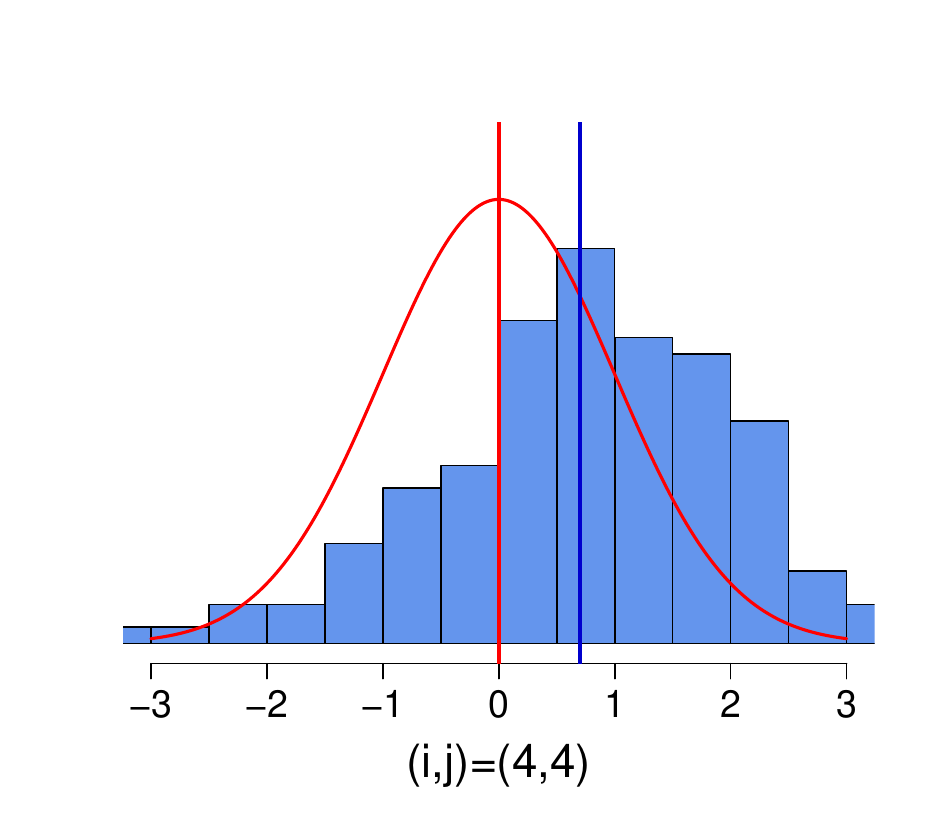}
    \end{minipage}
    \begin{minipage}{0.24\linewidth}
        \centering
        \includegraphics[width=\textwidth]{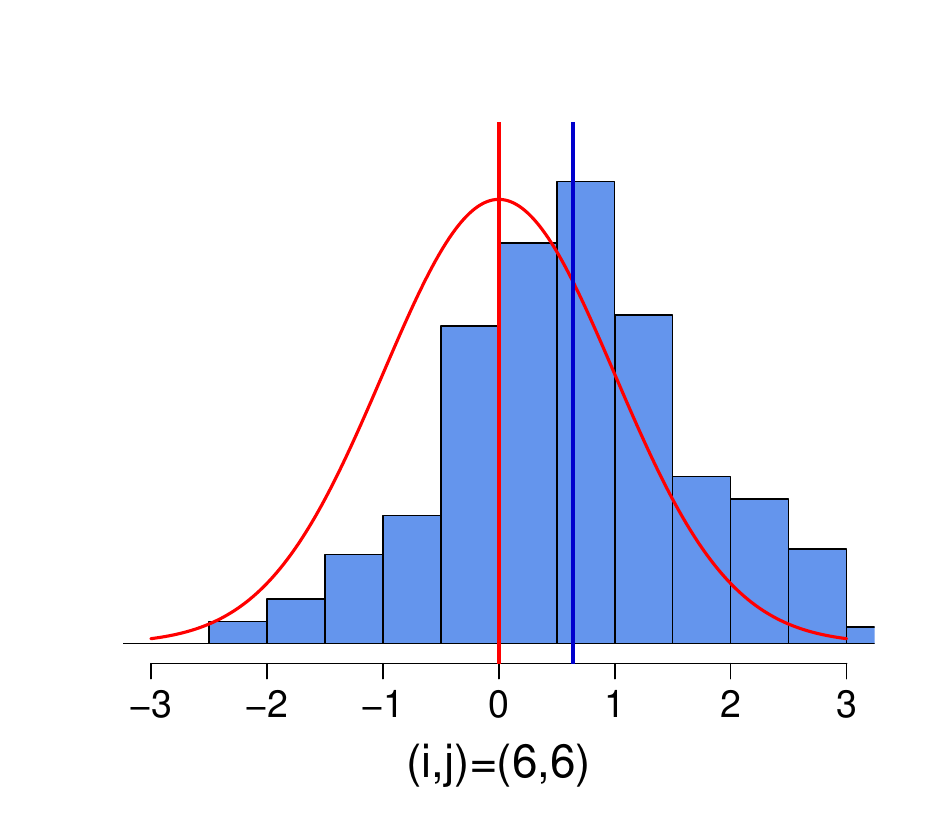}
    \end{minipage}
    \begin{minipage}{0.24\linewidth}
        \centering
        \includegraphics[width=\textwidth]{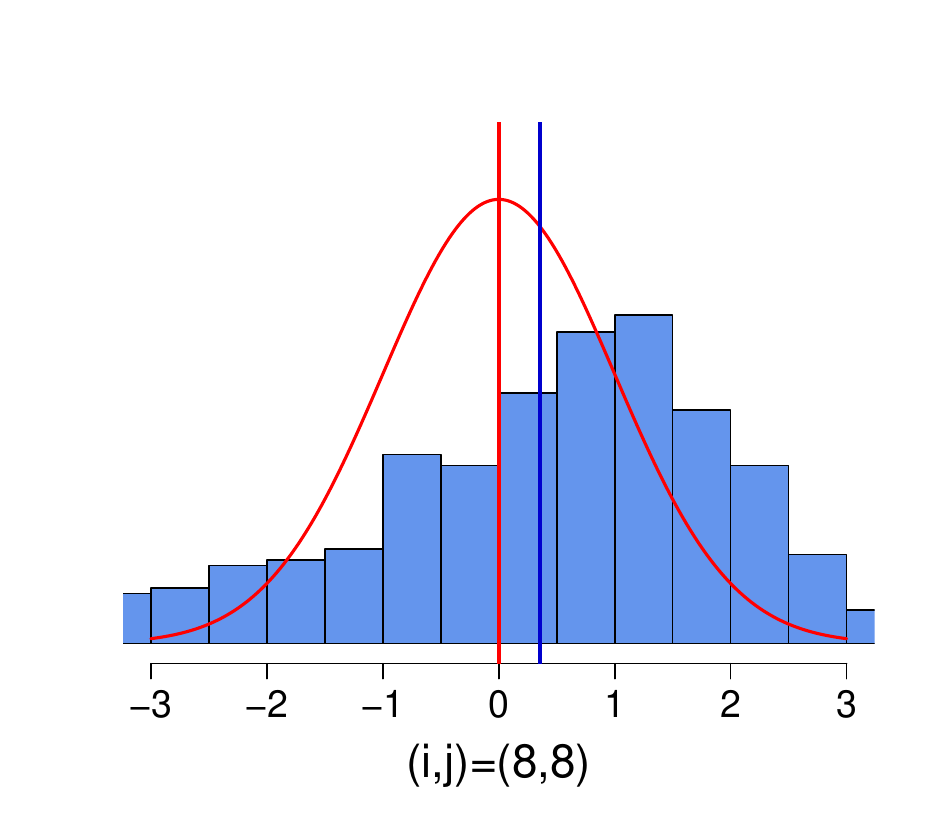}
    \end{minipage}
 \end{minipage}
 \hspace{1cm}
 \begin{minipage}{0.3\linewidth}
    \begin{minipage}{0.24\linewidth}
        \centering
        \includegraphics[width=\textwidth]{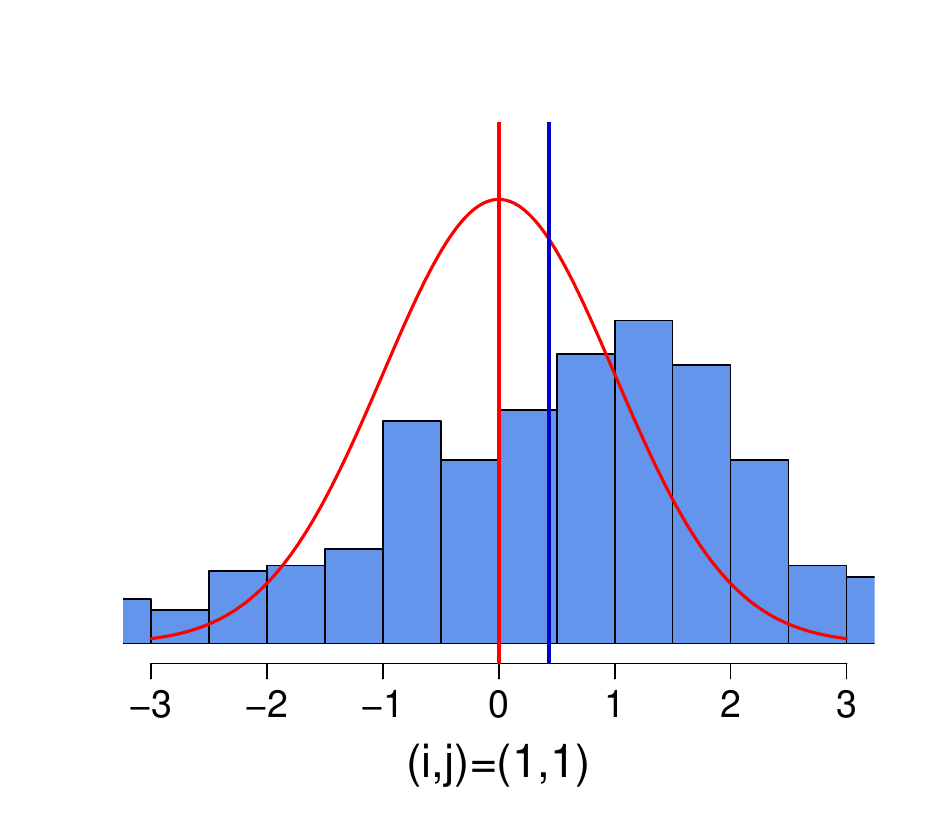}
    \end{minipage}
    \begin{minipage}{0.24\linewidth}
        \centering
        \includegraphics[width=\textwidth]{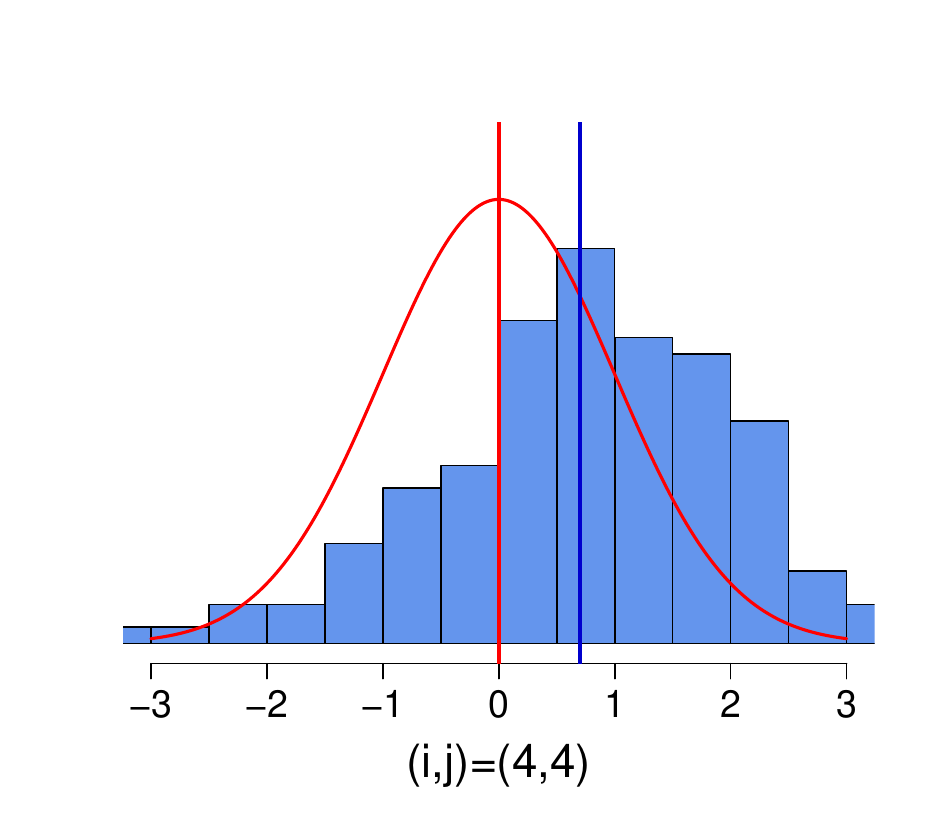}
    \end{minipage}
    \begin{minipage}{0.24\linewidth}
        \centering
        \includegraphics[width=\textwidth]{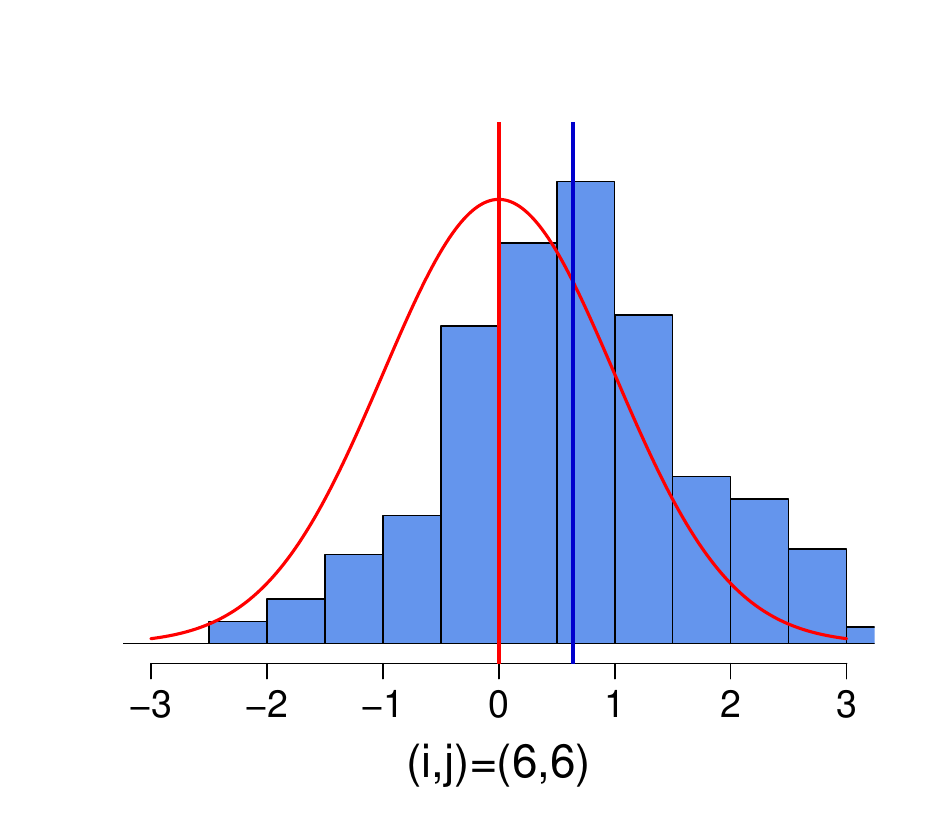}
    \end{minipage}
    \begin{minipage}{0.24\linewidth}
        \centering
        \includegraphics[width=\textwidth]{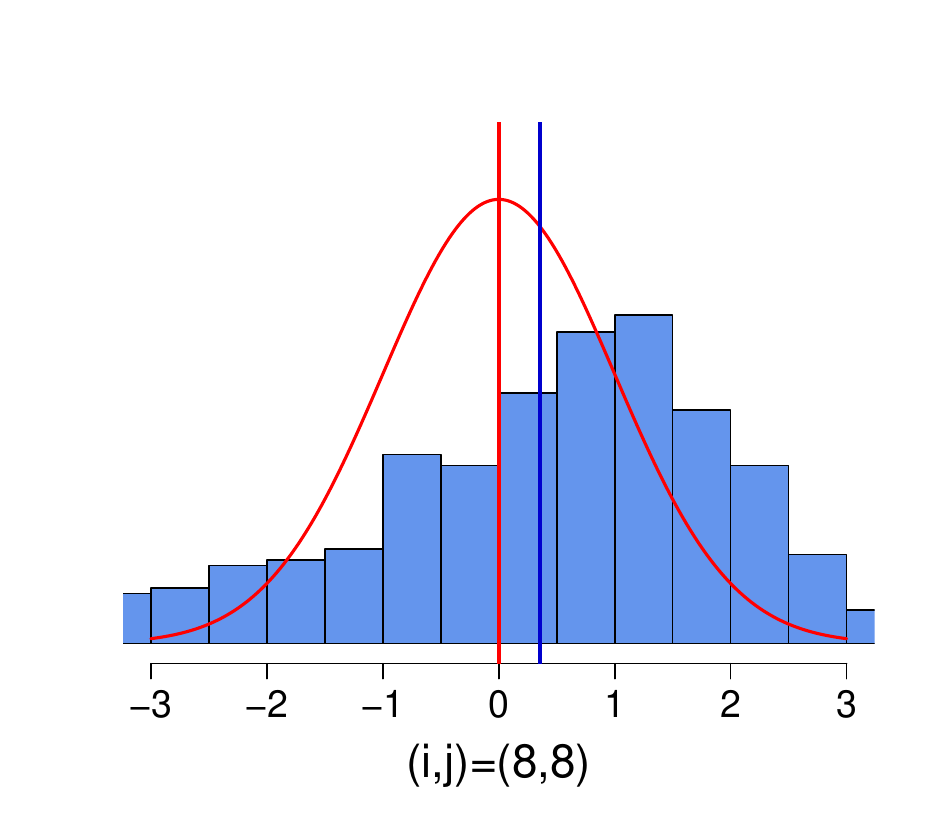}
    \end{minipage}    
 \end{minipage}
  \hspace{1cm}
 \begin{minipage}{0.3\linewidth}
     \begin{minipage}{0.24\linewidth}
        \centering
        \includegraphics[width=\textwidth]{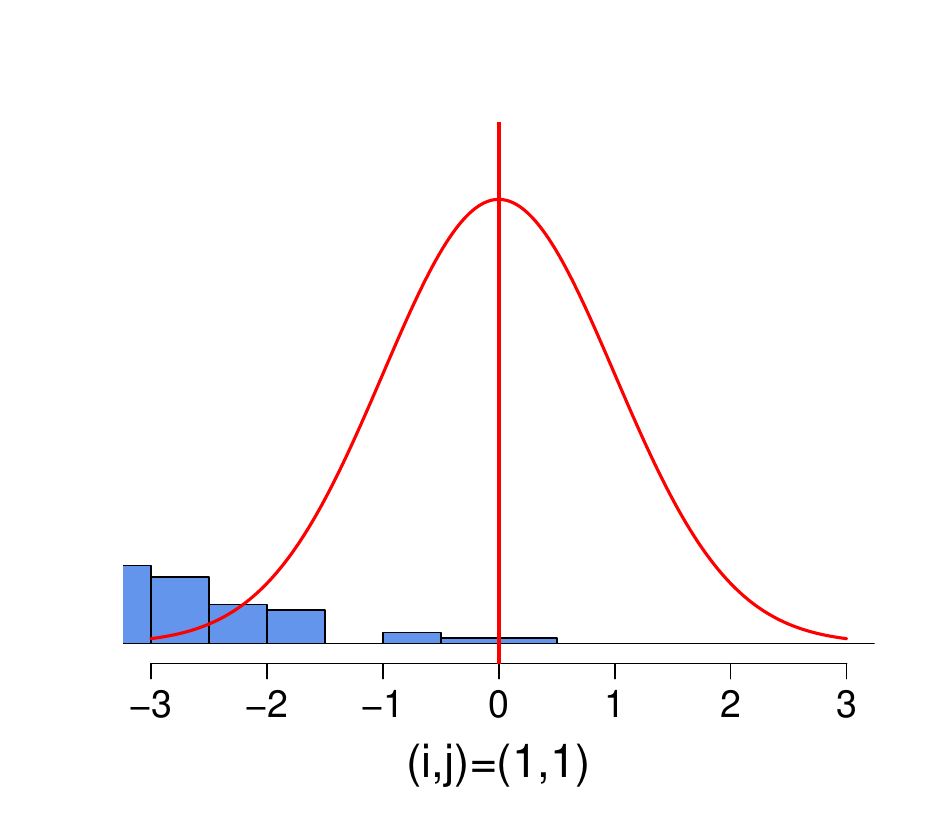}
    \end{minipage}
    \begin{minipage}{0.24\linewidth}
        \centering
        \includegraphics[width=\textwidth]{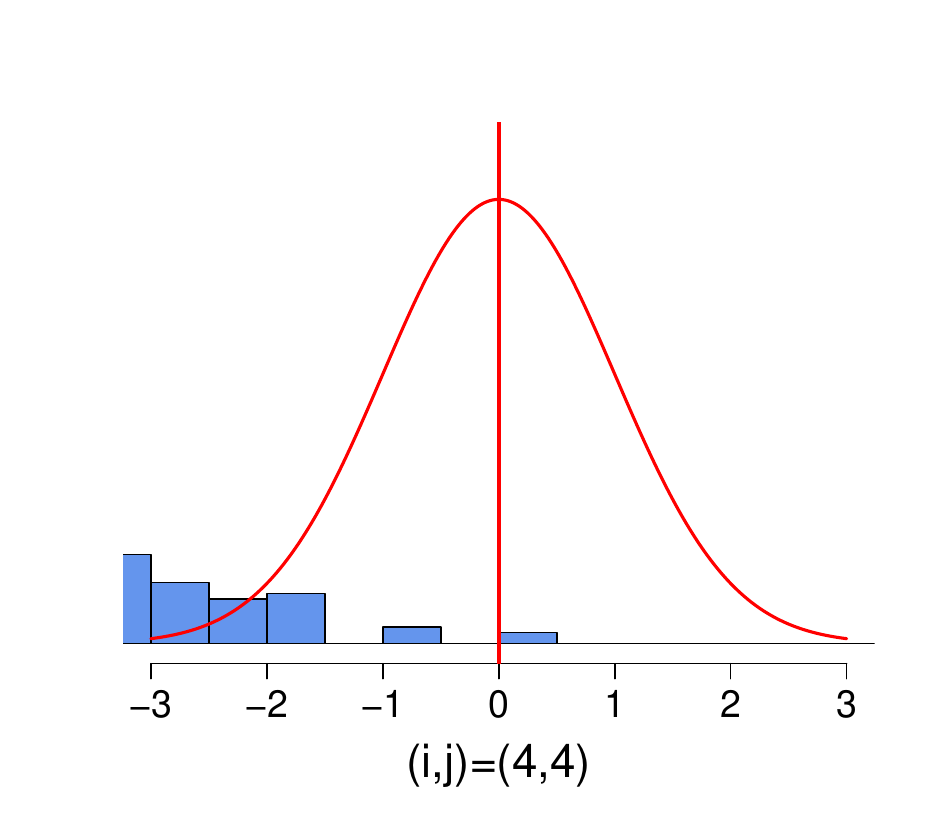}
    \end{minipage}
    \begin{minipage}{0.24\linewidth}
        \centering
        \includegraphics[width=\textwidth]{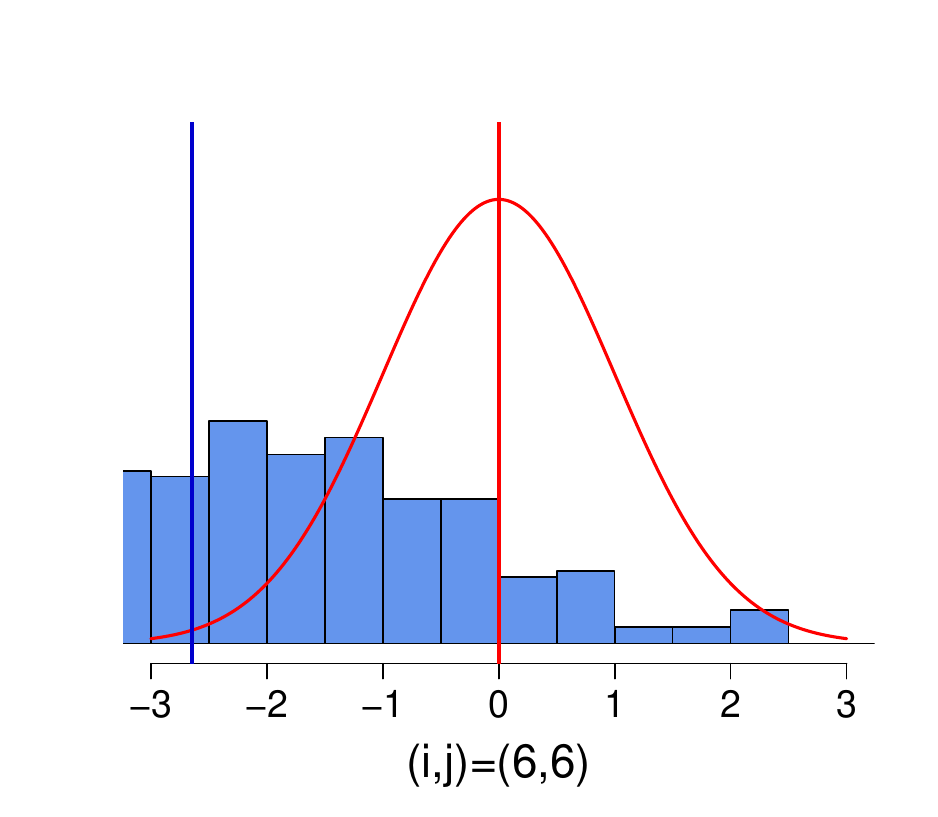}
    \end{minipage}
    \begin{minipage}{0.24\linewidth}
        \centering
        \includegraphics[width=\textwidth]{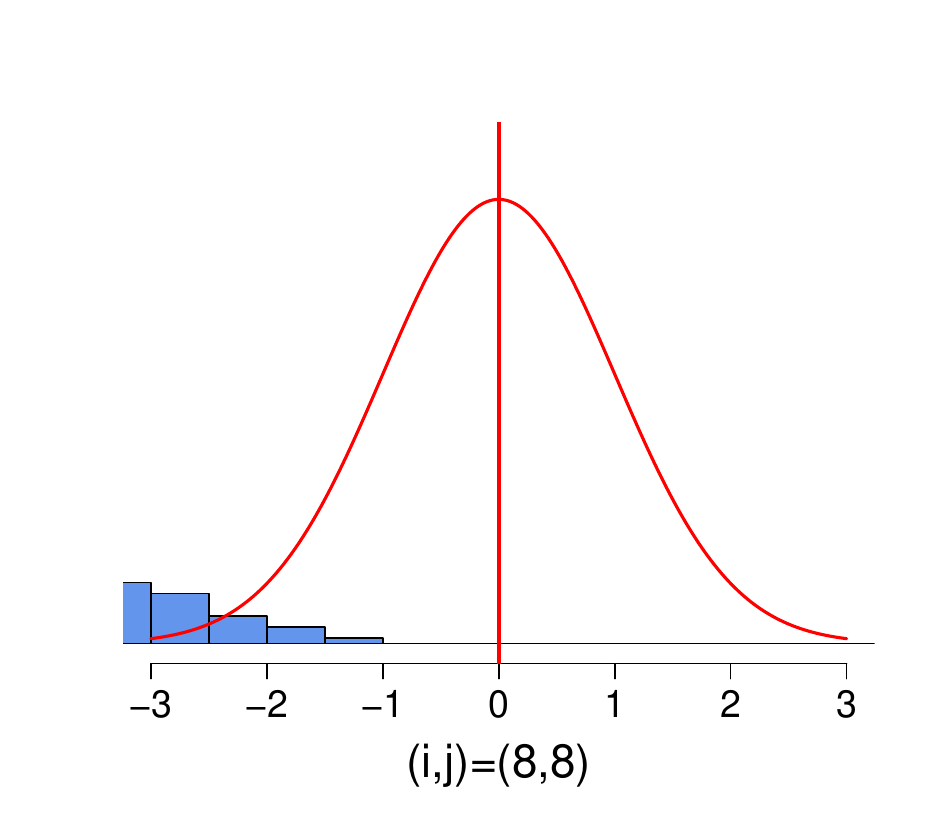}
    \end{minipage}
 \end{minipage}

  \caption*{$n=400, p=200$}
      \vspace{-0.43cm}
 \begin{minipage}{0.3\linewidth}
    \begin{minipage}{0.24\linewidth}
        \centering
        \includegraphics[width=\textwidth]{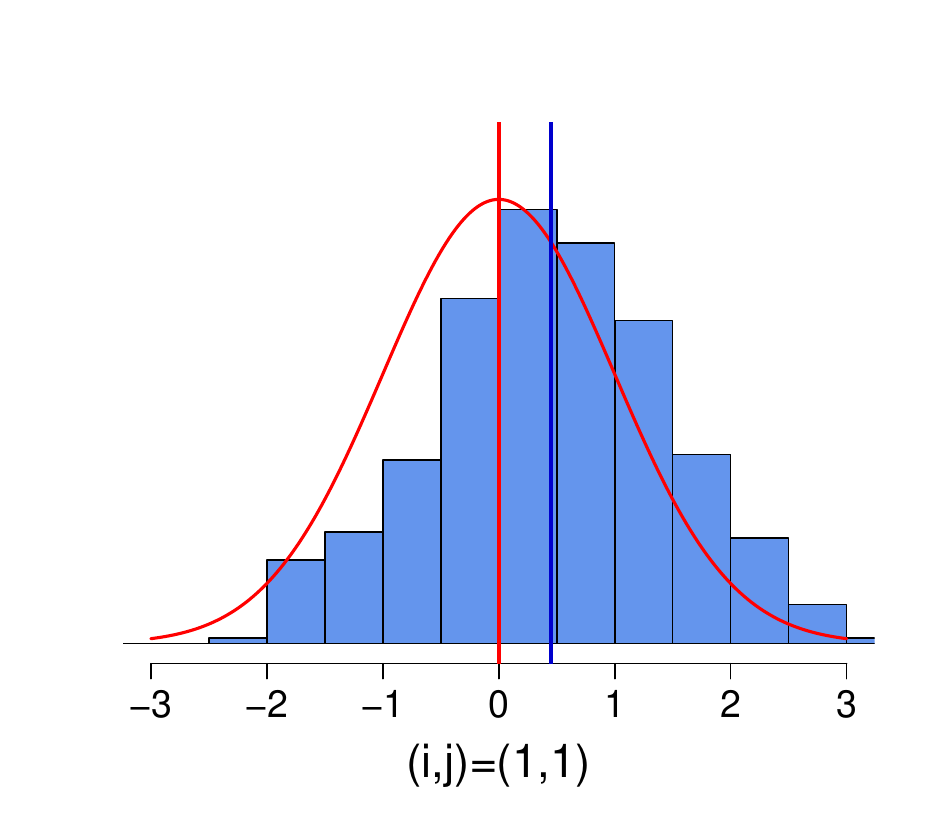}
    \end{minipage}
    \begin{minipage}{0.24\linewidth}
        \centering
        \includegraphics[width=\textwidth]{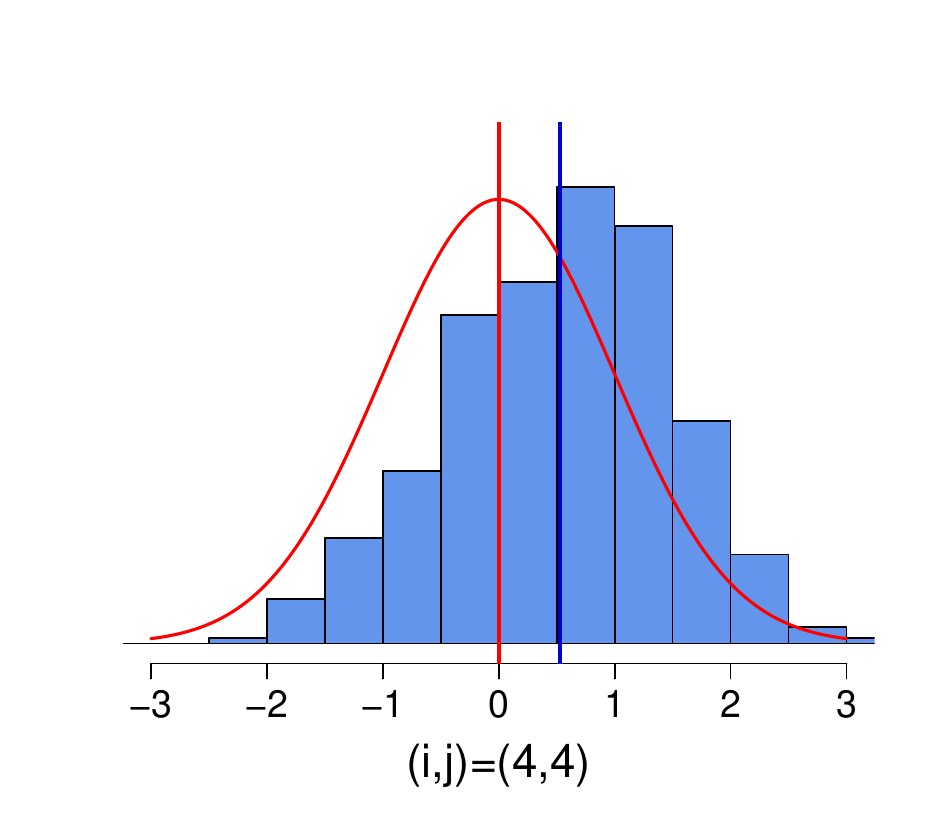}
    \end{minipage}
    \begin{minipage}{0.24\linewidth}
        \centering
        \includegraphics[width=\textwidth]{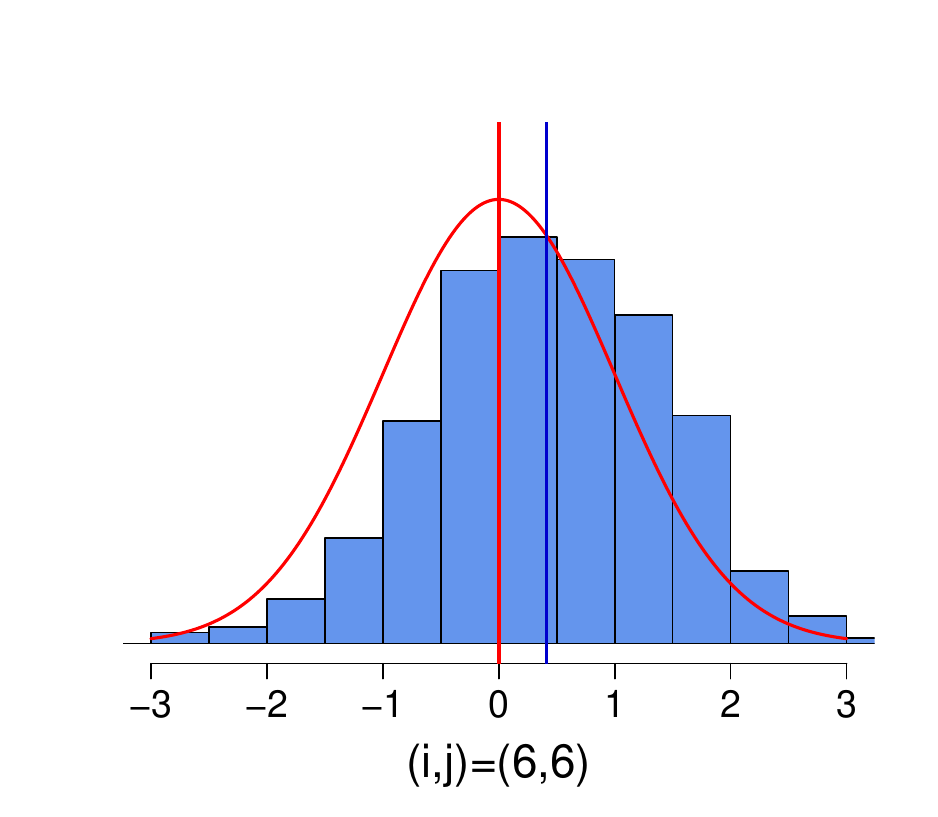}
    \end{minipage}
    \begin{minipage}{0.24\linewidth}
        \centering
        \includegraphics[width=\textwidth]{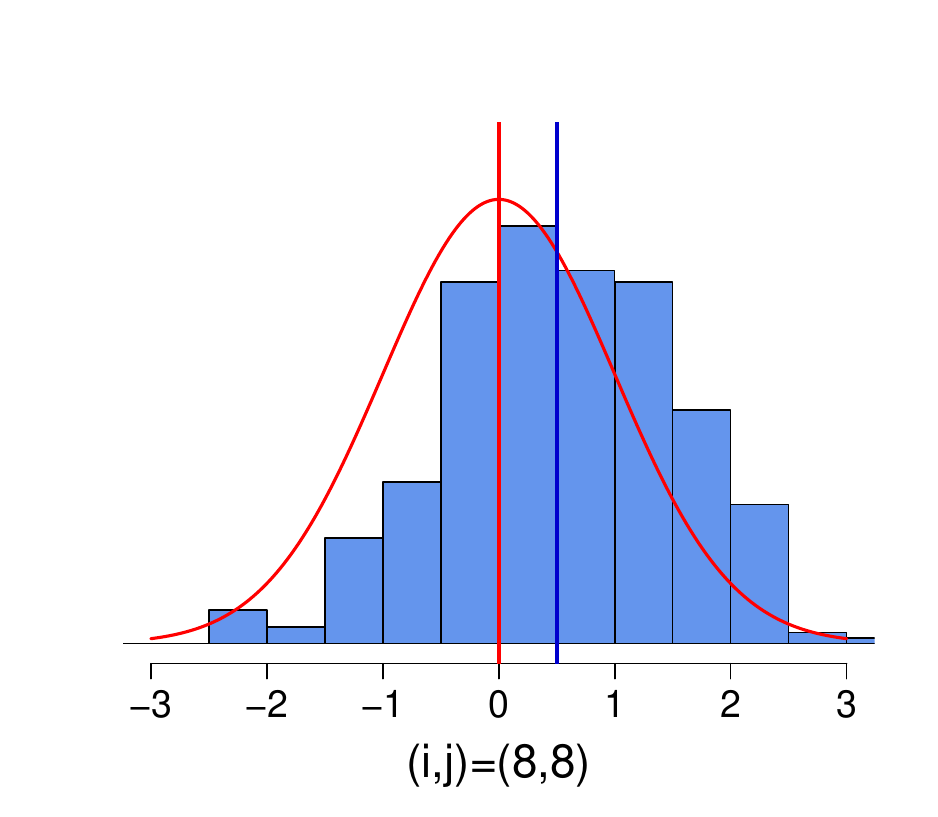}
    \end{minipage}
 \end{minipage}  
     \hspace{1cm}
 \begin{minipage}{0.3\linewidth}
    \begin{minipage}{0.24\linewidth}
        \centering
        \includegraphics[width=\textwidth]{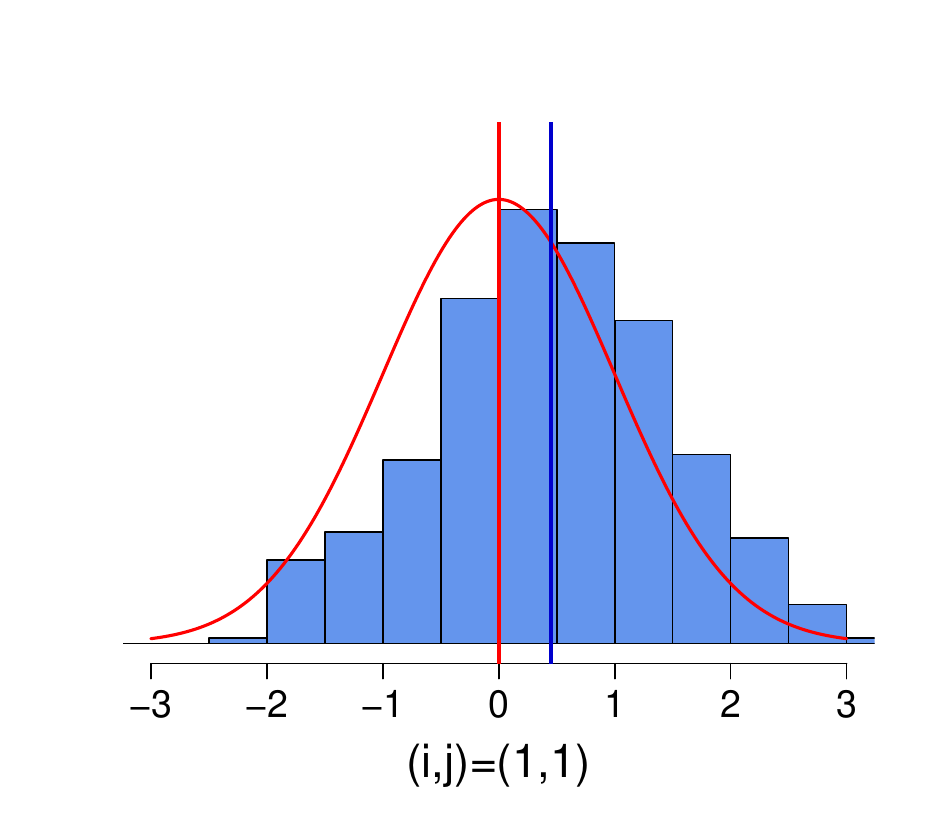}
    \end{minipage}
    \begin{minipage}{0.24\linewidth}
        \centering
        \includegraphics[width=\textwidth]{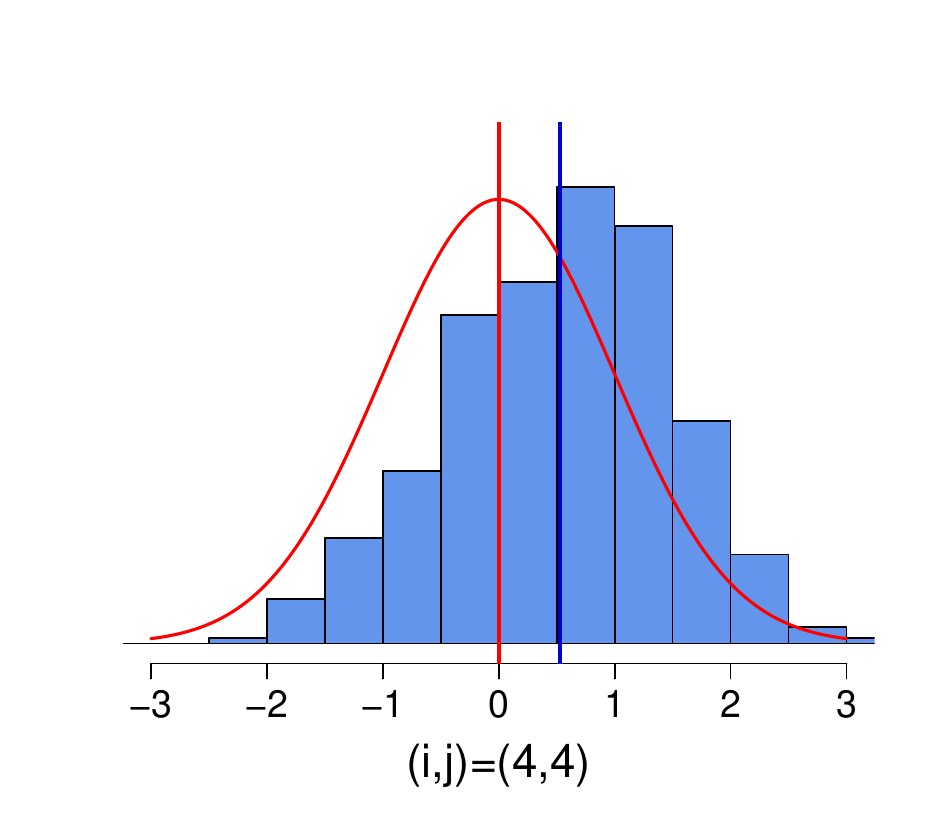}
    \end{minipage}
    \begin{minipage}{0.24\linewidth}
        \centering
        \includegraphics[width=\textwidth]{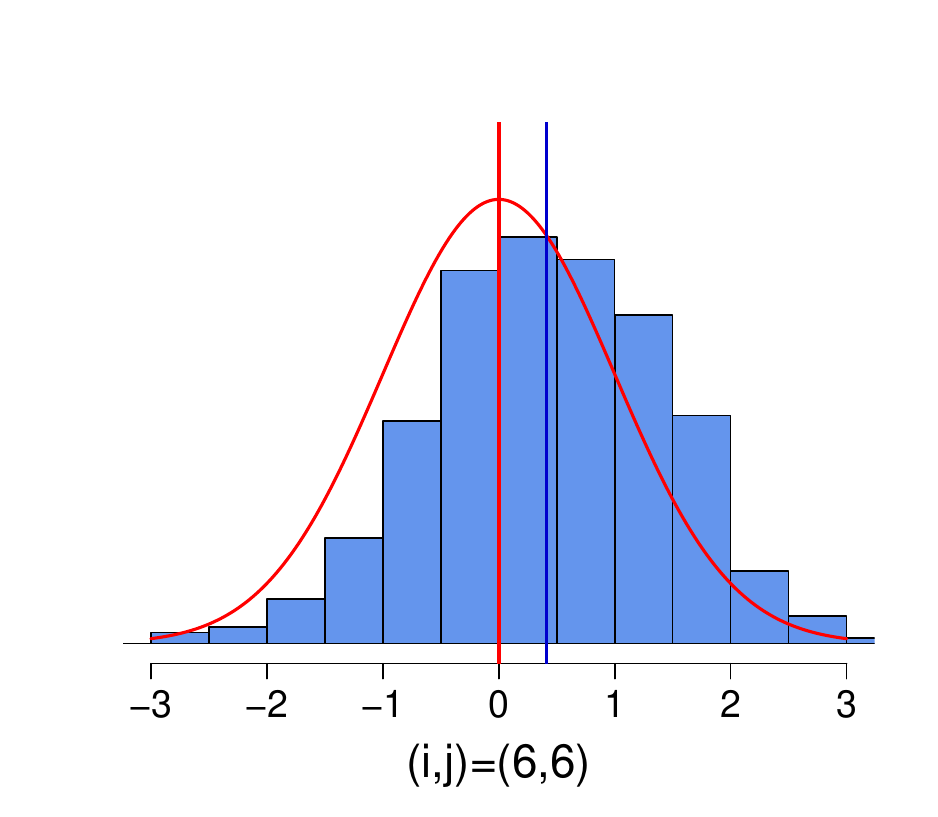}
    \end{minipage}
    \begin{minipage}{0.24\linewidth}
        \centering
        \includegraphics[width=\textwidth]{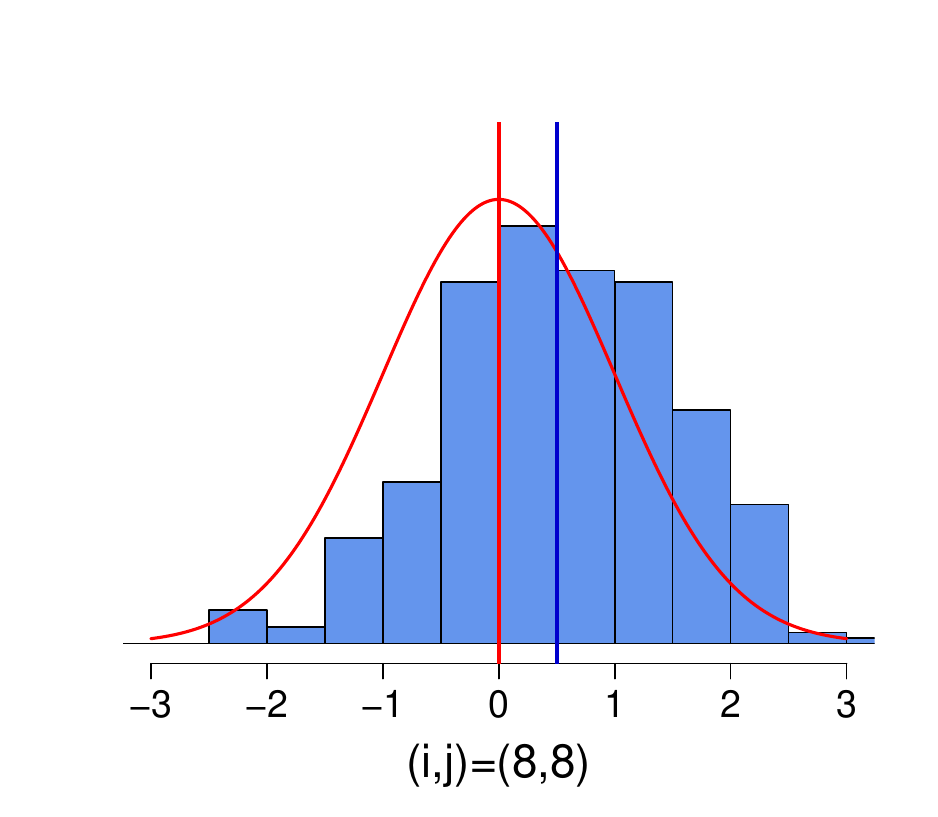}
    \end{minipage}
  \end{minipage}  
    \hspace{1cm}
 \begin{minipage}{0.3\linewidth}
    \begin{minipage}{0.24\linewidth}
        \centering
        \includegraphics[width=\textwidth]{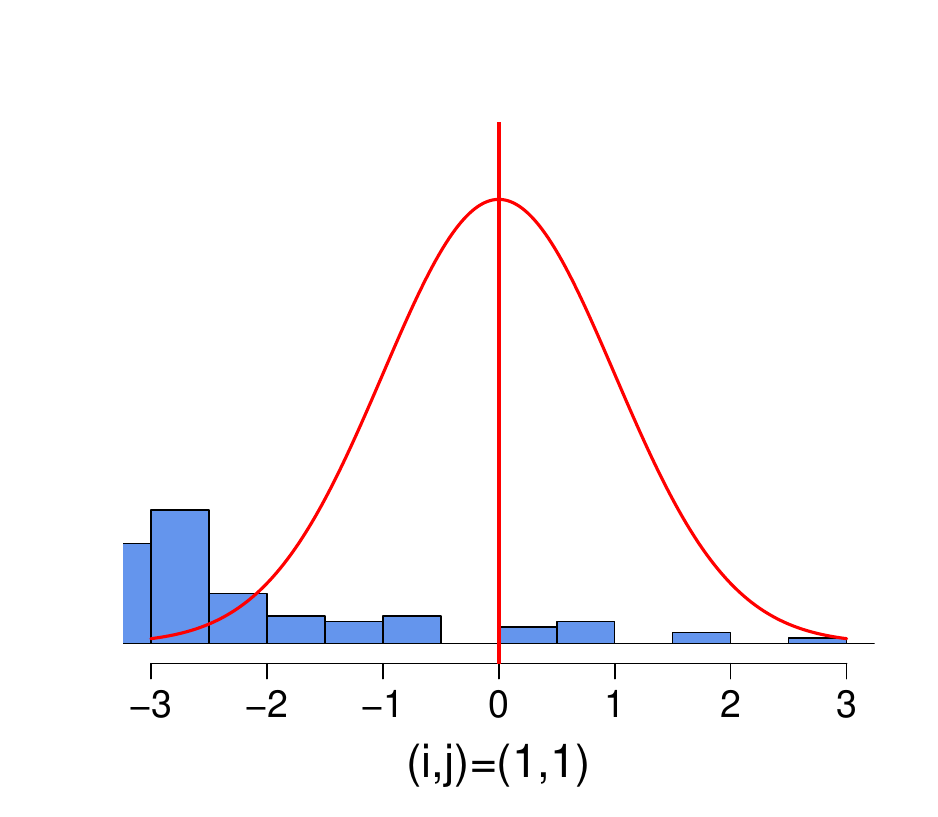}
    \end{minipage}
    \begin{minipage}{0.24\linewidth}
        \centering
        \includegraphics[width=\textwidth]{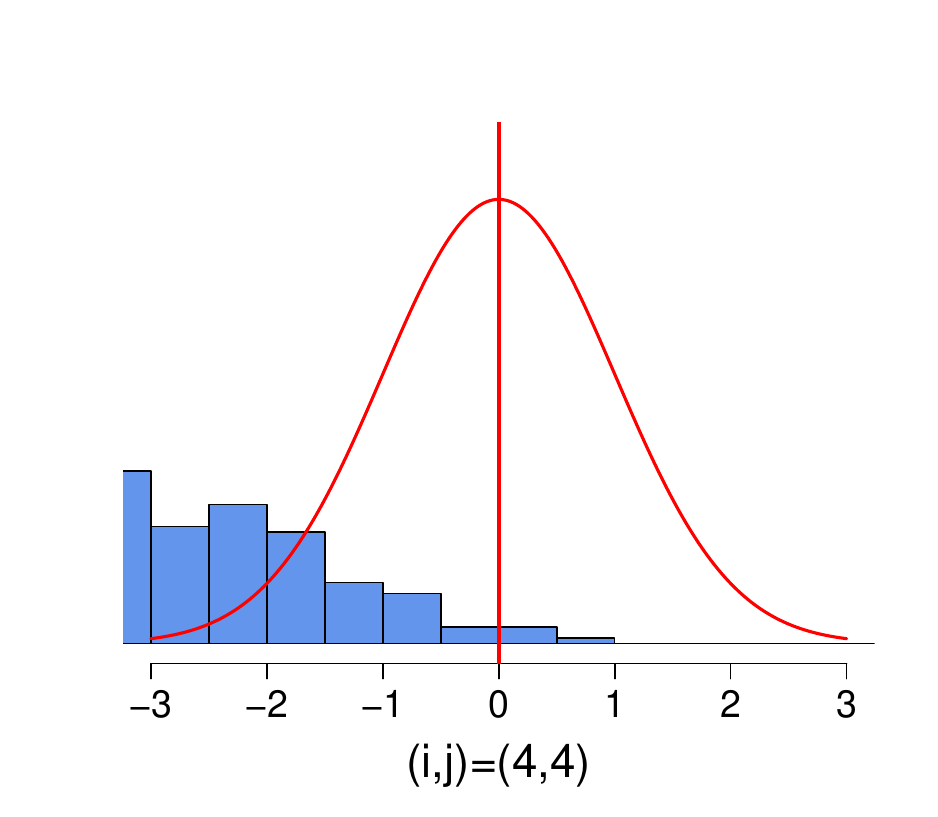}
    \end{minipage}
    \begin{minipage}{0.24\linewidth}
        \centering
        \includegraphics[width=\textwidth]{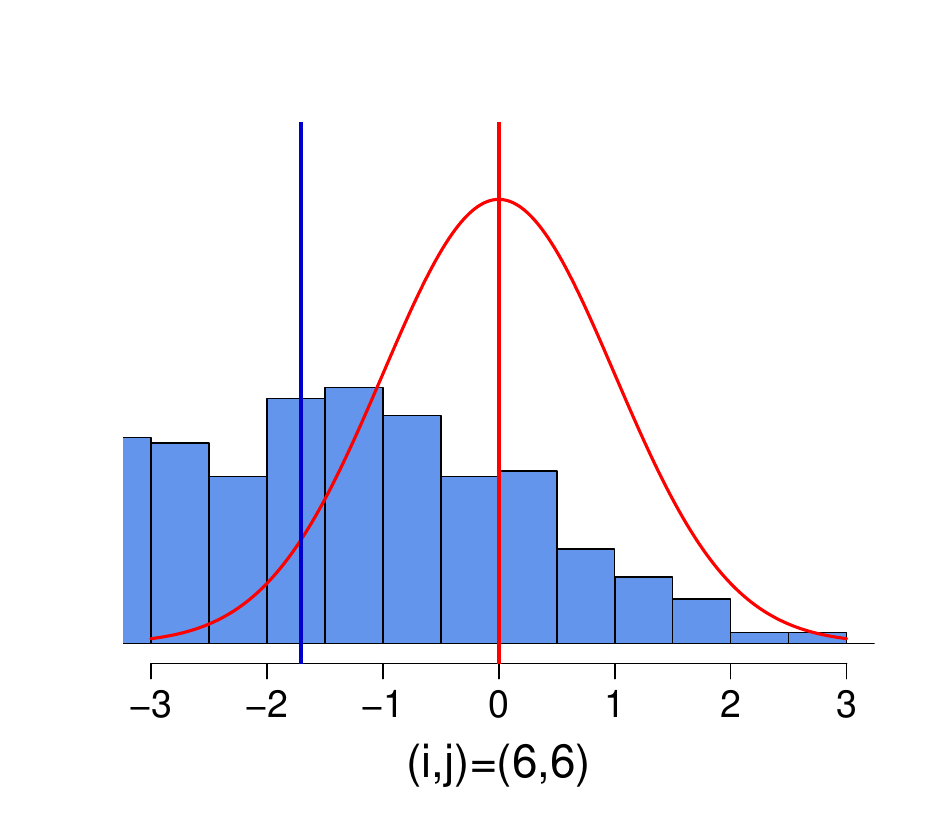}
    \end{minipage}
    \begin{minipage}{0.24\linewidth}
        \centering
        \includegraphics[width=\textwidth]{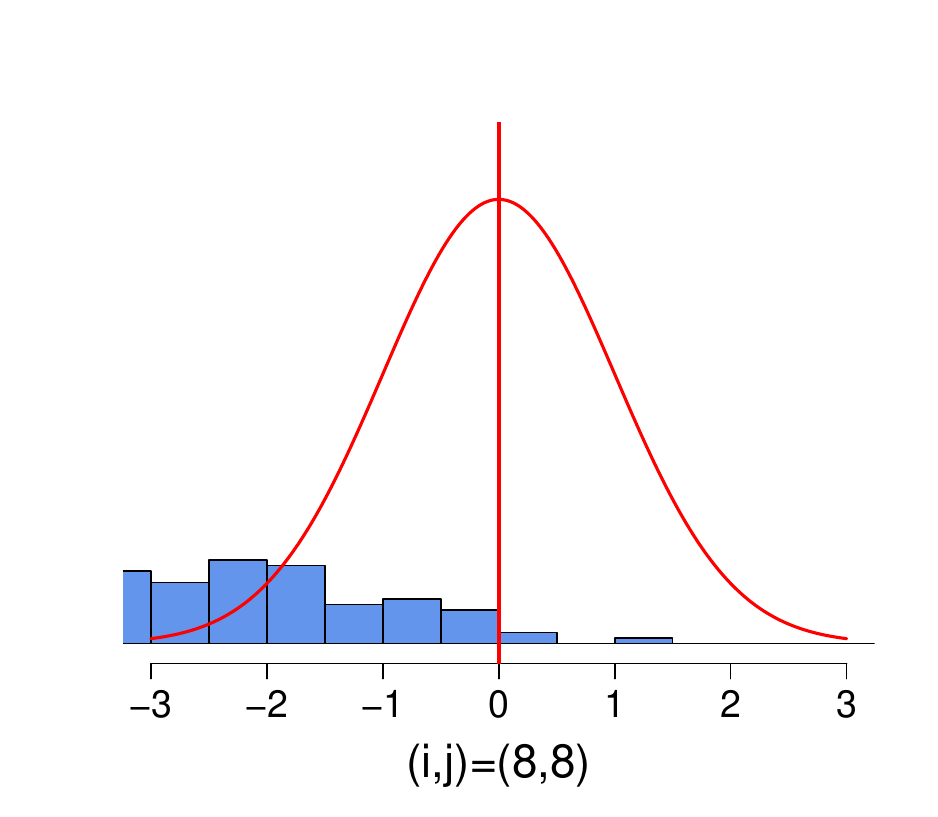}
    \end{minipage}
 \end{minipage}

  \caption*{$n=800, p=200$}
      \vspace{-0.43cm}
 \begin{minipage}{0.3\linewidth}
    \begin{minipage}{0.24\linewidth}
        \centering
        \includegraphics[width=\textwidth]{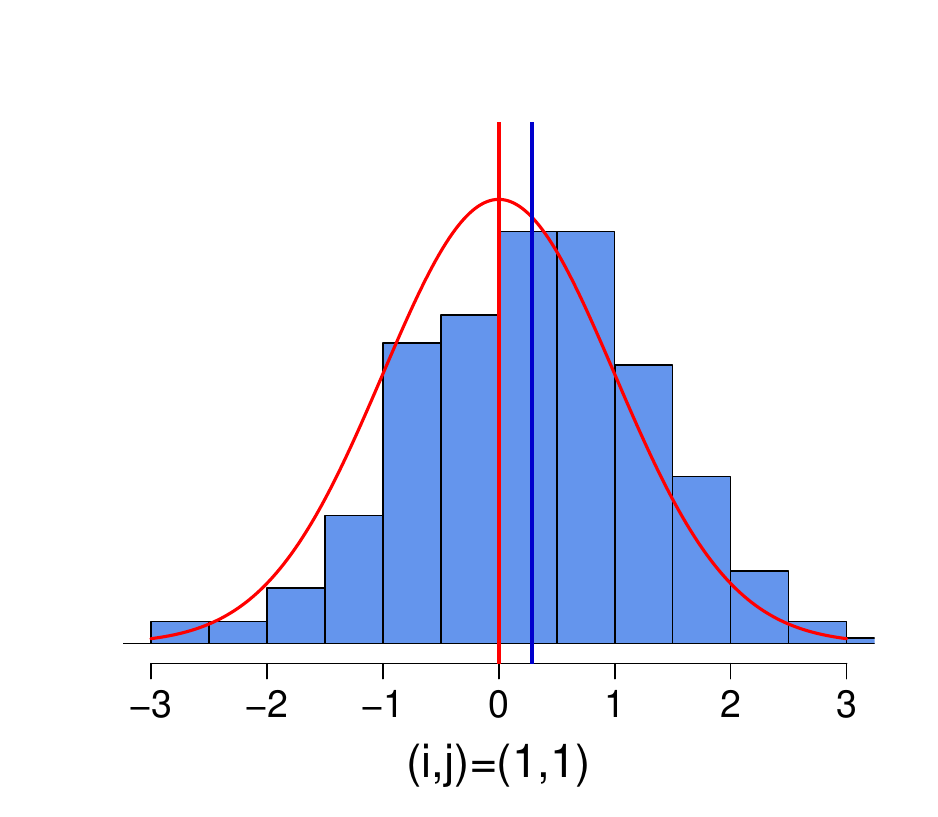}
    \end{minipage}
    \begin{minipage}{0.24\linewidth}
        \centering
        \includegraphics[width=\textwidth]{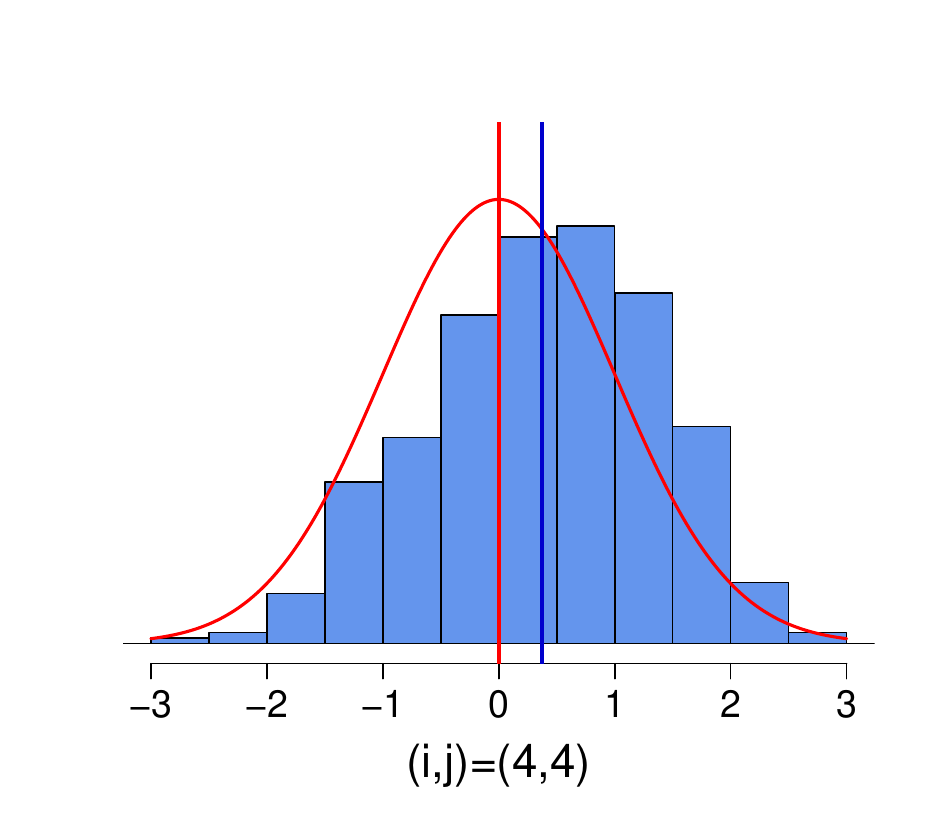}
    \end{minipage}
    \begin{minipage}{0.24\linewidth}
        \centering
        \includegraphics[width=\textwidth]{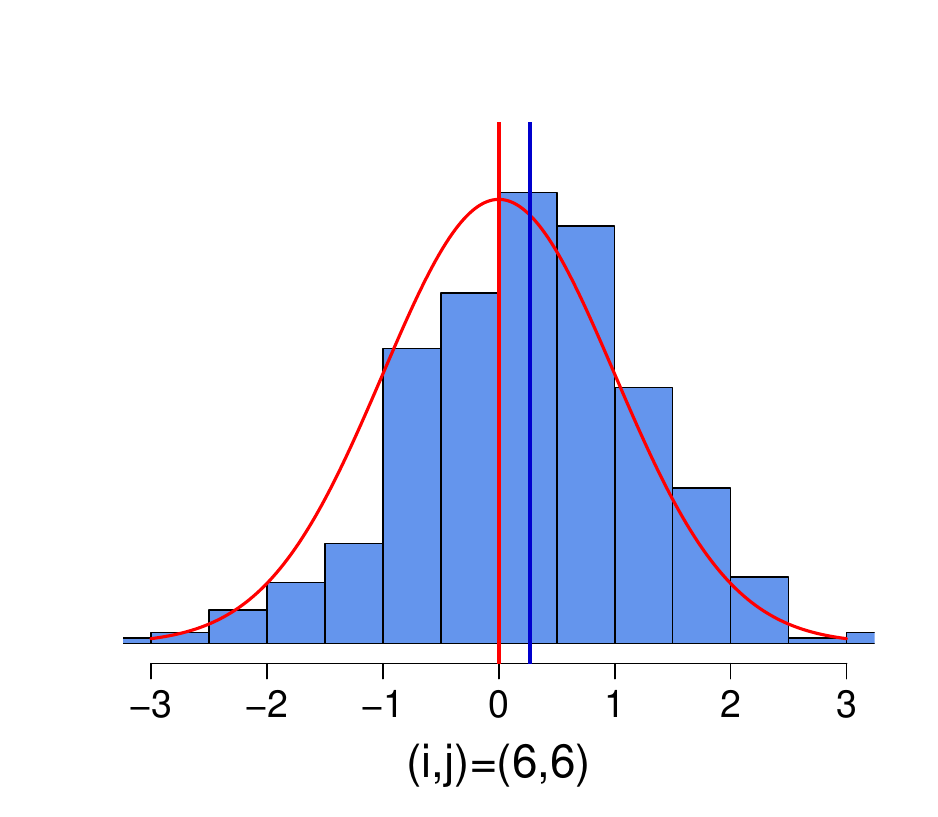}
    \end{minipage}
    \begin{minipage}{0.24\linewidth}
        \centering
        \includegraphics[width=\textwidth]{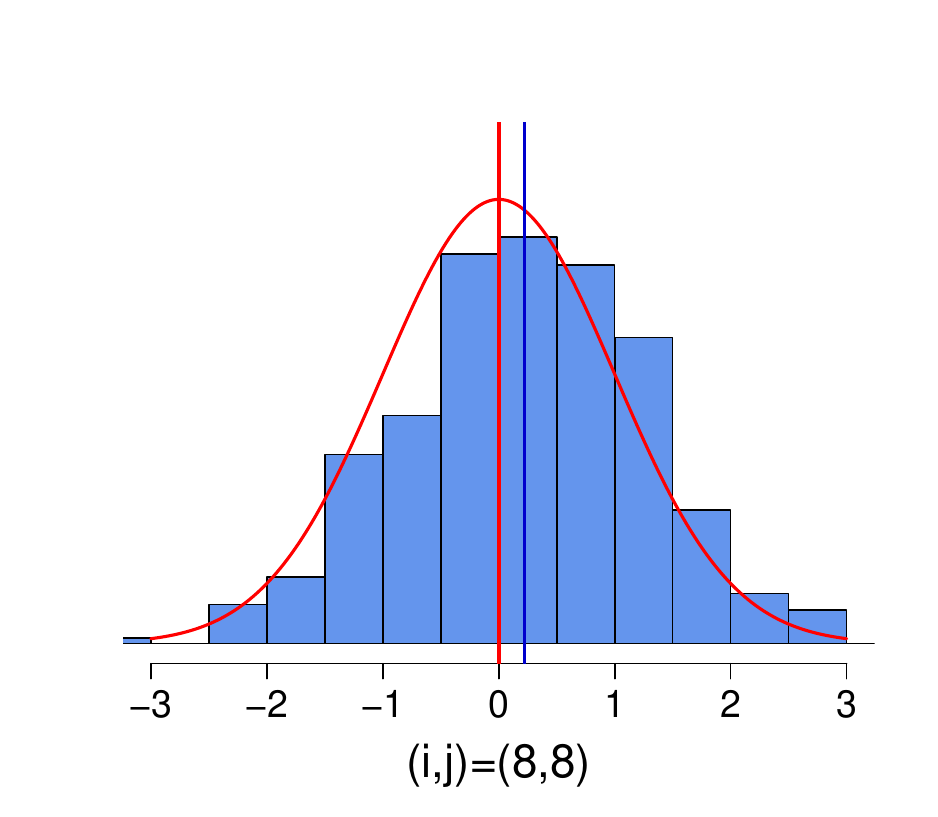}
    \end{minipage}
 \end{minipage} 
     \hspace{1cm}
 \begin{minipage}{0.3\linewidth}
    \begin{minipage}{0.24\linewidth}
        \centering
        \includegraphics[width=\textwidth]{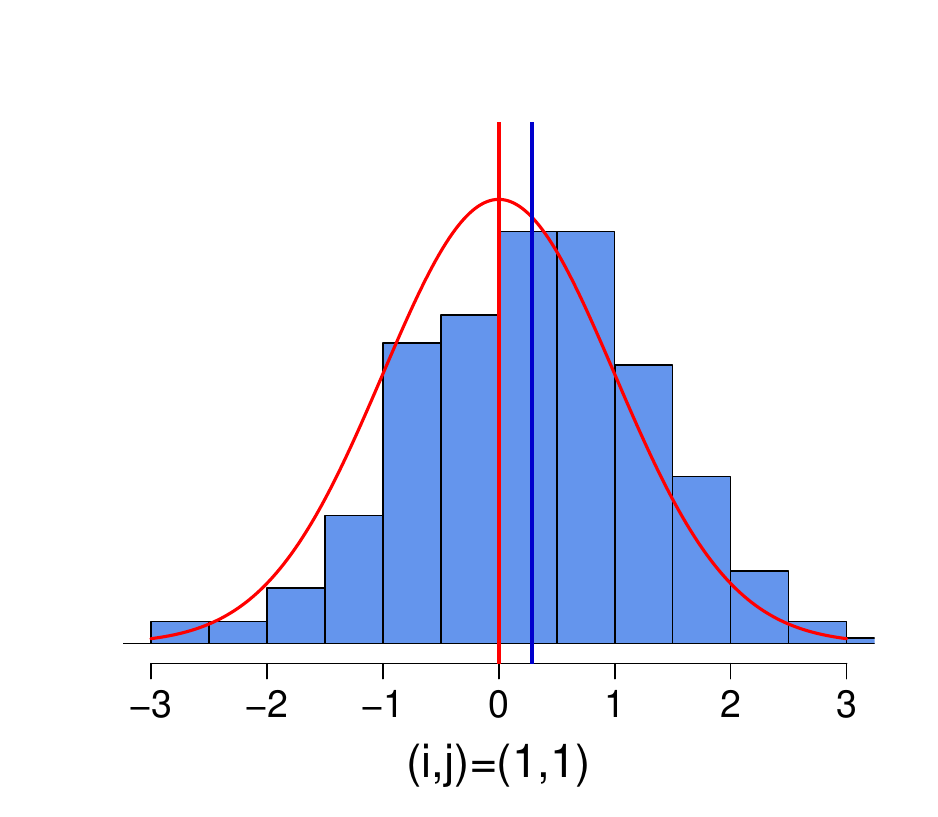}
    \end{minipage}
    \begin{minipage}{0.24\linewidth}
        \centering
        \includegraphics[width=\textwidth]{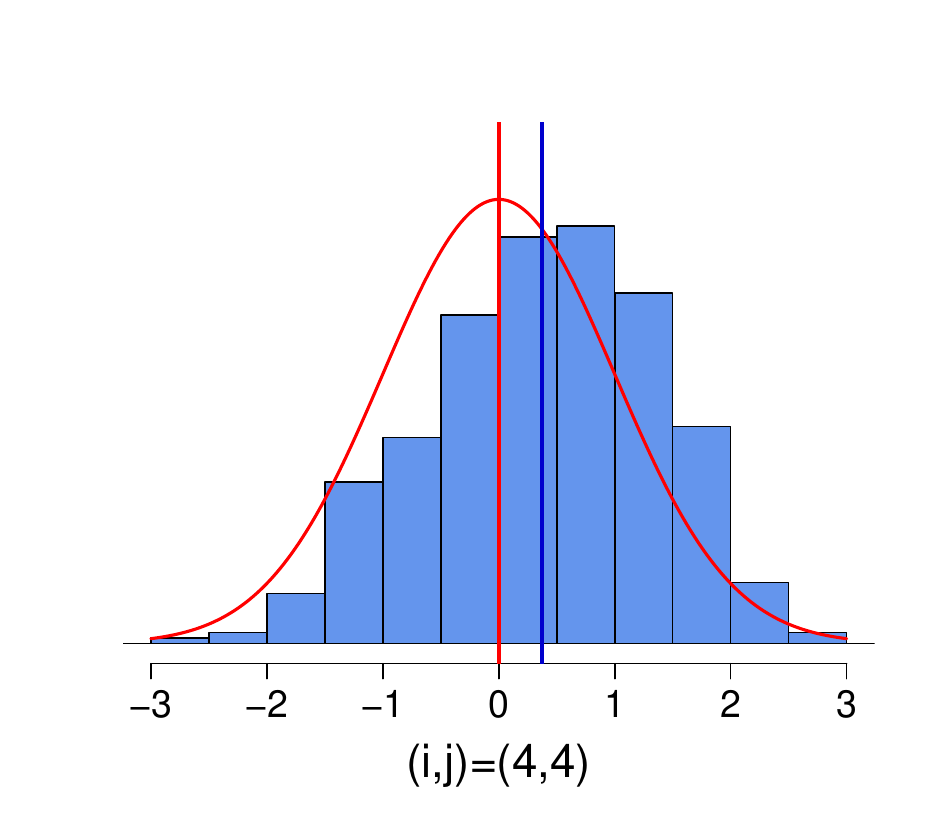}
    \end{minipage}
    \begin{minipage}{0.24\linewidth}
        \centering
        \includegraphics[width=\textwidth]{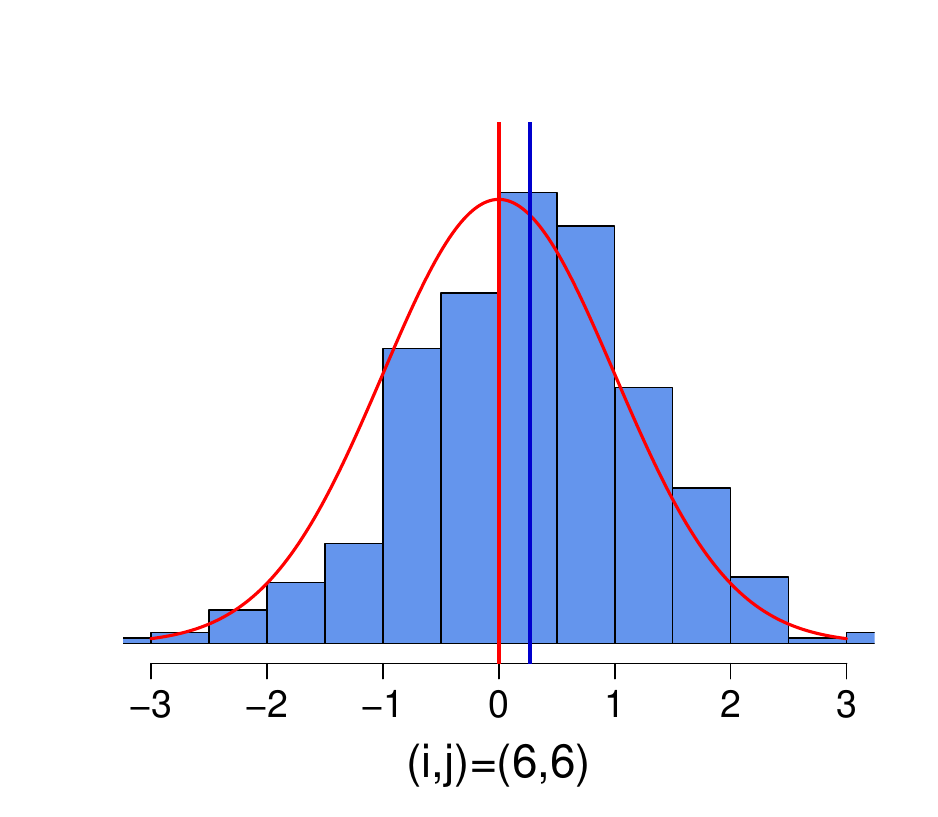}
    \end{minipage}
    \begin{minipage}{0.24\linewidth}
        \centering
        \includegraphics[width=\textwidth]{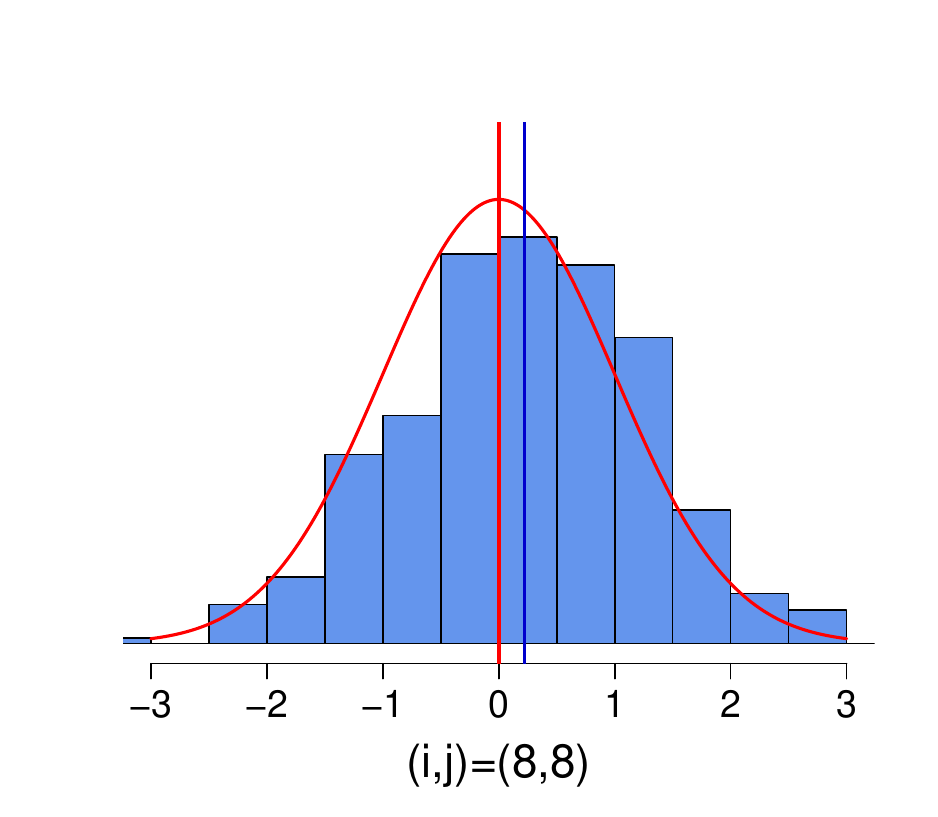}
    \end{minipage}
 \end{minipage}   
      \hspace{1cm}
 \begin{minipage}{0.3\linewidth}
    \begin{minipage}{0.24\linewidth}
        \centering
        \includegraphics[width=\textwidth]{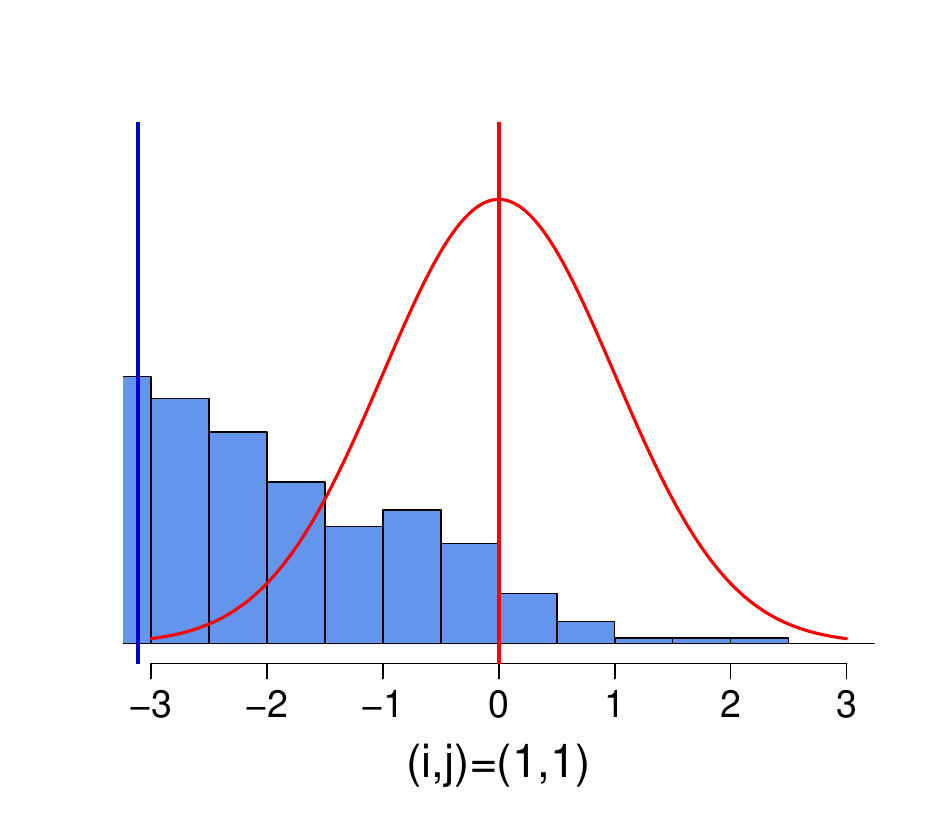}
    \end{minipage}
    \begin{minipage}{0.24\linewidth}
        \centering
        \includegraphics[width=\textwidth]{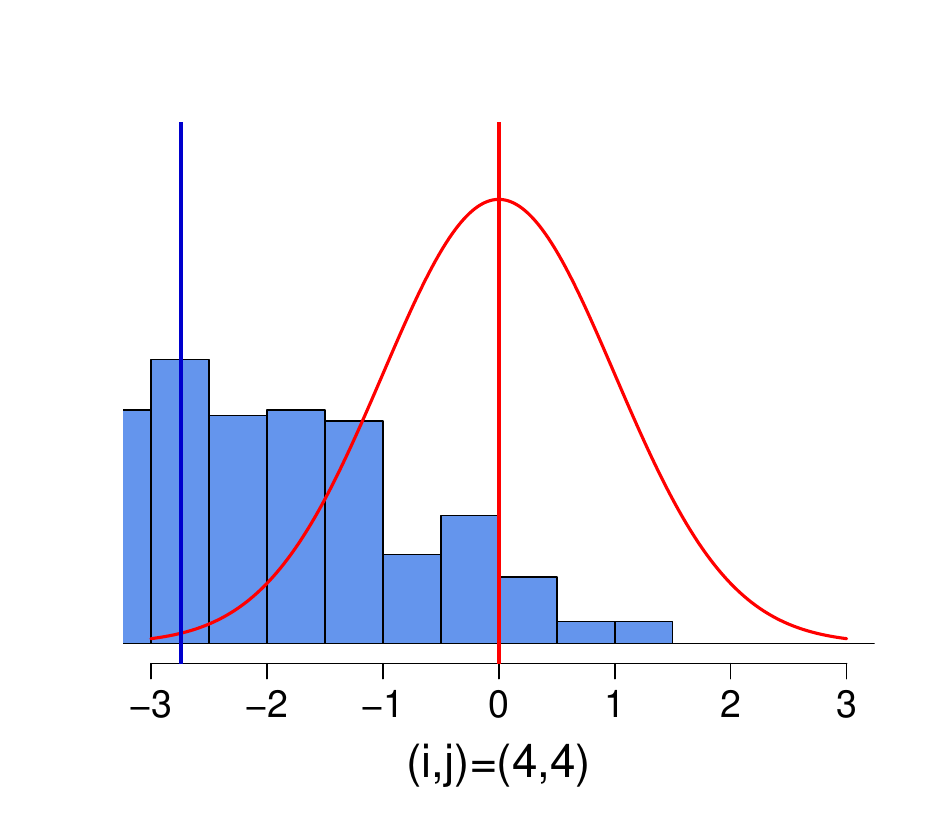}
    \end{minipage}
    \begin{minipage}{0.24\linewidth}
        \centering
        \includegraphics[width=\textwidth]{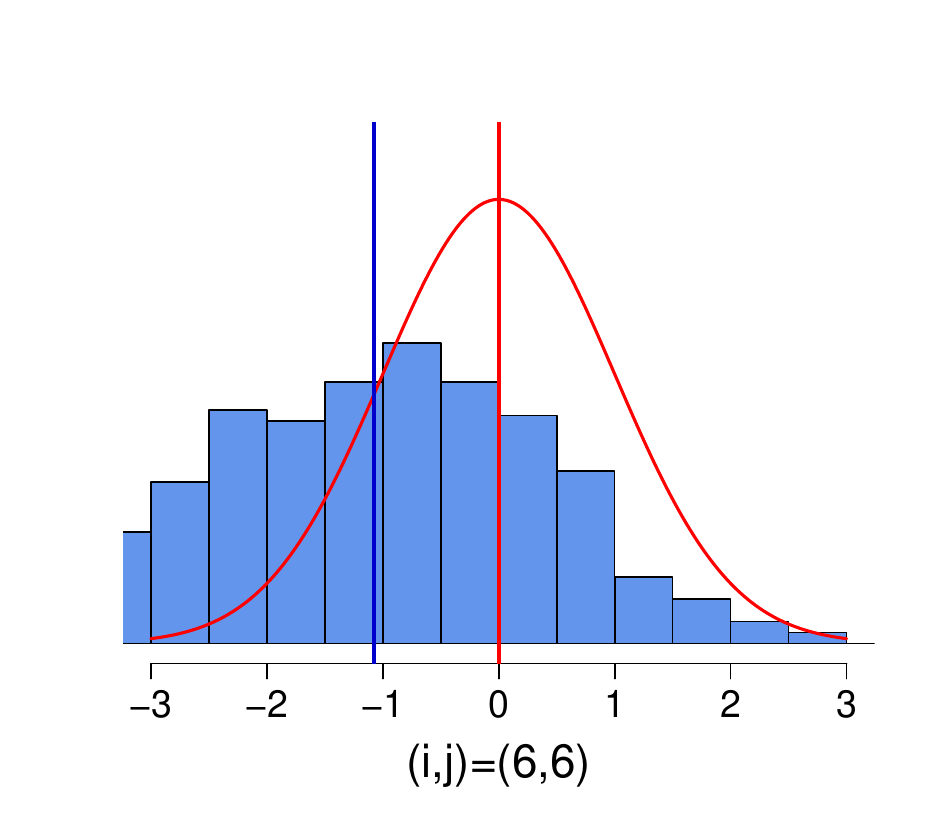}
    \end{minipage}
    \begin{minipage}{0.24\linewidth}
        \centering
        \includegraphics[width=\textwidth]{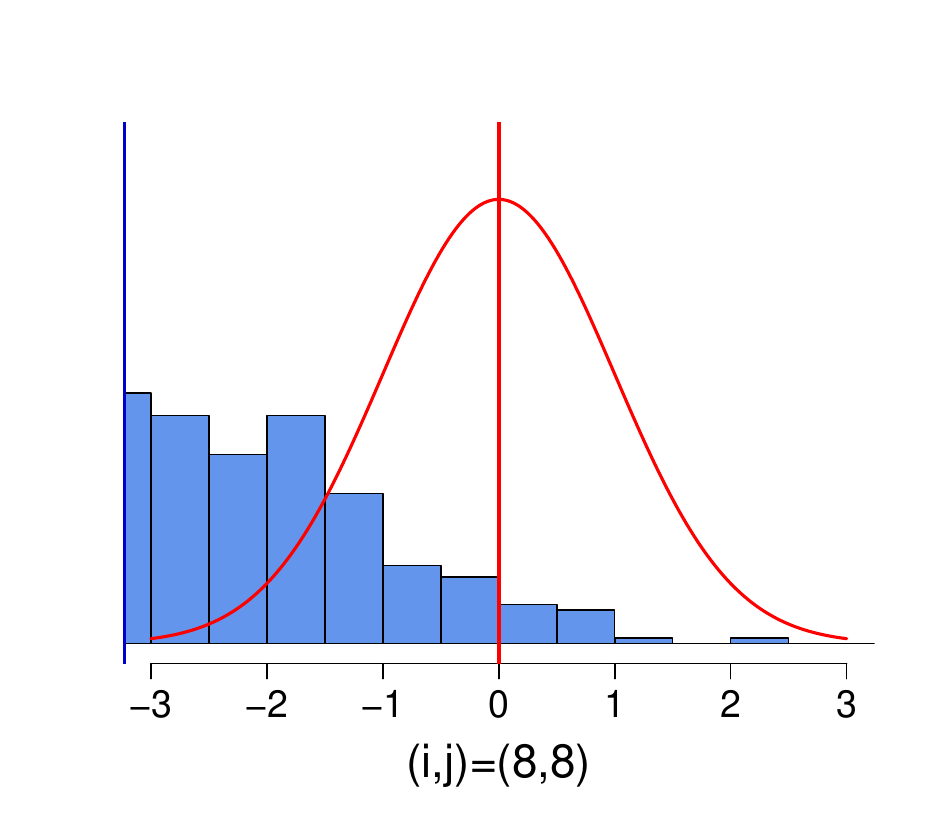}
    \end{minipage}
     \end{minipage}   
     
 \caption*{$n=200, p=400$}
     \vspace{-0.43cm}
 \begin{minipage}{0.3\linewidth}
    \begin{minipage}{0.24\linewidth}
        \centering
        \includegraphics[width=\textwidth]{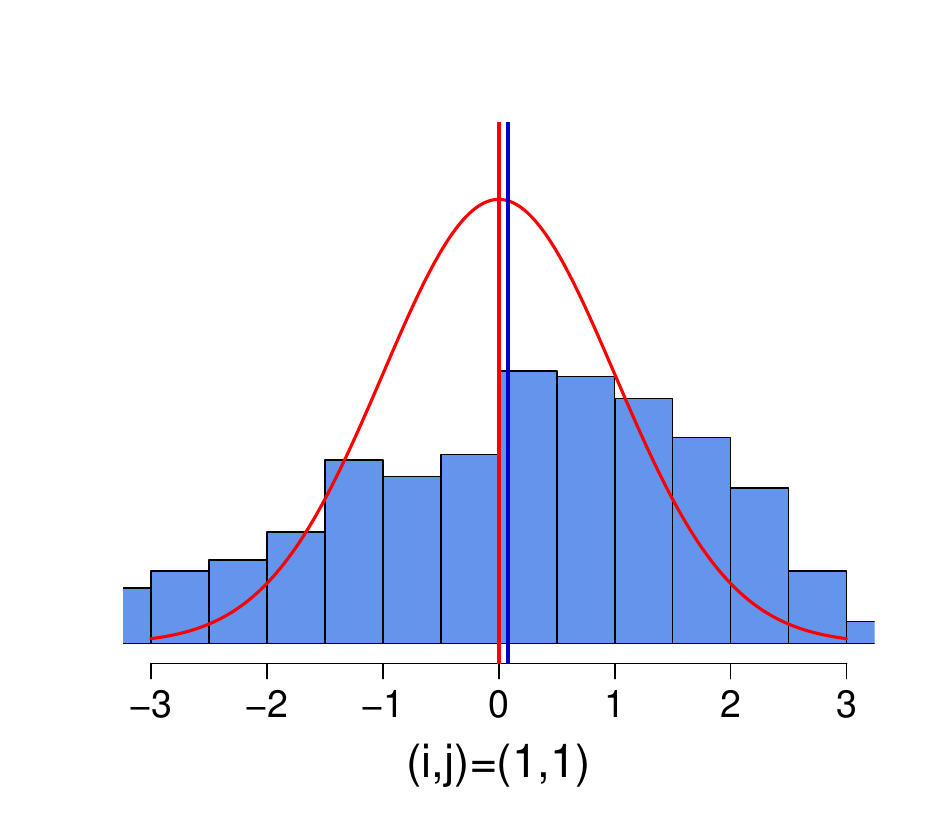}
    \end{minipage}
    \begin{minipage}{0.24\linewidth}
        \centering
        \includegraphics[width=\textwidth]{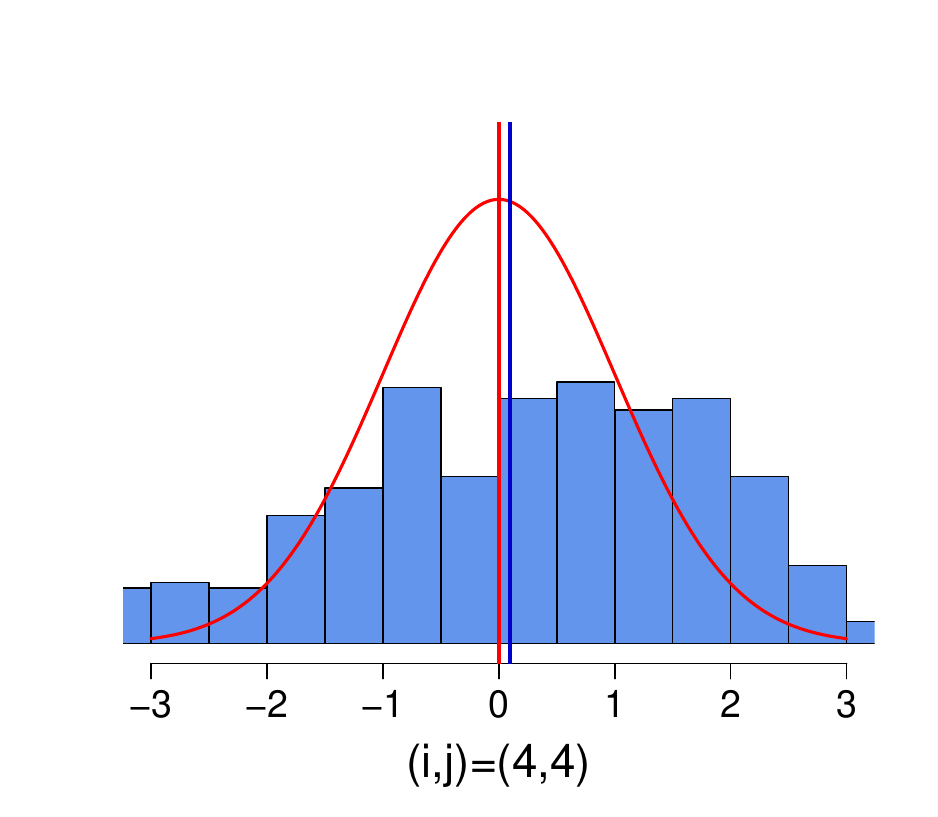}
    \end{minipage}
    \begin{minipage}{0.24\linewidth}
        \centering
        \includegraphics[width=\textwidth]{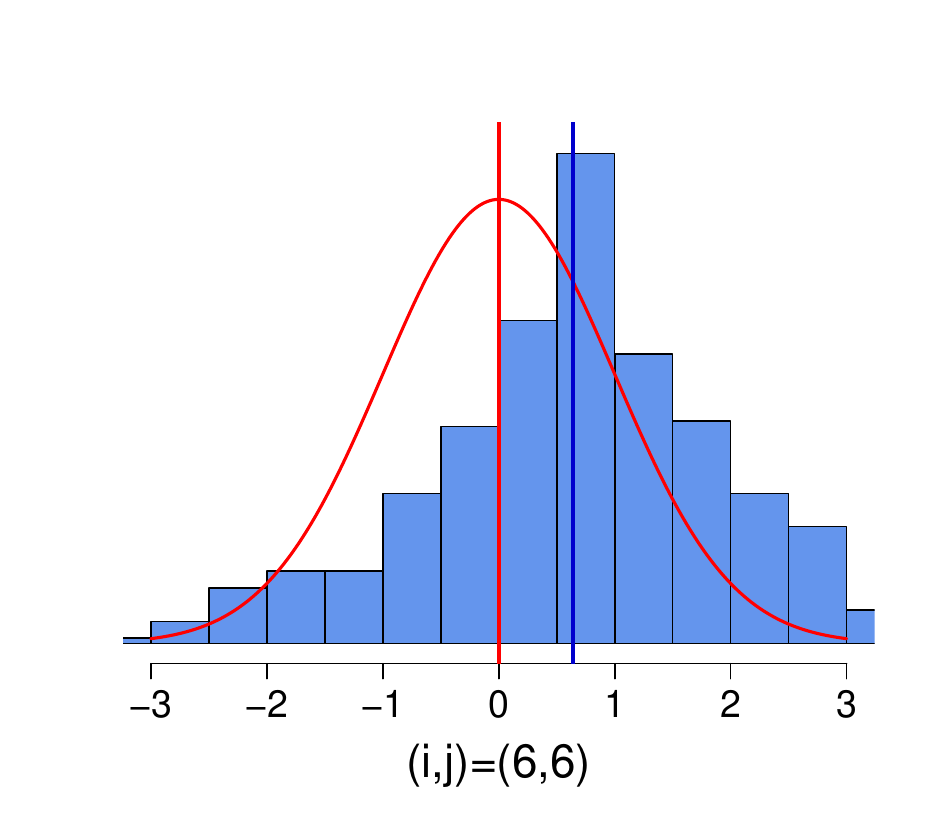}
    \end{minipage}
    \begin{minipage}{0.24\linewidth}
        \centering
        \includegraphics[width=\textwidth]{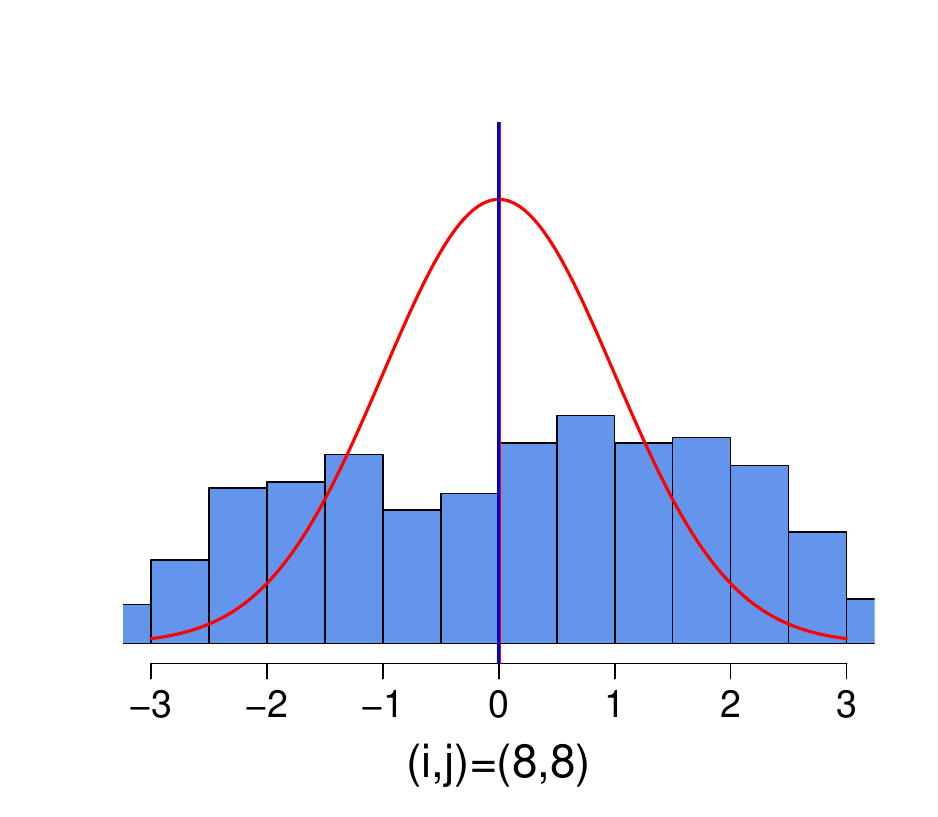}
    \end{minipage}
 \end{minipage}
 \hspace{1cm}
 \begin{minipage}{0.3\linewidth}
    \begin{minipage}{0.24\linewidth}
        \centering
        \includegraphics[width=\textwidth]{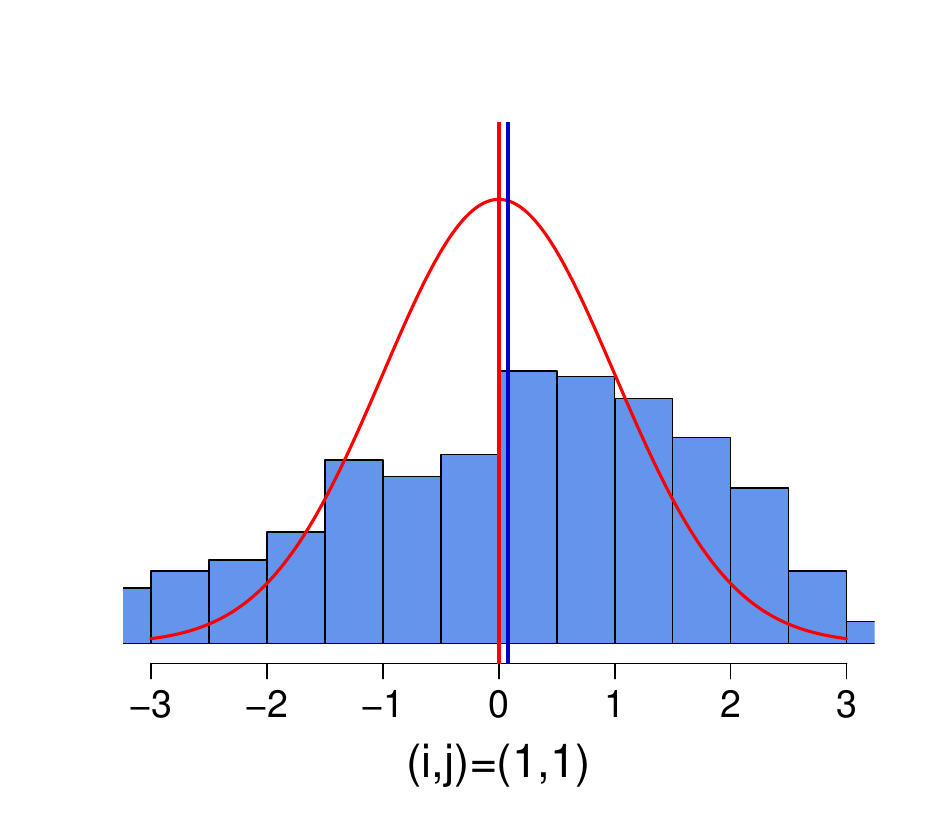}
    \end{minipage}
    \begin{minipage}{0.24\linewidth}
        \centering
        \includegraphics[width=\textwidth]{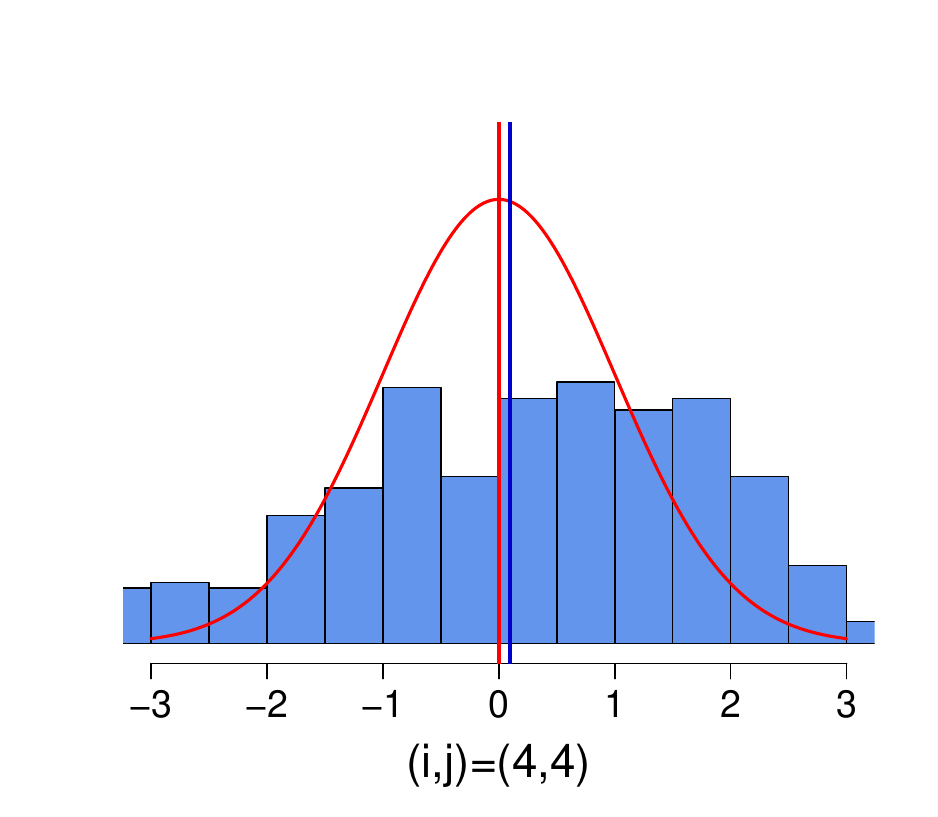}
    \end{minipage}
    \begin{minipage}{0.24\linewidth}
        \centering
        \includegraphics[width=\textwidth]{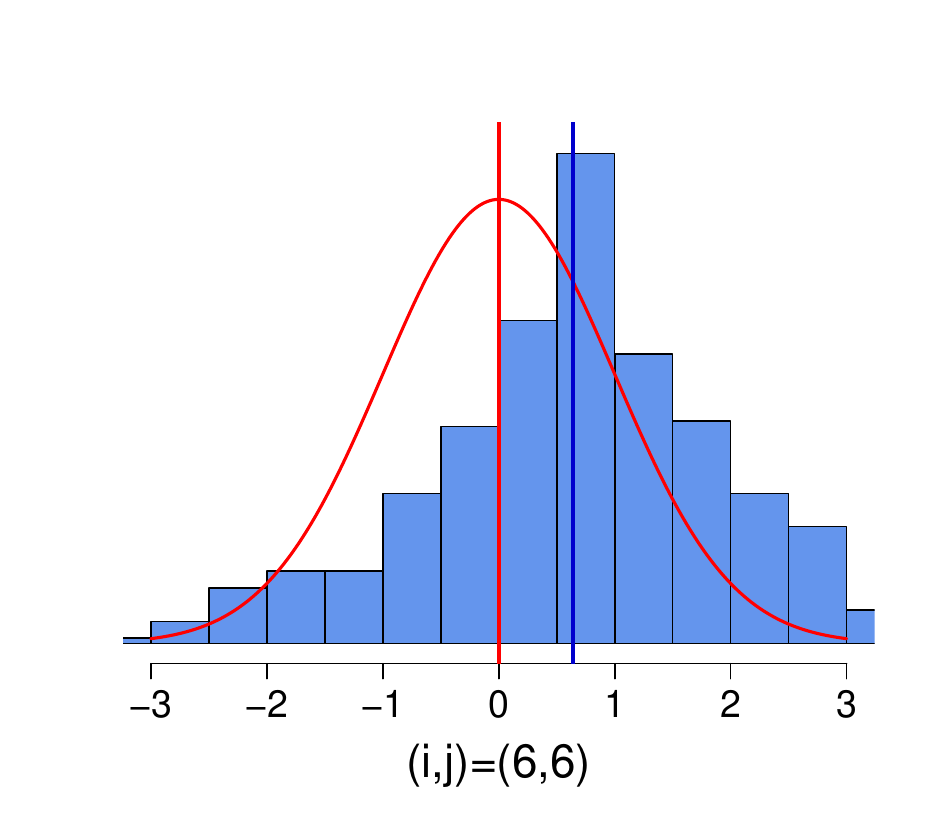}
    \end{minipage}
    \begin{minipage}{0.24\linewidth}
        \centering
        \includegraphics[width=\textwidth]{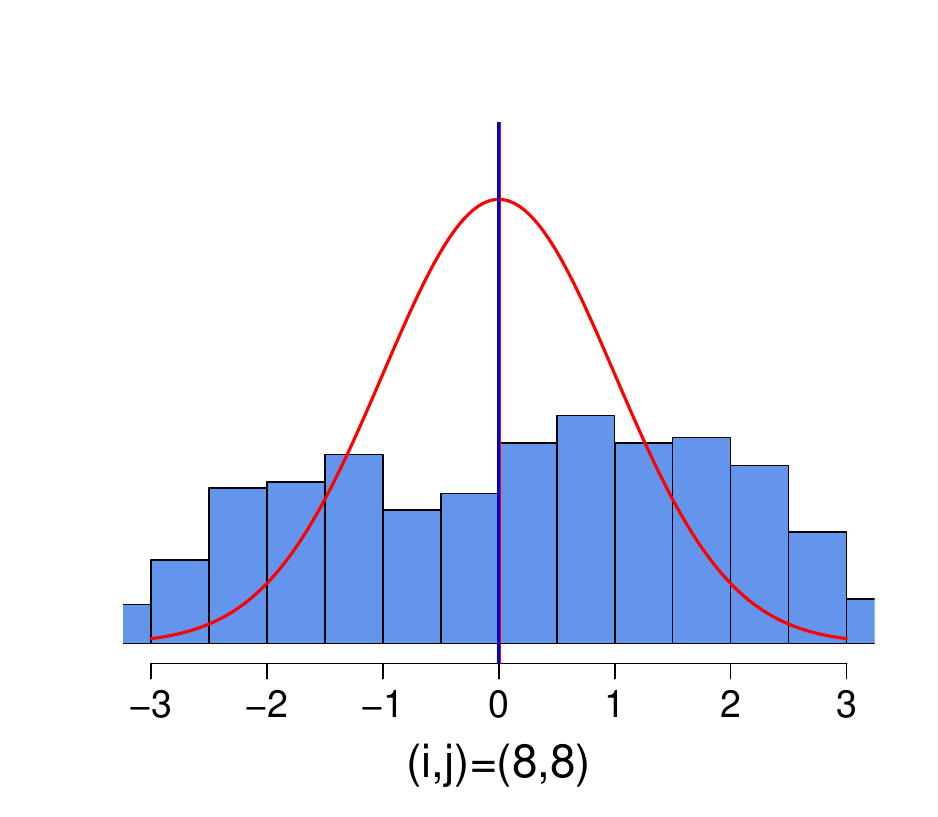}
    \end{minipage}    
 \end{minipage}
  \hspace{1cm}
 \begin{minipage}{0.3\linewidth}
     \begin{minipage}{0.24\linewidth}
        \centering
        \includegraphics[width=\textwidth]{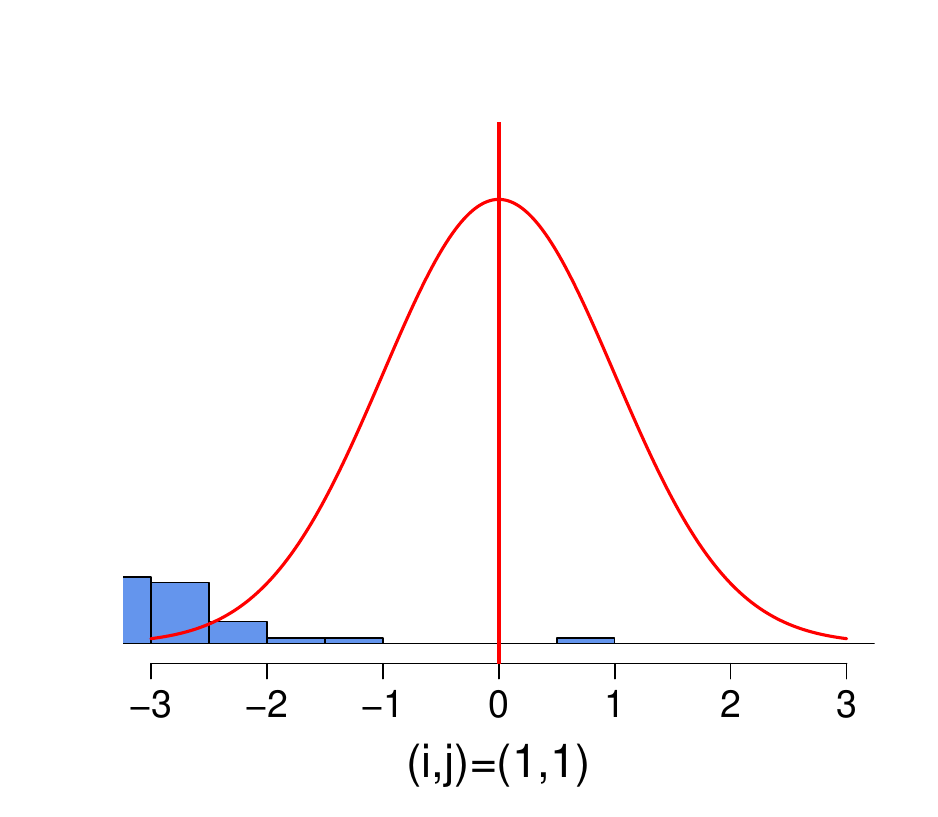}
    \end{minipage}
    \begin{minipage}{0.24\linewidth}
        \centering
        \includegraphics[width=\textwidth]{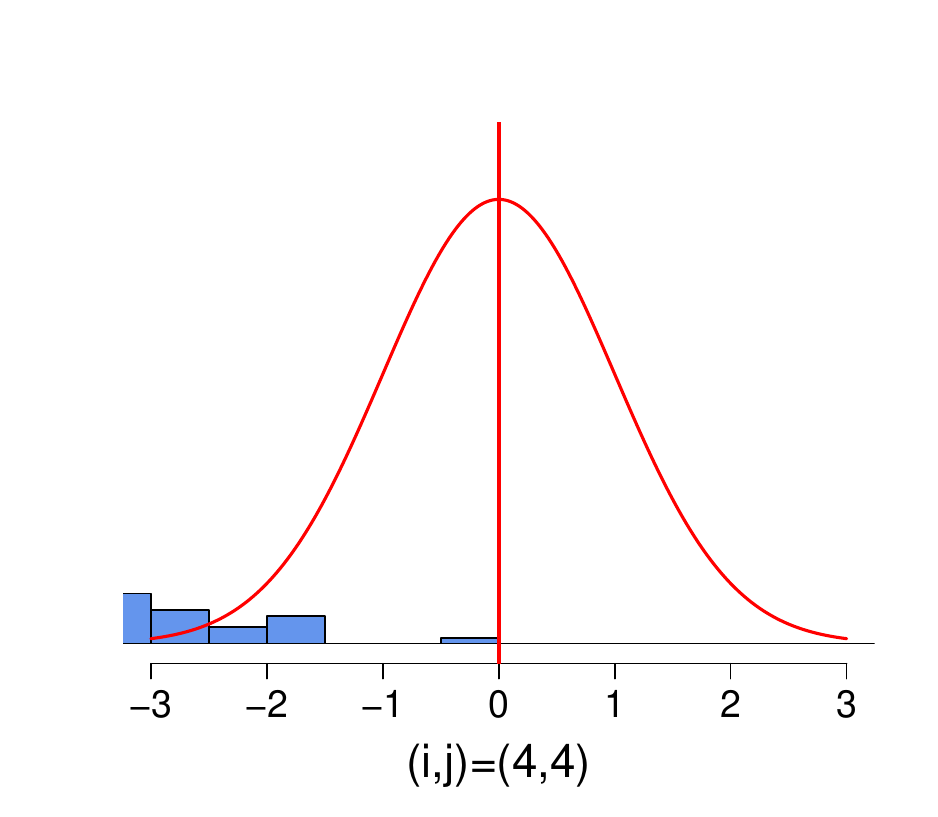}
    \end{minipage}
    \begin{minipage}{0.24\linewidth}
        \centering
        \includegraphics[width=\textwidth]{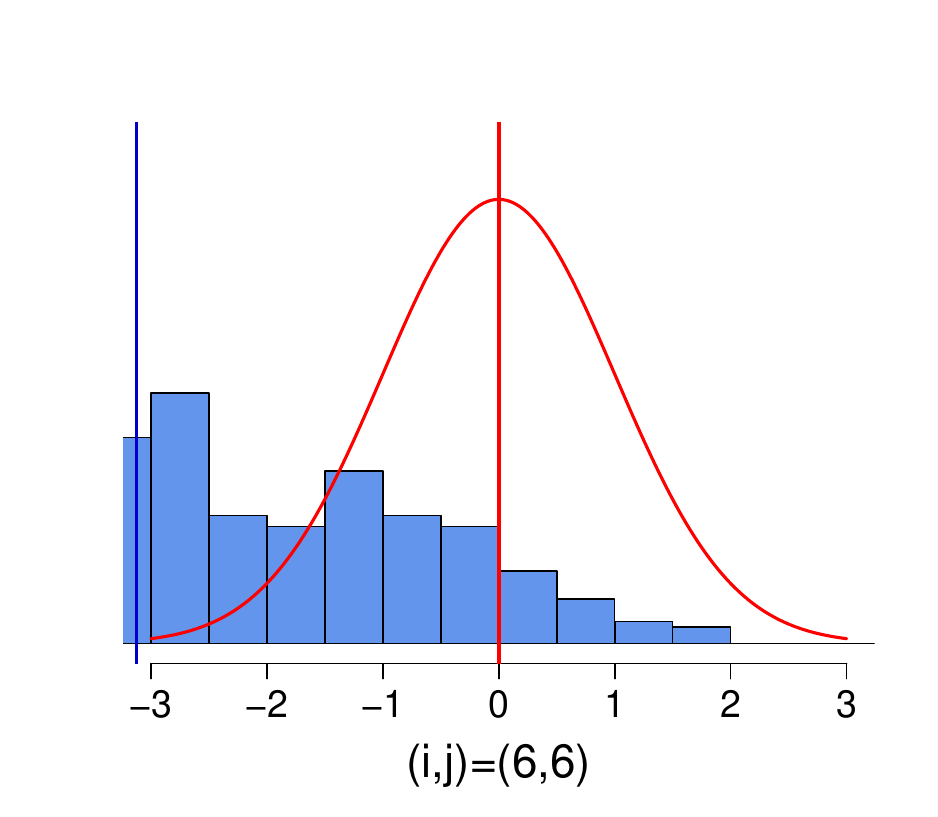}
    \end{minipage}
    \begin{minipage}{0.24\linewidth}
        \centering
        \includegraphics[width=\textwidth]{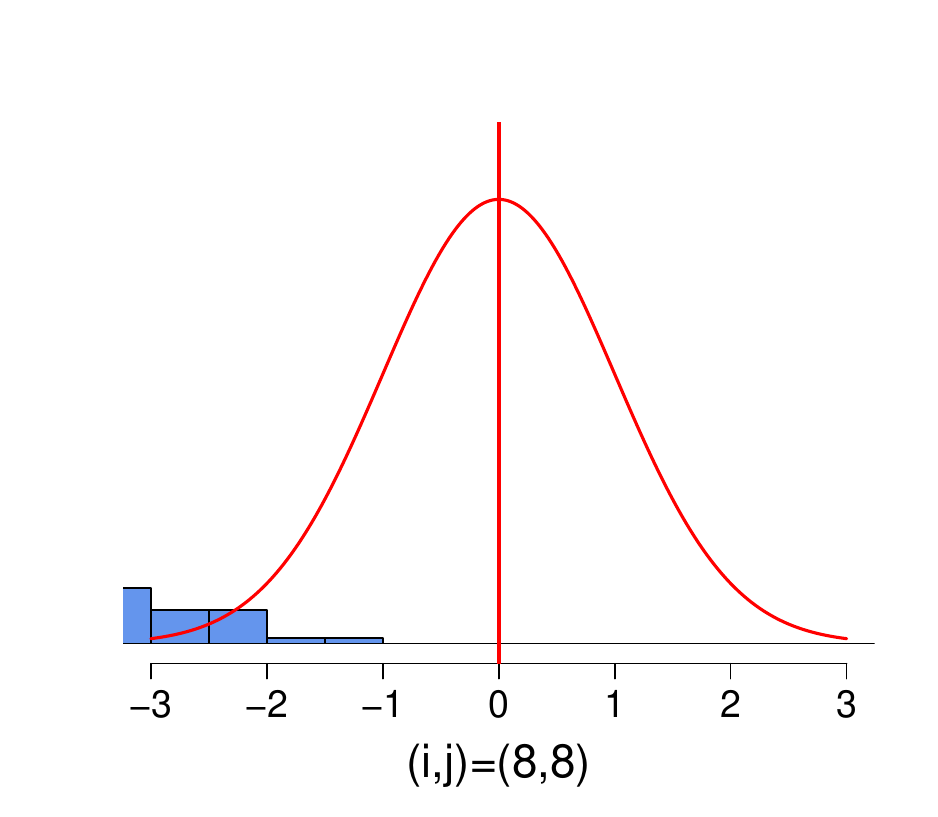}
    \end{minipage}
 \end{minipage}

  \caption*{$n=400, p=400$}
      \vspace{-0.43cm}
 \begin{minipage}{0.3\linewidth}
    \begin{minipage}{0.24\linewidth}
        \centering
        \includegraphics[width=\textwidth]{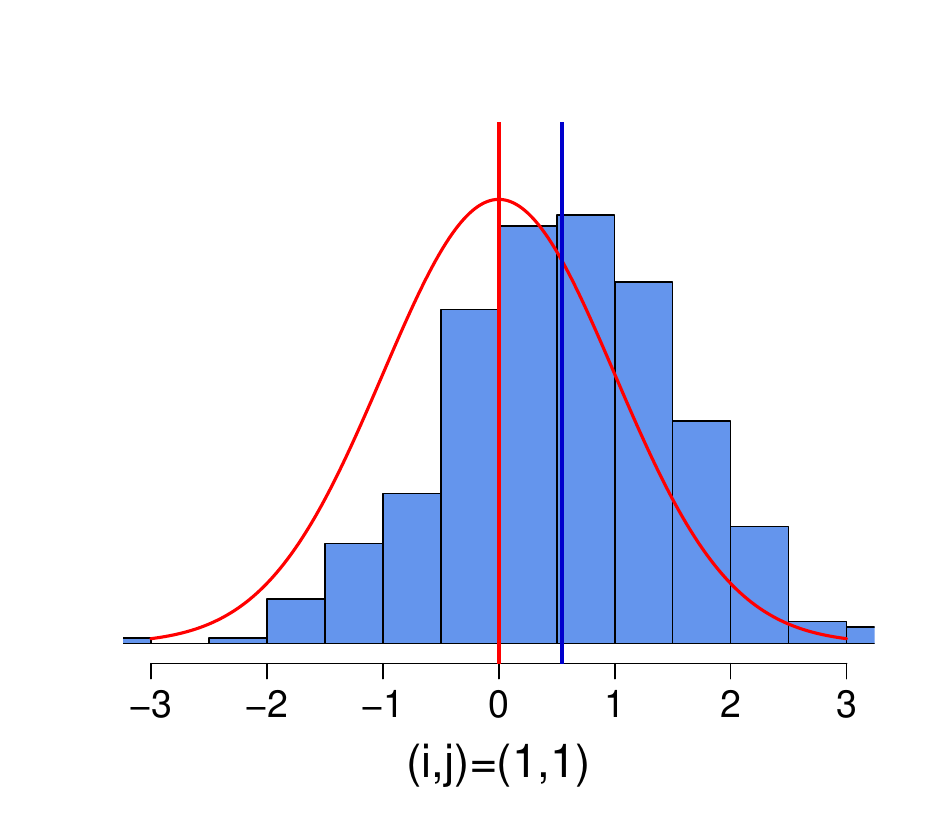}
    \end{minipage}
    \begin{minipage}{0.24\linewidth}
        \centering
        \includegraphics[width=\textwidth]{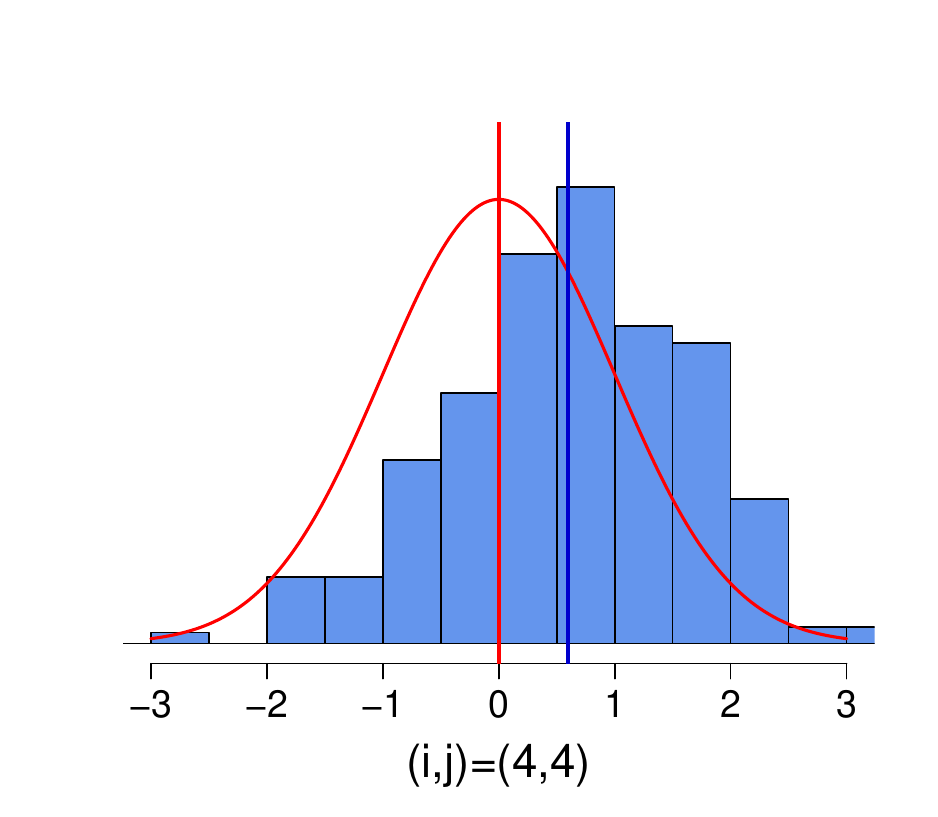}
    \end{minipage}
    \begin{minipage}{0.24\linewidth}
        \centering
        \includegraphics[width=\textwidth]{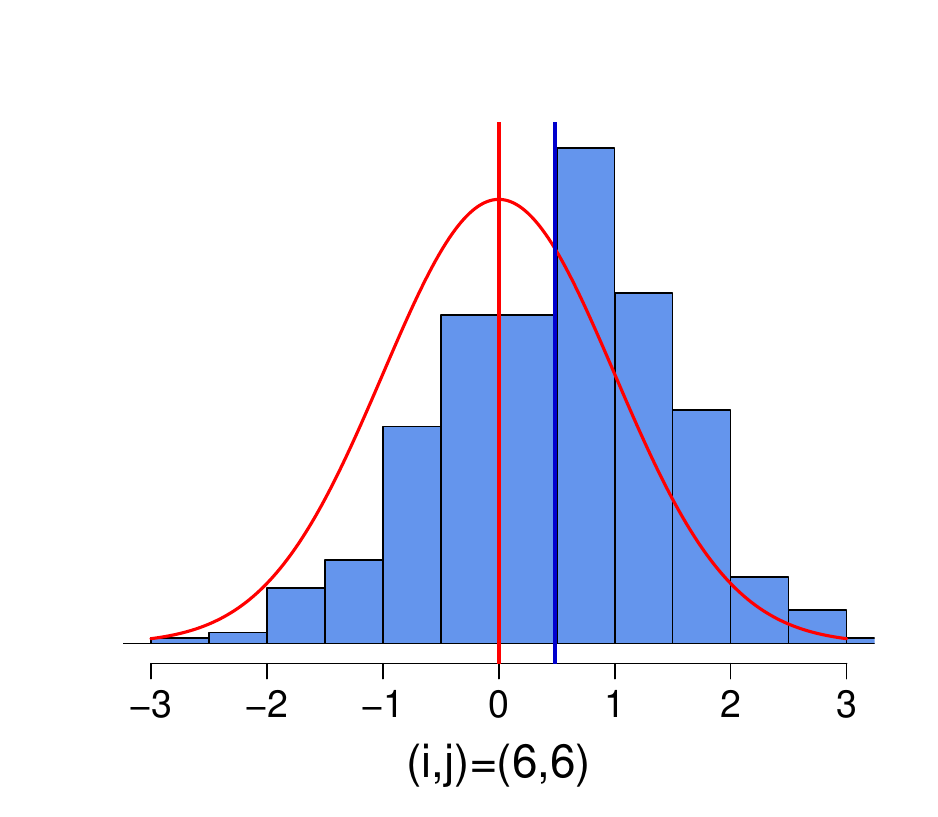}
    \end{minipage}
    \begin{minipage}{0.24\linewidth}
        \centering
        \includegraphics[width=\textwidth]{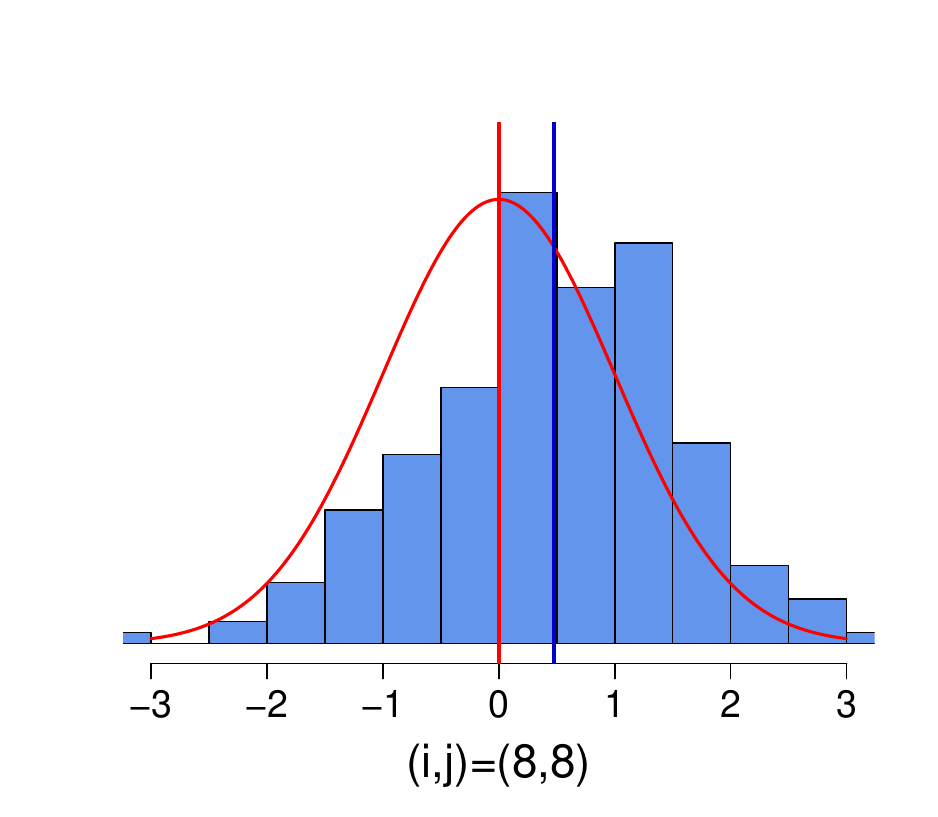}
    \end{minipage}
 \end{minipage}  
     \hspace{1cm}
 \begin{minipage}{0.3\linewidth}
    \begin{minipage}{0.24\linewidth}
        \centering
        \includegraphics[width=\textwidth]{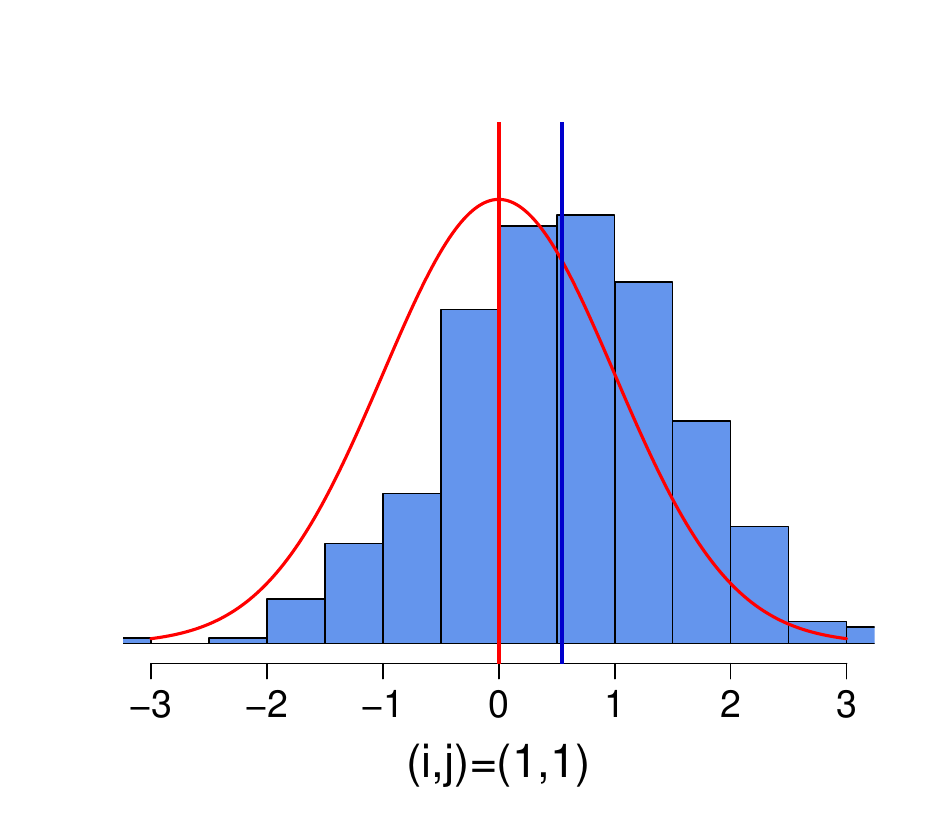}
    \end{minipage}
    \begin{minipage}{0.24\linewidth}
        \centering
        \includegraphics[width=\textwidth]{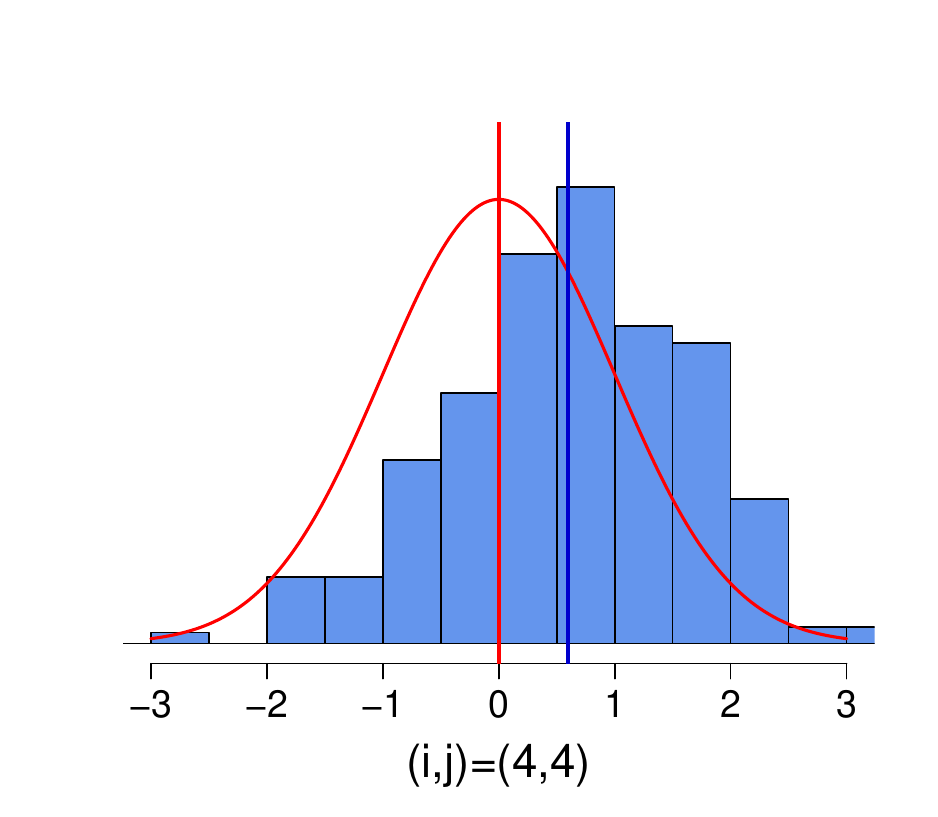}
    \end{minipage}
    \begin{minipage}{0.24\linewidth}
        \centering
        \includegraphics[width=\textwidth]{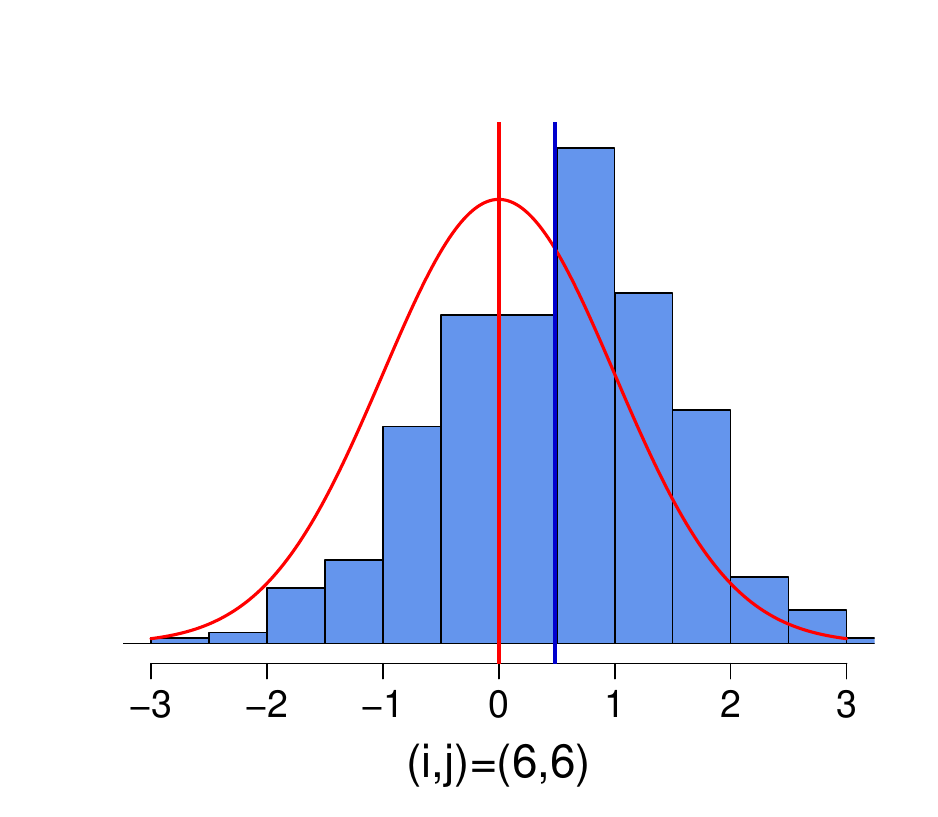}
    \end{minipage}
    \begin{minipage}{0.24\linewidth}
        \centering
        \includegraphics[width=\textwidth]{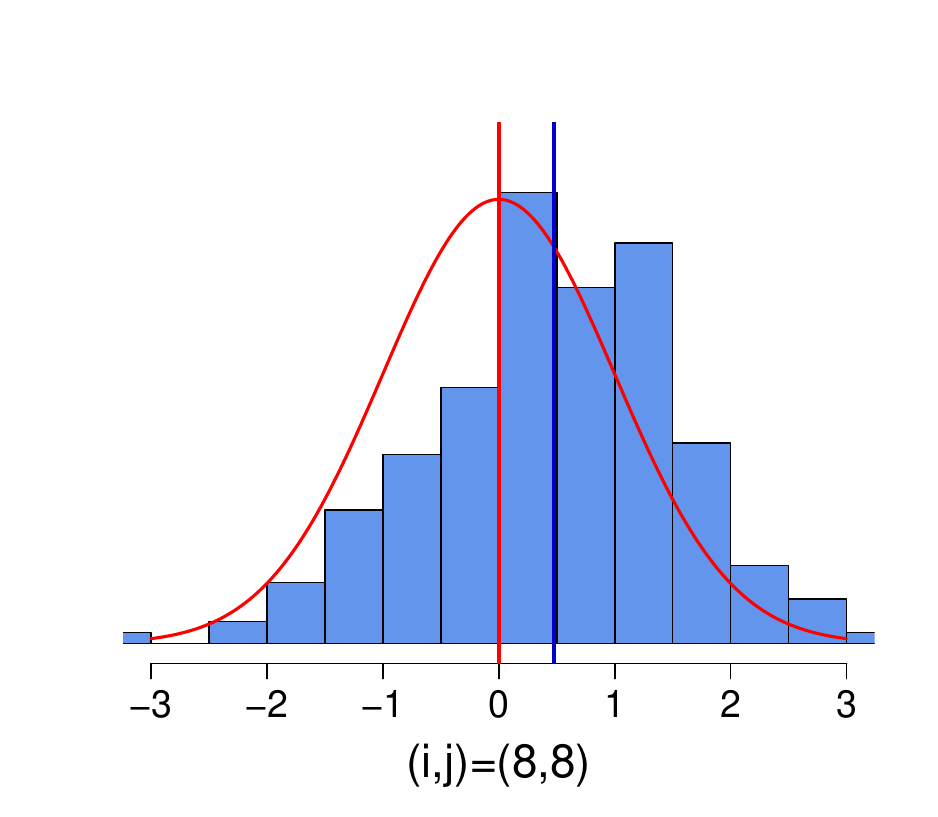}
    \end{minipage}
  \end{minipage}  
    \hspace{1cm}
 \begin{minipage}{0.3\linewidth}
    \begin{minipage}{0.24\linewidth}
        \centering
        \includegraphics[width=\textwidth]{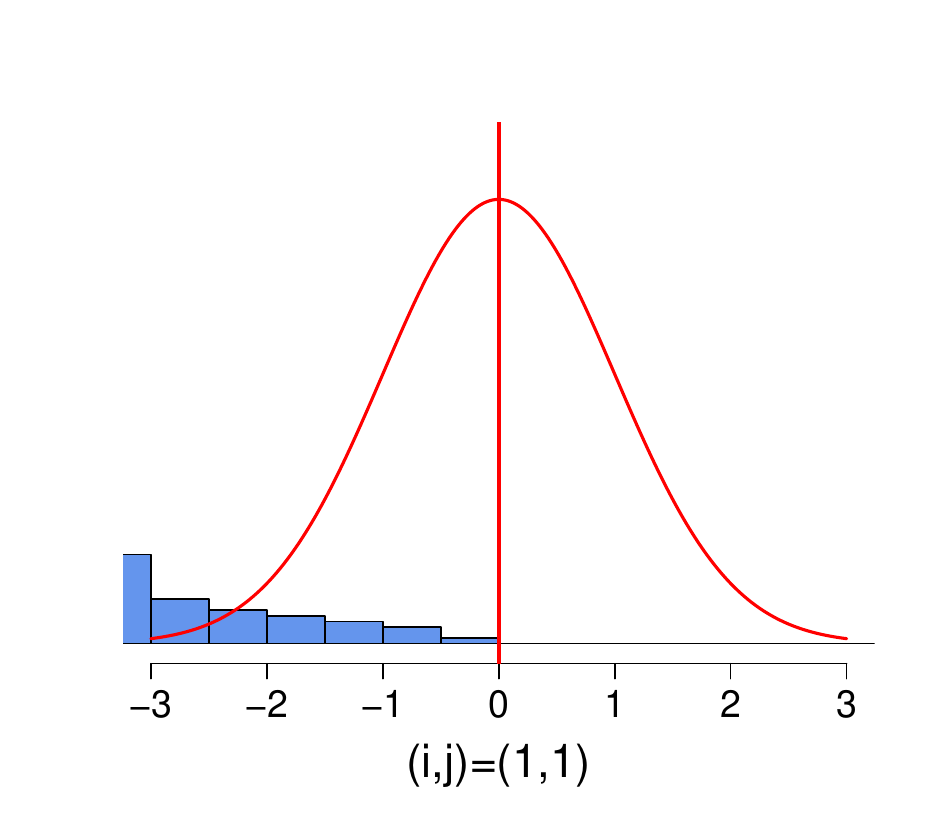}
    \end{minipage}
    \begin{minipage}{0.24\linewidth}
        \centering
        \includegraphics[width=\textwidth]{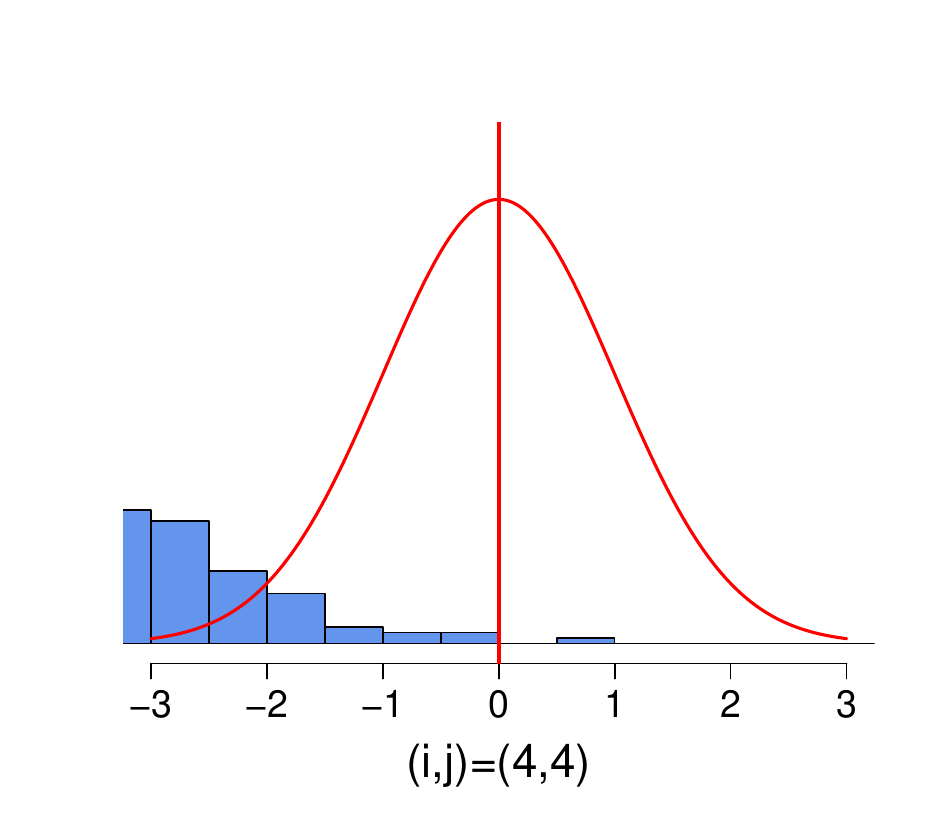}
    \end{minipage}
    \begin{minipage}{0.24\linewidth}
        \centering
        \includegraphics[width=\textwidth]{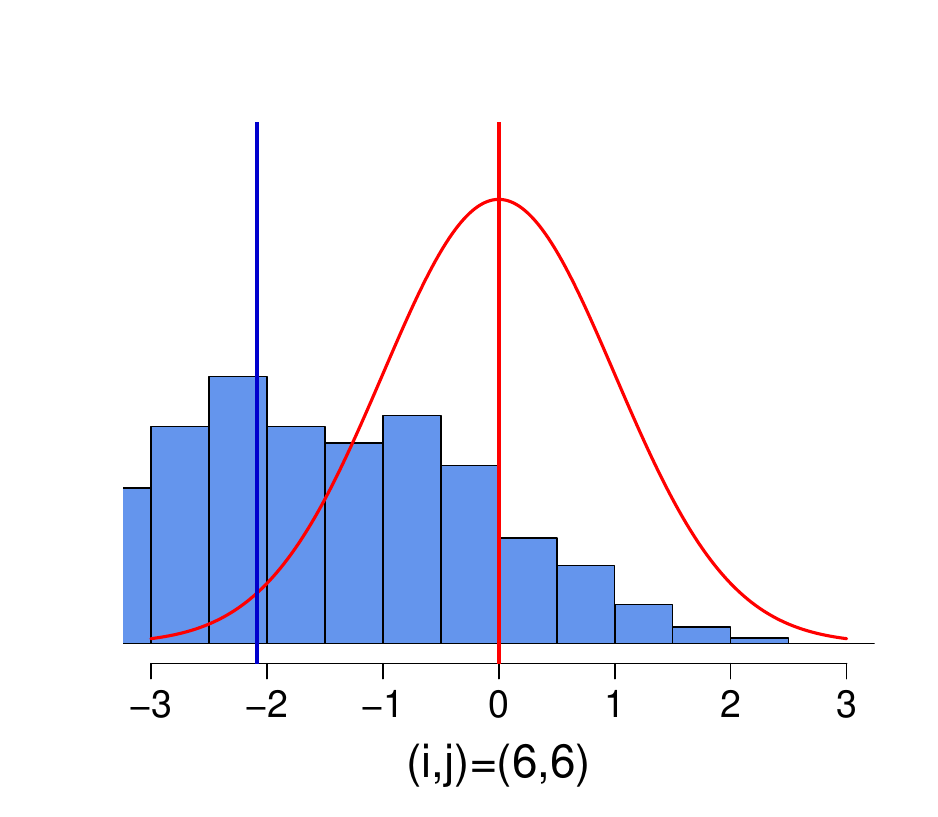}
    \end{minipage}
    \begin{minipage}{0.24\linewidth}
        \centering
        \includegraphics[width=\textwidth]{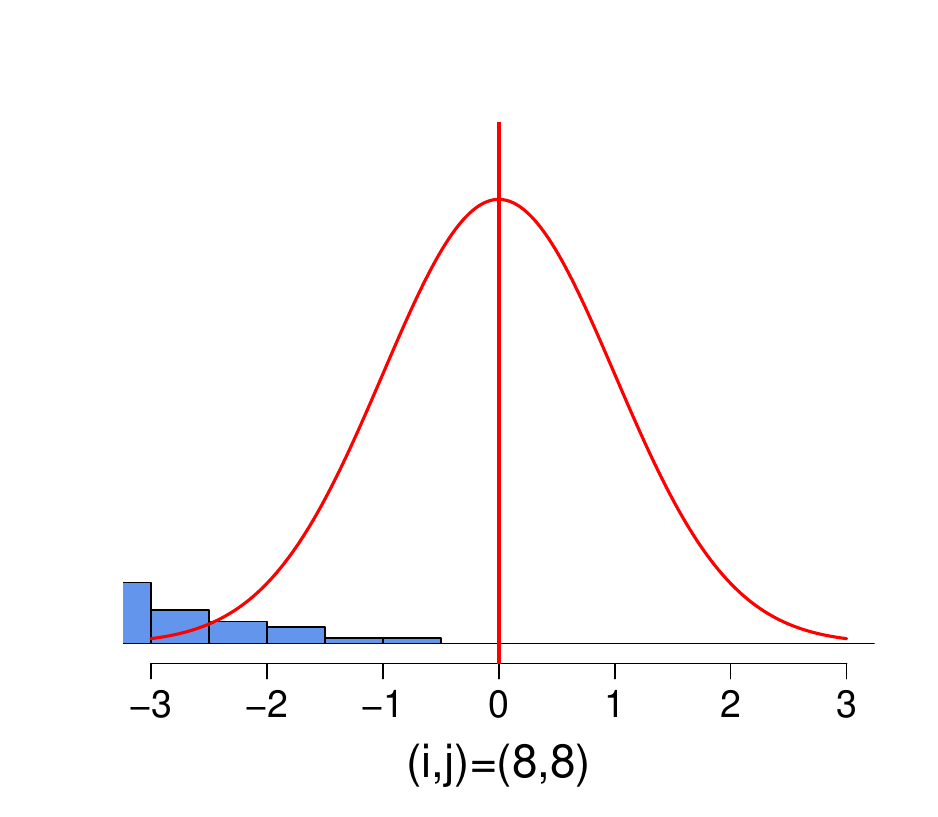}
    \end{minipage}
 \end{minipage}

  \caption*{$n=800, p=400$}
      \vspace{-0.43cm}
 \begin{minipage}{0.3\linewidth}
    \begin{minipage}{0.24\linewidth}
        \centering
        \includegraphics[width=\textwidth]{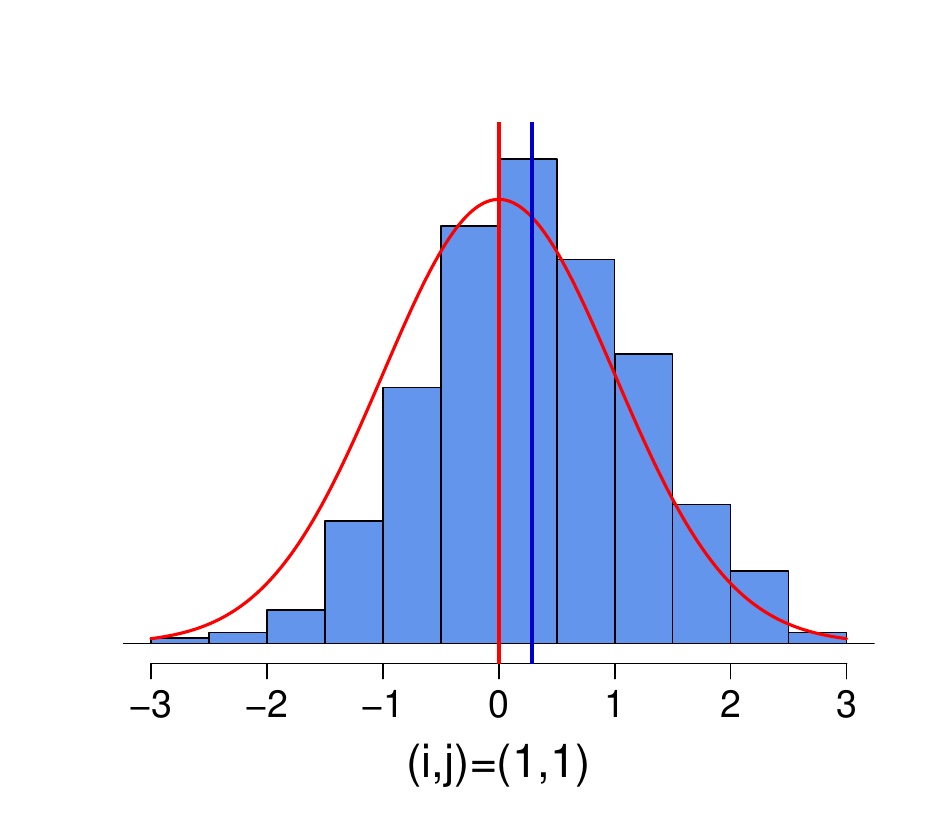}
    \end{minipage}
    \begin{minipage}{0.24\linewidth}
        \centering
        \includegraphics[width=\textwidth]{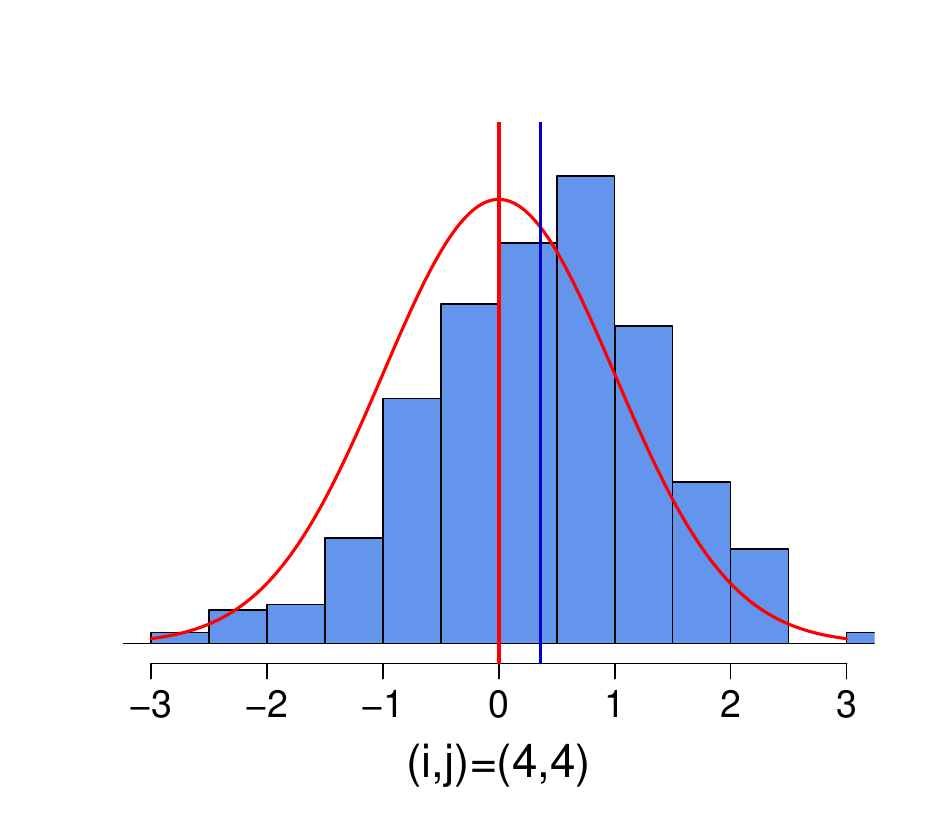}
    \end{minipage}
    \begin{minipage}{0.24\linewidth}
        \centering
        \includegraphics[width=\textwidth]{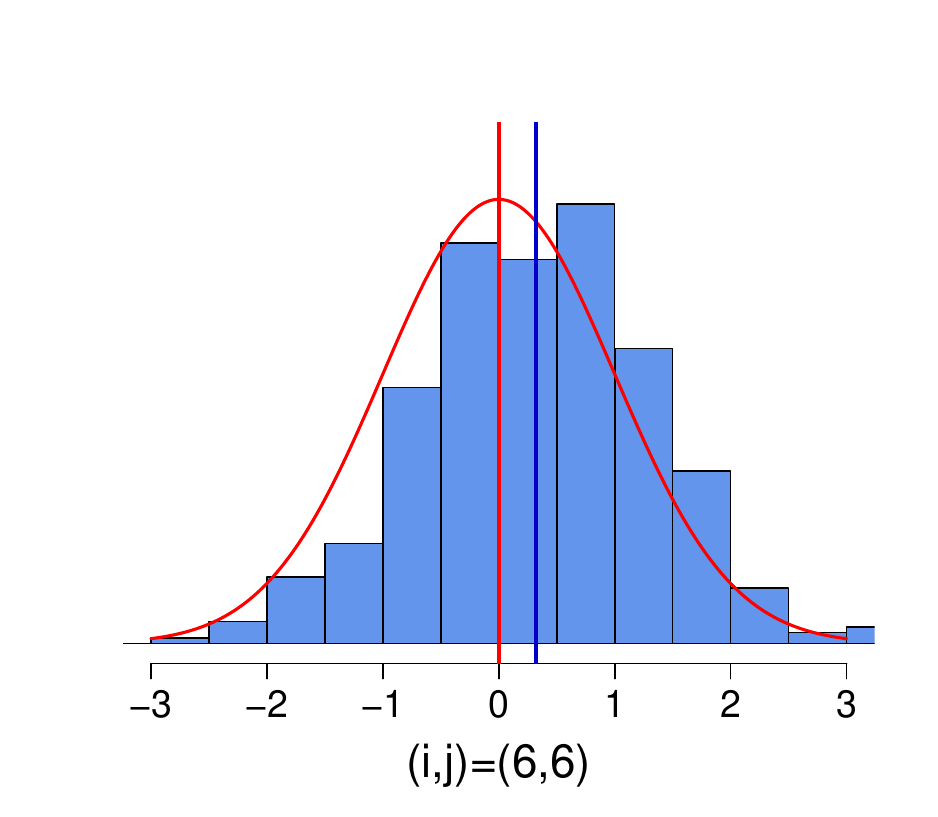}
    \end{minipage}
    \begin{minipage}{0.24\linewidth}
        \centering
        \includegraphics[width=\textwidth]{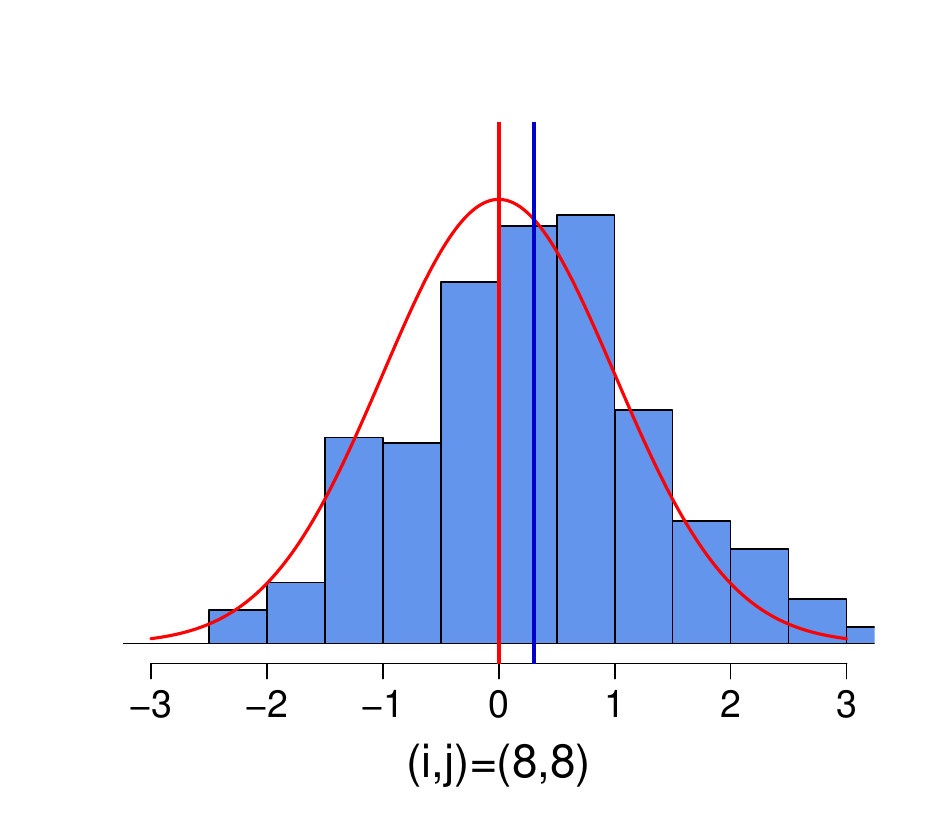}
    \end{minipage}
   \caption*{(a)~~$L_0{:}~ \widehat{\mb{\Omega}}^{\text{US}}$}
 \end{minipage} 
     \hspace{1cm}
 \begin{minipage}{0.3\linewidth}
    \begin{minipage}{0.24\linewidth}
        \centering
        \includegraphics[width=\textwidth]{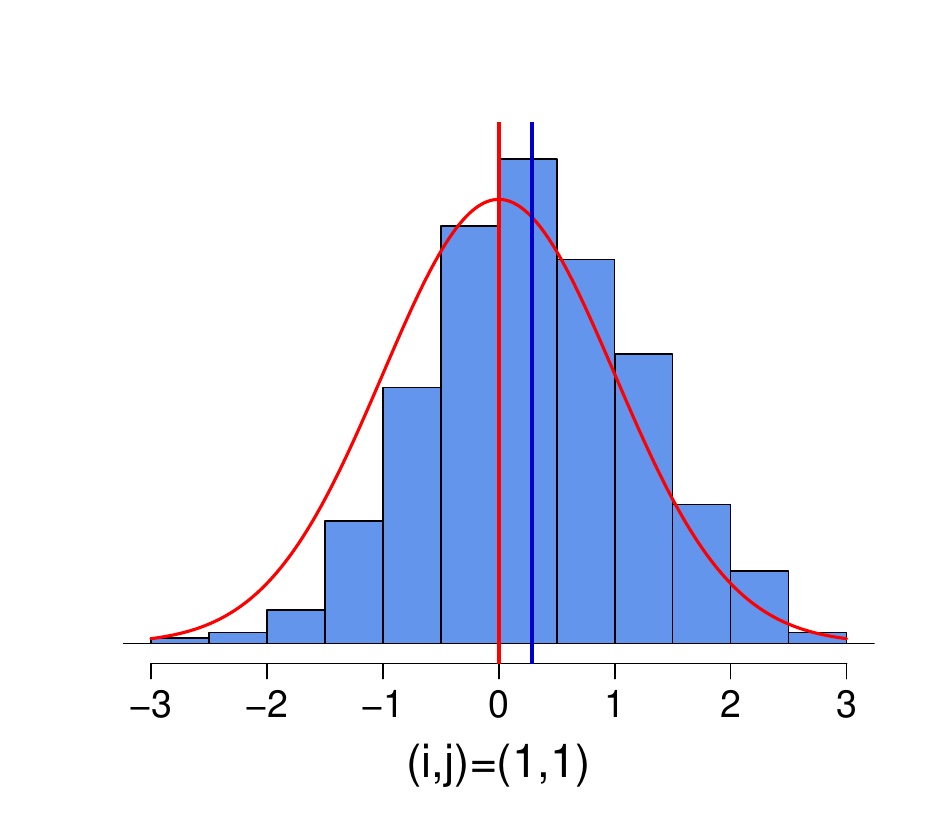}
    \end{minipage}
    \begin{minipage}{0.24\linewidth}
        \centering
        \includegraphics[width=\textwidth]{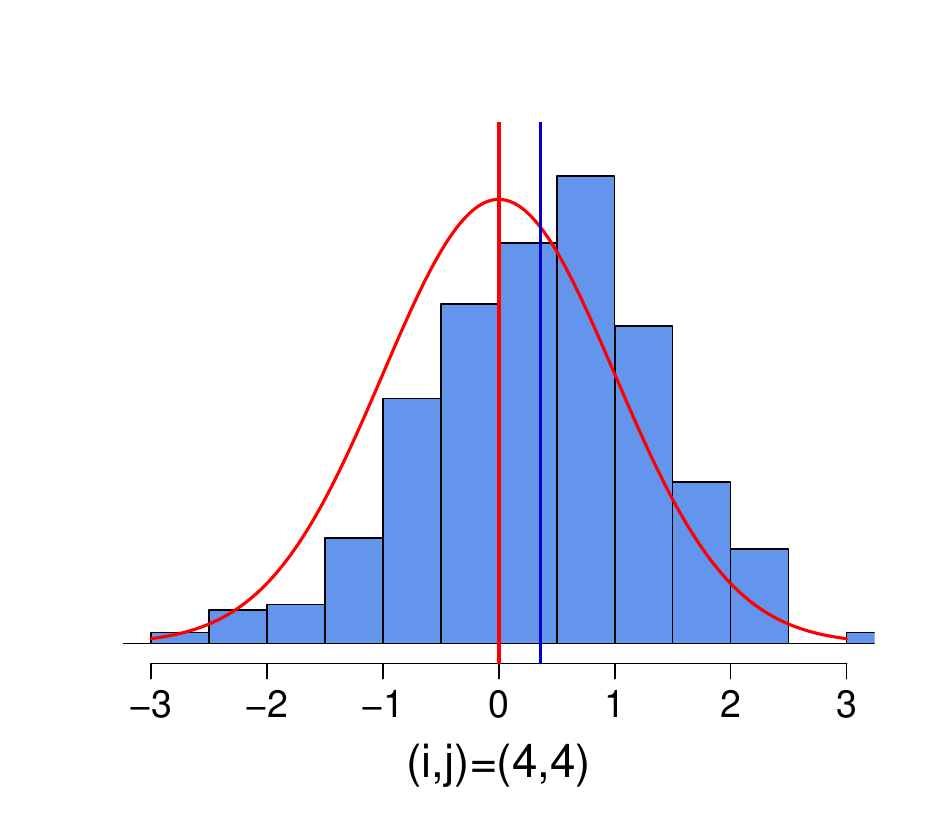}
    \end{minipage}
    \begin{minipage}{0.24\linewidth}
        \centering
        \includegraphics[width=\textwidth]{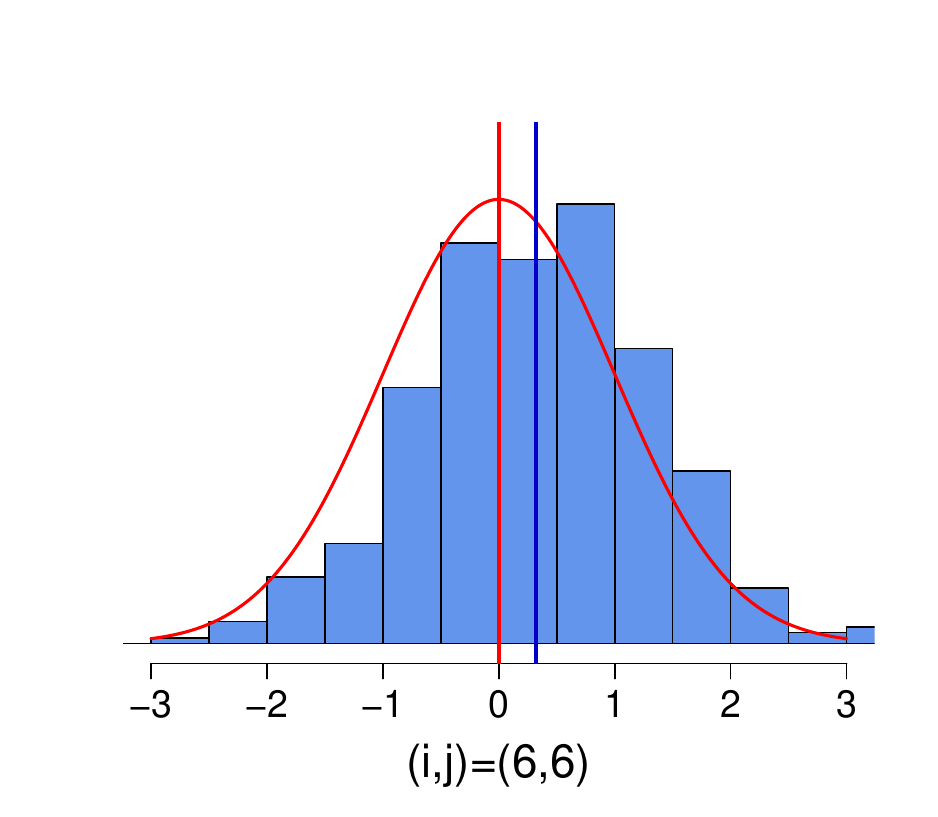}
    \end{minipage}
    \begin{minipage}{0.24\linewidth}
        \centering
        \includegraphics[width=\textwidth]{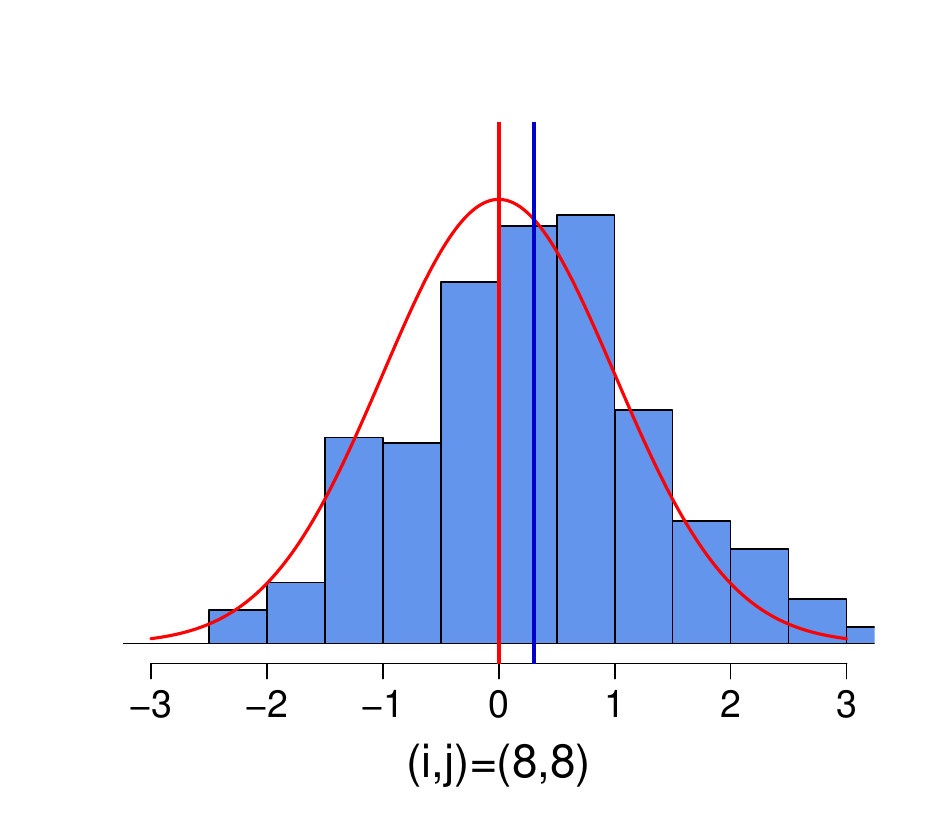}
    \end{minipage}
        \caption*{(b)~~$L_0{:}~ \widehat{\mb{T}}$}
 \end{minipage}   
      \hspace{1cm}
 \begin{minipage}{0.3\linewidth}
    \begin{minipage}{0.24\linewidth}
        \centering
        \includegraphics[width=\textwidth]{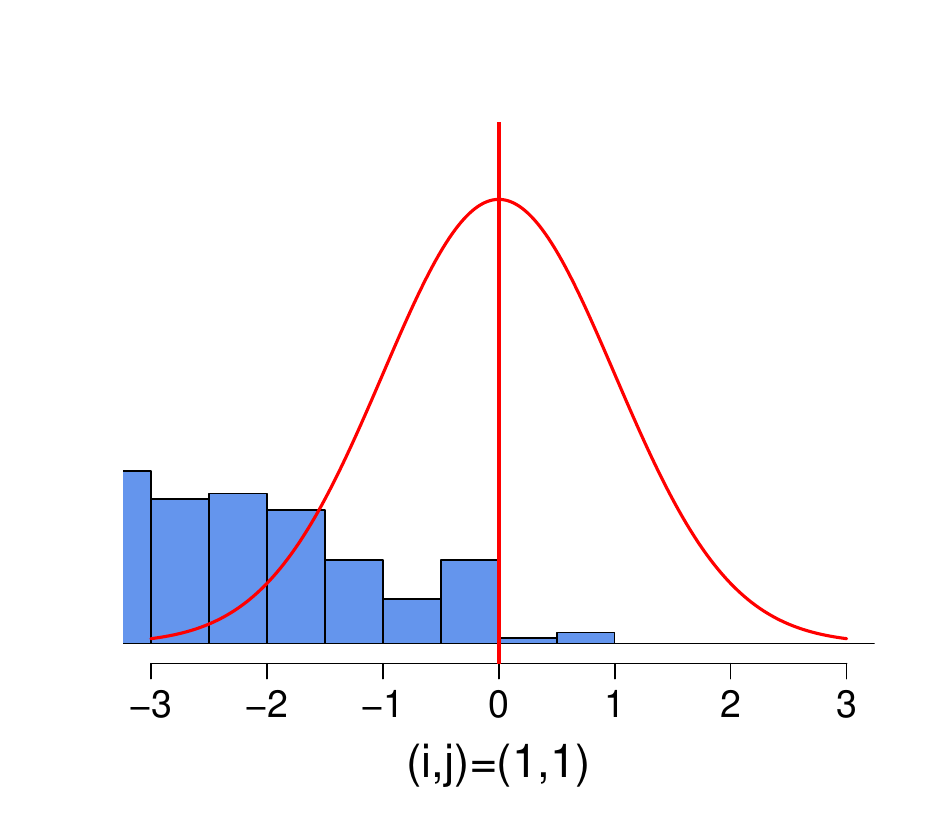}
    \end{minipage}
    \begin{minipage}{0.24\linewidth}
        \centering
        \includegraphics[width=\textwidth]{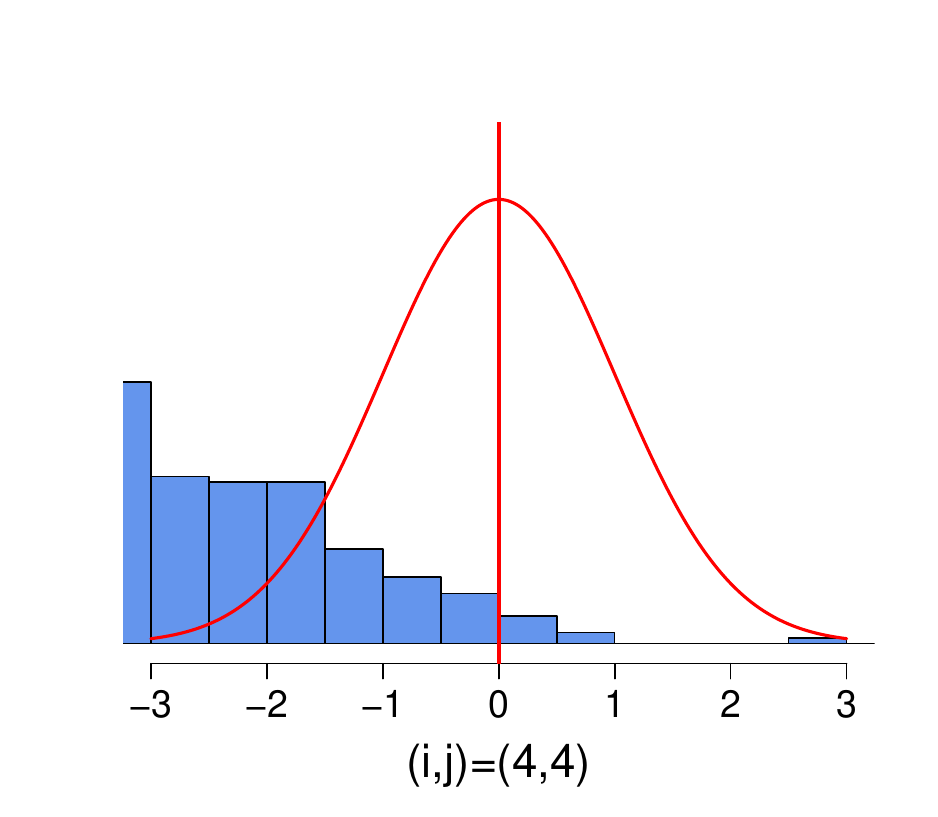}
    \end{minipage}
    \begin{minipage}{0.24\linewidth}
        \centering
        \includegraphics[width=\textwidth]{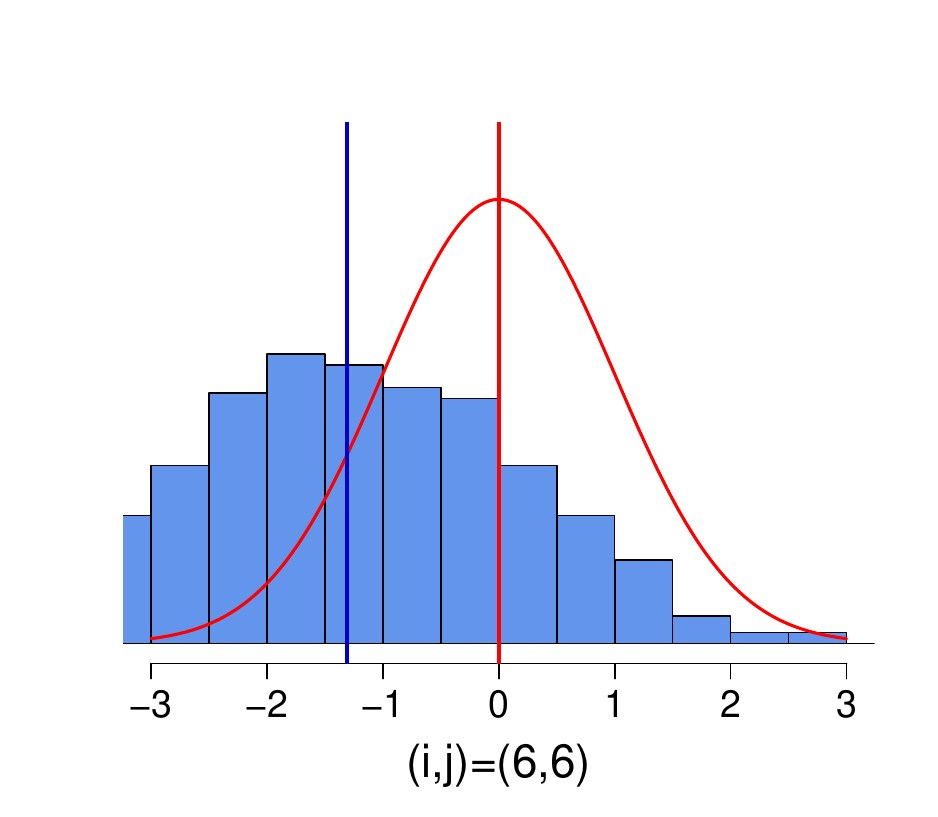}
    \end{minipage}
    \begin{minipage}{0.24\linewidth}
        \centering
        \includegraphics[width=\textwidth]{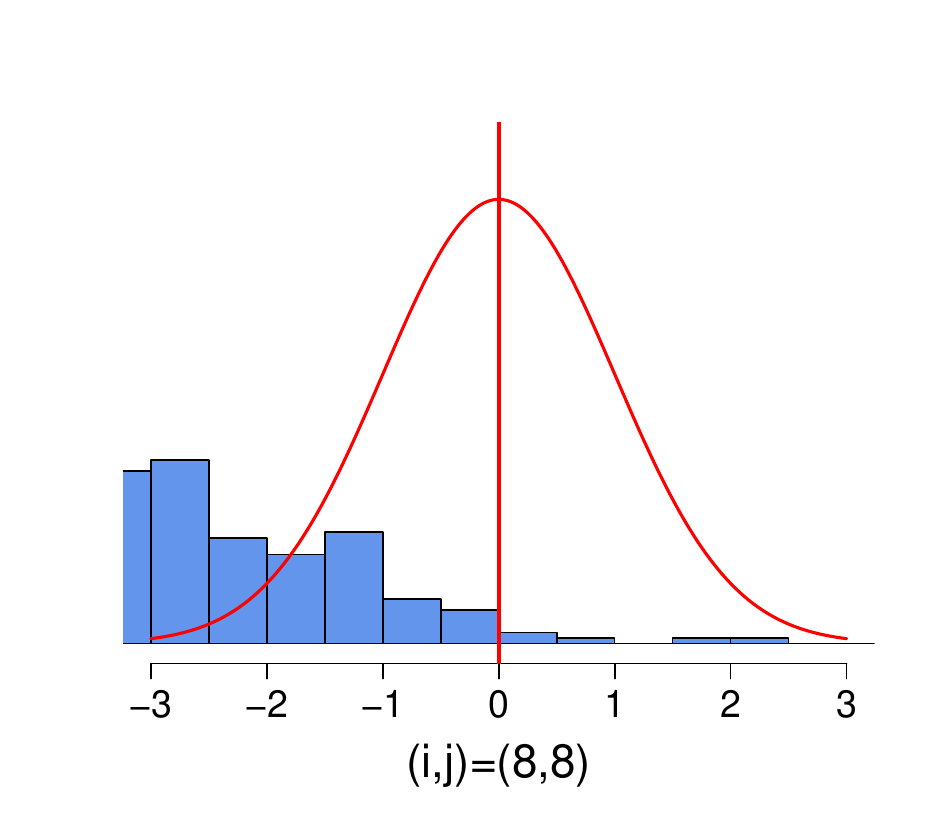}
    \end{minipage}
    \caption*{(c)~~$L_1{:}~ \widehat{\mb{T}}$}
     \end{minipage}   
     \caption{Histograms of $\big(\sqrt{n}(\widehat{\mb{\Omega}}_{ij}^{(m)}-\mb{\Omega}_{ij})/\widehat{\sigma}_{\mb{\Omega}_{ij}}^{(m)}\big)_{m=1}^{400}$ under sub-Gaussian cluster graph settings.}
\label{fig: normalplot sub-Gaussian cluster}
\end{sidewaysfigure}

\clearpage

\bibliographystyle{asa}
\bibliography{refs}

\begin{thebibliography}{85}
\newcommand{\enquote}[1]{``#1''}
\expandafter\ifx\csname natexlab\endcsname\relax\def\natexlab#1{#1}\fi

\bibitem[{Bai and Silverstein(2010)}]{Bai10}
Bai, Z. and Silverstein, J.~W. (2010), \textit{Spectral Analysis of Large
  Dimensional Random Matrices}, Springer, New York, 2nd ed.

\bibitem[{Bai and Yin(1993)}]{Bai93}
Bai, Z.~D. and Yin, Y.~Q. (1993), \enquote{Limit of the smallest eigenvalue of
  a large-dimensional sample covariance matrix,} \textit{The Annals of
  Probability}, 21, 1275--1294.

\bibitem[{Baltagi(2011)}]{MR2814522}
Baltagi, B.~H. (2011), \textit{Econometrics}, Springer, Heidelberg, 5th ed.

\bibitem[{Banerjee et~al.(2008)Banerjee, El~Ghaoui, and d'Aspremont}]{Bane08}
Banerjee, O., El~Ghaoui, L., and d'Aspremont, A. (2008), \enquote{Model
  selection through sparse maximum likelihood estimation for multivariate
  {G}aussian or binary data,} \textit{Journal of Machine Learning Research}, 9,
  485--516.

\bibitem[{Banerjee and Roy(2014)}]{Bane14}
Banerjee, S. and Roy, A. (2014), \textit{Linear Algebra and Matrix Analysis for
  Statistics}, CRC Press.

\bibitem[{Bertsimas et~al.(2016)Bertsimas, King, and Mazumder}]{MR3476618}
Bertsimas, D., King, A., and Mazumder, R. (2016), \enquote{Best subset
  selection via a modern optimization lens,} \textit{The Annals of Statistics},
  44, 813--852.

\bibitem[{Box and Cox(1964)}]{box1964analysis}
Box, G.~E. and Cox, D.~R. (1964), \enquote{An analysis of transformations,}
  \textit{Journal of the Royal Statistical Society Series B: Statistical
  Methodology}, 26, 211--243.

\bibitem[{Cai et~al.(2011)Cai, Liu, and Luo}]{CaiClime}
Cai, T., Liu, W., and Luo, X. (2011), \enquote{A constrained {$\ell_1$}
  minimization approach to sparse precision matrix estimation,} \textit{Journal
  of the American Statistical Association}, 106, 594--607.

\bibitem[{Cai et~al.(2016)Cai, Liu, and Zhou}]{CLZ16}
Cai, T.~T., Liu, W., and Zhou, H.~H. (2016), \enquote{Estimating sparse
  precision matrix: optimal rates of convergence and adaptive estimation,}
  \textit{The Annals of Statistics}, 44, 455--488.

\bibitem[{Chang et~al.(2018)Chang, Qiu, Yao, and Zou}]{chang2018confidence}
Chang, J., Qiu, Y., Yao, Q., and Zou, T. (2018), \enquote{Confidence regions
  for entries of a large precision matrix,} \textit{Journal of Econometrics},
  206, 57--82.

\bibitem[{Chen et~al.(2014)Chen, Ge, Wang, and Ye}]{chen2014complexity}
Chen, X., Ge, D., Wang, Z., and Ye, Y. (2014), \enquote{Complexity of
  unconstrained $L_2$-$L_p$ minimization,} \textit{Mathematical Programming},
  143, 371--383.

\bibitem[{Chen et~al.(2013)Chen, Xu, and Wu}]{Chen13}
Chen, X., Xu, M., and Wu, W.~B. (2013), \enquote{{Covariance and precision
  matrix estimation for high-dimensional time series},} \textit{The Annals of
  Statistics}, 41, 2994 -- 3021.

\bibitem[{Chen et~al.(2016)Chen, Xu, and Wu}]{chen2016regularized}
--- (2016), \enquote{Regularized estimation of linear functionals of precision
  matrices for high-dimensional time series,} \textit{IEEE Transactions on
  Signal Processing}, 64, 6459--6470.

\bibitem[{Chernozhukov et~al.(2018)Chernozhukov, Chetverikov, Demirer, Duflo,
  Hansen, Newey, and Robins}]{10.1111/ectj.12097}
Chernozhukov, V., Chetverikov, D., Demirer, M., Duflo, E., Hansen, C., Newey,
  W., and Robins, J. (2018), \enquote{{Double/debiased machine learning for
  treatment and structural parameters},} \textit{The Econometrics Journal}, 21,
  C1--C68.

\bibitem[{Cornu\'{e}jols et~al.(2018)Cornu\'{e}jols, Pe\~{n}a, and
  T\"{u}t\"{u}nc\"{u}}]{MR3837524}
Cornu\'{e}jols, G., Pe\~{n}a, J., and T\"{u}t\"{u}nc\"{u}, R. (2018),
  \textit{Optimization Methods in Finance}, Cambridge University Press,
  Cambridge, 2nd ed.

\bibitem[{Cram\'{e}r(1946)}]{Cram46}
Cram\'{e}r, H. (1946), \textit{Mathematical {M}ethods of {S}tatistics},
  Princeton University Press, Princeton, NJ.

\bibitem[{Dai(2023)}]{dai2023variable}
Dai, S. (2023), \enquote{Variable selection in convex quantile regression:
  $L_1$-norm or $L_0$-norm regularization?} \textit{European Journal of
  Operational Research}, 305, 338--355.

\bibitem[{Durbin et~al.(2002)Durbin, Hardin, Hawkins, and
  Rocke}]{durbin2002variance}
Durbin, B., Hardin, J., Hawkins, D., and Rocke, D. (2002), \enquote{A
  variance-stabilizing transformation for gene-expression microarray data,}
  \textit{Bioinformatics}, 18, S105--S110.

\bibitem[{Durrett(2019)}]{Durrett2019}
Durrett, R. (2019), \textit{Probability---Theory and Examples}, Cambridge
  University Press, Cambridge, 5th ed.

\bibitem[{Epskamp et~al.(2018)Epskamp, Waldorp, M{\~o}ttus, and
  Borsboom}]{epskamp2018gaussian}
Epskamp, S., Waldorp, L.~J., M{\~o}ttus, R., and Borsboom, D. (2018),
  \enquote{The Gaussian graphical model in cross-sectional and time-series
  data,} \textit{Multivariate Behavioral Research}, 53, 453--480.

\bibitem[{Fan and Li(2001)}]{fan2001variable}
Fan, J. and Li, R. (2001), \enquote{Variable selection via nonconcave penalized
  likelihood and its oracle properties,} \textit{Journal of the American
  Statistical Association}, 96, 1348--1360.

\bibitem[{Fan et~al.(2016)Fan, Liao, and Liu}]{fan2016overview}
Fan, J., Liao, Y., and Liu, H. (2016), \enquote{An overview of the estimation
  of large covariance and precision matrices,} \textit{The Econometrics
  Journal}, 19, C1--C32.

\bibitem[{Fan and Lv(2016)}]{MR3546445}
Fan, Y. and Lv, J. (2016), \enquote{Innovated scalable efficient estimation in
  ultra-large {G}aussian graphical models,} \textit{The Annals of Statistics},
  44, 2098--2126.

\bibitem[{Farasat et~al.(2015)Farasat, Nikolaev, Srihari, and
  Blair}]{farasat2015probabilistic}
Farasat, A., Nikolaev, A., Srihari, S.~N., and Blair, R.~H. (2015),
  \enquote{Probabilistic graphical models in modern social network analysis,}
  \textit{Social Network Analysis and Mining}, 5, 1--18.

\bibitem[{Feng et~al.(2016)Feng, Hannig, and Marron}]{feng2016note}
Feng, Q., Hannig, J., and Marron, J. (2016), \enquote{A note on automatic data
  transformation,} \textit{Stat}, 5, 82--87.

\bibitem[{Friedman et~al.(2008)Friedman, Hastie, and Tibshirani}]{Frie08}
Friedman, J., Hastie, T., and Tibshirani, R. (2008), \enquote{Sparse inverse
  covariance estimation with the graphical lasso,} \textit{Biostatistics}, 9,
  432--441.

\bibitem[{Friedman et~al.(2019)Friedman, Hastie, and Tibshirani}]{GLasso2019}
--- (2019), \textit{glasso: Graphical Lasso: Estimation of Gaussian Graphical
  Models}, {R} package version 1.11.

\bibitem[{Friedman et~al.(2022)Friedman, Hastie, Tibshirani, Narasimhan, Tay,
  Simon, Qian, and Yang}]{glmnet2022}
Friedman, J., Hastie, T., Tibshirani, R., Narasimhan, B., Tay, K., Simon, N.,
  Qian, J., and Yang, J. (2022), \textit{glmnet: Lasso and Elastic-Net
  Regularized Generalized Linear Models}, {R} package version 4.1-4.

\bibitem[{Garg and Khandekar(2009)}]{garg2009gradient}
Garg, R. and Khandekar, R. (2009), \enquote{Gradient descent with
  sparsification: an iterative algorithm for sparse recovery with restricted
  isometry property,} in \textit{Proceedings of the 26th Annual International
  Conference on Machine Learning}, pp. 337--344.

\bibitem[{Giudici and Spelta(2016)}]{giudici2016graphical}
Giudici, P. and Spelta, A. (2016), \enquote{Graphical network models for
  international financial flows,} \textit{Journal of Business \& Economic
  Statistics}, 34, 128--138.

\bibitem[{Hastie et~al.(2009)Hastie, Tibshirani, and Friedman}]{MR2722294}
Hastie, T., Tibshirani, R., and Friedman, J. (2009), \textit{The Elements of
  Statistical Learning: Data Mining, Inference, and Prediction}, Springer, New
  York, 2nd ed.

\bibitem[{Hess et~al.(2006)Hess, Anderson, Symmans, Valero, Ibrahim, Mejia,
  Booser, Theriault, Buzdar, Dempsey, et~al.}]{hess2006pharmacogenomic}
Hess, K.~R., Anderson, K., Symmans, W.~F., Valero, V., Ibrahim, N., Mejia,
  J.~A., Booser, D., Theriault, R.~L., Buzdar, A.~U., Dempsey, P.~J., et~al.
  (2006), \enquote{Pharmacogenomic predictor of sensitivity to preoperative
  chemotherapy with paclitaxel and fluorouracil, doxorubicin, and
  cyclophosphamide in breast cancer,} \textit{Journal of Clinical Oncology},
  24, 4236--4244.

\bibitem[{Hsieh et~al.(2014)Hsieh, Sustik, Dhillon, and Ravikumar}]{Hsie14}
Hsieh, C.-J., Sustik, M.~A., Dhillon, I.~S., and Ravikumar, P. (2014),
  \enquote{Q{UIC}: quadratic approximation for sparse inverse covariance
  estimation,} \textit{Journal of Machine Learning Research}, 15, 2911--2947.

\bibitem[{Huang et~al.(2018)Huang, Jiao, Liu, and Lu}]{Huang18}
Huang, J., Jiao, Y., Liu, Y., and Lu, X. (2018), \enquote{A constructive
  approach to $L_0$ penalized regression,} \textit{Journal of Machine Learning
  Research}, 19, 403--439.

\bibitem[{Hwang(2004)}]{hwang2004cauchy}
Hwang, S.-G. (2004), \enquote{Cauchy's interlace theorem for eigenvalues of
  Hermitian matrices,} \textit{The American Mathematical Monthly}, 111,
  157--159.

\bibitem[{Jankov{\'a} and van~de Geer(2017)}]{Jank17}
Jankov{\'a}, J. and van~de Geer, S. (2017), \enquote{Honest confidence regions
  and optimality in high-dimensional precision matrix estimation,}
  \textit{Test}, 26, 143--162.

\bibitem[{Jankov{\'a} et~al.(2015)Jankov{\'a}, Van De~Geer, et~al.}]{Jank15}
Jankov{\'a}, J., Van De~Geer, S., et~al. (2015), \enquote{Confidence intervals
  for high-dimensional inverse covariance estimation,} \textit{Electronic
  Journal of Statistics}, 9, 1205--1229.

\bibitem[{Javanmard and
  Montanari(2014{\natexlab{a}})}]{javanmard2014confidence}
Javanmard, A. and Montanari, A. (2014{\natexlab{a}}), \enquote{Confidence
  intervals and hypothesis testing for high-dimensional regression,}
  \textit{Journal of Machine Learning Research}, 15, 2869--2909.

\bibitem[{Javanmard and
  Montanari(2014{\natexlab{b}})}]{javanmard2014hypothesis}
--- (2014{\natexlab{b}}), \enquote{Hypothesis testing in high-dimensional
  regression under the gaussian random design model: Asymptotic theory,}
  \textit{IEEE Transactions on Information Theory}, 60, 6522--6554.

\bibitem[{Javanmard and Montanari(2018)}]{10.1214/17-AOS1630}
--- (2018), \enquote{{Debiasing the lasso: Optimal sample size for Gaussian
  designs},} \textit{The Annals of Statistics}, 46, 2593 -- 2622.

\bibitem[{Jiang and Hahn(2002)}]{jiang2002empirical}
Jiang, X. and Hahn, M.~G. (2002), \enquote{Empirical central limit theorems for
  exchangeable random variables,} \textit{Statistics \& Probability Letters},
  59, 75--81.

\bibitem[{Johnson et~al.(2015)Johnson, Lin, Ungar, Foster, and
  Stine}]{johnson2015risk}
Johnson, K.~D., Lin, D., Ungar, L.~H., Foster, D.~P., and Stine, R.~A. (2015),
  \enquote{A risk ratio comparison of $l_0$ and $l_1$ penalized regression,}
  \textit{arXiv preprint arXiv:1510.06319}.

\bibitem[{Kim et~al.(2021)Kim, Zhu, Wang, and Do}]{kim2021scalable}
Kim, J., Zhu, H., Wang, X., and Do, K.-A. (2021), \enquote{Scalable network
  estimation with $L_0$ penalty,} \textit{Statistical Analysis and Data
  Mining}, 14, 18--30.

\bibitem[{Klaassen et~al.(2023)Klaassen, Kueck, Spindler, and
  Chernozhukov}]{klaassen2023uniform}
Klaassen, S., Kueck, J., Spindler, M., and Chernozhukov, V. (2023),
  \enquote{Uniform inference in high-dimensional Gaussian graphical models,}
  \textit{Biometrika}, 110, 51--68.

\bibitem[{Kov{\'a}cs et~al.(2021)Kov{\'a}cs, Ruckstuhl, Obrist, and
  B{\"u}hlmann}]{kovacs2021graphical}
Kov{\'a}cs, S., Ruckstuhl, T., Obrist, H., and B{\"u}hlmann, P. (2021),
  \enquote{Graphical elastic net and target matrices: Fast algorithms and
  software for sparse precision matrix estimation,} \textit{arXiv preprint
  arXiv:2101.02148}.

\bibitem[{Kuismin et~al.(2017)Kuismin, Kemppainen, and
  Sillanp{\"a}{\"a}}]{kuismin2017precision}
Kuismin, M., Kemppainen, J., and Sillanp{\"a}{\"a}, M. (2017),
  \enquote{Precision matrix estimation with ROPE,} \textit{Journal of
  Computational and Graphical Statistics}, 26, 682--694.

\bibitem[{Lam and Fan(2009)}]{Lam09}
Lam, C. and Fan, J. (2009), \enquote{Sparsistency and rates of convergence in
  large covariance matrix estimation,} \textit{Annals of statistics}, 37, 4254.

\bibitem[{Lauritzen(1996)}]{MR1419991}
Lauritzen, S.~L. (1996), \textit{Graphical Models}, Oxford University Press,
  New York.

\bibitem[{Li et~al.(2022)Li, Zhao, Wang, Yuan, and Liu}]{flare2022}
Li, X., Zhao, T., Wang, L., Yuan, X., and Liu, H. (2022), \textit{flare: Family
  of Lasso Regression}, {R} package version 1.7.0.1.

\bibitem[{Liu et~al.(2009)Liu, Lafferty, and Wasserman}]{Liu09}
Liu, H., Lafferty, J., and Wasserman, L. (2009), \enquote{The nonparanormal:
  semiparametric estimation of high dimensional undirected graphs,}
  \textit{Journal of Machine Learning Research}, 10, 2295--2328.

\bibitem[{Liu and Wang(2017)}]{TigerLiu17}
Liu, H. and Wang, L. (2017), \enquote{T{IGER}: a tuning-insensitive approach
  for optimally estimating {G}aussian graphical models,} \textit{Electronic
  Journal of Statistics}, 11, 241--294.

\bibitem[{Liu and Luo(2015)}]{liu2015fast}
Liu, W. and Luo, X. (2015), \enquote{Fast and adaptive sparse precision matrix
  estimation in high dimensions,} \textit{Journal of multivariate analysis},
  135, 153--162.

\bibitem[{Marjanovic and Hero(2015)}]{marjanovic2015}
Marjanovic, G. and Hero, A.~O. (2015), \enquote{$l_0$ sparse inverse covariance
  estimation,} \textit{IEEE Transactions on Signal Processing}, 63, 3218--3231.

\bibitem[{Meinshausen and B\"{u}hlmann(2006)}]{Mein06}
Meinshausen, N. and B\"{u}hlmann, P. (2006), \enquote{High-dimensional graphs
  and variable selection with the lasso,} \textit{The Annals of Statistics},
  34, 1436--1462.

\bibitem[{Natarajan(1995)}]{MR1320206}
Natarajan, B.~K. (1995), \enquote{Sparse approximate solutions to linear
  systems,} \textit{SIAM Journal on Computing}, 24, 227--234.

\bibitem[{Ravikumar et~al.(2011)Ravikumar, Wainwright, Raskutti, and
  Yu}]{Ravi11}
Ravikumar, P., Wainwright, M.~J., Raskutti, G., and Yu, B. (2011),
  \enquote{High-dimensional covariance estimation by minimizing
  {$\ell_1$}-penalized log-determinant divergence,} \textit{Electronic Journal
  of Statistics}, 5, 935--980.

\bibitem[{Ren et~al.(2015)Ren, Sun, Zhang, and Zhou}]{Zhao15}
Ren, Z., Sun, T., Zhang, C.-H., and Zhou, H.~H. (2015), \enquote{Asymptotic
  normality and optimalities in estimation of large {G}aussian graphical
  models,} \textit{The Annals of Statistics}, 43, 991--1026.

\bibitem[{Ritchie et~al.(2015)Ritchie, Phipson, Wu, Hu, Law, Shi, and
  Smyth}]{ritchie2015limma}
Ritchie, M.~E., Phipson, B., Wu, D., Hu, Y., Law, C.~W., Shi, W., and Smyth,
  G.~K. (2015), \enquote{limma powers differential expression analyses for
  RNA-sequencing and microarray studies,} \textit{Nucleic Acids Research}, 43,
  e47--e47.

\bibitem[{Rothman et~al.(2008)Rothman, Bickel, Levina, and Zhu}]{Roth08}
Rothman, A.~J., Bickel, P.~J., Levina, E., and Zhu, J. (2008), \enquote{Sparse
  permutation invariant covariance estimation,} \textit{Electronic Journal of
  Statistics}, 2, 494--515.

\bibitem[{Rudelson and Vershynin(2013)}]{Rude13}
Rudelson, M. and Vershynin, R. (2013), \enquote{Hanson-{W}right inequality and
  sub-{G}aussian concentration,} \textit{Electronic Communications in
  Probability}, 18, no. 82, 9.

\bibitem[{Ryali et~al.(2012)Ryali, Chen, Supekar, and
  Menon}]{ryali2012estimation}
Ryali, S., Chen, T., Supekar, K., and Menon, V. (2012), \enquote{Estimation of
  functional connectivity in fMRI data using stability selection-based sparse
  partial correlation with elastic net penalty,} \textit{NeuroImage}, 59,
  3852--3861.

\bibitem[{Schäfer and Strimmer(2005)}]{Schafer05}
Schäfer, J. and Strimmer, K. (2005), \enquote{A shrinkage approach to
  large-scale covariance matrix estimation and implications for functional
  genomics.} \textit{Statistical Applications in Genetics and Molecular
  Biology}, 4, 1--30.

\bibitem[{Shu and Nan(2019)}]{Shu19}
Shu, H. and Nan, B. (2019), \enquote{Estimation of large covariance and
  precision matrices from temporally dependent observations,} \textit{The
  Annals of Statistics}, 47, 1321--1350.

\bibitem[{Spring et~al.(2020)Spring, Fell, Arfe, Sharma, Greenup, Reynolds,
  Smith, Alexander, Moy, Isakoff, et~al.}]{spring2020pathologic}
Spring, L.~M., Fell, G., Arfe, A., Sharma, C., Greenup, R., Reynolds, K.~L.,
  Smith, B.~L., Alexander, B., Moy, B., Isakoff, S.~J., et~al. (2020),
  \enquote{Pathologic complete response after neoadjuvant chemotherapy and
  impact on breast cancer recurrence and survival: a comprehensive
  meta-analysis,} \textit{Clinical Cancer Research}, 26, 2838--2848.

\bibitem[{Sun and Zhang(2012)}]{sun2012scaled}
Sun, T. and Zhang, C.-H. (2012), \enquote{Scaled sparse linear regression,}
  \textit{Biometrika}, 99, 879--898.

\bibitem[{Sun and Zhang(2013)}]{sun2013sparse}
--- (2013), \enquote{Sparse matrix inversion with scaled lasso,}
  \textit{Journal of Machine Learning Research}, 14, 3385--3418.

\bibitem[{Sun and Cai(2007)}]{sun2007oracle}
Sun, W. and Cai, T.~T. (2007), \enquote{Oracle and adaptive compound decision
  rules for false discovery rate control,} \textit{Journal of the American
  Statistical Association}, 102, 901--912.

\bibitem[{Tibshirani(1996)}]{Lasso}
Tibshirani, R. (1996), \enquote{Regression shrinkage and selection via the
  lasso,} \textit{Journal of the Royal Statistical Society Series B:
  Statistical Methodology}, 58, 267--288.

\bibitem[{van~de Geer et~al.(2014)van~de Geer, B\"{u}hlmann, Ritov, and
  Dezeure}]{VGeer14}
van~de Geer, S., B\"{u}hlmann, P., Ritov, Y., and Dezeure, R. (2014),
  \enquote{On asymptotically optimal confidence regions and tests for
  high-dimensional models,} \textit{The Annals of Statistics}, 42, 1166--1202.

\bibitem[{Van~Wieringen and Peeters(2016)}]{van2016ridge}
Van~Wieringen, W.~N. and Peeters, C.~F. (2016), \enquote{Ridge estimation of
  inverse covariance matrices from high-dimensional data,}
  \textit{Computational Statistics \& Data Analysis}, 103, 284--303.

\bibitem[{Vershynin(2012)}]{Vershynin2012}
Vershynin, R. (2012), \enquote{Introduction to the non-asymptotic analysis of
  random matrices,} in \textit{Compressed sensing}, Cambridge University Press,
  Cambridge, pp. 210--268.

\bibitem[{Vershynin(2018)}]{vershynin_2018}
--- (2018), \textit{High-Dimensional Probability: An Introduction with
  Applications in Data Science}, Cambridge University Press, Cambridge.

\bibitem[{Vorobyov(2013)}]{Vorobyov2013}
Vorobyov, S. (2013), \enquote{Principles of minimum variance robust adaptive
  beamforming design,} \textit{Signal Processing}, 93, 3264--3277.

\bibitem[{Wang et~al.(2013)Wang, Kim, and Li}]{Wang13HBIC}
Wang, L., Kim, Y., and Li, R. (2013), \enquote{Calibrating nonconvex penalized
  regression in ultra-high dimension,} \textit{The Annals of Statistics}, 41,
  2505--2536.

\bibitem[{Wang et~al.(2016)Wang, Ren, and Gu}]{pmlr-v51-wang16a}
Wang, L., Ren, X., and Gu, Q. (2016), \enquote{Precision Matrix Estimation in
  High Dimensional Gaussian Graphical Models with Faster Rates,} in
  \textit{Proceedings of the 19th International Conference on Artificial
  Intelligence and Statistics}, pp. 177--185.

\bibitem[{Yuan(2010)}]{Yuan10}
Yuan, M. (2010), \enquote{High dimensional inverse covariance matrix estimation
  via linear programming,} \textit{Journal of Machine Learning Research}, 11,
  2261--2286.

\bibitem[{Yuan and Lin(2007)}]{Yuan07}
Yuan, M. and Lin, Y. (2007), \enquote{Model selection and estimation in the
  {G}aussian graphical model,} \textit{Biometrika}, 94, 19--35.

\bibitem[{Zhang(2010)}]{MCP2010}
Zhang, C.-H. (2010), \enquote{Nearly unbiased variable selection under minimax
  concave penalty,} \textit{The Annals of Statistics}, 38, 894--942.

\bibitem[{Zhang and Zhang(2014)}]{zhang2014confidence}
Zhang, C.-H. and Zhang, S.~S. (2014), \enquote{Confidence intervals for low
  dimensional parameters in high dimensional linear models,} \textit{Journal of
  the Royal Statistical Society Series B: Statistical Methodology}, 76,
  217--242.

\bibitem[{Zhang et~al.(2021)Zhang, Park, and Chung}]{zhang2021minimax}
Zhang, Q., Park, C., and Chung, J. (2021), \enquote{Minimax estimation of
  covariance and precision matrices for high-dimensional time series with
  long-memory,} \textit{Statistics \& Probability Letters}, 177, 109177.

\bibitem[{Zhang(2011{\natexlab{a}})}]{zhang2011adaptive}
Zhang, T. (2011{\natexlab{a}}), \enquote{Adaptive forward-backward greedy
  algorithm for learning sparse representations,} \textit{IEEE Transactions on
  Information Theory}, 57, 4689--4708.

\bibitem[{Zhang(2011{\natexlab{b}})}]{zhang2011sparse}
--- (2011{\natexlab{b}}), \enquote{Sparse recovery with orthogonal matching
  pursuit under RIP,} \textit{IEEE Transactions on Information Theory}, 57,
  6215--6221.

\bibitem[{Zhao et~al.(2012)Zhao, Liu, Roeder, Lafferty, and
  Wasserman}]{zhao2012huge}
Zhao, T., Liu, H., Roeder, K., Lafferty, J., and Wasserman, L. (2012),
  \enquote{The huge package for high-dimensional undirected graph estimation in
  R,} \textit{Journal of Machine Learning Research}, 13, 1059--1062.

\bibitem[{Zhu et~al.(2020)Zhu, Wen, Zhu, Zhang, and Wang}]{zhu2020polynomial}
Zhu, J., Wen, C., Zhu, J., Zhang, H., and Wang, X. (2020), \enquote{A
  polynomial algorithm for best-subset selection problem,} \textit{Proceedings
  of the National Academy of Sciences}, 117, 33117--33123.

\bibitem[{Zou(2006)}]{adaptiveLasso}
Zou, H. (2006), \enquote{The adaptive lasso and its oracle properties,}
  \textit{Journal of the American Statistical Association}, 1418--1429.

\end{thebibliography}

\end{document}